\newtheorem{thm}{Theorem}[subsection]
\newtheorem{cor}[thm]{Corollary}
\newtheorem{lem}[thm]{Lemma} 
\newtheorem{prop}[thm]{Proposition}
\newtheorem{conj}[thm]{Conjecture} \theoremstyle{definition}
 \theoremstyle{definition}
\newtheorem{defn}[thm]{Definition} \theoremstyle{remark}
\newtheorem{rem}[thm]{\bf Remark}
\newtheorem{para}[thm]{\bf}
\DeclareMathOperator{\Hom}{Hom}
\DeclareMathOperator{\End}{End}
\DeclareMathOperator{\Ext}{Ext}
\DeclareMathOperator{\im}{im}
\DeclareMathOperator{\coker}{coker}
\DeclareMathOperator{\modd}{mod}
\DeclareMathOperator{\Spec}{Spec}
\DeclareMathOperator{\Fil}{Fil}
\DeclareMathOperator{\gr}{gr}
\DeclareMathOperator{\Gal}{Gal}
\DeclareMathOperator{\Frob}{Frob}
\DeclareMathOperator{\Sym}{Sym}
\DeclareMathOperator{\hht}{ht}
\DeclareMathOperator{\Aut}{Aut}
\DeclareMathOperator{\ad}{ad}
\DeclareMathOperator{\Supp}{Supp}
\def\cont{\mathrm{cont}}
\def\cris{\mathrm{cris}}
\def\dR{\mathrm{dR}}
\def\Ind{\mathrm{Ind}}
\def\patch{\mathrm{p}}
\def\ps{\mathrm{ps}}
\def\loc{\mathrm{loc}}
\def\ord{\mathrm{ord}}
\def\depth{\mathrm{depth}}
\newcommand{\Z}{\mathbb{Z}}
\newcommand{\F}{\mathbb{F}}
\newcommand{\Q}{\mathbb{Q}}
\newcommand{\R}{\mathbb{R}}
\newcommand{\T}{\mathbb{T}}
\newcommand{\A}{\mathbb{A}}
\newcommand{\bC}{\mathbb{C}}
\newcommand{\fR}{\mathbf{R}}
\newcommand{\tr}{\mathrm{tr}}
\newcommand{\GL}{\mathrm{GL}}
\newcommand{\SL}{\mathrm{SL}}
\newcommand{\PGL}{\mathrm{PGL}}
\newcommand{\cC}{\mathcal{C}}
\newcommand{\cI}{\mathcal{I}}
\newcommand{\cO}{\mathcal{O}}
\newcommand{\cL}{\mathcal{L}}
\newcommand{\defeq}{\stackrel{\mathrm{def}}{=}}
\newcommand{\cOf}{C_\cO^\mathrm{f}}
\newcommand{\kq}{\mathfrak{q}}
\newcommand{\kp}{\mathfrak{p}}
\newcommand{\kkp}{k(\kp)}
\newcommand{\kQ}{\mathfrak{Q}}
\newcommand{\kP}{\mathfrak{P}}
\newcommand{\kC}{\mathfrak{C}}
\newcommand{\kB}{\mathfrak{B}}
\newcommand{\kF}{\mathfrak{F}}
\newcommand{\ka}{\mathfrak{a}}
\newcommand{\kb}{\mathfrak{b}}
\newcommand{\km}{\mathfrak{m}}
\newcommand{\mm}{\mathrm{m}}
\newcommand{\DAi}{(D\otimes_F \A_F^\infty)^\times}
\newcommand{\AFi}{(\A_F^\infty)^\times}
\newcommand{\RNum}[1]{\uppercase\expandafter{\romannumeral #1\relax}}
\newcommand{\overbar}[1]{\mkern 1.5mu\overline{\mkern-1.5mu#1\mkern-1.5mu}\mkern 1.5mu}
\begin{document}

\title{The Fontaine-Mazur conjecture in the residually reducible case}
\author{Lue Pan}
\address{Fine Hall, Washington Road\\
Princeton NJ 08544-1000 USA}
\email{lpan@princeton.edu}
\begin{abstract} 
We prove new cases of Fontaine-Mazur conjecture on two-dimensional Galois representations over $\Q$ when the residual representation is reducible. Our approach is via a semi-simple local-global compatibility of the completed cohomology and a Taylor-Wiles patching argument for the completed homology in this case. As a key input, we generalize the work of Skinner-Wiles in the ordinary case. In addition, we also treat the residually irreducible case at the end of the paper. Combining with people's earlier work, we can prove the Fontaine-Mazur conjecture completely in the regular case when $p\geq 5$.
\end{abstract}

\maketitle

\tableofcontents

\section{Introduction}
First we recall the following remarkable conjecture of Fontaine and Mazur made in \cite{FM93}.
\begin{conj}[Fontaine-Mazur] \label{FMconj}
Let $p$ be a prime number and 
\[\rho:\Gal(\bar{\Q}/\Q)\to\GL_2(\overbar{\Q_p})\]
be a continuous, irreducible representation such that
\begin{itemize}
\item $\rho$ is only ramified at finitely many places,
\item the restriction of $\rho$ at the decomposition group at $p$ is potentially semi-stable in the sense of Fontaine,
\item $\rho$ is odd, i.e. $\det\rho(c)=-1$ for any complex conjugation $c\in \Gal(\bar{\Q}/\Q)$.
\end{itemize}
Then $\rho$ arises from a cuspidal eigenform up to twist.
\end{conj}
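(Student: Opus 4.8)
The plan is to follow Emerton's completed-cohomology strategy for the Fontaine--Mazur conjecture, but to push it through in the case where the residual representation is \emph{reducible}. After twisting $\rho$ by a power of the cyclotomic character and a finite-order character, we may assume it is normalized so that its restriction at $p$ is de Rham with distinct Hodge--Tate weights; this is the regular case, corresponding to (the Galois side of) modular forms of weight $\geq 2$. The residual representation $\bar\rho$ obtained by reducing $\rho$ (with respect to a stable lattice) is either absolutely irreducible or reducible. When $\bar\rho$ is absolutely irreducible the conjecture in the regular case is essentially known, via Khare--Wintenberger's proof of Serre's conjecture together with Taylor--Wiles--Kisin patching and Emerton's local-global compatibility; a few residually irreducible cases are cleaned up at the end of the paper. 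So I would concentrate on the residually reducible case.

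The first reduction is via Emerton's criterion. Let $\widetilde H^1$ denote the completed cohomology of the tower of modular curves, a smooth admissible representation of $\GL_2(\A_\Q^f)$ carrying a commuting continuous action of $\Gal(\bar\Q/\Q)$; dually, one has the completed homology $\widetilde H_1$. Then $\rho$ arises from a cuspidal eigenform as soon as $\Hom_{\Gal(\bar\Q/\Q)}(\rho,\widetilde H^1)$ is nonzero, because the de Rham hypothesis together with the $p$-adic local Langlands correspondence for $\GL_2(\Q_p)$ upgrades any such embedding to one hitting a locally algebraic vector of the correct weight and type, and classicality of such vectors then yields a genuine eigenform. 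Thus the problem becomes: \emph{show the $\rho$-isotypic part of completed cohomology does not vanish}.

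To produce this nonvanishing I would establish two technical inputs. (i) A \textbf{semi-simple local-global compatibility}: after localizing $\widetilde H^1$ (or $\widetilde H_1$) at the maximal ideal of the pseudo-deformation Hecke algebra attached to $\bar\rho^{\mathrm{ss}}$, the $\GL_2(\Q_p)$-action and the Galois action are matched by Colmez's $p$-adic local Langlands correspondence --- but, since $\bar\rho|_{\Gal(\bar\Q_p/\Q_p)}$ is reducible and the mod-$p$ correspondence is non-generic there, the matching is only controllable after semisimplification. (ii) A \textbf{Taylor--Wiles patching argument for completed homology}, building a patched module $M_\infty$ over a patched (framed, pseudo-)deformation ring $R_\infty$ with an arithmetic action, whose support is governed by a patched local deformation ring; this forces $M_\infty$, and hence (by Nakayama and duality) the localized completed cohomology itself, to have the expected support, so that $\rho$ --- lying on the relevant component of the deformation space --- necessarily contributes. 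The generalization of Skinner--Wiles to the residually reducible \emph{ordinary} case enters here, both to control the ordinary (Hida-family) locus and to supply modularity of auxiliary congruent forms that seed the patching.

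The main obstacle is the residually reducible deformation theory itself: the relevant global deformation rings need not be flat complete intersections of the expected dimension, Taylor--Wiles primes are harder to arrange, and one is forced to work systematically with pseudodeformation rings and generalized matrix algebras while tracking the reducible locus. This is compounded by the fact that the local $p$-adic Langlands input is available only up to semisimplification, so identifying the actual $\Gal$-representation carried by $M_\infty$, and then passing from ``nonzero isotypic part for $\bar\rho^{\mathrm{ss}}$'' to ``nonzero isotypic part for the irreducible $\rho$ itself,'' requires genuinely new arguments rather than a formal transcription of Emerton's proof; I expect the bulk of the work to concentrate precisely there. Once $\rho$ is shown to occur in $\widetilde H^1$, local-global compatibility at $p$ promotes the occurrence to a locally algebraic vector, classicality produces an overconvergent and then classical eigenform, and the de Rham condition fixes its weight, completing the regular case for $p \geq 5$; combined with the previously known residually irreducible cases, this yields the Fontaine--Mazur conjecture in full in the regular case.
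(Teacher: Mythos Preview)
The statement you are attempting to prove is a \emph{conjecture}; the paper does not prove it in full, and neither does your proposal. Your opening reduction --- ``after twisting $\rho$ by a power of the cyclotomic character and a finite-order character, we may assume its restriction at $p$ is de Rham with distinct Hodge--Tate weights'' --- is simply false: twisting by a one-dimensional character shifts both Hodge--Tate weights by the same integer and cannot separate them if they already coincide. Representations with equal Hodge--Tate weights (the irregular case, corresponding on the automorphic side to weight-one forms) lie outside the reach of the $p$-adic Langlands and patching machinery you invoke, and the paper explicitly imposes the distinct-weights hypothesis in both of its main theorems. Your proposal also restricts to $p \geq 5$ at the end, so $p=2$ and the excluded $p=3$ cases remain as well. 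What you have outlined is at best a proof of the paper's main theorems, not of the conjecture as stated.

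With that gap noted, your high-level strategy for the \emph{regular} case is broadly aligned with the paper's: a local-global compatibility statement for completed cohomology (your (i)) plus a Taylor--Wiles patching argument for completed homology (your (ii)), with the Skinner--Wiles ordinary results supplying automorphic input. One substantive difference worth flagging: the paper does not patch at the maximal ideal and then extract the $\rho$-isotypic part as you suggest. Because the residual pseudorepresentation is reducible and the usual Taylor--Wiles condition $\bar\rho|_{G_{F(\zeta_p)}}$ irreducible fails, the paper instead patches at a carefully chosen \emph{one-dimensional} ``nice'' prime $\mathfrak{q}$ of the pseudodeformation ring (in the spirit of Skinner--Wiles), proving an $R_{\mathfrak{q}}=\mathbb{T}_{\mathfrak{q}}$ statement there, and then propagates pro-modularity along irreducible components to reach the prime determined by $\rho$. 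The ordinary locus inside each relevant component is shown to have large enough dimension that Skinner--Wiles (and its generalizations in the paper) can seed this process. Your framing in terms of ``nonzero isotypic part for $\bar\rho^{\mathrm{ss}}$ versus for $\rho$'' does not quite capture this component-chasing geometry.
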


In \cite{Kis09a}, Kisin has proved this conjecture in many cases via the so-called Breuil-M\'ezard conjecture. Under slightly more restrictively conditions, Emerton in \cite{Eme1} gave another proof using his completed cohomology and local-global compatibility results. A common ingredient for both works is the $p$-adic local Langlands correspondence for $\GL_2(\Q_p)$, which was established by the work of Breuil, Colmez, Berger, Kisin, Pa\v{s}k\={u}nas.

It should be pointed out that both works need assume the residual representation $\bar{\rho}$ to be irreducible when restricted to ${\Gal(\bar{\Q}/\Q(\zeta_p))}$. In particular, $\bar\rho$ itself has to be irreducible. Results without this assumption are mostly known in the ordinary case (i.e. $\rho|_{\Gal(\overbar{\Q_p}/\Q_p)}$ is reducible) by the work of Skinner-Wiles \cite{SW99}, \cite{SW01}. They work with the $p$-adic family of ordinary forms (Hida family), which gives them more freedom and allows them to handle these bad cases.

A natural generalization of Hida family in the non-ordinary case is the completed cohomology. It is natural to ask whether we can establish new cases of Fontaine-Mazur conjecture by working with completed cohomology. One main result of this paper is:

\begin{thm}
Let $p$ be an odd prime and $\rho$ be as in the conjecture. Assume furthermore that
\begin{itemize}
\item $\rho|_{\Gal(\overbar{\Q_p}/\Q_p)}$ has distinct Hodge-Tate weights,
\item the semi-simplification of $\bar{\rho}$ is a sum of two characters $\bar{\chi}_1\oplus \bar{\chi}_2$. If $p=3$, we require that $\frac{\bar{\chi}_1}{\bar{\chi}_2}|_{\Gal(\overbar{\Q_p}/\Q_p)}\neq\omega$. Here $\omega$ denotes the mod $p$ cyclotomic character.
\end{itemize} 
Then $\rho$ comes from a cuspidal eigenform up to twist.
\end{thm}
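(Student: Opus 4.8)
The plan is to run the completed-cohomology argument of Emerton for the Fontaine--Mazur conjecture, with its residually irreducible inputs --- the $p$-adic local Langlands correspondence, local-global compatibility and Taylor--Wiles patching --- replaced by residually reducible counterparts; the essential new ingredient is a patching argument for completed homology valid when $\bar\rho$ is reducible, for which we generalize the methods of Skinner--Wiles. Fix a finite extension $E/\Q_p$ with ring of integers $\cO$ and residue field $k$ large enough to contain all relevant data, and a $\Gal(\bar{\Q}/\Q)$-stable $\cO$-lattice in $\rho$, with reduction $\bar\rho$ whose semisimplification is $\bar\chi_1\oplus\bar\chi_2$. Let $S$ consist of $p$, $\infty$ and the primes at which $\rho$ ramifies. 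Consider the completed cohomology $\widetilde{H}^1$ of the tower of open modular curves of some tame level $K^p$ chosen so that $\rho$, if it is modular at all, is already seen at level $K^p$; its dual, the completed homology $\widetilde{H}_1$, is a profinite $\cO[[\GL_2(\Z_p)]]$-module carrying a faithful action of a big Hecke algebra $\T$. Localizing at the maximal ideal $\km\subset\T$ attached to $\bar\chi_1\oplus\bar\chi_2$ --- which is genuinely in the support, since Eisenstein series realize this reduction --- one obtains a continuous pseudo-character $\Gal(\bar{\Q}/\Q)\to\T_\km$ deforming $\bar\chi_1+\bar\chi_2$, hence a surjection $R^{\ps}\twoheadrightarrow\T_\km$ from the universal (framed, with the condition at $p$ left unrestricted since we work at infinite level at $p$) pseudo-deformation ring.

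The next step is the \emph{semisimple local-global compatibility}. Over the irreducible locus of $\Spec R^{\ps}$ --- where a genuine two-dimensional Galois representation exists, in particular at the point $x_\rho$ corresponding to our irreducible $\rho$ --- there is a well-defined local representation at $p$, and the assertion is that the $\GL_2(\Q_p)$-action on $\widetilde{H}_{1,\km}$ is matched with it via Colmez's functor and the $p$-adic local Langlands correspondence for $\GL_2(\Q_p)$, at least up to semisimplification. The reducibility of $\bar\rho$ can only add Jordan--H\"older constituents supported over the reducible locus, so passing to semisimplifications --- equivalently, to the generic points of the irreducible locus --- recovers the same compatibility statement one has in Emerton's residually irreducible case, and the analysis of the locally analytic vectors of $\widetilde{H}^1$ enters exactly as there.

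Then one patches. By the Taylor--Wiles--Kisin method applied to $\widetilde{H}_{1,\km}$, construct a patched module $M_\infty$ over a patched ring $R_\infty$ --- a power series ring over the completed tensor product, along the auxiliary deformation data, of the relevant local framed (pseudo-)deformation rings --- which is Cohen--Macaulay of the expected codimension and recovers $\widetilde{H}_{1,\km}$ modulo the patching ideal. The choice of Taylor--Wiles and auxiliary primes here must follow a generalization of Skinner--Wiles: primes at which the residual pseudo-character becomes sufficiently non-degenerate, chosen so as to kill the relevant dual Selmer groups and, crucially, to control the congruences between cusp forms and Eisenstein series, so that the patched Hecke algebra is a quotient of the patched Galois deformation ring of the expected dimension and $M_\infty$ is faithful over $R_\infty$ (or at least has support meeting the component through $x_\rho$). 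Since $\rho$ is de Rham with distinct Hodge--Tate weights, its potentially semistable point $x_\rho$ of the prescribed Hodge--Tate and inertial type then lies in the support of $M_\infty$; equivalently, the $\GL_2(\Q_p)$-representation on the corresponding eigenspace of $\widetilde{H}^1$ carries a nonzero locally algebraic vector. Hence $\rho$ occurs in the classical cohomology of a modular curve and so is attached to a classical eigenform, which is cuspidal because $\rho$ is irreducible and therefore not Eisenstein. (When $\rho|_{\Gal(\overbar{\Q_p}/\Q_p)}$ is ordinary one may instead invoke the generalized Skinner--Wiles modularity lifting theorem along the ordinary/reducible locus directly.)

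The main obstacle is precisely this patching when $\bar\rho$ --- and a fortiori $\bar\rho|_{\Gal(\bar{\Q}/\Q(\zeta_p))}$ --- is reducible: the usual count of Taylor--Wiles primes via vanishing of dual Selmer groups breaks down, the universal deformation ring must be replaced by a pseudo-deformation ring whose reducible and irreducible loci have to be disentangled, and on the automorphic side one must cope with Eisenstein congruences and with the fact that $\T_\km$ only carries a pseudo-character rather than a genuine representation. Adapting the auxiliary-prime techniques of Skinner--Wiles, which handled exactly this difficulty in the ordinary Hida-theoretic setting, to the non-ordinary completed-homology setting --- while keeping $M_\infty$ large enough to detect $x_\rho$ --- is the heart of the argument. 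A secondary difficulty is local-global compatibility at $p$ when $\bar\rho|_{\Gal(\overbar{\Q_p}/\Q_p)}$ lies in a non-generic block of Pa\v{s}k\={u}nas' classification of the blocks of mod-$p$ representations of $\GL_2(\Q_p)$; for $p=3$ the worst such case is exactly $\bar\chi_1/\bar\chi_2|_{\Gal(\overbar{\Q_p}/\Q_p)}=\omega$, which explains why that case is excluded.
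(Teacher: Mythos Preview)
Your proposal captures the broad architecture --- completed cohomology, local-global compatibility, patching, classicality --- but misses the two structural ideas that make the argument go through in the residually reducible, non-ordinary case.

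First, you propose to patch at the Eisenstein maximal ideal $\km$ directly. The paper explains why this is problematic: the completed homology $M_\psi(U^p)_\km$ is not finitely generated over $\T_\km$ (because of the $\GL_2(\Q_p)$-action), so the usual support-theoretic patching argument does not apply. The paper's solution is \emph{not} to fix this at the maximal ideal, but to patch at a carefully chosen one-dimensional prime $\kq$ (following Skinner--Wiles' original technique in \cite{SW99}). To find such a $\kq$ which is already known to be pro-modular, the paper makes an essential detour through the \emph{ordinary locus}: starting from the component $C$ of $\Spec R^{\ps}$ containing $\rho$, one shows $C^{\ord}$ has dimension at least $[F:\Q]+1$, then invokes the ordinary Skinner--Wiles results (and their generalizations, Theorems \ref{thmB} and \ref{thmC} of the paper) to prove $C^{\ord}$ is pro-modular, and finally chooses a nice prime $\kq\in C^{\ord}$. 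Patching at $\kq$ then propagates pro-modularity to all of $C$. You treat the ordinary Skinner--Wiles theorem as an alternative route for the ordinary case; in fact it is the key \emph{input} for the non-ordinary case.

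Second, your local-global compatibility statement is too weak for what is needed. The paper's version (Theorem \ref{lgc}) identifies two actions of the local pseudo-deformation ring $R^{\ps}_p$ on $\mm=\Hom(P_{\kB_\km},M_\psi(U^p)_\km)$: one via Galois through $\T_\km$, the other via Pa\v{s}k\={u}nas' description of $R^{\ps}_p$ as (a factor of) the Bernstein centre of the relevant category of $\GL_2(\Q_p)$-representations. The point of applying Pa\v{s}k\={u}nas' functor $\Hom(P_{\kB_\km},-)$ is precisely that $\mm$ becomes a \emph{finitely generated faithful} $\T_\km$-module (Corollary \ref{mclgc}), which is what makes patching possible; the same compatibility also yields the lower bound $\dim\T_\km\ge 1+2[F:\Q]$ (Theorem \ref{dh}) needed for the numerical miracle, and the classicality statement (Corollary \ref{classicality}). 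Your ``semisimple local-global compatibility over the irreducible locus'' does not give any of these.
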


As we just mentioned, when $\rho|_{\Gal(\overbar{\Q_p}/\Q_p)}$ is reducible and $\frac{\bar{\chi}_1}{\bar{\chi}_2}|_{\Gal(\overbar{\Q_p}/\Q_p)}\neq\mathbf{1}$, this is the main result of \cite{SW99}. The missing case $\frac{\bar{\chi}_1}{\bar{\chi}_2}|_{\Gal(\overbar{\Q_p}/\Q_p)}=\mathbf{1}$ in the ordinary case will be proved in section \ref{AgoroS-W2}. One main contribution here (Theorem \ref{mainthm}) is our result in the non-ordinary case (i.e.  $\rho|_{\Gal(\overbar{\Q_p}/\Q_p)}$ is irreducible). Our method will not yield a new proof in the ordinary case. In fact, our proof in the non-ordinary case uses the work of Skinner-Wiles (and its generalization \ref{thmB}, \ref{thmC}) as a key ingredient. In the discussion below, we will assume $\rho|_{\Gal(\overbar{\Q_p}/\Q_p)}$ is irreducible.

The assumption that $\frac{\bar{\chi}_1}{\bar{\chi}_2}|_{\Gal(\overbar{\Q_p}/\Q_p)}\neq\omega$ when $p=3$ is used almost everywhere throughout the paper. The essential difficulty (at least for the author) is our lack of knowledge of the $p$-adic local Langlands correspondence in this case. See Theorem \ref{pa1}, \ref{pa2} for more details.

To prove the main result, we follow the strategy of Emerton. There are two steps:
\begin{enumerate}
\item Prove a `big' $R=\T$ result. Here $R$ denotes some global Galois deformation ring of pseudo-characters with no condition at $p$ and $\T$ is a localization of some Hecke algebra for completed cohomology.
\item Prove a classicality result: if $\rho$ arises from some completed cohomology and $\rho|_{\Gal(\overbar{\Q_p}/\Q_p)}$ is irreducible, de Rham of distinct Hodge-Tate weights, then up to twist it comes from a classical eigenform. 
\end{enumerate}

In reality, we do not quite prove an $R=\T$ result in the first step. What we actually will prove is identifying some irreducible components of $\Spec R$ as in $\Spec \T$, which is enough for our purpose. For simplicity, we will only discuss the generic case ($\frac{\bar{\chi}_1}{\bar{\chi}_2}|_{\Gal(\overbar{\Q_p}/\Q_p)}\neq\mathbf{1},\omega^{\pm 1}$) in this introduction. The basic idea is as follows. Let $\kp\in\Spec R$ be the prime ideal given by $\rho$. Then a standard calculation of Galois cohomology shows that we may choose an irreducible component $C$ containing $\kp$ with large dimension (lemma \ref{dimrpskp}). Under our generic assumption on $\frac{\bar{\chi}_1}{\bar{\chi}_2}|_{\Gal(\overbar{\Q_p}/\Q_p)}$, we can show that the ordinary locus $C^{\ord}$ in $C$ corresponding to deformations reducible at $p$ also has large dimension (lemma \ref{Corddim}). Using the result of Skinner-Wiles and its generalization (Theorem \ref{thmB}), we are able to show that $C^{\ord}$ is contained in $\Spec \T$. Now choose a `nice' one-dimensional prime $\kq\in C^{\ord}$ and apply a Taylor-Wiles type patching argument to this prime. We conclude that $C$ hence $\kp$ is contained in $\Spec\T$. Some of the arguments here are inspired by the work of B\"ockle \cite{Boc01}.

Here we need a version of patching argument for completed cohomology. For this part of the discussion, we will focus on patching at the maximal ideal as in the classical case. Although the patching will be at a one-dimensional prime (following \cite{SW99}) in our applications, there is no essential difference except several technical complications. A patching argument in this setting has already been considered by Gee and Newton in \cite{GN16}. As far as I understand, there are two essential difficulties:
\begin{enumerate}
\item We need a correct lower bound on $\dim\T$.
\item The patched completed homology is not finitely generated over $R_\infty$, the product of some local deformation rings with several formal variables. This is equivalent with saying that the completed homology is not finitely generated over $\T$.
\end{enumerate}

We remark that in the classical case, since the Hecke algebra is a finite $\Z_p$-module, we have a natural and correct bound for its dimension (which is $1$!). In the ordinary case, we get a bound on $\dim\T$ from the Iwasawa algebra $\Lambda$ (see corollary \ref{orddim} below). For the second difficulty, recall that  the classical patching argument involves supports, which do not behave well once we are not in the finitely generated world. The failure of the completed homology to be finitely generated over $\T$ is because there is an action of $\GL_2(\Q_p)$ on it. How to translate the information coming from this action is crucial. 

The solution to both problems is our local-global compatibility result and an application of the work of Pa\v{s}k\={u}nas \cite{Pas13}, which is the main technical innovation in this paper. Let $R^{\ps}_p$ be the local deformation ring of some two-dimensional pseudo-representation at $p$ with fixed determinant. It will play the similar role here as the Iwasawa algebra in Hida's theory. There are two actions of $R^{\ps}_p$ on the completed homology: one comes from the associated Galois pseudo-representation over the Hecke algebra, the Galois side; the other one comes from the action of $\GL_2(\Q_p)$, the spectral side. More precisely, by the work of Pa\v{s}k\={u}nas, we know that $R^{\ps}_p$ is in fact a component of the Bernstein centre of certain category of representations of $\GL_2(\Q_p)$ (Theorem 1.5 of \cite{Pas13}). Our main result is that both actions are the \textit{same} (Theorem \ref{lgc}). We note that this result is also obtained independently by Pa\v{s}k\={u}nas in \cite{Pas18} by similar methods.

This simple equality has surprisingly many important corollaries. For example, 
\begin{cor}
$\T$ has dimension at least $1+2$.
\end{cor}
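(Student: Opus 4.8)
The plan is to transport to $\T$, via the local--global compatibility of Theorem \ref{lgc}, the lower bound $\dim R^{\ps}_p\geq 1+2$ for the local pseudo-deformation ring (which plays here the role the Iwasawa algebra plays in Hida theory, and whose Krull dimension is computed in \cite{Pas13}). Recall there are two $R^{\ps}_p$-module structures on the (suitably localized) completed homology $\widetilde H$: the \emph{Galois-side} one, coming from the pseudo-representation carried by $\T$ restricted to a decomposition group at $p$ --- this is a local $\cO$-algebra homomorphism $R^{\ps}_p\to\T$, and the action factors through $\T$ by construction --- and the \emph{spectral-side} one, coming from the $\GL_2(\Q_p)$-action on $\widetilde H$ together with Pa\v{s}k\={u}nas's identification of $R^{\ps}_p$ with a block of the Bernstein centre of the relevant category of representations (Theorem 1.5 of \cite{Pas13}).

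By Theorem \ref{lgc} these two actions agree, so the spectral action $R^{\ps}_p\to\End_\cO(\widetilde H)$ factors as $R^{\ps}_p\to\T\to\End_\cO(\widetilde H)$. The next step is to invoke Pa\v{s}k\={u}nas's structure theory to see that this spectral action is faithful --- indeed that $\widetilde H$, after localization and after passing to the appropriate torsion-free quotient or Pontryagin dual, is a faithfully flat $R^{\ps}_p$-module, so that its support over $R^{\ps}_p$ is all of $\Spec R^{\ps}_p$. Granting this, the composite $R^{\ps}_p\to\T\to\End_\cO(\widetilde H)$ is injective, hence $R^{\ps}_p\hookrightarrow\T$. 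Finally, since $\T$ is a quotient of a global pseudo-deformation ring and the latter is topologically finitely generated over $R^{\ps}_p$ (the auxiliary ramification being supported on a finite set of primes away from $p$), one obtains a surjection $R^{\ps}_p[[x_1,\dots,x_N]]\twoheadrightarrow\T$ compatible with the inclusion $R^{\ps}_p\hookrightarrow\T$, and a routine dimension estimate then yields $\dim\T\geq\dim R^{\ps}_p\geq 1+2$.

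The main obstacle is the faithfulness input: one must know that completed homology is large enough on the automorphic side to detect all of the purely local ring $R^{\ps}_p$. This is precisely where Pa\v{s}k\={u}nas's detailed analysis of the category of locally admissible representations of $\GL_2(\Q_p)$ with fixed central character, and of its Bernstein centre, is indispensable; one also has to keep track of the fixed-determinant versus fixed-central-character normalizations and of which object --- completed homology, its maximal torsion-free quotient, or its Pontryagin dual --- one actually runs the argument through. Everything else, the factorization of actions and the final dimension count, is formal once Theorem \ref{lgc} is available.
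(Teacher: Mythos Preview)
Your factorization $R^{\ps}_p\to\T\to\End_\cO(\mm)$ via Theorem \ref{lgc} is correct, and so is the finiteness of $\T$ over $R^{\ps}_p$ (corollary \ref{mclgc}). The gap is the faithfulness step. Pa\v{s}k\={u}nas's theory is purely local: it identifies $R^{\ps}_p$ with the centre of $E_{\kB_\km}$ and shows $E_{\kB_\km}$ is finite over it, but it says nothing about whether the \emph{global} object $M_\psi(U^p)_\km$ (or $\mm$) is faithful, let alone faithfully flat, over $R^{\ps}_p$. That would amount to knowing that every local pseudo-character at $p$ globalizes to a point of $\T$ --- which is essentially what you are trying to prove. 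You correctly flag faithfulness as ``the main obstacle'' but then assert that \cite{Pas13} supplies it; it does not, and no amount of bookkeeping with duals or torsion-free quotients closes this gap.

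The paper's argument is genuinely different and provides the missing global input through Gelfand--Kirillov dimensions. The global fact is that $M_\psi(U^p)_\km/\varpi$ is \emph{free} over $\Lambda=\F[[PK_n]]$ (from proposition \ref{chproj}), hence has G--K dimension $3[F:\Q]$. The local fact extracted from Pa\v{s}k\={u}nas is a fibre-dimension bound (proposition \ref{df}): for any finitely generated $E_{\kB_\km}/(\varpi)$-module $N$ one has $\dim_{R\Lambda}(N\otimes_{E_{\kB_\km}}P_{\kB_\km})\leq\dim_{R^{\ps,\psi\varepsilon}_p}N+[F:\Q]$, ultimately because irreducible mod-$p$ representations of $\GL_2(\Q_p)$ have G--K dimension at most $1$. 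For a minimal prime $\kp$ of $\T$, taking $N$ to be (the image of) $\mm[\kp]/\varpi$, the freeness of $M/\varpi$ forces $N\otimes_{E_{\kB_\km}} P_{\kB_\km}$ to still have G--K dimension $3[F:\Q]$; the formula then gives $\dim_{R^{\ps,\psi\varepsilon}_p}N\geq 2[F:\Q]$, and since this action factors through $\T/(\kp,\varpi)$ one concludes $\dim\T/\kp\geq 1+2[F:\Q]$. The heuristic ``total $\leq$ base $+$ fibre'' is what replaces your hoped-for faithfulness.
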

In general, under certain assumptions, this will be $1+2[F:\Q]$ if we are working with some totally real field $F$ in which $p$ completely splits. We refer to Theorem \ref{dh} for a precise statement. Also we get the classicality result (corollary \ref{classicality}) as we need in the second step of the whole argument. For the patching argument: the local-global compatibility result implies that a construction in the work of Pa\v{s}k\={u}nas turns the completed homology into a finitely generated faithful $\T$-module (corollary \ref{mclgc}). Thus we can apply the usual patching argument to this module and everything works well again.

A local-global compatibility conjecture for completed cohomology first appeared in Emerton's work \cite{Eme1}, \cite{Eme06} attacking the Fontaine-Mazur conjecture. Using our description of the action of the Bernstein centre on the completed cohomology, we can prove his conjecture when the Galois representation at $p$ is irreducible. To prove our local-global compatibility result, we follow the strategy of Emerton: the density result for crystalline points reduces ourselves to the crystalline case, which follows from the result of Berger-Breuil \cite{BB10} and classical local-global compatibility.

In view of the main results of \cite{SW01}, \cite{Kis09a}, \cite{HT15} and this paper, one missing case of conjecture \ref{FMconj} is when $\rho|_{\Gal(\overbar{\Q_p}/\Q_p)}$ is irreducible and $\bar\rho$ is irreducible but becomes reducible when restricted to ${\Gal(\bar{\Q}/\Q(\zeta_p))}$. Following \cite{SW01}, we also include a proof of this case at the end of this paper (section \ref{FMcitric2}). Hence combining the work of Skinner-Wiles, Kisin, Hu-Tan, we can prove the Fontaine-Mazur conjecture completely in the regular case when $p\geq 5$. More precisely, we have

\begin{thm}
Let $p$ be an odd prime and $\rho$ be as in conjecture \ref{FMconj}. Assume that
\begin{itemize}
\item $\rho|_{\Gal(\overbar{\Q_p}/\Q_p)}$ has distinct Hodge-Tate weights,
\item If $p=3$, the semi-simplification of $\bar{\rho}|_{\Gal(\overbar{\Q_p}/\Q_p)}$ is not of the form $\eta\oplus\eta\omega$ for some character $\eta$. 
\end{itemize} 
Then $\rho$ comes from a cuspidal eigenform up to twist.
\end{thm}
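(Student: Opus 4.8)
The plan is to deduce this final theorem by combining the paper's main new result (the second theorem in the introduction, call it the ``reducible-$\bar\rho$ theorem'') with the previously known results of Skinner--Wiles, Kisin, and Hu--Tan that cover the residually irreducible case, plus the promised new material in Section~\ref{FMcitric2} treating the delicate case where $\bar\rho$ is irreducible but becomes reducible over $\Gal(\bar\Q/\Q(\zeta_p))$. So the first step is simply a case division on the residual representation $\bar\rho$ attached to $\rho$ (after choosing a stable lattice; the semisimplification $\bar\rho^{ss}$ is well-defined): either (i) $\bar\rho^{ss}$ is a sum of two characters, or (ii) $\bar\rho$ is irreducible. The hypotheses in the statement — distinct Hodge--Tate weights at $p$, oddness, finite ramification, potential semistability at $p$ — are exactly those needed to invoke each of the input theorems, so the work is to check that in each case the relevant input applies.

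In case (i), I would invoke the reducible-$\bar\rho$ theorem directly. Its hypotheses are: distinct Hodge--Tate weights (given), $\bar\rho^{ss}=\bar\chi_1\oplus\bar\chi_2$ (this is case (i)), and, when $p=3$, the condition $\frac{\bar\chi_1}{\bar\chi_2}\big|_{\Gal(\overbar{\Q_p}/\Q_p)}\neq\omega$. I must check this last condition matches the hypothesis in the final theorem, namely that for $p=3$, $\bar\rho^{ss}|_{\Gal(\overbar{\Q_p}/\Q_p)}$ is not of the form $\eta\oplus\eta\omega$. Indeed $\frac{\bar\chi_1}{\bar\chi_2}|_{\Gal(\overbar{\Q_p}/\Q_p)}=\omega$ happens precisely when $\{\bar\chi_1|_{\Gal(\overbar{\Q_p}/\Q_p)},\bar\chi_2|_{\Gal(\overbar{\Q_p}/\Q_p)}\}=\{\eta,\eta\omega\}$ for $\eta=\bar\chi_2|_{\Gal(\overbar{\Q_p}/\Q_p)}$, which is exactly the excluded shape (using that $\omega^{-1}=\omega$ is impossible to confuse here since for $p=3$ one still has $\omega\neq\omega^{-1}$ unless $\omega^2=1$, but $\omega$ has order $p-1=2$, so $\omega=\omega^{-1}$ — hence one should phrase the excluded set symmetrically, and the statement $\eta\oplus\eta\omega$ already captures the symmetric condition). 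So case (i) is covered. Note that the reducible-$\bar\rho$ theorem already subsumes both the Skinner--Wiles ordinary case with $\frac{\bar\chi_1}{\bar\chi_2}|_{\Gal(\overbar{\Q_p}/\Q_p)}\neq\mathbf 1$ and the new non-ordinary result, as well as the ordinary $\frac{\bar\chi_1}{\bar\chi_2}|_{\Gal(\overbar{\Q_p}/\Q_p)}=\mathbf 1$ case proved in Section~\ref{AgoroS-W2}, so no further splitting is needed within case (i).

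In case (ii), $\bar\rho$ is irreducible, and I split further: (ii-a) $\bar\rho|_{\Gal(\bar\Q/\Q(\zeta_p))}$ is (absolutely) irreducible, and (ii-b) $\bar\rho$ is irreducible but $\bar\rho|_{\Gal(\bar\Q/\Q(\zeta_p))}$ is reducible. Case (ii-a) is handled by the work of Kisin \cite{Kis09a} (and Emerton \cite{Eme1}); one checks that the ``distinct Hodge--Tate weights'' hypothesis implies in particular the regularity/distinctness conditions those works require, and the potential semistability, oddness and finite ramification are in the hypotheses of conjecture~\ref{FMconj}, so $\rho$ arises from a cuspidal eigenform. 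For the $p=3$ subtlety: when $\bar\rho|_{\Gal(\bar\Q/\Q(\zeta_p))}$ is irreducible the Kisin/Emerton machinery applies regardless of the shape of $\bar\rho|_{\Gal(\overbar{\Q_p}/\Q_p)}$, so the extra $p=3$ hypothesis is not needed here. Case (ii-b) is exactly the case treated in Section~\ref{FMcitric2} of this paper, following \cite{SW01} and using \cite{HT15}; I would simply cite the theorem proved there. Assembling: in every case $\rho$ comes from a cuspidal eigenform, which is the assertion.

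The main obstacle — really the only nontrivial point in this final deduction, since the heavy lifting is in the earlier theorems — is to verify carefully that the hypotheses of the final theorem are sufficient to trigger each input theorem, and in particular to reconcile the two slightly different forms of the $p=3$ condition ($\frac{\bar\chi_1}{\bar\chi_2}|_{\Gal(\overbar{\Q_p}/\Q_p)}\neq\omega$ versus ``not $\eta\oplus\eta\omega$''), keeping track of the fact that $\omega$ has order $p-1$ so for $p=3$ it is quadratic and equals its own inverse. One should also confirm that there is no overlap-gap: the union of cases (i), (ii-a), (ii-b) is genuinely exhaustive — this uses only that a $2$-dimensional representation over a field is either irreducible or has semisimplification a sum of two characters — and that in the overlap regions (e.g.\ an ordinary $\rho$ with reducible residual representation) the conclusions of the applicable theorems agree, which they do since each asserts the same thing, namely cuspidal modularity of $\rho$.
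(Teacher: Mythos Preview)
Your approach is correct and matches the paper's implicit argument: the final theorem is assembled by case division on $\bar\rho^{ss}$, invoking the reducible-$\bar\rho$ theorem for case (i) and the combination of \cite{SW01}, \cite{Kis09a}, \cite{HT15}, and the paper's Sections~\ref{FMcitric1}--\ref{FMcitric2} for case (ii). Your verification that the two forms of the $p=3$ condition agree in case (i) is fine (and yes, for $p=3$ one has $\omega=\omega^{-1}$, so the condition is automatically symmetric).

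However, your claim in case (ii-a) that ``the Kisin/Emerton machinery applies regardless of the shape of $\bar\rho|_{\Gal(\overbar{\Q_p}/\Q_p)}$, so the extra $p=3$ hypothesis is not needed here'' is wrong. Kisin's theorem, Hu--Tan's extension, and the paper's own Theorems~\ref{ThmirredA} and~\ref{ThmirredB} all carry a local restriction on $\bar\rho|_{G_{\Q_p}}$ when $p=3$: they exclude precisely the situation where the local semi-simplification is $\eta\oplus\eta\omega$ (equivalently, where $\bar\rho|_{G_{\Q_p}}$ is a twist of an extension between $\mathbf{1}$ and $\omega$). So the $p=3$ hypothesis of the final theorem is needed in case (ii) just as in case (i); your proof still goes through once you \emph{use} the hypothesis rather than declaring it superfluous. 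Two smaller points: Theorem~\ref{ThmirredB} (Section~\ref{FMcitric2}) assumes $\rho|_{G_{\Q_p}}$ is irreducible, so the ordinary subcase of (ii) is handled by \cite{SW01} (and \cite{Kis09a}, \cite{HT15} when $\bar\rho|_{G_{\Q(\zeta_p)}}$ is irreducible), not by Section~\ref{FMcitric2}; and the residual modularity hypothesis in Theorems~\ref{ThmirredA}--\ref{ThmirredB} is supplied over $\Q$ by Khare--Wintenberger's proof of Serre's conjecture, which you should cite.
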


\begin{rem}
In his thesis \cite{Tung18}, Shen-Ning Tung proved this result under the usual Taylor-Wiles condition \textit{without} our assumption when $p=3$. In a subsequent paper  \cite{Tung19}, he established similar results when $p=2$. More precisely, he independently proved some local-global compatibility results similar to our results in \ref{sblgc} and deduced some finiteness results after applying Colmez's functors to the completed cohomology. Using these results, he established new cases of Breuil-M\'{e}zard conjectures and reduced to the framework in Kisin's work.
\end{rem}


The paper is organized as follows. We first recall some properties of deformation rings of pseudo-representations in section \ref{Srop-r}. This will only be used in our patching argument at a one-dimensional prime. In section \ref{L-gc}, we introduce completed cohomology and state and prove our local-global compatibility result. Section \ref{Paao-dp} is rather technical. We give in detail our patching argument for the completed homology at a one-dimensional prime. In section \ref{AgoroS-W1}, we follow the strategy of Skinner-Wiles and generalize their work into the form we are using. The missing case in the ordinary case will be treated in section \ref{AgoroS-W2}. In section \ref{Tmt}, we put all these results together and prove our main result in the residually reducible case. In the last two sections \ref{FMcitric1}, \ref{FMcitric2}, we will use our patching argument to obtain results in the residually irreducible, non-ordinary case.

Even for people who are only interested in the residually reducible case, we strongly recommend reading section \ref{FMcitric1} first to get an idea of how our patching argument works for the completed homology (under the usual Taylor-Wiles condition). Note that this will give a new proof of Kisin and Emerton's results in the non-ordinary case.

\subsection*{Acknowledgement} 
Part of this paper is the author's Ph.D thesis at Princeton University. I would like to thank my advisor Richard Taylor for introducing me to this area, especially explaining the work of Skinner-Wiles, and for his constant encouragement to work on this topic. I would also like to thank Christopher Skinner for sharing his ideas on his thesis work, Ziquan Zhuang for his help on commutative algebra, and Yongquan Hu, Bao V. Le Hung for some useful discussions. Finally, I would like to thank the anonymous referees for their careful reading and many helpful comments. 

\subsection*{Notation} 
Throughout the paper, we fix an odd prime $p$, a finite extension $E$ of $\Q_p$ and a uniformizer $\varpi$ of $E$. We denote its ring of integer by $\cO$ and residue field by $\F$. Also we fix an embedding of $E$ into $\overbar{\Q_p}$, some algebraic closure of $\Q_p$ and an isomorphism $\iota_p:\overbar{\Q_p}\simeq \bC$.

We use $\cOf$ to denote the category of Artinian local $\cO$-algebras with residue field $\F$ and use $C_\cO$ to denote the category of topological local $\cO$-algebras which are isomorphic to inverse limits of objects of $\cOf$.

Following \cite{Pas13}, we introduce some categories. See  \S 2 of \cite{Pas13} for more precise definitions. Let $G$ be a topological group which is locally pro-$p$ and $(A,\km)$ be a complete local Noetherian $\cO$-algebra with residue field $\F$. We denote by $\mathrm{Mod}_G(A)$ the category of $A[G]$-modules, $\mathrm{Mod}_G^{\mathrm{sm}}(A)$ the full subcategory of $\mathrm{Mod}_G(A)$ consisting of smooth $G$-representations, i.e. any element in the representation is killed by some power of $\km$ and fixed by some open subgroup of $G$. We also denote by $\mathrm{Mod}^{\mathrm{l\,fin}}_G(A)$ the full subcategory of $\mathrm{Mod}_G^{\mathrm{sm}}(A)$ consisting of representations locally of finite length and $\mathrm{Mod}^{\mathrm{l\, adm}}_G(A)$ the full subcategory consisting of locally admissible representations. Let $Z$ be the centre of $G$ and $\zeta:Z\to A^\times$ be a continuous character. Then we can define $\mathrm{Mod}_{G,\zeta}^{\mathrm{sm}}(A),\mathrm{Mod}^{\mathrm{l\,fin}}_{G,\zeta}(A),\mathrm{Mod}^{\mathrm{l\, adm}}_{G,\zeta}(A)$ similarly as subcategories with central character $\zeta$. All representations in these categories are considered to have discrete topology. 

Let $H$ be a compact open subgroup of $G$ and $A[[H]]$ be the completed group algebra of $H$. Let $\mathrm{Mod}_G^{\mathrm{pro\,aug}}(A)$ be the category of profinite linearly topological $A[[H]]$-modules with action of $A[G]$ such that both agree on $A[H]$, with morphisms $G$-equivariant continuous homomorphisms of topological
$A[[H]]$-modules. This is independent of the choice of $H$. Taking Pontryagin duals (with the discrete topology on $E/\cO$):
\[M\mapsto M^\vee:=\Hom^\mathrm{cont}_\cO(M,E/\cO)\mbox{ with the compact-open topology}\]
induces an anti-equivalence of categories between $\mathrm{Mod}_G^{\mathrm{sm}}(A)$ and $\mathrm{Mod}_G^{\mathrm{pro\,aug}}(A)$. There is a natural isomorphism between $M^{\vee\vee}$ and $M$. Under this anti-equivalence, we define $\kC_{G,\zeta}(\cO)$ to be the full subcategory of $\mathrm{Mod}_G^{\mathrm{pro\,aug}}(A)$ with objects isomorphic to $\pi^\vee$ for some object $\pi$ in $\mathrm{Mod}^{\mathrm{l\, adm}}_{G,\zeta}(A)$. Note that this is the $\kC(\cO)$ used in \S5 \cite{Pas13}.

For a prime ideal $\kp$ of a commutative ring $R$, we denote its residue field by $k(\kp)$. Let $R_\kp$ be the localization at $\kp$. We write $\hat{R_\kp}$ as its $\kp$-adic completion. For an ideal $I$, we denote its height by $\hht(I)$ and dimension of $R/I$ by $\dim R/I$.

For a representation $\rho:\Gamma\to\GL_n(R)$, we use $\tr(\rho)$ to denote its trace.

Suppose $F$ is a number field with maximal order $O_F$. For any finite place $v$, we will write $F_v$ (resp. $O_{F_v}$) for the completion of $F$ (resp. $O_F$) at $v$, $\varpi_v$ for a uniformizer of $F_v$, $k(v)$ for the residue field, $N(v)$ for the norm of $v$ (in $\Q$),  $G_{F_v}$ for a decomposition group above $v$, $I_{F_v}$ for its inertia group and $\Frob_v$ for a geometric Frobenius element in $G_{k(v)}:=G_{F_v}/I_{F_v}$. By abuse of notation, we will also fix a lifting of $\Frob_v$ in $G_{F_v}$. If $l$ is a rational prime, then we denote $O_F\otimes_\Z \Z_l$ by $O_{F,l}$. The adele ring of $F$ will be denoted by $\A_F$ and $|\cdot|_{\A_F}:\A_F\to\R$ denotes the adelic norm map. Suppose $S$ is a finite set of places of $F$. We use $\A_F^S$ to denote the set of adeles away from $S$ and $G_{F,S}$ for the Galois group of the maximal extension of $F$ unramified outside $S$ and all infinite places. We will view $(\A_F^S)^\times$ as a subgroup of $\A_F^\times$ with elements having $1$ at $S$. The absolute Galois group of $F$ is denoted by $G_F=\Gal(\overbar{F}/F)$. 

We use $\varepsilon$ to denote the $p$-adic cyclotomic character and $\omega$ to denote the mod $p$ cyclotomic character. Our convention for the Hodge-Tate weight of $\varepsilon$ is $-1$. We normalize local class field theory by sending uniformizers to \textit{geometric} Frobenius elements.

For a finite set $Q$, we denote the number of elements in $Q$ by $|Q|$.

\section{Some results on pseudo-representations} \label{Srop-r}
In this section, we collect some results on the $2$-dimensional pseudo-representation and its deformations. These results will be useful in our study of the patching argument at a one-dimensional prime.

\subsection{Pseudo-representations}
\begin{para} \label{defnPr}
Suppose $\Gamma$ is a profinite group and $R$ is a topological commutative ring with $1$ in which $2$ is invertible. A $2$-dimensional \textit{pseudo-representation} (or \textit{pseudo-character}) of $\Gamma$ over $R$ is a continuous function $T:\Gamma\to R$ which behaves like a `trace' in the sense that (see \cite{Ta91})
\begin{enumerate}
\item $T(1)=2$, where $1\in \Gamma$ is the identity element. 
\item $T(\sigma\tau)=T(\tau\sigma)$ for any $\sigma,\tau\in\Gamma$.
\item $T(\gamma\delta\eta)+T(\gamma\eta\delta)-T(\gamma\eta)T(\delta)-T(\eta\delta)T(\gamma)-T(\delta\gamma)T(\eta)+T(\gamma)T(\delta)T(\eta)=0$, for any $\delta,\gamma,\eta\in\Gamma$.
\end{enumerate}

We define the determinant $\det(T)$ of a pseudo-representation $T$ by
\[\det(T)(\sigma)=\frac{1}{2}(T(\sigma)^2-T(\sigma^2)),~\sigma\in\Gamma.\]
\end{para}

\begin{para}[\bf Assumption 1: Mazur's finiteness condition $\Phi_p$] \label{Phip}
We assume that $\Gamma$ satisfies Mazur's finiteness condition $\Phi_p$ in \cite{Maz87}, i.e. for every open subgroup $\Gamma_0$ of $\Gamma$, the maximal pro-$p$ quotient of $\Gamma_0$ is topologically finitely generated.
\end{para}

\begin{para}[\bf Assumption 2] \label{assco}
Though the following assumption on $\Gamma$ and $T$ can be weakened, it will simplify a lot of discussions below. We will keep this assumption in this section.
\begin{itemize}
\item There exists an order $2$ element $\sigma^*\in\Gamma$ such that $T(\sigma^*)=0$.
\end{itemize}

In our case, $\Gamma$ will be certain Galois group of a totally real field and $\sigma^*$ can be chosen as any complex conjugation in $\Gamma$. 
\end{para}

\begin{para} \label{adx}
Given a pseudo-representation $T$ which satisfies \ref{assco} and $\sigma,\tau\in\Gamma$, we can define:
\begin{itemize}
\item $a(\sigma)=\frac{1}{2}(T(\sigma^*\sigma)+T(\sigma))$.
\item $d(\sigma)=\frac{1}{2}(-T(\sigma^*\sigma)+T(\sigma))=T(\sigma)-a(\sigma)$.
\item $x(\sigma,\tau)=a(\sigma\tau)-a(\sigma)a(\tau)$.
\end{itemize}

They satisfy the following identities appeared in the original definition of Wiles \cite{Wi88}.
\begin{enumerate}
\item $x(\sigma,\tau)=d(\tau\sigma)-d(\tau)d(\sigma)$.
\item $x(\sigma\tau,\delta)=a(\sigma)x(\tau,\delta)+x(\sigma,\delta)d(\tau)$.
\item $x(\sigma,\tau\delta)=a(\delta)x(\sigma,\tau)+x(\sigma,\delta)d(\tau)$.
\item $x(\alpha,\beta)x(\sigma,\tau)=x(\alpha,\tau)x(\sigma,\beta)$.
\end{enumerate}
Since there seems to be no good reference for these identities, we sketch a proof here.
\end{para}

\begin{proof}
Take $\gamma=\sigma,\delta=\tau,\eta=\sigma^*$ in the last axiom of the definition of pseudo-representation. This will give us the first identity.

Write $A(\gamma,\delta,\eta)=T(\gamma\delta\eta)+T(\gamma\eta\delta)-T(\gamma\eta)T(\delta)-T(\eta\delta)T(\gamma)-T(\delta\gamma)T(\eta)+T(\gamma)T(\delta)T(\eta)$. Consider 
\[A(\sigma,\tau,\delta\sigma^*)+A(\tau,\delta,\sigma^*\sigma)-A(\tau,\sigma\delta,\sigma^*)=0.\] 
A direct computation shows that this gives
\begin{eqnarray*}2T(\sigma\tau\delta\sigma^*)=T(\sigma\tau)T(\delta\sigma^*)+T(\tau\delta)T(\sigma\sigma^*)+T(\sigma\delta)T(\tau\sigma^*)+T(\tau)T(\sigma\delta\sigma^*)+\\T(\sigma)T(\tau\delta\sigma^*)+T(\delta)T(\tau\delta\sigma^*)-T(\sigma)T(\tau)T(\delta\sigma^*)-T(\tau)T(\delta)T(\sigma\sigma^*).
\end{eqnarray*}
Replacing $\delta$ by $\delta\sigma^*$ and taking the sum of this equality with the equality above, we obtain the second identity. The third identity can be proved similarly.

The last identity follows by applying the second and third identities repeatedly:
\begin{eqnarray*}
x(\alpha,\beta)x(\sigma,\tau)&=&(a(\alpha\beta)-a(\alpha)a(\beta))x(\sigma,\tau) \\
&=& (x(\alpha\beta\sigma,\tau)-x(\alpha\beta,\tau)d(\sigma))-a(\alpha)a(\beta)x(\sigma,\tau)\\
&=& (x(\alpha,\tau)d(\beta\sigma)+a(\alpha)x(\beta\sigma,\tau))-x(\alpha\beta,\tau)d(\sigma)-a(\alpha)a(\beta)x(\sigma,\tau)\\
&=& (x(\alpha,\tau)x(\sigma,\beta)+x(\alpha,\tau)d(\beta)d(\sigma))+a(\alpha)x(\beta\sigma,\tau)-x(\alpha\beta,\tau)d(\sigma)\\&&-a(\alpha)a(\beta)x(\sigma,\tau)
\end{eqnarray*}
Applying the second identity to $x(\beta\sigma,\tau),x(\alpha\beta,\tau)$, we get the desired identity.
\end{proof}

\begin{rem} \label{adrep}
If $\rho:\Gamma\to\GL_2(R)$ is a continuous representation, then $\tr(\rho)$ is a pseudo-representation. If $\rho(\sigma^*)=\begin{pmatrix} 1 & 0\\ 0 & -1\end{pmatrix}$, then $\rho(\sigma)=\begin{pmatrix} a(\sigma) & b(\sigma) \\ c(\sigma) & d(\sigma)\end{pmatrix}$ with $a(\sigma),d(\sigma)$ defined above and $x(\sigma,\tau)=b(\sigma)c(\tau)$.
\end{rem}

\begin{para}\label{tar}
Now assume $R$ is either a field or a discrete valuation ring. We can attach a $2$-dimensional representation $\rho$ of $\Gamma$ over $R$ with trace $T$. There are two cases:
\begin{enumerate}
\item All $x(\sigma,\tau)=0$. Then $a,d:\Gamma\to R^\times$ are two characters. Define 
\[\rho(\sigma)=\begin{pmatrix} a(\sigma) &0 \\ 0 & d(\sigma) \end{pmatrix}.\]
We call this case \textbf{reducible}. Note that $R$ can be a general ring in this case.
\item $x(\sigma,\tau)\neq 0$ for some $\sigma,\tau$. Choose $\sigma_0,\tau_0$ such that $
\frac{x(\sigma,\tau)}{x(\sigma_0,\tau_0)}\in R$ for any $\sigma,\tau$. Define
\[\rho(\sigma)=\begin{pmatrix} a(\sigma) & \frac{x(\sigma,\tau_0)}{x(\sigma_0,\tau_0)} \\ x(\sigma_0,\sigma) & d(\sigma) \end{pmatrix}.\]
One can check that this really defines a representation of $\Gamma$ using the identities above. It is clear that $\rho$ is absolutely irreducible. We call this case \textbf{irreducible}. Note that in this case, if $R$ is a field, then $\rho$ is unique up to conjugation.
\end{enumerate}
The determinant of $\rho$ is nothing but $\mathrm{det}(T)$ we defined before.
\end{para}

\subsection{Deformation rings of pseudo-representations I}
\begin{para}
Let $T_\F:\Gamma\to\F$ be a $2$-dimensional pseudo-representation. Then the functor sending each object $R$ of $\cOf$ to the set of pseudo-representations $T:\Gamma\to R$ which lift $T_\F$ is prorepresented by a complete Noetherian local $\cO$-algebra $R^{\ps}_{T_\F}$ (lemma 1.4.2 of \cite{Kis09a}). Write $T^{univ}:\Gamma\to R^{\ps}_{T_\F}$ as the universal pseudo-character. A standard argument shows that $R^{\ps}_{T_\F}$ is topologically generated by $T^{univ}(\Gamma)$. By Theorem 1 of \cite{Ta91}, there exists a finite set $S\subset\Gamma$ such that the values of $T^{univ}$ on $S$ determine $T^{univ}$. Hence $T^{univ}(S)$ topologically generates $R^{\ps}_{T_\F}$. Although both cited results require $\Gamma$ to be topologically finitely generated, it is enough to only assume the finiteness condition $\Phi_p$ by Corollary A.3 of \cite{Pas10}.

If $T_\F$ is irreducible, write $\bar{\rho}$ as its associated $2$-dimensional representation. Then it is well-known that the natural map $R^{\ps}_{T_\F}\to R_{\bar{\rho}}$ by evaluating the trace is an isomorphism. Here $R_{\bar{\rho}}$ is the universal deformation ring of $\bar{\rho}$. 

From now on, we will assume our $T_\F$ is reducible.

Suppose $\kp$ is an one-dimensional prime of $R^{\ps}_{T_\F}$. Consider $T(\kp)\defeq T^{univ}\otimes \kkp:\Gamma\to \kkp$. Let $\rho(\kp):\Gamma\to \GL_2(k(\kp))$ be a representation with trace $T(\kp)$ as we discussed in the previous subsection. In this subsection, we assume  
\begin{itemize}
\item$\kkp=E$. 
\item $\rho(\kp)$ is irreducible (hence absolutely irreducible).
\end{itemize}
We want to give a moduli interpretation of $\widehat{(R^{\ps}_{T_\F})_\kp}$, the $\kp$-adic completion of $(R^{\ps}_{T_\F})_\kp$.

Consider the functor $D_\kp$ from the category of Artinian local $E$-algebras with residue field $E$ (equipped with $p$-adic topology) to the category of sets which sends $A$ to the set of $2$-dimensional pseudo-representations over $A$ lifting $T(\kp)$. The main result here is
\end{para}

\begin{prop} \label{Dkp}
The deformation problem $D_\kp$ is pro-represented by $\widehat{(R^{\ps}_{T_\F})_\kp}$ with universal pseudo-representation $\Gamma \stackrel{T^{univ}}{\longrightarrow} R^{\ps}_{T_\F}\to \widehat{(R^{\ps}_{T_\F})_\kp}$. In fact, the same result holds without assuming $\rho(\kp)$ is irreducible.
\end{prop}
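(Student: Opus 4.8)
The plan is to show that $\widehat{(R^{\ps}_{T_\F})_\kp}$ represents $D_\kp$ by checking the two natural maps between it and the (pro-)representing object are mutually inverse, using the universal property of $R^{\ps}_{T_\F}$ and the standard fact that the $\kp$-adic completion of a localization of a Noetherian local ring at a prime with residue field $E$ carries the universal deformation for the associated equal-characteristic-zero deformation problem. Concretely, first I would produce the candidate universal object: the composite $T^{univ}\colon\Gamma\to R^{\ps}_{T_\F}\to\widehat{(R^{\ps}_{T_\F})_\kp}$ is a continuous $2$-dimensional pseudo-representation over a pro-Artinian $E$-algebra reducing to $T(\kp)$ modulo the maximal ideal, hence it defines an element of $\varprojlim D_\kp$ evaluated on the Artinian quotients $\widehat{(R^{\ps}_{T_\F})_\kp}/\km^n$; this gives a morphism of functors $\Hom_{E\text{-alg}}(\widehat{(R^{\ps}_{T_\F})_\kp},-)\to D_\kp$ on the category of Artinian local $E$-algebras with residue field $E$.

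Next I would construct the inverse. Given an Artinian local $E$-algebra $A$ with residue field $E$ and a pseudo-representation $T_A\colon\Gamma\to A$ lifting $T(\kp)$, one chooses an $\cO$-subalgebra: since $\Gamma$ is topologically finitely generated and, by Taylor's theorem, $T^{univ}$ is determined by its values on a finite set $S\subset\Gamma$, the values $T_A(S)$ generate a complete local Noetherian $\cO$-subalgebra $A_0\subset A$ with residue field $\F$ (the reduction of $T(\kp)$ mod $\km_E$ lands in $\F$ after possibly enlarging $S$ so that it detects reducibility), such that $T_A$ takes values in $A_0$ and lifts $T_\F$. The universal property of $R^{\ps}_{T_\F}$ yields a local $\cO$-algebra map $R^{\ps}_{T_\F}\to A_0\subset A$; this map sends $\kp$ into $\km_A$ (it is compatible with $T(\kp)$ by construction), so it factors through $(R^{\ps}_{T_\F})_\kp$ and then, since $A$ is Artinian with residue field $E$, through the $\kp$-adic completion $\widehat{(R^{\ps}_{T_\F})_\kp}$. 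This produces the required $E$-algebra homomorphism, and one checks the two constructions are mutually inverse by unwinding that both sides are controlled by the values of the pseudo-representation on $S$.

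The main obstacle I expect is the bookkeeping in the construction of $A_0$ and in verifying that the resulting map $R^{\ps}_{T_\F}\to A$ really does land in the localization-then-completion, i.e. that the kernel is exactly cut out by $\kp$-power behavior; this is where one must be careful that the subring generated by $T_A(S)$ over $\cO$ is Noetherian local with the correct residue field and that reduction mod $\km_E$ of $T(\kp)$ genuinely recovers $T_\F$ (not merely a pseudo-representation with the same semisimplification). The final sentence of the proposition — that irreducibility of $\rho(\kp)$ is not needed — falls out automatically from this argument, since nowhere did we use that $\rho(\kp)$ is irreducible: the entire construction is phrased in terms of pseudo-representations and the universal property, so the hypothesis can simply be dropped. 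I would remark that the only place irreducibility might have seemed relevant is in attaching $\rho(\kp)$ via the recipe of \ref{tar}, but that recipe is never invoked in the representability argument itself.
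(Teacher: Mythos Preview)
Your proposal is correct and follows essentially the same argument as the paper: pass to the finite local $\cO$-subalgebra $A_0 \subset A$ generated by $T_A(S)$, invoke the universal property of $R^{\ps}_{T_\F}$ to obtain a map into $A_0$, then factor through the localization and $\kp$-adic completion, with uniqueness coming from the fact that $T^{univ}(S)$ topologically generates $R^{\ps}_{T_\F}$. The parenthetical about ``enlarging $S$ so that it detects reducibility'' is unnecessary (the residue field of $A_0$ is $\F$ simply because $R^{\ps}_{T_\F}/\kp \subset \cO$, so $T(\kp)$ already takes values in $\cO$ and reduces to $T_\F$), but this does not affect the argument.
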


\begin{proof}
Given an Artinian local $E$-algebra $A$ with residue field $E$ and a lifting $T_A$ of $T(\kp)$ to $A$, we need to define a map from $\widehat{(R^{\ps}_{T_\F})_\kp}$ to $A$.  Let $S$ be a finite subset of $\Gamma$ such that $T^{univ}(S)$ topologically generates $R^{\ps}_{T_\F}$. Consider the $\cO$-algebra $A_0$ generated by $T_A(S)$ in $A$. It is easy to see that $A_0$ is a finite local $\cO$-algebra. Moreover $T_A$ factors through $A_0$. Hence there exists a map $R^{\ps}_{T_\F}\to A_0$ by the universal property. This map can be extended to a map $\widehat{(R^{\ps}_{T_\F})_\kp}\to A$. It is unique since the values of $T_A$ on $S$ determine the map. Therefore $\widehat{(R^{\ps}_{T_\F})_\kp}$ pro-represents $D_\kp$. It is clear that we don't need the assumption that $\rho(\kp)$ is irreducible.
\end{proof}

\begin{cor} \label{dr0}
Let $D_{\rho(\kp)}$ be the functor from the category of Artinian local $E$-algebras with residue field $E$ to the category of sets which sends $A$ to the set of deformations of $\rho(\kp)$ to $A$. Then $D_{\rho(\kp)}$ is pro-represented by $\widehat{(R^{\ps}_{T_\F})_\kp}$.
\end{cor}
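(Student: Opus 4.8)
The plan is to deduce this from Proposition \ref{Dkp} by exhibiting a natural isomorphism of functors between $D_{\rho(\kp)}$ and $D_\kp$. The forward map $D_{\rho(\kp)}\to D_\kp$ is the obvious one: given a deformation $\rho_A$ of $\rho(\kp)$ to an Artinian local $E$-algebra $A$ with residue field $E$, send it to its trace $\tr(\rho_A)$, which is a $2$-dimensional pseudo-representation over $A$ lifting $T(\kp)=\tr(\rho(\kp))$. This is clearly functorial. The content is that this map is a bijection on $A$-points for every such $A$, and then Proposition \ref{Dkp} gives the corollary immediately since $\widehat{(R^{\ps}_{T_\F})_\kp}$ pro-represents $D_\kp$.

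First I would establish surjectivity: given a pseudo-representation $T_A$ over $A$ lifting $T(\kp)$, I need to produce a deformation $\rho_A$ of $\rho(\kp)$ with $\tr(\rho_A)=T_A$. Since $\rho(\kp)$ is absolutely irreducible, this is the standard statement that an absolutely irreducible residual representation lifts uniquely (given the trace) to any Artinian coefficient ring — one can invoke the results on pseudo-representations vs.\ representations in the absolutely irreducible case (e.g.\ Nyssen, Rouquier, or the framework recalled in \S\ref{tar} extended to Artinian rings), normalizing by the idempotent $\sigma^*$ as in Remark \ref{adrep}. Concretely, after conjugating $\rho(\kp)$ so that $\rho(\kp)(\sigma^*)=\mathrm{diag}(1,-1)$, one defines $a,d,x$ from $T_A$ as in \ref{adx}, picks $\sigma_0,\tau_0$ with $x(\sigma_0,\tau_0)\in A^\times$ (possible since its reduction is nonzero, as $\rho(\kp)$ is irreducible, and $A$ is local Artinian), and sets $\rho_A(\sigma)=\begin{pmatrix} a(\sigma) & x(\sigma,\tau_0)x(\sigma_0,\tau_0)^{-1} \\ x(\sigma_0,\sigma) & d(\sigma)\end{pmatrix}$; the identities in \ref{adx} show this is a homomorphism, it lifts $\rho(\kp)$, and its trace is $T_A$.

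For injectivity (and well-definedness of the inverse up to the deformation equivalence), suppose $\rho_A,\rho_A'$ are two deformations of $\rho(\kp)$ with the same trace $T_A$. Because $\rho(\kp)$ is absolutely irreducible, $\rho_A$ and $\rho_A'$ are absolutely irreducible over $A$ in the appropriate sense, and two such representations with equal trace over a local Artinian ring are conjugate by an element of $\widehat{\GL_2(A)}$ reducing to the identity — again the standard Carayol/Serre lemma on lifting the conjugation inductively along the maximal ideal of $A$, using $\Hom$-vanishing from irreducibility of the residual representation and Nakayama. Hence they define the same element of $D_{\rho(\kp)}$. Combining, the trace map is a natural bijection $D_{\rho(\kp)}\xrightarrow{\sim} D_\kp$, and the corollary follows.

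The main obstacle is purely bookkeeping rather than conceptual: one must be slightly careful that the "fields" here are $E$ (not $\F$), so the relevant "residually irreducible implies unique lift" statements must be applied with residue field $E$, and one must check that the explicit matrix construction of \S\ref{tar} genuinely extends from a field or DVR to an arbitrary Artinian local $E$-algebra — the key point being that $x(\sigma_0,\tau_0)$, which is a unit in $E$ (as $\rho(\kp)$ is irreducible), remains a unit in $A$ since $A$ is local with residue field $E$. Once that is noted, everything else is the classical deformation-theoretic dictionary between absolutely irreducible representations and their pseudo-representations, and no genuinely new input beyond Proposition \ref{Dkp} is needed.
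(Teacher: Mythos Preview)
Your proposal is correct and follows exactly the same approach as the paper: the paper's proof is a single sentence stating that the natural transformation $D_{\rho(\kp)}\to D_\kp$ given by taking the trace is an isomorphism because $\rho(\kp)$ is absolutely irreducible. You have simply spelled out the standard details behind this well-known fact.
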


\begin{proof}
The natural transformation from $D_{\rho(\kp)}$ to $D_\kp$ by evaluating the trace is an isomorphism since we assume $\rho(\kp)$ is absolutely irreducible.
\end{proof}

\subsection{Deformation rings of pseudo-representations II}

In this subsection, we are going to prove a similar but more precise result of corollary \ref{dr0} for primes containing $p$. The setup is a bit complicated, but useful for our applications.

\begin{para}[\textbf{Setup}]
We start with the following data:
\begin{itemize}
\item $A=\F[[T]]$, the formal power series ring over $\F$.
\item $\Gamma_0$ is a profinite group which satisfies $\Phi_p$ in \ref{Phip}.
\item $\rho_0:\Gamma_0\to\GL_2(A), \sigma\mapsto \begin{pmatrix}a_0(\sigma) & b_0(\sigma) \\ c_0(\sigma) & d_0(\sigma) \end{pmatrix}$ is a continuous irreducible representation.
\end{itemize}
We assume that
\begin{itemize}
\item the reduction $\bar{\rho_0}$ of $\rho_0$ modulo $T$ has the form  $\begin{pmatrix}\bar{a_0} & \bar{b_0} \\ 0 & \bar{d_0} \end{pmatrix}$.
\item there exist $\sigma_0,\tau_0\in \Gamma_0$ such that $b_0(\sigma_0)=1$ and $c_0(\tau_0)\neq 0$.
\item there exist an order $2$ element $\sigma_0^*\in\Gamma_0$ such that $\rho_0(\sigma_0^*)=\begin{pmatrix}1 & 0 \\ 0 & -1 \end{pmatrix}$.
\end{itemize}

Let $\Gamma$ be another profinite group satisfying $\Phi_p$ with a fixed surjective map 
\[\pi:\Gamma \to \Gamma_0.\] 
Moreover we assume there exists an order $2$ element $\sigma^*\in \Gamma$ mapping to $\sigma_0^*$. In practice, $\Gamma_0$ will be $G_{F,S}$ for some totally real number field $F$ and some finite set $S$ of places of $F$. The group $\Gamma$ will be $G_{F,S\cup T}$ for some set $T$ of Taylor-Wiles primes and $\sigma^*$ will be some fixed complex conjugation.

Write $\rho$ (resp. $\bar{\rho}$) as the composite maps of $\pi$ and $\rho_0$ (resp. $\bar{\rho_0}$). Consider the following two universal deformation rings (pro-representing functors from $\cOf$ to the category of sets):
\begin{itemize}
\item $R^{\ps}=R^{\ps}_{tr(\bar{\rho})}$, the universal deformation ring of pseudo-representation $\tr\bar{\rho}:\Gamma\to \F$.
\item $R_b$: the universal deformation ring of the representation $\bar{\rho}:\Gamma\to\GL_2(\F)$.
\end{itemize}
Note that the trace of $\rho$ and the representation $\rho$ give rise to prime ideals $\kq$ and $\kq_b$ of $R^{\ps}$ and $R_b$. Both prime ideals contain $p$ and have to be one-dimensional since $\rho$ is irreducible while $\bar{\rho}$ is reducible. There exists a natural map:
\[\varphi:R^{\ps}\to R_b\]
by evaluating the trace and $\varphi^*(\kq_b)=\kq$. It is natural to compare $\widehat{(R^{\ps})_\kq}$ and $\widehat{(R_b)_{\kq_b}}$.
\end{para}

\begin{prop} \label{rpsrc}
There exists an element $c\in R^{\ps}\setminus \kq$ such that
\begin{enumerate}
\item the image of $c$ in $R^{\ps}/\kq\to A$ is $c_0(\tau_0)\neq 0$;
\item for any positive integer $n$, there exists an integer $N=N(n)\ge 0$, such that $c^N$ kills the kernel and the cokernel of the map $R^{\ps}/\kq^n\to R_b/\kq_b^n$. Moreover $N$ only depends on $n$ and $\rho_0:\Gamma_0\to\GL_2(A)$ (hence is independent of $\Gamma$).
\end{enumerate}
\end{prop}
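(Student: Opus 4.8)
The plan is to construct $c$ explicitly from the off-diagonal entries of $\rho_0$ and to control the comparison map $R^{\ps}/\kq^n \to R_b/\kq_b^n$ using the explicit matrix reconstruction from pseudo-characters described in \S\ref{adx} and \ref{tar}. First, recall that $R_b$ carries the universal deformation $\rho^{univ}$ of $\bar\rho$, which we may normalize (after conjugation by a $1$-unit, using $\sigma^*$) so that $\rho^{univ}(\sigma^*) = \diag(1,-1)$; write the entries as $a,b,c,d\colon \Gamma \to R_b$, so that $a,d$ reduce mod $\kq_b$ to $a_0\circ\pi,\ d_0\circ\pi$ and $b,c$ reduce to $b_0\circ\pi,\ c_0\circ\pi$. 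On the pseudo-character side, $R^{\ps}$ is topologically generated by the functions $a(\sigma) = \tfrac12(T^{univ}(\sigma^*\sigma)+T^{univ}(\sigma))$, $d(\sigma)$, and the products $x(\sigma,\tau) = a(\sigma\tau)-a(\sigma)a(\tau)$; via $\varphi$ these pull back to the functions $a,d$ and to the products $b(\sigma)c(\tau)$ in $R_b$ (Remark \ref{adrep}). The natural candidate is $c \defeq \varphi^{-1}$ applied formally to $c(\tau_0)$ — but $c(\tau_0)$ need not lie in the image of $\varphi$, so instead I would take $c$ to be the image in $R^{\ps}$ of the pseudo-character quantity $x(\sigma_0,\tau_0) \in R^{\ps}$, which maps under $\varphi$ to $b(\sigma_0)c(\tau_0)$; since $b_0(\sigma_0)=1$, its further reduction in $R^{\ps}/\kq \hookrightarrow A$ is $1\cdot c_0(\tau_0) = c_0(\tau_0)\neq 0$, giving (1), and in particular $c\notin\kq$.

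For (2), the key point is that over $R_b$ one can almost reconstruct all matrix entries from $a,d$ and the $x$-products, in the style of \ref{tar}(2): for any $\sigma$, $x(\sigma_0,\sigma) = b(\sigma_0)c(\sigma) = c(\sigma)$ and $x(\sigma,\tau_0) = b(\sigma)c(\tau_0)$, so $c(\sigma)$ is literally in the image of $\varphi$, while $b(\sigma)\cdot c_0(\tau_0) \equiv x(\sigma,\tau_0) \pmod{\kq_b}$, and more precisely $c(\tau_0)\cdot b(\sigma) = x(\sigma,\tau_0)$ lies in the image of $\varphi$. The obstruction to $\varphi$ being an isomorphism mod $\kq_b^n$ is exactly that we only recover $b(\sigma)$ after multiplying by $c(\tau_0)$, equivalently after multiplying by the image of $c$. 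So I would argue: the image of $\varphi \bmod \kq_b^n$ contains $a(\sigma), d(\sigma), c(\sigma)$ for all $\sigma$, and contains $c(\tau_0)\cdot b(\sigma)$ for all $\sigma$; since $R_b/\kq_b^n$ is generated as an $\cO$-algebra by finitely many such entries (for $\sigma$ in a fixed finite generating set depending only on $\rho_0$, by Theorem 1 of \cite{Ta91} and topological finite generation), and since $c(\tau_0)$ is a nonzerodivisor-up-to-$\kq_b$ whose $n$-th power (say) clears denominators, some explicit power $c^{N}$ with $N=N(n)$ annihilates $\coker(\varphi \bmod \kq_b^n)$. For the kernel: if $y\in R^{\ps}$ maps to $\kq_b^n$, then because $R^{\ps}/\kq^n \to A/(T^n\text{-type ideal})$ is injective on the image of $c$-localization and $\varphi$ becomes an isomorphism after inverting $c$ (both localizations pro-represent the same characteristic-zero deformation functor of the irreducible $\rho(\kq)$, by the argument of Corollary \ref{dr0} applied to $\rho_0$ over $A[1/T]$... more precisely after inverting $c$ the diagonal-plus-$x$ data literally reconstructs $\rho$), multiplying $y$ by a suitable $c^N$ forces $y\in\kq^n$. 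The bound $N=N(n)$ depends only on the size of the generating set and on the $T$-adic valuation of $c_0(\tau_0)$ in $A$, both of which are determined by $\rho_0\colon\Gamma_0\to\GL_2(A)$ alone, so $N$ is independent of $\Gamma$.

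The main obstacle I anticipate is making the "clears denominators after inverting $c$" step uniform and effective — i.e. pinning down an explicit $N(n)$ that genuinely does not see $\Gamma$. The cleanest way is: (i) show $\varphi[1/c]\colon R^{\ps}[1/c]_\kq^{\wedge} \xrightarrow{\ \sim\ } (R_b[1/c])_{\kq_b}^{\wedge}$ is an isomorphism, by exhibiting an inverse sending the matrix entries of the universal deformation to the explicit formulae $a(\sigma), d(\sigma), x(\sigma_0,\sigma)/x(\sigma_0,\tau_0)\cdot(\dots)$ of \ref{tar}(2) — these formulae only involve inverting $c = x(\sigma_0,\tau_0)$; (ii) deduce that $\Ker$ and $\coker$ of $R^{\ps}/\kq^n\to R_b/\kq_b^n$ are $c$-power-torsion, being supported away from the locus where $c$ is invertible; (iii) bound the exponent by noting the relevant modules are generated over $\cO$ by the images of a fixed finite set $T^{univ}(S)$ with $S=S(\rho_0)$, so a single $N$ works for all of them. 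One has to be careful that $R^{\ps}$ and $R_b$ depend on $\Gamma$, but the $c$-power needed to express $b(\sigma)$ in terms of $\varphi$-image elements depends only on how $c_0(\tau_0)$ and the identities of \ref{adx} interact — purely functions of $\rho_0$ — which is precisely the content of the last sentence of the Proposition.
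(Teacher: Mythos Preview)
Your choice of $c = x(\sigma_*,\tau_*)$ and the verification of (1) are correct and match the paper. The high-level idea for (2) --- that inverting $c$ makes $\varphi$ an isomorphism, so kernel and cokernel are $c$-power torsion --- is also right, and is essentially what the paper records afterwards as Corollary~\ref{psccomp}.

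The gap is precisely where you flag it: the \emph{uniformity} of $N$ in $\Gamma$. Your steps (i)--(iii) do not close it. Knowing that $\ker(\varphi_n)$ and $\coker(\varphi_n)$ are $c$-power torsion gives \emph{some} exponent $N$ for each fixed $\Gamma$ (once you also know these modules are finitely generated over $R^{\ps}/\kq^n$, which itself needs an argument --- why is $R_b/\kq_b^n$ finite over $R^{\ps}/\kq^n$ before you know the proposition?). But that exponent is extracted from the module structure of $\coker(\varphi_n)$, which lives over rings that genuinely depend on $\Gamma$. Your final sentence asserts that the exponent depends only on $c_0(\tau_0)$ and the identities of \ref{adx}; this is exactly the content to be proved, and topological finite generation by a $\Gamma_0$-determined set $S$ does not by itself bound torsion exponents in a quotient ring that varies with $\Gamma$.

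The paper's argument is constructive and quite different from localization. In the special case $R^{\ps}/\kq = R_b/\kq_b$ it builds from $R^{\ps}/\kq^n$ an auxiliary ring $(R^{\ps}_n)'$ via an explicit graded construction (form $\bigoplus_k \kq^k/\kq^{k+1}$, impose relations that simulate ``divide by $c$ along the filtration'', and kill $c$-torsion). The point of this ring is that over it one can write down an honest $2$-dimensional \emph{representation} lifting $\bar\rho$ with trace $T^{univ}$, by solving $b(\sigma)c = x(\sigma,\tau_*)$ degree by degree in the grading. The universal property of $R_b$ then gives a map $R_b \to (R^{\ps}_n)'$ in the reverse direction, and chasing a small diagram of such maps yields the explicit bound $N = 2n+1$; every step is a formula involving only $n$ and the chosen $\sigma_*,\tau_*$, which is how independence of $\Gamma$ falls out. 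The general case (where $R^{\ps}/\kq \subsetneq A$) is reduced to this one by base-changing along $\F[[T^{p^m}]]\hookrightarrow \F[[T]]$ for suitable $m$, again with an explicit (messier) bound.
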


\begin{proof}
Denote $R^{\ps}/\kq$ by $B$. We may assume $B$ and $A$ have the same fraction fields by replacing $A$ with the normal closure of $B$, cf. \ref{tar}. Note that $B$ is topologically generated by the traces of $\rho_0(\Gamma)$, hence independent of $\Gamma$.

Choose $\sigma_*,\tau_*\in\Gamma$ to be some liftings of $\sigma_0,\tau_0$. Recall that $\sigma^*$ is an order $2$ element mapping to $\sigma_0^*$ in $\Gamma_0$. Since $\tr(\bar{\rho})(\sigma_0^*)=0$, by taking $\gamma=\delta=\eta=\sigma^*$ in the third condition of \ref{defnPr}, we see that $T(\sigma^*)=0$ for any lifting $T$ of pseudo-representation $\tr(\bar{\rho})$. In particular, take $T$ to be $T^{univ}:\Gamma\to R^{\ps}$, the universal pseudo-representation, and apply the construction in \ref{adx} for $\sigma^*$. We can define $a(\sigma),d(\sigma),x(\sigma,\tau)\in R^{\ps}$ for $\sigma,\tau\in\Gamma$. We also define:
\[c(\tau)=x(\sigma_*,\tau)\in R^{\ps},~\tau\in \Gamma\]
\[c=x(\sigma_*,\tau_*).\]
Note that the reduction of $c(\tau)$ modulo $\kq$ is $c_0(\pi(\tau))\neq 0$ as $b_0(\pi(\sigma_*))=b_0(\sigma_0)=1$. We claim that $c$ actually works for the proposition.

\noindent\textbf{\underline{Special Case}}: $R^{\ps}/\kq=R_b/{\kq_b}$. First we need some auxiliary construction.

\begin{defn} \label{pscon}
Let R be a commutative ring with $1$ and $I\in\Spec(R)$ such that $I^n=0$ for some integer $n$. Let $c\in R\setminus I$. We can define two new commutative rings:
\[R''\defeq (R\oplus I^1\oplus\cdots \oplus I^{n-1})/J,\]
\[R'\defeq R''/R''[c],\]
where 
\begin{itemize}
\item $R\oplus I^1\oplus\cdots \oplus I^{n-1}$ is viewed as a graded ring with $\gr^i=I^i$.
\item $J$ is an ideal of $R\oplus I^1\oplus\cdots \oplus I^{n-1}$ generated by elements of the form 
\[(a_0,\cdots,a_{n-1})-(0,a_0c,\cdots,a_{n-2}c)\] 
for $a_i\in I^i$ such that $a_ic\in I^{i+1}$.
\item $c\in R''$ by abuse of the notation is the image of $(c,0,\cdots,0)$ in $R''$.
\item $R''[c]$ denotes the $c$-torsion part of $R''$.
\end{itemize}
\end{defn}

\begin{lem} \label{conlem}
\hspace{2em}
\begin{enumerate}
\item The kernel of the natural map $R\to R'$ is killed by $c^{n+1}$.
\item The images of elements of the form $(0,a_1,\cdots,a_{n-1})$ in $R'$ define a prime ideal $I'$. We have $R/I=R'/I'$ and $(I')^n=0$.
\item The cokernel of $R\to R'$ is killed by $c^{n-1}$.
\end{enumerate}
\end{lem}
\begin{proof}
Let $x\in \ker(R\to R'')$. By definition, there exists $a_i\in I^i$ with $a_{n-1}c=0$ such that
\[(x,0,\cdots,0)=(a_0,a_1-a_0c,\cdots,a_{n-1}-a_{n-2}c)\]
in $R\oplus I^1\oplus\cdots \oplus I^{n-1}$. Hence $c^nx=ca_{n-1}=0$. This proves the first part.

The second part is easy. We omit the proof here. The last part follows from 
\[[c^{n-1}(a_0,\cdots,a_{n-1})]=[(a_0c^{n-1}+a_1c^{n-2}+\cdots+a_{n-1},0,\cdots,0)]\]
in $R''$.
\end{proof}

Apply this construction to $R^{\ps}_n\defeq R^{\ps}/\kq^n$ with $I=\kq/\kq^n$ and $c=c(\tau_*)$. We get $(R^{\ps}_n)'$ with an ideal $I'$ and $(R^{\ps}_n)''$. Consider the composition of the following maps:
\[T'_n:\Gamma \stackrel{T^{univ}}{\longrightarrow} R^{\ps}\to R^{\ps}/\kq^n\to(R^{\ps}_n)'.\]
Note that $(R^{\ps}_n)'/I'=R^{\ps}/\kq=R_b/{\kq_b}$. The whole point of introducing $(R^{\ps}_n)'$ is 

\begin{lem} \label{Rpsn'lift}
There exists a lifting $\rho'_n$ of $\bar{\rho}$ from $(R^{\ps}_n)'/I'$ to $(R^{\ps}_n)'$ with trace $T'_n$.
\end{lem}

\begin{proof}
By abuse of notation, we view $a(\sigma),d(\sigma),x(\sigma,\tau)$ as elements in $R^{\ps}_n,(R^{\ps}_n)',(R^{\ps}_n)''$. Recall that $\rho:\Gamma\to\GL_2(A)$ has images in $\GL_2(R_b/\kq_b)=\GL_2(R^{\ps}/\kq)$. Hence for any $\sigma$,
\[x(\sigma,\tau_*)\equiv b_0(\pi(\sigma))c_0(\tau_0)~\mod~\kq.\]
in $R^{\ps}$. That is to say we can find $\tilde{b}(\sigma)\in R^{\ps}_n$ and $y\in\kq/\kq^n$ such that
\[x(\sigma,\tau_*)=\tilde{b}(\sigma)c(\tau_*)+y\]
in $R^{\ps}_n$. Now let $b''(\sigma)=[(\tilde{b}(\sigma),y,0,\cdots,0)]$ be an element in $(R^{\ps}_n)''$. It is easy to check that this is independent of the choice of $\tilde{b}(\sigma)$. Note that we have 
\[b''(\sigma)c(\tau_*)=[(\tilde{b}(\sigma)c,yc,0,\cdots,0)]=[(\tilde{b}(\sigma)c+y,0,\cdots,0)]=x(\sigma,\tau_*)\in (R^{\ps}_n)''. \]
Similarly,
\[b''(\sigma)c(\tau)c(\tau_*)=x(\sigma,\tau)c(\tau_*)\]
Let $b(\sigma)$ be the image of $b''(\sigma)$ in $(R^{\ps}_n)'$. Using these identities and identities in \ref{adx}, one can check that
\[\rho':\Gamma\to\GL_2((R^{\ps}_n)'),~\sigma\mapsto \begin{pmatrix}a(\sigma) &b(\sigma) \\ c(\sigma) & d(\sigma)\end{pmatrix}\]
really defines a representation with trace $T'_n$ which lifts $\rho$.
\end{proof}

Let $\rho'$ be the representation constructed in the proof of the previous lemma. By the universal property, we get a map:
\begin{eqnarray} \label{mapj}
j:R_b\to(R^{\ps}_n)'
\end{eqnarray}
which necessarily factors through $R_b/\kq_b^n$ by the second part of lemma \ref{conlem}.

On the other hand, we may apply the construction \ref{pscon} to $R_b/\kq_b^n$ with ideal $\kq_b/\kq_b^n$ and $c=c(\tau_*)$ and get a ring $(R_b/\kq_b^n)'$. It is easy to see that this construction is functorial  and gives us a diagram (not necessarily commutative):
\[\begin{tikzcd} 
R^{\ps}/\kq^n \arrow[r,"i^{\ps}"] \arrow[d,"\varphi_n"] & (R^{\ps}_n)' \arrow[d, "\varphi'_n"]\\
R_b/\kq_b^n \arrow[ru,"j"] \arrow[r,"i_b"] & (R_b/\kq_b^n)'\\
\end{tikzcd}\]
The square and the upper left triangle are commutative by construction. As for the lower right triangle, we have
\begin{lem}
For any $x\in R_b/\kq_b^n,$
\[\varphi'_n\circ j(x)-i_b(x)\in (R_b/\kq_b^n)'[c].\]
\end{lem}
\begin{proof}
Let $\rho'_b:\Gamma\to\GL_2((R_b/\kq_b^n)')$ be a deformation of $\bar{\rho}$ induced by $i_b$. It is enough to show that modulo $(R_b/\kq_b^n)'[c]$, this representation $\rho'_b$ is conjugate to $\varphi'_n\circ \rho'$ by an element in $1+M_2(m')$, where $m'$ is the maximal ideal of $(R_b/\kq_b^n)'$.

Note that $\tr(\rho'_b)=\varphi'_n\circ \tr(\rho')$. By conjugating with some element in $1+M_2(m')$, we may assume
\[\rho'_b(\sigma^*)=\begin{pmatrix}1 & 0\\ 0& -1\end{pmatrix},~\rho'_b(\sigma_*)=\begin{pmatrix}*&1\\ *&*\end{pmatrix}.\]
Write $\rho'_b(\sigma)=\begin{pmatrix}a'(\sigma)&b'(\sigma)\\ c'(\sigma) & d'(\sigma)\end{pmatrix}$. Then 
\[\varphi'_n (a(\sigma))=a'(\sigma),~\varphi'_n( d(\sigma))=d'(\sigma),\]
\[\varphi'_n(b(\sigma)c(\tau))=\varphi'_n(x(\sigma,\tau))=b'(\sigma)c'(\tau),\]
since they can be defined by the same formulae using the pseudo-representation. Thus
\[c'(\tau)=b'(\sigma_*)c'(\tau)=\varphi'_n(b(\sigma_*)c(\tau))=\varphi'_n(c(\tau)),\]
\[c(b'(\sigma)-\varphi'_n(b(\sigma)))=0.\]
In other words, $\rho'_b\equiv \varphi'_n \circ \rho'\mod (R_b/\kq_b^n)'[c]$. This is exactly what we want.
\end{proof}

Now we can prove the proposition under the assumption $R^{\ps}/\kq=R_b/\kq_b$. Consider the upper left triangle of the previous diagram:
\[\ker(\varphi_n)\subset \ker (i^{\ps})\subset (R^{\ps}/\kq^n)[c^{n+1}],\]
where the last inclusion follows from the first part of lemma \ref{conlem}.

Suppose $x\in R_b/\kq_b^n$. By the last part of lemma \ref{conlem}, we may write $c^{n-1}j(x)=i^{\ps}(y)=j\circ \varphi_n(y)$ for some $y\in R^{\ps}/\kq^n$. Thus $z=\varphi_n(y)-c^{n-1}x\in\ker(j)$. Apply the previous lemma to $z$. We get $cz\in \ker(i_b)$. Now use lemma \ref{conlem} again. We have $c^{n+2}z=0$. Hence
\[c^{2n+1}x=\varphi_n(c^{n+2}y)\in \varphi_n(R^{\ps}_n).\]
This proves the proposition with $N=2n+1$.

\noindent\textbf{\underline{General Cases}}. In general, we can use the following trick to reduce to the special case we treated before. Since $B=R^{\ps}/\kq$ have the same fraction field as $A=\F[[T]]$, there exists a positive integer $r$ such that $T^rA\subset B$. We may assume $r=p^m$. Hence $R^{\ps}/\kq$ can be viewed as an $\F[[T^{p^m}]]$-algebra. Choose a lifting $\tilde{T}$ of $T^{p^m}$ in $R^{\ps}$. This gives $\cO[[T^{p^m}]]$-algebra structures to $R^{\ps}$ and $R_b$. Define:
\[\widetilde{R^{\ps}}=R^{\ps}\otimes_{\cO[[T^{p^m}]]}\cO[[T]],\]
\[\widetilde{R_b}=R_b\otimes_{\cO[[T^{p^m}]]}\cO[[T]]\]
via the natural faithfully flat map $\cO[[T^{p^m}]]\to \cO[[T]]$. Thus it suffices to prove the kernel and cokernel of
\[\widetilde{\varphi_n}:\widetilde{R^{\ps}}/\kq^n\widetilde{R^{\ps}}\to\widetilde{R_b}/\kq_b^n\widetilde{R_b}\]
are killed by some power, depending only on $n$, of $c$. Note that $\widetilde{R^{\ps}}$ and $\widetilde{R_b}$ have natural surjective maps to $A$:
\[\widetilde{R^{\ps}}/\kq \widetilde{R^{\ps}}=B\otimes_{\F[[T^{p^m}]]}\F[[T]]\to A\]
given by $b\otimes a\mapsto ab$. Let $\tilde{\kq}$ (resp. $\tilde{\kq_b}$) be the kernel of $\widetilde{R^{\ps}}\to A$ (resp. $\widetilde{R_b}\to A$). It is clear that 
\[\tilde{\kq}^{p^m}\subseteq\kq\widetilde{R^{\ps}}\subseteq\tilde{\kq}.\]

Now one can check that the previous argument of the special case works here for $(\widetilde{R^{\ps}},\tilde{\kq})$ and $(\widetilde{R_b},\tilde{\kq_b})$. For example, lemma \ref{Rpsn'lift} still holds and  we get a $\cO[[T]]$-algebra homomorphism $\widetilde{R_b}\to(\widetilde{R^{\ps}}/\tilde{\kq}^n)'$, similar to the map $j$ in \eqref{mapj}. As a consequence, the kernel and cokernel of 
\[\theta_l:\widetilde{R^{\ps}}/\tilde{\kq}^l\to \widetilde{R_b}/\tilde{\kq_b}^l\]
are killed by $c^{2l+1}$. Take $l=np^m$ and write $\tilde{\varphi}:\widetilde{R^{\ps}}\to\widetilde{R_b}$. We have
\begin{eqnarray}\label{inc}
c^{2l+1}\widetilde{R_b}\subseteq\tilde{\varphi}(\widetilde{R^{\ps}})+\tilde{\kq_b}^l\subseteq \tilde{\varphi}(\widetilde{R^{\ps}})+\kq_b^n\widetilde{R_b}.
\end{eqnarray}
This proves that $c^{2np^m+1}$ kills the cokernel of $\widetilde{\varphi_n}$.

Let $x$ be an element in $\widetilde{R^{\ps}}$ such that $\tilde{\varphi}(x)\in\kq_b^n\widetilde{R_b}$. It is clear from \eqref{inc} that
\[c^{2np^m+1}\kq_b\widetilde{R_b}\subseteq \tilde{\varphi}(\kq\widetilde{R^{\ps}})+\kq_b^n\widetilde{R_b}.\]
For any positive integer $k$, a simple induction on $k$ will give us
\[c^{(2np^m+1)(n+(2n-1)+\cdots+((n-1)k+1))}\tilde{\varphi}(x)\in \tilde{\varphi}(\kq^n\widetilde{R^{\ps}})+\kq_b^{n+k(n-1)}\widetilde{R_b}.\]
Take $k=2p^m-2$. Then $n+k(n-1)\ge np^m$, hence
\[c^M\tilde{\varphi}(x)\in \tilde{\varphi}(\kq^n\widetilde{R^{\ps}})+\kq_b^{np^m}\widetilde{R_b}\subseteq \tilde{\varphi}(\kq^n\widetilde{R^{\ps}})+\tilde{\kq_b}^{np^m},\]
where $M=(2np^m+1)(n+(2n-1)+\cdots+((n-1)k+1))$ only depends on $p^m,n$. This means that  we may find $y\in \kq^n\widetilde{R^{\ps}}$ such that
\[\tilde{\varphi}(c^Mx-y)\in \tilde{\kq_b}^{np^m}.\]
But kernel of $\theta_{np^m}:\widetilde{R^{\ps}}/\tilde{\kq}^{np^m}\to \widetilde{R_b}/\tilde{\kq_b}^{np^m}$ is killed by $c^{2np^m+1}$. Thus
\[c^{2np^m+1+M}x-c^{2np^m+1}y\in \tilde{\kq}^{np^m}\subseteq\kq^n\widetilde{R^{\ps}}.\]
In other words, $c^{2np^m+M+1}$ kills the kernel of $\widetilde{\varphi_n}$.
\end{proof}

The following corollary generalizes Proposition 2.11 of \cite{SW99}.

\begin{cor} \label{psccomp}
\hspace{2em}
\begin{enumerate}
\item For any one-dimensional prime ideal $\kp_b\in \Spec R_b$ such that $\rho^{univ}\mod \kp_b$ is irreducible, the natural map $\widehat{(R^{\ps})_\kp}\to\widehat{(R_b)_{\kp_b}}$ is an isomorphism and $\Spec (R_b)_{\kp_b}\to \Spec (R^{\ps})_{\kp}$ is surjective. Here $\rho^{univ}:\Gamma\to \GL_2(R_b)$ is a universal deformation and $\kp=\kp_b\cap R^{\ps}$. 
\item For any $\kQ\in \Spec R_b$ such that $\rho^{univ}\mod \kQ$ is irreducible, we have
\[\dim R_b/\kQ\leq\dim R^{\ps}/\kQ\cap R^{\ps}.\]
Moreover, if $\kQ$ is a minimal prime, so is $\kQ\cap R^{\ps}$. 
\end{enumerate}
\end{cor}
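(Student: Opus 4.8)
The plan is to deduce both parts from Proposition \ref{rpsrc} by localization, the key observation being that the element $c$ produced there becomes a \emph{unit} after localizing at $\kp$ (respectively $\kp_b$), so that the kernel/cokernel estimates collapse to isomorphisms. For (1), I would first put myself in the situation of Proposition \ref{rpsrc}: the quotient $R_b/\kp_b$ is a one-dimensional complete local domain, its normalization is a complete discrete valuation ring isomorphic to $\F[[T]]$ (after enlarging $E$ to absorb the — harmless — residue field extension that may occur), and composing $\rho^{univ}\bmod\kp_b$ with $R_b/\kp_b\hookrightarrow \F[[T]]$ gives, after a conjugation, a representation $\rho_0\colon\Gamma\to\GL_2(\F[[T]])$ of the shape required in the Setup; with $\Gamma_0=\Gamma$, $\pi=\id$, the rings occurring there are $R^{\ps}$, $R_b$ and the distinguished primes are $\kp$, $\kp_b$. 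Proposition \ref{rpsrc} then furnishes $c\in R^{\ps}\setminus\kp$ such that for every $n$ some power $c^{N(n)}$ annihilates the kernel and cokernel of $\varphi_n\colon R^{\ps}/\kp^n\to R_b/\kp_b^n$.

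Since $\varphi^{-1}(\kp_b)=\kp$, the image of $c$ in $R_b$ avoids $\kp_b$, so $c$ is a unit in $(R^{\ps})_\kp$, in $(R_b)_{\kp_b}$, and in every Artinian quotient of these. Localizing $\varphi_n$ (source at $\kp$, target at $\kp_b$) and using exactness of localization, the kernel and cokernel become $c^{N(n)}$-torsion modules over rings in which $c$ is invertible, hence vanish, so the induced map $(R^{\ps}/\kp^n)_\kp\to(R_b/\kp_b^n)_{\kp_b}$ is an isomorphism. (One checks along the way that $(R_b/\kp_b^n)\otimes_{R^{\ps}/\kp^n}(R^{\ps}/\kp^n)_\kp$ is already local — it is isomorphic to $(R^{\ps}/\kp^n)_\kp$ — and therefore coincides with $(R_b/\kp_b^n)_{\kp_b}$.) Passing to the inverse limit over $n$ gives the isomorphism $\widehat{(R^{\ps})_\kp}\xrightarrow{\sim}\widehat{(R_b)_{\kp_b}}$. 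For the surjectivity of $\Spec(R_b)_{\kp_b}\to\Spec(R^{\ps})_\kp$, I would note this morphism is dominated by $\Spec\widehat{(R_b)_{\kp_b}}\to\Spec(R^{\ps})_\kp$, which under the isomorphism just obtained is $\Spec\widehat{(R^{\ps})_\kp}\to\Spec(R^{\ps})_\kp$ and hence surjective by faithful flatness of completion.

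For (2), put $\kp=\kQ\cap R^{\ps}$. Because $\rho^{univ}\bmod\kQ$ is irreducible while $\bar\rho$ is reducible, $\kQ\neq\km_{R_b}$, so $R_b/\kQ$ is a complete local domain of dimension $\geq 1$; since the irreducible locus is open and contains $\kQ$ and $R_b/\kQ$ is catenary and equidimensional, I can choose $\kQ'\supseteq\kQ$ with $\rho^{univ}\bmod\kQ'$ still irreducible and $\dim R_b/\kQ'=1$, so that $\hht_{R_b/\kQ}(\kQ'/\kQ)=\dim R_b/\kQ-1$. Let $\kp'=\kQ'\cap R^{\ps}$ and apply (1) to $\kQ'$ to get $\hat\varphi\colon\widehat{(R^{\ps})_{\kp'}}\xrightarrow{\sim}\widehat{(R_b)_{\kQ'}}$. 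As $\varphi(\kp)\subseteq\kQ$ we have $\hat\varphi(\kp\widehat{(R^{\ps})_{\kp'}})=\varphi(\kp)\widehat{(R_b)_{\kQ'}}\subseteq\kQ\widehat{(R_b)_{\kQ'}}$, so comparing heights through the isomorphism (heights of ideals being preserved by completion),
\[ \hht_{R^{\ps}}\kp=\hht\bigl(\kp\widehat{(R^{\ps})_{\kp'}}\bigr)\leq\hht\bigl(\kQ\widehat{(R_b)_{\kQ'}}\bigr)=\hht_{R_b}\kQ, \]
which already gives the ``moreover'' ($\hht_{R_b}\kQ=0\Rightarrow\hht_{R^{\ps}}\kp=0$). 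For the dimension inequality, reduce the same isomorphism modulo $\kp$: $\varphi(\kp)\subseteq\kQ$ yields a surjection $\widehat{(R_b)_{\kQ'}}/\varphi(\kp)\twoheadrightarrow\widehat{(R_b/\kQ)_{\kQ'}}$, hence
\[ \hht_{R^{\ps}/\kp}(\kp'/\kp)=\dim\bigl(\widehat{(R_b)_{\kQ'}}/\varphi(\kp)\bigr)\geq\dim\widehat{(R_b/\kQ)_{\kQ'}}=\dim R_b/\kQ-1. \]
Moreover $\dim R^{\ps}/\kp'\geq 1$: otherwise $R^{\ps}/\kp'=\F$, so $\tr(\rho^{univ}\bmod\kQ')=\tr\bar\rho$, all the quantities $x(\sigma,\tau)$ attached to $\rho^{univ}\bmod\kQ'$ vanish, and $\rho^{univ}\bmod\kQ'$ is reducible, a contradiction. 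Combining,
\[ \dim R^{\ps}/\kp\geq\hht_{R^{\ps}/\kp}(\kp'/\kp)+\dim R^{\ps}/\kp'\geq(\dim R_b/\kQ-1)+1=\dim R_b/\kQ. \]

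The main obstacle is not in the commutative algebra above — which is formal once Proposition \ref{rpsrc} is granted — but in the reduction of part (1) to the setup of that proposition: one must pass to the normalization of $R_b/\kp_b$, arrange the conjugated representation $\rho_0$ to have exactly the required shape, identify the rings and primes appearing in Proposition \ref{rpsrc} with $R^{\ps},R_b,\kp,\kp_b$, and carry along the mild residue-field extension. Verifying that these identifications are compatible with passing to completed localizations is the only genuinely delicate point.
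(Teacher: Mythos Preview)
Your approach for part (1) has a genuine gap: the reduction to Proposition \ref{rpsrc} only works when $p\in\kp_b$. Proposition \ref{rpsrc} is set up with $A=\F[[T]]$, so it requires the normalization of $R_b/\kp_b$ to be a power series ring over (an extension of) $\F$. But when $p\notin\kp_b$, the quotient $R_b/\kp_b$ is a one-dimensional complete local $\cO$-domain of mixed characteristic, and its normalization is the ring of integers in a finite extension of $E$, not $\F[[T]]$; the proposition simply does not apply to this situation. The paper handles the characteristic-zero case by a separate, more direct argument: when $k(\kp)=E$, Corollary \ref{dr0} already identifies $\widehat{(R^{\ps})_\kp}$ with the universal deformation ring of $\rho(\kp)$ over Artinian local $E$-algebras, and since $\rho(\kp)$ is absolutely irreducible this coincides with $\widehat{(R_b)_{\kp_b}}$. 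The general case (arbitrary residue field) is then reduced to these two special cases by a finite base change $\cO\to\cO_L$.

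Once this is repaired, the rest of your argument is essentially the paper's. Your localization argument in part (1) (when $p\in\kp_b$) is exactly the intended use of Proposition \ref{rpsrc}: $c$ becomes a unit, so kernel and cokernel die. Your part (2) follows the same strategy as the paper --- specialize to a one-dimensional prime $\kQ'$ above $\kQ$ where the representation is still irreducible, apply part (1), and compare dimensions through the resulting isomorphism of completions. The paper's bookkeeping is slightly cleaner (it writes the chain as $\dim(R_b/\kQ)_{\kp_b}=\dim\widehat{(R_b)_{\kp_b}}/(\kQ)\leq\dim\widehat{(R^{\ps})_\kp}/(\kQ\cap R^{\ps})=\dim(R^{\ps}/\kQ\cap R^{\ps})_\kp$ and, for the minimal-prime claim, uses going-down for the faithfully flat map $R^{\ps}\to\widehat{(R^{\ps})_\kp}$ directly), but your version via heights is equivalent.
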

\begin{proof}
Note that it is clear from the discussion in \ref{tar} that $k(\kp_b)=k(\kp)$. If $k(\kp)=E$, then the isomorphism between $\widehat{(R^{\ps})_\kp}$ and $\widehat{(R_b)_{\kp_b}}$ is a direct consequence of corollary \ref{dr0}. If $p\in \kp$ and the residue field of the normalization of $R_b/\kp_b$ is $\F$, then the isomorphism follows from proposition \ref{rpsrc}.

In general, we can reduce the problem to these situations by the following trick: Let $L$ be $k(\kp)$ if $p\notin \kp$ and a finite unramified extension of $E$ with residue field same as the integral closure of $\F$ in $k(\kp)$ if $p\in \kp$. Let $O_L$ be the ring of integers of $L$. Then for any prime ideal $\kp'_b\in\Spec R_b\otimes_\cO O_L$ above $\kp$, we may apply the results in the previous paragraph to deformation rings $(R^{\ps})'=R^{\ps}\otimes_\cO O_L, R_b'=R_b\otimes_\cO O_L$ and prime ideals $\kp'=\kp'_b\cap (R^{\ps})',\kp'_b$ and conclude 
\[\widehat{(R^{\ps})'_{\kp'}}\cong\widehat{(R'_b)_{\kp'_b}}.\]
On the other hand, as $(R'_b)_{\kp_b}=(R_b)_{\kp_b}\otimes_\cO O_L$ is a finite $(R_b)_{\kp_b}$-algebra, we have
\[\widehat{(R_b)_{\kp_b}}\otimes_{\cO}O_L\cong \prod_{\kp'_b} \widehat{(R'_b)_{\kp'_b}},\]
where the product is taken over all primes $\kp'_b \in\Spec R'_b$ above $\kp_b$. Similarly $\widehat{(R^{\ps})_{\kp}}\otimes_{\cO}O_L\cong \prod_{\kp'} \widehat{(R^{\ps})'_{\kp'}}$. Here the product is taken over all $\kp'\in\Spec (R^{\ps})'$ above $\kp$. Note that there is a natural bijection between $\{\kp'\}$ and $\{\kp'_b\}$ as $k(\kp)=k(\kp_b)$, which can be seen using the construction in \ref{tar}. Thus 
\[\widehat{(R^{\ps})_{\kp}}\otimes_{\cO}O_L\cong \prod_{\kp'} \widehat{(R^{\ps})'_{\kp'}}\cong \prod_{\kp'_b} \widehat{(R'_b)_{\kp'_b}}\cong \widehat{(R_b)_{\kp_b}}\otimes_{\cO}O_L.\]
From this, we see that $\widehat{(R^{\ps})_\kp}\cong\widehat{(R_b)_{\kp_b}}$. Since $\widehat{(R^{\ps})_\kp}$ is faithfully flat over $(R^{\ps})_\kp$ (and similar result holds for $(R_b)_{\kp_b}$), it is clear that $\Spec (R_b)_{\kp_b}\to \Spec (R^{\ps})_{\kp}$ is surjective.
 
As for the second part, we may find a prime ideal $\kp_b$ containing $\kQ$ such that $R_b/\kp_b$ is one-dimensional and $\rho^{univ}\mod \kp_b$ is irreducible. To see the existence of $\kp_b$, note that 
\[\{\kp\in\Spec R_b, ~\rho^{univ}\mod \kp \mbox{ is reducible}\}\] 
is defined by all $x(\sigma,\tau)$ (see the beginning of proof of proposition \ref{rpsrc} for notations here), hence closed in $\Spec R_b$. Let $f$ be an element of the form $x(\sigma,\tau)$ not in $\kQ$. We can take $\kp_b$ to be any maximal ideal of $(R_b/\kQ)_f$. Let $\kp=R^{\ps}\cap \kp_b$. Our assertion follows from
\[\dim (R_b/\kQ)_{\kp_b}=\dim \widehat{(R_b)_{\kp_b}}/(\kQ)\leq  \dim \widehat{(R^{\ps})_\kp}/(\kQ\cap R^{\ps})=\dim (R^{\ps}/\kQ\cap R^{\ps})_\kp.\]
If $\kQ$ is minimal, we can choose a minimal prime $\kQ'$ of $\widehat{(R_b)_{\kp_b}}$ such that $\kQ'\cap R_b=\kQ$. This is possible as $(R_b)_{\kp_b}\to \widehat{(R_b)_{\kp_b}}$ is faithfully flat. We may regard $\kQ'$ as a prime of $\widehat{(R^{\ps})_\kp}$. By the going-down property of the flat morphism $R^{\ps}\to\widehat{(R^{\ps})_\kp}$, we see that $\kQ\cap R^{\ps}=\kQ'\cap R^{\ps}$ is also a minimal prime.
\end{proof}

In application, we need the following version of proposition \ref{rpsrc}.

\begin{cor} \label{pscrem}
Let $I$ be an ideal of $R^{\ps}$ contained in $\kq$ and $t$ be some positive integer. Write $R_1$ (resp. $R_2$) for $(R^{\ps}/I)[[y_1,\cdots,y_t]]$ (resp. $(R_b/IR_b)[[y_1,\cdots,y_t]]$) and $\kq_1$ (resp. $\kq_2$) for the ideal of $(R^{\ps}/I)[[y_1,\cdots,y_t]]$ (resp. $(R_b/IR_b)[[y_1,\cdots,y_t]]$) generated by $y_1,\cdots,y_t$ and $\kq$ (resp. $y_1,\cdots,y_t$ and $\kq_b$). Let $c$ be the element in proposition \ref{rpsrc}. Then the second part of proposition \ref{rpsrc} holds for $R_1/\kq_1^n\to R_2/\kq_2^n$.
\end{cor}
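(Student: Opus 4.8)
The plan is to observe that Corollary \ref{pscrem} is simply Proposition \ref{rpsrc} with the base rings enlarged by the quotient $R^{\ps}\twoheadrightarrow R^{\ps}/I$ (legitimate since $I\subseteq\kq$) and by the formal variables $y_1,\dots,y_t$, and then to transcribe the proof of Proposition \ref{rpsrc} line by line. So the first step is to put in place the analogous data and check that none of the numerical inputs to that proof changes.

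Concretely, since $I\subseteq\kq$ the surjection $R^{\ps}\twoheadrightarrow R_1=(R^{\ps}/I)[[y_1,\dots,y_t]]$ carries $\kq$ into $\kq_1$ and induces an isomorphism $R^{\ps}/\kq\xrightarrow{\sim}R_1/\kq_1$; likewise $R_b\twoheadrightarrow R_2$ induces $R_b/\kq_b\xrightarrow{\sim}R_2/\kq_2$, and $\varphi\colon R^{\ps}\to R_b$ induces $\varphi_{12}\colon R_1\to R_2$ with $\varphi_{12}^{-1}(\kq_2)=\kq_1$. I would take $T_1\colon\Gamma\xrightarrow{T^{univ}}R^{\ps}\twoheadrightarrow R^{\ps}/I\hookrightarrow R_1$ as the relevant pseudo-representation and $\rho_1\colon\Gamma\xrightarrow{\rho^{univ}}\GL_2(R_b)\to\GL_2(R_2)$ as the relevant representation, so that $\tr\rho_1=\varphi_{12}\circ T_1$. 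Applying the construction of \ref{adx} to $T_1$ and $\sigma^*$ gives functions $a,d,x$ valued in $R_1$ which are the images of the ones attached to $T^{univ}$; in particular the image of $c=x(\sigma_*,\tau_*)\in R^{\ps}$ is the element $c$ of the statement, its reduction mod $\kq_1$ is $c_0(\tau_0)\neq 0$ (so $c\notin\kq_1$), and $x(\sigma,\tau_*)\equiv b_0(\pi(\sigma))c_0(\tau_0)\bmod\kq_1$ for all $\sigma$, because $R_1/\kq_1=R^{\ps}/\kq$. The crucial observation is that $R_1/\kq_1$ is \emph{literally} $R^{\ps}/\kq$, so the subring $B$ and the power series ring $A=\F[[T]]$ occurring in the proof of Proposition \ref{rpsrc}, hence the integer $p^m$ with $T^{p^m}A\subseteq B$, are unchanged.

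With this set up, I would run the proof of Proposition \ref{rpsrc} with $(R^{\ps},\kq,R_b,\kq_b)$ replaced by $(R_1,\kq_1,R_2,\kq_2)$. The commutative-algebra part (Definition \ref{pscon}, Lemma \ref{conlem}) is formal and applies to $R_1/\kq_1^n$ with ideal $\kq_1/\kq_1^n$ and $c=c(\tau_*)$, producing $(R_1/\kq_1^n)'$ with a prime $I'$ such that $(R_1/\kq_1^n)'/I'=R_1/\kq_1$. The proof of Lemma \ref{Rpsn'lift} then produces, from $T'_n\colon\Gamma\to R_1\to R_1/\kq_1^n\to(R_1/\kq_1^n)'$, a representation $\rho'$ with trace $T'_n$ deforming $\rho_1$. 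Next I would build the analogue of the map $j$ of \eqref{mapj}: $\rho'$ is a deformation of $\bar\rho$, hence is classified by a homomorphism $R_b\to(R_1/\kq_1^n)'$; composing it with $\varphi$ gives the homomorphism classifying the pseudo-representation $\tr\rho'=T'_n$, which by construction factors through $R^{\ps}\twoheadrightarrow R^{\ps}/I$, so $R_b\to(R_1/\kq_1^n)'$ kills $\varphi(I)R_b=IR_b$ and factors through $R_b/IR_b$; together with the images of $y_1,\dots,y_t$ in the maximal ideal of $(R_1/\kq_1^n)'$ this gives $j\colon R_2\to(R_1/\kq_1^n)'$, which factors through $R_2/\kq_2^n$ by Lemma \ref{conlem}. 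Functoriality of the construction in Definition \ref{pscon} then gives $(R_2/\kq_2^n)'$ and the (not necessarily commutative) diagram relating these four rings; the lower-right-triangle lemma holds by the same computation (normalize $\rho'_n(\sigma^*)=\begin{pmatrix}1&0\\0&-1\end{pmatrix}$, $\rho'_n(\sigma_*)=\begin{pmatrix}*&1\\ *&*\end{pmatrix}$ and compare matrix entries through the pseudo-representation); and the same diagram chase shows $c^{2n+1}$ kills the kernel and cokernel of $R_1/\kq_1^n\to R_2/\kq_2^n$ in the case $R_1/\kq_1=R_2/\kq_2$. The reduction of the general case to this one is identical: lift $T^{p^m}$ to $\tilde T\in R_1$, give $R_1,R_2$ the resulting $\cO[[T^{p^m}]]$-algebra structures, set $\widetilde{R_1}=R_1\otimes_{\cO[[T^{p^m}]]}\cO[[T]]$ and $\widetilde{R_2}=R_2\otimes_{\cO[[T^{p^m}]]}\cO[[T]]$, and repeat; the variables $y_i$ are carried along without affecting anything. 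One ends up with $c^{N}$ killing the kernel and cokernel of $R_1/\kq_1^n\to R_2/\kq_2^n$, with $N=N(n)$ the same integer as in Proposition \ref{rpsrc}.

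I expect the only point requiring genuine care — as opposed to mechanical transcription — to be the construction of $j$, namely the verification that the deformation $\rho'$ factors through $R_b/IR_b$ and not merely through $R_b$, so that $j$ is defined on $R_2=(R_b/IR_b)[[y_1,\dots,y_t]]$; this rests precisely on the compatibility of $\varphi$ with traces together with the universal property of $R^{\ps}$. Everything else is a repetition of the argument for Proposition \ref{rpsrc}.
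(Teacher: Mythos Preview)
Your proposal is correct. The paper itself omits the proof entirely (``This is easy. We omit the proof here.''), and what you have written is precisely the transcription of the proof of Proposition~\ref{rpsrc} to the enlarged setting that the paper has in mind; in particular your identification of the one genuinely delicate point --- that the classifying map $R_b\to (R_1/\kq_1^n)'$ kills $IR_b$ because its composite with $\varphi$ classifies a pseudo-representation factoring through $R^{\ps}/I$ --- is exactly the reason the passage from $R^{\ps}$ to $R^{\ps}/I$ causes no trouble, and your observation that $R_1/\kq_1=R^{\ps}/\kq$ (so $B$, $A$, and $p^m$ are unchanged) is what guarantees the bound $N(n)$ depends only on $n$ and $\rho_0$.
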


\begin{proof}
This is easy. We omit the proof here.
\end{proof}

\subsection{Some miscellaneous results}
We keep the same notations and setup as in the previous subsections.

\begin{lem}
Let $\kp$ be a one-dimensional prime ideal of $R^{\ps}$ such that the associated representation $\rho(\kp)$ is irreducible. Then the fibre $\Spec (R_b\otimes_{R^{\ps}}k(\kp))$ has at most one point.
\end{lem}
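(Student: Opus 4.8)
The plan is to reduce the statement to showing that any two prime ideals $\kQ_1,\kQ_2\subseteq R_b$ with $\kQ_1\cap R^{\ps}=\kQ_2\cap R^{\ps}=\kp$ coincide. If the fibre is empty there is nothing to prove, so assume a prime $\kQ$ of $R_b$ over $\kp$ exists. First I normalize the universal deformation: since $R_b$ is local and $p$ is odd, after conjugating $\rho^{univ}:\Gamma\to\GL_2(R_b)$ by an element of $1+M_2(\km_{R_b})$ I may assume $\rho^{univ}(\sigma^*)=\diag(1,-1)$ (note $\bar\rho(\sigma^*)=\bar\rho_0(\sigma_0^*)=\diag(1,-1)$); then, for a lift $\sigma_*\in\Gamma$ of $\sigma_0$, $b^{univ}(\sigma_*)\equiv b_0(\sigma_0)=1\pmod{\km_{R_b}}$ is a unit, so rescaling by a diagonal matrix I may further assume $b^{univ}(\sigma_*)=1$. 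Each conjugation is by an element of $\GL_2(R_b)$, so by the universal property $R_b$ is still topologically generated over $\cO$ by the matrix entries of $\rho^{univ}$. Since $\tr\rho^{univ}=\varphi\circ T^{univ}$, Remark \ref{adrep} applied to the pseudo-character $\varphi\circ T^{univ}$ gives $a^{univ}(\sigma)=\varphi(a(\sigma))$, $d^{univ}(\sigma)=\varphi(d(\sigma))$, $c^{univ}(\tau)=\varphi(x(\sigma_*,\tau))$ and $b^{univ}(\sigma)\,\varphi(x(\sigma_*,\tau))=\varphi(x(\sigma,\tau))$ for all $\sigma,\tau\in\Gamma$, where $a,d,x\in R^{\ps}$ are the functions of \ref{adx}.

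Next I choose the localizing element. The reduction $\rho^{univ}\bmod\kQ$ has trace $T(\kp)$ via the inclusion $R^{\ps}/\kp\hookrightarrow R_b/\kQ$ induced by $\varphi$, and it is irreducible by hypothesis, hence — being in the above normalization with $b^{univ}(\sigma_*)=1$ — not upper triangular, so $c^{univ}(\tau_1)=\varphi(x(\sigma_*,\tau_1))\notin\kQ$ for some $\tau_1$; equivalently $\xi:=x(\sigma_*,\tau_1)\in R^{\ps}\setminus\kp$, and then $\varphi(\xi)\notin\kQ'$ for every prime $\kQ'$ of $R_b$ over $\kp$. Over $R_b[\varphi(\xi)^{-1}]$ the displayed identities give $b^{univ}(\sigma)=\varphi\!\bigl(x(\sigma,\tau_1)\xi^{-1}\bigr)$, so all matrix entries of $\rho^{univ}$ lie in the image of $R^{\ps}[\xi^{-1}]$. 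As $R_b$ is topologically generated by these entries, $\varphi$ induces a surjection $R^{\ps}[\xi^{-1}]\twoheadrightarrow R_b[\varphi(\xi)^{-1}]$; hence $\Spec R_b[\varphi(\xi)^{-1}]\to\Spec R^{\ps}[\xi^{-1}]$ is injective, and since every prime of $R_b$ over $\kp$ lies in $\Spec R_b[\varphi(\xi)^{-1}]$ and contracts to $\kp$, the fibre over $\kp$ has at most one point.

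The only delicate point is the step from ``the entries lie in the subring $\varphi(R^{\ps})[\varphi(\xi)^{-1}]$'' to ``$\varphi$ is surjective after inverting $\xi$'', since $R_b$ is only \emph{topologically} generated by the entries; this is the step I expect to be the main obstacle. One can argue around it directly: by the discussion in \ref{tar} and Corollary \ref{psccomp} one has $k(\kQ_i)=k(\kp)$ canonically, so each $\kQ_i$ ($i=1,2$) gives a continuous $\cO$-algebra homomorphism $\lambda_i:R_b\twoheadrightarrow R_b/\kQ_i\hookrightarrow k(\kp)$ with kernel $\kQ_i$ and with $\lambda_i\circ\varphi$ the canonical map $R^{\ps}\to k(\kp)$, independent of $i$. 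The formulas above then show $\lambda_1$ and $\lambda_2$ agree on every matrix entry of $\rho^{univ}$ — the entries $a^{univ},d^{univ},c^{univ}$ are values of $\varphi$, and $\lambda_i(b^{univ}(\sigma))=\lambda_i\varphi(x(\sigma,\tau_1))\big/\lambda_i\varphi(\xi)$ is visibly independent of $i$ — hence on the topological generators of $R_b$, so $\lambda_1=\lambda_2$ and $\kQ_1=\kQ_2$. All the algebraic content is contained in the explicit matrix form recalled in \ref{tar}; the rest is bookkeeping about the conjugations and the topology.
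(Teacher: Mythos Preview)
Your proof is correct and follows essentially the same idea as the paper: normalize so that $\rho(\sigma^*)=\diag(1,-1)$ and $b(\sigma_*)=1$, then observe that in this gauge all matrix entries are determined by the pseudo-representation (via the formulas of \ref{adx} and Remark \ref{adrep}), forcing any two primes over $\kp$ to coincide. The paper carries out the normalization separately at each fiber $\kp'$ rather than once on $R_b$, but the content is identical; your direct $\lambda_1=\lambda_2$ argument is exactly the paper's ``all the entries of $\rho(\kp')$ now are determined by $\tr(\rho(\kp'))$'', and you were right to flag and bypass the topological-generation issue in the surjectivity-after-localization route.
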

\begin{proof}
For any $\kp'\in \Spec R_b$ mapping to $\kp\in\Spec R^{\ps}$, we have a natural map: $R^{\ps}/\kp\to R_b/\kp'\to k(\kp')$ and a representation
\[\rho(\kp'):\Gamma\to\GL_2(R_b/\kp')\]
that lifts  $\bar{\rho}$. Note that $\kp'$  is one-dimensional as it cannot be the maximal ideal $\km_b$ of $R_b$. From our previous discussion, we may conjugate $\rho(\kp')$ by some element in $1+M_2(\km_b)$ and assume all the entries of $\rho(\kp')$ are in $k(\kp)$ and $\rho(\kp')(\sigma^*)=\begin{pmatrix} 1 & 0 \\ 0 & -1 \end{pmatrix}$ and $\rho(\kp')(\sigma_*)$ of the form $\begin{pmatrix} * & 1 \\ * & * \end{pmatrix}$. Note that under these assumptions,  since we assume $\rho(\kp')$ is irreducible, all the entries of $\rho(\kp')$ now are determined by $\tr(\rho(\kp'))$. Thus this proves the uniqueness of $\kp'$.
\end{proof}

The following result is due to Skinner-Wiles (cf. the last paragraph of \cite{SW99} page 68).

\begin{prop} \label{ht1def}
Let $a(\sigma),d(\sigma),x(\sigma,\tau),c(\sigma)\in R^{\ps}$ be the elements as in the beginning of the proof of proposition \ref{rpsrc}. Denote by $\km$ the maximal ideal of $R^{\ps}$. Suppose $\kq_b$ is a prime of $\Spec R_b$  such that $R_b/(\kq_b+\km R_b)$ is not Artinian. Then
\[\hht(I_c)\le 1,\]
where $I_c\subseteq R^{\ps}/\kQ$ is the ideal generated by $c(\sigma),\sigma\in\Gamma$ and $\kQ=\kq_b\cap R^{\ps}$.
\end{prop}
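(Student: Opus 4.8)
The plan is to exploit the moduli interpretation of $\widehat{(R^{\ps})_\kp}$ (Corollary \ref{dr0}) together with the fact that the locus $I_c$ cuts out, after inverting suitable denominators, precisely the deformations of $\tr\bar\rho$ that become reducible at $\kp$. Concretely, suppose for contradiction that $\hht(I_c)\geq 2$ inside $R^{\ps}/\kQ$. Pick a minimal prime $\kp$ of $R^{\ps}/\kQ$ containing $I_c$; by the height assumption and the fact that $R_b/(\kq_b+\km R_b)$ is not Artinian (so that $\dim R^{\ps}/\kQ$ is at least, say, $2$ via Corollary \ref{psccomp}), we may arrange $\kp$ to be one-dimensional with $\kp\supsetneq\kQ$. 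Since $\kp\supseteq I_c$, every $c(\sigma)=x(\sigma_*,\sigma)$ vanishes modulo $\kp$; combined with the last identity $x(\alpha,\beta)x(\sigma,\tau)=x(\alpha,\tau)x(\sigma,\beta)$ of \ref{adx}, vanishing of all $x(\sigma_*,\sigma)$ forces all $x(\sigma,\tau)$ to vanish modulo $\kp$ (this is where the choice $c(\sigma)=x(\sigma_*,\sigma)$, together with the existence of $\sigma_*,\tau_*$ with $x(\sigma_*,\tau_*)\notin\kQ$, matters — one should check this carefully). Hence $\rho(\kp)$ is reducible in the sense of \ref{tar}.

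The contradiction will come from comparing dimensions. On one hand, $\kp$ is a one-dimensional prime of $R^{\ps}$ with $\rho(\kp)$ reducible. On the other hand, $\kQ=\kq_b\cap R^{\ps}$ comes from the irreducible representation $\rho$, and by the second part of Corollary \ref{psccomp} we have $\dim R^{\ps}/\kQ\geq \dim R_b/\kQ'$ for a suitable one-dimensional $\kq_b\subseteq\kQ'$; but more to the point, the hypothesis that $R_b/(\kq_b+\km R_b)$ is not Artinian says exactly that the special fibre of $\Spec R_b/\kq_b\to\Spec\cO$ has positive dimension, i.e. $\kQ$ is the kernel of a map $R^{\ps}\to A=\F[[T]]$ (in the notation of the Setup) and $R^{\ps}/\kQ$ has dimension $\geq 1$ already over $\F$. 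The key input is that the reducible locus inside $\Spec R^{\ps}/\kQ$ has codimension at least one more than expected: one proceeds along the lines of Skinner–Wiles by observing that a one-dimensional reducible prime $\kp$ containing $\kQ$ would, via the splitting $\rho(\kp)=a\oplus d$ and the identities of \ref{adx}, be controlled by a pair of characters, forcing $R^{\ps}/\kp$ to be a quotient of a ring of Krull dimension $\leq 1$ in a way incompatible with $\kp$ lying strictly above the one-dimensional $\kQ$. Thus $\hht(I_c)\leq 1$.

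The main obstacle, and the step I would spend the most care on, is the reduction "$x(\sigma_*,\sigma)=0$ for all $\sigma$ implies $x(\sigma,\tau)=0$ for all $\sigma,\tau$" in the quotient $R^{\ps}/\kp$: this uses identity (4) of \ref{adx} but requires knowing that some $x(\sigma_0,\tau_0)$ is a non-zero-divisor, or rather that the relevant $x$'s are not all killed, which in turn uses that $\rho$ itself (giving $\kQ$) is irreducible, so that $x(\sigma_0,\tau_0)\notin\kQ$ for some choice — but we need this modulo $\kp\supsetneq\kQ$, and it is precisely the content of $\kp\supseteq I_c$ that kills $c=x(\sigma_*,\tau_*)$, so one must instead argue that were $\hht(I_c)\geq 2$, the prime $\kp$ would witness reducibility in a range where Galois cohomology (or the Skinner–Wiles argument with the auxiliary ring construction of \ref{pscon}) bounds the dimension of the reducible deformation locus. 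I would therefore organize the proof as: (i) set up $\kp$ minimal over $I_c$ in $R^{\ps}/\kQ$, assume $\hht\geq 2$; (ii) deduce $\dim R^{\ps}/\kp\geq 1$ and $\rho(\kp)$ reducible; (iii) use the character decomposition plus topological finite generation of $R^{\ps}$ by traces to bound $\dim R^{\ps}/\kp$; (iv) contradict. Steps (iii)–(iv) are the Skinner–Wiles heart and the place where the not-Artinian hypothesis on $R_b/(\kq_b+\km R_b)$ is genuinely used.
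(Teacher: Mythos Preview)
Your approach has a genuine gap at the step you yourself flag: from $I_c\subseteq\kp$ you cannot conclude that the pseudo-representation $T\bmod\kp$ is reducible. Recall $c(\tau)=x(\sigma_*,\tau)$ for a \emph{fixed} element $\sigma_*$. In any two-dimensional representation with $\rho(\sigma^*)=\begin{pmatrix}1&0\\0&-1\end{pmatrix}$ and trace $T\bmod\kp$, the vanishing of all $x(\sigma_*,\tau)=b'(\sigma_*)c'(\tau)$ only forces $b'(\sigma_*)=0$ or all $c'(\tau)=0$; the former is perfectly compatible with irreducibility. Identity (4) of \ref{adx} gives you nothing once every $x(\sigma_*,\cdot)$ vanishes, since plugging in $\alpha=\sigma_*$ yields $0=0$. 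So the passage from ``$\kp\supseteq I_c$'' to ``$\rho(\kp)$ reducible'' fails, and with it the whole dimension-of-reducible-locus strategy. Your fallback suggestion of invoking the construction of \ref{pscon} or unspecified Galois cohomology does not salvage this; those tools compare $R^{\ps}$ with $R_b$ but do not produce the reducibility you need.

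The paper's argument is entirely different and does not go through reducibility at all. It passes to the normalization $B_1$ of $R^{\ps}/\kQ$ and exploits the identity $b(\tau)c(\sigma)=x(\tau,\sigma)$ inside $R_b/\kq_b$ (where an actual representation exists, with $b(\sigma_*)$ a unit). If the ideal generated by the $c(\sigma)$ had height $\geq 2$ in $B_1$, then for each height-one prime $\kp$ of $B_1$ some $c(\sigma)$ would be a unit in $(B_1)_\kp$, so $b(\tau)=x(\tau,\sigma)/c(\sigma)\in(B_1)_\kp$; since $B_1$ is normal, intersecting over all height-one primes gives $b(\tau)\in B_1$. Then every matrix entry of $\rho(\kq_b)$ lies in the finite $R^{\ps}/\kQ$-algebra $B_1'=B_1\cap R_b/\kq_b$, and since these entries topologically generate $R_b$, the map $B_1'/\km B_1'\to R_b/(\kq_b+\km R_b)$ is surjective. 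This forces the target to be Artinian, contradicting the hypothesis. The not-Artinian assumption is thus used directly and transparently, with no appeal to the structure of any reducible locus.
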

\begin{proof}
Let $\rho(\kq_b):\Gamma\to\GL_2(R_b/\kq_b)$ be a deformation induced from the universal deformation. As before, we may assume that it has the form $\sigma\mapsto \begin{pmatrix} a(\sigma)& b(\sigma)\\ c(\sigma)& d(\sigma)\end{pmatrix}$, where $a(\sigma),c(\sigma),d(\sigma)\in R^{\ps}/\kQ$ are defined in the proposition.

Let $B_1$ be the integral closure of $R^{\ps}/\kQ$ in its fraction field. Then by page 237 of \cite{Mat2} (since $R^{\ps}/\kQ$ is complete and reduced), $B_1$ is finite over $R^{\ps}/\kQ$. Denote the intersection of $B_1$ and $R_b/\kq_b$ (in the fraction field of $R^{\ps}/\kQ$) by $B'_1$. Then $B'_1$ is also a finite $R^{\ps}/\kQ$-algebra hence $B'_1/\km B'_1$ is Artinian.

We claim that $c(\sigma),\sigma\in\Gamma$ generate an ideal of height at most one in $B_1$. Note that since $B_1$ is finite over $R^{\ps}/\kQ$, this will imply the proposition. Suppose this is not true. Then for any height one prime ideal $\kp$ of $B_1$, there exists $\sigma\in\Gamma$ such that $c(\sigma)\notin \kp$, hence
\[b(\tau)= \frac{x(\tau,\sigma)}{c(\sigma)}\in (B_1)_{\kp}~\mbox{for any }\tau\in\Gamma.\]
Thus 
\[b(\tau)\in\bigcap_{\kp} (B_1)_{\kp}=B_1,\] 
where $\kp$ runs over all the height one prime ideals of $B_1$ and the equality is valid since $B_1$ is normal. In other words, all the entries of $\rho(\kq_b)$ lie in $B_1$ hence also in $B'_1$. Consider the natural map given by the inclusion $B'_1\to R_b/\kq_b$:
\[B'_1/\km B'_1\to R_b/(\kq_b+\km R_b).\]
Recall that $B'_1/\km B'_1$ is Artinian. The above map is surjective as the image contains all the entries of a universal deformation which topologically generate $R_b$. But this contradicts our assumption that $R_b/(\kq_b+\km R_b)$ is not Artinian. Therefore $c(\sigma),\sigma\in\Gamma$ generates an ideal of height at most one in $B_1$, which implies our assertion in the proposition.
\end{proof}

\begin{rem} \label{ht1Rem}
This remark will only be used in \ref{case3}. Proposition \ref{ht1def} can be extended to the following situation: Let $\psi_i:\Gamma\to E^\times,i=1,2$ be two distinct continuous characters. Then $T=\psi_1+\psi_2$ is a two-dimensional pseudo-representation of $\Gamma$ over $E$. As before, we assume $T(\sigma^*_0)=0$ for some order $2$ element $\sigma^*_0$. We can consider the universal deformation ring $R^{\ps}_T$ of $T$ over $E$ (see the paragraph above \ref{Dkp}). On the other hand, let 
\[\rho_B:\Gamma\to\GL_2(E),~\gamma\mapsto\begin{pmatrix} \psi_2(\gamma) & B(\gamma) \\ 0 & \psi_1(\gamma)\end{pmatrix}\] 
be a non-split extension of $\psi_1$ by $\psi_2$ and $R_B$ be its universal deformation ring (\ref{dr0}). Fix an element $\gamma_0\in\Gamma$ with $B(\gamma_0)=1$. We have a natural map $R^{\ps}_T\to R_B$. Denote the maximal ideal of $R^{\ps}_T$ by $\km_T$. Let $\kQ_B\in\Spec R_B$ such that $R_B/(\km_T+\kQ_B)$ is not Artinian. Then 
\[\hht (I_C)\leq 1,\]
where $I_C$ is the ideal of $R^{\ps}_T/(\kQ_B\cap R^{\ps}_T)$ generated by the images of $x(\gamma_0,\sigma),\sigma\in\Gamma$. Here $x(\cdot,\cdot)$ are defined using $\sigma^*_0$ as before.

The proof is almost the same as proposition \ref{ht1def}. We leave the details to the readers.
\end{rem}

\section{Local-global compatibility} \label{L-gc}
In this section, we will state and prove our local-global compatibility result and deduce some useful corollaries from it (\ref{sblgc}, \ref{edha}). Most results here are also obtained by Pa\v{s}k\={u}nas in \cite{Pas18} \S 5. According to Pa\v{s}k\={u}nas (Remark 5.13. ibid.), these results were discovered in 2011 and remained unpublished. I was unaware of his work when I worked on this project.

For notations, we fix a totally field $F$ in which $p$ completely splits and a quaternion algebra $D$ with centre $F$ which is ramified at all infinite places of $F$ and unramified at all places above $p$. Also we fix isomorphisms $D\otimes F_v\simeq M_2(F_v)$ for any $v$ where $D$ is unramified. Under these isomorphisms, we may view $K_p=\prod_{v|p}\GL_2(O_{F_v}),D_p^\times=\prod_{v|p}\GL_2(F_v)$ as subgroups of $(D\otimes_F\A_F)^\times$. We also write $N_{D/F}:(D\otimes_F\A_F)^\times\to \A_F^\times$ as the reduced norm.

\subsection{Quaternionic forms} \label{quaform}
\begin{para} \label{Quaformssm}
We first recall some results on quaternionic forms. Reference is \cite{Ta06} and \cite{Kis09a}. 

Let $A$ be a topological $\Z_p$-algebra and $U=\prod_{v} U_v$ be an open compact subgroup of $\DAi$ such that $U_v\subseteq \GL_2(O_{F_v})$ for $v|p$. Let $\psi:(\A^\infty_F)^\times/F^\times_{>>0}\to A^\times$ be a continuous character, where $F_{>>0}$ is the set of totally positive elements in $F$. Also let $\tau:\prod_{v|p}U_v\to\Aut(W_\tau)$ be a continuous representation on a finite $A$-module $W_\tau$. By abuse of notation, we also view $\tau$ as a representation of $U$ by projecting to $\prod_{v|p}U_v$.

Let $S_{\tau,\psi}(U,A)$ be the space of continuous functions:
\[f:D^{\times}\setminus \DAi\to W_{\tau}\]
such that for any $g\in \DAi,u\in U,z\in \AFi$, we have
\begin{itemize}
\item $f(gu)=\tau(u^{-1})(f(g))$,
\item $f(gz)=\psi(z)f(g)$.
\end{itemize}
Write $\DAi=\bigsqcup_{i\in I}D^\times t_iU\AFi$ for some finite set $I$ and $t_i\in \DAi$. If $\tau^{-1}|_{U\cap \AFi}$ acts as $\psi|_{U\cap \AFi}$, then there is an isomorphism:
\begin{eqnarray} \label{dcp}
S_{\tau,\psi}(U,A)\simeq \bigoplus_{i\in I}W_{\tau}^{(t_i^{-1}D^\times t_i\cap U\AFi)/F^{\times}},
\end{eqnarray}
by sending $f$ to $(f(t_i))_i$. We say $U$ is sufficiently small if $(t_i^{-1}D^\times t_i\cap U\AFi)/F^{\times}$ is trivial for all $i\in I$. This can be achieved by shrinking $U_v$ for some $v$:
\end{para}
\begin{lem}
Suppose that $D$ is unramified at $v$ and $\zeta+\zeta^{-1}\neq 2 \mod \varpi_v^n$ for some n and any root of unity $\zeta\neq 1$ in a quadratic extension of $F$. Then $U$ is sufficiently small if $U_v$ is contained in the subgroup of $\GL_2(O_{F_v})$, whose elements are unipotent upper triangular modulo $\varpi_v^n$.
\end{lem}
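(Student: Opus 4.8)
The plan is to unwind the finiteness condition in \eqref{dcp} for each double coset and reduce it to a statement about finite subgroups of $D^\times$ fixing a lattice. Recall that $U$ is sufficiently small exactly when $\Delta_i := (t_i^{-1}D^\times t_i \cap U\AFi)/F^\times$ is trivial for every $i\in I$. So fix $i$ and suppose $\delta \in t_i^{-1}D^\times t_i \cap U\AFi$ with nontrivial image in $\Delta_i$. Since $D$ is a totally definite quaternion algebra, the group $\Delta_i$ is finite; in fact any element of $t_i^{-1}D^\times t_i$ that lies in $U\AFi$ has totally real reduced norm (being in $\AFi$ times a compact group, up to center), and after scaling by $F^\times$ one arranges that the corresponding element of $D^\times$ generates a number field $F(\delta)$ that is either $F$ itself or a CM quadratic extension of $F$ — this is the standard dichotomy for definite quaternion algebras. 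The case $F(\delta)=F$ forces $\delta \in F^\times$, i.e. trivial in $\Delta_i$, so we may assume $F(\delta)/F$ is quadratic and $\delta$ maps to a root of unity $\zeta \neq 1$ in this quadratic extension.

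Next I would localize at $v$. The element $\delta$, viewed in $U_v \subseteq \GL_2(O_{F_v})$ (using that $D$ is unramified at $v$), satisfies a quadratic equation over $F_v$ whose trace is $\zeta + \zeta^{-1}$ and norm is $1$ (a root of unity has reduced norm $1$). Write $\bar\delta$ for the image of $\delta$ in $\GL_2(O_{F_v}/\varpi_v^n)$. By hypothesis $U_v$ is contained in the subgroup of elements that are unipotent upper triangular modulo $\varpi_v^n$; hence $\bar\delta$ is unipotent upper triangular, so $\operatorname{tr}\bar\delta = 2$ in $O_{F_v}/\varpi_v^n$. But $\operatorname{tr}\bar\delta = \zeta + \zeta^{-1} \bmod \varpi_v^n$, and the assumed congruence $\zeta + \zeta^{-1} \neq 2 \bmod \varpi_v^n$ for every root of unity $\zeta \neq 1$ in a quadratic extension of $F$ gives a contradiction. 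Therefore no such $\delta$ exists, $\Delta_i$ is trivial for all $i$, and $U$ is sufficiently small.

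The only point requiring a little care — and the place I would slow down — is justifying that an element of $t_i^{-1}D^\times t_i \cap U\AFi$ can, after multiplying by a suitable element of $F^\times$, be taken to lie in a maximal order and to have finite order modulo center, so that its minimal polynomial over $F$ is cyclotomic-type with a genuine root of unity $\zeta$. This uses totally definiteness of $D$ (the unit group modulo center of an order is finite) together with the observation that $U$ is compact and $U \cap \AFi$ acts through $F^\times \cdot (\text{compact})$; the reduced norm argument pins down that the relevant eigenvalues are roots of unity living in $F$ or a quadratic extension. Once that normalization is in place, the local computation at $v$ is immediate. I do not expect genuine obstacles here; the lemma is essentially Lemme of Taylor's type and the proof is routine modulo the bookkeeping just described.
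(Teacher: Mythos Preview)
Your overall strategy---reduce to a trace computation modulo $\varpi_v^n$ at the place $v$---is exactly the paper's, and the local step is fine. The gap is precisely where you flag it: producing a root of unity $\zeta$ from $\delta$. Your suggestion to ``scale by $F^\times$'' so that $\delta$ itself becomes a root of unity does not work in general. If $\gamma = t_i\delta t_i^{-1}\in D^\times$ has order $l$ in $\Delta_i$, then $\gamma^l\in F^\times$, but there is no reason $N_{D/F}(\gamma)$ is a square in $F^\times$, which is what you would need to write $\gamma = a\zeta$ with $a\in F^\times$ and $\zeta$ a root of unity. (Concretely: in $K=\mathbb{Q}(\sqrt{-5})$ over $F=\mathbb{Q}$, the element $\sqrt{-5}$ has square in $\mathbb{Q}^\times$ but is not a rational multiple of any element of $\mu_K=\{\pm1\}$.) So the claim that the eigenvalues of $\delta$ are $\zeta,\zeta^{-1}$ with $\zeta$ a root of unity is not justified.

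The paper's fix is clean and costs nothing: instead of $\gamma$, consider $\gamma/\iota(\gamma)$, where $\iota$ is the main involution of $D$. This element automatically has reduced norm $1$ and satisfies $(\gamma/\iota(\gamma))^l=1$, so it is a genuine root of unity $\zeta$ in the quadratic field $F(\gamma)$. At $v$, writing $t_i^{-1}\gamma t_i = u z$ with $u\in U_v$ and $z\in F_v^\times$, one computes $t_i^{-1}(\gamma/\iota(\gamma))t_i = u^2/\det(u)$, which is unipotent upper triangular modulo $\varpi_v^n$ and hence has trace $\equiv 2$. Since the trace is $\zeta+\zeta^{-1}$, the hypothesis forces $\zeta=1$, i.e.\ $\gamma=\iota(\gamma)\in F^\times$. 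Your argument becomes correct once you make this substitution.
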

\begin{proof}
Let $\gamma\in D^\times\cap t_iU\AFi t_i^{-1}$ and $l$ be the order of $t_i^{-1}\gamma t_i$ in $(t_i^{-1}D^\times t_i\cap U\AFi)/F^{\times}$. Let $\iota:D\to D$ be the main involution. Then $\gamma^l=\iota(\gamma)^l$ and $\frac{\gamma}{\iota(\gamma)}$ is a root of unity in a quadratic extension of $F$. Now since $t_iU_vF_v^\times t_i^{-1}$ contains $\gamma$, it also contains $\frac{\gamma}{\iota(\gamma)}$. Taking the reduced trace of $\frac{\gamma}{\iota(\gamma)}$, we see that the assumption on $v$ implies $l=1$.
\end{proof}

\begin{cor} \label{sfsmf}
Assume $\tau^{-1}|_{U\cap \AFi}=\psi|_{U\cap \AFi}$ and $U$ is sufficiently small. Then $S_{\tau,\psi}(U,A)$ is a finite free $A$-module and $S_{\tau\otimes B,\psi}(U,B)\simeq S_{\tau,\psi}(U,A)\otimes B$ for any $A$-algebra $B$.
\end{cor}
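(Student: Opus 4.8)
The plan is to deduce Corollary \ref{sfsmf} directly from the decomposition \eqref{dcp}, once the sufficiently small hypothesis is in force. First I would observe that the hypothesis ``$U$ is sufficiently small'' means precisely that each finite group $(t_i^{-1}D^\times t_i\cap U\AFi)/F^\times$ is trivial, so the isomorphism \eqref{dcp} simplifies to
\[
S_{\tau,\psi}(U,A)\;\simeq\;\bigoplus_{i\in I}W_\tau,
\]
a finite direct sum of copies of $W_\tau$ indexed by the (finite) set $I$. The compatibility hypothesis $\tau^{-1}|_{U\cap\AFi}=\psi|_{U\cap\AFi}$ is exactly what is needed to invoke \eqref{dcp} in the first place, so both hypotheses of the corollary get used.

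Next, to obtain that $S_{\tau,\psi}(U,A)$ is a \emph{finite free} $A$-module, I would note that $W_\tau$ itself need not be free over $A$ in general; however, in the applications $W_\tau$ will be finite free (this is how the lemma is used — $\tau$ comes from an algebraic representation over $\cO$ or a localization thereof). So the cleanest route is to assume $W_\tau$ is finite free over $A$ as part of the standing setup — it is described as ``a finite $A$-module'' above, but the freeness in the conclusion must come from freeness of $W_\tau$, so I would either add this to the hypotheses or note that in the relevant cases it holds. Granting that, $\bigoplus_{i\in I}W_\tau$ is finite free over $A$, and hence so is $S_{\tau,\psi}(U,A)$.

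For the base-change statement, given an $A$-algebra $B$, I would form $W_{\tau\otimes B}:=W_\tau\otimes_A B$ with the evidently induced $U$-action, and compute
\[
S_{\tau\otimes B,\psi}(U,B)\;\simeq\;\bigoplus_{i\in I}\bigl(W_\tau\otimes_A B\bigr)\;\simeq\;\Bigl(\bigoplus_{i\in I}W_\tau\Bigr)\otimes_A B\;\simeq\;S_{\tau,\psi}(U,A)\otimes_A B,
\]
using \eqref{dcp} for the pair $(\tau\otimes B,B)$ on the left (the compatibility $(\tau\otimes B)^{-1}|_{U\cap\AFi}=\psi|_{U\cap\AFi}$ is inherited from the one over $A$, and ``sufficiently small'' depends only on $U$, not on $A$) and the fact that $\otimes_A B$ commutes with the finite direct sum in the middle. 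One should check that the identification $S_{\tau\otimes B,\psi}(U,B)\simeq S_{\tau,\psi}(U,A)\otimes_A B$ obtained this way is the natural map (the one sending $f\otimes b$ to the function $g\mapsto b\cdot f(g)$, using that continuous functions out of the compact quotient $D^\times\backslash\DAi/U\AFi$, a finite set, are just tuples); this is immediate from the explicit description of \eqref{dcp} via $f\mapsto (f(t_i))_i$.

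The only genuine subtlety — and the step I would flag as the main point to get right — is the freeness claim: one must be careful that ``finite $A$-module'' for $W_\tau$ is upgraded to ``finite free'' in the situations where the corollary is applied, since otherwise the conclusion as literally stated is false (e.g. $W_\tau = A/\varpi$). Everything else is bookkeeping on the finite index set $I$ and the already-established decomposition \eqref{dcp}; there is no hard analysis or cohomology vanishing involved because sufficiently small level kills all the finite automorphism groups that would otherwise obstruct freeness and flat base change.
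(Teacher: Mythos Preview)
Your proposal is correct and follows exactly the (implicit) argument the paper intends: the corollary is stated without proof precisely because it is an immediate consequence of the decomposition \eqref{dcp} once the finite groups $(t_i^{-1}D^\times t_i\cap U\AFi)/F^\times$ are trivial. Your flagging of the freeness subtlety is accurate --- the paper's general setup only asks that $W_\tau$ be a finite $A$-module, while the conclusion of the corollary needs $W_\tau$ free; this is harmless since in every application (the algebraic weights $W_{(\vec{k},\vec{w}),A}$, trivial coefficients, etc.) $W_\tau$ is visibly finite free over $A$.
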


\begin{para} \label{pcaf}
The relationship of $S_{\tau,\psi}(U,A)$ with classical automorphic forms on $D^\times$ is as follows (see lemma 1.3 of \cite{Ta06}). Suppose $A=E$ and $(\vec{k},\vec{w})\in \Z_{>1}^{\Hom(F,\overbar{\Q_p})}\times  \Z^{\Hom(F,\overbar{\Q_p})}$ such that $k_\sigma+2w_\sigma$ is independent of $\sigma:F\to \overbar{\Q_p}$. Write $w=k_{\sigma}+2w_{\sigma}-1$. We can define the following algebraic representation $\tau_{(\vec{k},\vec{w})}$ of $D_p^\times=(D\otimes \Q_p)^\times$ on
\[W_{(\vec{k},\vec{w}),E}=\bigotimes_{\sigma:F\to E}(\Sym^{k_\sigma-2}(E^2)\otimes \det{}^{w_\sigma}),\]
where $\Sym^{k_\sigma-2}$ denotes the space of homogeneous polynomials of degree $k_\sigma-2$ in two variables with an action of $\GL_2(F_{v(\sigma)})$ given by
\[\begin{pmatrix} a&b\\c&d\end{pmatrix}(f)(X,Y)=f(\sigma(a)X+\sigma(c)Y,\sigma(b)X+\sigma(d)Y).\] 
Here $v(\sigma)$ is the place above $p$ given by $\sigma$.  Let $\psi:(\A^\infty_F)^\times/F^\times_{>>0}\to E^\times$ be a continuous character such that $\tau_{(\vec{k},\vec{w})}^{-1}|_{U\cap \AFi}=\psi|_{U\cap \AFi}$ and write $\tau=\tau_{(\vec{k},\vec{w})}$. Then there is an isomorphism:
\begin{eqnarray*}
S_{\tau_{(\vec{k},\vec{w})},\psi}(U,E)\otimes_{E,\iota_p}\bC&\stackrel{\sim}{\longrightarrow} &\Hom_{D_\infty^\times}(W_{\iota_p(\vec{k},\vec{w}),\bC}^*,C^\infty(D^\times\setminus (D\otimes\A_F)^\times/U,\psi_\bC)),\\
f\otimes 1&\longmapsto& ``w^*\mapsto (g\mapsto w^*(\tau_\bC(g_\infty)^{-1}\tau(g_p)f(g^\infty)))"
\end{eqnarray*}
where 
\begin{itemize}
\item $D_\infty^\times=(D\otimes_\Q \R)^\times$, 
\item $W_{\iota_p(\vec{k},\vec{w}),\bC}=W_{(\vec{k},\vec{w}),E}\otimes_{E,\iota_p}\bC$ is viewed as an algebraic representation $\tau_\bC$ of $D_\infty^\times$ (induced by $\iota_p$), and $W_{\iota_p(\vec{k},\vec{w}),\bC}^*$ is its $\bC$-linear dual.
\item $\psi_\bC:\A_F^\times/F^\times\to\bC^\times$ sends $g$ to $N_{F/\Q}(g_{\infty})^{1-w}\iota_p(N_{F/\Q}(g_p)^{w-1}\psi(g^\infty))$.
\item $C^\infty(D^\times\setminus (D\otimes\A_F)^\times/U,\psi_\bC)$ is the space of smooth $\bC$-valued functions on $D^\times\setminus(D\otimes\A_F)^\times$, right invariant by $U$ and with central character $\psi_\bC$.
\end{itemize}
Note that the right hand side is a subspace of automorphic forms on $(D\otimes\A_F)^\times$.

\begin{rem} \label{pact}
This isomorphism is functorial in $U$. Fix $v|p$ and take the direct limit over all open compact subgroups $U_v$ in $\GL_2(O_{F_v})$. We get 
\[\varinjlim_{U_v} S_{\tau_{(\vec{k},\vec{w})},\psi}(U,E)\otimes\bC\simeq \Hom_{D_\infty^\times}(W_{\iota_p(\vec{k},\vec{w}),\bC}^*,C^\infty(D^\times\setminus (D\otimes\A_F)^\times/U^v,\psi_\bC))\]
where $U^v=\prod_{w\neq v} U_w$. Clearly there is an action of $\GL_2(F_v)$ on the right hand side by right translation. On the left hand side, this is given by 
\[g(f)(h)=\tau(g)(f(hg)),~g\in\GL_2(F_v),f\in S_{\tau_{(\vec{k},\vec{w})},\psi}(U,E), h\in \DAi.\]
\end{rem}
For simplicity, we will write $S_{(\vec{k},\vec{w}),\psi}(U,E)$ for $S_{\tau_{(\vec{k},\vec{w})},\psi}(U,E)$ from now on.
\end{para}

\subsection{Completed homology and cohomology} \label{chac}
\begin{para}
Now we introduce completed homology (and cohomology). We denote by $S_\psi(U,A)$ when $W_{\tau}=A$ with trivial $U_v$-actions. Note that the definition of $S_\psi(U,A)$ makes sense for any topological $\cO$-\textit{module} $A$. In the below we will use $S_\psi(U,A)$ for any topological $\cO$-module $A$ by abuse of notation. Given $U^p=\prod_{v\nmid p}U_v$ (a tame level) and a torsion $\cO$-algebra $A$ with discrete topology, we define
\end{para}

\begin{defn} 
\[S_{\psi}(U^p,A):=\varinjlim_{U_p} S_\psi(U^pU_p,A),\]
with discrete topology, where $U_p=\prod_{v|p}U_v$ runs over all open compact subgroups $U_v$ of $\GL_2(F_v)$. The \textit{completed cohomology} of tame level $U^p$ is defined to be
\[S_\psi(U^p):=\Hom_\cO(E/\cO,S_{\psi}(U^p,E/\cO))\]
equipped with $p$-adic topology, and the \textit{completed homology} is defined to be
\[M_\psi(U^p):=S_\psi(U^p,E/\cO)^\vee=\Hom_\cO(S_{\psi}(U^p,E/\cO),E/\cO)\]
equipped with compact-open topology.
\end{defn}

\begin{rem}
It is easy to see that $S_\psi(U^p,A)$ is naturally isomorphic to the space of locally constant $A$-valued functions on $D^\times\setminus\DAi$ right invariant by $U^p$ with central character $\psi$. Using this equivalent definition, there is a natural action of $D_p^\times=\prod_{v|p}\GL_2(F_v)$ on all spaces defined above by right translation. It is almost by definition that for any open compact subgroup $U_p=\prod_{v|p}U_v\subseteq K_p=\prod_{v|p}\GL_2(O_{F_v})$,
\[S_{\psi}(U^p,A)^{U_p}=S_{\psi}(U^pU_p,A).\]
Hence $S_\psi(U^p,E/\cO)$ is a smooth admissible $\cO$-representation of $D_p^\times$ in the sense of \S2 of \cite{Pas13}. It is also clear that $S_\psi(U^p)$ is $p$-torsion free and
\begin{eqnarray*}
S_\psi(U^p)\cong \varprojlim_n S_\psi(U^p,\cO/p^n)\cong \Hom^{\mathrm{cont}}_{\cO}(M_\psi(U^p),\cO),~M_\psi(U^p)\cong \Hom_\cO(S_\psi(U^p),\cO).
\end{eqnarray*}
\end{rem}

\begin{prop} \label{chproj}
Suppose $U^p$ is small enough such that $U^pK_p$ is sufficiently small and $\psi|_{N_{D/F}(U^p)}$ is trivial. Then $S_\psi(U^p,E/\cO)$ is an injective object in $\mathrm{Mod}_{K_p,\psi}^{\mathrm{sm}}(\cO)$. 
\end{prop}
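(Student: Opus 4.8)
The plan is to show $S_\psi(U,E/\cO)$ is injective in $\mathrm{Mod}_{K_p,\psi}^{\mathrm{sm}}(\cO)$ by identifying it, as a smooth $K_p$-representation with central character $\psi$, with a direct sum of copies of a known injective object, namely the ``coinduced'' or ``functions'' module $\cC^\infty(\text{something finite},E/\cO)$ with the appropriate central character. Concretely, I would first use the equivalent description of $S_\psi(U^p,A)$ as locally constant $A$-valued functions on $D^\times\backslash\DAi$ right-invariant by $U^p$ with central character $\psi$, and then use the coset decomposition $\DAi=\bigsqcup_{i\in I}D^\times t_i (U^pK_p)\AFi$. Since $U^pK_p$ is sufficiently small, the finite groups $(t_i^{-1}D^\times t_i\cap U^pK_p\AFi)/F^\times$ are all trivial, so evaluation at the $t_i$ should yield a $K_p$-equivariant isomorphism
\[
S_\psi(U^p,E/\cO)\;\xrightarrow{\ \sim\ }\;\bigoplus_{i\in I}\cC^\infty\!\big(K_p/(K_p\cap t_i^{-1}D^\times t_i\AFi),\,E/\cO\big),
\]
where the right-hand side carries the natural $K_p$-action by right translation and the central character $\psi$; one must check that the finite-level spaces $S_\psi(U^pK_p',E/\cO)$ match up under $\eqref{dcp}$ compatibly as $K_p'$ shrinks, so that the direct limit over $K_p'$ produces exactly the smooth induction from the trivial subgroup (intersected with the center-condition), i.e. smooth functions on $K_p$ modulo that subgroup.

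Second, I would invoke the standard fact (this is in \S2 of \cite{Pas13}, in the discussion of $\mathrm{Mod}_{G,\zeta}^{\mathrm{sm}}$) that for a profinite group $H$ and a closed subgroup $H_0$ on which the central character is compatible, the space $\cC^\infty(H/H_0,E/\cO)$ with the compatible central character is an injective object in $\mathrm{Mod}_{H,\psi}^{\mathrm{sm}}(\cO)$ — equivalently, via Pontryagin duality, the dual is a projective object in the category $\kC_{H,\psi}(\cO)$, because $E/\cO$ is an injective $\cO$-module and smooth induction is exact and right adjoint to the (exact) restriction functor. Since a (possibly infinite) direct sum of injectives in a locally Noetherian / Grothendieck category of smooth representations is again injective — and $\mathrm{Mod}_{K_p,\psi}^{\mathrm{sm}}(\cO)$ has this property because $K_p$ is compact so every smooth representation is a union of finite-length (indeed finite) pieces — the direct sum over the finite set $I$ is trivially injective anyway, so finiteness of $I$ makes this step painless.

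The main obstacle I anticipate is step one: verifying that the evaluation-at-$t_i$ maps at finite level $U^pK_p'$ are compatible with shrinking $K_p'$ in the precise way needed to identify the colimit with $\cC^\infty(K_p/(\,\cdot\,),E/\cO)$ rather than with some twisted or ``induced from an intermediate subgroup'' object — in other words, bookkeeping the double-coset representatives $t_i$ and the groups $t_i^{-1}D^\times t_i\cap U^pK_p'\AFi$ as $K_p'$ varies, and checking the sufficiently-small hypothesis is inherited at all levels $U^pK_p'$ (it is, since $U^pK_p'\subseteq U^pK_p$). The hypothesis $\psi|_{N_{D/F}(U^p)}$ trivial is what guarantees the central-character condition $\tau^{-1}|_{U\cap\AFi}=\psi|_{U\cap\AFi}$ needed to even apply $\eqref{dcp}$ with the trivial coefficient system $W_\tau=E/\cO$, so I would check that compatibility first. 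Once the identification is in place, injectivity is immediate from the adjunction, and I would also remark that this is exactly the input needed later to run the patching argument with $S_\psi(U^p,E/\cO)^\vee$ projective in $\kC_{K_p,\psi}(\cO)$.
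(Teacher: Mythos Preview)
Your approach is correct and works, but it is genuinely different from the paper's. The paper does not identify $S_\psi(U^p,E/\cO)$ structurally as a sum of copies of $\cC^\infty_\psi(K_p,E/\cO)$. Instead it proves a Frobenius-reciprocity lemma: for any finite torsion $\cO$-module $M$ with smooth $K_p$-action there is a natural isomorphism
\[
S_{M,\psi}(U^pK_p,\cO)\;\xrightarrow{\ \sim\ }\;\Hom_{\cO[K_p]}\bigl(M^\vee,\,S_\psi(U^p,E/\cO)\bigr),
\]
and then checks injectivity by hand: given $0\to\pi'\to\pi$, reduce to $\pi$ of finite $\cO$-length, apply the lemma with $M=\pi^\vee,(\pi')^\vee$, and use that $S_{\tau,\psi}(U^pK_p,A)$ is free over $A$ (from the decomposition \eqref{dcp} and sufficient smallness) to get surjectivity; the general case follows by writing $\pi$ as a union of finite-length pieces and invoking Mittag--Leffler.

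Your route is arguably cleaner for the injectivity statement itself and, as you note, immediately gives the corollary that $M_\psi(U^p)$ is a direct summand of $(\cO[[K_p]]\otimes_{\cO[[O_{F,p}^\times]],\psi^{-1}}\cO)^{\oplus d}$, which the paper deduces separately afterwards. The paper's route, on the other hand, isolates the reciprocity lemma as a standalone tool: it is reused verbatim later to identify $\Hom_{E[U_p]}(W^*_{(\vec k,\vec w),E},S_\psi(U^p)_E)$ with the classical space $S_{(\vec k,\vec w),\psi}(U^pU_p,E)$, which is the bridge to automorphic forms needed for the density and local--global compatibility arguments. So the paper's detour through the lemma pays for itself downstream, whereas your structural identification would still need that reciprocity statement (or an equivalent) when the time comes to extract locally algebraic vectors. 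The bookkeeping you flag (tracking coset representatives as $U_p$ shrinks and matching the central character) is exactly the content of \eqref{dcp} applied at every level, and the sufficiently-small hypothesis is indeed inherited by all $U^pU_p\subset U^pK_p$; one small correction is that the quotient in your displayed isomorphism should really be $\cC^\infty_\psi(K_p,E/\cO)$ (smooth functions on $K_p$ transforming by $\psi|_{O_{F,p}^\times}$ under the center) rather than functions on a genuine quotient set, since the stabilizer you wrote down intersects $K_p$ only in its center.
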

\begin{proof}
This is a variant of proposition 4.4.3 of \cite{BH15}. We recall their proof here.
\begin{lem} \label{lcc}
Let $M$ be a finite torsion $\cO$-module with a smooth $K_p$-action. Assume $U^pK_p$ is sufficiently small. Then there is a natural isomorphism
\[S_{M,\psi}(U^pK_p,\cO)\stackrel{\sim}{\longrightarrow}\Hom_{\cO[K_p]}(M^\vee,S_\psi(U^p,E/\cO))\]
\end{lem}
\begin{proof}
The map is given by sending $f\in S_{M,\psi}(U^pK_p,\cO)$ to $\ell^\vee \mapsto (g\mapsto \ell^\vee(f(g)))$. One can easily construct an inverse of this map. We omit the details here.
\end{proof}
Now given $0\to\pi'\to\pi$ in $\mathrm{Mod}_{K_p,\psi}^{\mathrm{sm}}(\cO)$, one needs to show
\[\Hom_{\cO[K_p]}(\pi,S_\psi(U^p,E/\cO))\to\Hom_{\cO[K_p]}(\pi',S_\psi(U^p,E/\cO))\]
is surjective. By proposition 2.1.9. of \cite{Eme10a}, we may assume $\pi$ is admissible. If $\pi$ is a finite $\cO$-module, we may apply the previous lemma to $\pi^\vee,(\pi')^\vee$ and conclude the surjectivity from corollary \ref{sfsmf}. In general, we may write $\pi=\bigcup_{n} \pi_n$ as an increasing union of representations of finite $\cO$-length. Write $\pi'_n=\pi_n\cap\pi',\pi''_n=\pi_n/\pi'_n$. The result follows from taking inverse limit of 
\[\Hom_{\cO[K_p]}(\pi''_n,S_\psi(U^p,E/\cO))\to\Hom_{\cO[K_p]}(\pi_n,S_\psi(U^p,E/\cO))\to\Hom_{\cO[K_p]}(\pi'_n,S_\psi(U^p,E/\cO)).\]
Note that the first term is of finite $\cO$-length, so it satisfies Mittag-Leffler conditions.
\end{proof}

\begin{cor} \label{dsc}
Let $S_\psi(U^p)_E$ be $S_\psi(U^p)\otimes_\cO E$, which is a Banach space with unit ball $S_\psi(U^p)$. Then $S_{\psi}(U^p)_E$ is a topological direct summand of $\cC_\psi(K_p,E)^{\oplus d}$ as an $E[K_p]$-module for some $d$, where $\cC_\psi(K_p,E)$ is the space of continuous $E$-valued functions on $K_p$ with central character $\psi|_{O_{F,p}^\times}$.
\end{cor}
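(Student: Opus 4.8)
\emph{Strategy.} The plan is to Pontryagin‑dualize Proposition \ref{chproj} and then invert $p$. Recall $K_p=\prod_{v\mid p}\GL_2(O_{F_v})$, a compact $p$‑adic analytic group since $p$ splits completely in $F$. Let $\Lambda$ be the Pontryagin dual of the smooth $K_p$‑representation $\cC_\psi(K_p,E/\cO)^{\mathrm{sm}}$ of locally constant $E/\cO$‑valued functions on $K_p$ with central character $\psi|_{O_{F,p}^\times}$; concretely $\Lambda$ is the appropriate $\psi$‑twist of the completed group algebra $\cO[[K_p]]$. Since $\cC_\psi(K_p,E/\cO)^{\mathrm{sm}}$ is the smooth induction from the centre $O_{F,p}^\times$ to $K_p$ of the injective $\cO$‑module $E/\cO$ (with the centre acting through $\psi$), it is an injective cogenerator of $\mathrm{Mod}^{\mathrm{sm}}_{K_p,\psi}(\cO)$; hence, under the (exact) Pontryagin anti‑equivalence recalled in the Notations section, $\Lambda$ is a projective generator of the anti‑equivalent category $\mathrm{Mod}^{\mathrm{pro\,aug}}_{K_p,\psi}(\cO)$ of profinite $\Lambda$‑modules. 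I will also use the identities recorded just after the definition of $M_\psi(U^p)$: namely $S_\psi(U^p)\cong\Hom^{\mathrm{cont}}_\cO(M_\psi(U^p),\cO)$ and $S_\psi(U^p)_E=S_\psi(U^p)[1/p]$.

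\emph{Step 1: $M_\psi(U^p)=S_\psi(U^p,E/\cO)^\vee$ is a finitely generated projective $\Lambda$‑module.} Projectivity is formal: by Proposition \ref{chproj}, $S_\psi(U^p,E/\cO)$ is an injective object of $\mathrm{Mod}^{\mathrm{sm}}_{K_p,\psi}(\cO)$, and an exact anti‑equivalence carries injectives to projectives, so $M_\psi(U^p)$ is projective in $\mathrm{Mod}^{\mathrm{pro\,aug}}_{K_p,\psi}(\cO)$. For finite generation, use that $S_\psi(U^p,E/\cO)$ is admissible as a $K_p$‑representation (stated in the excerpt): choosing an open pro‑$p$ subgroup $U_p\subseteq K_p$, the ring $\cO[[U_p]]$ is local, and $M_\psi(U^p)/\mathfrak{a}_{U_p}M_\psi(U^p)\cong\bigl(S_\psi(U^p,E/\cO)^{U_p}\bigr)^\vee=\bigl(S_\psi(U^pU_p,E/\cO)\bigr)^\vee$ is finitely generated over $\cO$ by the sufficiently‑small hypothesis (cf. Corollary \ref{sfsmf}); so $M_\psi(U^p)/(\varpi,\mathfrak{a}_{U_p})$ is finite, and topological Nakayama shows $M_\psi(U^p)$ is topologically finitely generated over $\cO[[U_p]]$, a fortiori over $\Lambda$.

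\emph{Step 2: split, dualize, invert $p$.} By Step 1 there is a $K_p$‑equivariant continuous surjection $\Lambda^{\oplus d}\twoheadrightarrow M_\psi(U^p)$ for some $d$, which splits by projectivity, giving a topological decomposition $\Lambda^{\oplus d}\cong M_\psi(U^p)\oplus M'$ in $\mathrm{Mod}^{\mathrm{pro\,aug}}_{K_p,\psi}(\cO)$. Now apply the additive functor $\Hom^{\mathrm{cont}}_\cO(-,\cO)$, which preserves the splitting. Using $\Hom^{\mathrm{cont}}_\cO(M_\psi(U^p),\cO)\cong S_\psi(U^p)$ and the standard duality $\Hom^{\mathrm{cont}}_\cO(\Lambda,\cO)\cong\cC_\psi(K_p,\cO)$ (the continuous‑function side of the completed‑group‑algebra duality, with central character $\psi|_{O_{F,p}^\times}$), this exhibits $S_\psi(U^p)$ as a $K_p$‑stable direct summand of $\cC_\psi(K_p,\cO)^{\oplus d}$ with continuous projections. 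Finally, since $\cC_\psi(K_p,E)=\cC_\psi(K_p,\cO)[1/p]$ (every continuous function on the compact group $K_p$ is bounded) and $S_\psi(U^p)_E=S_\psi(U^p)[1/p]$, both integral models being $p$‑torsion free, applying $-\otimes_\cO E$ produces the desired realization of $S_\psi(U^p)_E$ as a topological $E[K_p]$‑module direct summand of $\cC_\psi(K_p,E)^{\oplus d}$, the complement being a closed $E[K_p]$‑submodule.

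\emph{Expected difficulty.} There is no essential obstacle here; the argument is purely formal once Proposition \ref{chproj} is available. The care needed is bookkeeping: tracking the three coefficient rings $\cO$, $E/\cO$, $E$ and their topologies (discrete, discrete, $p$‑adic Banach) as one passes back and forth through Pontryagin duality, pinning down the $\psi$‑twist in $\Lambda$ so that all central characters come out equal to $\psi|_{O_{F,p}^\times}$, and checking that the splitting is \emph{topological} — the last point being automatic precisely because the decomposition is already produced in the profinite (pseudocompact) category, before $p$ is inverted, so all structure maps are continuous by construction.
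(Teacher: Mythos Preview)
Your proof is correct and follows essentially the same route as the paper's: Pontryagin-dualize Proposition \ref{chproj} to see that $M_\psi(U^p)$ is a finitely generated projective module over the $\psi$-twisted Iwasawa algebra (the paper writes this explicitly as $\cO[[K_p]]\otimes_{\cO[[O_{F,p}^\times]],\psi^{-1}}\cO$), split off from a finite free module, and dualize back. The only cosmetic differences are that the paper takes $\Hom^{\cont}_\cO(-,E)$ in one step rather than $\Hom^{\cont}_\cO(-,\cO)$ followed by $\otimes_\cO E$, and it is slightly more explicit that on the profinite side the central character is $\psi^{-1}|_{O_{F,p}^\times}$ rather than $\psi|_{O_{F,p}^\times}$.
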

\begin{proof}
Taking the Pontryagin dual of the result in the proposition, we know that $M_\psi(U^p)$ is a projective object in the category of profinite linearly topological $\cO[[K_p]]$-modules with central character $\psi^{-1}|_{O_{F,p}^\times}$. Note that $S_\psi(U^p,E/\cO)$ is an admissible representation of $K_p$. Hence $M_\psi(U^p)$ is a finitely generated module over $\cO[[K_p]]$ and therefore a direct summand of 
\[(\cO[[K_p]]\otimes_{\cO[[O_{F,p}^\times]],\psi^{-1}}\cO)^{\oplus d}\]
for some $d$. Here we view $\cO$ as a $\cO[[O_{F,p}^\times]]$-module by $\psi^{-1}$. We get the corollary by taking the continuous $\Hom$ to $E$ since  
\[\Hom^{\cont}_E(\cO[[K_p]]\otimes_{\cO[[O_{F,p}^\times]],\psi^{-1}}\cO,E)\cong \cC_\psi(K_p,E).\]
\end{proof}

\begin{para}[Twisting of a character] \label{Tofac}
Given a continuous character $\theta:(\A^{\infty}_F)^\times/N_{D/F}(U^p)F_{>>0}^\times\to\cO^\times$,  we can define a natural isomorphism by twisting with $\theta$:
\[S_{\psi}(U^p)\stackrel{\sim}{\longrightarrow}S_{\psi\theta^2}(U^p)\]
by sending $f$ to $g\mapsto \theta(N_{D/F}(g))f(g)$. Here we identify $S_{\psi}(U^p)$ with functions on $D^\times\setminus \DAi$. Using this isomorphism, we may sometimes assume $\psi$ is of finite order (not now).
\end{para}

\begin{para}[A density result]
We need a density result like \cite{Eme1} \S5.4. This will reduce the local-global compatibility problem to the compatibility at \textit{crystalline points}. In this section, we fix a place $v$ above $p$ and we assume $\psi|_{N_{D/F}(U^p)}$ is trivial and $\psi|_{O_{F_v}^\times}$ is an \textit{algebraic character}. In particular, there exists an integer $m$ such that,
\[\psi(a_v)=\sigma_v(a_v)^m,~a_v\in O_{F_v}^\times,\]
where $\sigma_v:F\to E$ is the embedding induced by $v$. Consider the subspace $S_\psi(U^p)_E^{v-\mathrm{a},v'-\mathrm{la}}$  (see the definition below) of $\GL_2(O_{F_v})$-algebraic, $\prod_{w\neq v,w|p}\GL_2(O_{F_w})$-locally algebraic vectors of $S_\psi(U^p)_E$. The main result is
\end{para}

\begin{prop} \label{density}
Assume that $\psi|_{N_{D/F}(U^p)}$ is trivial and $\psi|_{O_{F_v}^\times}$ is an algebraic character. Then $S_\psi(U^p)_E^{v-\mathrm{a},v'-\mathrm{la}}$ is dense in $S_\psi(U^p)_E$.
\end{prop}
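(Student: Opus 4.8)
The plan is to reduce the density statement to the injectivity statement of Proposition~\ref{chproj} together with a standard density result for locally algebraic vectors inside injective smooth objects, following the strategy of \cite{Eme1} \S5.4. First I would recall from Corollary~\ref{dsc} that, after inverting $p$, $S_\psi(U^p)_E$ is a topological direct summand of $\cC_\psi(K_p,E)^{\oplus d}$ as an $E[K_p]$-module, where $\cC_\psi(K_p,E)$ is the space of continuous functions on $K_p$ with the prescribed central character. Density of a subspace defined by a local condition at the places above $p$ is preserved under passing to $K_p$-equivariant topological direct summands, so it suffices to prove the analogous density statement for $\cC_\psi(K_p,E)$ itself. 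For the factor $\cC(\GL_2(O_{F_v}),E)$ one invokes the classical fact (going back to the theory of the algebraic Peter--Weyl theorem, or Emerton's analysis of locally algebraic vectors) that the $\GL_2(O_{F_v})$-algebraic vectors are dense in $\cC(\GL_2(O_{F_v}),E)$; for the remaining factors $\prod_{w\neq v,\,w\mid p}\cC(\GL_2(O_{F_w}),E)$ one uses that locally algebraic (indeed locally constant) vectors are dense in continuous functions on a compact $p$-adic group. Taking the completed tensor product of these statements over the places above $p$ gives the density of $\cC_\psi(K_p,E)^{v\text{-}\mathrm{a},v'\text{-}\mathrm{la}}$ in $\cC_\psi(K_p,E)$.

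A cleaner route, and the one I would actually write, is to argue directly on $S_\psi(U^p,E/\cO)$ using Proposition~\ref{chproj}: that proposition says $S_\psi(U^p,E/\cO)$ is an injective object of $\mathrm{Mod}^{\mathrm{sm}}_{K_p,\psi}(\cO)$, hence by Pontryagin duality $M_\psi(U^p)$ is a projective, finitely generated $\cO[[K_p]]$-module with the appropriate central character, so it is a direct summand of a free module $(\cO[[K_p]]\otimes_{\cO[[O_{F,p}^\times]],\psi^{-1}}\cO)^{\oplus d}$. One then checks that for the free module $\cO[[K_p]]\otimes_{\cO[[O_{F,p}^\times]],\psi^{-1}}\cO$, dually $\cC_\psi(K_p,E)$, the space of $\GL_2(O_{F_v})$-algebraic and $\prod_{w\neq v}\GL_2(O_{F_w})$-locally algebraic vectors is dense; this is the Banach-space statement above. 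The density then descends to the direct summand $S_\psi(U^p)_E$ because the algebraic/locally-algebraic subspace of a $K_p$-stable complemented subspace is the intersection of that subspace with the corresponding subspace upstairs, and a closed projection carries a dense subspace onto a dense subspace.

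For the $v$-algebraic direction specifically, one must also use the hypothesis that $\psi|_{O_{F_v}^\times}$ is algebraic (so that the central character condition is compatible with restricting to algebraic vectors at $v$, i.e. $\psi(a_v)=\sigma_v(a_v)^m$), and that $\psi|_{N_{D/F}(U^p)}$ is trivial (so that the whole setup makes sense and $S_\psi(U^pK_p)$ is sufficiently small). Concretely, the $\GL_2(O_{F_v})$-algebraic vectors of $\cC(\GL_2(O_{F_v}),E)$ twisted by the algebraic central character $\sigma_v^m$ are exactly the matrix coefficients of algebraic representations, and these span a dense subspace by the density of polynomial functions (Stone--Weierstrass over a compact $p$-adic analytic group, in its algebraic incarnation).

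The main obstacle I expect is purely bookkeeping: carefully matching central characters through the chain of identifications (completed homology $\leftrightarrow$ $\cO[[K_p]]$-modules $\leftrightarrow$ continuous functions) and verifying that the notion of ``$v$-algebraic, $v'$-locally algebraic vectors'' is compatible with passing to $K_p$-equivariant direct summands and with the twist by $\theta$ of \ref{Tofac}. The genuinely analytic input --- density of (locally) algebraic vectors in continuous functions on a compact $p$-adic group --- is standard and I would simply cite it (e.g. from \cite{Eme1} or the general theory of locally algebraic representations); the work is in the reductions, all of which are formal once Corollary~\ref{dsc} is in hand.
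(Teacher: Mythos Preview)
Your proposal is correct and follows essentially the same route as the paper. Both arguments reduce, via Corollary~\ref{dsc}, to proving that $\cC_\psi^{v\text{-}\mathrm{a},v'\text{-}\mathrm{la}}(K_p,E)$ is dense in $\cC_\psi(K_p,E)$, and both observe that density passes to $K_p$-equivariant topological direct summands.

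The only noteworthy difference is in how the density for $\cC_\psi(K_p,E)$ itself is established. You factor $K_p=\prod_{w\mid p}\GL_2(O_{F_w})$ and treat each factor separately via Stone--Weierstrass/Peter--Weyl, then take a completed tensor product. The paper instead reduces to the case of trivial central character by a module-generation trick: it shows that $\cC_\psi(K_p,E)=\cC(PK_p,E)\cdot\cC_\psi^{v\text{-}\mathrm{a},v'\text{-}\mathrm{la}}(K_p,E)$ via an explicit partition-of-unity argument (writing $f=\frac{f\mathbf{1}_a}{\psi(a)}\cdot\psi(a)+\frac{f\mathbf{1}_b}{\psi(b)}\cdot\psi(b)$ on $\GL_2(\Z_p)$), and then cites Proposition~A.3 of \cite{Pas14} for the density of algebraic vectors in $\cC(PK_p,E)$. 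Your approach is arguably cleaner conceptually; the paper's trick has the virtue of reducing to a single citable statement about $\mathrm{PGL}_2$. Either way, the ``bookkeeping'' you flag around the central character is genuinely the only place where care is needed, and the hypothesis that $\psi|_{O_{F_v}^\times}$ is algebraic enters exactly where you say it does.
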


\begin{proof}
We first recall the construction of $S_\psi(U^p)_E^{v-\mathrm{a},v'-\mathrm{la}}$. For $(\vec{k},\vec{w})\in \Z_{>1}^{\Hom(F,\overbar{\Q_p})}\times  \Z^{\Hom(F,\overbar{\Q_p})}$ such that $k_\sigma+2w_\sigma=m+2$ for any $\sigma$, we can associate an algebraic representation of $K_p$ (see \ref{pcaf}):
\[W_{(\vec{k},\vec{w}),E}=\bigotimes_{\sigma:F\to E}(\Sym^{k_\sigma-2}(E^2)\otimes \det{}^{w_\sigma}).\]
Note that these $W_{(\vec{k},\vec{w}),E}$ exhaust all algebraic representations of $K_p$ with central character locally same as $\psi|_{O_{F,p}^\times}$. Let $U^v$ be an open compact subgroup of $\prod_{w\neq v,w|p}\GL_2(O_{F_w})$. Then the subspace of $\GL_2(O_{F_v})U^v$-algebraic vectors of $S_\psi(U^p)_E$ is defined to be the image of the evaluation map:
\[\bigoplus_{(\vec{k},\vec{w})}\Hom_{E[\GL_2(O_{F_v})U^v]}(W_{(\vec{k},\vec{w}),E},S_\psi(U^p)_E)\otimes_E W_{(\vec{k},\vec{w}),E}\to S_\psi(U^p)_E,\]
where the sum is taken over all $(\vec{k},\vec{w})$ with $k_\sigma+2w_\sigma=m+2$ for any $\sigma$. The subspace $S_\psi(U^p)_E^{v-\mathrm{a},v'-\mathrm{la}}$ of $\GL_2(O_{F_v})$-algebraic, $\prod_{w\neq v,w|p}\GL_2(O_{F_w})$-locally algebraic vectors of $S_\psi(U^p)_E$ is defined to be the union of all $\GL_2(O_{F_v})U^v$-algebraic vectors where $U^v$ runs through all open compact subgroups of $\prod_{w\neq v,w|p}\GL_2(O_{F_w})$.

Let $\cC_{\psi}^{v-\mathrm{a},v'-\mathrm{la}}(K_p,E)$ be the subspace of $\GL_2(O_{F_v})$-algebraic, $\prod_{w\neq v,w|p}\GL_2(O_{F_w})$-locally algebraic vectors in $\cC_{\psi}(K_p,E)$. By corollary \ref{dsc}, it suffices to prove that $\cC_{\psi}^{v-\mathrm{a},v'-\mathrm{la}}(K_p,E)$ is dense in $\cC_{\psi}(K_p,E)$. Note that
\[\cC_{\psi}(K_p,E)\cong \widehat{\bigotimes}_{w|p} \cC_{\psi|_{O_{F_w}^\times}}(\GL_2(O_{F_w}),E),\]
\[\cC_{\psi}^{v-\mathrm{a},v'-\mathrm{la}}(K_p,E)\cong \cC_{\psi|_{O_{F_v}^\times}}^{\mathrm{a}}(\GL_2(O_{F_w}),E)\otimes_E{\bigotimes}_{w|p,w\neq v} \cC_{\psi|_{O_{F_w}^\times}}^{\mathrm{la}}(\GL_2(O_{F_w}),E),\]
where $\cC_{\psi|_{O_{F_v}^\times}}^{\mathrm{a}}(\GL_2(O_{F_v}),E)\subseteq\cC_{\psi|_{O_{F_v}^\times}}(\GL_2(O_{F_v}),E)$ denotes the subspace of $\GL_2(O_{F_v})$-algebraic vectors and $ \cC_{\psi|_{O_{F_w}^\times}}^{\mathrm{la}}(\GL_2(O_{F_w}),E)\subseteq  \cC_{\psi|_{O_{F_w}^\times}}(\GL_2(O_{F_w}),E)$ denotes the subspace of $\GL_2(O_{F_w})$-locally algebraic vectors, $w\neq v$. It's enough to show that
$\cC_{\psi|_{O_{F_v}^\times}}^{\mathrm{a}}(\GL_2(O_{F_v}),E)$ (resp.  $ \cC_{\psi|_{O_{F_w}^\times}}^{\mathrm{la}}(\GL_2(O_{F_w}),E)$) is dense  inside $\cC_{\psi|_{O_{F_v}^\times}}(\GL_2(O_{F_v}),E)$ (resp. $\cC_{\psi|_{O_{F_w}^\times}}(\GL_2(O_{F_w}),E)$). We will only give a proof  for $\cC_{\psi|_{O_{F_v}^\times}}^{\mathrm{a}}(\GL_2(O_{F_v}),E)$ here. The case of $ \cC_{\psi|_{O_{F_w}^\times}}^{\mathrm{la}}(\GL_2(O_{F_w}),E)$ can be proved in a similar way. 

Let $\psi_v=\psi|_{O_{F_v}^\times}$. If $\psi_v$ is trivial, then $\cC_{\psi_v}(\GL_2(O_{F_v}),E)= \cC(\mathrm{PGL}_2(O_{F_v}),E)$, the space of continuous $E$-valued functions on $\mathrm{PGL}_2(O_{F_v})$. The density of $\mathrm{GL}_2(O_{F_v})$-algebraic vectors  in $\cC(\mathrm{PGL}_2(O_{F_v}),E)$ follows from Proposition 6.A.17 of \cite{Pas14} with $G=\PGL_2$.

In general, note that  $\cC(\mathrm{PGL}_2(O_{F_v}),E)$ is a commutative ring and $\cC_{\psi_v}(\GL_2(O_{F_v}),E)$ is a $\cC(\PGL_2(O_{F_v}),E)$-module. It suffices to prove
\[\cC_{\psi_v}(\GL_2(O_{F_v}),E)=\cC(\PGL_2(O_{F_v}),E)\cdot \cC_{\psi_v}^{\mathrm{a}}(\GL_2(O_{F_v}),E)\]
i.e. $\cC_{\psi_v}(\GL_2(O_{F_v}),E)$ is generated by its $\GL_2(O_{F_v})$-algebraic vectors as a $\cC(\PGL_2(O_{F_v}),E)$-module. Suppose $\psi_v(a_v)=\sigma_v(a_v)^m,~a_v\in O_{F_v}^\times$ for some integer $m$.  Write 
\[\GL_2(O_{F_v})\subseteq M_2(O_{F_v})=\{\begin{pmatrix}a& b\\ c&d\end{pmatrix},a,b,c,d\in O_{F_v}\}.\]
Then $\frac{a^{|m|}}{\det{}^{\frac{|m|-m}{2}}},\frac{b^{|m|}}{\det{}^{\frac{|m|-m}{2}}}$ can be viewed as  elements in $ \cC_{\psi_v}^{\mathrm{a}}(\GL_2(O_{F_v}),E)$. Let
\begin{eqnarray*}
U_a&=&\{\begin{pmatrix}a& b\\ c&d\end{pmatrix}\in\GL_2(O_{F_v}),a\notin p O_{F_v}\}\\
U_b&=&\{\begin{pmatrix}a& b\\ c&d\end{pmatrix}\in\GL_2(O_{F_v}),a\in pO_{F_v},b\notin p O_{F_v}\}
\end{eqnarray*}
It is clear that $U_a,U_b$ are disjoint open sets of $\GL_2(O_{F_v})$ and cover the whole group since $F_v=\Q_p$. Let $\mathbf{1}_a,\mathbf{1}_b$ be the characteristic functions of $U_a,U_b$. For any $f\in \cC_{\psi_v}(\GL_2(O_{F_v}),E)$, we can write
\[f=\frac{f\mathbf{1}_a\det{}^{\frac{|m|-m}{2}}}{a^{|m|}}\cdot \frac{a^{|m|}}{\det{}^{\frac{|m|-m}{2}}}+\frac{f\mathbf{1}_b\det{}^{\frac{|m|-m}{2}}}{b^{|m|}}\cdot \frac{b^{|m|}}{\det{}^{\frac{|m|-m}{2}}}.\]
Note that $\frac{f\mathbf{1}_a\det{}^{\frac{|m|-m}{2}}}{a^{|m|}},\frac{f\mathbf{1}_b\det{}^{\frac{|m|-m}{2}}}{b^{|m|}}$ are well-defined functions on $\PGL_2(O_{F_v})$. Hence we have expressed $f$ as an element in $\cC(\PGL_2(O_{F_v}),E)\cdot \cC_{\psi_v}^{\mathrm{a}}(\GL_2(O_{F_v}),E)$ and this is exactly what we want.
\end{proof}

\begin{para}[Relation with classical automorphic forms] \label{rlcc}
We can recover classical automorphic forms on $D^\times$ from the completed cohomology in the following  way. Suppose $(\vec{k},\vec{w})\in \Z_{>1}^{\Hom(F,\overbar{\Q_p})}\times  \Z^{\Hom(F,\overbar{\Q_p})}$ such that $k_\sigma+2w_\sigma$ is independent of $\sigma$ and $U_p$ is an open subgroup of $K_p$ such that $\psi|_{U^pU_p\cap (\A_F^\infty)^\times}=\tau^{-1}_{(\vec{k},\vec{w})}|_{U^pU_p\cap (\A_F^\infty)^\times}$. Then it is not too hard to deduce from lemma \ref{lcc} that
\[S_{(\vec{k},\vec{w}),\psi}(U^pU_p,E)\simeq \Hom_{E[U_p]}(W^{*}_{(\vec{k},\vec{w}),E},S_\psi(U^p)_E).\]
In other words, locally algebraic vectors in $S_{\psi}(U^p)_E$ can be identified with automorphic forms on $(D\otimes \A_F^\infty)^\times$ (see \ref{pcaf}).
\end{para}

\subsection{Hecke algebras and Pseudo-representations} \label{haapr}
\begin{para}
First we introduce Hecke algebras on finite levels. Fix a topological $\Z_p$-algebra $A$, a level $U$, character $\psi$ and a representation $\tau:U_p\to\Aut(W_{\tau})$. Let $S$ be a finite set of primes of $F$ containing all places above $p$ and places $v$ where either $D$ is ramified or $U_v$ is not a maximal open subgroup. For any place $v\notin S$, we define the Hecke operator $T_v\in \End(S_{\tau,\psi}(U,A))$ to be the double coset action $[U_v\begin{pmatrix}\varpi_v&0\\0&1\end{pmatrix}U_v]$. More precisely, write $U_v\begin{pmatrix}\varpi_v&0\\0&1\end{pmatrix}U_v=\bigsqcup_i \gamma_iU_v$, then
\[(T_v\cdot f)(g)=\sum_i f(g\gamma_i),~f\in S_{\tau,\psi}(U,A).\]

We define Hecke algebra $\T^S_{\tau,\psi}(U,A)\subseteq \End_A(S_{\tau,\psi}(U,A))$ to be the $A$-subalgebra generated by all $T_v,v\notin S$. This is a finite commutative $A$-algebra. By definition, $S_{\tau,\psi}(U,A)$ is a finitely-generated faithful $\T^S_{\tau,\psi}(U,A)$-module.

If $U'\subseteq U$ is a smaller open subgroup, then we have a natural surjective map $\T^S_{\tau,\psi}(U',A)\to \T^S_{\tau,\psi}(U,A)$ induced by $S_{\tau,\psi}(U,A)\subseteq S_{\tau,\psi}(U',A)$. 

If $U$ is small enough and $\psi|_{U\cap (\A_F^\infty)^\times}=\tau^{-1}|_{U\cap (\A_F^\infty)^\times}$, then for any ideal $I$ of $A$, there is a natural surjective map $\T^S_{\tau,\psi}(U,A)\to \T^S_{\tau\otimes A/I,\psi}(U,A/I)$ induced by $S_{\tau\otimes A/I,\psi}(U,A/I)\simeq S_{\tau,\psi}(U,A)\otimes A/I$ (see \ref{sfsmf}). This also implies that $\T^S_{\tau,\psi}(U,A)\otimes A/I\to \T^S_{\tau\otimes A/I,\psi}(U,A/I)$ has nilpotent kernel since $S_{\tau,\psi}(U,A)\otimes A/I$ has full supports on both rings.
\end{para}

\begin{para} \label{hac}
Now take $\tau=\tau_{(\vec{k},\vec{w})}$ (see \ref{pcaf}) and $A=\cO$. Assume $\psi|_{U\cap (\A_F^\infty)^\times}=\tau^{-1}_{(\vec{k},\vec{w})}|_{U\cap (\A_F^\infty)^\times}$. It is well-known that there exists a two-dimensional pseudo-representation:
\[T_{(\vec{k},\vec{w}),\psi}(U):G_{F,S}\to\T^S_{\tau,\psi}(U,\cO)=:\T^S_{(\vec{k},\vec{w}),\psi}(U,\cO)\]
sending $\Frob_v$ to $T_v$, $v\notin S$ . This pseudo-representation has determinant $\psi\varepsilon^{-1}$. Here by abuse of notation, we identify $\psi$ with a character of $G_{F,S}$ by the class field theory. Note that the image of this map contains all $T_v,v\notin S$ hence generates the whole Hecke algebra as an $\cO$-module. By Chebatorev density Theorem, $\T^S_{(\vec{k},\vec{w}),\psi}(U,\cO)$ is independent of $S$, so we may simply write $\T_{(\vec{k},\vec{w}),\psi}(U,\cO)$.

Under the same assumption plus $U$ is sufficiently small, we have a natural surjective map $\T_{(\vec{k},\vec{w}),\psi}(U,\cO)/\varpi^n\to \T^S_{\tau_{(\vec{k},\vec{w})}\otimes \cO/\varpi^n,\psi}(U,\cO/\varpi^n)$ for any $n$. Hence there also exists a two-dimensional pseudo-representation over $\T^S_{\tau_{(\vec{k},\vec{w})}\otimes \cO/\varpi^n,\psi}(U,\cO/\varpi^n)$ which is independent of $S$. For simplicity, we write it as $\T_{(\vec{k},\vec{w}),\psi}(U,\cO/\varpi^n)$.
\end{para}

\begin{para} \label{exgal}
We will write $\T^S_\psi(U,A)$ for the Hecke algebra if $\tau$ is the trivial action on $A$. Suppose $A=\cO/\varpi^n$, $U$ is sufficiently small and $\psi|_{U\cap (\A_F^\infty)^\times}$ is trivial modulo $\varpi^n$, i.e. $\psi(U\cap (\A_F^\infty)^\times)\subseteq 1+\varpi^n\cO$. We are going to show the existence of pseudo-representation $G_{F,S}\to\T^S_\psi(U,\cO/\varpi^n)$ with determinant $\psi\varepsilon^{-1}$ sending $\Frob_v$ to $T_v$ for $v\notin S$ in this case. 

It suffices to treat the case where $U$ is small enough such that $\psi|_{N_{D/F}(U)}$ is trivial modulo $\varpi^n$. In general, we may shrink $U$ to $U'$ small enough and conclude from the surjective map $\T^S_\psi(U',\cO/\varpi^n)\to \T^S_\psi(U,\cO/\varpi^n)$. Consider the Teichm\"uller lifting $\tilde\psi$ of $\psi$ modulo $\varpi$. Then $\psi^{-1}\tilde\psi$ is of pro-$p$ order, hence can be written as $\theta^2$ for some character $\theta:(\A_F^{\infty})^\times/F^\times_{>>0}\to\cO^\times$. It is easy to see that we may choose $\theta$ to be trivial on ${N_{D/F}(U)}$. Then twisting with $\theta$ induces an isomorphism (\ref{Tofac}):
\[\varphi_\theta:S_{\psi}(U,\cO/\varpi^n)\stackrel{\sim}{\longrightarrow}S_{\tilde\psi}(U,\cO/\varpi^n)\]
sending $f$ to $g\mapsto \theta(N_{D/F}(g))f(g)$. Note that this map also induces an isomorphism between Hecke algebras with
\[T_v\circ\varphi_\theta=\theta(\Frob_v)\varphi_{\theta}\circ T_v.\]
Now since $\tilde\psi$ is of finite order, we may apply the results in \ref{hac} with $\vec{k}=\vec{w}=\vec{0}$ and get the pseudo-representation over $\T^S_{\tilde\psi}(U,\cO/\varpi^n)$ hence on $\T^S_{\psi}(U,\cO/\varpi^n)$. The determinant is also easy to determine under this map. One direct corollary is that $\T^S_{\psi}(U,\cO/\varpi^n)$ is independent of $S$. So we may drop $S$ in it from now on.
\end{para}

\begin{defn}[Big Hecke algebra]
Let $U^p$ be a tame level and $\psi:(\A_F^\infty)^\times/(U^p\cap (\A_F^\infty)^\times)F^\times_{>>0}\to\cO^\times$ be a continuous character. Then we define the Hecke algebra 
\[\T_\psi(U^p)=\varprojlim_{(n,U_p)\in\cI}\T_{\psi}(U^pU_p,\cO/\varpi^n),\]
where $\cI$ is the set of pairs $(n,U_p)$ with $U_p\subseteq K_p$ and $n$ a positive integer such that $\psi|_{U_p\cap O_{F,p}^\times}\equiv 1\mod \varpi^n$. Equivalently, this is also
\[\varprojlim_{n}\varprojlim_{U_p\subseteq K_p}\T_{\psi}(U^pU_p,\cO/\varpi^n).\]
\end{defn}

\begin{para} \label{hagal}
It is almost by definition that $\T_\psi(U^p)$ acts faithfully on the completed cohomology $S_\psi(U^p)$ and  commutes with the action of $D_p^\times$. By our previous discussion, there exists a two-dimensional pseudo-representation $T_\psi(U^p):G_{F,S}\to \T_\psi(U^p)$ sending $\Frob_v$ to $T_v$ with determinant $\psi\varepsilon^{-1}$. 

From now on, we will always assume $\psi$ is trivial on $U^p\cap(\A^\infty_F)^\times$.
\end{para}

\begin{prop}\label{semiloc}
Let $U_p$ be a pro-$p$ subgroup of $K_p$. Hence $\psi|_{U_p\cap O_{F,p}^\times}$ is trivial modulo $\varpi$. Then the natural map
\[\T_{\psi}(U^p)\to\T_{\psi}(U^pU_p,\F)\]
induces a bijection between the sets of maximal ideals. In particular, $\T_{\psi}(U^p)$ is semi-local. 
\end{prop}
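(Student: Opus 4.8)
The plan is to prove that the natural surjection $\alpha\colon\T:=\T_\psi(U^p)\to\T_\psi(U^pU_p,\F)$ has kernel contained in the Jacobson radical $\mathrm{rad}(\T)$; semi-locality is then automatic, since $\T/\mathrm{rad}(\T)$ becomes a quotient of the Artinian ring $\T/\ker\alpha=\T_\psi(U^pU_p,\F)$, and the induced map on maximal ideals is a bijection because every maximal ideal of $\T$ contains $\mathrm{rad}(\T)\supseteq\ker\alpha$. That $\alpha$ is surjective is because it is one of the projections defining $\T$: as $U_p$ is pro-$p$ and $\psi$ is continuous, $\psi|_{U_p\cap O_{F,p}^\times}$ is trivial mod $\varpi$, so $(1,U_p)\in\cI$. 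First I would record the structural facts needed about $\T$: the module $M_\psi(U^p)$ is finitely generated over the Noetherian, semi-local, complete ring $\cO[[K_p]]$ (admissibility of $S_\psi(U^p,E/\cO)$), and $\T$ acts faithfully and $K_p$-equivariantly on $M_\psi(U^p)$, hence embeds into the module-finite $\cO[[K_p]]$-algebra $\End_{\cO[[K_p]]}(M_\psi(U^p))$ and is itself module-finite over $\cO[[K_p]]$. Consequently $\T$ is Noetherian, semi-local and complete, its maximal ideals are open with finite residue field, and any such maximal ideal factors through some finite-level Hecke algebra $\T_\psi(U^pU_1,\F)$.

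Next, fixing a maximal ideal $\km\subset\T$, I would write it as the preimage of a maximal ideal $\km_1$ of $\T_\psi(U^pU_1,\F)$ for some open pro-$p$ subgroup $U_1$, which after shrinking may be taken inside $U_p$ and normal in $U_p$; let $\lambda\colon\T\to\F':=\T_\psi(U^pU_1,\F)/\km_1$ be the induced character, so $\ker\lambda=\km$. Since $S_\psi(U^pU_1,\F)$ is a faithful module over the Artinian algebra $\T_\psi(U^pU_1,\F)$, the $\km_1$-torsion subspace $V:=S_\psi(U^pU_1,\F)[\km_1]$ is non-zero. The crucial observation is that $V$ is stable under $U_p/U_1$: the $U_p$-action on $S_\psi(U^pU_1,\F)=S_\psi(U^p,\F)^{U_1}$ commutes with all the Hecke operators $T_v$, $v\notin S$, as these are given by double cosets at places away from $p$. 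Thus $V$ is a non-zero smooth $\F$-representation of the finite $p$-group $U_p/U_1$, and in characteristic $p$ it therefore has a non-zero invariant vector $w$. This $w$ lies in $S_\psi(U^p,\F)^{U_p}=S_\psi(U^pU_p,\F)$ and is still killed by $\km_1$, so $\T$ acts on $w$ through $\lambda$.

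To finish, I would use that the $\T$-action on $S_\psi(U^pU_p,\F)$ factors through $\alpha$: any $t\in\ker\alpha$ annihilates $w$, while $tw=\lambda(t)w$ with $w\neq 0$ in the $\F'$-vector space $V$, so $\lambda(t)=0$, i.e.\ $t\in\km$. As $\km$ was arbitrary this yields $\ker\alpha\subseteq\mathrm{rad}(\T)$, as wanted. I expect the main obstacle to be exactly the middle step: realizing an abstract maximal ideal of the "big" Hecke algebra at a finite level and then descending the associated eigensystem from level $U_1$ to the prescribed pro-$p$ level $U_p$ — this is the single place where the hypothesis that $U_p$ is pro-$p$ is genuinely used, via the fact that a non-zero mod-$p$ representation of a pro-$p$ group has non-zero invariants. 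The remaining ingredients (module-finiteness of $\T$ over $\cO[[K_p]]$, openness of maximal ideals, commutation of the prime-to-$p$ Hecke operators with the $U_p$-action, and faithfulness at finite level) are routine and I would dispatch them briefly.
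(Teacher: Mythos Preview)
Your proof is correct and rests on the same key idea as the paper's: given a maximal ideal $\km$, realize the corresponding eigensystem at some deeper pro-$p$ level $U_1\subseteq U_p$, take the nonzero $\km$-torsion in $S_{\psi}(U^pU_1,\F)$, and use that a finite $p$-group acting on a nonzero $\F$-vector space has a nonzero fixed vector to produce an eigenvector already at level $U_p$.

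The packaging differs slightly. The paper reduces immediately to finite levels: it shows that for every $(n,U'_p)\in\cI$ with $U'_p\trianglelefteq U_p$, the surjection $\T_{\psi}(U^pU'_p,\cO/\varpi^n)\to\T_{\psi}(U^pU_p,\F)$ is a bijection on maximal ideals, and then passes to the inverse limit. You instead first prove a structural fact about the big Hecke algebra (module-finiteness over $\cO[[K_p]]$, hence semi-locality and openness of maximal ideals) in order to know that every maximal ideal of $\T$ factors through some finite level, and then run the fixed-point argument once. Both routes are valid; the paper's is shorter because it never needs to compare the inverse-limit topology on $\T$ with the $\cO[[K_p]]$-module topology, which in your approach is implicitly used when you pass from ``module-finite over $\cO[[K_p]]$'' to ``maximal ideals factor through some $\T_{\psi}(U^pU_1,\F)$''. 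That step is correct but deserves a sentence of justification (e.g.\ both topologies are compact Hausdorff on $\T$ and one refines the other, hence they agree).
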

\begin{proof}
This is lemma 2.1.14 of \cite{GN16}. We give a sketch here. It suffices to prove the map
\[\T_\psi(U^pU'_p,\cO/\varpi^n)\to\T_{\psi}(U^pU_p,\F)\]
induces a bijection of maximal ideals, where $U'_p\subseteq U_p$ is an open normal subgroup small enough so that $U^pU'_p$ is sufficiently small and $\psi|_{U'_p\cap O_{F,p}^\times}$ is trivial modulo $\varpi^n$. 

Let $\km$ be a maximal ideal of the Artinian ring $\T_{\psi}(U^pU'_p,\cO/\varpi^n)$. Since $\T_{\psi}(U^pU'_p,\cO/\varpi^n)$ acts faithfully on $S_{\psi}(U^pU'_p,\cO/\varpi^n)$, we know that
\[S_{\psi}(U^pU'_p,\cO/\varpi^n)[\km]\neq 0.\]
The $p$-group $U_p/U'_p$ acts naturally on this $\F$-vector space, hence has a non-zero fixed vector, which by definition belongs to $S_{\psi}(U^pU_p,\F)$. Thus $S_{\psi}(U^pU_p,\F)[\km]\neq 0$ and $\km$ is also a maximal ideal of $\T_{\psi}(U^pU_p,\F)$.
\end{proof}

Let $U'_p,n$ be as in the proof. It follows from Theorem 8.15 of \cite{Mat1} that 
\[\T_\psi(U^pU'_p,\cO/\varpi^n)\cong \T_\psi(U^pU'_p,\cO/\varpi^n)_{\km_1}\times \cdots \T_\psi(U^pU'_p,\cO/\varpi^n)_{\km_r}\]
where $\km_1,\cdots,\km_r$ are the maximal ideals of $\T_\psi(U^pU'_p,\cO/\varpi^n)$. Each $\T_\psi(U^pU'_p,\cO/\varpi^n)_{\km_i}$ is an Artinian local $\cO$-algebra. Passing to the limit, we get

\begin{cor}
Let $\km_1,\cdots,\km_r$ be the maximal ideals of $\T_\psi(U^p)$. Then
\begin{enumerate}
\item $\T_\psi(U^p)\cong \T_\psi(U^p)_{\km_1}\times\cdots\times \T_\psi(U^p)_{\km_r}$ and each $\T_\psi(U^p)_{\km_i}$ is $\km_i$-adically complete and separated. 
\item $S_{\psi}(U^p)\cong S_{\psi}(U^p)_{\km_1}\oplus\cdots\oplus S_{\psi}(U^p)_{\km_r}$ and each $S_\psi(U^p)_{\km_i}$ is $\km_i$-adically complete and separated. 
\end{enumerate}
\end{cor}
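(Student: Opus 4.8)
The plan is to obtain the corollary by passing to the inverse limit in the displayed finite-level decomposition, the only genuinely new points being the compatibility of the relevant idempotents and the identification of the limit topology with the $\km_i$-adic one.

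First I would restrict attention to a cofinal system of pairs $(n,U_p)\in\cI$ with $U_p$ pro-$p$, so that $\T_\psi(U^p)=\varprojlim_{(n,U_p)}\T_\psi(U^pU_p,\cO/\varpi^n)$ and each $\T_\psi(U^pU_p,\cO/\varpi^n)$ is a finite, hence Artinian, $\cO$-algebra, which by Theorem 8.15 of \cite{Mat1} is the product of its localizations at its (finitely many) maximal ideals. The key observation is that every transition map in this system is surjective and induces a bijection on maximal ideals compatible with the natural labellings: reduction along $\cO/\varpi^{n'}\to\cO/\varpi^n$ has nilpotent kernel, so is a homeomorphism on $\Spec$, and the level-lowering maps $\T_\psi(U^pU'_p,\F)\to\T_\psi(U^pU_p,\F)$ for pro-$p$ groups $U'_p\subseteq U_p$ are bijective on maximal ideals by precisely the finite $p$-group fixed-vector argument used to prove Proposition \ref{semiloc}; since $\varpi$ lies in every maximal ideal of every $\cO/\varpi^n$-algebra in sight, these facts also show the surjection $\T_\psi(U^p)\to\T_\psi(U^pU_p,\cO/\varpi^n)$ carries $\km_1,\dots,\km_r$ bijectively onto the maximal ideals of the target at each level. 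A transition map therefore sends the primitive idempotent attached to $\km_i$ to an idempotent lying in every $\km_j$, $j\neq i$, but not in $\km_i$, and in a finite product of local rings the only such idempotent is the primitive idempotent attached to $\km_i$ downstairs; hence these idempotents are compatible and define orthogonal idempotents $e_i\in\T_\psi(U^p)$ with $\sum_i e_i=1$. This gives $\T_\psi(U^p)=\prod_{i=1}^r e_i\T_\psi(U^p)$ with $e_i\T_\psi(U^p)=\varprojlim_{(n,U_p)}\T_\psi(U^pU_p,\cO/\varpi^n)_{\km_i}$, a ring whose unique maximal ideal is $\km_i$ (a ring with a single maximal ideal equals its own localization there), so this direct factor is $\T_\psi(U^p)_{\km_i}$.

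It remains to address $\km_i$-adic completeness and separatedness, which I expect to be the only slightly delicate step. Each kernel $K_{(n,U_p)}=\ker\bigl(\T_\psi(U^p)_{\km_i}\to\T_\psi(U^pU_p,\cO/\varpi^n)_{\km_i}\bigr)$ contains a power of $\km_i$, since the image of $\km_i$ in the Artinian local quotient is nilpotent; as $\bigcap_{(n,U_p)}K_{(n,U_p)}=0$ this already yields $\bigcap_N\km_i^N=0$, i.e. separatedness. For completeness it is cleanest to observe that $\T_\psi(U^p)$ is module-finite over $\cO[[K_p]]$: it acts faithfully on $M_\psi(U^p)$ commuting with the $\cO[[K_p]]$-action, hence embeds into $\End_{\cO[[K_p]]}(M_\psi(U^p))$, which is finitely generated over the Noetherian ring $\cO[[K_p]]$ because $M_\psi(U^p)$ is (proof of Corollary \ref{dsc}); thus $\T_\psi(U^p)$ is a module-finite algebra over a complete semilocal Noetherian ring and so is itself complete semilocal Noetherian, and the $\km_i$-adic topology on $\T_\psi(U^p)_{\km_i}$ agrees with the profinite inverse-limit topology (both are compact metrizable and comparable, so a continuous bijection is a homeomorphism), giving completeness. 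Finally, part (2) follows by applying the $e_i$ to the $\T_\psi(U^p)$-module $S_\psi(U^p)$, equivalently by decomposing $M_\psi(U^p)=\varprojlim_n M_\psi(U^pU_p,\cO/\varpi^n)$ in the same way; the $\km_i$-adic completeness of $S_\psi(U^p)_{\km_i}$ then follows from that of $\T_\psi(U^p)_{\km_i}$ together with $S_\psi(U^p)_{\km_i}=\Hom^{\cont}_{\cO}(M_\psi(U^p)_{\km_i},\cO)$ and the module-finiteness of $M_\psi(U^p)_{\km_i}$ over $\cO[[K_p]]$. The main obstacle is not any single hard step but reconciling the $\km_i$-adic and inverse-limit topologies; once module-finiteness over $\cO[[K_p]]$ is in hand this is routine.
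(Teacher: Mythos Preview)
Your approach is the same as the paper's: decompose each finite-level Artinian Hecke algebra via Theorem~8.15 of \cite{Mat1}, check that the resulting primitive idempotents are compatible along the inverse system (using the bijection on maximal ideals from Proposition~\ref{semiloc}), and pass to the limit. The paper says only ``passing to the limit'' and leaves the idempotent-tracking implicit; your expansion of this step is correct and is exactly what the paper intends.

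There is, however, a gap in your treatment of $\km_i$-adic completeness. The assertion that $\T_\psi(U^p)$ is ``module-finite over $\cO[[K_p]]$'' is not meaningful as written: $\cO[[K_p]]$ is noncommutative, so $\End_{\cO[[K_p]]}(M_\psi(U^p))$ carries no natural $\cO[[K_p]]$-module structure, and the embedding $\T_\psi(U^p)\hookrightarrow\End_{\cO[[K_p]]}(M_\psi(U^p))$ does not by itself force $\T_\psi(U^p)$ to be Noetherian or complete. A cleaner argument, already available at this point in the paper, uses the pseudo-representation $T_\psi(U^p)\colon G_{F,S}\to\T_\psi(U^p)$ from \ref{hagal}: reducing mod $\km_i$ gives a residual pseudo-character $T_{\km_i}$ over $\F$, and the universal property yields a continuous local $\cO$-algebra map $R^{\ps}_{T_{\km_i}}\to\T_\psi(U^p)_{\km_i}$ whose image contains every $T_v$, hence is surjective. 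Since $R^{\ps}_{T_{\km_i}}$ is complete Noetherian local (this is recalled at the start of \S2.2), so is its quotient $\T_\psi(U^p)_{\km_i}$, and $\km_i$-adic completeness is immediate; the profinite and $\km_i$-adic topologies then agree because a continuous bijection of compact Hausdorff spaces is a homeomorphism. With Noetherianness of $\T_\psi(U^p)_{\km_i}$ in hand, part~(2) for $M_\psi(U^p)_{\km_i}$ (hence $S_\psi(U^p)_{\km_i}$) follows by the compactness argument you sketch, now that $\km_i$ is finitely generated.
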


\begin{para}
One easily checks that $\T_\psi(U^p)$ commutes with base change. To be precise, let $E'$ be a finite extension of $E$ with ring of integers $\cO'$. Replacing $\cO$ by $\cO'$ in all the definitions, we can define a Hecke algebra $\T'_{\psi}(U^p)$.  Then $\T'_\psi(U^p)\cong \T_\psi(U^p)\otimes_\cO \cO'$.

Suppose $\T_\psi(U^p)$ is non-zero and $\km\in\Spec \T_\psi(U^p)$ is a maximal ideal. Enlarge $\cO$ if necessary, we may assume $\km$ has residue field $\F$. Denote by $T_\km$ the two-dimensional pseudo-representation (over $\F$):
\[G_{F,S}\stackrel{T_\psi(U^p)}{\longrightarrow}\T^\psi(U^p)\to \T^\psi(U^p)/\km=\F.\]
For any $v|p$, let $R^{\ps}_v$ be the universal deformation ring which parametrizes all two-dimensional pseudo-representations of $G_{F_v}$ lifting $T_\km|_{G_{F_v}}$ with determinant $\psi\varepsilon^{-1}|_{G_{F_v}}$. By the universal property, we have a natural map $R^{\ps,\psi\varepsilon^{-1}}_v\to\T_\psi(U^p)_\km$. Taking tensor products over all $v|p$, we get
\[R^{\ps,\psi\varepsilon^{-1}}_p:=\widehat{\bigotimes}_{v|p}R^{\ps,\psi\varepsilon^{-1}}_v\to\T_\psi(U^p)_\km.\]
Therefore, we have defined an action of $R^{\ps,\psi\varepsilon^{-1}}_p$ on the completed cohomology $S_\psi(U^p)_\km$. We denote this action by $\tau_{\Gal}$. It will appear in one side of our local-global compatibility result.
\end{para}

\subsection{Pa\v{s}k\={u}nas' theory} \label{Paskunas}
In this subsection, we recall the main results of Pa\v{s}k\={u}nas in \cite{Pas13} \cite{Pas16} in the case of $\GL_2(\Q_p)$ and some generalizations in \cite{GN16} in the case of products of $\GL_2(\Q_p)$. The main application of his theory is to define another action of $R^{\ps,\psi\varepsilon^{-1}}_p$ on the completed cohomology. Reference is \S 3 and \S 5 of \cite{Pas13}.

\begin{para}
Let $G=\prod_{i=1}^f \GL_2(\Q_p)$ and $Z(G)\simeq \prod_{i=1}^f\Q_p^\times$ be its centre. Fix a character $\zeta:Z(G)\to\cO^\times$. We may formulate categories $\mathrm{Mod}_{G,\zeta}^{\mathrm{sm}}(\cO),\mathrm{Mod}^{\mathrm{l\,fin}}_{G,\zeta}(\cO),\mathrm{Mod}^{\mathrm{l\, adm}}_{G,\zeta}(\cO),\kC_{G,\zeta}(\cO)$. See notations in the beginning of the paper. We note that the last two categories are anti-equivalent to each other. And in fact, the middle two are the same:

\end{para}
\begin{lem}
Any locally admissible representation in $\mathrm{Mod}_{G,\zeta}^{\mathrm{sm}}(\cO)$ is locally of finite length.
\end{lem}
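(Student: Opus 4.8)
The plan is to reduce the statement to the assertion that every finitely generated admissible object of $\mathrm{Mod}_{G,\zeta}^{\mathrm{sm}}(\cO)$ has finite length, then to reduce to $\F$-coefficients, and finally to the single-factor case $f=1$, which is the theorem of Pa\v{s}k\={u}nas underlying \cite{Pas13}. By definition a locally admissible representation $\pi$ is the union of its finitely generated subrepresentations, each of which is admissible, and $\pi$ is locally of finite length precisely when each such subrepresentation has finite length; conversely a finitely generated admissible representation is itself locally admissible. So it suffices to prove: a finitely generated admissible $\pi$ has finite length. Since $\pi$ is generated by finitely many smooth vectors, each annihilated by a power of $\varpi$, the whole of $\pi$ is annihilated by $\varpi^n$ for some $n$. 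Filtering $\pi$ by the subrepresentations $\varpi^i\pi$, whose successive quotients $\varpi^i\pi/\varpi^{i+1}\pi$ are again finitely generated and admissible — admissibility of the quotient uses the finite-dimensionality of $H^1(K,\varpi^{i+1}\pi)$ for compact open $K$, a standard property of admissible smooth $\F$-representations — one reduces to the case where $\pi$ is a smooth $\F$-representation.

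Next I would reduce to $f=1$. After enlarging $\F$ (harmless, since locally admissible and finite length behave well under finite base change of $\cO$) every irreducible admissible smooth $\F$-representation of $\GL_2(\Q_p)$ is absolutely irreducible, its endomorphism ring being a finite-dimensional division $\F$-algebra, hence $\F$; therefore an irreducible admissible smooth $\F$-representation of $G=\prod_{i=1}^{f}\GL_2(\Q_p)$ is an external tensor product $\pi_1\boxtimes\cdots\boxtimes\pi_f$ of irreducible admissible smooth representations of the factors. Following the argument of Gee--Newton \cite{GN16}, an induction on the number of factors — passing between $\pi$ and the admissible representation of a single $\GL_2(\Q_p)$ obtained by taking invariants under a suitably small compact open subgroup in the remaining factors, and checking that finite generation is preserved — reduces the finite length statement to $f=1$.

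For $G=\GL_2(\Q_p)$, the statement that a finitely generated admissible smooth $\F$-representation with central character $\zeta$ has finite length is Pa\v{s}k\={u}nas' theorem. The input is the classification of irreducible smooth $\overbar{\F_p}$-representations of $\GL_2(\Q_p)$ (Barthel--Livn\'e and Breuil) together with the structure of the pro-$p$ Iwahori--Hecke algebra: every nonzero smooth $\F$-representation has nonzero invariants under a fixed pro-$p$ open subgroup $K$ (a pro-$p$ group acting on a nonzero $\F$-vector space has a nonzero fixed vector, by the usual orbit-span argument), $\GL_2(\Q_p)$ is uniformly admissible over $\F$, and these combine to bound the length of a finitely generated admissible $\pi$ in terms of $\dim_{\F}\pi^{K}$.

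I expect the case $f=1$ to be the only genuinely non-formal point: it is special to $\GL_2(\Q_p)$ and relies on the full mod $p$ classification, failing for more general $p$-adic reductive groups (or even for $\GL_2$ of a ramified extension). The product reduction, by contrast, is technical bookkeeping about finite generation, and the coefficient reduction is routine d\'evissage.
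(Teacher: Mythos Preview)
Your proposal is correct and matches the paper's approach, which simply cites Emerton for $f=1$ and Gee--Newton for general $f$; your outline is essentially the content of those references, with the reduction to $\F$-coefficients and then to a single factor. The only minor discrepancy is attribution: the paper credits the $f=1$ case to Emerton's Theorem~2.3.8 rather than to Pa\v{s}k\={u}nas, and your aside justifying admissibility of the quotients via finiteness of $H^1(K,\varpi^{i+1}\pi)$ is a bit awkward (the cleaner argument is Pontryagin duality: duals of admissible smooth representations are finitely generated over the Noetherian ring $\cO[[K]]$, so the dual category is visibly abelian), but neither point affects the correctness of the argument.
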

\begin{proof}
This is Theorem 2.3.8 of \cite{Eme10a} when $f=1$ and Lemma B.7 of \cite{GN16} in general. In fact, Gee and Newton work with products of $\PGL_2(\Q_p)$, but their proof also works with products of $\GL_2(\Q_p)$ with fixed determinant.
\end{proof}

\begin{para}[\textbf{Blocks}] \label{Blocks}
Let $\mathrm{Irr}_{G,\zeta}$ be the set of irreducible representations in $\mathrm{Mod}_{G,\zeta}^{\mathrm{sm}}(\cO)$. We consider the following equivalence relation $\sim$ on $\mathrm{Irr}_{G,\zeta}$: $\pi\sim\tau$ if there exists a sequence of irreducible representations $\pi_1=\pi,\pi_2,\cdots,\pi_n=\tau$ such that $\Ext^{1}_G(\pi_i,\pi_{i+1})\neq 0$ or $\Ext^{1}_G(\pi_{i+1},\pi_{i})\neq 0$ or $\pi_i\cong\pi_{i+1}$. An equivalence class is called a \textit{block}.

There exists a natural decomposition of $\mathrm{Mod}^{\mathrm{l\, adm}}_{G,\zeta}(\cO)$ with respect to the blocks:
\begin{eqnarray}
\mathrm{Mod}^{\mathrm{l\, adm}}_{G,\zeta}(\cO)\cong \prod_{\mathfrak{B}\in\mathrm{Irr}_{G,\zeta}/\sim}\mathrm{Mod}^{\mathrm{l\, adm}}_{G,\zeta}(\cO)^\kB,
\end{eqnarray}
where $\mathrm{Mod}^{\mathrm{l\, adm}}_{G,\zeta}(\cO)^\kB$ is the full subcategory of $\mathrm{Mod}^{\mathrm{l\, adm}}_{G,\zeta}(\cO)$ consisting of representations with all irreducible subquotients in $\kB$. Taking Pontryagin dual, this gives:
\[\kC_{G,\zeta}(\cO)\cong\prod_{\mathfrak{B}\in\mathrm{Irr}_{G,\zeta}/\sim}\kC_{G,\zeta}(\cO)^\kB\]

For a block $\kB$, write $\pi_{\kB}=\bigoplus_{\pi\in\kB_i}\pi$, where $\kB_i$ is the set of isomorphism classes of elements of $\kB$. We will see that this is actually a finite set. Let $\pi_\kB\hookrightarrow J_{\kB}$ be an \textit{injective envelope} of $\pi_{\kB}$ in $\mathrm{Mod}^{\mathrm{l\, adm}}_{G,\zeta}(\cO)$. Its Pontryagin dual $P_{\kB}:=J_{\kB}^\vee$ is a \textit{projective envelope} of $\pi^\vee\cong\bigoplus_{\pi\in\kB_i}\pi^\vee$ in $\kC_{G,\zeta}(\cO)$. Let
\[E_{\kB}:=\End_{\kC_{G,\zeta}(\cO)}(P_{\kB})\cong \End_G(J_{\kB}).\] 
This is a pseudo-compact ring. The topology is given as follows: for any quotient map $q:P_\kB\twoheadrightarrow M$ to some $M\in\kC_{G,\zeta}(\cO)$ of finite length, we may define a right ideal:
\[\mathfrak{r}(M)=\{\phi\in E_{\kB},q\circ \phi=0\}.\]
Such $\{\mathfrak{r}(M)\}$ forms a basis of open neighborhood of $0$ in $E_{\kB}$. We note that $P_{\kB}$ is a natural left $E_{\kB}$-module.

Suppose $M\in \kC_{G,\zeta}(\cO)$, then $E_{\kB}$ acts naturally on $\Hom_{\kC_{G,\zeta}(\cO)}(P_{\kB},M)$ (on the right). By writing $M=\varprojlim M_i$ with $M_i$ of finite length, we can equip the projective topology on $\Hom(P_{\kB},M)=\varprojlim\Hom(P_\kB,M_i)$, which makes $\Hom(P_{\kB},M)$ into a pseudo-compact $E_{\kB}$-module. In fact, this functor 
\[M\mapsto \Hom_{\kC_{G,\zeta}(\cO)}(P_\kB,M)\] 
defines an \textit{anti-equivalence} of categories between $\kC_{G,\zeta}(\cO)^\kB$ and the category of right pseudo-compact $E_{\kB}$-modules. An inverse functor is given by 
\[\mathrm{m}\mapsto (\mm\hat{\otimes}_{E_{\kB}}P_{\kB}).\]

$P_\kB$ is called a \textit{projective generator} of $\kB$.
\end{para}

\begin{para} \label{pas}
In \cite{Pas13}\cite{Pas16}, Pa\v{s}k\={u}nas computes $E_{\kB}$ and its centre when $G=\GL_2(\Q_p)$ in almost all cases. We now recall his results. 

Suppose $G=\GL_2(\Q_p)$. In this case, blocks containing an absolutely irreducible representation are computed in \cite{Pas14} Cor 6.2. For our purpose, we only list the results when $p\geq 3$.
\begin{enumerate}
\item $\kB=\{\pi\}$, $\pi$ is supersingular.
\item $\kB=\{(\Ind^G_B\delta_1\otimes\delta_2\omega^{-1})_{\mathrm{sm}},(\Ind^G_B\delta_2\otimes\delta_1\omega^{-1})_{\mathrm{sm}}\}$ with $\delta_2\delta_1^{-1}\neq \omega^{\pm 1},\mathbf{1}$.
\item $\kB=\{(\Ind^G_B\delta\otimes\delta\omega^{-1})_{\mathrm{sm}}\}$.
\item If $p\geq 5$, $\kB=\{\mathbf{1},\mathrm{Sp},(\Ind^G_B\omega\otimes \omega^{-1})_{\mathrm{sm}}\}\otimes \delta\circ\det$.
\item If $p=3$, $\kB=\{\mathbf{1},\mathrm{Sp},\omega\circ\det,\mathrm{Sp}\otimes\omega\circ\det\} \otimes \delta\circ\det$.
\end{enumerate}
for some smooth characters $\delta,\delta_1,\delta_2:\Q_p^\times\to \F^\times$. Here $B$ denotes the upper triangular Borel subgroup of $B$, $(\Ind^G_B)$ denotes the smooth induction and $\mathrm{Sp}$ denotes the mod $p$ special representation. See \cite{Pas14} for the precise definitions.

To describe (the centre of) $E_{\kB}$, we need to attach a semi-simple $2$-dimensional representation $\bar{\rho}_{\kB}$ of $G_{\Q_p}$ over $\F$ to each block. This is given by the following list:
\begin{enumerate}
\item $\bar{\rho}_{\kB}=\mathbf{V}(\pi)$ if a supersingular $\pi\in\kB$ , where $\mathbf{V}$ is the Colmez's functor normalized in \S5.7 of \cite{Pas13} 
\item $\bar{\rho}_{\kB}=\delta_1\oplus\delta_2$ if $(\Ind^G_B\delta_1\otimes\delta_2\omega^{-1})_{\mathrm{sm}}\in\kB$ with $\delta_2\delta_1^{-1}\neq \omega^{\pm 1}$.
\item $\bar{\rho}_{\kB}=\delta\oplus\delta\omega$ if $\delta\circ\det\in\kB$.
\end{enumerate}
Under this correspondence, the determinant of $\bar{\rho}_\kB$ is $\zeta\varepsilon\mod\varpi$. Note that this actually defines a bijection between the set of blocks containing an absolutely irreducible representation and the isomorphism classes of two-dimensional representations of $G_{\Q_p}$ over $\F$ which are direct sums of absolutely irreducible representations.
\end{para}

\begin{thm}[Pa\v{s}k\={u}nas] \label{pa1}
Let $R^{\ps,\zeta\varepsilon}_{\kB}$ be the universal deformation ring which parametrizes all $2$-dimensional pseudo-representations of $G_{\Q_p}$ lifting $\tr\bar{\rho}_\kB$ with determinant $\zeta\varepsilon$. Then there exists a natural isomorphism between the centre of $E_{\kB}$ and $R^{\ps,\zeta\varepsilon}_{\kB}$ in case (1)(2)(3)(4).
\end{thm}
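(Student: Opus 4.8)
The plan is to reduce Theorem \ref{pa1} to results that Pa\v{s}k\={u}nas has already established in \cite{Pas13} and \cite{Pas16}, by matching the abstract deformation-theoretic description of the centre $Z(E_\kB)$ block-by-block with the pseudo-deformation ring $R^{\ps,\zeta\varepsilon}_\kB$. The key point is that in each of the cases (1)--(4) Pa\v{s}k\={u}nas computes $Z(E_\kB)$ explicitly, either as a framed/unframed Galois deformation ring or as an explicit quotient of a power series ring, and one shows these coincide with the corresponding pseudo-deformation ring. First I would set up the universal objects: let $T^{\mathrm{univ}}_\kB : G_{\Q_p} \to R^{\ps,\zeta\varepsilon}_\kB$ be the universal pseudo-representation with fixed determinant $\zeta\varepsilon$ lifting $\tr\bar\rho_\kB$, and recall from \ref{Blocks} that $Z(E_\kB)$ is the centre of the endomorphism ring of a projective generator $P_\kB$ of the block. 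The natural map to be constructed is $R^{\ps,\zeta\varepsilon}_\kB \to Z(E_\kB)$; its existence follows from the fact that Colmez's functor $\mathbf V$ (suitably normalized as in \S5.7 of \cite{Pas13}) applied to $P_\kB$ produces a deformation of $\bar\rho_\kB$ over $E_\kB$, hence a pseudo-representation valued in $Z(E_\kB)$, giving by universality the desired homomorphism.

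The main work is then proving this map is an isomorphism in each case. In the supersingular case (1) and the generic principal-series case (2) (where $\delta_2\delta_1^{-1}\neq\omega^{\pm1},\mathbf 1$), Pa\v{s}k\={u}nas identifies $E_\kB$ itself with a (noncommutative, in case (1)) Galois deformation ring and its centre with the universal deformation ring of $\bar\rho_\kB$; since $\bar\rho_\kB$ is irreducible (case 1) or a sum of two characters with distinct ratio away from $\omega^{\pm1}$ (case 2), the standard comparison between the pseudo-deformation ring and the framed/unframed representation deformation ring — exactly the phenomenon exploited in \ref{tar} and in Corollary \ref{psccomp} — shows $R^{\ps,\zeta\varepsilon}_\kB$ agrees with what Pa\v{s}k\={u}nas computes. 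In case (3), where $\bar\rho_\kB = \delta\oplus\delta\omega$ but $\delta_2\delta_1^{-1}\neq\omega^{\pm1}$ is replaced by $\delta_1=\delta_2$ giving a non-split situation, and case (4) (the $p\geq5$ ``Steinberg'' block), one uses Pa\v{s}k\={u}nas' explicit presentations of $E_\kB$ from \cite{Pas16}: there $Z(E_\kB)$ is computed as an explicit ring, and one checks by hand, using the Galois cohomology of $\bar\rho_\kB$ (dimensions of $\Ext^1$ and $\Ext^2$ groups with fixed determinant), that the pseudo-deformation ring $R^{\ps,\zeta\varepsilon}_\kB$ has the same presentation. The twist by $\delta\circ\det$ is harmless and reduces everything to $\delta = \mathbf 1$.

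I expect the main obstacle to be case (4) (and the analogous delicate part of case (3)): here $\bar\rho_\kB$ is reducible and \emph{non-generic} (the ratio of the two characters is $\omega$), so the pseudo-deformation ring is genuinely larger and more singular than any representation deformation ring, and the comparison with $Z(E_\kB)$ is not formal — it requires Pa\v{s}k\={u}nas' hard computation of the deformation theory of the ``big'' projective object in that block in \cite{Pas16}, together with a careful bookkeeping of the determinant condition. This is also precisely why the theorem is stated only in cases (1)--(4) and not in case (5) ($p=3$, $\bar\rho_\kB = \delta\oplus\delta\omega$), where the relevant computation is unavailable — consistent with the $p=3$ hypothesis $\frac{\bar\chi_1}{\bar\chi_2}|_{G_{\Q_p}}\neq\omega$ imposed throughout the paper. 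A secondary technical point is to verify that the map $R^{\ps,\zeta\varepsilon}_\kB \to Z(E_\kB)$ is continuous for the pseudo-compact topology on the target described in \ref{Blocks}, so that surjectivity (which one gets from topological generation by traces of Frobenii, as in the proof of Proposition \ref{Dkp}) combined with the equality of the relevant invariants upgrades to an isomorphism of topological rings.
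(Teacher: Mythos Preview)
Your proposal is correct in spirit but misreads the role of this statement in the paper. Theorem~\ref{pa1} is a result \emph{attributed} to Pa\v{s}k\={u}nas, and the paper's ``proof'' is a one-line citation: \emph{This is Theorem~1.5 of \cite{Pas13} and Theorem~1.3 of \cite{Pas16}}. The paper does not reprove it; it treats this as a black box input from the $p$-adic Langlands programme.

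That said, your sketch is a reasonable outline of how Pa\v{s}k\={u}nas' own argument proceeds, and your identification of case~(4) (and the non-generic reducible situations) as the hard part is accurate --- this is exactly the content of \cite{Pas16}, written years after \cite{Pas13} precisely because the Steinberg block required substantially more work. Two small slips: in case~(3) the residual representation is $\bar\rho_\kB = \delta\oplus\delta$ (scalar semi-simplification), not a non-split situation with $\delta_1=\delta_2$ as you describe; and your remark about ``topological generation by traces of Frobenii'' does not make sense for the local group $G_{\Q_p}$ --- you presumably mean topological generation by traces of group elements, as in the argument after Definition~\ref{defnPr}. For the purposes of this paper, though, all you need is the citation.
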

\begin{proof}
This is Theorem 1.5 of \cite{Pas13} and Theorem 1.3 of \cite{Pas16}.
\end{proof}

\begin{thm}[Pa\v{s}k\={u}nas] \label{pa2}
$E_\kB$ is a finitely generated module over its centre in case (1)(2)(3)(4).
\end{thm}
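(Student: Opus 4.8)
This is a theorem of Paškūnas; the strategy (following \cite{Pas13}, \cite{Pas16}) runs as follows. Write $P_\kB=\bigoplus_i P_i$, the sum over the finitely many isomorphism classes $\pi_i\in\kB_i$ of the projective envelopes $P_i$ of $\pi_i^\vee$ in $\kC_{G,\zeta}(\cO)$, so that
\[E_\kB=\End_{\kC_{G,\zeta}(\cO)}(P_\kB)=\bigoplus_{i,j}\Hom_{\kC_{G,\zeta}(\cO)}(P_i,P_j),\]
and, by Theorem \ref{pa1}, the centre of $E_\kB$ is the complete local Noetherian $\cO$-algebra $R:=R^{\ps,\zeta\varepsilon}_\kB$, acting on $P_\kB$ through its diagonal embedding into $\bigoplus_i\End_{\kC_{G,\zeta}(\cO)}(P_i)$. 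Write $\km$ for the maximal ideal of $R$. Since $E_\kB$ is pseudo-compact, the topological Nakayama lemma reduces the theorem to showing that $E_\kB/\km E_\kB$ is finite-dimensional over $\F$, equivalently (by the display, with $\km$ acting diagonally) that each $\Hom_{\kC_{G,\zeta}(\cO)}(P_i,P_j)/\km$ is finite-dimensional over $\F$.

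The key input — the substantive part of Paškūnas' work — is the finiteness of $P_\kB$ over its centre: $P_\kB$ is flat over $R$ and $P_\kB/\km P_\kB$ has finite length in $\kC_{G,\zeta}(\cO)$ (equivalently, the $\km$-torsion $J_\kB[\km]$ in the injective envelope $J_\kB$ is an admissible $G$-representation of finite length). Granting this, the reduction is immediate: as $\km$ is finitely generated and $\Hom_{\kC_{G,\zeta}(\cO)}(P_i,-)$ is exact by projectivity of $P_i$, one gets $\Hom_{\kC_{G,\zeta}(\cO)}(P_i,P_j)/\km\cong\Hom_{\kC_{G,\zeta}(\cO)}(P_i,P_j/\km P_j)$, and the right-hand side is finite-dimensional because $P_i$ is the projective envelope of a finite-length object while $P_j/\km P_j$, being a summand of $P_\kB/\km P_\kB$, has finite length. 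In particular $E_\kB$ is finite over $R$.

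It then remains to establish the finiteness input in each case. For case (1), where $\kB_i=\{\pi\}$ with $\pi$ supersingular and $\bar\rho_\kB=\mathbf{V}(\pi)$ irreducible, one shows that the Montreal functor $\mathbf{V}$ carries $P_\kB$ to the universal deformation of $\bar\rho_\kB$ and induces an isomorphism $E_\kB\xrightarrow{\sim}\End(\mathbf{V}(P_\kB))=R$; thus $E_\kB$ is already commutative, $P_\kB$ is $R$-flat of rank one, and $P_\kB/\km P_\kB\cong\pi^\vee$. For cases (2)--(4) the functor $\mathbf{V}$ is no longer faithful enough (it kills one-dimensional representations and collapses smooth principal series to characters), so one argues more directly: parabolic induction together with Emerton's ordinary-part functor reduces the principal-series blocks to an explicit computation on the Borel, the relevant $\Ext^1_G$-groups among the finitely many irreducible constituents of $\kB$ are those recorded in \cite{Pas14}, and combining these with the known structure of $R$ (formally smooth over $\cO$ in the generic case (2), a Steinberg-type deformation ring in cases (3)--(4)) one exhibits $P_\kB$ as flat over $R$ with finite-length special fibre.

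The main obstacle is precisely this last step in cases (2)--(4): one must control the $R$-module structure of $P_\kB$, not merely its $\cO[G]$-module structure, and here $\mathbf{V}$ cannot be used as a black box. This is exactly why Theorem \ref{pa1} is indispensable as an input: once the centre has been identified with the concrete ring $R^{\ps,\zeta\varepsilon}_\kB$, the assertion ``$P_\kB$ is flat over its centre with finite-length fibre'' becomes a verifiable representation-theoretic statement, proved by the $\Ext$-computations and ordinary-part analysis above. In case (1) the difficulty disappears since $E_\kB$ is commutative from the outset.
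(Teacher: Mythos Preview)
The paper's own proof is simply a list of citations to Pa\v{s}k\={u}nas --- Theorem~1.2 of \cite{Pas16} and Corollary~8.11, Corollary~9.25, Lemma~10.90 of \cite{Pas13} --- with no argument reproduced. Your proposal is not a competing proof but an attempt to sketch what lies behind those references.

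As a sketch it is broadly reasonable, and your Nakayama reduction (showing $\Hom(P_i,P_j)/\km\cong\Hom(P_i,P_j/\km P_j)$ is finite-dimensional once $P_\kB/\km P_\kB$ has finite length) is correct. Two remarks are worth making, though. First, the flatness of $P_\kB$ over $R$ plays no role in the finite-generation argument you give; only the finite length of the special fibre is used, so you should not list flatness as part of the ``key input.'' Second, and more substantively, your presentation suggests a clean logical order --- first identify the centre (Theorem~\ref{pa1}), then establish the finite-length fibre, then deduce finite generation --- whereas in the cited results the finite generation over the centre is obtained \emph{together} with the identification of $E_\kB$ itself: in each case Pa\v{s}k\={u}nas computes $E_\kB$ (or enough of its structure) explicitly --- commutative and equal to $R$ in case~(1), close to a $2\times 2$ matrix ring over $R$ in case~(2), and more intricate but still explicit descriptions in cases~(3) and~(4) --- from which finite generation is immediate. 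Your abstract reduction is a legitimate reorganisation, but you should be aware that the ``key input'' you invoke is, in Pa\v{s}k\={u}nas, a byproduct of the same explicit computations rather than an independently available lemma, so the saving is largely expository.
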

\begin{proof}
See Theorem 1.2 of \cite{Pas16} and Corollary 8.11, Corollary 9.25, Lemma 10.90 of \cite{Pas13}.
\end{proof}

\begin{rem}
Both theorems are also known in case (5) and all the cases when $p$=2 thanks to the recent work  \cite{PT21} of Pa\v{s}k\={u}nas-Tung.
\end{rem}

\begin{para} 
Now we generalize the above results to the case $G=\prod_{i=1}^f\GL_2(\Q_p)$. First we need to classify the blocks containing an absolutely irreducible representation in this case.
\end{para}

\begin{lem} \label{irrof}
\hspace{2em}
\begin{enumerate}
\item For any absolutely irreducible representation $\pi$ in $\mathrm{Mod}_{G,\zeta}^{\mathrm{sm}}(\cO)$, there is a finite extension $\F'/\F$ such that $\pi\otimes_{\F}\F'$ is isomorphic to some $\bigotimes_{i=1}^f\pi_i$, where $\pi_i$ are absolutely irreducible $\GL_2(\Q_p)$ representations over $\F'$.
\item Moreover assume that $\F'=\F$. Let $P_r\to\pi_r^\vee$ be a projective envelope of $\pi_r^\vee$ for $r=1,\cdots,f$. Then $P_1\hat{\otimes}\cdots\hat{\otimes}P_f\to\widehat{\bigotimes}_{i=1}^f\pi_i^\vee$ is a projective envelope of $\widehat{\bigotimes}_{i=1}^f\pi_i^\vee\cong(\bigotimes_{i=1}^f\pi_i)^\vee$ in $\kC_{G,\zeta}(\cO)$. 
\end{enumerate}
\end{lem}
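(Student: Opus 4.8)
The plan is to reduce both statements to the case $f=1$, treated by Pa\v{s}k\={u}nas, by an induction on $f$, writing $G = G' \times \GL_2(\Q_p)$ with $G' = \prod_{i=1}^{f-1}\GL_2(\Q_p)$ and correspondingly factoring the central character $\zeta = \zeta' \cdot \zeta_f$.

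For part (1), I would first recall that an irreducible smooth $\F$-representation of $G$ is admissible (since it is generated by a single vector fixed by an open compact subgroup, and smooth irreducible representations of $p$-adic reductive groups over $\F$ are admissible), hence finite-dimensional over $\F$ is false, but finite length facts apply. The key input is a tensor-product theorem for irreducible smooth representations over $\F$ of a product group: any irreducible $\pi \in \mathrm{Mod}_{G,\zeta}^{\mathrm{sm}}(\F)$ decomposes as $\pi' \boxtimes \pi_f$ with $\pi'$ an irreducible $G'$-representation and $\pi_f$ an irreducible $\GL_2(\Q_p)$-representation. To prove this, pick a nonzero irreducible $G'$-subrepresentation $\pi'$ of $\pi|_{G'}$ (which exists by admissibility/local finiteness), and consider $\Hom_{G'}(\pi',\pi)$; this is a nonzero smooth $\GL_2(\Q_p)$-representation, and the natural evaluation map $\pi' \otimes_\F \Hom_{G'}(\pi',\pi) \to \pi$ is $G$-equivariant and nonzero, hence surjective by irreducibility; a Schur-type argument (using $\End_{G'}(\pi')=\F$, which holds because $\F$ is not algebraically closed but one can base-change to $\overbar{\F}$, or invoke Emerton's absolute irreducibility results for these categories) shows it is an isomorphism after identifying $\Hom_{G'}(\pi',\pi)$ with an irreducible $\GL_2(\Q_p)$-representation $\pi_f$. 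Iterating the splitting $G' = \prod_{i<f}\GL_2(\Q_p)$ gives the claimed decomposition $\pi \cong \bigotimes_{i=1}^f \pi_i$.

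For part (2), the plan is to show that the completed tensor product of projective envelopes is again projective and still an essential cover. Projectivity: in the category $\kC_{G,\zeta}(\cO)$, which is anti-equivalent to locally admissible representations, one has a Künneth-type compatibility $\Hom_{\kC_{G,\zeta}}(P_1\hat\otimes\cdots\hat\otimes P_f, -) \cong \Hom_{\kC_{G',\zeta'}}(P_1\hat\otimes\cdots\hat\otimes P_{f-1}, \Hom_{\kC_{\GL_2,\zeta_f}}(P_f,-))$ coming from the corresponding tensor-hom adjunction on the representation side, together with exactness of $\Hom_{\kC_{\GL_2,\zeta_f}}(P_f,-)$ (projectivity of $P_f$) and of the inductive hypothesis for $P_1\hat\otimes\cdots\hat\otimes P_{f-1}$; composing exact functors gives exactness of $\Hom(P_1\hat\otimes\cdots\hat\otimes P_f,-)$, i.e. projectivity. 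That $P_1\hat\otimes\cdots\hat\otimes P_f$ covers $(\bigotimes_i \pi_i)^\vee$: the cosocle of $P_r$ is $\pi_r^\vee$, and one checks $\mathrm{cosoc}(P_1\hat\otimes\cdots\hat\otimes P_f) \cong \bigotimes_r \mathrm{cosoc}(P_r) = (\bigotimes_r \pi_r)^\vee$ using that the Jacobson radical of the completed tensor product of the (pseudo-compact) endomorphism rings is the appropriate ideal, or more concretely by dualizing and checking that the socle of $J_1\hat\otimes\cdots\hat\otimes J_f$ (injective envelopes) is $\bigotimes_r \pi_r$, again via a Künneth argument for socles in locally admissible representations. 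An essential projective cover is a projective envelope, which is what is asserted.

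The main obstacle I expect is the bookkeeping for the completed tensor products in the pseudo-compact category $\kC_{G,\zeta}(\cO)$: verifying that $\hat\otimes$ of projectives is projective and that cosocles (equivalently socles of injective envelopes) behave multiplicatively requires care with topologies and with the fact that these are genuinely infinite-dimensional pseudo-compact modules, not finite-length ones. In the literature this is handled via the tensor-hom adjunction between $\kC_{G,\zeta}(\cO)$ and pseudo-compact modules over $E_\kB$ (compare \cite{GN16} Appendix B, where the analogous statements for $\PGL_2$ are proved); the cleanest route is probably to transport everything through the anti-equivalence of \ref{Blocks} and argue entirely on the side of pseudo-compact $E_\kB$-modules, where $\hat\otimes$ of projective generators is manifestly a projective generator of the product. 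The tensor-product theorem in part (1) is standard but its proof over a non-algebraically-closed residue field $\F$ needs the small extra step of base-changing to $\overbar{\F}$ and descending, which I would include but not belabor.
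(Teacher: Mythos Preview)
The paper's proof is simply a citation: ``This is Lemma B.7, B.8 of \cite{GN16}.'' Your outlined argument---induction on $f$ via $G = G' \times \GL_2(\Q_p)$, extracting an irreducible $G'$-sub and using the evaluation map $\pi' \otimes \Hom_{G'}(\pi',\pi) \to \pi$ for part (1), and a K\"unneth/tensor-hom adjunction for projectivity plus a cosocle computation for part (2)---is essentially the content of those lemmas in Gee--Newton, which you yourself note. So your approach and the paper's agree; the paper just outsources the details you have sketched.

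One small remark: your worry about Schur's lemma over a non-algebraically-closed $\F$ is handled in \cite{GN16} (and in the background references there) by the admissibility of irreducibles, which forces $\End(\pi')$ to be a finite field extension of $\F$; one then enlarges $\F$ as needed, exactly as you anticipate. This is not a gap, just a bookkeeping point already absorbed into the cited result.
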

\begin{proof}
This is Lemma B.7, B.8 of \cite{GN16}.
\end{proof}

\begin{lem} \label{seprod}
Write $G=G_1\times G_2$, where each $G_r=\prod_{i=1}^{f_r}\GL_2(\Q_p)$ with centre $Z_r,~r=1,2$. Suppose $M_r,N_r\in\kC_{G_r,\zeta|_{Z_r}}(\cO),~r=1,2$. Then there exists a natural isomorphism:
\[\Hom_{\kC_{G,\zeta}(\cO)}(M_1\hat{\otimes}M_2,N_1\hat{\otimes} N_2)\cong\Hom_{\kC_{G_1,\zeta|_{Z_1}}(\cO)}(M_1,N_1)\hat{\otimes}\Hom_{\kC_{G_2,\zeta|_{Z_2}}(\cO)}(M_2,N_2)\]
\end{lem}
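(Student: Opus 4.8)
The plan is to reduce the statement to a concrete computation with projective generators, using Lemma \ref{irrof} to control how projective objects in $\kC_{G,\zeta}(\cO)$ decompose as completed tensor products over the two factors $G_1$ and $G_2$. First I would reduce to the case where $N_1 = P_{\kB_1}$ and $N_2 = P_{\kB_2}$ are projective generators of blocks $\kB_1 \subseteq \mathrm{Irr}_{G_1,\zeta|_{Z_1}}$ and $\kB_2 \subseteq \mathrm{Irr}_{G_2,\zeta|_{Z_2}}$, respectively: indeed, by the anti-equivalence of $\kC_{G_r,\zeta|_{Z_r}}(\cO)$ with pseudo-compact $E_{\kB_r}$-modules (recalled in \ref{Blocks}), any $N_r$ is a cokernel of a map between (possibly infinite) products of copies of $P_{\kB_r}$, and both sides of the claimed isomorphism are continuous in $N_1$, $N_2$ and turn cokernels into kernels; so it suffices to check the statement when $N_1, N_2$ are projective generators, where both sides become pseudo-compact completed tensor products of the endomorphism rings. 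By Lemma \ref{irrof}(2), $P_{\kB_1}\hat\otimes P_{\kB_2}$ is (a direct summand of, in fact equal to, after matching up the relevant blocks) a projective generator for the block of $G$ containing the exterior tensor products $\pi_1\otimes\pi_2$ with $\pi_r \in (\kB_r)_i$, so the right-hand side is $E_{\kB_1}\hat\otimes_\cO E_{\kB_2}$ and one reduces to identifying $\Hom_{\kC_{G,\zeta}(\cO)}(M_1\hat\otimes M_2, P_{\kB_1}\hat\otimes P_{\kB_2})$ with this.

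Next I would handle the case $M_1 = P_{\kB_1}$, $M_2 = P_{\kB_2}$ as well, again by a colimit/continuity argument, so that the whole statement collapses to the single natural isomorphism
\[
\End_{\kC_{G,\zeta}(\cO)}\bigl(P_{\kB_1}\hat\otimes P_{\kB_2}\bigr) \cong \End_{\kC_{G_1,\zeta|_{Z_1}}(\cO)}(P_{\kB_1})\,\hat\otimes_\cO\,\End_{\kC_{G_2,\zeta|_{Z_2}}(\cO)}(P_{\kB_2}),
\]
which is exactly what Lemma \ref{irrof}(2) together with the construction of the endomorphism ring in \ref{Blocks} gives: a continuous endomorphism of a completed tensor product of projective envelopes is determined by a compatible system of endomorphisms of the finite-length quotients, and these factor through the two tensor factors. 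The map itself is the obvious one, $\phi_1 \hat\otimes \phi_2 \mapsto \phi_1\hat\otimes\phi_2$ on $M_1\hat\otimes M_2$; naturality in all four arguments is then formal.

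The main obstacle I anticipate is the bookkeeping around topologies and the precise sense in which $\hat\otimes$ commutes with the relevant (co)limits in the pseudo-compact setting: one must be careful that the completed tensor product of pseudo-compact modules is exact enough (it is, on the relevant subcategories, since $\cO$ is a DVR and everything in sight is pseudo-compact and flat over $\cO$ after inverting nothing), and that the projective topology on $\Hom(P_{\kB_1}\hat\otimes P_{\kB_2}, -)$ really is the completed tensor of the two projective topologies. Concretely, the argument needs that every finite-length object of $\kC_{G,\zeta}(\cO)$ receiving a map from $P_{\kB_1}\hat\otimes P_{\kB_2}$ is dominated by one of the form $Q_1 \hat\otimes Q_2$ with $Q_r$ finite length over $G_r$ — this is where Lemma \ref{irrof} and the structure of $\mathrm{Mod}^{\mathrm{l\,fin}}_{G,\zeta}(\cO)$ for $G$ a product really enter. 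Granting this, the rest is a standard limit manipulation; everything else is routine and I would not spell it out in detail.
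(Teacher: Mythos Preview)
Your overall strategy (reduce via limits to a core case, using the cofinality of quotients of the form $Q_1\hat\otimes Q_2$ among finite-length quotients of $M_1\hat\otimes M_2$) is on the right track, and you correctly identify in your last paragraph that this cofinality is the real content. But the specific reduction you describe for $N_1,N_2$ is backwards. You write that ``any $N_r$ is a cokernel of a map between products of copies of $P_{\kB_r}$, and both sides \dots\ turn cokernels into kernels.'' The functor $\Hom(M_1\hat\otimes M_2,-)$ is \emph{covariant} in $N$, so it preserves kernels, not cokernels; writing $N_r$ as a cokernel of projectives gives you no control over $\Hom(-,N_r)$. (The dual statement would work for the $M$-variable, where $\Hom$ is contravariant, but there you only say ``by a colimit/continuity argument'' without specifying which one.) So as written the reduction to projective generators does not go through.

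The fix is to run the reduction the other way: write each $N_r=\varprojlim N_{r,i}$ as a cofiltered limit of its finite-length quotients, use that $N_1\hat\otimes N_2=\varprojlim_{i,j} N_{1,i}\hat\otimes N_{2,j}$ and that $\Hom$ into a limit is the limit of $\Hom$'s, and similarly for the right-hand side (here the completed tensor product of pseudo-compact $\cO$-modules commutes with cofiltered limits). This reduces to $N_r$ of finite length. Then for the $M$-variable, the key point is precisely your cofinality claim: every finite-length quotient of $M_1\hat\otimes M_2$ factors through some $Q_1\hat\otimes Q_2$ with $Q_r$ a finite-length quotient of $M_r$; this identifies the topology on the left-hand side with the product topology and reduces to the case where all four objects are of finite length, which one handles directly. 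This is essentially the argument of Lemma~B.8 of \cite{GN16}, to which the paper simply defers.
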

\begin{proof}
The argument in proof of Lemma B.8 of \cite{GN16} works here without any changes.
\end{proof}

\begin{lem} \label{fblk}
Given $f$ blocks $\kB_1,\cdots,\kB_f$ of $\GL_2(\Q_p)$ that contain an absolutely irreducible representation, then
\[\kB_1\otimes\cdots\otimes\kB_f:=\{\pi_1\otimes\cdots\otimes\pi_f|\pi_r\in\kB_r,r=1,\cdots,f\}\]
is a block of $G$. 
\end{lem}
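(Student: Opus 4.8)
The plan is to reduce to the case of two factors and then to compute $\Ext$-groups over a product $G_1\times G_2$ by a Künneth argument built on the homological input of \cite{GN16}; the identification of blocks is then a formal manipulation of the linkage relation defining blocks. Concretely, it suffices to prove the following: if $G_r=\prod_{i=1}^{f_r}\GL_2(\Q_p)$ has central character $\zeta|_{Z_r}$ and $\kB_r$ is a block of $G_r$ for $r=1,2$, then $\kB_1\otimes\kB_2:=\{\sigma_1\otimes\sigma_2:\sigma_r\in\kB_r\}$ is a block of $G=G_1\times G_2$, and conversely every block of $G$ is of this shape. Granting this, the statement for $G=\prod_{i=1}^f\GL_2(\Q_p)$ follows by induction on $f$, taking $G_1=\prod_{i<f}\GL_2(\Q_p)$ and $G_2=\GL_2(\Q_p)$ and using associativity of the operation $\otimes$ on blocks together with the inductive description of the blocks of $G_1$.

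The key ingredient is a Künneth formula for these categories. Let $\pi_r,\tau_r$ be irreducible objects of $\mathrm{Mod}^{\mathrm{sm}}_{G_r,\zeta|_{Z_r}}(\cO)$. Choose projective resolutions $P_r^\bullet\to\tau_r^\vee$ in $\kC_{G_r,\zeta|_{Z_r}}(\cO)$. By Lemma \ref{irrof}(2), applied in each degree, and the exactness of the completed tensor product against projective objects, $\mathrm{Tot}(P_1^\bullet\hat{\otimes}P_2^\bullet)\to\tau_1^\vee\hat{\otimes}\tau_2^\vee$ is a projective resolution in $\kC_{G,\zeta}(\cO)$. Applying $\Hom_{\kC_{G,\zeta}(\cO)}(-,\pi_1^\vee\hat{\otimes}\pi_2^\vee)$ and invoking Lemma \ref{seprod} (for the finite-length pieces $\hat{\otimes}$ is the ordinary tensor over $\cO$, and one passes to the inverse limit), one obtains the total complex of $\Hom_{\kC_{G_1}}(P_1^\bullet,\pi_1^\vee)\otimes_\cO\Hom_{\kC_{G_2}}(P_2^\bullet,\pi_2^\vee)$. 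Its cohomology is governed by the algebraic Künneth spectral sequence; dualizing back, this shows that $\Ext^n_G(\pi_1\otimes\pi_2,\tau_1\otimes\tau_2)$ is assembled out of the groups $\Ext^a_{G_1}(\pi_1,\tau_1)$ and $\Ext^b_{G_2}(\pi_2,\tau_2)$ via $\otimes_\cO$ and $\Tor^\cO_1$. Two consequences are all that is needed: (i) $\Ext^n_G(\pi_1\otimes\pi_2,\tau_1\otimes\tau_2)$ vanishes for every $n$ unless $\pi_r$ and $\tau_r$ lie in a single block of $G_r$ for each $r$; and (ii) there is a natural injection $\Hom_{G_1}(\pi_1,\tau_1)\otimes_\cO\Ext^1_{G_2}(\pi_2,\tau_2)\hookrightarrow\Ext^1_G(\pi_1\otimes\pi_2,\tau_1\otimes\tau_2)$, and symmetrically with the roles of $G_1$ and $G_2$ exchanged.

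Now fix $\pi_r\in\kB_r$. If $\sigma_r\in\kB_r$ for $r=1,2$, connect $\pi_1$ to $\sigma_1$ by a linkage chain in $G_1$ and tensor each term with $\pi_2$: by (ii) a nonzero $\Ext^1_{G_1}$ at one step produces a nonzero $\Ext^1_G$ (since $\End_{G_2}(\pi_2)\neq 0$), and similarly in the reverse direction, while the isomorphism case is trivial; hence $\pi_1\otimes\pi_2\sim\sigma_1\otimes\pi_2$, and likewise $\sigma_1\otimes\pi_2\sim\sigma_1\otimes\sigma_2$, so $\kB_1\otimes\kB_2$ is contained in a single $\sim$-class. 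Conversely, every irreducible of $G$ is of the form $\sigma_1\otimes\sigma_2$ by Lemma \ref{irrof}(1), and if $\sigma_1\otimes\sigma_2$ is linked to $\sigma_1'\otimes\sigma_2'$ then (i)—together with the vanishing of $\Ext^n$ between irreducibles in distinct blocks, which is part of the block decomposition recalled in \ref{Blocks}, and uniqueness of the tensor decomposition of irreducibles for the isomorphism case—forces $\sigma_r\sim_{G_r}\sigma_r'$ for each $r$. Therefore the $\sim$-class of $\pi_1\otimes\pi_2$ is contained in, hence equal to, $\kB_1\otimes\kB_2$, so $\kB_1\otimes\kB_2$ is a block. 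Finally, if $\kB$ is any block of $G$, choose $\pi=\pi_1\otimes\pi_2\in\kB$ and let $\kB_r$ be the block of $\pi_r$ in $G_r$; the above gives $\kB=\kB_1\otimes\kB_2$, which proves the last assertion.

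The only substantial point is the Künneth step: one needs that $\kC_{G,\zeta}(\cO)$ has enough projectives, that the completed tensor product of projective resolutions over $G_1$ and $G_2$ resolves the external tensor product over $G$, and the $\Hom$-decomposition of Lemma \ref{seprod} in each degree. These are precisely the homological facts established in Appendix B of \cite{GN16}, from which Lemmas \ref{irrof} and \ref{seprod} are taken, so this is bookkeeping rather than a genuine obstacle. I also note that the $\Tor^\cO_1$-corrections appearing in the spectral sequence are harmless for the argument above: they can only link $\sigma_1\otimes\sigma_2$ to $\sigma_1'\otimes\sigma_2'$ when $\sigma_r$ and $\sigma_r'$ already lie in the same block of $G_r$, so they do not enlarge any $\sim$-class beyond $\kB_1\otimes\kB_2$.
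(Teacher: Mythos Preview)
Your proof is correct and shares with the paper's the reduction to two factors and the induction on $f$. The approaches diverge in the $f=2$ step. You build a K\"unneth spectral sequence by tensoring projective resolutions and applying Lemma \ref{seprod} degreewise, extracting (i) vanishing of $\Ext^1_G$ when the factors lie in distinct blocks and (ii) an injection from $\Hom\otimes\Ext^1$ into $\Ext^1_G$. The paper is more direct: for the analogue of (i) it embeds a putative non-split extension into the explicit injective envelope $(J_1^\vee\hat\otimes J_2^\vee)^\vee$ (via Lemma \ref{irrof}(2)) and observes that no subquotient can have first factor outside the block of $\pi_1$; for the analogue of (ii), given a non-split extension $\pi_3$ of $\pi_1'$ by $\pi_1$ in $G_1$ with $\pi_1\not\cong\pi_1'$, it checks via a single application of Lemma \ref{seprod} that $\Hom_G(\pi_3\otimes\pi_2,\pi_1\otimes\pi_2)=0$, so $\pi_3\otimes\pi_2$ remains non-split. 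Your route buys a uniform computation of all $\Ext^n_G$ and treats the self-extension case on the same footing; the paper's route is shorter and avoids the homological bookkeeping (projectivity of completed tensors of arbitrary projectives, exactness of $-\hat\otimes_\cO P$ for $P$ projective) that you defer to \cite{GN16}. One small point: Lemma \ref{irrof}(2) as stated concerns only projective envelopes of irreducibles, so your ``applied in each degree'' implicitly uses that arbitrary projectives in $\kC_{G_r,\zeta|_{Z_r}}(\cO)$ are summands of products of such envelopes.
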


\begin{proof}
We will only prove the case $f=2$. The general case will follow directly by induction on $f$. Let $\pi=\pi_1\otimes\pi_2,\pi'=\pi'_1\otimes\pi'_2$ be two irreducible representations of $G$ such that $\pi_1,\pi'_1$ are in not the same block of $\GL_2(\Q_p)$. We claim that $\Ext^1_G(\pi,\pi')=\Ext^1_G(\pi',\pi)=0$.  To see this, let $\pi_1\hookrightarrow J_1,\pi_2\hookrightarrow J_2$ be injective envelopes of $\pi_1,\pi_2$.  Suppose we have a non-split extension: 
\[0\to\pi\to \pi''\to\pi'\to 0.\]
By lemma \ref{irrof}, $J:=(J_1^\vee\hat{\otimes}J_2^\vee)^\vee$ is an injective envelope of $\pi$. It is easy to see that all irreducible subquotients of $J$ must be a tensor product of irreducible subquotients of $J_1,J_2$. By universal property, we get a map $\pi''\to J$ which has to be injective. But $\pi'_1$ cannot appear as a subquotient of $J_1$ since $\pi_1,\pi'_1$ live in different blocks. Thus there is no such non-split extension. This proves that each block of $G$ must be contained in some $\kB_1\otimes\kB_2$.

It rests to prove that if $\Ext^1_{\GL_2(\Q_p)}(\pi'_1,\pi_1)\neq 0$ and $\pi_1\ncong\pi'_1$, then $\Ext^1_G(\pi'_1\otimes\pi_2,\pi_1\otimes\pi_2)\neq 0$ for irreducible representations $\pi_1,\pi'_1,\pi_2$ of $\GL_2(\Q_p)$. Choose a non-split extension $\pi_3$ of $\pi'_1$ by $\pi_1$. Then $\Hom_G(\pi_3\otimes\pi_2,\pi_1\otimes\pi_2)\cong\Hom_{\GL_2(\Q_p)}(\pi_3,\pi_1)\otimes \Hom_{\GL_2(\Q_p)}(\pi_2,\pi_2)=0$ by the previous lemma. Thus $\pi_3\otimes\pi_2$ is a non-split extension of $\pi'_1\otimes\pi_2$ by $\pi_1\otimes\pi_2$. 
\end{proof}

\begin{cor} \label{fpas}
Let $\kB=\kB_1\otimes\cdots\otimes\kB_f$ be a block of $G$ such that each $\kB_i$ contains an absolutely irreducible representation. If $p=3$, we assume no $\kB_r$ contains $\delta\circ\det$ (case (5)). Then
\begin{enumerate}
\item $E_\kB\cong \widehat{\bigotimes}_{r=1}^f E_{\kB_r}$
\item The natural inclusion $R^{\ps,\zeta\varepsilon}_{\kB_r}\to E_{\kB_r}$ in Theorem \ref{pa1} induces a natural finite map 
\[\widehat{\bigotimes}_{r=1}^f R^{\ps,\zeta\varepsilon}_{\kB_r}\to E_{\kB},\] 
which makes $E_{\kB}$ into a finitely generated module over $\widehat{\bigotimes}_{r=1}^f R^{\ps,\zeta\varepsilon}_{\kB_r}$.
\item The centre of $E_\kB$ is Noetherian and $E_{\kB}$ is a finitely generate module over its centre.
\end{enumerate}
\end{cor}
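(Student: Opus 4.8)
The plan is to deduce all three statements from Pa\v{s}k\={u}nas' results for a single copy of $\GL_2(\Q_p)$ (Theorems \ref{pa1} and \ref{pa2}), together with the product structure of blocks (Lemma \ref{fblk}) and the multiplicativity of morphism spaces in $\kC_{G,\zeta}(\cO)$ under completed tensor products (Lemma \ref{seprod}). Note that the hypothesis excluding case (5) when $p=3$ is exactly what guarantees that Theorems \ref{pa1} and \ref{pa2} apply to each factor $\kB_r$.

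For part (1), I would first use Lemma \ref{fblk} to write $\kB=\kB_1\otimes\cdots\otimes\kB_f$ and observe that then $\kB_i$ consists precisely of the representations $\pi_1\otimes\cdots\otimes\pi_f$ with $\pi_r\in(\kB_r)_i$ (uniqueness of the tensor factorization of an irreducible), so that $\pi_\kB\cong\bigotimes_{r=1}^f\pi_{\kB_r}$. Writing each $P_{\kB_r}=\bigoplus_{\pi\in(\kB_r)_i}P_\pi$ as a direct sum of projective envelopes of the $\pi^\vee$, Lemma \ref{irrof}(2) shows that each $P_{\pi_1}\hat{\otimes}\cdots\hat{\otimes}P_{\pi_f}$ is a projective envelope of $(\pi_1\otimes\cdots\otimes\pi_f)^\vee$; summing over all tuples (and using that $\hat{\otimes}$ commutes with finite direct sums) identifies $\widehat{\bigotimes}_{r=1}^f P_{\kB_r}$ as a projective envelope of $\pi_\kB^\vee$, hence by uniqueness with $P_\kB$. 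I would then apply Lemma \ref{seprod} inductively on $f$, taking $M_r=N_r=P_{\kB_r}$, to obtain $E_\kB=\End_{\kC_{G,\zeta}(\cO)}(P_\kB)\cong\End_{\kC_{G,\zeta}(\cO)}(\widehat{\bigotimes}_r P_{\kB_r})\cong\widehat{\bigotimes}_{r=1}^f\End_{\kC_{G_r,\zeta|_{Z_r}}(\cO)}(P_{\kB_r})=\widehat{\bigotimes}_{r=1}^f E_{\kB_r}$, and I would check that this identification respects the ring structures, since composition of decomposable endomorphisms is factor-wise.

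For parts (2) and (3), I would argue as follows. By Theorems \ref{pa1} and \ref{pa2}, each $E_{\kB_r}$ is a finite module over its centre, which is identified with the complete Noetherian local $\cO$-algebra $R^{\ps,\zeta\varepsilon}_{\kB_r}$; hence $R:=\widehat{\bigotimes}_{r=1}^f R^{\ps,\zeta\varepsilon}_{\kB_r}$ is again a complete Noetherian local $\cO$-algebra, and the inclusions $R^{\ps,\zeta\varepsilon}_{\kB_r}\hookrightarrow E_{\kB_r}$ induce via part (1) the natural map $R\to E_\kB$. Choosing pseudo-compact surjections $(R^{\ps,\zeta\varepsilon}_{\kB_r})^{\oplus m_r}\twoheadrightarrow E_{\kB_r}$ and using that completed tensor product of pseudo-compact modules is right exact and commutes with finite direct sums, one gets a surjection $R^{\oplus(m_1\cdots m_f)}\twoheadrightarrow E_\kB$, which proves (2). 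For (3), the image of $R$ in $E_\kB$ is central: the image of each $R^{\ps,\zeta\varepsilon}_{\kB_r}$ lies in $Z(E_{\kB_r})$ by Theorem \ref{pa1}, so inside $\widehat{\bigotimes}_s E_{\kB_s}\cong E_\kB$ it commutes with the $r$-th factor and, lying in a single tensor component, with every other factor as well. Thus $Z(E_\kB)$ is an $R$-subalgebra of the finite $R$-module $E_\kB$, hence is itself a finite $R$-module (as $R$ is Noetherian), in particular Noetherian; and since $R\subseteq Z(E_\kB)\subseteq E_\kB$ with $E_\kB$ finite over $R$, $E_\kB$ is finite over its centre.

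The step I expect to require the most care is part (1): checking that $\widehat{\bigotimes}_r P_{\kB_r}$ is genuinely a projective envelope of $\pi_\kB^\vee$ — which needs that completed tensor products of projectives stay projective and of essential monomorphisms stay essential, packaged into Lemma \ref{irrof}(2), together with compatibility with the direct-sum decompositions — and checking that Lemma \ref{seprod} extends to $f$ factors as an isomorphism of topological (pseudo-compact) rings and not merely of $\cO$-modules. Once (1) is established, (2) and (3) follow formally from the single-factor theorems of Pa\v{s}k\={u}nas and routine pseudo-compact module algebra.
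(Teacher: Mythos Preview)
Your proposal is correct and follows essentially the same approach as the paper. The paper's proof is terser---it simply cites the previous lemmas for $P_\kB\cong\widehat{\bigotimes}P_{\kB_r}$ and $\End(P_\kB)\cong\widehat{\bigotimes}\End(P_{\kB_r})$, observes that (3) follows from (2) because the image of $\widehat{\bigotimes}_r R^{\ps,\zeta\varepsilon}_{\kB_r}$ lands in the centre, and says the rest follows from Theorem~\ref{pa2}---but the underlying argument is exactly the one you spell out.
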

\begin{proof}
Clearly the third part follows directly from the second since the image of $\widehat{\bigotimes}_{r=1}^f R^{\ps,\zeta\varepsilon}_{\kB_r}$ is in the centre of $E_{\kB}$. By the previous lemmas, $P_{\kB}\cong\widehat{\bigotimes}P_{\kB_r}$ and $\End(P_\kB)\cong \widehat{\bigotimes} \End(P_{\kB_r})$. The rest of the corollary all follows from Theorem \ref{pa2}.
\end{proof}

\begin{rem}
It seems that $\widehat{\bigotimes}_{r=1}^f R^{\ps,\zeta\varepsilon}_{\kB_r}$ is exactly the centre of $E_{\kB}$. This is at least true if all $\kB_r$ are in case (1)(2)(3) since $E_{\kB_r}$ is a free module over its centre in these cases.
\end{rem}

\subsection{Local-global compatibility} \label{sblgc}
In this subsection, we use Pa\v{s}k\={u}nas' theory to define another action of `$R^{\ps}_p$' on the completed cohomology and prove that it is equal to the action defined from Galois side at the end of subsection \ref{haapr}. 

\begin{para} \label{defnBmv}
Back to the setting in subsection \ref{haapr}. Let $\km$ be a maximal ideal of $\T_\psi(U^p)$. We get a two-dimensional pseudo-representation with determinant $\psi\varepsilon^{-1}$:
\[T_\km:G_{F,S}\to \T_\psi(U^p)_\km.\]
Restrict it to $G_{F_v},v|p$, we can attach a two-dimensional semi-simple representation $\bar{\rho}_{\km,v}$ of $G_{F_v}$ over $\F$. Enlarging $\cO$ if necessary, we assume that $\bar{\rho}_{\km,v}$ is a direct sum of absolutely irreducible representations. Let $\bar{\rho}'_{\km,v}=\bar{\rho}_{\km,v}\otimes \varepsilon$. Using the recipe in \ref{pas}, we can define a block $\kB_{\km,v}$ of $\GL_2(F_v)\cong\GL_2(\Q_p)$ from $\bar{\rho}'_{\km,v}$. Let $\kB_\km=\otimes_{v|p}\kB_{\km,v}$ be the block of $D_p^\times=\prod_{v|p}\GL_2(F_v)$ defined in lemma \ref{fblk}. Note that it has central character $\psi$. As before, we denote its projective generator by $P_{\kB_{\km}}$.

One of the central objects in our study is
\end{para}

\begin{defn} \label{defnm}
$\mm:=\Hom_{\kC_{D_p^\times,\psi}(\cO)}(P_{\kB_\km},M_\psi(U^p)_\km)$.
\end{defn}

\begin{rem}
The twist of cyclotomic character in $\bar{\rho}'_{\km,v}$ comes from the normalization of Colmez's functor used in Pa\v{s}k\={u}nas' paper. For example, there would be no such twist if we are using the magical functor in \cite{Eme1}.
\end{rem}

\begin{para} \label{gaact}
There two actions of $R^{\ps,\psi\varepsilon^{-1}}_p:=\widehat{\bigotimes}_{v|p}R^{\ps,\psi\varepsilon^{-1}}_v$ on $\mm$: 
\begin{enumerate}
\item $\tau_{\Gal}$: defined in the last paragraph of subsection \ref{haapr}, which comes from the action of $\T_\psi(U^p)_\km$ on $M_\psi(U^p)_\km$.
\item $\tau_{\mathrm{Aut}}$: which comes from the action of $\widehat{\bigotimes}_{v|p}R^{\ps,\psi\varepsilon}_v\cong\widehat{\bigotimes}_{v|p}R^{\ps,\psi\varepsilon^{-1}}_v$ on $P_{\kB_\km}$ via $E_{\kB_\km}$ in corollary \ref{fpas}.  The natural isomorphisms between $R^{\ps,\psi\varepsilon}_v$ and $R^{\ps,\psi\varepsilon^{-1}}_v$ are given by twisting the inverse of cyclotomic character. We fix this isomorphism from now on.
\end{enumerate}

Now we can state our main result of this section.
\end{para}

\begin{thm}[Local-global compatibility] \label{lgc}
If $p=3$, we assume that $\delta\circ\det\notin\kB_{\km,v}$ for any $v|p$. Then
\begin{enumerate}
\item $M_\psi(U^p)_\km\in\kC_{D_p^\times,\psi}(\cO)^{\kB_{\km}}$ where $\kB_{\km}$ is the block defined above.
\item Both actions $\tau_{\Gal},\tau_{\mathrm{Aut}}$ of $R^{\ps,\psi\varepsilon^{-1}}_p$ on $\mm$ are the same.
\end{enumerate}
\end{thm}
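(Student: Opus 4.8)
The plan is to follow Emerton's strategy: reduce both assertions to a statement about classical, indeed crystalline, points, the crystalline case itself being supplied by Berger--Breuil \cite{BB10}, classical local-global compatibility, and Pa\v{s}k\={u}nas' identification of the Bernstein centre in Theorem \ref{pa1} and Corollary \ref{fpas}.

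First I would observe that $\mm$ carries both actions independently of part (1): $\tau_{\mathrm{Aut}}$ is the central action of $R^{\ps,\psi\varepsilon}_p\cong R^{\ps,\psi\varepsilon^{-1}}_p$ on $P_{\kB_\km}$, and $\tau_{\Gal}$ is the action of $\T_\psi(U^p)_\km$ on $M_\psi(U^p)_\km$ through $T_\km$; they commute because the Hecke and $D_p^\times$ actions on $M_\psi(U^p)_\km$ commute. By the proof of Corollary \ref{dsc} the module $M_\psi(U^p)$ is finitely generated over $\cO[[K_p]]$, so $\mm$ is finitely generated over $E_{\kB_\km}$, hence over $R^{\ps,\psi\varepsilon^{-1}}_p$ via $\tau_{\mathrm{Aut}}$ by the finiteness in Corollary \ref{fpas}; moreover $\mm$ is $\cO$-torsion free since $M_\psi(U^p)$ is. I would then argue, by a reducedness-and-density argument, that to prove (2) it suffices to check that the two continuous $\cO$-algebra maps $\tau_{\Gal},\tau_{\mathrm{Aut}}\colon R^{\ps,\psi\varepsilon^{-1}}_p\to\End_\cO(\mm)$ agree after evaluation at a Zariski-dense subset of the $\overbar{\Q_p}$-points of $\Supp\mm[1/p]$; and since $R^{\ps,\psi\varepsilon^{-1}}_p$ is topologically generated by the traces $\tr T^{univ}(g)$, $g\in\bigcup_{v\mid p}G_{F_v}$, this reduces to checking that at each such point the two resulting pseudo-representations of $\prod_{v\mid p}G_{F_v}$ coincide.

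The required density comes from Proposition \ref{density}: the $v$-algebraic, $v'$-locally algebraic vectors of $S_\psi(U^p)_E$ are dense, and by \ref{rlcc} these are precisely the classical quaternionic automorphic forms of regular weight; shrinking $U^p$ away from $p$ if necessary and using that the Hecke algebra of a fixed weight and level is a finite $\cO$-module, the points attached to $\pi$ with $\pi_v$ unramified at every $v\mid p$ --- so that $\rho_\pi|_{G_{F_v}}$ is crystalline --- remain Zariski-dense in $\Spec\T_\psi(U^p)_\km[1/p]$, hence in $\Supp\mm[1/p]$. Fix such a point, attached to an automorphic representation $\pi$; via Jacquet--Langlands and the known construction of Galois representations for Hilbert modular forms one obtains $\rho_\pi\colon G_F\to\GL_2(\overbar{\Q_p})$, and the value of $\tau_{\Gal}$ at this point is $\tr\!\big(\rho_\pi|_{G_{F_v}}\otimes\varepsilon^{-1}\big)$, the twist being the one built into $T_\psi(U^p)$ (cf. the remark after Definition \ref{defnm}). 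For $\tau_{\mathrm{Aut}}$ at the same point: the locally algebraic vectors there span a copy of $\bigotimes_{v\mid p}\big(\pi_v\otimes W_{(\vec k,\vec w),v}\big)$ inside $S_\psi(U^p)_E$ with closure the unitary Banach representation $\widehat{\bigotimes}_{v\mid p}\Pi_v$, and by Theorem \ref{pa1} the centre of $E_{\kB_\km}$ acts on the corresponding quotient of $\mm$ through $\prod_{v\mid p}\tr\mathbf V(\Pi_v)$, where $\mathbf V$ is Colmez's functor normalized as in \S5.7 of \cite{Pas13}. Since each $\pi_v$ is (a twist of) an unramified principal series and $W_{(\vec k,\vec w),v}$ has distinct weights, the results of Berger--Breuil \cite{BB10} on the $p$-adic local Langlands correspondence in the crystalline case identify $\mathbf V(\Pi_v)$ with the $2$-dimensional crystalline representation of $G_{F_v}$ carrying the prescribed Hodge--Tate weights and crystalline Frobenius eigenvalues, and classical local-global compatibility at $p$ says that this is $\rho_\pi|_{G_{F_v}}\otimes\varepsilon^{-1}$. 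So the two pseudo-representations agree at every crystalline point, which gives (2). Part (1) then drops out of the same comparison: if a block $\kB=\bigotimes_v\kB_v$ had $M_\psi(U^p)_\km^{\kB}\neq 0$, running the argument on this summand and reducing the identity of pseudo-representations modulo $\varpi$ forces $\bar\rho_{\kB_v}\cong\bar\rho'_{\km,v}{}^{\mathrm{ss}}$ for all $v\mid p$, that is $\kB_v=\kB_{\km,v}$ and $\kB=\kB_\km$.

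The step I expect to be the main obstacle is the first one: turning the sought equality of the two actions into an equality at classically interpolated points. This requires controlling the behaviour of $\mm[1/p]$ well enough that it is detected by the crystalline points (faithfulness of the completed cohomology over the Hecke algebra, together with the density of Proposition \ref{density}), and it requires keeping careful track of all the cyclotomic twists entering the normalization of $\mathbf V$, of $R^{\ps,\psi\varepsilon}_p$, and of $T_\psi(U^p)$. The crystalline-point input is essentially quoted from \cite{BB10}, \cite{Pas13} and the classical theory. Finally, the hypothesis excluding case (5) when $p=3$ is exactly what makes Theorem \ref{pa1} and Corollary \ref{fpas} available, so it is needed throughout.
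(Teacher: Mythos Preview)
Your proposal is essentially the paper's own proof: Emerton's strategy of using the density of algebraic vectors (Proposition \ref{density}) to reduce to crystalline points, then invoking Berger--Breuil/\cite{CDP14} and classical local-global compatibility at $p$, with Pa\v{s}k\={u}nas' identification of the centre supplying the automorphic side.

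A few points of execution differ. The paper works one place $v\mid p$ at a time, and in place of your ``reducedness-and-density'' step it proves a concrete injection (Lemma \ref{inccr}):
\[
\Hom^{\cont}_{E[\GL_2(F_v)]}(S_\psi(U^p)_\km\otimes E,\Pi_{\kB_{\km,v}})\hookrightarrow\prod_\kp\Hom^{\cont}_{E(\kp)[\GL_2(F_v)]}(\Pi(\kp),\Pi_{\kB_{\km,v}}\otimes E(\kp)),
\]
the key input being semi-simplicity of the Hecke action on locally algebraic vectors. Your abstract ``evaluate at Zariski-dense points of $\Supp\mm[1/p]$'' is really this injection in disguise; without making it explicit you would need to justify why $\mm[1/p]$ is detected by its fibres, which is exactly what semi-simplicity provides. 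The paper also divides the crystalline verification into three cases you elide: $\rho(\kp)_v$ irreducible (via \cite{CDP14} Theorem 1.3 and uniqueness of unitary completions from \cite{BB10}), $\rho(\kp)_v$ reducible (via \cite{BE10} for the ordinary completion and \cite{Pas13} Theorem 1.11), and $\pi_\kp$ factoring through the reduced norm (via \cite{Pas13} Proposition 10.107). Finally, for part (1) the paper runs the same injection with an arbitrary block $\kB'$ in place of $\kB_{\km,v}$ and observes that every $\Pi(\kp)$ lies in $\kB_{\km,v}$, forcing the right-hand side to vanish; this is cleaner than your reduction-mod-$\varpi$ argument, which requires first setting up the comparison on a putative foreign block summand with its own centre $R^{\ps}_{\kB'}$.
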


We will follow the strategy of Emerton in \cite{Eme1}. The idea is that using the density result (proposition \ref{density}), it suffices to check the compatibility of both actions on classical \textit{crystalline} points. But this is a consequence of the results and Berger-Breuil \cite{BB10} and classical local-global compatibility. We also remark that the argument relies on the semi-simplicity of the Hecke actions at finite levels.

\begin{proof}
Let $v$ be a place above $p$. It suffices to prove 
\begin{enumerate} 
\item $M_\psi(U^p)_\km\in\kC_{\GL_2(F_v),\psi|_{F_v^\times}}(\cO)^{\kB_{\km,v}}$ where $\kB_{\km,v}$ is the block defined in \ref{defnBmv},
\item both actions $\tau_{\Gal},\tau_{\mathrm{Aut}}$ of $R^{\ps,\psi\varepsilon^{-1}}_v$ on $\Hom_{\kC_{\GL_2(F_v),\psi|_{F_v^\times}}(\cO)}(P_{\kB_{\km,v}},M_\psi(U^p)_\km)$ are the same.
\end{enumerate}

Since the formulation of the Theorem is compatible with twisting of characters, we may assume $\psi$ is crystalline at $v$ of Hodge-Tate weight $w_\psi$. Also it is clear that we may shrink $U^p$ so that $\psi|_{U^p\cap(\A_F^\infty)^\times}$ is trivial and $U^pK_p$ is sufficiently small. 
 
Consider the isomorphism in \ref{rlcc}: for any $(\vec{k},\vec{w})\in \Z_{>1}^{\Hom(F,\overbar{\Q_p})}\times  \Z^{\Hom(F,\overbar{\Q_p})}$ such that $k_\sigma+2w_\sigma=w_\psi+2$ independent of $\sigma$ and $U_p=\GL_2(O_{F_v})U^v$ with $U^v$ an open subgroup of  $\prod_{w\neq v, w|p}\GL_2(O_{F_w})$, we have
\begin{eqnarray*} \label{rlcc2}
S_{(\vec{k},\vec{w}),\psi}(U^pU_p,E)\cong \Hom_{E[U_p]}(W^{*}_{(\vec{k},\vec{w}),E},S_\psi(U^p)_E).
\end{eqnarray*}
It is clear that this isomorphism is Hecke-equivariant. We get a natural surjective map $\T_\psi(U^p)[\frac{1}{p}]\to\T_{(\vec{k},\vec{w}),\psi}(U^pU_p,E)$ sending $T_w$ to $T_w$ for $w\notin S$. Hence it follows from the theory of classical automorphic forms that the action of $\T_\psi(U^p)[\frac{1}{p}]$ on $S_{(\vec{k},\vec{w}),\psi}(U^pU_p,E)$ is \textit{semi-simple}. Let $\kp$ be a prime ideal of $\T_{(\vec{k},\vec{w}),\psi}(U^pU_p,E)\otimes_E\overbar{\Q_p}$. Then it corresponds to an automorphic representation $\pi_\kp=\pi_\kp^\infty\otimes(\pi_\kp)_\infty$ on $\DAi$. From the discussion in \ref{pcaf}, we know that
\[(S_{(\vec{k},\vec{w}),\psi}(U^pU_p,E)\otimes_{E,\iota_p} \bC)[\kp]\cong (\pi_\kp^\infty)^{U^pU_p}. \]
Fix $U^v$ and take the limit over all open compact subgroups $U_v$ of $\GL_2(O_{F_v})$. We get
\[\varinjlim_{U_v} (S_{(\vec{k},\vec{w}),\psi}(U^pU^vU_v,E)\otimes_{E,\iota_p} \bC)[\kp]\cong (\pi_\kp^\infty)^{U^pU^v}\cong (\pi_\kp)_v^{\oplus d(\kp)},\]
for some $d(\kp)>0$. Here $(\pi_\kp)_v$ is the local representation of $\pi_\kp$ at place $v$. Take a finite extension $E(\kp)$ of $E$ such that $\kp$ is defined over $E(\kp)$. There exists a model $\pi^{E(\kp)}_v$ over $E(\kp)$ of $(\pi_\kp)_v$. Then we have 
\[\varinjlim_{U_v} S_{(\vec{k},\vec{w}),\psi}(U^pU^vU_v,E(\kp))[\kp]\cong (\pi^{E(\kp)}_v)^{\oplus d(\kp)}.\]
Combining with the isomorphism in \ref{rlcc}, we get a map (by abuse of notation, $\kp$ is viewed as a maximal ideal of $\T_\psi(U^p)\otimes E(\kp)$)
\[\Phi_{\kp}:W^{*}_{(\vec{k},\vec{w}),E}\otimes_E (\pi^{E(\kp)}_v)^{\oplus d(\kp)}\to (S_\psi(U^p)\otimes_{\cO} E(\kp))[\kp].\]
Using remark \ref{pact}, it is easy to see that this map is actually $\GL_2(F_v)U^v$-equivariant. Moreover the image contains the $\GL_2(O_{F_v})U^v$-algebraic vectors of $(S_\psi(U^p)_E\otimes_E E(\kp))[\kp]$ (see \ref{density} for the precise definition here). This is essentially because 
\[\Hom_{E[U_p]}(W^{*}_{(\vec{k'},\vec{w'}),E},S_\psi(U^p)_E\otimes E(\kp)[\kp])=0\]
unless $\vec{k}=\vec{k'},\vec{w}=\vec{w'}$. We denote the \textit{closure} of the image of $\Phi_\kp$ in $S_{\psi}(U^p)\otimes_\cO E(\kp)$ by $\Pi(\kp)$. Recall that $S_{\psi}(U^p)\otimes_\cO E(\kp)$ is a Banach space with a unit ball $S_{\psi}(U^p)\otimes_\cO O_{E(\kp)}$.

Let $\Pi_{\kB_{\km,v}}:=\Hom^{\cont}_\cO(P_{\kB_{\km,v}},E)$. This is a Banach space with unit ball $\Hom^{\cont}_\cO(P_{\kB_{\km,v}},\cO)$. 

\begin{lem} \label{inccr}
The inclusion map $\Pi(\kp)\hookrightarrow S_{\psi}(U^p)\otimes_{\cO} E(\kp)$ induces a natural injective map:
\[\Hom^{\cont}_{E[\GL_2(F_v)]}(S_{\psi}(U^p)_\km\otimes_\cO E,\Pi_{\kB_{\km,v}})\hookrightarrow\prod_{U^v}\prod_{(\vec{k},\vec{w})}\prod_\kp \Hom^{\cont}_{E(\kp)[\GL_2(F_v)]}(\Pi(\kp),\Pi_{\kB_{\km,v}}\otimes E(\kp)),\]
where $U^v$ runs over all open subgroups of $\prod_{w\neq v,w|p} \GL_2(O_{F_w})$, the pair $(\vec{k},\vec{w})$ runs over all elements  in $\Z_{>1}^{\Hom(F,\overbar{\Q_p})}\times  \Z^{\Hom(F,\overbar{\Q_p})}$ such that $k_\sigma+2w_\sigma=w_\psi+2$  for any $\sigma$, and $\kp$ runs over all the maximal ideals of $\T_{(\vec{k},\vec{w}),\psi}(U^p\GL_2(O_{F_v})U^v)_\km\otimes \overbar{\Q_p}$.
\end{lem}

\begin{proof}
By proposition \ref{density}, any continuous map from $S_{\psi}(U^p)_\km\otimes E$ is determined by its value on $(S_{\psi}(U^p)_\km\otimes E)^{v-\mathrm{a},v'-\mathrm{la}}$. But the action of Hecke algebra on this space is semi-simple, hence it follows from our previous discussion that it is contained in the space generated by all $\Pi(\kp)\cap S_{\psi}(U^p)_\km\otimes E$. This clearly proves the lemma.
\end{proof}

Note that the left hand side of the previous lemma is nothing but:
\begin{eqnarray*}
\Hom^{\cont}_{E[\GL_2(F_v)]}(S_\psi(U^p)_\km\otimes E,\Pi_{\kB_{\km,v}})\cong \Hom_{\kC_{\GL_2(F_v),\psi|_{F_v^\times}}(\cO)} (P_{\kB_{\km,v}},M_\psi(U^p)_\km) \otimes E.
\end{eqnarray*}

Since $S_{\psi}(U^p)$ is $p$-torsion free, it follows from the previous lemma that it suffices to prove 
\[\tau_{\Gal}|_{R^{\ps,\psi\varepsilon^{-1}}_v}=\tau_{\mathrm{Aut}}|_{R^{\ps,\psi\varepsilon^{-1}}_v}~\mbox{ on }\Hom^{\cont}_{E(\kp)[\GL_2(F_v)]}(\Pi(\kp),\Pi_{\kB_{\km,v}}\otimes E(\kp))\] 
for any $\kp$. Fix such a $\kp$ and  suppose it comes from $\T_{(\vec{k},\vec{w}),\psi}(U^p\GL_2(O_{F_v})U^v)_\km$. Since all the formulations are compatible with base change, we may assume $E(\kp)=E$.

There are two possibilities for $\kp$ depending on whether the automorphic representation $\pi_\kp$ factors through the reduced norm map $N_{D/F}$ or not. First we assume that $\pi_\kp$ does \textit{not} factor through $N_{D/F}$. Then the under Jacquet-Langlands correspondence, $\pi_\kp$ corresponds to a regular algebraic cuspidal automorphic representation of $\GL_2(\A_F)$. See lemma 1.3. of \cite{Ta06} for more details.

\noindent \underline{\textbf{Galois side}} The action $\tau_{\Gal}$ of $R^{\ps,\psi\varepsilon^{-1}}_v$ on $\Hom^{\cont}_{E[\GL_2(F_v)]}(\Pi(\kp),\Pi_{\kB_{\km,v}}\otimes E)$ is clear by our knowledge on the classical local-global compatibility at primes above $p$. 

\begin{lem} \label{clgc}
Let $\kp_v=R^{\ps,\psi\varepsilon^{-1}}_v[\frac{1}{p}]\cap\kp$ and $\rho(\kp)_v:G_{F_v}\to\GL_2(E)$ be the semi-simple representation given by $\kp_v$. Then
\begin{enumerate}
\item $\rho(\kp)_v$ is de Rham of Hodge-Tate weights $(w_{\sigma_v},w_{\sigma_v}+k_{\sigma_v}-1)$, where $\sigma_v:F\to E$ is the embedding induced by $v$. More precisely, 
\[\gr^i(\rho(\kp)_v\otimes B_{\dR})^{G_{F_v}}=0\] 
unless $i=w_{\sigma_v},w_{\sigma_v}+k_{\sigma_v}-1$.
\item The semi-simple Weil-Deligne representation $\mathrm{WD}(\varepsilon\otimes\rho(\kp)_v)^{\mathrm{ss}}$ corresponds to $\pi_v^{E(\kp)}$ under the Hecke correspondence in the sense of \cite{De72}. In our case, $\mathrm{WD}(\varepsilon\otimes\rho(\kp)_v)$ is just $(\varepsilon\otimes\rho(\kp)_v\otimes B_\cris)^{G_{F_v}}$ with $\Frob_v$ acting via $\varphi$.
\end{enumerate}
\end{lem}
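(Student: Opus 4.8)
The plan is to realize $\rho(\kp)$ as the $p$-adic Galois representation attached to the automorphic representation $\pi_\kp$ and then quote classical local-global compatibility at the places above $p$; both parts of the lemma are essentially restatements of known results. First I would make the global identification. Since $\pi_\kp$ does not factor through $N_{D/F}$, Jacquet--Langlands produces a regular algebraic cuspidal automorphic representation $\Pi$ of $\GL_2(\A_F)$ whose component at infinity is the discrete series of weight $(\vec k,\vec w)$; moreover the local component of $\Pi$ at $v$ is unramified (equivalently $\pi_v$ is spherical), since $\kp$ cuts out a nonzero eigenspace in $S_{(\vec k,\vec w),\psi}(U^p\GL_2(O_{F_v})U^v,E)$. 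By the construction of Galois representations for cohomological Hilbert modular forms (Carayol, Taylor, Blasius--Rogawski, Skinner, \dots) there is an irreducible continuous $\rho_\Pi\colon G_{F,S}\to\GL_2(E)$ with $\det\rho_\Pi=\psi\varepsilon^{-1}$ and $\tr\rho_\Pi(\Frob_w)$ equal to the eigenvalue of $T_w$ on $\Pi$ for every $w\notin S$. Comparing traces of Frobenii, the specialization of $T_\psi(U^p)$ at $\kp$ coincides with $\tr\rho_\Pi$; since a semisimple representation is determined by its trace, $\rho(\kp)\cong\rho_\Pi$, and pushing this through the natural map $R^{\ps,\psi\varepsilon^{-1}}_v\to\T_\psi(U^p)_\km$ identifies $\kp_v$ with the prime cut out by $\tr(\rho_\Pi|_{G_{F_v}})$ and yields $\rho(\kp)_v=(\rho_\Pi|_{G_{F_v}})^{\mathrm{ss}}$.

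It then remains to invoke classical local-global compatibility at $v\mid p$ for $\rho_\Pi$. For part (1): $\rho_\Pi|_{G_{F_v}}$ is de Rham because $\rho_\Pi$ is cut out of the $p$-adic \'etale cohomology of a quaternionic Shimura variety, and its Hodge--Tate weights are dictated by the infinitesimal character $(\vec k,\vec w)$; with the conventions of this paper ($\det=\psi\varepsilon^{-1}$, Hodge--Tate weight of $\varepsilon$ equal to $-1$, and $\psi|_{O_{F_v}^\times}$ algebraic of the stated form) they come out to $w_{\sigma_v}$ and $w_{\sigma_v}+k_{\sigma_v}-1$, whose sum equals the Hodge--Tate weight $w_\psi+1$ of $\det\rho(\kp)_v$ as a consistency check. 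For part (2): since $\pi_v$ is unramified, $\rho_\Pi|_{G_{F_v}}$ is in fact crystalline, so $\mathrm{WD}(\varepsilon\otimes\rho(\kp)_v)$ reduces to $(\varepsilon\otimes\rho(\kp)_v\otimes B_\cris)^{G_{F_v}}$ with $\Frob_v$ acting by the crystalline $\varphi$; the claim then amounts to the statement that the eigenvalues of this $\varphi$ are the Satake parameters of $\pi_v$ — equivalently, that they are determined by the images under $\kp$ of the spherical Hecke operators at $v$ — which is part of the construction of $\rho_\Pi$ and, for general Hilbert modular forms, is a theorem of T.~Saito relating these forms to $p$-adic Hodge theory (classical when $F=\Q$).

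I expect the only real difficulty to be bookkeeping with normalizations rather than anything structural: one must match the cyclotomic twist $\varepsilon\otimes(-)$, the determinant $\psi\varepsilon^{-1}$, the geometric-Frobenius normalization of local and global class field theory, the sign of the Hodge--Tate weight, and the precise shape of $\psi$ at $v$, against the normalization of $\rho_\Pi$ in the cited references and against the recipe of \ref{tar} used to pass from the pseudo-character at $\kp_v$ to $\rho(\kp)_v$. The genuinely hard external input, namely local-global compatibility at $p$ for cohomological Hilbert modular forms, is cited as a black box.
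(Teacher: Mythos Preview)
Your proposal is correct and is essentially the approach the paper takes: the paper's entire proof is the one-line citation ``This follows directly from Theorem 1.1 of \cite{BLGHT11},'' which packages exactly the Jacquet--Langlands transfer plus classical local-global compatibility at $p$ for cohomological Hilbert modular forms that you spell out. Your observation that $\pi_v$ is unramified (hence $\rho(\kp)_v$ is crystalline) is correct in context, since $\kp$ arises from a Hecke algebra at level $\GL_2(O_{F_v})$ at $v$, and your bookkeeping checks on the determinant and Hodge--Tate weights are the right sanity tests.
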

\begin{proof}
This is the main result of \cite{Ski09}. See also Theorem 1.1 of \cite{BLGHT11}.
\end{proof}

\begin{rem}
Under the Hecke correspondence, $\chi_1\oplus\chi_2$ will correspond to $\Ind_{B(F_v)}^{\GL_2(F_v)}\chi_1\otimes \chi_2|\cdot|^{-1}$ (generically). This is also the one used in \cite{CDP14}.
\end{rem}

\noindent \underline{\textbf{Automorphic side}} Now we need to determine $\Pi(\kp)$. Recall that this is the closure of 
\[W^{*}_{(\vec{k},\vec{w}),E}\otimes (\pi^{E(\kp)}_v)^{\oplus d(\kp)}=[(\Sym^{k_{\sigma_v-2}}(E^2)\otimes \det{}^{w_{\sigma_v}})^*\otimes \pi^{E(\kp)}_v]^{\oplus d(\kp)'}\]
in $S_\psi(U^p)_E$. Here $d(\kp)'$ is some multiple of $d(\kp)$.

Let $\Pi_v$ be the universal unitary completion of $(\Sym^{k_{\sigma_v-2}}(E^2)\otimes \det{}^{w_{\sigma_v}})^*\otimes \pi^{E(\kp)}_v$ as a $E$-representation of $\GL_2(F_v)$. We note that $\pi^{E(\kp)}_v$ is an irreducible principal series. Otherwise it is one-dimensional and $\pi_\kp$ has to factor through the reduced norm map by the approximation Theorem, which we assume not the case. By the main results of \cite{BB10}, \cite{Pas09} in the non-ordinary case and Proposition 2.2.1 of \cite{BE10} in the ordinary case, this is a topologically irreducible admissible unitary $\GL_2(F_v)$ representation. 

\begin{lem}
$\Pi(\kp)$ is a quotient of $\Pi_v^{\oplus d(\kp)'}$.
\end{lem}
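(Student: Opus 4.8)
The plan is to deduce the statement from the universal property of the universal unitary completion $\Pi_v$, the one remaining issue being to exhibit $\Pi(\kp)$ as an \emph{admissible} unitary Banach representation of $\GL_2(F_v)$; once that is in place, the required map comes for free from the universal property and its surjectivity follows from the good formal properties of the abelian category of admissible Banach representations.

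First I would check that $\Pi(\kp)$ is a unitary Banach representation of $\GL_2(F_v)$ over $E(\kp)$: by construction it is a closed $\GL_2(F_v)$-stable subspace of the Banach space $S_\psi(U^p)\otimes E(\kp)$, whose unit ball $S_\psi(U^p)\otimes\cO_{E(\kp)}$ is preserved by all of $D_p^\times\supseteq\GL_2(F_v)$, so the inherited norm is $\GL_2(F_v)$-invariant. Next I would localise the ambient space away from $v$: the image of $\Phi_\kp$, hence its closure $\Pi(\kp)$, is contained in the $\kp$-eigenspace and, more to the point, in the subspace of $S_\psi(U^p)\otimes E(\kp)$ of vectors which at every place $w\mid p$, $w\neq v$, lie in the isotypic component for the finite-dimensional representation $W^v_{(\vec k,\vec w),E}|_{U^v}$ of $U^v$ --- this is exactly what the isomorphism of \ref{rlcc} used to build $\Phi_\kp$ records. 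That subspace is closed (an isotypic component for a finite-dimensional representation of the compact group $U^v$ is cut out by a central idempotent of $E[U^v]$, hence is a topological direct summand), and, since the level and the weight away from $v$ are now fixed, its $\GL_2(O_{F_v})$-invariants form a finite-dimensional space of classical automorphic forms of type $S_{(\vec k,\vec w),\psi}(U^pU^v\GL_2(O_{F_v}),E(\kp))$; thus it is admissible as a $\GL_2(F_v)$-representation. Since a closed subrepresentation of an admissible unitary Banach representation is again admissible (Schneider--Teitelbaum), $\Pi(\kp)$ is admissible.

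With that in hand I would invoke the universal property. Writing $\sigma=(\Sym^{k_{\sigma_v}-2}(E^2)\otimes\det{}^{w_{\sigma_v}})^{*}\otimes\pi_v^{E(\kp)}$, so that $\Pi_v$ is by definition the universal unitary completion of $\sigma$ and $W^{*}_{(\vec k,\vec w),E}\otimes(\pi_v^{E(\kp)})^{\oplus d(\kp)}\cong\sigma^{\oplus d(\kp)'}$, the map $\Phi_\kp$ is a $\GL_2(F_v)$-equivariant map $\sigma^{\oplus d(\kp)'}\to\Pi(\kp)$ with dense image. Applying the universal property of the universal unitary completion to the unitary Banach representation $\Pi(\kp)$ produces a continuous $\GL_2(F_v)$-equivariant map $\Pi_v^{\oplus d(\kp)'}=\widehat{\sigma^{\oplus d(\kp)'}}\to\Pi(\kp)$, whose image contains that of $\sigma^{\oplus d(\kp)'}$ and hence is dense. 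Now $\Pi_v$ is admissible (by \cite{BB10}, \cite{Pas09}, and \cite{BE10} in the ordinary case, as recalled above), so $\Pi_v^{\oplus d(\kp)'}$ is admissible and the last map is a morphism in the abelian category of admissible unitary Banach representations of $\GL_2(F_v)$ over $E(\kp)$; every such morphism is strict with closed image (Schneider--Teitelbaum), so a morphism of dense image is surjective and open. Therefore $\Pi(\kp)$ is a topological quotient of $\Pi_v^{\oplus d(\kp)'}$.

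The genuinely substantive point --- and the one I expect to be the main obstacle --- is the admissibility statement in the second paragraph: one must verify that after fixing the tame level $U^p$, the level $U^v$ and the weight $(\vec k,\vec w)$ at the places above $p$ other than $v$, what remains of the completed cohomology is admissible over $\GL_2(F_v)$, equivalently that the $\GL_2(O_{F_v})$-invariant vectors in $\Pi(\kp)$ reduce to a finite-dimensional space of classical forms. Once this is granted, the universal property of $\Pi_v$ together with the formalism of admissible Banach representations finishes the argument with no further computation.
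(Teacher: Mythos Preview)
Your proof is correct and follows essentially the same line as the paper's: invoke the universal property of $\Pi_v$ to get a continuous $\GL_2(F_v)$-map $\Pi_v^{\oplus d(\kp)'}\to\Pi(\kp)$ with dense image, then use that morphisms in the category of admissible unitary Banach representations are strict (the paper cites Proposition~3.1.3 of \cite{Eme1}, you cite Schneider--Teitelbaum directly---same content) to conclude surjectivity.

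The one place where you go beyond the paper is the admissibility of $\Pi(\kp)$ over $\GL_2(F_v)$: the paper simply asserts it, whereas you supply an argument via the $U^v$-isotypic component for the algebraic weight at the places $w\neq v$. Your argument is correct---the image of $\Phi_\kp$ lands in this closed isotypic component, hence so does its closure $\Pi(\kp)$; the isotypic component is admissible over $\GL_2(F_v)$ because $S_\psi(U^p)$ is admissible over $K_p$ and fixing a finite-dimensional $U^v$-type leaves something finitely generated over $\cO[[\GL_2(O_{F_v})]]$ on the dual side; closed subrepresentations of admissible representations are admissible. This is a genuine gap in the paper's exposition that you have filled, so your concern that this is ``the main obstacle'' is well placed, though the argument you give resolves it cleanly.
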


\begin{proof}
By the universal property, we get a continuous map $\Pi_v^{\oplus d(\kp)'}\to\Pi(\kp)$ with dense image. Note that both $\Pi_v^{\oplus d(\kp)'}$ and $\Pi(\kp)$ are both admissible representations of $\GL_2(F_v)$. The surjectivity of this map follows from Proposition 3.1.3 of \cite{Eme1}.
\end{proof}

As a corollary, we get an injective map
\[\Hom^{\cont}_{E[\GL_2(F_v)]}(\Pi(\kp),\Pi_{\kB_{\km,v}})\hookrightarrow \Hom^{\cont}_{E[\GL_2(F_v)]}(\Pi_v^{\oplus d(\kp)'},\Pi_{\kB_{\km,v}}).\]
Let $\Pi_v^0$ be a $\GL_2(F_v)$-invariant bounded open ball of $\Pi_v$ and denote $\Hom_\cO(\Pi_v^0,\cO)$ by $M_v$. Then
\[\Hom^{\cont}_{E[\GL_2(F_v)]}(\Pi_v^{\oplus d(\kp)'},\Pi_{\kB_{\km,v}})\cong E^{\oplus d(\kp)'}\otimes \Hom_{\kC_{\GL_2(F_v),\psi|_{F_v^\times}}(\cO)}(P_{\kB_{\km,v}},M_v)\]
 
Now we only need prove that  the action of the centre $R^{\ps,\psi\varepsilon^{-1}}_v$ of $E_{\mathfrak{B}}$ on $\Hom(P_{\kB_{\km,v}},M_v)\otimes E$ also factors through $R^{\ps,\psi\varepsilon^{-1}}_v[\frac{1}{p}]/\kp_v$. Note that $\Pi_v$ is topologically irreducible. By corollary 1.9 of \cite{Pas13}, it suffices to show that $M_v$ appears as a subquotient of $P_{\kB_{\km,v}}/\kp_v P_{\kB_{\km,v}}$. Here we consider $\kp_v$ as a prime ideal of $R^{\ps,\psi\varepsilon}_v$ by the isomorphism in \ref{gaact}.

If $\rho(\kp)_v$ is absolutely \textit{irreducible}, then by Theorem 1.10 of \cite{Pas13}, up to isomorphism, there is only one irreducible Banach representation $\Pi'_v$ appeared in the subquotient of $\Hom^{\cont}_\cO(P_{\kB_{\km,v}}/\kp_v P_{\kB_{\km,v}},E)$, which is characterized by $\mathbf{V}(\Pi'_v)\cong\rho(\kp)_v\otimes\varepsilon$. Here $\mathbf{V}$ is Colmez's functor normalized as in \cite{Pas13}. By Theorem 1.3 of \cite{CDP14} (the convention for Hodge-Tate weight of $\varepsilon$ is $1$ there) and lemma \ref{clgc}, $\Pi_v'$ is a unitary completion of 
\[\Sym^{k_{\sigma_v}-2}(E^2)\otimes \det{}^{-(w_{\sigma_v}+k_{\sigma_v}-2)}\otimes \pi_v^{E(\kp)}.\]
But the result of Berger and Breuil says that such unitary completion is unique. Hence $\Pi'_v\cong\Pi_v$ and $M_v\otimes E$ is even a quotient of $(P_{\kB_{\km,v}}/\kp_v P_{\kB_{\km,v}})\otimes E$.

If $\rho(\kp)_v=\psi_1\oplus\psi_2$ is \textit{reducible}, we may assume Hodge-Tate weight of $\psi_1$ (resp. $\psi_2$) is $w_{\sigma_v}$ (resp. $w_{\sigma_v}+k_{\sigma_v}-1$). Then 
\[\pi^{E(\kp)}_v\cong(\Ind^{\GL_2(F_v)}_{B(F_v)}\psi_1\varepsilon^{w_{\sigma_v}}|\cdot|^{1-w_{\sigma_v}}\otimes\psi_2\varepsilon^{w_{\sigma_v}+k_{\sigma_v}-1}|\cdot|^{-w_{\sigma_v}-k_{\sigma_v}+1})_{\mathrm{sm}}\]
is irreducible by our assumption. Hence $\psi_1/\psi_2\neq \varepsilon^{\pm1}$. Proposition 2.2.1 of \cite{BE10} tells us that $\Pi_v$ is the unitary parabolic induction $(\Ind^{\GL_2(F_v)}_{B(F_v)}\psi_2\varepsilon\otimes \psi_1)_{\cont}$. Compared with Theorem 1.11 of \cite{Pas13}, we also conclude that $M_v$ appears in the subquotient of $P_{\kB_{\km,v}}/\kp_v P_{\kB_{\km,v}}$.

Finally we treat the case where $\pi_\kp$ factors through the reduced norm map. In this case, $\Pi(\kp)$ has the form $\eta\circ\det$ for some continuous character $\eta:F_v^\times\to\cO^\times$ and the corresponding pseudo-character of $G_{F_v}$ is $\eta+\eta\epsilon^{-1}$. Here as usual $\eta$ is also viewed as a character of $G_{F_v}$ by the class field theory. Our claim follows directly from proposition 10.107 of \cite{Pas13}.

This finishes the proof of the second statement of the Theorem. As for the first part, note that in lemma \ref{inccr}, we can replace $\kB_{\km,v}$ by any other block $\kB'$. Since we have already seen that $\Pi(\kp)$ belongs to the block $\kB_{\km,v}$, it is clear that $\Hom^{\cont}_{E[\GL_2(F_v)]}(S_{\psi}(U^p)_\km\otimes E,\Pi_{\kB'})=0$ unless $\kB'=\kB_{\km,v}$. 
\end{proof}

\begin{cor} \label{mclgc}
Under the same assumption as in the Theorem, 
\begin{enumerate}
\item $\mm$ (defined in \ref{defnm}) is a faithful, finitely generated $\T_\psi(U^p)_\km$-module. 
\item $\T_\psi(U^p)_\km$ is a finite $R^{\ps,\psi\varepsilon^{-1}}_p$-algebra.
\end{enumerate}
\end{cor}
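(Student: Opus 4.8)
The plan is to bootstrap from Theorem \ref{lgc} and corollary \ref{fpas}: one transports the $E_{\kB_\km}$-finiteness of $\mm$ (a statement about Pa\v{s}k\={u}nas' side) to $\T_\psi(U^p)_\km$-finiteness using the equality $\tau_{\Gal}=\tau_{\mathrm{Aut}}$.

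First I would record two inputs about $M_\psi(U^p)_\km$. By Theorem \ref{lgc}(1) it lies in $\kC_{D_p^\times,\psi}(\cO)^{\kB_\km}$, so, by the (anti-)equivalence recalled in \ref{Blocks} in its product form (\ref{Paskunas}), $\mm=\Hom_{\kC_{D_p^\times,\psi}(\cO)}(P_{\kB_\km},M_\psi(U^p)_\km)$ is a pseudo-compact $E_{\kB_\km}$-module and $\Hom_{\kC_{D_p^\times,\psi}(\cO)}(P_{\kB_\km},-)$ is an (anti-)equivalence between $\kC_{D_p^\times,\psi}(\cO)^{\kB_\km}$ and the category of right pseudo-compact $E_{\kB_\km}$-modules. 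Second, the proof of corollary \ref{dsc} shows $M_\psi(U^p)$ is finitely generated over $\cO[[K_p]]$; the semi-local decomposition of $M_\psi(U^p)$ is effected by idempotents of $\T_\psi(U^p)$, which commute with $D_p^\times$ and in particular with $K_p$, so the direct summand $M_\psi(U^p)_\km$ is again finitely generated over $\cO[[K_p]]$. Since $E_{\kB_\km}$ is Noetherian (corollary \ref{fpas}), Pa\v{s}k\={u}nas' results then give that $\mm$ is a finitely generated $E_{\kB_\km}$-module. By corollary \ref{fpas} again, $E_{\kB_\km}$ is finite over $R^{\ps,\psi\varepsilon}_p=\widehat{\bigotimes}_{v\mid p}R^{\ps,\psi\varepsilon}_v$, hence $\mm$ is finite over $R^{\ps,\psi\varepsilon}_p$ — equivalently over $R^{\ps,\psi\varepsilon^{-1}}_p$ via the fixed isomorphism of \ref{gaact} — with respect to the action $\tau_{\mathrm{Aut}}$.

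Now I would invoke Theorem \ref{lgc}(2): $\tau_{\mathrm{Aut}}=\tau_{\Gal}$ on $\mm$, while by construction (\ref{gaact}, and the end of \S\ref{haapr}) the action $\tau_{\Gal}$ of $R^{\ps,\psi\varepsilon^{-1}}_p$ on $\mm$ factors as $R^{\ps,\psi\varepsilon^{-1}}_p\to\T_\psi(U^p)_\km\to\End(\mm)$. Therefore $\mm$, being finite over the image of $R^{\ps,\psi\varepsilon^{-1}}_p$ in $\End(\mm)$, is finite over $\T_\psi(U^p)_\km$. For faithfulness: the $\T_\psi(U^p)_\km$-action on $\mm$ is post-composition with the $\T_\psi(U^p)_\km$-action on $M_\psi(U^p)_\km$, which is by $D_p^\times$-equivariant endomorphisms, so under the (anti-)equivalence it corresponds to the $\T_\psi(U^p)_\km$-action on $M_\psi(U^p)_\km$; the latter is faithful since $\T_\psi(U^p)$ acts faithfully on $S_\psi(U^p)$ (see \ref{hagal}), hence on its $\cO$-dual $M_\psi(U^p)$, hence on the summand $M_\psi(U^p)_\km$. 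This proves (1).

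For (2): faithfulness of $\mm$ over $\T_\psi(U^p)_\km$ together with the fact that the structure map $R^{\ps,\psi\varepsilon^{-1}}_p\to\T_\psi(U^p)_\km$ is central gives an injection of $R^{\ps,\psi\varepsilon^{-1}}_p$-algebras $\T_\psi(U^p)_\km\hookrightarrow\End_{R^{\ps,\psi\varepsilon^{-1}}_p}(\mm)$. Since $R^{\ps,\psi\varepsilon^{-1}}_p$ is a complete Noetherian local ring and $\mm$ is finitely generated over it, $\End_{R^{\ps,\psi\varepsilon^{-1}}_p}(\mm)$ is a finite $R^{\ps,\psi\varepsilon^{-1}}_p$-module, hence so is the subalgebra $\T_\psi(U^p)_\km$; thus $\T_\psi(U^p)_\km$ is a finite $R^{\ps,\psi\varepsilon^{-1}}_p$-algebra. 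The only step requiring genuine care is the deduction, inside the framework of \S\ref{Paskunas}, of $E_{\kB_\km}$-finiteness of $\mm$ from $\cO[[K_p]]$-finiteness of $M_\psi(U^p)_\km$ (and the use of the product generalization of Pa\v{s}k\={u}nas' results there); once Theorem \ref{lgc} and corollary \ref{fpas} are in hand, everything else is a formal bootstrap.
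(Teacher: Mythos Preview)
Your proposal is correct and follows essentially the same argument as the paper: admissibility of $M_\psi(U^p)_\km$ gives $E_{\kB_\km}$-finiteness of $\mm$ (the paper cites Proposition 4.17 of \cite{Pas13} for this), corollary \ref{fpas} then gives $R^{\ps,\psi\varepsilon^{-1}}_p$-finiteness via $\tau_{\mathrm{Aut}}$, and Theorem \ref{lgc}(2) identifies this with $\tau_{\Gal}$ which factors through $\T_\psi(U^p)_\km$; faithfulness is exactly the anti-equivalence argument you give, which the paper compresses to ``follows from the first part of the Theorem.'' Your deduction of (2) via the embedding $\T_\psi(U^p)_\km\hookrightarrow\End_{R^{\ps,\psi\varepsilon^{-1}}_p}(\mm)$ is the implicit content of the paper's terse ``this proves both finiteness assertions.''
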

\begin{proof}
The faithfulness follows from the first part of the Theorem. Note that $S_\psi(U^p,E/\cO)$ is an admissible representation of $D_p^\times$. By Proposition 4.17 of \cite{Pas13}, $\mm$ is a finitely generated $E_{\kB_{\km}}$-module. But $E_{\kB_{\km}}$ is a finite algebra over $R^{\ps,\psi\varepsilon^{-1}}_p$ (corollary \ref{fpas}), hence $\mm$ is a finite module over $R^{\ps,\psi\varepsilon^{-1}}_p$ via $\tau_{\mathrm{Aut}}$. On the other hand, $\tau_{\Gal}=\tau_{\Aut}$ factors through $\T_\psi(U^p)_\km$. This proves both finiteness assertions in the corollary.
\end{proof}

\begin{cor} \label{finitelength}
$S_\psi(U^p,\F)[\km]$ is a representation of $D_p^\times$ of finite length. 
\end{cor}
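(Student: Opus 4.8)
The plan is to dualize and reduce to a finiteness statement about the $E_{\kB_\km}$-module $\mm$. First I would note that, since $\km$ contains $\varpi$, we have $S_\psi(U^p,\F)[\km]=S_\psi(U^p,E/\cO)[\km]$, whose Pontryagin dual is $M_\psi(U^p)/\km M_\psi(U^p)$; because $\km$ is a maximal ideal of $\T_\psi(U^p)$ this equals $M_\psi(U^p)_\km/\km M_\psi(U^p)_\km$. By corollary \ref{mclgc} the ring $\T_\psi(U^p)_\km$ is Noetherian, so $\km$ is finitely generated and $\km M_\psi(U^p)_\km$ is a continuous image of a profinite module, hence a closed subobject of $M_\psi(U^p)_\km$ in $\kC_{D_p^\times,\psi}(\cO)$. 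Thus $M_\psi(U^p)_\km/\km M_\psi(U^p)_\km$ is a genuine object of $\kC_{D_p^\times,\psi}(\cO)$, and it is enough to show it has finite length.

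Next I would bring in the local-global compatibility. By Theorem \ref{lgc}(1), $M_\psi(U^p)_\km$ lies in $\kC_{D_p^\times,\psi}(\cO)^{\kB_\km}$, so under Pa\v{s}k\={u}nas' equivalence (see \ref{Blocks}) between $\kC_{D_p^\times,\psi}(\cO)^{\kB_\km}$ and pseudo-compact right $E_{\kB_\km}$-modules it corresponds to $\mm$ (see \ref{defnm}). This functor is exact because $P_{\kB_\km}$ is projective, and for each $t\in\T_\psi(U^p)_\km$ it sends multiplication by $t$ on $M_\psi(U^p)_\km$ to multiplication by $t$ on $\mm$, the latter being exactly the module structure of corollary \ref{mclgc}. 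Applying it to the short exact sequence $0\to\km M_\psi(U^p)_\km\to M_\psi(U^p)_\km\to M_\psi(U^p)_\km/\km M_\psi(U^p)_\km\to 0$, the object $M_\psi(U^p)_\km/\km M_\psi(U^p)_\km$ is carried to a subquotient of $\mm$ that is annihilated by $\km$.

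Finally, corollary \ref{mclgc} says $\mm$ is a finitely generated module over the Noetherian local ring $\T_\psi(U^p)_\km$, which has residue field $\F$. Hence any subquotient of $\mm$ annihilated by $\km$ is a finitely generated module over $\T_\psi(U^p)_\km/\km=\F$, i.e. a finite-dimensional $\F$-vector space, and so has finite length over $E_{\kB_\km}$. An equivalence of abelian categories preserves simple objects and therefore sends finite-length objects to finite-length objects; hence $M_\psi(U^p)_\km/\km M_\psi(U^p)_\km$ has finite length in $\kC_{D_p^\times,\psi}(\cO)$, and dually $S_\psi(U^p,\F)[\km]$ is a $D_p^\times$-representation of finite length. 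The whole weight of the argument rests on Theorem \ref{lgc} and corollary \ref{mclgc}: once $\mm$ is known to be finitely generated over the Noetherian Hecke algebra the rest is formal, the only slightly delicate point being to track the $\km$-action correctly through Pa\v{s}k\={u}nas' equivalence.
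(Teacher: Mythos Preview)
Your argument is correct and follows exactly the same route as the paper's proof, which is a single line: since $\mm$ is finitely generated over the Noetherian local ring $\T_\psi(U^p)_\km$ with residue field $\F$, the quotient $\mm/\km\mm$ is a finite-dimensional $\F$-vector space and hence a finite-length $E_{\kB_\km}$-module; the equivalence of categories then does the rest. One small sharpening: exactness of $\Hom(P_{\kB_\km},-)$ together with the fact that $\km$ is finitely generated identifies the image of $M_\psi(U^p)_\km/\km M_\psi(U^p)_\km$ under the equivalence with $\mm/\km\mm$ on the nose, not merely some $\km$-torsion subquotient.
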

\begin{proof}
It follows from the first part of corollary \ref{mclgc} that $\mm/\km\mm$ is actually a finite dimensional $\F$-vector space hence a finite length $E_{\kB_{\km}}$-module.
\end{proof}

\begin{cor} \label{classicality}
For any maximal ideal $\kp$ of $\T_{\psi}(U^p)_\km[\frac{1}{p}]$ such that for any $v|p$,
\begin{itemize}
\item $\rho(\kp)|_{G_{F_v}}$ is absolutely irreducible and de Rham with distinct Hodge-Tate weights,
\end{itemize}
where $\rho(\kp):G_{F,S}\to\GL_2(k(\kp))$ is the semi-simple representation associated to $\kp$. Then $\kp$ is a pull-back of a maximal ideal of $\T_{(\vec{k},\vec{w}),\psi}(U^pU_p)[\frac{1}{p}]$ for some weight $(\vec{k},\vec{w})\in \Z_{>1}^{\Hom(F,\overbar{\Q_p})}\times  \Z^{\Hom(F,\overbar{\Q_p})}$ and open compact subgroup $U_p\subseteq K_p$. In other words, $\kp$ comes from a classical automorphic representation on $(D\otimes \A_F^\infty)^\times$. By Jacquet-Langlands correspondence, it also arises from a regular algebraic cuspidal automorphic representation of $\GL_2(\A_F)$.
\end{cor}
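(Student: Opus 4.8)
The plan is to exhibit a nonzero $K_p$-locally algebraic vector in the $\kp$-eigenspace of $S_\psi(U^p)_E$ and then invoke the identification \ref{rlcc} of locally algebraic vectors with classical automorphic forms; throughout I enlarge $E$ so that $k(\kp)=E$. First I check the eigenspace is nonzero: by corollary \ref{mclgc}, $\mm$ is a faithful, finitely generated module over $\T_\psi(U^p)_\km$, which is Noetherian (being finite over the complete local Noetherian ring $R^{\ps,\psi\varepsilon^{-1}}_p$), so $\kp\in\Supp\mm$ and $\mm[\tfrac1p]/\kp\,\mm[\tfrac1p]$ is a nonzero finite-dimensional $E$-vector space. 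Applying the anti-equivalence $N\mapsto N\hat\otimes_{E_{\kB_\km}}P_{\kB_\km}$ of \ref{Blocks} together with Pontryagin duality, one obtains that $\Pi:=(S_\psi(U^p)_\km\otimes_\cO E)[\kp]$ is a nonzero finite-length admissible $E$-Banach representation of $D_p^\times$ with central character $\psi$, whose associated $E_{\kB_\km}[\tfrac1p]$-module $\mm[\tfrac1p]/\kp$ is killed by the prime $\kp_p=(\kp_v)_{v\mid p}\subseteq R^{\ps,\psi\varepsilon^{-1}}_p$ cut out by $\kp$ — here one uses the local-global compatibility $\tau_{\Gal}=\tau_{\mathrm{Aut}}$ of Theorem \ref{lgc}, which is what makes $\kp_p$ act on $\Pi$ through the Galois data.

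Next I analyze $\Pi$ place by place. Fix $v\mid p$ and let $\rho(\kp)_v$ be the semisimple representation of $G_{F_v}$ attached to $\kp_v$; by hypothesis it is absolutely irreducible and de Rham with distinct Hodge--Tate weights $(w_{\sigma_v},\,w_{\sigma_v}+k_{\sigma_v}-1)$, $k_{\sigma_v}>1$. By Theorem 1.10 of \cite{Pas13} there is, up to isomorphism, a unique topologically irreducible admissible unitary $E$-Banach representation $\Pi'_v$ of $\GL_2(F_v)$ with $\mathbf{V}(\Pi'_v)\cong\rho(\kp)_v\otimes\varepsilon$, and it is the only irreducible subquotient occurring in $\Hom^{\cont}_\cO(P_{\kB_{\km,v}}/\kp_v P_{\kB_{\km,v}},E)$; moreover, exactly as on the automorphic side of the proof of Theorem \ref{lgc} (Theorem 1.3 of \cite{CDP14} applied to the de Rham representation $\rho(\kp)_v$, together with uniqueness of the unitary completion from \cite{BB10}), $\Pi'_v$ contains the nonzero locally algebraic subrepresentation $\Sym^{k_{\sigma_v}-2}(E^2)\otimes\det{}^{-(w_{\sigma_v}+k_{\sigma_v}-2)}\otimes\pi_v$, where $\pi_v$ is the smooth principal series matching $\mathrm{WD}(\varepsilon\otimes\rho(\kp)_v)^{\mathrm{ss}}$ under local Langlands. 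Since $\rho(\kp)_v$ is irreducible we never meet the reducible or $\delta\circ\det$ cases of \ref{pas}, so the $p=3$ restriction and any auxiliary hypothesis are irrelevant here.

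Now I assemble. Using $P_{\kB_\km}=\widehat{\bigotimes}_{v\mid p}P_{\kB_{\km,v}}$, $\kp_p=(\kp_v)_v$ and lemma \ref{irrof}, every irreducible subquotient of $\Pi$ is isomorphic to $\bigotimes_{v\mid p}\Pi'_v$. As $\Pi$ is a nonzero finite-length admissible Banach representation, it has a topologically irreducible subrepresentation, necessarily $\cong\bigotimes_{v\mid p}\Pi'_v$, which contains the locally algebraic vector $\bigotimes_{v\mid p}\bigl(\Sym^{k_{\sigma_v}-2}(E^2)\otimes\det{}^{\bullet}\otimes\pi_v\bigr)$. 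Since local algebraicity is intrinsic, this is a nonzero $K_p$-locally algebraic vector of $S_\psi(U^p)_E$ in the $\kp$-eigenspace, of some weight $(\vec k,\vec w)\in\Z_{>1}^{\Hom(F,\overbar{\Q_p})}\times\Z^{\Hom(F,\overbar{\Q_p})}$ (the constraint that $k_\sigma+2w_\sigma$ be constant being forced by $\det\rho(\kp)=\psi\varepsilon^{-1}$). By \ref{rlcc} it lies in $S_{(\vec k,\vec w),\psi}(U^pU_p,E)$ for a suitable open $U_p\subseteq K_p$, which exhibits $\kp$ as the pull-back of a maximal ideal of $\T_{(\vec k,\vec w),\psi}(U^pU_p)[\tfrac1p]$. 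The attached classical automorphic representation of $D^\times$ cannot factor through $N_{D/F}$, for otherwise $\rho(\kp)$ would be a sum of characters, contradicting irreducibility at $v$; hence Jacquet--Langlands produces a regular algebraic cuspidal automorphic representation of $\GL_2(\A_F)$.

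The real content is packaged into inputs already available: that $\mm$ is faithful and finitely generated over $\T_\psi(U^p)_\km$ (corollary \ref{mclgc}, resting on Theorem \ref{lgc}), and that in the de Rham, distinct-weights case Pa\v{s}k\={u}nas' block machinery together with Colmez--Dospinescu--Pa\v{s}k\={u}nas and Berger--Breuil forces $P_{\kB_{\km,v}}/\kp_v P_{\kB_{\km,v}}$ to ``see'' the locally algebraic model. Given these, the only delicate bookkeeping is the translation, via the anti-equivalence of \ref{Blocks} and Pontryagin duality, between the module statement ``$\mm[\tfrac1p]/\kp\neq 0$, killed by $\kp_p$'' and the Banach-theoretic assertion about $\Pi$, together with the standard fact that a nonzero finite-length admissible Banach representation admits an irreducible subrepresentation; these are the steps I would write out most carefully.
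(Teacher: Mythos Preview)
Your argument follows the same route as the paper: show $(S_\psi(U^p)_\km\otimes E)[\kp]\neq 0$ via corollary \ref{mclgc}, analyze its irreducible constituents at each $v\mid p$ using Theorem 1.10 of \cite{Pas13} and Theorem 1.3 of \cite{CDP14}, and conclude by exhibiting locally algebraic vectors and invoking \ref{rlcc}. The paper packages the tensor-product step into a separate lemma (\ref{irred}), proving the stronger assertion that $E_{\kB_\km}[\tfrac1p]/(\kp_p)$ is central simple and hence $(S_\psi(U^p)_\km\otimes E)[\kp]\cong(\widehat{\bigotimes}_v\Pi_v)^d$; you instead argue only for an irreducible sub, which is enough.

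One correction: your justification ``using lemma \ref{irrof}'' for the claim that every irreducible Banach subquotient of $\Pi$ is $\widehat{\bigotimes}_v\Pi'_v$ is not quite right --- lemma \ref{irrof} concerns irreducible \emph{smooth} $\F$-representations, not irreducible Banach representations. The correct route is through the module-theoretic anti-equivalence of \S4 of \cite{Pas13}: $\Pi$ corresponds to a finite-length $E_{\kB_\km}[\tfrac1p]/(\kp_p)$-module, and since $\rho(\kp)_v$ is absolutely irreducible, Theorem 1.10 of \cite{Pas13} shows each $E_{\kB_{\km,v}}[\tfrac1p]/(\kp_v)$ is central simple over $E$ with unique simple module $\mm(\Pi'_v)$, whence $E_{\kB_\km}[\tfrac1p]/(\kp_p)\cong\bigotimes_v E_{\kB_{\km,v}}[\tfrac1p]/(\kp_v)$ is also central simple with unique simple module $\bigotimes_v\mm(\Pi'_v)$. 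This is exactly the content of the paper's lemma \ref{irred}, and you should invoke it (or reprove it) rather than lemma \ref{irrof}.
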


\begin{proof}
Let $\kp_v\in \Spec R^{\ps,\psi\varepsilon^{-1}}_v$ and $\tilde\kp\in\Spec\T_\psi(U^p)_\km$ be the pull-back of $\kp$. Since $\rho(\kp)|_{G_{F_v}}$ is absolutely irreducible, all the irreducible subquotients of $\Hom^{\cont}_\cO(P_{\kB_{\km,v}}/\kp_v P_{\kB_{\km,v}}, E)$ are isomorphic (Theorem 1.10 of \cite{Pas13}). We denote any such irreducible subquotient by $\Pi_v$. Enlarge $E$ if necessary, we may assume $\Pi_v$ is absolutely irreducible.

\begin{lem} \label{irred}
Under all the assumptions as in the proof,
\begin{enumerate}
\item The unitary representation $\Pi:=\widehat{\bigotimes}_{v|p} \Pi_v$ of $D_p^\times$ is topologically irreducible.
\item Let $\kp_p=\kp\cap R^{\ps,\psi\varepsilon^{-1}}_p$. Then $E_{\kB_\km}[\frac{1}{p}]/(\kp_p)$ is a central simple $E$-algebra.
\end{enumerate}
\end{lem}

\begin{proof}
We are going to use the relations between $E_{\kB_\km}[\frac{1}{p}]$-modules and Banach space representations. The reference here is \S 4 of \cite{Pas13}. 

To prove the first assertion, we will apply Theorem 4.34 of \cite{Pas13} with $G=D_p^\times$. The first condition in that Theorem is clearly satisfied. The second condition follows from Proposition 4.20 by corollary \ref{fpas}. Thus it suffices to prove that 
\[\mm(\Pi):=\Hom_{\kC_{D_p^\times,\psi}(\cO)}(P_{\kB_\km},\Pi_0^d)\otimes E\]
is a simple $E_{\kB_\km}[\frac{1}{p}]$-module, where $\Pi_0$ is an open $D_p^\times$-invariant unit ball of $\Pi$ and $\Pi_0^d:=\Hom_{\cO}(\Pi_0,\cO)$. Using lemma \ref{seprod}, it is easy to see that 
\[\mm(\Pi)\cong\widehat{\bigotimes}_{v|p}\mm(\Pi_v),\]
where $\mm(\Pi_v):=\Hom_{\kC_{\GL_2(F_v),\psi|_{F_v^\times}}(\cO)}(P_{\kB_{\km,v}},\Hom_{\cO}(\Pi_{v,0},\cO))\otimes E$ and $\Pi_{v,0}$ is an open $\GL_2(F_v)$-invariant unit ball of $\Pi_v$. Let $E_v$ be the image of $E_{\kB_{\km,v}}[\frac{1}{p}]$ in $\End(\Pi_v)$. Since we assume $\Pi_v$ is absolutely irreducible, by Lemma 4.1 and Proposition 4.19 of \cite{Pas13}, we know that $E_v$ is a central simple algebra over $E$ and $\mm(\Pi_v)$ is a simple $E_v$-module. Hence $\widehat{\bigotimes}_{v|p}\mm(\Pi_v)\cong\bigotimes_{v|p} \mm(\Pi_v)$ is a simple module of $\bigotimes E_v$. This is exactly what we need to show.

For the second claim, since we assume $\rho(\kp_v)$ is absolutely irreducible, it follows from Theorem 1.10 of  \cite{Pas13} that $E_v$ is in fact $E_{\kB_{\km,v}}[\frac{1}{p}]/(\kp_v)$. Hence our claim is clear by the discussion in the previous paragraph.
\end{proof}

By Pontryagin duality, we can write
\[\Hom^{\cont}_\cO(M_\psi(U^p)_\km/\tilde\kp M_\psi(U^p)_\km ,E)\cong S_\psi(U^p)_\km[\tilde\kp]\otimes E\cong (S_\psi(U^p)_\km\otimes E)[\kp].\]

As a consequence of our local-global compatibility result, $\mm$ is a faithful, finitely generated $\T_\psi(U^p)_\km$-module. Therefore $\mm/\tilde\kp\mm\otimes E$ is a non-zero $E_{\kB_\km}[\frac{1}{p}]/(\kp_p)$-module. Hence $(S_\psi(U^p)_\km\otimes E)[\kp]\neq 0$ and lemma \ref{irred} even implies that 
\[(S_\psi(U^p)_\km\otimes E)[\kp]\cong(\widehat{\bigotimes}_{v|p} \Pi_v)^d\] 
for some positive integer $d$. Note that this is the analogue of Emerton's local-global compatibility conjecture (Conjecture 1.1.1 of \cite{Eme10a}) in this situation. By Theorem 1.3 of \cite{CDP14}, for each $v|p$, $\Pi_v$ has non-zero locally algebraic vectors of $\GL_2(F_v)$. Thus $(S_\psi(U^p)_\km\otimes E)[\kp]$ contains non-zero locally algebraic vectors of $D_p^\times$. In view of the discussion in \ref{rlcc}, this implies that $\kp$ comes from some $\T_{(\vec{k},\vec{w}),\psi}(U^pU_p)$.
\end{proof}

\subsection{A lower bound on the dimension of Hecke algebra} \label{edha}
As another application of our local-global compatibility result, we prove
\begin{thm} \label{dh}
Same assumption as in Theorem \ref{lgc}. Then each irreducible component of $\T_\psi(U^p)_\km$ is of characteristic zero and of dimension at least $1+2[F:\Q]$. 
\end{thm}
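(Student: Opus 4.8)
The plan is to read both assertions off Corollary~\ref{mclgc} (which makes $\mm$ a faithful finite $\T_\psi(U^p)_\km$-module and $\T_\psi(U^p)_\km$ a finite algebra over $R^{\ps}_p:=R^{\ps,\psi\varepsilon^{-1}}_p=\widehat{\bigotimes}_{v\mid p}R^{\ps,\psi\varepsilon^{-1}}_v$) together with the structural facts about completed cohomology already in hand. Write $\T:=\T_\psi(U^p)_\km$.

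\emph{Characteristic zero.} Since $\T$ is a direct factor of $\T_\psi(U^p)$ (the decomposition following Proposition~\ref{semiloc}) acting faithfully on the summand $S_\psi(U^p)_\km$ of the $p$-torsion free module $S_\psi(U^p)$, it embeds into the $p$-torsion free ring $\End_\cO(S_\psi(U^p)_\km)$; being Noetherian (finite over $R^{\ps}_p$), $\T$ then has $\varpi$ as a non-zero-divisor, hence in no minimal prime. So every irreducible component of $\Spec\T$ is $\cO$-flat, which is the first claim, and for each minimal prime $\kq$ of $\T$ the domain $\T/\kq$ satisfies $\dim\T/\kq=1+\dim(\T/\kq)[\tfrac{1}{p}]$; it remains to bound $\dim\T/\kq$ below by $1+2[F:\Q]$.

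\emph{Reduction.} Because $\mm$ generates $\T$ over $R^{\ps}_p$ and $\T$ embeds $R^{\ps}_p$-linearly into a finite power of $\mm$, we get $\mathrm{Ass}_{R^{\ps}_p}(\T)\subseteq\mathrm{Ass}_{R^{\ps}_p}(\mm)$; as $R^{\ps}_p\to\T$ is finite, each minimal prime $\kq$ of $\T$ contracts into $\mathrm{Ass}_{R^{\ps}_p}(\T)$ and $\dim\T/\kq=\dim R^{\ps}_p/(\kq\cap R^{\ps}_p)$. Hence it suffices to prove: \emph{(a)} every associated prime of $\mm$ over $R^{\ps}_p$ is a minimal prime of $R^{\ps}_p$ (equivalently: $\mm$ has full support and no embedded primes), and \emph{(b)} every minimal prime $\kp$ of $R^{\ps}_p$ has $\dim R^{\ps}_p/\kp\geq 1+2[F:\Q]$. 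For \emph{(b)}: since $p$ splits completely in $F$, each $R^{\ps,\psi\varepsilon^{-1}}_v$ is the fixed-determinant pseudo-deformation ring of $G_{F_v}=G_{\Q_p}$, and is equidimensional of dimension $1+3$ by Pa\v{s}k\={u}nas' explicit description in cases (1)--(4); already its reducible locus $\{\chi_1+\chi_2:\chi_1\chi_2\text{ fixed}\}$, carrying the character-deformation ring $\cO[[x_1,x_2]]$, has dimension $1+2$. So every component of $\Spec R^{\ps}_p$ has dimension $1+3[F:\Q]$.

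\emph{The main point: (a).} By $\tau_{\mathrm{Aut}}$ and Pa\v{s}k\={u}nas' anti-equivalence (\ref{Blocks}), $\mm$ corresponds to $M_\psi(U^p)_\km$, which by Corollary~\ref{dsc} and its proof is a nonzero \emph{projective} $\cO[[K_p]]$-module (with the relevant central character), hence a maximal Cohen--Macaulay $\cO[[K_p]]$-module, and which (by the mod-$p$ local--global/Serre-weight description of $S_\psi(U^p,\F)_\km$) is in fact a projective \emph{generator} of the corresponding category of $K_p$-representations. The plan is to push these two properties across Pa\v{s}k\={u}nas' structure of $P_{\kB_\km}$ --- itself $\cO[[K_p]]$-projective, with $R^{\ps}_p$ acting through the centre $E_{\kB_\km}$ --- using the identity $\tau_{\mathrm{Aut}}=\tau_{\Gal}$ of Theorem~\ref{lgc}, and conclude that $\mm$ is a faithful maximal Cohen--Macaulay $R^{\ps}_p$-module, which is exactly \emph{(a)}. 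Faithfulness is the statement that no Bernstein-centre element annihilates $M_\psi(U^p)_\km$, which would otherwise annihilate every $D_p^\times$-subquotient of $S_\psi(U^p,\F)_\km$ --- impossible given the $\cO[[K_p]]$-projectivity and the occurrence of all Serre weights --- and the Cohen--Macaulay property is the transfer, through $P_{\kB_\km}$, of $\cO[[K_p]]$-projectivity. This is where the difficulty lies: one must feed the finiteness of $\mm$ over $R^{\ps}_p$ coming from local--global compatibility into Pa\v{s}k\={u}nas' fine analysis of the block, upgrading the formal inequality $\dim\T\leq\dim R^{\ps}_p$ to the matching lower bound; the ``$+1$'' in $1+2[F:\Q]$ is the $\cO$-direction, present precisely because $\T$ is $p$-torsion free. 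As a consistency check, \emph{(a)} can also be seen on the generic fibre from Proposition~\ref{density} and Corollary~\ref{classicality}: classical crystalline points of every regular weight and finite slope at each $v\mid p$ are Zariski-dense in $\Spec\T[\tfrac{1}{p}]$, so their images are dense in the trianguline locus of $\Spec R^{\ps}_p[\tfrac{1}{p}]$, forcing $\Supp_{R^{\ps}_p}\mm$ to contain it.
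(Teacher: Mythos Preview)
Your reduction to claims (a) and (b) is sound, and (b) is indeed easy: each $R^{\ps,\psi\varepsilon^{-1}}_v$ has Krull dimension $1+3$, so $R^{\ps}_p$ has dimension $1+3[F:\Q]$. The characteristic-zero argument is also fine.

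The gap is at (a). What you actually need to prove there is that $\mm$ is maximal Cohen--Macaulay over $R^{\ps}_p$, and you acknowledge ``this is where the difficulty lies'' but give only a heuristic: transfer $\cO[[K_p]]$-projectivity of $M_\psi(U^p)_\km$ through $P_{\kB_\km}$. This transfer does \emph{not} produce what you want. The point is that for each $v\mid p$, the irreducible objects in the block $\kB_{\km,v}$ have Gelfand--Kirillov dimension $1$ (or $0$ for characters) as $\F[[\GL_2(\Z_p)]]$-modules. Since $P_{\kB_\km}$ is built from $[F:\Q]$ such factors, applying $\Hom_{\kC}(P_{\kB_\km},-)$ to a $\Lambda$-free module of rank $d$ does not yield something of $R^{\ps}_p$-dimension $3[F:\Q]$; it loses $[F:\Q]$ dimensions. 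Concretely, your (a) would force $\dim \T/\kq = \dim R^{\ps}_p = 1+3[F:\Q]$ for every minimal $\kq$, but (see the remark following Proposition~\ref{df}) when every $\kB_{\km,v}$ is of type (1)--(3) the dimension formula is an \emph{equality}, which combined with the proof of the theorem gives $\dim_{R^{\ps}_p}(\mm[\kp]/\varpi)=2[F:\Q]$ exactly---so the support of $\mm$ over $R^{\ps}_p$ has dimension $1+2[F:\Q]<1+3[F:\Q]$ and (a) fails. Your faithfulness argument (``occurrence of all Serre weights'') is likewise only a heuristic: it does not show that the map $R^{\ps}_p\to\T_\km$ is injective, and in fact that injectivity is equivalent to the density of global Galois representations among local ones, a genuinely deep statement you cannot extract from $K_p$-projectivity alone.

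The paper's route is entirely different. Rather than trying to control associated primes of $\mm$ over $R^{\ps}_p$ directly, it proves a quantitative \emph{dimension formula} (Proposition~\ref{df}): for any finite $E_{\kB_\km}/\varpi$-module $N$, the Gelfand--Kirillov dimension of $N\otimes_{E_{\kB_\km}}P_{\kB_\km}$ over $\Lambda=\F[[PK_n]]$ is at most $\dim_{R^{\ps}_p}N+[F:\Q]$. One then applies this with $N=\mm[\kp]/\varpi\mm[\kp]$ for $\kp$ a minimal prime of $\T_\km$; the image $N'$ of $N$ in $\bar\mm$ is nonzero and $N'\otimes P_{\kB_\km}$ sits inside $M_\psi(U^p)_\km/\varpi\cong\Lambda^{\oplus d}$, so has $\Lambda$-dimension $3[F:\Q]$. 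The formula then gives $\dim_{R^{\ps}_p}N\geq 2[F:\Q]$, whence $\dim\T/\kp\geq 1+2[F:\Q]$. The $[F:\Q]$ correction term in the formula is exactly the Gelfand--Kirillov dimension of the fibres, proved by induction on $\dim_{R^{\ps}_p}N$ with the base case being that smooth irreducible $\GL_2(\Q_p)$-representations have GK dimension $\leq 1$.
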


\begin{para}
Since $\mm$ is $p$-torsion free and is a faithful, finitely generated $\T_\psi(U^p)_\km$-module, it is clear that each irreducible component of $\T_\psi(U^p)_\km$ is of characteristic zero. 

We will establish a formula which relates the Gelfand-Kirillov dimension of $M_\psi(U^p)_\km/\varpi$ and $ M_\psi(U^p)_\km/\km M_\psi(U^p)_\km$ as $\F[[K_1]]$-modules (see definition below) with the usual dimension of $\mm/\varpi\mm$ as a $R^{\ps,\psi\varepsilon^{-1}}_p$-module. First we recall the definition and some basic properties of Gelfand-Kirillov dimension.
\end{para}

\begin{para} \label{pkn}
Let $K_n=\prod_{v|p}(1+p^n\mathrm{M}_2(O_{F_v}))$ for some $n>0$ large enough such that $U^pK_n$ is sufficiently small and let $Z_n$ be the center of $K_n$. Denote $K_n/Z_n$ by $PK_n$. This is a $p$-adic Lie group of dimension $3[F:\Q]$. It is clear that $PK_n$ is torsion-free and $[PK_n,PK_n]\subseteq (PK_n)^p$, the subgroup generated by $g^p,g\in PK_n$. Hence by Theorem 4.5 of \cite{DdSMS99}, $PK_n$ is uniform (see Definition 4.1 ibid.). Let $\Lambda=\F[[PK_n]]$ be the completed group ring of $PK_n$ over $\F$ with maximal ideal $J_1$. It follows from Theorem 7.24 ibid. that
\end{para}

\begin{lem}
$\gr(\Lambda):=\bigoplus_{k\ge 0}J_1^k/J_1^{k+1}$ is isomorphic to $\F[x_1,\cdots,x_{3[F:\Q]}]$ as a graded ring.
\end{lem}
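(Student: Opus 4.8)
The claim is that for the uniform pro-$p$ group $PK_n$ of dimension $d := 3[F:\Q]$, the completed group ring $\Lambda = \F[[PK_n]]$ has associated graded ring $\gr(\Lambda) = \bigoplus_{k \geq 0} J_1^k/J_1^{k+1}$ isomorphic to the polynomial ring $\F[x_1,\dots,x_d]$. This is a pure statement about uniform pro-$p$ groups, and the proof is a citation-plus-bookkeeping exercise: everything needed is in Dixon–du Sautoy–Mann–Segal (DdSMS).

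**Plan.** The plan is the following. First I would recall that $PK_n$ has already been shown (just above, via Theorem 4.5 of \cite{DdSMS99}) to be uniform of dimension $d = 3[F:\Q]$; so fix a topological generating set $a_1,\dots,a_d$ realizing the uniform structure. Next, I would invoke the mod-$p$ version of the theory of the completed group algebra: by Theorem 7.24 of \cite{DdSMS99} (the graded ring computation for $\F_p[[G]]$ when $G$ is uniform), the associated graded ring of $\F_p[[PK_n]]$ with respect to the powers of its augmentation (= maximal) ideal $J$ is the polynomial ring $\F_p[x_1,\dots,x_d]$, where $x_i$ is the image of $a_i - 1$ in $J/J^2$. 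Here one must be a little careful about which filtration is used: DdSMS work with the Lazard/Zassenhaus filtration, but for a uniform group the $J$-adic filtration and the relevant dimension filtration agree in the sense needed, so the graded ring is the stated polynomial algebra on $d$ generators in degree $1$. Finally, since our coefficient field is $\F$ (a finite extension of $\F_p$) rather than $\F_p$, I would note $\F[[PK_n]] = \F \otimes_{\F_p} \F_p[[PK_n]]$, that $J_1 = \F \otimes_{\F_p} J$, hence $J_1^k = \F \otimes_{\F_p} J^k$ for all $k$ (the tensor product is exact over the field $\F_p$), and therefore $\gr(\Lambda) = \F \otimes_{\F_p} \gr(\F_p[[PK_n]]) \cong \F[x_1,\dots,x_d] = \F[x_1,\dots,x_{3[F:\Q]}]$.

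**Main obstacle.** The only real subtlety — hardly an obstacle — is matching filtrations: Theorem 7.24 of \cite{DdSMS99} is phrased in terms of the filtration coming from the lower $p$-series of the uniform group, and one has to check this coincides with (or is cofinal with, compatibly graded against) the $J_1$-adic filtration used in the statement. For a uniform group this is exactly the content that makes $\gr$ polynomial: the $k$-th graded piece is spanned by monomials $\prod (a_i-1)^{e_i}$ with $\sum e_i = k$, and these are linearly independent in $J_1^k/J_1^{k+1}$ precisely by uniformity (no hidden relations in low degree, by the defining powerful/uniform condition $[PK_n,PK_n]\subseteq (PK_n)^p$). So after citing Theorem 7.24 the argument is immediate; I would simply record the base-change step to $\F$ and conclude. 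A clean alternative, if one wants to avoid quoting the exact form of Theorem 7.24, is to use the PBW-type basis for $\F[[PK_n]]$ as $\F[[x_1,\dots,x_d]]$ (coordinates of the second kind, again from uniformity, cf. \S 7 of \cite{DdSMS99}) and observe directly that $J_1 = (x_1,\dots,x_d)$, so $\gr$ is the associated graded of a formal power series ring in $d$ variables with respect to its maximal ideal, which is the polynomial ring $\F[x_1,\dots,x_d]$.
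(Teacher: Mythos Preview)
Your proposal is correct and takes essentially the same approach as the paper: both rely on the citation of Theorem 7.24 of \cite{DdSMS99} for uniform pro-$p$ groups, and the paper's proof is in fact nothing more than that single citation. Your additional bookkeeping (the base change from $\F_p$ to $\F$ and the remark on filtrations) is reasonable elaboration, but the paper omits it entirely.
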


\begin{cor}
$\Lambda$ is left and right Noetherian and has no zero-divisors.
\end{cor}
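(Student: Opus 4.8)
The plan is to deduce both statements from the previous lemma by the standard theory of complete filtered rings. First I would record that $\Lambda$ is complete and separated for the filtration by the powers $J_1^k$ of its maximal ideal, so that $\Lambda$ is controlled by its associated graded ring together with this filtration. By the lemma, $\gr(\Lambda)\cong\F[x_1,\dots,x_{3[F:\Q]}]$, which is a commutative Noetherian integral domain: Hilbert's basis theorem gives Noetherianity, and a polynomial ring over a field has no zero-divisors.

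For the Noetherian assertion, I would invoke the general fact that if a ring $R$ is complete for a separated (exhaustive) filtration whose associated graded ring $\gr(R)$ is left (resp. right) Noetherian, then $R$ itself is left (resp. right) Noetherian; applying this to $\Lambda$ with $\gr(\Lambda)$ the polynomial ring above yields that $\Lambda$ is left and right Noetherian. This is the standard argument used to establish Noetherianity of completed group algebras of uniform pro-$p$ groups; see for instance \cite{DdSMS99}.

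For the absence of zero-divisors, I would argue with principal symbols. Given nonzero $a,b\in\Lambda$, separatedness of the filtration provides integers $i,j\ge 0$ with $a\in J_1^i\setminus J_1^{i+1}$ and $b\in J_1^j\setminus J_1^{j+1}$; let $\sigma(a)\in J_1^i/J_1^{i+1}$ and $\sigma(b)\in J_1^j/J_1^{j+1}$ be the corresponding nonzero homogeneous elements of $\gr(\Lambda)$. Since $\gr(\Lambda)$ is a domain, $\sigma(a)\sigma(b)\ne 0$ in $J_1^{i+j}/J_1^{i+j+1}$, and this element is precisely the image of $ab$ under $J_1^{i+j}\to J_1^{i+j}/J_1^{i+j+1}$; hence $ab\notin J_1^{i+j+1}$, and in particular $ab\ne 0$. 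So $\Lambda$ has no zero-divisors.

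The only point requiring real care is the appeal to the filtered-ring machinery in the Noetherian step: completeness of $\Lambda$ for the $J_1$-adic filtration is essential, since the associated graded ring being Noetherian does not by itself imply the same for an arbitrary filtered ring, and it is exactly completeness (together with the identification of $\gr(\Lambda)$ coming from uniformity of $PK_n$) that makes the lifting argument go through.
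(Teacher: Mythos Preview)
Your proof is correct and follows essentially the same approach as the paper, which simply cites Corollary 7.25 of \cite{DdSMS99}; you have merely unpacked the standard filtered-ring argument (lifting Noetherianity from $\gr(\Lambda)$ via completeness, and using principal symbols to show the domain property) that underlies that reference.
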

\begin{proof}
This is Corollary 7.25 of \cite{DdSMS99}.
\end{proof}

\begin{defn}
Let $\km_p$ be the maximal ideal of $R^{\ps,\psi\varepsilon}_p$. Define $R\Lambda$ to be the completed tensor product of $R^{\ps,\psi\varepsilon}_p$ and $\Lambda$ over $\F$ with respect to $\km_p$-adic and $J_1$-adic topology. This is a local ring. We denote its maximal ideal by $J_2$.
\end{defn}

\begin{para}
By definition, $\gr(R\Lambda)=\bigoplus_{k\ge 0}J_2^k/J_2^{k+1}\cong \gr(R^{\ps,\psi\varepsilon}_p/\varpi)\otimes \F[x_1,\cdots,x_{3[F:\Q]}]$ is Noetherian.  Hence $R\Lambda$ is left and right Noetherian by Chapter II.1.2 Proposition 3 of \cite{LvO96}. We note that $R\Lambda$ acts on $P_{\kB_{\km}}$ naturally (see the proof of Lemma 2.7 of \cite{Pas13}) and makes it into a finitely generated $R\Lambda$-module since $(P_{\kB_{\km}}/\km_pP_{\kB_{\km}})^\vee$ is an admissible representation of $D_p^\times$ (of finite length). 

Let $R$ be either $R\Lambda$ or $\Lambda$ with maximal ideal $J$. Let $M$ be a finitely generated left $R$-module. Consider $\gr(M):=\bigoplus_{k\ge0}J^kM/J^{k+1}M$. This is a finitely generated graded $\gr(R)$-module. Its \textit{Hilbert polynomial} $\varphi_M(t)$ (see \cite{Mat1} \S13) is defined to be unique polynomial satisfying $\varphi_M(k)=\dim_\F(J^kM/J^{k+1}M)$ for $k$ large enough. The \textit{Gelfand-Kirillov dimension} $\dim_R(M)$ of $M$ over $R$ is defined to be the degree of the polynomial $t\varphi_M(t)$. Equivalently $\dim_R(M)=\limsup_k\log_k(\dim_\F M/J^kM)$. For example, $\dim_\Lambda(\Lambda)=3[F:\Q]$. The dimension is independent of the choice of the open compact subgroup $K_n$.

There is a natural map $\Lambda\to R\Lambda$ such that $J_1J_2=J_2J_1$. The next result is well-known in commutative algebra and the proof is the same.
\end{para}

\begin{lem} \label{dtr}
Let $M$ be a finitely generated left $R\Lambda$-module. Assume it is also finitely generated as a $\Lambda$-module. Then $\dim_\Lambda(M)=\dim_{R\Lambda}(M)$.
\end{lem}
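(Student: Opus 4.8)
The claim is a standard "filtration-independence" statement in the theory of Gelfand–Kirillov dimension: for a finitely generated left $R\Lambda$-module $M$ that happens to be finitely generated over the subring $\Lambda$, the two Gelfand–Kirillov dimensions $\dim_\Lambda(M)$ and $\dim_{R\Lambda}(M)$ coincide. The point is that these are intrinsic invariants of $M$ as an abelian group equipped with a good filtration, and both filtrations $\{J_1^k M\}$ and $\{J_2^k M\}$ are cofinal in a suitable sense once $M$ is finitely generated over both rings.

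The plan is to compare the two descending chains $\{J_1^k M\}_{k\ge 0}$ and $\{J_2^k M\}_{k\ge 0}$. First I would observe $J_1 \subseteq J_2$ (via $\Lambda \to R\Lambda$), so $J_1^k M \subseteq J_2^k M$ for all $k$, hence $\dim_F(M/J_1^k M) \ge \dim_F(M/J_2^k M)$ and therefore $\dim_\Lambda(M) = \limsup_k \log_k \dim_F(M/J_1^k M) \ge \limsup_k \log_k \dim_F(M/J_2^k M) = \dim_{R\Lambda}(M)$. For the reverse inequality, use that $M$ is finitely generated over $\Lambda$: pick finitely many generators $m_1,\dots,m_s$ of $M$ over $\Lambda$. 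Since $R\Lambda$ is Noetherian and $M$ is finitely generated over $R\Lambda$, the maximal ideal $J_2$ is finitely generated, say by elements $r_1,\dots,r_t$ (we may take them among a generating set $\{\varpi\}\cup\{$generators of $\km_p\}\cup\{$topological generators of $J_1\}$, noting $\varpi=0$ in $\F[[PK_n]]$-modules so effectively $J_2$ is generated by lifts of $\km_p$-generators and $J_1$-generators). The key mechanism is: because $M$ is finitely generated over $\Lambda$, there exists an integer $c$ such that $\km_p M \subseteq J_1 M + (\text{something controlled})$ — more precisely, writing $N := R\Lambda/J_1 (R\Lambda)$, the module $M/J_1 M$ is a finitely generated module over $R^{\ps,\psi\varepsilon}_p/\varpi$ which is also finitely generated over $\F$ (since $M$ is f.g. over $\Lambda$ means $M/J_1 M$ is finite-dimensional over $\F$), hence is killed by some power $\km_p^c$. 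This gives $J_2^{c} M \subseteq J_1 M$, and more generally an inclusion of the form $J_2^{N(k)} M \subseteq J_1^{k} M$ with $N(k)$ growing linearly in $k$ (say $N(k) \le Ck$ for a constant $C$ independent of $k$): indeed iterate, using $J_2^{c}M \subseteq J_1 M$ together with $J_1 \subseteq J_2$ to get $J_2^{c(k)} M \subseteq J_1^k M$ for $c(k)$ linear in $k$. Then $\dim_F(M/J_1^k M) \le \dim_F(M/J_2^{Ck} M)$, so $\limsup_k \log_k \dim_F(M/J_1^k M) \le \limsup_k \log_k \dim_F(M/J_2^{Ck}M) = \limsup_j \log_j \dim_F(M/J_2^j M)$, using that $\log_k(x) = \log_{Ck}(x)\cdot \frac{\log(Ck)}{\log k} \to \log_{Ck}(x)$ asymptotically. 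This yields $\dim_\Lambda(M) \le \dim_{R\Lambda}(M)$, completing the proof.

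The main obstacle — really the only subtle point — is establishing the linear bound $J_2^{Ck} M \subseteq J_1^k M$ with $C$ independent of $k$. The naive iteration of $J_2^c M \subseteq J_1 M$ needs care: from $J_2^c M \subseteq J_1 M$ one wants $J_2^{2c} M = J_2^c(J_2^c M) \subseteq J_2^c(J_1 M)$, and then one needs $J_2^c (J_1 M) \subseteq J_1^2 M$, which follows provided $J_1 M$ is itself finitely generated over $\Lambda$ (true, as $\Lambda$ is Noetherian) and one applies the same "$\km_p^c$ kills $J_1 M/J_1^2 M$" argument — but the exponent $c$ might a priori depend on the module $J_1^{k-1}M/J_1^k M$. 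Here one uses that $\gr_{J_1}(M) = \bigoplus_k J_1^k M/J_1^{k+1}M$ is a single finitely generated graded module over $\gr(\Lambda)$, and correspondingly $\gr(R\Lambda)$ acts on it through a finitely generated structure; since $\km_p^c$ annihilates the degree-zero piece and the module is generated in bounded degrees over $\gr(\Lambda)$, a fixed power $\km_p^{c'}$ annihilates all of $\gr_{J_1}(M)$, giving $J_2^{c'} \cdot J_1^k M \subseteq J_1^{k+1} M$ uniformly in $k$. Iterating gives $J_2^{c'k} M \subseteq J_1^k M$, which is the required bound with $C = c'$. Once this is in hand, the rest is the elementary $\limsup$ comparison sketched above. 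Alternatively, and perhaps more cleanly, one can invoke the standard fact (e.g. from the theory of filtered rings, as in \cite{LvO96}) that Gelfand–Kirillov dimension computed via any good filtration by a finitely generated ideal agrees, and apply it directly; since $M$ is finitely generated over both $\Lambda$ and $R\Lambda$ with $J_1 \subseteq J_2$ and $J_2/J_1 J_2$ nilpotent on $M$, both the $J_1$-adic and $J_2$-adic filtrations are good filtrations of $M$ as a filtered $\Lambda$-module, hence give the same dimension.
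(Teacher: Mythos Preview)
Your proposal is correct and follows essentially the same strategy as the paper: compare the $J_1$-adic and $J_2$-adic filtrations on $M$, use finite generation over $\Lambda$ to find an integer $r$ with $J_2^r M\subseteq J_1 M$, and iterate to sandwich the two filtrations up to a linear reparametrisation.

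The one place you make life harder than necessary is the iteration. You flag the passage from $J_2^c M\subseteq J_1 M$ to $J_2^{ck} M\subseteq J_1^k M$ as the main obstacle and propose an argument via the associated graded $\gr_{J_1}(M)$. This works, but it is unnecessary: the paper records, immediately before the lemma, that $J_1 J_2 = J_2 J_1$ (since $R^{\ps,\psi\varepsilon}_p$ lands in the centre of $R\Lambda$, the ``extra'' generators of $J_2$ commute with everything). With that in hand the induction is one line: $J_2^{r(k+1)} M = J_2^r J_2^{rk} M \subseteq J_2^r J_1^k M = J_1^k J_2^r M \subseteq J_1^{k+1} M$. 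So the paper's proof is literally three sentences, while your detour through $\gr_{J_1}(M)$ and bounded-degree generation, though valid, is working around a difficulty that does not exist in this setting.
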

\begin{proof}
Since $M/J_1M$ is a finite $\F$-vector space, there exists an integer $r$ such that $J_2^rM\subseteq J_1M$. Hence $J_1^{rk}M\subseteq J_2^{rk}M\subseteq J_1^kM$. The desired result follows from the definition.
\end{proof}

Another ingredient we need is Artin-Rees property.
\begin{lem}
Let $R$ be either $\Lambda$ or $R\Lambda$ with maximal ideal $J$. Let $M$ be a finitely generated left $R$-module and $N\subseteq M$ be a submodule. Then there exists $c\in \Z_{>0}$ such that for any $k$,
\[J^{k+c}M\cap N\subseteq J^k N.\]
\end{lem}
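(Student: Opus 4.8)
The plan is to run the classical Rees-module proof of the Artin--Rees lemma, which carries over verbatim to the present left Noetherian, $J$-adically complete setting. Let $t$ be a central variable and let $\widetilde R := \bigoplus_{n\ge 0} J^n t^n$ be the Rees ring of the $J$-adic filtration on $R$, a positively graded ring with $\widetilde R_0 = R$ and $\widetilde R_n = J^n t^n$. The one ingredient that is not purely formal is that $\widetilde R$ is left Noetherian. Since $R$ is complete with respect to $J$ and $\gr(R) = \bigoplus_{k\ge 0} J^k/J^{k+1}$ is Noetherian --- this is the polynomial ring identification above when $R=\Lambda$, and the remark that $\gr(R\Lambda)$ is Noetherian when $R = R\Lambda$ --- this follows from the theory of (Zariskian) filtered rings, i.e. the very same machinery of \cite{LvO96} that was invoked just above to see that $\Lambda$ and $R\Lambda$ are themselves Noetherian; I would simply appeal to it rather than reprove it.

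Granting that, I would filter $M$ by its $J$-adic filtration and form the Rees module $\widetilde M := \bigoplus_{n\ge 0} (J^nM)t^n$, a graded $\widetilde R$-module; it is generated in degree $0$ by $\widetilde M_0 = M$, which is finitely generated over $\widetilde R_0 = R$, so $\widetilde M$ is finitely generated over $\widetilde R$. Next I would filter $N$ by the filtration induced from $M$, namely $N_n := J^n M \cap N$, and set $\widetilde N := \bigoplus_{n\ge 0} (J^nM \cap N)t^n$. One checks $\widetilde N$ is a graded $\widetilde R$-submodule of $\widetilde M$, the point being that $J^k(J^nM \cap N)$ lies both in $J^{n+k}M$ and in $N$. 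By Noetherianity of $\widetilde R$, $\widetilde N$ is finitely generated; let $c$ be the largest degree occurring in a finite homogeneous generating set. Then for every $k \ge 0$, comparing degree-$(k+c)$ components and discarding the powers of $t$,
\[
J^{k+c}M \cap N = \widetilde N_{k+c} = \sum_{0\le j\le c} \widetilde R_{k+c-j}\,\widetilde N_j = \sum_{0\le j\le c} J^{k+c-j}\,(J^jM \cap N) \subseteq \sum_{0\le j\le c} J^{k+c-j}N \subseteq J^k N,
\]
where the last inclusion uses $k+c-j \ge k$ for $j \le c$. This yields the Artin--Rees property with the stated constant $c$.

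The only step with genuine content is the Noetherianity of $\widetilde R$, which is precisely where the hypothesis that $\gr(R)$ is Noetherian gets used; everything else is formal manipulation of graded modules and submodules. Since this exact filtered-ring input has already been cited in the excerpt (for $\Lambda$ and $R\Lambda$ being Noetherian), the main obstacle is not really an obstacle here --- one invokes \cite{LvO96} and the rest is routine.
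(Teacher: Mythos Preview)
Your proof is correct and is essentially the same as the paper's: the paper simply cites Chapter II 1.1 Proposition 3 of \cite{LvO96}, and you have written out the standard Rees-ring argument (with the Noetherianity of $\widetilde R$ supplied by the same Zariskian-filtration machinery) that underlies that citation.
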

\begin{proof}
This follows from Chapter II 1.1 Proposition 3 of \cite{LvO96}. 
\end{proof}

A lot of results in classical commutative algebra are also true in this setting.
\begin{lem} \label{seqd}
Let $0\to M'\to M \to M''\to 0$ be a short exact sequence of finitely generated left $R$-modules. Then $\dim_R(M)=\max (\dim_R (M'),\dim_R (M''))$.
\end{lem}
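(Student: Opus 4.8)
The plan is to work throughout with the equivalent description $\dim_R(M)=\limsup_k\log_k\bigl(\dim_\F M/J^kM\bigr)$ recorded above, and to compare the three Hilbert-type functions
$k\mapsto\dim_\F M'/J^kM'$, $k\mapsto\dim_\F M/J^kM$, $k\mapsto\dim_\F M''/J^kM''$ with one another. The only non-formal input will be the Artin--Rees property established in the previous lemma; everything else is bookkeeping with $\limsup_k\log_k$.

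First I would treat the quotient. The surjection $M\twoheadrightarrow M''$ induces a surjection $M/J^kM\twoheadrightarrow M''/J^kM''$ for every $k$, so $\dim_\F M''/J^kM''\le\dim_\F M/J^kM$ and hence $\dim_R(M'')\le\dim_R(M)$. Next I would treat the submodule: apply the Artin--Rees lemma to $N=M'\subseteq M$ to get $c\in\Z_{>0}$ with $J^{k+c}M\cap M'\subseteq J^kM'$ for all $k$. Then $M'/J^kM'$ is a quotient of $M'/(J^{k+c}M\cap M')$, and the latter embeds into $M/J^{k+c}M$; therefore $\dim_\F M'/J^kM'\le\dim_\F M/J^{k+c}M$. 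Since $\limsup_k\log_k(a_{k+c})=\limsup_k\log_k(a_k)$ for any sequence of positive integers (because $\log k/\log(k+c)\to1$), this yields $\dim_R(M')\le\dim_R(M)$. Combining, $\dim_R(M)\ge\max\bigl(\dim_R(M'),\dim_R(M'')\bigr)$.

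For the reverse inequality I would use that $J^kM'\subseteq J^kM\cap M'$ trivially, so the exact sequence $0\to M'/(J^kM\cap M')\to M/J^kM\to M''/J^kM''\to0$ gives
\[\dim_\F M/J^kM=\dim_\F M'/(J^kM\cap M')+\dim_\F M''/J^kM''\le\dim_\F M'/J^kM'+\dim_\F M''/J^kM''.\]
Setting $a_k=\dim_\F M'/J^kM'$ and $b_k=\dim_\F M''/J^kM''$, one has $\log_k(a_k+b_k)\le\log_k2+\max(\log_k a_k,\log_k b_k)$, and taking $\limsup_k$ gives $\dim_R(M)\le\max\bigl(\dim_R(M'),\dim_R(M'')\bigr)$. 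Together with the previous paragraph this proves the claimed equality. (Alternatively one can run the same argument on Hilbert polynomials of $\gr(M')$, $\gr(M)$, $\gr(M'')$, using Artin--Rees to compare $J^kM\cap M'$ with $J^kM'$; the $\log_k$ formulation is slightly more economical.)

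The main obstacle — such as it is — is the submodule direction $\dim_R(M')\le\dim_R(M)$, which genuinely needs Artin--Rees; the rest is formal, the only subtlety being the elementary remark that a constant shift of the index leaves $\limsup_k\log_k$ unchanged. Noetherianity of $R$ (established above for both $\Lambda$ and $R\Lambda$) is what makes the Artin--Rees lemma and the finite generation of all the modules in sight available.
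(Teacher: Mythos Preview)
Your proof is correct and follows essentially the same approach as the paper's: both use Artin--Rees to compare $J^kM\cap M'$ with $J^{k-c}M'$, together with the trivial inclusion $J^kM'\subseteq J^kM\cap M'$, to sandwich $\dim_\F M/J^kM$ between expressions in $\dim_\F M'/J^{\bullet}M'$ and $\dim_\F M''/J^kM''$. The paper compresses this into the two displayed inequalities $\dim_\F(M/J^kM)\leq \dim_\F(M'/J^kM')+\dim_\F(M''/J^kM'')$ and $\dim_\F(M/J^kM)\geq \dim_\F(M'/J^{k-c}M')+\dim_\F(M''/J^kM'')$ and leaves the $\limsup_k\log_k$ manipulation implicit; your version simply unpacks these steps more explicitly.
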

\begin{proof}
Apply the previous lemma to $M'\subseteq M$ and get an integer $c$ as in the lemma. Then
\[\dim_\F(M/J^kM)\leq \dim_\F(M'/J^kM')+\dim_\F(M''/J^kM''),\]
\[\dim_\F(M/J^kM)\geq \dim_\F(M'/J^{k-c}M')+\dim_\F(M''/J^kM'').\]
The desired result is clear.
\end{proof}

\begin{lem} \label{xtf1}
Let $M$ be a finitely generated left $R$-module and $x\in J$ be an element in the centre of $R$. Suppose $M$ has no $x$-torsion, then $\dim_R(M/xM)=\dim_R(M)-1$.
\end{lem}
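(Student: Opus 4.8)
Lemma \ref{xtf1} is a standard fact in dimension theory for graded/filtered rings; the plan is to adapt the classical argument via Hilbert polynomials, using the Artin--Rees property established in the preceding lemma to handle the non-exactness of the associated graded functor. First I would reduce to comparing Hilbert polynomials: write $J$ for the maximal ideal of $R$ and set $\bar{M} = M/xM$. Since $x$ lies in the centre of $R$ and $x \in J$, multiplication by $x$ is an $R$-module endomorphism of $M$, and the hypothesis that $M$ has no $x$-torsion means $0 \to M \xrightarrow{\cdot x} M \to \bar M \to 0$ is exact. I would like to turn this into a statement about the graded pieces $J^k M / J^{k+1}M$, but $\gr$ is not exact, so the Artin--Rees lemma enters: there is $c \in \Z_{>0}$ with $J^{k+c}M \cap xM \subseteq x J^k M$ for all $k$.

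The core computation is to estimate $\dim_\F(M/J^k M)$ against $\dim_\F(\bar M / J^k \bar M) = \dim_\F(M/(xM + J^k M))$. On one hand, the surjection $M/J^kM \twoheadrightarrow M/(xM+J^kM) = \bar M/J^k\bar M$ gives
\[\dim_\F(M/J^kM) = \dim_\F(\bar M/J^k\bar M) + \dim_\F\big((xM + J^kM)/J^kM\big).\]
Now $(xM + J^kM)/J^kM \cong xM/(xM \cap J^kM) \cong M/(M \cap x^{-1}J^kM)$, and since $M$ is $x$-torsion-free this last quotient is $M/\{m : xm \in J^kM\}$. Using Artin--Rees in the form $xM \cap J^{k+c}M \subseteq xJ^kM$, i.e. $\{m : xm \in J^{k+c}M\} \subseteq J^kM$, together with the trivial inclusion $xJ^{k-1}M \subseteq xM \cap J^kM$, we sandwich:
\[\dim_\F(M/J^{k-1}M) \;\le\; \dim_\F\big((xM+J^kM)/J^kM\big) \;\le\; \dim_\F(M/J^{k-c}M).\]
Plugging this into the displayed identity yields
\[\dim_\F(\bar M/J^k\bar M) \;\le\; \dim_\F(M/J^kM) - \dim_\F(M/J^{k-1}M)\]
and
\[\dim_\F(\bar M/J^k\bar M) \;\ge\; \dim_\F(M/J^kM) - \dim_\F(M/J^{k-c}M).\]
If $d = \dim_R(M)$, so $\dim_\F(M/J^kM)$ grows like $a k^d$ for some $a>0$ (up to lower order; this is the content of the Hilbert polynomial having degree $d$ — note the degree of $t\varphi_M(t)$ is $d$, so $\varphi_M$ has degree $d-1$, hence partial sums have degree $d$), then the right-hand side of the first inequality is a difference that behaves like $a d\, k^{d-1}$, giving $\dim_R(\bar M) \le d-1$; and the second inequality shows the difference is still of exact order $k^{d-1}$ (the subtracted term $\dim_\F(M/J^{k-c}M) \sim a(k-c)^d = ak^d - acd\,k^{d-1} + \cdots$ leaves a positive $k^{d-1}$ term), so $\dim_R(\bar M) \ge d-1$. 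Hence $\dim_R(M/xM) = \dim_R(M) - 1$.

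The main obstacle is purely bookkeeping with the Hilbert-polynomial asymptotics: one must be careful that ``$\dim_R$'' is defined in the excerpt as the degree of $t\varphi_M(t)$, so one is really comparing degrees of polynomials approximating partial sums $\sum_{j\le k}\dim_\F(J^jM/J^{j+1}M) = \dim_\F(M/J^{k+1}M)$, and then checking that taking a first difference drops the degree by exactly one and that the Artin--Rees shift by the constant $c$ does not affect the leading behaviour. None of this requires new ideas beyond what is already invoked (Artin--Rees, Noetherianness of $R$, and the existence of Hilbert polynomials from \cite{Mat1} \S13), so the proof should be short; I would simply remark that the argument is identical to the commutative graded case and spell out the two sandwiching inequalities above.
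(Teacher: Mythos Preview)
Your approach is essentially identical to the paper's: both use the identity
\[\dim_\F(M/(J^kM+xM)) = \dim_\F(M/J^kM) - \dim_\F\bigl(xM/(xM\cap J^kM)\bigr),\]
sandwich the last term using the trivial inclusion $xJ^{k-1}M\subseteq xM\cap J^kM$ and the Artin--Rees inclusion $xM\cap J^kM\subseteq xJ^{k-c}M$, and conclude via Hilbert-polynomial asymptotics.

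There is one slip: your sandwich inequality is written backwards. From $J^{k-1}M\subseteq\{m:xm\in J^kM\}\subseteq J^{k-c}M$ one obtains
\[\dim_\F(M/J^{k-c}M)\;\le\;\dim_\F\bigl((xM+J^kM)/J^kM\bigr)\;\le\;\dim_\F(M/J^{k-1}M),\]
the reverse of what you wrote, and correspondingly your two displayed inequalities for $\dim_\F(\bar M/J^k\bar M)$ are swapped relative to the paper's. This does not actually damage the conclusion, since both differences $\dim_\F(M/J^kM)-\dim_\F(M/J^{k-1}M)$ and $\dim_\F(M/J^kM)-\dim_\F(M/J^{k-c}M)$ are of exact order $k^{d-1}$, so either direction yields $\dim_R(\bar M)=d-1$; but you should correct the sign.
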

\begin{proof} Note that
\[\dim_\F(M/(J^kM+xM))= \dim_\F(M/J^k M)-\dim_\F(xM/(J^kM\cap xM))\]
By Artin-Rees lemma, there exists an integer $c$ such that for $k\geq c$, $J^kM\cap xM\subseteq J^{k-c}xM$. 
\begin{eqnarray*}
\dim_\F(M/(J^kM+xM)) &\leq& \dim_\F(M/J^kM)-\dim_\F (xM/xJ^{k-c}M)\\
&=& \dim_\F(M/J^kM)-\dim_\F (M/J^{k-c}M)
\end{eqnarray*}
Thus $\dim_R(M/xM)\leq\dim_R(M)-1$. On the other hand, $J^kM\cap xM\supseteq xJ^{k-1}M$. We have
\[\dim_\F(M/(J^kM+xM))\geq \dim_\F(M/J^k M)-\dim_\F(M/J^{k-1}M).\]
This implies that $\dim_R(M/xM)\geq\dim_R(M)-1$ and hence the equality.
\end{proof}

Now we can state our main result. Assuming it, we can give a proof of Theorem \ref{dh}.

\begin{prop}[Dimension formula] \label{df}
Let $N$ be a finitely generated right $E_{\kB_{\km}}/(\varpi)$-module with the induced topology. Then 
\[\dim_{R\Lambda} (N{\otimes} _{E_{\kB_\km}}P_{\kB_\km})\leq \dim_{R^{\ps,\psi\epsilon}_p}N+[F:\Q],\]
where $\km_p$ is the maximal ideal of $R^{\ps,\psi\epsilon}_p$. Note that $ \dim_{R^{\ps,\psi\epsilon}_p}N$ makes sense since $N$ is also a finitely generated $R^{\ps,\psi\epsilon}_p$-module.
\end{prop}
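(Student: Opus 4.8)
The plan is to induct on $d:=\dim_{R^{\ps,\psi\varepsilon}_p}N$, transporting everything through the functor $-\hat{\otimes}_{E_{\kB_\km}}P_{\kB_\km}$. Write $M:=N\hat{\otimes}_{E_{\kB_\km}}P_{\kB_\km}$. Recall from \S\ref{Blocks} that, with the induced topology, $N$ is a pseudo-compact right $E_{\kB_\km}$-module and that $\mm\mapsto\mm\hat{\otimes}_{E_{\kB_\km}}P_{\kB_\km}$ is an equivalence of categories between pseudo-compact right $E_{\kB_\km}$-modules and $\kC_{D_p^\times,\psi}(\cO)^{\kB_\km}$; in particular it is exact. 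Since $E_{\kB_\km}$ is a finite $R^{\ps,\psi\varepsilon}_p$-algebra (corollary \ref{fpas}), $N$ is a finitely generated $R^{\ps,\psi\varepsilon}_p/\varpi$-module, so $d$ is a genuine Krull dimension and $N$ has only finitely many associated primes. Choosing a surjection $(E_{\kB_\km}/\varpi)^{\oplus s}\twoheadrightarrow N$ and applying $-\hat{\otimes}_{E_{\kB_\km}}P_{\kB_\km}$ gives a surjection $(P_{\kB_\km}/\varpi)^{\oplus s}\twoheadrightarrow M$, so $M$ is a finitely generated $R\Lambda$-module (as $P_{\kB_\km}/\varpi$ is, by \S\ref{edha}) and $\dim_{R\Lambda}M$ is defined. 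Note that $\km_p$ lies in the centre of $R\Lambda$ and in its maximal ideal $J_2$.

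For the base case $d=0$, the module $N$ has finite length over $R^{\ps,\psi\varepsilon}_p$, so $\km_p^tN=0$ for some $t$; hence $\km_p^tM=0$ and $M$ is finitely generated over $R\Lambda/\km_p^tR\Lambda$, which is a finite $\Lambda$-module. Thus $M$ is finitely generated over $\Lambda=\F[[PK_n]]$, so $\dim_{R\Lambda}M=\dim_\Lambda M$ by lemma \ref{dtr}, and $M^\vee$ is an \emph{admissible} smooth representation of $D_p^\times\cong\prod_{i=1}^{[F:\Q]}\GL_2(\Q_p)$ with central character $\psi$. Since an admissible representation of $\GL_2(\Q_p)$ has Gelfand--Kirillov dimension at most $1=\dim\GL_2/B$ over the relevant Iwasawa algebra, an admissible representation of the $[F:\Q]$-fold product has Gelfand--Kirillov dimension at most $[F:\Q]$ over $\Lambda$; hence $\dim_{R\Lambda}M\le[F:\Q]=d+[F:\Q]$.

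For the inductive step $d\ge1$, given $N$ with $\dim_{R^{\ps,\psi\varepsilon}_p}N\le d$ we may assume $\dim_{R^{\ps,\psi\varepsilon}_p}N=d$, the case $<d$ being the inductive hypothesis. Let $N'\subseteq N$ be the $\km_p$-power-torsion submodule; as $\km_p$ is central in $E_{\kB_\km}$, $N'$ is a finitely generated sub-$E_{\kB_\km}/\varpi$-module, it is killed by some $\km_p^t$, and $N/N'$ has $\km_p\notin\mathrm{Ass}_{R^{\ps,\psi\varepsilon}_p}(N/N')$. Applying the exact functor $-\hat{\otimes}_{E_{\kB_\km}}P_{\kB_\km}$ to $0\to N'\to N\to N/N'\to0$ and using lemma \ref{seqd} reduces us, via the base case applied to $N'$, to bounding $\dim_{R\Lambda}\big((N/N')\hat{\otimes}_{E_{\kB_\km}}P_{\kB_\km}\big)$; so we may replace $N$ by $N/N'$ and assume $\km_p\notin\mathrm{Ass}_{R^{\ps,\psi\varepsilon}_p}(N)$ (and $\dim N=d$, else use the inductive hypothesis). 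Every associated prime of $N$ is then strictly contained in $\km_p$, so by prime avoidance there is $x\in\km_p$ avoiding all of them; thus $x$ is a nonzerodivisor on $N$, and since $x$ avoids the minimal primes of $\Supp N$ of dimension $d$ we get $\dim_{R^{\ps,\psi\varepsilon}_p}(N/xN)\le d-1$. Applying $-\hat{\otimes}_{E_{\kB_\km}}P_{\kB_\km}$ to $0\to N\xrightarrow{\,x\,}N\to N/xN\to0$ shows that $x$ is a nonzerodivisor on $M$ with $M/xM\cong(N/xN)\hat{\otimes}_{E_{\kB_\km}}P_{\kB_\km}$; hence lemma \ref{xtf1} gives $\dim_{R\Lambda}(M/xM)=\dim_{R\Lambda}M-1$, while the inductive hypothesis gives $\dim_{R\Lambda}(M/xM)\le(d-1)+[F:\Q]$. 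Combining these yields $\dim_{R\Lambda}M\le d+[F:\Q]$, completing the induction.

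The technical heart is the base case, and specifically the geometric input that an admissible representation of $\GL_2(\Q_p)$ has Gelfand--Kirillov dimension at most $\dim\GL_2/B=1$ over the relevant Iwasawa algebra: this is exactly what injects the bound $[F:\Q]=\#\{v\mid p\}$ into the estimate. Everything else is the formal interplay between the two module structures carried by $P_{\kB_\km}$ — the central action of $R^{\ps,\psi\varepsilon}_p$, through which both sides of the asserted inequality are measured, and the exactness of Paškūnas's functor $-\hat{\otimes}_{E_{\kB_\km}}P_{\kB_\km}$, which is what allows a dimension-dropping element of $\km_p$ on the Galois side to be transported to a nonzerodivisor on $M$ on the automorphic side.
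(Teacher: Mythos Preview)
Your argument follows the same induction as the paper's proof, and the inductive step is correct; your device of first stripping off the $\km_p$-power-torsion submodule and then choosing a nonzerodivisor $x\in\km_p$ by prime avoidance is a harmless reordering of the paper's ``choose a parameter $x$ with $\dim N/xN\le r$, then replace $x$ by a power so that $N[x]=N[x^\infty]$''.

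The one point that needs tightening is the base case. When $d=0$, the module $N$ is a finite-dimensional $\F$-vector space, hence has finite length as an $E_{\kB_\km}$-module, and by the equivalence of categories $M^\vee$ is a smooth $D_p^\times$-representation of \emph{finite length}, not merely admissible. This is what lets you reduce, via lemma \ref{seqd}, to irreducible constituents, which by lemma \ref{irrof} are tensor products $\bigotimes_{v\mid p}\pi_v$; the GK-dimension over $\Lambda$ of such a tensor product is $\sum_v\dim_{\F[[K_v/Z_v]]}\pi_v^\vee\le[F:\Q]$, using that each irreducible smooth mod $p$ representation of $\GL_2(\Q_p)$ with central character has GK dimension $\le1$ (the paper records this as a separate lemma with references). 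Your blanket assertion that an arbitrary \emph{admissible} representation of $\GL_2(\Q_p)$ has GK dimension $\le\dim\GL_2/B=1$ is not something one can invoke without further argument, and even granting it, the passage from a single $\GL_2(\Q_p)$ to the $[F:\Q]$-fold product still requires the finite-length and tensor-product decomposition to go through.
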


\begin{rem}
This formula roughly says `dimension of total space $\leq$ dimension of the base space + dimension of the special fibre'. This is exactly the heuristics in page 19 of \cite{CE12}.
 
Another remark is that it will be clear in the proof that if no $\kB_{\km,v}$ belong to the last two blocks in \ref{pas}, then the inequality in the proposition is in fact an equality. The problem of the last two blocks is that $\delta\circ\det$ has Gelfand-Kirillov dimension $0$ rather than $1$. 
\end{rem}

\begin{proof}[Proof of Theorem \ref{dh}]
Note that $\psi|_{1+pO_{F,p}}$ is trivial modulo $\varpi$. The same proof of Proposition \ref{chproj} shows that $M_\psi(U^p)_\km/\varpi$ is a projective, hence free module over $\Lambda=\F[[PK_n]]$ (defined in \ref{pkn}).  Let 
\[\bar{\mm}=\mm/\varpi\mm=\Hom_{\kC_{D_p^\times,\psi}(\cO)}(P_{\kB_\km},M_\psi(U^p)_\km/\varpi).\] 
Then $\bar\mm$ has full support on $\T_\psi(U^p)/(\varpi)$ and it follows from the discussion in \ref{Blocks} that $\bar{\mm}\otimes_{E_{\kB_\km}}P_{\kB_\km}=M_\psi(U^p)_\km/\varpi\cong \Lambda^{\oplus d}$ for some $d$.

Let $\kp$ be a minimal prime ideal of $\T_\psi(U^p)_\km$ and $\mm[\kp]$ be the set of elements of $\mm$ killed by $\kp$. Denote $\mm[\kp]/\varpi\mm[\kp]$ by $N$ and the image of $N\to\bar{\mm}$ by $N'$. Note that $\mm$ is torsion-free and has full support on $\T_\psi(U^p)_\km$, hence $N'\neq 0$ and $0\neq N'\otimes_{E_{\kB_\km}}P_{\kB_\km}\subseteq \Lambda^{\oplus d}$. Since $\Lambda$ does not have zero-divisors, $N'\otimes_{E_{\kB_\km}}P_{\kB_\km}$ has at least a copy of $\Lambda$ inside. Hence
\[\dim_\Lambda(N\otimes_{E_{\kB_\km}}P_{\kB_\km})\geq\dim_\Lambda(N'\otimes_{E_{\kB_\km}}P_{\kB_\km})\geq \dim_\Lambda \Lambda=3[F:\Q]\]
by lemma \ref{seqd}. On the other hand, the other direction of the inequality is also true since $N\otimes_{E_{\kB_\km}}P_{\kB_\km}$ is a finitely generated $\Lambda$-module. Thus 
\[\dim_\Lambda(N\otimes_{E_{\kB_\km}}P_{\kB_\km})=3[F:\Q].\]

Note that this is also $\dim_{R\Lambda}(N\otimes_{E_{\kB_\km}}P_{\kB_\km})$ by lemma \ref{dtr}. Apply proposition \ref{df} with $N=N$. We get 
\[\dim_{R^{\ps,\psi\epsilon}_p} N\geq 2[F:\Q].\] 
Since the action of $R^{\ps,\psi\epsilon}_p$ on $N$ factors through $\T_\psi(U^p)_\km/(\kp,\varpi)$, we have
\[\dim \T_\psi(U^p)_\km/\kp-1=\dim \T_\psi(U^p)_\km/(\kp,\varpi)=\dim_{\T_\psi(U^p)_\km/(\kp,\varpi)} N=\dim_{R^{\ps,\psi\epsilon}_p} N\geq 2[F:\Q].\]
The first equality follows from the fact that $\kp$ has characteristic zero. This finishes the proof.
\end{proof}

\begin{proof}[Proof of Proposition \ref{df}]
Write $d(N)=\dim_{R\Lambda}(N\otimes_{E_{\kB_\km}}P_{\kB_\km})$. The proof is by induction on the dimension of $N$ over $R^{\ps,\psi\varepsilon}_p$. If $\dim_{R^{\ps,\psi\varepsilon}_p}N=0$, then $(N\otimes_{E_{\kB_\km}}P_{\kB_\km})^\vee$ is a smooth representation of $D_p^\times$ of finite length. Since each irreducible constituent has the form $\otimes_{v|p}\pi_v$, where $\pi_v$ is an irreducible representation of $\GL_2(F_v)$, the result follows from

\begin{lem}
Let $K$ be a pro-$p$ open subgroup of $\GL_2(F_v)$ and $\pi$ be any smooth irreducible representation of $\GL_2(\Q_p)$ over $\F$ with central character $\psi$. Then $\dim_{\F[[K]]}\pi^\vee=1$ unless $\pi$ is one-dimensional.
\end{lem}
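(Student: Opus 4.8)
The plan is as follows. Since $p$ splits completely in $F$ we identify $F_v$ with $\Q_p$ and $G:=\GL_2(F_v)$ with $\GL_2(\Q_p)$. The Gelfand--Kirillov dimension $\dim_{\F[[K]]}\pi^\vee$ is independent of the compact open pro-$p$ subgroup $K$, since any two are commensurable and the resulting $J$-adic filtrations on the completed group algebras are equivalent up to bounded rescaling; so I fix $K=1+p\mathrm{M}_2(\Z_p)$, which is uniform, and write $K^{(n)}=1+p^n\mathrm{M}_2(\Z_p)$. As $\pi$ is admissible, $\pi^\vee$ is a finitely generated $\F[[K]]$-module with $(\pi^{K^{(n)}})^\vee\cong\pi^\vee/I_{K^{(n)}}\pi^\vee$, and comparing the congruence filtration with the $J$-adic filtration on $\F[[K]]$ (as in \cite{DdSMS99}) one gets $\dim_{\F[[K]]}\pi^\vee=\limsup_n\big(\log_p\dim_\F\pi^{K^{(n)}}\big)/n$. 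So the lemma amounts to two bounds: $\dim_\F\pi^{K^{(n)}}$ is unbounded in $n$ (forcing dimension $\ge 1$) and $\dim_\F\pi^{K^{(n)}}=O(p^n)$ (forcing dimension $\le 1$).

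For the lower bound, a finite-dimensional smooth irreducible $\F$-representation of $\GL_2(\Q_p)$ is a character: its restriction to $\SL_2(\Q_p)$ has open kernel, while $\SL_2(\Q_p)$ has no proper open normal subgroup, so the restriction is trivial and $\pi$ factors through $\det$. Hence if $\pi$ is not one-dimensional it is infinite-dimensional, so $\pi^\vee$ is infinite-dimensional over $\F$; were $\dim_{\F[[K]]}\pi^\vee=0$, the Hilbert function $k\mapsto\dim_\F J^k\pi^\vee/J^{k+1}\pi^\vee$ would vanish for $k\gg 0$, giving $J^k\pi^\vee=J^{k+1}\pi^\vee$ and thus $J^k\pi^\vee=0$ by Nakayama, a contradiction. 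So $\dim_{\F[[K]]}\pi^\vee\ge 1$.

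For the upper bound I use the classification of irreducible smooth $\F$-representations of $\GL_2(\Q_p)$ (Barthel--Livn\'e, Breuil): in every case, $\pi$ admissible irreducible has nonzero $\GL_2(\Z_p)$-socle, so choosing an irreducible $\sigma$ inside it and extending to $KZ:=\GL_2(\Z_p)\Q_p^\times$ yields a surjection $\mathrm{ind}_{KZ}^G\sigma\twoheadrightarrow\pi$. Identify $G/KZ$ with the vertices of the Bruhat--Tits tree of $\PGL_2(\Q_p)$, with base vertex $x_0$ fixed by $K$, and set $\pi_{\le m}:=\sum_{d(gKZ,x_0)\le m}g\cdot\overline{\sigma}\subseteq\pi$, where $\overline{\sigma}$ is the image of $\sigma$. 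Each $\pi_{\le m}$ is $K$-stable with $\dim_\F\pi_{\le m}\le\#\{v:d(v,x_0)\le m\}\cdot\dim_\F\sigma=O(p^m)$, and $\pi=\bigcup_m\pi_{\le m}$. The crux is an inclusion $\pi^{K^{(n)}}\subseteq\pi_{\le n+c}$ for a constant $c=c(\sigma)$: if a $K^{(n)}$-invariant vector $w$ had a term $g\cdot\overline{v}$ with $d(gKZ,x_0)=m$ maximal and $m>n+c$, then a suitable unipotent $u\in K^{(n)}$ pointing outward along that end of the tree (a $g$-conjugate of a level-$\asymp(m-n)$ unipotent subgroup of $KZ$, which lands in $K^{(n)}$ once $m-n$ exceeds $c$) moves that term strictly past radius $m$ while fixing the radius-$\le m$ part, so $uw\neq w$. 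Granting this, $\dim_\F\pi^{K^{(n)}}\le\dim_\F\pi_{\le n+c}=O(p^n)$, so $\dim_{\F[[K]]}\pi^\vee\le 1$, and combined with the lower bound $\dim_{\F[[K]]}\pi^\vee=1$. (For principal series $\pi\subseteq\Ind_B^G\chi$ one can bypass the tree argument: by the Iwasawa decomposition $\Ind_B^G\chi|_{\GL_2(\Z_p)}$ is a $\chi$-twist of $C^\infty(\PP^1(\Z_p))$, and $\dim_\F C^\infty(\PP^1(\Z_p))^{K^{(n)}}=\#(\PP^1(\Z/p^n)/K^{(n)})=(p+1)p^{n-1}$.)

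The main obstacle is precisely the inclusion $\pi^{K^{(n)}}\subseteq\pi_{\le n+c}$: in characteristic $p$ there is no averaging projector onto $K^{(n)}$-invariants, so one must genuinely show that invariant vectors cannot protrude past radius $n+O(1)$ on the tree — a triangularity property of the unipotent action with respect to the radial filtration. Establishing this uniformly in the type of $\pi$, in particular for the supersingular representations (where one may instead appeal to the explicit description of $\pi|_{\GL_2(\Z_p)}$ due to Breuil), is the technical heart of the argument.
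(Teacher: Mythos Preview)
Your overall strategy matches the paper's: both reduce to the Barthel--Livn\'e/Breuil classification and compute the growth of $\dim_\F\pi^{K^{(n)}}$ type by type. Your lower bound and your principal-series computation via $C^\infty(\PP^1(\Z_p))$ are correct and essentially what the paper means by ``by hand''. For the supersingular case the paper invokes Theorem~1.2 of \cite{Pas10}, which exhibits $\pi|_{KZ}$ as an explicit inductive limit from which the linear growth of invariants is immediate; your appeal to Breuil's description of $\pi|_{\GL_2(\Z_p)}$ amounts to the same thing. The paper also points to Corollary~7.5 of \cite{SS16} as a packaged reference.

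Where you should be careful is the uniform tree argument. The inclusion $\pi^{K^{(n)}}\subseteq\pi_{\le n+c}$ is argued in the compact induction $\mathrm{ind}_{KZ}^G\sigma$, not in $\pi$: you pick a lift $\tilde w\in V_{\le m}$ with a nonzero radius-$m$ component and find $u\in K^{(n)}$ with $u\tilde w\neq\tilde w$ in $V$. But the conclusion you need is $uw\neq w$ in $\pi$, i.e.\ $u\tilde w-\tilde w\notin\ker(V\twoheadrightarrow\pi)$. Since the kernel is generated by the Hecke relation $T-\lambda$ (supersingular) or is the kernel of an intertwiner (principal/special), and $T$ mixes adjacent radii, a vector supported at radii $\ge m$ can very well lie in the kernel. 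So ``moving a term past radius $m$'' does not by itself force non-invariance in the quotient, and the radial filtration on $\pi$ is not fine enough to run the triangularity argument without further input. This is exactly the point at which one must feed in the explicit $KZ$-structure of $\pi$ --- which is what both you (via Breuil) and the paper (via \cite{Pas10}) ultimately do. I would therefore drop the tree argument as a standalone proof and present it, if at all, as motivation, relying on the case-by-case computation for the actual bound.
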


\begin{proof}
See the proof of Corollary 7.5 of \cite{SS16}. Or one can prove this directly: the Gelfand-Kirillov dimension of principal series and special series can be computed by hand; the case of supersingular representations can be computed using Theorem 1.2 of \cite{Pas10}.
\end{proof}

Suppose we have proved for all $N$ of dimension at most $r$ over $R^{\ps,\psi\varepsilon}_p$. Let $N$ be a finitely generated right $E_{\kB_\km}$-module of dimension $r+1$ over  $R^{\ps,\psi\varepsilon}_p$. Choose $x\in R^{\ps,\psi\varepsilon}_p$ such that $\dim_{R^{\ps,\psi\varepsilon}_p} N/xN=r$, i.e. $x$ does not vanish on each irreducible component of the support of $N$ of dimension $r+1$. Hence the support of $N[x]$ has dimension at most $r$. After replacing $x$ by its power, we may assume $N[x]=N[x^\infty]$. Let $N'=N/N[x]$. Then $N'$ has no $x$-torsion. $d(N')=d(N'/xN')+1$ by lemma \ref{xtf1}.
 
Since $\dim_{R^{\ps,\psi\varepsilon}_p}N'/xN' \leq \dim_{R^{\ps,\psi\varepsilon}_p} N/xN = r$, by induction hypothesis, 
\[d(N[x])\leq \dim_{R^{\ps,\psi\varepsilon}_p} N[x]+[F:\Q]\leq r+[F:\Q],\]
\[d(N')=d(N'/xN')+1\leq r+[F:\Q]+1.\]
Hence $d(N)\leq r+1+[F:\Q]$ by lemma \ref{seqd}.
\end{proof}

\subsection{A variant of completed homology} \label{varch}
In order to apply Taylor's Ihara avoidance in \cite{Ta08}, we need to introduce a variant of completed homology considered before. Most of arguments in previous subsections still work here. We keep the notations as in the beginning of this section.

\begin{para}
Let $U=\prod_{v} U_v$, where $U_v$ is an open compact subgroup $(D\otimes F_v)^\times$. Write $U^p=\prod_{v\nmid p} U_v$ and $U_p=\prod_{v|p}U_v$.  Let $\xi:U^p\to\cO^\times$ be a continuous smooth character and $\psi:(\A^\infty_F)^\times/F^\times_{>>0}\to\cO^\times$ be a continuous character such that $\psi|_{\prod_{v\nmid p} (O^\times_{F_v}\cap U_v)}=\xi|_{\prod_{v\nmid p} (O^\times_{F_v}\cap U_v)}$. 

Given a topological $\Z_p$-algebra $A$ and a continuous representation $\tau:\prod_{v|p}U_v\to\Aut(W\tau)$, we may define $S_{\tau,\psi,\xi}(U,A)$ as the space of continuous functions:
\[f:D^\times\setminus \DAi\to W_\tau,\]
such that for any $g\in\DAi,z\in \AFi,u=u^pu_p\in U$, we have 
\[f(guz)=\psi(z)\xi(u^p)\tau(u_p^{-1})(f(g)).\]

If $\psi|_{U_p\cap O_{F,p}^\times}=\tau^{-1}|_{U_p\cap O_{F,p}^\times}$, then as in \eqref{dcp}, we have
\[S_{\tau,\psi,\xi}(U,A)\simeq \bigoplus_{i\in I}W_{\tau}^{(t_i^{-1}D^\times t_i\cap U\AFi)/F^{\times}},\]
where $I=D^\times\setminus \DAi/U\AFi$ and $\{t_i\}_{i\in I}$ is a set of representatives. If $U$ is sufficiently small, corollary \ref{sfsmf} is still valid. We will simply write $S_{\psi,\xi}(U,A)$ if $\tau$ is the trivial action on $A$.
\end{para}

\begin{para} 
We can introduce completed homology $M_{\psi,\xi}(U^p)$ and cohomology $S_{\psi,\xi}(U^p)$ similarly as in subsection \ref{chac} and Hecke algebra $\T_{\psi,\xi}(U^p)$ as in subsection \ref{haapr}. Let $U'^p$ be the kernel of $\xi$.Then we have a natural Hecke equivariant inclusion $S_{\psi,\xi}(U^p)\hookrightarrow S_{\psi}(U'^p)$. From this, we can deduce our local-global compatibility (Theorem \ref{lgc}) for $M_{\psi,\xi}(U^p)$ directly from the case we have proved before. The same estimate of dimension of $\T_{\psi,\xi}(U^p)$ can be obtained exactly the same as in subsection \ref{edha}. We leave the details to interested readers.
\end{para}

\section{Patching at a one-dimensional prime} \label{Paao-dp}
We are going to prove a `$R_\kq=\T_\kq$' result, where $\kq$ is a one-dimensional prime ideal rather than the maximal ideal.

\subsection{Setup and the statement of the main result} \label{satsotmr}
\begin{para} \label{defrps1}
In this section, $F$ denotes a totally real field of even degree over $\Q$ in which $p$ splits completely. Write $\Sigma_p$ as the set of places of $F$ above $p$. Let $S$ be a finite set of finite places containing $\Sigma_p$ such that $p|N(v)-1$ for all $v\in S\setminus \Sigma_p$, and let $\chi:G_{F,S}\to\cO^\times$ be a continuous character such that 
\begin{itemize}
\item $\chi$ is unramified at places outside of $\Sigma_p$.
\item $\chi(\Frob_v)\equiv 1\mod \varpi$ for $v\in S\setminus \Sigma_p$.
\item $\chi(c)=-1$ for any complex conjugation $c\in G_{F,S}$.
\end{itemize}
Denote by $\bar{\chi}$ the reduction of $\chi$ modulo $\varpi$. Let $\xi_v:k(v)^\times\to\cO^\times$ be characters of $p$-power order for $v\in S\setminus \Sigma_p$. We will view $\xi_v$ as characters of $I_{F_v}$ by the local class field theory. 

Consider the universal deformation ring $R^{\ps,\{\xi_v\}}$ which pro-represents the functor from $\cOf$ to the category of sets sending $R$ to the set of two-dimensional pseudo-representations $T$ of $G_{F,S}$ over $R$  such that $T$ is a lifting of $1+\bar\chi$ with determinant $\chi$ and
\[T|_{I_{F_v}}=\xi_v+\xi_v^{-1} \]
for any $v\in S\setminus \Sigma_p$. If $\xi_v$ are all trivial, we will simply write $R^{\ps,1}$.

We fix a complex conjugation $\sigma^*\in G_{F,S}$ so that we can associate a two-dimensional semi-simple representation $\rho(\kp):G_{F,S}\to\GL_2(k(\kp))$ with trace $T^{univ} \mod\kp$ for any $\kp\in\Spec R^{\ps,\{\xi_v\}}$ as in \ref{tar}. Here, $T^{univ}:G_{F,S}\to R^{\ps,\{\xi_v\}}$ is the pseudo-representation given by the universal property.
\end{para}

\begin{para} \label{autlev}
On the automorphic side, let $D$ be a quaternion algebra over $F$ ramified exactly at all infinite places. Fix an isomorphism between $\DAi$ and $\GL_2(\A_F^\infty)$. By global class field theory, we may view $\psi=\chi\varepsilon$ as a character of $\AFi/F^\times_{>>0}$. We also define a tame level $U^p=\prod_{v\nmid p}U_v$ as follows: $U_v=\GL_2(O_{F_v})$ if $v\notin S$ and 
\[U_v=\mathrm{Iw}_v:=\{g\in\GL_2(O_{F_v}),g\equiv \begin{pmatrix}*&*\\0&*\end{pmatrix}\mod \varpi_v\}\]
otherwise. For any $v\in S\setminus \Sigma_p$, the map $\begin{pmatrix}a&b\\c&d\end{pmatrix}\mapsto \xi_v(\frac{a}{d}\mod \varpi_v)$ defines a character of $U_v$. The product of $\xi_v$ can be viewed as a character $\xi$ of $U^p$ by projecting to $\prod_{v\in S\setminus \Sigma_p}U_v$. As in the previous section, we denote $K_p=\prod_{v|p}\GL_2(O_{F_v}),D_p^\times=\prod_{v|p}\GL_2(F_v)$.

Using this, we can define a Hecke algebra $\T:=\T_{\psi,\xi}(U^p)$ as in section \ref{varch}. We also make the following assumption in this section (which defines a maximal ideal $\km$ of $\T$):

\noindent \textbf{\underline{Assumption}}: $T_v-(1+\chi(\Frob_v)),v\notin S$ and $\varpi$ generate a maximal ideal $\km$ of $\T$.
\end{para}

\begin{para}
By the discussion in \ref{hagal}, there is a natural pseudo-representation $T_\km:G_{F,S}\to\T_\km$ with determinant $\chi$ sending $\Frob_v$ to $T_v$ for $v\notin S$. Let $R^{\ps}$ be the universal object in $C_{\cO}$ which pro-represents the functor from $\cOf$ to the category of sets sending $R$ to the set of two-dimensional pseudo-representations $T$ of $G_{F,S}$ over $R$ lifting $1+\bar\chi$ with determinant $\chi$. By the universal property, there is a natural map $R^{\ps}\to \T_\km$, which is surjective since $\T_\km$ is topologically generated by $T_v,v\notin S$.

We claim this map factors through $R^{\ps,\{\xi_v\}}$, i.e. $T_\km|_{I_{F_v}}=\xi_v+\xi_v^{-1}$ for $v\in S\setminus \Sigma_p$. If $\psi$ is of finite order,  then this is a direct consequence of classical local-global compatibility at such $v$ at finite levels. In general, we can reduce to the previous case by twisting everything with a certain character (see for example the argument in \ref{exgal}).

Using the construction for $R^{\ps,\{\xi_v\}}$, we can define a two-dimensional semi-simple representation $\rho(\kp):G_{F,S}\to\GL_2(k(\kp))$ with trace $T_\km \mod\kp$ for any $\kp\in\Spec \T_\km$.

We will say a prime $\kq\in\Spec R^{\ps,\{\xi_v\}}$ is \textit{pro-modular} if it comes from a prime of $\T_\km$. 
\end{para}

\begin{defn} \label{nice}
Let $\kq$ be a prime ideal of $\T_\km$ and $A$ be the normal closure of $\T_\km/\kq$ in $k(\kq)$. We say $\kq$ is \textit{nice} if $\kq$ contains $p$ and $\dim \T_\km/\kq =1$ and there exists a two-dimensional representation
\[\rho(\kq)^o:G_{F,S}\to \GL_2(A)\]
satisfying the following properties:
\begin{enumerate}
\item $\rho(\kq)^o \otimes k(\kq)\cong  \rho(\kq)$ is irreducible. In other words, $\rho(\kq)^o$ is a lattice of $\rho(\kq)$.
\item The mod $\km_A$ reduction $\bar{\rho}_b$ of $\rho(\kq)^o$ is a non-split extension and has the form
\[\bar{\rho}_b(g)=\begin{pmatrix}*&*\\0&*\end{pmatrix},~g\in G_{F,S}.\]
Here $\km_A$ is the maximal ideal of $A$.
\item If $\rho(\kq)$ is dihedral, namely isomorphic to $\Ind_{G_L}^{G_F}\theta$ for some quadratic extension $L$ of $F$ and continuous character $\theta:G_L\to k(\kq)^\times$, then $L\cap F(\zeta_p)=F$, where $\zeta_p\in \overbar{F}$ is a primitive $p$-th root of unity. 
\item $\rho(\kq)^o|_{G_{F_v}}=\bar{\rho}_b|_{G_{F_v}}$ for any $v\in S\setminus \Sigma_p$, i.e. $\rho(\kq)^o|_{G_{F_v}}$ is the trivial lift. Here we view $\GL_2(\F)$ as a subgroup of $\GL_2(A)$ by the canonical embedding $\F\to A$.
\end{enumerate}
By abuse of notation, we say a prime $\kq^{\ps}\in\Spec R^{\ps,\{\xi_v\}}$ is \textit{nice} if it comes from a nice prime $\kq$ of $\T_\km$ in the above sense.
\end{defn}

\begin{rem}
This is different from the definition in the beginning of \S 6 of \cite{SW99}. See the proof of lemma \ref{ktwl} below for an explanation.
\end{rem}

\begin{rem}
The last assumption will be used in lemma \ref{cctp} and lemma \ref{ccal}. The idea is that we need to understand the ``completion at $\rho(\kq)^o|_{G_{F_v}}$'' of some local deformation rings at $v$. If $\rho(\kq)^o|_{G_{F_v}}$ is the trivial lifting, these are just usual local deformation rings. Without this condition, it seems a bit difficult to control such completions.
\end{rem}

Now we can state the main result of this section:
\begin{thm} \label{thmA}
Under the assumptions for $F,\chi$ as in this subsection, let $\kq\in \Spec \T_\km$ be a nice prime and $\kq^{\ps}=\kq\cap R^{\ps,\{\xi_v\}}$. If $p=3$, we further assume $\bar{\chi}|_{G_{F_v}}\neq \omega^{\pm 1}$ for any $v|p$. Then the natural surjective map 
\[(R^{\ps,\{\xi_v\}})_{\kq^{\ps}}\to\T_\kq\]
has nilpotent kernel.
\end{thm}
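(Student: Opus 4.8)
The plan is to run a Taylor--Wiles patching argument for the completed homology, carried out relative to the one-dimensional prime $\kq$ in the spirit of \cite{SW99}, feeding in the results of \S\ref{Srop-r} to move between pseudo-representations and honest two-dimensional representations and those of \S\ref{L-gc} to control the Hecke module. Since the map $(R^{\ps,\{\xi_v\}})_{\kq^{\ps}}\to\T_\kq$ is a surjection of Noetherian local rings, having nilpotent kernel is equivalent to the assertion that every minimal prime of $(R^{\ps,\{\xi_v\}})_{\kq^{\ps}}$ is pro-modular. Passing to the faithfully flat completion, it is enough to show that every minimal prime of $\widehat{(R^{\ps,\{\xi_v\}})_{\kq^{\ps}}}$ lies in the support of the patched module constructed below.

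Because $\kq$ is nice, $\T_\km/\kq$ is a complete local domain of characteristic $p$ and dimension one; after enlarging $\cO$ its normalisation is $A\cong\F[[T]]$, and nice-ness supplies $\rho(\kq)^o\colon G_{F,S}\to\GL_2(A)$ with non-split upper-triangular reduction $\bar\rho_b$ and prescribed behaviour at the places of $S\setminus\Sigma_p$. I would therefore place myself in the \textbf{Setup} of \S\ref{Srop-r}, with $\Gamma_0=G_{F,S}$, $\rho_0=\rho(\kq)^o$ and $\Gamma=G_{F,S\cup Q}$ for auxiliary sets $Q$. Let $R_b$ be the deformation ring of $\bar\rho_b$ with determinant $\chi$ and the local conditions at $S\setminus\Sigma_p$ coming from $\bar\rho_b$, and $\kq_b$ the prime cut out by $\rho(\kq)^o$. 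By Corollary~\ref{psccomp}(1) the map $\widehat{(R^{\ps,\{\xi_v\}})_{\kq^{\ps}}}\to\widehat{(R_b)_{\kq_b}}$ is an isomorphism, and Corollary~\ref{pscrem} gives the analogous comparison after adjoining Taylor--Wiles level structure (the powers of the element $c$ being harmless after localisation). So the remaining work is on the representation side and of classical flavour, except that everything runs in characteristic $p$ over the base $A$.

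To patch, note that by Corollary~\ref{mclgc} the module $\mm=\Hom_{\kC_{D_p^\times,\psi}(\cO)}(P_{\kB_\km},M_\psi(U^p)_\km)$ (or its \S\ref{varch} variant) is a faithful, finitely generated $\T_\km$-module, so the usual machine applies to $\mm$ rather than to $M_\psi(U^p)_\km$ itself, which is not finite over $\T_\km$ because of the $D_p^\times$-action. For each $n$ I would choose a set $Q_n$ of primes $v\notin S$ with $N(v)\equiv1\bmod p^n$ at which $\rho(\kq)^o(\Frob_v)$ has distinct eigenvalues over $A$; such $Q_n$ of the expected size exist because $\rho(\kq)$ is irreducible and, when dihedral, satisfies condition~(3) of Definition~\ref{nice}. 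Adjoining $Q_n$ to the level, $\mm$ becomes a module over $R^{\ps}_{Q_n}$, which is an algebra over $\cO[[\Delta_{Q_n}]]$ with $\Delta_{Q_n}=\prod_{v\in Q_n}k(v)^\times(p)$; patching the resulting data and completing produces $M_\infty$, finite over $R_\infty=R^{\ps}_p\,\widehat{\otimes}\,R^{\loc}_{S\setminus\Sigma_p}[[x_1,\dots,x_g]]$ with no condition at $p$, and simultaneously finite over a formal power series ring $S_\infty$, the $R_\infty$-action factoring through a patched Hecke algebra $\T_\infty$. The local conditions at $S\setminus\Sigma_p$ are handled by Taylor's Ihara avoidance \cite{Ta08} — this is the reason for the variant completed homology of \S\ref{varch} and for conditions (3)--(4) of Definition~\ref{nice} — which also yields equidimensionality of $R^{\loc}_{S\setminus\Sigma_p}$.

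For the dimension count: the global Euler characteristic formula for Galois cohomology (using $\chi(c)=-1$ and the niceness hypotheses) bounds $\dim R_\infty$ above by $\dim S_\infty$, while the lower bound $\dim\T_\psi(U^p)_\km\ge1+2[F:\Q]$ of Theorem~\ref{dh} — together with the freeness of $M_\psi(U^p)_\km/\varpi$ over the Iwasawa algebra from the proof of that theorem — shows the patched $M_\infty$ is full-dimensional over $S_\infty$; hence $\dim R_\infty=\dim S_\infty$ and $M_\infty$ is Cohen--Macaulay over $R_\infty$ of dimension $\dim R_\infty$. Securing that value of $\dim\T$ is exactly what the local-global compatibility of \S\ref{sblgc} provides. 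Combined with the equidimensionality of $R_\infty$ — here, for $p=3$, the hypothesis $\bar\chi|_{G_{F_v}}\neq\omega^{\pm1}$ is used to control the local pseudo-deformation ring $R^{\ps}_v$ at $p$ — this forces the support of $M_\infty$ to contain every minimal prime of $R_\infty$ in the fibre over $\kq^{\ps}$, so these are pro-modular, and un-doing the patching gives the theorem. I expect the main obstacle to be precisely the one just isolated: the completed homology is not finite over the deformation ring, so the patched module has no usable support, and replacing it by $\mm$ is legitimate only through Theorem~\ref{lgc}/Corollary~\ref{mclgc}; a secondary nuisance is running the entire argument over $A=\F[[T]]$ and carrying the $c$-torsion estimates of \S\ref{Srop-r} between pseudo-characters and two-dimensional representations at each Taylor--Wiles level.
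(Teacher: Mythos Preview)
Your strategy is correct and is exactly the paper's: patch $\mm$ (not $M_\psi(U^p)_\km$) over a ring $R_\infty$ built from local framed deformation data completed at the prime determined by $\rho(\kq)^o$, use the $c$-estimates of \S\ref{Srop-r} to pass between pseudo-representations and honest two-dimensional deformations at each Taylor--Wiles level, and conclude by comparing dimensions via Theorem~\ref{dh}. A few points in your sketch are off in ways that matter when you write it out.

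First, $R_\infty$ is \emph{not} built from $R^{\ps}_p$. The paper takes $R^{\{\xi_v\}}_{\loc}$ to be a completed tensor product of \emph{framed representation} lifting rings $R^\square_v$ for $v\mid p$ (and $R^{\square,\xi_v}_v$, $R^{\square,ur}_v$ at the remaining places of $P=S\cup T'$), then localises and completes at the prime cut out by $\rho(\kq)^o$; see Definition~\ref{rxivloc} and Proposition~\ref{rlocp}. The comparison with the pseudo-deformation side happens only after patching, via Corollary~\ref{pscpatch}. Working with $R^{\ps}_v$ directly would make the equidimensionality statement of Proposition~\ref{rlocp} harder to access. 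Relatedly, the $p=3$ hypothesis is not primarily about equidimensionality of local deformation rings: it is there so that Pa\v{s}k\={u}nas' Theorems~\ref{pa1} and~\ref{pa2} apply, hence so that Theorem~\ref{lgc}, Corollary~\ref{mclgc}, and Theorem~\ref{dh} hold.

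Second, there is an $S_\infty'$ versus $S_\infty$ issue you have not anticipated. Because you are deforming a \emph{pseudo}-representation, only the subalgebra of $\cO[\Delta_{Q_N}]$ generated by $g+g^{-1}$ is visibly in the image of $R^{\ps,\{\xi_v\}}_{Q_N}$ (Proposition~\ref{sinfty'}); thus one only gets a map $S_\infty'\to R_\infty$ and the numerics run through the finite free extension $(R^{\{\xi_v\}})'=R_\infty\otimes_{S_\infty'}S_\infty$ (Lemma~\ref{sinf'} and the proof of Lemma~\ref{fulsupthmA}). Finally, the endgame is not a Cohen--Macaulay argument: one first proves full support when all $\xi_v$ are nontrivial (then $R_\infty$ is a domain by Proposition~\ref{rlocp}(1)), and then transfers to general $\{\xi_v\}$ by comparing modulo $\varpi$ and using Proposition~\ref{rlocp}(2)---this is where Taylor's trick actually enters, not in the construction of the patched module.
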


\begin{cor} \label{corA}
Under the same assumptions as in the previous Theorem, let $\kp$ be a maximal ideal of $R^{\ps,\{\xi_v\}}[\frac{1}{p}]$. Assume that 
\begin{itemize}
\item For any $v|p$, $\rho(\kp)|_{G_{F_v}}$ is irreducible and de Rham with distinct Hodge-Tate weights.
\item There exists an irreducible component of $R^{\ps,\{\xi_v\}}$ containing both $\kp$ and a nice prime $\kq$.
\end{itemize}
Then $\rho(\kp)$ arises from a regular algebraic cuspidal automorphic representation of $\GL_2(\A_F)$.
\end{cor}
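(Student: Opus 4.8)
The plan is to deduce pro-modularity of $\kp$ from Theorem~\ref{thmA} by propagating modularity along the given irreducible component, and then to invoke the classicality result of \S\ref{sblgc} in the $\xi$-twisted form of \S\ref{varch}. First I would note that pro-modularity is a closed condition on $\Spec R^{\ps,\{\xi_v\}}$: since $R^{\ps,\{\xi_v\}}\to\T_\km$ is surjective, its kernel $I$ satisfies $\Spec\T_\km=V(I)\subseteq\Spec R^{\ps,\{\xi_v\}}$, and a prime is pro-modular exactly when it contains $I$. Let $\kq^{\ps}\in\Spec R^{\ps,\{\xi_v\}}$ be the nice prime lying on the component $C$, coming from a nice $\kq\in\Spec\T_\km$, and let $\kp_C\subseteq\kq^{\ps}$ be the minimal prime cutting out $C$. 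By Theorem~\ref{thmA} the surjection $(R^{\ps,\{\xi_v\}})_{\kq^{\ps}}\to\T_\kq$ has nilpotent kernel, i.e. $I(R^{\ps,\{\xi_v\}})_{\kq^{\ps}}$ is nilpotent; it is therefore contained in every prime of $(R^{\ps,\{\xi_v\}})_{\kq^{\ps}}$, in particular in the minimal prime $\kp_C(R^{\ps,\{\xi_v\}})_{\kq^{\ps}}$. Contracting back to $R^{\ps,\{\xi_v\}}$ gives $I\subseteq\kp_C$, so $C\subseteq V(I)=\Spec\T_\km$. Since $\kp$ lies on $C$ it contains $I$, hence defines a maximal ideal $\tilde\kp$ of $\T_\km[\frac{1}{p}]=\T_{\psi,\xi}(U^p)_\km[\frac{1}{p}]$ (using $R^{\ps,\{\xi_v\}}[\frac{1}{p}]\twoheadrightarrow\T_\km[\frac{1}{p}]$) whose associated semisimple Galois representation is again $\rho(\kp)$.

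It then remains to apply corollary~\ref{classicality} to $\tilde\kp$. Its hypotheses ask that for each $v\mid p$ the representation $\rho(\kp)|_{G_{F_v}}$ be absolutely irreducible and de Rham with distinct Hodge–Tate weights. The latter is assumed; and since $p$ splits completely in $F$ we have $F_v=\Q_p$, so if $\rho(\kp)|_{G_{\Q_p}}$ were irreducible but not absolutely irreducible it would, after a finite scalar extension, split as a sum of two Galois-conjugate characters, all of whose constituents carry the same Hodge–Tate weight, contradicting distinctness — so plain irreducibility suffices. Corollary~\ref{classicality}, in the variant for $\T_{\psi,\xi}(U^p)_\km$ indicated in \S\ref{varch}, then shows that $\tilde\kp$ is the pull-back of a maximal ideal of $\T_{(\vec{k},\vec{w}),\psi}(U^pU_p)[\frac{1}{p}]$ for some weight $(\vec{k},\vec{w})$ and open compact $U_p\subseteq K_p$; equivalently $\tilde\kp$ comes from a classical automorphic representation on $\DAi$, and by Jacquet–Langlands from a regular algebraic cuspidal automorphic representation of $\GL_2(\A_F)$ whose attached Galois representation is $\rho(\kp)$. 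This gives the corollary.

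The only genuinely non-formal ingredient is Theorem~\ref{thmA}. Given it, the step to be most careful about is the commutative-algebra argument promoting modularity from the single nice prime $\kq^{\ps}$ to the whole component $C$; it relies precisely on pro-modularity being cut out by $I$ together with the nilpotence of $I$ after localization at $\kq^{\ps}$. One should also double-check that corollary~\ref{classicality} really carries over to the $\xi$-twisted completed homology as claimed in \S\ref{varch}, and that the semisimple representation attached to $\tilde\kp$ over $\T_{\psi,\xi}(U^p)_\km$ coincides with $\rho(\kp)$ as constructed from $R^{\ps,\{\xi_v\}}$.
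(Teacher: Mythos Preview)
Your argument is correct and is precisely the intended expansion of the paper's one-line proof (``This follows directly from the previous Theorem and corollary~\ref{classicality}''): you use Theorem~\ref{thmA} to show the whole irreducible component $C$ lies in $\Spec\T_\km$, hence $\kp$ is pro-modular, and then apply corollary~\ref{classicality} (in its \S\ref{varch} variant) to obtain classicality and the Jacquet--Langlands transfer. The commutative-algebra step contracting $I\subseteq\kp_C$ from the localization and the observation that irreducibility plus distinct Hodge--Tate weights over $F_v=\Q_p$ forces absolute irreducibility are both fine.
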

\begin{proof}
This follows directly from the previous Theorem and corollary \ref{classicality}.
\end{proof}

\begin{para} \label{kqrho}
The rest of the section is to prove this Theorem. Fix a nice $\kq$ and a choice of $\rho(\kq)^0$ as in the definition. We may assume $A$ has residue field $\F$. If not, let $\cO'$ be an unramified extension of $\cO$ with the same residue field as $A$ and choose a prime ideal $\kq'\in \Spec\T_{\km}\otimes_{\cO}\cO'$ above $\kq$. Then the normal closure of $(T_{\km}\otimes_{\cO}\cO')/\kq'$ has residue field $\F'$. Denote $\kq'\cap (R^{\ps,\{\xi_v\}}\otimes_\cO \cO')$ by $\kq'^{\ps}$. Hence  $(R^{\ps,\{\xi_v\}}\otimes_\cO \cO')_{\kq'^{\ps}}\to(\T\otimes_\cO\cO')_{\kq'}$ has nilpotent kernel by the Theorem. Since the natural map $(R^{\ps,\{\xi_v\}})_{\kq^{\ps}}\to (R^{\ps,\{\xi_v\}}\otimes_\cO \cO')_{\kq'^{\ps}}$ is faithfully flat, this implies that $(R^{\ps,\{\xi_v\}})_{\kq^{\ps}}\to\T_\kq$ has nilpotent kernel as well.

From now on, we fix an isomorphism $A\cong \F[[T]]$.

The following lemma gives some sufficient conditions for the third condition in \ref{nice}.
\end{para}

\begin{lem} \label{nirred}
Let $\kq$ be a prime ideal of $\Spec R^{\ps,\{\xi_v\}}$ containing $p$ such that $R^{\ps,\{\xi_v\}}/\kq$ is one-dimensional.  Suppose $\rho(\kq)$ is irreducible. Then the third condition in \ref{nice} holds for $\kq$ if one of the following conditions holds:
\begin{enumerate}
\item $\bar{\chi}$ is not quadratic. This includes the cases where $\bar{\chi}|_{G_{F_v}}=\omega$ or $\omega^{-1}$ for some $v|p$.
\item $\bar{\chi}|_{G_{F_v}}=\mathbf{1}$ for some $v|p$.
\item There exists a place $v|p$ such that $\bar{\chi}|_{G_{F_v}}\neq\mathbf{1}$ and $\rho(\kq)^o|_{G_{F_v}}\cong \begin{pmatrix}\chi_{v,1}& *\\ 0 & \chi_{v,2}\end{pmatrix}$ is reducible. Moreover $\chi_{v,1}/\chi_{v,2}$ is of infinite order, which is equivalent with saying $\chi_{v,1}$ is of infinite order as $\chi_{v,1}\chi_{v,2}=\bar{\chi}$ is of finite order.
\end{enumerate}
\end{lem}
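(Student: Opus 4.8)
The plan is to reduce to the case where $\rho(\kq)$ is dihedral --- there is nothing to prove otherwise --- and in that case first to pin down $\bar\chi$. So suppose $\rho(\kq)\cong\Ind_{G_L}^{G_F}\theta$ with $L/F$ quadratic and $\theta\colon G_L\to k(\kq)^\times$ continuous; write $\eta_{L/F}$ for the quadratic character of $G_F$ with kernel $G_L$, and fix $c\in G_F$ lifting the nontrivial element of $\Gal(L/F)$. The first step will be to show that necessarily $\bar\chi=\eta_{L/F}$; once this is known, I will rule out $L\subseteq F(\zeta_p)$ in each of the three cases.

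For Step 1: since $\kq$ contains $p$, the residue field $k(\kq)=\mathrm{Frac}(A)$ is a complete discretely valued field of characteristic $p$, so the continuous character $\theta$ on the profinite group $G_L$ takes values in $A^\times$ and reduces to $\bar\theta\colon G_L\to\F^\times$ (after harmlessly enlarging $\F$). By Mackey, $\rho(\kq)|_{G_L}\cong\theta\oplus\theta^c$, so the reduction of $\tr\bigl(\rho(\kq)^o|_{G_L}\bigr)$ equals $\bar\theta+\bar\theta^c$. On the other hand $\tr\rho(\kq)^o$ is the image of $T^{univ}\bmod\kq$, which lifts $1+\bar\chi$; hence the reduction $\bar\rho_b$ of $\rho(\kq)^o$ has $\tr\bar\rho_b=1+\bar\chi$ and, since $p>2$ determines the determinant (resp. the semisimplification) of a two-dimensional representation from its trace, $\bar\rho_b^{\mathrm{ss}}=\mathbf{1}\oplus\bar\chi$. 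Comparing traces on $G_L$ and invoking Brauer--Nesbitt gives $\{\bar\theta,\bar\theta^c\}=\{\mathbf{1}|_{G_L},\bar\chi|_{G_L}\}$ as multisets; since $\mathbf{1}|_{G_L}$ is $c$-invariant this forces $\bar\chi|_{G_L}=\mathbf{1}$, so $\bar\chi$ factors through $\Gal(L/F)$, and as $\bar\chi$ is odd (hence nontrivial) we conclude $\bar\chi=\eta_{L/F}$.

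Granting this, I would finish case by case. In case (1): $\eta_{L/F}$ is quadratic while $\bar\chi$ is not, so $\rho(\kq)$ cannot be dihedral and the third condition of \ref{nice} holds vacuously. (For the parenthetical remark: if $\bar\chi|_{G_{F_v}}=\omega^{\pm1}$ for some $v\mid p$, then since $p$ splits completely in $F$ we have $F_v=\Q_p$ and $\omega|_{G_{\Q_p}}$ is surjective onto $\F_p^\times$, of order $p-1$, which exceeds $2$ unless $p=3$; so $\bar\chi$ is not quadratic when $p\ge5$.) In cases (2) and (3) I would argue by contradiction: assume $L\subseteq F(\zeta_p)$ and fix $v\mid p$ (the distinguished place in case (3)), with $w\mid v$ in $L$. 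Because $p$ splits completely in $F$ and $\Q_p(\zeta_p)/\Q_p$ is totally ramified of degree $p-1$, the decomposition group of $v$ in $\Gal(F(\zeta_p)/F)$ is the whole group; hence $v$ does not split in $L$ and $L_w/F_v$ is the ramified quadratic subextension of $\Q_p(\zeta_p)/\Q_p$. In particular $\bar\chi|_{G_{F_v}}=\eta_{L/F}|_{G_{F_v}}$ has exact order $2$, which in case (2) already contradicts $\bar\chi|_{G_{F_v}}=\mathbf{1}$. In case (3), $[L_w:F_v]=2$ gives $\rho(\kq)|_{G_{F_v}}\cong\Ind_{G_{L_w}}^{G_{F_v}}\bigl(\theta|_{G_{L_w}}\bigr)$ by Mackey, while the hypothesis that $\rho(\kq)^o|_{G_{F_v}}$ is upper triangular over $A$ makes $\rho(\kq)|_{G_{F_v}}$ reducible over $k(\kq)$; a two-dimensional representation induced from a quadratic extension is reducible only when the inducing character is $\Gal(L_w/F_v)$-invariant, in which case it has the form $\tilde\theta_v\oplus\tilde\theta_v\,\eta_{L_w/F_v}$ for a character $\tilde\theta_v$ of $G_{F_v}$. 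Comparing constituents with the diagonal $\chi_{v,1},\chi_{v,2}$ of $\rho(\kq)^o|_{G_{F_v}}$ (base changed to $k(\kq)$) yields $\chi_{v,1}/\chi_{v,2}=\eta_{L_w/F_v}^{\pm1}$, which has order $2$ --- contradicting that it is of infinite order. In every case $L\cap F(\zeta_p)=F$, as required.

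The hard part --- indeed the only point that is not a routine local computation --- is Step 1: the lattice $\rho(\kq)^o$ may be chosen with \emph{non-split} reduction, so it cannot be compared directly with $\theta\oplus\theta^c$; the comparison must be carried out at the level of semisimplifications (equivalently, of traces), using their lattice-independence. Once $\bar\chi=\eta_{L/F}$ is in hand, the three cases are short exercises exploiting that $p$ splits completely in $F$.
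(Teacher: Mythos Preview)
Your proof is correct and follows the same approach as the paper: first pin down $L=F(\bar\chi)$ (your Step~1), then dispatch the three cases by local arguments at $v\mid p$. One minor point: in case~(3) the paper does not assume $L\subseteq F(\zeta_p)$ for the contradiction, since $\bar\chi|_{G_{F_v}}\neq\mathbf{1}$ already forces $v$ to be non-split in $L=F(\bar\chi)$ --- so the paper actually shows $\rho(\kq)$ cannot be dihedral at all under hypothesis~(3), a slightly stronger conclusion than you need.
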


\begin{proof}
Since the semi-simplification of the reduction of $\rho(\kq)$ is $1+\bar{\chi}$, it is clear that if $\rho(\kq)$ is isomorphic to $\Ind_{G_L}^{G_F}\theta$, then $\bar{\chi}$ is quadratic and $L=F(\bar{\chi})=\overbar{F}^{\ker(\bar{\chi})}$. If $\bar{\chi}|_{G_{F_v}}=\mathbf{1}$ for some $v|p$, then $F(\bar{\chi})\cap F(\zeta_p)=F$ as we assume $p$ completely splits in $F$. This proves the first two parts of the lemma.

As for the third part, suppose $\rho(\kq)\cong \Ind_{G_{F(\bar{\chi})}}^{G_F}\theta$. Then by our assumption, $v$ is inert or ramified in $F(\bar{\chi})$. Hence $\rho(\kq)|_{G_{F_v}}\cong \Ind_{G_{L_w}}^{G_{F_v}}\theta|_{G_{L_w}}$, where $w$ is the place above $v$ in $L$. Since $\rho(\kq)|_{G_{F_v}}$ is reducible, $\rho(\kq)|_{G_{L_w}}\cong \theta\oplus\theta$. This contradicts our assumption that $\chi_{v,1}/\chi_{v,2}$ is of infinite order.
\end{proof}

\begin{rem} \label{nicerem}
It is clear from the proof  that lemma \ref{nirred} holds if we assume that $[F_v(\zeta_p):F_v]=p-1$ for any $v|p$ instead of that $p$ completely splits in $F$ as in \ref{defrps1}.
\end{rem}

\subsection{Some local and global (framed) deformation rings}
We introduce several universal lifting rings and recall some of their basic properties. 

\begin{defn} \label{ldfr}
Let $v$ be a finite place of $F$. 
\begin{itemize} 
\item If $\bar{\rho}_b|_{G_{F_v}}$ is unramified, we define $R^{\square,ur}_{v}$ to be the universal object in $C_\cO$ that pro-represents the functor from $\cOf$ to the category of sets sending $R$ to the set of unramified liftings $\rho_R:G_{F_v}/I_{F_v}\to\GL_2(R)$ of $\bar{\rho}_b|_{G_{F_v}}$ to $R$ with determinant $\chi$.
\item If $v\in S\setminus \Sigma_p$, we define $R^{\square,\xi_v}_{v}$ to be the universal object in $C_\cO$ that pro-represents the functor from $\cOf$ to the category of sets sending $R$ to the set of liftings $\rho_R:G_{F_v}\to\GL_2(R)$ of $\bar{\rho}_b|_{G_{F_v}}$ to $R$ with determinant $\chi$ such that 
\[\tr(\rho_R)|_{I_{F_v}}=\xi_v+\xi_v^{-1}.\]
If $\xi_v$ is trivial, we write $R^{\square,\xi_v}_{v}$ as $R^{\square,1}_{v}$.
\item We also define $R^{\square}_{v}$ to be the unrestricted (i.e. no condition on the liftings) universal lifting ring of $\bar{\rho}_b|_{G_{F_v}}$ with determinant $\chi$. 
\end{itemize}
\end{defn}

\begin{para} \label{defnP}
Recall that in the previous subsection, we defined $\rho(\kq)^o:G_{F,S}\to\GL_2(A)$. Here $A=\F[[T]]$ equipped with $T$-adic topology. Let $B$ be the topological closure of the $\F$-algebra generated by all the entries of $\rho(\kq)^o(G_{F,S})$. By Chebotarev's density Theorem, we may find a finite set of primes $T'$ disjoint with $S$ such that the entries of $\rho(\kq)^o(\Frob_v),v\in T',$ topologically generate $B$. In fact, let $w\notin S$ be a place such that $\rho(\kq)^o(\Frob_w)\notin \GL_2(\F)$. We denote by $A'$ the topological closure of the $\F$-subalgebra in $A$ generated by the entries of $\rho(\kq)^o(\Frob_w)$. Then $B$ is a finite $A'$-module and by the ascending chain condition we can find such a finite set $T'$. We fix such a $T'$ from now on and denote 
\[P=T'\cup S.\]

For any Noetherian local $\cO$-algebras $R_1,R_2$, we define their completed tensor product $R_1\widehat{\otimes}_\cO R_2$ to be $\varprojlim_{n} (R_1/\km_1^n\otimes_\cO R_2/\km_2^n)$, where $\km_i$ is the maximal ideal of $R_i$. It follows from this definition that $R_1\widehat{\otimes}_\cO R_2\cong \widehat{R_1}\widehat{\otimes}_\cO R_2$ and there is a natural map $\widehat{R_1}\to R_1\widehat{\otimes}_\cO R_2$, where $\widehat{R_1}$ is the $\km_1$-adic completion of $R_1$. If $R_1$ has residue field $\F$, then $R_1\widehat{\otimes}_\cO R_2$ is a complete Noetherian local $\cO$-algebra as well (Lemma 1.3 of \cite{Tho15}). 
\end{para}

\begin{defn} \label{rxivloc}
We define $R^{\{\xi_v\}}_{\loc}$ to be
\[(\widehat{\bigotimes}_{v\in \Sigma_p}R^{\square}_v)\widehat{\otimes}(\widehat{\bigotimes}_{v\in S\setminus \Sigma_p}R^{\square,\xi_v}_v)\widehat{\otimes}(\widehat{\bigotimes}_{v\in T'}R^{\square,ur}_v),\]
where all the completed tensor products are taken over $\cO$. By the universal property, $\rho(\kq)^o$ gives rise to a one-dimensional prime $\kq^{\{\xi_v\}}_{\loc}$ of $R^{\{\xi_v\}}_{\loc}$. Note that for any $v\in S\setminus \Sigma_p$, the pull-back of $\kq^{\{\xi_v\}}_{\loc}$ to $R^{\square,\xi_v}_v$ is the maximal ideal of $R^{\square,\xi_v}_v$ by our  last assumption in \ref{nice}.
\end{defn}

The main result of this subsection is
\begin{prop} \label{rlocp}
The $\kq^{\{\xi_v\}}_{\loc}$-adic completion $\widehat{(R^{\{\xi_v\}}_{\loc})_{\kq^{\{\xi_v\}}_{\loc}}}$ of $(R^{\{\xi_v\}}_{\loc})_{\kq^{\{\xi_v\}}_{\loc}}$ is equidimensional of dimension $3[F:\Q]+3|P|$. The generic point of each irreducible component has characteristic zero. Moreover,
\begin{enumerate}
\item If all $\xi_v$ are non-trivial, then $\widehat{(R^{\{\xi_v\}}_{\loc})_{\kq^{\{\xi_v\}}_{\loc}}}$ is integral.
\item In general, each minimal prime of $\widehat{(R^{\{\xi_v\}}_{\loc})_{\kq^{\{\xi_v\}}_{\loc}}}/(\varpi)$  contains a unique minimal prime of $\widehat{(R^{\{\xi_v\}}_{\loc})_{\kq^{\{\xi_v\}}_{\loc}}}$.
\end{enumerate}
\end{prop}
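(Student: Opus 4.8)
The plan is to reduce the statement to the structure of the individual local lifting rings and to the good behaviour of completed tensor products over the discrete valuation ring $\cO$. Throughout, recall $|\Sigma_p|=[F:\Q]$, so the target dimension $3[F:\Q]+3|P|$ equals $6[F:\Q]+3|S\setminus\Sigma_p|+3|T'|$. \textbf{Step 1 (the local factors).} For $v\in\Sigma_p$ (so $F_v=\Q_p$) the ring $R^\square_v$ is, by Kisin's work (cf.\ \cite{Kis09a}), $\cO$-flat and a local complete intersection, hence Cohen--Macaulay and equidimensional, of dimension $3[F_v:\Q_p]+4=7$; under the running hypothesis on $\bar\chi|_{G_{F_v}}$ (which for $p=3$ excludes precisely $\bar\chi|_{G_{F_v}}=\omega^{\pm1}$) I will use that $\Spec R^\square_v$, and $\Spec R^\square_v/(\varpi)$, are integral. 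For $v\in T'$ one has $R^{\square,ur}_v\cong\cO[[x_1,x_2,x_3]]$, a regular (hence normal integral) $\cO$-flat ring of dimension $4$. For $v\in S\setminus\Sigma_p$ with $\xi_v$ non-trivial, Taylor's Ihara-avoidance results (\cite{Ta08}) give that $R^{\square,\xi_v}_v$ is $\cO$-flat, integral, Cohen--Macaulay of dimension $4$, with integral reduction mod $\varpi$; for $\xi_v$ trivial, $R^{\square,1}_v$ is $\cO$-flat, Cohen--Macaulay, equidimensional of dimension $4$, and every minimal prime of $R^{\square,1}_v/(\varpi)$ contains a unique minimal prime of $R^{\square,1}_v$.

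\textbf{Step 2 (the ring $R^{\{\xi_v\}}_{\loc}$ and its localization).} A completed tensor product over $\cO$ of $\cO$-flat Cohen--Macaulay complete local $\cO$-algebras with residue field $\F$ is again $\cO$-flat and Cohen--Macaulay (apply flat base change, noting the fibres $R_i$ and $R_i/(\varpi)$ are Cohen--Macaulay), of dimension the sum of the dimensions minus $(\#\{\text{factors}\}-1)$. Hence $R^{\{\xi_v\}}_{\loc}$ is $\cO$-flat and Cohen--Macaulay of dimension
\[7[F:\Q]+4|S\setminus\Sigma_p|+4|T'|-(|P|-1)=6[F:\Q]+3|S\setminus\Sigma_p|+3|T'|+1,\]
and its minimal primes have residue characteristic $0$ since it is $\cO$-flat. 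The quotient $R^{\{\xi_v\}}_{\loc}/\kq^{\{\xi_v\}}_{\loc}$ is the image of $R^{\{\xi_v\}}_{\loc}$ in $A=\F[[T]]$ determined by the local restrictions of $\rho(\kq)^o$; as $\rho(\kq)^o(\Frob_w)\notin\GL_2(\F)$ for the chosen $w\in T'$, this image is a one-dimensional complete local domain, so $\dim R^{\{\xi_v\}}_{\loc}/\kq^{\{\xi_v\}}_{\loc}=1$. Since $R^{\{\xi_v\}}_{\loc}$ is Cohen--Macaulay (thus catenary and equidimensional), $\hht(\kq^{\{\xi_v\}}_{\loc})=\dim R^{\{\xi_v\}}_{\loc}-1$; localization and $\kq^{\{\xi_v\}}_{\loc}$-adic completion preserve dimension, the Cohen--Macaulay property and $\varpi$-torsion-freeness, so $\widehat{(R^{\{\xi_v\}}_{\loc})_{\kq^{\{\xi_v\}}_{\loc}}}$ is Cohen--Macaulay, hence equidimensional, of dimension $3[F:\Q]+3|P|$, and $\cO$-flat, hence with characteristic-zero generic points.

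\textbf{Step 3 (parts (1) and (2)).} Since $\kq^{\{\xi_v\}}_{\loc}$ restricts to the maximal ideal of $R^{\square,\xi_v}_v$ for every $v\in S\setminus\Sigma_p$ and to primes $\kq_v$ (with $R_v/\kq_v\hookrightarrow A$) on the remaining factors, and since $\rho(\kq)^o|_{G_{F_v}}=\bar\rho_b|_{G_{F_v}}$ has entries in $\F$ for $v\in S\setminus\Sigma_p$, the prime $\kq^{\{\xi_v\}}_{\loc}$ is generated by the extensions of the $\kq_v$; this identifies $\widehat{(R^{\{\xi_v\}}_{\loc})_{\kq^{\{\xi_v\}}_{\loc}}}$ with a completed tensor product over $\cO$ of the $R^{\square,\xi_v}_v$ ($v\in S\setminus\Sigma_p$), the $\widehat{(R^\square_v)_{\kq_v}}$ ($v\in\Sigma_p$) and the $\widehat{(R^{\square,ur}_v)_{\kq_v}}$ ($v\in T'$), compatibly with reduction mod $\varpi$. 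The factors $\widehat{(R^{\square,ur}_v)_{\kq_v}}$ are regular; the factors $\widehat{(R^\square_v)_{\kq_v}}$ are integral with integral reduction mod $\varpi$ by Step 1. Hence, if all $\xi_v$ are non-trivial, every factor is integral with (geometrically) integral generic and special fibre, and an $\cO$-flat complete local ring whose fibres over $\cO$ and over $\cO/\varpi$ are both integral is itself integral; this gives part (1). In general all factors other than the $R^{\square,1}_v$ have this property while the latter satisfy Taylor's Ihara-avoidance hypothesis, so the standard lemma on completed tensor products over $\cO$ (\cite{Ta08}) yields part (2): every minimal prime of $\widehat{(R^{\{\xi_v\}}_{\loc})_{\kq^{\{\xi_v\}}_{\loc}}}/(\varpi)$ contains a unique minimal prime of $\widehat{(R^{\{\xi_v\}}_{\loc})_{\kq^{\{\xi_v\}}_{\loc}}}$.

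\textbf{The main obstacle.} The substantive input is the claim in Step 1 for $v\in\Sigma_p$: controlling the local geometry of the unrestricted framed deformation ring $R^\square_v$ at the non-generic, characteristic-$p$ prime $\kq_v$ cut out by $\rho(\kq)^o|_{G_{F_v}}$ --- showing $\kq_v$ lies on a single irreducible component, that $R^\square_v$ is analytically irreducible there, and that both persist modulo $\varpi$. This reduces to computing $H^0$ and $H^2$ of $G_{F_v}$ with coefficients in $\ad^0\rho(\kq)^o|_{G_{F_v}}$, and it is exactly here that the hypothesis ``$\bar\chi|_{G_{F_v}}\neq\omega^{\pm1}$ for $p=3$'' is used, to rule out the configuration that would create an obstruction, i.e.\ extra branches of $\Spec R^\square_v$, at $\kq_v$; this requires the detailed description of $\Spec R^\square_v$ for $\GL_2(\Q_p)$. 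Making the identification in Step 3 precise --- passing the completed localization through the completed tensor product at the non-maximal primes $\kq_v$ --- also needs some care but is routine.
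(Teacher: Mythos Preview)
Your Step 2 (equidimensionality, Cohen--Macaulayness, $\cO$-flatness of $R^{\{\xi_v\}}_{\loc}$, and hence of its completed localization) is fine and matches the paper. The real problem is the identification you assert in Step 3 and then label ``routine'': you want
\[
\widehat{(R^{\{\xi_v\}}_{\loc})_{\kq^{\{\xi_v\}}_{\loc}}}\;\cong\;\Big(\widehat{\bigotimes}_{v\in S\setminus\Sigma_p}R^{\square,\xi_v}_v\Big)\,\widehat{\otimes}\,\Big(\widehat{\bigotimes}_{v\in\Sigma_p}\widehat{(R^\square_v)_{\kq_v}}\Big)\,\widehat{\otimes}\,\Big(\widehat{\bigotimes}_{v\in T'}\widehat{(R^{\square,ur}_v)_{\kq_v}}\Big),
\]
but for $v\in\Sigma_p\cup T'$ the primes $\kq_v$ are \emph{not} maximal, and the completed localizations $\widehat{(R^\square_v)_{\kq_v}}$ have residue fields $k(\kq_v)\supsetneq\F$. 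The completed tensor product over $\cO$ used in the paper (and in the ``standard lemma'' you invoke from \cite{Ta08}) is only set up for complete local $\cO$-algebras with residue field $\F$; with larger residue fields the right-hand side above is not even defined in that framework, let alone isomorphic to the left-hand side. The only splitting one gets for free (Lemma~\ref{cctp}) is when one factor carries the \emph{maximal} ideal, which here applies only to the $S\setminus\Sigma_p$ factors since $\rho(\kq)^o|_{G_{F_v}}=\bar\rho_b|_{G_{F_v}}$ there.

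The paper's proof is organized precisely to avoid this: it keeps the $\Sigma_p\cup T'$ factors together as $R_1=(\widehat{\bigotimes}_{v\in\Sigma_p}R^\square_v)\widehat{\otimes}(\widehat{\bigotimes}_{v\in T'}R^{\square,ur}_v)$, proves $R_1$ is a \emph{normal domain} from the explicit descriptions in Lemma~\ref{varlocdim} (so $\widehat{(R_1)_{\kq_1}}$ is a normal local domain, automatically analytically irreducible), and only then uses Lemma~\ref{cctp} to peel off the $R^{\square,\xi_v}_v$ factors one at a time. For non-trivial $\xi_v$ it applies the geometric connectedness/normality statement of Lemma~\ref{lrca}(2); for trivial $\xi_v$ it passes through the finite-type models $A^{\xi_v}$ and invokes the tailored Lemma~\ref{ccal}. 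None of this reduces to a per-place decomposition of $\widehat{(R_1)_{\kq_1}}$. Relatedly, your ``main obstacle'' is miscast: one does not analyze $R^\square_v$ locally at $\kq_v$ via $H^i(G_{F_v},\ad^0\rho(\kq)^o|_{G_{F_v}})$; the input is the \emph{global} normality of $R^\square_v$ (Lemma~\ref{varlocdim}(2)), which already forces irreducibility of the completion at any prime.
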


We first collect some useful results in commutative algebra (see also \S1 of \cite{Tho15}).

\begin{lem} \label{scca}
Let $R,S$ be complete Noetherian local $\cO$-algebras with maximal ideals $\km_R,\km_S$ respectively. Suppose that $R/\km_R=\F$ and $S$ is flat over $\cO$. Then
\begin{enumerate}
\item The natural map $R\to R\widehat{\otimes}_\cO S$ is faithfully flat.
\item For any finitely generated $R$-module $M$, the $(\km_R,\km_S)$-adic completion $M\widehat{\otimes}_\cO S$ of $M\otimes_\cO S$ as an $R\otimes S$-module is canonically isomorphic to its $\km_R$-adic completion. Moreover there is an natural isomorphism:
\[M\otimes_R(R\widehat{\otimes}_\cO S)\cong M\widehat{\otimes}_\cO S.\]
In particular, for any ideal $I$ of $R$, we have
\[I(R\widehat{\otimes}_\cO S)\cong I\otimes_R (R\widehat{\otimes}_\cO S)\cong I\widehat{\otimes}_\cO S. \]
\item For any ideal $\kp$ of $S$ such that $S/\kp$ is $\cO$-flat, we have
\[(R\widehat{\otimes}_\cO S)/(\kp)\cong R\widehat{\otimes}_\cO (S/\kp).\]
\end{enumerate}
\end{lem}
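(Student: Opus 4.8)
\emph{Proof proposal.} The three assertions form a package of standard facts about completed tensor products, and the plan is to reduce everything to the case of a power series ring. First I would choose a surjection $\widetilde R:=\cO[[x_1,\dots,x_d]]\twoheadrightarrow R$ with kernel $J$, which is possible since $R$ is complete Noetherian local with residue field $\F$. Since each $R/\km_R^n$ is a finite $\cO$-module, the ring $R/\km_R^n\otimes_\cO S$ is finite over $S$ and hence already $\km_S$-adically complete; a cofinality argument for the double inverse limit over $(n,m)$ then identifies $R\widehat{\otimes}_\cO S=\varprojlim_{n}R/\km_R^n\otimes_\cO S/\km_S^n$ with the $\km_R$-adic completion $\varprojlim_n R/\km_R^n\otimes_\cO S$ of $R\otimes_\cO S$. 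Running the same computation for $\widetilde R$, combined with right exactness of $-\otimes_\cO S$ and the fact that every ideal of the Noetherian complete local ring $\widetilde R\widehat{\otimes}_\cO S$ is closed, yields the base-change identity $R\widehat{\otimes}_\cO S\cong(\widetilde R\widehat{\otimes}_\cO S)/J(\widetilde R\widehat{\otimes}_\cO S)$.

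The key input is then the identification $\widetilde R\widehat{\otimes}_\cO S\cong S[[x_1,\dots,x_d]]$: one checks $\varprojlim_n\bigl(\cO[[x_1,\dots,x_d]]/(\varpi,x_1,\dots,x_d)^n\otimes_\cO S\bigr)=S[[x_1,\dots,x_d]]$ directly from the coefficient filtration, and $S[[x_1,\dots,x_d]]$ is Noetherian and flat over $\cO[[x_1,\dots,x_d]]$, being the $(x_1,\dots,x_d)$-adic completion of the free extension $\cO[x_1,\dots,x_d]\to S[x_1,\dots,x_d]$ and invoking the standard fact that adic completion preserves flatness over Noetherian rings (cf.\ \S1 of \cite{Tho15}). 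Part (1) is now formal: $R\widehat{\otimes}_\cO S\cong S[[x_1,\dots,x_d]]\otimes_{\cO[[x_1,\dots,x_d]]}R$ is a base change of a flat $\cO[[x_1,\dots,x_d]]$-algebra, hence flat over $R$, and since $(R\widehat{\otimes}_\cO S)/(\km_R,\km_S)=(R/\km_R)\otimes_\cO(S/\km_S)=S/\km_S$ is a field, $R\widehat{\otimes}_\cO S$ is local and the structure map $R\to R\widehat{\otimes}_\cO S$ is a local flat, hence faithfully flat, homomorphism.

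For part (2), note that $(\km_R,\km_S)(R\widehat{\otimes}_\cO S)$ is the maximal ideal of $R\widehat{\otimes}_\cO S$, so on any finitely generated $R\widehat{\otimes}_\cO S$-module the $(\km_R,\km_S)$-adic and $\km_R$-adic topologies agree and such a module is complete for both. Applying $-\otimes_R(R\widehat{\otimes}_\cO S)$ to a finite presentation $R^a\to R^b\to M\to0$ and comparing, via Artin–Rees, with the $\km_R$-adic completion of the corresponding presentation of $M\otimes_\cO S$ — a standard bootstrap using the snake lemma and the flatness from (1) — gives $M\widehat{\otimes}_\cO S\cong M\otimes_R(R\widehat{\otimes}_\cO S)$. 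The displayed statement for an ideal $I\subseteq R$ follows by taking $M=I$ and using flatness to see that $I\otimes_R(R\widehat{\otimes}_\cO S)\hookrightarrow R\widehat{\otimes}_\cO S$ has image $I(R\widehat{\otimes}_\cO S)$. Finally, for part (3), power series commute with quotients by finitely generated ideals, so $S[[x_1,\dots,x_d]]/\kp\,S[[x_1,\dots,x_d]]\cong(S/\kp)[[x_1,\dots,x_d]]$; reducing $R\widehat{\otimes}_\cO S\cong S[[x_1,\dots,x_d]]/J\,S[[x_1,\dots,x_d]]$ modulo $\kp$ therefore gives $(R\widehat{\otimes}_\cO S)/(\kp)\cong(S/\kp)[[x_1,\dots,x_d]]/J(S/\kp)[[x_1,\dots,x_d]]\cong R\widehat{\otimes}_\cO(S/\kp)$, the last isomorphism being the identification of the previous step applied to the complete Noetherian local $\cO$-algebra $S/\kp$.

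The one step that requires genuine care — and which I expect to be the main obstacle — is the interchange of completions in the identification $\cO[[x_1,\dots,x_d]]\widehat{\otimes}_\cO S\cong S[[x_1,\dots,x_d]]$ together with the flatness of the latter over the former: this is precisely where one must appeal to Artin–Rees and to the standard results on flatness of adic completion over Noetherian rings, rather than manipulate tensor products naively. Everything else reduces to bookkeeping with right-exact functors, finite presentations, and local homomorphisms.
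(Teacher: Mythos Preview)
Your proof is correct and takes a genuinely different route from the paper. The paper argues more directly and piecewise: for (1) it simply cites Lemma~1.3 of \cite{Tho15} and notes the map is local; for (2) it observes that each $M/\km_R^nM$ has finite $\cO$-length, so $M/\km_R^nM\otimes_\cO S$ is already $\km_S$-adically complete, and then runs the finite-presentation argument you describe; for (3) it uses the $\cO$-flatness of $S/\kp$ to get exact sequences $0\to R/\km_R^n\otimes_\cO\kp\to R/\km_R^n\otimes_\cO S\to R/\km_R^n\otimes_\cO(S/\kp)\to 0$, passes to the inverse limit, and checks that $\kp(R\widehat\otimes_\cO S)$ surjects onto $\varprojlim_n(R/\km_R^n\otimes_\cO\kp)$. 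Your approach instead fixes once and for all a presentation $\cO[[x_1,\dots,x_d]]\twoheadrightarrow R$ and reduces everything to the explicit model $S[[x_1,\dots,x_d]]$; this buys uniformity, and in fact your argument for (3) never uses the flatness hypothesis on $S/\kp$ --- the identifications $\widetilde R\widehat\otimes_\cO T\cong T[[x_1,\dots,x_d]]$ and $(\widetilde R\widehat\otimes_\cO T)/J\cong R\widehat\otimes_\cO T$ hold for any complete Noetherian local $\cO$-algebra $T$. One small caveat: your justification that $S[[x_1,\dots,x_d]]$ is flat over $\cO[[x_1,\dots,x_d]]$ via ``completion preserves flatness'' needs care since $S[x_1,\dots,x_d]$ is not finitely generated over $\cO[x_1,\dots,x_d]$; the cleanest fix is the local flatness criterion, noting that $\varpi,x_1,\dots,x_d$ form a regular sequence in $S[[x_1,\dots,x_d]]$ (here the $\cO$-flatness of $S$ enters).
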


\begin{proof}
The flatness of $R\widehat{\otimes}_\cO S$ over $R$ is Lemma 1.3 of \cite{Tho15}. Also it is clear that $R\to R\widehat{\otimes}_\cO S$ is a local homomorphism, hence faithfully flat. 

To prove the second part of the lemma, we note that $M/\km_R^nM$ is of finite length as a $\cO$-module for any $n>0$. Thus $M/\km_R^nM\otimes_\cO S\cong \varprojlim_k M/\km_R^nM\otimes_\cO S/\km_S^k$. This proves the first assertion in the second part of the lemma. The second assertion is trivially true if $M$ is a free $R$-module. The general case follows by writing $M$ as a finite presentation $R^{\oplus r}\to R^{\oplus s}\to M\to 0$. 

For the last part,  by our assumption there is an exact sequence for any $n$:
\[0\to R/\km_R^n\otimes_\cO\kp\to R/\km_R^n\otimes_\cO S \to R/\km_R^n\otimes_\cO S/\kp \to 0.\]
The inverse limit over $n$ remains exact. Now it suffices to show that 
\[\kp(R\widehat{\otimes}_\cO S)\to \varprojlim_n (R/\km_R^n\otimes_\cO\kp)\] 
is surjective. But this is clear as we can write $\kp$ as a quotient of $S^{\oplus r}$ for some $r$ and apply Theorem 8.1 of \cite{Mat1}.
\end{proof}

\begin{lem} \label{cctp}
Let $R,S\in C_\cO$ with maximal ideals $\km_R,\km_S$ respectively. Let $\kp\in \Spec R$ containing $p$ and $\kp'=(\kp,\km_S)\in\Spec R\widehat{\otimes}_\cO S$. Then there is a canonical isomorphism
\[\widehat{(R\widehat{\otimes}_\cO S)_{\kp'}}\cong \widehat{R_\kp}\widehat{\otimes}_\cO S.\]
\end{lem}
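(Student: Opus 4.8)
The plan is to identify both sides of the asserted isomorphism with the single inverse limit
\[\varprojlim_{a,b}\ \bigl(R_\kp/\kp^a R_\kp\bigr)\otimes_\cO \bigl(S/\km_S^b\bigr),\]
where $\km_S$ is the maximal ideal of $S$. By the definition of $\widehat{\otimes}_\cO$ in \ref{defnP} and the standard fact $\widehat{R_\kp}/\widehat\km^a\cong R_\kp/\kp^a R_\kp$ (with $\widehat\km$ the maximal ideal of $\widehat{R_\kp}$), the right-hand side $\widehat{R_\kp}\widehat{\otimes}_\cO S$ is this limit essentially by inspection, so the work lies entirely in unwinding the left-hand side. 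Write $T=R\widehat{\otimes}_\cO S$. First I would check that $\kp'$ is prime as claimed: using the presentation $T=\varprojlim_{m,n}R/\km_R^m\otimes_\cO S/\km_S^n$, together with $S/\km_S=\F=\cO/\varpi$, the fact $\varpi\in\kp$, and completeness of $R$, one gets a canonical identification $T/\km_S T\cong R/\varpi R$ compatible with the natural map from $R$ (which becomes reduction mod $\varpi$), whence $T/\kp'\cong R/\kp$, which is a domain.

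The key step is a cofinality observation. Since $\kp'=\kp T+\km_S T$, for all $a,b\ge 0$ one has $\kp'^{\,a+b}\subseteq(\kp^a+\km_S^b)T\subseteq\kp'^{\,\min(a,b)}$: the first inclusion holds because each summand $\kp^i\km_S^j T$ with $i+j=a+b$ lies in $\kp^a T$ or in $\km_S^b T$, and the second is immediate. Hence the filtrations $\{\kp'^k\}_k$ and $\{(\kp^a+\km_S^b)T\}_{a,b}$ are cofinal, so that
\[\widehat{T_{\kp'}}=\varprojlim_k (T/\kp'^k)_{\kp'}=\varprojlim_{a,b}\bigl(T/(\kp^a+\km_S^b)T\bigr)_{\kp'}.\]
Next I would compute $T/(\kp^a+\km_S^b)T\cong(R/\kp^a)\otimes_\cO(S/\km_S^b)$: modding the inverse-limit presentation of $T$ first by $\km_S^b$ and then by $\kp^a$ commutes with $\varprojlim$ because every $R/\km_R^m\otimes_\cO S/\km_S^n$ has finite length over $\cO$ (so the systems are Mittag--Leffler and $\varprojlim^1$ vanishes) and $S/\km_S^b$ is a finite $\cO$-module; the outcome is $\varprojlim_m\bigl(R/(\km_R^m+\kp^a)\bigr)\otimes_\cO S/\km_S^b=(R/\kp^a)\otimes_\cO(S/\km_S^b)$, using that $R/\kp^a$ is complete. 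Finally, because the maximal ideal of $S/\km_S^b$ is nilpotent, every prime of $(R/\kp^a)\otimes_\cO(S/\km_S^b)$ lies over a prime of $R$, and localizing at $\kp'$ is base change of the $R$-factor along $R\to R_\kp$; thus $\bigl(T/(\kp^a+\km_S^b)T\bigr)_{\kp'}\cong(R_\kp/\kp^a R_\kp)\otimes_\cO(S/\km_S^b)$, which is already Artinian local. Passing to the limit over $(a,b)$ recovers the displayed inverse limit, and since all the maps used are canonical, so is the resulting isomorphism $\widehat{(R\widehat{\otimes}_\cO S)_{\kp'}}\cong\widehat{R_\kp}\widehat{\otimes}_\cO S$.

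I expect the only genuinely delicate point to be the bookkeeping in the last localization step: one must verify that $(R/\kp^a)\otimes_\cO(S/\km_S^b)$ becomes local, with maximal ideal the image of $\kp'$, after inverting the image of $R\setminus\kp$, so that $-\otimes_R R_\kp$ really is localization at $\kp'$; this uses that $\varpi$ vanishes in $k(\kp)$. One also has to keep the transition maps compatible through the reindexing, but that is routine given the cofinality estimates. A pleasant feature of running the argument through these Artinian quotients is that, in contrast to Lemma \ref{scca}, no $\cO$-flatness hypothesis on $R$ or $S$ is needed.
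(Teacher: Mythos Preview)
Your argument is correct. The paper does not actually give a proof of this lemma: it simply cites Lemma~1.5 of \cite{Tho15}. Your write-up therefore supplies what the paper omits, and the approach you take---identifying both completions with the single inverse system $\varprojlim_{a,b}(R_\kp/\kp^aR_\kp)\otimes_\cO(S/\km_S^b)$ via the cofinality estimate $\kp'^{\,a+b}\subseteq(\kp^a+\km_S^b)T\subseteq\kp'^{\,\min(a,b)}$---is the standard one and is essentially how Thorne argues as well.

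A couple of minor remarks on presentation. The sentence ``every prime of $(R/\kp^a)\otimes_\cO(S/\km_S^b)$ lies over a prime of $R$'' is tautological; what you use (and verify in the subsequent clause) is the sharper fact that the map on $\Spec$ from this tensor product down to $\Spec R/\kp^a$ is a \emph{bijection}, since the ideal generated by $\km_S$ is nilpotent. That is what makes the ring $(R/\kp^a)_\kp\otimes_\cO S/\km_S^b$ already local with maximal ideal the image of $\kp'$, so that inverting $R\setminus\kp$ coincides with localizing at $\kp'$. Also, your observation that no $\cO$-flatness is needed is a genuine point: Lemma~\ref{scca} would require $S/\km_S^b$ to be $\cO$-flat to pass the quotient through the completed tensor product, which fails here, and your finite-length Mittag--Leffler argument is the right workaround.
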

\begin{proof}
This is Lemma 1.5 of \cite{Tho15}.
\end{proof}

Later on, $R$ will be $(\widehat{\bigotimes}_{v\in \Sigma_p}R^{\square}_v)\widehat{\otimes}(\widehat{\bigotimes}_{v\in T'}R^{\square,ur}_v)$ and $S$ will be $(\widehat{\bigotimes}_{v\in S\setminus \Sigma_p}R^{\square,\xi_v}_v)[[x_1,\cdots,x_g]]$ for some $g$. 

\begin{lem} \label{ccal}
Let $S$ be a complete local Noetherian flat $\cO$-algebra of dimension $e$ and $R$ be a finitely generated $\cO$-algebra. Suppose that $S$ and $R$ satisfy the following conditions:
\begin{itemize}
\item $S\otimes_\cO O_L$ is integral for any finite extension $L/E$ with ring of integers $O_L$ (i.e. $S$ is geometrically integral over $\cO$). Moreover, we assume $S/pS\neq 0$.
\item For each minimal prime $\kp$ of $R$, $R/\kp$ is $\cO$-flat of dimension $d+1$ and $R/(\kp,\varpi)$ is generically reduced.
\item Each minimal prime of $R/(\varpi)$ contains a unique minimal prime of $R$.
\end{itemize}
Let $\km$ be a maximal ideal of $R$ with residue field $\F$. Then
\begin{enumerate}
\item If $R$ is normal, then $S\widehat{\otimes}_\cO R_\km$ is integral of dimension $d+e$.
\item For each minimal prime $\kq$ of $S\widehat{\otimes}_\cO R_\km$, $S\widehat{\otimes}_\cO R_\km/\kq$ is $\cO$-flat of dimension $d+e$.
\item Each minimal prime of $(S\widehat{\otimes}_\cO R_\km)/(\varpi)$ contains a unique minimal prime of $S\widehat{\otimes}_\cO R_\km$.
\end{enumerate}
\end{lem}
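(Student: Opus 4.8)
The plan is to reduce the statement to properties of the faithfully flat local homomorphism $\widehat{R_\km}\to S\widehat\otimes_\cO R_\km$, and then to use the excellence of finitely generated $\cO$-algebras to transport the hypotheses on $R$ to its completion. By \ref{defnP} one has $S\widehat\otimes_\cO R_\km\cong S\widehat\otimes_\cO\widehat{R_\km}$, and Lemma \ref{scca}(1), applied with its ``$R$'' taken to be $\widehat{R_\km}$ (a complete Noetherian local ring with residue field $\F$) and its ``$S$'' taken to be our $\cO$-flat $S$, shows this map is faithfully flat. Its closed fibre is $S\widehat\otimes_\cO\F\cong S/\varpi S$, of dimension $e-1$ since $S$ is $\cO$-flat of dimension $e$; since $\dim\widehat{R_\km}=d+1$, the dimension formula for flat local maps gives $\dim\bigl(S\widehat\otimes_\cO R_\km\bigr)=d+e$. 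As $S\widehat\otimes_\cO R_\km$ is $\cO$-flat (being flat over the $\cO$-flat ring $\widehat{R_\km}$), $\varpi$ is a nonzerodivisor on it and hence lies in no minimal prime, so the generic point of each irreducible component has characteristic zero.

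For part (1): if $R$ is normal then $R_\km$ is a normal local Noetherian domain, and by excellence $\widehat{R_\km}$ is again a normal domain. Because $S$ is geometrically integral over $\cO$, its base change to any field extension of $E$ remains a domain (a direct limit from the finite case), so in particular the generic fibre of $S\widehat\otimes_\cO\widehat{R_\km}$ over $\widehat{R_\km}$ is a domain; by going-down over the domain $\widehat{R_\km}$ every minimal prime of $S\widehat\otimes_\cO\widehat{R_\km}$ contracts to $(0)$ and arises from the unique minimal prime of that generic fibre, giving uniqueness of the minimal prime. Since the ring is $\cO$-flat it injects into $\bigl(S\widehat\otimes_\cO\widehat{R_\km}\bigr)[1/p]$, which is reduced (geometric reducedness of $S$ together with reducedness of the normal domain $\widehat{R_\km}[1/p]$), so the ring itself is reduced and therefore integral, of dimension $d+e$.

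For parts (2) and (3) one argues component by component. By Lemma \ref{scca}(2), for each minimal prime $\kp\subseteq\km$ of $R$ the quotient $\bigl(S\widehat\otimes_\cO R_\km\bigr)/\kp$ is canonically $S\widehat\otimes_\cO(R/\kp)_\km$; here $(R/\kp)_\km$ is an excellent $\cO$-flat local domain of dimension $d+1$ with generically reduced reduction modulo $\varpi$, so by excellence $\widehat{(R/\kp)_\km}$ is reduced and equidimensional of dimension $d+1$. Any minimal prime $\kq$ of $S\widehat\otimes_\cO R_\km$ contracts, under the flat map, to a minimal prime of $\widehat{R_\km}$, which lies over some such $\kp$; thus $\kq$ contains $\kp\bigl(S\widehat\otimes_\cO R_\km\bigr)$ and is a minimal prime of $S\widehat\otimes_\cO(R/\kp)_\km$. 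Then $\bigl(S\widehat\otimes_\cO R_\km\bigr)/\kq$ is a domain (quotient by a prime ideal) and is $\cO$-flat, since $\varpi\notin\kq$ (it lies in no minimal prime); its dimension is $d+e$ because $S\widehat\otimes_\cO(R/\kp)_\km$ is flat over the equidimensional ring $\widehat{(R/\kp)_\km}$ with equidimensional fibres (base changes of the geometrically integral $S[1/p]$ over the generic points, and $S/\varpi S$ of dimension $e-1$ over the closed point), hence itself equidimensional of dimension $d+e$, and $\kq$ is one of its minimal primes. Finally, the hypothesis that each minimal prime of $R/\varpi$ contains a unique minimal prime of $R$ survives localization at $\km$ and, via the geometrically regular formal fibres of the excellent ring $R_\km$, survives completion; combined with the fact that $S$ is a domain and with the flatness of $\widehat{R_\km}\to S\widehat\otimes_\cO\widehat{R_\km}$, this yields part (3) for $S\widehat\otimes_\cO R_\km$.

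The step I expect to be the main obstacle is precisely this passage to the completion in parts (2) and (3): one must check that being ``$\cO$-flat, equidimensional, with generically reduced and mutually disjoint mod-$\varpi$ special fibres'' is inherited by $\widehat{R_\km}$, which is exactly where the excellence of finitely generated $\cO$-algebras (geometric regularity of formal fibres) is used. The accompanying bookkeeping of the dimensions of the fibres of $S$ over the various residue fields occurring in $\widehat{R_\km}$ is routine once one records that $S$ is geometrically integral and $\cO$-flat of dimension $e$.
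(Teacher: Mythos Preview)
Your argument for part (1) contains a genuine gap. You claim that the generic fibre of $S\widehat\otimes_\cO\widehat{R_\km}$ over $\widehat{R_\km}$ is a domain because ``$S$ is geometrically integral over $\cO$, its base change to any field extension of $E$ remains a domain''. This implicitly identifies that generic fibre with $S\otimes_\cO K$ for $K=\mathrm{Frac}(\widehat{R_\km})$, but the identification is false: completed tensor products do not commute with localization, and $(S\widehat\otimes_\cO\widehat{R_\km})\otimes_{\widehat{R_\km}}K$ is strictly larger than the image of $S\otimes_\cO K$ in general. For a concrete example, take $S=\cO[[t]]$ and $\widehat{R_\km}=\cO[[y]]$: the element $\sum_{n\geq 0}y^nt^n\in\cO[[t,y]]$ does not lie in the image of $\cO[[t]]\otimes_\cO\mathrm{Frac}(\cO[[y]])$, since that would force all the coefficients $by^n$ (for a common denominator $b$) to live in a finitely generated $\cO$-submodule of $\cO[[y]]$. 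So geometric integrality of $S$, a statement about $S\otimes_\cO\cO_L$ for \emph{finite} $L/E$, tells you nothing directly about this fibre. The same confusion affects your reducedness claim and propagates into parts (2) and (3): your treatment of those parts ultimately requires understanding $S\widehat\otimes_\cO A$ for complete local domains $A$ arising as $\widehat{(R/\kp)_\km}/\kP$, and you have no tool for this beyond the faulty generic-fibre identification. The vague assertion that the third hypothesis ``survives completion'' and then passes to $S\widehat\otimes_\cO\widehat{R_\km}$ ``combined with the fact that $S$ is a domain and with flatness'' is not an argument.

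The paper's proof avoids the generic fibre entirely. For part (1) it first establishes reducedness by noting that $(S\otimes_\cO R_\km)_{\km'}$ is excellent (being essentially of finite type over the complete local ring $S$) and reduced, hence has reduced completion. For irreducibility it chooses a countable dense family of \emph{closed} points $\kp\in\Spec\widehat{R_\km}[1/p]$, so that the fibres $S\widehat\otimes_\cO(\widehat{R_\km}/\kp)=S\otimes_\cO(\widehat{R_\km}/\kp)$ are honest (finite) base changes of $S$; given two nonempty opens $U_1,U_2$ of $X=\Spec(S\widehat\otimes_\cO\widehat{R_\km})$ it finds such fibres meeting them, then a transversal closed point $\kq\in\Spec S[1/p]$ whose fibre $X_\kq$ meets both, and proves $X_\kq$ irreducible by showing $\tilde S\widehat\otimes_\cO R_\km$ is normal for the DVR $\tilde S$ normalizing $S/\kq$ --- it is here that the generically-reduced hypothesis on $R/(\kp,\varpi)$ enters, to verify $(R_1)$ at height-one primes containing $p$. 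For parts (2) and (3) the paper passes to the normalization $\tilde R=\prod_i\tilde R_i$ of $R_\km$, observes that the cokernel $Q=\tilde R/R_\km$ vanishes at each minimal prime of $R_\km/(\varpi)$ (because the hypotheses force $R_\km$ to be a DVR there), tensors the exact sequence $0\to\widehat{R_\km}\to\prod_{i,k}\widehat{(\tilde R_i)_{\km_{i,k}}}\to\widehat Q\to 0$ with $S$, and then applies part (1) to each normal factor $S\widehat\otimes_\cO\widehat{(\tilde R_i)_{\km_{i,k}}}$.
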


\begin{proof}
Before giving the proof, we note the following useful fact: Let $D$ be an excellent local ring and $\widehat{D}$ be its completion. Then $D\to \widehat{D}$ is regular and faithfully flat. Hence $D$ is regular (normal, reduced) if and only if $\widehat{D}$ is  regular (normal, reduced). See \S32 of \cite{Mat1} for basic properties of excellent rings. In practice, all the rings below in this proof are excellent.

First we assume $R$ is normal. Let $\km'\in \Spec S\otimes_\cO R_\km$ be the maximal ideal above $\km$. Then $(S\otimes_\cO R_\km)_{\km'}$ is flat over $R_\km$. Hence 
\[\dim (S\otimes_\cO R_\km)_{\km'}=\dim S\otimes\F +\dim R_\km=d+e.\]
We conclude that $\dim S\widehat{\otimes}_\cO R_\km=d+e$ since taking completion preserves the dimension.

Note that $S\otimes_\cO R_\km$ is reduced (which is clear after inverting $p$) and excellent. $S\widehat{\otimes}_\cO R_\km$ is also reduced. It rests to prove that $S\widehat{\otimes}_\cO R_\km$ is irreducible.

For simplicity, we denote $\Spec S\widehat{\otimes}_\cO R_\km$ by $X$. By Corollary 10.5.8 of \cite{EGA}, the intersection of all maximal ideals of $\widehat{R_\km}[\frac{1}{p}]$ is trivial. Let $I$ be the set of pull-back of these maximal ideals to $\widehat{R_\km}$. Then $\bigcap_{\kp\in I} \kp=\{0\}$. Since $\widehat{R_\km}$ is compact, for any $n>0$, we may find  a finite subset $I_n\subseteq I$ such that $\bigcap_{\kp\in I_n} \kp \subseteq \km^n\widehat{R_\km}$. Let $J=\bigcup_n I_n$. Then 
\[\bigcap_{\kp\in J}\kp(S\widehat{\otimes}_\cO \widehat{R_\km})\subseteq \bigcap_n \km^n(S\widehat{\otimes}_\cO \widehat{R_\km})=\{0\}.\]
In other words, $J$ is dense in $\Spec S\widehat{\otimes}_\cO \widehat{R_\km}$.

Now given two non-empty open sets $U_1,U_2$ of $X$, by the above equality, we can find $\kp_i\in J$ for $i=1,2$ such that $U_i\cap \Spec (S\widehat{\otimes}_\cO \widehat{R_\km})/(\kp_i)$ is non-empty. Note that 
\[(S\widehat{\otimes}_\cO \widehat{R_\km})/(\kp_i)\cong S\widehat{\otimes}_\cO (\widehat{R_\km}/\kp_i) \cong S\otimes_\cO (\widehat{R_\km}/\kp_i)\]
by Lemma \ref{scca} and the fact that $\widehat{R_\km}/\kp_i$ is a finite $\cO$-domain. Hence $X_{\kp_i}\stackrel{\mathrm{def}}{=}\Spec S\otimes_\cO k(\kp_i)$, the fibre of $X$ over $\kp_i\in \Spec \widehat{R_\km}$, is dense in $\Spec S\otimes_\cO (\widehat{R_\km}/\kp_i)$. In particular, $X_{\kp_i}\cap U_i$ is a non-empty open set of $X_{\kp_i}$. From this and Corollary 10.5.8 of \cite{EGA} again, it is easy to see that we may find $\kq\in\Spec S$ which is the pull-back of a maximal ideal of $S[\frac{1}{p}]$, such that $X_{\kq}\cap X_{\kp_i}\cap U_i$ is non-empty for $i=1,2$. Therefore, we only need to show $X_\kq$ is irreducible.

By the last part of Lemma \ref{scca}, $X_\kq\cong\Spec (S/\kq\widehat{\otimes}_{\cO}\widehat{R_\km})[\frac{1}{p}]$. Let $\tilde{S}$ be the normal closure of $S/\kq$ in its fractional field. This is a complete discrete valuation ring with $\varpi$ contained in the maximal ideal. Fix a uniformizer $\lambda$ of $\tilde{S}$. Note that $\tilde{S}$ is a finite $S/\kq$-algebra since $S$ is excellent. It is easy to see that $(S/\kq\widehat{\otimes}_{\cO}\widehat{R_\km})[\frac{1}{p}]\cong (\tilde{S}\widehat{\otimes}_{\cO}\widehat{R_\km})[\frac{1}{p}]\cong (\tilde{S}\widehat{\otimes}_{\cO}{R_\km})[\frac{1}{p}]$. We claim that $\tilde{S}\widehat{\otimes}_{\cO}{R_\km}$ is normal. Consequently, $X_\kq$ will be irreducible. 

Let $\km'$ be the unique maximal ideal of $\tilde{S}{\otimes}_{\cO}{R_\km}$ containing $\km$. Note that $\tilde{S}\widehat{\otimes}_{\cO}{R_\km}$ is the $\km'$-adic completion of $(\tilde{S}\otimes_{\cO}{R_\km})_{\km'}$. Hence to prove $\tilde{S}\widehat{\otimes}_{\cO}{R_\km}$ is normal, it suffices to show $(\tilde{S}\otimes_{\cO}{R_\km})_{\km'}$ is normal since $(\tilde{S}\otimes_{\cO}{R_\km})_{\km'}$ is excellent. We apply Serre's criterion for normality: ($S_2$) is true as $\tilde{S}$ is flat over $\cO$. For ($R_1$), note that the map $R_\km[\frac{1}{p}]\to \tilde{S}{\otimes}_{\cO}{R_\km}[\frac{1}{p}]$, as a base change of $E\to \tilde{S}[\frac{1}{p}]$,  is regular. Hence $\tilde{S}{\otimes}_{\cO}{R_\km}[\frac{1}{p}]$ is normal and any height one prime of it is regular. For height one prime of $(\tilde{S}\otimes_{\cO}{R_\km})_{\km'}$ containing $p$, since $R/(\varpi)$ is generically reduced, so is $(\tilde{S}\otimes_{\cO}{R_\km})_{\km'}/(\lambda)=(\tilde{S}/(\lambda)\otimes_\F R_\km/(\varpi))_{\km'}$ as $\F$ is perfect. Thus any height one prime containing $\varpi$ is also regular. This proves that $(\tilde{S}\otimes_{\cO}{R_\km})_{\km'}$ is normal and finishes the proof of the first part of the lemma.

For the last two parts of the lemma, we follow the argument of the proof of lemma 2.7 in \cite{Ta08} (see also lemma 1.6 of \cite{Tho15}). First we note that for any finite extension $L/E$ with ring of integers $\cO'$, $S'=S\otimes_{\cO}\cO'$ and $R'=R\otimes_{\cO}\cO'$ also satisfy the assumptions in the lemma, with $\cO$ replaced by $\cO'$. Let $\km'\in\Spec R'$ be the unique maximal ideal above $\km$. Using the isomorphism $(S\widehat{\otimes}_{\cO}R_\km)\otimes_{\cO}\cO'\cong S'\widehat{\otimes}_{\cO'}R'_{\km'}$, it is easy to see that it suffices to show the same assertions for $S',R',\cO'$. Therefore, we may replace $\cO$ by its extension if necessary.

We may assume $R$ is reduced. Denote the minimal primes of $R_\km$ by $\kp_1,\cdots,\kp_r$ and $R_i=R_{\km}/\kp_i$. Let $\tilde{R_i}$ be the normalization of $R_i$ and $\tilde{R}=\prod_i^r \tilde{R_i}$ be the normalization of $R_\km$. Hence $\tilde{R}$ is a finite $R_\km$-algebra. Let $Q$ be $R_\km$-module $\tilde{R}/R_{\km}$. It follows from our assumptions that $(R_{\km})_{\kp_{i,j}}$ is a discrete valuation ring with maximal ideal $(\varpi)$ for any minimal prime $\kp_{i,j}$ of $R_{\km}/(\varpi, \kp_i)$. Therefore $Q_{\kp_{i,j}}=0$. Consider the exact sequence:
\[0\to R_\km\to \tilde{R} \to Q\to 0.\]
and tensor it with $\widehat{R_\km}$ over $R_\km$ (which is an exact functor):
\[0\to \widehat{R_\km}\to \prod_{i,k}\widehat{(\tilde{R_i})_{\km_{i,k}}} \to \widehat{Q}\to 0,\]
where the product in the middle term is taken over all maximal ideals $\km_{i,k}$ of $\tilde{R_i}$ and $\widehat{Q}$ is the $\km$-adic completion of $Q$. Here we use the fact that the $\km$-adic completion of $\tilde{R_i}$ is isomorphic to $\prod_k \widehat{(\tilde{R_i})_{\km_{i,k}}}$. Replace $E$ by some unramified extension if necessary. We may assume $\tilde{R_i}/\km_{i,k}=\F$ for any $i,k$. Note that each term in the above short exact sequence is a finite $\widehat{R_\km}$-module. We can tensor it with $S\widehat{\otimes}_\cO \widehat{R_\km}$ (over $\widehat{R_\km}$) and apply lemma \ref{scca}:
\[0\to S\widehat{\otimes}_\cO \widehat{R_\km} \to \prod_{i,k} S\widehat{\otimes}_\cO \widehat{(\tilde{R_i})_{\km_{i,k}}} \to S\widehat{\otimes}_\cO \widehat{Q}\to 0.\]
Note that $(\tilde{R_i})_{\km_{i,k}}$ is a normal local flat $\cO$-algebra and $(\tilde{R_i})_{\km_{i,k}}/(\varpi)$ is generically reduced as so is $R/(\varpi)$. We may apply the first part of the lemma to conclude that $S\widehat{\otimes}_\cO \widehat{(\tilde{R_i})_{\km_{i,k}}}$ is an integral domain with dimension $d+e$.

Let $\kq$ be a minimal prime ideal of $S\widehat{\otimes}_\cO \widehat{R_\km}/(\varpi)$. Then by going-down Theorem, the pull-back of $\kq$ to $R_{\km}/(\varpi)$ is also a minimal prime $\kp'$. Hence $(S\widehat{\otimes}_\cO \widehat{Q})_{\kq}=0$ since $Q_{\kp'}=0$. In particular, we have 
\[(S\widehat{\otimes}_\cO \widehat{R_\km})_\kq \cong \prod_{i,k} (S\widehat{\otimes}_\cO \widehat{(\tilde{R_i})_{\km_{i,k}}})_\kq.\]
The left hand side is a local ring. Hence $(S\widehat{\otimes}_\cO \widehat{(\tilde{R_i})_{\km_{i,k}}})_\kq\neq 0$ for a unique pair $(i,k)$. This implies that $(S\widehat{\otimes}_\cO \widehat{R_\km})_\kq$ is an integral domain. In other words, $\kq$ contains a unique minimal prime of $S\widehat{\otimes}_\cO \widehat{R_\km}$. This proves the third part of the lemma.

Now let $\kp$ be a minimal prime of $S\widehat{\otimes}_\cO \widehat{R_\km}$. Again by going-down Theorem, its pull-back to $R_{\km}$ defines a minimal prime and the same argument as in the previous paragraph shows that $(S\widehat{\otimes}_\cO \widehat{R_\km})_{\kp}\cong (S\widehat{\otimes}_\cO \widehat{(\tilde{R_i})_{\km_{i,k}}})_\kp$ for a unique pair $(i,k)$. Hence $(S\widehat{\otimes}_\cO \widehat{R_\km})/{\kp}$ maps injectively into $S\widehat{\otimes}_\cO \widehat{(\tilde{R_i})_{\km_{i,k}}}$. Note that $S\widehat{\otimes}_\cO \widehat{(\tilde{R_i})_{\km_{i,k}}}$ is a finite $S\widehat{\otimes}_\cO \widehat{R_\km}$-algebra. Thus 
\[\dim (S\widehat{\otimes}_\cO \widehat{R_\km})/\kp=\dim S\widehat{\otimes}_\cO \widehat{(\tilde{R_i})_{\km_{i,k}}}=d+e.\]
The flatness of $(S\widehat{\otimes}_\cO \widehat{R_\km})/\kp$ over $\cO$ is also clear. This proves the second part of the lemma.
\end{proof}

\begin{para}
To prove proposition \ref{rlocp}, we also need some basic properties of the local framed deformation rings defined in the beginning of this subsection here.
\end{para}

\begin{lem} \label{varlocdim}
\hspace{2em}
\begin{enumerate}
\item If $v\notin S$, then $\bar{\rho}_b|_{G_{F_v}}$ is unramified and $R^{\square,ur}_{v}\cong \cO[[x_1,x_2,x_3]]$.
\item If $v|p$, then $R^{\square}_v$ is isomorphic to $\cO[[x_1,\cdots,x_6]]$ or $\cO[[c_0,c_1,d_0,d_1,x_1,x_2,x_3]]/(c_0d_1-c_1d_0)$. Hence $R^{\square}_v$ is a normal domain of dimension $1+6$.
\end{enumerate}
\end{lem}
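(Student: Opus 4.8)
The plan is to treat the two parts separately, and for the case $v\mid p$ to reduce everything to the standard cohomological presentation of a framed local deformation ring. For part (1), first note that since $\rho(\kq)^o$ is a representation of $G_{F,S}$, it is unramified at every $v\notin S$, hence so is its reduction $\bar\rho_b|_{G_{F_v}}$; this both justifies the definition of $R^{\square,ur}_v$ and shows $\bar\rho_b|_{G_{F_v}}$ factors through $G_{F_v}/I_{F_v}\cong\widehat{\Z}$. An unramified lifting of $\bar\rho_b|_{G_{F_v}}$ to $R$ with determinant $\chi$ is uniquely determined by the image $M\in\GL_2(R)$ of $\Frob_v$, a lift of $\bar\rho_b(\Frob_v)$ with $\det M=\chi(\Frob_v)$. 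Thus $R^{\square,ur}_v$ is the completed local ring at $\bar\rho_b(\Frob_v)$ of the fibre over $\chi(\Frob_v)$ of $\det\colon\GL_{2,\cO}\to\mathbb{G}_{m,\cO}$; since $\det$ is smooth and surjective this fibre is smooth over $\cO$ of relative dimension $3$, so $R^{\square,ur}_v\cong\cO[[x_1,x_2,x_3]]$. (Equivalently: $\mathrm{cd}(\widehat{\Z})=1$ makes the problem unobstructed, and the framed tangent space $Z^1(\widehat{\Z},\ad^0\bar\rho_b)$ has $\F$-dimension $\dim H^1+3-\dim H^0=3$.)

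For part (2) we have $F_v=\Q_p$, and I would invoke the standard presentation of the framed deformation ring with fixed determinant: $R^\square_v\cong\cO[[x_1,\dots,x_g]]/(f_1,\dots,f_r)$, where $g=\dim_\F Z^1(G_{\Q_p},\ad^0\bar\rho_b)=\dim_\F H^1(G_{\Q_p},\ad^0\bar\rho_b)+3-\dim_\F H^0(G_{\Q_p},\ad^0\bar\rho_b)$ and the $f_j\in\km^2$ have leading quadratic terms given by the obstruction map $\Sym^2 Z^1\to H^2(G_{\Q_p},\ad^0\bar\rho_b)$, so that $r\le\dim_\F H^2(G_{\Q_p},\ad^0\bar\rho_b)$. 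The local Euler characteristic formula gives $\dim H^1-\dim H^0-\dim H^2=3$, hence $g=\dim_\F H^2(G_{\Q_p},\ad^0\bar\rho_b)+6$. By local Tate duality $H^2(G_{\Q_p},\ad^0\bar\rho_b)\cong H^0(G_{\Q_p},\ad^0\bar\rho_b\otimes\omega)^\vee$, and since $\bar\rho_b|_{G_{\Q_p}}$ is reducible with diagonal characters among $\{\mathbf{1},\bar\chi|_{G_{\Q_p}}\}$, a short computation shows these invariants vanish unless $\bar\chi|_{G_{\Q_p}}\in\{\omega,\omega^{-1}\}$, and — under the standing hypotheses (in particular $\bar\chi|_{G_{F_v}}\ne\omega^{\pm1}$ when $p=3$) — in that exceptional case they are exactly one-dimensional. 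So either $H^2=0$, whence $r=0$ and $R^\square_v\cong\cO[[x_1,\dots,x_6]]$, or $\dim H^2=1$ and $R^\square_v\cong\cO[[x_1,\dots,x_7]]/(f)$ for a single $f$.

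In the second case I would pin down $f$ by writing the universal framed deformation explicitly, using a convenient pair of topological generators of $G_{\Q_p}$ (for $p$ odd the maximal pro-$p$ quotient is free of rank $2$) together with the upper-triangular shape of $\bar\rho_b|_{G_{\Q_p}}$: the resulting matrix relation is the vanishing of a $2\times 2$ determinant, and one checks that its leading term is the obstruction quadratic form, which has rank exactly $4$ and is hyperbolic, i.e.\ equivalent to $c_0d_1-c_1d_0$. Since $R^\square_v$ is $\cO$-flat and $p$ is odd, a change of variables over $\cO[[\cdots]]$ absorbs the higher-order terms and yields $R^\square_v\cong\cO[[c_0,c_1,d_0,d_1,x_1,x_2,x_3]]/(c_0d_1-c_1d_0)$. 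For the remaining assertions: this ring is a hypersurface, hence Cohen--Macaulay, so $(S_2)$ holds; $c_0d_1-c_1d_0$ is irreducible, so it is a domain; its singular locus is $\{c_0=c_1=d_0=d_1=0\}$, of codimension $3\ge 2$, so $(R_1)$ holds; by Serre's criterion it is normal; and it is flat over $\cO$ of relative dimension $6$, so $\dim R^\square_v=1+6$. The smooth case is immediate.

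The main obstacle is the exceptional case $\bar\chi|_{G_{\Q_p}}=\omega^{\pm1}$: computing $\dim_\F H^2(G_{\Q_p},\ad^0\bar\rho_b)=1$ exactly, and then identifying the single relation with the Segre quadric $c_0d_1-c_1d_0$, i.e.\ verifying that the cup-product obstruction form is nondegenerate of rank $4$ and hyperbolic and that no correction terms survive the change of variables. This is the kind of explicit local computation carried out, e.g., in Böckle's work and in the analyses of $\GL_2/\Q_p$ framed deformation rings of Shotton and others; one must also track the sub-cases (whether $\bar\rho_b|_{G_{\Q_p}}$ is split, and whether $\bar\chi|_{G_{\Q_p}}$ is trivial, unramified, or ramified) to confirm that the two listed forms are exhaustive under the paper's hypotheses.
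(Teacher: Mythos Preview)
Part (1) and the smooth half of part (2) are fine and match the paper. The real issue is your treatment of the non-smooth case.

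First, a minor inaccuracy in the case split: it is not true that $\dim_\F H^2(G_{\Q_p},\ad^0\bar\rho_b)=1$ whenever $\bar\chi|_{G_{\Q_p}}\in\{\omega,\omega^{-1}\}$. When $\bar\chi|_{G_{\Q_p}}=\omega^{-1}$ and $\bar\rho_b|_{G_{\Q_p}}$ is a \emph{non-split} extension of $\omega^{-1}$ by $\mathbf{1}$, one computes $H^0(G_{\Q_p},\ad^0\bar\rho_b\otimes\omega)=0$, so $R^\square_v$ is smooth. The paper's dichotomy is stated correctly as: non-smooth exactly when $\bar\rho_b|_{G_{\Q_p}}\cong\begin{pmatrix}1&*\\0&\omega\end{pmatrix}\otimes\eta$ for some $\eta$. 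This does not affect the lemma's conclusion, but it means your ``exceptional set'' is slightly too large.

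The substantive gap is your proposed computation of the single relation $f$. You invoke ``a convenient pair of topological generators of $G_{\Q_p}$'' via the maximal pro-$p$ quotient being free of rank $2$, but liftings of $\bar\rho_b|_{G_{\Q_p}}$ do \emph{not} factor through the pro-$p$ quotient: the image of $\bar\rho_b|_{G_{\Q_p}}$ already contains elements of order dividing $p-1$ (coming from $\omega$). One must instead work with the quotient of $G_{\Q_p}$ through which all such liftings factor, whose structure depends on $\bar\rho_b$ and is not simply free pro-$p$ of rank $2$. Your subsequent assertion that ``since $R^\square_v$ is $\cO$-flat\ldots a change of variables absorbs the higher-order terms'' is also circular ($\cO$-flatness is part of what is to be proved) and does not match reality: in the non-split sub-case the actual relation over $\cO$ is $c_0d_1+c_1d_0+pc_0$, which has a genuine $p$-linear term, and bringing it to the form $c_0d_1-c_1d_0$ requires the specific substitution $d_1\mapsto d_1+p$, not a generic Morse-type argument. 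Finally, in the split sub-case $\bar\rho_b|_{G_{\Q_p}}\cong\eta\oplus\eta\omega$, the unframed functor is not representable, so one must pass through a versal hull; you do not address this.

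The paper avoids all of this by citing the explicit computations already in the literature: Pa\v{s}k\={u}nas (Corollary B.5 of \cite{Pas13}) for the non-split case, giving $R^\square_v\cong\cO[[c_0,c_1,d_0,d_1,x_1,x_2,x_3]]/(c_0d_1+c_1d_0+pc_0)$ via formal smoothness of the framed ring over the unframed one; and Hu--Tan (Corollaries 3.6, 3.7 of \cite{HT15}) for the split case, giving the versal ring $\cO[[c_0,c_1,d_0,d_1,b]]/(c_0d_1-c_1d_0)$, combined with formal smoothness of $R^\square_v$ over the versal ring of relative dimension $2$ (Kisin--Wintenberger, Proposition 2.1 of \cite{KW09b}). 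Your obstruction-theoretic outline is the right shape, but to make it a proof you would need to actually compute the cup-product pairing and handle the versal/framed comparison in the split case; short of that, citing these sources as the paper does is the efficient route.
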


\begin{proof}
The first part is clear since we only need to lift $\Frob_v$. The second part is clear in the case $\bar{\rho}_b|_{G_{F_v}}\ncong \begin{pmatrix} 1&*\\0&\omega\end{pmatrix}\otimes \eta$ for some character $\eta$ as $H^2(G_{F_v},\ad^0\bar{\rho}_b)=0$ and $R^{\square}_v$ is smooth in this case. The computation of dimension is standard. If $\bar{\rho}_b|_{G_{F_v}}$ is a non-split extension of $\omega$ by $\mathbf{1}$ up to some character, then it follows from Corollary B.5 of \cite{Pas13} that $R^{\square}_v\cong \cO[[c_0,c_1,d_0,d_1,x_1,x_2,x_3]]/(c_0d_1+c_1d_0+pc_0)$ since $R^{\square}_v$ is formally smooth of relative dimension $3$ over the  universal deformation ring of $\bar{\rho}_b|_{G_{F_v}}$ with determinant $\chi$. Note that we assume $p>3$ in this case so that we can apply the quoted result. If $\bar{\rho}_b|_{G_{F_v}}\cong \eta\oplus\eta\omega$ for some $\eta$, then its versal deformation ring $R^{ver}$ is isomorphic to $\cO[[c_0,c_1,d_0,d_1,b]]/(c_0d_1-c_1d_0)$ by Corollary 3.6, 3.7 of \cite{HT15}. Note that in this case $R^{\square}_v$ is formally smooth of relative dimension $2$ over $R^{ver}$ by Proposition 2.1 of \cite{KW09b}. Hence we have exhausted all the cases of the second part of the lemma.
\end{proof}

\begin{lem} \label{lrca}
For $v\in S\setminus \Sigma_p$, there exists a finite type $\cO$-algebra $A^{\xi_v}$ and an $\F$-valued point $x\in \Spec A^{\xi_v}$ such that $\widehat{(A^{\xi_v})_x}\cong R^{\square,\xi_v}_{v}$. Moreover,
\begin{enumerate}
\item If $\xi_v$ is trivial, then for each minimal prime $\kp$ of $A^{\xi_v}$, $A^{\xi_v}/\kp$ is $\cO$-flat of dimension $3+1$ and $A^{\xi_v}/(\kp,\varpi)$ is generically reduced. Each minimal prime of $A^{\xi_v}/(\varpi)$ contains a unique minimal prime of $A^{\xi_v}$.
\item If $\xi_v$ is non-trivial, then 
\begin{itemize}
\item $R^{\square,\xi_v}_{v}$ is irreducible of dimension $1+3$ and flat over $\cO$.
\item $R^{\square,\xi_v}_v/(\varpi)\cong R^{\square,1}_v/(\varpi)$.
\item $R^{\square,\xi_v}_{v}[\frac{1}{p}]$ and $(A^{\xi_v})_x[\frac{1}{p}]$ are regular.
\item Let $S$ be a complete local Noetherian flat $\cO$-algebra such that $S/pS\neq 0$ and $S[\frac{1}{p}]$ is geometrically connected and geometrically normal, i.e. $S[\frac{1}{p}]\otimes_E L$ is connected and normal for any finite extension $L/E$. Then $(S\widehat{\otimes}_\cO R^{\square,\xi_v}_{v})[\frac{1}{p}]$ is geometrically connected and geometrically normal as well. The Krull dimension of $S\widehat{\otimes}_\cO R^{\square,\xi_v}_{v}$ is $\dim S+3$.
\end{itemize} 
\end{enumerate}
\end{lem}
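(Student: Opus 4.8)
The plan is to reduce everything to the explicit computations of the local lifting rings $R^{\square,\xi_v}_v$ carried out by Taylor in his Ihara-avoidance argument \cite{Ta08} (in the residually reducible formulation of \cite{Tho15}), and then to deduce the remaining assertions by commutative algebra of the kind already used for Lemma \ref{ccal}. First I would set up the \textbf{algebraization}. Since $v\nmid p$ and $2$ is invertible in $\cO$, the condition $\tr(\rho_R)|_{I_{F_v}}=\xi_v+\xi_v^{-1}$ is equivalent to requiring that the characteristic polynomial of $\rho_R(\phi_v)$ — for $\phi_v$ a fixed topological generator of the tame quotient of $I_{F_v}$ — equal $(X-\xi_v(\phi_v))(X-\xi_v^{-1}(\phi_v))$, together with the determinant constraint; wild inertia maps into a pro-$\ell_v$ group and contributes nothing extra after passing to a suitable finite quotient. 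Hence $R^{\square,\xi_v}_v$ is a quotient of an explicit power series ring over $\cO$ by finitely many explicit relations, and as in \cite[\S3]{Ta08}, \cite[\S3]{Tho15} one realises it as the completion $\widehat{(A^{\xi_v})_x}$ of a finite type $\cO$-algebra $A^{\xi_v}$ at an $\F$-point $x$. As $A^{\xi_v}$ and $R^{\square,\xi_v}_v$ are excellent and $(A^{\xi_v})_x\to R^{\square,\xi_v}_v$ is faithfully flat and regular, the properties we need ($\cO$-flatness, Krull dimension, generic reducedness of the special fibre, incidence of minimal primes modulo $\varpi$, regularity after inverting $p$) transfer between the two rings.

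For the \textbf{trivial case} $\xi_v=1$, the ring $R^{\square,1}_v$ is the lifting ring with unipotent image of inertia, whose structure is computed in \cite[\S3]{Ta08}: each minimal prime $\kp$ gives an $\cO$-flat quotient of dimension $3+1$, the reduction $R^{\square,1}_v/(\kp,\varpi)$ is generically reduced, and each minimal prime of $R^{\square,1}_v/(\varpi)$ contains a unique minimal prime of $R^{\square,1}_v$ (the two components meet transversally along $\varpi=0$). Transporting this along Step 1 gives part (1). For the \textbf{non-trivial case} $\xi_v\neq 1$: since $\xi_v$ has $p$-power order with $p$ odd, $\xi_v\neq\xi_v^{-1}$, so inertia is forced to act through two \emph{distinct} characters, which rigidifies the local deformation problem. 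Taylor's computation then shows $R^{\square,\xi_v}_v$ is an integral domain, $\cO$-flat of dimension $1+3$, and that reducing the eigenvalue condition modulo $\varpi$ (where $\xi_v\equiv 1$) yields an isomorphism $R^{\square,\xi_v}_v/(\varpi)\cong R^{\square,1}_v/(\varpi)$. Regularity of $R^{\square,\xi_v}_v[\frac{1}{p}]$ (equivalently of $(A^{\xi_v})_x[\frac{1}{p}]$) follows from formal smoothness of the local framed deformation functor in characteristic $0$ at every $\overbar{\Q_p}$-point, the relevant obstruction group vanishing when inertia acts with distinct eigenvalue characters; the same rigidity shows $R^{\square,\xi_v}_v\otimes_\cO\cO_L$ stays a domain for every finite $L/E$, so $R^{\square,\xi_v}_v[\frac{1}{p}]$ is geometrically connected and, being regular, geometrically normal.

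It remains to treat the \textbf{completed tensor product} with $S$. Here $(S\widehat{\otimes}_\cO R^{\square,\xi_v}_v)[\frac{1}{p}]$ is analysed exactly as in the proof of Lemma \ref{ccal}(1): using that $S[\frac{1}{p}]$ and $R^{\square,\xi_v}_v[\frac{1}{p}]$ are regular (hence the structure maps $E\to S[\frac{1}{p}]$ and $E\to R^{\square,\xi_v}_v[\frac{1}{p}]$ are regular), Lemma \ref{scca}, and the density of the set of pull-backs of closed points of $\widehat{R}[\frac{1}{p}]$-spectra (Corollary 10.5.8 of \cite{EGA}), one reduces geometric connectedness and geometric normality to the corresponding statements for the fibres over such closed points, where they hold by base change and the geometric normality of $S[\frac{1}{p}]$. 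The dimension identity $\dim S\widehat{\otimes}_\cO R^{\square,\xi_v}_v=\dim S+3$ is additivity of relative dimensions over $\cO$ together with the faithful flatness in Lemma \ref{scca}(1).

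The substantive content is entirely in the two ring computations above, i.e. the explicit determination of the structure of $R^{\square,1}_v$ and $R^{\square,\xi_v}_v$, which is Taylor's Ihara-avoidance computation and which I would simply invoke; this is the main obstacle only in the sense that it is the one input that is not formal. Everything else — the algebraization of the functor by a finite type $\cO$-algebra and the completed-tensor-product manipulations — is routine, being a rerun of arguments already deployed for Lemma \ref{ccal} and Lemma \ref{scca}.
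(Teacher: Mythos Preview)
Your overall architecture matches the paper's: algebraize the lifting problem as the completion of a finite type $\cO$-scheme, quote the Ihara-avoidance computations for parts (1) and (2), and then push the tensor-product statement through Lemma~\ref{ccal}-style commutative algebra. Two points deserve correction.

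First, a minor slip: you assert that $S[\frac{1}{p}]$ is regular, but the hypothesis is only that it is geometrically normal. This does not actually break your normality argument---since $(A^{\xi_v})_x[\frac{1}{p}]$ is regular, the map $S[\frac{1}{p}]\to (S\widehat\otimes_\cO R^{\square,\xi_v}_v)[\frac{1}{p}]$ is regular, and normality of $S[\frac{1}{p}]$ (not regularity) is what transports---but you should state it correctly.

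Second, and more substantively, the paper does not treat the non-trivial $\xi_v$ case by a single citation to Taylor; it splits according to the shape of $\bar\rho_b|_{G_{F_v}}$ and invokes \cite{Tho15} (trivial residual) and \cite{Sho16} (non-trivial residual). The case that matters is when $\bar\rho_b|_{I_{F_v}}$ is trivial but $\bar\rho_b|_{G_{F_v}}$ is not: then \cite{Sho16} gives $R^{\square,\xi_v}_v\cong\cO[[x_1,x_2,x_3,x_4]]/(x_1^2x_2-c^2)$ with $c=\xi_v(t)-\xi_v(t)^{-1}\in\varpi\cO$, whose special fibre $\F[[x_1,x_2,x_3,x_4]]/(x_1^2x_2)$ is \emph{not} generically reduced. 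This is exactly the hypothesis that drives the connectedness step in Lemma~\ref{ccal}(1), so your ``rerun Lemma~\ref{ccal}(1)'' proposal does not go through as written. The paper's fix is a hands-on trick: the map $x_2\mapsto y_2^2$ gives a surjection with nilpotent kernel (after inverting $p$) onto $S[[x_1,y_2,x_3,x_4]]/(x_1y_2-c)$, whose special fibre \emph{is} generically reduced, and one then argues connectedness for that auxiliary ring. Your sketch of swapping the roles of $S$ and $R^{\square,\xi_v}_v$ and fibring over closed points of $R^{\square,\xi_v}_v[\frac{1}{p}]$ is a reasonable alternative line, but it is not literally the argument of Lemma~\ref{ccal}(1), and you would need to spell out why the density and fibre-irreducibility steps survive the role reversal.
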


\begin{proof}
For any $R\in C_\cO$, since $\bar{\rho}_b|_{G_{F_v}}$ has unipotent images, any lifting $\rho_R:G_{F_v}\to\GL_2(R)$ factors through the pro-$p$ quotient $I_{F_v}(p)$ of $I_{F_v}$ when restricted to the inertia. Choose a topological generator $t$ of $I_{F_v}(p)$. Then $\rho_R$ is determined by the pair of matrices $(\rho_R(\Frob_v),\rho_R(t))$. Hence $R^{\square,\xi_v}_{v}$ represents the functor sending $R\in \cOf$ to the pair of $2\times 2$ matrices $(\Phi,\Sigma)$ that lifts $(\bar{\rho}_b(\Frob_v),\bar{\rho}_b(t))$ satisfying 
\begin{itemize}
\item $\Phi\Sigma\Phi^{-1}=\Sigma^{N(v)}$
\item $\Sigma$ has characteristic polynomial $(X-\xi_v(t))(X-\xi_v^{-1}(t))$.
\item $\det \Phi=\chi(\Frob_v)$.
\end{itemize}
See \S3 of \cite{Ta08} for more details. Then we can take $\Spec A^{\xi_v}$ to be the moduli space (over $\cO$) of pair of matrices $(\Phi,\Sigma)$ satisfying the above conditions and $x$ to be the maximal ideal given by $(\bar{\rho}_b(\Frob_v),\bar{\rho}_b(t))$. 

If $\xi_v$ is trivial, then all the assertions follow from the first part of Proposition 3.1 of \cite{Ta08}. Note that $\mathcal{M}((X-1)^2,N(v))$ defined there is isomorphic to the spectrum of polynomial ring of one variable over $A^{\xi_v}$ since we require the determinant of $\Phi$ to be $\chi(\Frob_v)$.

Now we assume $\xi_v$ is non-trivial. The isomorphism between $R^{\square,\xi_v}_v/(\varpi)$ and $R^{\square,1}_v/(\varpi)$ is clear since they represent the same deformation problem. For other assertions, note that the natural map $(A^{\xi_v})_x[\frac{1}{p}]\to R^{\square,\xi_v}_{v}[\frac{1}{p}]$ is regular. Hence $R^{\square,\xi_v}_{v}[\frac{1}{p}]$ is regular if and only so is $(A^{\xi_v})_x[\frac{1}{p}]$. Moreover, if this holds, all the maps in $S[\frac{1}{p}]\to S\otimes_E (A^{\xi_v})_x[\frac{1}{p}] \to (S\widehat{\otimes}_\cO R^{\square,\xi_v}_{v})[\frac{1}{p}]$ would be regular. Thus $R^{\square,\xi_v}_{v}[\frac{1}{p}]$ being regular implies that $(S\widehat{\otimes}_\cO R^{\square,\xi_v}_{v})[\frac{1}{p}]$ is normal. Similar result holds for base change to finite extension of $E$.

For any finite extension $L/E$ with ring of integers $O_L$, we may replace everything by its tensor with $O_L$ over $\cO$. Hence the connectedness of $(S\widehat{\otimes}_\cO R^{\square,\xi_v}_{v})[\frac{1}{p}]$ will imply that it is geometrically connected. If $\bar{\rho}_b|_{G_{F_v}}$ is trivial, in this case all the claims follow from proposition 3.15, 3.16 of \cite{Tho15}. If $\bar{\rho}_b|_{I_{F_v}}$ is non-trivial, then $R^{\square,\xi_v}_{v}\cong\cO[[x_1,x_2,x_3]]$ by proposition 5.8 (1) of \cite{Sho16} and all the assertions are clear. If $\bar{\rho}_b|_{I_{F_v}}$ is trivial but $\bar{\rho}_b|_{G_{F_v}}$ is non-trivial, then $R^{\square,\xi_v}_{v}\cong\cO[[x_1,x_2,x_3,x_4]]/(x_1^2x_2-c^2)$ with $c=\xi_v(t)-\xi_v(t)^{-1}$ (proposition 5.8 (2) of \cite{Sho16}). Clearly we are left to show $(S\widehat{\otimes}_\cO R^{\square,\xi_v}_{v})[\frac{1}{p}]$ is connected. Consider the homomorphism
\[\varphi:S[[x_1,x_2,x_3,x_4]]/(x_1^2x_2-c^2)\longrightarrow S[[x_1,y_2,x_3,x_4]]/(x_1y_2-c)\] 
sending $x_2$ to $y_2^2$. It is easy to see that this map is surjective after inverting $p$. Moreover if $f(x_1,x_2)\in S[[x_3,x_4]][[x_1,x_2]]$ is in the kernel of the map $\varphi$ i.e. $f(x_1,y_2^2)=g(x_1,y_2)(x_1y_2-c)$ for some $g(x_1,y_2)\in S[[x_3,x_4]][[x_1,y_2]]$, then $f(x_1,y_2^2)^2=-g(x_1,y_2)g(x_1,-y_2)(x_1^2y_2^2-c^2)$. But $g(x_1,y_2)g(x_1,-y_2)=h(x_1,y_2^2)$ for some $h(x_1,y_2)\in S[[x_3,x_4]][[x_1,y_2]]$. Hence $f(x_1,x_2)^2=0$ in $S[[x_1,x_2,x_3,x_4]]/(x_1^2x_2-c^2)$. Thus $\varphi$ has nilpotent kernel after inverting $p$ and it suffices to show that $\Spec (S[[x_1,y_2,x_3,x_4]]/(x_1y_2-c))[\frac{1}{p}]$ is connected. This can be proved by the same argument as in the first part of the proof of lemma \ref{ccal} since $\cO[[x_1,y_2,x_3,x_4]]/(\varpi,x_1y_2-c)$ is generically reduced.

The dimension of $S\widehat{\otimes}_\cO R^{\square,\xi_v}_{v}$ is easy to compute as it is flat over $R^{\square,\xi_v}_{v}$. We remark that the local deformation problems considered in all these references have no condition on the determinants of the liftings. Hence their deformation rings have one more formal variable than our deformation rings.
\end{proof}

Now we can prove proposition \ref{rlocp}.
\begin{proof}[Proof of proposition \ref{rlocp}]
We denote $(\widehat{\bigotimes}_{v\in\Sigma_p}R^{\square}_v)\widehat{\otimes}(\widehat{\bigotimes}_{v\in T'}R^{\square,ur}_v)$ by $R_1$ and the pull-back of $\kq^{\{\xi_v\}}_{\loc}$ to $R_1$ by $\kq_1$. It follows from the explicit descriptions in lemma \ref{varlocdim} that $R_1$ is normal, hence $\widehat{(R_1)_{\kq_1}}$, as a completion of $(R_1)_{\kq_1}$, is also normal. Moreover, using our assumption in the beginning of \ref{kqrho}, we see that for any finite extension $L/E$ with ring of integers $O_L$, $\widehat{(R_1)_{\kq_1}}\otimes_\cO O_L$ is a normal local ring. In particular, $\widehat{(R_1)_{\kq_1}}[\frac{1}{p}]$ is geometrically normal and geometrically connected.

Let $\Sigma_1$ (resp. $\Sigma_2$) be the set of $v\in S\setminus\Sigma_p$ such that $\xi_v$ is non-trivial (resp. trivial). We may apply the second part of lemma \ref{lrca} repeatedly and conclude that $(\widehat{(R_1)_{\kq_1}}\widehat{\otimes} \widehat{\bigotimes}_{v\in \Sigma_1}R^{\square,\xi_v}_v)[\frac{1}{p}]$ is geometrically normal and geometrically connected. Clearly $S_1=\widehat{(R_1)_{\kq_1}}\widehat{\otimes}_\cO \widehat{\bigotimes}_{v\in \Sigma_1}R^{\square,\xi_v}_v$ is flat over $\cO$ as $R^{\square,\xi_v}_v$ and $R_1$ are all flat $\cO$-algebras. Hence $S_1$ is geometrically integral. This proves the first part of the proposition. The dimension of $S_1$ is $3[F:\Q]+3|P|-3|\Sigma_2|$.

For each $v\in \Sigma_2$, we let $X_v^1$ be the finite type $\cO$-algebra $A^{\xi_v}$ defined in lemma \ref{lrca} and $x_v\in\Spec X_v^1$ be the $x$ there. Then in order to apply \ref{ccal} with $R=X,S=S_1$ and $\km$ given by the product of all $x_v$, it suffices to show that $X=\bigotimes_{v\in\Sigma_2}X_v^1$ (over $\cO$) satisfies:
\begin{itemize}
\item For each minimal prime $\kp$ of $X$, $X/\kp$ is $\cO$-flat of dimension $1+3|\Sigma_2|$ and $X/(\varpi,\kp)$ is generically reduced.
\item Each minimal prime of $X/(\varpi)$ contains a unique a minimal prime of $X$.
\end{itemize}
To see this, we may assume $X^1_v$ is reduced. Then $X$ is also reduced and $\cO$-flat and we are left to show that
\begin{itemize}
\item $X$ is equidimensional of dimension $1+3|\Sigma_2|$.
\item For each height one prime $\kp$ of $X$ with $\varpi\in \kp$, $X_\kp$ is a DVR with maximal ideal $(\varpi)$.
\end{itemize}
Let $\kp$ be a height one prime ideal containing $\varpi$ and $\kp_v$ be its pull-back to $X^1_v$. Then $\kp_v$ is a minimal prime of $X^1_v/(\varpi)$. In particular, $X_{\kp}$ is a localization of $\bigotimes_{v\in\Sigma_2}(X_v^1)_{\kp_v}$. By our assumption, $(X_v^1)_{\kp_v}$ is a DVR with maximal ideal $(\varpi)$. Hence the homomorphism $\cO\to (X_v^1)_{\kp_v}$ is regular and an easy induction argument shows that $\bigotimes_{v\in\Sigma_2}(X_v^1)_{\kp_v}$ is a regular ring. Therefore $X_{\kp}$ is regular. Note that $\bigotimes_{v\in\Sigma_2}(X_v^1)_{\kp_v}/(\varpi)$ is reduced as it is the tensor product of $k(\kp_v)$ over $\F$. This implies that $X_{\kp}/(\varpi)$ is reduced. Hence $X_\kp$ is a DVR with maximal ideal $(\varpi)$.

Since $\kp$ can be viewed as a minimal prime of $\bigotimes_{v\in\Sigma_2} X_v^1/\kp_v$, it is easy to see that $X/\kp$ has dimension $\sum_{v\in\Sigma_2}\dim X_v^1/\kp_v=3|\Sigma_2|$. Hence $X$ is equidimensional of dimension $1+3|\Sigma_2|$ as $X$ is $\cO$-flat. This finishes the proof of proposition \ref{rlocp}.
\end{proof}

\begin{para} \label{grdr}
We also need to define some global framed deformation rings with certain local conditions. 
\end{para}

\begin{defn} \label{globaldefr}
Suppose $M,Q$ are finite sets of primes of $F$. 
\begin{enumerate}
\item We define $R^{\square_M,\{\xi_v\}}_{\bar{\rho}_b,Q}$ to be the universal object in $C_{\cO}$ pro-representing the functor $\mathrm{Def}^{\square_M,\{\xi_v\}}_{\bar{\rho}_b,Q}$ from $\cOf$ to the category of sets sending $R$ to the set of tuples $(\rho_R;\alpha_v)_{v\in M}$ modulo the equivalence relation $\sim_M$ where
\begin{itemize}
\item $\rho_R:G_{F,S\cup Q}\to\GL_2(R)$ is a lifting of $\bar{\rho}_b$ to $R$ with determinant $\chi$ such that $\tr (\rho_R)|_{I_{F_v}}=\xi_v+\xi_v^{-1}$ for any $v\in S\setminus \Sigma_p$.
\item $\alpha_v\in 1+M_2(\km_R),v\in M$. Here $\km_R$ is the maximal ideal of $R$.
\item $(\rho_R;\alpha_v)_{v\in M}\sim_M (\rho'_R;\alpha'_v)_{v\in M}$ if there exists an element $\beta\in 1+M_2(\km_R)$ with $\rho'_R=\beta\rho_R\beta^{-1},\alpha'_v=\beta\alpha_v$ for any $v\in M$.
\end{itemize}
We define $R^{\square_M}_{\bar{\rho}_b,Q}$ in the same way without the local conditions $\tr (\rho_R)|_{I_{F_v}}=\xi_v+\xi_v^{-1}$ for any $v\in S\setminus \Sigma_p$. If $M$ is empty, we will drop the superscript $\square_M$ in $R^{\square_M,\{\xi_v\}}_{\bar{\rho}_b,Q}$ and $R^{\square_M}_{\bar{\rho}_b,Q}$. If all $\xi_v$ are trivial, we will simply write $R^{\square_M,1}_{\bar{\rho}_b,Q}$ for $R^{\square_M,\{\xi_v\}}_{\bar{\rho}_b,Q}$.
\item We define $R^{\ps,\{\xi_v\}}_{Q}$ to be the universal deformation ring which pro-represents the functor from $\cOf$ to the category of sets sending $R$ to the set of two-dimensional pseudo-representations $T$ of $G_{F,S\cup Q}$ over $R$  such that $T$ is a lifting of $1+\bar\chi$ with determinant $\chi$ and
\[T|_{I_{F_v}}=\xi_v+\xi_v^{-1} \]
for any $v\in S\setminus \Sigma_p$. If $\xi_v$ are all trivial, we will simply write $R^{\ps,1}_Q$.

We also define $R^{\ps}_Q$ in the same way without any condition for $v\in S\setminus \Sigma_p$.
\end{enumerate}
\end{defn}

\begin{para} \label{pscl}
Suppose $M\supseteq S$. Given a tuple $(\rho_R;\alpha_v)_{v\in M}$ as in the above definition, then for $v\in M$, $\alpha_v^{-1}\rho_R\alpha_v|_{G_{F_v}}$ is a well-defined lifting of $\bar{\rho}_b|_{G_{F_v}}$. This induces natural maps $R^{\square,\xi_v}_v\to R^{\square_M,\{\xi_v\}}_{\bar{\rho}_b,Q}$ and $R^{\square}_v\to R^{\square_M}_{\bar{\rho}_b,Q}$. It is easy to see that 
\[R^{\square_M,\{\xi_v\}}_{\bar{\rho}_b,Q}\cong R^{\square_M}_{\bar{\rho}_b,Q}\otimes_{(\bigotimes_{v\in S\setminus\Sigma_p} R^{\square}_v)}(\bigotimes_{v\in S\setminus\Sigma_p} R^{\square,\xi_v}_v).\]

Note that our local deformation constraints are all defined via the traces. So we can rewrite the above isomorphism as:
\[R^{\square_M,\{\xi_v\}}_{\bar{\rho}_b,Q}\cong R^{\square_M}_{\bar{\rho}_b,Q}\otimes_{R^{\ps}_Q}R^{\ps,\{\xi_v\}}_Q,\]
where the map $R^{\ps}_Q\to R^{\square_M}_{\bar{\rho}_b,Q}$ is given by evaluating the trace of the universal lifting.
\end{para}

\subsection{Existence of Taylor-Wiles primes} \label{eotwp}
\begin{para}
We will freely use the notation introduced in \ref{defnP}. In \ref{rxivloc}, we defined $R^{\{\xi_v\}}_{\loc}$ to be
\[(\widehat{\bigotimes}_{v\in \Sigma_p}R^{\square}_v)\widehat{\otimes}(\widehat{\bigotimes}_{v\in S\setminus \Sigma_p}R^{\square,\xi_v}_v)\widehat{\otimes}(\widehat{\bigotimes}_{v\in T'}R^{\square,ur}_v),\]
where all the completed tensor products are taken over $\cO$. Then there is a natural map 
\[R^{\{\xi_v\}}_{\loc}\to R^{\square_P,\{\xi_v\}}_{\bar{\rho}_b,Q}\]
for any finite set of primes $Q$. Recall that we fix a nice prime $\kq$ in the beginning of this section \ref{kqrho}. By the universal property, $(\rho(\kq)^o;1)_{v\in P}$ gives rise to a prime $\kq_{b,Q}$ of $R^{\square_P,\{\xi_v\}}_{\bar{\rho}_b,Q}$, whose pull-back to $R^{\{\xi_v\}}_{\loc}$ is $\kq^{\{\xi_v\}}_{\loc}$. Note that by our choice of $T'$,  we have $B=R^{\square_P,\{\xi_v\}}_{\bar{\rho}_b,Q}/\kq_{b,Q}=R^{\{\xi_v\}}_{\loc}/\kq^{\{\xi_v\}}_{\loc}$.

For any $A$-module $M$ of finite length, we denote its length by $\ell(M)$. Now we can state the main result of this subsection. See also Proposition 6.10 of \cite{SW99}.
\end{para}

\begin{prop} \label{exttwp}
Let $r=\dim_{k(\kq)}H^1(G_{F,P},\ad^0\rho(\kq)(1))$, where $\ad^0\rho(\kq)$ denotes the trace $0$ subspace of the adjoint representation $\ad \rho(\kq)$ of $\rho(\kq)$ and $(1)$ denotes the Tate twist. Then there exists an integer $C$ such that for any positive integer $N$, we can find a finite set of primes $Q_N$ disjoint with $P$ such that
\begin{enumerate}
\item $|Q_N|=r$.
\item $N(v)\equiv 1\mod p^N$ for any $v\in Q_N$.
\item $\rho(\kq)(\Frob_v)$ has distinct eigenvalues $\alpha_v,\beta_v$ with $\ell(A/(\alpha_v-\beta_v)^2)<C$ for any $v\in Q_N$.
\item There exists an $A$-module $M_N$ with $\ell(M_N)<C$ such that 
\[\kq_{b,Q_N}/(\kq_{b,Q_N}^2,\kq^{\{\xi_v\}}_{\loc})\otimes_B A \cong A^{\oplus g} \oplus M_N\] 
as $A$-modules, where $g=r+|P|-[F:\Q]-1$.
\item There exists a map $R^{\{\xi_v\}}_{\loc}[[x_1,\cdots,x_g]]\to R^{\square_P,\{\xi_v\}}_{\bar{\rho}_b,Q_N}$ such that the images of $x_i$ are in $\kq_{b,Q_N}$ and $\kq_{b,Q_N}/(\kq_{b,Q_N}^2,\kq^{\{\xi_v\}}_{\loc},x_1,\cdots,x_g)$ is killed by some element $f\in B$ with $\ell(A/(f))<C$. Note that this implies such $f$ can be chosen independently of $N$.
\end{enumerate}
\end{prop}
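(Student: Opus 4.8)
The plan is to run the Taylor--Wiles argument in the form used by Skinner--Wiles (this is the analogue of Proposition~6.10 of \cite{SW99}), but with the Galois representation $\rho(\kq)^o\colon G_{F,S}\to\GL_2(A)$, $A=\F[[T]]$, playing the role normally played by a residual representation, while keeping quantitative control of the lengths of the $A$-modules that intervene. The output of the proof is a single constant $C$, depending only on the fixed data --- $F$, $\chi$, $P$, the representation $\rho(\kq)^o$, and a finite list of cohomology classes fixed once and for all --- but \emph{not} on $N$; all the length bounds in (3), (4) and (5) are established with this $C$. The congruence (2) and the count $|Q_N|=r$ in (1) are as in the classical construction; the genuinely new points are that the Frobenius elements at the chosen primes can be taken to have eigenvalue gap of bounded $T$-adic valuation, and that the torsion in the relative cotangent module stays bounded as $N$ grows.

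First I would fix the Galois cohomology. The $A$-module $H^1(G_{F,P},\ad^0\rho(\kq)^o(1))$, where $\ad^0\rho(\kq)^o$ is the $A$-lattice of trace-zero endomorphisms of $\rho(\kq)^o$, is finitely generated, and its base change to $k(\kq)$ is the $r$-dimensional space $H^1(G_{F,P},\ad^0\rho(\kq)(1))$; choose classes $\phi_1,\dots,\phi_r$ in the former reducing to a $k(\kq)$-basis of the latter. A Greenberg--Wiles Euler-characteristic computation for $\ad^0\rho(\kq)$ over the field $k(\kq)$ --- using irreducibility of $\rho(\kq)$ so that $H^0(G_F,\ad^0\rho(\kq))=0$, the oddness $\chi(c)=-1$ which makes each of the $[F:\Q]$ real places contribute $-1$, the fact that $p$ splits completely in $F$, and the explicit structure of $R^{\{\xi_v\}}_{\loc}$ from Proposition~\ref{rlocp} and Lemma~\ref{varlocdim} --- shows that, once the dual Selmer group $H^1_{\mathcal{L}^\perp}(G_{F,S\cup Q_N},\ad^0\rho(\kq)(1))$ vanishes, the relative cotangent module $\kq_{b,Q_N}/(\kq_{b,Q_N}^2,\kq^{\{\xi_v\}}_{\loc})\otimes_B k(\kq)$ has dimension exactly $g=r+|P|-[F:\Q]-1$. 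Thus the proposition reduces to producing a set $Q_N$ of $r$ primes with $N(v)\equiv1\bmod p^N$ that kills the dual Selmer group, subject to the two additional bounded-length requirements.

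Next comes the Chebotarev step, where the hypothesis that $\kq$ is nice is used. Since $\rho(\kq)$ is irreducible there is $\sigma_0\in G_{F,P}$ with $\mathrm{disc}(\rho(\kq)(\sigma_0))\neq0$; set $e_0=v_T(\mathrm{disc}(\rho(\kq)^o(\sigma_0)))$. Condition~(3) of Definition~\ref{nice} (together with the assumption $\bar\chi|_{G_{F_v}}\neq\omega^{\pm1}$ at the places above $p$ when $p=3$) rules out the degenerate dihedral and small-image situations, hence yields the standard ``big image'' lemma: for each nonzero $\phi\in H^1(G_{F,P},\ad^0\rho(\kq)(1))$ there is $\sigma\in G_{F,P}$ with $\rho(\kq)(\sigma)$ of distinct eigenvalues and $\phi(\sigma)$ nontrivial in the relevant $\Frob$-eigenline of $\ad^0\rho(\kq)(1)$; over the lattice $\rho(\kq)^o$ this attaches a $T$-adic valuation $e(\phi)$, bounded over the chosen $\phi_i$. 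Let $C$ exceed $e_0$, the $e(\phi_i)$, and the contributions coming from the Wiles formula above. Given $N$, apply the Chebotarev density theorem to the finite Galois extension of $F$ cut out by $\rho(\kq)^o\bmod T^C$, by $\mu_{p^N}$, and by the $A/T^C$-reductions of $\phi_1,\dots,\phi_r$; choosing the conjugacy classes of $\Frob_{v_1},\dots,\Frob_{v_r}$ one at a time, so that the matrix of localizations $\bigl(\mathrm{loc}_{v_i}\phi_j\bigr)$ becomes triangular with unit diagonal modulo $T^C$, produces primes $v_1,\dots,v_r\notin P$ with $N(v_i)\equiv1\bmod p^N$, with $\rho(\kq)(\Frob_{v_i})$ of distinct eigenvalues and $\ell\bigl(A/(\alpha_{v_i}-\beta_{v_i})^2A\bigr)\le e_0<C$, and such that imposing the Taylor--Wiles conditions at $Q_N=\{v_1,\dots,v_r\}$ makes the dual Selmer group vanish (the usual inductive argument, using that the localization maps are nonzero over $k(\kq)$). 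Properties (1), (2), (3) follow at once; for (4) and (5) one lifts a basis of the free rank-$g$ part of the relative cotangent module to elements $x_1,\dots,x_g$, so the cokernel becomes a quotient of the torsion submodule $M_N$, of length $<C$ by the estimates above, annihilated by an element $f\in B$ with $\ell(A/(f))<C$ --- built from the eigenvalue gaps and the determinant of the localization matrix and pulled back along $B\hookrightarrow A$.

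The crux of the whole argument is the uniformity of $C$ in $N$. This rests on two observations. First, all the cohomology modules in play --- $H^1(G_{F,P},\ad^0\rho(\kq)^o(1))$ and the local cohomology at the candidate Taylor--Wiles primes --- are finitely generated over the Noetherian ring $A$, so the torsion visible after reduction modulo $T^n$ stabilizes for $n\gg0$ (a Mittag--Leffler / Artin--Rees argument), which is what lets us fix the truncation level $T^C$ once and for all. Second, the ``big image'' lemma must be formulated so that the valuation $e(\phi)$ measuring how nontrivially a class can be localized at an admissible Taylor--Wiles prime is bounded independently of which such prime is chosen --- and this is exactly where the conditions built into ``nice'' (the restriction of $\bar\chi$ at a place above $p$, the infinite-order hypothesis on $\chi_{v,1}/\chi_{v,2}$, and the $p=3$ exclusion) enter. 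The remaining bookkeeping --- the precise value $g=r+|P|-[F:\Q]-1$ and the check that the presentation in (5) has exactly $g$ power-series variables --- is a routine, if somewhat lengthy, application of the Greenberg--Wiles formula together with Proposition~\ref{rlocp}.
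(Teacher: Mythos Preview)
Your outline is essentially the paper's own approach (which in turn follows \cite{SW99} Proposition~6.10): reduce part~(5) to part~(4), compute the relative cotangent module via the Greenberg--Wiles formula so that everything is governed by the length of the dual Selmer group $H^1_{\cL^\perp_{P\cup Q_N}}(G_{F,P\cup Q_N},W_n^*)$, and then show this length can be bounded uniformly in $N$ by choosing the Taylor--Wiles primes well via Chebotarev. So the architecture is right.

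Two points deserve sharpening. First, the step you call the ``big image lemma'' has real content that you are not making explicit. The paper's argument runs through the inflation--restriction sequence
\[
0\to H^1(\Gamma,W^*)\to H^1(G_{F,P},W^*)\to \Hom_\Gamma(G_{\tilde F,P},W^*),
\]
where $\Gamma=G_{F,P}/(\ker\varepsilon\cap\ker\rho(\kq))$, and the key input is the lemma (from \cite{SW99}, with a separate argument in the dihedral case) that $\ell(H^1(\Gamma,W^*))<\infty$. This is what lets you pass a class $\phi$ to a nonzero homomorphism $G_{\tilde F,P}\to W^*$ and then modify a fixed $\sigma_0\in G_{F(\zeta_{p^\infty})}$ by an element of $G_{\tilde F,P}$ to achieve the desired nontriviality with bounded $T$-adic loss. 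Crucially, the elements $\sigma_1,\dots,\sigma_r$ are chosen once and for all in $G_{F(\zeta_{p^\infty})}$, not merely in $G_{F,P}$; this is what makes the congruence $N(v)\equiv1\bmod p^N$ compatible with the eigenvalue and Selmer conditions for \emph{every} $N$ simultaneously. Your formulation ``$\sigma\in G_{F,P}$'' and the truncation-mod-$T^C$ approach obscure this, and without it the uniformity in $N$ is not clear.

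Second, the hypotheses you invoke are slightly off. The condition actually used here is condition~(3) of Definition~\ref{nice} (if $\rho(\kq)$ is dihedral then $L\cap F(\zeta_p)=F$), which guarantees that $\rho(\kq)|_{G_{F(\zeta_{p^\infty})}}$ has large enough image. The infinite-order hypothesis on $\chi_{v,1}/\chi_{v,2}$ and the $p=3$ exclusion are not used in this proposition; the former is a \emph{sufficient} condition for niceness (Lemma~\ref{nirred}), and the latter enters elsewhere in the paper.
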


\begin{para}
Note that the last part is a direct consequence of the previous one. Indeed, denote $\kq_{b,Q_N}/(\kq_{b,Q_N}^2,\kq^{\{\xi_v\}}_{\loc})$ by $M$. Since $A$ and $B$ have the same residue fields, we can find $x_1,\cdots,x_g\in M$ such that $\left(M/(Bx_1+\cdots+Bx_g)\right)\otimes_B A\cong M_N$.  By our construction, we may assume  $T^{C}A\subseteq B$ after possibly increasing $C$. Now we claim that $M/(Bx_1+\cdots+Bx_g)$ is killed by some element $f\in B$ with $\ell(A/(f))\leq2C$. Let $M'=M/(Bx_1+\cdots+Bx_g)$. It suffices to show that $T^{C}$ annihilates $T^CM'$. Consider the map $M_N\cong M'\otimes_B A\to M'$ sending $m\otimes a\in M'\otimes_B A$ to $(T^Ca)m$. Clearly this map is $B$-linear and the image contains $T^CM'$. Hence our claim follows from $\ell(M_N)<C$. Therefore it suffices to prove the existence of $Q_N$ satisfying the first four properties. 

The following result is standard.
\end{para}

\begin{lem} \label{stdcodt}
Let $Q$ be a finite set of primes disjoint with $P$ such that $N(v)\equiv 1\mod p$ and $\rho(\kq)^o(\Frob_v)$ has distinct eigenvalues $\alpha_v,\beta_v$ for any $v\in Q$. Denote $\kq_{b,Q}/(\kq_{b,Q}^2,\kq^{\{\xi_v\}}_{\loc})\otimes_B A/(T^n)$ by $V_n$. Then
\[\ell(V_n)=\ell(H^1_{\cL^{\perp}_{P\cup Q}}(G_{F,P\cup Q},W_n^*))-\ell(H^0(G_{F},W_n^*))+gn+\sum_{v\in Q}\ell(A/(T^{2n},(\alpha_v-\beta_v)^2)),\]
where \begin{itemize}
\item $W_n=\ad^0 \rho(\kq)^o\otimes A/(T^n)$ and $W_n^*=\Hom_{\F}(W_n,\F)(1)$.
\item $H^1_{\cL^{\perp}_{P\cup Q}}(G_{F,P\cup Q},W_n^*)=\ker (H^1(G_{F,P\cup Q},W_n^*)\to \bigoplus_{v\in Q}H^1(G_{F_v},W_n^*))$.
\item $g=r+|P|-[F:\Q]-1$ as defined in the proposition.
\end{itemize}
\end{lem}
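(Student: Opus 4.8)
The module $V_n$ is, essentially by unwinding the definitions, the base change along $B\to A/(T^n)$ of the relative cotangent space at $\kq_{b,Q}$ of the global framed deformation ring $R^{\square_P,\{\xi_v\}}_{\bar{\rho}_b,Q}$ over the local ring $R^{\{\xi_v\}}_{\loc}$; recall from \ref{eotwp} that $\kq_{b,Q}$ pulls back to $\kq^{\{\xi_v\}}_{\loc}$ and that $B=R^{\square_P,\{\xi_v\}}_{\bar{\rho}_b,Q}/\kq_{b,Q}=R^{\{\xi_v\}}_{\loc}/\kq^{\{\xi_v\}}_{\loc}$. Since $V_n$ has finite length over $A/(T^n)$, one has $\ell(V_n)=\ell\big(\Hom_{A/(T^n)}(V_n,A/(T^n))\big)$, and by the usual dictionary the right-hand side is the group of framed deformations of $\rho(\kq)^o\bmod T^n$ to the dual numbers $A/(T^n)[\epsilon]$ whose restriction along $R^{\{\xi_v\}}_{\loc}$ is trivial; after the standard reduction to $A=\F[[T]]$ of \ref{kqrho} the bookkeeping can be done over $\F$. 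Writing such a deformation as $\rho(\kq)^o(1+\epsilon\phi)$ with $\phi\in Z^1(G_{F,P\cup Q},W_n)$, $W_n=\ad^0\rho(\kq)^o\otimes A/(T^n)$, and recording framing cochains $\beta_v\in\ad\rho(\kq)^o\otimes A/(T^n)$ for $v\in P$, triviality along each local factor $R^{\square}_v,R^{\square,\xi_v}_v,R^{\square,ur}_v$ of $R^{\{\xi_v\}}_{\loc}$ imposes the condition $\phi|_{G_{F_v}}=d\beta_v$ at every $v\in P$ (so in particular $\phi|_{G_{F_v}}$ is a coboundary there), with no condition at the Taylor--Wiles places $v\in Q$. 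A dimension count in the resulting description (cocycles, with the torsors of framing choices contributing $\sum_{v\in P}(n+\ell H^0(G_{F_v},W_n))$, modulo the free conjugation action of $\ad\rho(\kq)^o\otimes A/(T^n)$) gives, after a short computation,
\[\ell(V_n)=\ell\big(H^1_{\cL_{P\cup Q}}(G_{F,P\cup Q},W_n)\big)-\ell\big(H^0(G_F,W_n)\big)+(|P|-1)n+\sum_{v\in P}\ell\big(H^0(G_{F_v},W_n)\big),\]
where $\cL$ is the local system with $\cL_v=0$ for $v\in P$ and $\cL_v=H^1(G_{F_v},W_n)$ for $v\in Q$, so that $H^1_{\cL_{P\cup Q}}(W_n)=\ker\big(H^1(G_{F,P\cup Q},W_n)\to\bigoplus_{v\in P}H^1(G_{F_v},W_n)\big)$.

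I would then pass to the Tate dual via the Greenberg--Wiles formula applied to $W_n$ with the system $\cL$,
\[\frac{\#H^1_{\cL_{P\cup Q}}(W_n)}{\#H^1_{\cL^{\perp}_{P\cup Q}}(W_n^*)}=\frac{\#H^0(G_F,W_n)}{\#H^0(G_F,W_n^*)}\prod_{v}\frac{\#\cL_v}{\#H^0(G_{F_v},W_n)},\]
the product running over all $v\in P\cup Q$ and all archimedean $v$ (where $\cL_v=0$). The dual conditions are $\cL^{\perp}_v=H^1(G_{F_v},W_n^*)$ for $v\in P$ and $\cL^{\perp}_v=0$ for $v\in Q$, so $H^1_{\cL^{\perp}_{P\cup Q}}(W_n^*)$ is precisely the group in the statement. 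Now I would evaluate the local factors one place at a time: at the real places (using $p$ odd) $H^1(G_{F_v},W_n)=0$, and since $\rho$ is odd, complex conjugation acts on $\ad^0$ with eigenvalues $1,-1,-1$, whence $H^0(G_{F_v},W_n)\cong A/(T^n)$ and the archimedean factors contribute $-[F:\Q]n$; at $v\in P$ the factor $\#\cL_v/\#H^0(G_{F_v},W_n)$ exactly cancels the framing term $\sum_{v\in P}\ell H^0(G_{F_v},W_n)$; and at $v\in Q$, using $N(v)\equiv1\bmod p$ (so the cyclotomic twist is trivial modulo $p$ and $W_n^*\cong W_n$ as $G_{F_v}$-modules), local Tate duality, and the trivial local Euler characteristic away from $p$, the factor equals $\#H^0(G_{F_v},W_n)$; finally, since $\Frob_v$ acts on $\ad^0\rho(\kq)^o$ with eigenvalues $1,\alpha_v\beta_v^{-1},\beta_v\alpha_v^{-1}$ and $\alpha_v\beta_v=\chi(\Frob_v)\in A^\times$, one computes $\ell H^0(G_{F_v},W_n)=n+\ell\big(A/(T^{2n},(\alpha_v-\beta_v)^2)\big)$. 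Assembling all contributions and using the global Euler characteristic to identify the surviving coefficient of $n$, which comes out to $|Q|+|P|-[F:\Q]-1$ and hence equals $g=r+|P|-[F:\Q]-1$ in the situation $|Q|=r$ in which the lemma is applied (cf. Proposition \ref{exttwp}), yields the asserted formula.

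The main difficulty I anticipate is the careful tracking of constants rather than any single conceptual step: one must distinguish the $\ad$-valued framing cochains from the $\ad^0$-valued deformation cocycles (this is the source of the $(|P|-1)n$ term), keep precise account of the archimedean and global $H^0$ contributions in the Greenberg--Wiles formula, and carry out the reduction to $A=\F[[T]]$ so that every cohomology group in sight is a genuine finite length $A/(T^n)$-module of the expected $\F$-dimension. One should also record that the exact sequences produced by the framed-deformation dictionary remain exact after $\otimes_B A/(T^n)$, which is automatic because every term is of finite length and length is additive.
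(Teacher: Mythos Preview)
Your proposal is correct and follows essentially the same route as the paper: dualize $V_n$ to identify it with framed deformations of $\rho(\kq)^o\bmod T^n$ over the dual numbers that are trivial along $R^{\{\xi_v\}}_{\loc}$, pass to the cocycle description to obtain the formula $\ell(V_n)=\ell(H^1_{\cL_{P\cup Q}}(W_n))-\ell(H^0(G_F,W_n))+(|P|-1)n+\sum_{v\in P}\ell(H^0(G_{F_v},W_n))$, then apply Greenberg--Wiles/Poitou--Tate and compute the local terms. You also correctly observe that the coefficient of $n$ one actually obtains is $|Q|+|P|-[F:\Q]-1$, which matches the stated $g=r+|P|-[F:\Q]-1$ precisely when $|Q|=r$, i.e.\ in the setting of Proposition~\ref{exttwp} where the lemma is used.
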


\begin{proof}
We give a sketch of proof here. By Pontryagin duality, $\ell(V_n)$ is equal to the length of
\[\Hom_A(V_n,A/(T^n))=\Hom_B(\kq_{b,Q}/(\kq_{b,Q}^2,\kq^{\{\xi_v\}}_{\loc}),A/(T^n)).\]
Since $R^{\{\xi_v\}}_{\loc}+\kq_{b,Q}=R^{\square_P,\{\xi_v\}}_{\bar{\rho}_b,Q}$, any element $h_0$ in $\Hom_B(\kq_{b,Q}/(\kq_{b,Q}^2,\kq^{\{\xi_v\}}_{\loc}),A/(T^n))$ can be uniquely extended to a $R^{\{\xi_v\}}_{\loc}$-algebra homomorphism 
\[h: R^{\square_P,\{\xi_v\}}_{\bar{\rho}_b,Q}\to B\oplus (A/T^n)\epsilon\] 
sending $x\in \kq_{b,Q}$ to $h_0(x)\epsilon$. Here the ring structure of $B\oplus (A/T^n)\epsilon$ is given by $(b+a\epsilon)(b'+a'\epsilon)=bb'+(ab'+a'b)\epsilon$ and we view it as a $R^{\{\xi_v\}}_{\loc}$-algebra by $R^{\{\xi_v\}}_{\loc}\to R^{\{\xi_v\}}_{\loc}/\kq^{\{\xi_v\}}_{\loc}=B\hookrightarrow B\oplus (A/T^n)\epsilon$. Note that by construction, $h(\kq_{b,Q})\subseteq (A/T^n)\epsilon$. In fact, this induces an isomorphism between $\Hom_B(\kq_{b,Q}/(\kq_{b,Q}^2,\kq^{\{\xi_v\}}_{\loc}),A/(T^n))$ and 
\[\{h\in \Hom_{R^{\{\xi_v\}}_{\loc}-alg}(R^{\square_P,\{\xi_v\}}_{\bar{\rho}_b,Q},B\oplus (A/T^n)\epsilon), h(\kq_{b,Q})\subseteq (A/T^n)\epsilon\}.\]
By the universal property, this is the subset $\{[(\rho;\alpha_v)_{v\in P}]\}$ of $\mathrm{Def}^{\square_P,\{\xi_v\}}_{\bar{\rho}_b,Q}(B\oplus (A/T^n)\epsilon)$ with 
\begin{itemize}
\item $\alpha_v\in\ker(\GL_2(B\oplus (A/T^n)\epsilon)\to \GL_2(B)),v\in P$.
\item $\rho\mod (A/T^n)\epsilon = \rho(\kq)^o$ and $\alpha_v^{-1}\rho|_{G_{F_v}}\alpha_v=\rho(\kq)^o|_{G_{F_v}}$ for $v\in P$.
\end{itemize}
Write $\rho(\sigma)=(1+\phi(\sigma)\epsilon)\rho(\kq)^o(\sigma), \alpha_v=1+m_v\epsilon$. It is easy to check that $\phi:G_{F,P\cup Q}\to \ad^0\rho(\kq)^o\otimes A/T^n$ defines a $1$-cocycle in $Z^1(G_{F,P\cup Q},\ad^0\rho(\kq)^o\otimes A/T^n)$ with $\phi|_{G_{F_v}}=\partial m_v$ for $v\in P$. Conversely, any $(\rho;\alpha_v)_{v\in P}$ arises from such a $1$-cocycle and $m_v$. Using this, it is not hard to see that 
\[\ell(V_n)=\sum_{v\in P}\ell(H^0(G_{F_v},W_n))+\ell(H^1_{\cL_{P\cup Q}}(G_{F},W_n))-\ell(H^0(G_{F,P\cup Q},W_n))+(|P|-1)n,\]
where $H^1_{\cL_{P\cup Q}}(G_{F,P\cup Q},W_n)=\ker (H^1(G_{F,P\cup Q},W_n)\stackrel{\mathrm{res}}{\longrightarrow} \bigoplus_{v\in P\cup Q}H^1(G_{F_v},W_n)/\cL_v)$ and $\cL_v=0$ if $v\in P$ and $\cL_v=H^1(G_{F_v},W_n)$ otherwise. By Poitou-Tate long exact sequence and global Euler characteristic formula (see Theorem 2.19 of \cite{DDT95}), we may compute
\begin{eqnarray*}
\ell(H^1_{\cL_{P\cup Q}}(G_{F,P\cup Q},W_n))=&\ell(H^0(G_{F},W_n))-\ell(H^0(G_{F},W_n^*))+\ell(H^1_{\cL^{\perp}_{P\cup Q}}(G_{F,P\cup Q},W_n^*))\\
&+ \sum_{v\in P\cup  Q}(\ell(\cL_v)-\ell(H^0(G_{F_v},W_n)))-\sum_{v|\infty}\ell(H^0(G_{F_v},W_n)),
\end{eqnarray*}
where $H^1_{\cL^{\perp}_{P\cup Q}}(G_{F,P\cup Q},W_n^*)$ and $W_n^*$ were defined in the lemma. By local Euler characteristic formula and local Tate duality, for $v\in Q$, it follows from our assumption on $v$ that
\[\ell(\cL_v)-\ell(H^0(G_{F_v},W_n))=\ell(H^0(G_{F_v},W_n^*))=\ell(H^0(G_{F_v},W_n))=n+\ell(A/(T^{2n},(\alpha_v-\beta_v)^2)).\]

For any $v|\infty$, by our oddness assumption,
\[\ell(H^0(G_{F_v},W_n))=n.\]

The lemma is a direct consequence of all these formulae.
\end{proof}

\begin{para}
Note that the perfect pairing $W_n\times W_n\to A/T^n, (X,Y)\mapsto \tr(XY)$ induces an isomorphism $W_n(1)\cong W_n^*$ by sending $\sum_{i=0}^{n-1}a_iT^{i}\in A/T^n, a_i\in \F$ to $a_{n-1}\in \F$. It follows from the third condition in the definition of nice prime (\ref{nice}) that $\ell(H^0(G_{F},W_n(1)))$ is bounded independent of $n$. Thus in view of the previous lemma, proposition \ref{exttwp} follows from
\end{para}

\begin{lem} \label{lem1}
There exists an integer $C$ such that for any $N>0$, we can find a finite set of primes $Q_N$ disjoint with $P$ such that the first three parts of proposition \ref{exttwp} hold and 
\[\ell(H^1_{\cL^{\perp}_{P\cup Q_N}}(G_{F,P\cup Q_N},W_n^*))<C\]
for any $n$.
\end{lem}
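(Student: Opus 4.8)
The plan is to carry out the Taylor--Wiles prime-selection argument in the $\Lambda$-adic form of Skinner--Wiles (Proposition 6.10 of \cite{SW99}); the one genuinely new feature, that $\bar{\rho}_b$ is reducible, is absorbed by running the whole argument with the absolutely irreducible $\rho(\kq)$ (and its lattice $\rho(\kq)^o$) in place of $\bar{\rho}_b$. Set $\mathcal{M}=\ad^0\rho(\kq)^o$, a free $A$-module of rank $3$. Using the trace pairing one has $W_n^*\cong(\mathcal{M}/T^n\mathcal{M})(1)$ as $G_F$-modules, and one forms $\mathcal{M}^*_\infty:=\varinjlim_n W_n^*\cong\Hom_A(\mathcal{M},\F((T))/\F[[T]])(1)$, a cofinitely generated $A$-module of corank $3$ with $H^i(G_{F,P\cup Q},\mathcal{M}^*_\infty)=\varinjlim_n H^i(G_{F,P\cup Q},W_n^*)$. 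Since $\mathcal{M}^*_\infty$ is $T$-divisible, the exact sequences $0\to W_n^*\to\mathcal{M}^*_\infty\xrightarrow{T^n}\mathcal{M}^*_\infty\to 0$, combined with the usual diagram chase in the complexes defining the dual Selmer groups, give for every $n$ a bound of $\ell\bigl(H^1_{\cL^{\perp}_{P\cup Q}}(G_{F,P\cup Q},W_n^*)\bigr)$ by $\ell\bigl(H^0(G_F,\mathcal{M}^*_\infty)\bigr)+\ell\bigl(H^1_{\cL^{\perp}_{P\cup Q}}(G_{F,P\cup Q},\mathcal{M}^*_\infty)\bigr)+\sum_{v\in Q}\ell\bigl(H^0(G_{F_v},\mathcal{M}^*_\infty)\bigr)$; and the Greenberg--Wiles global Euler characteristic formula applied to $\mathcal{M}^*_\infty$ bounds the middle summand by the outer two together with $\sum_{v\in Q}\ell(A/(\alpha_v-\beta_v)^2)$, once the generic dual Selmer group vanishes. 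Thus it suffices to produce, for each $N$, a set $Q_N$ of size $r$ with $N(v)\equiv 1\pmod{p^N}$, with $\rho(\kq)(\Frob_v)$ of distinct eigenvalues and $\ell(A/(\alpha_v-\beta_v)^2)$ bounded by a constant depending only on $\rho(\kq)^o$, and with $H^1_{\cL^{\perp}_{P\cup Q_N}}(G_{F,P\cup Q_N},\ad^0\rho(\kq)(1))=0$.

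The terms $H^0(G_F,\mathcal{M}^*_\infty)$ and $H^0(G_{F_v},\mathcal{M}^*_\infty)$ are handled by the structure of the image of $\rho(\kq)$. Finiteness of $H^0(G_F,\mathcal{M}^*_\infty)$ is the statement $(\ad^0\rho(\kq)(1))^{G_F}=0$, which holds since $\rho(\kq)$ is absolutely irreducible and, by condition (3) of Definition \ref{nice}, is not induced from a quadratic extension of $F$ contained in $F(\zeta_p)$; in particular $\rho(\kq)|_{G_{F(\zeta_{p^\infty})}}$ remains irreducible, because for $p$ odd the only quadratic subextension of $F(\zeta_{p^\infty})/F$ lies in $F(\zeta_p)$. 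To bound the length of this term, the local terms at $v\in Q_N$, and the quantities $\ell(A/(\alpha_v-\beta_v)^2)$, I will only ever use primes $v$ for which $\rho(\kq)(\Frob_v)$ is conjugate to a fixed element $h$ in the image of $\rho(\kq)|_{G_{F(\zeta_{p^\infty})}}$ whose two eigenvalues differ by an element of $T$-adic valuation bounded in terms of $\rho(\kq)^o$ alone: when the image of $\rho(\kq)$ contains a conjugate of $\SL_2(\F)$ such $h$ is abundant, and when $\rho(\kq)\cong\Ind_{G_L}^{G_F}\theta$ is dihedral, condition (3) forces $G_{F(\zeta_{p^\infty})}\not\subseteq G_L$, so one may take $h$ of trace $0$ in the nontrivial coset, where the eigenvalue difference is a unit. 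For such $v$ the relevant local $H^0$'s and eigenvalue gaps are bounded, and parts (1)--(3) of Proposition \ref{exttwp} hold automatically.

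The heart of the matter is the prime-detection step, which produces $Q_N$ and makes the generic dual Selmer group vanish. Given a nonzero class $\psi\in H^1(G_{F,P},\ad^0\rho(\kq)(1))$, I claim there are infinitely many primes $v\notin P$ with $N(v)\equiv 1\pmod{p^N}$, with $\rho(\kq)(\Frob_v)$ conjugate to $h$, and with $\mathrm{res}_v(\psi)\ne 0$. This is the usual Chebotarev-plus-cocycle argument: let $F_\infty/F$ be the finite extension cut out by $\rho(\kq)$, by $\psi$ and by $\mu_{p^N}$; since $(\ad^0\rho(\kq)(1))^{G_{F(\zeta_{p^\infty})}}=0$, inflation--restriction shows that $\psi$ restricts nontrivially to $G_{F(\zeta_{p^\infty})}$, and then, using that the image there is large (resp.\ dihedral via condition (3)), one finds $\sigma\in\Gal(F_\infty/F(\zeta_{p^N}))$ with $\rho(\kq)(\sigma)$ conjugate to $h$ and $\psi(\sigma)\notin(\ad\rho(\kq)(\sigma)-1)\bigl(\ad^0\rho(\kq)(1)\bigr)$, so that any $v$ with $\Frob_v\mapsto\sigma$ works. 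Applying this $r$ times, peeling off one dimension of $H^1(G_{F,P},\ad^0\rho(\kq)(1))$ at each stage, yields $Q_N=\{v_1,\dots,v_r\}$ for which the restriction map $H^1(G_{F,P},\ad^0\rho(\kq)(1))\to\bigoplus_i H^1(G_{F_{v_i}},\ad^0\rho(\kq)(1))$ is injective; hence the dual Selmer group over $k(\kq)$ vanishes, the $A$-module $H^1_{\cL^{\perp}_{P\cup Q_N}}(G_{F,P\cup Q_N},\mathcal{M}^*_\infty)$ has $A$-corank $0$ and is therefore finite, and feeding everything into the bounds above yields the constant $C$, independent of both $N$ and $n$.

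The step I expect to be the main obstacle is the dihedral case of the prime-detection lemma: there one must run the Chebotarev argument inside the metabelian image $\rho(\kq)(G_F)$, and it is precisely here that condition (3) of Definition \ref{nice} is indispensable --- both to keep $\rho(\kq)|_{G_{F(\zeta_{p^\infty})}}$ irreducible and to locate the element $h$ (in the nontrivial coset) whose eigenvalue gap is a unit, which is what keeps $\ell(A/(\alpha_v-\beta_v)^2)$, and hence the constant $C$, bounded as $N\to\infty$. Keeping track of this uniformity, together with the passage from the divisible module $\mathcal{M}^*_\infty$ back down to the finite-level modules $W_n^*$ (so that the bound is uniform in $n$ as well), is the other point requiring care.
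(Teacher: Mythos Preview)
Your approach is essentially the paper's, repackaged through the divisible module $\mathcal{M}^*_\infty$ rather than through the integral module $W^*=\ad^0\rho(\kq)^o(1)$ and the diagram comparing $H^1(G_{F,P},W^*)/T^n$ with $H^1(G_{F,P},W_n^*)$ (which is how the paper passes from the fixed $\sigma_i$'s to the bound uniform in $n$). Both routes reduce to the same prime-detection statement over $k(\kq)$ and both invoke condition~(3) of Definition~\ref{nice} in exactly the place you indicate.

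One point deserves tightening. Your inflation--restriction step rests on the claim $(\ad^0\rho(\kq)(1))^{G_{F(\zeta_{p^\infty})}}=0$, and your dichotomy ``image contains a conjugate of $\SL_2(\F)$'' versus ``dihedral'' is not exhaustive. In the dihedral case $\rho(\kq)=\Ind_{G_L}^{G_F}\theta$, the invariants $(\ad^0\rho(\kq))^{G_{F(\zeta_{p^\infty})}}$ need not vanish: it can happen that $\theta/\theta^c$ becomes trivial on $G_{L(\zeta_{p^\infty})}$, in which case the induced piece of $\ad^0$ acquires an invariant line. The paper handles this step differently and more robustly. Its Lemma~\ref{ktwl} shows $\ell\bigl(H^1(\Gamma,W^*)\bigr)<\infty$ with $\Gamma=G_{F,P}/(\ker\varepsilon\cap\ker\rho(\kq))$: in the non-dihedral case this is precisely Lemma~6.9 of \cite{SW99} (which needs no ``big image'' hypothesis), while in the dihedral case condition~(3) forces the product decomposition $\Gamma\cong\im\rho(\kq)\times\Gal(F(\zeta_{p^\infty})/F)$, and then the nontrivial action of $\Gal(F(\zeta_p)/F)$ on $W^*$ kills $H^1$. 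With that lemma in hand, the paper restricts a given class all the way to $G_{\tilde F}$ (where it becomes an equivariant homomorphism) and uses irreducibility of $\ad^0\rho(\kq)$ in the non-dihedral case, or the explicit decomposition $W^*\otimes k(\kq)\cong\bar\chi(1)\oplus(\Ind\theta^2)(1)$ in the dihedral case, to produce the required $\sigma$. You should cite and use this lemma rather than the sharper invariant-vanishing you assert.
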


\begin{para}
Let $Q$ be a finite set of primes as in lemma \ref{stdcodt}. Combine the following two exact sequences:
\[0\to H^1_{\cL^{\perp}_{P\cup Q}}(G_{F,P\cup Q},W_n^*)\to H^1(G_{F,P\cup Q},W_n^*)\to\bigoplus_{v\in Q}H^1(G_{F_v},W_n^*),\]
\[0\to H^1(G_{F,P},W_n^*)\to H^1(G_{F,P\cup Q},W_n^*)\to\bigoplus_{v\in Q}H^1(G_{F_v},W_n^*)/H^1(G_{k(v)},W_n^*).\]
We get that 
\[0\to H^1_{\cL^{\perp}_{P\cup Q}}(G_{F,P\cup Q},W_n^*)\to H^1(G_{F,P},W_n^*)\to\bigoplus_{v\in Q}H^1(G_{k(v)},W_n^*),\]
where the last map is the restriction map. We note that $H^1(G_{k(v)},W_n^*)\cong W_n^*/(\Frob_v-1)W_n^*$.
\end{para}

\begin{lem} \label{lem2}
Let $F(\zeta_{p^\infty})=\bigcup_n F(\zeta_{p^n})$, where $\zeta_{p^n}$ are primitive $p^n$-th roots of unity. Denote its absolute Galois group by $G_{F(\zeta_{p^\infty})}\subset G_{F}$. Then there exists $\sigma_1,\cdots,\sigma_r\in G_{F(\zeta_{p^\infty})}$ such that
\begin{itemize}
\item $\rho(\kq)(\sigma_i)$ has distinct eigenvalues for each $i$. 
\item The kernel and cokernel of the map $H^1(G_{F,P},W^*)\to \bigoplus_{i=1}^rW^*/(\sigma_i-1)W^*$ have finite lengths, where $W^*$ denotes $\Hom_A(\ad^0\rho(\kq)^o,A(1))\cong \ad^0\rho(\kq)^o(1)$, equipped with $T$-adic topology.
\end{itemize}
\end{lem}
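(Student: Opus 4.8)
The plan is to follow the standard Taylor-Wiles argument for producing auxiliary primes, but keeping track of the fact that we are working over $A=\F[[T]]$ rather than over a field, so that "nonzero" gets replaced by "bounded length as $A$-module". First I would reduce to a statement about the Galois action on $W^*$: the cohomology group $H^1(G_{F,P},W^*)$ is a finitely generated torsion $A$-module (since $\rho(\kq)$ is irreducible, so $H^0$ and $H^2$ are controlled, and $\ad^0\rho(\kq)$ is absolutely irreducible as a $k(\kq)$-representation away from small primes), hence of the form $\bigoplus_j A/(T^{m_j})$ up to bounded error. The image of a class $c$ under restriction to $G_{F(\zeta_{p^\infty})}$ lands in $H^1(G_{F(\zeta_{p^\infty})},W^*)^{G_F/G_{F(\zeta_{p^\infty})}}$, and by inflation-restriction the kernel of $H^1(G_{F,P},W^*)\to H^1(G_{F(\zeta_{p^\infty})},W^*)$ is $H^1(\Gal(F(\zeta_{p^\infty})/F),W^*{}^{G_{F(\zeta_{p^\infty})}})$, which has bounded length because $\Gal(F(\zeta_{p^\infty})/F)$ is (essentially) $\Z_p$ acting through the cyclotomic character and the invariants $W^*{}^{G_{F(\zeta_{p^\infty})}}$ have bounded length (this is exactly where condition (3) of \ref{nice}, that a dihedral $\rho(\kq)$ has splitting field linearly disjoint from $F(\zeta_p)$, is used — it guarantees $\ad^0\rho(\kq)$ has no $G_{F(\zeta_{p^\infty})}$-invariant line, up to bounded $T$-power).

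The heart of the matter is then a Chebotarev/linear-algebra argument over $\F[[T]]$: I want to show that the restriction map
\[
H^1(G_{F(\zeta_{p^\infty})},W^*)^{G_F}\longrightarrow \bigoplus_{i=1}^r W^*/(\sigma_i-1)W^*
\]
can be made injective-with-bounded-cokernel by a clever choice of $\sigma_1,\dots,\sigma_r\in G_{F(\zeta_{p^\infty})}$ with $\rho(\kq)(\sigma_i)$ regular semisimple, where $r=\dim_{k(\kq)}H^1(G_{F,P},\ad^0\rho(\kq)(1))$. For each nonzero class $c$ (more precisely, for a set of generators of $H^1$ as an $A$-module, modulo bounded-length submodules), a Cebotarev-type argument — applied to the extension of $F(\zeta_{p^\infty})$ cut out by $W^*$ together with the field generated by a cocycle representing $c$ — produces an element $\sigma$ with $\rho(\kq)(\sigma)$ having distinct eigenvalues and $c|_\sigma\notin (\sigma-1)W^*$ "to bounded $T$-order". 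The subtle point is to do this uniformly: one picks the $\sigma_i$ to kill the generators of $H^1(G_{F,P},W^*)\otimes_A \mathrm{Frac}(A)$ one at a time, and bounds the error in terms of the (finite, $n$-independent) lengths appearing, using that $H^0(G_F,W^*(1))=H^0(G_F,\ad^0\rho(\kq)^o(1)\otimes\text{(twist)})$ has bounded length by condition (3) of the definition of nice prime and the bound on $\ell(A/(\alpha_v-\beta_v)^2)$ analogues.

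The main obstacle I anticipate is the uniformity over $n$ — i.e. proving that a single choice of $\sigma_1,\dots,\sigma_r$ works simultaneously for all truncations $W_n^*$ with a bound $C$ independent of $n$ — rather than the existence of the $\sigma_i$ for each fixed $n$. This is handled by observing that $W^*=\varprojlim W_n^*$ with $W^*=\ad^0\rho(\kq)^o(1)$ free of finite rank over $A=\F[[T]]$, that $H^1(G_{F,P},W^*)$ is a finitely generated $A$-module (so its torsion is bounded by a single power $T^C$), and that $W^*/(\sigma_i-1)W^*$ is computed by a matrix over $A$ whose elementary divisors are fixed once $\sigma_i$ is fixed; feeding these finitely many $T$-powers into the snake-lemma comparison between the sequence for $W^*$ and for $W_n^*$ gives a bound $C$ depending only on $\rho(\kq)^o$ and $P$, not on $n$. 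I would then conclude Lemma \ref{lem1}, and hence Proposition \ref{exttwp}, by choosing $Q_N$ to be primes $v$ with $\Frob_v$ close to the $\sigma_i$ in the relevant finite quotient (so that $N(v)\equiv 1\bmod p^N$ and $W_n^*/(\Frob_v-1)\cong W_n^*/(\sigma_i-1)$), which forces $H^1_{\cL^\perp_{P\cup Q_N}}(G_{F,P\cup Q_N},W_n^*)$ to inject into $\ker\big(H^1(G_{F,P},W_n^*)\to\bigoplus_i W_n^*/(\sigma_i-1)\big)$, a group of length $<C$.
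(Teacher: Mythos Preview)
Your overall architecture---reduce to finding one $\sigma$ per generator of $H^1(G_{F,P},W^*)\otimes_A\mathrm{Frac}(A)$, then assemble by induction---matches the paper exactly, and your observation that the passage from $W^*$ to $W_n^*$ is handled by snake-lemma bookkeeping is correct. But the proposal misidentifies where the real content lies.

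First, a slip: $H^1(G_{F,P},W^*)$ is \emph{not} a torsion $A$-module. It has $A$-rank exactly $r$; this is why one fixes an inclusion $A^r\hookrightarrow H^1$ and looks for $r$ elements $\sigma_i$. You seem to know this later when you speak of generators of $H^1\otimes_A\mathrm{Frac}(A)$, so this is presumably just a misstatement.

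The substantive gap is in your inflation--restriction step. Restricting only to $G_{F(\zeta_{p^\infty})}$ and bounding $H^1\bigl(\Gal(F(\zeta_{p^\infty})/F),(W^*)^{G_{F(\zeta_{p^\infty})}}\bigr)$ is the easy part---and it is \emph{not} what condition (3) of ``nice'' is for. On $G_{F(\zeta_{p^\infty})}$ the representation $\rho(\kq)$ is still highly nontrivial, so a restricted class is still a genuine cocycle, and your ``Chebotarev-type argument'' has no purchase until you restrict further to $G_{\tilde F,P}=\ker(\varepsilon)\cap\ker(\rho(\kq))$, where cocycles become $\Gamma$-equivariant homomorphisms into $W^*$ for $\Gamma=G_{F,P}/G_{\tilde F,P}$. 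The crucial technical input is that $\ell\bigl(H^1(\Gamma,W^*)\bigr)<\infty$; this is nontrivial and is exactly the content of the paper's Lemma~\ref{ktwl}, which in the non-dihedral case quotes Skinner--Wiles \cite[Lemma 6.9]{SW99} and in the dihedral case uses condition (3) to get $\overbar{F}^{\ker\rho(\kq)}\cap F(\zeta_{p^\infty})=F$ and hence $\Gamma\cong\im\rho(\kq)\times\Gal(F(\zeta_{p^\infty})/F)$, from which $H^1(\Gamma,W^*)=0$. Your proposal does not address this restriction at all.

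Once that bound is in hand, the paper's argument is: in the non-dihedral case $\ad^0\rho(\kq)\otimes k(\kq)$ is irreducible over $\Gamma$, so the $A$-span of $\phi(G_{\tilde F,P})$ contains $T^CW^*$; then one takes $\sigma=\sigma_0\tau$ with $\sigma_0\in G_{F(\zeta_{p^\infty})}$ chosen to have distinct eigenvalues and $\tau\in G_{\tilde F,P}$ chosen so that $\phi(\tau)+\phi(\sigma_0)$ is large modulo $(\sigma_0-1)W^*$. In the dihedral case $\ad^0\rho(\kq)$ is \emph{not} irreducible (it decomposes as $\bar\chi\oplus\Ind\,\theta^2$), and one must separately check that for each irreducible summand an appropriate $\sigma_0$ exists---again using condition (3). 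Your proposal does not distinguish these cases.
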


\begin{para}
Before giving the proof of this lemma, we first show that this implies lemma \ref{lem1}. Fix $\sigma_1,\cdots,\sigma_r\in G_{F(\zeta_{p^\infty})}$ as in the lemma. Then the image of
\[H^1(G_{F,P},W^*)\to \bigoplus_{i=1}^rW^*/(\sigma_i-1)W^*\]
contains $T^M\left(\bigoplus_{i=1}^rW^*/(\sigma_i-1)W^*\right)$ for some integer $M\geq0$.
Now given a positive integer $N$, we may choose places $v_1,\cdots,v_r$ of $F$ with $\Frob_{v_i}\in G_{F(\zeta_{p^N})}$ close enough to $\sigma_i$ so that for $i=1,\cdots,r$, 
\begin{itemize}
\item $\rho(\mathfrak{q})(\Frob_{v_i})$ has distinct eigenvalues $\alpha_i,\beta_i$ and
$A/(\alpha_i-\beta_i)^2=A/(\alpha'_i-\beta'_i)^2$ where $\alpha'_i$ and $\beta'_i$ denote the eigenvalues of $\rho(\kq)(\sigma_i)$ ;
\item $(\sigma_i-1)W_{M+1}^*=(\Frob_{v_i}-1)W_{M+1}^*$; 
\item $\phi(\sigma_i)-\phi(\Frob_{v_i})\in T^{M+1}W^*$ for any $[\phi]\in H^1(G_{F,P},W^*)$. This is possible as $H^1(G_{F,P},W^*)$ is a finitely generated $A$-module.
\end{itemize} 
The last two assumptions imply that the map 
\[H^1(G_{F,P},W^*)/T^{M+1}\to \bigoplus_{i=1}^rW^*/\left((\Frob_{v_i}-1)W^*+T^{M+1}W^*\right)=\bigoplus_{i=1}^rW_{M+1}^*/(\Frob_{v_i}-1)W_{M+1}^*\]
agrees with $H^1(G_{F,P},W^*)/T^{M+1}\to \bigoplus_{i=1}^rW_{M+1}^*/(\sigma_{i}-1)W_{M+1}^*$, hence its image
contains $T^M\left( \bigoplus_{i=1}^rW_{M+1}^*/(\Frob_{v_i}-1)W_{M+1}^*\right)$. Since both $H^1(G_{F,P},W^*)$ and $\bigoplus_{i=1}^rW^*/(\Frob_{v_i}-1)W^*$  are $T$-adically complete, it follows that the image of the map
\[H^1(G_{F,P},W^*)\to\bigoplus_{i=1}^rW^*/(\Frob_{v_i}-1)W^*\] 
contains $T^M\left(\bigoplus_{i=1}^rW^*/(\Frob_{v_i}-1)W^*\right)$. Thus  this map becomes an isomorphism after inverting $T$ by considering the dimensions over $k(\kq)$.  Hence there is a uniform upper bound for the length of the kernel of 
\[H^1(G_{F,P},W^*)/T^n\to  \bigoplus_{i=1}^r H^1(G_{k(v_i)},W^*)/T^n\]
for any $n\geq 0$ and any choice of $v_1,\cdots,v_r$.

 Let $Q_N=\{v_1,\cdots,v_r\}$. Consider the following diagram:
\[\begin{tikzcd}
& & H^1(G_{F,P},W^*)/T^n \arrow[r] \arrow[d, hook] & \bigoplus_{i=1}^r H^1(G_{k(v_i)},W^*)/T^n \arrow[d, equal] \\
0 \arrow[r] &  H^1_{\cL^{\perp}_{P\cup Q_N}}(G_{F,P\cup Q_N},W_n^*) \arrow[r] & H^1(G_{F,P},W_n^*) \arrow[r] \arrow[d] & \bigoplus_{i=1}^r H^1(G_{k(v_i)},W_n^*) \\
& & H^2(G_{F,P},W^*)^{tor} &.
\end{tikzcd}\]
Here $H^2(G_{F,P},W^*)^{tor}$ denotes the $T^\infty$-torsion part of $H^2(G_{F,P},W^*)$. From this, it is easy to see that $H^1_{\cL^{\perp}_{P\cup Q_N}}(G_{F,P\cup Q_N},W_n^*)$ are bounded independent of $n$ and $Q_N$.

Now it suffices to prove the lemma below, which implies lemma \ref{lem2} by induction on $\sigma_i$.
\end{para}

\begin{lem} \label{lem3}
Fix an inclusion $A^r\hookrightarrow H^1(G_{F,P},W^*)$. For any $[\phi]\in A^r\setminus TA^r$ (viewed as an element in $H^1(G_{F,P},W^*)$), there exists an element $\sigma\in G_{F(\zeta_{p^\infty})}$ such that $\rho(\kq)(\sigma)$ has distinct eigenvalues and 
\[\ell(W^*/((\sigma-1)W^*+A\phi(\sigma)))<+\infty.\]
\end{lem}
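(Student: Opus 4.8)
The plan is to produce $\sigma$ as a product $\sigma = \tau h$ with $h \in G_{F(\zeta_{p^\infty})}$ chosen so that $\rho(\kq)(h)$ already has distinct eigenvalues, and $\tau$ a well-chosen element of the (open, by irreducibility) subgroup $G_{F(\zeta_{p^\infty})} \cap \ker(\rho(\kq) \bmod T)$ used to kill the obstruction coming from $\phi$. First I would reduce to working over the fraction field: since $[\phi] \notin TA^r \subseteq H^1(G_{F,P},W^*)$, the class $[\phi]$ is nonzero in $H^1(G_{F,P},W^*) \otimes_A k(\kq)$ (or at least its reduction generates a free rank-one piece modulo $T$), so it suffices to find $\sigma$ such that the evaluation of the corresponding $k(\kq)$-valued class at $\sigma$ is a vector not lying in $(\sigma-1)W^*_{k(\kq)}$; the finite-length statement over $A$ then follows by Nakayama/semicontinuity since the relevant quotient will be $T$-torsion. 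So I recast everything as a statement about the $k(\kq)[G_{F,P}]$-module $\overline W^* = \ad^0\rho(\kq)(1)$ and the nonzero class $[\bar\phi] \in H^1(G_{F,P},\overline W^*)$.

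Next I would run the standard argument (as in Taylor--Wiles, Diamond, and §6 of \cite{SW99}): because $\rho(\kq)$ is irreducible, the image $G := \rho(\kq)(G_F)$ is big enough that there exists $g_0 \in G$ with $\rho(\kq)(g_0)$ of distinct eigenvalues, and one analyzes the $G$-module $\ad^0\rho(\kq)(1)$. The key point is that, twisting by the cyclotomic character and using $F(\zeta_p) \neq F$ together with the dihedral caveat — which is exactly condition (3) of \ref{nice}, ensuring that when $\rho(\kq) \cong \Ind_{G_L}^{G_F}\theta$ the field $L$ is linearly disjoint from $F(\zeta_{p^\infty})$ — one checks that $H^1(G_F,\overline W^*)$ has no nonzero class that restricts to zero on every $\sigma \in G_{F(\zeta_{p^\infty})}$ with $\rho(\kq)(\sigma)$ regular semisimple. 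Concretely: pick $h \in G_{F(\zeta_{p^\infty})}$ with $\rho(\kq)(h)$ of distinct eigenvalues; let $\alpha \in W^*$ be a nonzero element of the $(h-1)$-\emph{invariants} that are \emph{not} in the image of $(h-1)$ (this uses the eigenvalue structure of $\ad^0$ at a regular semisimple element — the invariants are a line and the coinvariants a line, and they are complementary); then using the cocycle relation $\phi(\tau h) = \phi(\tau) + \tau\phi(h)$ and the fact that $\phi$ restricted to the open subgroup $H' := G_{F(\zeta_{p^\infty})}\cap\ker(\bar\rho_b)$ is a nonzero homomorphism $H' \to \overline W^*$ (nonzero because $[\bar\phi]\neq 0$ and by the disjointness one cannot kill it on $H'$), choose $\tau \in H'$ with $\tau\phi(h) + \phi(\tau)$ having nonzero component along $\alpha$. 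Setting $\sigma = \tau h$ gives $\rho(\kq)(\sigma)$ of distinct eigenvalues (since $\tau$ acts trivially mod $T$, $\rho(\kq)(\sigma)$ and $\rho(\kq)(h)$ have the same reduction, and distinctness of eigenvalues is an open/closed condition that persists) and $\phi(\sigma) \notin (\sigma-1)W^*_{k(\kq)}$, whence $W^*/((\sigma-1)W^* + A\phi(\sigma))$ is $T$-power torsion, i.e. of finite length.

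I expect the main obstacle to be verifying that $\phi|_{H'}$ is genuinely nonzero and that one can choose $\tau \in H'$ hitting the required direction $\alpha$ — equivalently, that the restriction map $H^1(G_{F,P},\overline W^*) \to \Hom(H', \overline W^*)^{G}$ is injective on the line spanned by $[\bar\phi]$. This is where the hypotheses genuinely enter: one needs $\overline W^*$ to have no $G$-stable line on which the cyclotomic twist could make the inflation from $\Gal(F(\zeta_{p^\infty})/F)$ contribute, and one must rule out the dihedral degeneration, which is precisely what \ref{nice}(3) provides, together with $[F_v(\zeta_p):F_v]=p-1$ (Remark \ref{nicerem}). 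Once that cohomological vanishing is in hand, the construction of $\sigma$ is a direct linear-algebra manipulation with the cocycle relation, and the passage from the $k(\kq)$-statement back to the finite-length statement over $A=\F[[T]]$ is routine.
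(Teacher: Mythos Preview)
Your overall strategy --- write $\sigma=\tau h$ with $h$ regular semisimple and $\tau$ in a ``trivial'' subgroup, then use the cocycle relation --- matches the paper's, but your choice of subgroup is wrong and this breaks two steps. You take $H'=G_{F(\zeta_{p^\infty})}\cap\ker(\bar\rho_b)$, i.e.\ the kernel of the \emph{mod $T$} reduction. But $\overline W^*=\ad^0\rho(\kq)(1)$ carries the action of $\rho(\kq)$ (over $k(\kq)$), not of $\bar\rho_b$; an element $\tau\in H'$ satisfies $\rho(\kq)^o(\tau)\equiv 1\bmod T$ yet typically $\rho(\kq)(\tau)\neq 1$, so $\tau$ does \emph{not} act trivially on $\overline W^*$ and $\phi|_{H'}$ is not a homomorphism. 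For the same reason your ``distinct eigenvalues persist'' claim fails: you only chose $h$ so that $\rho(\kq)(h)$ has distinct eigenvalues, which allows the discriminant to lie in $TA$; multiplying by $\tau\in\ker(\bar\rho_b)$ only preserves the discriminant mod $T$ and can kill it entirely. (This would be salvageable if you could pick $h\in G_{F(\zeta_{p^\infty})}$ with $\bar\rho_b(h)$ regular semisimple, i.e.\ $\bar\chi(h)\neq 1$, but nothing rules out $\bar\chi$ being a power of $\omega$.)

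The paper instead works with the much smaller (not open) subgroup $G_{\tilde F,P}=\ker(\varepsilon)\cap\ker(\rho(\kq))$ and $\Gamma=G_{F,P}/G_{\tilde F,P}$. For $\tau\in G_{\tilde F,P}$ one has $\rho(\kq)(\tau)=1$ exactly, so both problems above disappear: $\phi|_{G_{\tilde F,P}}$ is genuinely a $\Gamma$-equivariant homomorphism, and $\rho(\kq)(\sigma_0\tau)=\rho(\kq)(\sigma_0)$ keeps its distinct eigenvalues. The price is that one must show $\phi|_{G_{\tilde F,P}}\neq 0$, and this is the real content: it follows from the inflation--restriction sequence together with the key input $\ell(H^1(\Gamma,W^*))<\infty$ (Lemma~\ref{ktwl}, essentially Lemma~6.9 of \cite{SW99} plus a separate argument in the dihedral case). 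You never isolate this finiteness, and your sketch does not cover the dihedral case, where $W^*\otimes k(\kq)$ is reducible and one must additionally check that for each irreducible constituent $V$ there is a choice of $\sigma_0\in G_{F(\zeta_{p^\infty})}$ with $(W^*)^{\sigma_0=1}\subseteq V$.
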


\begin{proof}
Let $G_{\tilde{F},P}=\ker(\varepsilon)\cap \ker(\rho(\kq))\subseteq G_{F,P}$ and $\Gamma=G_{F,P}/G_{\tilde{F},P}$. Consider the following short exact sequence given by Hochschild-Serre spectral sequence:
\[0\to H^1(\Gamma,W^*)\to H^1(G_{F,P},W^*)\to H^1(G_{\tilde{F},P},W^*)^\Gamma(=\Hom_\Gamma(G_{\tilde{F},P},W^*)).\]
We quote the following result which is mainly due to Skinner-Wiles:

\begin{lem}\label{ktwl}
$\ell(H^1(\Gamma,W^*))<\infty$.
\end{lem}

\begin{proof}
This is lemma 6.9 of \cite{SW99} if $\rho(\kq)$ is not dihedral. Note that their paper requires $\rho(\kq)|_{G_{F_v}}\cong \begin{pmatrix}\chi_{v,1}& *\\ 0 & \chi_{v,2}\end{pmatrix}$ with $\chi_{v,1}/\chi_{v,2}$ of infinite order for \textit{each} $v|p$. However this assumption is only used to establish their lemma 6.1, which holds here as we assume $\rho(\kq)$ is irreducible and not dihedral. 

If $\rho(\kq)$ is isomorphic to $\Ind_{G_L}^{G_F}\theta$, then from the proof of lemma \ref{nirred} we know that $\bar{\chi}$ is quadratic and $L=F(\bar{\chi})$. Hence $\rho(\kq)|_{G_L}\cong \theta\oplus \theta^{-1}$. Therefore the only possible non-trivial abelian quotient of $\im(\rho(\kq))$ is $\bar{\chi}$, where $\im(\rho(\kq))$ denotes the image of $G_F$ in $\GL_2(k(\kq))$ under $\rho(\kq)$. Since $L\cap F(\zeta_p)=F$, we have $\overbar{F}^{\ker(\rho(\kq))}\cap F(\zeta_{p^\infty})=F$ and $\Gamma\cong \im(\rho(\kq))\times \Gal(F(\zeta_{p^\infty})/F)$. It is clear that $H^1(\Gamma,W^*)=0$ as $\Gal(F(\zeta_{p})/F)$ acts non-trivially on $W^*\cong \ad^0 \rho(\kq)^o(1)$.
\end{proof}

Back to the proof of lemma \ref{lem3}, we first deal with the case that $\rho(\kq)$ is \textit{not dihedral}. A direct consequence of the previous lemma is that $\phi|_{G_{\tilde{F},P}}\in \Hom_\Gamma(G_{\tilde{F},P},W^*)$ is non-zero. Since $\rho(\kq)$ is irreducible and not dihedral, $W^*\otimes k(\kq)$ is an irreducible representation of $\Gamma$ and the $A$-module generated by $\phi(G_{\tilde{F},P})$ contains $T^{C}W^*$ for some $C>0$. 

Choose $\sigma_0\in G_{F(\zeta_{p^\infty})}$ such that $\rho(\kq)(\sigma_0)$ has distinct eigenvalues. This is possible, otherwise $\rho(\kq)(G_{F(\zeta_{p^\infty})})$ would be a pro-$p$ group and $\rho(\kq)|_{G_{F(\zeta_{p^\infty})}}$ would be reducible, which contradicts our third condition in \ref{nice}. Choose $\tau\in G_{\tilde{F},P}$ so that 
\[\ell(W^*/(A(\phi(\tau)+\phi(\sigma_0))+(\sigma_0-1)W^*))<+\infty.\]
It is clear that $\sigma=\sigma_0\tau$ satisfies all the requirements in the lemma. This proves lemma \ref{lem3} and hence proposition \ref{exttwp} in the non-dihedral case.

Now assume that we are in the dihedral case, i.e. $\rho(\kq)\cong\Ind_{G_L}^{G_F}\theta$. In view of the proof in the non-dihedral case, it is enough to show that for any non-zero irreducible subrepresentation $V$ of $W^*\otimes k(\kq)$ (viewed as a representation of $\Gamma$), we can find an element $\sigma_0\in G_{F(\zeta_{p^\infty})}$ such that $\rho(\kq)(\sigma_0)$ has distinct eigenvalues and $V$ contains the $\sigma_0$-invariant subspace $(W^*)^{\sigma_0=1}$.

By our discussion in the second paragraph of the proof of lemma \ref{ktwl}, $L=F(\bar{\chi})$ and $\overbar{F}^{\ker(\rho(\kq))}\cap F(\zeta_{p^\infty})=F$. It is clear that $W^*\cong \bar{\chi}(1)\oplus (\Ind_{G_L}^{G_F}\theta^2)(1)$. For $\sigma_0 \in G_{F(\zeta_{p^\infty})}$ such that $\theta(\sigma_0)\neq1$, we have $(W^*)^{\sigma_0=1}=\bar{\chi}(1)$. For $\sigma_0 \in G_{F(\zeta_{p^\infty})}\setminus G_{L(\zeta_{p^\infty})}$, we have $(W^*)^{\sigma_0=1}$ contained in $(\Ind_{G_L}^{G_F}\theta^2)(1)$. Hence this finishes the proof of lemma \ref{lem3}.
\end{proof}

\subsection{Completed homology with auxiliary levels}
\begin{para}
In this subsection, we study the completed homology with certain auxiliary levels. For any positive integer $N$, we fix a set of primes $Q_N$ as in proposition \ref{exttwp}. Denote the unique quotient of $k(v)^\times$ of order $p^N$ by $\Delta_v$ and $\bigoplus_{v\in Q_N}\Delta_v$ by $\Delta_N$.

Recall that $U^p=\prod_{v\nmid p}U_v$ where $U_v=\GL_2(O_{F_v})$ if $v\notin S$ and $U_v=\mathrm{Iw}_v$ otherwise. Define tame levels $U^p_{Q_N,0}=\prod_{v\nmid p}U_{Q_N,0,v} ,U^p_{Q_N}=\prod_{v\nmid p}U_{Q_N,v}$ as follows: 
\begin{itemize}
\item $U_{Q_N,0,v}=U_{Q_N,v}=U_v$ if $v\notin Q_N$.
\item $U_{Q_N,0,v}=\mathrm{Iw}_v$ if $v\in Q_N$.
\item $U_{Q_N,v}=\ker(\mathrm{Iw}_v\to\Delta_v)$ if $v\in Q_N$, where the map is the composite of $\mathrm{Iw}_v\to k(v)^\times: \begin{pmatrix}a&b\\c&d\end{pmatrix}\mapsto \frac{a}{d}\mod \varpi_v$ and the natural quotient map $k(v)^\times\to \Delta_v$.
\end{itemize}

We can define Hecke algebra $\T_{\psi,\xi}(U^p_{Q_N}),\T_{\psi,\xi}(U^p_{Q_N,0})$ as before. By abuse of notation, we also view $\km$ as maximal ideals of these Hecke algebras.

It is clear that $\Delta_v=\mathrm{Iw}_v/U_{Q_N,v}$ acts naturally on the completed homology  $M_{\psi,\xi}(U^p_{Q_N})$ and $S_{\psi,\xi}(U^p_{Q_N}U_p,\cO/\varpi^n)$ via the right translation of $\mathrm{Iw}_w$ for any open compact subgroup $U_p\subseteq K_p$ and positive integer $n$. Hence all these spaces are $\cO[\Delta_N]$-modules. Let $\mathfrak{a}_{Q_N}$ be the augmentation ideal of $\cO[\Delta_N]$.
\end{para}

\begin{lem} \label{flmf}
Suppose $U_p\subseteq K_p$ is an open compact subgroup such that $\psi|_{U_p\cap O_{F,p}^\times}$ is trivial modulo $\varpi^n$ for some $n$ and $U^pU_p$ is sufficiently small. Then 
\begin{enumerate}
\item $S_{\psi,\xi}(U^p_{Q_N}U_p,\cO/\varpi^n)$ and $S_{\psi,\xi}(U^p_{Q_N}U_p,\cO/\varpi^n)^\vee$ are finite flat $\cO/\varpi^n[\Delta_N]$-modules.
\item The natural map $S_{\psi,\xi}(U^p_{Q_N}U_p,\cO/\varpi^n)^\vee\to S_{\psi,\xi}(U^p_{Q_N,0}U_p,\cO/\varpi^n)^\vee$ induces a natural isomorphism:
\[S_{\psi,\xi}(U^p_{Q_N}U_p,\cO/\varpi^n)^\vee/\ka_{Q_N}S_{\psi,\xi}(U^p_{Q_N}U_p,\cO/\varpi^n)^\vee\cong S_{\psi,\xi}(U^p_{Q_N,0}U_p,\cO/\varpi^n)^\vee.\]
\item The natural map $S_{\psi,\xi}(U^p_{Q_N}U_p,\cO/\varpi^{n+1})^\vee\to S_{\psi,\xi}(U^p_{Q_N}U_p,\cO/\varpi^n)^\vee$ is surjective with kernel $\varpi^n S_{\psi,\xi}(U^p_{Q_N}U_p,\cO/\varpi^{n+1})^\vee$.
\item $U'_p\subseteq U_p$ is another open subgroup. Then the natural map $S_{\psi,\xi}(U^p_{Q_N}U'_p,\cO/\varpi^n)^\vee\to S_{\psi,\xi}(U^p_{Q_N}U_p,\cO/\varpi^n)^\vee$ is surjective.
\end{enumerate}
The same results hold with everything localized at $\km$.
\end{lem}
\begin{proof}
All these claims follow from the discussion in \ref{Quaformssm}. The first two parts are the Pontryagin dual of Lemma 2.1.4 of \cite{Kis09a}.
\end{proof}

\begin{cor}
$M_{\psi,\xi}(U^p_{Q_N})$ is a flat $\cO[\Delta_N]$-module and 
\[M_{\psi,\xi}(U^p_{Q_N})/\ka_{Q_N} M_{\psi,\xi}(U^p_{Q_N})\cong M_{\psi,\xi}(U^p_{Q_N,0}).\]
\end{cor}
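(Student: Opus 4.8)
The plan is to obtain the Corollary by passing to the inverse limit in Lemma \ref{flmf}. First I would unwind the definitions: by Pontryagin duality the colimit defining $S_{\psi,\xi}(U^p_{Q_N},E/\cO)$ becomes an inverse limit, so
\[
M_{\psi,\xi}(U^p_{Q_N})\;\cong\;\varprojlim_{(n,U_p)}S_{\psi,\xi}(U^p_{Q_N}U_p,\cO/\varpi^n)^\vee ,
\]
the limit taken over pairs $(n,U_p)$ with $U_p\subseteq K_p$ open and $U^p_{Q_N}U_p$ sufficiently small. Each term is a finite $\cO/\varpi^n[\Delta_N]$-module which is finite flat, equivalently finite free since $\cO/\varpi^n[\Delta_N]$ is Artinian local, by Lemma \ref{flmf}(1); and all transition maps are surjective, being Pontryagin duals of the inclusions $S_{\psi,\xi}(U^p_{Q_N}U_p,-)\hookrightarrow S_{\psi,\xi}(U^p_{Q_N}U_p',-)$ and of multiplication by $\varpi$. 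Since everything in sight is a finite abelian group, every inverse system occurring below is Mittag--Leffler, so $\varprojlim$ is exact on these systems and all $\varprojlim^1$-terms vanish.

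For the flatness assertion I would first record that $M_{\psi,\xi}(U^p_{Q_N})$ is $\varpi$-torsion free and that $M_{\psi,\xi}(U^p_{Q_N})/\varpi^n\cong S_{\psi,\xi}(U^p_{Q_N},\cO/\varpi^n)^\vee$ for all $n$; both follow from the exactness of the coefficient functor $A\mapsto S_{\psi,\xi}(U^p_{Q_N},A)$ at each finite level (Corollary \ref{sfsmf}) together with exactness of Pontryagin duality, exactly as for $M_\psi(U^p)$ in subsection \ref{chac}. In particular $M_{\psi,\xi}(U^p_{Q_N})$ is $\varpi$-adically complete and separated. Next, $M_{\psi,\xi}(U^p_{Q_N})/\varpi=\varprojlim_{U_p}S_{\psi,\xi}(U^p_{Q_N}U_p,\F)^\vee$ is an inverse limit along surjective maps of finite free $\F[\Delta_N]$-modules; such a pro-free module over the Artinian local ring $\F[\Delta_N]$ is flat: tensoring a short exact sequence of finite $\F[\Delta_N]$-modules with each of the finite free modules $S_{\psi,\xi}(U^p_{Q_N}U_p,\F)^\vee$ stays exact, and applying the exact functor $\varprojlim$ — using $N\otimes_{\F[\Delta_N]}\varprojlim(-)=\varprojlim\bigl(N\otimes_{\F[\Delta_N]}(-)\bigr)$ for finite $N$, which one checks by dévissage to $N=\F$ — keeps it exact. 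The local criterion for flatness then upgrades this to flatness of $M_{\psi,\xi}(U^p_{Q_N})$ over $\cO[\Delta_N]$.

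For the isomorphism $M_{\psi,\xi}(U^p_{Q_N})/\ka_{Q_N}M_{\psi,\xi}(U^p_{Q_N})\cong M_{\psi,\xi}(U^p_{Q_N,0})$ I would apply the right-exact functor $-\otimes_{\cO[\Delta_N]}\cO=-/\ka_{Q_N}$ to the presentation above. Writing $M^{(n,U_p)}:=S_{\psi,\xi}(U^p_{Q_N}U_p,\cO/\varpi^n)^\vee$ and $\ka_{Q_N}=(s_1,\dots,s_k)$, the right-exact sequences $(M^{(n,U_p)})^{\oplus k}\to M^{(n,U_p)}\to M^{(n,U_p)}/\ka_{Q_N}M^{(n,U_p)}\to 0$ form a system of exact sequences of finite groups, so $\varprojlim$ is exact on them; since $\varprojlim$ commutes with finite direct sums, this yields $M_{\psi,\xi}(U^p_{Q_N})/\ka_{Q_N}M_{\psi,\xi}(U^p_{Q_N})=\varprojlim\bigl(M^{(n,U_p)}/\ka_{Q_N}M^{(n,U_p)}\bigr)$. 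By Lemma \ref{flmf}(2) the right-hand side is $\varprojlim_{(n,U_p)}S_{\psi,\xi}(U^p_{Q_N,0}U_p,\cO/\varpi^n)^\vee=M_{\psi,\xi}(U^p_{Q_N,0})$, which is what we want.

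The step I expect to require the most care is the deduction of flatness over $\cO[\Delta_N]$ from flatness over the Artinian quotients: one must verify the hypotheses of the local criterion for flatness (equivalently, make the Mittag--Leffler bookkeeping precise enough that no $\varprojlim^1$ obstruction enters), using crucially that $M_{\psi,\xi}(U^p_{Q_N})$ is $\varpi$-torsion free and $\varpi$-adically complete and separated, and that all the inverse systems involved consist of finite groups and hence are Mittag--Leffler. Everything else is formal manipulation of (co)limits and Pontryagin duals.
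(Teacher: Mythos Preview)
Your proposal is correct and follows essentially the same route as the paper. The paper's proof consists of the single sentence ``This follows from the lemma below,'' where that lemma (Lemma \ref{fllem}) is precisely the abstract statement that (i) an inverse limit of flat modules over a complete local Noetherian ring is flat and commutes with quotients by ideals, and (ii) an inverse limit of flat $R/I^n$-modules with compatible transition isomorphisms is flat over $R$; your argument is an inline version of the proof of that lemma applied to $R=\cO[\Delta_N]$ and $I=(\varpi)$. In particular, the step you flag as requiring the most care --- upgrading flatness over the Artinian quotients to flatness over $\cO[\Delta_N]$ via the local criterion --- is exactly what the paper handles in the proof of Lemma \ref{fllem}(2) (invoking Theorem 22.3 of \cite{Mat1} after first establishing $N/JN\cong\varprojlim N_i/JN_i$ for every ideal $J$, which supplies the idealwise-separation input that the local criterion needs for non-finitely-generated modules).
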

\begin{proof}
By writing $M_{\psi,\xi}(U^p_{Q_N})=\varprojlim_{n}\varprojlim_{U_p} S_{\psi,\xi}(U^p_{Q_N}U_p,\cO/\varpi^n)^\vee$, this follows from the lemma below.
\end{proof}

\begin{lem} \label{fllem}
Let $R$ be a local complete Noetherian ring.
\begin{enumerate}
\item Suppose $R$ is Artinian. Let $\{M_i\}_{i\in \mathbf{N}}$ be a projective system of flat $R$-modules with surjective transition maps. Then $M=\varprojlim M_i$ is also flat over $R$ and $M/JM\cong \varprojlim M_i/JM_i$ for any ideal $J$ of $R$.
\item Let $I$ be an ideal of $R$ and $\{N_i\}_{i\in \mathbf{N}}$ be a projective system of $R$-modules such that $N_i$ is a flat $R/I^i$-module and the natural transition maps induce isomorphisms $N_i\cong N_{i+1}/I^i N_{i+1}$. Then $N=\varprojlim N_i$ is a flat $R$-module and $N_i\cong N/I^iN$. Moreover for any ideal $J$ of $R$, $N/JN\cong \varprojlim N_i/JN_i$.
\end{enumerate}
\end{lem}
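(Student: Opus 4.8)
The plan is to reduce both parts to three standard facts: (a) over a Noetherian ring every cyclic module $R/J$ is finitely presented, so that, computing from a finite free resolution, both $-\otimes_R R/J$ and $\Tor_\bullet^R(R/J,-)$ commute with arbitrary direct products; (b) the exactness, for a tower $\{P_i\}$ with surjective transition maps, of
\[
0\longrightarrow \varprojlim\nolimits_i P_i\longrightarrow \prod_i P_i\xrightarrow{1-\mathrm{sh}}\prod_i P_i\longrightarrow 0,
\]
where $\mathrm{sh}$ is the shift; and (c) the Artin--Rees lemma. I will assume throughout that the transition maps are surjective, which is the only case occurring in the applications of this lemma. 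Flatness of an $R$-module will always be detected by the vanishing of $\Tor_1^R(R/\mathfrak a,-)$ for all finitely generated ideals $\mathfrak a$, which suffices since $R$ is Noetherian.

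For part (1): I would tensor the sequence (b), with $P_i=M_i$, by $R/\mathfrak a$. Since each $M_i$ is flat, $\Tor_j^R(R/\mathfrak a,\prod_i M_i)\cong\prod_i\Tor_j^R(R/\mathfrak a,M_i)=0$ for $j\ge 1$, so the long exact sequence forces $\Tor_1^R(R/\mathfrak a,M)=0$; hence $M$ is flat. For the isomorphism $M/JM\cong\varprojlim_i M_i/JM_i$, tensor the same sequence by $R/J$: as $\Tor_1^R(R/J,\prod_i M_i)=0$ and $(\prod_i M_i)\otimes_R R/J\cong\prod_i(M_i/JM_i)$, one gets $0\to M/JM\to\prod_i(M_i/JM_i)\xrightarrow{1-\mathrm{sh}}\prod_i(M_i/JM_i)$ exact, and the kernel of the last map is by definition $\varprojlim_i M_i/JM_i$.

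For part (2): the crucial input is that for every finitely generated $R$-module $Q$ and every $j\ge 1$, the pro-system $\{\Tor_j^R(Q,R/I^n)\}_n$ is \emph{pro-zero}, i.e.\ for a suitable $c$ the transition map $\Tor_j^R(Q,R/I^{n+c})\to\Tor_j^R(Q,R/I^n)$ vanishes for all $n$; for $j=1$ this follows from the description $\Tor_1^R(Q,R/I^n)=(K\cap I^nF)/I^nK$ attached to a presentation $0\to K\to F\to Q\to 0$ together with the Artin--Rees containment $K\cap I^nF\subseteq I^{n-c}K$, and for $j\ge 2$ by dimension shift. Granting this, to see that $N=\varprojlim_n N_n$ is $R$-flat I would compute, for $F_\bullet$ a finite free resolution of $R/\mathfrak a$ (so that $F_\bullet\otimes_R-$ commutes with $\varprojlim_n$), that $\Tor_\bullet^R(R/\mathfrak a,N)=H_\bullet(F_\bullet\otimes_R N)=H_\bullet(\varprojlim_n F_\bullet\otimes_R N_n)$, and then use the $\varprojlim^1$-exact sequence (valid because $\{F_k\otimes_R N_n\}_n$ has surjective transition maps):
\[
0\longrightarrow\varprojlim\nolimits^1_n\Tor_2^R(R/\mathfrak a,N_n)\longrightarrow\Tor_1^R(R/\mathfrak a,N)\longrightarrow\varprojlim\nolimits_n\Tor_1^R(R/\mathfrak a,N_n)\longrightarrow 0.
\]
Base change along $R\to R/I^n$ (using $N_n$ flat over $R/I^n$) identifies $\Tor_j^R(R/\mathfrak a,N_n)$ with $\Tor_j^R(R/\mathfrak a,R/I^n)\otimes_{R/I^n}N_n$, so these towers are pro-zero for $j\ge 1$; hence both outer terms vanish and $\Tor_1^R(R/\mathfrak a,N)=0$, so $N$ is flat. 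Finally, tensoring the sequence (b), with $P_n=N_n$, by $R/J$: now $\Tor_1^R(R/J,\prod_n N_n)\cong\prod_n\Tor_1^R(R/J,N_n)$ need not vanish, but the long exact sequence identifies $\ker\big(N/JN\to\prod_n N_n/JN_n\big)$ with $\varprojlim\nolimits^1_n\Tor_1^R(R/J,N_n)$, which is $0$ by pro-zeroness, and the image of $N/JN$ with $\varprojlim_n N_n/JN_n$; thus $N/JN\cong\varprojlim_n N_n/JN_n$. Taking $J=I^m$ and noting that $N_n/I^mN_n\cong N_m$ for $n\ge m$ (iterating the hypothesis $N_{n-1}\cong N_n/I^{n-1}N_n$), so that this tower is eventually constant, yields $N/I^mN\cong N_m$.

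All the diagram chases with long exact sequences are routine; the one genuinely delicate point is the Artin--Rees bookkeeping for part (2), namely verifying that pro-zeroness of $\{\Tor_j^R(R/\mathfrak a,R/I^n)\}_n$ survives the base change $-\otimes_{R/I^n}N_n$ and, consequently, that the relevant $\varprojlim^1$-terms vanish. Once that is in place, both assertions of the lemma follow formally.
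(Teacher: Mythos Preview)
Your proposal is correct and rests on the same key input as the paper (Artin--Rees), but the organization differs in an interesting way.

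For part~(1) the paper is more direct: it tests flatness only against the residue field $k$, takes a finite free resolution $C_\bullet\to k$, notes that each $C_\bullet\otimes_R M_i$ is exact by flatness, and passes to the limit using that $C_j$ is finite free so $C_j\otimes_R(\varprojlim M_i)\cong\varprojlim(C_j\otimes_R M_i)$. Your product-sequence argument achieves the same end while testing against all $R/\mathfrak a$; both are fine.

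For part~(2) the paper reverses your order of operations. It \emph{first} proves $N/JN\cong\varprojlim_i N_i/JN_i$ by a hands-on Mittag--Leffler check for the tower $\{\Tor_1^R(R/J,N_i)\}_i$: it writes down the four-term sequences $\Tor_1^R(R/J,N_i)\to J\otimes N_i\to N_i\to N_i/JN_i\to 0$, and verifies stabilization of the images $\Tor_1^R(R/J,N_k)\to\Tor_1^R(R/J,N_i)$ by reducing (via flatness of $N_{k+1}$ over $R/I^{k+1}$) to the identity $JI^iN_{k+1}\cap I^kN_{k+1}=JN_{k+1}\cap I^kN_{k+1}$ for $k\gg i$, which is Artin--Rees. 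Once $N/I^iN\cong N_i$ is established, flatness of $N$ is simply read off from the local flatness criterion (Theorem~22.3 of \cite{Mat1}) rather than computed directly.

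Your route trades that citation for an explicit $\varprojlim^1$/Milnor-sequence computation, and upgrades Mittag--Leffler to the stronger pro-zeroness statement (which also forces you to track $\Tor_2$ via dimension shift). This is cleaner conceptually and self-contained, while the paper's argument is more elementary---no $\varprojlim^1$, no higher $\Tor$---at the price of an explicit diagram chase and the external reference. Either approach is acceptable.
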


\begin{proof}
Both are special cases of Lemma 15.27.4. of The Stacks project \cite[\href{https://stacks.math.columbia.edu/tag/0912}{Tag 0912}]{stacks-project}. I would like to thank one referee for providing such a reference.

\end{proof}

\begin{para} \label{tdm}
Let $P(Q_N)$ be the power set of $Q_N$. Now we consider the following map:
\begin{eqnarray*}
\eta'_{N}:S_{\psi,\xi}(U^pU_p,\cO/\varpi^n)^{P(Q_N)}&\to& S_{\psi,\xi}(U^p_{Q_N,0}U_p,\cO/\varpi^n)\\
	(f_X)_{X\in P(Q_N)}&\mapsto&  \sum_{X}\prod_{v\in X}\begin{pmatrix} 1 & 0\\0 &\varpi_v\end{pmatrix}\cdot f_X,
\end{eqnarray*}
where $\begin{pmatrix} 1 & 0\\0 &\varpi_v\end{pmatrix}\cdot f_X$ denotes the right translation of $f_X$ by $\begin{pmatrix} 1 & 0\\0 &\varpi_v\end{pmatrix}$. If $X$ is the empty set, we set $\prod_{v\in X}\begin{pmatrix} 1 & 0\\0 &\varpi_v\end{pmatrix}\cdot f_X=f_X$. It is clear that this map commutes with the action of the Hecke operators away from $Q_N$. Hence we may localize this map at $\km$ and denote it by $\eta_N$.

Recall that there is a map $R^{\ps}_{Q_N}\to \T_{\psi,\xi}(U^p_{Q_N,0})_\km$ sending $T^{univ}(\Frob_v)$ to $T_v$ for $v\notin S\cup Q_N$, which factors through $R^{\ps,\{\xi_v\}}_{Q_N}$. Here $T^{univ}:G_{F,S\cup Q_N}\to R^{\ps}_{Q_N}$ is the universal trace.
\end{para}

\begin{prop} \label{twkcc}
There exists a constant $C$ and elements $\tilde{f}_N\in R^{\ps,\{\xi_v\}}_{Q_N}$ that satisfy the following properties: for any $N$ and any open pro-$p$ subgroup $U_p\subseteq K_p$ such that $\psi|_{N_{D/F}(U_p)}$ is trivial modulo $\varpi^n$ and $U^pU_p$ is sufficiently small, we have
\begin{enumerate}
\item $\tilde{f}_N$ kills the kernel and cokernel of $\eta_N$.
\item The image $\bar{f}_N$ of $\tilde{f}_N$ in $R^{\ps,\{\xi_v\}}_{Q_N}\to\T_{\psi,\xi}(U^p_{Q_N,0})_\km\to\T_{\psi,\xi}(U^p)_\km\stackrel{\mod\kq}{\longrightarrow} A$ is non-zero and the length $\ell(A/(\bar{f}_N))<C$.
\end{enumerate}
\end{prop}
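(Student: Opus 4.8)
The plan is to manufacture $\tilde f_N$ from the universal pseudo-representation. Writing $T^{univ}\colon G_{F,S\cup Q_N}\to R^{\ps,\{\xi_v\}}_{Q_N}$, for each $v\in Q_N$ I would set
\[\delta_v:=T^{univ}(\Frob_v)^2-4\,(\det T^{univ})(\Frob_v)\in R^{\ps,\{\xi_v\}}_{Q_N},\]
the discriminant of the characteristic polynomial of $\Frob_v$, and take $\tilde f_N:=\prod_{v\in Q_N}\delta_v$. Since $\det T^{univ}=\psi\varepsilon^{-1}=\chi$ and local class field theory is normalised geometrically, $(\det T^{univ})(\Frob_v)$ is sent to $\chi(\Frob_v)=N(v)\langle\varpi_v\rangle$ in the Hecke algebras; using the pseudo-representation carried by $\T_{\psi,\xi}(U^p_{Q_N,0}U_p,\cO/\varpi^n)_\km$ (which exists by the discussion in \ref{hagal}, \ref{tdm}), one checks that $\delta_v$ maps to an operator whose further image in $\T_{\psi,\xi}(U^p)_\km$ is $T_v^2-4N(v)S_v$, and whose image in $A$ (via reduction mod $\kq$, the chain $R^{\ps,\{\xi_v\}}_{Q_N}\to\T_{\psi,\xi}(U^p_{Q_N,0})_\km\to\T_{\psi,\xi}(U^p)_\km\to A$; note the composite factors through the pseudo-representation of $G_{F,S}$ by Chebotarev and uniqueness) is $(\tr\rho(\kq)(\Frob_v))^2-4\det\rho(\kq)(\Frob_v)=(\alpha_v-\beta_v)^2$.

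\textbf{Part (1): annihilation of $\ker\eta_N$ and $\mathrm{coker}\,\eta_N$.} I would reduce to a single Taylor--Wiles prime. Ordering $Q_N=\{v_1,\dots,v_r\}$ and writing $U^p_{Q',0}$ for the tame level with Iwahori structure exactly at $Q'\subseteq Q_N$, the map $\eta_N$ factors (compatibly with the Hecke action) as the composite of $\eta_{Q'}\oplus\eta_{Q'}$ followed by the one-prime degeneracy map
\[S_{\psi,\xi}(U^p_{Q',0}U_p,\cO/\varpi^n)_\km^{\oplus 2}\longrightarrow S_{\psi,\xi}(U^p_{Q'\cup\{v\},0}U_p,\cO/\varpi^n)_\km,\qquad (g_1,g_2)\mapsto g_1+\begin{pmatrix}1&0\\0&\pi_v\end{pmatrix}g_2,\]
so by the snake lemma it suffices to show that for a single Taylor--Wiles prime $v$ the kernel and cokernel of this degeneracy map are killed by $T_v^2-4N(v)S_v$ (equivalently by the image of $\delta_v$). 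This is the classical local Hecke computation at Iwahori level: one uses the $U_v$-operator, the trace (norm) map back to the spherical level, and the standard relations among $U_v$, the two degeneracy maps and $T_v$, $S_v$ (the cokernel being the $v$-new subspace); compare \S6 of \cite{SW99} and the analogous manipulations in \cite{Ta06}, \cite{Kis09a}. Carrying the constants through the $r$-fold snake lemma, $\tilde f_N=\prod_{v\in Q_N}\delta_v$ annihilates $\ker\eta_N$ and $\mathrm{coker}\,\eta_N$, uniformly in $n$ and in $U_p$.

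\textbf{Part (2): the length bound.} By niceness of $\kq$, $A\cong\F[[T]]$ is a complete discrete valuation ring, in particular a domain. From the computation above, the image $\bar f_N$ of $\tilde f_N$ in $A$ equals $\prod_{v\in Q_N}(\alpha_v-\beta_v)^2$. Each factor is nonzero by condition (3) of Proposition \ref{exttwp}, so $\bar f_N\neq 0$; moreover $|Q_N|=r$ is fixed and $\ell(A/(\alpha_v-\beta_v)^2)<C_0$ for the constant $C_0$ supplied by that proposition, hence
\[\ell(A/(\bar f_N))\le\sum_{v\in Q_N}\ell(A/(\alpha_v-\beta_v)^2)<rC_0,\]
a bound independent of $N$. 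Taking $C=rC_0$ finishes the argument.

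\textbf{Main obstacle.} The crux is the single-prime local Hecke computation together with its uniformity: after localising at $\km$ the degeneracy map is genuinely \emph{not} an isomorphism over $\cO/\varpi^n$, precisely because $\bar\rho_\km(\Frob_v)$ need not have distinct eigenvalues (this is the whole reason we patch at the one-dimensional prime $\kq$ and not at $\km$), so one must check that the discriminant $T_v^2-4N(v)S_v$ cuts out the defect \emph{integrally}, that it annihilates the $v$-new cokernel over $\cO/\varpi^n$, and that all of this is compatible with the transition maps in $n$ and $U_p$. Identifying the image of the Galois-side element $\delta_v$ with this Hecke operator at Iwahori level (where $\Frob_v$ is no longer mapped to a spherical Hecke operator) is the other point that needs care.
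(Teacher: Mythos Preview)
Your chosen element $\delta_v = T^{univ}(\Frob_v)^2 - 4\chi(\Frob_v)$ does not annihilate the kernel or the cokernel of the one--prime degeneracy map. A strong--approximation argument (which the paper carries out, rather than any Hecke--theoretic one) shows that the kernel of $\eta_N$ consists of tuples $(f_X)_X$ in which every $f_X$ factors through the reduced norm $N_{D/F}$; since $\km$ is an Eisenstein maximal ideal, this contribution survives localization. On the cokernel side, $\ker j^+$ is supported on automorphic representations that are special at $v$ or factor through the norm. In either case the associated local Galois representation at $v$ has the shape $\begin{pmatrix}\theta\varepsilon & *\\ 0 & \theta\end{pmatrix}$, so the Frobenius eigenvalue ratio is $N(v)^{\pm 1}$, and on such a form your discriminant acts by
\[
(\alpha_v-\beta_v)^2 \;=\; \theta(\Frob_v)^2\,(N(v)-1)^2,
\]
which is $p^{2N}$--divisible but not zero. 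Because the proposition must hold for \emph{every} $U_p$ (hence every $n$), this is fatal: for $n$ large compared with $N$, your $\tilde f_N$ does not kill the defect. Your ``main obstacle'' paragraph correctly identifies exactly this point, but the resolution is wrong --- the naive discriminant does not cut out the defect integrally.

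The paper replaces $\delta_w$ by
\[
\theta_w \;=\; (1+N(w))^2\,T^{univ}(\Frob_w^2) - (1+N(w)^2)\,T^{univ}(\Frob_w)^2
\;=\; 2\bigl(N(w)\alpha_w-\beta_w\bigr)\bigl(\alpha_w-N(w)\beta_w\bigr),
\]
which vanishes identically on the locus $\alpha_w/\beta_w\in\{N(w)^{\pm 1}\}$ (hence kills the norm--factoring and Steinberg contributions over $\cO/\varpi^n$ for every $n$), yet reduces modulo $\kq$ (where $N(w)\equiv 1$) to $2(\alpha_w-\beta_w)^2$, so Part~(2) still follows exactly as in your argument. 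For the cokernel the paper does not invoke a black--box ``classical computation'' but introduces a perfect pairing on $S_{\psi,\xi}(U,\cO)_\km$, computes the matrix of $j^+\!\circ j$ explicitly as $\begin{pmatrix}N(w)+1 & T_w\\ T_w & (N(w)+1)\psi(\varpi_w)\end{pmatrix}$, and shows that one power of $\theta_w$ kills $\ker j^+$ and another kills $\mathrm{coker}(j^+\!\circ j)$; altogether $\theta_w^3$ suffices for one prime, and an induction over $Q_N$ gives $\tilde f_N=\bigl(\prod_{w\in Q_N}\theta_w\bigr)^{3r}$.
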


\begin{proof}
Define $\tilde{f}_N$ to be $\theta_{Q_N}^{3r}$, where $r$ is defined in proposition \ref{exttwp}, $\theta_{Q_N}=\prod_{w\in Q_N}\theta_w$ and
\[\theta_w:=(1+N(w))^2T^{univ}(\Frob_w^2)-(1+N(w)^2)T^{univ}(\Frob_w)^2.\]
It rests to check all the desired properties of $\tilde{f}_N$.

\noindent \underline{\textbf{Kernel of $\eta_N$:}} We first identify the kernel of $\eta_N$, following the proof of Lemma 2 of \cite{DT94}. Let $(f_X)_{X\in P(Q_N)}\in\ker(\eta_N)$ and $w$ be a place in $Q_N$. Define
\begin{eqnarray*}
f_1=\sum_{X\ni w}\prod_{v\in X}\begin{pmatrix} 1 & 0\\0 &\varpi_v\end{pmatrix}\cdot f_X,\\
f_2=\sum_{X\not\owns w}\prod_{v\in X}\begin{pmatrix} 1 & 0\\0 &\varpi_v\end{pmatrix}\cdot f_X.
\end{eqnarray*}
Since $f_X$ is right invariant by $U_w=\GL_2(O_{F_w})$, it is clear that $f_1=-f_2$ is right invariant by $\GL_2(O_{F_w})$ and $\begin{pmatrix} 1 & 0\\0 &\pi_w\end{pmatrix}^{-1}\GL_2(O_{F_w})\begin{pmatrix} 1 & 0\\0 &\varpi_w\end{pmatrix}$, which generate a subgroup of $\GL_2(F_w)$ containing $\SL_2(F_w)$. Hence by strong approximation Theorem, $f_1$ and $f_2$ are both right invariant by $(D\otimes \A_F^\infty)^{\times, \det=1}$. Repeating the same argument, we can show that each $f_X$ is right invariant by $(D\otimes \A_F^\infty)^{\times, \det=1}$. Hence $f_X$ factors through the reduced norm map.
Using this description, it is easy to check that any $\theta_w$ kills the kernel.

\noindent \underline{\textbf{Cokernel of $\eta_N$:}} We will only show that $\tilde{f}_N$ kills the cokernel of $\eta_N$ in the case $|Q_N|=1$. The general case follows by induction on the number of primes in $Q_N$. Write $Q_N=\{w\}$. We want to show that the cokernel of 
\begin{eqnarray*}
S_{\psi,\xi}(U^pU_p,\cO/\varpi^n)_\km^{\oplus 2} \to S_{\psi,\xi}(U^p_{\{w\},0}U_p,\cO/\varpi^n)_\km\\
(f_1,f_2)\mapsto f_1+\begin{pmatrix}1 & 0\\ 0&\varpi_w\end{pmatrix}\cdot f_2
\end{eqnarray*}
is killed by $\theta_w^3$. 

Let $\tilde\psi$ be the Teichm\"uller lifting of $\psi\mod \varpi$ and write $\tilde\psi=\psi\theta^2$ for some continuous character $\theta:(\A^\infty_F)^\times/F^\times_{>>0}\to \cO^\times$ which is trivial on the kernel of $\psi$. By twisting with $\theta$ and arguing as in \ref{exgal}, it suffices to prove the case $\psi=\tilde\psi$ (note that the action of $\theta_w$ only differs by a unit in $\cO^\times$). 

Assume $\psi=\tilde\psi$ from now on. Then $\psi|_{U_p\cap O_{F,p}^\times}$ is trivial. Hence the natural map 
\[S_{\psi,\xi}(U^pU_p,\cO)_\km\to S_{\psi,\xi}(U^pU_p,\cO/\varpi^n)_\km\]
and the similar map for $U^p_{\{w\},0}$ are surjective. Thus it is enough to shows that the cokernel of
\begin{eqnarray*}
j:S_{\psi,\xi}(U^pU_p,\cO)_\km^{\oplus 2} \to S_{\psi,\xi}(U^p_{\{w\},0}U_p,\cO)_\km\\
(f_1,f_2)\mapsto f_1+\begin{pmatrix}1 & 0\\ 0&\varpi_w\end{pmatrix}\cdot f_2
\end{eqnarray*}
is killed by $\theta_w^3$. Let $U$ be either $U^pU_p$ or $U^p_{\{w\},0}U_p$. We may define a perfect pairing: $\langle\cdot,\cdot\rangle_U:S_{\psi,\xi}(U,\cO)_\km\times S_{\psi,\xi}(U,\cO)_\km \to \cO$ by 
\[\langle\varphi_1,\varphi_2\rangle_U:=\sum_{g\in D^\times\setminus \DAi/U(\A_F^\infty)^\times}\varphi_1(g)\varphi_2(g)\psi(N_{D/F}(g))^{-1}\]
for $\varphi_1,\varphi_2\in S_{\psi,\xi}(U,\cO)_\km$. It is easy to check that $\langle T_v\cdot\varphi_1,\varphi_2\rangle_U=\langle \varphi_1,T_v\cdot\varphi_2\rangle_U$ for $v\notin S\cup Q$.

For simplicity, we write $S_0,S_1$ for $S_{\psi,\xi}(U^pU_p,\cO)_\km,S_{\psi,\xi}(U^p_{\{w\},0}U_p,\cO)_\km$. Elements in these spaces can be viewed as automorphic forms on $D^\times$. Let $j^+:S_1\to S_0^{\oplus 2}$ be the adjoint map of $j$ and $\tilde{j^+}$ be the composite of $S_1\stackrel{j^+}{\to} S_0^{\oplus 2}\to S_0^{\oplus 2}/\ker j$. Consider the following diagram:
\[\begin{tikzcd}
0 \arrow[r]  & S_0^{\oplus 2}/\ker j \arrow[d,"\tilde{j^+}\circ j"] \arrow[r,"j"] & S_1 \arrow[r] \arrow[d,"\tilde{j^+}"] & \coker{j} \arrow[r] \arrow[d] & 0\\
0 \arrow[r]  & S_0^{\oplus 2}/\ker j  \arrow[r,equal] & S_0^{\oplus 2}/\ker j   \arrow[r] & 0&.
\end{tikzcd}\]
Then the snake lemma gives  us a short exact sequence:
\[\ker\tilde{j^+}\to \coker j \to \coker(\tilde{j^+}\circ j).\]

Note that the image of $j$ contains exactly the automorphic forms whose corresponding automorphic representation is unramified at $w$. Since the pairings are Hecke-equivariant, the automorphic representations generated by elements in $\ker j^+$ are either Steinberg at $w$ or factor through the reduced norm map. In either case, the associated Galois representation at $w$ is of the form $\begin{pmatrix} \theta\varepsilon & *\\ 0 & \theta \end{pmatrix}$ for some unramified character $\theta$. A direct computation shows that $\theta_w$ kills $\ker j^+$. Hence $\theta_w^2$ kills $\ker \tilde{j^+}$.

We claim that $\theta_w$ kills $\coker(\tilde{j^+}\circ j)$. Combined with the previous paragraph, this will imply $\theta_w^3$ kills $\coker j$. Since $\coker(\tilde{j^+}\circ j)$ is a quotient of $\coker(j^+\circ j)$, it suffices to prove that $\theta_w$ kills $\coker(j^+\circ j)$. A direct computation shows that $j^+\circ j:S_0^{\oplus 2}\to S_0^{\oplus 2}$ is 
\[\begin{pmatrix}
N(w)+1 & T_w\\
T_w & (N(w)+1)\psi(\varpi_w)
\end{pmatrix}.\]
Thus $\coker (j^+\circ j)$ is killed by the determinant of this matrix. Using $T^{univ}(\Frob_w)=T_w$ and $T^{univ}(\Frob_w^2)=T_w^2-2\psi(\pi_w)N(w)$ on $S_0$, it is easy to see that this determinant is simply $(-2N(w))^{-1}\theta_w$. Hence $\theta_w$ kills $\coker (j^+\circ j)$. This finishes the proof of the first part of the proposition. 

We still need to understand the image of $\tilde{f}_N$ in $A$. Let $\pi_N$ be the map $\T_{\psi,\xi}(U^p_{Q_N,0})_\km\to\T_{\psi,\xi}(U^p)_\km\stackrel{\mod\kq}{\longrightarrow} A$ and $\alpha_w,\beta_w$ be the eigenvalues of $\rho(\kq)(\Frob_w)$ with $w\in Q_N$. Then $\pi_N(T^{univ}(\Frob_w^i))=\alpha_w^i+\beta_w^i$. Hence using the assumption $N(w)\equiv 1\mod p$, we have
\[\pi_N(\theta_w)=(1+N(w))^2(\alpha_w^2+\beta_w^2)-(1+N(w)^2)(\alpha_w+\beta_w)^2=2(\alpha_w-\beta_w)^2.\]
Therefore it follows from our choice of $Q_N$ that $\ell(A/(\pi_N(\theta_w)))$ is uniformly bounded for any $N,w$. This proves the second part of the proposition.
\end{proof}

\begin{para}
The usual patching argument requires a Galois-theoretic interpretation of the action of $\cO[\Delta_{Q_N}]$ on $M_{\psi,\xi}(U^p_{Q_N})_{\km}$. We will only do it for a subring of $\cO[\Delta_{Q_N}]$. More precisely, let $\cO[\Delta_{Q_N}]'\subseteq \cO[\Delta_{Q_N}]$ be the $\cO$-subalgebra generated by elements of the form $g+g^{-1}$ with $g\in\Delta_{v},v\in Q_N$.  We may think $\cO[\Delta_{Q_N}]'$ as a subring of $\End (M_{\psi,\xi}(U^p_{Q_N})_{\km})$.
\end{para}

\begin{prop} \label{sinfty'}
$\cO[\Delta_{Q_N}]'$ is contained in the image of 
\[R^{\ps,\{\xi_v\}}_{Q_N}\to \T_{\psi,\xi}(U^p_{Q_N})\to \End (M_{\psi,\xi}(U^p_{Q_N})_{\km}).\]
Moreover let $\ka_{Q_N}'=\ka_{Q_N}\cap \cO[\Delta_{Q_N}]'$. Then $\ka_{Q_N}'$ is contained in the image of $\ker(R^{\ps,\{\xi_v\}}_{Q_N}\to R^{\ps,\{\xi_v\}})$ in $\End (M_{\psi,\xi}(U^p_{Q_N})_{\km})$.
\end{prop}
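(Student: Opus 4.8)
The plan is to reduce the statement to a pointwise identity on automorphic forms at finite level and then feed in classical local--global compatibility at the Taylor--Wiles primes. Since the image of a ring homomorphism is a subring and $\cO[\Delta_{Q_N}]'$ is generated as an $\cO$-algebra by the elements $g+g^{-1}$ with $g\in\Delta_v$, $v\in Q_N$, it is enough to produce, for each such $g$, an element of $R^{\ps,\{\xi_v\}}_{Q_N}$ mapping to right translation by $g+g^{-1}$ in $\End(M_{\psi,\xi}(U^p_{Q_N})_\km)$. I would take this element to be $T^{univ}(\sigma_g)$, where $\sigma_g\in I_{F_v}\subseteq G_{F,S\cup Q_N}$ is a lift of $g$ obtained by composing the reciprocity map with $O_{F_v}^\times\twoheadrightarrow k(v)^\times\twoheadrightarrow\Delta_v$.

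To verify this I would dualize and pass to finite level: it suffices to check that $T^{univ}(\sigma_g)$ and $g+g^{-1}$ induce the same operator on $S_{\psi,\xi}(U^p_{Q_N}U_p,\cO/\varpi^n)_\km$ for all admissible pairs $(n,U_p)$. Following the device in \ref{exgal} (shrinking $U_p$, twisting $\psi$ to finite order, and using that $S_{\psi,\xi}(U^pU_p,\cO)_\km$ is finite free over $\cO$ by corollary \ref{sfsmf}), this reduces to the analogous identity over $E$, where the anemic Hecke action and the $\Delta_{Q_N}$-action are semisimple, so that the space decomposes into eigen-pieces attached to automorphic representations $\pi$. On the $\pi$-piece, $T^{univ}(\sigma_g)$ acts by $\tr\rho_\pi(\sigma_g)$. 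Classical local--global compatibility at $v\in Q_N$ (disjoint from $S$, with $N(v)\equiv1\bmod p$) identifies $\rho_\pi|_{I_{F_v}}$ with $\eta_{v,1}\oplus\eta_{v,2}$, the characters governing the Iwahori-level local component $\pi_v$, so that $g$ acts on $\pi_v^{U_{Q_N,v}}$ with eigenvalues among $\{\eta_{v,1}(\sigma_g),\eta_{v,2}(\sigma_g)\}$; since the central character $\psi\xi$ is unramified at $v$, one has $\eta_{v,2}|_{I_{F_v}}=\eta_{v,1}^{-1}|_{I_{F_v}}$, hence $g+g^{-1}$ acts on the entire $\pi$-piece by the single scalar $\eta_{v,1}(\sigma_g)+\eta_{v,2}(\sigma_g)=\tr\rho_\pi(\sigma_g)$. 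Summing over $\pi$ gives the desired equality and hence the first assertion.

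For the "moreover" clause I would note that $\cO[\Delta_{Q_N}]'$, being generated over $\cO$ by the $g+g^{-1}$ whose augmentation images are $2$, has $\ka'_{Q_N}$ equal to the ideal generated by the elements $(g+g^{-1})-2$. On the Galois side, the map $R^{\ps,\{\xi_v\}}_{Q_N}\to R^{\ps,\{\xi_v\}}$ sends $T^{univ}(\sigma_g)$ to $T^{univ}(1)=2$, because the image of $\sigma_g$ under $G_{F,S\cup Q_N}\to G_{F,S}$ is trivial; thus $T^{univ}(\sigma_g)-2$ lies in $\ker(R^{\ps,\{\xi_v\}}_{Q_N}\to R^{\ps,\{\xi_v\}})$ and, by the first part, maps to $(g+g^{-1})-2$ in $\End(M_{\psi,\xi}(U^p_{Q_N})_\km)$. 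This settles the last assertion.

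The step I expect to be the main obstacle is the local--global compatibility and local harmonic-analysis input at the Taylor--Wiles primes: one must describe precisely how $\Delta_v$ acts on the $U_{Q_N,v}$-invariants of the local principal series $\pi_v$ and match the resulting eigenvalues with $\rho_\pi|_{I_{F_v}}$, including the normalization of the reciprocity map (which becomes harmless once one symmetrizes to $g+g^{-1}$). A secondary technical point is the careful reduction of the $\cO/\varpi^n$-coefficient identity to the characteristic-zero statement, which requires shrinking $U_p$ and twisting so that the relevant spaces become finite free and the Hecke and diamond actions diagonalize.
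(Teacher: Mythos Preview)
Your proposal is correct and follows essentially the same route as the paper: choose a lift $\sigma_g\in I_{F_v}$ of $g\in\Delta_v$, reduce via Pontryagin duality to finite level, twist so that $\psi$ has finite order (as in \ref{exgal}/proof of proposition \ref{twkcc}), and then invoke local--global compatibility at $v\in Q_N$ to identify the action of $T^{univ}(\sigma_g)$ with that of $g+g^{-1}$. The paper records this more tersely, simply noting that $\pi_v$ is Steinberg or principal series (citing Bushnell--Henniart) and that ``in both cases, the desired result is clear''; your unpacking of the principal-series eigenvalue computation and of the second assertion via $T^{univ}(\sigma_g)\mapsto T^{univ}(1)=2$ is exactly what is meant. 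One minor caveat: writing $\rho_\pi|_{I_{F_v}}=\eta_{v,1}\oplus\eta_{v,2}$ is not literally correct in the Steinberg case (the restriction is unipotent, not split), but your argument only uses the trace, so the conclusion is unaffected.
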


\begin{proof}
Consider the natural map $I_{F_v}\to k(v)^\times\to\Delta_v$ given by the local class field theory. For any $\sigma_0\in \Delta_v$, we choose a lifting $\sigma\in I_{F_v}$. It suffices to show that $T^{univ}(\sigma)$ acts as $\sigma_0+\sigma_0^{-1}$ on $M_{\psi,\xi}(U^p_{Q_N})_{\km}$. By definition, we only need to check this for the action of $T^{univ}(\sigma)$ on $S_{\psi,\xi}(U^p_{Q_N}U_p,\cO/\varpi^n)_\km$ where $U_p$ is an open subgroup of $K_p$ such that $\psi|_{U_p\cap O_{F,p}^\times}\mod \varpi^n$ is trivial and $U^p_{Q_N}U_p$ is sufficiently small.

Since the formulation commutes with twisting with a character unramified at $v$, we may argue as in the proof of proposition \ref{twkcc} and assume $\psi$ is equal to the Teichm\"uller lifting of its mod $\varpi$ reduction. Now it suffices to show that the action of $T^{univ}(\sigma)$ on $S_{\psi,\xi}(U^p_{Q_N}U_p,\cO)_\km$ is equal to $\sigma_0+\sigma_0^{-1}$. But this is a consequence of the local-global compatibility result at $v$: if $f\in S_{\psi,\xi}(U^p_{Q_N}U_p,\cO)_\km$ generates an automorphic representation, then the local representation at $v$ is either Steinberg or a principal series since it has $U^p_{Q_N,v}$-fixed vectors (see for example Proposition 14.3 of \cite{BH06}). In both cases, the desired result is clear.
\end{proof}

\subsection{Patching I: patched completed homology} \label{p1pch}
\begin{para}
We summarize what we have done so far. For any $N$, we have 
\begin{itemize}
\item a finite set of primes $Q_N$ with cardinality $r$ given by proposition \ref{exttwp}.
\item $R^{\ps,\{\xi_v\}}_{Q_N}\twoheadrightarrow \T_{\psi,\xi}(U^p_{Q_N})_{\km} \to \End_{\kC_{D_p^\times,\psi}(\cO)}(M_{\psi,\xi}(U^p_{Q_N})_{\km})$, where $D_p^\times=\prod_{v|p}\GL_2(F_v)$.
\item $\Delta_{Q_N}=\prod_{v\in Q_N}\Delta_v$ and $\Delta_v$ is the unique quotient of $k(v)^\times$ of order $p^N$.
\item $\cO[\Delta_{Q_N}]\hookrightarrow \End_{\kC_{D_p^\times,\psi}(\cO)}(M_{\psi,\xi}(U^p_{Q_N})_{\km})$ that makes $M_{\psi,\xi}(U^p_{Q_N})_{\km}$ into a flat $\cO[\Delta_{Q_N}]$-module. Also the image of $\cO[\Delta_{Q_N}']$ is contained in the image of  $R^{\ps,\{\xi_v\}}_{Q_N}$.
\item Elements $\tilde{f}_N\in R^{\ps,\{\xi_v\}}_{Q_N}$ and natural maps $\eta_N$ (see proposition \ref{twkcc}).
\end{itemize}

We also set: 
\begin{itemize}
\item $\Delta_\infty:=\Z_p^{\oplus r}$. Fix surjective maps $\Z_p\to \Delta_v$ for all $v\in Q_N$ and thus surjective maps $\Delta_\infty\to \Delta_{Q_N}$. 
\item $\cO_\infty=\cO[[y_1,\cdots,y_{4|P|-1}]]$ with maximal ideal $\kb$ and prime ideal $\kb_1=(y_1,\cdots,y_{4|P|-1})$.
\item $S_\infty=\cO_\infty[[\Delta_\infty]]\cong \cO_{\infty}[[s_1,\cdots,s_r]]$. This is a local $\cO_\infty$-algebra with maximal ideal $\ka$. 
\item $\ka_0=\ker(S_\infty\to\cO_\infty)=(s_1,\cdots,s_r)$ the augmentation ideal, and $\ka_1=(\ka_0,\kb_1)=(y_1,\cdots,y_{4|P|-1},s_1,\cdots,s_r)$. Hence $\ka=(\kb)+\ka_0$ and $\ka_1=(\kb_1)+\ka_0$.
\item $S_\infty'\subseteq\cO_\infty[[\Delta_\infty]]$ is the closure (under the profinite topology) of the $\cO_\infty$-subalgebra generated by all elements of the form $g+g^{-1}$ with $g=(0,\cdots,0,a,0,\cdots,0)\in\Delta_\infty$ for some $a\in\Z_p$. This is a regular local $\cO_\infty$-algebra and $S_\infty$ is a finite free $S_\infty'$-algebra.
\item $\ka_0'=\ka_0\cap S_\infty',\ka_1'=\ka_1\cap S_\infty'$. We may find $r$ elements $s_1',\cdots,s_r'$ that generate $\ka_0'$ and $S_\infty'\cong\cO_\infty[[s_1',\cdots,s_r']]$.
\end{itemize} 
\end{para}

\begin{para}
Following \cite{Sch15}, we will use the language of ultrafilters to define patched completed homology (see also \cite{GN16}). This language seems to be essential here. For example, I don't know how to rewrite the arguments below in the classical language (as in \cite{SW99}).

Let $\cI$ be the set of positive integers  and $\fR=\prod_\cI \cO$. From now on we fix a non-principal ultrafilter $\kF$ on $\cI$. Then $\kF$ gives rise to a multiplicative set $S_\kF\subseteq \fR$ consisting of all idempotents $e_{I}$ with $I\in\kF$ where $e_I(i)=1$ if $i\in I$, $e_I(i)=0$ otherwise. 

We define $\fR_\kF=S_\kF^{-1}\fR$. This is a quotient of $\fR$ as all the elements in $S_\kF$ are idempotents. Taking tensor product with $\fR_\kF$ over $\fR$ is an exact functor. Since $\kF$ is non-principal, for any finite set $T\subseteq \cI$, we have $\fR_T\otimes_\fR \fR_\kF\cong \fR_\kF$, where $\fR_T$ is the quotient of $\fR$ by elements of the form $(a_i)_{i\in \cI}$ with $a_i=0$ for $i\notin T$.

The following lemma is easy but extremely useful.
\end{para}

\begin{lem} \label{uflem}
Suppose for any $i\in\cI$, $M_i$ is an $\cO$-module with decreasing filtrations of $\cO$-modules $M_i\supseteq M_{i,1}\supseteq M_{i,2}\supseteq\cdots$. Then the natural map
\[\prod_{i\in\cI} M_i\to \varprojlim_n((\prod_{i\in\cI}M_i/M_{i,n})\otimes_\fR \fR_\kF)\]
is surjective. The kernel contains all elements of the form $(a_i)_{i\in \cI}$ such that for any $n$, there exists $I_n\in\kF$ with $a_i\in M_{i,n}$ for any $i\in I_n$.
\end{lem}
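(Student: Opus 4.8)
The plan is to argue directly with the non-principal ultrafilter $\kF$, the key device being to discard finitely many indices at each level so as to avoid any completeness issue for the individual modules $M_i$. Throughout I will use the standard identification: since $\fR_\kF$ is a quotient of $\fR$ and $-\otimes_\fR\fR_\kF$ is exact, $(\prod_{i}M_i/M_{i,n})\otimes_\fR\fR_\kF$ is canonically the ultraproduct $\prod_\kF M_i/M_{i,n}$, two families $(x_i)_i,(y_i)_i$ having the same class exactly when $\{i:x_i=y_i\}\in\kF$; the $n$-th component of the map in question sends $(a_i)_i$ to the class of $(a_i\bmod M_{i,n})_i$.

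The assertion about the kernel is then immediate: if $(a_i)_i\in\prod_i M_i$ has the property that for every $n$ there is $I_n\in\kF$ with $a_i\in M_{i,n}$ for all $i\in I_n$, then the class of $(a_i\bmod M_{i,n})_i$ in $\prod_\kF M_i/M_{i,n}$ vanishes for every $n$ (it is zero already on $I_n$), so $(a_i)_i$ maps to $0$.

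For surjectivity I would proceed as follows. Fix $\xi=(\xi_n)_n$ in the target inverse limit and, for each $n$, pick a representative $(x_i^{(n)})_i\in\prod_i M_i/M_{i,n}$ of $\xi_n$. Compatibility of $\xi$ gives $A_n:=\{i:x_i^{(n+1)}\equiv x_i^{(n)}\bmod M_{i,n}\}\in\kF$; set $B_n=A_1\cap\dots\cap A_n\in\kF$, with $B_0=\cI$. Since $\kF$ is non-principal each $B_n$ is infinite, so $Y_n:=B_n\setminus\{1,\dots,n\}$ again lies in $\kF$ (also put $Y_0=\cI$); the $Y_n$ are decreasing and $\bigcap_n Y_n=\emptyset$, hence $k(i):=\sup\{n\ge 0:i\in Y_n\}$ is finite for every $i$. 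Define $a_i\in M_i$ to be any lift of $x_i^{(k(i))}\in M_i/M_{i,k(i)}$ (and $a_i=0$ if $k(i)=0$). To check that $(a_i)_i$ maps to $\xi$ it suffices to show that for each $n\ge 1$ the set $\{i:a_i\equiv x_i^{(n)}\bmod M_{i,n}\}$ lies in $\kF$, and I claim it contains $Y_n$: if $i\in Y_n$ then $k(i)\ge n$ and $i\in Y_{k(i)}\subseteq B_{k(i)}$, so telescoping the congruences $x_i^{(m+1)}\equiv x_i^{(m)}\bmod M_{i,m}$ for $n\le m<k(i)$ — valid since $B_{k(i)}\subseteq A_m$ and $M_{i,m}\subseteq M_{i,n}$ — yields $x_i^{(k(i))}\equiv x_i^{(n)}\bmod M_{i,n}$, and since $a_i\equiv x_i^{(k(i))}\bmod M_{i,k(i)}\subseteq M_{i,n}$ we get $a_i\equiv x_i^{(n)}\bmod M_{i,n}$.

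The only step requiring genuine care — and the one I regard as the crux — is precisely the passage from $B_n$ to $Y_n=B_n\setminus\{1,\dots,n\}$: this forces every index $i$ to receive a finite level $k(i)$, so that $a_i$ can be produced as an honest element of $M_i$ rather than merely of its completion $\varprojlim_n M_i/M_{i,n}$; in particular no completeness hypothesis on the $M_i$ is needed. The rest is routine ultrafilter bookkeeping.
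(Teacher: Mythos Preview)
Your proof is correct and follows essentially the same approach as the paper's: both take a compatible system of representatives, intersect the compatibility sets into a decreasing chain in $\kF$, force the intersection to be empty by removing finitely many indices at each level (the paper simply asserts this can be done, whereas you spell it out via $Y_n=B_n\setminus\{1,\dots,n\}$), and then define $a_i$ by lifting at the maximal level to which $i$ belongs. Your write-up is somewhat more detailed in the verification, but the idea is identical.
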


\begin{proof}
The description of the kernel follows from the definition of $R_\kF$. To prove the surjectivity, let $\varprojlim_n [(a_{i,n})_i]$ with $a_{i,n}\in M_i/M_{i,n}$ be an element in the projective limit. This means that for any $n$, there exists $I_n\in\kF$ such that $a_{i,n}\equiv a_{i,n+1}\mod M_{i,n}$ for any $i\in I_n$. We may assume $I_n$ contains $I_{n+1}$ for any $n$ and the intersection of all $I_n$ is empty. Now for any $i\in I_n\setminus I_{n+1}$, let $a_i\in M_i$ be a lifting of $a_{i,n+1}$.  Set $a_i=0$ for $i\notin I_1$. It is easy to see that $(a_i)_{i\in\cI}$ maps to $\varprojlim_n [(a_{i,n})_i]$. This finishes the proof of the lemma.
\end{proof}

Note that since $\cO/(\varpi^n)$ has finite cardinality, 
\begin{eqnarray} \label{ufi}
\fR_\kF/(\varpi^n)=(\prod_\cI \cO/\varpi^n)\otimes_\fR \fR_\kF\cong \cO/(\varpi^n).
\end{eqnarray}
As a special case of the previous lemma, we have
\begin{cor} \label{speufi}
The inverse limit of \eqref{ufi} gives a surjective map $\prod_{\cI} \cO\to \cO$. The kernel contains all $(a_i)_{i\in \cI}$ such that for any $n$, there exists $I_n\in\kF$ with $a_i\in (\varpi^n)$ for any $i\in I_n$.
\end{cor}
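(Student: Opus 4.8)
\textbf{Proof proposal for Corollary \ref{speufi}.}

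The statement is a direct specialization of Lemma \ref{uflem}, so the plan is simply to apply that lemma with the trivial filtration on each copy of $\cO$ and then translate the conclusion. First I would set $M_i = \cO$ for every $i \in \cI$, equipped with the $\varpi$-adic filtration $M_{i,n} = \varpi^n\cO$. Lemma \ref{uflem} then asserts that the natural map
\[
\prod_{i\in\cI}\cO \;\longrightarrow\; \varprojlim_n\Bigl(\bigl(\textstyle\prod_{i\in\cI}\cO/\varpi^n\bigr)\otimes_\fR\fR_\kF\Bigr)
\]
is surjective, and that its kernel contains every element $(a_i)_{i\in\cI}$ such that for each $n$ there is some $I_n\in\kF$ with $a_i\in\varpi^n\cO$ for all $i\in I_n$.

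The only thing left to do is to identify the target with $\cO$. This is exactly the content of \eqref{ufi}: since $\cO/(\varpi^n)$ is finite, one has $\fR_\kF/(\varpi^n)\cong\cO/(\varpi^n)$, and taking the inverse limit over $n$ of these isomorphisms gives $\varprojlim_n\bigl((\prod_\cI\cO/\varpi^n)\otimes_\fR\fR_\kF\bigr)\cong\varprojlim_n\cO/(\varpi^n)=\cO$, using that $\cO$ is $\varpi$-adically complete and separated. Under this identification the map in Lemma \ref{uflem} becomes the claimed surjection $\prod_\cI\cO\to\cO$, and the description of the kernel carries over verbatim.

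There is essentially no obstacle here; the corollary is purely bookkeeping once Lemma \ref{uflem} and the isomorphism \eqref{ufi} are in hand. The one point that warrants a sentence of care is checking that the inverse system $\{\fR_\kF/(\varpi^n)\}_n$ has transition maps compatible with the isomorphisms to $\{\cO/(\varpi^n)\}_n$ — this follows because both reduction maps are induced by the quotient maps $\cO/\varpi^{n+1}\to\cO/\varpi^n$ on each factor, tensored with the exact functor $-\otimes_\fR\fR_\kF$ — so that passing to the limit is legitimate.
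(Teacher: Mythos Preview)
Your proposal is correct and matches the paper's approach exactly: the paper introduces the corollary with the phrase ``As a special case of the previous lemma, we have'' and gives no further proof, so your explicit specialization of Lemma~\ref{uflem} with $M_i=\cO$ and $M_{i,n}=\varpi^n\cO$, together with the identification via \eqref{ufi}, is precisely what is intended.
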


\begin{para}
We now define patched completed homology. Recall that $S_{\psi,\xi}(U^p_{Q_N}U_p,\cO/\varpi^n)_\km$ is an $\cO[\Delta_{Q_N}]$-module. Hence $S_{\psi,\xi}(U^p_{Q_N}U_p,\cO/\varpi^n)_\km^\vee\otimes_\cO \cO_\infty$ is a natural $S_\infty$-module. For simplicity, we denote it by $M(U_p,N,n)$. Then $\prod_{N\in\cI}M(U_p,N,n)$ has a natural $\fR$-module structure. 
\end{para}

\begin{defn}[Patched completed homology] \label{Pch}
For any integer $n>0$, we define
\begin{itemize}
\item $M^{\patch,\{\xi_v\}}_n:=\varprojlim_{U_p}(\prod_{N\in\cI} M(U_p,N,n)/\ka^n M(U_p,N,n)\otimes_\fR \fR_\kF)$.
\item $M^{\patch,\{\xi_v\}}_{n,0}:=\varprojlim_{U_p}(\prod_{N\in\cI} (S_{\psi,\xi}(U^p_{Q_N,0}U_p,\cO/\varpi^n)_\km^\vee\otimes_\cO \cO_\infty/\kb^n) \otimes_\fR \fR_\kF)$.
\item $M^{\{\xi_v\}}_{n}:=\varprojlim_{U_p}(\prod_{\cI} (S_{\psi,\xi}(U^pU_p,\cO/\varpi^n)_\km^\vee\otimes_\cO \cO_\infty/\kb^n) \otimes_\fR \fR_\kF)$.
\end{itemize}
Here $U_p$ runs through all open compact subgroups of $D_p^\times=\prod_{v|p}\GL_2(F_v)$ in all projective limits above. We note that $M^{\{\xi_v\}}_{n}$ is nothing but 
\[\varprojlim_{U_p}(S_{\psi,\xi}(U^pU_p,\cO/\varpi^n)_\km^\vee\otimes_\cO \cO_\infty/\kb^n)=M_{\psi,\xi}(U^p)_\km \otimes_\cO \cO_\infty/\kb^n\]
since $S_{\psi,\xi}(U^pU_p,\cO/\varpi^n)_\km^\vee\otimes_\cO \cO_\infty/\kb^n$ has finite cardinality.
\end{defn}

\begin{para}
In the definition of $M^{\patch,\{\xi_v\}}_n,M^{\patch,\{\xi_v\}}_{n,0}$, we can replace the limits taken over all open compact subgroups of $D_p^\times$ by over all open subgroups of $K_p$. From this description, it is clear that both patched completed homology are natural $\cO[[K_p]]$-modules. The action of $K_p$ can be extended to $D_p^\times=\prod_{v|p}\GL_2(F_v)$ in the usual way: the action of $g\in D_p^\times$ induces isomorphisms $M(U_p,N,n)\stackrel{\sim}{\longrightarrow}M(g^{-1}U_pg,N,n)$.

We prove some simple properties of $M^{\patch,\{\xi_v\}}_n$. Note that the diagonal action of  $S_\infty$ on $\prod_{N\in\cI} M(U_p,N,n)/\ka^n M(U_p,N,n)$ defines a natural $S_\infty$-module structure on $M^{\patch,\{\xi_v\}}_n$.
\end{para}

\begin{prop} \label{mnflat}
$M^{\patch,\{\xi_v\}}_n$ is a flat $S_\infty/\ka^n$-module. Moreover the natural maps $M^{\patch,\{\xi_v\}}_n$ to $M^{\patch,\{\xi_v\}}_{n-1}$ and $M^{\patch,\{\xi_v\}}_{n,0}$ induce isomorphisms
\begin{eqnarray*}
M^{\patch,\{\xi_v\}}_n/\ka^{n-1}M^{\patch,\{\xi_v\}}_n\cong M^{\patch,\{\xi_v\}}_{n-1}.\\
M^{\patch,\{\xi_v\}}_n/\ka_0 M^{\patch,\{\xi_v\}}_n\cong M^{\patch,\{\xi_v\}}_{n,0}.
\end{eqnarray*}
\end{prop}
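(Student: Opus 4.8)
The plan is to reduce the statement to a fixed tame–at–$p$ level and then take an inverse limit. For a fixed (pro-$p$, sufficiently small) open subgroup $U_p\subseteq K_p$, write
\[
A_{U_p}\defeq\Bigl(\textstyle\prod_{N\in\cI}M(U_p,N,n)/\ka^nM(U_p,N,n)\Bigr)\otimes_\fR\fR_\kF ,
\]
so that $M^{\patch,\{\xi_v\}}_n=\varprojlim_{U_p}A_{U_p}$, the limit running over a countable cofinal chain of such $U_p$. I would first show each $A_{U_p}$ is flat over $R\defeq S_\infty/\ka^n$, then identify $A_{U_p}/\ka^{n-1}A_{U_p}$ and $A_{U_p}/\ka_0A_{U_p}$ with the analogous terms for $n-1$ and for $M^{\patch,\{\xi_v\}}_{n,0}$, and finally pass to the limit using Lemma \ref{fllem}(1), which applies because $R$ is Artinian local, hence complete Noetherian.

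For flatness: for $N$ large (say $N\ge 2n$, a set in $\kF$), Lemma \ref{flmf} gives that $S_{\psi,\xi}(U^p_{Q_N}U_p,\cO/\varpi^n)^\vee_\km$ is finite flat over $(\cO/\varpi^n)[\Delta_{Q_N}]$, hence $M(U_p,N,n)=S_{\psi,\xi}(U^p_{Q_N}U_p,\cO/\varpi^n)^\vee_\km\otimes_\cO\cO_\infty$ is finite flat over $(\cO_\infty/\varpi^n)[\Delta_{Q_N}]$. Its $S_\infty$-module structure factors through the surjection $(\cO_\infty/\varpi^n)[\Delta_{Q_N}]\twoheadrightarrow S_\infty/\ka^n$ (valid once $(1+s_i)^{p^N}-1\in\ka^n$ for all $i$, i.e.\ for $N\gg n$), so $M(U_p,N,n)/\ka^nM(U_p,N,n)$ is the corresponding base change and is $R$-flat. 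Since $\kF$ is non-principal, one may replace $\prod_{N\in\cI}$ by $\prod_{N\ge 2n}$ in the definition of $A_{U_p}$; this product of flat $R$-modules is $R$-flat because $R$ is Noetherian (Chase's theorem), and $-\otimes_\fR\fR_\kF=S_{\kF}^{-1}(-)$ is an exact functor commuting with $-\otimes_R(-)$ (as $\fR$ acts centrally and compatibly with $R$), so $A_{U_p}$ is $R$-flat.

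For the two isomorphisms: base-changing $A_{U_p}$ along $R\to S_\infty/\ka^{n-1}$ (recall $\ka^{n-1}\supseteq\ka^n$) and using that $S_{\kF}^{-1}$ and tensoring with the finitely presented $R$-module $S_\infty/\ka^{n-1}$ both commute with $\prod_N$, together with $\varpi^{n-1}\in\ka^{n-1}$ and $M(U_p,N,n)/\varpi^{n-1}\cong M(U_p,N,n-1)$ (Corollary \ref{sfsmf} plus Pontryagin duality, compatibly with the $\Delta_{Q_N}$- and Hecke-actions), one gets $A_{U_p}/\ka^{n-1}A_{U_p}\cong\bigl(\prod_N M(U_p,N,n-1)/\ka^{n-1}M(U_p,N,n-1)\bigr)\otimes_\fR\fR_\kF$; then Lemma \ref{fllem}(1) yields $M^{\patch,\{\xi_v\}}_n/\ka^{n-1}M^{\patch,\{\xi_v\}}_n\cong M^{\patch,\{\xi_v\}}_{n-1}$. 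For the second isomorphism, note $\ka^n+\ka_0=\kb^n+\ka_0$ and $S_\infty/(\ka_0+\kb^n)=\cO_\infty/\kb^n$; since the augmentation ideal $\ka_0$ of $S_\infty$ maps onto the augmentation ideal $\ka_{Q_N}$ of $\cO[\Delta_{Q_N}]$, Lemma \ref{flmf}(2) gives $M(U_p,N,n)/(\ka_0+\kb^n)M(U_p,N,n)\cong S_{\psi,\xi}(U^p_{Q_N,0}U_p,\cO/\varpi^n)^\vee_\km\otimes_\cO\cO_\infty/\kb^n$, so the same manipulation identifies $A_{U_p}/\ka_0A_{U_p}$ with the $U_p$-term appearing in Definition \ref{Pch} of $M^{\patch,\{\xi_v\}}_{n,0}$, and Lemma \ref{fllem}(1) finishes the proof.

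The main obstacle is the flatness step: one must reconcile the infinite product over $N$, the ultraproduct localization $S_{\kF}^{-1}$, and $R$-flatness simultaneously. Concretely, the care is in checking that $S_{\kF}^{-1}$ commutes with base change over $R$ (using that the $\fR$-action is central and compatible with $R$), that discarding the finitely many small $N$ — for which the $(\cO_\infty/\varpi^n)[\Delta_{Q_N}]\twoheadrightarrow S_\infty/\ka^n$ argument may fail — is harmless because $\{N\ge 2n\}\in\kF$, and that products of flat modules survive over the Noetherian ring $R$. The remaining identities ($\ka^n+\ka_0=\ka_0+\kb^n$, the level-lowering compatibility via Lemma \ref{flmf}(2), and $M(U_p,N,n)/\varpi^{n-1}\cong M(U_p,N,n-1)$) are routine once one keeps track of the $\Delta_{Q_N}$- and Hecke-equivariance.
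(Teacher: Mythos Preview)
Your proposal is correct and follows essentially the same approach as the paper: establish flatness of each $M(U_p,N,n)/\ka^nM(U_p,N,n)$ over $S_\infty/\ka^n$ via Lemma~\ref{flmf} for $N$ large, pass to the product (flat over the Noetherian ring $S_\infty/\ka^n$), localize at $\kF$, and then take the inverse limit over $U_p$ using Lemma~\ref{fllem}(1); the two quotient isomorphisms are obtained by commuting the quotient with the product and the ultraproduct, exactly as in the paper's ``easy lemma'' following the proposition. Your write-up is simply more explicit than the paper's on a few points (the factorization $(\cO_\infty/\varpi^n)[\Delta_{Q_N}]\twoheadrightarrow S_\infty/\ka^n$ for $N\gg n$, the invocation of Chase's theorem, and the identification $M(U_p,N,n)/\varpi^{n-1}\cong M(U_p,N,n-1)$), but there is no substantive difference in strategy.
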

\begin{proof}
For $U_p$ small enough and $N>n$,  $M(U_p,N,n)/\ka^n M(U_p,N,n)$ is a flat $S_\infty/\ka^n$-module by lemma \ref{flmf}. Hence $\prod_{N>n} M(U_p,N,n)/\ka^n M(U_p,N,n)$ is a flat $S_\infty/\ka^n$-module. Since $\otimes \fR_\kF$ is an exact functor and $S_\infty/\ka^n$ is Artinian, applying lemma \ref{fllem}, we see that $M^{\patch,\{\xi_v\}}_n$ is a flat $S_\infty/\ka^n$-module. 

Note that by lemma \ref{flmf} again,  we have
\begin{eqnarray*}
M(U_p,N,n)/\ka^{n-1} M(U_p,N,n)\cong M(U_p,N,n-1)/\ka^{n-1} M(U_p,N,n-1).\\
M(U_p,N,n)/(\ka^n+\ka_0) M(U_p,N,n)\cong S_{\psi,\xi}(U^p_{Q_N,0}U_p,\cO/\varpi^n)_\km^\vee\otimes_\cO \cO_\infty/\kb^n.
\end{eqnarray*}
Both isomorphisms in the proposition now follow from the first part of lemma \ref{fllem} and the following easy lemma which we omit the proof here.
\end{proof}

\begin{lem}
Let $R$ be a Noetherian ring and $M=\prod_{i\in \cI} M_i$ be a product of $R$-modules $M_i$. Then $M/IM\cong \prod_{i\in \cI}M_i/IM_i$ for any ideal $I$ of $R$.  
\end{lem}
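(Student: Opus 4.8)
The plan is to reduce everything to the finite generation of $I$, which is precisely where the Noetherian hypothesis is used; indeed the statement fails without it, since tensoring with $R/I$ need not commute with infinite direct products in general.

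First I would write down the obvious candidate isomorphism. For each $i$ the quotient map $M_i \to M_i/IM_i$ induces a map $M = \prod_{i\in\cI} M_i \to \prod_{i\in\cI} M_i/IM_i$, which is surjective because a product of surjective maps is surjective, and whose kernel is exactly $\prod_{i\in\cI} IM_i$ (an element $(m_i)_i$ maps to zero iff $m_i\in IM_i$ for every $i$). So the whole content is to identify the submodule $IM\subseteq M$ with $\prod_{i\in\cI}IM_i$. The inclusion $IM\subseteq\prod_{i\in\cI}IM_i$ is immediate: any element of $IM$ is a finite sum $\sum_j a_j m^{(j)}$ with $a_j\in I$ and $m^{(j)}\in M$, and its $i$-th coordinate $\sum_j a_j m^{(j)}_i$ lies in $IM_i$. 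For the reverse inclusion I would use that $R$ is Noetherian to pick generators $I=(a_1,\dots,a_r)$; then $IM_i=\sum_{k=1}^r a_k M_i$ for every $i$, so any $(x_i)_i\in\prod_{i\in\cI}IM_i$ can be written coordinatewise as $x_i=\sum_{k=1}^r a_k m_{ik}$ with $m_{ik}\in M_i$, and setting $y_k:=(m_{ik})_{i\in\cI}\in M$ gives $(x_i)_i=\sum_{k=1}^r a_k y_k\in IM$. This yields $IM=\prod_{i\in\cI}IM_i$, and hence $M/IM\cong\prod_{i\in\cI}M_i/IM_i$ via the map above.

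Alternatively one could phrase the same argument conceptually: since $R$ is Noetherian, $R/I$ is a finitely presented $R$-module, and tensoring a finitely presented module with an arbitrary direct product commutes with the product; applying this to $M/IM\cong (R/I)\otimes_R\prod_{i\in\cI}M_i$ gives the claim at once. I do not expect any genuine obstacle here — the only point worth flagging is that the finiteness of the generating set (the "bounded number of $a_k$'s") is what allows one to split an element of $\prod_{i\in\cI}IM_i$ into a single finite $R$-linear combination in $M$, and that finiteness is exactly what the Noetherian hypothesis supplies.
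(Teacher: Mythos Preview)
Your proof is correct; the paper in fact omits the proof entirely as easy, and your argument using finite generation of $I$ to show $IM=\prod_{i\in\cI}IM_i$ is the natural one it has in mind.
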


\begin{para} \label{etanpa}
Fix isomorphisms between $P(Q_N)$ and $\{1,\cdots,2^r\}$ for any $N$ from now on. Then the map $\eta_N$ defined in \ref{tdm} induces a map:
\[\eta^{\patch}_n: M^{\patch,\{\xi_v\}}_{n,0}\to (M^{\{\xi_v\}}_n)^{\oplus 2^r},\]
which commutes with the action of $D_p^\times$ and $\cO[[K_p]]$. 

It is clear that $\prod_N R^{\ps,\{\xi_v\}}_{Q_N}$ acts naturally on $M^{\patch,\{\xi_v\}}_{n}$ and $M^{\patch,\{\xi_v\}}_{n,0}$. Let $\tilde{f}=\prod_N\tilde{f}_N\in\prod_N R^{\ps,\{\xi_v\}}_{Q_N}$ where $\tilde{f}_N$ are defined in proposition \ref{twkcc}. Here all the products are taken over all $N\in \cI$ and we will keep this convention from now on in this section. The following result is a direct consequence of proposition \ref{twkcc} and lemma \ref{limf} below.
\end{para}

\begin{prop} \label{fkkc}
$\tilde{f}^2$ kills the kernel and cokernel of $\eta^{\patch}_n$.
\end{prop}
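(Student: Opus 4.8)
The plan is to deduce Proposition~\ref{fkkc} from Proposition~\ref{twkcc} by passing through the ultrafilter construction, with the only genuine content being a lemma (referred to as ``lemma \ref{limf}'' in \ref{etanpa}) that says that killing kernels and cokernels is preserved under the operations $\prod_{N\in\cI}(-)$, $\otimes_\fR\fR_\kF$, and $\varprojlim_{U_p}$. First I would make precise what $\eta^{\patch}_n$ is: for each $U_p$, $N$, and each $n$, Proposition~\ref{twkcc}(1) says $\tilde f_N$ kills $\ker(\eta_N)$ and $\coker(\eta_N)$ for the map $\eta_N$ at level $U^p_{Q_N,0}U_p$ mod $\varpi^n$; dualizing and tensoring with $\cO_\infty/\kb^n$ (an exact operation since $\cO_\infty$ is free over $\cO$), then taking the product over $N\in\cI$, we get that $\tilde f=\prod_N\tilde f_N$ kills the kernel and cokernel of $\prod_N(\eta_N^\vee\otimes\cO_\infty/\kb^n)$ as a map of $\fR$-modules.

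The key steps are then: (i) observe that $\otimes_\fR\fR_\kF$ is exact (stated in \ref{etanpa}), so $\tilde f$ still kills the kernel and cokernel after applying it, giving the statement at fixed $U_p$ and fixed $n$ before the $U_p$-limit; (ii) pass to the inverse limit over $U_p$. Step (ii) is where a small subtlety enters: inverse limits are only left-exact, so a priori $\tilde f$ kills the kernel of $\varprojlim\eta$, but the cokernel only gets controlled up to a $\varprojlim^1$ term. This is exactly why the proposition asserts $\tilde f^{2}$ rather than $\tilde f$: one uses the standard trick that if $f$ kills $\ker$ and $\coker$ of a map $\phi\colon M\to N$, then $\phi$ factors through $f$-multiplication in both directions, i.e. there is $\psi\colon N\to M$ with $\psi\circ\phi=f^2$ and $\phi\circ\psi=f^2$ (composing the two ``division by $f$'' maps coming from the kernel bound and the cokernel bound). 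Taking $\varprojlim_{U_p}$ of the pair $(\phi,\psi)$ — which is harmless since $\varprojlim$ is a functor — produces $\varprojlim\psi$ with $(\varprojlim\psi)\circ(\varprojlim\phi)=f^2$ and $(\varprojlim\phi)\circ(\varprojlim\psi)=f^2$, and this immediately gives that $f^2$ annihilates $\ker(\varprojlim\phi)$ and $\coker(\varprojlim\phi)$ without ever touching $\varprojlim^1$. I would also invoke Proposition~\ref{mnflat} or the Mittag-Leffler property of the finite-length pieces to reassure the reader that the limits behave, but the $f^2$ trick is what makes the argument robust.

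I expect the main obstacle to be bookkeeping rather than mathematics: one must check that the Hecke element $\tilde f_N\in R^{\ps,\{\xi_v\}}_{Q_N}$, whose image acts on all the finite-level spaces compatibly, indeed gives a well-defined element $\tilde f=\prod_N\tilde f_N$ acting on $M^{\patch,\{\xi_v\}}_{n,0}$ and $M^{\{\xi_v\}}_n$ and commuting with all transition maps (both the $U_p$-transition maps and the mod-$\varpi^n$ reduction maps), and that $\eta^{\patch}_n$ is genuinely the limit of the $\eta_N$'s after the identifications $P(Q_N)\cong\{1,\dots,2^r\}$ of \ref{etanpa}. Once that compatibility is in place, the argument is the formal ``kernel/cokernel killed by $f$ $\Rightarrow$ killed by $f^2$ after $\varprojlim$'' lemma applied termwise. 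So the proof will read: apply Proposition~\ref{twkcc}(1) at each finite level, take $\prod_{N}$, apply the exact functor $\otimes_\fR\fR_\kF$, then apply lemma \ref{limf} (the $f\rightsquigarrow f^2$ under $\varprojlim$ statement) to conclude. I would keep the written proof to a couple of sentences, exactly as the author does, citing Proposition~\ref{twkcc} and ``lemma \ref{limf}''.
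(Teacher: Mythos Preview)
Your proposal is correct and follows essentially the same approach as the paper: reduce to Proposition~\ref{twkcc} at each finite level, pass through $\prod_N$ and the exact functor $\otimes_\fR\fR_\kF$, and then invoke lemma~\ref{limf} for the inverse limit over $U_p$. Your explanation of lemma~\ref{limf} via a canonical quasi-inverse $\psi$ (well-defined because $f$ kills $\ker\phi$, and compatible with transition maps for the same reason) is a clean repackaging of the paper's direct element-chase, which simply lifts $fa_i$ to some $b_i$ and observes that $(fb_i)_i$ is coherent; both arguments yield the same $f^2$.
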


\begin{lem}\label{limf}
Let $R$ be a Noetherian ring and $\{M_i\}_{i\in \Z_{>0}}\to \{N_i\}_{i\in \Z_{>0}}$ be a map between two projective systems of $R$-modules. Suppose there exists an element $f$ that kills the kernel and cokernel of $M_i\to N_i$ for any $i$. Then $f^2$ kills the kernel and cokernel of $\varprojlim M_i\to \varprojlim N_i$.
\end{lem}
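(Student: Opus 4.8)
The plan is to prove the standard ``limits of maps with bounded kernel/cokernel'' lemma by a diagram chase, exactly as one does for $p$-adic completions. Let $K_i$ and $C_i$ denote the kernel and cokernel of $M_i\to N_i$, so by hypothesis $fK_i=0$ and $fC_i=0$ for every $i$. Write $M=\varprojlim M_i$, $N=\varprojlim N_i$, and let $\phi:M\to N$ be the induced map. I want to show $f^2\ker\phi=0$ and $f^2\coker\phi=0$.

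For the kernel: suppose $x=(x_i)_i\in M$ maps to $0$ in $N$. Then each $x_i$ lies in $K_i$, hence $f x_i=0$ for all $i$, so already $fx=0$; in particular $f^2x=0$. (So the kernel is in fact killed by $f$; the square is only needed for the cokernel, and stating $f^2$ uniformly does no harm.) For the cokernel: let $y=(y_i)_i\in N$. For each $i$ choose, using $fC_i=0$, an element $m_i\in M_i$ with $\psi_i(m_i)=f y_i$, where $\psi_i:M_i\to N_i$. The obstruction to $(m_i)_i$ being compatible under the transition maps is that $m_i-\theta_i(m_{i+1})$ lies in $K_i$ (it maps to $f y_i - f y_i=0$ in $N_i$), where $\theta_i$ is the transition map $M_{i+1}\to M_i$. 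Multiplying by $f$ kills this discrepancy: set $m_i'=f m_i$. Then $m_i'-\theta_i(m_{i+1}')=f(m_i-\theta_i(m_{i+1}))\in fK_i=0$, so $(m_i')_i$ is a genuine element $m'\in M=\varprojlim M_i$, and $\phi(m')=(f\psi_i(m_i))_i=(f^2 y_i)_i=f^2 y$. Hence $f^2 y$ is in the image of $\phi$, i.e. $f^2$ kills $\coker\phi$. This completes the argument.

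There is essentially no obstacle here: the only mild point to watch is that one is working with honest inverse limits over $\Z_{>0}$ (no exactness of $\varprojlim$ is invoked — we produce the preimage $m'$ by hand rather than appealing to a Mittag--Leffler statement), so the diagram chase goes through for arbitrary $R$-modules with no Noetherian or finiteness hypotheses actually needed beyond what is stated. I would present it in precisely the two-step form above, and note that the same reasoning applies verbatim when the projective systems are replaced by filtered systems indexed by any directed set, which is the form in which it is applied in Proposition \ref{fkkc} (where the index set is pairs $(U_p,n)$ refined by the ultrafilter construction).
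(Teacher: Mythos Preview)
Your proof is correct and follows exactly the same approach as the paper's: lift $fy_i$ to $m_i\in M_i$, then multiply by $f$ to make the sequence compatible. The paper's version is more terse (it dispatches the kernel with ``the claim for the kernel is clear''), but your observation that $f$ already kills the kernel and that the Noetherian hypothesis is not actually used is a nice bonus.
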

\begin{proof}
Suppose $(a_i)_i\in \varprojlim N_i$. There exists $b_i\in M_i$ mapping to $fa_i\in N_i$ for any $i$. Then $(fb_i)_i$ defines an element in $\varprojlim M_i$ and maps to $f^2(a_i)_i$. The claim for the kernel is clear.
\end{proof}

\begin{cor} \label{finpath}
Let $\tilde{M}^{\patch,\{\xi_v\}}_n$ be the image of $M^{\patch,\{\xi_v\}}_n$ into $(M^{\patch,\{\xi_v\}}_n)_{\tilde{f}}$, the localization of $M^{\patch,\{\xi_v\}}_n$ by the powers of $\tilde{f}$. Then $\tilde{M}^{\patch,\{\xi_v\}}_n$ is a finitely generated module over $\cO[[K_p]]$. In other words, the Pontryagin dual of $\tilde{M}^{\patch,\{\xi_v\}}_n$ is an admissible representation of $D_p^\times$. In particular, $\tilde{M}^{\patch,\{\xi_v\}}_n$ is an object in $\kC_{D_p^\times,\psi}(\cO)$.
\end{cor}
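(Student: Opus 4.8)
\textbf{Proof proposal for Corollary \ref{finpath}.} The plan is to deduce the finiteness of $\tilde{M}^{\patch,\{\xi_v\}}_n$ over $\cO[[K_p]]$ by combining the isomorphism $\eta^{\patch}_n$ of Proposition \ref{fkkc} with the structure of the unpatched space $M^{\{\xi_v\}}_n$. First I would record that $M^{\{\xi_v\}}_n \cong M_{\psi,\xi}(U^p)_\km \otimes_\cO \cO_\infty/\kb^n$, which was already observed in Definition \ref{Pch}. The completed homology $M_{\psi,\xi}(U^p)_\km$ is a quotient of $M_{\psi,\xi}(U^p)$, and by the argument of Corollary \ref{dsc} (equivalently Proposition \ref{chproj} and its variant in subsection \ref{varch}), $M_{\psi,\xi}(U^p)$ is a finitely generated $\cO[[K_p]]$-module — indeed a projective, hence direct summand of a free, module. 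Therefore $M^{\{\xi_v\}}_n$ is finitely generated over $\cO[[K_p]]/\kb^n$, and a fortiori over $\cO[[K_p]]$; consequently $(M^{\{\xi_v\}}_n)^{\oplus 2^r}$ is finitely generated over $\cO[[K_p]]$ as well.

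Next I would transport this finiteness across $\eta^{\patch}_n$. By Proposition \ref{fkkc}, $\tilde{f}^2$ kills the kernel and cokernel of $\eta^{\patch}_n : M^{\patch,\{\xi_v\}}_{n,0} \to (M^{\{\xi_v\}}_n)^{\oplus 2^r}$. Hence after localizing at $\tilde f$ (more precisely, after passing to the image $\tilde{M}^{\patch,\{\xi_v\}}_{n,0}$ in the localization), $\eta^{\patch}_n$ becomes an isomorphism onto a $\tilde f$-saturated submodule of $(M^{\{\xi_v\}}_n)^{\oplus 2^r}$. Since $\cO[[K_p]]$ is Noetherian (it is a completed group algebra of a compact $p$-adic analytic group), a submodule of a finitely generated $\cO[[K_p]]$-module is again finitely generated; thus $\tilde{M}^{\patch,\{\xi_v\}}_{n,0}$ is finitely generated over $\cO[[K_p]]$. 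Finally, Proposition \ref{mnflat} gives $M^{\patch,\{\xi_v\}}_n/\ka_0 M^{\patch,\{\xi_v\}}_n \cong M^{\patch,\{\xi_v\}}_{n,0}$; localizing this isomorphism at $\tilde f$ and using that $\ka_0$ is generated by the finitely many elements $s_1,\dots,s_r$, one sees that $\tilde{M}^{\patch,\{\xi_v\}}_n / \ka_0 \tilde{M}^{\patch,\{\xi_v\}}_n$ is a quotient of $\tilde{M}^{\patch,\{\xi_v\}}_{n,0}$ (up to $\tilde f$-torsion bookkeeping), hence finitely generated over $\cO[[K_p]]$.

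To conclude I would invoke the topological Nakayama lemma for $\cO[[K_p]]$-modules: $\tilde{M}^{\patch,\{\xi_v\}}_n$ is a profinite (pseudo-compact) $\cO[[K_p]]$-module, $\ka_0$ lies in the center of the relevant action and maps into the maximal ideal after adjoining the maximal ideal of $\cO[[K_p]]$, and $\tilde{M}^{\patch,\{\xi_v\}}_n$ is $\ka$-adically separated by construction (it is built as an inverse limit over powers of $\ka$). Since the quotient $\tilde{M}^{\patch,\{\xi_v\}}_n/\ka_0\tilde{M}^{\patch,\{\xi_v\}}_n$ is finitely generated over $\cO[[K_p]]$, a lift of a finite generating set generates $\tilde{M}^{\patch,\{\xi_v\}}_n$ topologically over $\cO[[K_p]]$, and by pseudo-compactness this is an honest finite generation. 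Admissibility of the Pontryagin dual as a $D_p^\times$-representation, and membership in $\kC_{D_p^\times,\psi}(\cO)$, then follow from the standard dictionary (subsection \ref{Paskunas} and the notations section): a smooth representation with Pontryagin dual finitely generated over $\cO[[K_p]]$ is admissible, and the central character $\psi$ and local admissibility are inherited from the finite-level spaces $S_{\psi,\xi}(U^p_{Q_N}U_p,\cO/\varpi^n)_\km$.

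The main obstacle I anticipate is the careful handling of the $\tilde f$-localization throughout: $\eta^{\patch}_n$ is only an isomorphism "up to $\tilde f^2$", so one must consistently work with the images $\tilde{M}^{\patch,\{\xi_v\}}_n$ in the localizations and verify that the operations — passing to $\ka_0$-coinvariants, applying Nakayama, comparing $\tilde{M}^{\patch,\{\xi_v\}}_{n,0}$ with $\tilde{M}^{\patch,\{\xi_v\}}_n/\ka_0$ — all commute appropriately with localization at $\tilde f$. A secondary subtlety is checking that $\tilde{M}^{\patch,\{\xi_v\}}_n$ really is pseudo-compact and $\ka$-adically separated so that topological Nakayama applies; this should come directly from the inverse-limit definition together with Lemma \ref{uflem}, but it needs to be stated cleanly.
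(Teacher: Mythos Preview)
Your two ``obstacles'' at the end are not secondary subtleties but the whole difficulty, and the argument as written does not overcome them. Concretely, the comparison in your middle paragraph goes the wrong way: passing to the $\tilde f^\infty$-torsion-free quotient is right exact, so there is a natural \emph{surjection}
\[
\tilde M^{\patch,\{\xi_v\}}_n/\ka_0\tilde M^{\patch,\{\xi_v\}}_n \twoheadrightarrow \tilde M^{\patch,\{\xi_v\}}_{n,0},
\]
not the other way around (the kernel is exactly the $\tilde f^\infty$-torsion of the left-hand side). Knowing the target is finitely generated over $\cO[[K_p]]$ therefore tells you nothing about the source, so Nakayama has no input. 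Separately, to invoke topological Nakayama you would need $\tilde M^{\patch,\{\xi_v\}}_n$ to be pseudo-compact, but it is the quotient of $M^{\patch,\{\xi_v\}}_n$ by $M^{\patch,\{\xi_v\}}_n[\tilde f^\infty]=\bigcup_k M^{\patch,\{\xi_v\}}_n[\tilde f^k]$, an increasing union of closed submodules that has no reason to be closed a priori; the inverse-limit description of $M^{\patch,\{\xi_v\}}_n$ does not supply this.

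The paper sidesteps both issues by working with $X=M^{\patch,\{\xi_v\}}_n$ itself rather than $\tilde X$. Since $\ka^n X=0$, one uses a finite filtration adapted to both $\ka$ and $\tilde f$: set $\Fil^k X=\{x\in X:\tilde f^{2k}x\in\ka^k X\}\cap\Fil^{k-1}X$. Flatness over $S_\infty/\ka^n$ identifies $\ka^kX/\ka^{k+1}X$ with copies of $M^{\patch,\{\xi_v\}}_{1,0}$, and Proposition~\ref{fkkc} then shows each $\tilde f^{2k+2}\Fil^kX/\tilde f^{2k+2}\Fil^{k+1}X$ embeds into something finitely generated over $\cO[[K_p]]$. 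This yields that $\tilde f^{2n}X$ is finitely generated; since $\tilde X$ is a quotient of $\tilde f^{2n}X\cong X/X[\tilde f^{2n}]$, the corollary follows. Your first paragraph (finite generation of $(M^{\{\xi_v\}}_n)^{\oplus 2^r}$) is correct and is used here, but the passage from graded pieces to the whole requires this filtration trick rather than Nakayama.
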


\begin{proof}
For simplicity, we write $X$ for $M^{\patch,\{\xi_v\}}_n$. We define a decreasing filtration on $X$ with 
\begin{eqnarray*}
\Fil^k X &=& X, ~k\leq 0\\
\Fil^k X &=& \{x\in X,\tilde{f}^{2k}x\in \ka^k X\}\cap \Fil^{k-1}X,~k>0 .
\end{eqnarray*}
Since $\ka^n X=0$, we have $\Fil^k X\subseteq \Fil^n X\subseteq X[\tilde{f}^{2n}],k\ge n$. Hence $\tilde{f}^{2n}\Fil^k X=0,k\geq n$. We claim that  $\tilde{f}^{2n}\Fil^k X/\tilde{f}^{2n}\Fil^{k+1}X$ is a finite $\cO[[K_p]]$-module for any $k$. Note that this will imply $\tilde{f}^{2n}X$ is a finite $\cO[[K_p]]$-module and hence prove the corollary. In fact, we are going to prove a stronger result:
\[\tilde{f}^{2k+2}\Fil^k X/\tilde{f}^{2k+2}\Fil^{k+1} X\]
is finite over $\cO[[K_p]]$ for any $k\le n-1$.

Consider the map:
\begin{eqnarray*}
\varphi_k:\tilde{f}^{2k+2}\Fil^k X&\to& \tilde{f}^{2}(\ka^k X/\ka^{k+1}X)\subseteq \ka^k X/\ka^{k+1}X \\
\tilde{f}^{2k+2}x &\mapsto & \tilde{f}^2 (\tilde{f}^{2k}x).
\end{eqnarray*}
It is easy to see that $\ker (\varphi_k)=\tilde{f}^{2k+2}\Fil^{k+1}X$. Hence we get an injective map:
\[\tilde{f}^{2k+2}\Fil^k X/\tilde{f}^{2k+2}\Fil^{k+1} X\hookrightarrow \tilde{f}^2(\ka^k X/\ka^{k+1}X).\]
But $X$ is flat over $S_\infty/\ka^n$. Thus 
\[\ka^k X/\ka^{k+1}X\cong \ka^k/\ka^{k+1}\otimes X/\ka X\cong \ka^k/\ka^{k+1}\otimes M^{\patch,\{\xi_v\}}_{1,0}.\]
By the previous proposition, the kernel of $\eta_1^{\patch}:M^{\patch,\{\xi_v\}}_{1,0}\to M^{\{\xi_v\}}_1$ is killed by $\tilde{f}^2$. The result follows from the fact that $M^{\{\xi_v\}}_1\cong M_{\psi,\xi}(U^p)_\km/\varpi M_{\psi,\xi}(U^p)_\km$ is finite over $\cO[[K_p]]$.
\end{proof}

We note that the same argument shows that 
\begin{lem} \label{f2nmNn}
For any integer $n>0$ and open subgroup $U_p$ of $D_p^\times$ small enough so that $U^pU_p$ is sufficiently small, the cardinality of 
\[\tilde{f_N}^{2n}(M(U_p,N,n)/\ka^n M(U_p,N,n))\]
has a uniform upper bound which is independent of $N$.
\end{lem}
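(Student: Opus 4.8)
The plan is to run the argument of Corollary \ref{finpath} at finite level, keeping track of cardinalities instead of merely of finite generation, and to feed in Proposition \ref{twkcc} in place of the finiteness of $M_{\psi,\xi}(U^p)_\km/\varpi$ over $\cO[[K_p]]$. Since there are only finitely many $N$ with $N\le n$ and each of them contributes a finite cardinality, it suffices to bound $|\tilde f_N^{\,2n}(M(U_p,N,n)/\ka^n M(U_p,N,n))|$ uniformly over $N>n$. Fix such an $N$ and $U_p$ as in the statement and write $X=M(U_p,N,n)/\ka^n M(U_p,N,n)$. By Lemma \ref{flmf} (as in the proof of Proposition \ref{mnflat}), $X$ is flat over $S_\infty/\ka^n$, and from Lemma \ref{flmf}(2) together with $\ka=(\kb)+\ka_0$ one gets $X/\ka X\cong S_{\psi,\xi}(U^p_{Q_N,0}U_p,\cO/\varpi^n)_\km^\vee/\varpi$.

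The first step would be to bound $|\tilde f_N^{\,2}(X/\ka X)|$ independently of $N$. Since $X/\ka X$ is a quotient of $S^\vee:=S_{\psi,\xi}(U^p_{Q_N,0}U_p,\cO/\varpi^n)_\km^\vee$, the module $\tilde f_N^{\,2}(X/\ka X)$ is a subquotient of $\tilde f_N S^\vee$; and for finite modules multiplication by $\tilde f_N$ dualizes to multiplication by $\tilde f_N$, so $|\tilde f_N S^\vee|=|\tilde f_N\, S_{\psi,\xi}(U^p_{Q_N,0}U_p,\cO/\varpi^n)_\km|$, which by Proposition \ref{twkcc} (since $\tilde f_N$ kills $\coker\eta_N$) is at most $|S_{\psi,\xi}(U^pU_p,\cO/\varpi^n)_\km|^{2^r}$. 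As $2^r=|P(Q_N)|$ and this last quantity are independent of $N$, we get a bound $B_0$ on $|\tilde f_N^{\,2}(X/\ka X)|$ that does not depend on $N$. Using flatness of $X$ over $S_\infty/\ka^n$, for $0\le k\le n-1$ we have $\ka^kX/\ka^{k+1}X\cong(\ka^k/\ka^{k+1})\otimes_\F X/\ka X$, a direct sum of $c_k:=\dim_\F(\ka^k/\ka^{k+1})$ copies of $X/\ka X$ on which $\tilde f_N$ acts diagonally (it commutes with the $S_\infty$-action, coming from the action of $R^{\ps,\{\xi_v\}}_{Q_N}$ and of $\cO_\infty$); hence $|\tilde f_N^{\,2}(\ka^kX/\ka^{k+1}X)|\le B_0^{\,c_k}$, and $c_k$, a binomial coefficient in $k$ and $\dim S_\infty$, is independent of $N$.

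The second step would reproduce the filtration of the proof of Corollary \ref{finpath}: set $\Fil^0X=X$ and $\Fil^kX=\{x\in X:\tilde f_N^{\,2k}x\in\ka^kX\}\cap\Fil^{k-1}X$ for $k\ge1$, so that $\ka^nX=0$ forces $\tilde f_N^{\,2n}\Fil^nX=0$. The assignment $\tilde f_N^{\,2k+2}x\mapsto\tilde f_N^{\,2}(\tilde f_N^{\,2k}x)\bmod\ka^{k+1}X$ is a well-defined homomorphism $\tilde f_N^{\,2k+2}\Fil^kX\to\tilde f_N^{\,2}(\ka^kX/\ka^{k+1}X)$ with kernel $\tilde f_N^{\,2k+2}\Fil^{k+1}X$, whence $|\tilde f_N^{\,2k+2}\Fil^kX/\tilde f_N^{\,2k+2}\Fil^{k+1}X|\le B_0^{\,c_k}$ for $0\le k\le n-1$. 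Multiplication by $\tilde f_N^{\,2n-2k-2}$ surjects $\tilde f_N^{\,2k+2}\Fil^kX$ onto $\tilde f_N^{\,2n}\Fil^kX$ and $\tilde f_N^{\,2k+2}\Fil^{k+1}X$ onto $\tilde f_N^{\,2n}\Fil^{k+1}X$, so each $\tilde f_N^{\,2n}\Fil^kX/\tilde f_N^{\,2n}\Fil^{k+1}X$ is a quotient of the previous group and has cardinality at most $B_0^{\,c_k}$. Telescoping over $0\le k\le n-1$ and using $\tilde f_N^{\,2n}\Fil^nX=0$ gives $|\tilde f_N^{\,2n}X|\le\prod_{k=0}^{n-1}B_0^{\,c_k}$, a bound independent of $N$; since $X=M(U_p,N,n)/\ka^n M(U_p,N,n)$, this is exactly the asserted uniform upper bound.

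I do not expect a genuine obstacle here: the whole content sits in Proposition \ref{twkcc} and the flatness furnished by Lemma \ref{flmf}, and the argument is the one already carried out in Corollary \ref{finpath}. The only point requiring care is to verify that every constant that appears — the exponent $2^r=|P(Q_N)|$, the graded dimensions $c_k$, the length $n$ of the filtration, and $|S_{\psi,\xi}(U^pU_p,\cO/\varpi^n)_\km|$ — depends only on the fixed data $\kq,P,U_p,n$ and not on the Taylor–Wiles set $Q_N$, which holds because $|Q_N|=r$ is fixed and, for $N>n$, $X$ is flat over $S_\infty/\ka^n$ so that the associated graded pieces of its $\ka$-adic filtration have $N$-independent ranks.
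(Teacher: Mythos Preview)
Your proposal is correct and is exactly the approach the paper intends: the paper's own proof of this lemma is merely the sentence ``the same argument shows that'', referring back to the filtration argument in the proof of Corollary \ref{finpath}, and you have spelled out precisely that argument at finite level with cardinalities in place of finite generation, feeding in Proposition \ref{twkcc} to get the $N$-independent bound on $|\tilde f_N^{\,2}(X/\ka X)|$.
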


\subsection{Patching II: patched deformation rings}
\begin{para}
In this subsection, we are going to define a patched (local) deformation ring acting on our patched completed homology. First we consider the action of $R^{\ps,\{\xi_v\}}_{Q_N}$. Denote $R^{\ps,\{\xi_v\}}_{Q_N}\hat{\otimes}_\cO \cO_{\infty}$ by $R^{\ps}_N$ and its maximal ideal by $\km^{\ps}_{N}$. Note that $R^{\ps}_N$ acts on $M(U_p,N,n)$ $\cO_{\infty}$-linearly. We will freely use the notation defined in the previous subsection.
\end{para}

\begin{lem}
For any integer $n>0$ and open subgroup $U_p\subseteq K_p$ small enough, there exists a constant $C=C_{n,U_p}$ (independent of $N$) such that $(\km^{\ps}_{N}\tilde{f}_N)^C$ annihilates 
\[M(U_p,N,n)/\ka^n M(U_p,N,n)\]
for any $N$.
\end{lem}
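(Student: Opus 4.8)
The plan is to reduce the statement to a finite-level analogue, namely to Lemma \ref{f2nmNn}, which asserts that $\tilde f_N^{2n}\bigl(M(U_p,N,n)/\ka^n M(U_p,N,n)\bigr)$ has cardinality bounded uniformly in $N$. First I would fix $n>0$ and an open $U_p\subseteq K_p$ so that $U^pU_p$ is sufficiently small and $\psi|_{U_p\cap O_{F,p}^\times}$ is trivial modulo $\varpi^n$. By Lemma \ref{f2nmNn} there is a uniform bound $B$ (independent of $N$) on $|\,\tilde f_N^{2n}\bigl(M(U_p,N,n)/\ka^n M(U_p,N,n)\bigr)|$. The key point is then that $M(U_p,N,n)/\ka^n M(U_p,N,n)$ is, by its very construction (it is $S_{\psi,\xi}(U^p_{Q_N}U_p,\cO/\varpi^n)_\km^\vee\otimes_\cO \cO_\infty$ modulo $\ka^n$), a module of bounded cardinality over $\cO/\varpi^n$: indeed $S_{\psi,\xi}(U^p_{Q_N}U_p,\cO/\varpi^n)_\km$ is a finite free $\cO/\varpi^n[\Delta_{Q_N}]$-module of rank $d$ equal to that of $S_{\psi,\xi}(U^pU_p,\cO/\varpi^n)_\km$ (Lemma \ref{flmf}), and $\ka$ acts through the finite quotient whose underlying $\cO$-length is controlled by $n$, $|Q_N|=r$, $|P|$ and $d$ alone. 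Hence $M(U_p,N,n)/\ka^n M(U_p,N,n)$ has cardinality at most some $C_0=C_0(n,U_p)$ independent of $N$.

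Given this, the submodule $\tilde f_N^{2n}\bigl(M(U_p,N,n)/\ka^n M(U_p,N,n)\bigr)$ is a module of length at most $\log_p B$ over the Artinian local ring $R^{\ps}_N/\km^{\ps}_N{}^{?}$, and any finite-length module over a Noetherian local ring $(R^{\ps}_N,\km^{\ps}_N)$ is killed by $\km^{\ps}_N{}^{\ell}$ for $\ell$ its length. Thus $(\km^{\ps}_N)^{B'}$ kills $\tilde f_N^{2n}\bigl(M(U_p,N,n)/\ka^n M(U_p,N,n)\bigr)$ with $B'=\lfloor\log_p B\rfloor$, uniformly in $N$. Combining with the fact that $\tilde f_N^{2n}$ already annihilates the quotient of $M(U_p,N,n)/\ka^n M(U_p,N,n)$ by that submodule — more precisely, writing $M:=M(U_p,N,n)/\ka^n M(U_p,N,n)$, we have $\tilde f_N^{2n}M\subseteq M[\text{uniform length}]$ and $(\km^{\ps}_N)^{B'}\cdot\tilde f_N^{2n}M=0$, while $\tilde f_N^{2n}\cdot(M/\tilde f_N^{2n}M)=0$ is immediate — I obtain $(\km^{\ps}_N)^{B'}\tilde f_N^{2n}\cdot\tilde f_N^{2n}M=0$, hence $(\km^{\ps}_N\tilde f_N)^{C}$ annihilates $M$ for $C=\max(2n,B'+2n)$, which depends only on $n$ and $U_p$.

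The main obstacle is making the cardinality bound on $M(U_p,N,n)/\ka^n M(U_p,N,n)$ genuinely uniform in $N$: a priori $\cO[\Delta_{Q_N}]$ grows with $N$ (the groups $\Delta_v$ have order $p^N$), so one must check that reducing modulo $\ka^n$ — equivalently, modulo the $n$-th power of the augmentation-plus-maximal ideal of $S_\infty$ — collapses this growth. This is exactly the content of the flatness and base-change statements in Lemma \ref{flmf} together with the structure $S_\infty=\cO_\infty[[s_1,\dots,s_r]]$: $M(U_p,N,n)$ is free over $\cO/\varpi^n[\Delta_{Q_N}]$ of rank $d$, the $\cO_\infty$-variable contributes only through $\cO_\infty/\kb^n$, and $S_\infty/\ka^n$ has bounded $\cO/\varpi^n$-length depending on $r$ and $n$ only, so $M/\ka^n M$ has $\cO/\varpi^n$-length at most $d\cdot\mathrm{length}(S_\infty/\ka^n)$, which is the desired uniform bound. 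Once this is in hand the rest is the elementary commutative-algebra fact about annihilators of finite-length modules, exactly as in the proof of Lemma \ref{f2nmNn}, which I would simply invoke.
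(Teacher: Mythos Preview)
Your central claim in the second and third paragraphs — that $M(U_p,N,n)/\ka^n M(U_p,N,n)$ itself has cardinality bounded uniformly in $N$ — is false, and this is exactly the subtlety the whole $\tilde f_N$ mechanism exists to handle. Lemma \ref{flmf} does not say what you attribute to it: modding out by the augmentation ideal $\ka_{Q_N}$ gives $S_{\psi,\xi}(U^p_{Q_N,0}U_p,\cO/\varpi^n)_\km^\vee$, i.e.\ the module at level $U^p_{Q_N,0}$, \emph{not} at level $U^p$. The index $[U^p:U^p_{Q_N,0}]=\prod_{v\in Q_N}(N(v)+1)$ grows without bound since $N(v)\ge p^N$, so the $\cO/\varpi^n$-rank $d$ you invoke is not uniform in $N$. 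Consequently $M(U_p,N,n)/\ka^n M(U_p,N,n)$ has unbounded cardinality, and your ``main obstacle'' paragraph does not resolve this — it contains the same misreading of Lemma \ref{flmf}.

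That said, the error is removable: you already have everything you need without this claim. By Lemma \ref{f2nmNn} the submodule $\tilde f_N^{2n}(M/\ka^n M)$ has cardinality at most some $B$ uniform in $N$; hence its length as an $R^{\ps}_N$-module is at most $B'$, and $(\km^{\ps}_N)^{B'}$ annihilates it. Therefore $(\km^{\ps}_N)^{B'}\tilde f_N^{2n}$ already annihilates all of $M/\ka^n M$, and $(\km^{\ps}_N\tilde f_N)^C$ works for $C=\max(B',2n)$. Your ``combining'' sentence overcomplicates this: you don't need to argue about $M/\tilde f_N^{2n}M$ at all.

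For comparison, the paper's proof is more direct and avoids Lemma \ref{f2nmNn} altogether: it reduces to $n=1$ via the $\ka$-adic filtration (flatness gives $\ka^k M/\ka^{k+1}M\cong \ka^k/\ka^{k+1}\otimes M/\ka M$), and for $n=1$ uses Proposition \ref{twkcc} directly — $\tilde f_N$ kills $\ker/\coker(\eta_N)$ and $\km^{c'}$ annihilates the finite-dimensional $S_{\psi,\xi}(U^pU_p,\F)_\km$ — to get the constant. Your route via Lemma \ref{f2nmNn} is legitimate but circuitous, since that lemma is itself proved by the same filtration argument.
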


\begin{proof}
This is an immediate consequence of lemma \ref{f2nmNn}.
\end{proof}

\begin{defn} \label{defnrpsp}
\[R^{\ps,\patch}:=\varprojlim_n(\prod_{N\in\cI}(R^{\ps}_N/(\km^{\ps}_N\tilde{f}_N)^n)\otimes_{\fR}\fR_{\kF}).\]
It follows from the previous lemma that $R^{\ps,\patch}$ acts on $M^{\patch,\{\xi_v\}}_n$.
\end{defn}

\begin{para} \label{rpsps}
There is a natural surjective map $R^{\ps,\patch}\to R^{\ps,\square}:=R^{\ps,\{\xi_v\}}\hat{\otimes} \cO_\infty$ by taking the limits of
\[\prod_{N\in\cI}(R^{\ps}_N/(\km^{\ps}_N\tilde{f}_N)^n)\otimes_{\fR}\fR_{\kF}\to \prod_{N\in\cI}(R^{\ps,\square}/(\km^{\ps,\square})^n)\otimes_{\fR}\fR_{\kF}=R^{\ps,\square}/(\km^{\ps,\square})^n.\]
Here $\km^{\ps,\square}$ denotes the maximal ideal of $R^{\ps,\square}$. Denote the prime ideal $(\kq^{\ps},\kb_1)$ of $R^{\ps,\square}$ by $\kq^{\ps,\square}$ and its pull-back to $R^{\ps}_N$ by $\kq^{\ps}_N$. We have
\[B'=R^{\ps,\{\xi_v\}}/\kq^{\ps}=R^{\ps,\square}/\kq^{\ps,\square}=R^{\ps}_N/\kq^{\ps}_N.\]
Recall that the integral closure of $B'$ in its fraction field is $A=\F[[T]]$ (see \ref{kqrho}).

Consider the natural map $\prod_{N\in\cI}R^{\ps}_N\to R^{\ps,\patch}$, which is \textit{surjective} by lemma \ref{uflem}.
\end{para}

\begin{lem} [Definition of $\kq^{\ps,\patch}$] \label{defqps}
The image of $\prod \kq^{\ps}_N\subseteq \prod R^{\ps}_N$ in $R^{\ps,\patch}$ defines a prime ideal $\kq^{\ps,\patch}$, which is the pull-back of $\kq^{\ps,\square}$ via the natural map $R^{\ps,\patch}\to R^{\ps,\square}$.
\end{lem}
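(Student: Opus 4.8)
The plan is to verify the two assertions of the lemma directly from the ultrafilter construction of $R^{\ps,\patch}$ in Definition \ref{defnrpsp} and the surjectivity of $\prod_N R^{\ps}_N\to R^{\ps,\patch}$ noted at the end of \ref{rpsps}. First I would describe explicitly the natural map $R^{\ps,\patch}\to R^{\ps,\square}$: it is induced by the quotient maps $R^{\ps}_N\to R^{\ps,\square}$ (coming from the fixed surjections $\Delta_\infty\to\Delta_{Q_N}$, equivalently $R^{\ps,\{\xi_v\}}_{Q_N}\to R^{\ps,\{\xi_v\}}$, tensored with $\cO_\infty$), passing to the product over $\cI$ and then through $S_\kF^{-1}$; the point is that $(\km_N^{\ps}\tilde f_N)^n\mapsto (\km^{\ps,\square})^n$ so the inverse-limit-over-$n$ definition of $R^{\ps,\patch}$ maps to $\varprojlim_n R^{\ps,\square}/(\km^{\ps,\square})^n=R^{\ps,\square}$ (the last identification because $R^{\ps,\square}/(\km^{\ps,\square})^n$ is of finite cardinality, so tensoring with $\fR_\kF$ does nothing, cf.\ \eqref{ufi}).

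Next I would show that the image of $\prod_N\kq^{\ps}_N$ in $R^{\ps,\patch}$ is exactly the preimage of $\kq^{\ps,\square}$ under this map. One inclusion is immediate: each $\kq^{\ps}_N$ maps into $\kq^{\ps,\square}$, so its image in $R^{\ps,\patch}$ lies in the preimage of $\kq^{\ps,\square}$. For the reverse inclusion, take an element $x\in R^{\ps,\patch}$ whose image in $R^{\ps,\square}$ lies in $\kq^{\ps,\square}$; by surjectivity of $\prod_N R^{\ps}_N\to R^{\ps,\patch}$, lift it to $(x_N)_N\in\prod_N R^{\ps}_N$. The hypothesis says that the image of $(x_N)_N$ in $R^{\ps,\square}$ — which is obtained by reducing each $x_N$ modulo $\kq^{\ps}_N$, i.e.\ landing in $B'=R^{\ps}_N/\kq^{\ps}_N$, and then applying the kernel-description lemma \ref{uflem} — lies in $\kq^{\ps,\square}$, equivalently is zero in $R^{\ps,\square}/\kq^{\ps,\square}=B'$; unwinding \eqref{ufi}/Corollary \ref{speufi} applied to the $B'$-valued sequence, this means that for each $m$ there is $I_m\in\kF$ with $x_N\in\kq^{\ps}_N+(\km^{\ps}_N)^m$ for $N\in I_m$ — but $\km^{\ps}_N/\tilde f_N$-adic considerations plus $\tilde f_N\notin\kq^{\ps}_N$ (proposition \ref{twkcc}(2): $\bar f_N\neq 0$ in $A$) show we can correct $x_N$ by an element of $(\km^{\ps}_N\tilde f_N)^m$-size without leaving the class, so modifying on an $\kF$-large set I may assume $x_N\in\kq^{\ps}_N$ for $N$ in a member of $\kF$, hence $x$ lies in the image of $\prod_N\kq^{\ps}_N$. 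This simultaneously proves that this image is a prime ideal: it is the preimage of the prime $\kq^{\ps,\square}$ under a ring homomorphism, hence prime.

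The main obstacle I anticipate is the bookkeeping in the previous paragraph: matching the two filtrations — the $\ka$-adic (really $(\km^{\ps}_N\tilde f_N)$-adic) filtration used to define $R^{\ps,\patch}$ as an inverse limit, versus the $\kq^{\ps}_N$-adic information that detects membership in $\kq^{\ps,\patch}$ — through the ultrafilter quotient without losing track of which statements hold on which $\kF$-large sets. The cleanest route is probably to phrase everything in terms of lemma \ref{uflem}: describe the kernel of $\prod_N R^{\ps}_N\to R^{\ps,\patch}$ via the $(\km^{\ps}_N\tilde f_N)$-adic filtrations, describe the kernel of $\prod_N R^{\ps}_N\to\prod_N B'\to B'$ via the $\kq^{\ps}_N$-adic (hence $\varpi$-adic, since $B'\cong A=\F[[T]]$ is a domain finite over a DVR) filtration, and observe that an element of the second kernel can be adjusted by an element of the first kernel to land in $\prod_N\kq^{\ps}_N$; finiteness of the residue rings makes the ultrafilter step harmless. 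I would keep the argument short, as the lemma is essentially a formal compatibility, and cite \ref{uflem}, Corollary \ref{speufi}, and proposition \ref{twkcc} for the inputs.
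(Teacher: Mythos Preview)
Your proposal is correct and follows essentially the same approach as the paper: both arguments reduce to showing that $(\prod_N R^{\ps}_N)/(I^{\ps},\prod_N\kq^{\ps}_N)\cong B'$ by using lemma~\ref{uflem} to describe $I^{\ps}$, then invoking the uniform bound on the $T$-adic valuation of $\bar f_N$ (from proposition~\ref{twkcc}(2)) to identify the image of $I^{\ps}$ in $\prod_N B'$ with the kernel of the ultrafilter map $\prod_N B'\to B'$. The paper phrases this as a direct quotient computation rather than your ``lift and correct'' formulation, which is slightly cleaner but substantively identical.
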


\begin{proof}
Let $I^\ps$ be the kernel of $\prod_{N\in\cI}R^{\ps}_N\to R^{\ps,\patch}$. It suffices to prove the composite $\prod R^{\ps}_N\to R^{\ps,\patch}\to R^{\ps,\square}$ induces an isomorphism:
\[(\prod R^{\ps}_N) /(I^{\ps},\prod \kq^{\ps}_N)\stackrel{\sim}{\longrightarrow} R^{\ps,\square}/\kq^{\ps,\square}=R^{\ps,\{\xi_v\}}/\kq^{\ps}=B'.\]

By lemma \ref{uflem}, $I^{\ps}$ is the set of elements $(x_N)_N\in \prod R^{\ps}_N$ such that for any $n>0$, there exists an element $I_n\in \kF$ with $x_N\in (\km^{\ps}_N\tilde{f}_N)^n$ for any $N\in I_n$. Note that the image of $\tilde{f}_N$ in $R^{\ps}_N/\kq^{\ps}_N=B'\subseteq A$ has a uniform bound on its $T$-adic valuation. It is easy to see that the image $I_{B'}$ of $I^{\ps}$ in $R^{\ps}_N/\prod \kq^{\ps}_N=\prod_N B'$ consists of elements $\{b_N\}_N$ such that for any $n>0$, there exists an element $I_n\in \kF$ with $b_N\in (\km_{B'})^n$ for any $N\in I_n$, where $\km_{B'}$ is the maximal ideal of $B'$. By lemma \ref{uflem} again (see also corollary \ref{speufi}), $(\prod_N B')/I_{B'}$ is nothing but $B'$. This finishes the proof.
\end{proof}
 
\begin{para}
In \ref{grdr}, we defined several global universal lifting rings. See the notations there and also in \ref{eotwp}. For each $N$, we fix isomorphisms
\[R^{\square_P,\{\xi_v\}}_{\bar{\rho}_b,Q_N}\cong R^{\{\xi_v\}}_{\bar{\rho}_b,Q_N}\hat{\otimes}_\cO \cO_\infty\]
such that $\kb_1\subseteq \cO_\infty$ is contained in $\kq_{b,Q_N}$ for each $i$. The natural map $R^{\ps,\{\xi_v\}}_{Q_N}\to R^{\{\xi_v\}}_{\bar{\rho}_b,Q_N}$ given by evaluating the universal trace can be extended naturally to an $\cO_\infty$-algebra homomorphism $R^{\ps}_N=R^{\ps,\{\xi_v\}}_{Q_N}\hat{\otimes}_\cO \cO_{\infty} \to R^{\square_P,\{\xi_v\}}_{\bar{\rho}_b,Q_N}$. For simplicity, we write $R_{b,N}$ for $R^{\square_P,\{\xi_v\}}_{\bar{\rho}_b,Q_N}$.
\end{para}

\begin{defn}
\[R^{\patch}_b:=(\prod_{N\in \cI}R_{b,N})\otimes_{\prod_{N\in \cI}R^{\ps}_N} R^{\ps,\patch}.\]
This is a quotient of $\prod_{N\in \cI}R_{b,N}$.
\end{defn}

\begin{lem}[Definition of $\kq^{\patch}_b$] \label{defqc}
The image of $\prod_N \kq_{b,Q_N}\subseteq \prod_{N\in \cI}R_{b,N}$ in $R^{\patch}_b$ defines a prime ideal $\kq^{\patch}_b$ of $R^{\patch}_b$. Its pull-back to $R^{\ps,\patch}$ is $\kq^{\ps,\patch}$. Moreover $R^{\patch}_b/\kq^{\patch}_b$ is naturally isomorphic to $B=R_{b,N}/\kq_{b,Q_N}$ (defined in the beginning of subsection \ref{eotwp}).
\end{lem}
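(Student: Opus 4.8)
The plan is to argue exactly as in the proof of lemma \ref{defqps}. Write $\pi_b\colon \prod_{N\in\cI}R_{b,N}\twoheadrightarrow R^{\patch}_b$ for the defining surjection and set $I_b=\ker\pi_b$. Since $R^{\patch}_b=(\prod_N R_{b,N})\otimes_{\prod_N R^{\ps}_N}R^{\ps,\patch}$ and $R^{\ps,\patch}=(\prod_N R^{\ps}_N)/I^{\ps}$, we have $I_b=I^{\ps}\cdot\prod_N R_{b,N}$, the ideal generated by the image of $I^{\ps}$ under the componentwise trace map $\prod_N R^{\ps}_N\to\prod_N R_{b,N}$. Thus $R^{\patch}_b/\kq^{\patch}_b=(\prod_N R_{b,N})/(I_b,\prod_N\kq_{b,Q_N})$, and since $B\subseteq A=\F[[T]]$ is a domain, it suffices to produce a natural isomorphism $R^{\patch}_b/\kq^{\patch}_b\cong B$; primality of $\kq^{\patch}_b$ then follows.

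To get the isomorphism I would first note $(\prod_N R_{b,N})/\prod_N\kq_{b,Q_N}=\prod_N(R_{b,N}/\kq_{b,Q_N})=B^{\cI}$, as $B=R_{b,N}/\kq_{b,Q_N}$ is independent of $N$. Next I would check that $\kq_{b,Q_N}\cap R^{\ps}_N=\kq^{\ps}_N$: both ideals are cut out by the single deformation $\rho(\kq)^o$, and since $B'\hookrightarrow B$ is injective the composite $R^{\ps}_N\to R_{b,N}\twoheadrightarrow B$ factors as $R^{\ps}_N\twoheadrightarrow B'\hookrightarrow B$. Consequently the image of $I^{\ps}$ (hence of $I_b$) in $B^{\cI}$ equals the ideal $I_{B'}\cdot B^{\cI}$ generated by $I_{B'}\subseteq (B')^{\cI}\subseteq B^{\cI}$, where $I_{B'}$ is the set $\{(b_N)_N:\forall n,\ \{N:b_N\in\km_{B'}^{n}\}\in\kF\}$ identified in the proof of lemma \ref{defqps} (this identification rests on proposition \ref{twkcc}, which bounds the $T$-adic valuation of the image of $\tilde f_N$ in $B'$ uniformly in $N$). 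So I am reduced to showing $(I_b,\prod_N\kq_{b,Q_N})$ is the kernel of $\prod_N R_{b,N}\to B^{\cI}\xrightarrow{\mathrm{ufl}} B$, i.e. that $I_{B'}\cdot B^{\cI}=\ker(B^{\cI}\to B)$, where $B^{\cI}\to B$ is the ultrafilter quotient coming from lemma \ref{uflem} and corollary \ref{speufi} applied to the finite modules $B/\km_B^n$.

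The inclusion $I_{B'}\cdot B^{\cI}\subseteq\ker(B^{\cI}\to B)$ is immediate since $\km_{B'}\subseteq\km_B$. For the reverse inclusion the key input is that $B$ is a module-finite $B'$-algebra: indeed $B'\subseteq B\subseteq A$, and $A=\F[[T]]$ is the integral closure of the complete (hence Nagata) local domain $B'$ in its fraction field, so $A$, and therefore $B$, is finite over $B'$. Hence $\km_B^{c}\subseteq\km_{B'}B$ for some $c$ and $B=\sum_{j=1}^{m}B'e_j$. Given $(a_N)\in\ker(B^{\cI}\to B)$, after shrinking the sets $I_n\in\kF$ to a strictly decreasing sequence with empty intersection (possible since $\kF$ is non-principal), for $N\in I_n\setminus I_{n+1}$ I write $a_N=\sum_j b_{N,j}e_j$ with $b_{N,j}\in\km_{B'}^{n}$, so $(a_N)=\sum_j (b_{N,j})_N\,(e_j)_N\in I_{B'}\cdot B^{\cI}$. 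This proves the equality, so $R^{\patch}_b/\kq^{\patch}_b\cong B^{\cI}/\ker\cong B$ naturally, which also gives that $\kq^{\patch}_b$ is prime.

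Finally, for the pull-back statement: by the factorizations above and lemma \ref{defqps}, the composite $R^{\ps,\patch}\to R^{\patch}_b\to B$ is $R^{\ps,\patch}\twoheadrightarrow R^{\ps,\patch}/\kq^{\ps,\patch}=B'\hookrightarrow B$, so the pull-back of $\kq^{\patch}_b$ contains $\kq^{\ps,\patch}$ and induces an injection $R^{\ps,\patch}/(\text{pull-back})\hookrightarrow B$ factoring through $B'\hookrightarrow B$; injectivity of $B'\hookrightarrow B$ forces the pull-back to be exactly $\kq^{\ps,\patch}$. I expect the only genuinely substantive point to be the module-finiteness of $B$ over $B'$ and the resulting comparison of the $\km_B$-adic and $\km_{B'}$-adic topologies used in the key step; the remaining ultrafilter manipulations are a routine repetition of lemma \ref{defqps}.
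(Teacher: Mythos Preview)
Your proof is correct and follows essentially the same approach as the paper. The paper's proof is a terse three lines: it writes $R^{\patch}_b/\kq^{\patch}_b=(\prod_N R_{b,N})/(I^{\ps},\prod_N\kq_{b,Q_N})=(\prod_N B)/(I_{B'})\cong B$, with the last step justified by ``the explicit description of $I_{B'}$ and the fact $B$ is a finite $B'$-algebra''—precisely the module-finiteness argument you spell out in detail via the decomposition $B=\sum_j B'e_j$ and the comparison of the $\km_B$-adic and $\km_{B'}$-adic topologies.
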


\begin{proof}
We use the notations in the proof of the previous lemma. Then
\[R^{\patch}_b/\kq^{\patch}_b=(\prod_{N\in \cI}R_{b,N})/(I^{\ps},\prod_N \kq_{b,Q_N})=(\prod_N B)/(I_{B'})\cong B.\]
The last isomorphism follows from the explicit description of $I_{B'}$ and the fact $B$ is a finite $B'$-algebra.
\end{proof}

\begin{defn}
For any positive integer $n$, we let $\kq^{\ps,[n]}$ be the image of $\prod_N (\kq^{\ps}_N)^n\subseteq \prod_N R^{\ps}_N$ in $R^{\ps,\patch}$. Similarly, we define $\kq^{[n]}_b$ as the image of $\prod_N \kq_{b,Q_N}^n\subseteq \prod_N R_{b,N}$ in $R^{\patch}_b$. 
\end{defn}

\begin{rem}
By definition, we have 
\begin{itemize}
\item $\kq^{\ps,[1]}=\kq^{\ps,\patch}$, $\kq^{[1]}_b=\kq^{\patch}_b$.
\item $(\kq^{\ps,\patch})^n\subseteq \kq^{\ps,[n]}$. 
\end{itemize}
It is likely true that the inclusion above is an equality \footnote{This is actually proved in a slightly different context in a recent paper \cite[Lemma 4.21]{NT20} by Newton-Thorne. I believe their argument  works in our setting.}. However we don't quite need this. For our purpose, the following lemma is enough.
\end{rem}

\begin{lem} \label{q[n]qn}
For any positive integers $n,j$ and open subgroup $U_p$ of $D_p^\times$, the images of $\prod_N (\kq^{\ps}_N)^j$ and $(\prod_N \kq^{\ps}_N)^j$ in the endomorphism ring of 
\[\prod_{N\in\cI} \tilde{f_N}^{2n}(M(U_p,N,n)/\ka^n M(U_p,N,n))\otimes_\fR \fR_\kF=\tilde{f}^{2n}(\prod_{N\in\cI} M(U_p,N,n)/\ka^n M(U_p,N,n)\otimes_\fR \fR_\kF)\]
are the same.
\end{lem}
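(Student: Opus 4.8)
The plan is to reduce Lemma~\ref{q[n]qn} to a statement about a single finite-length module and then argue by a standard chasing of ideals through the ultraproduct. First I would recall the key finiteness input: by Lemma~\ref{f2nmNn} the $\cO$-module $\tilde{f_N}^{2n}(M(U_p,N,n)/\ka^n M(U_p,N,n))$ has cardinality bounded by a constant $C_0=C_0(n,U_p)$ independent of $N$. Consequently its endomorphism ring $E_N$ has cardinality bounded by some $C_1=C_1(n,U_p)$, again independent of $N$; in particular the length (as $\cO$-module, or simply the cardinality) of $E_N$ is uniformly bounded. So there is a uniform integer $t=t(n,U_p)$ such that for every $N$, any product of $t$ elements of the maximal ideal image in $E_N$ that already lies in a proper ideal stabilizes; more to the point, the image $\overline{\kq^{\ps}_N}$ of $\kq^{\ps}_N$ in $E_N$ is a nilpotent ideal of nilpotency degree at most $C_1$, hence its powers $(\overline{\kq^{\ps}_N})^j$ for $j\ge C_1$ all vanish, and for all $j$ one has $(\overline{\kq^{\ps}_N})^{j}$ equal to some fixed ideal once $j$ is large.

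The main point to establish is the inclusion $\prod_N(\kq^{\ps}_N)^j \cdot E \subseteq (\prod_N \kq^{\ps}_N)^j\cdot E$, where $E=\prod_{N}E_N\otimes_\fR\fR_\kF$ is the endomorphism ring in the statement; the reverse inclusion is trivial since $(\prod_N\kq^{\ps}_N)^j\subseteq \prod_N(\kq^{\ps}_N)^j$. For the nontrivial inclusion I would argue as follows. An element $x$ of $\kq^{\ps,[j]}$ acting on $E$ is represented by a tuple $(x_N)_N$ with $x_N\in(\kq^{\ps}_N)^j$; write each $x_N=\sum_{k} y_{N,k,1}\cdots y_{N,k,j}$ as a finite sum of $j$-fold products of elements of $\kq^{\ps}_N$. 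The number of summands is not uniform in $N$, so one cannot directly assemble tuples. Instead I would pass to the images in $E_N$: since $\#E_N\le C_1$, the ideal $\overline{\kq^{\ps}_N}\subseteq E_N$ is finitely generated by at most $C_1$ elements $g_{N,1},\dots,g_{N,C_1}$ (e.g. a generating set of bounded size exists because the ideal is a finite set), so $(\overline{\kq^{\ps}_N})^j$ is generated as an ideal of $E_N$ by the (boundedly many) $j$-fold products $g_{N,i_1}\cdots g_{N,i_j}$. Now form the tuples $(g_{N,i})_N\in\prod_N E_N$; their images $\gamma_i\in E$ lie in $(\prod_N\kq^{\ps}_N)\cdot E$ (each $g_{N,i}$ lifts, compatibly with the ultrafilter, to an element of $\kq^{\ps}_N$, using that $\kq^{\ps}_N\to\overline{\kq^{\ps}_N}$ is surjective), and the finitely many $j$-fold products $\gamma_{i_1}\cdots\gamma_{i_j}$ generate the image of $\kq^{\ps,[j]}$ in $E$ because generation of the finite ideal $(\overline{\kq^{\ps}_N})^j$ by these products holds for every $N$ simultaneously and is preserved under $\prod_N(-)\otimes_\fR\fR_\kF$ (an exact functor, by the discussion preceding Lemma~\ref{uflem}).

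The step I expect to be the main obstacle is making the "generators of bounded size, compatibly in $N$" argument fully rigorous in the ultraproduct setting: one must choose, for each $N$, a generating set of $\overline{\kq^{\ps}_N}$ of size $\le C_1$ in a way that the resulting tuples behave well under $\otimes_\fR \fR_\kF$, and one must check that the relation "$(\overline{\kq^{\ps}_N})^j$ is generated by these $j$-fold products" — which is a finite conjunction of equalities of finite sets — indeed survives passage to $\fR_\kF$. Here the exactness of $-\otimes_\fR\fR_\kF$ and the fact, analogous to \eqref{ufi} and Corollary~\ref{speufi}, that $\prod_N M_N\otimes_\fR\fR_\kF$ for uniformly finite $M_N$ is computed "pointwise along the ultrafilter", do the job: an equality of submodules of $\prod_N E_N$ that holds $N$-by-$N$ descends to $E$. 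Once this bookkeeping is in place, combining it with the reverse trivial inclusion gives that the two images in $\mathrm{End}(\tilde f^{2n}(\prod_N M(U_p,N,n)/\ka^n)\otimes_\fR\fR_\kF)$ coincide, which is exactly the assertion of Lemma~\ref{q[n]qn}. I would also remark that the equality of the two descriptions of the module displayed in the statement (the product of the $\tilde f_N^{2n}$-parts versus $\tilde f^{2n}$ applied to the product) is immediate from $\tilde f=\prod_N\tilde f_N$ and the exactness of $-\otimes_\fR\fR_\kF$, so no separate argument is needed there.
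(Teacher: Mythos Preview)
Your argument is correct in outline and in its details: the uniform bound on $\#E_N$ lets you choose generating sets of $\overline{\kq^{\ps}_N}$ of size $\le C_1$, form the tuples $(\tilde g_{N,i})_N\in\prod_N\kq^{\ps}_N$, and then any $(x_N)_N$ with $x_N\in(\kq^{\ps}_N)^j$ has the same image in each $E_N$ as a fixed finite $R_N$-linear combination of the $j$-fold products $g_{N,i_1}\cdots g_{N,i_j}$; assembling the (boundedly many) coefficients into tuples produces an element of $(\prod_N\kq^{\ps}_N)^j$ with the same image componentwise, hence the same image after $\otimes_\fR\fR_\kF$. The nilpotency remark is not actually needed, but it is harmless.

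The paper's proof reaches the same conclusion by a shorter pigeonhole argument that avoids the bookkeeping you flag as the main obstacle. Rather than building generators, it observes that there are only finitely many isomorphism classes of triples $(M,R,I)$ with $M$ a finite abelian group of cardinality at most $C$, $R$ a commutative subring of $\End(M)$, and $I\subset R$ an ideal. Partitioning $\cI$ according to the isomorphism type of $\bigl(\tilde f_N^{2n}(M(U_p,N,n)/\ka^n),\,R_N,\,I_N\bigr)$ and using that $\kF$ is an ultrafilter, one set $J\in\kF$ has constant type $(M_0,R_0,I_0)$; on $J$ the ultraproduct module is $M_0$, the image of $\prod_N R^{\ps}_N$ is $R_0$, and the images of both $\prod_N(\kq^{\ps}_N)^j$ and $(\prod_N\kq^{\ps}_N)^j$ are $I_0^j$. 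Your approach is more constructive and makes the mechanism visible; the paper's pigeonhole trick is slicker and sidesteps the need to verify that ``generation by bounded products'' survives the ultrafilter.
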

\begin{proof}
By lemma \ref{f2nmNn}, there is a uniform upper bound $C$ on the cardinality of
\[\tilde{f_N}^{2n}(M(U_p,N,n)/\ka^n M(U_p,N,n))\] 
for all $N$. The lemma is clear as there are only finitely many isomorphism classes of triples $(M,R,I)$ where $M$ is a finite group of cardinality at most $C$, $R$ is a commutative subring of the endomorphism ring of $M$ and $I\subset R$ is an ideal.
\end{proof}

\begin{para}
Recall that in proposition \ref{rpsrc} and corollary \ref{pscrem}, we proved that there exist elements $c_N\in R^{\ps}_N$ for $N>0$ such that
\begin{itemize}
\item For any $n$, we may find a constant $k_n$ (which is $N$ in proposition \ref{rpsrc}) independent of $N$  such that $c_N^{k_n}$ kills the kernel and cokernel of the map $R^{\ps}_N/(\kq^{\ps}_N)^n\to R_{b,N}/(\kq_{b,Q_N})^n$. 
\item The image $c'$ of $c_N$ in $R^{\ps}_N/\kq^{\ps}_N=B'\hookrightarrow A$ is non-zero and independent of $N$.
\end{itemize}
To be more precise, we take $\Gamma=G_{F,S\cup Q_N},\Gamma_0=G_{F,S},\rho_0=\rho(\kq)^o$ in the setup of proposition \ref{rpsrc} and $I$ be the kernel of $R^{\ps}\to R^{\ps,\{\xi_v\}}$ in corollary \ref{pscrem}.

We define $\tilde{c}$ to be image of $(c_N)_N\in \prod_N R^{\ps}_N$ in $R^{\ps,\patch}$. It is easy to see that $\tilde{c}\mod \kq^{\ps,\patch}$ is equal to $c'\neq 0$. A direct corollary of the above discussion is 
\end{para}

\begin{cor} \label{pscpatch}
\hspace{2em}
\begin{enumerate}
\item $\tilde{c}\notin \kq^{\ps,\patch}$.
\item For any $n$, there exists an integer $k_n$ such that $\tilde{c}^{k_n}$ kills the kernel and cokernel of the natural map $R^{\ps,\patch}/\kq^{\ps,[n]} \to R^{\patch}_{b}/\kq^{[n]}_b$.
\item Let $\widehat{(R^{\ps,\patch})_{[\kq^{\ps,\patch}]}}$ (resp. $\widehat{(R^{\patch}_{b})_{[\kq^{\patch}_b]}}$) denote the completion of the localization $(R^{\ps,\patch})_{\kq^{\ps,\patch}}$ (resp. $(R^{\patch}_{b})_{\kq^{\patch}_b}$) with respect to the topology with a system of open neighborhoods of zero given by $(\kq^{\ps,[n]})_{n>0}$ (resp. $(\kq^{[n]}_b)_{n>0}$). Then
\[\widehat{(R^{\ps,\patch})_{[\kq^{\ps,\patch}]}}\cong \widehat{(R^{\patch}_{b})_{[\kq^{\patch}_b]}}.\] 
\end{enumerate}
\end{cor}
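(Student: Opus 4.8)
\textbf{Proof plan for Corollary \ref{pscpatch}.}
The three statements are the patched analogues of Proposition \ref{rpsrc} and Corollary \ref{psccomp}, so the plan is to deduce them from those results by passing through the ultraproduct. Part (1) is immediate: by construction $\tilde c \bmod \kq^{\ps,\patch}$ equals the image $c'$ of the $c_N$'s in $B' \hookrightarrow A$, and $c' \neq 0$ by the second bullet point recalled in \ref{rpsps} (which is part (1) of Proposition \ref{rpsrc}); since $\kq^{\ps,\patch}$ is a prime ideal (lemma \ref{defqps}) with residue ring $B'$ reduced, $\tilde c \notin \kq^{\ps,\patch}$.

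For part (2), the idea is to apply corollary \ref{pscrem} levelwise. Fix $n$. By corollary \ref{pscrem} applied to $\Gamma = G_{F,S\cup Q_N}$, $\Gamma_0 = G_{F,S}$, $\rho_0 = \rho(\kq)^o$, $I = \ker(R^{\ps}\to R^{\ps,\{\xi_v\}})$, and the $t = 4|P|-1$ formal variables of $\cO_\infty$, there is an integer $k_n = N(n)$, \emph{independent of $N$}, such that $c_N^{k_n}$ kills the kernel and cokernel of $R^{\ps}_N/(\kq^{\ps}_N)^n \to R_{b,N}/(\kq_{b,Q_N})^n$. Taking the product over all $N$ and tensoring with $\fR_\kF$ (an exact functor), $\tilde c^{k_n}$ kills the kernel and cokernel of
\[
\big(\textstyle\prod_N R^{\ps}_N/(\kq^{\ps}_N)^n\big)\otimes_\fR \fR_\kF \longrightarrow \big(\textstyle\prod_N R_{b,N}/(\kq_{b,Q_N})^n\big)\otimes_\fR \fR_\kF.
\]
Now I must identify these two ultraproducts with $R^{\ps,\patch}/\kq^{\ps,[n]}$ and $R^{\patch}_b/\kq^{[n]}_b$ respectively. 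For the target this is essentially the definition of $\kq^{[n]}_b$ together with the surjection $\prod_N R_{b,N} \twoheadrightarrow R^{\patch}_b$; for the source one uses that $R^{\ps,\patch} = \varprojlim_m (\prod_N R^{\ps}_N/(\km^{\ps}_N\tilde f_N)^m)\otimes_\fR\fR_\kF$ and that modulo $\kq^{\ps,[n]}$ the tower stabilizes appropriately — here I expect to invoke lemma \ref{q[n]qn} (or a direct variant of it at the level of rings rather than modules) to control the passage between $\prod_N(\kq^{\ps}_N)^n$ and the image $\kq^{\ps,[n]}$, and lemma \ref{uflem} to handle the inverse limit. A small point to check carefully: the $(\km^{\ps}_N\tilde f_N)$-adic completion built into $R^{\ps,\patch}$ versus the $\kq^{\ps,[n]}$-adic filtration used in the statement — one needs that these are cofinal enough after localizing away from $\tilde f$, which is exactly where $\tilde c \notin \kq^{\ps,\patch}$ and the uniform bound on $T$-adic valuations of the $c_N, \tilde f_N$ (used already in lemmas \ref{defqps}, \ref{defqc}) come in.

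For part (3), once (1) and (2) are in place the argument is formal. Localizing at $\kq^{\ps,\patch}$ (resp. $\kq^{\patch}_b$) and completing with respect to the filtrations $(\kq^{\ps,[n]})_n$ (resp. $(\kq^{[n]}_b)_n$), the map $R^{\ps,\patch}\to R^{\patch}_b$ induces $\widehat{(R^{\ps,\patch})_{[\kq^{\ps,\patch}]}} \to \widehat{(R^{\patch}_b)_{[\kq^{\patch}_b]}}$, and by (2) the kernel and cokernel of each graded piece $R^{\ps,\patch}/\kq^{\ps,[n]} \to R^{\patch}_b/\kq^{[n]}_b$ are killed by a fixed power of $\tilde c$; since $\tilde c$ becomes a unit after localizing at $\kq^{\ps,\patch}$ (by (1)) and then inverting it is harmless on the completion, one concludes that the map on associated graded rings, hence on completions, is an isomorphism. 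This mirrors exactly how corollary \ref{psccomp}(1) was deduced from proposition \ref{rpsrc}. The main obstacle I anticipate is not any deep input but the bookkeeping in part (2): making the identification of the two ultraproduct rings with the stated quotients precise, and in particular verifying that the topology defining $\widehat{(R^{\ps,\patch})_{[\kq^{\ps,\patch}]}}$ is compatible with the product-of-$(\kq^{\ps}_N)^n$ topology after localization — this is where lemma \ref{q[n]qn}, the uniform independence of $k_n$ from $N$ in corollary \ref{pscrem}, and corollary \ref{speufi} all have to be assembled correctly.
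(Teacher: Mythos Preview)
Your plan for parts (1) and (3) is correct and matches the paper. For part (2) you have the right levelwise input (corollary \ref{pscrem} with the uniform-in-$N$ exponent $k_n$), but your passage to the patched rings is more complicated than necessary and, as written, does not quite work.

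The problem is your intermediate step: tensoring with $\fR_\kF$ over $\fR$ gives
\[
\big(\textstyle\prod_N R^{\ps}_N/(\kq^{\ps}_N)^n\big)\otimes_\fR \fR_\kF,
\]
but this is \emph{not} $R^{\ps,\patch}/\kq^{\ps,[n]}$. The quotients $R^{\ps}_N/(\kq^{\ps}_N)^n$ are not Artinian (they surject onto $B'$, which is one-dimensional), so the ultraproduct is much larger than the corresponding quotient of $R^{\ps,\patch}$; the same issue arises on the $R_{b,N}$ side. Your remark that ``for the target this is essentially the definition'' is therefore not right, and the bookkeeping you anticipate (lemma \ref{q[n]qn}, cofinality of the two filtrations, etc.) would be a genuine detour rather than a formality.

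The paper's route avoids all of this in one line: tensor the product map
\[
\textstyle\prod_N R^{\ps}_N/(\kq^{\ps}_N)^n \longrightarrow \prod_N R_{b,N}/(\kq_{b,Q_N})^n
\]
with $R^{\ps,\patch}$ over $\prod_N R^{\ps}_N$ (not with $\fR_\kF$ over $\fR$). Since $\prod_N R^{\ps}_N \twoheadrightarrow R^{\ps,\patch}$ is a quotient map (lemma \ref{uflem}), this tensoring is just passing to quotients, and by the \emph{definitions} of $\kq^{\ps,[n]}$, $R^{\patch}_b$, and $\kq^{[n]}_b$ one gets exactly $R^{\ps,\patch}/\kq^{\ps,[n]} \to R^{\patch}_b/\kq^{[n]}_b$. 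The kernel and cokernel remain killed by a power of $\tilde c$ (right-exactness handles the cokernel; for the kernel one picks up at worst a $\mathrm{Tor}_1$ contribution coming from the original cokernel, itself annihilated by $c^{k_n}$, so $\tilde c^{2k_n}$ suffices). No comparison of filtrations, no lemma \ref{q[n]qn}, no inverse-limit argument is needed.
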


\begin{proof}
The first part is clear and the last part follows directly from the first two parts. As for the second part, consider the natural map
\[\prod_N R^{\ps}_N/(\kq^{\ps}_N)^n\to \prod_N R_{b,N}/(\kq_{b,Q_N})^n,\]
whose kernel and cokernel are killed by the $k_n$-th power of $(c_N)_N$. We obtain the desired results by taking tensor product of this map with $R^{\ps,\patch}$ over $\prod_N R^{\ps}_N$.
\end{proof}

\begin{para}
We still need to relate these patched (global) deformation rings with some local deformation ring. By the construction in proposition \ref{exttwp}, for any $N$, we have a map
\[R^{\{\xi_v\}}_{\loc}[[x_1,\cdots,x_g]]\to R^{\square_P,\{\xi_v\}}_{\bar{\rho}_b,Q_N}=R_{b,N}\]
sending $x_i$ to $\kq_{b,Q_N}$ and $\kq_{b,Q_N}/(\kq_{b,Q_N}^2,\kq^{\{\xi_v\}}_{\loc},x_1,\cdots,x_g)$ is killed by some element $f'\in B$ with $\ell(A/(f'))<C$. It is clear that we can take one $f'\neq 0$ which works for all $N$. It is easy to see that $(f')^n$ kills $\kq_{b,Q_N}^n/(\kq_{b,Q_N}^{n+1},(\kq^{\{\xi_v\}}_{\loc},x_1,\cdots,x_g)^n)$.
\end{para}

\begin{defn}
We define 
\[ R^{\{\xi_v\}}_{\infty}:= R^{\{\xi_v\}}_{\loc}[[x_1,\cdots,x_g]]\]
and $\kq^{\{\xi_v\}}_{\infty}$ to be its prime ideal generated by $\kq^{\{\xi_v\}}_{\loc},x_1,\cdots,x_g$. 
\end{defn}

\begin{prop}
For any $n>0$, the diagonal map $R^{\{\xi_v\}}_\infty \to \prod_{N\in\cI} R_{b,N}$ induces a natural map 
\[R^{\{\xi_v\}}_\infty \to R^{\patch}_b\]
which sends $(\kq^{\{\xi_v\}}_{\infty})^n$ into $\kq^{[n]}_b$. Moreover, the $B$(=$R^{\patch}_b/\kq^{\patch}_b$)-module $\kq^{[n]}_b/(\kq^{[n+1]}_b,(\kq^{\{\xi_v\}}_{\infty})^n)$ is killed by $(f')^n\neq 0$.
\end{prop}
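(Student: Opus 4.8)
The first two assertions will be formal. Since $R^{\patch}_b$ is, by construction, a quotient of $\prod_{N\in\cI} R_{b,N}$, composing the diagonal embedding $R^{\{\xi_v\}}_\infty\to\prod_N R_{b,N}$ (whose $N$-th component is the structure map $R^{\{\xi_v\}}_\infty=R^{\{\xi_v\}}_{\loc}[[x_1,\dots,x_g]]\to R_{b,N}$ of Proposition \ref{exttwp}(5)) with this quotient produces the asserted map $R^{\{\xi_v\}}_\infty\to R^{\patch}_b$. To see that $\kq^{\{\xi_v\}}_\infty$ lands in $\kq^{\patch}_b$, I would use that $\kq^{\{\xi_v\}}_\infty$ is generated by $\kq^{\{\xi_v\}}_{\loc}$ together with $x_1,\dots,x_g$: along each $R^{\{\xi_v\}}_\infty\to R_{b,N}$ the generators of $\kq^{\{\xi_v\}}_{\loc}$ map into $\kq_{b,Q_N}$ because $\kq_{b,Q_N}$ pulls back to $\kq^{\{\xi_v\}}_{\loc}$ (see \S\ref{eotwp}), while the $x_i$ map into $\kq_{b,Q_N}$ by Proposition \ref{exttwp}(5). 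Hence the diagonal image of $\kq^{\{\xi_v\}}_\infty$ lies in $\prod_N\kq_{b,Q_N}$, so its image in $R^{\patch}_b$ lies in the image of $\prod_N\kq_{b,Q_N}$, which is $\kq^{\patch}_b$ by Lemma \ref{defqc}.

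For the last statement, I would first record that $\kq^{\patch}_b/(\kq^{[2]}_b,\kq^{\{\xi_v\}}_\infty R^{\patch}_b)$ really is a $B$-module: the image of $\prod_N\kq_{b,Q_N}$ times itself lies in the image of $\prod_N\kq_{b,Q_N}^2$, so $(\kq^{\patch}_b)^2\subseteq\kq^{[2]}_b$ and $R^{\patch}_b/\kq^{\patch}_b=B$ acts on the quotient. Next, fix a lift $\widetilde{f'}\in R^{\{\xi_v\}}_{\loc}\subseteq R^{\{\xi_v\}}_\infty$ of the nonzero element $f'\in B=R^{\{\xi_v\}}_{\loc}/\kq^{\{\xi_v\}}_{\loc}$ of \S\ref{eotwp}; since $R^{\patch}_b/\kq^{\patch}_b=B$ (Lemma \ref{defqc}) compatibly with the canonical quotient $R^{\{\xi_v\}}_{\loc}\twoheadrightarrow B$, the image of $\widetilde{f'}$ in $R^{\patch}_b$ is a lift of $f'$, so multiplication by $f'$ on the quotient module is induced by multiplication by $\widetilde{f'}$. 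Because $\kq^{\patch}_b$ is by definition the image of $\prod_N\kq_{b,Q_N}$, it suffices to show that for every $(y_N)_N\in\prod_N\kq_{b,Q_N}$ the element $(\widetilde{f'}_N y_N)_N$ (with $\widetilde{f'}_N$ the image of $\widetilde{f'}$ in $R_{b,N}$) lies in $\prod_N\kq_{b,Q_N}^2$ plus the ideal of $\prod_N R_{b,N}$ generated by the diagonal images of a fixed finite generating set $z_1,\dots,z_s$ of $\kq^{\{\xi_v\}}_\infty$ (here one uses that $R^{\{\xi_v\}}_\infty$ is Noetherian). Pushing such a decomposition forward to $R^{\patch}_b$ then lands the first part in $\kq^{[2]}_b$ and the second in $\kq^{\{\xi_v\}}_\infty R^{\patch}_b$, which is exactly what is needed.

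This last point reduces to a single index $N$, where it is precisely the content of Proposition \ref{exttwp}(5) in the $N$-uniform form recalled just before the definition of $R^{\{\xi_v\}}_\infty$: $\widetilde{f'}$ annihilates $\kq_{b,Q_N}/(\kq_{b,Q_N}^2,\kq^{\{\xi_v\}}_{\loc}R_{b,N},x_1^{(N)},\dots,x_g^{(N)})$, i.e. each $\widetilde{f'}_N y_N$ can be written as $a_N+b_N$ with $a_N\in\kq_{b,Q_N}^2$ and $b_N$ in the ideal of $R_{b,N}$ generated by the images $z_1^{(N)},\dots,z_s^{(N)}$ of $z_1,\dots,z_s$ (which generate $\kq^{\{\xi_v\}}_{\loc}R_{b,N}+(x_1^{(N)},\dots,x_g^{(N)})$). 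Writing $b_N=\sum_j c_{N,j}z_j^{(N)}$ and assembling over $N$ gives $(\widetilde{f'}_N y_N)_N=(a_N)_N+\sum_j (c_{N,j})_N\cdot(z_j^{(N)})_N$, which has the required shape.

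I expect the only genuinely delicate part to be this passage from the one-factor-at-a-time statement of Proposition \ref{exttwp}(5) to an identity in the product ring $\prod_N R_{b,N}$: one must work with a \emph{single} finite generating set of $\kq^{\{\xi_v\}}_\infty$ (so that the diagonal elements $(z_j^{(N)})_N$ generate $\kq^{\{\xi_v\}}_\infty R^{\patch}_b$), and one must check that the identification $R^{\patch}_b/\kq^{\patch}_b=B$ is compatible with the canonical quotient $R^{\{\xi_v\}}_{\loc}\twoheadrightarrow B$, so that the action of $f'\in B$ really is given by $\widetilde{f'}$. Everything else is formal manipulation of quotients and products together with the facts already established in \S\ref{eotwp}, Proposition \ref{exttwp}, and Lemma \ref{defqc}.
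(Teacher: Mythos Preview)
Your proof is correct and follows essentially the same approach as the paper. The only organizational difference is that where you work directly with elements and a fixed finite generating set $z_1,\dots,z_s$ of $\kq^{\{\xi_v\}}_\infty$ to pass from the per-$N$ statement to the product, the paper phrases the same step more module-theoretically: it observes that the map $\prod_N(\kq^{\{\xi_v\}}_\infty/(\kq^{\{\xi_v\}}_\infty)^2)\to\kq^{\patch}_b/\kq^{[2]}_b$ is $\prod_N B$-linear with the action on the target factoring through $B$, and then checks that the diagonal $\kq^{\{\xi_v\}}_\infty/(\kq^{\{\xi_v\}}_\infty)^2\to(\prod_N\kq^{\{\xi_v\}}_\infty/(\kq^{\{\xi_v\}}_\infty)^2)\otimes_{\prod_N B}B$ is surjective (clear for free $B$-modules, hence for quotients thereof). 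Your finite-generating-set argument is exactly the concrete realization of this surjectivity, so the two are the same idea in different clothing.
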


\begin{proof}
The first claim is clear. We will only show that $\kq^{[n]}_b/(\kq^{[n+1]}_b,(\kq^{\{\xi_v\}}_{\infty})^n)$ is killed by $(f')^n$ when $n=1$. The general case is the same. Since the cokernel of the composite map
\[\prod_N (\kq^{\{\xi_v\}}_{\infty}/(\kq^{\{\xi_v\}}_{\infty})^2)\to \prod_N (\kq_{b,Q_N}/(\kq_{b,Q_N})^2) \to \kq^{\patch}_b/\kq^{[2]}_b\]
is killed by $f'$, it suffices to show that the image of $\kq^{\{\xi_v\}}_{\infty}/(\kq^{\{\xi_v\}}_{\infty})^2\to \kq^{\patch}_b/\kq^{[2]}_b$ is equal to the image of $\prod_N (\kq^{\{\xi_v\}}_{\infty}/(\kq^{\{\xi_v\}}_{\infty})^2)\to\kq^{\patch}_b/\kq^{[2]}_b$. Note this latter map is a homomorphism between $\prod_N R^{\{\xi_v\}}_\infty/\kq^{\{\xi_v\}}_{\infty}=\prod_N B$-modules, and the action of $\prod_N B$ on the target factors through $R^{\patch}_b/\kq^{\patch}_b=B$. Thus it is enough to show that the composite map
\begin{eqnarray} \label{diasur}
\kq^{\{\xi_v\}}_{\infty}/(\kq^{\{\xi_v\}}_{\infty})^2\to \prod_N (\kq^{\{\xi_v\}}_{\infty}/(\kq^{\{\xi_v\}}_{\infty})^2) \to \prod_N (\kq^{\{\xi_v\}}_{\infty}/(\kq^{\{\xi_v\}}_{\infty})^2)\otimes_{\prod_N B} B
\end{eqnarray}
is surjective. Here the first map is given by the diagonal embedding and  $B$ is viewed as a $\prod B$-algebra by identifying them with $R^{\patch}_b/\kq^{\patch}_b$ and $\prod_N R^{\{\xi_v\}}_\infty/\kq^{\{\xi_v\}}_{\infty}$. Explicitly, the map $\prod_N B \to B$ is the one considered in lemma \ref{uflem} with $M_i=B$ and $M_{i,n}=\km_B^n$ where $\km_B$ is the maximal ideal of $B$. Note that the composite map \eqref{diasur} is clearly an isomorphism if $\kq^{\{\xi_v\}}_{\infty}/(\kq^{\{\xi_v\}}_{\infty})^2$ is a finite free $B$-module. In general, we can get the surjectivity by writing $\kq^{\{\xi_v\}}_{\infty}/(\kq^{\{\xi_v\}}_{\infty})^2$ as a quotient of a finite free $B$-module. 
\end{proof}

\begin{cor} \label{patdefr}
Let $\widehat{(R^{\{\xi_v\}}_\infty)_{\kq^{\{\xi_v\}}_{\infty}}}$ be the $\kq^{\{\xi_v\}}_{\infty}$-adic completion of $(R^{\{\xi_v\}}_\infty)_{\kq^{\{\xi_v\}}_{\infty}}$. Then we have a surjective local homomorphism:
\[\widehat{(R^{\{\xi_v\}}_\infty)_{\kq^{\{\xi_v\}}_{\infty}}} \twoheadrightarrow \widehat{(R^{\patch}_{b})_{[\kq^{\patch}_b]}}~(\cong \widehat{(R^{\ps,\patch})_{[\kq^{\ps,\patch}]})}.\] 
\end{cor}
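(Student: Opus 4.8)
The statement to be proved is that the ring map $R^{\{\xi_v\}}_\infty\to R^{\patch}_b$ of the preceding proposition induces, after localization and completion, a \emph{surjective} local homomorphism; the parenthetical isomorphism is then exactly corollary \ref{pscpatch}(3). My plan is a topological Nakayama argument that reduces surjectivity to the cotangent level, where it is precisely the content of the preceding proposition once $f'$ is inverted.

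First I would record that $\widehat\varphi$ is well defined. The preceding proposition supplies a homomorphism $\varphi\colon R^{\{\xi_v\}}_\infty\to R^{\patch}_b$ carrying $\kq^{\{\xi_v\}}_\infty$ into $\kq^{\patch}_b=\kq^{[1]}_b$; since $\kq^{[n]}_b\kq^{[m]}_b\subseteq\kq^{[n+m]}_b$ (immediate from the definition as images of $\prod_N\kq_{b,Q_N}^{n}$), it carries $(\kq^{\{\xi_v\}}_\infty)^n$ into $\kq^{[n]}_b$, hence is continuous for the respective adic topologies and passes to a local homomorphism $\widehat\varphi\colon\widehat{(R^{\{\xi_v\}}_\infty)_{\kq^{\{\xi_v\}}_{\infty}}}\to\widehat{(R^{\patch}_{b})_{[\kq^{\patch}_b]}}$. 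Moreover $R^{\{\xi_v\}}_\infty/\kq^{\{\xi_v\}}_\infty=B=R^{\patch}_b/\kq^{\patch}_b$ (by \ref{eotwp} and lemma \ref{defqc}), so the two local rings being completed have the same residue field $\operatorname{Frac}(B)$ and $\widehat\varphi$ is an isomorphism on residue fields.

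Next comes the cotangent estimate. Writing $\widehat I_n$ for the closure of the image of $\kq^{[n]}_b$ in the target, the preceding proposition says that $\kq^{\patch}_b/(\kq^{[2]}_b+\kq^{\{\xi_v\}}_\infty R^{\patch}_b)$ is annihilated by the nonzero element $f'\in B$. Since $f'$ is a unit in the localization $(R^{\patch}_b)_{\kq^{\patch}_b}$ — its image in the residue field is a nonzero element of $\operatorname{Frac}(B)$ — localizing at $\kq^{\patch}_b$ and completing kills this cokernel, so the image of $\kq^{\{\xi_v\}}_\infty$, equivalently the image under $\widehat\varphi$ of the maximal ideal of $\widehat{(R^{\{\xi_v\}}_\infty)_{\kq^{\{\xi_v\}}_{\infty}}}$, generates $\widehat I_1/\widehat I_2$ as a vector space over $\operatorname{Frac}(B)$. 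Because $R^{\{\xi_v\}}_\infty=R^{\{\xi_v\}}_{\loc}[[x_1,\cdots,x_g]]$ is Noetherian, $\kq^{\{\xi_v\}}_\infty$ is finitely generated, so $\widehat I_1/\widehat I_2$ is finite-dimensional and spanned by finitely many elements in the image of $\widehat\varphi$.

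I would then conclude by the topological Nakayama lemma for the $\kq^{[n]}_b$-adic filtration: using the inclusions $\widehat I_n\widehat I_m\subseteq\widehat I_{n+m}$, the cotangent surjectivity above, and completeness of the (Noetherian) source, one shows by induction on $n$ that the image of $\widehat\varphi$ together with $\widehat I_n$ fills the target, and lifts an arbitrary target element to a convergent sequence in the complete source $\widehat{(R^{\{\xi_v\}}_\infty)_{\kq^{\{\xi_v\}}_{\infty}}}$; this is the argument used for patching at a one-dimensional prime in \cite{SW99}. The step I expect to be the main obstacle is exactly this last one: $R^{\patch}_b$ is not Noetherian, and the filtration $(\kq^{[n]}_b)_n$ — being the naive power-wise filtration of an infinite product — need not satisfy $\kq^{[n]}_b=\kq^{[1]}_b\kq^{[n-1]}_b$, only the convenient inclusion $\kq^{[n]}_b\kq^{[m]}_b\subseteq\kq^{[n+m]}_b$. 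Consequently the finiteness has to be imported from the Noetherian source at each stage of the induction, and the d\'evissage must be arranged so as never to use more than $\widehat I_n\widehat I_m\subseteq\widehat I_{n+m}$ together with the cotangent surjectivity.
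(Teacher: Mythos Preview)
Your approach is correct and is the intended one; the paper records no proof for this corollary, treating it as an immediate consequence of the preceding proposition via the topological Nakayama argument you outline, together with corollary \ref{pscpatch}(3) for the parenthetical isomorphism.

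One point in your sketch needs sharpening. You assert that the d\'evissage requires only the inclusions $\widehat I_n\widehat I_m\subseteq\widehat I_{n+m}$ together with the level-$1$ cotangent surjectivity, but the latter alone cannot control $\widehat I_n/\widehat I_{n+1}$ for $n\geq 2$: since $\widehat I_n$ may be strictly larger than $\widehat I_1^n$, the relation $\widehat I_1=\widehat\varphi(\kq^{\{\xi_v\}}_\infty)T+\widehat I_2$ yields information only about $\widehat I_1^n$, not about $\widehat I_n$. What the induction actually needs is the graded analogue
\[
f'^{\,n}\,\kq^{[n]}_b\;\subseteq\;(\kq^{\{\xi_v\}}_\infty)^n R^{\patch}_b+\kq^{[n+1]}_b\qquad\text{for every }n\geq 1,
\]
and this must be won by returning to the Noetherian rings $R_{b,N}$ individually: from $f'\kq_{b,Q_N}\subseteq J_N+\kq_{b,Q_N}^2$ (with $J_N$ the image of $\kq^{\{\xi_v\}}_\infty$) a short induction gives $f'^{\,n}\kq_{b,Q_N}^n\subseteq J_N^n+\kq_{b,Q_N}^{n+1}$ uniformly in $N$, and one then passes to the product using that $\kq^{\{\xi_v\}}_\infty$ is finitely generated, so that $J_N^n$ is spanned as an $R_{b,N}$-ideal by a fixed finite set of monomials independent of $N$. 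After localization at $\kq^{\patch}_b$ this gives $\widehat I_n\subseteq \widehat\varphi(\kq^{\{\xi_v\}}_\infty)^nT+\widehat I_{n+1}$, and your inductive lifting to a Cauchy sequence in the complete Noetherian source then goes through cleanly.
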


\begin{proof}
The only non-trivial part is the surjectivity. We have the following commutative diagram:
\[\begin{tikzcd}
  0\to ((\kq^{\{\xi_v\}}_{\infty})^n/(\kq^{\{\xi_v\}}_{\infty})^{n+1})_{\kq^{\{\xi_v\}}_{\infty}} \arrow[d] \arrow[r] & (R^{\{\xi_v\}}_\infty/(\kq^{\{\xi_v\}}_{\infty})^{n+1})_{\kq^{\{\xi_v\}}_{\infty}} \arrow[d,"\pi_{n+1}"]\arrow[r] &(R^{\{\xi_v\}}_\infty/(\kq^{\{\xi_v\}}_{\infty})^n)_{\kq^{\{\xi_v\}}_{\infty}}  \arrow[d,"\pi_n"] \to 0\\
0\longrightarrow  (\kq^{[n]}_b/\kq^{[n+1]}_b)_{\kq^{\patch}_b} \arrow[r] & (R^{\patch}_{b}/\kq^{[n+1]}_b)_{\kq^{\patch}_b} \arrow[r] &(R^{\patch}_{b}/\kq^{[n]}_b)_{\kq^{\patch}_b}   \longrightarrow 0.
\end{tikzcd}\]
The first vertical arrow is surjective by the previous proposition. So an induction on $n$ shows that $\pi_n$ is surjective for any $n$ and kernels of $\pi_n$ form a projective system with surjective transition maps. This clearly implies the surjectivity of the limit of $\pi_n$.
\end{proof}

\subsection{Patching III: An application of Pa\v{s}k\={u}nas' theory and the local-global compatibility result} \label{p3apop}

\begin{para}
In subsection \ref{sblgc}, we attach a block $\kB_\km$ of $D_p^\times$ to the maximal ideal $\km$ of Hecke algebra. By Theorem \ref{lgc}, $M_{\psi,\xi}(U^p)_\km\in \kC_{D_p^\times,\psi}(\cO)^{\kB_{\km}}$. From this, it is clear from the proof of corollary \ref{finpath} that $\tilde{M}^{\patch,\{\xi_v\}}_n\in  \kC_{D_p^\times,\psi}(\cO)^{\kB_{\km}}$. Recall that $\tilde{M}^{\patch,\{\xi_v\}}_n$ is defined to be the image of $M^{\patch,\{\xi_v\}}_n$ into $(M^{\patch,\{\xi_v\}}_n)_{\tilde{f}}$. Similarly, we denote the image of $M^{\patch,\{\xi_v\}}_{n,0}$ into $(M^{\patch,\{\xi_v\}}_{n,0})_{\tilde{f}}$ by $\tilde{M}^{\patch,\{\xi_v\}}_{n,0}$. Let $P_{\kB_\km}$ be the projective generator of $\kB_\km$. See \ref{sblgc} for the notations here.
\end{para}

\begin{defn}
For any positive integer $n$, we define
\begin{itemize}
\item $\tilde{\mm}^{\patch,\{\xi_v\}}_n:=\Hom_{\kC_{D_p^\times,\psi}(\cO)}(P_{\kB_\km},\tilde{M}^{\patch,\{\xi_v\}}_n)$.
\item $\tilde{\mm}^{\patch,\{\xi_v\}}_{n,0}:=\Hom_{\kC_{D_p^\times,\psi}(\cO)}(P_{\kB_\km},\tilde{M}^{\patch,\{\xi_v\}}_{n,0})$.
\item $\mm^{\{\xi_v\}}:=\Hom_{\kC_{D_p^\times,\psi}(\cO)}(P_{\kB_\km},M_\psi(U^p)_\km)$.
\item $\mm^{\{\xi_v\}}_{n}:=\Hom_{\kC_{D_p^\times,\psi}(\cO)}(P_{\kB_\km},M^{\{\xi_v\}}_{n})\cong \mm^{\{\xi_v\}}\otimes_\cO \cO_\infty/\kb^n$.
\end{itemize}
\end{defn}

\begin{para}
By our local-global compatibility result (corollary \ref{mclgc}), $\mm^{\{\xi_v\}}$ is a faithful, finitely generated $\T_{\psi,\xi}(U^p)_\km$-module. We note that $\Hom_{\kC_{D_p^\times,\psi}(\cO)}(P_{\kB_\km},\cdot)$ is an exact functor from $\kC_{D_p^\times,\psi}(\cO)^{\kB_{\km}}$ to the category of right pseudo-compact $E_{\kB_\km}$-modules (see \ref{Blocks}).
\end{para}

\begin{lem}
In the following statements, for any $R^{\ps,\{\xi_v\}}_{Q_N}$-module $M$, we use $M_{\tilde{f}}$ to denote its localization by powers of $\tilde{f}$.
\begin{enumerate}
\item $(\tilde{\mm}^{\patch,\{\xi_v\}}_n)_{\tilde{f}}$ is a flat $S_\infty/\ka^n$-module and there are natural isomorphisms 
\[(\tilde{\mm}^{\patch,\{\xi_v\}}_{n+1})_{\tilde{f}}/\ka^n (\tilde{\mm}^{\patch,\{\xi_v\}}_{n+1})_{\tilde{f}}\cong (\tilde{\mm}^{\patch,\{\xi_v\}}_n)_{\tilde{f}}.\]
\item The isomorphism in proposition \ref{mnflat} and $\eta^{\patch}_n$ in \ref{etanpa} induce isomorphisms:
\[(\tilde{\mm}^{\patch,\{\xi_v\}}_n)_{\tilde{f}}/\ka_0(\tilde{\mm}^{\patch,\{\xi_v\}}_n)_{\tilde{f}}\cong (\tilde{\mm}^{\patch,\{\xi_v\}}_{n,0})_{\tilde{f}} \cong (\mm^{\{\xi_v\}}_{n})_{\tilde{f}}^{\oplus 2^r}.\]
\end{enumerate}
\end{lem}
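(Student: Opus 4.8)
The plan is to obtain (1) and (2) by applying the functor $\mm(-):=\Hom_{\kC_{D_p^\times,\psi}(\cO)}(P_{\kB_{\km}},-)$ to the statements of proposition \ref{mnflat} and proposition \ref{fkkc} and then localizing at $\tilde f$. (To lighten notation I drop the superscript $\{\xi_v\}$ below.) First I would assemble the formal input: since $P_{\kB_{\km}}$ is projective in $\kC_{D_p^\times,\psi}(\cO)$, the functor $\mm(-)$ is exact; it is $\cO$-linear, commutes with finite colimits (hence with forming quotients by finitely generated ideals) and with arbitrary limits; and on the block $\kC_{D_p^\times,\psi}(\cO)^{\kB_{\km}}$ it is, restricted there, an anti-equivalence onto pseudo-compact $E_{\kB_{\km}}$-modules (\ref{Blocks}). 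By Theorem \ref{lgc} and the proof of corollary \ref{finpath}, the objects $M^{\patch}_n$, $\tilde M^{\patch}_n$, $M^{\patch}_{n,0}$, $\tilde M^{\patch}_{n,0}$, $M^{\{\xi_v\}}_n$ all lie in $\kC_{D_p^\times,\psi}(\cO)^{\kB_{\km}}$, and the actions on them of $S_\infty$, $\prod_{N}R^{\ps}_N$ and $R^{\ps,\patch}$ are by morphisms of $\kC_{D_p^\times,\psi}(\cO)$ (they commute with the $D_p^\times$-action); hence these actions---and in particular multiplication by $\tilde f$---are carried along by $\mm(-)$.

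Writing $\mm^{\patch}_n:=\mm(M^{\patch}_n)$ and $\mm^{\patch}_{n,0}:=\mm(M^{\patch}_{n,0})$, I would then feed in the known facts. From the flatness of $M^{\patch}_n$ over $S_\infty/\ka^n$ (proposition \ref{mnflat}) together with its pro-flat description (lemma \ref{fllem}), the standard transfer of $\Tor$-vanishing---compute $\Tor^{S_\infty/\ka^n}_\bullet(\mm^{\patch}_n,\F)$ from a finite free resolution of $\F$ and use exactness and additivity of $\mm(-)$, then conclude with lemma \ref{fllem}---gives that $\mm^{\patch}_n$ is flat over $S_\infty/\ka^n$. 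Applying $\mm(-)$ to the two isomorphisms of proposition \ref{mnflat} (each presented as the cokernel of an explicit map of $\kC$-objects built from generators of $\ka^n$, resp.\ of $\ka_0$) yields
\[
\mm^{\patch}_{n+1}/\ka^n\mm^{\patch}_{n+1}\;\cong\;\mm^{\patch}_n,\qquad \mm^{\patch}_n/\ka_0\mm^{\patch}_n\;\cong\;\mm^{\patch}_{n,0}.
\]
And applying $\mm(-)$ to $\eta^{\patch}_n$ and invoking proposition \ref{fkkc}: since $\mm(-)$ is exact, $\ker\mm(\eta^{\patch}_n)=\mm(\ker\eta^{\patch}_n)$ and $\coker\mm(\eta^{\patch}_n)=\mm(\coker\eta^{\patch}_n)$ are both killed by $\tilde{f}^2$, so $\mm(\eta^{\patch}_n)\colon\mm^{\patch}_{n,0}\to(\mm^{\{\xi_v\}}_n)^{\oplus 2^r}$ has kernel and cokernel killed by $\tilde{f}^2$.

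Next I would localize at $\tilde f$. As $(-)_{\tilde f}$ is exact over the commutative ring through which $\tilde f$ acts, it preserves flatness over $S_\infty/\ka^n$ and all the displayed isomorphisms, and it turns $\mm(\eta^{\patch}_n)$ into an isomorphism $(\mm^{\patch}_{n,0})_{\tilde f}\cong(\mm^{\{\xi_v\}}_n)_{\tilde f}^{\oplus 2^r}$. It then remains to identify $(\mm^{\patch}_n)_{\tilde f}\cong(\tilde\mm^{\patch}_n)_{\tilde f}$ and $(\mm^{\patch}_{n,0})_{\tilde f}\cong(\tilde\mm^{\patch}_{n,0})_{\tilde f}$. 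For the first: $\tilde M^{\patch}_n$ is, by construction, a quotient of $M^{\patch}_n$ on which $\tilde f$ acts injectively, with kernel $K_n=\bigcup_j M^{\patch}_n[\tilde{f}^j]$; applying exact $\mm(-)$ gives $0\to\mm(K_n)\to\mm^{\patch}_n\to\tilde\mm^{\patch}_n\to 0$, and one checks $\mm(K_n)$ is $\tilde f$-power torsion---every morphism $P_{\kB_{\km}}\to K_n$ has finitely generated image over a Noetherian ring acting compatibly (using the finiteness of $P_{\kB_{\km}}$ recalled in \S\ref{edha}), hence image inside a single $M^{\patch}_n[\tilde{f}^j]$---so $(\mm(K_n))_{\tilde f}=0$. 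The same argument applies with $M^{\patch}_{n,0}$ in place of $M^{\patch}_n$. Combining these identifications with the statements of the previous paragraph proves the lemma.

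I expect this last identification---showing $(\mm(K_n))_{\tilde f}=0$---to be the main obstacle, as it is precisely where the finite-generation properties of $P_{\kB_{\km}}$ (over the appropriate ``$R\Lambda$''-type ring of \S\ref{edha}) and of the patched homology (corollary \ref{finpath}) must be invoked, and where one must be careful that the submodules $M^{\patch}_n[\tilde{f}^j]$ are stable under a Noetherian ring over which $P_{\kB_{\km}}$ is finitely generated and with respect to which the maps $P_{\kB_{\km}}\to M^{\patch}_n$ are linear. The transfer of flatness and of the two reduction isomorphisms, by contrast, is routine given the exactness and compatibilities of $\mm(-)$, the only mild point being that $\mm^{\patch}_n$ is not finitely generated over $S_\infty/\ka^n$, so that flatness is read off from the pro-flat presentation rather than from $\Tor_1(-,\F)=0$ alone.
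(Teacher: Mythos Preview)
Your strategy has a real gap: you assert that $M^{\patch}_n$, $M^{\patch}_{n,0}$ lie in $\kC_{D_p^\times,\psi}(\cO)^{\kB_\km}$, but the paper does \emph{not} establish this, and in the residually reducible setting of \S\ref{Paao-dp} it is almost certainly false. The whole point of corollary \ref{finpath} is that only after passing to $\tilde{M}^{\patch}_n$ (the image in the $\tilde f$-localization) do we obtain an object that is finitely generated over $\cO[[K_p]]$, i.e.\ admissible, and hence in $\kC_{D_p^\times,\psi}(\cO)$. Before that, $M^{\patch}_n$ is a huge projective limit built from an ultraproduct $\prod_N M(U_p,N,n)/\ka^n\otimes_\fR\fR_\kF$ whose terms have no uniform bound in $N$ (the Eisenstein contribution at the reducible maximal ideal $\km$ can grow with $Q_N$). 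So your appeal to exactness of $\mm(-)=\Hom_{\kC}(P_{\kB_\km},-)$ on $M^{\patch}_n$, to the commutation of $\mm(-)$ with $\otimes N$, and to the computation $\mm(K_n)$ all rest on an object that is not known to live in the category where $P_{\kB_\km}$ is projective; the argument breaks at its first step.

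The paper's proof sidesteps this by never applying $\mm$ to $M^{\patch}_n$. It works directly with $\tilde{M}^{\patch}_n\in\kC^{\kB_\km}$ and records the single formula
\[
\tilde{\mm}^{\patch}_n\otimes_{S_\infty/\ka^n} N \;\cong\; \Hom_{\kC}(P_{\kB_\km},\,\tilde{M}^{\patch}_n\otimes_{S_\infty/\ka^n} N)
\]
for finitely generated $N$ (immediate from exactness of $\mm$ and a finite free presentation of $N$). All the needed statements then follow: the maps $\tilde{M}^{\patch}_{n+1}/\ka^n\to\tilde{M}^{\patch}_n$ and $\tilde{M}^{\patch}_n/\ka_0\to\tilde{M}^{\patch}_{n,0}$ and $\eta^{\patch}_n$ all have $\tilde f$-power-torsion kernel and cokernel (coming from proposition \ref{mnflat} and proposition \ref{fkkc} together with $\tilde{M}^{\patch}_\bullet=M^{\patch}_\bullet/M^{\patch}_\bullet[\tilde f^\infty]$), these kernels and cokernels lie in $\kC$ as subquotients of objects already in $\kC$, and applying $\mm$ then localizing at $\tilde f$ gives the lemma. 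To repair your argument, drop the intermediate $\mm^{\patch}_n:=\mm(M^{\patch}_n)$ entirely and run the same logic with $\tilde{M}^{\patch}_n$ from the start.
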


\begin{proof}
For any finitely generated $S_\infty/\ka^n$-module $N$, it is easy to see that 
\[\tilde{\mm}^{\patch,\{\xi_v\}}_n\otimes_{S_\infty/\ka^n} N\cong \Hom_{\kC_{D_p^\times,\psi}(\cO)}(P_{\kB_\km},\tilde{M}^{\patch,\{\xi_v\}}_n\otimes_{S_\infty/\ka^n}N).\]
The isomorphism in the first part now follows from proposition \ref{mnflat}. Again by proposition \ref{mnflat}, $M^{\patch,\{\xi_v\}}_n$ is flat over $S_\infty/\ka^n$. Hence for any short exact sequence of finitely generated $S_\infty/\ka^n$-modules $0\to N'\to N\to N''\to 0$, 
\[0\to \tilde{M}^{\patch,\{\xi_v\}}_n\otimes_{S_\infty/\ka^n}N'\to \tilde{M}^{\patch,\{\xi_v\}}_n\otimes_{S_\infty/\ka^n}N\to \tilde{M}^{\patch,\{\xi_v\}}_n\otimes_{S_\infty/\ka^n}N''\to 0\]
is exact up to $\tilde{f}$-torsion. Applying the exact functor $\Hom_{\kC_{D_p^\times,\psi}(\cO)}(P_{\kB_\km},\cdot)$ and inverting $\tilde{f}$, we deduce easily that $(\tilde{\mm}^{\patch,\{\xi_v\}}_n)_{\tilde{f}}$ is a flat $S_\infty/\ka^n$-module.

By definition, the kernel of $\tilde{\mm}^{\patch,\{\xi_v\}}_n/\ka_0 \tilde{\mm}^{\patch,\{\xi_v\}}_n \to \tilde{\mm}^{\patch,\{\xi_v\}}_{n,0}$ is killed by powers of $\tilde{f}$. This proves the first isomorphism in the second part. The second isomorphism follows directly from proposition \ref{fkkc}.
\end{proof}

\begin{cor} \label{patmloc}
In definition \ref{defnrpsp}, we defined a $R^{\ps,\patch}$-module structure on $M^{\patch,\{\xi_v\}}_n$ hence also on $\tilde{M}^{\patch,\{\xi_v\}}_n,\tilde{\mm}^{\patch,\{\xi_v\}}_n$. Then the localization $(\tilde{\mm}^{\patch,\{\xi_v\}}_n)_{\kq^{\ps,\patch}}$ of $\tilde{\mm}^{\patch,\{\xi_v\}}_n$ at $\kq^{\ps,\patch}$ is flat over $S_\infty/\ka^n$ and we have isomorphisms:
\begin{eqnarray*}
(\tilde{\mm}^{\patch,\{\xi_v\}}_{n+1})_{\kq^{\ps,\patch}}/\ka^{n} (\tilde{\mm}^{\patch,\{\xi_v\}}_{n+1})_{\kq^{\ps,\patch}}&\cong& (\tilde{\mm}^{\patch,\{\xi_v\}}_n)_{\kq^{\ps,\patch}};\\
(\tilde{\mm}^{\patch,\{\xi_v\}}_n)_{\kq^{\ps,\patch}}/\ka_0 (\tilde{\mm}^{\patch,\{\xi_v\}}_n)_{\kq^{\ps,\patch}}&\cong &(\tilde{\mm}^{\patch,\{\xi_v\}}_{n,0})_{\kq^{\ps,\patch}}\cong (\mm^{\{\xi_v\}}_n)^{\oplus 2^r}_{\kq^{\ps,\patch}}.
\end{eqnarray*}
\end{cor}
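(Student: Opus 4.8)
The plan is to deduce Corollary \ref{patmloc} from the preceding lemma by simply localizing all the relevant isomorphisms at the prime $\kq^{\ps,\patch}$ of $R^{\ps,\patch}$, using the exactness of localization. First I would recall that $\tilde{\mm}^{\patch,\{\xi_v\}}_n$ carries a natural action of $R^{\ps,\patch}$ coming from the action on $M^{\patch,\{\xi_v\}}_n$ defined in \ref{defnrpsp}, and that the element $\tilde{f}$ lies in $R^{\ps,\patch}$; moreover $\tilde{f}\notin\kq^{\ps,\patch}$ by corollary \ref{pscpatch}(1) (indeed $\tilde{c}\notin\kq^{\ps,\patch}$ and a similar argument applies to $\tilde f$, or one observes $\bar f_N\ne 0$ in $A$ by proposition \ref{twkcc}(2)). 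Hence localizing a module at $\kq^{\ps,\patch}$ automatically inverts $\tilde{f}$, so that $(\tilde{M}^{\patch,\{\xi_v\}}_n)_{\kq^{\ps,\patch}}=(M^{\patch,\{\xi_v\}}_n)_{\kq^{\ps,\patch}}$ and $(\tilde{\mm}^{\patch,\{\xi_v\}}_n)_{\kq^{\ps,\patch}}=(\mm^{\patch,\{\xi_v\}}_n)_{\kq^{\ps,\patch}}$ where the right-hand sides mean the localizations of the untilded objects; in particular the three statements for the tilded objects localized at $\kq^{\ps,\patch}$ agree with the three statements for the $\tilde{f}$-localized objects further localized at $\kq^{\ps,\patch}$.

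Next I would carry out the three claims in order. For flatness over $S_\infty/\ka^n$: the previous lemma gives that $(\tilde{\mm}^{\patch,\{\xi_v\}}_n)_{\tilde{f}}$ is flat over $S_\infty/\ka^n$; since the $R^{\ps,\patch}$-action is $S_\infty$-linear (both actions factor through endomorphisms in $\kC_{D_p^\times,\psi}(\cO)$ and $S_\infty\subseteq \cO_\infty[[\Delta_\infty]]$ acts $\cO_\infty$-linearly commuting with everything), localization at $\kq^{\ps,\patch}$ is a localization of $S_\infty/\ka^n$-modules along a multiplicative set, hence preserves flatness. For the transition isomorphism: apply the functor $(-)_{\kq^{\ps,\patch}}$, which is exact, to the isomorphism $(\tilde{\mm}^{\patch,\{\xi_v\}}_{n+1})_{\tilde{f}}/\ka^n(\tilde{\mm}^{\patch,\{\xi_v\}}_{n+1})_{\tilde{f}}\cong(\tilde{\mm}^{\patch,\{\xi_v\}}_n)_{\tilde{f}}$ from the lemma; since $\ka^n$ is a finitely generated ideal, localization commutes with forming $(-)/\ka^n(-)$, giving the first displayed isomorphism. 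For the base-change isomorphism: apply $(-)_{\kq^{\ps,\patch}}$ to $(\tilde{\mm}^{\patch,\{\xi_v\}}_n)_{\tilde{f}}/\ka_0(\tilde{\mm}^{\patch,\{\xi_v\}}_n)_{\tilde{f}}\cong(\tilde{\mm}^{\patch,\{\xi_v\}}_{n,0})_{\tilde{f}}\cong(\mm^{\{\xi_v\}}_n)_{\tilde{f}}^{\oplus 2^r}$, again using that $\ka_0$ is finitely generated, to get the second chain of isomorphisms.

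The one genuinely substantive point — rather than the main obstacle, since this is essentially a bookkeeping corollary — is to make sure the $R^{\ps,\patch}$-module structures are compatible across all the maps in sight: the isomorphism of proposition \ref{mnflat}, the map $\eta^{\patch}_n$ of \ref{etanpa}, and the identification $(\mm^{\{\xi_v\}}_n)\cong\mm^{\{\xi_v\}}\otimes_\cO\cO_\infty/\kb^n$ all need to be $R^{\ps,\patch}$-equivariant for the localized statements to follow formally. This is true because all these maps are constructed from $D_p^\times$-equivariant, $\cO[[K_p]]$-linear maps at finite level that commute with the Hecke action (proposition \ref{twkcc} records that $\eta_N$ commutes with the Hecke operators, hence with the image of $R^{\ps,\{\xi_v\}}_{Q_N}$), and passing to the patched limit preserves this. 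Once that is checked, the corollary is immediate; I would state it by saying ``apply the exact functor $(-)_{\kq^{\ps,\patch}}$ to the isomorphisms of the previous lemma and use that $\tilde f\notin\kq^{\ps,\patch}$'' and leave the routine verifications to the reader, matching the paper's style.
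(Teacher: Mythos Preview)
Your proposal is correct and follows essentially the same approach as the paper: the only substantive point is that $\tilde f\notin\kq^{\ps,\patch}$, after which the corollary follows from the previous lemma by further localizing $(\,\cdot\,)_{\tilde f}$ at $\kq^{\ps,\patch}$. The paper proves $\tilde f\notin\kq^{\ps,\patch}$ exactly as you suggest, using proposition \ref{twkcc}(2) (the uniform bound on $\ell(A/(\bar f_N))$) together with the explicit description of the kernel of $\prod_N B'\to B'$ from lemma \ref{defqps}; your citation of corollary \ref{pscpatch}(1) is about $\tilde c$ rather than $\tilde f$, but your ``similar argument'' remark and reference to proposition \ref{twkcc}(2) pinpoint precisely what the paper does.
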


\begin{proof}
In view of the previous lemma, it suffices to prove that the image of $\tilde{f}=\prod \tilde{f}_N$ in $R^{\ps,\patch}/\kq^{\ps,\patch}=B'$ is non-zero. Let $\prod_N\tilde{f}_N$ be the image of $\tilde{f}$ in $\prod_{N\in\cI}(R^{\ps}_N/\kq^{\ps}_N)=\prod_N B'$. By proposition \ref{twkcc}, $A/(\bar{f}_N)$ has a uniform bounded length for all $N$. Note that the natural map $\prod_N B'\to R^{\ps,\patch}/\kq^{\ps,\patch}=B'$ is the one considered in the proof of lemma \ref{defqps}. The image of $\prod_N\tilde{f}_N$ in $B'$ is non-zero by the explicit description of the kernel of this map there.
\end{proof}

\begin{defn}
For any $n>0$, we define
\begin{enumerate}
\item $\displaystyle \mm^{\patch,\{\xi_v\}}_n:=\varprojlim_k (\tilde{\mm}^{\patch,\{\xi_v\}}_n)_{\kq^{\ps,\patch}}/\kq^{\ps,[k]}(\tilde{\mm}^{\patch,\{\xi_v\}}_n)_{\kq^{\ps,\patch}}$, the completion of $(\tilde{\mm}^{\patch,\{\xi_v\}}_n)_{\kq^{\ps,\patch}}$ with respect to the system of ideals $(\kq^{\ps,[k]})_{k>0}$. See lemma \ref{comexa} below for an equivalent definiton.
\item  $\displaystyle \mm^{\patch,\{\xi_v\}}_{n,0}:=\varprojlim_k (\tilde{\mm}^{\patch,\{\xi_v\}}_{n,0})_{\kq^{\ps,\patch}}/\kq^{\ps,[k]}(\tilde{\mm}^{\patch,\{\xi_v\}}_{n,0})_{\kq^{\ps,\patch}}$, the completion of $(\tilde{\mm}^{\patch,\{\xi_v\}}_{n,0})_{\kq^{\ps,\patch}}$ with respect to the system of ideals $(\kq^{\ps,[k]})_{k>0}$.
\item $\displaystyle\mm^{\{\xi_v\}}_{\infty}:=\varprojlim_k \mm^{\patch,\{\xi_v\}}_k$.
\item $\mm_0^{\{\xi_v\}}:=$ the $\kq$-adic completion of $(\mm^{\{\xi_v\}})_\kq$ as a $\T_{\psi,\xi}(U^p)_\kq$-module. Recall that $\kq$ is a prime ideal of $\T_{\psi,\xi}(U^p)_\km$ defined in \ref{kqrho}.
\end{enumerate}
Clearly $\widehat{(R^{\ps,\patch})_{[\kq^{\ps,\patch}]}}$ hence $\widehat{(R^{\{\xi_v\}}_\infty)_{\kq^{\{\xi_v\}}_{\infty}}}$ (by corollary \ref{patdefr}) act on these spaces.
\end{defn}

The main result of this subsection is

\begin{prop}
\begin{enumerate}
\item $\mm^{\{\xi_v\}}_\infty$ is a finitely generated $\widehat{(R^{\{\xi_v\}}_\infty)_{\kq^{\{\xi_v\}}_{\infty}}}$-module.
\item $\mm^{\{\xi_v\}}_\infty$ is a flat $S_\infty$-module. Moreover, the isomorphisms in corollary \ref{patmloc} induce
\[\mm^{\{\xi_v\}}_\infty/\ka_1\mm^{\{\xi_v\}}_\infty\cong (\mm_0^{\{\xi_v\}})^{\oplus 2^r}.\]
\end{enumerate}
\end{prop}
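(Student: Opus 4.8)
The plan is to establish both parts for the patched module $\mm^{\patch,\{\xi_v\}}_\infty:=\varprojlim_n\mm^{\patch,\{\xi_v\}}_n$, the limit taken along the transition maps of corollary \ref{patmloc}. I will use throughout that $\widehat{(R^{\{\xi_v\}}_\infty)_{\kq^{\{\xi_v\}}_{\infty}}}$ is a complete local Noetherian ring which surjects onto $\widehat{(R^{\ps,\patch})_{[\kq^{\ps,\patch}]}}$ by corollary \ref{patdefr}, so that finite generation over one is equivalent to finite generation over the other; for part (1) it thus suffices to work with $\widehat{(R^{\ps,\patch})_{[\kq^{\ps,\patch}]}}$. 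A preliminary point I would record is that each $\mm^{\patch,\{\xi_v\}}_n$ is itself finitely generated over $\widehat{(R^{\ps,\patch})_{[\kq^{\ps,\patch}]}}$: since $\tilde{M}^{\patch,\{\xi_v\}}_n$ is finitely generated over $\cO[[K_p]]$ by corollary \ref{finpath}, arguing as in corollary \ref{mclgc} (Proposition 4.17 of \cite{Pas13} and corollary \ref{fpas}, incorporating the commuting $\cO_\infty$-action) the module $\tilde{\mm}^{\patch,\{\xi_v\}}_n$ is finitely generated over a ring whose action factors through $R^{\ps,\patch}$, and then localizing at $\kq^{\ps,\patch}$ and completing gives a finitely generated module over the complete Noetherian ring $\widehat{(R^{\ps,\patch})_{[\kq^{\ps,\patch}]}}$.

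For the flatness in part (2) I would combine corollary \ref{patmloc} with lemma \ref{fllem}(2), applied with $R=S_\infty$ and $I=\ka$. By corollary \ref{patmloc} the localizations $(\tilde{\mm}^{\patch,\{\xi_v\}}_n)_{\kq^{\ps,\patch}}$ are flat over $S_\infty/\ka^n$ with compatible reductions modulo $\ka^n$; these properties survive passage to the $(\kq^{\ps,[m]})$-adic completions, because that completion commutes with the quotient by $\ka^n$ and preserves flatness over the Artinian ring $S_\infty/\ka^n$ (the modules in question being finitely generated over a Noetherian ring, by the previous paragraph). Lemma \ref{fllem}(2) then gives that $\mm^{\patch,\{\xi_v\}}_\infty$ is flat over $S_\infty$, with $\mm^{\patch,\{\xi_v\}}_\infty/\ka^n\cong\mm^{\patch,\{\xi_v\}}_n$ and $\mm^{\patch,\{\xi_v\}}_\infty/J\mm^{\patch,\{\xi_v\}}_\infty\cong\varprojlim_n\mm^{\patch,\{\xi_v\}}_n/J\mm^{\patch,\{\xi_v\}}_n$ for every ideal $J$ of $S_\infty$; I apply the last isomorphism with $J=\ka_1$.

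The heart of the argument is the identification $\mm^{\patch,\{\xi_v\}}_\infty/\ka_1\cong(\mm_0^{\{\xi_v\}})^{\oplus 2^r}$. Write $\ka_1=(\ka_0,\kb_1)$. By the second isomorphism of corollary \ref{patmloc}, $\mm^{\patch,\{\xi_v\}}_n/\ka_0$ is the $(\kq^{\ps,[m]})$-adic completion of $(\mm^{\{\xi_v\}}_n)^{\oplus 2^r}_{\kq^{\ps,\patch}}$; reducing further modulo $\kb_1\subseteq\cO_\infty$ replaces $\mm^{\{\xi_v\}}_n=\mm^{\{\xi_v\}}\otimes_\cO\cO_\infty/\kb^n$ by $\mm^{\{\xi_v\}}/\varpi^n$, since $\cO_\infty/(\kb_1,\kb^n)=\cO/\varpi^n$. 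On $\mm^{\{\xi_v\}}/\varpi^n$ the $R^{\ps,\patch}$-action factors through $R^{\ps,\{\xi_v\}}\twoheadrightarrow\T_{\psi,\xi}(U^p)_\km$, carrying $\kq^{\ps,\patch}$ to $\kq^{\ps}$ and then to $\kq$; because $R^{\ps,\{\xi_v\}}\to\T_{\psi,\xi}(U^p)_\km$ is surjective, $\kq$ is the only prime over $\kq^{\ps}$, so localizing at $\kq^{\ps}$ equals localizing at $\kq$ and the two adic topologies coincide, and together with the descriptions of the ideals $\kq^{\ps,[m]}$ via the products $\prod_N(\kq^{\ps}_N)^m$ (lemmas \ref{defqps} and \ref{q[n]qn}, corollary \ref{speufi}) this shows the completion above is the $\kq$-adic completion of $(\mm^{\{\xi_v\}})_\kq/\varpi^n$. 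Taking $\varprojlim_n$, and using that $\mm^{\{\xi_v\}}$ is finite over the complete local ring $\T_{\psi,\xi}(U^p)_\km$ so that the inverse limit of its $\varpi^n$-reductions recovers it, I obtain $\mm^{\patch,\{\xi_v\}}_\infty/\ka_1\cong(\mm_0^{\{\xi_v\}})^{\oplus 2^r}$ with $\mm_0^{\{\xi_v\}}$ the $\kq$-adic completion of $(\mm^{\{\xi_v\}})_\kq$, as required. I expect this step — checking that the localizations and completions performed on the patched objects collapse, along $R^{\ps,\patch}\to R^{\ps,\square}\to R^{\ps,\{\xi_v\}}\to\T_{\psi,\xi}(U^p)_\km$, to the single $\kq$-adic localization and completion on the Hecke side — to be the main obstacle.

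Finally, for part (1) I would invoke topological Nakayama. Since $\kq^{\ps}=\kq\cap R^{\ps,\{\xi_v\}}$ and $R^{\ps,\{\xi_v\}}\to\T_{\psi,\xi}(U^p)_\km$ is surjective, $\T_{\psi,\xi}(U^p)_\km/\kq\cong R^{\ps,\{\xi_v\}}/\kq^{\ps}=B'$, so the $\kq$-adic completion $\widehat{\T_{\psi,\xi}(U^p)_\kq}$ is complete local Noetherian with residue field $k(\kq)=\mathrm{Frac}(B')$, over which $\mm_0^{\{\xi_v\}}$ is finitely generated with residual fibre $\mm^{\{\xi_v\}}\otimes_{\T_{\psi,\xi}(U^p)_\km}k(\kq)$, finite-dimensional by corollary \ref{mclgc}. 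The maximal ideal of $\widehat{(R^{\{\xi_v\}}_\infty)_{\kq^{\{\xi_v\}}_{\infty}}}$ is the extension of $\kq^{\{\xi_v\}}_\infty$, has residue field $k(\kq)$, and contains the image of $\ka_1$ (since $\kb_1$ maps into $\kq_{b,Q_N}$ under the chosen framing identification and $\ka_0'$ into $\ker(R^{\ps,\{\xi_v\}}_{Q_N}\to R^{\ps,\{\xi_v\}})$ by proposition \ref{sinfty'}); hence $\mm^{\patch,\{\xi_v\}}_\infty$ modulo that maximal ideal is a quotient of $(\mm_0^{\{\xi_v\}})^{\oplus 2^r}\otimes_{\widehat{\T_{\psi,\xi}(U^p)_\kq}}k(\kq)$, which is finite-dimensional over $k(\kq)$. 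Since $\mm^{\patch,\{\xi_v\}}_\infty$ is pseudo-compact (an inverse limit of the pseudo-compact modules $\mm^{\patch,\{\xi_v\}}_n$) and its topology is finer than the $\kq^{\{\xi_v\}}_\infty$-adic one, topological Nakayama shows $\mm^{\patch,\{\xi_v\}}_\infty$ is finitely generated over $\widehat{(R^{\{\xi_v\}}_\infty)_{\kq^{\{\xi_v\}}_{\infty}}}$, establishing part (1).
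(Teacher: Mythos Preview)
Your overall architecture matches the paper's: reduce everything to properties of the individual $\mm^{\patch,\{\xi_v\}}_n$, feed these into lemma \ref{fllem}(2), and then read off both claims. The gap is in your second paragraph, where you assert that the $(\kq^{\ps,[m]})$-adic completion ``commutes with the quotient by $\ka^n$ and preserves flatness over the Artinian ring $S_\infty/\ka^n$.'' Neither statement is automatic, because the system $(\kq^{\ps,[m]})_m$ is \emph{not} a priori the sequence of powers of a single ideal in any Noetherian ring: it is defined as the image of $\prod_N(\kq^{\ps}_N)^m$ in $R^{\ps,\patch}$, which is not known to be Noetherian. Hence the standard package (Artin--Rees, exactness and flatness of completion, $\widehat{M}\cong M\otimes_R\widehat{R}$ for $M$ finitely generated) is unavailable. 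In particular, a quotient of a flat $S_\infty/\ka^n$-module by an $R^{\ps,\patch}$-submodule need not be flat, so flatness of the inverse limit does not follow; and your first-paragraph claim that ``localizing at $\kq^{\ps,\patch}$ and completing gives a finitely generated module'' already presupposes that this completion is a flat base change.

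The paper fills this gap via lemma \ref{comexa}(1). One passes to the image $R_n$ of $R^{\ps,\patch}$ in $\End_{R^{\ps}_p}(\tilde{\mm}^{\patch,\{\xi_v\}}_n)$, which is finite over $R^{\ps}_p$ and hence Noetherian, and proves that the image $\kq^{[j]}_n$ of $\kq^{\ps,[j]}$ there \emph{equals} the honest power $\kq_n^j$. The proof uses lemma \ref{q[n]qn} (which you cite, but only for the terminal $\ka_1$-identification) together with a compactness argument over the finite pieces $\tilde{f}^C M^{\patch,\{\xi_v\}}_n$. Once $\kq^{[j]}_n=\kq_n^j$, one has $\mm^{\patch,\{\xi_v\}}_n\cong\tilde{\mm}^{\patch,\{\xi_v\}}_n\otimes_{R_n}\widehat{(R_n)_{\kq_n}}$, a flat base change of a finitely generated module over a Noetherian local ring, and all three parts of lemma \ref{comexa} follow immediately from corollary \ref{patmloc}. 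Your argument would go through once this identity is inserted at the start of your second paragraph; without it, the completions you take are with respect to a non-standard filtration and the claimed properties do not follow.
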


\begin{proof}
In view of the second part of lemma \ref{fllem} and Theorem 8.4 of \cite{Mat1}, the proposition is a direct consequence of the following lemma. 
\end{proof}

\begin{lem} \label{comexa}
\hspace{2em}
\begin{enumerate}
\item $\mm^{\patch,\{\xi_v\}}_n$ is isomorphic to the $\kq^{\ps,\patch}$-adic completion of $(\tilde{\mm}^{\patch,\{\xi_v\}}_n)_{\kq^{\ps,\patch}}$.
\item $\mm^{\patch,\{\xi_v\}}_1$ is a finitely generated $\widehat{(R^{\ps,\patch})_{[\kq^{\ps,\patch}]}}$-module.
\item For each $n>0$,  $\mm^{\patch,\{\xi_v\}}_n$ is a flat $S_\infty/\ka^n$-module and there are natural isomorphisms
\begin{eqnarray*}
\mm^{\patch,\{\xi_v\}}_{n+1}/\ka^n\mm^{\patch,\{\xi_v\}}_{n+1}&\cong &\mm^{\patch,\{\xi_v\}}_n.\\
\mm^{\patch,\{\xi_v\}}_n/\ka_1 \mm^{\patch,\{\xi_v\}}_n &\cong& (\mm_0^{\patch,\{\xi_v\}} /\varpi^n\mm_0^{\patch,\{\xi_v\}} )^{\oplus 2^r}.
\end{eqnarray*}
\end{enumerate}
\end{lem}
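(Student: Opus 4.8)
The plan is to reduce all three assertions to the structure results already available for the localized modules $(\tilde{\mm}^{\patch,\{\xi_v\}}_n)_{\kq^{\ps,\patch}}$ in Corollary \ref{patmloc}, once part (1) has identified the limit defining $\mm^{\patch,\{\xi_v\}}_n$ with a genuine $\kq^{\ps,\patch}$-adic completion. For part (1): since $(\kq^{\ps,\patch})^k\subseteq\kq^{\ps,[k]}$ (the remark before Lemma \ref{q[n]qn}), the $\kq^{\ps,\patch}$-adic completion of $(\tilde{\mm}^{\patch,\{\xi_v\}}_n)_{\kq^{\ps,\patch}}$ surjects onto $\mm^{\patch,\{\xi_v\}}_n$, and I must show the two filtrations are cofinal. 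This is exactly what Lemma \ref{q[n]qn} supplies: $\prod_N(\kq^{\ps}_N)^k$ and $(\prod_N\kq^{\ps}_N)^k$ have the same image in the endomorphism ring of $\tilde{f}^{2n}(\prod_N M(U_p,N,n)/\ka^nM(U_p,N,n)\otimes_{\fR}\fR_{\kF})$ for every $n,k$ and every sufficiently small $U_p$. Since $\tilde{f}\notin\kq^{\ps,\patch}$ (proof of Corollary \ref{patmloc}), localizing at $\kq^{\ps,\patch}$ inverts $\tilde{f}$, so $(\tilde{M}^{\patch,\{\xi_v\}}_n)_{\kq^{\ps,\patch}}=(M^{\patch,\{\xi_v\}}_n)_{\kq^{\ps,\patch}}$, the displayed equality of ideal-actions localizes, and applying the exact functor $\Hom_{\kC_{D_p^\times,\psi}(\cO)}(P_{\kB_\km},-)$ (Proposition 4.17 of \cite{Pas13}) I conclude that $\kq^{\ps,[k]}$ and $(\kq^{\ps,\patch})^k$ act identically on $(\tilde{\mm}^{\patch,\{\xi_v\}}_n)_{\kq^{\ps,\patch}}$; hence the filtrations coincide and the completions agree.

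Part (2) rests on the local--global compatibility theorem. By Corollary \ref{finpath}, $\tilde{M}^{\patch,\{\xi_v\}}_1\in\kC_{D_p^\times,\psi}(\cO)^{\kB_\km}$ has admissible Pontryagin dual, so $\tilde{\mm}^{\patch,\{\xi_v\}}_1$ is finitely generated over $E_{\kB_\km}$ (Proposition 4.17 of \cite{Pas13}); $E_{\kB_\km}$ is module-finite over $R^{\ps,\psi\varepsilon^{-1}}_p$ (Corollary \ref{fpas}), and by Theorem \ref{lgc} --- valid for $M_{\psi,\xi}(U^p_{Q_N})_\km$ by \S\ref{varch} --- the action $\tau_{\mathrm{Aut}}$ of $R^{\ps,\psi\varepsilon^{-1}}_p$ on $\tilde{\mm}^{\patch,\{\xi_v\}}_1$ equals $\tau_{\Gal}$, whose image lies in that of $R^{\ps,\patch}$. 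Hence $(\tilde{\mm}^{\patch,\{\xi_v\}}_1)_{\kq^{\ps,\patch}}$ is finitely generated over the local ring $(R^{\ps,\patch})_{\kq^{\ps,\patch}}$, so by part (1) its $\kq^{\ps,\patch}$-adic completion $\mm^{\patch,\{\xi_v\}}_1$ is complete and separated with $\mm^{\patch,\{\xi_v\}}_1/\kq^{\ps,\patch}\mm^{\patch,\{\xi_v\}}_1$ finite-dimensional over the residue field. Since $\widehat{(R^{\ps,\patch})_{[\kq^{\ps,\patch}]}}\cong\widehat{(R^{\patch}_b)_{[\kq^{\patch}_b]}}$ (Corollary \ref{pscpatch}) is a quotient of the Noetherian complete local ring $\widehat{(R^{\{\xi_v\}}_\infty)_{\kq^{\{\xi_v\}}_\infty}}$ (Corollary \ref{patdefr}, Proposition \ref{rlocp}), the topological Nakayama lemma yields finite generation of $\mm^{\patch,\{\xi_v\}}_1$ over it.

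Part (3): Corollary \ref{patmloc} already shows $(\tilde{\mm}^{\patch,\{\xi_v\}}_n)_{\kq^{\ps,\patch}}$ is flat over the Artinian local ring $S_\infty/\ka^n$ and computes its reductions modulo $\ka^{n-1}$ and modulo $\ka_0$. The remaining work is to check the $\kq^{\ps,\patch}$-adic completion does not disturb the $S_\infty$-structure. The $\cO_\infty$- and $\Delta_\infty$-actions comprising the $S_\infty$-structure commute with the $R^{\ps,\patch}$-action (diamond operators commute with Hecke, and $\cO_\infty\subseteq R^{\ps,\patch}$), so $R^{\ps,\patch}$ acts $S_\infty$-linearly; moreover by Proposition \ref{sinfty'} the subalgebra $S_\infty'$ acts through $R^{\ps,\patch}$ with $\ka_1'=\ka_1\cap S_\infty'\subseteq\kq^{\ps,\patch}$, and $\varpi,\kb_1\in\kq^{\ps,\patch}$ because $R^{\ps,\patch}/\kq^{\ps,\patch}=B'$ has characteristic $p$. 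Using that $S_\infty$ is finite over $S_\infty'$, these facts let me compute $\mm^{\patch,\{\xi_v\}}_n/\ka^{n-1}\mm^{\patch,\{\xi_v\}}_n$ and $\mm^{\patch,\{\xi_v\}}_n/\ka_1\mm^{\patch,\{\xi_v\}}_n$ termwise from the corresponding quotients of $(\tilde{\mm}^{\patch,\{\xi_v\}}_n)_{\kq^{\ps,\patch}}$, and to deduce flatness over $S_\infty/\ka^n$ from Lemma \ref{fllem}. For the last isomorphism I also use $\mm^{\{\xi_v\}}_n/\kb_1\mm^{\{\xi_v\}}_n\cong\mm^{\{\xi_v\}}/\varpi^n$ together with the surjectivity of $R^{\ps,\{\xi_v\}}\to\T_{\psi,\xi}(U^p)_\km$ and the relation $\kq^{\ps}=\kq\cap R^{\ps,\{\xi_v\}}$, which make the $\kq^{\ps}$-adic and $\kq$-adic filtrations on the finitely generated $\T_{\psi,\xi}(U^p)_\km$-module $\mm^{\{\xi_v\}}$ coincide, so that its completion is $\mm_0^{\{\xi_v\}}$.

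The main obstacle I anticipate is the bookkeeping in part (3): the $S_\infty$-module structure is only partially visible inside $R^{\ps,\patch}$ (through $S_\infty'$ and Proposition \ref{sinfty'}), while the completion is taken along the $R^{\ps,\patch}$-direction, so one must argue with some care --- via Artin--Rees and Lemma \ref{fllem} --- that $\kq^{\ps,\patch}$-adic completion genuinely preserves flatness over $S_\infty/\ka^n$ and commutes with the base-change maps, rather than only producing something flat over $S_\infty'$. By contrast part (1), though foundational, is essentially immediate from Lemma \ref{q[n]qn}, and the content of part (2) is carried entirely by the local--global compatibility result of \S\ref{sblgc}.
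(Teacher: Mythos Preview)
Your proposal has the right overall shape but misses the key technical device that makes the paper's proof go through cleanly: the introduction of the auxiliary ring $R_n$, defined as the image of $R^{\ps,\patch}$ in $\End_{R^{\ps}_p}(\tilde{\mm}^{\patch,\{\xi_v\}}_n)$. Because $\tilde{\mm}^{\patch,\{\xi_v\}}_n$ is finitely generated over $R^{\ps}_p$, this $R_n$ is a \emph{finite} $R^{\ps}_p$-algebra, hence Noetherian.

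In part (1), Lemma \ref{q[n]qn} only gives that $\kq^{\ps,[j]}$ and $(\kq^{\ps,\patch})^j$ have the same image in the endomorphism ring of the \emph{finite} module $\tilde f^{2n}\bigl(\prod_N M(U_p,N,n)/\ka^n M(U_p,N,n)\otimes_\fR\fR_\kF\bigr)$ for each fixed $U_p$. You still need to pass to the projective limit over $U_p$, and this is where you glide: saying ``applying the exact functor $\Hom(P_{\kB_\km},-)$'' does not explain why an equality of ideal actions at each finite stage survives the limit. The paper's argument is that the images $\kq_n^{[j]}$ and $\kq_n^j$ in $R_n$ are finitely generated $R^{\ps}_p$-modules and hence \emph{compact} in the $\km_p$-adic topology; one then writes $\tilde f^C M^{\patch,\{\xi_v\}}_n\cong\varprojlim_{U_p} M(U_p,C)$ with each $M(U_p,C)$ finite, deduces equality of the images in each $\End(M(U_p,C))$ from Lemma \ref{q[n]qn}, and concludes $\kq_n^{[j]}=\kq_n^j$ by compactness.

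The payoff of this is substantial. Once $\kq_n^{[j]}=\kq_n^j$, part (1) identifies $\mm^{\patch,\{\xi_v\}}_n$ with $\tilde{\mm}^{\patch,\{\xi_v\}}_n\otimes_{R_n}\widehat{(R_n)_{\kq_n}}$; since $R_n$ is Noetherian, $\widehat{(R_n)_{\kq_n}}$ is \emph{flat} over $R_n$ (Theorem 8.8 of \cite{Mat1}), and parts (2) and (3) follow immediately from Corollary \ref{patmloc} by flat base change. Your route for part (3), via Artin--Rees, Lemma \ref{fllem}, and the partial $S_\infty'$-structure inside $R^{\ps,\patch}$, is attempting to prove by hand properties (flatness over $S_\infty/\ka^n$, compatibility with $\ka$-adic reduction) that become automatic once you know you are tensoring with a flat algebra over a Noetherian ring. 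Without that Noetherianity, the bookkeeping you flagged as ``the main obstacle'' is not just bookkeeping; it is a genuine difficulty, since completion along an ideal of a non-Noetherian ring need not be flat.
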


\begin{para}
To prove this lemma, we need some more notations. For $v|p$, let $R^{\ps,\chi}_v$ be the universal deformation ring which parametrizes all two-dimensional pseudo-representations of $G_{F_v}$ which lifts $1+\bar{\chi}|_{G_{F_v}}$ with determinant $\chi$. Put $R^{\ps}_p=\widehat{\bigotimes}_{v|p}R^{\ps,\chi}_v$. By corollary \ref{mclgc}, $\mm^{\{\xi_v\}}$ is a finitely generated $R^{\ps}_p$-module. 

Note that there are two actions of $R^{\ps}_p$ on $\tilde{\mm}^{\patch,\{\xi_v\}}_n$: one comes from the center of $E_{\kB_\km}$ (see \ref{Blocks} for notations here), one comes from mapping $R^{\ps}_p$ diagonally into $\prod_N R^{\ps}_N$. We will always view $\tilde{\mm}^{\patch,\{\xi_v\}}_n$ as a $R^{\ps}_p$-module by the \textit{second} action. It follows from Theorem \ref{lgc} and the proof of corollary \ref{finpath} that $\tilde{\mm}^{\patch,\{\xi_v\}}_n$ is finitely generated over $R^{\ps}_p$.
\end{para}

\begin{defn}
We define $R_n$ to be the image of $R^{\ps,\patch}$ in $\End_{R^{\ps}_p}(\tilde{\mm}^{\patch,\{\xi_v\}}_n)$. This is a natural $R^{\ps}_p$-algebra.
\end{defn}

\begin{para}
It is clear that $R_n$ is a \textit{finite} $R^{\ps}_p$-algebra hence a noetherian ring. We claim that the kernel of $R^{\ps,\patch}\to\End_{R^{\ps}_p}(\tilde{\mm}^{\patch,\{\xi_v\}}_n)$ is contained in $\kq^{\ps,\patch}$, so that $\kq^{\ps,\patch}$ defines a prime ideal $\kq_n$ of $R_n$. It suffices to show that 
\[(\tilde{\mm}^{\patch,\{\xi_v\}}_n)_{\kq^{\ps,\patch}}\neq 0.\]
By corollary \ref{patmloc}, we only need to check $(\mm^{\{\xi_v\}}_n)_{\kq^{\ps,\patch}}\neq 0$. Note that $\mm^{\{\xi_v\}}_{n}/\kb \mm^{\{\xi_v\}}_{n}\cong \mm^{\{\xi_v\}}/\varpi\mm^{\{\xi_v\}}$ and the action of $R^{\ps,\patch}$ on it factors through $\T_{\psi,\xi}(U^p)_\km$. Also it follows from lemma \ref{defqps} that $\kq^{\ps,\patch}$ is the pull-back of $\kq\in\Spec\T_{\psi,\xi}(U^p)_\km$. Thus
\[(\tilde\mm^{\{\xi_v\}}_n/\kb \tilde\mm^{\{\xi_v\}}_{n})_{\kq^{\ps,\patch}}=(\tilde\mm^{\{\xi_v\}}/\varpi\tilde\mm^{\{\xi_v\}})_\kq\neq 0\]
because $\kq$ contains $\varpi$ and $\mm$ is a faithful finitely generated $\T_{\psi,\xi}(U^p)_\km$-module.
\end{para}

\begin{proof}[Proof of lemma \ref{comexa}]
For any positive integer $j$, let $\kq_n^{[j]}$ be the image of $\kq^{\ps,[j]}$ in $R_n$. Recall that this is also the image of $\prod_N  (\kq^{\ps}_N)^j$. We claim that $\kq^{[j]}_n=\kq_n^j$. This will imply our first assertion in the lemma.

Let $\km_p$ be the maximal ideal of $R^{\ps}_p$. Both $\kq_n^{[j]}$ and $\kq_n^j$ are finite $R^{\ps}_p$-modules, hence naturally profinite groups by the $\km_p$-adic topology. In particular, they are compact.

In the proof of corollary  \ref{finpath}, we showed that $\tilde{f}^{2n} M^{\patch,\{\xi_v\}}_n$ is a finite $\cO[[K_p]]$-module. Therefore we can find an integer $C\geq2n$ such that $\tilde{f}^{C} M^{\patch,\{\xi_v\}}_n$ has no $\tilde{f}$-torsion. The multiplication by $\tilde{f}^C$ induces  a natural isomorphism between $\tilde{M}^{\patch,\{\xi_v\}}_n$ and $\tilde{f}^{C} M^{\patch,\{\xi_v\}}_n$. We also note that the $\cO[[K_p]]$-module structure makes $\tilde{f}^{C} M^{\patch,\{\xi_v\}}_n$ into a natural profinite group.

Recall that in \ref{Pch}, we defined $M^{\patch,\{\xi_v\}}_n$ as $\varprojlim_{U_p}(\prod_{N\in\cI} M(U_p,N,n)/\ka^n M(U_p,N,n)\otimes_\fR \fR_\kF)$. Let $M(U_p,C)$ be the image of $\tilde{f}^{C} M^{\patch,\{\xi_v\}}_n$ in $\prod_{N\in\cI} M(U_p,N,n)/\ka^n M(U_p,N,n)\otimes_\fR \fR_\kF$. Clearly $M(U_p,C)$ is contained in $\tilde{f}^C(\prod_{N\in\cI} M(U_p,N,n)/\ka^n M(U_p,N,n)\otimes_\fR \fR_\kF)$, which is of finite cardinality by lemma \ref{f2nmNn}. Since $\tilde{f}^C M^{\patch,\{\xi_v\}}_n$ is compact, we have
\[\tilde{M}^{\patch,\{\xi_v\}}_n\cong \tilde{f}^C M^{\patch,\{\xi_v\}}_n\cong \varprojlim_{U_p} M(U_p,C).\]
It follows from lemma \ref{q[n]qn} that $\kq_n^{[j]}$ and $\kq_n^j$ have the same images in the endomorphism rings of $M(U_p,C)$. Taking the limits over $U_p$ and using the compactness of $\kq_n^{[j]}$ and $\kq_n^j$, we conclude that they are equal.
 
As a consequence, ${\mm}^{\patch,\{\xi_v\}}_n$ is isomorphic to the $\kq_n$-adic completion of $(\tilde{\mm}^{\patch,\{\xi_v\}}_n)_{\kq_n}$ as a $(R_n)_{\kq_n}$-module. Since $\tilde{\mm}^{\patch,\{\xi_v\}}_n$ is a finitely generated module over $R^{\ps}_p$ hence also finitely generated over the noetherian ring $R_n$, we have (Theorem 8.7 \cite{Mat1})
\[{\mm}^{\patch,\{\xi_v\}}_n\cong \tilde{\mm}^{\patch,\{\xi_v\}}_n\otimes _{R_n} \widehat{(R_n)_{\kq_n}}.\]
Here $\widehat{(R_n)_{\kq_n}}$ is the $\kq_n$-adic completion of $(R_n)_{\kq_n}$, which is flat over $R_n$ (Theorem 8.8 ibid.). The third part of the lemma now follows directly from corollary \ref{patmloc}. The second part holds as $\widehat{(R_n)_{\kq_n}}$ is a quotient of $\widehat{(R^{\ps,\patch})_{[\kq^{\ps,\patch}]}}$ (Theorem 8.1 ibid.) and $\mm_1^{\patch,\{\xi_v\}}$ is a finite $\widehat{(R_1)_{\kq_1}}$-module.
\end{proof}

\begin{para}
Recall that in the beginning of subsection \ref{p1pch}, we define $S_\infty'\subseteq\cO_\infty[[\Delta_\infty]]$ to be the closure (under the profinite topology) of the $\cO_\infty$-subalgebra generated by all elements of the form $g+g^{-1},g=(0,\cdots,0,a,0\cdots,0)\in\Delta_\infty$ and ideal $\ka_1'=\ka_1\cap S_\infty'$.
\end{para}

\begin{lem} \label{sinf'}
$S_\infty'$ is contained in the image of $\widehat{(R^{\{\xi_v\}}_\infty)_{\kq^{\{\xi_v\}}_{\infty}}}\to \End (\mm^{\{\xi_v\}}_\infty)$. Moreover $\ka_1'$ is contained in the image of $\ker (\widehat{(R^{\{\xi_v\}}_\infty)_{\kq^{\{\xi_v\}}}}\to \widehat{(R^{\ps,\{\xi_v\}})_{\kq^{\ps}}})$ induced by the map $R^{\ps,\patch}\to R^{\ps,\{\xi_v\}}$ (see \ref{rpsps}), where $\widehat{(R^{\ps,\{\xi_v\}})_{\kq^{\ps}}}$ denotes the $\kq^{\ps}$-adic completion of $(R^{\ps,\{\xi_v\}})_{\kq^{\ps}}$.
\end{lem}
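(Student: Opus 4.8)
The plan is to prove this as the patched analogue of Proposition \ref{sinfty'}: for each $N$ that proposition already records how $\cO[\Delta_{Q_N}]'$ and its ideal $\ka_{Q_N}'$ interact with $R^{\ps,\{\xi_v\}}_{Q_N}$ inside $\End(M_{\psi,\xi}(U^p_{Q_N})_\km)$, and it remains only to carry this statement through the product-over-$N$, ultrafilter-collapse, localisation and completion operations that build $\mm^{\{\xi_v\}}_\infty$.

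First I would fix $N$ (and an auxiliary level $U_p$ and an integer $n$). The chosen surjection $\Delta_\infty\twoheadrightarrow\Delta_{Q_N}$ induces a continuous $\cO_\infty$-algebra homomorphism $S_\infty=\cO_\infty[[\Delta_\infty]]\to\cO[\Delta_{Q_N}]\widehat{\otimes}_\cO\cO_\infty$ carrying $S_\infty'$ into the closure of $\cO[\Delta_{Q_N}]'\widehat{\otimes}_\cO\cO_\infty$, and carrying $\ka_1'=\ka_1\cap S_\infty'$ into the ideal generated by the image of $\ka_{Q_N}'$ together with $\kb_1$. Base-changing Proposition \ref{sinfty'} along the central extension $\cO\to\cO_\infty$ (so that $R^{\ps}_N=R^{\ps,\{\xi_v\}}_{Q_N}\widehat{\otimes}_\cO\cO_\infty$ still governs the $\cO_\infty$-linear action) and restricting to the quotient $M(U_p,N,n)$ of $M_{\psi,\xi}(U^p_{Q_N})_\km\otimes_\cO\cO_\infty$, I obtain: the image of $S_\infty'$ in $\End(M(U_p,N,n))$ lies in the image of $R^{\ps}_N$, and the image of $\ka_1'$ lies in the image of $\ker(R^{\ps}_N\to R^{ps,\square}\to R^{ps,\square}/\kb_1=R^{\ps,\{\xi_v\}})$; for the last point one also uses that $\kb_1$ acts through the diagonal copy of $\cO_\infty\subseteq R^{\ps}_N$, which lies in that kernel.

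Next I would patch. Given $s\in S_\infty'$, pick for every $N$ an element $t_{s,N}\in R^{\ps}_N$ acting as $s$ does on $M(U_p,N,n)$ for all $n,U_p$; then $(t_{s,N})_N\in\prod_N R^{\ps}_N$, and by Lemma \ref{uflem} and the surjection $\prod_N R^{\ps}_N\twoheadrightarrow R^{\ps,\patch}$ of \ref{rpsps} its image $\tilde t_s\in R^{\ps,\patch}$ acts on $M^{\patch,\{\xi_v\}}_n$ exactly as $s$ does. Since the $R^{\ps,\patch}$-action on $M^{\patch,\{\xi_v\}}_n$ is continuous and its image in $\End(M^{\patch,\{\xi_v\}}_n)$ is closed, all of $S_\infty'$ (topologically generated by such $s$ and by $\cO_\infty$, which also acts through $R^{\ps,\patch}$) lies in it. Localising at $\kq^{\ps,\patch}$, passing to $\tilde{\mm}^{\patch,\{\xi_v\}}_n$, completing along $(\kq^{\ps,[n]})_n$ and taking $\varprojlim_n$, I conclude that $S_\infty'$ acts on $\mm^{\{\xi_v\}}_\infty$ through $\widehat{(R^{\ps,\patch})_{[\kq^{\ps,\patch}]}}$, hence through $\widehat{(R^{\{\xi_v\}}_\infty)_{\kq^{\{\xi_v\}}_{\infty}}}$ via the surjection of Corollary \ref{patdefr}. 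Repeating the argument for $s\in\ka_1'$ with the $t_{s,N}$ chosen inside $\ker(R^{\ps}_N\to R^{\ps,\{\xi_v\}})$, their image lands in $\ker(R^{\ps,\patch}\to R^{\ps,\{\xi_v\}})$, which maps into $\ker(\widehat{(R^{\{\xi_v\}}_\infty)_{\kq^{\{\xi_v\}}_{\infty}}}\to\widehat{(R^{\ps,\{\xi_v\}})_{\kq^{\ps}}})$ because $\kq^{\ps,\patch}$ pulls back to $\kq^{\ps}$ (Lemma \ref{defqps}); this gives the second assertion.

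The step requiring the most care — though I expect only bookkeeping rather than any genuine difficulty — is verifying that these agreements of endomorphisms survive the ultrafilter collapse $\otimes_\fR\fR_\kF$ and the subsequent localisation and completions: that $(t_{s,N})_N$ really does act as $s$ on $M^{\patch,\{\xi_v\}}_n$ for all $n$, and that for $s\in\ka_1'$ the $t_{s,N}$ can be chosen uniformly enough that their image stays in the relevant kernel after collapsing. Both follow from Lemma \ref{uflem}, Lemma \ref{q[n]qn}, and the explicit descriptions of the kernels of the collapse maps employed in the proofs of Lemma \ref{defqps} and Corollary \ref{pscpatch}, together with the already-established compatibility of the $R^{\ps,\patch}$- and $S_\infty$-actions on $M^{\patch,\{\xi_v\}}_n$; I would simply spell these out.
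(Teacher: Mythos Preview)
Your proposal is correct and takes essentially the same approach as the paper: the paper's proof is simply the one-liner ``This follows from proposition \ref{sinfty'},'' and what you have written is a detailed unpacking of exactly how that proposition propagates through the product-over-$N$, ultrafilter collapse, localisation, and completion steps defining $\mm^{\{\xi_v\}}_\infty$. The bookkeeping you flag as requiring care is precisely what the paper leaves implicit.
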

\begin{proof}
This follows from proposition \ref{sinfty'}.
\end{proof}
In other words, since $S_\infty'\cong\cO[[y_1,\cdots,y_{4|P|-1},s_1',\cdots,s_r']]$, we may choose a lifting $S_\infty'\to \widehat{(R^{\{\xi_v\}}_\infty)_{\kq^{\{\xi_v\}}}}$ with $\ka_1'$ mapping into $\ker (\widehat{(R^{\{\xi_v\}}_\infty)_{\kq^{\{\xi_v\}}}}\to \widehat{(R^{\ps,\{\xi_v\}})_{\kq^{\ps}}})$.

\subsection{Proof of Theorem \ref{thmA}}
\begin{para}
We first summarize what we have done so far in the following commutative diagram:
\[\begin{tikzcd}
& S_\infty' \arrow[r,hook] \arrow[d,dashed] & S_\infty \arrow[d,hook] \\
& \widehat{(R^{\{\xi_v\}}_\infty)_{\kq^{\{\xi_v\}}_{\infty}}} \arrow[ld, two heads,"\pi_R"]  \arrow [d, two heads] \arrow[r] & \End_{S_\infty} (\mm^{\{\xi_v\}}_\infty) \arrow[d] \\
\widehat{(R^{\ps,\{\xi_v\}})_{\kq^{\ps}}} \arrow[r,two heads]  &\widehat{\T_{\kq}} \arrow[r,hook] & \End_{\cO} (\mm_0^{\{\xi_v\}}),
\end{tikzcd}\]
where the dashed arrow exists by lemma \ref{sinf'} and the image of $\ka_1'$ is contained in $\ker(\pi_R)$. In addition, we have
\begin{itemize}
\item $\mm^{\{\xi_v\}}_\infty$ is a finitely generated $\widehat{(R^{\{\xi_v\}}_\infty)_{\kq^{\{\xi_v\}}_{\infty}}}$-module.
\item $\mm^{\{\xi_v\}}_\infty$ is a flat $S_\infty$-module and $\mm^{\{\xi_v\}}_\infty/\ka_1 \mm^{\{\xi_v\}}_\infty\cong (\mm_0^{\{\xi_v\}})^{\oplus 2^r}$.
\item $\mm_0^{\{\xi_v\}}=\widehat{(\mm^{\{\xi_v\}})_\kq}\cong (\mm^{\{\xi_v\}})_\kq\otimes_{\T_\kq} \widehat{\T_\kq}$ is a finitely generated faithful $\widehat{\T_\kq}$-module. By Theorem \ref{dh}, each irreducible component of $\widehat{\T_\kq}$ has dimension at least $2[F:\Q]$.
\end{itemize}
Let $(R^{\{\xi_v\}})'$ be $\widehat{(R^{\{\xi_v\}}_\infty)_{\kq^{\{\xi_v\}}_{\infty}}}\otimes_{S_\infty'} S_\infty$. This is also a local ring with $\widehat{\T_\kq}$ as a natural quotient by mapping $S_\infty$ to $S_\infty/\ka_1=\cO$. Hence $\dim_{(R^{\{\xi_v\}})'} (\mm^{\{\xi_v\}}_0)= \dim_{\widehat{\T_\kq}} (\mm^{\{\xi_v\}}_0)$ is at least $2[F:\Q]$. Note that $y_1,\cdots,y_{4|P|-1},s_1,\cdots,s_r\in\ka_1$ form a regular sequence of $\mm^{\{\xi_v\}}_\infty$. We see immediately from these results that
\end{para}

\begin{lem} \label{dimsupp}
$\dim_{(R^{\{\xi_v\}})'} (\mm^{\{\xi_v\}}_\infty)\geq 4|P|-1+r+2[F:\Q]$.
\end{lem}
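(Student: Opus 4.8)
The plan is to deduce the inequality directly from the structural facts gathered immediately before the statement, together with Theorem \ref{dh}. The three inputs are: that $\mm^{\{\xi_v\}}_\infty$ is a finitely generated module over the Noetherian local ring $(R^{\{\xi_v\}})'$; that the elements $y_1,\dots,y_{4|P|-1},s_1,\dots,s_r$, viewed in $(R^{\{\xi_v\}})'$ through the map $S_\infty\to(R^{\{\xi_v\}})'$, lie in its maximal ideal, generate $\ka_1$, and form a regular sequence on $\mm^{\{\xi_v\}}_\infty$ (the last point being the remark recorded just above, a consequence of the $S_\infty$-flatness of $\mm^{\{\xi_v\}}_\infty$); and that $\mm^{\{\xi_v\}}_\infty/\ka_1\mm^{\{\xi_v\}}_\infty\cong(\mm_0^{\{\xi_v\}})^{\oplus 2^r}$. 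The strategy is to use the elementary fact that cutting a finitely generated module over a Noetherian local ring by a nonzerodivisor in the maximal ideal drops its dimension by exactly one.

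More precisely, for a nonzero finitely generated module $M$ over a Noetherian local ring $(R,\mathfrak m)$ and $x\in\mathfrak m$ a nonzerodivisor on $M$, one has $\dim_R(M/xM)=\dim_R(M)-1$: the inequality $\ge$ is the principal ideal theorem, and $\le$ holds because $x$ avoids every minimal prime of $\Supp_R(M)$. Applying this successively along the regular sequence $y_1,\dots,y_{4|P|-1},s_1,\dots,s_r$ (each stage being a nonzero module, by Nakayama) gives
\[
\dim_{(R^{\{\xi_v\}})'}\mm^{\{\xi_v\}}_\infty=\dim_{(R^{\{\xi_v\}})'}(\mm^{\{\xi_v\}}_\infty/\ka_1\mm^{\{\xi_v\}}_\infty)+(4|P|-1+r).
\]
Since $\mm^{\{\xi_v\}}_\infty/\ka_1\mm^{\{\xi_v\}}_\infty\cong(\mm_0^{\{\xi_v\}})^{\oplus 2^r}$ has the same support as $\mm_0^{\{\xi_v\}}$, the first term on the right equals $\dim_{(R^{\{\xi_v\}})'}\mm_0^{\{\xi_v\}}$.

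Finally, the action of $(R^{\{\xi_v\}})'$ on $\mm_0^{\{\xi_v\}}$ factors through its quotient $\widehat{\T_\kq}$, so $\dim_{(R^{\{\xi_v\}})'}\mm_0^{\{\xi_v\}}=\dim_{\widehat{\T_\kq}}\mm_0^{\{\xi_v\}}$; and as $\mm_0^{\{\xi_v\}}$ is a faithful finitely generated $\widehat{\T_\kq}$-module we have $\Supp_{\widehat{\T_\kq}}\mm_0^{\{\xi_v\}}=\Spec\widehat{\T_\kq}$, whence $\dim_{\widehat{\T_\kq}}\mm_0^{\{\xi_v\}}=\dim\widehat{\T_\kq}\ge 2[F:\Q]$ by Theorem \ref{dh}. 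Plugging this into the displayed equality yields the claim. There is no real obstacle: everything has been prepared, and the only point deserving a sentence of justification is that $y_1,\dots,s_r$ genuinely form a regular sequence on $\mm^{\{\xi_v\}}_\infty$ and sit in the maximal ideal of $(R^{\{\xi_v\}})'$, which is exactly what the remark preceding the lemma and the construction $(R^{\{\xi_v\}})'=\widehat{(R^{\{\xi_v\}}_\infty)_{\kq^{\{\xi_v\}}_{\infty}}}\otimes_{S_\infty'}S_\infty$ provide.
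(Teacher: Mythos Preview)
Your proof is correct and follows essentially the same approach as the paper: use that $y_1,\dots,y_{4|P|-1},s_1,\dots,s_r$ form a regular sequence on $\mm^{\{\xi_v\}}_\infty$ to reduce to the quotient $\mm_0^{\{\xi_v\}}$, and then invoke Theorem~\ref{dh} (via the faithful finitely generated $\widehat{\T_\kq}$-module structure) for the lower bound $2[F:\Q]$. The paper records exactly these ingredients in the paragraph preceding the lemma and then simply says ``We see immediately from these results that''; you have supplied the commutative algebra details the paper leaves implicit.
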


On the other hand, by proposition \ref{rlocp} and lemma \ref{cctp}, we have

\begin{lem}
$\widehat{(R^{\{\xi_v\}}_\infty)_{\kq^{\{\xi_v\}}_{\infty}}}$ is equidimensional of dimension $4|P|-1+r+2[F:\Q]$. \end{lem}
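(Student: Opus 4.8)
The plan is to reduce the statement to Proposition \ref{rlocp} by exhibiting $\widehat{(R^{\{\xi_v\}}_\infty)_{\kq^{\{\xi_v\}}_{\infty}}}$ as a formal power series ring over $\widehat{(R^{\{\xi_v\}}_{\loc})_{\kq^{\{\xi_v\}}_{\loc}}}$.

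First I would recall that $R^{\{\xi_v\}}_\infty=R^{\{\xi_v\}}_{\loc}[[x_1,\ldots,x_g]]=R^{\{\xi_v\}}_{\loc}\widehat{\otimes}_\cO \cO[[x_1,\ldots,x_g]]$ with $g=r+|P|-[F:\Q]-1$, and that $\kq^{\{\xi_v\}}_{\infty}$ is the prime generated by $\kq^{\{\xi_v\}}_{\loc}$ together with $x_1,\ldots,x_g$. The one-dimensional prime $\kq^{\{\xi_v\}}_{\loc}$ contains $p$, since $R^{\{\xi_v\}}_{\loc}/\kq^{\{\xi_v\}}_{\loc}$ is a one-dimensional quotient embedding into $A=\F[[T]]$; hence the maximal ideal of $\cO[[x_1,\ldots,x_g]]$, namely $(\varpi,x_1,\ldots,x_g)$, together with $\kq^{\{\xi_v\}}_{\loc}$ generates exactly $\kq^{\{\xi_v\}}_{\infty}$. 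Applying Lemma \ref{cctp} with $R=R^{\{\xi_v\}}_{\loc}$ and $S=\cO[[x_1,\ldots,x_g]]$ then yields a canonical isomorphism
\[\widehat{(R^{\{\xi_v\}}_\infty)_{\kq^{\{\xi_v\}}_{\infty}}}\;\cong\;\widehat{(R^{\{\xi_v\}}_{\loc})_{\kq^{\{\xi_v\}}_{\loc}}}\,\widehat{\otimes}_\cO\,\cO[[x_1,\ldots,x_g]]\;=\;\widehat{(R^{\{\xi_v\}}_{\loc})_{\kq^{\{\xi_v\}}_{\loc}}}[[x_1,\ldots,x_g]],\]
the last equality because $\widehat{(R^{\{\xi_v\}}_{\loc})_{\kq^{\{\xi_v\}}_{\loc}}}$ is a complete Noetherian local ring.

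Next I would invoke Proposition \ref{rlocp}: $\widehat{(R^{\{\xi_v\}}_{\loc})_{\kq^{\{\xi_v\}}_{\loc}}}$ is equidimensional of dimension $3[F:\Q]+3|P|$. It then suffices to check that adjoining $g$ formal variables to a complete Noetherian local ring preserves equidimensionality and increases the Krull dimension by $g$. For a Noetherian ring $B$ one has $\mathrm{Nil}(B[[x]])=\mathrm{Nil}(B)[[x]]$, so the minimal primes of $B[[x_1,\ldots,x_g]]$ are exactly the ideals $\kp\,B[[x_1,\ldots,x_g]]$ for $\kp$ a minimal prime of $B$, and $\dim B[[x_1,\ldots,x_g]]/\kp B[[x_1,\ldots,x_g]]=\dim (B/\kp)[[x_1,\ldots,x_g]]=\dim B/\kp+g$. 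Taking $B=\widehat{(R^{\{\xi_v\}}_{\loc})_{\kq^{\{\xi_v\}}_{\loc}}}$ gives that $\widehat{(R^{\{\xi_v\}}_\infty)_{\kq^{\{\xi_v\}}_{\infty}}}$ is equidimensional of dimension
\[3[F:\Q]+3|P|+g=3[F:\Q]+3|P|+(r+|P|-[F:\Q]-1)=4|P|-1+r+2[F:\Q],\]
as desired.

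The argument is essentially bookkeeping; the only point requiring a little attention is the stability of equidimensionality under the completed-tensor-product/power-series operation, which is handled by the nilradical computation above (alternatively one may appeal to excellence of all the rings involved, as in the proof of Lemma \ref{ccal}, so that completion does not disturb the dimensions of the irreducible components).
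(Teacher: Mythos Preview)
Your proof is correct and follows exactly the same approach as the paper, which simply cites Proposition \ref{rlocp} and Lemma \ref{cctp}. Your expansion of the details (verifying the hypotheses of Lemma \ref{cctp}, the dimension count, and the preservation of equidimensionality under adjoining formal variables) is a faithful unpacking of that two-line argument.
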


As a corollary $(R^{\{\xi_v\}})'$ is also equidimensional of dimension $4|P|-1+r+2[F:\Q]$ since it is finite free over $\widehat{(R^{\{\xi_v\}}_\infty)_{\kq^{\{\xi_v\}}_{\infty}}}$. Combining this with lemma \ref{dimsupp}, we deduce that

\begin{cor} \label{supirrc}
The support $\Supp_{\widehat{(R^{\{\xi_v\}}_\infty)_{\kq^{\{\xi_v\}}_{\infty}}}}(\mm^{\{\xi_v\}}_\infty)$ contains at least one irreducible component of $\widehat{(R^{\{\xi_v\}}_\infty)_{\kq^{\{\xi_v\}}_{\infty}}}$ and any minimal prime of $\Supp_{\widehat{(R^{\{\xi_v\}}_\infty)_{\kq^{\{\xi_v\}}_{\infty}}}}(\mm^{\{\xi_v\}}_\infty)$ has characteristic zero.
\end{cor}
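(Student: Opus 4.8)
The plan is to play off the dimension lower bound of Lemma~\ref{dimsupp} against the equidimensionality of $\widehat{(R^{\{\xi_v\}}_\infty)_{\kq^{\{\xi_v\}}_{\infty}}}$ and the $\cO$-flatness of the patched module. Write $R$ for $\widehat{(R^{\{\xi_v\}}_\infty)_{\kq^{\{\xi_v\}}_{\infty}}}$, write $R'=(R^{\{\xi_v\}})'$ for the finite $R$-algebra $R\otimes_{S_\infty'}S_\infty$, and set $d=4|P|-1+r+2[F:\Q]$. The module $\mm^{\{\xi_v\}}_\infty$ is finitely generated over $R$, and $R\to R'$ is finite; since then $R/\mathrm{Ann}_R(\mm^{\{\xi_v\}}_\infty)\hookrightarrow R'/\mathrm{Ann}_{R'}(\mm^{\{\xi_v\}}_\infty)$ is a finite, injective extension of rings and so preserves Krull dimension, we get $\dim_R(\mm^{\{\xi_v\}}_\infty)=\dim_{R'}(\mm^{\{\xi_v\}}_\infty)\ge d$ by Lemma~\ref{dimsupp}, while trivially $\dim_R(\mm^{\{\xi_v\}}_\infty)\le\dim R=d$. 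Hence $\Supp_R(\mm^{\{\xi_v\}}_\infty)$ is a closed subset of $\Spec R$ of dimension exactly $d$.

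First I would extract the irreducible component. Let $\kp$ be a minimal prime of $\Supp_R(\mm^{\{\xi_v\}}_\infty)$ with $\dim R/\kp=d$, and let $\kp_0\subseteq\kp$ be a minimal prime of $R$; by equidimensionality (the lemma preceding the corollary) $\dim R/\kp_0=d$. In the Noetherian local domain $R/\kp_0$ every nonzero prime $\mathfrak{r}$ satisfies $\dim (R/\kp_0)/\mathfrak{r}\le d-1$; applying this to the image of $\kp$ in $R/\kp_0$ and comparing with $\dim R/\kp=d$ forces $\kp=\kp_0$. Thus $V(\kp)$ is an irreducible component of $\Spec R$ contained in the support, which is the first assertion.

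Next I would pin down the characteristic. By the proposition recorded above, $\mm^{\{\xi_v\}}_\infty$ is flat over $S_\infty$, and $S_\infty$ is a formal power series ring over $\cO$, hence $\cO$-flat, so $\mm^{\{\xi_v\}}_\infty$ is $\cO$-flat; in particular $\varpi$ is a nonzerodivisor on it. Any minimal prime $\kp$ of $\Supp_R(\mm^{\{\xi_v\}}_\infty)$ lies in $\mathrm{Ass}_R(\mm^{\{\xi_v\}}_\infty)$, so $\kp=\mathrm{Ann}_R(m)$ for some $m\ne 0$; were $\varpi\in\kp$ we would get $\varpi m=0$, contradicting the nonzerodivisor property. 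Hence $\varpi\notin\kp$, so $R/\kp$ is a domain in which $\varpi$ is nonzero, i.e. $\cO$-torsion free, and therefore $\kp$ has characteristic zero.

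I do not expect a serious obstacle: the needed inputs — the dimension estimate, the equidimensionality of $R$, and the $\cO$-flatness of $\mm^{\{\xi_v\}}_\infty$ — are all already in hand (the last two from the lemmas just above and from Proposition~\ref{rlocp} through Lemma~\ref{cctp}). The only points warranting a sentence of care are the equality $\dim_R(\mm^{\{\xi_v\}}_\infty)=\dim_{R'}(\mm^{\{\xi_v\}}_\infty)$ and the passage from $\varpi\notin\kp$ to characteristic zero, both routine. One could alternatively obtain the characteristic-zero assertion directly from Proposition~\ref{rlocp}(2), but the $\cO$-flatness route is shorter.
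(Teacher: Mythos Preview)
Your argument is correct and follows the same route as the paper: use finiteness of $(R^{\{\xi_v\}})'$ over $R$ to transfer the dimension bound from Lemma~\ref{dimsupp}, match it against the equidimensionality of $R$, and conclude the characteristic-zero claim from $\cO$-flatness of $\mm^{\{\xi_v\}}_\infty$. The paper's proof is simply a terse version of yours; your expansion of the two points you flag (the dimension equality under a finite extension and the associated-primes argument for $\varpi\notin\kp$) is exactly what the paper is implicitly invoking.
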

\begin{proof}
The first claim is clear as $(R^{\{\xi_v\}})'$ is finite over $\widehat{(R^{\{\xi_v\}}_\infty)_{\kq^{\{\xi_v\}}_{\infty}}}$, hence 
\[\dim_{\widehat{(R^{\{\xi_v\}}_\infty)_{\kq^{\{\xi_v\}}_{\infty}}}} (\mm^{\{\xi_v\}}_\infty)=\dim _{(R^{\{\xi_v\}})'} (\mm^{\{\xi_v\}}_\infty)=4|P|-1+r+2[F:\Q]=\dim \widehat{(R^{\{\xi_v\}}_\infty)_{\kq^{\{\xi_v\}}_{\infty}}}.\]
The second claim comes from the fact that $\mm^{\{\xi_v\}}_\infty$ is flat over $\cO$.
\end{proof}

\begin{lem} \label{fulsupthmA}
Theorem \ref{thmA} holds if $\mm^{\{\xi_v\}}_\infty$ has full support on $\widehat{(R^{\{\xi_v\}}_\infty)_{\kq^{\{\xi_v\}}_{\infty}}}$.
\end{lem}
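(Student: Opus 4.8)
The plan is to run the standard commutative-algebra endgame of a patching argument on the data assembled in the diagram above, the only delicate point being that $S_\infty'$ maps to $R_\infty:=\widehat{(R^{\{\xi_v\}}_\infty)_{\kq^{\{\xi_v\}}_{\infty}}}$ by Lemma \ref{sinf'}, whereas the larger ring $S_\infty$ acts only on $\mm^{\{\xi_v\}}_\infty$ and not on any deformation ring. Write $R^{\ps}=\widehat{(R^{\ps,\{\xi_v\}})_{\kq^{\ps}}}$ and $T=\widehat{\T_\kq}$, so that the diagram supplies surjections $R_\infty\xrightarrow{\pi_R}R^{\ps}\twoheadrightarrow T$; recall that $\mm_0^{\{\xi_v\}}$ is a faithful $T$-module, that $\mm^{\{\xi_v\}}_\infty/\ka_1\mm^{\{\xi_v\}}_\infty\cong(\mm_0^{\{\xi_v\}})^{\oplus 2^r}$, and that $\ka_1'$ lies in $\ker\pi_R$.

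First I would record two elementary facts about supports over a Noetherian ring $R$: (a) if $M$ is an $R$-module with $\Supp_R M=\Spec R$ then $\mathrm{Ann}_R M$ is nilpotent and $\Supp_{R/I}(M/IM)=\Spec R/I$ for every ideal $I$; (b) if $\varphi_1,\dots,\varphi_r$ are pairwise commuting endomorphisms of an $R$-module $N$ commuting with the $R$-action and with $\varphi_i^2=0$, then $\Supp_R\bigl(N/\textstyle\sum_i\varphi_i N\bigr)=\Supp_R N$, since for a prime $\mathfrak p$ with $N_{\mathfrak p}=\sum_i(\varphi_i)_{\mathfrak p}N_{\mathfrak p}$ repeated substitution writes $N_{\mathfrak p}$ as a sum of images of products of more than $r$ distinct $\varphi_i$, forcing $N_{\mathfrak p}=0$.

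Then the argument runs as follows. By hypothesis $\Supp_{R_\infty}\mm^{\{\xi_v\}}_\infty=\Spec R_\infty$, so (a) gives $\Supp_{R_\infty/\ka_1'R_\infty}(\mm^{\{\xi_v\}}_\infty/\ka_1'\mm^{\{\xi_v\}}_\infty)=\Spec R_\infty/\ka_1'R_\infty$. Choosing the diamond generators so that $s_i=[\gamma_i]-1$ and $s_i'=[\gamma_i]+[\gamma_i]^{-1}-2$, one has $s_i^2=(1+s_i)s_i'$ in $S_\infty$, so on $\mm^{\{\xi_v\}}_\infty/\ka_1'\mm^{\{\xi_v\}}_\infty$ the operators $s_i$ commute with the $R_\infty$-action and square to zero; as $\ka_1\mm^{\{\xi_v\}}_\infty=\ka_1'\mm^{\{\xi_v\}}_\infty+\sum_i s_i\mm^{\{\xi_v\}}_\infty$, part (b) yields $\Supp_{R_\infty/\ka_1'R_\infty}(\mm^{\{\xi_v\}}_\infty/\ka_1\mm^{\{\xi_v\}}_\infty)=\Spec R_\infty/\ka_1'R_\infty$. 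But $\mm^{\{\xi_v\}}_\infty/\ka_1\mm^{\{\xi_v\}}_\infty\cong(\mm_0^{\{\xi_v\}})^{\oplus 2^r}$, and by the diagram its $R_\infty/\ka_1'R_\infty$-module structure factors through $R^{\ps}$ and then through $T$, on which $\mm_0^{\{\xi_v\}}$ is faithful. Hence the composite $R_\infty/\ka_1'R_\infty\to T\hookrightarrow\End_\cO\bigl((\mm_0^{\{\xi_v\}})^{\oplus 2^r}\bigr)$ has image $T$ and, by (a), nilpotent kernel; factoring it as $R_\infty/\ka_1'R_\infty\twoheadrightarrow R^{\ps}\twoheadrightarrow T$ shows that $\ker(R^{\ps}\to T)$ is nilpotent, i.e. $T\cong R^{\ps}/\mathfrak a$ with $\mathfrak a$ nilpotent.

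Finally I would descend to the uncompleted rings: with $K=\ker\bigl((R^{\ps,\{\xi_v\}})_{\kq^{\ps}}\to\T_\kq\bigr)$, Noetherianity gives $T=R^{\ps}/KR^{\ps}$, so $KR^{\ps}=\mathfrak a$ is nilpotent, say $K^nR^{\ps}=0$; faithful flatness of $(R^{\ps,\{\xi_v\}})_{\kq^{\ps}}\to R^{\ps}$ then forces $K^n=0$, which is Theorem \ref{thmA}. I do not expect a genuine obstacle in this lemma: its content is exactly the support bookkeeping above, and the hard input—the full support of $\mm^{\{\xi_v\}}_\infty$ over $R_\infty$, of which Corollary \ref{supirrc} already records the weaker statement that at least one component is hit—is what the remaining lemmas of this subsection establish, using the dimension count of Theorem \ref{dh} and the equidimensionality of $R_\infty$; note in particular that the argument here needs neither of those nor the $S_\infty$-flatness of $\mm^{\{\xi_v\}}_\infty$.
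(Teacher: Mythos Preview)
Your argument is correct and takes a genuinely different route from the paper. The paper introduces the auxiliary ring $(R^{\{\xi_v\}})'=R_\infty\otimes_{S_\infty'}S_\infty$ (already used in Lemma \ref{dimsupp}), which is finite free over $R_\infty$ and admits $\widehat{\T_\kq}$ as a quotient via $S_\infty\to S_\infty/\ka_1$; it then argues prime by prime: a prime $\kp_0$ of $\widehat{(R^{\ps,\{\xi_v\}})_{\kq^{\ps}}}$ pulls back to $\kp_1\in\Spec R_\infty$, lifts uniquely to $\kp_1'\in\Spec(R^{\{\xi_v\}})'$, and since $\ka_1\subseteq\kp_1'$ one lands in $\Supp_{(R^{\{\xi_v\}})'}(\mm_0^{\{\xi_v\}})$, whence $\kp_0\in\Supp_{R^{\ps}}(\mm_0^{\{\xi_v\}})$. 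You instead stay on $R_\infty$ and use the identity $s_i^2=(1+s_i)s_i'$ to make the $s_i$ act as commuting square-zero endomorphisms modulo $\ka_1'$, then invoke the elementary fact that quotienting by such endomorphisms preserves support. Both approaches rely on the commutation of $S_\infty$ with $R_\infty$ on $\mm_\infty^{\{\xi_v\}}$ (recorded in the paper's diagram as $R_\infty\to\End_{S_\infty}(\mm_\infty^{\{\xi_v\}})$). Your route is a clean algebraic shortcut that avoids the extra ring; the paper's is more structural and reuses machinery already in place. The descent step at the end is identical in both.
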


\begin{proof}
As we mentioned before, the natural map $\widehat{(R^{\{\xi_v\}}_\infty)_{\kq^{\{\xi_v\}}_{\infty}}}\to \widehat{(R^{\ps,\{\xi_v\}})_{\kq^{\ps}}}$ extends to a map $(R^{\{\xi_v\}})'\to \widehat{(R^{\ps,\{\xi_v\}})_{\kq^{\ps}}}$ by sending $S_\infty$ to $S_\infty/\ka_1=\cO$. Let $\kp_0$ be a prime of $\widehat{(R^{\ps,\{\xi_v\}})_{\kq^{\ps}}}$ and denote its pull-back to $\widehat{(R^{\{\xi_v\}}_\infty)_{\kq^{\{\xi_v\}}_{\infty}}}$ by $\kp_1$. It is easy to see that there is a unique prime $\kp'_1$ of $(R^{\{\xi_v\}})'$ above $\kp_1$, which is nothing but $\kp_0\cap (R^{\{\xi_v\}})'$. Since $\kp_1$ is in the support of $\mm^{\{\xi_v\}}_\infty$ over $\widehat{(R^{\{\xi_v\}}_\infty)_{\kq^{\{\xi_v\}}_{\infty}}}$, we see that $\kp'_1\in \Supp_{(R^{\{\xi_v\}})'} (\mm^{\{\xi_v\}}_\infty)$. Notice that $\ka_1\subseteq \kp'_1$. Thus
\[\kp'_1\in \Supp_{(R^{\{\xi_v\}})'} (\mm^{\{\xi_v\}}_\infty/\ka_1\mm^{\{\xi_v\}}_\infty)=\Supp_{(R^{\{\xi_v\}})'} (\mm^{\{\xi_v\}}_0).\]
But the action of $(R^{\{\xi_v\}})'$ on $\mm^{\{\xi_v\}}_0$ factors through $\widehat{(R^{\ps,\{\xi_v\}})_{\kq^{\ps}}}$. Hence 
\[\kp_0\in \Supp_{\widehat{(R^{\ps,\{\xi_v\}})_{\kq^{\ps}}}} (\mm^{\{\xi_v\}}_0).\]
Since this works for all primes $\kp_0\in\Spec \widehat{(R^{\ps,\{\xi_v\}})_{\kq^{\ps}}}$, we conclude that $\mm^{\{\xi_v\}}_0$ has full support on $\widehat{(R^{\ps,\{\xi_v\}})_{\kq^{\ps}}}$. Thus $\widehat{(R^{\ps,\{\xi_v\}})_{\kq^{\ps}}}\to \widehat{\T_{\kq}}$ has nilpotent kernel. Note that this map is the $\kq^{\ps}$-adic completion of $(R^{\ps,\{\xi_v\}})_{\kq^{\ps}}\to \T_{\kq}$. Hence $(R^{\ps,\{\xi_v\}})_{\kq^{\ps}}\to \T_{\kq}$ also has nilpotent kernel. 
\end{proof}

\begin{para}
Now we can prove Theorem \ref{thmA} when \underline{$\xi_v$ \textbf{are all non-trivial}}. In this case, it follows from proposition \ref{rlocp} and lemma \ref{cctp} that $\widehat{(R^{\{\xi_v\}}_\infty)_{\kq^{\{\xi_v\}}_{\infty}}}$ is irreducible. Hence $\mm^{\{\xi_v\}}_\infty$ has full support on $\widehat{(R^{\{\xi_v\}}_\infty)_{\kq^{\{\xi_v\}}_{\infty}}}$. The previous lemma implies Theorem \ref{thmA} directly.
\end{para}

\begin{para}
In general, we use Taylor's trick in \cite{Ta08}. Let $\xi'_v:k(v)\to \cO^\times$ be non-trivial characters of $p$-power order for $v\notin S\setminus \Sigma_p$. Then the product of $\xi'_v$ can be viewed as a character $\xi'$ of $U^p$ (defined in \ref{autlev}) and we can define completed cohomology $S_{\psi,\xi'}(U^p)$, $S_{\psi,\xi'}(U^p,E/\cO)$ and Hecke algebra $\T':=\T_{\psi,\xi'}(U^p)$ as in subsection \ref{varch}.
\end{para}

\begin{lem} \label{SWbc}
$T_v-(1+\chi(\Frob_v)),v\notin S$ and $\varpi$ generate a maximal ideal $\km'$ of $\T'$.
\end{lem}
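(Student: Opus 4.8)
The plan is to reduce everything modulo $\varpi$, where the auxiliary characters disappear. For every open compact subgroup $U_p\subseteq K_p$ I would first observe that, since each $\xi'_v$ has $p$-power order, its reduction modulo $\varpi$ is trivial, and the same is true of each $\xi_v$. Hence at every finite level the defining conditions for $S_{\psi,\xi'}(U^pU_p,\F)$ and for $S_{\psi,\xi}(U^pU_p,\F)$ are literally the same — both are the space of $\F$-valued functions on $D^\times\backslash\DAi$ with central character $\psi\bmod\varpi$, right invariant under $U^pU_p$, with $\mathrm{Iw}_v$-level at the places $v\in S\setminus\Sigma_p$ — so there is a tautological equality
\[S_{\psi,\xi'}(U^pU_p,\F)=S_{\psi,\xi}(U^pU_p,\F),\]
and the Hecke operators $T_v$ for $v\notin S$ — double cosets at places where the local level is maximal and where the tame characters play no role — act identically on it. In particular the $\cO$-subalgebras of $\mathrm{End}_{\F}\big(S_{\psi,\xi}(U^pU_p,\F)\big)$ generated by these operators coincide, i.e. $\T_{\psi,\xi'}(U^pU_p,\F)=\T_{\psi,\xi}(U^pU_p,\F)$ as quotients of $\cO[T_v:v\notin S]$.

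Next I would transport the maximal ideal $\km$ to $\T'$ using Proposition \ref{semiloc}, which applies verbatim in the $\xi$-twisted setting of \ref{varch}. Fix any pro-$p$ subgroup $U_p\subseteq K_p$, so that $\km$ corresponds to a maximal ideal $\overline{\km}$ of $\T_{\psi,\xi}(U^pU_p,\F)$; by the Assumption of \ref{autlev}, $\overline{\km}$ contains the images of $\varpi$ and of $T_v-(1+\chi(\Frob_v))$ for all $v\notin S$. By the previous paragraph $\overline{\km}$ is equally a maximal ideal of $\T_{\psi,\xi'}(U^pU_p,\F)$; applying Proposition \ref{semiloc} to $\T'=\T_{\psi,\xi'}(U^p)$, it is the image of a (necessarily unique) maximal ideal $\km'$ of $\T'$, and $\km'$ — being the preimage of $\overline{\km}$ under $\T'\to\T_{\psi,\xi'}(U^pU_p,\F)$ — contains $\varpi$ and every $T_v-(1+\chi(\Frob_v))$, $v\notin S$. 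Finally, since $\T'$ is topologically generated over $\cO$ by the operators $T_v$, $v\notin S$ (by construction, cf. \ref{varch}), the closed ideal $J$ generated by $\varpi$ and by the elements $T_v-(1+\chi(\Frob_v))$ has $\T'/J$ a quotient of $\cO/\varpi\cong\F$; as $J\subseteq\km'\subsetneq\T'$, this forces $\T'/J\cong\F$ and $J=\km'$. Thus $\varpi$ and $T_v-(1+\chi(\Frob_v))$, $v\notin S$, generate the maximal ideal $\km'$ of $\T'$, as claimed.

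There is no serious obstacle here: the only points requiring care are the identification of the mod-$\varpi$ completed cohomologies at finite level together with the Hecke-equivariance of this identification, and the hypotheses of Proposition \ref{semiloc} on the auxiliary pro-$p$ level. Once one notes that a character of $p$-power order becomes trivial modulo $\varpi$, the argument is entirely formal. This lemma is precisely the input needed to run Taylor's Ihara-avoidance trick \cite{Ta08}: it allows one to apply the already-established case of Theorem \ref{thmA} (with all $\xi'_v$ nontrivial) to $\T'$ and then compare $\T'$ with $\T$.
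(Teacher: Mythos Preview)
Your proof is correct and follows essentially the same approach as the paper: both arguments rest on the observation that since $\xi_v$ and $\xi'_v$ have $p$-power order, they coincide modulo $\varpi$, so $S_{\psi,\xi'}(U^p,\F)=S_{\psi,\xi}(U^p,\F)$ Hecke-equivariantly. The paper's version is more compressed --- it simply notes that the statement is equivalent to $S_{\psi,\xi'}(U^p,\F)[\km']\neq 0$, which equals $S_{\psi,\xi}(U^p,\F)[\km]\neq 0$ by the identification --- whereas you route the same idea through Proposition~\ref{semiloc} and an explicit check that the generators exhaust $\km'$; but the substance is identical.
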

\begin{proof}
This is equivalent with saying that $S_{\psi,\xi'}(U^p,\F)[\km']$ is non-zero. Since $\xi_v\equiv \xi'_v\mod\varpi$, we have $S_{\psi,\xi'}(U^p,\F)[\km']=S_{\psi,\xi}(U^p,\F)[\km]$, which is non-zero by our assumption.
\end{proof}

\begin{para}
Therefore we get a non-zero surjective map $R^{\ps,\{\xi'_v\}}\to\T'_{\km'}$. Note that $R^{\ps,\{\xi'_v\}}/(\varpi)\cong R^{\ps,\{\xi_v\}}/(\varpi)$ as both rings represent the same universal problem. Under this isomorphism, $\kq^{\ps}$ can be viewed as a prime ideal $\kq'{}^{\ps}$ of $R^{\ps,\{\xi'_v\}}$ as $\varpi\in\kq^{\ps}$. The same argument of the previous lemma shows that $\kq'{}^{\ps}$ comes from a prime ideal $\kq'$ of $\T'$. 

It is clear from the proof of proposition \ref{exttwp} that the same set of Taylor-Wiles primes $Q_N$ will also satisfy proposition \ref{exttwp} with $\xi_v$ replaced by $\xi'_v$, and we can choose the map $R^{\{\xi'_v\}}_{\loc}[[x_1,\cdots,x_g]]\to R^{\square_P,\{\xi'_v\}}_{\bar{\rho}_b,Q_N}$ in proposition \ref{exttwp} to be the same as $R^{\{\xi_v\}}_{\loc}[[x_1,\cdots,x_g]]\to R^{\square_P,\{\xi_v\}}_{\bar{\rho}_b,Q_N}$ after reducing mod $\varpi$, under the isomorphism $R^{\{\xi'_v\}}_{\loc}/(\varpi)\cong R^{\{\xi_v\}}_{\loc}/(\varpi)$ and the one similar for $R^{\square_P,\{\xi'_v\}}_{\bar{\rho}_b,Q_N}/(\varpi)$. Thus we can use the same primes to patch our completed homology and get a diagram as in the beginning of this subsection:
\[\begin{tikzcd}
& \widehat{(R^{\{\xi'_v\}}_\infty)_{\kq^{\{\xi'_v\}}_{\infty}}} \arrow[ld, two heads,"\pi'_R"]  \arrow [d, two heads] \arrow[r] & \End_{S_\infty} (\mm^{\{\xi'_v\}}_\infty) \arrow[d] \\
\widehat{(R^{\ps,\{\xi'_v\}})_{\kq'{}^{\ps}}} \arrow[r,two heads]  &\widehat{\T'_{\kq'}} \arrow[r,hook] & \End_{\cO} (\mm_0^{\{\xi'_v\}}).
\end{tikzcd}\]
Moreover, we have the following commutative diagram
\[\begin{tikzcd}
\widehat{(R^{\{\xi'_v\}}_\infty)_{\kq^{\{\xi'_v\}}_{\infty}}}/(\varpi)   \arrow [d,"\cong"] \arrow[r] & \End_{S_\infty} (\mm^{\{\xi'_v\}}_\infty/\varpi \mm^{\{\xi'_v\}}_\infty) \arrow[d,"\cong"] \\
\widehat{(R^{\{\xi_v\}}_\infty)_{\kq^{\{\xi_v\}}_{\infty}}}/(\varpi)  \arrow[r] & \End_{S_\infty} (\mm^{\{\xi_v\}}_\infty/\varpi \mm^{\{\xi_v\}}_\infty) \\
\end{tikzcd}\]
under the natural isomorphisms $\widehat{(R^{\{\xi'_v\}}_\infty)_{\kq^{\{\xi'_v\}}_{\infty}}}/(\varpi)\cong \widehat{(R^{\{\xi_v\}}_\infty)_{\kq^{\{\xi_v\}}_{\infty}}}/(\varpi)$ and $\mm^{\{\xi'_v\}}_\infty/\varpi \mm^{\{\xi'_v\}}_\infty\cong \mm^{\{\xi_v\}}_\infty/\varpi \mm^{\{\xi_v\}}_\infty$.
Since $\xi'_v$ are all non-trivial, $\mm^{\{\xi'_v\}}_\infty$ has full support on $\widehat{(R^{\{\xi'_v\}}_\infty)_{\kq^{\{\xi'_v\}}_{\infty}}}$ by our previous result. Hence $\mm^{\{\xi_v\}}_\infty/\varpi \mm^{\{\xi_v\}}_\infty$ also has full support on $\widehat{(R^{\{\xi_v\}}_\infty)_{\kq^{\{\xi_v\}}_{\infty}}}/(\varpi)$ by the above diagram. By  corollary \ref{supirrc}, any minimal prime of the support of $\mm^{\{\xi_v\}}_\infty$ on$\widehat{(R^{\{\xi_v\}}_\infty)_{\kq^{\{\xi_v\}}_{\infty}}}$ has characteristic zero. It follows from proposition \ref{rlocp} that all minimal primes of $\widehat{(R^{\{\xi_v\}}_\infty)_{\kq^{\{\xi_v\}}_{\infty}}}$ are in the support. Theorem \ref{thmA} now follows from lemma \ref{fulsupthmA}.
\end{para}

\section{A generalization of results of Skinner-Wiles I} \label{AgoroS-W1}
In this section, we prove the modularity of some ordinary representations and some finiteness results, which partially generalize the work of Skinner-Wiles in \cite{SW99}. We follow the beautiful method of \cite{SW99} by establishing some ``$R=\T$" results for ordinary representations. One main difference in the proof is that we adopt Taylor's Ihara avoidance trick \cite{Ta08} rather than the original level raising argument in \cite{SW99}. Our main result (Theorem \ref{thmB} below) completely removes the assumption in \cite{SW99} that the reduction of $\psi_{v,1}$ modulo $\varpi$ is $\mathbf{1}$ for \textit{all} $v|p$ (see the statement of Theorem \ref{thmB} for the notation here). This requires some new results \ref{EoEmi} on the existence of certain Eisenstein maximal ideal of the ordinary Hecke algebra. 

The main result of this section will be a key ingredient of our proof of the modularity in the \textit{non-ordinary} case (in section \ref{Tmt}). In fact, we will use these ordinary points to find enough pro-modular points to apply the result in the previous section. See section \ref{Tmt} for more details here. We note that the method in this subsection does not work (at least so far) in the non-ordinary case. This is because $p$-adic local Langlands correspondence is only established for $\GL_2(\Q_p)$. However one main step \ref{Washington} in the proof is to bound the $p$-part of the class group by taking suitable field extension, in which $p$ might not be split. On the other hand, Hida theory works well for all finite extensions of $\Q_p$ hence is more flexible with base change. This is why the ordinary case can be handled directly. 

The case we exclude here ($\bar{\chi}|_{G_{F_v}}= \mathbf{1},v|p$) will be treated in section \ref{AgoroS-W2}.

\subsection{Statement of the main results} \label{sasotmr}
\begin{para} \label{sotmr1}
In this subsection, $F$ denotes an \textit{abelian} totally real extension of $\Q$ in which $p$ is \textit{unramified}. Let $S$ be a finite set of finite places containing all places above $p$. Let $\chi:G_{F,S}\to \cO^\times$ be a continuous character such that
\begin{itemize}
\item $\chi(c)=-1$ for any complex conjugation $c\in G_{F,S}$.
\item $\bar{\chi}$, the reduction of $\chi$ modulo $\varpi$, can be extended to a character of $G_\Q$.
\item $\bar{\chi}|_{G_{F_v}}\neq \mathbf{1}$ for any $v|p$.
\item $\chi|_{G_{F_v}}$ is de Rham for any $v|p$. In other words, $\chi=\varepsilon^k\psi_0$ with $k$ an integer and $\psi_0$ a character of finite order. 
\end{itemize}

Consider the universal deformation ring $R^{\ps,\ord}$ which pro-represents the functor from $\cOf$ to the category of sets sending $R$ to the set of two-dimensional pseudo-representations $T$ of $G_{F,S}$ over $R$  such that $T$ is a lifting of $1+\bar\chi$ with determinant $\chi$ and $T|_{G_{F_v}}$ is \textit{reducible} for any $v|p$, i.e. $x(\sigma,\tau)=0$ for any $\sigma,\tau\in G_{F_v},v|p$, cf. \ref{tar}. Denote the universal pseudo-representation by $T^{univ}:G_{F,S}\to R^{\ps,\ord}$. 

Since $\bar{\chi}|_{G_{F_v}}\neq\mathbf{1}$, we have $T^{univ}|_{G_{F_v}}=\psi^{univ}_{v,1}+\psi^{univ}_{v,2}$ for some characters $\psi^{univ}_{v,1},\psi^{univ}_{v,2}:G_{F_v}\to (R^{\ps,\ord})^\times$ which are liftings of $\mathbf{1},\bar{\chi}|_{G_{F_v}}$ respectively. (Using the notation in \ref{tar}, $\psi^{univ}_{v,1}=a|_{G_{F_v}}$ and $\psi^{univ}_{v,2}=d|_{G_{F_v}}$.) By the class field theory, $\psi^{univ}_{v,1}|_{I_{F_v}}$ induces a homomorphism $\cO[[O_{F_v}^\times(p)]]\to R^{\ps,\ord}$ for any $v|p$. Here $O_{F_v}^\times(p)$ denotes the $p$-adic completion of $O_{F_v}^\times$. Taking the completed tensor product over $\cO$ for all $v|p$, we get a map:
\[\Lambda_F:=\widehat{\bigotimes}_{v|p}\cO[[O_{F_v}^\times(p)]]\to R^{\ps,\ord}.\]
\end{para}

Now we can state the main results of this section:
\begin{thm} \label{thmB}
Under the assumptions for $F,\chi$ as above, we have
\begin{enumerate}
\item $R^{\ps,\ord}$ is a finite $\Lambda_F$-algebra.
\item For any maximal ideal $\kp$ of $R^{\ps,\ord}[\frac{1}{p}]$, we denote the associated semi-simple representation $G_{F,S}\to\GL_2(k(\kp))$ by $\rho(\kp)$ (see \ref{tar}). Assume 
\begin{itemize}
\item $\rho(\kp)$ is irreducible.
\item For any $v|p$, $\rho(\kp)|_{G_{F_v}}\cong\begin{pmatrix}\psi_{v,1} & *\\ 0 & \psi_{v,2}\end{pmatrix}$ such that $\psi_{v,1}$ is de Rham and has strictly less Hodge-Tate number than $\psi_{v,2}$ for any embedding $F_v\hookrightarrow \overbar{\Q_p}$.
\end{itemize}
Then $\rho(\kp)$ comes from a twist of a Hilbert modular form.
\end{enumerate}
\end{thm}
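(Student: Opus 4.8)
The plan is to follow the strategy of Skinner--Wiles \cite{SW99}, replacing their level-raising step by Taylor's Ihara avoidance \cite{Ta08}, and to deduce part (2) from an ``$R=\T$'' statement for an ordinary (Hida-type) Hecke algebra attached to the Eisenstein maximal ideal determined by $1+\bar{\chi}$. Throughout, $\Lambda_F\cong\cO[[x_1,\dots,x_{[F:\Q]}]]$ is regular local, and one knows that $R^{\ps,\ord}$ is Noetherian (it is a quotient of the unrestricted pseudo-deformation ring of $G_{F,S}$).

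For part (1), I would reduce by topological Nakayama over $\Lambda_F$ to showing that the special fibre $R^{\ps,\ord}\otimes_{\Lambda_F}\F$ is Artinian; since $R^{\ps,\ord}$ is topologically generated by the values of $T^{univ}$, it suffices to bound its reduced cotangent space over $\F$. A pseudo-deformation $T$ of $1+\bar{\chi}$ to an Artinian $\F$-algebra, reducible at each $v\mid p$ with diagonal characters trivial on $O_{F_v}^\times(p)$, is built from an extension class, and the hypothesis $\bar{\chi}|_{G_{F_v}}\neq\mathbf{1}$ guarantees the two local diagonal characters do not collide, so the local conditions at $p$ behave as expected. A standard Galois-cohomology/class-field-theory computation then identifies the relevant tangent and obstruction groups with subquotients of $p$-parts of ray class groups of abelian number fields (using that $F$ is abelian over $\Q$ and that $\bar{\chi}$ extends to $G_{\Q}$); these are finite by Washington's theorem, which is exactly the input exploited later at \ref{Washington}. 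Hence the special fibre is Artinian and $R^{\ps,\ord}$ is module-finite over $\Lambda_F$.

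For part (2) I would first set up the automorphic side: using Hida's theory of ordinary $p$-adic Hilbert modular forms over $F$ (realized, if one wishes, on a suitable quaternion algebra), form the big ordinary Hecke algebra $\T^{\ord}$ of the appropriate tame level and fixed central character $\chi$, and localize at the maximal ideal $\km$ cut out by $T_v\equiv 1+\bar\chi(\Frob_v)$ and $\varpi$. The crucial non-formal point is that $\km$ is \emph{Eisenstein} (as $1+\bar{\chi}$ is reducible), so its occurrence in the support of the ordinary cohomology is not automatic; this is the content of \ref{EoEmi}, established by exhibiting ordinary Eisenstein series and analysing congruences between them and cusp forms. Granting this, Hida's control and freeness theorems make $\T^{\ord}_\km$ module-finite and torsion-free over $\Lambda_F$ of the same dimension, and the Galois pseudo-representation carried by the Hida family yields a surjection $R^{\ps,\ord}\twoheadrightarrow\T^{\ord}_\km$ (surjective since traces generate $\T^{\ord}_\km$), compatible with the $\Lambda_F$-structures and factoring through the reducible-at-$p$ condition by classical local-global compatibility in Hida families. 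The heart of the argument is to show this surjection has nilpotent kernel, i.e.\ that the space of ordinary forms is faithful over $R^{\ps,\ord}$, equivalently that every irreducible component of $\Spec R^{\ps,\ord}$ carries a modular point. Here I would follow Skinner--Wiles: part (1) and the class-group estimates give a lower bound on $\dim R^{\ps,\ord}/\kp_0$ for each minimal prime $\kp_0$, matching the dimension of $\T^{\ord}_\km$ over $\Lambda_F$, and to propagate modularity from the known (Eisenstein-related and CM) points to all components I would introduce auxiliary ramified deformation conditions $\xi_v$ at places with $p\mid N(v)-1$, patch the ordinary forms, and invoke Taylor's Ihara avoidance \cite{Ta08} (the $\xi_v\neq\mathbf 1$ local lifting rings are irreducible while their mod-$\varpi$ reductions agree with the $\xi_v=\mathbf 1$ ones) to force full support in characteristic zero, hence in general. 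This gives $\Spec R^{\ps,\ord}=\Spec\T^{\ord}_\km$ up to nilpotents.

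Finally, a maximal ideal $\kp$ of $R^{\ps,\ord}[\tfrac1p]$ as in (2) then corresponds to a point of $\T^{\ord}_\km[\tfrac1p]$, i.e.\ to a $\overbar{\Q_p}$-point of the Hida family; the condition that each $\psi_{v,1}$ is de Rham with strictly smaller Hodge--Tate weight than $\psi_{v,2}$ at every embedding is precisely what forces this point to have classical cohomological weight, so by Hida's classicality theorem it arises from a classical ordinary Hilbert modular eigenform over $F$, and after untwisting by the finite-order character absorbed into $\Lambda_F$ we conclude that $\rho(\kp)$ comes from a twist of a Hilbert modular form. The main obstacle I expect is not part (1) nor the passage from $R=\T$ to classicality (both comparatively formal given the inputs quoted), but the Skinner--Wiles mechanism itself: constructing and controlling the Eisenstein maximal ideal $\km$ (\ref{EoEmi}), pinning down $R^{\ps,\ord}$ precisely enough over $\Lambda_F$, and producing modular points on every irreducible component --- now via Ihara avoidance rather than level raising --- is where essentially all of the work lies.
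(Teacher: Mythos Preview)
Your overall architecture is right, but there is a genuine gap in your treatment of part (1), and your sketch of the Skinner--Wiles mechanism omits the two ideas that actually make it work.

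\textbf{Part (1) is not formal.} You propose to show $R^{\ps,\ord}\otimes_{\Lambda_F}\F$ is Artinian by a direct tangent-space computation, claiming that a pseudo-deformation of $1+\bar\chi$ over an Artinian $\F$-algebra ``is built from an extension class'' and then invoking Washington. This does not work: the pseudo-deformation ring at the reducible maximal ideal is not governed by a single Selmer group, and its irreducible locus surjects (after localizing) onto the deformation rings $R^{\ord}_b$ as $b$ ranges over \emph{all} nonzero classes in $H^1_{\Sigma^o}(F)$. Each $R^{\ord}_b$ has dimension $\ge [F:\Q]+1$ (lemma~\ref{grrcord}), equal to $\dim\Lambda_F$, so there is no a priori reason the $\Lambda_F$-fibre is Artinian --- that is essentially what you are trying to prove. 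The paper deduces finiteness \emph{from} pro-modularity (corollary~\ref{corB}(2)): once every irreducible-pseudo-character point is known to come from $\T^{\ord}_\km$, the map $R^{\ps,\ord,\{\xi_v\}}_{\Sigma^o}\to \T^{\ord}_\km\times R^{\ps,\mathrm{red}}$ has nilpotent kernel and both targets are $\Lambda_F$-finite. Washington's theorem is not used to bound a tangent space; it is used (\S\ref{Washington}) to arrange, after a carefully chosen soluble base change $F\to F_3$ inside a cyclotomic $\Z_{l_0}$-tower, that the Selmer group $H^1_{\Sigma^o}(F_3)$ is small enough for the numerical hypothesis of proposition~\ref{propB} to hold.

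\textbf{The two missing Skinner--Wiles ideas.} First, the patching is not at the Eisenstein maximal ideal (where Taylor--Wiles fails) but at a one-dimensional \emph{nice} prime $\kq$ with $\rho(\kq)$ irreducible (proposition~\ref{propA}); this is what allows the argument to run at all in the residually reducible setting, and it requires the auxiliary set $\Sigma^o$, a connectedness-dimension estimate for $R^{\ord}_b$ (corollary~\ref{crord}), and a bound on the reducible locus (proposition~\ref{redloc}) to guarantee nice primes exist on every component (proposition~\ref{rcmodc}). Second, that argument only shows $R^{\ord}_b$ is pro-modular for \emph{one} extension class $b$; to get every $b$ one needs the extension-class-hopping trick of \S\ref{Rordbmodab} (proposition~\ref{propB}), which manufactures a nice prime on $R^{\ord}_{b_2}$ from one on $R^{\ord}_{b_1}$ by walking through a carefully chosen two-parameter reducible family and invoking the height-one estimate of proposition~\ref{ht1def}. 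Your sketch (``patch and use Ihara avoidance'') captures the local-deformation-ring input but not these global moves, and the choice of $\Sigma^o$ depending on the minimal prime of $R^{\ps,\ord}$ (with the twist ensuring $|\Sigma^o|\le\frac12|\Sigma_p|$, cf.\ \eqref{onehalf}) is also essential for the Selmer bound.
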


\begin{rem}
The condition that $p$ is unramified in $F$ can be weakened. However we decide to impose this condition here as this can simplify some arguments.
\end{rem}

\subsection{Hida families}
As we remarked before, roughly speaking, the main results are proved by identifying $R^{\ps,\ord}$ with some ordinary Hecke algebra. We first collect some basic results for Hida families.
\begin{para} \label{hidatheory}
In this subsection, let $F$ be a totally real field of even degree over $\Q$ in which $p$ is unramified and $D$ be a totally definite quaternion algebra over $F$ which splits at all finite places. We fix isomorphisms $D\otimes_F F_v\cong M_2(F_v)$ for any finite place $v$. 

Let $S$ be a finite set of places of $F$ that contains all places above $p$ and $U^p=\prod_{v\nmid p}U_v$ be an open compact subgroup of $\prod_{v\nmid p}\GL_2(O_{F_v})$ such that $U_v=\GL_2(O_{F_v})$ for $v\notin S$. For any positive integer $c$ and $v|p$, we denote
\[ \mathrm{Iw}_1(v^c)=\{g\in\GL_2(O_{F_v}),g\equiv \begin{pmatrix} 1& *\\ 0 & 1\end{pmatrix} \mod \varpi_v^c\},\]
and $U^p(c)=U^p\prod_{v|p}\mathrm{Iw}_1(v^c)$, an open compact subgroup of $\GL_2(\A_F^\infty)$.

Fix a continuous character $\psi:\AFi/F_{>>0}^\times(\AFi\cap U^p)\to\cO^\times$ such that $\psi(a_p)=N_{F/\Q}(a_p)^{-w}$ for some integer $w$ and all $a_p$ in some open subgroup of $O_{F,p}^\times$. Here $N_{F/\Q}:O_{F,p}^\times\to \Z_p^\times\to E^\times$ is the usual norm map. Recall that in \ref{pcaf}, for $(\vec{k},\vec{w})\in\Z_{>1}^{\Hom(F,\overbar{\Q_p})}\times \Z^{\Hom(F,\overbar{\Q_p})}$ such that $k_\sigma+2w_\sigma=w+2$ independent of $\sigma\in \Hom(F,\overbar{\Q_p})$, we defined an algebraic representation $\tau_{(\vec{k},\vec{w})}$ of $D_p^\times=(D\otimes \Q_p)^\times$ on
\[W_{(\vec{k},\vec{w}),E}=\bigotimes_{\sigma:F\to E}(\Sym^{k_\sigma-2}(E^2)\otimes \det{}^{w_\sigma}).\]
By abuse of notation, for any topological $\cO$-algebra $A$, we use $\tau_{(\vec{k},\vec{w})}$ to denote the representation on $W_{(\vec{k},\vec{w}),A}=\bigotimes_{\sigma:F\to E}(\Sym^{k_\sigma-2}(A^2)\otimes \det{}^{w_\sigma})$. Then we can define $S_{(\vec{k},\vec{w}),\psi}(U^p(c),A)$ as  in \ref{quaform}. Note that as we discussed in \ref{pcaf}, $S_{(\vec{k},\vec{w}),\psi}(U^p(c),E)$ can be considered as a space of automorphic forms.

For any $\gamma\in O_{F,p}\cap (F\otimes\Q_p)^\times$, we define $\langle\gamma\rangle\in\End_A(S_{(\vec{k},\vec{w}),\psi}(U^p(c),A))$ to be the double coset action
\[\langle\gamma\rangle=(\prod_{\sigma\in \Hom(F,\overbar{\Q_p})}\sigma(\gamma)^{-w_{\sigma}})U^p(c)\begin{pmatrix}\gamma & 0 \\ 0 & 1 \end{pmatrix} U^p(c).\]
Explicitly, since $U^p(c)\begin{pmatrix}\gamma & 0 \\ 0 & 1 \end{pmatrix} U^p(c)=\bigsqcup_{\alpha\in O_{F,p}/(\gamma)}\begin{pmatrix}\gamma & \alpha \\ 0 & 1 \end{pmatrix} U^p(c)$,
\[(\langle\gamma\rangle\cdot f)(g)=\prod_{\sigma\in \Hom(F,\overbar{\Q_p})}\sigma(\gamma)^{-w_{\sigma}}\sum_{\alpha\in O_{F,p}/(\gamma)}\tau_{(\vec{k},\vec{w})}(\begin{pmatrix}\gamma & \alpha \\ 0 & 1 \end{pmatrix} )\cdot f(g\begin{pmatrix}\gamma & \alpha \\ 0 & 1 \end{pmatrix} ).\]
This action is independent of $c$ and defines a morphism of monoids $O_{F,p}\cap (F\otimes\Q_p)^\times\to \End_A(S_{(\vec{k},\vec{w}),\psi}(U^p(c),A))$. Hida's idempotent $\mathbf{e}$ is defined to be
\[\mathbf{e}:=\varinjlim_{n\to+\infty}\langle p \rangle^{n!}\in \End_A(S_{(\vec{k},\vec{w}),\psi}(U^p(c),A)).\]

For any topological $\cO$-algebra $A$ and $k=(\vec{k},\vec{w})$ as above, the space of ordinary forms is defined to be 
\[S_{k,\psi}^{\ord}(U^p(c),A):=\mathbf{e}S_{k,\psi}(U^p(c),A).\]
Note that $\langle p \rangle$ acts by a unit on this space. Hence the morphism of monoids $O_{F,p}\cap (F\otimes\Q_p)^\times\to \End_A(S_{k,\psi}(U^p(c),A))$ extends to a homomorphism
\[\langle\cdot \rangle:(F\otimes\Q_p)^\times\to \End_A(S_{k,\psi}^{\ord}(U^p(c),A)).\]

We define the ordinary Hecke algebra $\T^{\ord}_{k,\psi}(U^p(c),A)$ to be the $A$-subalgebra generated by $T_v$, $v\notin S$ (see \ref{haapr}) and $\langle \gamma \rangle$, $\gamma\in O_{F,p}\cap (F\otimes\Q_p)^\times$. Hence $\langle\cdot \rangle$ induces a smooth character $F_v^\times\to \T^{\ord}_{k,\psi}(U^p(c),A)^\times$. by the class field theory, this defines a character:
\[\psi_{v,1}:G_{F_v}\to\T^{\ord}_{k,\psi}(U^p(c),A)^\times.\]

If $A=\cO$, there is a two-dimensional pseudo-representation with determinant $\psi\varepsilon^{-1}$
\[T_c:G_{F,S}\to \T^{\ord}_{k,\psi}(U^p(c),\cO)\]
sending $\Frob_v$ to $T_v$. 
\end{para}

\begin{para}
Now we can introduce Hida families. We define
\begin{itemize}
\item $S^{\ord}_{\psi}(U^p,E/\cO):=\varinjlim_{c,n>0} S_{k,\psi}^{\ord}(U^p(c),\varpi^{-n}\cO/\cO)$.
\item $M_{\psi}^{\ord}(U^p):=S^{\ord}_{\psi}(U^p,E/\cO)^\vee$.
\item $\T_\psi^{\ord}(U^p):=\varprojlim_c  \T^{\ord}_{k,\psi}(U^p(c),\cO)$ (Hecke algebra).
\end{itemize}
It is well-known (cf. Theorem 2.3 of \cite{Hida89}) that these are all independent of the choice of $k=(\vec{k},\vec{w})$. By abuse of notation, we also denote by $\psi_{v,1}:G_{F_v}\to \T^{\ord}_\psi(U^p)$ the character induced by $\langle\cdot\rangle$. Moreover there is a two-dimensional pseudo-representation with determinant $\psi\varepsilon^{-1}$
\[T^{\ord}:G_{F,S}\to \T^{\ord}_{\psi}(U^p)\]
sending $\Frob_v$ to $T_v$, such that for any $\kp\in\Spec T^{\ord}_{\psi}(U^p)$, its associated two-dimensional semi-simple representation $\rho(\kp)$ has the property 
\[\rho(\kp)|_{G_{F_v}}\cong \begin{pmatrix} \psi_{v,1} & *\\ 0 & *\end{pmatrix}, v|p.\]
See proposition 2.3 of \cite{Hida88}, though our convention for Hecke algebra is slightly different.

Let $\Lambda_F=\widehat{\bigotimes}_{v|p}\cO[[O_{F_v}^\times(p)]]$ defined in the previous subsection. Then there is a natural map
\[\Lambda_F\to \T^\ord_\psi(U^p(c))\]
induced by $\psi_{v,1}$. Hence $M_{\psi}^{\ord}(U^p)$ has a $\Lambda_F$-module structure. It is well-known (cf. Theorem 3.8 of \cite{Hida89}, though the determinant is not fixed there) that $M_{\psi}^{\ord}(U^p)$ is in fact finite free over $\Lambda_F$. Hence $\T^{\ord}_\psi(U^p)$ is a finite torsion-free $\Lambda_F$-algebra with same dimensions ($[F:\Q]+1$).
\end{para}

\begin{para} \label{finlevhecke}
The ordinary forms at finite level $\T^{\ord}_{k,\psi}(U^p(c),\cO)$ can be recovered in the following way (cf.  \cite[Theorem 2.4]{Hida89} and the proof of \cite[Proposition 2.20]{Ger10}): for $k=(\vec{k},\vec{w})\in\Z_{>1}^{\Hom(F,\overbar{\Q_p})}\times \Z^{\Hom(F,\overbar{\Q_p})}$ such that $k_\sigma+2w_\sigma=w+2$ independent of $\sigma\in \Hom(F,\overbar{\Q_p})$, let $\ka_{k,c}$ be the ideal of $\Lambda_F$ generated by $\langle \gamma \rangle-\prod_{\sigma}\sigma(\gamma)^{-w_\sigma}$, $\gamma\in 1+p^cO_{F,p}$. Suppose $U^p(c)$ is sufficiently small. Then
\[M_{\psi}^{\ord}(U^p)/\ka_{k,c}M_{\psi}^{\ord}(U^p)\cong S^{\ord}_{k,\psi}(U^p(c),E/\cO)^\vee.\]
Roughly speaking, finite level ordinary forms correspond to the locus where $\psi_{v,1}$ are locally algebraic characters with certain weights.
\end{para}

\begin{para}
By the same argument as in the proof of proposition \ref{semiloc}, we know that $\T^{\ord}_\psi(U^p)$ is a complete semi-local ring. Let $\km_1,\cdots,\km_s$ be its maximal ideals. Then $\T^{\ord}_\psi(U^p)=\T^{\ord}_\psi(U^p)_{\km_1}\times\cdots\times \T^{\ord}_\psi(U^p)_{\km_s}$ and for any maximal ideal $\km$, $M^{\ord}_{\psi}(U^p)_{\km}$ is a direct summand of $M_{\psi}^{\ord}(U^p)$ and hence finite free over $\Lambda_F$. Since $\Lambda_F$ is regular, we have
\[\depth_{\T^{\ord}_{\psi}(U^p)_\km} (M^{\ord}_{\psi}(U^p)_{\km})\geq \depth_{\Lambda_F} (M^{\ord}_{\psi}(U^p)_{\km})=\dim{\Lambda_F} = \dim \T^{\ord}_{\psi}(U^p)_\km.\]
Therefore $M^{\ord}_{\psi}(U^p)_{\km}$ is a Cohen-Macaulay $\T^{\ord}_{\psi}(U^p)_\km$-module. Note that $M^{\ord}_{\psi}(U^p)_{\km}$ is also a faithful $\T^{\ord}_{\psi}(U^p)_\km$-module. By Theorem 17.3 of \cite{Mat1}, for any minimal prime $\kp$ of $\T^{\ord}_{\psi}(U^p)_\km$
\[\dim \T^{\ord}_{\psi}(U^p)_\km/\kp=\dim \T^{\ord}_{\psi}(U^p)_\km=\dim \Lambda_F=[F:\Q]+1.\]
$\kp$ must be of characteristic zero as $\varpi\in\T^{\ord}_{\psi}(U^p)_\km$ is a regular element. Thus we get (compare this with Theorem \ref{dh})
\end{para}
\begin{cor} \label{orddim}
$\T^{\ord}_{\psi}(U^p)_\km$ is equidimensional of dimension $[F:\Q]+1$. Moreover, any minimal prime ideal has characteristic zero.
\end{cor}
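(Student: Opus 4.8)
The plan is to deduce this from the maximal Cohen--Macaulayness of $M^{\ord}_\psi(U^p)_\km$ over the Hecke algebra, exactly as in the classical Hida setup. First I would record that $\T^{\ord}_\psi(U^p)$ is complete and semi-local: this follows by the same $p$-group averaging argument used in Proposition \ref{semiloc}, since each $\mathrm{Iw}_1(v^c)$ is pro-$p$ modulo its centre and the Hecke action commutes with the relevant translation action, so every maximal ideal of $\T^{\ord}_\psi(U^p)$ supports a nonzero vector in $S^{\ord}_\psi(U^p,\F)$. Writing $\T^{\ord}_\psi(U^p)=\prod_i \T^{\ord}_\psi(U^p)_{\km_i}$, the completed homology decomposes accordingly and $M^{\ord}_\psi(U^p)_\km$ is a direct summand of $M^{\ord}_\psi(U^p)$, hence --- by Hida's freeness theorem (Theorem 3.8 of \cite{Hida89}) --- a finite free $\Lambda_F$-module, and it is nonzero because $\km$ is an honest maximal ideal acting faithfully on completed homology.

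Next I would invoke the standard depth inequality: for a finite module $M$ over a Noetherian local ring $R$ that is module-finite over a regular local subring $\Lambda$, one has $\depth_R M\ge \depth_\Lambda M$. Applying this with $R=\T^{\ord}_\psi(U^p)_\km$, $\Lambda=\Lambda_F$ and $M=M^{\ord}_\psi(U^p)_\km$, together with $\depth_{\Lambda_F} M=\dim \Lambda_F=[F:\Q]+1$ (since $M$ is $\Lambda_F$-free of positive rank) and with $\dim \T^{\ord}_\psi(U^p)_\km\le \dim\Lambda_F$ (as $\T^{\ord}_\psi(U^p)_\km$ is module-finite over $\Lambda_F$), shows that $M^{\ord}_\psi(U^p)_\km$ is a maximal Cohen--Macaulay $\T^{\ord}_\psi(U^p)_\km$-module. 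Since it is moreover a faithful $\T^{\ord}_\psi(U^p)_\km$-module, the unmixedness statement for Cohen--Macaulay modules (Theorem 17.3 of \cite{Mat1}) yields $\dim \T^{\ord}_\psi(U^p)_\km/\kp=\dim \T^{\ord}_\psi(U^p)_\km=[F:\Q]+1$ for every minimal prime $\kp$, which is the asserted equidimensionality.

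Finally, for the characteristic-zero assertion I would note that $\varpi$ is a nonzerodivisor on the $\Lambda_F$-free module $M^{\ord}_\psi(U^p)_\km$; faithfulness then forces $\varpi$ to be a nonzerodivisor in $\T^{\ord}_\psi(U^p)_\km$, so $\varpi$ lies in no minimal prime, i.e. each minimal prime has residue characteristic zero. I do not expect a genuine obstacle here: all the inputs --- semi-locality, Hida freeness, the depth inequality, and unmixedness --- are either established earlier in the paper or are standard commutative algebra; the only point deserving a line of care is confirming $M^{\ord}_\psi(U^p)_\km\neq 0$ so that the free $\Lambda_F$-module has positive rank and its depth is exactly $\dim\Lambda_F$.
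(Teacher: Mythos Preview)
Your proof is correct and follows essentially the same route as the paper: semi-locality, Hida freeness of $M^{\ord}_\psi(U^p)_\km$ over $\Lambda_F$, the depth inequality giving maximal Cohen--Macaulayness over the Hecke algebra, and then Theorem 17.3 of \cite{Mat1} together with regularity of $\varpi$ to conclude. The only cosmetic difference is that you spell out why $\varpi$ is a nonzerodivisor in $\T^{\ord}_\psi(U^p)_\km$ via faithfulness on the free $\Lambda_F$-module, whereas the paper simply asserts it.
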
 

\begin{para}
 As in subsection \ref{varch}, we also need a variant of the space of ordinary forms. Let $\xi_v:U_v\to\cO^\times$ be smooth characters for $v\in S\setminus \Sigma_p$. Then product of $\xi_v$ defines a character $\xi$ of $U^p$. 
We define $S^{\ord}_{k,\psi,\xi}(U^p(c),A)$ to be $\mathbf{e}S_{k,\psi,\xi}(U^p(c),A)$. Similarly we can define $S^{\ord}_{\psi,\xi}(U^p,E/\cO), M_{\psi,\xi}^{\ord}(U^p),\T_{\psi,\xi}^{\ord}(U^p)$ as above and have $\dim \T_{\psi,\xi}^{\ord}(U^p)_{\km}=[F:\Q]+1$ for any maximal ideal $\km$ of $\T_{\psi,\xi}^{\ord}(U^p)$.
\end{para}

\subsection{Patching at a one-dimensional prime: ordinary case I} \label{Paao-dproc1}
In this subsection, we are going to prove a result similar to Theorem \ref{thmA} in the ordinary setting, which basically says an irreducible component of $R^{\ps,\ord,\{\xi_v\}}_{\Sigma^o}$ (see below for the definition) is pro-modular if it contains a nice prime. 

\begin{para} \label{setupord1}
\noindent \underline{\textbf{Setup}} (Compare with \ref{satsotmr}) Let $F,D$ be as in the previous subsection and $S$ be a finite set of places of $F$ containing all places above $p$ such that for any $v\in S\setminus \Sigma_p$,
\[N(v)\equiv 1\mod p.\]
Let $\xi_v:k(v)^\times\to\cO^\times$ be a character of $p$-power order for each $v\in S\setminus \Sigma_p$. We will view $\xi_v$ as characters of $I_{F_v}$ by the class field theory. We also fix a complex conjugation $\sigma^*\in G_{F}$. 

Fix a continuous character $\chi:G_{F,S}\to\cO^\times$ such that
\begin{itemize}
\item $\chi$ is unramified at places outside of $\Sigma_p$.
\item $\chi(\Frob_v)\equiv1\mod \varpi$ for $v\in\Sigma_p\setminus S$.
\item $\chi(c)=-1$ for any complex conjugation $c\in G_F$.
\item For any $v|p$, $\bar{\chi}|_{G_{F_v}}\neq \mathbf{1}$ and $\chi|_{G_{F_v}}$ is Hodge-Tate. Here $\bar{\chi}$ denotes the reduction of $\chi$ modulo $\varpi$.
\item $[F_v:\Q_p]\geq2$ for any $v|p$.
\end{itemize}

Let $\Sigma^o$ be a subset of $\Sigma_p$. We define $\bar{\psi}_{v,1}:G_{F_v}\to \F^\times$ to be $\mathbf{1}$ if $v\in\Sigma^o$ and $\bar{\chi}|_{G_{F_v}}$ if $v\notin\Sigma^o$. 

For a pseudo-representation, we have the following definition for it being ordinary.
\end{para}

\begin{defn} \label{psiord} \footnote{I learned this definition from Professor Richard Taylor's lectures at Princeton University in 2016 Spring. As a referee pointed out, it also appears in \cite[Definition 2.5]{CS19}.}
Let $T:G_{F}\to R$ be a two-dimensional pseudo-representation over some ring $R$ such that for some place $v$, $T|_{G_{F_v}}=\psi_1+\psi_2$ is a sum of two characters. We say $T$ is $\psi_1$-\textit{ordinary} if for any $\sigma,\tau\in G_{F_v},\eta\in G_F$,
\[T(\sigma\tau\eta)-\psi_1(\sigma)T(\tau\eta)-\psi_2(\tau)T(\sigma\eta)+\psi_1(\sigma)\psi_2(\tau)T(\eta)=0.\] 
\end{defn}

\begin{lem} \label{ordpseudo}
Let $\rho:G_F\to\GL_2(R)$ be a two-dimensional irreducible representation over some ring $R$ with trace $T$ such that $T|_{G_{F_v}}=\psi_1+\psi_2$, a sum of two characters. Suppose $\rho|_{G_{F_v}}$ has the form $\begin{pmatrix}\psi_1&*\\0& \psi_2\end{pmatrix}$. Then $T$ is $\psi_1$-ordinary. Conversely, if $R$ is a field and $T$ is $\psi_1$-ordinary, then after possibly enlarging $R$,
\[\rho|_{G_{F_v}}\cong\begin{pmatrix}\psi_1&*\\0& \psi_2\end{pmatrix}.\]
\end{lem}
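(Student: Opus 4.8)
The statement is a compatibility between the notion of $\psi_1$-ordinary pseudo-representation and genuine ordinarity (reducibility with $\psi_1$ on the quotient or sub) of an actual representation. Both directions should be handled by a direct matrix computation, exploiting the explicit recipe in \ref{tar} that reconstructs $\rho$ from its trace $T$ via the functions $a,d,x(\cdot,\cdot)$ built from a fixed order-$2$ element $\sigma^*$.

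For the forward direction, suppose $\rho|_{G_{F_v}} = \begin{pmatrix}\psi_1 & * \\ 0 & \psi_2\end{pmatrix}$. Then for $\sigma\in G_{F_v}$ write $\rho(\sigma) = \begin{pmatrix}\psi_1(\sigma) & b(\sigma) \\ 0 & \psi_2(\sigma)\end{pmatrix}$. The plan is simply to expand $T(\sigma\tau\eta) = \tr(\rho(\sigma)\rho(\tau)\rho(\eta))$ for $\sigma,\tau\in G_{F_v}$, $\eta\in G_F$, using block-triangularity of $\rho(\sigma),\rho(\tau)$ but \emph{not} of $\rho(\eta)$. Writing $\rho(\eta) = \begin{pmatrix}e_1 & e_2 \\ e_3 & e_4\end{pmatrix}$, a short calculation gives
\[
T(\sigma\tau\eta) = \psi_1(\sigma)\psi_1(\tau) e_1 + \big(\psi_1(\sigma)b(\tau) + b(\sigma)\psi_2(\tau)\big) e_3 + \psi_2(\sigma)\psi_2(\tau) e_4.
\]
One then checks that the left-hand side of the identity in \ref{psiord} collapses: the cross-terms involving $b$ cancel because the coefficient of $e_3$ in $\psi_1(\sigma) T(\tau\eta) + \psi_2(\tau) T(\sigma\eta)$ is exactly $\psi_1(\sigma) b(\tau) + \psi_2(\tau) b(\sigma)$ (using $T(\tau\eta) = \psi_1(\tau) e_1 + b(\tau) e_3 + \psi_2(\tau) e_4$ and similarly for $T(\sigma\eta)$), while the $e_1$- and $e_4$-coefficients match multiplicatively. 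Hence $T$ is $\psi_1$-ordinary. This is routine and I would not write out every term.

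For the converse, assume $R$ is a field (enlarged so that the two-dimensional representation $\rho$ with trace $T$ exists and, if $T|_{G_{F_v}}$ is reducible as a pseudo-character, that $\psi_1,\psi_2$ are $R$-valued) and that $T$ is $\psi_1$-ordinary. Here I distinguish two cases following \ref{tar}. If $\rho$ is reducible, then $T|_{G_{F_v}}$ being a sum of two characters forces (up to swapping) $\rho|_{G_{F_v}} \cong \psi_1\oplus\psi_2$ and we are done trivially. If $\rho$ is irreducible, I would pick $\sigma_0,\tau_0\in G_F$ with $x(\sigma_0,\tau_0)\neq 0$ and use the model $\rho(\sigma) = \begin{pmatrix}a(\sigma) & x(\sigma,\tau_0)/x(\sigma_0,\tau_0) \\ x(\sigma_0,\sigma) & d(\sigma)\end{pmatrix}$. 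The $\psi_1$-ordinarity identity, specialized to $\sigma,\tau\in G_{F_v}$, says precisely that the ``trace-like'' data of $\rho|_{G_{F_v}}$ factors through the reducible pattern with $\psi_1$ in the upper-left; concretely, taking $\eta = 1$ and $\eta = \sigma^*$ and combining with the definitions of $a,d$, I would show $a|_{G_{F_v}} = \psi_1$ (or $\psi_2$) and $d|_{G_{F_v}} = \psi_2$ (or $\psi_1$), and that $x(\sigma_0,\sigma) = 0$ for all $\sigma\in G_{F_v}$ when $a|_{G_{F_v}}=\psi_1$ — equivalently the lower-left entry of $\rho|_{G_{F_v}}$ vanishes after conjugating so that $\sigma_0$ plays the role of the "source" of the off-diagonal. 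More carefully: $\psi_1$-ordinarity with $\sigma=\sigma$, $\tau = \sigma^{-1}$ (so $\sigma\tau=1$) and running $\eta$ over $G_F$ shows that the linear functional $\eta\mapsto T(\sigma\eta) - \psi_1(\sigma)T(\eta)$ is supported on the "$\psi_2$-line", which in the matrix model means $\rho(\sigma)$ preserves a common line on which it acts by $\psi_1$; since this holds for all $\sigma\in G_{F_v}$, there is a common $G_{F_v}$-stable line, giving $\rho|_{G_{F_v}}\cong\begin{pmatrix}\psi_1 & * \\ 0 & \psi_2\end{pmatrix}$ after a base change of $R$ to split off that line.

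\textbf{Main obstacle.} The forward direction is pure bookkeeping. The real care is in the converse: extracting from the single polynomial identity in \ref{psiord} the existence of a $G_{F_v}$-stable line with the \emph{correct} character ($\psi_1$, not $\psi_2$) on it, and making the argument uniform over all of $G_{F_v}$ rather than element by element. The cleanest route is probably to reinterpret the ordinarity identity as the statement that the idempotent/projector cutting out $\psi_1$ (built abstractly from $\psi_1,\psi_2$ via $(\mathrm{id}-\psi_2(\cdot))/(\psi_1-\psi_2)$-type combinations, valid once $\psi_1\neq\psi_2$, and by a limiting/direct argument otherwise) is respected by left-multiplication by $\rho(G_{F_v})$ inside the trace form, then transport this through the explicit model of \ref{tar}; one must also handle the degenerate case $\psi_1=\psi_2|_{G_{F_v}}$ separately, where "stable line" still follows but the projector argument needs the companion-matrix normalization. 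I would allot most of the write-up to this point and treat everything else as immediate.
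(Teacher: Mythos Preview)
Your forward direction is fine and is exactly the ``simple computation'' the paper alludes to; note however that both directions become much cleaner once you observe that the $\psi_1$-ordinary identity is literally
\[
\tr\bigl[(\rho(\sigma)-\psi_1(\sigma))(\rho(\tau)-\psi_2(\tau))\rho(\eta)\bigr]=0,\qquad \sigma,\tau\in G_{F_v},\ \eta\in G_F,
\]
so that when $\rho|_{G_{F_v}}=\begin{pmatrix}\psi_1&*\\0&\psi_2\end{pmatrix}$ the product of the first two factors is already the zero matrix and there is nothing to compute.

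Your converse, however, has a genuine gap. You propose to work in the explicit model of \ref{tar} built from the \emph{global} order-$2$ element $\sigma^*$, and to show that $a|_{G_{F_v}}=\psi_1$ and $x(\sigma_0,\sigma)=0$ for $\sigma\in G_{F_v}$. But $a(\sigma)=\tfrac12(T(\sigma^*\sigma)+T(\sigma))$ involves $\sigma^*\sigma\notin G_{F_v}$, so there is no reason whatsoever for $a|_{G_{F_v}}$ to equal $\psi_1$ or $\psi_2$; the local characters simply need not align with the eigenbasis of the global complex conjugation. Indeed, specializing your proposed identities to $\eta=1$ gives a tautology, and $\eta=\sigma^*$ only yields a relation among $a,d,\psi_1,\psi_2$ that does not pin down $a|_{G_{F_v}}$. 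The projector-style argument you sketch at the end would need to be made precise \emph{in a basis adapted to $G_{F_v}$}, not in the $\sigma^*$-basis.

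The paper's route avoids this entirely. Since $T|_{G_{F_v}}=\psi_1+\psi_2$ and $R$ is a field (possibly enlarged), one may choose a basis in which $\rho|_{G_{F_v}}$ is upper-triangular, with either $\psi_1$ or $\psi_2$ on the diagonal top-left (assume $\psi_1\neq\psi_2$; otherwise there is nothing to prove). If $\psi_1$ is on top we are done. If instead $\rho|_{G_{F_v}}=\begin{pmatrix}\psi_2&b\\0&\psi_1\end{pmatrix}$, a two-line computation of $(\rho(\sigma)-\psi_1(\sigma))(\rho(\tau)-\psi_2(\tau))$ shows the only nonzero entry is $b(\sigma)\phi(\tau)-b(\tau)\phi(\sigma)$ in the upper-right, where $\phi=\psi_1-\psi_2$. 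The trace identity above then reads $(b(\sigma)\phi(\tau)-b(\tau)\phi(\sigma))\cdot c(\eta)=0$, where $c(\eta)$ is the lower-left entry of $\rho(\eta)$. Using global irreducibility of $\rho$ to pick $\eta$ with $c(\eta)\neq 0$ forces $b$ proportional to $\phi$, so the extension splits on $G_{F_v}$ and after one more conjugation we are in the desired form. The key move you are missing is this \emph{local} triangularization step, which decouples the problem from the global model of \ref{tar}.
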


\begin{proof}
The first claim follows from some simple computation. For the second claim, we may assume $\psi_1\neq \psi_2$. Enlarge $R$ if possible, then under suitable basis, we may assume $\rho|_{G_{F_v}}=\begin{pmatrix}\psi_1&*\\0& \psi_2\end{pmatrix}$ or $\rho|_{G_{F_v}}=\begin{pmatrix}\psi_2&b\\0& \psi_1\end{pmatrix}$ for some $b:G_{F_v}\to R$. Suppose we are in the second case, the ordinary condition is the same as
\[\tr[(\rho(\sigma)-\psi_1(\sigma))(\rho(\tau)-\psi_2(\tau))\rho(\eta)]=0\]
for any $\sigma,\tau\in G_{F_v},\eta\in G_F$. Equivalently, if we write $\rho(\eta)=\begin{pmatrix} *&*\\ c& *\end{pmatrix}$ and $\phi=\psi_1-\psi_2$, then
\[(b(\sigma)\phi(\tau)-b(\tau)\phi(\sigma))c=0.\]
Since $\rho$ is irreducible, we can always choose $\eta$ so that $c\neq 0$, hence $b(\sigma)\phi(\tau)-b(\tau)\phi(\sigma)=0$. Choose $\sigma\in G_{F_v}$ such that $\phi(\sigma)\neq0$, then for any $\tau\in G_{F_v}$,
\[\begin{pmatrix} 0&1\\ 1 & \frac{b(\sigma)}{\phi(\sigma)} \end{pmatrix}\rho(\tau) \begin{pmatrix}-\frac{b(\sigma)}{\phi(\sigma)} &1\\ 1 & 0 \end{pmatrix}=\begin{pmatrix}\psi_1(\tau)&0\\0& \psi_2(\tau)\end{pmatrix}.\]
This is exactly what we want.
\end{proof}

\begin{para} \label{rpsordxiv}
Now consider the universal deformation ring $R^{\ps,\ord,\{\xi_v\}}_{\Sigma^o}$ which pro-represents the functor from $\cOf$ to the category of sets sending $R$ to the set of tuples $(T,\{\psi_{v,1}\}_{v\in\Sigma_p})$ where
\begin{itemize}
\item $T$ is a two-dimensional pseudo-representation of $G_{F,S}$ over $R$ lifting $1+\bar\chi$ with determinant $\chi$ and for any $v\in S\setminus \Sigma_p$,
\[T|_{I_{F_v}}=\xi_v+\xi_v^{-1} .\]
\item For any $v|p$, $\psi_{v,1}:G_{F_v}\to R^\times$ is a character which lifts $\bar{\psi}_{v,1}$ and satisfies
\[T|_{G_{F_v}}=\psi_{v,1}+\psi_{v,1}^{-1}\chi.\]
Moreover $T$ is $\psi_{v,1}$-ordinary.
\end{itemize}
We remark that $R^{\ps,\ord,\{\xi_v\}}_{\Sigma^o}$ is a quotient of $R^{\ps,\ord}$ (defined in \ref{sasotmr}) as all $\psi_{v,1}$ already take values over $R^{\ps,\ord}$. There is a natural map $\Lambda_F\to R^{\ps,\ord,\{\xi_v\}}_{\Sigma^o}$ coming from the universal lifting of $\bar{\psi}_{v,1}$ when restricted to the inertia. Note that this might be \textit{different} from the one considered in \ref{sasotmr} unless $\Sigma^o$ is empty. Since we have fixed a complex conjugation $\sigma^*\in G_{F,S}$, we can attach a semi-simple representation $\rho(\kp):G_{F,S}\to \GL_2(k(\kp))$ for any $\kp\in\Spec R^{\ps,\ord,\{\xi_v\}}_{\Sigma^o}$ with trace $T^{univ}\mod\kp$ as in \ref{tar}. Here $T^{univ}:G_{F,S}\to R^{\ps,\ord,\{\xi_v\}}_{\Sigma^o}$ is the universal trace.
\end{para}

\begin{para} \label{ass1}
On the automorphic side, we put tame level $U^p=\prod_{v\notin\Sigma_p}U_v$ with $U_v=\GL_2(O_{F_v})$ if $v\notin S$ and $U_v=\mathrm{Iw}_v$ if $v\in S\setminus \Sigma_p$ as in \ref{satsotmr}. For any $v\in S\setminus \Sigma_p$, the map 
\[\begin{pmatrix}a&b\\c&d\end{pmatrix}\mapsto \xi_v(\frac{a}{d}\mod \varpi_v)\] 
defines a character of $U_v$, which we also denote by $\xi_v$ by abuse of notation. The product of $\xi_v$ can be viewed as a character $\xi$ of $U^p$ by projecting to $\prod_{v\notin S\setminus \Sigma_p}U_v$. By global class field theory, we may view $\psi=\chi\varepsilon$ as a character of $\AFi/F^\times_{>>0}$. Then we can consider a space of ordinary forms $M_{\psi,\xi}^{\ord}(U^p)$ and ordinary Hecke algebra $\T^{\ord}:=\T^{\ord}_{\psi,\xi}(U^p)$ as in the previous subsection. Recall that $\langle\,\rangle$ induces characters $\psi_{v,1}:G_{F_v}\to(\T^{\ord})^\times$ for $v|p$ and homomorphism $\Lambda_F\to \T^\ord$.

We make the following assumption here. We will see later that this assumption holds after making suitable field extension of $F$.
\begin{itemize}
\item \textbf{Assumption}: $T_v-(1+\chi(\Frob_v)),v\notin S$ and $\psi_{v,1}(\gamma)-\tilde{\psi}_{v,1}(\gamma),\gamma\in F_v^\times,v|p$ and $\varpi$ generate a maximal ideal $\km$ of $\T^\ord$. Here $\tilde{\psi}_{v,1}(\gamma)\in\cO$ is any lifting of $\bar{\psi}_{v,1}(\gamma)\in\F$.
\end{itemize}

As we recalled before, there is a two-dimensional pseudo-representation $T^{\ord}_\km:G_{F,S}\to \T^\ord_\km$ with determinant $\chi$ sending $\Frob_v$ to $T_v$. Moreover $T^\ord_\km$ is $\psi_{v,1}$-ordinary and $T_v|_{I_{F_v}}=\xi_v+\xi_v^{-1}$. These results can be checked on the finite level. Therefore we get  a map
\[R^{\ps,\ord,\{\xi_v\}}_{\Sigma^o}\to\T^{\ord}_{\km}\]
which is necessarily surjective since the topological generators $T_v,v\notin S$ and $\psi_{v,1}(F_v^\times),v|p$ are in the image. This map is also a $\Lambda_F$-algebra homomorphism.
\end{para}

\begin{para} \label{ordnice}
We say a prime $\kp\in\Spec R^{\ps,\ord,\{\xi_v\}}_{\Sigma^o}$ is \textit{pro-modular} if it comes from a prime of $\T^{\ord}_\km$. Recall that in \ref{nice}, we call a prime $\kq\in\Spec\T^{\ord}_\km$ \textit{nice} if $\T^\ord/\kq$ is one-dimensional of characteristic $p$ and there exists a two-dimensional representation
\[\rho(\kq)^o:G_{F,S}\to \GL_2(A)\]
satisfying the following properties:
\begin{enumerate}
\item $\rho(\kq)^o \otimes k(\kq)\cong  \rho(\kq)$ is irreducible. In other words, $\rho(\kq)^o$ is a lattice of $\rho(\kq)$.
\item The mod $\km_A$ reduction $\bar{\rho}_b$ of $\rho(\kq)^o$ is a non-split extension and has the form
\[\bar{\rho}_b(g)=\begin{pmatrix}*&*\\0&*\end{pmatrix},~g\in G_{F,S}.\]
Here $\km_A$ is the maximal ideal of $A$.
\item $\rho(\kq)^o|_{G_{F_v}}=\bar{\rho}_b|_{G_{F_v}}$ for any $v\in S\setminus \Sigma_p$ under the canonical inclusion map $\GL_2(\F)\to \GL_2(k(\kq))$.
\end{enumerate}

Note that this is slightly different from \ref{nice}: we drop the third condition in the original definition. This is because that condition is automatically true here:  by definition, the image of $\kq$ under the finite map $\Spec\T^{\ord}_\km \to \Spec \Lambda_F$ is not the maximal ideal. Hence $\psi_{v}\mod \kq$ is of infinite order for at least one $v|p$. It then follows from the third part of lemma \ref{nirred} that $\rho(\kq)$ is not dihedral.

The main technical result we are going to prove in this subsection is:
\end{para}

\begin{prop} \label{propA}
Under all the assumptions in this subsection, if $\kq\in\Spec\T^{\ord}_\km$ is a nice prime, then
\[(R^{\ps,\ord,\{\xi_v\}}_{\Sigma^o})_{\kq^\ps}\to \T^{\ord}_\kq\]
has nilpotent kernel. Here $\kq^{\ps}=\kq\cap R^{\ps,\ord,\{\xi_v\}}_{\Sigma^o}$.
\end{prop}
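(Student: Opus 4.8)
The plan is to mirror the Taylor--Wiles patching argument of Theorem~\ref{thmA}, but in the Hida-theoretic setting, using the extra dimension coming from the Iwasawa algebra $\Lambda_F$ in place of the dimension bound from the completed-cohomology local-global compatibility (Theorem~\ref{dh}). First I would set up the auxiliary level structures: for each positive integer $N$ choose a set $Q_N$ of Taylor--Wiles primes exactly as in Proposition~\ref{exttwp} (the input $\bar\rho_b$, the local conditions at $S\setminus\Sigma_p$, and the oddness are the same, so that proposition applies verbatim; note that niceness in the sense of \ref{ordnice} still supplies the needed bound on $\ell(H^0(G_F,W_n(1)))$ since the relevant cohomological input is the irreducibility of $\rho(\kq)$ and the fact that it is not dihedral, which \ref{ordnice} guarantees). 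Then I would form the ordinary completed homology $M^{\ord}_{\psi,\xi}(U^p_{Q_N})_\km$ with its $\cO[\Delta_{Q_N}]$-action, establish the analogues of Lemma~\ref{flmf} and the surjectivity/kernel results for the maps $\eta_N$ (these are standard for ordinary forms, essentially Lemma~2.1.4 of \cite{Kis09a} combined with the Ihara-avoidance computation in Proposition~\ref{twkcc}), and patch over a non-principal ultrafilter $\kF$ as in \S\ref{p1pch} to build $M^{\ord,\patch,\{\xi_v\}}_n$ over $S_\infty = \cO_\infty[[\Delta_\infty]]$.

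Next I would introduce the patched deformation rings. Here there is a genuine difference from \S\ref{Paao-dp}: the relevant global deformation ring is $R^{\ps,\ord,\{\xi_v\}}_{\Sigma^o,Q_N}$, which carries not just the pseudo-character data but also the ordinarity characters $\psi_{v,1}$, hence a $\Lambda_F$-algebra structure. I would assemble $R^{\ps,\patch}$ and $R^{\patch}_b$ and the local framed ring $R^{\{\xi_v\}}_\infty$ exactly as in \S\ref{Paao-dp}, but keeping track of $\Lambda_F$; the key point is that Proposition~\ref{rpsrc} and Corollary~\ref{pscrem} are stated for arbitrary $\Gamma$ and apply here with $\Gamma = G_{F,S\cup Q_N}$, so the comparison between $\widehat{(R^{\ps,\patch})_{[\kq^{\ps,\patch}]}}$ and $\widehat{(R^{\patch}_b)_{[\kq^{\patch}_b]}}$ goes through with the same element $\tilde c$. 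One must choose the local deformation rings at $v\in\Sigma_p$ appropriately: instead of the unrestricted $R^\square_v$, one uses the \emph{ordinary} framed lifting ring (liftings $\begin{pmatrix}\psi_{v,1}&*\\0&\psi_{v,1}^{-1}\chi\end{pmatrix}$ with $\psi_{v,1}$ varying), whose relative dimension over $\cO$ accounts for the $[F_v:\Q_p]$ degrees of freedom in $\psi_{v,1}|_{I_{F_v}}$. I would prove the analogue of Proposition~\ref{rlocp}: the relevant local ring is equidimensional of the correct dimension, $\cO$-flat, and satisfies the minimal-prime-over-$(\varpi)$ condition, using the explicit structure of ordinary local deformation rings (these are formally smooth over power series rings once one fixes the right framing data) together with Lemmas~\ref{ccal} and \ref{lrca} for the places in $S\setminus\Sigma_p$.

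Then I would carry out the dimension count. On the spectral side, the input replacing Theorem~\ref{dh} is Corollary~\ref{orddim}: $\T^{\ord}_{\psi,\xi}(U^p)_\km$ is equidimensional of dimension $[F:\Q]+1$, and $M^{\ord}_{\psi,\xi}(U^p)_\km$ is a faithful Cohen--Macaulay (indeed $\Lambda_F$-free) module over it. Localizing and completing at $\kq$ gives a faithful finitely generated $\widehat{\T^{\ord}_\kq}$-module $\mm_0$ with $\dim\widehat{\T^{\ord}_\kq} = [F:\Q]$. Combining this with the $S_\infty$-flatness of $\mm^{\patch}_\infty$ and the regular sequence $y_i,s_j \in \ka_1$ yields $\dim_{(R')'}\mm^{\patch}_\infty \ge (\text{number of auxiliary variables}) + [F:\Q]$, which is exactly $\dim \widehat{(R^{\{\xi_v\}}_\infty)_{\kq^{\{\xi_v\}}_\infty}}$ (the $\Lambda_F$ from the ordinary local rings makes the local dimension match). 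Hence the support is a union of irreducible components; when all $\xi_v$ are nontrivial the local ring is irreducible (Proposition~\ref{rlocp}(1)), giving full support, and in general one deduces full support by Taylor's Ihara-avoidance descent to characteristic $p$ exactly as in the proof of Theorem~\ref{thmA} at the end of \S\ref{Paao-dp}. Full support of $\mm^{\patch}_\infty$ then forces $\widehat{(R^{\ps,\ord,\{\xi_v\}}_{\Sigma^o})_{\kq^\ps}}\to\widehat{\T^{\ord}_\kq}$ to have nilpotent kernel, and faithful flatness of completion gives the statement for $(R^{\ps,\ord,\{\xi_v\}}_{\Sigma^o})_{\kq^\ps}\to\T^{\ord}_\kq$.

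I expect the main obstacle to be the bookkeeping of the ordinarity data throughout the patching: one must show that the $\Lambda_F$-action survives patching (i.e. the characters $\psi_{v,1}$ glue compatibly across the $Q_N$ and the ultrafilter limit), that the local ordinary deformation rings at $v\in\Sigma_p$ really have the dimension one needs and satisfy the hypotheses of Lemma~\ref{ccal} (flatness over $\cO$, generic reducedness mod $\varpi$, normality in the nontrivial-$\xi_v$ case), and that the $\psi_{v,1}$-ordinarity condition of Definition~\ref{psiord} is a closed condition cutting out the right locus compatibly with the Galois-to-Hecke map. The other slightly delicate point, as in \S\ref{Paao-dp}, is controlling the failure of $c_N$-torsion, i.e. verifying that the element $\tilde c$ of Corollary~\ref{pscpatch} stays away from $\kq^{\ps,\patch}$ in this setting; but this is governed by the same statement (Proposition~\ref{rpsrc}) and the niceness hypothesis \ref{ordnice}(2) that $\bar\rho_b$ is a nonsplit extension, so it should go through without change.
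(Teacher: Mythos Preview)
Your approach is essentially the paper's own: it too omits a detailed proof and says the argument is that of Theorem~\ref{thmA} with completed cohomology replaced by Hida families and Theorem~\ref{dh} replaced by Corollary~\ref{orddim}. The paper does, however, flag two technical points more sharply than you do.

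First, the local ordinary deformation rings $R^{\Delta}_v$ at $v\mid p$ are \emph{not} formally smooth in general (your ``formally smooth over power series rings'' is too optimistic). The paper proves (Lemma~\ref{orddefprop1}) that $R^{\Delta}_v$ is a normal domain of dimension $1+2[F_v:\Q_p]+3$, and this genuinely uses the standing hypothesis $[F_v:\Q_p]\ge 2$ from \ref{setupord1}; the argument goes through a Serre-criterion analysis of the mod~$p$ fiber.

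Second, Corollary~\ref{pscrem} does \emph{not} apply directly, contrary to what you say, because the ordinary condition on the representation side is not cut out by an ideal pulled back from $R^{\ps}$ (ordinarity is not a trace condition). The paper's point~(2) of \ref{rdxbq} observes, via Lemma~\ref{ordpseudo}, that the kernel of
\[
R^{\{\xi_v\}}_{\bar{\rho}_b}\otimes_{R^{\ps,\{\xi_v\}}}R_{\Sigma^o}^{\ps,\ord,\{\xi_v\}}\longrightarrow R^{\Delta,\{\xi_v\}}_{\bar{\rho}_b}
\]
is nonetheless killed by the element $c$ of Proposition~\ref{rpsrc}, so Corollary~\ref{pscpatch} still goes through. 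You anticipated both issues as ``obstacles'' in your last paragraph but did not identify the precise fix in either case.

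Finally, the paper's point~(3) confirms your instinct: since $M^{\ord}_{\psi,\xi}(U^p)_\km$ is already finite (indeed free) over $\Lambda_F$, Pa\v{s}k\={u}nas' theory is not needed and one patches the Hida family itself.
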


\begin{para}
The proof of proposition \ref{propA} is almost the same as the proof of Theorem \ref{thmA} with completed cohomology replaced by Hida family. We decide to omit the details here. Rather, we would like to point out some technical differences between these two cases.

\begin{enumerate}\label{rdxbq}
\item In the definition \ref{rxivloc} of $R^{\{\xi_v\}}_{\loc}$, we need to replace the unrestricted deformation rings $R^{\square}_v$, $v|p$ by \textit{ordinary deformation rings} $R^{\Delta}_v$ which pro-represents the functor from $\cOf$ to the category of sets sending $R$ to the set of pairs $(\rho_R,\psi_R)$ such that
\begin{itemize}
\item $\rho_R:G_{F_v}\to\GL_2(R)$ is a lifting of $\bar{\rho}_b|_{G_{F_v}}$ with determinant $\chi|_{G_{F_v}}$.
\item $\psi_R:G_{F_v}\to R^\times$ is a lifting of $\bar{\psi}_{v,1}$ such that $\rho_R$ has a (necessarily unique) $G_{F_v}$-stable rank one $R$-submodule, which is a direct summand of $R^{\oplus 2}$ as a $R$-module, with $G_{F_v}$ acting via $\psi_R$.
\end{itemize}
The second part of lemma \ref{varlocdim} is now replaced by:
\begin{itemize}
\item $R^{\Delta}_v$ is a \textit{normal} domain of dimension $1+2[F_v:\Q_p]+3$ and flat over $\cO$.
\end{itemize}
The proof will be given below (\ref{orddefprop1}). Once this is proved, proposition \ref{rlocp} in this case can be established in exactly the same way.

\item As for global deformation rings considered in \ref{globaldefr}, we also need to take quotients of them by ordinary conditions. More precisely, instead of using $R^{\square_P,\{\xi_v\}}_{\bar{\rho}_b,Q}$, we should consider
\[R^{\Delta_P,\{\xi_v\}}_{\bar{\rho}_b,Q}:=R^{\square_P,\{\xi_v\}}_{\bar{\rho}_b,Q}\otimes_{(\bigotimes_{v|p}R^{\square}_v)}(\bigotimes_{v|p}R^{\Delta}_v)\]
and unframed deformation rings
\[R^{\Delta,\{\xi_v\}}_{\bar{\rho}_b,Q}:=R^{\{\xi_v\}}_{\bar{\rho}_b,Q}\otimes_{(\bigotimes_{v|p}R^{\square}_v)}(\bigotimes_{v|p}R^{\Delta}_v).\]
We will omit $Q$ in the subscript if $Q$ is empty. Note that such ordinary conditions are not defined via traces, hence the natural surjective map
\[R^{\{\xi_v\}}_{\bar{\rho}_b}\otimes_{R^{\ps,\{\xi_v\}}}R_{\Sigma^o}^{\ps,\ord,\{\xi_v\}}\to R^{\Delta,\{\xi_v\}}_{\bar{\rho}_b}\]
might not be an isomorphism. Thus in the proof of corollary \ref{pscpatch}, we cannot apply corollary \ref{pscrem}, which is a consequence of proposition \ref{rpsrc}, directly. However, it follows from lemma \ref{ordpseudo} that the kernel is nilpotent and an easy computation shows that the kernel of the above map is killed by the element $c$ in proposition \ref{rpsrc}. Therefore corollary \ref{pscpatch} still can be proved in the same way.

\item In the patching argument, since $M^\ord_{\psi,\xi}(U^p)_\km$ is already finite over the Hecke algebra, we can work with the patched Hida family directly. In fact, the whole point of using Pa\v{s}k\={u}nas' theory in \ref{p3apop} is to make completed homology into some module which is finitely generated over the Hecke algebra. The role of $\Lambda_F$ in Hida's theory is replaced by $R^{\ps,\psi\varepsilon^{-1}}_p$ in the completed cohomology side (see Theorem \ref{lgc}).
\end{enumerate}
\end{para}

\begin{lem} \label{orddefprop1}
$R^{\Delta}_v$ is a \textit{normal} domain of dimension $1+2[F_v:\Q_p]+3$ and flat over $\cO$ if $[F_v:\Q_p]\geq2$. Recall that we made this assumption in our setup \ref{setupord1}.
\end{lem}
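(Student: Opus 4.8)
The plan is to reduce to a two-step local computation and then apply Serre's criterion. Since $\bar{\chi}|_{G_{F_v}}\neq\mathbf{1}$, the residual characters $\bar{\psi}_{v,1}$ and $\bar{\psi}_{v,1}^{-1}\bar{\chi}$ are distinct, so any ordinary lift $\rho_R$ carries a \emph{unique} $G_{F_v}$-stable line reducing to the $\bar{\psi}_{v,1}$-line; hence $\psi_R$ is determined by $\rho_R$ and $R^{\Delta}_v$ is exactly the ordinary locus inside $R^{\square}_v$. After conjugating we may assume $\bar{\rho}_b|_{G_{F_v}}$ is upper triangular with the $\bar{\psi}_{v,1}$-line equal to $\langle e_1\rangle$. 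Set $d=[F_v:\Q_p]$ and let $\Lambda_v$ be the universal deformation ring of the character $\bar{\psi}_{v,1}:G_{F_v}\to\F^{\times}$ (with no determinant condition). As $F_v/\Q_p$ is unramified we have $\mu_p\not\subset F_v$, so $H^{2}(G_{F_v},\F)=0$ and the maximal abelian pro-$p$ quotient of $G_{F_v}$ is free of rank $d+1$; thus $\Lambda_v\cong\cO[[y_1,\dots,y_{d+1}]]$ is formally smooth of dimension $d+2$.

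Next I would introduce $R^{\mathrm{tri}}_v$, the universal ring carrying a \emph{literally} upper triangular lift $\begin{pmatrix}\psi^{univ}_{v,1}&b\\0&(\psi^{univ}_{v,1})^{-1}\chi\end{pmatrix}$ of $\bar{\rho}_b|_{G_{F_v}}$; writing $b=\psi^{univ}_{v,1}c$, this is the universal $\Lambda_v$-algebra equipped with a continuous $1$-cocycle $c$ valued in $\theta:=(\psi^{univ}_{v,1})^{-2}\chi$ and lifting the nonzero cocycle of $\bar{\rho}_b$. Using that the stable line of $\rho_R$ is determined and reduces to $\langle e_1\rangle$, conjugation by the lower-unipotent matrix it defines gives $R^{\Delta}_v\cong R^{\mathrm{tri}}_v[[z]]$, so it suffices to prove $R^{\mathrm{tri}}_v$ is a normal domain of dimension $2d+3$, flat over $\cO$. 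The standard presentation of a cocycle moduli space realizes $R^{\mathrm{tri}}_v$ as a local complete intersection over $\Lambda_v$, cut out by $\dim_{\F}H^{2}(G_{F_v},\bar{\theta})\leq 1$ relations that are linear in the cochain variables; hence $R^{\mathrm{tri}}_v$ is Cohen--Macaulay (so satisfies $(S_2)$), flat over $\cO$, and smooth over $\Lambda_v$ away from the closed locus $V\subseteq\Spec\Lambda_v$ where $H^{2}(G_{F_v},\theta)$ does not vanish after specialization. By the local Euler characteristic formula the relative dimension of $R^{\mathrm{tri}}_v$ over $\Lambda_v$ is the generic value $\dim Z^{1}(G_{F_v},\theta)=d+1$, so $\dim R^{\mathrm{tri}}_v=2d+3$ and $\dim R^{\Delta}_v=2d+4=1+2[F_v:\Q_p]+3$.

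It remains to show $R^{\mathrm{tri}}_v$ is a normal domain. By local Tate duality, $V$ meets the generic fibre only in the point where $\theta\cong\varepsilon$ on $G_{F_v}$, and meets $\Spec(\Lambda_v/\varpi)$ either not at all or in the whole special fibre, the latter precisely when $\bar{\theta}\cong\bar{\omega}$ on $G_{F_v}$ (equivalently $\bar{\psi}_{v,1}^{2}=\bar{\chi}\bar{\omega}^{-1}|_{G_{F_v}}$). If $\bar{\theta}\not\cong\bar{\omega}$ then $V=\emptyset$ and $R^{\mathrm{tri}}_v$ is already regular, so assume $\bar{\theta}\cong\bar{\omega}$. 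Away from $\varpi$, $R^{\mathrm{tri}}_v[\frac{1}{p}]$ is smooth over $\Lambda_v[\frac{1}{p}]$ outside the fibre over the point $\theta\cong\varepsilon$; that fibre is a $Z^{1}$-space of dimension $d+2$, hence of codimension $d\geq 2$ in $\Spec R^{\mathrm{tri}}_v[\frac{1}{p}]$ --- this is the point at which the hypothesis $[F_v:\Q_p]\geq 2$ is used --- so $R^{\mathrm{tri}}_v[\frac{1}{p}]$ is regular in codimension one. Along the special fibre $\theta$ is nonconstant, so $H^{2}$ vanishes after specializing to the generic point of $\Spec(\Lambda_v/\varpi)$; thus $R^{\mathrm{tri}}_v/(\varpi)$ is generically smooth over $\Lambda_v/(\varpi)$, in particular generically reduced, and every height-one prime containing $\varpi$ is regular, exactly as in the proof of Lemma~\ref{ccal}. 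Serre's criterion gives normality. Finally $R^{\mathrm{tri}}_v$ is $\cO$-flat, so its minimal primes correspond to those of $R^{\mathrm{tri}}_v[\frac{1}{p}]$, which is connected (being an affine-space bundle over the connected $\Lambda_v[\frac{1}{p}]$ outside a subset of codimension $\geq 2$); a connected normal ring is a domain, and $R^{\Delta}_v\cong R^{\mathrm{tri}}_v[[z]]$ then is as well. The one delicate case, and the place that needs real care, is the congruence $\bar{\theta}\cong\bar{\omega}$, where $R^{\mathrm{tri}}_v$ is genuinely singular and the presentation above must be made explicit; alternatively this case can be extracted from the explicit ordinary local deformation ring computations as in \cite{HT15}.
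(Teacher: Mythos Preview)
Your reduction $R^{\Delta}_v\cong R^{\mathrm{tri}}_v[[z]]$ and the fibration over $\Lambda_v$ are correct, and the overall strategy is sound; it is genuinely different from the paper's route. The paper does not fibre over $\Lambda_v$. Instead, after the same reduction to the Borel-valued lifting ring (the paper writes $R^B_v$ for your $R^{\mathrm{tri}}_v$), it works directly on the special fibre $R^B_v/(\varpi)$ and proves an auxiliary moduli-theoretic lemma: the completion of $R^B_v/(\varpi)$ at a one-dimensional prime $\kq$ is the universal Borel-valued lifting ring of $\rho(\kq)^B$, hence formally smooth of dimension $2[F_v:\Q_p]+1$ whenever $\rho(\kq)^B$ is not of the form $\begin{pmatrix}1&*\\0&\omega^{-1}\end{pmatrix}$. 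The bad locus is then bounded by $\dim_\F Z^1(G_{F_v},\omega)=[F_v:\Q_p]+2\le 2[F_v:\Q_p]$, giving $(R_1)$ on the special fibre and hence on $R^B_v$. Your fibration approach is arguably more transparent, but it is organizationally out of order and contains one wrong intermediate claim.

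The gap is that you assert ``Cohen--Macaulay, flat over $\cO$'' immediately after writing down the presentation, before you have any control on $g_1\bmod\varpi$. Cohen--Macaulay is fine (a hypersurface in a regular local ring), but $\cO$-flatness needs $\varpi$ to be a nonzerodivisor, equivalently $g_1\notin(\varpi)$; you only supply the ingredient for this later, when you observe that $H^2$ vanishes at the generic point of $\Spec(\Lambda_v/\varpi)$ (so the generic fibre of $R^{\mathrm{tri}}_v/(\varpi)$ over $\Lambda_v/(\varpi)$ has dimension $d+1$, forcing $g_1\bmod\varpi\ne 0$). Move that observation up and deduce flatness from it; then your Serre-criterion argument for $(R_1)$ goes through cleanly. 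Relatedly, your claim that $V$ ``meets $\Spec(\Lambda_v/\varpi)$ either not at all or in the whole special fibre'' is false and indeed contradicted two sentences later: when $\bar\theta\cong\omega$, the locus in $\Spec(\Lambda_v/\varpi)$ where the specialized $\theta$ equals $\omega$ is only the closed point, not the whole special fibre. Finally, the appeal to \cite{HT15} at the end is misplaced: those computations are for $\GL_2(\Q_p)$ and do not cover $[F_v:\Q_p]\ge 2$; with the reorganization above you do not need an external reference at all.
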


\begin{proof}
We will only deal with the case $\bar{\psi}_{v,1}= \mathbf{1}$ (the other case $\bar{\psi}_{v,1}=\bar{\chi}$ is similar). Since we assume $\bar{\chi}|_{G_{F_v}}\neq \mathbf{1}$, for any  lifting ($\rho_R,\psi_R)$ to some $R\in\cOf$ as in the definition of $R^{\Delta}_v$, we may find a unique matrix $U=\begin{pmatrix}1 &0 \\ n & 1 \end{pmatrix}$ such that $U^{-1}\rho_R U$ is of the form $\begin{pmatrix}\psi_R & * \\ 0 & * \end{pmatrix}$. Hence $R^{\Delta}_v\cong R^B_v[[n]]$, where $R^B_v$ parametrizes all liftings of $\bar{\rho}_b|_{G_{F_v}}$ into the Borel subgroup $B$ of upper triangular matrices with determinant $\chi|_{G_{F_v}}$. 

It is well known that we can write $R^{B}_v\cong \cO[[x_1,\cdots,x_g]]/(f_1,\cdots,f_r)$, where $g-r=2[F_v,\Q_p]+2$ and $r=\dim H^2(G_{F_v},\ad^0\bar{\rho}_b^B)$. Here $\bar{\rho}_b^B$ is $\bar{\rho}_b|_{G_{F_v}}$ viewed as a representation of $G_{F_v}$ into the upper triangular subgroup $B(\F)$ and  $\ad^0\bar{\rho}_b^B$ denotes the trace zero subspace of its adjoint representation. There is a canonical exact sequence as $G_{F_v}$-representations:
\[0\to \bar{\chi}^{-1}\to \ad^0\bar{\rho}_b^B \to \mathbf{1} \to 0.\]
Hence $r\leq 1$. The lemma is clear if $\bar{\chi}\neq \omega^{-1}$ as $r=0$ in this case.

Now we assume $\bar{\chi}= \omega^{-1}$. Note that from the expression $R^{B}_v\cong \cO[[x_1,\cdots,x_g]]/(f_1,\cdots,f_r)$, it is easy to see that each irreducible component of $R^B_v$ has dimension at least $2[F_v:\Q_p]+3$. We claim that $R_v^B/(\varpi)$ is normal of dimension $2[F_v:\Q_p]+2$. Assuming this, it is clear from comparing the dimensions that $\varpi$ is a regular element in $R_v^B$. This implies that $R_v^B$ is an $\cO$-flat normal algebra of dimension $2[F_v:\Q_p]+3$ by Serre's criterion for normality. Hence we are left to show our claim for $R_v^B/(\varpi)$. Since $R_v^B/(\varpi)\cong \F[[x_1,\cdots,x_g]]/(f_1,\cdots,f_r)$ with $r\leq 1$, we see that $R_v^B/(\varpi)$ is a complete intersection ring. It suffices to show that the non-regular locus has codimension at least $2$ and $\dim R^B_v/(\varpi)=2[F:\Q_p]+2$.

Write $R_1=R_v^B/(\varpi)$. Suppose $\kq\in\Spec R_1$ is a one-dimensional prime. We use $\rho(\kq)^B:G_{F_v}\to B(k(\kq))$ to denote the push-forward of the universal lifting over $R_v^B$ to $k(\kq)$. We need the following result concerning the smoothness of $(R_1)_{\kq}$.

\begin{lem} \label{1dimmoduli}
$(R_1)_{\kq}$ is regular of dimension $2[F_v:\Q_p]+1$ if $\rho(\kq)^B$ is not isomorphic to the form $\begin{pmatrix}1 &* \\ 0 & \omega^{-1} \end{pmatrix}$.
\end{lem}

\begin{proof}
We will relate the $\kq$-adic completion of $(R_1)_{\kq}$ with some other universal lifting ring. First by enlarging $\F$ if necessary, we may assume the normal closure of $R_1/\kq$ is isomorphic to $A=\F[[T]]$ (as $\F$-algebras). Fix such an isomorphism and hence an embedding $A\hookrightarrow k(\kq)$.

Clearly $R_2:=R_1\hat{\otimes}_\F A\cong R_1[[T_1]]$ pro-represents the functor assigning each Artinian local $A$-algebra $R$ with residue field $\F$ to the set of continuous homomorphisms of $G_{F_v}$ to $B(R)$ with determinant $\chi$ that lift $\bar\rho_b^B$. Viewed as taking values in $B(A)$, the representation $\rho(\kq)^B$ induces a prime ideal $\kp$ of $R_2$. We claim that the $\kp$-adic completion $\widehat{(R_2)_\kp}$ is the universal lifting ring for $\rho(\kq)^B:G_{F_v}\to\GL_2(k(\kq))$ (with determinant $\chi$), i.e. if $R$ is an Artinian local $k(\kq)$-algebra with residue field $k(\kq)$ and if $\rho^B:G_{F_v}\to B(R)$ is a continuous lifting of $\rho(\kq)^B$ with determinant $\chi$, then there exists a unique map of $k(\kq)$-algebras $\widehat{(R_2)_\kp}\to R$ such that the push forward of the universal lifting on $R_2$ is $\rho^B$. The proof is standard: let $R^0$ be the $A$-subalgebra of $R$ generated by the matrix entries of $\rho^B$. This is a finite local $A$-algebra with residue field $\F$ and $\rho^B$ can be viewed as taking values in $B(R^0)$. Hence we get a natural map $R_2\to R^0$ and our claim follows easily. In particular, under our assumption on $\rho(\kq)^B$, the deformation ring $\widehat{(R_2)_\kp}$ is formally smooth of relative dimension $2[F_v:\Q_p]+2$ over $k(\kq)$.

Let $a\in (R_1)_{\kq}$ be a lifting of $T\in A\hookrightarrow (R_1)_{\kq}/\kq$. Using the natural map $(R_1)_\kq \to (R_2)_\kp$, we also view $a$ as an element of $\widehat{(R_2)_\kp}$ by abuse of notation. We claim that
\begin{enumerate}
\item The map sending $T_1$ to $a$ induces an isomorphism $\widehat{(R_2)_\kp}/(T_1-a)\stackrel{\sim}{\longrightarrow} \widehat{(R_1)_\kq}$.
\item $T_1-a\notin (\kp(R_2)_\kp)^2$.
\end{enumerate}
Note that these two claims imply lemma \ref{1dimmoduli}. For the second assertion, we have
\[(R_2/\kq R_2)\otimes_{R_1} k(\kq)\cong (R_1/\kq)[[T_1]]\otimes_{R_1} k(\kq)\cong \F[[T,T_1]][\frac{1}{T}].\]
Under the above isomorphism, $T_1-a$ is identified with $T_1-T$ in $\F[[T,T_1]][\frac{1}{T}]$ and the image of $\kp$ is generated by $T_1-T\neq 0$. From this explicit description, the assertion is clear.

For the first claim, we use $B_\kq$ to denote $B\otimes_{R_1} (R_1)_{\kq}$ for any $R_1$-algebra $B$. It follows from the previous paragraph that $\kp(R_1[[T_1]])_\kq=(\kq,T_1-a)$ is a maximal ideal in $(R_1[[T_1]])_\kq$. Hence it suffices to prove that for any integer $n>0$,
\[(R_1[[T_1]])_\kp/(\kq^n,T_1-a)=(R_1[[T_1]])_\kq/(\kq^n,T_1-a)\stackrel{\sim}{\longrightarrow} (R_1/\kq^n)_\kq.\]
It is easy to see that $T^k\in R_1/\kq$ for some integer $k$ as $T\in k(\kq)$. Hence there exists some element $b$ in the maximal ideal of $R_1$ such that $a^k-b\in \kq (R_1)_{\kq}$. Thus $(T_1^k-b)^n=0$ in $(R_1[[T_1]])_\kq/(\kq^n,T_1-a)$. Using this, we get
\[(R_1[[T_1]])_\kq/(\kq^n,T_1-a)\cong (R_1[T_1])_\kq/(\kq^n,T_1-a)\cong (R_1/\kq^n)_\kq.\]
\end{proof}

Back to the proof of lemma \ref{orddefprop1}. The locus where $\rho(\kq)^B$ is of the form $\begin{pmatrix}1 &* \\ 0 & \omega^{-1} \end{pmatrix}$ has dimension $\dim_\F Z^1(G_{F_v},\omega)=[F_v:\Q_p]+2\leq 2[F_v:\Q_p]$. Since each irreducible component of $R_1$ has dimension at least $2[F_v:\Q_p]+2$, we conclude from lemma \ref{1dimmoduli} that the non-regular locus has codimension at least $2$. This finishes the proof of lemma \ref{orddefprop1}.
\end{proof}

\subsection{\texorpdfstring{$R^\ord_b$}{Lg} is pro-modular for one \texorpdfstring{$b$}{Lg}} \label{Rordbimf1b}
It follows from the main result of the previous subsection that if we could show any irreducible component of $R^{\ps,\ord,\{\xi_v\}}$ has a nice prime, then any prime of $R^{\ps,\ord,\{\xi_v\}}$ would be pro-modular. However, the geometry of $\Spec R^{\ps,\ord,\{\xi_v\}}$ seems a bit hard to control. On the other hand, the ring-theoretic property of the usual global deformation ring $R^{\ord}_{b}$ (see the precise definition below) can be described by the Galois cohomology. The goal of this subsection is to show that under certain assumptions, the image of $\Spec R^{\ord}_{b}$ in $\Spec R^{\ps,\ord,\{\xi_v\}}$ is pro-modular for one extension class $b$.

\begin{para}
Keep all the notations $F,D,\chi,S,\Sigma^o,\sigma^*$ as in the previous subsection. We will define some global deformation ring $R^{\ord}_{b}$ and discuss its connectedness dimension and the dimension of its reducible locus. We first introduce the following Selmer group. Recall that $\Sigma^o$ is a subset of $\Sigma_p$.
\end{para}

\begin{defn} \label{selmext}
\[H^1_{\Sigma^o}(F):=\ker(H^1(G_{F,S},\F(\bar{\chi}^{-1}))\stackrel{\mathrm{res}}{\longrightarrow}\bigoplus_{v\in \Sigma_p\setminus \Sigma^o}H^1(G_{F_v},\F(\bar{\chi}^{-1}))).\]
\end{defn}

It is easy to that there is a bijection between $H^1_{\Sigma^o}(F)$ and the group of extensions 
\[0\to \mathbf{1}\to \bar{\rho} \to \bar{\chi} \to 0\]
as $\F[G_{F,S}]$-modules such that $\bar{\rho}$ is $\bar{\psi}_{v,1}$-ordinary for $v|p$. The bijection is given as follows: any cohomology class $b=[\phi_b]\in H^1_{\Sigma^o}(F)$ defines a two-dimensional representation
\[\bar{\rho}_b:G_{F,S}\to \GL_2(\F),~g\mapsto \begin{pmatrix} 1 & \phi_b(g)\bar{\chi}(g)\\0& \bar{\chi}(g)\end{pmatrix},\]
which can be viewed as an extension of $\bar\chi$ by $\mathbf{1}$. Since $\bar{\chi}(\sigma^*)=-1$, we can always choose $\phi_b$ so that $\phi_b(\sigma^*)=0$. We will keep this assumption for $\bar{\rho}_b$ from now on.

\begin{para} \label{Rbpsi}
For any non-zero $b\in H^1_{\Sigma^o}(F)$, we will write $R^{\ord}_{b}$ for $R^{\Delta,\{\xi_v\}}_{\bar{\rho}_b}$ introduced in the second part of \ref{rdxbq}. More precisely, it pro-represents the functor from $\cOf$ to the category of sets sending $R$ to the set of tuples $(\rho_R,\{\psi_{v,1}\}_{\Sigma_p})$ where
\begin{itemize}
\item $\rho_R:G_{F,S}\to\GL_2(R)$ is a lifting of $\bar{\rho}_b$ with determinant $\chi$.
\item For $v|p$, $\psi_{v,1}:G_{F_v}\to R^\times$ is a lifting of $\bar{\psi}_{v,1}$ such that $\rho_R$ has a (necessarily unique) $G_{F_v}$-stable rank one $R$-submodule, which is a direct summand of $\rho_R$ as a $R$-module, with $G_{F_v}$ acting via $\psi_R$.
\end{itemize}
Recall that in $\ref{rdxbq}$, we also introduce the framed ordinary deformation ring $R^{\Delta_S,\{\xi_v\}}_{\bar{\rho}_b}$ with framing at $S$. Then there is a non-canonical isomorphism by choosing a universal framing
\[R^{\Delta_S,\{\xi_v\}}_{\bar{\rho}_b}\cong R^{\ord}_{b}[[y_1,\cdots,y_{4|S|-1}]].\]

Let $R^{\ord}_{\loc}$ be the following completed tensor product over $\cO$:
\[(\widehat{\bigotimes}_{v\in \Sigma_p}R^{\Delta}_v)\widehat{\otimes}(\widehat{\bigotimes}_{v\in S\setminus \Sigma_p}R^{\square,\xi_v}_v),\]
where $R^{\Delta}_v$ is defined in the first part of \ref{rdxbq} and $R^{\square,\xi_v}_v$ is defined in \ref{ldfr}. It is well-known (for example see corollary 2.3.5 of \cite{CHT08}) that $R^{\Delta_S,\{\xi_v\}}_{\bar{\rho}_b}$ can be written as the form
\[R^{\Delta_S,\{\xi_v\}}_{\bar{\rho}_b}\cong R^{\ord}_{\loc}[[x_1,\cdots,x_{g_1}]]/(h_1,\cdots,h_{r_1})\]
where $g_1=\dim_\F H^1(G_{F,S},\ad^0\bar{\rho}_b(1))+|S|-1-[F:\Q]-\dim_\F H^0(G_{F,S},\ad^0\bar{\rho}_b(1))$ and $r_1=\dim_\F H^1(G_{F,S},\ad^0\bar{\rho}_b(1))$.
\end{para}

\begin{lem} \label{grrcord}
Each irreducible component of $R^{\ord}_b$ has dimension at least $[F:\Q]+1$ if $H^0(G_{F,S},\ad^0\bar{\rho}_b(1))=0$ and $[F:\Q]$ in general. Moreover, $R^{\ord}_b$ can be written as the form $R_p[[x_1,\cdots,x_{g_2}]]/(f_1,\cdots,f_{r_2})$ with 
\[g_2-r_2\geq -|S|-2|\Sigma_p|-[F:\Q]-1\]
and $R_p$ a domain of dimension $1+2[F:\Q]+3|\Sigma_p|$.
\end{lem}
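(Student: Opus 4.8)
\textbf{Proof plan for Lemma \ref{grrcord}.}

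The plan is to compute all the relevant invariants via the usual Galois-cohomological presentation of deformation rings, exactly as in the analogous statements for ordinary deformation problems (e.g. \cite{CHT08}, \cite{SW99}). First I would set up the local conditions: the ordinary deformation functor at each $v\in\Sigma_p$ is represented by $R^\Delta_v$, which by Lemma \ref{orddefprop1} is a normal domain of dimension $1+2[F_v:\Q_p]+3$; since $p$ splits completely (or, by Remark \ref{nicerem}, $[F_v(\zeta_p):F_v]=p-1$), one has $[F_v:\Q_p]$ summing to $[F:\Q]$ over $v\in\Sigma_p$, so $R_p:=\widehat{\bigotimes}_{v|p}R^\Delta_v$ is a domain of dimension $1+2[F:\Q]+3|\Sigma_p|$ (a completed tensor product over $\cO$ of $\cO$-flat geometrically integral normal domains is again integral, as in Lemma \ref{ccal}(1) or Lemma \ref{cctp}). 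At the auxiliary places $v\in S\setminus\Sigma_p$ the local ring $R^{\square,\xi_v}_v$ contributes relative dimension $3$ over $\cO$ (Lemma \ref{varlocdim}, Lemma \ref{lrca}). So $R^{\ord}_{\loc}=(\widehat{\bigotimes}_{v\in\Sigma_p}R^\Delta_v)\widehat{\otimes}(\widehat{\bigotimes}_{v\in S\setminus\Sigma_p}R^{\square,\xi_v}_v)$ has dimension $1+2[F:\Q]+3|\Sigma_p|+3(|S|-|\Sigma_p|)=1+2[F:\Q]+3|S|$.

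Next I would invoke the standard presentation of the framed global deformation ring over the local one: $R^{\Delta_S,\{\xi_v\}}_{\bar\rho_b}\cong R^{\ord}_{\loc}[[x_1,\dots,x_{g_1}]]/(h_1,\dots,h_{r_1})$ where $r_1=\dim_\F H^1(G_{F,S},\ad^0\bar\rho_b(1))$ and $g_1=r_1+|S|-1-[F:\Q]-h^0$ with $h^0=\dim_\F H^0(G_{F,S},\ad^0\bar\rho_b(1))$ (this is corollary 2.3.5 of \cite{CHT08}, taking care that our determinant is fixed so each local/global ring has one fewer formal variable than in the unframed-determinant version). Passing from the framed ring at $S$ to the unframed ring $R^{\ord}_b$ removes $4|S|-1$ framing variables. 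Absorbing the $\Sigma_p$-local rings into the statement, I would write $R^{\ord}_{\loc}=(\widehat{\bigotimes}_{v\in S\setminus\Sigma_p}R^{\square,\xi_v}_v)$-part as $3(|S|-|\Sigma_p|)$ extra formal variables over $R_p$ (this is only true after possibly passing to a presentation, but for the purpose of the stated inequality on $g_2-r_2$ it suffices that each $R^{\square,\xi_v}_v$ is a quotient of a power series ring over $\cO$ in $3$ variables by at most — for trivial $\xi_v$ — one relation; when $\xi_v$ is trivial one extra relation appears, which only helps the inequality). Combining: $R^{\ord}_b$ has a presentation $R_p[[x_1,\dots,x_{g_2}]]/(f_1,\dots,f_{r_2})$ with $g_2-r_2=3(|S|-|\Sigma_p|)+g_1-r_1-(4|S|-1)=3(|S|-|\Sigma_p|)+(|S|-1-[F:\Q]-h^0)-(4|S|-1)\geq 3|S|-2|\Sigma_p|-[F:\Q]-2$; I would need the harmless inequality $-3|\Sigma_p|\ge -2|\Sigma_p|-1$ together with $h^0\le 1$, the latter being clear since $\ad^0\bar\rho_b$ sits in a two-step filtration with graded pieces among $\mathbf 1,\bar\chi,\bar\chi^{-1}$ and at most one twist-by-cyclotomic piece can carry $H^0(\cdot(1))$ (and that only when $\bar\chi\equiv\omega^{\pm1}$). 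Keeping track of the exact constants here is slightly fiddly but entirely mechanical.

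The lower bound on the dimension of each irreducible component then follows from Krull's principal ideal theorem applied to the presentation: $\dim R^{\ord}_b\geq \dim R_p + g_2 - r_2 \geq (1+2[F:\Q]+3|\Sigma_p|)+(3|S|-2|\Sigma_p|-[F:\Q]-2)$; but this is the global bound, and to get the sharp $[F:\Q]+1$ (resp. $[F:\Q]$) I would instead run the presentation of $R^{\ord}_b$ directly over $\cO$ rather than over $R_p$, using that the local ordinary deformation problems at $\Sigma_p$ are liftable (smooth over $\cO$ in the appropriate sense, or at least have all components of the expected dimension by Lemma \ref{orddefprop1}) and the $S\setminus\Sigma_p$ conditions are liftable by Lemma \ref{lrca}; the presentation over $\cO$ has $\dim\geq 1 + \sum_{v\in\Sigma_p}(\text{rel.dim of }R^\Delta_v) + \sum_{v\in S\setminus\Sigma_p}3 + (\text{global }g-r) - (\text{local contributions counted twice})$, which after bookkeeping collapses to $1+[F:\Q]-h^0$ — i.e. $[F:\Q]+1$ when $h^0=0$ and $[F:\Q]$ in general. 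I expect the main obstacle to be purely bookkeeping: matching the conventions for fixed-determinant versus variable-determinant deformation rings (the "$-1$" shifts), and correctly counting how the framing variables and the local relations at $S\setminus\Sigma_p$ interact when one tensors $R^{\square,\xi_v}_v$ (possibly reducible, with an honest relation) into the global ring. There is no genuinely deep point — the only substantive input is Lemma \ref{orddefprop1} (normality and dimension of the local ordinary ring), which is already proved — so once the constants are pinned down the lemma follows.
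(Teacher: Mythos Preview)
Your strategy is the same as the paper's --- compute $\dim R^{\ord}_{\loc}$ from the local pieces, feed it into the standard presentation of the $S$-framed global ring, and use $h^0:=\dim_\F H^0(G_{F,S},\ad^0\bar\rho_b(1))\le 1$. The first claim falls out immediately: each component of $R^{\Delta_S,\{\xi_v\}}_{\bar\rho_b}$ has dimension $\ge (2[F:\Q]+3|S|+1)+(g_1-r_1)=[F:\Q]+4|S|-h^0$, hence each component of $R^{\ord}_b$ has dimension $\ge [F:\Q]+1-h^0$ after stripping the $4|S|-1$ framing variables. Your detour through ``running directly over $\cO$'' is unnecessary --- the bound you already wrote collapses to exactly this.

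For the second claim your arithmetic breaks, and the break has a concrete source. You assert that each $R^{\square,\xi_v}_v$ ($v\in S\setminus\Sigma_p$) is a quotient of $\cO[[x_1,x_2,x_3]]$ by at most one relation. This is false here: under the standing hypotheses $N(v)\equiv 1\pmod p$ and $\bar\chi(\Frob_v)\equiv 1$, the restriction $\bar\rho_b|_{G_{F_v}}$ is unipotent, so $r_v:=\dim_\F H^2(G_{F_v},\ad^0\bar\rho_b)\ge 1$ and already the \emph{unrestricted} ring $R^\square_v$ needs $g_v=r_v+3\ge 4$ generators and $r_v$ relations. What the paper actually uses is the local Euler characteristic $g_v-r_v=3$, and then one further relation $\tr\rho(t)=\xi_v(t)+\xi_v^{-1}(t)$ cuts out $R^{\square,\xi_v}_v$; so the net contribution per such place is $g_v-(r_v+1)=2$, not $3$. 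Your ``harmless inequality $-3|\Sigma_p|\ge -2|\Sigma_p|-1$'' is equivalent to $|\Sigma_p|\le 1$, which is certainly not assumed, and is a symptom of this miscount. With the correct contribution of $2$ one gets, for the framed ring over $R_p$,
\[
(g_1-r_1)+2(|S|-|\Sigma_p|)=3|S|-2|\Sigma_p|-[F:\Q]-1-h^0\ \ge\ 3|S|-2|\Sigma_p|-[F:\Q]-2,
\]
which is the stated bound; Corollary~\ref{crord} then follows after accounting for the $4|S|-1$ framing variables at the level of connectedness dimension. So the fix is not deep --- just redo the local count at $v\in S\setminus\Sigma_p$ via the Euler characteristic rather than assuming smoothness there.
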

\begin{proof}
By the first part of \ref{rdxbq}, lemma \ref{lrca} and lemma 3.3 of \cite{BLGHT11}, we know that $R^\ord_\loc$ is equidimensional of dimension $2[F:\Q]+3|S|+1$. Since $\dim H^0(G_{F,S},\ad^0\bar{\rho}_b(1))\leq 1$. The first claim now follows from a simple computation.

For the second part, let $R_p=\widehat{\otimes}_{v|p}R^{\Delta}_v$. This is a domain of dimension $1+2[F:\Q]+3|\Sigma_p|$ by lemma \ref{orddefprop1}. Note that for $v\in S\setminus \Sigma_p$, $R^{\square,\xi_v}_v$ is a quotient of the unrestricted deformation ring $R^{\square}_v$ by one equation $\tr(t)=\xi(t)+\xi^{-1}(t)$, where $t$ is a topological generator of the pro-$p$ quotient of $I_{F_v}$. On the other hand, it is well-known that $R^{\square}_v$ has the form $\cO[[x_1,\cdots,x_{g_v}]]/(f_1,\cdots,f_{r_v})$. Here $g_v=\dim_\F H^1(G_{F_v},\ad^0\bar{\rho}_b)+3-\dim_\F H^0(G_{F_v},\ad^0\bar{\rho}_b)$ and $r_v=\dim_\F H^2(G_{F_v},\ad^0\bar{\rho}_b)$. It follows from local Euler characteristic formula that $g_v-r_v=3$. From this, it is easy to deduce the second claim in the lemma.
\end{proof}

\begin{para}[\underline{\textbf{Connectedness dimension}}] \label{connectednessdim}
In order to show that there are enough `nice' primes, we need the notion of \textit{connectedness dimension}. Reference here is \cite{BR86}.
\end{para}
\begin{defn}
Let $R$ be a complete noetherian local ring. The connectedness dimension of $R$ is defined to be 
\[c(R)=\min_{Z_1,Z_2}\{\dim (Z_1\cap Z_2)\}\] 
where $Z_1,Z_2$ are non-empty unions of irreducible components of $\Spec R$ such that $Z_1\cup Z_2=\Spec R$.
\end{defn}

Suppose $I$ is an ideal of a complete noetherian local ring $R$. The arithmetic rank $r(I)$ is defined as the minimal number of elements in $I$ which span an ideal with the same radical as $I$. The following result is Theorem 2.4 of \cite{BR86}.
\begin{prop} \label{cndimformula}
Let $I,R$ be as above. Then $c(R/I)\geq c(R)-r(I)-1$.
\end{prop}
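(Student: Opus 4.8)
The statement to prove is Proposition~\ref{cndimformula}: for an ideal $I$ of a complete noetherian local ring $R$, one has $c(R/I)\geq c(R)-r(I)-1$. Since this is quoted as ``Theorem 2.4 of \cite{BR86}'', the natural plan is to reconstruct the argument from \cite{BR86}, which proceeds by induction on the arithmetic rank $r(I)$.

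\textbf{Base case.} When $r(I)=0$, the ideal $I$ is nilpotent, so $\Spec R/I$ and $\Spec R$ have the same underlying topological space and hence the same irreducible components and the same connectedness dimension; thus $c(R/I)=c(R)\geq c(R)-0-1$ trivially, with room to spare. (In fact one wants the sharper case $r(I)=1$ as the real base of the induction.)

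\textbf{Reduction to a principal ideal.} Write $\sqrt{I}=\sqrt{(f_1,\dots,f_r)}$ with $r=r(I)$. Set $J=(f_1,\dots,f_{r-1})$, so that $r(J)\leq r-1$ and $R/I$ has the same topological space as $(R/J)/(\bar f_r)$ where $\bar f_r$ is the image of $f_r$. By the inductive hypothesis applied to $J\subseteq R$ we get $c(R/J)\geq c(R)-(r-1)-1 = c(R)-r$. So it suffices to prove the single-element step: for any $x$ in the maximal ideal of a complete local ring $S$, $c(S/xS)\geq c(S)-1$. Applying this with $S=R/J$ and $x=\bar f_r$ gives $c(R/I)\geq c(R/J)-1\geq c(R)-r-1$, as desired.

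\textbf{The principal step $c(S/xS)\geq c(S)-1$.} This is the heart of the matter and the main obstacle. The idea: take a decomposition $\Spec S/xS = Z_1'\cup Z_2'$ into unions of irreducible components realizing the connectedness dimension, i.e.\ $\dim(Z_1'\cap Z_2') = c(S/xS)$. Each component of $\Spec S/xS$ is contained in a component of $\Spec S$ (pull back along $\Spec S/xS\hookrightarrow \Spec S$ and use that $V(x)$ meets every component of positive dimension, invoking Krull's principal ideal theorem so that $\dim$ drops by at most one); grouping, lift to a decomposition $\Spec S = W_1\cup W_2$ of $\Spec S$ into unions of irreducible components with $W_i\supseteq Z_i'$. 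One must check $W_1\cup W_2$ is all of $\Spec S$: any component $C$ of $\Spec S$ not meeting $V(x)$ in the way required — here one uses that $S$ is local, hence $\Spec S$ is connected, so $C$ meets $V(x)=\Spec S/xS$, placing $C\cap V(x)$ inside some $Z_i'$ and hence $C$ inside $W_i$. Then $W_1\cap W_2 \supseteq Z_1'\cap Z_2'$, and since $W_1\cap W_2$ is a proper closed subset of the connected space $\Spec S$ whose complement meets it, one gets $\dim(W_1\cap W_2) \geq \dim(Z_1'\cap Z_2')$; combined with $\dim(W_1\cap W_2)\geq c(S)$ by definition — no wait, the inequality goes the other way — one instead argues: by Krull, $V(x)\cap(W_1\cap W_2)$ has dimension at least $\dim(W_1\cap W_2)-1 \geq c(S)-1$, and $V(x)\cap W_1\cap W_2 \subseteq$ the intersection locus inside $\Spec S/xS$, giving $c(S/xS)\geq c(S)-1$. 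The delicate points are: (i) ensuring the lifted decomposition still covers $\Spec S$, which rests on connectedness of $\Spec S$; (ii) the dimension bookkeeping, where the $-1$ comes precisely from Krull's principal ideal theorem applied to the cutting element $x$ on the intersection subscheme; (iii) handling components of $\Spec S$ that already lie in $V(x)$ separately from those cut down by $x$. I would carry out (iii) by a case split and expect (ii) to need care about whether $x$ is a zerodivisor on $W_1\cap W_2$ — if $x$ vanishes identically on a component of $W_1\cap W_2$ the dimension does not drop, which only helps. Modulo these checks, the induction closes and the proposition follows; since the paper only uses it as a black box from \cite{BR86}, a proof at this level of detail (or simply a pointer to loc.\ cit.) suffices.
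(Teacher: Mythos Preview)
The paper does not prove this proposition; it simply cites it as Theorem~2.4 of \cite{BR86}. So your attempt to reconstruct the argument is a genuine addition --- but it contains a fatal gap.

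Your induction reduces everything to the ``principal step'': for a complete local ring $S$ and $x\in\mathfrak m_S$, $c(S/xS)\geq c(S)-1$. This statement is \emph{false}. Take $S=k[[u,v]]$ and $x=uv$. Then $S$ is a domain, so the only admissible decomposition is $Z_1=Z_2=\Spec S$ and $c(S)=\dim S=2$. But $\Spec S/(uv)$ is the union of two lines meeting at the origin, so $c(S/(uv))=0$, which is strictly less than $c(S)-1=1$. (Note that the proposition itself, with $r(I)=1$, only asserts $c(S/(uv))\geq c(S)-2=0$, which holds with equality; the extra $-1$ in the statement is exactly the slack your principal step would erase.)

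The place your argument for the principal step breaks is the lifting. Given a decomposition $Z'_1\cup Z'_2=V(x)$, you want $W_1,W_2\subseteq\Spec S$ with $W_1\cup W_2=\Spec S$ and $V(x)\cap W_i\subseteq Z'_i$. But a single irreducible component $C$ of $\Spec S$ can break, after cutting by $x$, into pieces lying in \emph{both} $Z'_1\setminus Z'_2$ and $Z'_2\setminus Z'_1$; in the example above, $C=\Spec S$ and $C\cap V(uv)=Z'_1\cup Z'_2$. There is then no way to assign $C$ to $W_1$ or $W_2$ so that both containments hold, and your inclusion $V(x)\cap W_1\cap W_2\subseteq Z'_1\cap Z'_2$ collapses. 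The delicate point you flagged as (i) --- that the lifted decomposition covers $\Spec S$ --- is actually the easy direction; the containment going the other way is what fails.

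The argument in \cite{BR86} does not proceed by this naive lifting; it goes through local cohomology and a Hartshorne--Grothendieck type connectedness theorem, relating $c(R)$ to the vanishing of $H^i_{\mathfrak m}$, which behaves predictably under cutting by $r$ elements. If you want to give a self-contained proof rather than a citation, that is the route to take.
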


Combining this with lemma \ref{grrcord}, we get
\begin{cor} \label{crord}
$c(R^{\ord}_b/\varpi)\geq [F:\Q]-|S|+|\Sigma_p|-2$.
\end{cor}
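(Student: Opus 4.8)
The plan is to combine the explicit presentation of $R^{\ord}_b$ recorded in Lemma \ref{grrcord} with the connectedness-dimension estimate of Proposition \ref{cndimformula}, applied to a defining ideal that additionally kills $\varpi$.

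First I would invoke Lemma \ref{grrcord} to fix an isomorphism $R^{\ord}_b \cong R_p[[x_1,\dots,x_{g_2}]]/(f_1,\dots,f_{r_2})$, where $R_p$ is a domain of dimension $1+2[F:\Q]+3|\Sigma_p|$ and $g_2 - r_2 \geq 3|S| - 2|\Sigma_p| - [F:\Q] - 2$. Set $R = R_p[[x_1,\dots,x_{g_2}]]$. Since $R_p$ is a Noetherian domain, the power series ring $R$ is again a Noetherian local domain; in particular $\Spec R$ is irreducible, so by the very definition of connectedness dimension $c(R) = \dim R = 1 + 2[F:\Q] + 3|\Sigma_p| + g_2$. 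Next I would take the ideal $I = (f_1,\dots,f_{r_2},\varpi) \subseteq R$, so that $R/I \cong R^{\ord}_b/\varpi$ and the arithmetic rank satisfies $r(I) \leq r_2 + 1$.

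Then Proposition \ref{cndimformula} applied to $R$ and $I$ gives
\[
c(R^{\ord}_b/\varpi) \;=\; c(R/I) \;\geq\; c(R) - r(I) - 1 \;\geq\; \bigl(1 + 2[F:\Q] + 3|\Sigma_p| + g_2\bigr) - (r_2+1) - 1 .
\]
Rewriting the right-hand side as $2[F:\Q] + 3|\Sigma_p| + (g_2 - r_2) - 1$ and substituting the bound $g_2 - r_2 \geq 3|S| - 2|\Sigma_p| - [F:\Q] - 2$ yields $c(R^{\ord}_b/\varpi) \geq [F:\Q] + |\Sigma_p| + 3|S| - 3$, which (since $|S|\geq 1$) is in particular $\geq [F:\Q] - |S| + |\Sigma_p| - 2$, the asserted inequality. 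I would probably remark that this is in fact somewhat stronger than the stated estimate; the weaker form in the corollary is all that gets used later.

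There is no genuine obstacle here — all the real content sits in Lemma \ref{grrcord} (the local normality input from \ref{orddefprop1} together with the global Euler-characteristic bookkeeping) and in the formula of \cite{BR86}. The only points to keep straight are that $R_p$ being a domain forces $R_p[[x_1,\dots,x_{g_2}]]$ to be a domain, so that its connectedness dimension equals its Krull dimension, and that enlarging a defining ideal by the single element $\varpi$ raises the arithmetic rank by at most one. Everything else is numerical bookkeeping with the invariants of Lemma \ref{grrcord}.
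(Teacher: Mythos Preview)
Your proposal is correct and is exactly the argument the paper has in mind: the corollary is obtained by feeding the presentation of Lemma~\ref{grrcord} into Proposition~\ref{cndimformula}, with $R=R_p[[x_1,\dots,x_{g_2}]]$ (a domain, so $c(R)=\dim R$) and $I=(f_1,\dots,f_{r_2},\varpi)$.

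One small remark on the stronger bound you obtain. The proof of Lemma~\ref{grrcord} actually manufactures a presentation of the \emph{framed} ring $R^{\Delta_S,\{\xi_v\}}_{\bar\rho_b}\cong R^{\ord}_b[[y_1,\dots,y_{4|S|-1}]]$ over $R_p$, and the inequality $g_2-r_2\geq 3|S|-2|\Sigma_p|-[F:\Q]-2$ matches that framed object. If you run your argument with the framed ring and then use that $c(A[[y]])=c(A)+1$ (minimal primes and their intersections behave compatibly under adjoining a formal variable) to descend the extra $4|S|-1$ variables, you land exactly on $[F:\Q]-|S|+|\Sigma_p|-2$, which is the bound the paper records and uses in Proposition~\ref{rcmodc}. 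Either way the corollary follows; your numerics are not wrong, they simply take the lemma's phrasing more literally than its proof warrants.
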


\begin{para}[\textbf{Reducible locus of }$R^{\ord}_b$] \label{redlocrc}
Let $c\in H^1_{\Sigma^o}(F)\setminus\{0\}$. We choose a universal deformation 
\[\rho^{univ}:G_{F,S}\to\GL_2(R^{\ord}_b)~g\mapsto \begin{pmatrix} \tilde{a}(g) & \tilde b(g) \\ \tilde c(g) & \tilde d(g)\end{pmatrix}\]
so that $\rho^{univ}(\sigma^*)=\begin{pmatrix}1&0\\0&-1\end{pmatrix}$.

It is clear that the set of $\kp\in\Spec R^{\ord}_b$ such that $\rho^{univ}\otimes k(\kp)$ is reducible is closed in $\Spec R^{\ord}_b$. In fact, it is generated by the set of $x(\sigma,\tau)$ (see the discussion and notation in \ref{tar}). We denote its reduced subscheme by $\Spec R^{\mathrm{red}}_b$ and call it the reducible locus of $\Spec R^{\ord}_b$.

The dimension of reducible locus is crucial in our later analysis of $\Spec R^{\ord}_b$. An upper bound is given in lemma 2.7 of \cite{SW99}. We recall their argument here.

Let $F(\bar{\chi})$ be the splitting field of $\bar{\chi}$ and $\tilde S$ be the set of places of $F(\bar{\chi})$ above $S$. Then
\[H^1(G_{F,S},\F(\bar{\chi}^{-1}))=\Hom_{\Gal(F(\bar{\chi})/F)}(G_{F(\bar{\chi}),\tilde S}, \F(\bar{\chi}^{-1})).\]
In other words, we get a pairing $H^1(G_{F,S},\F(\bar{\chi}^{-1}))\times G_{F(\bar{\chi}),\tilde S}\to\F$. 
Denote $\dim_\F H^1_{\Sigma^o}(F)$ by $r_s$. Then we can choose elements $\sigma_1,\cdots,\sigma_{r_s}\in G_{F(\bar{\chi}),\tilde S}$ such that they form a basis of $H^1_{\Sigma^o}(F)^\vee$ under this pairing.

On the other hand, let $G_{F,S}^{\mathrm{ab}}(p)$ be the maximal pro-$p$ abelian quotient of $G_{F,S}$. We denote its $\Z_p$-rank by $\delta_F+1$ and choose a set of elements $\tau_0,\cdots,\tau_{\delta_F}\in G_{F(\bar{\chi}),\tilde S}$ whose images in $G_{F,S}^{\mathrm{ab}}(p)\otimes_{\Z_p} \Q_p$ form a basis. Since $F\neq \Q$, by the class field theory we have a trivial bound $\delta_F\leq [F:\Q]-2$.
\end{para}

\begin{prop} \label{redloc}
Let $I$ be the ideal of $R^{\mathrm{red}}_b$ generated by $\varpi$ and the image of elements 
\begin{itemize}
\item $\tilde a(\tau_i)+\tilde d(\tau_i)-2$, $i=0,\cdots,\delta_F$,
\item $\tilde{b}(\sigma_i)-\tilde{b_i}$, $i=1,\cdots, r_s$, where $\tilde{b_i}\in\cO$ is a lifting of the reduction of $\tilde{b}(\sigma_i)$ modulo the maximal ideal of $R^{\ord}_b$.
\end{itemize}
Then $R^{\mathrm{red}}_b/I$ has finite length. In particular, $\dim R^{\mathrm{red}}_b/(\varpi)\leq \delta_F+\dim_\F H^1_{\Sigma^o}(F)+1$.
\end{prop}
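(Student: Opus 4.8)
The plan is to show that on the closed subscheme $\Spec R^{\mathrm{red}}_b/(\varpi)$, after further quotienting by the listed elements, there is no room left; i.e. the resulting ring is artinian. The key structural fact is that a point of $\Spec R^{\mathrm{red}}_b$ is, by definition, a deformation $\rho_R$ of $\bar{\rho}_b$ with all $x(\sigma,\tau)=0$, hence (using the construction recalled in \ref{tar}, applied over the reduced, excellent ring $R^{\mathrm{red}}_b$, or rather after localizing/completing at each prime) of the shape $\begin{pmatrix}\tilde a & \tilde b\\ 0 & \tilde d\end{pmatrix}$ for two characters $\tilde a,\tilde d:G_{F,S}\to R^\times$ with $\tilde a\tilde d=\chi$ and an extension class $\tilde b$. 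The characters $\tilde a,\tilde d$ are determined by their restriction to $G_{F,S}^{\mathrm{ab}}(p)$; modulo $\varpi$, $\bar a=\mathbf 1$ and $\bar d=\bar\chi$, so $\tilde a/\mathbf 1$ (equivalently $\tilde a$ itself, since it lifts $\mathbf 1$) factors through $G_{F,S}^{\mathrm{ab}}(p)$ and is determined by its values on $\tau_0,\dots,\tau_{\delta_F}$. Imposing $\tilde a(\tau_i)+\tilde d(\tau_i)-2=0$ together with $\tilde a\tilde d=\chi$ pins down $\tilde a(\tau_i),\tilde d(\tau_i)$ up to finite ambiguity (the two roots of $X^2-2X+\chi(\tau_i)$ reduce to the two roots modulo the maximal ideal, and the value of $\tilde a(\tau_i)$ is forced to the one congruent to $1$), so after this quotient the pair $(\tilde a,\tilde d)$ is rigid modulo elements of bounded finite length; this accounts for the $\delta_F+1$ parameters.

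Next I would deal with the extension class $\tilde b$. On the reducible locus the cocycle condition makes $\tilde b$ (suitably twisted) a $1$-cocycle with values in $R(\bar{\chi}^{-1})$-type coefficients, whose class lies in the Selmer-type group $H^1_{\Sigma^o}$ built into $R^{\ord}_b$ via the ordinary condition; more precisely, reduction modulo the maximal ideal gives a class in $H^1_{\Sigma^o}(F)$, and the dual basis $\sigma_1,\dots,\sigma_{r_s}$ was chosen exactly so that the values $\tilde b(\sigma_1),\dots,\tilde b(\sigma_{r_s})$ span the relevant cohomology. Imposing $\tilde b(\sigma_i)-\tilde b_i=0$ for all $i$ therefore kills all deformations of the extension class modulo coboundaries and modulo $\varpi$, up to finite length: any point of $\Spec R^{\mathrm{red}}_b/I$ has $\tilde b$ agreeing with the fixed class to all orders, hence (by Nakayama applied to the finitely generated module $\tilde b(G_{F,S})/(\text{fixed data})$, using that $\tilde b$ is determined by its values on a finite generating set, which can be taken to include the $\sigma_i$ after absorbing coboundaries) the whole deformation is determined up to bounded finite length. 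Combining, $R^{\mathrm{red}}_b/I$ is a finitely generated $\cO$-module killed by a power of $\varpi$ and by the maximal ideal to some bounded power, hence of finite length.

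For the displayed dimension bound, $I$ is generated over $R^{\mathrm{red}}_b/(\varpi)$ by $\delta_F+1+r_s$ elements (the $\tilde a(\tau_i)+\tilde d(\tau_i)-2$ and the $\tilde b(\sigma_i)-\tilde b_i$), and $R^{\mathrm{red}}_b/I$ being artinian forces $\dim R^{\mathrm{red}}_b/(\varpi)\le (\delta_F+1)+r_s = \delta_F+\dim_\F H^1_{\Sigma^o}(F)+1$ by Krull's height theorem (each generator can drop the dimension by at most one).

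\textbf{Main obstacle.} The delicate point is the passage from the pointwise/mod-maximal-ideal description to a genuine finite-length statement: on $R^{\mathrm{red}}_b$, which is only reduced (not a domain or a power series ring), one must be careful that ``all $x(\sigma,\tau)$ vanish'' really does produce a global reducible lifting $\begin{pmatrix}\tilde a&\tilde b\\0&\tilde d\end{pmatrix}$, that $\sigma^*$ can be used to split off the diagonal as in \ref{adrep} and \ref{tar}, and that the chosen $\sigma_i$ continue to ``see'' the extension class over an arbitrary artinian quotient rather than just over $\F$. This is exactly the argument of Skinner--Wiles (lemma 2.7 of \cite{SW99}), so I expect it to go through; the only thing requiring genuine care is bookkeeping the finite-length error terms (the $\ell(A/(f))<C$-style bounds) uniformly, and making sure the ordinary condition at $v\in\Sigma_p\setminus\Sigma^o$ is what cuts $H^1$ down to $H^1_{\Sigma^o}(F)$ so that no extra $\tilde b$-parameters survive.
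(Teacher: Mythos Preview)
Your approach has the right ingredients (control the diagonal via the $\tau_i$, control the extension via the $\sigma_i$), but the execution is needlessly tangled, and the ``finite-length bookkeeping'' you set up is both vague and unnecessary. The paper's argument is much crisper and avoids precisely the obstacle you flag.

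Here is the clean version. To show $R^{\mathrm{red}}_b/I$ is Artinian it suffices to show that every prime $\kp$ of $R^{\mathrm{red}}_b/I$ is maximal; equivalently, that $\rho(\kp):=\rho^{univ}\otimes k(\kp)$ equals $\bar\rho_b\otimes k(\kp)$. Over the field $k(\kp)$ the vanishing of all $x(\sigma,\tau)=\tilde b(\sigma)\tilde c(\tau)$ forces either $\tilde b\equiv 0$ or $\tilde c\equiv 0$; since $b\neq 0$, some $\tilde b(\sigma_i)$ reduces to a nonzero element of $\F$ and is therefore a unit, so $\tilde c\equiv 0$ and $\rho(\kp)$ is genuinely upper triangular with diagonal characters $\chi_1,\chi_2$. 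Now $\chi_1(\tau_i)+\chi_2(\tau_i)=2$ and $\chi_1(\tau_i)\chi_2(\tau_i)=\bar\chi(\tau_i)=1$ (we are in characteristic $p$, and the $\tau_i$ may be chosen in $\ker\bar\chi$ without changing their images in $G_{F,S}^{\mathrm{ab}}(p)\otimes\Q_p$), so $\chi_1(\tau_i)=\chi_2(\tau_i)=1$; this forces $\chi_1,\chi_2$ to have finite order and hence to be $\mathbf{1},\bar\chi$. Then $\rho(\kp)|_{G_{F(\bar\chi)}}$ is unipotent upper triangular, the off-diagonal entry defines a class in $H^1_{\Sigma^o}(F)\otimes k(\kp)$, and the relations $\tilde b(\sigma_i)=\tilde b_i$ pin that class to $b$. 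Thus $\rho(\kp)=\bar\rho_b\otimes k(\kp)$ and $\kp$ is maximal.

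Two specific points where your write-up goes astray: first, the ``two roots of $X^2-2X+\chi(\tau_i)$ up to finite ambiguity'' picture is wrong in characteristic $p$ --- there the polynomial is $(X-1)^2$ and there is no ambiguity at all. Second, your conclusion ``$R^{\mathrm{red}}_b/I$ is a finitely generated $\cO$-module \ldots\ hence of finite length'' is circular as written: you have not shown finite generation over $\cO$ independently of Artinianness. Working prime by prime eliminates both issues and dissolves your ``main obstacle'' entirely.
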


\begin{proof}
Let $\kp$ be a prime of $R^{\mathrm{red}}_b/I$. It suffices to prove $\rho(\kp)=\rho^{univ}\mod \kp$ is a trivial deformation. First note that the semi-simplification of $\rho(\kp)$ is a sum of two characters $\chi_1,\chi_2$ of $G_{F,S}$. By the definition of $I$, $\chi_1(\tau_i)+\chi_2(\tau_i)=2$ and $\chi_1(\tau_i)\chi_2(\tau_i)=\bar{\chi}(\tau_i)=1$. Hence $\chi_1(\tau_i)=\chi_2(\tau_i)=1$. Therefore $\chi_1,\chi_2$ are of finite orders and have to be $\mathbf{1},\bar{\chi}$.

Thus $\rho(\kp)|_{G_{F(\bar{\chi}),\tilde S}}$ is unipotent. Since $\rho^{univ}(\sigma^*)=\begin{pmatrix}1&0\\0&-1\end{pmatrix}$, we must have
\[\rho(\kp)|_{G_{F(\bar{\chi}),\tilde S}}=\begin{pmatrix} 1 & *\\ 0 & 1\end{pmatrix}.\]
Note that $*$ can be viewed as an element in 
\[H^1_{\Sigma^o}(F)\otimes k(\kp)\subseteq \Hom_{\Gal(F(\bar{\chi})/F)}(G_{F(\bar{\chi}),\tilde S}, k(\kp)(\bar{\chi}^{-1})).\] 
It follows from our construction of $I$ that $\rho(\kp)=\bar{\rho}_b\otimes k(\kp)$.
\end{proof}

\begin{para}[\textbf{Reducible locus of }$R^{\ps,\ord,\{\xi_v\}}_{\Sigma^o}$] \label{redlrps}
Similarly the set of $\kp\in\Spec R_{\Sigma^o}^{\ps,\ord,\{\xi_v\}}$ such that $\rho(\kp)$ is reducible is closed in $\Spec R^{\ps,\ord,\{\xi_v\}}_{\Sigma^o}$. We denote its reduced subscheme by $\Spec R^{\ps,\mathrm{red}}$. Note that the universal pseudo-character becomes a sum of two characters when restricted on $\Spec R^{\ps,\mathrm{red}}$. One is a lifting of $\bar{\chi}$ and the other one is a lifting of $\mathbf{1}$. The latter one induces a surjective map $\cO[[G_{F,S}^\mathrm{ab}(p)]]\to R^{\ps,\mathrm{red}}$. Hence $\dim R^{\ps,\mathrm{red}}\leq\delta_F+2$.
\end{para}

\begin{para}
Now we can state and prove the main result of this subsection. For any non-zero $b\in H^1_{\Sigma^o}(F)$, we say a prime $\kp\in\Spec R^{\ord}_b$ is \textit{pro-modular} if its image in $\Spec R^{\ps,\ord,\{\xi_v\}}_{\Sigma^o}$ is pro-modular. We say a subset $Z\subseteq \Spec R^{\ord}_b$ is pro-modular if any prime of $Z$ is pro-modular. Note that the set of pro-modular primes is closed in $\Spec R^{\ord}_b$.

Similarly, we say a prime $\kp\in\Spec R^{\ord}_b$ is \textit{nice} if its image in $\Spec R^{\ps,\ord,\{\xi_v\}}_{\Sigma^o}$ is nice (defined in \ref{ordnice}). Then proposition \ref{propA} implies that an irreducible component of $\Spec R^{\ord}_b$ is pro-modular if it contains a nice prime.
\end{para}

\begin{prop}  \label{rcmodc}
Let $b\in H^1_{\Sigma^o}(F)\setminus\{0\}$. Assume
\begin{itemize}
\item There exists a pro-modular prime $\kp\in \Spec R^{\ord}_b$ such that $R^{\ord}_b/\kp$ is one-dimensional and $\rho^{univ}\otimes k(\kp)$ is irreducible. (This prime ideal $\kp$ could be characteristic $0$ or $p$.) In particular, the assumption in \ref{ass1} holds.
\item $[F:\Q]-4|S|+4|\Sigma_p|-3>\delta_F+\dim_\F H^1_{\Sigma^o}(F)$.
\end{itemize}
Then $R^{\ord}_b$ is pro-modular, i.e. any prime of $R^{\ord}_b$ is pro-modular.
\end{prop}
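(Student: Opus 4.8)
The plan is to show that the (closed) pro-modular locus in $\Spec R^{\ord}_b$ contains at least one irreducible component, and then to propagate pro-modularity across \emph{all} components using that $\Spec R^{\ord}_b$ is connected (it is the spectrum of a local ring) together with the connectedness-dimension estimates of \ref{connectednessdim}. Throughout I would use that $R^{\ord}_b$ is reduced and $\cO$-flat with every irreducible component of dimension $\geq [F:\Q]$ (lemma \ref{grrcord} and its proof, via $R^{\ord}_b\cong R_p[[x_1,\dots,x_{g_2}]]/(f_1,\dots,f_{r_2})$ with $R_p$ an $\cO$-flat domain), so that irreducible components of $\Spec R^{\ord}_b$ are in bijection with those of $\Spec R^{\ord}_b/(\varpi)$, and that a component is pro-modular as soon as its generic point is, the pro-modular locus being closed and the map $\Spec R^{\ord}_b\to\Spec R^{\ps,\ord,\{\xi_v\}}_{\Sigma^o}$ being available via evaluation of the trace.

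\textbf{Step 1: one pro-modular component.} Since $\rho^{univ}\otimes k(\kp)$ is irreducible, corollary \ref{psccomp} together with the discussion in \ref{rdxbq}(2) identifies $\widehat{(R^{\ps,\ord,\{\xi_v\}}_{\Sigma^o})_{\kp^{\ps}}}$ and $\widehat{(R^{\ord}_b)_{\kp}}$ up to nilpotents, where $\kp^{\ps}=\kp\cap R^{\ps,\ord,\{\xi_v\}}_{\Sigma^o}$; in particular $\dim R^{\ps,\ord,\{\xi_v\}}_{\Sigma^o}/\kp^{\ps}=1$ and generizations of $\kp^{\ps}$ in $\Spec R^{\ps,\ord,\{\xi_v\}}_{\Sigma^o}$ lift to generizations of $\kp$ in $\Spec R^{\ord}_b$ of the same dimension. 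As $\kp$ is pro-modular, $\kp^{\ps}$ lies on the closed subscheme $\Spec\T^{\ord}_\km\subseteq\Spec R^{\ps,\ord,\{\xi_v\}}_{\Sigma^o}$, hence contains a minimal prime $\wp$ of $\T^{\ord}_\km$, which by corollary \ref{orddim} has dimension $[F:\Q]+1$. Its lift $\tilde\wp\subseteq\kp$ in $\Spec R^{\ord}_b$ then has dimension $[F:\Q]+1$, which by lemma \ref{grrcord} forces $\tilde\wp$ to be a minimal prime; so the component $C_0=\overline{\{\tilde\wp\}}$ is an irreducible component of $\Spec R^{\ord}_b$, and it is pro-modular because $\wp$ is and the pro-modular locus is closed.

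\textbf{Step 2: propagation.} Suppose for contradiction that some component of $\Spec R^{\ord}_b$ is not pro-modular. Let $Z$ be the union of the pro-modular components and $Z'$ the union of the others; both are non-empty (the first by Step 1) unions of irreducible components with $Z\cup Z'=\Spec R^{\ord}_b$, and likewise for their reductions $\bar Z,\bar Z'$ in $\Spec R^{\ord}_b/(\varpi)$. By the definition of connectedness dimension and corollary \ref{crord},
\[\dim(\bar Z\cap\bar Z')\geq c\bigl(R^{\ord}_b/(\varpi)\bigr)\geq [F:\Q]-|S|+|\Sigma_p|-2.\]
On the other hand $\dim R^{\mathrm{red}}_b/(\varpi)\leq\delta_F+\dim_\F H^1_{\Sigma^o}(F)+1$ by proposition \ref{redloc}; since $S\supseteq\Sigma_p$, the hypothesis $[F:\Q]-4|S|+4|\Sigma_p|-3>\delta_F+\dim_\F H^1_{\Sigma^o}(F)$ implies $[F:\Q]-|S|+|\Sigma_p|-2>\delta_F+\dim_\F H^1_{\Sigma^o}(F)+1\geq\dim R^{\mathrm{red}}_b/(\varpi)$. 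Hence $\bar Z\cap\bar Z'$ is not contained in the reducible locus, so it contains a one-dimensional prime $\kq$ (necessarily of characteristic $p$) with $\rho^{univ}\otimes k(\kq)$ irreducible; using the remaining room in the dimension count we may further require that $\kq$ avoid the proper closed sublocus where the universal deformation acquires extra ramification at some $v\in S\setminus\Sigma_p$. Now $\kq\in Z$ is pro-modular, its image $\kq^{\ps}$ comes from a prime $\kq'$ of $\T^{\ord}_\km$ with $\dim\T^{\ord}_\km/\kq'=1$, and $\kq'$ is \emph{nice} in the sense of \ref{ordnice}: condition (1) holds as $\rho^{univ}\otimes k(\kq)$ is irreducible and the universal deformation over $R^{\ord}_b$ provides a $G_{F,S}$-stable $A$-lattice; condition (2) holds because the reduction of this lattice is $\bar\rho_b$, a non-split upper-triangular extension since $b\neq0$; condition (3) holds by the extra genericity of $\kq$ (and the dihedral alternative is excluded automatically, cf.\ \ref{ordnice}). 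Proposition \ref{propA} now gives that $(R^{\ps,\ord,\{\xi_v\}}_{\Sigma^o})_{\kq^{\ps}}\to\T^{\ord}_{\kq'}$ has nilpotent kernel, so every generization of $\kq^{\ps}$ in $\Spec R^{\ps,\ord,\{\xi_v\}}_{\Sigma^o}$ is pro-modular; but $\kq$ lies on a component $C_1\subseteq Z'$ whose image has generic point among these generizations, whence $C_1$ is pro-modular, contradicting $C_1\subseteq Z'$. Therefore every component is pro-modular, i.e.\ $R^{\ord}_b$ is pro-modular.

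\textbf{Main obstacle.} The delicate point is Step 2: one must know that the reductions mod $\varpi$ of the pro-modular and non-pro-modular parts genuinely remain unions of irreducible components of $\Spec R^{\ord}_b/(\varpi)$ (resting on the $\cO$-flatness and the ``unique minimal prime below'' statements of the type in lemma \ref{ccal} and lemma \ref{grrcord}), and — more seriously — that the one-dimensional characteristic-$p$ prime $\kq$ produced purely by the dimension count can be taken to satisfy condition (3) of \ref{ordnice}, namely $\rho(\kq')^o|_{G_{F_v}}=\bar\rho_b|_{G_{F_v}}$ for $v\in S\setminus\Sigma_p$. This is exactly where the precise accounting of the auxiliary local conditions at $S\setminus\Sigma_p$ is needed, and where the quantitative hypothesis involving $4|S|-4|\Sigma_p|$ is spent: it ensures that, after excising the extra-ramification loci at the places in $S\setminus\Sigma_p$, the intersection $\bar Z\cap\bar Z'$ still carries a prime of the required dimension and local type.
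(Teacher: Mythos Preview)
Your overall architecture matches the paper's, and Step 2 is essentially right (the ``extra genericity'' is made precise in the paper by cutting with a system of parameters of each $R^{\square,\xi_v}_v/(\varpi)$). The problem is Step 1.

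\textbf{The gap in Step 1.} You conclude that $\tilde\wp$ is a minimal prime of $R^{\ord}_b$ from $\dim R^{\ord}_b/\tilde\wp=[F:\Q]+1$ together with lemma \ref{grrcord}. But lemma \ref{grrcord} is a \emph{lower} bound on the dimension of components: it says each component has dimension at least $[F:\Q]$ (or $[F:\Q]+1$ when $H^0(G_{F,S},\ad^0\bar\rho_b(1))=0$). Nothing prevents a minimal prime $\kP\subsetneq\tilde\wp$ with $\dim R^{\ord}_b/\kP\geq[F:\Q]+2$; indeed, an \emph{upper} bound of $[F:\Q]+1$ on component dimensions is tantamount to knowing $R^{\ord}_b$ is finite over $\Lambda_F$, which is part of what one is ultimately proving (Theorem \ref{thmB}(1)). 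So you have not exhibited a pro-modular irreducible component. Relatedly, the blanket assertion that $R^{\ord}_b$ is reduced and $\cO$-flat, with a bijection between its components and those of its special fibre, is not supported by lemmas \ref{ccal} or \ref{grrcord}; neither says this about $R^{\ord}_b$.

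\textbf{How the paper handles it.} The paper does \emph{not} try to show the lifted prime is minimal. Instead it runs the mechanism you use in Step 2 already in Step 1: from the pro-modular prime $\kQ$ (your $\tilde\wp$) of dimension $\geq[F:\Q]+1$, it passes to a \emph{nice} prime $\kq\supseteq\kQ$ by modding out $\varpi$ and, for each $v\in S\setminus\Sigma_p$, a system of parameters $f_{v,1},f_{v,2},f_{v,3}$ of $R^{\square,\xi_v}_v/(\varpi)$. The numerical hypothesis guarantees $[F:\Q]-3(|S|-|\Sigma_p|)>\dim R^{\mathrm{red}}_b/(\varpi)$, so one may choose $\kq$ with $\rho(\kq)$ irreducible, while the cut by the $f_{v,i}$ forces $\rho(\kq)^o|_{G_{F_v}}=\bar\rho_b|_{G_{F_v}}$ (condition (3) of \ref{ordnice}). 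Proposition \ref{propA} then makes every minimal prime of $(R^{\ord}_b)_\kq$ pro-modular, so at least one irreducible component is. For the propagation the paper takes $Z_1,Z_2$ to be unions of irreducible components of the \emph{special fibre} $R^{\ord}_b/(\varpi)$, not of $R^{\ord}_b$; this way corollary \ref{crord} applies directly without any $\cO$-flatness input, and the same nice-prime argument at a point of $Z_1\cap Z_2$ gives the contradiction. In short, the ``find a nice prime, apply \ref{propA}'' step must be run twice, and the hypothesis with $4|S|-4|\Sigma_p|$ is spent precisely on the inequality
\[
c\bigl(R^{\ord}_b/(\varpi)\bigr)\;>\;3(|S|-|\Sigma_p|)+\dim R^{\mathrm{red}}_b/(\varpi).
\]
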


\begin{proof}
Let $\kp'\in\Spec R^{\ps,\ord,\{\xi_v\}}_{\Sigma^o}$ be the image of $\kp$. This is still one-dimensional as $\rho^{univ}\otimes k(\kp)$ is irreducible. By corollary \ref{orddim}, the pro-modular locus in $\Spec R^{\ps,\ord,\{\xi_v\}}_{\Sigma^o}$ is equidimensional of dimension $[F:\Q]+1$. In particular, we can choose a pro-modular prime $\mathfrak{P}$ contained in $\kp'$ with $\dim (R^{\ps,\ord,\{\xi_v\}}_{\Sigma^o}/\mathfrak{P})_{\kp'}=[F:\Q]$. Notice that
\[\dim (R^{\ps,\ord,\{\xi_v\}}_{\Sigma^o}/\mathfrak{P})_{\kp'}=\dim \widehat{(R^{\ps,\ord,\{\xi_v\}}_{\Sigma^o})_{\kp'}}/(\kP)=\dim \widehat{(R^{\ord}_b)_{\kp}}/(\kP)= \dim (R^{\ord}_b/(\kP))_{\kp} \]
where the equality in the middle follows from corollary \ref{pscrem} and the discussion in the second part of \ref{rdxbq}. Thus we may find a pro-modular prime $\kQ$ of $R^{\ord}_b$ such that $\dim R^{\ord}_b/\kQ=1
+[F:\Q]$. I claim we can find \textit{one} nice prime containing $\kQ$. Hence at least \textit{one} irreducible component of $R^{\ord}_b$ is pro-modular.

For $v\in S\setminus \Sigma_p$, we know that $R^{\square,\xi_v}_v/(\varpi)$ is $3$-dimensional by lemma \ref{lrca}. Hence we may find $f_{v,1},f_{v,2},f_{v,3}$ in $R^{\square,\xi_v}_v$ that form a system of parameters of $R^{\square,\xi_v}_v/(\varpi)$. Consider the quotient $R'$ of $R^{\ord}_b/\kQ$ by $\varpi$ and all such $f_{v,1},f_{v,2},f_{v,3}$, $v\in S\setminus \Sigma_p$. It has dimension at least
\[[F:\Q]-3(|S|-|\Sigma_p|).\]

On the other hand, it follows from proposition \ref{redloc}, that the reducible locus in the special fibre $R^{\mathrm{red}}_b/(\varpi)$ has dimension at most $\delta_F+\dim_\F H^1_{\Sigma^o}(F)+1$, which is less than $[F:\Q]-3(|S|-|\Sigma_p|)$ by our assumption. Hence there must exist a one-dimensional prime $\kq\in\Spec R'$ such that $\rho^{univ}\otimes k(\kq)$ is irreducible. It is easy to see that $\kq$ is nice in view of the definition of `nice' above proposition \ref{propA}.

Now let $Z_1$ be the union of pro-modular irreducible components of the special fibre $R^{\ord}_b/(\varpi)$ and $Z_2$ be the union of other irreducible components of the special fibre. It suffices to show $Z_2$ is empty. Suppose not, we have already seen $Z_1$ is non-empty, hence by corollary \ref{crord}, 
\[\dim Z_1\cap Z_2\geq [F:\Q]-|S|+|\Sigma_p|-2.\]
Note that by our assumption, this is larger than $3(|S|-|\Sigma_p|)+\dim R^{\mathrm{red}}_b/(\varpi)$. Therefore arguing as above, we can find a nice prime in $Z_1\cap Z_2$. This implies that some irreducible component in $Z_2$ is also pro-modular. Contradiction. Thus $R^{\ord}_b$ is pro-modular.
\end{proof}

\begin{cor} \label{rcmod1c}
Assume 
\begin{itemize}
\item (Assumption in \ref{ass1}) $T_v-(1+\chi(\Frob_v)),v\notin S$ and $\psi_{v,1}(\gamma)-\tilde{\psi}_{v,1}(\gamma),\gamma\in F_v^\times,v|p$ and $\varpi$ generate a maximal ideal $\km$ of $\T^\ord$. Here $\tilde{\psi}_{v,1}(\gamma)\in\cO$ is any lifting of $\bar{\psi}_{v,1}(\gamma)\in\F$.
\item $[F:\Q]-4|S|+4|\Sigma_p|-3>\delta_F+\dim_\F H^1_{\Sigma^o}(F)$.
\end{itemize}
Enlarge $\cO$ if necessary, then there exists a non-zero $b\in H^1_{\Sigma^o}(F)$ such that $R^{\ord}_b$ is pro-modular.
\end{cor}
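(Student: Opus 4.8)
The plan is to deduce Corollary \ref{rcmod1c} from Proposition \ref{rcmodc} by producing, after enlarging $\cO$ if necessary, a single nonzero class $b\in H^1_{\Sigma^o}(F)$ for which the hypotheses of the proposition are met. The second hypothesis of the proposition (the inequality $[F:\Q]-4|S|+4|\Sigma_p|-3>\delta_F+\dim_\F H^1_{\Sigma^o}(F)$) is literally the second hypothesis of the corollary, so it carries over for free. Hence the entire content is the first hypothesis: we must find some $b$ for which there is a pro-modular prime $\kp\in\Spec R^{\ord}_b$ with $R^{\ord}_b/\kp$ one-dimensional and $\rho^{univ}\otimes k(\kp)$ irreducible.

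First I would use the assumption in \ref{ass1} on the ordinary Hecke algebra: $\km$ is a maximal ideal of $\T^{\ord}=\T^{\ord}_{\psi,\xi}(U^p)$ cut out by the Eisenstein congruence. By Corollary \ref{orddim}, $\T^{\ord}_\km$ is equidimensional of dimension $[F:\Q]+1\geq 3$, so it has one-dimensional primes $\kq$; moreover, since $\varpi\in\km$, these contain $p$, and by the finiteness of $\Spec\T^{\ord}_\km\to\Spec\Lambda_F$ together with the dimension count, such a $\kq$ maps to a non-maximal prime of $\Lambda_F$, so $\psi_{v,1}\bmod\kq$ has infinite order for at least one $v|p$. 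I would then choose $\kq$ so that $\rho(\kq)$ is irreducible — this is possible because the reducible locus of $\Spec\T^{\ord}_\km$ (cut out by all $x(\sigma,\tau)$, equivalently by a map from $\cO[[G_{F,S}^{\mathrm{ab}}(p)]]$) has dimension bounded by $\delta_F+2\leq[F:\Q]$, which is strictly smaller than $[F:\Q]+1$; so there is a one-dimensional pro-modular prime with irreducible Galois representation. Passing to $A$, the normal closure of $\T^{\ord}_\km/\kq$ in $k(\kq)$ (after enlarging $\cO$ so $A$ has residue field $\F$, exactly as in \ref{kqrho}), and choosing a lattice $\rho(\kq)^o:G_{F,S}\to\GL_2(A)\cong\GL_2(\F[[T]])$, its mod-$\km_A$ reduction $\bar\rho_b$ is a nontrivial extension of some lift of $\bar\chi$ by a lift of $\mathbf 1$ (nonsplit because $\rho(\kq)$ is irreducible while the reduction must have semisimplification $\mathbf 1\oplus\bar\chi$); a further twist by a character of $G_F$ of $p$-power order arranges that $\bar\rho_b$ has the form $\begin{pmatrix}\mathbf 1&*\\0&\bar\chi\end{pmatrix}$ with $*\in H^1_{\Sigma^o}(F)$, using that $T^{\ord}_\km$ is $\psi_{v,1}$-ordinary so the extension is $\bar\psi_{v,1}$-ordinary at each $v|p$; choose the cocycle so $\phi_b(\sigma^*)=0$. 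This $b$ is nonzero since $\bar\rho_b$ is nonsplit.

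With this $b$ fixed, the lattice $\rho(\kq)^o$ gives, by the universal property of $R^{\ord}_b=R^{\Delta,\{\xi_v\}}_{\bar\rho_b}$ (note $\rho(\kq)^o$ has determinant $\chi$, is ordinary at each $v|p$ with the prescribed characters, and has the prescribed inertial types $\xi_v$ at $v\in S\setminus\Sigma_p$ — the last using local-global compatibility as in \ref{ass1}, together with the extra requirement $\rho(\kq)^o|_{G_{F_v}}=\bar\rho_b|_{G_{F_v}}$ for $v\in S\setminus\Sigma_p$ which I would build into the choice of lattice/twist, matching condition (3) of \ref{ordnice}), a prime $\kp\in\Spec R^{\ord}_b$ whose image in $\Spec R^{\ps,\ord,\{\xi_v\}}_{\Sigma^o}$ equals $\kq^{\ps}=\kq\cap R^{\ps,\ord,\{\xi_v\}}_{\Sigma^o}$ and is therefore pro-modular by definition. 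Since $\rho(\kq)=\rho^{univ}\otimes k(\kp)$ is irreducible and $R^{\ord}_b/\kp$ surjects onto $B\cong$ (image of $A$ in $k(\kq)$), which is one-dimensional, while $\dim R^{\ord}_b/\kp\geq 1$ always, we get $\dim R^{\ord}_b/\kp=1$. Thus both hypotheses of Proposition \ref{rcmodc} hold for this $b$, and the proposition gives that $R^{\ord}_b$ is pro-modular, which is exactly the assertion.

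The main obstacle is the bookkeeping in the second paragraph: arranging simultaneously, after a single twist by a finite-order character, that the residual extension $\bar\rho_b$ lies in $H^1_{\Sigma^o}(F)$ (i.e.\ is $\bar\psi_{v,1}$-ordinary at $v|p$ with $\bar\psi_{v,1}=\mathbf 1$ for $v\in\Sigma^o$ and $=\bar\chi|_{G_{F_v}}$ otherwise) \emph{and} that $\rho(\kq)^o|_{G_{F_v}}=\bar\rho_b|_{G_{F_v}}$ for $v\in S\setminus\Sigma_p$ so that $\kp$ actually defines a point of $R^{\ord}_b$ rather than of some nearby deformation ring; one must check the inertial-type condition $\tr(\rho(\kq)^o)|_{I_{F_v}}=\xi_v+\xi_v^{-1}$ is compatible with the choice of $\Sigma^o$-ordinary structure (it is, since these are imposed on disjoint sets of places, $S\setminus\Sigma_p$ versus $\Sigma_p$). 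A secondary subtlety is making sure the one-dimensional prime $\kq$ can be chosen with $\rho(\kq)$ irreducible and not accidentally forced to violate any of the conditions in \ref{ordnice} — but as noted, the non-dihedral condition is automatic here because $\psi_{v,1}\bmod\kq$ has infinite order for some $v|p$, by the third part of Lemma \ref{nirred}.
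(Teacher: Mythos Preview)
Your approach is correct and follows the paper's proof: use Corollary~\ref{orddim} to get $\dim\T^{\ord}_\km=[F:\Q]+1$, compare with the reducible locus (dimension $\leq\delta_F+2\leq[F:\Q]$ via \ref{redlrps}) to find a one-dimensional pro-modular prime $\kq$ with $\rho(\kq)$ irreducible, take a lattice whose reduction is a non-split extension in $H^1_{\Sigma^o}(F)$, and apply Proposition~\ref{rcmodc}.

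Two of your stated obstacles are not genuine. First, no twist is needed to force the residual into the shape $\begin{pmatrix}\mathbf 1&*\\0&\bar\chi\end{pmatrix}$: the trace already reduces to $1+\bar\chi$, and by the usual Ribet-type lattice argument one may choose the lattice so that $\mathbf 1$ is the subrepresentation. The class then lands in $H^1_{\Sigma^o}(F)$ automatically: the $\psi_{v,1}$-stable line of $\rho(\kq)|_{G_{F_v}}$ descends to a saturated $A$-line in the lattice, and for $v\in\Sigma_p\setminus\Sigma^o$ this forces $\bar\rho_b|_{G_{F_v}}$ to have a $\bar\chi|_{G_{F_v}}$-line, hence to split (since $\bar\chi|_{G_{F_v}}\neq\mathbf 1$). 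Second, condition (3) of \ref{ordnice}---that $\rho(\kq)^o|_{G_{F_v}}=\bar\rho_b|_{G_{F_v}}$ for $v\in S\setminus\Sigma_p$---is \emph{not} required here: Proposition~\ref{rcmodc} only asks for a pro-modular, one-dimensional, irreducible point of $R^{\ord}_b$, not a nice prime. The conditions defining $R^{\ord}_b$ (determinant $\chi$, ordinary structure at $v|p$, trace condition $\xi_v+\xi_v^{-1}$ on inertia for $v\in S\setminus\Sigma_p$) are all inherited directly from $R^{\ps,\ord,\{\xi_v\}}_{\Sigma^o}$.

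One small phrasing issue: $R^{\ord}_b/\kp$ \emph{injects} into $A$ (it is the image of $R^{\ord}_b\to A$), not surjects; since $A\cong\F[[T]]$ and $\kp$ is non-maximal (as $\rho(\kq)$ is irreducible while $\bar\rho_b$ is not), this gives $\dim R^{\ord}_b/\kp=1$.
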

\begin{proof}
It is clear that we only need to find an element $b\in H^1_{\Sigma^o}(F)$ that satisfies the conditions in the previous proposition. By our first assumption and corollary \ref{orddim}, the pro-modular locus in $\Spec R^{\ps,\ord,\{\xi_v\}}_{\Sigma^o}$ has dimension $[F:\Q]+1$. Since the reducible locus has dimension at most $\delta_F+2\leq [F:\Q]$, we may choose a one-dimensional pro-modular prime $\kp'$ such that $\rho(\kp)$ is irreducible. Then after enlarging $\cO$ if necessary, we can find a representation $G_{F,S}\to\GL_2(A)$, where $A$ is  normalization of $R^{\ps,\ord,\{\xi_v\}}_{\Sigma^o}/\kp'$, such that its residual representation gives rise to some non-zero $b\in H^1_{\Sigma^o}(F)$. Clearly it has the properties we want.
 \end{proof}

\subsection{\texorpdfstring{$R^{\ord}_{b}$}{Lg} is pro-modular for any \texorpdfstring{$b$}{Lg}} \label{Rordbmodab}
The goal of this subsection is to extend the result in the previous subsection to any non-zero extension class $b\in H^1_{\Sigma^o}(F)$ (under certain assumptions).

\begin{prop} \label{propB}
Keep all the notations as in the previous subsection and all the assumptions in corollary \ref{rcmod1c}. Then after replacing $E$ by some finite extension, $R^{\ord}_b$ is pro-modular for any non-zero $b\in H^1_{\Sigma^o}(F)$.
\end{prop}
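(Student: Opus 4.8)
The plan is to reduce the statement, for each fixed nonzero $b\in H^1_{\Sigma^o}(F)$, to an application of Proposition \ref{rcmodc} for that particular $b$. Corollary \ref{rcmod1c} already produces \emph{one} class $b_0$ for which $R^{\ord}_{b_0}$ is pro-modular, and the only input in \ref{rcmodc} that is not already a standing hypothesis is its first bullet: the existence of a pro-modular prime $\kp\in\Spec R^{\ord}_b$ with $R^{\ord}_b/\kp$ one-dimensional and $\rho^{univ}\otimes k(\kp)$ irreducible. (The numerical inequality $[F:\Q]-4|S|+4|\Sigma_p|-3>\delta_F+\dim_\F H^1_{\Sigma^o}(F)$ is inherited as a hypothesis; and $\rho(\kp)$ is automatically non-dihedral, as remarked after \ref{ordnice}, because $\psi_{v,1}\bmod\kp$ has infinite order for some $v\mid p$.) So the whole content is to seed, for \emph{every} $b$, a single $\Sigma^o$-ordinary modular Galois representation whose attached residual extension class is $b$ up to scaling, the rescaling being harmless since proportional classes define isomorphic $\bar\rho_b$.

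First I would fix a nonzero $b$ and, enlarging $E$ if necessary, assume the relevant residue fields equal $\F$. The modular point is produced from the existence result \ref{EoEmi} for Eisenstein maximal ideals of ordinary Hecke algebras: one chooses an auxiliary tame level, nebentypus and partial-ordinarity datum (matching $\Sigma^o$, i.e.\ $\bar\psi_{v,1}=\mathbf 1$ for $v\in\Sigma^o$ and $\bar\psi_{v,1}=\bar\chi|_{G_{F_v}}$ otherwise, and the prescribed $\xi_v$ at $v\in S\setminus\Sigma_p$) so that the associated space of ordinary Eisenstein series is congruent modulo $\varpi$ to a cuspidal ordinary Hida family, with the congruence, after an appropriate choice of lattice, producing exactly the extension $\bar\rho_b$. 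Unwinding the congruence gives a cuspidal $\Sigma^o$-ordinary Hilbert modular point with $\det=\chi$ and nebentypus $\psi=\chi\varepsilon$, and residual representation $\bar\rho_b$; the universal property then yields a pro-modular prime of $\Spec R^{\ord}_b$. Choosing a one-dimensional prime $\kp$ of the pro-modular locus through this point and using that the reducible locus $\Spec R^{\mathrm{red}}_b/(\varpi)$ has dimension at most $\delta_F+\dim_\F H^1_{\Sigma^o}(F)+1$ (Proposition \ref{redloc}), which is strictly smaller than the dimension of the pro-modular components by the numerical hypothesis, one arranges in addition that $\rho^{univ}\otimes k(\kp)$ is irreducible. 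This verifies the first hypothesis of \ref{rcmodc} for $b$.

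With both hypotheses of Proposition \ref{rcmodc} in hand, $R^{\ord}_b$ is pro-modular; since $b$ was arbitrary, this proves \ref{propB}. (Equivalently: once one pro-modular, hence one nice, prime of $\Spec R^{\ord}_b$ is known, the connectedness-dimension argument of \ref{rcmodc}, built on Corollary \ref{crord} and the bound \ref{redloc}, spreads pro-modularity from one irreducible component of $\Spec R^{\ord}_b/(\varpi)$ to all of them, and thence to every prime.)

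The main obstacle is the very first step: producing, uniformly in $b$, the Eisenstein-to-cuspidal congruence realizing the prescribed class $b$ in the \emph{partially} ordinary setting where $\bar\psi_{v,1}=\bar\chi|_{G_{F_v}}$ is allowed to be nontrivial for $v\notin\Sigma^o$ — precisely the ingredient \ref{EoEmi} supplies beyond \cite{SW99}. The delicate points there are: choosing the auxiliary level and character so that the ordinary (hence local-at-$p$ Selmer) conditions cutting out $H^1_{\Sigma^o}(F)$ are matched by the Eisenstein family; controlling the relevant Eisenstein ideal — a class-number/Iwasawa-theoretic input, compare the appeal to Washington's theorem flagged in \ref{Washington} — so that the congruence module surjects onto the line spanned by $b$; and checking the resulting cuspidal point is genuinely new, i.e.\ irreducible rather than CM or Eisenstein. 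The remaining bookkeeping — descent of the auxiliary-level deformation ring back to $R^{\ord}_b$ along the lines of Corollary \ref{pscrem}, and the harmlessness of rescaling $b$ and enlarging $E$ — is routine.
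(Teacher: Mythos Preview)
Your proposal has a genuine gap at its core. Proposition \ref{exoemi} does \emph{not} allow you to prescribe the extension class $b$: it only produces a maximal ideal $\km$ of the ordinary Hecke algebra with residual pseudo-representation $1+\bar\chi$ and the correct $\bar\psi_{v,1}$. When you then pick a cuspidal eigenform in $\km$ and a lattice in its Galois representation, the resulting extension class in $H^1_{\Sigma^o}(F)$ is \emph{some} nonzero class, but you have no mechanism to make it equal to a preassigned $b$. Your assertion that ``the congruence module surjects onto the line spanned by $b$'' is exactly the missing input, and proving it directly would amount to an Iwasawa-main-conjecture-type statement well beyond what \ref{EoEmi} (or anything else in this section) supplies. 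Corollary \ref{rcmod1c} already extracts everything one can from \ref{exoemi}: a single $b_1$ with $R^{\ord}_{b_1}$ pro-modular.

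The paper's actual argument is entirely different and avoids this problem. Starting from the one $b_1$ furnished by Corollary \ref{rcmod1c}, it works inside $\Spec R^{\ord}_{b_1}$: for a target $b_2$, it considers the reducible family $\rho_{12}(\sigma)=\begin{pmatrix}1 & \bar\chi(\sigma)(\phi_{b_1}(\sigma)+T\phi_{b_2}(\sigma))\\ 0 & \bar\chi(\sigma)\end{pmatrix}$ defining a prime $\kq_{12}$, cuts $R^{\ord}_{b_1}$ by the equations $\tilde b(\sigma_3)=\cdots=\tilde b(\sigma_{r_s})=0$ (forcing the $b$-component to lie in $\F b_1+\F b_2$), takes a minimal prime $\kQ\subset\kq_{12}$ of this, and uses the dimension bound \ref{grrcord} together with \ref{redloc} to see $\rho(\kQ)$ is irreducible. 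The key step is then to pass to $\kQ^{\ps}=\kQ\cap R^{\ps,\ord,\{\xi_v\}}_{\Sigma^o}$ and further cut by the ideal $I_{b_1}$ generated by the $x(\sigma_1,\tau)$; Proposition \ref{ht1def} bounds $\hht(I_{b_1})\le 1$, so the remaining locus is still large enough to contain a one-dimensional irreducible pro-modular prime $\kp$. The equations imposed then force any lattice of $\rho(\kp)$ with nonsplit reduction to have residual class proportional to $b_2$, which seeds Proposition \ref{rcmodc} for $b_2$. In short, the passage from $b_1$ to $b_2$ is purely an argument about the geometry of the pseudo-deformation ring, not a new congruence.
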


\begin{proof}
First by corollary \ref{rcmod1c}, we may assume there exists a non-zero class $b_1\in H^1_{\Sigma^o}(F)$ such that $R^{\ord}_{b_1}$ is pro-modular (after possibly enlarging $\cO$). Now let $b_2\in H^1_{\Sigma^o}(F)$ be another class in $H^1_{\Sigma^o}(F)\setminus \{\F b_1\}$. In view of proposition \ref{rcmodc}, it suffices to find a pro-modular one-dimensional prime of $R^{\ps,\ord,\{\xi_v\}}_{\Sigma^o}$ whose associated representation $\rho(\kp)$ is irreducible and admits a lattice $\rho(\kp)^o$ with residue representation corresponding to the extension class $b_2\in H^1_{\Sigma^o}(F)$.

By our assumption, we may extend $b_1,b_2$ to a basis $b_1,\cdots,b_{r_s}$ of $H^1_{\Sigma^o}(F)$. Write $b_i=[\phi_{b_i}]$, $\phi_{b_i}:G_{F,S}\to \F(\bar\chi^{-1})$. Also we can choose elements $\sigma_1,\cdots,\sigma_{r_s}\in G_{F(\bar{\chi}),\tilde S}$ such that 
\[\phi_{b_i}(\sigma_j)=\delta_{ij}\]
under the pairing before proposition \ref{redloc}.

Consider the following deformation of $\bar{\rho}_{b_1}$:
\[\rho_{12}:G_{F,S}\to \GL_2(\F[[T]]),~\sigma\mapsto \begin{pmatrix}1 & \bar{\chi}(\sigma)(\phi_{b_1}(\sigma)+\phi_{b_2}(\sigma)T)\\ 0 & \bar{\chi}(\sigma) \end{pmatrix}.\]
One way to think about this deformation is that it becomes $\bar\rho_{b_2}$ when $T$ ``goes to $\infty$''. This gives rise to a one-dimensional prime $\kq_{12}$ of $R^{\ord}_{b_1}$. As in \ref{redlocrc}, we may choose a universal deformation 
\[\rho^{univ}:G_{F,S}\to\GL_2(R^{\ord}_{b_1})~g\mapsto \begin{pmatrix} \tilde{a}(g) & \tilde b(g) \\ \tilde c(g) & \tilde d(g)\end{pmatrix}\]
so that $\tilde b(\sigma_1)=1$, $\rho^{univ}(\sigma^*)=\begin{pmatrix}1&0\\0&-1\end{pmatrix}$ and the reduction of $\rho^{univ}$ modulo $\kq_{12}$ is $\rho_{12}$. It is clear that $\tilde{b}(\sigma_i)\in\kq_{12},i=3,\cdots,r_s$. Let $\kQ$ be a minimal prime of $R^{\ord}_{b_1}/(\tilde{b}(\sigma_3),\cdots,\tilde{b}(\sigma_{r_s}))$ contained in $\kq_{12}$. Then by lemma \ref{grrcord}, 
\[\dim R^{\ord}_{b_1}/\kQ\geq [F:\Q]-(r_s-2)=[F:\Q]+2-\dim_{\F} H^1_{\Sigma^o}(F).\]
We claim that $\rho^{univ}\otimes k(\kQ)$ is irreducible. If not, since $\tilde{b}(\sigma_i),i=3,\cdots,r_s$ are already contained in $\kQ$, it follows from proposition \ref{redloc} that $R^{\ord}_{b_1}/\kQ$ has dimension at most $1+\delta_F+3=\delta_F+4$. But by our second assumption 
\[[F:\Q]+2-\dim_{\F} H^1_{\Sigma^o}(F)>4|S|-4|\Sigma_p|+\delta_F+5\geq \delta_F+5.\]
In other words, this upper bound for $\dim R^{\ord}_{b_1}/\kQ$ contradicts the previous lower bound. Therefore $\kQ$ is not in the reducible locus. Hence let $\kQ'=\kQ\cap R^{\ps,\ord,\{\xi_v\}}_{\Sigma^o}$. We may apply the second part of corollary \ref{psccomp} and conclude
 \[\dim R^{\ps,\ord,\{\xi_v\}}_{\Sigma^o}/\kQ'\geq \dim R^{\ord}_{b_1}/\kQ\geq [F:\Q]+2-\dim_{\F} H^1_{\Sigma^o}(F).\]

Consider the ideal $I_{b_1}$ of $R^{\ps,\ord,\{\xi_v\}}_{\Sigma^o}/\kQ'$ generated by elements $x(\sigma_1,\tau),\tau\in G_{F,S}$ defined by the universal pseudo-character $T^{univ}:G_{F,S}\to R^{\ps,\ord,\{\xi_v\}}_{\Sigma^o}$. See \ref{adx} and the beginning of the proof of proposition \ref{rpsrc} for the precise definition. These elements have the property that $x(\sigma_1,\tau)$ maps to $\tilde{b}(\sigma_1)\tilde{c}(\tau)=\tilde{c}(\tau)$ in $R^{\ord}_{b_1}$. It follows from proposition \ref{ht1def} that $\mathrm{ht}(I_{b_1})\leq 1$, hence
\[\dim R^{\ps,\ord,\{\xi_v\}}_{\Sigma^o}/(\varpi,\kQ',I_{b_1})\geq [F:\Q]-\dim_{\F} H^1_{\Sigma^o}(F).\]
Note that we may apply proposition \ref{ht1def} here as $\kQ$ is contained in $\kq_{12}$ which is a one-dimensional prime ideal of $R^{\ord}_{b_1}$ mapping to the maximal ideal of $R^{\ps,\ord,\{\xi_v\}}_{\Sigma^o}$. Since
\[[F:\Q]-\dim_\F H^1_{\Sigma^o}(F)> \delta_F+2\geq \dim R^{\ps,\mathrm{red}}\]
by our assumption, we can find a one-dimensional prime $\kp$ containing $\varpi,\kQ',I_{b_1}$ such that  $\rho(\kp)$ is irreducible. It has to be pro-modular as $\kQ'$, the image of $\kQ$, is pro-modular. We claim that there is a lattice of $\rho(\kp)$ such that its residue representation belongs to the extension class $b_2\in H^1_{\Sigma^o}(F)$.

Let $A=\tilde{\F}[[T]]$ be the normalization of $R^{\ps,\ord,\{\xi_v\}}_{\Sigma^o}/\kp$, where $\tilde{\F}$ is a finite extension of $\F$. Then we may find a lattice of $\rho(\kp)$: $\rho^o:G_{F,S}\to\GL_2(A)$ such that the reduction of $\rho^o\mod T$ has the form
\[\begin{pmatrix}1 & * \\ 0 & \bar{\chi} \end{pmatrix},~*\neq 0.\]
and $\rho^o(\sigma^*)=\begin{pmatrix}1 & 0\\ 0 & -1 \end{pmatrix}$ \footnote{As pointed out by one referee, this is usually called Ribet's lemma, cf. Proposition 2.1. of \cite{Rib76}. We will freely use this result here.}. Write $\rho^o(g)=\begin{pmatrix} a'(g) & b'(g) \\ c'(g) & d'(g) \end{pmatrix}$. Since $\rho^o$ is irreducible, we may find some $\tau'\in G_{F,S}$ such that $c'(\tau)\neq0$. Now for $i=3,\cdots,r_s$, the image of $x(\sigma_i,\tau')$ in $R^{\ord}_{b_1}$ is
\[\tilde{b}(\sigma_i)\tilde{c}(\tau')\in \kQ,\]
hence $x(\sigma_i,\tau')\in\kQ'$ and $b'(\sigma_i)c'(\tau')=x(\sigma_i,\tau')=0$ in $A$. Therefore $b'(\sigma_i)=0,i=3,\cdots,r_s$.

Similarly $b'(\sigma_1)c'(\tau)=0$ in $A$ as $x(\sigma_1,\tau')\in I_{b_1}$ by our construction. Thus $b'(\sigma_1)=0$.

Hence $b'(\sigma_i)=0$ unless $i\neq 2$. Since we assume the reduction $\bar{\rho}$ of $\rho^o$ is non-split, it follows from the discussion below definition \ref{selmext} that after possibly conjugating $\rho^o$ by element of the form $\begin{pmatrix} n & 0\\ 0 & 1\end{pmatrix},n\in A^\times$, the reduction $\bar{\rho}$ has to belong to the extension class $b_2\in H^1_{\Sigma^o}(F)$. This is exactly what we want.
\end{proof}

\begin{cor} \label{corB}
Keep the same assumptions as in the previous proposition, then
\begin{enumerate}
\item A prime $\kp\in\Spec R^{\ps,\ord,\{\xi_v\}}_{\Sigma^o}$ is pro-modular if $\rho(\kp)$ is irreducible.
\item $R^{\ps,\ord,\{\xi_v\}}_{\Sigma^o}$ is a finite $\Lambda_F$-algebra.
\item If $\kp$ is a maximal ideal of $R^{\ps,\ord,\{\xi_v\}}_{\Sigma^o}[\frac{1}{p}]$ such that 
\begin{itemize}
\item $\rho(\kp)$ is irreducible.
\item Write $\rho(\kp)|_{G_{F_v}}\cong \begin{pmatrix} \psi_{v,1} & * \\ 0 & \psi_{v,2}\end{pmatrix}$. We assume $\psi_{v,1}$ is Hodge-Tate and has strictly less Hodge-Tate number than $\psi_{v,2}$ for any $v|p$ and any embedding $F_v\hookrightarrow \overbar{\Q_p}$.
\end{itemize}
Then $\rho(\kp)$ comes from a twist of a Hilbert modular form.
\end{enumerate}
\end{cor}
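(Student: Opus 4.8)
The plan is to derive all three assertions from Proposition \ref{propB} --- which says that $R^{\ord}_b$ is pro-modular for every nonzero $b\in H^1_{\Sigma^o}(F)$ --- together with the structural facts about the ordinary Hida family recalled above (Corollary \ref{orddim}, the control isomorphism in \ref{finlevhecke}) and the description of the reducible locus in \ref{redlrps}. First, for part (1): let $\kp\in\Spec R^{\ps,\ord,\{\xi_v\}}_{\Sigma^o}$ have $\rho(\kp)$ irreducible. After enlarging $E$, the normalization $A$ of $R^{\ps,\ord,\{\xi_v\}}_{\Sigma^o}/\kp$ is a complete local normal domain, finite over $R^{\ps,\ord,\{\xi_v\}}_{\Sigma^o}/\kp$, over which $\rho(\kp)$ is realized; by Ribet's lemma one may choose a $G_{F,S}$-stable $A$-lattice whose reduction is a non-split extension of $\bar\chi$ by $\mathbf 1$, say $\bar\rho_b$ with $b\in H^1(G_{F,S},\F(\bar\chi^{-1}))\setminus\{0\}$. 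Since the pseudo-character is $\psi_{v,1}$-ordinary at each $v\mid p$ (Definition \ref{psiord}, Lemma \ref{ordpseudo}) and $\bar\chi|_{G_{F_v}}\neq\mathbf 1$, the reduction of this lattice must be locally split at every $v\notin\Sigma^o$ (it has there two distinct stable lines), so in fact $b\in H^1_{\Sigma^o}(F)$. Thus $\rho(\kp)$, with its ordinary filtrations, defines an $\cO$-algebra map $R^{\ord}_b\to A$ whose kernel $\kQ$ satisfies $\kQ\cap R^{\ps,\ord,\{\xi_v\}}_{\Sigma^o}=\kp$ (as $R^{\ps,\ord,\{\xi_v\}}_{\Sigma^o}$ is topologically generated by the universal pseudo-character, which maps to $\tr\rho(\kp)$ and hence has image $R^{\ps,\ord,\{\xi_v\}}_{\Sigma^o}/\kp$). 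By Proposition \ref{propB} the prime $\kQ$ is pro-modular, hence so is $\kp$.

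For part (2): by the topological Nakayama lemma it suffices to show that $R^{\ps,\ord,\{\xi_v\}}_{\Sigma^o}/\km_{\Lambda_F}R^{\ps,\ord,\{\xi_v\}}_{\Sigma^o}$ is Artinian, and since this ring is Noetherian it is enough that $\Spec$ of it be a finite set of closed points. If $\kp$ is such a prime with $\rho(\kp)$ irreducible, then by part (1) it is pro-modular, hence is the preimage of a prime of $\T^{\ord}_\km\otimes_{\Lambda_F}\F$, a finite set since $\T^{\ord}_\km$ is a finite $\Lambda_F$-algebra. If instead $\rho(\kp)$ is reducible, then $\kp$ corresponds to a prime of $R^{\ps,\mathrm{red}}\otimes_{\Lambda_F}\F$, and by \ref{redlrps} the ring $R^{\ps,\mathrm{red}}$ is a quotient of $\cO[[G_{F,S}^{\mathrm{ab}}(p)]]$, which is a finite $\Lambda_F$-algebra by class field theory; so this locus is again a finite set. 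Hence $R^{\ps,\ord,\{\xi_v\}}_{\Sigma^o}/\km_{\Lambda_F}R^{\ps,\ord,\{\xi_v\}}_{\Sigma^o}$ is Artinian and $R^{\ps,\ord,\{\xi_v\}}_{\Sigma^o}$ is finite over $\Lambda_F$.

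For part (3): let $\kp$ be a maximal ideal of $R^{\ps,\ord,\{\xi_v\}}_{\Sigma^o}[\tfrac1p]$ satisfying the two hypotheses. Since $\rho(\kp)$ is irreducible, part (1) shows $\kp$ is pulled back from a prime $\widetilde\kp\subset\T^{\ord}_\km$, and as $k(\widetilde\kp)\hookrightarrow k(\kp)$ is finite over $E$, $\widetilde\kp$ is maximal in $\T^{\ord}_\km[\tfrac1p]$, i.e.\ a point of the ordinary Hida family. The hypothesis that each $\psi_{v,1}$ is de Rham with strictly smaller Hodge--Tate number than $\psi_{v,2}$ at every embedding $F_v\hookrightarrow\overbar{\Q_p}$ is precisely the assertion that $\psi_{v,1}|_{I_{F_v}}$ is, up to finite order, the algebraic character attached to a cohomological weight $(\vec k,\vec w)$ with $\vec k\in\Z_{>1}^{\Hom(F,\overbar{\Q_p})}$ --- ordinarity supplying that $\psi_{v,1}$ is the small-weight direction --- so $\widetilde\kp\supseteq\ka_{k,c}$ for $c$ large. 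By the control isomorphism \ref{finlevhecke}, $\widetilde\kp$ then arises from $\T^{\ord}_{(\vec k,\vec w),\psi,\xi}(U^p(c),\cO)$, i.e.\ from an ordinary automorphic form on $D^\times$ of weight $(\vec k,\vec w)$; by Jacquet--Langlands (lemma 1.3 of \cite{Ta06}) this corresponds to a Hilbert cusp form, cuspidal because $\rho(\kp)$ is irreducible. Tracking the twist by $\varepsilon$ in $\psi=\chi\varepsilon$ and the normalizations then shows $\rho(\kp)$ is a twist of the Galois representation attached to this Hilbert modular form.

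The main obstacle is the lattice construction in part (1): one must pass from a prime $\kp$ whose associated representation is merely \emph{pseudo-representation}-ordinary at the places above $p$ to an actual $G_{F,S}$-stable lattice realizing $\rho(\kp)$ whose residual representation is a non-split extension lying in $H^1_{\Sigma^o}(F)\setminus\{0\}$ and carrying the correct local ordinary filtration at each $v\mid p$ (in particular matching $\bar\psi_{v,1}$). The delicate point is the compatibility of Ribet's lemma with the ordinary structure --- using $\bar\chi|_{G_{F_v}}\neq\mathbf 1$ to force the local reduction at $v\notin\Sigma^o$ to be split --- which is exactly the kind of input codified in Lemma \ref{ordpseudo} and Proposition \ref{ht1def}; one should also be slightly careful that Ribet's lemma is applied over the (complete, excellent, local, normal) ring $A$ rather than over a discrete valuation ring. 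By contrast, the dimension bookkeeping for part (2) is routine given the finiteness results already in hand, and part (3) is a standard consequence of Hida's control theorem.
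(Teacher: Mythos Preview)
Your treatments of (2) and (3) are essentially the paper's: for (2) the paper observes that part (1) makes the map $R^{\ps,\ord,\{\xi_v\}}_{\Sigma^o}\to\T^{\ord}_\km\times R^{\ps,\mathrm{red}}$ have nilpotent kernel, and then invokes finiteness of each factor over $\Lambda_F$ (the second via \ref{redlrps} and global class field theory), which is the same content as your fibre-by-fibre Artinian check; for (3) both arguments reduce to Hida's control theorem (\ref{finlevhecke}).

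The substantive divergence is in (1). You attempt to run Ribet's lemma and extract the ordinary direct-summand line over the normalization $A$ of $R^{\ps,\ord,\{\xi_v\}}_{\Sigma^o}/\kp$ for \emph{arbitrary} $\kp$. You correctly flag that Ribet over a non-DVR normal local domain requires care, but there is a second obstruction you do not address: even granting a lattice $L$ over $A$ with non-split reduction, the saturation in $L$ of the generic $\psi_{v,1}$-line is a priori only a rank-one reflexive $A$-module, not an $A$-direct summand, so it is unclear that $L$ defines a point of $R^{\ord}_b$ (whose very definition in \ref{Rbpsi} demands a rank-one direct summand at each $v\mid p$). The paper sidesteps both issues by a two-step reduction: first treat one-dimensional $\kp$, where $A$ is a DVR and both Ribet's lemma and the direct-summand property are immediate (so $\kp$ lies in the image of some $\Spec R^{\ord}_b$ and Proposition \ref{propB} applies); then for general $\kp$ choose a one-dimensional $\kp'\supseteq\kp$ with $\rho(\kp')$ irreducible, take the corresponding $\kp'_b\in\Spec R^{\ord}_b$, and use the surjectivity of $\Spec (R^{\ord}_b)_{\kp'_b}\to\Spec (R^{\ps,\ord,\{\xi_v\}}_{\Sigma^o})_{\kp'}$ from Corollary \ref{psccomp}(1) (in its ordinary variant, cf.\ \ref{rdxbq}(2)) to conclude that $\kp$ too lies in the image and is therefore pro-modular. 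This localization-and-surjectivity step is the piece missing from your sketch; without it, the passage from pseudo-ordinarity to an honest point of $R^{\ord}_b$ over a higher-dimensional base is a genuine gap.
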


\begin{proof}
If $\kp$ in the first claim is one-dimensional, then enlarging $E$ if necessary, we can find a non-zero extension class $b\in H^1_{\Sigma^o}(F)$ such that $\kp$ is in the image of $\Spec R^{\ord}_b$. And the assertion follows from the previous proposition directly. In general, we may find a one-dimensional prime $\kp'$ containing $\kp$ such that $\rho(\kp')$ is irreducible. Suppose $\kp'$ is in the image of $\Spec R^{\ord}_b$, then by the first part of corollary \ref{psccomp}, $\kp$ is also in the image hence pro-modular as well. This proves the first assertion.

A direct consequence of this is that the natural $\Lambda_F$-equivariant map
\[R^{\ps,\ord,\{\xi_v\}}_{\Sigma^o}\to \T^{\ord}_\km \times R^{\ps,\mathrm{red}}\]
has nilpotent kernel. Hence it suffices to prove that the image is a finite $\Lambda_F$-algebra. This is clearly true for $\T^{\ord}_\km$. As for $R^{\ps,\mathrm{red}}$, as we discussed in \ref{redlrps}, $R^{\ps,\mathrm{red}}$ is a quotient of $\cO[[G_{F,S}^{\mathrm{ab}}(p)]]$. Its finiteness over $\Lambda_F$ follows from the global class field theory.

The last assertion follows from the first one and the discussion in \ref{finlevhecke}.
\end{proof}

\subsection{Existence of Eisenstein maximal ideal} \label{EoEmi}
In this subsection, we give sufficient conditions for the existence of Eisenstein maximal ideal in assumption \ref{ass1}. We keep the same notations as in the previous subsection. In particular, $\psi=\chi\varepsilon$ and $F$ is a totally real field of even degree over $\Q$ in which $p$ is unramified.

\begin{prop} \label{exoemi}
Assume 
\begin{itemize}
\item $\mathrm{ord}_{\varpi} \mathrm{L}_p(F,-1,\widetilde{\bar{\chi}\omega})>0$ if $\Sigma^o$ is $\Sigma_p$ or empty, where $\widetilde{\bar{\chi}\omega}$ is the Teichm\"uller lifting of $\bar{\chi}\omega$ and $\mathrm{L}_p(F,s,\widetilde{\bar{\chi}\omega})$ is the $p$-adic L-function associated to the character $\widetilde{\bar{\chi}\omega}$.
\end{itemize}
Then enlarging $\cO$ if necessary, there exists an open subgroup $U_e^p=\prod_{v\nmid p}U_{e,v}\subseteq \prod_{v\nmid p}\GL_2(O_{F_v})$ such that $\varpi$ and 
\begin{itemize}
\item $T_v-(1+\chi(\Frob_v))$ for $v\nmid p$ such that $U_{e,v}=\GL_2(O_{F_v})$;
\item $\psi_{v,1}(\gamma)-\tilde{\psi}_{v,1}(\gamma),\gamma\in F_v^\times,v|p$, where $\tilde{\psi}_{v,1}(\gamma)\in\cO$ is any lifting of $\bar{\psi}_{v,1}(\gamma)\in\F$;
\end{itemize}
generate a maximal ideal of $\T^\ord_{\psi}(U^p_e)$.
\end{prop}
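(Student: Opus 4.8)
The plan is to produce a modular (in the Hida-theoretic sense) residual representation $\bar\rho$ of the right shape, whose associated maximal ideal of the ordinary Hecke algebra has residual Galois representation a non-split extension of $\bar\chi$ by $\mathbf{1}$ satisfying the ordinarity conditions $\bar\psi_{v,1}$ at all $v\mid p$. Concretely, I would realize such a $\bar\rho$ as the reduction of a congruence between an ordinary Eisenstein series and an ordinary cusp form on $D^\times$. The key input allowing the congruence is the hypothesis on the $p$-adic $L$-value $\mathrm{L}_p(F,-1,\widetilde{\bar\chi\omega})$: the constant term of the relevant ordinary Eisenstein series (or its companion along the boundary) is, up to a unit and standard Euler factors, a value of $\mathrm{L}_p(F,s,\widetilde{\bar\chi\omega})$, so $\mathrm{ord}_\varpi$ of that $L$-value being positive forces the Eisenstein class to become cuspidal after reduction mod $\varpi$. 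This is the classical Ribet-style / Mazur-style argument, adapted to Hilbert modular forms of partial weight and to the quaternionic setting via Jacquet--Langlands; I would follow the treatment in \cite{SW99} (and its references to Wiles' and Hida's work) with the modifications forced by allowing $\bar\psi_{v,1}$ to be $\bar\chi|_{G_{F_v}}$ rather than $\mathbf 1$ at the places $v\notin\Sigma^o$.

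First I would set up the Eisenstein series: starting from the pair of Hecke characters $(\mathbf 1,\bar\chi\varepsilon^{-1})$ (appropriately Teichm\"uller-lifted and twisted so that its product equals $\psi\varepsilon^{-1}=\chi\varepsilon^{-1}$ and its restriction to $O_{F_v}^\times$ matches the required weight), I would form the ordinary Eisenstein family and compute its $\mathbf e$-ordinary part at the appropriate specialization; its Hecke eigenvalues away from $S$ are $1+\chi(\Frob_v)$ and its $\langle\gamma\rangle$-eigenvalues recover $\tilde\psi_{v,1}(\gamma)$ for $v\mid p$. This shows that the "Eisenstein ideal" $\mathfrak m_e$ defined by $\varpi$, the $T_v-(1+\chi(\Frob_v))$ and the $\psi_{v,1}(\gamma)-\tilde\psi_{v,1}(\gamma)$ is a proper ideal of the full (cuspidal $\oplus$ Eisenstein) ordinary Hecke algebra. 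The content of the proposition is to show that, after shrinking the tame level at finitely many auxiliary primes $v$ (choosing $U_{e,v}$ small enough to kill the Eisenstein contribution at those places — this is the reason the statement only asserts $T_v-(1+\chi(\Frob_v))\in\mathfrak m$ for $v$ where $U_{e,v}$ is maximal), the ideal $\mathfrak m_e$ meets the \emph{cuspidal} ordinary Hecke algebra $\T^\ord_\psi(U^p_e)$, i.e. defines a maximal ideal of it. This is exactly where the $L$-value hypothesis enters: the index of the Eisenstein ideal in the Hecke algebra, equivalently the order of the relevant congruence module, is controlled by $\mathrm{ord}_\varpi\mathrm{L}_p(F,-1,\widetilde{\bar\chi\omega})$ (via Hida's / Wiles' formula for the constant term, together with a computation of the boundary cohomology of the relevant locally symmetric space, or directly via $q$-expansions on the $\GL_2/F$ side and Jacquet--Langlands), so positivity of this order produces a genuine cuspidal congruence.

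The two cases $\Sigma^o=\Sigma_p$ and $\Sigma^o=\emptyset$ are the ones where the Eisenstein series is "balanced" at $p$ (both ordinary directions are either trivial or equal to $\bar\chi$ at every place), so a single $p$-adic $L$-function of $\widetilde{\bar\chi\omega}$ governs the congruence; for intermediate $\Sigma^o$ the statement does not assume anything, because there the argument will instead proceed by base change (restricting to a suitable abelian totally real extension in which the places in $\Sigma^o$ versus its complement behave uniformly), which is carried out later in the paper — so here I only need the two extreme cases, and the hypothesis is stated accordingly. Throughout I would be careful about: (i) choosing the auxiliary level $U^p_e$ so that $U^p_e K_p$ is sufficiently small and so that the Eisenstein series is not "old" at the auxiliary primes, using a lemma of the type of the sufficiently-small criterion in \S\ref{quaform}; (ii) matching the central character and the archimedean/$p$-adic weight normalizations, which is purely bookkeeping; (iii) ensuring $\bar\chi|_{G_{F_v}}\neq\mathbf 1$ (given) so that the ordinary characters $\psi^{univ}_{v,1}$ split off by Hensel and the $\langle\gamma\rangle$-conditions make sense mod $\varpi$.

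\textbf{Main obstacle.} The hard part will be the precise identification of the congruence module (or the constant term of the ordinary Eisenstein series) with the $p$-adic $L$-value $\mathrm{L}_p(F,-1,\widetilde{\bar\chi\omega})$, including getting the Euler factors at $S$ and the auxiliary primes exactly right so that the nonvanishing of $\mathrm{ord}_\varpi$ of the $L$-value really does force a congruence at the chosen level rather than being absorbed into a local factor. This requires either a careful $q$-expansion computation for ordinary Hilbert modular Eisenstein series (in the style of Hida and of Ohta's work on $\Lambda$-adic Eisenstein series) transported to $D^\times$ via Jacquet--Langlands, or a boundary-cohomology computation; I would lean on \cite{SW99} and the literature on Hida families of Eisenstein series for this, and the only genuinely new checking is that the modified ordinarity condition $\bar\psi_{v,1}=\bar\chi|_{G_{F_v}}$ at $v\notin\Sigma^o$ does not disturb the $L$-value computation — which it does not, because interchanging the two ordinary directions at a split place $v\mid p$ only swaps the roles of the two Hecke characters and their local Euler factors, leaving the global $p$-adic $L$-function unchanged up to a unit.
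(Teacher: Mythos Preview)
Your treatment of the extreme cases $\Sigma^o=\Sigma_p$ and $\Sigma^o=\emptyset$ is essentially what the paper does: those cases are dispatched by citing Proposition~3.18 of \cite{SW99} (after a twist by $\chi^{-1}$ for the empty case), and the $p$-adic $L$-value hypothesis is precisely the input to that Ribet--Wiles congruence argument.

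The genuine gap is the intermediate case $\emptyset\subsetneq\Sigma^o\subsetneq\Sigma_p$. You write that this ``will instead proceed by base change \ldots\ carried out later in the paper'', but this is a misreading: the proposition is itself the tool invoked after base change in \ref{conF1}, and there the set $\Sigma^o_{F_1}$ (places of $F_1$ above $\Sigma^o_F$) is still generically intermediate. So the intermediate case must be proved here, unconditionally, and this is in fact the main new content of the proof.

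The paper's mechanism for intermediate $\Sigma^o$ is quite different from the $L$-value argument and does not use any $L$-value at all. One chooses (via global class field theory) an auxiliary character $\bar\theta:G_F\to\F^\times$ with $\bar\theta|_{G_{F_v}}=\bar\psi_{v,1}^{-1}$ for $v\mid p$ and $\bar\theta$ \emph{ramified} at some place away from $p$. Setting $\theta_1=\widetilde{\bar\theta}$ and $\theta_2=\widetilde{\bar\theta\bar\chi}$, both characters are ramified, so the weight-one Eisenstein series $E_1(\theta_1,\theta_2)$ has \emph{vanishing constant terms}. Multiplying by a Hasse-invariant-type form $\Theta$ (constant term $1$, higher coefficients $\equiv 0\bmod p$) lifts $E_1$ to a higher-weight form $\mathbf E$ congruent to $E_1$ mod $\varpi$. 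The key step is then to show that the ordinary projection $\mathbf e\cdot\mathbf E$ localized at the Eisenstein ideal is already \emph{cuspidal}: any ordinary Eisenstein constituent $\pi(k_0,\psi_1,\psi_2)$ surviving the localization would force $\psi_1$ unramified at $p$ with $\psi_1(\varpi_v)\equiv 1\bmod\varpi$ for all $v\mid p$, but $\psi_1\equiv\theta_1$ or $\theta_2\bmod\varpi$, and precisely because $\Sigma^o$ is neither full nor empty, \emph{each} of $\theta_1,\theta_2$ is nontrivial at some $v\mid p$. This contradiction is what replaces the $L$-value input. Finally one transfers to $D^\times$ via Jacquet--Langlands and untwists by $\theta_1^{-1}$.
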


\begin{rem} \label{efcase}
If $\Sigma^o$ is $\Sigma_p$, this is proposition 3.18 of \cite{SW99}. If $\Sigma^o$ is empty, we can twist everything by $\chi^{-1}$ and reduce to the previous case. 
\end{rem}

As in \cite{SW99}, we will use Eisenstein series and congruences to find a cuspidal ordinary eigenform with desired Hecke eigenvalues. To do this, we first review some basics of the theory of Hilbert modular forms (of parallel weight). References here are \cite{Shi78} and \S2 of \cite{Hida06}, though some conventions here are different.

\begin{para}
Let $\delta$ be the different of $F$. Write ${O_F}\otimes\hat{\Z}$ as $\widehat{O_F}$. For an ideal $\mathfrak{n}$ of $O_F$, we denote by $U^1(\mathfrak{n})$ the following congruence subgroup: 
\[\{\begin{pmatrix}a&b\\c&d\end{pmatrix}\in\GL_2(\A_F^\infty):a\in\widehat{O_F},b\in \delta^{-1}\widehat{O_F},c\in\delta\mathfrak{n}\widehat{O_F},d-1\in\mathfrak{n}\widehat{O_F},ad-bc\in(\widehat{O_F})^\times\}.\]

Let $\Sigma_\infty$ be the set of infinite places of $F$ and $\mathfrak{H}$ be the usual upper half plane. There is a natural action of $\GL_2^{+}(F\otimes_\Q \R)\cong \GL_2^{+}(\R)^{\Sigma_\infty}$ on $\mathfrak{H}^{\Sigma_\infty}$:
\[u_\infty(z)=(\frac{a_\tau z_\tau+b_\tau}{c_\tau z_\tau+d_\tau})_{\tau\in\Sigma_\infty},~u_\infty=\begin{pmatrix} a_\tau & b_\tau \\ c_\tau & d_\tau \end{pmatrix}\in\GL_2^{+}(\R)^{\Sigma_\infty}.\]
Here $\GL_2^{+}(\R)\subseteq\GL_2(\R)$ denotes the subgroup with positive determinant. Define $j:\GL_2(F\otimes_\Q \R)\times \mathfrak{H}^{\Sigma_\infty}\to \bC^{\Sigma_\infty}$
\[j(u_\infty,z)=(c_\tau z_\tau+d_\tau)_{\tau},~u_\infty=\begin{pmatrix} a_\tau & b_\tau \\ c_\tau & d_\tau \end{pmatrix}.\]

Denote by $z_0$ the point $(i,\cdots,i)\in \mathfrak{H}^{\Sigma_\infty}$. It is fixed by 
\[C_\infty:=(\mathrm{SO}_2(\R)\R^\times)^{\Sigma_\infty}\subseteq \GL_2^{+}(\R)^{\Sigma_\infty}\subseteq \GL_2(\A_F).\]
\end{para}

\begin{para}[\textbf{Adelic holomorphic modular forms on $\GL_2/F$ of parallel weight $k$}] \label{ahmf}
Let $k$ be a positive integer. For any $x\in\bC^{\Sigma_\infty}$, we write $x^k$ for the product $\prod_{\tau\in\Sigma_\infty} x_{\tau}^k$. If a function $f:\GL_2(\A_F)\to\bC$ satisfies $f|_k u_\infty=f$ for any $u_\infty\in C_\infty$, where
\[(f|_k u_\infty)(g)=j(u_\infty,z_0)^{-k}\det(u_\infty)^{1} f(gu_\infty^{-1}),\]
then we can define a function $f_x: \mathfrak{H}^{\Sigma_\infty}\to \bC$ for any $x\in\GL_2(\A_F^\infty)$ by
\[ f_x(z)=j(u_\infty,z_0)^k\det(u_\infty)^{-{1}}f(xu_\infty),~z=u_\infty(z_0)\]
with $u_\infty\in\GL_2^+(F\otimes \R)$.

Let $\mathfrak{n}$ be an ideal of $O_F$. A function $f: \GL_2(\A_F)\to \bC$ is called a modular form of weight $k$ and level $U^1(\mathfrak{n})$ if 
\begin{itemize}
\item $f(axu^\infty)=f(x)$ for any $a\in\GL_2(F),u^\infty\in U^1(\mathfrak{n})$,
\item $f|_k u_\infty=f$ for any $u_\infty\in C_\infty$,
\item $f_x$ is homomorphic on $\mathfrak{H}^{\Sigma_\infty}$ for any $x\in\GL_2(\A_F^\infty)$.
\end{itemize}
Such a function $f$ is called a cusp form if the constant form of the Fourier expansion of $f_x$  (see \ref{FeaDs} below)  vanishes for all $x$. We denote the space of modular forms and cusp forms by $M_k(\mathfrak{n})$ and $S_k(\mathfrak{n})$. Let $\theta:\A_F^\times/F^\times\to\bC^\times$ be a Hecke character such that $\theta(x)=N_{F/\Q}(x)^{-k+2},x\in(F\otimes\R)^\times$. We denote by $M_k(\mathfrak{n},\theta)\subseteq M_k(\mathfrak{n})$, $S_k(\mathfrak{n},\theta)\subseteq S_k(\mathfrak{n})$ the subspace of functions with the centre $\A_F^\times$ acting via $\theta$. 

We remark that the relation between functions considered here and those in \cite{Shi78}  is as follows: given $f\in M_k(\mathfrak{n},\theta)$, then we can define another function $f_0:\GL_2(\A_F)\to\bC$ by
\[f_0(g)=f(g)|\det(g)|^{\frac{k}{2}-1}_{\A_F}.\]
This is an element of $M_{k}(\mathfrak{n},\theta|\cdot|_{\A_F}^{k-2})$ defined in \S 2 of \cite{Shi78}. Here $|\cdot|_{\A_F}:\A_F^\times \to \R^\times$ denotes the adelic norm. 
\end{para}

\begin{para}[\textbf{Relation with classical Hilbert modular forms}]
Let $t^{(i)}\in\A_F^\times,i=1,\cdots,h$ be a set of representatives of the narrow class group $F^\times\setminus\A_F^\times/\widehat{O_F}^\times(F\otimes\R)^\times_{>>0}$ such that $t^{(i)}_w=1$ for $w|\mathfrak{n}p\infty$. Fix such a choice from now on. Write
\[\Gamma_i=\GL_2(F)\cap t^{(i)}U^1(\mathfrak{n})\GL_2^{+}(F\otimes\R)(t^{(i)})^{-1}.\]
Let $f\in M_k(\mathfrak{n})$ and write $x_i=\begin{pmatrix} t^{(i)}&0\\0& 1\end{pmatrix}$. Then $f_{i}=f_{x_i}:\mathfrak{H}^{\Sigma_\infty}\to \bC$ satisfies
\[f_{i}(z)=j(\gamma,z)^{-k}\det(\gamma)^{1}f_i(\gamma z),\gamma\in\Gamma_i.\]
Note that $\det(\gamma)^{1}=1$ here as $\gamma\in\Gamma_i$. Hence $f_i$ is a Hilbert modular form of level $\Gamma_i$ and parallel weight $k$ in the usual sense. We denote by $M_k(\Gamma_i)$ the space of holomorphic functions on $\mathfrak{H}^{\Sigma_\infty}$ satisfying the above condition. Then the map $f\mapsto (f_i)_{i=1,\cdots,h}$ induces an isomorphism $M_k(\mathfrak{n})\cong\bigoplus_{i=1}^h M_k(\Gamma_i)$.
\end{para}

\begin{para}[\textbf{Fourier expansion and Dirichlet series}] \label{FeaDs}
Each $f_i(z)$ has a Fourier expansion at $\infty$:
\[f_i(z)=a_i(0,f)+\sum_{\mu\in(t^{(i)})^{-1}_{>>0}}a_i(\mu,f)e(\mu\cdot z)\]
where 
\begin{itemize}
\item $(t^{(i)})$ is the fractional ideal associated to $t^{(i)}$ and $(t^{(i)})^{-1}_{>>0}\subseteq (t^{(i)})^{-1}$ is the subset of totally positive elements.
\item $e(\mu\cdot z)=e^{2\pi \sqrt{-1}\sum_{\tau\in\Sigma_\infty}\mu_\tau z_\tau}$.
\end{itemize}

For a fractional ideal $\ka$, we put $C(\ka,f)=a_i(\mu,f)|t^{(i)}|^{-1}_{\A_F}$ if $\ka$ is of the form $(\mu (t^{(i)})), \mu\in (t^{(i)})^{-1}_{>>0}$ and zero otherwise. We attach a Dirichlet series to $f$ by 
\[D(f,s)=\sum_{\ka\subseteq O_F}\frac{C(\ka,f)}{N(\ka)^s},\]
where $\ka$ runs over all non-zero ideals of $O_F$ and $N(\ka)$ denotes the norm of $\ka$.
\end{para}

\begin{rem}
Any modular form $f\in M_{k}(\mathfrak{n})$ has an adelic Fourier expansion (see proposition 2.26 of \cite{Hida06}, note that $[\kappa_1]=[0]$ here):
\[f(\begin{pmatrix}y & x \\ 0 & 1 \end{pmatrix}) =|y|_{\A_F}(a_0(y,f)+\sum_{\xi\in F_{>>0}} a_\infty(\xi y,f)e(\sqrt{-1}\xi \cdot y_\infty)e_F(\xi x)),x\in\A_F,y\in\A_F^\times\]
where $e_F:\A_F/F\to \bC^\times$ denotes the standard additive character determined by the condition $e_F(x_\infty)=e^{2\pi\sqrt{-1}\sum_\tau x_\tau}$ for $x_\infty=(x_\tau)\in F\otimes \R$. The relation between $a_\infty$ and $a_i$ is given by
\[a_\infty(\mu(t^{(i)}),f)=a_i(\mu,f)|t^{(i)}|^{-1}_{\A_F}=C(\mu(t^{(i)}),f).\]
One can check that $C(\ka,f)$ is the same as $C(\ka,f_0)$ considered in (2.24) of \cite{Shi78}, where $f_0$ is defined at the end of \ref{ahmf}. Hence $D(f,s)=D(s,f_0)$ defined in (2.25) of \cite{Shi78}.
\end{rem}

\begin{para}[\textbf{Hecke operators}]
For place $v\nmid p\mathfrak{n}$, we define Hecke operators $T_v\in\End (M_{k}(\mathfrak{n}))$ in the usual way:
\[(T_v f)(g)=\sum_{\gamma_i}f(g\gamma_i),~U^1(\mathfrak{n})\begin{pmatrix}\varpi_v&0\\0&1\end{pmatrix}U^1(\mathfrak{n})=\bigsqcup_{\gamma_i} \gamma_i U^1(\mathfrak{n}).\]
For $\gamma\in O_{F,p}\cap (F\otimes\Q_p)^\times$, we can define Hecke operator $\langle \gamma\rangle$ as in \ref{hidatheory} (with $\vec w=\vec 0$ here)
\[(T_v f)(g)=\sum_{\gamma_i}f(g\gamma_i),~U^1(\mathfrak{n})\begin{pmatrix}\gamma &0\\0&1\end{pmatrix}U^1(\mathfrak{n})=\bigsqcup_{\gamma_i} \gamma_i U^1(\mathfrak{n}).\]

In terms of the Fourier expansion, let $f\in M_{k}(\mathfrak{n},\theta)$, then a simple computation shows that
\begin{itemize}
\item $C(\ka,T_v f)=C(\ka(\varpi_v),f)+N(\varpi_v)\theta(\varpi_v)C(\frac{\ka}{(\varpi_v)},f)$ if $v\nmid p\mathfrak{n}$.
\item $C(\ka,\langle \varpi_v \rangle f)=C(\ka (\varpi_v),f)$ if $v|p$.
\item $C(\ka,\langle \gamma \rangle f)=C(\ka,f)$ for $\gamma \in O_{F_v}^\times$.
\end{itemize}

For any subring $R\subset \bC$, let $M_k(\mathfrak{n},R)=\{f\in M_k(\mathfrak{n}),a_i(\mu,f)\in R$ for any $i,\mu\}$. We view $\cO$ as a subring of $\bC$ via the isomorphism $\bC\cong \overbar{\Q_p}$. Suppose $\cO$ is large enough. It is clear that all the Hecke operators above preserve $M_k(\mathfrak{n},\theta,\cO):=M_k(\mathfrak{n},\cO)\cap M_k(\mathfrak{n},\theta)$. In fact, $M_k(\mathfrak{n},\theta,\cO)$ defines an integral structure of $M_k(\mathfrak{n},\theta)$ (2.3.18 of \cite{Hida06}):
\[ M_k(\mathfrak{n},\theta,\cO)\otimes_{\cO}\bC\cong M_k(\mathfrak{n},\theta).\]
The same result holds with space of cusp forms: let $S_k(\mathfrak{n},\theta,\cO):=M_k(\mathfrak{n},\cO)\cap S_k(\mathfrak{n},\theta)$, then
\[ S_k(\mathfrak{n},\theta,\cO)\otimes_{\cO}\bC\cong S_k(\mathfrak{n},\theta).\]

We define Hecke algebras $\tilde{\T}(\mathfrak{n},\theta)\subseteq \End_{\cO}(M_k(\mathfrak{n},\theta,\cO))$ and  $\T(\mathfrak{n},\theta)\subseteq \End_{\cO}(S_k(\mathfrak{n},\theta,\cO))$ as the $\cO$-subalgebras generated by $T_v, v\nmid p\mathfrak{n}$, and $\langle\gamma\rangle,\gamma\in O_{F,p}\cap (F\otimes \Q_p)^\times$. We can define Hida's idempotent $\mathbf{e}$ in $\End_{\cO}(M_k(\mathfrak{n},\theta,\cO))$ and $\End_{\cO}(S_k(\mathfrak{n},\theta,\cO))$ as in \ref{hidatheory} and the ordinary Hecke algebra $\T^{\ord}(\mathfrak{n},\theta):=\mathbf{e}\T(\mathfrak{n},\theta)$ similarly for cusp forms.
\end{para}

\begin{proof}[Proof of Proposition \ref{exoemi}]
As we remarked in \ref{efcase}, we may assume $\Sigma^o$ is not empty or $\Sigma_p$.

By global class field theory (see Theorem 5 of chapter 10 of \cite{AT68}), there exists a continuous character $\bar{\theta}:G_F\to \F^\times$ such that
\begin{itemize}
\item For $v|p$, $\bar{\theta}|_{G_{F_v}}=(\bar{\psi}_{v,1})^{-1}$ (defined before \ref{psiord}).
\item $\bar{\theta}$ is ramified at somewhere outside of $\Sigma_p$.
\end{itemize}
Since we assume that $\bar{\chi}|_{G_{F_v}}\neq\mathbf{1}$ for any $v|p$, it follows from $\Sigma^o\neq \Sigma_p$ that $\bar{\theta}|_{G_{F_v}}\neq \mathbf{1}$ for some $v|p$. Similarly, $\bar{\theta}\bar{\chi}|_{G_{F_v}}\neq \mathbf{1}$ for some $v|p$. Let ${\theta}_1$ (resp. $\theta_2$) be the Teichm\"uller lifting of $\bar{\theta}$ (resp. $\bar{\theta}\bar{\chi}$) and $\mathfrak{n}_1$ (resp. $\mathfrak{n}_2$) be its conductor. Hence $\theta_1|_{G_{F_v}}$ (resp. $\theta_2|_{G_{F_v}}$) is trivial for $v\in \Sigma^o$ (resp. $v\in\Sigma_p\setminus \Sigma^o$). We remark that for any $i\in\{1,2\}$, there always exists a place $v|p$ such that $\theta_i|_{G_{F_v}}\neq\mathbf{1}$. Put $\theta=\theta_1\theta_2|\cdot|_{\A_F}$ and $\mathfrak{n}=\mathfrak{n}_1\mathfrak{n}_2$. Consider the \textit{weight one Eisenstein series} $E_1=E_1(\theta_1,\theta_2) \in M_1(\mathfrak{n}p,\theta)$ with Dirichlet series (Proposition 3.4 of \cite{Shi78})
\[L^{(\Sigma_p\setminus \Sigma^o)}(F,s,\theta_1)L^{(\Sigma^o)}(F,s,\theta_2)\]
where for a finite set of places $S$, $L^{(S)}(F,s,\theta_i)$ denotes the usual $L$-function associated to $\theta_i$ of conductor $\mathfrak{n}_i$ with Euler factors at $v\in S$ removed. A simple calculation shows that the Hecke action on $E_1$ satisfies the following
\begin{itemize}
\item $D(T_v\cdot E_1,s)=D\left((\theta_1(\Frob_v)+\theta_2(\Frob_v))E_1,s\right)$ for $v\nmid p\mathfrak{n}$;
\item $D(\langle \gamma \rangle \cdot E_1,s)= D(E_1,s)$ for $\gamma\in O_{F,p}\cap (F\otimes\Q_p)^\times$.
\end{itemize}
Since $M_1(\mathfrak{n},\theta)$ is finite-dimensional over $\mathbb{C}$, we may assume that $E_1$ is an eigenform, i.e. 
\begin{itemize}
\item $T_v\cdot E_1=(\theta_1(\Frob_v)+\theta_2(\Frob_v))E_1$ for $v\nmid p\mathfrak{n}$;
\item $\langle \gamma \rangle \cdot E_1= E_1$ for $\gamma\in O_{F,p}\cap (F\otimes\Q_p)^\times$.
\end{itemize}
Enlarging $\cO$ if necessary, we may assume further that $E_1\in M_1(\mathfrak{n}p,\theta,\cO)\setminus \varpi M_1(\mathfrak{n}p,\theta,\cO)$.

Next we want to find a modular form of higher weight congruent to $E_1$. I thank Professor Richard Taylor for showing me the following lemma.
\begin{lem}
There exists a Hilbert modular form $\Theta \in M_{(p-1)2^j}(p^{c-1}, |\cdot|_{\A_F}^{-(p-1)2^j+2},\cO)$ for some positive integers $j$ and $c$, such that
\begin{itemize}
\item $a_i(0,\Theta)=1$ for any $i$;
\item $a_i(\mu,\Theta)\equiv0\mod p$ for any $\mu\neq 0$ and $i$.
\end{itemize}
\end{lem}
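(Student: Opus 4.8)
The plan is to produce $\Theta$ from a classical Hasse-invariant-type construction adapted to the Hilbert modular setting. The weight $(p-1)2^j$ and the level $p^{c-1}$ are the tell-tale signs that one should start with the normalized Eisenstein series of parallel weight $p-1$ whose $q$-expansion is $\equiv 1 \bmod p$. Concretely, over $\Q$ the weight $p-1$ Eisenstein series $E_{p-1}$ has constant term $1$ and all higher coefficients divisible by $p$ (its mod $p$ reduction is the Hasse invariant, which lifts the identity on $q$-expansions). For a totally real field $F$ in which $p$ is unramified, the analogous statement is that there is a parallel weight $p-1$ Hilbert modular Eisenstein series — or, even more robustly, a suitable power of the total Hasse invariant on the Hilbert modular variety — whose $q$-expansion at every cusp (i.e.\ every $a_i$) is $1 + p\cdot(\text{higher terms})$. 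First I would write down such a form: either cite the parallel-weight $p-1$ Eisenstein series associated to the trivial character (using the $q$-expansion formula of Shimura \cite{Shi78}, proposition 3.4, together with the Clausen–von Staudt / Leopoldt-type congruence that forces the relevant $L$-value to be a $p$-adic unit, making the constant term a unit and hence normalizable to $1$), or invoke the existence of the Hasse invariant lift directly from the theory of $p$-adic/mod $p$ Hilbert modular forms.

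Second, once I have a form $H$ of parallel weight $p-1$, trivial character, with $a_i(0,H) = 1$ and $a_i(\mu,H) \equiv 0 \bmod p$ for all $\mu \neq 0$ and all $i$, I would simply raise it to a suitable $p$-power: set $\Theta = H^{2^j}$ for $j$ chosen large enough. The point is that $H \equiv 1 \bmod p$ on $q$-expansions implies $H^{p^m} \equiv 1 \bmod p^{m+1}$ on $q$-expansions by the usual Frobenius/Fermat argument (if $H = 1 + pG$ then $H^p = 1 + p^2(\cdots) + $ binomial terms, each divisible by $p^2$); iterating gives congruence to $1$ modulo arbitrarily high powers of $p$, in particular $\bmod \varpi^n$ for whatever $n$ is needed later. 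Taking a $2$-power exponent rather than an arbitrary one is just a bookkeeping convenience so that the exponent matches the shape $2^j$ and so that $(p-1)2^j$ is even (parallel weight is fine for any positive integer, but the evenness is harmless and matches the later use). The central character of $H^{2^j}$ is the $2^j$-th power of the central character of $H$; with $H$ of trivial nebentypus of weight $p-1$, the relation $\theta(x) = N_{F/\Q}(x)^{-k+2}$ on $(F\otimes\R)^\times$ forces the recorded character to be $|\cdot|_{\A_F}^{-(p-1)2^j + 2}$, matching the statement. The level: $H$ can be taken of level prime to $p$ (or level $1$), and $H^{2^j}$ then has level dividing a power of $p$; the bound $p^{c-1}$ is obtained by choosing $c$ large enough, and in fact one can take $H$ of level $1$ so that $\Theta$ has level $1 \mid p^{c-1}$ for any $c \geq 1$.

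Third, I would verify the two bulleted properties. That $a_i(0,\Theta) = 1$ for all $i$ is immediate from $a_i(0,H) = 1$ and multiplicativity of constant terms of $q$-expansions at each cusp (the constant term of a product is the product of constant terms, since the cusp is a fixed point and the expansion is a power series in $e(\mu z)$ with $\mu$ totally positive). That $a_i(\mu,\Theta) \equiv 0 \bmod p$ for $\mu \neq 0$ follows from $\Theta = 1 + p(\cdots)$ on each $q$-expansion. The integrality $\Theta \in M_{(p-1)2^j}(p^{c-1}, |\cdot|_{\A_F}^{-(p-1)2^j+2}, \cO)$ follows from the integrality of $H$ (after enlarging $\cO$ if needed, as permitted) and the fact that $M_\ast(\cdot,\cdot,\cO)$ is a ring.

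\textbf{Main obstacle.} The genuinely non-formal input is the very existence of the weight $p-1$ form $H$ with $q$-expansion $\equiv 1 \bmod p$ at \emph{every} cusp $a_i$, uniformly. Over $\Q$ this is classical ($E_{p-1}$), but over a general totally real $F$ one must either (a) control the constant terms of the parallel weight $p-1$ Eisenstein series at all cusps — which via Shimura's formula reduces to showing the relevant partial Dedekind zeta values $\zeta_F(2-p)$ (or $\zeta_F^{(S)}(\cdot)$) are $p$-adic units up to the normalization, a Leopoldt/Deligne–Ribet-type statement that does hold because $p$ is unramified in $F$ and $p-1$ is even, so the Kummer-type congruence keeps the value a unit — or (b) invoke the mod $p$ Hasse invariant on the Hilbert modular variety (which is canonically the reduction of such a form, of parallel weight $p-1$, with $q$-expansion $\equiv 1$) and lift it integrally, which is available from the theory of Hilbert modular forms mod $p$ and their lifts. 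I expect to take route (b) since it is cleanest and avoids delicate constant-term computations; the one point requiring care is that the Hasse invariant a priori lives in characteristic $p$ and one needs an honest characteristic-zero lift of integral weight, which is why a $p$-power must eventually be taken and why the weight appears as a $p$-power multiple of $p-1$. This is exactly the content Skinner–Wiles use, and I would follow their reference trail for the precise normalization.
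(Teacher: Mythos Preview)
The paper's own proof is a one-line citation to lemma 1.4.2 of \cite{Wi88}, so your sketch supplies far more detail than the paper does. Your overall strategy --- produce a parallel weight $p-1$ Hilbert modular form $H$ with $q$-expansion $\equiv 1 \bmod p$ at every cusp (via either the Eisenstein series $E_{p-1}$ or a lift of the Hasse invariant), then take a power --- is precisely the content behind Wiles's cited lemma, and you have correctly isolated the one nontrivial ingredient: the uniform existence of such an $H$ over a general totally real $F$ with $p$ unramified.

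One internal inconsistency worth cleaning up: you write ``raise it to a suitable $p$-power: set $\Theta = H^{2^j}$'' and then argue that $H^{p^m} \equiv 1 \bmod p^{m+1}$. The exponent $2^j$ is not a $p$-power, and in any case the higher-$p$-power congruence discussion is irrelevant here --- the lemma only demands $\equiv 0 \bmod p$ for the nonconstant coefficients, and \emph{any} positive power of $H$ already achieves that once $H \equiv 1 \bmod p$. The specific shape $(p-1)2^j$ is simply what Wiles's particular construction yields; for the application in the paper (multiplying $E_1$ by $\Theta$ to produce a higher-weight form $\mathbf{E}$ congruent to $E_1$ modulo $\varpi$), only the existence of some sufficiently large weight with the stated congruence matters, not the exact exponent. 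So your route (b), once made precise (and with the understanding that the characteristic-zero lift of the Hasse invariant may itself already sit in weight $(p-1)N$ for some $N>1$, after which further powers are taken), is sound and equivalent to what is being cited.
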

\begin{proof}
This follows from lemma 1.4.2. of \cite{Wi88}.
\end{proof}

Now consider the function $\mathbf{E}:\GL_2(\A_F)\to\bC$,
\[\mathbf{E}(u)=E_1(u)\Theta(u)|\det(u)|_{\A_F}^{-1}\]
It is easy to see that $\mathbf{E}\in M_{k_0}(\mathfrak{n}p^{c},\tilde\theta,\cO)$ with $k_0=(p-1)2^j+1,\tilde\theta=\theta|\cdot|_{\A_F}^{-(p-1)2^j}$  and
\[C_i(\mu,\mathbf{E})\equiv C_i(\mu,E_1)\mod \varpi.\]
Hence $\mathbf{E}\notin \varpi M_{k_0}(\mathfrak{n}p^{c},\tilde\theta,\cO)$ and
\begin{itemize}
\item $T_v\cdot \mathbf{E}\equiv (\theta_1(\Frob_v)+\theta_2(\Frob_v))\mathbf{E}\mod(\varpi)$ for $v\nmid p\mathfrak{n}$.
\item $\langle \gamma \rangle \cdot \mathbf{E}\equiv \mathbf{E}\mod (\varpi)$ for $\gamma\in O_{F,p}\cap (F\otimes\Q_p)^\times$.
\end{itemize}
Therefore $\varpi, T_v-(\theta_1(\Frob_v)+\theta_2(\Frob_v)),v\nmid p\mathfrak{n}$ and $\langle \gamma \rangle-1,\gamma\in O_{F,p}\cap (F\otimes\Q_p)^\times$ generate a maximal ideal $\km_1$ of $\tilde{\T}(\mathfrak{n}p^{c},\tilde\theta)$. Note that $\tilde{\T}(\mathfrak{n}p^{c},\tilde\theta)_{\km_1}$ is a direct summand of $\tilde{\T}(\mathfrak{n}p^{c},\tilde\theta)$. We denote the idempotent associated to $\km_1$ by $\epsilon_1$. Let $\epsilon_0$ be the composite of $\epsilon_1$ with Hida's idempotent $\mathbf{e}$. We claim that $\mathbf{E}^\ord:=\epsilon_0(\mathbf{E})$ is in fact a \textit{cusp form}.

By proposition 1.5 of \cite{Wi86}, a complement of $S_{k_0}(\mathfrak{n}p^{c},\tilde\theta)$ in $M_{k_0}(\mathfrak{n}p^{c},\tilde\theta)$ is spanned by some automorphic forms inside the automorphic representations $\pi(k_0,\psi_1,\psi_2)$ generated by the Eisenstein series $E_{k_0}(\psi_1,\psi_2)$ (see the discussion before proposition 1.5 \cite{Wi86} for the precise statement) with Dirichlet series $L(F,s,\psi_1)L(F,s-k_0+1,\psi_2)$, where $\psi_1,\psi_2$ are Hecke characters of finite orders such that 
\begin{itemize}
\item $\psi_1\psi_2|\cdot|_{\A_F}^{-k_0+2}=\tilde\theta$.
\item The product of conductors of $\psi_1$ and $\psi_2$ divides $p^c\mathfrak{n}$.
\end{itemize}
Strictly speaking, \cite{Wi86} only proved the case of weight two. However it is easy to see that the same argument works for higher weight. Suppose there exists some automorphic form $\mathbf{F}\in \pi(k_0,\psi_1,\psi_2)\cap M_{k_0}(\mathfrak{n}p^{c},\tilde\theta)$ such that $\epsilon_0(\mathbf{F})\neq 0$. It follows from $\mathbf{e}(\mathbf{F})\neq 0$ that $\mathbf{F}$ is an ordinary form and $\psi_1|_{F_v^\times}$ is unramified for $v|p$. Moreover since $\epsilon_1(\mathbf{F})\neq 0$, we must have 
\begin{itemize}
\item $\psi_1+\psi_2\equiv \theta_1+\theta_2\mod\varpi$.
\item $\psi_1(\varpi_v)\equiv 1 \mod\varpi,~v|p$.
\end{itemize}
From the first identity, it is easy to see that $\psi_1\equiv \theta_1$ or $\theta_2\mod \varpi$. In either case, we may find a $v|p$ such that $\psi_i|_{G_{F_v}}\not\equiv 1\mod\varpi$. This contradicts the second equality above. Thus $\epsilon_0(\mathbf{F})=0$ and $\mathbf{E}^\ord=\epsilon_0(\mathbf{E})$ has to be a non-zero cusp form.

Thus $\varpi, T_v-(\theta_1(\Frob_v)+\theta_2(\Frob_v)),v\nmid p\mathfrak{n}$ and $\langle \gamma \rangle-1,\gamma\in O_{F,p}\cap (F\otimes\Q_p)^\times$ in fact generate a maximal ideal $\km_2$ of $\T^{\ord}(\mathfrak{n}p^c,\tilde\theta)$, the ordinary Hecke algebra of cusp forms. Hence we may find an ordinary cuspidal eigenform $\mathbf{F}'\in S_{k_0}(\mathfrak{n}p^c,\tilde\theta,\cO)_{\km_2}$. Using the Jacquet-Langlands correspondence, we can transfer $\mathbf{F}'$ from $\GL_2/F$ to $D^\times$ and get an ordinary eigenform $\mathbf{F}''\in S^{\ord}_{(\vec{k_0},\vec{0}),\tilde\theta_p}(U^p_e(c),\cO)$ with the same $T_v,\langle \gamma \rangle$ eigenvalues. Here $U^p_e=\prod_{v\nmid p}U_{e,v}$ is an open subgroup of $\prod_{v\nmid p}\GL_2(O_{F_v})$ such that $U_{e,v}=\GL_2(O_{F_v}),v\nmid p\mathfrak{n}$ and $\tilde\theta_p:\AFi/F^\times_{>>0}\to\cO^\times$ is the $p$-adic character associated to $\tilde\theta$: 
\[\tilde\theta_p(g)=\tilde\theta(g)N_{F/\Q}(g_p)^{k_0-2}.\]

Shrink $U^p_e$ if necessary, we may assume $\mathbf{F}''\otimes \theta_1^{-1}: \DAi\to \cO$,
\[g\mapsto \mathbf{F}''(g)\theta_1^{-1}(N_{D/F}(g))\]
is an element in $S^{\ord}_{(\vec{k_0},\vec{0}),\tilde\theta_p\theta_1^{-1}}(U^p_e(c),\cO)$. It follows from the definition of $\theta_1$ that $\varpi$, $T_v-(1+\chi(\Frob_b)),v\nmid \mathfrak{n}p$ and $\psi_{v,1}(\gamma)-\tilde{\psi}_{v,1}(\gamma),\gamma\in F_v^\times,v|p$ (as in the proposition)  generate a maximal ideal of $\T^{\ord}_{(\vec{k_0},\vec{0}),\tilde\theta_p\theta_1^{-1}}(U^p_e(c))$, hence also a maximal ideal of $\T^{\ord}_{\tilde\theta_p\theta_1^{-1}}(U^p_e)$. Using the weight independence result (Thm 2.3 of \cite{Hida89}), we conclude that these elements generate a maximal ideal of $\T^{\ord}_{\psi}(U^p_e)$.
\end{proof} 

\subsection{Proof of Theorem \ref{thmB}} \label{PotthmB}
We will freely use the notation introduced in subsection \ref{sasotmr}. The strategy is roughly as follows: we will make certain soluble base change and invoke proposition \ref{propB} and soluble base change results. First we do some reduction work.

\begin{para} \label{firstreduction}
Let $\tilde{\bar{\chi}},\tilde\omega:G_{F,S}\to\cO^\times$ be the Teichm\"uller lifting of $\bar{\chi},\omega$. Then we may write 
\[\chi=\tilde{\bar{\chi}}\varepsilon^k\tilde\omega^{-k}\theta^2\]
for some integer $k$ and some character $\theta:G_{F,S}\to\cO^\times$ of finite $p$-power order. It is easy to see that we only need to prove Theorem \ref{thmB} with $\chi$ replaced by $\chi\theta^{-2}$. Hence we may assume $\chi$ and $\bar{\chi}$ ramify at the same set of places away from $p$.

It suffices to prove Theorem \ref{thmB} with $R^{\ps,\ord}$ replaced by $R^{\ps,\ord}/\kp$ for any minimal prime $\kp$ of $R^{\ps,\ord}$. Fix a minimal prime $\kp$. Let $\rho(\kp):G_{F,S}\to\GL_2(k(\kp))$ be the associated semi-simple representation and let $\Sigma^{o,2}_F$ be the set of places $v$ above $p$ such that $\rho(\kp)|_{G_{F_v}}\cong \begin{pmatrix}\psi^{univ}_{v,2}\,\modd\kp&*\\0& \psi^{univ}_{v,1}\,\modd \kp\end{pmatrix}$, i.e. $\psi^{univ}_{v,2}\,\modd\kp$ is a sub-representation of $\rho(\kp)|_{G_{F_v}}$. 
We denote the set of places of $F$ above $p$ by $\Sigma_{F,p}$. Let $\Sigma^o_F=\Sigma_{F,p}\setminus \Sigma^{o,2}_F$. Then 
\[\rho(\kp)|_{G_{F_v}}\cong \begin{pmatrix}\psi^{univ}_{v,1}\,\modd\kp&*\\0& \psi^{univ}_{v,2}\,\modd \kp\end{pmatrix}\]
is a \textit{non-split} extension for every $v\in\Sigma^o_F$. If not, $\psi^{univ}_{v,2}\,\modd\kp$ would be a sub-representation and $v$ has to belong to $\Sigma_F^{o,2}$ by the construction of $\Sigma_F^{o,2}$. After twisting the deformation problem in \ref{sotmr1} with $\chi^{-1}$ if necessary, we may assume 
\begin{eqnarray} \label{onehalf}
|\Sigma^o_F|\leq\frac{1}{2}|\Sigma_{F,p}|.
\end{eqnarray}
Later on we will use this to bound the Selmer group in \ref{selmext}.

We denote the trace of $\rho(\kp)$ by $T(\kp)$. Then $T(\kp)$ is ($\psi_{v,1}^{univ}\mod\kp$)-ordinary (resp. ($\psi_{v,2}^{univ}\mod\kp$)-ordinary) if $v\in\Sigma^o_F$ (resp. $v\in\Sigma_{F,p}\setminus \Sigma^o_F$).
\end{para}

The following simple lemma and corollary will be quite useful.
\begin{lem} \label{simlem}
Let $R\in C_{\cO}$ be a domain with maximal ideal $\km$ and fraction field $K$. Let $\rho:G_{F,S}\to\GL_2(R)$ be a continuous representation such that 
\[\tr \rho\equiv 1+\bar{\chi}\mod\km.\]
Then for any $v\in S\setminus\Sigma_{F,p}$, there exists a finite extension $K'/K$ such that either
\begin{itemize}
\item $\rho\otimes K'|_{G_{F_v}}$ is reducible, or
\item $\rho\otimes K'|_{G_{F_v}}\cong \Ind_{G_{F_{v^2}}}^{G_{F_v}}\theta$, where $F_{v^2}$ denotes the unramified quadratic extension of $F_v$ and $\theta:G_{F_{v^2}}\to k(\kp)^\times$ is a character.
\end{itemize}
\end{lem}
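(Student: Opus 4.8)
The plan is to base change to an algebraic closure $\overline{K}$, analyse $\rho\otimes\overline{K}|_{G_{F_v}}$, and then descend to a finite extension $K'/K$. Write $V$ for the two-dimensional $\overline{K}$-space underlying $\rho\otimes\overline{K}$, and let $\ell\neq p$ be the rational prime below $v$. The first step is to control wild inertia $P_v\subseteq I_{F_v}$. Since $\bar{\chi}(c)=-1\neq 1$ we have $\bar{\rho}^{\mathrm{ss}}\cong\mathbf{1}\oplus\bar{\chi}$; as $P_v$ is pro-$\ell$ while $1+\km M_2(R)=\ker(\GL_2(R)\to\GL_2(\F))$ is pro-$p$, the homomorphism $\rho(P_v)\to\GL_2(\F)$ is injective, so $\rho(P_v)\cong\bar{\rho}(P_v)$ is a finite $\ell$-group whose semisimplification is $\mathbf{1}|_{P_v}\oplus\bar{\chi}|_{P_v}$; in particular it is abelian, and over $\overline{K}$ one has $\rho|_{P_v}=\psi_1\oplus\psi_2$ with $\{\bar{\psi}_1,\bar{\psi}_2\}=\{\mathbf{1}|_{P_v},\bar{\chi}|_{P_v}\}$. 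Because $1+\km$ is pro-$p$ there are no nontrivial $\ell$-power roots of unity in $\ker(R^\times\to\F^\times)$, so reduction is injective on the (prime-to-$p$, cyclic) image of each $\psi_i$ and $\ker\psi_i=\ker\bar{\psi}_i$; hence one of $\psi_1,\psi_2$ is trivial. If the other is nontrivial then $V^{P_v}$ is a line, necessarily $G_{F_v}$-stable since $P_v$ is normal in $G_{F_v}$, and $\rho\otimes K'|_{G_{F_v}}$ is reducible for any finite $K'$ over which this line is defined --- the first alternative.

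It remains to treat the case $\rho|_{P_v}=1$, so that $\rho|_{G_{F_v}}$ factors through the tame quotient, topologically generated by a Frobenius lift $\phi$ and a generator $\sigma$ of tame inertia subject to $\phi\sigma\phi^{-1}=\sigma^{q_v}$ with $q_v=N(v)$. Put $M=\rho(\sigma)$ and pass to its Jordan form over $\overline{K}$. If $M=aI$ is scalar, then $\rho(I_{F_v})$ is central and any eigenvector of $\rho(\phi)$ spans a $G_{F_v}$-stable line, so $\rho$ is reducible over a quadratic extension of $K$. If $M=a(I+N)$ with $N\neq 0$ nilpotent, then comparing $\rho(\phi)M\rho(\phi)^{-1}$ with $M^{q_v}=a^{q_v}(I+q_vN)$ and using that $I$ and $N$ are linearly independent in $M_2(\overline{K})$ forces $a^{q_v}=a$ and $\mathrm{Ad}(\rho(\phi))N=q_vN$; consequently the line $\im(N)=\ker(N)$ is preserved both by $\rho(\phi)$ and by $\rho(\sigma)$, hence by $G_{F_v}$, and $\rho$ is again reducible. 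Finally, if $M$ is semisimple with distinct eigenvalues $a\neq b$, its eigenlines $L_a,L_b$ are also the eigenlines of $M^{q_v}$, so $\rho(\phi)$ permutes $\{L_a,L_b\}$ according to whether $a^{q_v}=a$ or $a^{q_v}=b$: in the former case $\rho$ is reducible, while in the latter the stabiliser $H$ of $L_a$ has index $2$ in $G_{F_v}$ and contains $I_{F_v}$, so the degree-$2$ subextension of $\overline{F_v}/F_v$ cut out by $H$ is the unramified quadratic extension $F_{v^2}$ and $\rho|_{G_{F_v}}\cong\Ind_{G_{F_{v^2}}}^{G_{F_v}}\theta$, with $\theta\colon G_{F_{v^2}}\to K'^\times$ the character by which $H$ acts on $L_a$. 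In every subcase the lines and characters involved are defined over an extension of $K$ of degree at most $2$ --- generated by the eigenvalues of $M$ and of $\rho(\phi)$ --- so one finite $K'/K$ works throughout.

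The argument is otherwise routine; the one place that needs care is the wild-inertia step, where the identification of $\rho|_{P_v}$ with a sum of a trivial character and one further character compatible with the residual decomposition rests on $1+\km M_2(R)$ being pro-$p$ and on the injectivity of reduction on prime-to-$p$ roots of unity. I do not expect any serious obstacle: once wild ramification is eliminated, the stated dichotomy is precisely the trichotomy produced by the tame relation $\phi\sigma\phi^{-1}=\sigma^{q_v}$, with the ``induced'' case forced to come from the unramified quadratic extension because the relevant index-$2$ subgroup contains inertia.
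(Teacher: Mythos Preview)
Your proof is correct. The organization differs from the paper's: the paper splits according to whether $\bar{\chi}|_{I_{F_v}}$ is trivial, using the prime-to-$p$ part $I'_{F_v}\subseteq I_{F_v}$ as the normal subgroup on which one first diagonalizes; if $\bar{\chi}|_{I_{F_v}}\neq 1$ the two characters on $I'_{F_v}$ have distinct reductions, so Frobenius cannot swap the eigenlines and one is already reducible, while if $\bar{\chi}|_{I_{F_v}}=1$ then $\rho|_{I_{F_v}}$ factors through the pro-$p$ quotient $I_{F_v}(p)\cong\Z_p$ and the dichotomy follows from the single matrix $\rho(\sigma_p)$. You instead split according to whether $\rho|_{P_v}$ is trivial, using wild inertia (pro-$\ell$) as the normal subgroup; in the wildly-ramified case you get the $P_v$-fixed line, and in the tame case you carry out an explicit Jordan-form analysis of the relation $\phi\sigma\phi^{-1}=\sigma^{q_v}$. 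The tamely-but-nontrivially-ramified case ($\bar\chi|_{P_v}=1$ but $\bar\chi|_{I_{F_v}}\neq 1$) thus lands in the paper's quick Case~A but in your longer Case~II; both routes reach reducibility there. Your approach has the virtue of making the unramified-quadratic induction completely explicit via the tame relation, while the paper's gets the ramified case done in one line at the cost of a slightly larger normal subgroup $I'_{F_v}$. The point you flag about the wild-inertia step is exactly right: one should note that over $R$ itself one can split off the trivial summand (e.g.\ via the averaging idempotent, or by Hensel on the characteristic polynomial of a generator with $\bar\chi(h_0)\neq 1$), which makes the identification $\{\psi_1,\psi_2\}\leftrightarrow\{1,\bar\chi|_{P_v}\}$ immediate and avoids having to reduce $\overline{K}$-valued characters.
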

\begin{proof}
If $\bar{\chi}|_{I_{F_v}}$ is non-trivial, then $\rho\otimes K|_{I_{F_v}'}$ is a direct sum of distinct characters where $I_{F_v}'$ denotes the prime to $p$ subgroup of $I_{F_v}$. Hence $\rho\otimes K|_{G_{F_v}}$ is reducible in this case. Now suppose $\bar{\chi}|_{I_{F_v}}=\mathbf{1}$. In this case, $\rho|_{I_{F_v}}$ has to factor through its pro-$p$ quotient $I_{F_v}(p)\cong \Z_p$. Hence there exists a quadratic extension $K'/K$ such that $\rho\otimes K'|_{I_{F_v}}$ is reducible. If $\rho\otimes K'|_{I_{F_v}}$ is a non-split extension, then $\rho\otimes K'|_{G_{F_v}}$ is reducible. Otherwise $\rho\otimes K'|_{I_{F_v}}\cong \theta_1\oplus\theta_2$ and $\rho\otimes K'|_{G_{F_v}}$ is either reducible or dihedral.
\end{proof}

\begin{cor} \label{simcor}
Let $R\in C_{\cO}$ be a domain with maximal ideal $\km$ and fraction field $K$. Let $T:G_{F,S}\to R$ be a continuous $2$-dimensional pseudo-representation with determinant $\chi$ such that $T\equiv 1+\bar{\chi}\mod\km$. Then for any $v\in S\setminus\Sigma_{F,p}$, there exists a finite extension $K'/K$ and two characters $\theta_1,\theta_2:I_{F_v}\to (K')^\times$ of finite orders  such that 
\[T|_{I_{F_v}}=\theta_1+\theta_2.\]
\end{cor}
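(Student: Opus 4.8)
The plan is to deduce Corollary \ref{simcor} from Lemma \ref{simlem} together with the dictionary between pseudo-representations and genuine representations developed in \S\ref{Srop-r}. First I would pass from the pseudo-representation $T$ to an actual representation: since $R$ is a domain with fraction field $K$ and $T\equiv 1+\bar\chi$ modulo the maximal ideal, I can enlarge $K$ to a finite extension (in fact to a quadratic one, to split off a ramified piece if $\bar\chi$ is nontrivial on $I_{F_v}$, though this is absorbed into later extensions anyway) and attach to $T\otimes K$ a continuous $2$-dimensional representation $\rho:G_{F,S}\to\GL_2(K)$ with $\tr\rho=T\otimes K$, using \ref{tar}. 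Concretely, one uses the same recipe as in \ref{tar}: either $T\otimes K$ is reducible, in which case $\rho$ is a sum of two characters and there is nothing to prove, or the $x(\sigma,\tau)$ are not all zero and we get an absolutely irreducible $\rho$ over $K$ (possibly after a finite extension to find $\sigma_0,\tau_0$ with $x(\sigma_0,\tau_0)$ of minimal valuation). In any case $\rho\equiv 1+\bar\chi$ modulo the maximal ideal of an appropriate order, so we may also arrange a lattice; but actually for the statement we only need $\rho$ over $K'$ and the congruence of traces, so I would avoid worrying about integral structure.

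Next I would apply Lemma \ref{simlem} to this $\rho$ (after replacing $R$ by a suitable finite local $\cO$-subalgebra of $K'$ generated by the matrix entries, so that the hypotheses of the lemma are literally met, or simply by quoting the lemma over the fraction field). Lemma \ref{simlem} gives, after a further finite extension of $K'$, that $\rho|_{G_{F_v}}$ is either reducible or isomorphic to $\Ind_{G_{F_{v^2}}}^{G_{F_v}}\theta$ for the unramified quadratic extension $F_{v^2}/F_v$ and some character $\theta$. In the reducible case $\rho|_{G_{F_v}}=\eta_1\oplus\eta_2$ up to semisimplification, and then $T|_{I_{F_v}}=\eta_1|_{I_{F_v}}+\eta_2|_{I_{F_v}}$; since $\det\rho|_{G_{F_v}}=\chi|_{G_{F_v}}$ is de Rham of the form (finite order)$\cdot\varepsilon^k$, and $\eta_1\eta_2$ restricted to inertia is $\chi|_{I_{F_v}}$ which is trivial because $v\nmid p$ forces $\varepsilon|_{I_{F_v}}$ to be trivial and $\chi$ is unramified outside $\Sigma_p$ (or at worst $\bar\chi|_{I_{F_v}}$ and hence $\chi|_{I_{F_v}}$ has finite order); I should double-check from the hypotheses of \S\ref{sotmr1} that $\chi|_{I_{F_v}}$ has finite order, which it does since in \ref{firstreduction} we reduced to $\chi$ ramifying only where $\bar\chi$ does, of $p$-power-prime-to-$p$ order — so $\eta_1|_{I_{F_v}},\eta_2|_{I_{F_v}}$ have finite order and we set $\theta_1=\eta_1|_{I_{F_v}}$, $\theta_2=\eta_2|_{I_{F_v}}$. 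In the dihedral case, $\rho|_{G_{F_v}}=\Ind_{G_{F_{v^2}}}^{G_{F_v}}\theta$; restricting an induced representation to the inertia subgroup of the unramified quadratic extension — which equals $I_{F_v}$ — gives $\rho|_{I_{F_v}}=\theta|_{I_{F_v}}\oplus\theta^{c}|_{I_{F_v}}$ where $c$ is the nontrivial element of $\Gal(F_{v^2}/F_v)$, and again these characters have finite order for the same determinant reason. In both cases $T|_{I_{F_v}}=\theta_1+\theta_2$ with $\theta_i$ of finite order, as claimed.

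The main obstacle, such as it is, is bookkeeping rather than mathematics: one must be careful that Lemma \ref{simlem} is applied in a setting where $R$ really is a complete local domain in $C_\cO$ with the right residual congruence, which may require first replacing $R$ by the closure of the $\cO$-subalgebra generated by the entries of a chosen $\rho$, and then enlarging the fraction field several times (once to produce $\rho$, once inside the proof of \ref{simlem} to split inertia, possibly once more). Since all these are finite extensions and the conclusion only asserts existence of \emph{some} finite $K'/K$, composing them is harmless. I would also need to record that "finite order" is preserved: a character of $I_{F_v}$ valued in a finite extension of $K$ whose product with its conjugate (or with the other summand) is the restriction of the de Rham, finite-order-times-$\varepsilon^k$ character $\chi|_{G_{F_v}}$ to inertia is itself of finite order, because $\varepsilon|_{I_{F_v}}$ is trivial for $v\nmid p$ and $\chi$ is unramified-outside-$\Sigma_p$ up to finite order. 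With these points pinned down the corollary follows immediately, and I expect the whole argument to be a short paragraph in the final write-up.
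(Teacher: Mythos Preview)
Your overall strategy matches the paper's—attach a genuine $\rho$ to $T$ and invoke Lemma~\ref{simlem}—but two steps are gappy. The first is the integral model: Lemma~\ref{simlem} requires $\rho:G_{F,S}\to\GL_2(R')$ with $R'\in C_\cO$ and $\tr\rho\equiv 1+\bar\chi\bmod\km_{R'}$, since both branches of its proof use this congruence to control $\rho|_{I_{F_v}}$ (forcing it through the pro-$p$ quotient when $\bar\chi|_{I_{F_v}}=\mathbf 1$, or splitting off distinct characters on the prime-to-$p$ inertia otherwise). The recipe of \ref{tar} only produces $\rho$ over the fraction field $K$, and your proposed ``$\cO$-subalgebra generated by the matrix entries'' does not obviously lie in $C_\cO$: the off-diagonal entries involve division by $x(\sigma_0,\tau_0)$, which always lies in $\km$ since $T$ is residually reducible, and adjoining such quotients destroys the local structure. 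The paper's fix is to first reduce to $R=R^{\ps}/\kq$ by universality, and then in the irreducible case lift $\kq$ to a prime $\kq'$ of the deformation ring $R_b$ of some non-split $\bar\rho_b$ (this is ``arguing as in the proof of corollary~\ref{corB}'', which ultimately relies on corollary~\ref{psccomp}); the universal deformation modulo $\kq'$ over $R_b/\kq'\in C_\cO$ is then the required integral model.

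The second gap is your finite-order argument: knowing $\eta_1\eta_2|_{I_{F_v}}=\chi|_{I_{F_v}}$ has finite order does not give this for each $\eta_i$ separately. The correct reason, available once $\rho$ lives over some $R'\in C_\cO$, is that any continuous character $I_{F_v}\to(R')^\times$ factors through $I_{F_v}^{\mathrm{ab}}\cong O_{F_v}^\times$; for $v\nmid p$ the latter is $k(v)^\times\times(1+\varpi_v O_{F_v})$ with the second factor a pro-$\ell$ group for $\ell\neq p$, while $(R')^\times$ has pro-$p$ part $1+\km_{R'}$ and finite prime-to-$p$ part, so the image is finite. When $\bar\chi|_{I_{F_v}}=\mathbf 1$ the diagonal characters may only be defined over $K'$, but they still factor through $I_{F_v}(p)$ (since $\rho|_{I_{F_v}}$ does, over $R'$) and through $I_{F_v}^{\mathrm{ab}}$, hence through the finite group $O_{F_v}^\times(p)$.
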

\begin{proof}
Let $R^{\ps}$ be the universal deformation ring which parametrizes all two-dimensional pseudo-representations with determinant $\chi$ that lift $1+\bar{\chi}$. Denote the universal pseudo-representation by $T^{univ}$. It suffices to treat the case $R=R^{\ps}/\kq, T=T^{univ}\mod \kq$ for some $\kq\in\Spec R^{\ps}$. Our assertion is clear if the semi-simple representation $\rho(\kq)$ associate to $\kq$ is reducible. Now assume $\rho(\kq)$ is irreducible. Then arguing as in the proof of corollary \ref{corB}, we conclude that $\kq$ is in the image of $\Spec R_b\to \Spec R^{\ps}$ where $R_b$ parametrizes all deformations of $\bar{\rho}_b:G_{F,S}\to \GL_2(\F)$ with determinant $\chi$ for some representation $\bar{\rho}_b$ of the form $\begin{pmatrix}1 & * \\ 0& \bar{\chi}\end{pmatrix},*\neq 0$. Our assertion follows from the previous lemma.
\end{proof}

\begin{para}[\textbf{Existence of Eisenstein ideal}] \label{conF1}
Arguing as in (4.16), (4.17) of \cite{SW99}, we can find a finite totally real field extension $F_1/F$ such that
\begin{itemize}
\item $F_1/\Q$ is abelian with even degree.
\item $F(\bar{\chi})\cap F_1=F$.
\item $p$ is unramified in $F_1$.
\item $\mathrm{ord}_{\varpi} \mathrm{L}_p(F_1,-1,\widetilde{\bar{\chi}\omega})>0$ as in proposition \ref{exoemi}.
\end{itemize}
Let $\Sigma^o_{F_1}$ be the set of places of $F_1$ above $\Sigma^o_F$. Then it follows from proposition \ref{exoemi} that there exists an open subgroup $U_e^p=\prod_{v\nmid p}U_{e,v}\subseteq \prod_{v\nmid p}\GL_2(O_{F_{1,v}})$ such that $\T^{\ord}_{\psi}(U^p_e)$ has the desired maximal ideal (see \ref{1asspropB} below for details here). Denote by $S_1$ the union of places of $F_1$ above $S$ and places $v$ such that $U_{e,v}\neq\GL_2(O_{F_{1,v}})$.
\end{para}

\begin{para} \label{conF2}
Let $F_2/F_1$ be a finite totally real field extension with following properties:
\begin{itemize}
\item $F_2/\Q$ is abelian.
\item $p$ is unramified in $F_2$.
\item $F_1(\bar{\chi})\cap F_2=F_1$.
\item For any place $w\nmid p$ above $v\in S_1$, we have $2|[k(w):k(v)]$ and $p|N(w)-1$ and
\[\bar{\chi}|_{G_{F_{2,w}}}=\mathbf{1}.\]
In particular, $\bar{\chi}|_{G_{F_2}}$ and ${\chi}|_{G_{F_2}}$ are unramified outside of places above $p$.
\end{itemize}
The only non-trivial requirement is the last one under the first condition. This is possible since we assume $\bar{\chi}:G_F\to\F^\times$ can be extended to $G_\Q$.
\end{para}

\begin{para} \label{Washington}
Finally, let $l_0$ be a rational prime congruent to $1$ modulo the order of $\bar{\chi}$ and larger than the norm of any place in $S_1$. We choose a finite extension $F_3/F_2$ contained in the cyclotomic $\Z_{l_0}$-extension of $F_2$ of sufficiently large $l_0$-power degree such that:
\begin{itemize}
\item $|S_3|\leq \frac{1}{12}[F_3:\Q]$, where $S_3$ denotes the set of places of $F_3$ above $S_1$.
\item The $p$-part of class group of $F_3(\bar{\chi})$ satisfies $|\mathrm{Cl}(F_3(\bar{\chi}))[p]|\leq \frac{1}{12}[F_3:\Q]$.
\end{itemize}
This is possible in view of the result of Washington \cite{Wa78}.

Denote the set of primes of $F_3$ above $p$ (resp. $\Sigma^o_F$) by $\Sigma_p$ (resp. $\Sigma^o$). Then $\frac{|\Sigma^o|}{|\Sigma_p|}=\frac{|\Sigma^o_F|}{|\Sigma_{F,p}|}\leq \frac{1}{2}$ by our assumption \eqref{onehalf}. In particular, $|\Sigma^o|\leq|\Sigma_p|-1$. We introduced the following Selmer group as in \ref{selmext}:
\[H^1_{\Sigma^o}(F_3):=\ker(H^1(G_{F_3,S_3},\F(\bar{\chi}^{-1}))\stackrel{\mathrm{res}}{\longrightarrow}\bigoplus_{v\in \Sigma_p\setminus \Sigma^o}H^1(G_{F_{3,v}},\F(\bar{\chi}^{-1}))).\]
\end{para}

\begin{lem} \label{ubdimh1}
$F_3$ is an abelian extension of $\Q$. Moreover it satisfies the following properties:
\begin{itemize}
\item $F_2(\bar{\chi})\cap F_3=F_2$.
\item $p$ is unramified in $F_3$ and $\bar{\chi}|_{G_{F_{3,v}}}\neq \mathbf{1}$ for any $v|p$.
\item $\dim_\F H^1_{\Sigma^o}(F_3)\leq \frac{2}{3}[F_3:\Q]-1$.
\end{itemize}
\end{lem}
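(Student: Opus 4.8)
The plan is to verify the three bulleted properties of $F_3$ in turn, using the chain of extensions $F\subseteq F_1\subseteq F_2\subseteq F_3$ constructed in \ref{conF1}--\ref{Washington}, and to deduce the Selmer bound from a class-group / unit-group estimate combined with the local conditions imposed at places above $p$.

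First I would record that $F_3$ is abelian over $\Q$: by construction $F_1,F_2$ are abelian over $\Q$, and $F_3$ lies in the cyclotomic $\Z_{l_0}$-extension of $F_2$, which is abelian over $\Q$ since it is a subfield of $F_2(\zeta_{l_0^\infty})$ and $F_2/\Q$ is abelian. The compositum of abelian extensions of $\Q$ is abelian, so $F_3/\Q$ is abelian. Next, $p$ is unramified in $F_3$: it is unramified in $F_2$ by \ref{conF2}, and the extension $F_3/F_2$ is ramified only at $l_0$ (and infinite places), and $l_0\neq p$ since $l_0$ is chosen larger than the norm of any place in $S_1\supseteq\Sigma_p$. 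The disjointness $F_2(\bar\chi)\cap F_3=F_2$ follows because $F_3/F_2$ has $l_0$-power degree while $F_2(\bar\chi)/F_2$ has degree dividing the order of $\bar\chi$, which is prime to $l_0$ by the choice of $l_0$ (congruent to $1$ modulo the order of $\bar\chi$, hence in particular coprime to it); so the two extensions of $F_2$ have coprime degrees and must intersect trivially. Finally, $\bar\chi|_{G_{F_{3,v}}}\neq\mathbf 1$ for $v|p$: since $F(\bar\chi)\cap F_1=F$, $F_1(\bar\chi)\cap F_2=F_1$ and now $F_2(\bar\chi)\cap F_3=F_2$, one gets $F_3(\bar\chi)\cap F_3'=F_3$ for appropriate intermediate fields, so $\bar\chi$ restricted to $G_{F_3}$ still has the same splitting behaviour at $p$ as over $F$; as $p$ is completely split in $F$ and $\bar\chi|_{G_{F_v}}\neq\mathbf 1$ was assumed (it is part of the hypotheses of Theorem \ref{thmB}, recalled in \ref{sotmr1}), the same holds over $F_3$ because the decomposition group at a place above $p$ still surjects onto $\mathrm{Gal}(F_3(\bar\chi)/F_3)$.

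For the Selmer bound, the key point is the standard inflation-restriction / Poitou--Tate style estimate: $H^1_{\Sigma^o}(F_3)$ injects into $\mathrm{Hom}_{\mathrm{Gal}(F_3(\bar\chi)/F_3)}(G_{F_3(\bar\chi),\widetilde{S_3}},\F(\bar\chi^{-1}))$ cut out by the vanishing conditions at $v\in\Sigma_p\setminus\Sigma^o$, and the dimension of such a Hom-group is controlled by $|\mathrm{Cl}(F_3(\bar\chi))[p]|$, the $\F$-rank of the $\bar\chi^{-1}$-isotypic part of the units/$S$-units of $F_3(\bar\chi)$, and the number of places in $S_3$ (local terms), minus the $|\Sigma_p\setminus\Sigma^o|=|\Sigma_p|-|\Sigma^o|$ local conditions imposed. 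Using $|\Sigma^o|\le|\Sigma_p|-1$, the bound $|S_3|\le\frac1{12}[F_3:\Q]$, the bound $|\mathrm{Cl}(F_3(\bar\chi))[p]|\le\frac1{12}[F_3:\Q]$, and the trivial estimate that the unit contribution is at most of size $[F_3(\bar\chi):\Q]=O([F_3:\Q])$ (here the precise constant is what must be tracked carefully), one adds up the contributions and the number of local conditions removed to land at $\dim_\F H^1_{\Sigma^o}(F_3)\le\frac23[F_3:\Q]-1$. This is essentially the computation in \cite{SW99} (their Lemma around (4.18)), adapted to the Selmer condition at $\Sigma_p\setminus\Sigma^o$ rather than at all of $\Sigma_p$, and that change in the local conditions is exactly what uses $|\Sigma^o|\le|\Sigma_p|-1$ to save the $-1$.

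The main obstacle will be the bookkeeping in the last step: getting the exact constant $\frac23$ (rather than, say, $\frac56$ or $1$) requires carefully balancing the global contributions (class group and unit group of $F_3(\bar\chi)$, each bounded by roughly $\frac1{12}[F_3:\Q]$ up to bounded multiplicative factors coming from $[F_3(\bar\chi):F_3]$) against the $|\Sigma_p\setminus\Sigma^o|$ local conditions one imposes, while making sure the $S_3$-term and the Euler-characteristic contribution from the trivial and cyclotomic subquotients of $\ad^0$ are accounted for. One has to be a little careful that $\F(\bar\chi^{-1})$ as a $\mathrm{Gal}(F_3(\bar\chi)/F_3)$-module has no invariants (which holds since $\bar\chi\neq\mathbf 1$) so that the inflation term vanishes or is small, and that the degrees $[F_3(\bar\chi):F_3]$ dividing the order of $\bar\chi$ are absorbed into the constants. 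Once the counting is set up as in Skinner--Wiles, the inequality follows; I would organize the proof by first writing $H^1_{\Sigma^o}(F_3)$ as a subgroup of a $\mathrm{Hom}$-group, then applying the class field theory description of that $\mathrm{Hom}$-group, and finally invoking the three numerical bounds from \ref{Washington} together with $|\Sigma^o|\le|\Sigma_p|-1$.
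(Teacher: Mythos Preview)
Your handling of the first two bullets is correct and more detailed than the paper, which simply asserts they are ``clear as $l_0$ is larger than the order of $\bar\chi$ and $p$.''

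For the Selmer bound there is a genuine gap. The constant $\tfrac{2}{3}$ decomposes as $\tfrac{1}{12}+\tfrac{1}{2}+\tfrac{1}{12}$, and you have not identified where the dominant $\tfrac{1}{2}$ comes from. It is \emph{not} a unit-group term; no units of $F_3(\bar\chi)$ enter the argument. The paper does not start from the full $H^1(G_{F_3,S_3},\F(\bar\chi^{-1}))$ and subtract local conditions at $\Sigma_p\setminus\Sigma^o$ (which would require knowing the restriction maps are surjective). Instead it introduces an auxiliary subgroup
\[
K=\ker\Bigl(H^1_{\Sigma^o}(F_3)\to \bigoplus_{v\in\Sigma^o}H^1(G_{F_{3,v}},\F(\bar\chi^{-1}))\oplus \bigoplus_{v\in S_3\setminus\Sigma_p}H^1(G_{F_{3,v}},\F(\bar\chi^{-1}))/H^1(G_{k(v)},\F(\bar\chi^{-1}))\Bigr),
\]
observes that the image of $K$ in $\Hom(G_{F_3(\bar\chi)},\F)$ lands in everywhere-unramified characters and is thus bounded by $\dim_{\F_p}\mathrm{Cl}(F_3(\bar\chi))[p]\le\tfrac{1}{12}[F_3:\Q]$, and then \emph{adds back} the sizes of the local targets. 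The dominant added term is $\sum_{v\in\Sigma^o}\dim H^1(G_{F_{3,v}},\F(\bar\chi^{-1}))\le \sum_{v\in\Sigma^o}([F_{3,v}:\Q_p]+1)$. Since all places above $p$ have the same local degree (the extensions are abelian over $\Q$), $\sum_{v\in\Sigma^o}[F_{3,v}:\Q_p]=\tfrac{|\Sigma^o|}{|\Sigma_p|}[F_3:\Q]$, and this is where the crucial hypothesis \eqref{onehalf}, namely $|\Sigma^o|\le\tfrac{1}{2}|\Sigma_p|$, is used to get the $\tfrac{1}{2}[F_3:\Q]$. The remaining term $|\Sigma^o|+|S_3|-|\Sigma_p|\le |S_3|-1\le\tfrac{1}{12}[F_3:\Q]-1$ uses $|\Sigma^o|\le|\Sigma_p|-1$, which you did mention. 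Your proposal only invokes the weaker $|\Sigma^o|\le|\Sigma_p|-1$ and never \eqref{onehalf}; without the latter you cannot reach $\tfrac{2}{3}$.
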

\begin{proof}
The first two claims are clear as $l_0$ is larger than the order of $\bar{\chi}$ and $p$. For the last one, let $K$ be the kernel of
\[H^1_{\Sigma^o}(F_3)\stackrel{\mathrm{res}}{\longrightarrow}\bigoplus_{v\in \Sigma^o}H^1(G_{F_{3,v}},\F(\bar{\chi}^{-1}))\oplus \bigoplus_{v\in S_3\setminus \Sigma_p}H^1(G_{F_{3,v}},\F(\bar{\chi}^{-1}))/H^1(G_{k(v)},\F(\bar{\chi}^{-1})).\]
Since $p$ is prime to the order of $\bar{\chi}$, we have a natural injection 
\[K\hookrightarrow H^1(G_{F_3(\bar{\chi})},\F(\bar{\chi}^{-1}))\cong \Hom (G_{F_3(\bar{\chi})},\F)(\bar{\chi}^{-1}).\]
It is easy to see that the image of $K$ lies inside the subspace of characters which are unramified everywhere. Hence its dimension is bounded by $\dim_{\F_p} |\mathrm{Cl}(F_3(\bar{\chi}))[p]|$. Note that $\bar{\chi}|_{G_{F_{3,v}}}$ is trivial for $v\in S_3\setminus \Sigma_p$, hence
\[H^1(G_{F_{3,v}},\F(\bar{\chi}^{-1}))/H^1(G_{k(v)},\F(\bar{\chi}^{-1}))\cong \Hom(O_{F_{3,v}}^\times,\F),\]
which is one-dimensional. For $v\in \Sigma_p$, it follows from our second claim in the lemma that
\[\dim_\F H^1(G_{F_{3,v}},\F(\bar{\chi}^{-1}))=[F_{3,v}:\Q_p]+e_\chi\leq [F_{3,v}:\Q_p]+1,\]
where $e_\chi=1$ if $\bar{\chi}|_{G_{F_{3,v}}}=\omega^{-1}$ and $0$ otherwise. Put all these results together:
\begin{eqnarray*}
\dim_\F H^1_{\Sigma^o}(F_3) &\leq& \dim K+\sum_{v\in\Sigma^o} \dim H^1(G_{F_{3,v}},\F(\bar{\chi}^{-1}))+\sum_{v\in S_3\setminus\Sigma_p} \dim \Hom(\cO^\times_{F_{3,v}},\F)\\
&\leq&  \dim_{\F_p} |\mathrm{Cl}(F_3(\bar{\chi}))[p]| + \sum_{v \in \Sigma^o} [F_{3,v}:\Q_p]+ |\Sigma^o|+|S_3|-|\Sigma_p|\\
&\leq& \frac{1}{12}[F_3:\Q]+\frac{|\Sigma^o|}{|\Sigma_p|}[F_3:\Q]+\frac{1}{12}[F_3:\Q]-1\\
&\leq& \frac{2}{3}[F_3:\Q]-1.
\end{eqnarray*}
\end{proof}

\begin{para}
For $v\in S_3\setminus \Sigma_p$, we define a $p$-power character $\xi_v:k(v)^\times\to\cO^\times$ as follows: let $\kp$ be the prime of $R^{\ps,\ord}$ defined in the beginning of this subsection. By corollary \ref{simcor} and our construction of $F_2$ in \ref{conF2}, enlarging $E$ if necessary, there exists a character $\xi_v:k(v)^\times\to \cO^\times$, viewed as a character of $I_{F_{3,v}}$ by the class field theory, such that
\[T(\kp)|_{I_{F_{3,v}}}=\xi_v+\xi_v^{-1}.\]
If $p\in\kp$, then we can simply take $\xi_v$ to be trivial. Then using the data $\{\xi_v\}_v,\Sigma^o$, we can define $R^{\ps,\ord,\{\xi_v\}}_{\Sigma^o}$ as in \ref{rpsordxiv} with $F=F_3,S=S_3$. We are going to apply corollary \ref{corB}. First we need check the assumptions in proposition \ref{propB} (which is written in corollary \ref{rcmod1c}).

Now the second assumption of \ref{rcmod1c}  becomes:
\[[F_3:\Q]-4|S_3|+4|\Sigma_p|-3>\delta_{F_3}+\dim_\F H^1_{\Sigma^o}(F_3).\]
Since $F_3$ is abelian over $\Q$, Leopoldt's conjecture is known in this case. Hence $\delta_{F_3}=0$. Then the inequality follows easily from lemma \ref{ubdimh1} and our assumption on $|S_3|$.
\end{para}

\begin{para}\label{1asspropB}
To see the first assumption (Assumption in \ref{ass1}) is satisfied, it follows from the existence of the Eisenstein maximal ideal as in \ref{conF1} that we can find a two-dimensional Galois representation (after possibly replacing $E$ by some extension) $\rho_1:G_{F_1,S_1}\to\GL_2(\cO)$ of determinant $\chi$ which comes from an automorphic representation of $\GL_2(\A_{F_1})$ such that 
\begin{itemize}
\item $\tr \rho_1 \equiv 1+\bar{\chi}\mod\varpi$.
\item For any $v|p$, $\rho_1|_{G_{F_{1,v}}}\cong\begin{pmatrix}\psi_{v,1}&*\\0&\psi_{v,2} \end{pmatrix}$ with $\psi_{v,1}\equiv \mathbf{1}\mod\varpi$ if $v\in\Sigma^o_{F_1}$ and $\psi_{v,1}\equiv \bar{\chi}|_{G_{F_{1,v}}}\mod\varpi$ otherwise. Moreover, $\psi_{v,1}$ is Hodge-Tate and has strictly less Hodge-Tate number than $\psi_{v,2}$ for any embedding $F_{1,v}\hookrightarrow \overbar{\Q_p}$.
\end{itemize}

Note that $\rho_1|_{G_{F_3}}$ has to be \textit{irreducible}. Otherwise suppose $G_{F_3}$ acts as a character $\theta$ on some one-dimensional subspace $L$. Then the reduction of $\theta$ modulo $\varpi$ is  $\mathbf{1}$ or $\bar{\chi}$. Hence $\theta(c)$ has to be a constant for any complex conjugation $c\in G_{F_3}$. This implies that $G_{F_1}$ also fixes $L$ and $\rho_1$ is reducible, which contradicts the cuspidal condition. 

Now it follows from the soluble base change (see for example 1.4 \cite{BLGHT11}) that $\rho_1|_{G_{F_3}}$ also comes from an automorphic representation of $\GL_2(\A_{F_3})$.  By lemma \ref{simlem}, for any $w\in S_3\setminus \Sigma_p$, there exists a character $\xi'_w:k(w)^\times\to \cO^\times$ of $p$-power order such that $\rho_1|_{I_{F_{3,w}}}=\xi'_w+(\xi'_w)^{-1}$. Hence the elements in Assumption \ref{ass1} generate a maximal ideal of $\T^{\ord}_{\psi,\xi'}(U^p)$ with $\xi'=\prod \xi'_v$, a character of $U^p$. See \ref{ass1} for the precise meaning of these notations. Arguing as in the proof of lemma \ref{SWbc}, we conclude that these elements also generate a maximal ideal of $\T^{\ord}=\T^{\ord}_{\psi,\xi}(U^p)$.

Thus we can apply corollary \ref{corB} to $R^{\ps,\ord,\{\xi_v\}}_{\Sigma^o}$. 
\end{para}

\begin{para}
It is easy to see that $T(\kp)|_{G_{F_3}}$ (defined in the beginning of this subsection) induces a natural map by the universal property:
\[R^{\ps,\ord,\{\xi_v\}}_{\Sigma^o}\to R^{\ps,\ord}/\kp.\]
\end{para}

\begin{lem} \label{finitemap}
This is a \textit{finite} map.
\end{lem}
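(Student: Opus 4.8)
\textbf{Proof proposal for Lemma \ref{finitemap}.}

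The plan is to factor the map through $R^{\ps,\ord,\{\xi_v\}}_{\Sigma^o}$ as constructed for $F_3$ and to exploit the finiteness statement already at our disposal in corollary \ref{corB}(2), namely that $R^{\ps,\ord,\{\xi_v\}}_{\Sigma^o}$ is a finite $\Lambda_{F_3}$-algebra, where $\Lambda_{F_3}=\widehat{\bigotimes}_{v|p}\cO[[O_{F_{3,v}}^\times(p)]]$. Since finiteness is insensitive to quotienting the source and since $R^{\ps,\ord}/\kp$ is a quotient of the target of a map from a finite $\Lambda_{F_3}$-algebra, it suffices to check that the composite $\Lambda_{F_3}\to R^{\ps,\ord,\{\xi_v\}}_{\Sigma^o}\to R^{\ps,\ord}/\kp$ makes $R^{\ps,\ord}/\kp$ into a module-finite $\Lambda_{F_3}$-algebra; equivalently, by part (1) of Theorem \ref{thmB} applied to $F$ (which is what we are ultimately proving, so this cannot be invoked circularly) — instead, we compare with $\Lambda_F=\widehat{\bigotimes}_{v|p}\cO[[O_{F_v}^\times(p)]]$, which maps naturally to $R^{\ps,\ord}$ by construction in \ref{sotmr1}.

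First I would record the commutative square relating the two Iwasawa algebras: the inclusion $F\hookrightarrow F_3$ and the splitting of $p$ give a natural (finite, in fact finite flat) homomorphism $\Lambda_F\to \Lambda_{F_3}$ coming from the norm maps $O_{F_{3,v'}}^\times\to O_{F_v}^\times$ on the local units, for $v'|v|p$. Here one must be slightly careful: the map $\Lambda_{F_3}\to R^{\ps,\ord,\{\xi_v\}}_{\Sigma^o}$ uses the characters $\psi_{v',1}$ for the decomposition $\Sigma^o\sqcup(\Sigma_p\setminus\Sigma^o)$, while $\Lambda_F\to R^{\ps,\ord}$ uses $\psi^{univ}_{v,1}$, and the two are intertwined through restriction $G_{F_{3,v'}}\hookrightarrow G_{F_v}$. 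The key compatibility to verify is that under $R^{\ps,\ord,\{\xi_v\}}_{\Sigma^o}\to R^{\ps,\ord}/\kp$ the character $\psi_{v',1}$ pulls back to $\psi^{univ}_{v,1}|_{G_{F_{3,v'}}}\bmod\kp$ (for $v'\in\Sigma^o$) or its inverse-times-$\chi$ analogue (for $v'\notin\Sigma^o$) — but this is forced by the definition of the deformation functor $R^{\ps,\ord,\{\xi_v\}}_{\Sigma^o}$ together with the choice of $\Sigma^o$ in \ref{Washington} and the ordinarity of $T(\kp)$ recorded at the end of \ref{firstreduction}. Once this is in place, the diagram
\[
\begin{CD}
\Lambda_F @>>> \Lambda_{F_3} @>>> R^{\ps,\ord,\{\xi_v\}}_{\Sigma^o}\\
@VVV @. @VVV\\
R^{\ps,\ord} @. @>>> R^{\ps,\ord}/\kp
\end{CD}
\]
commutes (the left vertical being the structure map of \ref{sotmr1}, and the bottom being quotient followed by the map of the lemma in the reverse direction need not be used).

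Then the argument is short: $R^{\ps,\ord,\{\xi_v\}}_{\Sigma^o}$ is finite over $\Lambda_{F_3}$ by corollary \ref{corB}(2); the map $R^{\ps,\ord,\{\xi_v\}}_{\Sigma^o}\to R^{\ps,\ord}/\kp$ is surjective, because its image is a closed $\Lambda_{F_3}$-subalgebra of $R^{\ps,\ord}/\kp$ containing all $T_v$ for $v\notin S_3$ (hence all $T_v$ for $v\notin S$ by Chebotarev, since $R^{\ps,\ord}/\kp$ is topologically generated by these) and all the local characters at $p$ — i.e. it contains a topological generating set of $R^{\ps,\ord}/\kp$; therefore $R^{\ps,\ord}/\kp$ is a quotient of a finite $\Lambda_{F_3}$-algebra, hence finite over $\Lambda_{F_3}$, hence (via $\Lambda_F\to\Lambda_{F_3}$ finite) finite over $\Lambda_F$. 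Finally the map of the lemma is $\Lambda_F$-linear with finite target over $\Lambda_F$ and source finite over $\Lambda_F$ (by the very conclusion we are proving about $R^{\ps,\ord,\{\xi_v\}}_{\Sigma^o}$, or more directly since $R^{\ps,\ord,\{\xi_v\}}_{\Sigma^o}$ is finite over $\Lambda_{F_3}$ which is finite over $\Lambda_F$), so it is a homomorphism of finite $\Lambda_F$-algebras and a fortiori a finite morphism.

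The main obstacle I anticipate is bookkeeping rather than depth: pinning down precisely that the Iwasawa-algebra structure maps are compatible across the base change $F\subset F_3$ and across the (possibly different) choices of which summand $\psi_{v,1}$ versus $\psi_{v,2}$ is singled out at each place above $p$ — this is exactly why the twisting normalization \eqref{onehalf} and the careful choice of $\Sigma^o$ in \ref{firstreduction} and \ref{Washington} were made. One should also double-check that $\psi|_{G_{F_{3,v}}}$ remains de Rham (Hodge–Tate) so that $R^{\ps,\ord,\{\xi_v\}}_{\Sigma^o}$ for $F_3$ is the correct object, but this is immediate from $\chi|_{G_{F_v}}$ being de Rham. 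No genuinely new input beyond corollary \ref{corB} is needed; the lemma is essentially a formal consequence of it together with finiteness of $\Lambda_F\to\Lambda_{F_3}$.
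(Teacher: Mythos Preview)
Your proposal has a genuine gap: the claim that the map $R^{\ps,\ord,\{\xi_v\}}_{\Sigma^o}\to R^{\ps,\ord}/\kp$ is \emph{surjective} is false in general. The source is a deformation ring for $G_{F_3,S_3}$, so its image in $R^{\ps,\ord}/\kp$ is the closed $\cO$-subalgebra generated by the values $T(\kp)(\sigma)$ for $\sigma\in G_{F_3}$ (together with the local characters at primes of $F_3$ above $p$). But $R^{\ps,\ord}/\kp$ is topologically generated by $T(\kp)(\sigma)$ for $\sigma\in G_{F}$, and since $G_{F_3}$ is a proper finite-index subgroup of $G_F$, there is no reason for the traces on the subgroup to generate the whole ring. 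Concretely, for a place $v$ of $F$ with residue degree $f>1$ in $F_3/F$, the image contains $\tr\rho(\Frob_v^f)$ but not a priori $\tr\rho(\Frob_v)$. So ``all $T_v$ for $v\notin S$ by Chebotarev'' does not follow. Relatedly, your map $\Lambda_F\to\Lambda_{F_3}$ is in the wrong direction: the norm maps $O_{F_{3,v'}}^\times\to O_{F_v}^\times$ give a surjection $\Lambda_{F_3}\twoheadrightarrow\Lambda_F$, not a finite flat map the other way (indeed $\dim\Lambda_{F_3}=1+[F_3:\Q]>1+[F:\Q]=\dim\Lambda_F$). Without surjectivity, your route collapses: you cannot conclude $R^{\ps,\ord}/\kp$ is finite over $\Lambda_{F_3}$, and invoking finiteness of $R^{\ps,\ord}/\kp$ over $\Lambda_F$ at the end is circular, since that is exactly the content of Theorem~\ref{thmB}(1) being proved.

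The paper's argument is entirely different and much more direct. To show a local homomorphism $A\to B$ of complete Noetherian local $\cO$-algebras is finite, it suffices to show $B/\km_A B$ has finite length. Here this means checking that $R^{\ps,\ord}/(\kp,\km_1)$ is Artinian, where $\km_1$ is the maximal ideal of $R^{\ps,\ord,\{\xi_v\}}_{\Sigma^o}$. Equivalently, every prime $\kq$ of this quotient is maximal. But $\kq\supseteq\km_1$ forces $\tr\rho(\kq)|_{G_{F_3}}=1+\bar\chi|_{G_{F_3}}$, so $\rho(\kq)|_{G_{F_3}}$ is reducible; then the oddness argument (as in the second paragraph of \ref{1asspropB}) shows $\rho(\kq)$ itself is reducible, whence $\tr\rho(\kq)=1+\bar\chi$ and $\kq$ is the maximal ideal. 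This fiber argument is what you are missing; no surjectivity and no comparison of Iwasawa algebras is needed for the lemma itself.
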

\begin{proof}
It suffices to prove $R^{\ps,\ord}/(\kp,\km_1)$ is of finite length, where $\km_1$ is the maximal ideal of $R^{\ps,\ord,\{\xi_v\}}_{\Sigma^o}$. Let $\kq$ be a prime ideal $R^{\ps,\ord}/(\kp,\km_1)$. We need to show that $\kq$ is in fact the maximal ideal. Denote by $\rho(\kq):G_{F}\to\GL_2(k(\kq))$ the associated two-dimensional semi-simple representation. Then
\[\tr\rho(\kq)|_{G_{F_3}}=1+\bar{\chi}|_{G_{F_3}}.\]
Hence $\rho(\kq)|_{G_{F_3}}$ is reducible. Arguing as in the second paragraph of \ref{1asspropB}, we see that $\rho(\kq)$ is also reducible. It follows easily from this that $\tr\rho(\kq)=1+\bar{\chi}$. Therefore $\kq$ is the maximal ideal of $R^{\ps,\ord}/(\kp,\km_1)$.
\end{proof}

\begin{para}
Now we can prove Theorem \ref{thmB}. As in \ref{rpsordxiv}, there is a map 
\[\iota_{F_3}:\Lambda_{F_3}:=\widehat{\bigotimes}_{v|p}\cO[[O_{F_{3,v}}^\times(p)]]\to R^{\ps,\ord,\{\xi_v\}}_{\Sigma^o}.\]
Similarly there is a map $\iota_{F}:\Lambda_{F}\to R^{\ps,\ord}/\kp$ coming from the universal deformations of $\mathbf{1}|_{G_{F_v}}$ if $v\in\Sigma^o_F$ and $\bar{\chi}|_{G_{F_v}}$ if $v\in\Sigma_{F,p}\setminus \Sigma^o_F$. Note that this map is different from the one in Theorem \ref{thmB}, which comes from the universal deformations of $\mathbf{1}|_{G_{F_v}}$ for any $v|p$. However, it is easy to see that both maps have the same images in $R^{\ps,\ord}/\kp$.

By the universal property, we have the commutative diagram:
\[\begin{tikzcd}
\Lambda_{F_3} \arrow[r,"\iota_{F_3}"] \arrow[d,"N_{F_3/F}"] & R^{\ps,\ord,\{\xi_v\}}_{\Sigma^o} \arrow[d]\\
\Lambda_F \arrow[r,"\iota_F"] & R^{\ps,\ord}/\kp 
\end{tikzcd}\]
where the left vertical map arises from the norm map and the right vertical map is the one we considered in the previous lemma. Note that both maps are finite: the left one is in fact surjective and the other one follows from lemma \ref{finitemap}. By corollary \ref{corB}, $\iota_{F_3}$ is also finite. Therefore $\iota_F$ also has to be finite. This proves the first part of Theorem \ref{thmB}.

For the second part of Theorem \ref{thmB}, let $\rho(\kp)$ be a Galois representation as in the Theorem. Then $\rho(\kp)|_{G_{F_3}}$ is irreducible by the same argument as in the second paragraph of \ref{1asspropB}. Hence we can apply the third part of corollary \ref{corB} to $\rho(\kp)|_{G_{F_3}}$. Theorem \ref{thmB} now follows from the theory of soluble base change.
\end{para}

\section{A generalization of results of Skinner-Wiles II} \label{AgoroS-W2}
In this section, we treat the case where $\bar{\chi}|_{G_{F_v}}=\mathbf{1}$ for any $v|p$. We note that this case was excluded in the work of Skinner-Wiles \cite{SW99}. The main difficulty here is that we cannot bound the dimension of reducible locus as what we did in proposition \ref{redloc}. To solve this problem, we need to put additional conditions on the liftings (see \ref{admpairs} below).

\subsection{Statement of the main results} 
\begin{para} \label{sotmr2}
In this subsection, $F$ denotes an \textit{abelian} totally real extension of $\Q$ in which $p$ is \textit{unramified}. Let $S$ be a finite set of finite places containing all places above $p$. Let $\chi:G_{F,S}\to \cO^\times$ be a continuous character such that
\begin{itemize}
\item $\chi(c)=-1$ for any complex conjugation $c\in G_{F,S}$.
\item $\bar{\chi}$, the reduction of $\chi$ modulo $\varpi$, can be extended to a character of $G_\Q$.
\item $\bar{\chi}|_{G_{F_v}}= \mathbf{1}$ for every $v|p$.
\item $\chi|_{G_{F_v}}$ is de Rham for any $v|p$. In other words, $\chi=\varepsilon^k\psi_0$ with $k$ an integer and $\psi_0$ a character of finite order. 
\end{itemize}

Consider the universal deformation ring $R^{\ps,\ord}_1$ which pro-represents the functor from $\cOf$ to the category of sets assigning $R$ to the set of tuples $(T;\{\psi_{v,1}\}_{v|p})$ where
\begin{itemize}
\item $T:G_{F,S}\to R$ is a two-dimensional pseudo-representation lifting $1+\bar\chi$ with determinant $\chi$.
\item For any $v|p$, $\psi_{v,1}:G_{F_v}\to R^\times$ is a lifting of the trivial character. Moreover
\[T|_{G_{F_v}}=\psi_{v,1}+\psi_{v,2}\]
with $\psi_{v,2}=\chi|_{G_{F_v}}(\psi_{v,1})^{-1}$ such that $T$ is $\psi_{v,1}$-ordinary in the sense of \ref{psiord}.
\end{itemize}

Let $\psi^{univ}_{v,1}:G_{F_v}\to (R^{\ps,\ord}_1)^\times$ be the universal characters. From the class field theory, this induces a homomorphism $\cO[[O_{F_v}^\times(p)]]\to R^{\ps,\ord}_1$ for any $v|p$. Taking the completed tensor product over $\cO$ for all $v|p$, we get a map:
\[\Lambda_F:=\widehat{\bigotimes}_{v|p}\cO[[O_{F_v}^\times(p)]]\to R^{\ps,\ord}_1.\]
\end{para}

The main results of this section are:
\begin{thm} \label{thmC}
Under the assumptions for $F,\chi$ as above, we have
\begin{enumerate}
\item $R^{\ps,\ord}_1$ is a finite $\Lambda_F$-algebra.
\item For any maximal ideal $\kp$ of $R^{\ps,\ord}_1[\frac{1}{p}]$, we denote the associated semi-simple representation $G_{F,S}\to\GL_2(k(\kp))$ by $\rho(\kp)$ (see \ref{tar}). Assume 
\begin{itemize}
\item $\rho(\kp)$ is irreducible.
\item For any $v|p$, $\rho(\kp)|_{G_{F_v}}\cong\begin{pmatrix}\psi_{v,1} & *\\ 0 & \psi_{v,2}\end{pmatrix}$ such that $\psi_{v,1}$ is de Rham and has strictly less Hodge-Tate number than $\psi_{v,2}$ for any embedding $F_v\hookrightarrow \overbar{\Q_p}$.
\end{itemize}
Then $\rho(\kp)$ comes from a twist of a Hilbert modular form.
\end{enumerate}
\end{thm}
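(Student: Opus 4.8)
The plan is to run the argument of Section~\ref{PotthmB} (the proof of Theorem~\ref{thmB}) with two modifications forced by the hypothesis $\bar{\chi}|_{G_{F_v}}=\mathbf{1}$ for all $v\mid p$. First, the universal object must carry the ordinary characters $\psi_{v,1}$ as part of the data — exactly as in the definition of $R^{\ps,\ord}_1$ in \ref{sotmr2} — since Hensel's lemma no longer splits $T|_{G_{F_v}}$; thus $R^{\ps,\ord}_1$ plays the role that $R^{\ps,\ord,\{\xi_v\}}_{\Sigma^o}$ played before, and the local ordinary deformation rings are the $R^\Delta_v$ of \ref{rdxbq} (parametrizing pairs $(\rho_R,\psi_R)$). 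Second, and this is the new point, one cannot estimate the reducible locus as in \ref{redloc}: with the two residual characters at $v$ coinciding, the reducible deformations that are ordinary at $p$ are no longer controlled by the abelianized Galois group plus $H^1_{\Sigma^o}$, so one must first pass to a closed subscheme of $\Spec R^{\ps,\ord}_1$ cut out by an \emph{admissibility} condition at each $v\mid p$ — a closed condition on the pair $(\psi_{v,1},\psi_{v,2}=\chi|_{G_{F_v}}\psi_{v,1}^{-1})$, designed so that it holds at the classical points of interest, so that on the reducible locus it forces $(\chi_1,\chi_2)$ into a family of the same dimension bound as in \ref{redloc}, and so that the corresponding local ring $R^{\Delta,\mathrm{adm}}_v$ remains a normal $\cO$-flat domain of dimension $1+2[F_v:\Q_p]+3$ when $[F_v:\Q_p]\ge 2$ (the admissible analogue of lemma~\ref{orddefprop1}, proved by the same reduction to the Borel lifting ring and a Serre-criterion analysis of the special fibre, the admissibility removing exactly the extra components produced by $\bar{\chi}|_{G_{F_v}}=\mathbf{1}$). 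Pinning down the formulation of this condition, compatibly with soluble base change and with the Taylor--Wiles deformations, is the crux; see the last paragraph.

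With the admissible deformation rings $R^{\ps,\ord,\mathrm{adm}}_1$, $R^{\ord,\mathrm{adm}}_b$, $R^{\Delta,\mathrm{adm}}_{\loc}$ in place, the usual reductions go through unchanged: twist so that $\chi$ and $\bar{\chi}$ ramify at the same places away from $p$; fix a minimal prime $\kp$ of $R^{\ps,\ord}_1$; read off from $\rho(\kp)$ the set $\Sigma^o$ of places where the ordinary line has character $\equiv\psi^{univ}_{v,1}\,\modd\kp$, twisting by $\chi^{-1}$ if needed so that $|\Sigma^o|\le\tfrac12|\Sigma_p|$. Then pass to a solubly larger totally real abelian field $F_3\supseteq F_2\supseteq F_1\supseteq F$ as in \ref{conF1}, \ref{conF2}, \ref{Washington}: $F_1$ chosen so that $\mathrm{ord}_{\varpi}\mathrm{L}_p(F_1,-1,\widetilde{\bar{\chi}\omega})>0$ and an Eisenstein maximal ideal exists — this is precisely the ``$\Sigma^o=\Sigma_p$'' case of \ref{exoemi}, since $\bar{\chi}|_{G_{F_v}}=\mathbf{1}$ makes $\bar\psi_{v,1}=\mathbf{1}$ for every $v$ — while also arranging that every place of $F_1$ (hence of $F_3$) above $p$ has residue degree $\ge 2$; $F_2$ making $\bar{\chi}$ unramified outside $p$ and trivial at the chosen auxiliary primes $w$ with $p\mid N(w)-1$; and $F_3$ cyclotomic-$\Z_{l_0}$ over $F_2$ with $|S_3|\le\tfrac1{12}[F_3:\Q]$ and $|\mathrm{Cl}(F_3(\bar{\chi}))[p]|\le\tfrac1{12}[F_3:\Q]$ via Washington. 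One then checks the admissible analogue of lemma~\ref{ubdimh1}, so that $[F_3:\Q]-4|S_3|+4|\Sigma_p|-3>\dim_\F H^1_{\Sigma^o}(F_3)$ holds ($\delta_{F_3}=0$ by Leopoldt, $F_3/\Q$ being abelian), the admissibility condition only cutting $H^1_{\Sigma^o}$ down further.

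Granting the local facts, the rest is mechanical. The ring $R^{\Delta,\mathrm{adm}}_{\loc}$ and its $\widehat{\otimes}$-decomposition give the equidimensionality needed for the analogue of proposition~\ref{rlocp}; the global framed analysis of \ref{rdxbq}, lemma~\ref{grrcord}, corollary~\ref{crord} carries over with $\Delta$ replaced by $\Delta,\mathrm{adm}$. The existence of Taylor--Wiles primes (proposition~\ref{exttwp}) and the patching at a one-dimensional nice prime (proposition~\ref{propA}) are unchanged: proposition~\ref{rpsrc} and corollary~\ref{pscrem} only use that the ambient representations are irreducible while $\bar{\rho}_b$ is reducible, still the case, and the kernel of $R^{\{\xi_v\}}_{\bar{\rho}_b}\otimes_{R^{\ps,\{\xi_v\}}}R^{\Delta,\mathrm{adm}}_{\bar{\rho}_b}\to R^{\Delta,\mathrm{adm}}_{\bar{\rho}_b}$ is nilpotent and killed by the element $c$ of \ref{rpsrc}, by lemma~\ref{ordpseudo} as before. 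The finiteness input $\ell(H^1(\Gamma,W^*))<\infty$ (lemma~\ref{ktwl}) applies: since $\bar{\chi}|_{G_{F_v}}=\mathbf{1}$ forces $\overbar{F}^{\ker\rho(\kq)}\cap F(\zeta_{p^\infty})=F$, no nice prime is dihedral, so lemma~6.9 of~\cite{SW99} is available, and the third condition of \ref{nice} holds automatically. Then proposition~\ref{rcmodc} and corollary~\ref{rcmod1c} give pro-modularity of $R^{\ord,\mathrm{adm}}_b$ for one $b$, proposition~\ref{propB} upgrades this to every nonzero $b\in H^1_{\Sigma^o}(F_3)$, corollary~\ref{corB} gives finiteness of $R^{\ps,\ord,\mathrm{adm}}_1$ over $\Lambda_{F_3}$ and classicality of its irreducible de Rham points, and the commutative square of norm maps $\Lambda_{F_3}\to\Lambda_F$ together with finiteness of the natural map $R^{\ps,\ord,\mathrm{adm}}_1\to R^{\ps,\ord}_1/\kp$ (proved as in lemma~\ref{finitemap}) yields both parts of Theorem~\ref{thmC} via soluble base change, after summing over the finitely many minimal primes $\kp$ of $R^{\ps,\ord}_1$.

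The main obstacle is the precise definition of admissibility. It must simultaneously be preserved under soluble base change and under the Taylor--Wiles deformation $R^{\Delta,\mathrm{adm}}_{\loc}\to R^{\square_P,\mathrm{adm}}_{\bar{\rho}_b,Q}$, hold at the modular points whose classicality is sought, be restrictive enough that the reducible-locus estimate \ref{redloc} and hence the connectedness/nice-prime counting of \ref{rcmodc} and \ref{propB} survive, and be mild enough that $R^{\Delta,\mathrm{adm}}_v$ stays a normal domain of the expected dimension so that \ref{rpsrc}, \ref{rlocp}, \ref{psccomp}, \ref{ht1def} remain applicable. A natural candidate is to pin down, for each $v\mid p$, a finite-order constraint on the pair $(\psi_{v,1},\psi_{v,2})$ restricted to inertia (breaking the residual symmetry between the two characters in characteristic zero); if a single choice drops classical points one instead ranges over the finitely many admissible types of bounded conductor and works component by component. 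Verifying (iv) — normality and dimension of the admissible local ring — and (iii) — that admissibility really does restore the bound of \ref{redloc} — are the two computations where something genuinely new relative to \cite{SW99} has to be done.
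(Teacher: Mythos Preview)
Your diagnosis is right---one must break the symmetry between the two residual characters and recover a reducible-locus bound---but the cure you propose is not what the paper does, and it is not clear that a character-level constraint can be made to work. The paper's \emph{admissible pair} is a global datum $(\bar\rho_b,\cL)$ where $\cL$ is a \emph{line} in the two-dimensional representation space, required to be \emph{not} $G_F$-stable and, for a \emph{single fixed} place $v_0\mid p$, to be $G_{F_{v_0}}$-stable (so $\bar\rho_b|_{G_{F_{v_0}}}$ must be split). The deformation ring $R_{b,\cL}$ then carries a lift $\cL_R$ of $\cL$ on which $G_{F_{v_0}}$ acts via $\psi_{v_0,1}$; at the remaining $v\mid p$ one imposes the Geraghty-style ordinary condition $R^{\Delta}_{v,\bar\rho_b}$ of \ref{gerlocorddef}, with no line chosen. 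This is not a constraint on the characters $\psi_{v,1}$.

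Why a line rather than a character? In the reducible locus both diagonal characters reduce to $\mathbf{1}$, so an inertial condition on $\psi_{v,1}$ compatible with the residual picture cannot distinguish them; pinning down a line does. With $b=0$ and $\bar\chi$ quadratic, proposition~\ref{reddimRbcL} bounds $\dim R^{\mathrm{red}}_{0,\cL}/(\varpi)$ by $\delta_F+\dim_\F H^1_{v_0}(F)+1$, where $H^1_{v_0}(F)$ is the Selmer group with a local condition only at the single place $v_0$. The base-change estimate is accordingly lemma~\ref{bcbsel}, which bounds $\dim_\F H^1_{v_0}(F_1)$ after climbing the cyclotomic $\Z_{l_0}$-tower; there is no $\Sigma^o$ in this story, and the local rings are those of lemmas~\ref{RDeltaL} and \ref{RDelta}, not an ``admissible'' variant of lemma~\ref{orddefprop1}.

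The bootstrap also differs. The Eisenstein ideal is not produced via $L_p$-values: after base change making $\bar\chi$ quadratic, lemma~\ref{quaeximai} uses CM forms induced from $F(\bar\chi)$, and an entire CM family furnishes the first nice prime inside the image of $\Spec R_{0,\cL}$ (proposition~\ref{R0cLmod}). One then goes from $R_{0,\cL}$ to every $R_{b,\cL}$ (proposition~\ref{RbcLpro-modular}) via the analogue of \ref{ht1def} together with the comparison proposition~\ref{psdefcL}, which loses one dimension because the line $\cL$ is not seen by the pseudo-representation. Corollary~\ref{corC} then plays the role of corollary~\ref{corB}; a lattice analysis shows every irreducible $\kp$ hits some $\Spec R_{b,\cL}$, possibly after twisting by $\chi^{-1}$.

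In short, the missing idea is that the symmetry-breaking datum is a line in the representation at one place, not a condition on the ordinary characters at all places.
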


\subsection{Some local ordinary deformation rings}
In this subsection, $F$ will be a totally real field and $\chi:G_F\to\cO^\times$ is a continuous character such that the reduction of $\chi|_{G_{F_v}}$ modulo $\varpi$ is trivial for any $v|p$. We will introduce some local ordinary deformation rings and prove some basic properties of them. Fix a place $v$ above $p$. Let $\bar{\rho}:G_{F_v}\to\GL_2(\F)$ be a representation whose semi-simplification is the trivial representation.

\begin{para} \label{locorddefcl}
Let $\cL_0\subseteq \F^{\oplus 2}$ be a line fixed by $G_{F_v}$ under the action $\bar\rho$. Consider the functor from $\cOf$ to the category of sets sending $R$ to the triple $(\rho,\psi_{v,1},\cL)$, where
\begin{itemize}
\item $\rho:G_{F_v}\to\GL_2(R)$ is a lifting of $\bar{\rho}$ with determinant $\chi|_{G_{F_v}}$;
\item $\psi_{v,1}:G_{F_v}\to R^\times$ is a lifting of the trivial character;
\item $\cL\subseteq R^{\oplus 2}$ is a $G_{F_v}$-stable, rank-one $R$-direct summand lifting $\cL_0$ with $G_{F_v}$ acting via $\psi_{v,1}$.
\end{itemize}
This is pro-representable by a complete local noetherian ring, which we denote by $R^{\Delta,\cL_0}_{v,\bar\rho}$. 
\end{para}

\begin{lem} \label{RDeltaL}
Suppose that $F_v$ does not contain a primitive $p$-th root of unity. Then $R^{\Delta,\cL_0}_{v,\bar\rho}$ is formally smooth over $\cO$ of relative dimension $2[F_v:\Q_p]+3$.
\end{lem}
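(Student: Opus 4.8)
\textbf{Proof proposal for Lemma \ref{RDeltaL}.}

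The plan is to compute the tangent space and obstruction space of the deformation functor represented by $R^{\Delta,\cL_0}_{v,\bar\rho}$ using Galois cohomology, and show the obstruction space vanishes under the hypothesis that $F_v$ contains no primitive $p$-th root of unity. First I would set up the relevant complex. A lifting of the triple $(\bar\rho,\mathbf{1},\cL_0)$ to $R$ consists of a lifting $\rho$ of $\bar\rho$ with fixed determinant $\chi|_{G_{F_v}}$, a lifting $\psi_{v,1}$ of the trivial character, and a compatible $G_{F_v}$-stable rank-one direct summand $\cL$. The deformations of such data are controlled by the cohomology of the subcomplex $C^\bullet$ of the standard cochain complex computing $H^\bullet(G_{F_v},\ad^0\bar\rho)$ consisting of cochains that preserve the filtration given by $\cL_0$; concretely, if $\bar\rho$ is written in a basis adapted to $\cL_0$ as upper triangular-ish, the relevant coefficient module is the Borel subalgebra $\mathfrak{b}\subseteq\ad^0\bar\rho$ (trace-zero upper triangular matrices), and one gets an exact sequence relating $H^i(G_{F_v},\mathfrak{b})$, $H^i(G_{F_v},\ad^0\bar\rho)$, and the ``$*$-entry" piece. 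More precisely, since $\bar\rho$ has trivial semisimplification, $\mathfrak{b}$ sits in an extension $0\to\F(?)\to\mathfrak{b}\to\F\to 0$ where the sub is the lower-left entry twisted appropriately; because the semisimplification is trivial and the determinant condition forces the diagonal pieces to be $\mathbf 1$ and $\mathbf 1$, this extension has both graded pieces equal to $\F$ (with trivial Galois action on the quotient and on the sub, modulo the extension class coming from $\bar\rho$ itself).

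The key step is then: compute $\dim_\F H^1(G_{F_v},\mathfrak{b})$ and show $H^2(G_{F_v},\mathfrak{b})=0$. For $H^2$: by local Tate duality $H^2(G_{F_v},\mathfrak{b})^\vee\cong H^0(G_{F_v},\mathfrak{b}^\vee(1))$, where $\mathfrak{b}^\vee(1)$ has graded pieces $\F(1)$ and $\F(1)$ (with the extension class dualized). An invariant vector in $\mathfrak{b}^\vee(1)$ would force $\F(1)$ to have a $G_{F_v}$-invariant vector, i.e. $\omega|_{G_{F_v}}=\mathbf 1$, i.e. $\mu_p\subseteq F_v$ — which is exactly what the hypothesis excludes. (One must check the extension does not conspire to create an invariant vector even though neither graded piece has one; but an invariant vector always has nonzero image in the quotient or lies in the sub, so in either case one gets $H^0(G_{F_v},\F(1))\ne 0$, contradiction.) Hence $H^2(G_{F_v},\mathfrak{b})=0$, so the functor is unobstructed and $R^{\Delta,\cL_0}_{v,\bar\rho}$ is formally smooth over $\cO$. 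The relative dimension is then $\dim_\F H^1(G_{F_v},\mathfrak{b})$, which by the local Euler characteristic formula equals $\dim_\F H^0(G_{F_v},\mathfrak{b})+\dim_\F H^2(G_{F_v},\mathfrak{b})+[F_v:\Q_p]\cdot\dim_\F\mathfrak{b}=\dim_\F H^0(G_{F_v},\mathfrak{b})+0+2[F_v:\Q_p]$. Since $\mu_p\not\subseteq F_v$ and the semisimplification of $\bar\rho$ is trivial, $H^0(G_{F_v},\mathfrak{b})$ has dimension $2$ (the two diagonal/nilpotent invariant directions — one needs a small case check depending on whether $\bar\rho$ is split or not, but in both cases one gets $2$), giving relative dimension $2[F_v:\Q_p]+2$; adding the one extra framing-type variable from the freedom in choosing a complement realizing the determinant normalization — or, more carefully, tracking the splitting $R^{\Delta,\cL_0}_{v,\bar\rho}\cong R^{B}[[\,y\,]]$ as in the proof of Lemma \ref{orddefprop1} — yields the stated $2[F_v:\Q_p]+3$.

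I would organize the write-up to first reduce, as in Lemma \ref{orddefprop1}, to the Borel-valued deformation ring $R^B$ by observing that since $\bar\chi|_{G_{F_v}}=\mathbf 1$ (so in particular the two characters are ``equal mod $\varpi$'') the choice of $\cL$ lifting $\cL_0$ is governed by a unipotent conjugation and contributes exactly one formal variable, so $R^{\Delta,\cL_0}_{v,\bar\rho}\cong R^B[[y]]$; then prove $R^B$ is formally smooth of relative dimension $2[F_v:\Q_p]+2$ by the obstruction-vanishing argument above applied to the Borel subalgebra $\mathfrak{b}$. The main obstacle I anticipate is the careful bookkeeping of the coefficient modules and of $\dim_\F H^0(G_{F_v},\mathfrak{b})$ in the two sub-cases $\bar\rho$ split versus non-split (and making sure the determinant-fixing condition is correctly incorporated into $\ad^0$ rather than $\ad$, so that the Euler characteristic count comes out to the advertised dimension); the vanishing of $H^2$ itself is the clean part, resting squarely on $\mu_p\not\subseteq F_v$.
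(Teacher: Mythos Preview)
Your cohomological approach is a legitimate alternative to the paper's argument, but there is a bookkeeping error that would give the wrong answer in the non-split case. The ring $R^B$ parametrizes actual $B$-valued \emph{liftings} of $\bar\rho$ (homomorphisms, not equivalence classes), so its tangent space is $Z^1(G_{F_v},\mathfrak b)$, not $H^1(G_{F_v},\mathfrak b)$. With $H^2=0$ one then gets
\[
\dim_\F Z^1(G_{F_v},\mathfrak b)=\dim_\F\mathfrak b\cdot\bigl([F_v:\Q_p]+1\bigr)=2[F_v:\Q_p]+2,
\]
independent of $h^0$. Your claim that $\dim_\F H^0(G_{F_v},\mathfrak b)=2$ in both cases is also wrong: when $\bar\rho$ is a non-split extension, conjugation by $\begin{pmatrix}1&c\\0&1\end{pmatrix}$ sends $\begin{pmatrix}a&b\\0&-a\end{pmatrix}$ to $\begin{pmatrix}a&b-2ac\\0&-a\end{pmatrix}$, so the invariants are one-dimensional. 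Using $H^1$ with this (correct) value of $h^0$ would yield $2[F_v:\Q_p]+1$ for $R^B$, which is off by one. Once you replace $H^1$ by $Z^1$ the argument goes through cleanly and the case distinction disappears.

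The paper's proof is shorter and avoids cohomology entirely. Since $\bar\rho$ has unipotent image, any lifting factors through the maximal pro-$p$ quotient of $G_{F_v}$, which by \cite[Theorem 7.5.11]{NSW08} is a \emph{free} pro-$p$ group on $[F_v:\Q_p]+1$ generators precisely when $\mu_p\not\subseteq F_v$. A $B$-valued lift with fixed determinant is then just a free choice of the upper-left entry and the upper-right entry on each generator, so $R^B$ is visibly a power series ring in $2([F_v:\Q_p]+1)$ variables; adding the unipotent-conjugation variable $n$ gives the claimed $2[F_v:\Q_p]+3$. Your $H^2$-vanishing via Tate duality is of course equivalent to the freeness statement, but the group-theoretic formulation makes the smoothness and the dimension count immediate without any Euler-characteristic bookkeeping.
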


\begin{proof}
We may assume $\cL_0$ is $\F\cdot\begin{pmatrix} 1\\ 0 \end{pmatrix}$. Then for any such lifting $(\rho,\psi_{v,1},\cL)$ over $R$, there exists a unique $n\in R$ such that $\begin{pmatrix} 1 & 0\\ n & 1 \end{pmatrix}\cdot \cL=R\cdot\begin{pmatrix} 1\\ 0 \end{pmatrix}$. The conjugation of $\rho$ by $\begin{pmatrix} 1 & 0\\ n & 1 \end{pmatrix}$ has the form $\begin{pmatrix} \psi_{v,1} & *\\ 0 & \chi \psi_{v,1}^{-1}\end{pmatrix}$. Note that since $\rho$ lifts a representation with trivial semi-simplification, it must factor through the pro-$p$ quotient of $G_{F_v}$, which is a free pro-$p$ group of rank $[F_v:\Q_p]+1$ by Theorem 7.5.11 of \cite{NSW08}. Now the lemma is clear.
\end{proof}

\begin{para} \label{gerlocorddef}
We also need a local ordinary deformation ring without the chosen line $\cL_0$. This is defined by Geraghty in his thesis \cite{Ger10}. We give a slightly different construction, which in our case agrees with Geraghty's construction in view of the lemma below. Let $R^{\square}_{\tilde{\Lambda}_v}$ be the universal object in $C_\cO$ which parametrizes pairs $(\rho,\psi_{v,1})$ with $\rho:G_{F_v}\to\GL_2(R)$, a lifting of $\bar{\rho}$ with determinant $\chi|_{G_{F_v}}$, and $\psi_{v,1}:G_{F_v}\to R^\times$, a lifting of the trivial character. Consider the closed subset $\mathcal{G}^{\Delta}$ of primes $\kp\in\Spec R^{\square}_{\tilde{\Lambda}_v}$ such that the induced lifting $\rho(\kp):G_{F_v}\to\GL_2(k(\kp))$ has a one-dimensional subrepresentation given by the induced character $\psi_{v,1}(\kp):G_{F_v}\to k(\kp)^\times$. We denote by $\Spec R^{\Delta}_{v,\bar\rho}$ the reduced closed subscheme induced by $\mathcal{G}^{\Delta}$. I thank Ziquan Zhuang for showing me the following argument.
\end{para}

\begin{lem} \label{RDelta}
Suppose that $F_v$ does not contain a primitive $p$-th root of unity. 
\begin{enumerate}
\item $R^{\Delta}_{v,\bar\rho}$ is $\cO$-flat hence agrees with the definition of Geraghty. Moreover, $R^{\Delta}_{v,\bar\rho}$ is normal of dimension $1+2[F_v:\Q_p]+3$.
\item  Let $(\tilde\rho,\tilde\psi_{v,1})$ be the universal pair on $R^{\square}_{\tilde{\Lambda}_v}$. Write $\tilde\psi_{v,2}=\chi|_{G_{F_v}}(\tilde\psi_{v,1})^{-1}$. Then kernel of $R^{\square}_{\tilde{\Lambda}_v}\twoheadrightarrow R^{\Delta}_{v,\bar\rho}$ is generated by the entries of $(\tilde\rho(\sigma)-\tilde\psi_{v,1}(\sigma))(\tilde\rho(\tau)-\tilde\psi_{v,2}(\tau)),\sigma,\tau\in G_{F_v}$.
\end{enumerate}
\end{lem}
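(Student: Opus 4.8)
\textbf{Plan of proof for Lemma \ref{RDelta}.}

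The plan is to reduce everything to the explicit description of the framed deformation ring $R^{\square}_{\tilde\Lambda_v}$ and the "reducible along $\psi_{v,1}$" condition, exploiting the hypothesis that $F_v$ contains no primitive $p$-th root of unity so that the maximal pro-$p$ quotient of $G_{F_v}$ is free of rank $[F_v:\Q_p]+1$ (Theorem 7.5.11 of \cite{NSW08}). First I would set up a stratification of $\Spec R^{\square}_{\tilde\Lambda_v}$ by the reduction type of $\bar\rho$ and note that since $\bar\rho$ has trivial semisimplification, any lifting $\rho$ of determinant $\chi|_{G_{F_v}}$ (which reduces to the trivial character) factors through the pro-$p$ quotient of $G_{F_v}$. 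This gives a concrete presentation of $R^{\square}_{\tilde\Lambda_v}$: choosing a chosen line $\cL_0$ as in \ref{locorddefcl}, the ring $R^{\Delta,\cL_0}_{v,\bar\rho}$ is formally smooth over $\cO$ of relative dimension $2[F_v:\Q_p]+3$ by Lemma \ref{RDeltaL}. The key geometric input is then that the map $\coprod_{\cL_0} \Spec R^{\Delta,\cL_0}_{v,\bar\rho}\to \Spec R^{\Delta}_{v,\bar\rho}$ (the union over the finitely many $G_{F_v}$-stable lines $\cL_0$ in $\F^{\oplus 2}$, of which there are either one or $p+1$ depending on whether $\bar\rho$ is scalar) is a proper birational cover; when $\bar\rho$ is non-scalar there is a unique such line and $R^{\Delta}_{v,\bar\rho}$ is already formally smooth, so the interesting case is $\bar\rho$ scalar.

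For the scalar case I would argue as follows. The locus where $\rho(\kp)$ has more than one $G_{F_v}$-stable line on which $G_{F_v}$ acts by $\psi_{v,1}(\kp)$ is exactly where $\rho(\kp)$ is $\psi_{v,1}$-twist-scalar, and this is a closed subset of each $\Spec R^{\Delta,\cL_0}_{v,\bar\rho}$ of codimension $\ge 2$ (it cuts out a subspace inside the formally smooth $R^{\Delta,\cL_0}_{v,\bar\rho}$ by vanishing of an extension class lying in an $H^1$ of dimension $\ge [F_v:\Q_p]+1\ge 2$). Away from that locus the cover is an isomorphism, so $R^{\Delta}_{v,\bar\rho}$ agrees with a formally smooth ring in codimension $\le 1$; in particular it satisfies Serre's $(R_1)$. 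For $(S_2)$: each $R^{\Delta,\cL_0}_{v,\bar\rho}$ is $\cO$-flat (being formally smooth over $\cO$), hence $\varpi$ is a nonzerodivisor; pushing this through the finite cover and using that $R^{\Delta}_{v,\bar\rho}$ was defined as the reduced subscheme, I would deduce $R^{\Delta}_{v,\bar\rho}$ is $\cO$-flat, of the expected dimension $1+2[F_v:\Q_p]+3$, and Cohen–Macaulay (indeed the cover realizes it as the image of a regular ring, and a standard argument with the conductor square plus the codimension-$\ge 2$ control gives $(S_2)$). Serre's criterion then yields normality, and in particular $\cO$-flatness shows the definition here agrees with Geraghty's (which is the $\cO$-flat closure of the same locus). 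I would also record that $R^{\Delta}_{v,\bar\rho}$ is connected — hence a domain once normal — because $\Spec R^{\square}_{\tilde\Lambda_v}$ is connected (it is formally smooth over $\cO$ by the freeness of the pro-$p$ quotient) and $\mathcal{G}^\Delta$ is the image of a connected space.

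For part (2), I would show that the ideal $J$ generated by the entries of $(\tilde\rho(\sigma)-\tilde\psi_{v,1}(\sigma))(\tilde\rho(\tau)-\tilde\psi_{v,2}(\tau))$ for all $\sigma,\tau\in G_{F_v}$ cuts out exactly $\mathcal{G}^\Delta$ and defines a reduced (equivalently, $\cO$-flat) quotient. That $V(J)=\mathcal{G}^\Delta$ set-theoretically is a Cayley–Hamilton / linear-algebra statement over each residue field: over a field, $(\rho(\sigma)-\psi_{v,1}(\sigma))(\rho(\tau)-\psi_{v,2}(\tau))=0$ for all $\sigma,\tau$ forces $\rho$ to have the block-upper-triangular shape $\begin{pmatrix}\psi_{v,1}&*\\0&\psi_{v,2}\end{pmatrix}$ in a suitable basis (this is essentially the computation in the proof of Lemma \ref{ordpseudo}). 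For scheme-theoretic equality, I would use the universal property: given a map $R^{\square}_{\tilde\Lambda_v}\to R$ killing $J$, over each $\cO$-flat quotient one can split off a rank-one summand with $G_{F_v}$ acting by $\tilde\psi_{v,1}$ by the usual idempotent/averaging argument (conjugating by the unipotent matrix $\begin{pmatrix}1&0\\n&1\end{pmatrix}$ exactly as in the proof of Lemma \ref{RDeltaL}), so $R^{\square}_{\tilde\Lambda_v}/J$ represents the ordinary functor with a chosen line locally; comparing with $R^{\Delta,\cL_0}_{v,\bar\rho}$ over the strata and using $\cO$-flatness of $R^{\square}_{\tilde\Lambda_v}/J$ (which follows because $J$ is generated by a regular sequence of the right length, checked on the smooth cover) gives $R^{\square}_{\tilde\Lambda_v}/J=R^{\Delta}_{v,\bar\rho}$.

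The main obstacle I anticipate is the scalar-$\bar\rho$ case of part (1): controlling the singularities of $R^{\Delta}_{v,\bar\rho}$ along the twist-scalar locus and verifying $(S_2)$ there. One cannot simply invoke formal smoothness; one must either carry out the conductor-square / gluing computation for the finite birational cover $\coprod_{\cL_0}\Spec R^{\Delta,\cL_0}_{v,\bar\rho}\to\Spec R^{\Delta}_{v,\bar\rho}$ carefully, or — following the style of the proof of Lemma \ref{orddefprop1} — give a direct presentation $R^{\Delta}_{v,\bar\rho}\cong R_p[[\ldots]]/(\text{one equation})$ exhibiting it as a complete intersection and then check the singular locus has codimension $\ge 2$ by the $Z^1$ dimension count $\dim_\F Z^1(G_{F_v},\mathbf 1)=[F_v:\Q_p]+2$. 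Everything else (freeness of the pro-$p$ group, the linear algebra over residue fields, $\cO$-flatness via regular sequences) is routine given the tools already assembled in the paper.
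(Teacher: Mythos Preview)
Your overall strategy diverges from the paper's, and the main gap is exactly where you flag it but deeper than you suspect. The cover $\coprod_{\cL_0}\Spec R^{\Delta,\cL_0}_{v,\bar\rho}\to\Spec R^{\Delta}_{v,\bar\rho}$ is \emph{not finite}: over the twist-scalar locus (where $\rho=\psi_{v,1}\cdot\mathrm{Id}$), every line lifting $\cL_0$ is stable, so the fiber is an affine line (the parameter $n$ in the proof of Lemma~\ref{RDeltaL}). This is a resolution-of-singularities type map, not a finite birational cover, so conductor-square arguments do not apply and there is no mechanism to push $(S_2)$ or Cohen--Macaulayness down. Your fallback suggestion of a ``one equation'' complete-intersection presentation is also wrong: the ring in question is the cone over the Segre embedding $\mathbf{P}^1\times\mathbf{P}^{2r+1}\hookrightarrow\mathbf{P}^{4r+3}$, which has codimension $2r+1$ but is cut out by $\binom{2r+2}{2}$ minors and is not a complete intersection for $r\ge 1$.

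The paper instead gives an explicit coordinate description. Using freeness of the pro-$p$ quotient (rank $r+1$ with $r=[F_v:\Q_p]$), one writes $\tilde\rho(\sigma_i)=\begin{pmatrix}a_i+1&b_i\\c_i&d_i+1\end{pmatrix}$, $\tilde\psi_{v,1}(\sigma_i)=n_i+1$; the determinant relations eliminate the $n_i$, and the ordinary condition becomes the rank-$\le 1$ locus of the $(2r+2)\times 2$ matrix with rows $(a_i-n_i,b_i)$, $(c_i,d_i-n_i)$. Thus $R^{\square}_{\tilde\Lambda_v}\cong\cO[[x_{ij}]]$ and the ideal is $(x_{i1}x_{j2}-x_{i2}x_{j1})$, the Segre determinantal ideal. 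One then invokes known results (Proposition~2.7 of \cite{Sho16}) to get $\cO$-flatness and Cohen--Macaulayness, and checks the singular locus (the origin) has codimension $\ge 2$. Part (2) is then immediate: the $2\times 2$ minors are literally entries of $(\tilde\rho(\sigma)-\tilde\psi_{v,1}(\sigma))(\tilde\psi_{v,2}(\tau)-\tilde\rho(\tau))$ for suitable $\sigma,\tau$, so the two ideals agree. Your ``regular sequence'' claim for $J$ is also false --- these minors are far from a regular sequence --- but the explicit identification with the Segre ideal bypasses this entirely.
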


\begin{proof}
First we give an explicit description of $R^{\square}_{\tilde{\Lambda}_v}$. As we mentioned in the proof of lemma \ref{RDeltaL}, under our assumption on $F_v$, the pro-$p$ quotient of $G_{F_v}$ is a free pro-$p$ group of rank $r+1$ with $r=[F_v:\Q_p]$. Choose a set of generators $\sigma_0,\cdots,\sigma_{r}$ and write $\tilde\rho(\sigma_i)=\begin{pmatrix} a_i+1&b_i\\c_i&d_i+1\end{pmatrix}$ and $\tilde\psi_{v,1}(\sigma_i)=n_i+1$. Then it is clear that $R^{\square}_{\tilde{\Lambda}_v}$ is topologically generated by $a_i,b_i,c_i,d_i,n_i$ with $i=0,\cdots,r$ modulo relations $\det\tilde\rho(\sigma_i)=\chi(\sigma_i)$. 

We will only prove the case when $\bar\rho$ is the trivial representation. In fact, it will be clear that the situation is much simpler if $\bar\rho$ is a non-split extension. The closed subset $\mathcal{G}^{\Delta}$ is the locus where kernel of $\tilde\rho(\sigma_i)-\tilde\psi_{v,1}(\sigma_i)$ has a common non-zero vector for any $i$. Equivalently, the following matrix has at most rank one:
\[\begin{pmatrix}a_0-n_0 & b_0\\ c_0 & d_0-n_0 \\ \vdots& \vdots \\ a_i-n_i & b_i\\ c_i & d_i-n_i \\ \vdots& \vdots \\ a_{r}-n_{r} & b_{r}\\ c_{r} & d_{r}-n_{r}\end{pmatrix}.\]
This is same as saying all $2\times 2$-submatrices have determinant zero. Denote the entries of this matrix by $x_{ij},i=1,\cdots,2r+2,j=1,2$. It is easy to see that the determinant condition $\det\tilde\rho(\sigma_i)=\chi(\sigma_i)$ is equivalent with expressing $n_i$ in terms of $x_{(2i)j},x_{(2i+1)j},j=1,2$. Hence $R^{\square}_{\tilde{\Lambda}_v}\cong \Spec \cO[[x_{ij},i=1,\cdots,2r+2,j=1,2]]$ and $\mathcal{G}^{\Delta}$ is the underlying space defined by the ideal $I=(x_{i1}x_{j2}-x_{i2}x_{j1})$ for any $i,j$. This is the completion at the origin of the cone of the Segre embedding $\mathbf{P}^1\times\mathbf{P}^{2r+1}\hookrightarrow \mathbf{P}^{4r+3}$. By proposition 2.7 of \cite{Sho16}, $\cO[[x_{ij}]]/I$ is $\cO$-flat and Cohen-Macaulay. To see that it is actually normal, one can check easily that $\cO[[x_{ij}]]/I$ has dimension $1+2r+3$ and the singular locus is defined by the ideal generated by all $x_{ij}$, which has codimension at least $2$. We omit the details here.

To see that the kernel of $R^{\square}_{\tilde{\Lambda}_v}\twoheadrightarrow R^{\Delta}_{v,\bar\rho}$ has the form as described in the lemma, first we note that clearly these elements are contained in the kernel. On the other hand, the elements $x_{i1}x_{j2}-x_{i2}x_{j1}$ are in fact entries of $(\tilde\rho(\sigma)-\tilde\psi_{v,1}(\sigma))(\tilde\psi_{v,2}(\tau)-\tilde\rho(\tau))$ for some $\sigma,\tau$. This finishes the proof of the lemma.

\end{proof}

\subsection{Patching at a one-dimensional prime: ordinary case II}
In this subsection, we are going to prove a result similar to proposition \ref{propA} when $\bar{\chi}|_{G_{F_v}}=\mathbf{1},v|p$ (see below for the precise statement).

\begin{para}
\noindent \underline{\textbf{Setup}} The setup is almost the same as \ref{Paao-dproc1} except we require $\bar{\chi}|_{G_{F_v}}=\mathbf{1},v|p$ now. More precisely, let $F$ be a totally real field of even degree over $\Q$  in which $p$ is \textit{unramified} and $D$ be a totally definite quaternion algebra over $F$ which splits at all finite places. We denote the set of places above $p$ by $\Sigma_p$. Let $S$ be a finite set of places of $F$ containing $\Sigma_p$ such that for any $v\in S\setminus \Sigma_p$,
\[N(v)\equiv 1\mod p.\]
Let $\xi_v:k(v)^\times\to\cO^\times$ be a character of $p$-power order for each $v\in S\setminus \Sigma_p$. As before, we will view $\xi_v$ as characters of $I_{F_v}$ by the class field theory and fix a complex conjugation $\sigma^*\in G_{F}$. 

Fix a continuous character $\chi:G_{F,S}\to\cO^\times$ such that
\begin{itemize}
\item $\chi$ is unramified at places outside of $\Sigma_p$.
\item $\chi(\Frob_v)\equiv1\mod \varpi$ for $v\in S\setminus \Sigma_p$.
\item $\chi(c)=-1$ for any complex conjugation $c\in G_F$.
\item For any $v|p$, $\bar{\chi}|_{G_{F_v}}= \mathbf{1}$ and $\chi|_{G_{F_v}}$ is Hodge-Tate. Here $\bar{\chi}$ denotes the reduction of $\chi$ modulo $\varpi$.
\end{itemize}

We can define tame levels $U^p$ and ordinary Hecke algebra $\T^{\ord}:=\T^{\ord}_{\psi,\xi}(U^p)$ as in \ref{ass1}. We make the following assumption: 
\begin{itemize}
\item \textbf{Assumption}: $T_v-(1+\chi(\Frob_v)),v\notin S$ and $\psi_{v,1}(\gamma)-1,\gamma\in F_v^\times,v|p$ and $\varpi$ generate a maximal ideal $\km$ of $\T^\ord$.
\end{itemize}
\end{para}

\begin{para} \label{Rpsordxiv1}
As in \ref{rpsordxiv}, we consider the universal deformation ring $R^{\ps,\ord,\{\xi_v\}}_{1}$ which pro-represents the functor from $\cOf$ to the category of sets sending $R$ to the set of tuples $(T,\{\psi_{v,1}\}_{v\in\Sigma_p})$ where
\begin{itemize}
\item $T$ is a two-dimensional pseudo-representation of $G_{F,S}$ over $R$ lifting $1+\bar\chi$ with determinant $\chi$ and for any $v\in S\setminus \Sigma_p$,
\[T|_{I_{F_v}}=\xi_v+\xi_v^{-1} .\]
\item For any $v|p$, $\psi_{v,1}:G_{F_v}\to R^\times$ is a lifting of the trivial character and satisfies
\[T|_{G_{F_v}}=\psi_{v,1}+\psi_{v,1}^{-1}\chi.\]
Moreover $T$ is $\psi_{v,1}$-ordinary.
\end{itemize}

Then there is a natural surjective map $R^{\ps,\ord,\{\xi_v\}}_{1}\to \T^{\ord}_\km$. We say a prime of $R^{\ps,\ord,\{\xi_v\}}_{1}$ pro-modular if it comes from a prime of $\T^{\ord}_\km$. As in \ref{ordnice}, a prime $\kq\in\T^{\ord}_\km$ is called nice if it satisfies the conditions there. In particular, we drop the original third condition in \ref{nice}. In view of the second part of lemma \ref{nirred} and remark \ref{nicerem}, this follows from our assumptions that $p$ is unramified in $F$ and $\bar{\chi}|_{G_{F_v}}=\mathbf{1}$ for any $v|p$.

The main result of this subsection is:
\end{para}

\begin{prop} \label{propC}
Under all the assumptions in this subsection, if $\kq\in\Spec\T^{\ord}_\km$ is a nice prime, then
\[(R^{\ps,\ord,\{\xi_v\}}_{1})_{\kq^\ps}\to \T^{\ord}_\kq\]
has nilpotent kernel. Here $\kq^{\ps}=\kq\cap R^{\ps,\ord,\{\xi_v\}}_{1}$.
\end{prop}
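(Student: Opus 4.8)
The plan is to mimic, essentially verbatim, the proof of Theorem \ref{thmA} (the patching at a one-dimensional prime carried out in subsections \ref{satsotmr}--\ref{p3apop}) but with the completed homology $M_{\psi,\xi}(U^p)$ replaced throughout by the Hida-theoretic object $M^{\ord}_{\psi,\xi}(U^p)$, and with the following modifications to the input data, exactly as indicated in \ref{rdxbq}. First, the local deformation rings at $v\mid p$ that are used to build $R^{\{\xi_v\}}_{\loc}$ must be taken to be the local ordinary deformation rings $R^{\Delta}_{v,\bar\rho_b}$ of subsection (the one with $\bar\chi|_{G_{F_v}}=\mathbf 1$): by lemma \ref{RDelta}, each $R^{\Delta}_{v,\bar\rho_b}$ is $\cO$-flat, normal, and of dimension $1+2[F_v:\Q_p]+3$, so Proposition \ref{rlocp} goes through with $3[F:\Q]$ replaced by the analogous count coming from these ordinary rings. (This is the one point where the present case genuinely differs from \ref{Paao-dproc1}: there one had $[F_v:\Q_p]\ge 2$ and could invoke lemma \ref{orddefprop1}, while here lemma \ref{RDelta} does the same job with no restriction on $[F_v:\Q_p]$, using instead that $F_v$ contains no primitive $p$-th root of unity --- which holds since $p$ is unramified in $F$.) Second, the global framed deformation rings $R^{\square_P,\{\xi_v\}}_{\bar\rho_b,Q}$ must be replaced by their ordinary quotients $R^{\Delta_P,\{\xi_v\}}_{\bar\rho_b,Q}:=R^{\square_P,\{\xi_v\}}_{\bar\rho_b,Q}\otimes_{(\bigotimes_{v\mid p}R^{\square}_v)}(\bigotimes_{v\mid p}R^{\Delta}_v)$, and correspondingly $R^{\ps,\{\xi_v\}}_Q$ by $R^{\ps,\ord,\{\xi_v\}}_{1,Q}$.

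The steps, in order, would be: (1) reduce to the case where the normalization $A$ of $\T^{\ord}_\km/\kq$ has residue field $\F$, by the faithfully flat base-change trick of \ref{kqrho}; fix $A\cong\F[[T]]$ and $\rho(\kq)^o$ from the niceness hypothesis. (2) Establish the ordinary analogue of Proposition \ref{rlocp} using lemma \ref{RDelta}, lemma \ref{lrca}, lemma \ref{ccal}, lemma \ref{cctp}, exactly as in the proof of Proposition \ref{rlocp} but with the ordinary local dimension bookkeeping. (3) Establish the existence of Taylor--Wiles primes $Q_N$: this is Proposition \ref{exttwp}, whose proof is a pure Galois-cohomology computation and is unaffected by passing to the ordinary setting, except that the relevant local deformation conditions at $p$ are now the ordinary ones --- but the Selmer-group estimates in lemmas \ref{lem1}--\ref{lem3} are exactly those of Skinner--Wiles and go through (note that niceness here already forces $\rho(\kq)$ non-dihedral via the third part of lemma \ref{nirred}, since $\psi_{v,1}\bmod\kq$ has infinite order for some $v$, so lemma \ref{ktwl} applies). (4) Carry out the patched completed-homology construction of \ref{chac}--\ref{Pch} with $M^{\ord}_{\psi,\xi}$: here the key simplification noted in item (3) of \ref{rdxbq} is that $M^{\ord}_{\psi,\xi}(U^p)_\km$ is already finitely generated over $\T^{\ord}_\km$ (it is finite free over $\Lambda_F$ by Hida theory), so there is no need for Pa\v{s}k\={u}nas' theory and no need to pass from $M$ to $\mm$; one patches the Hida family directly, with $\Lambda_F$ playing the structural role that $R^{\ps,\psi\varepsilon^{-1}}_p$ played in \ref{p3apop}. (5) Build the patched deformation rings $R^{\ps,\ord,\patch}$, $R^{\Delta,\patch}_b$ and the prime ideals $\kq^{\ps,\patch}$, $\kq^{\patch}_b$ as in subsection on patched deformation rings, together with the comparison element $c=\tilde c$; as flagged in item (2) of \ref{rdxbq}, the surjection $R^{\{\xi_v\}}_{\bar\rho_b}\otimes_{R^{\ps,\{\xi_v\}}}R^{\ps,\ord,\{\xi_v\}}_1\to R^{\Delta,\{\xi_v\}}_{\bar\rho_b}$ need not be an isomorphism, but by lemma \ref{ordpseudo} its kernel is nilpotent and killed by $c$, so Corollary \ref{pscpatch} still holds in the form needed. (6) Run the final dimension-counting argument of subsection on the proof of \ref{thmA}: the lower bound on $\dim\widehat{\T^{\ord}_\kq}$ now comes from Corollary \ref{orddim} (each irreducible component has dimension $[F:\Q]+1$, hence the support after killing $\varpi$ has dimension $\ge[F:\Q]$), replacing the use of Theorem \ref{dh}; combined with the equidimensionality of $\widehat{(R^{\{\xi_v\}}_\infty)_{\kq^{\{\xi_v\}}_\infty}}$ from step (2), one concludes that the patched module has full support on at least one component of characteristic zero, and then the Ihara-avoidance (Taylor's trick) argument with auxiliary non-trivial $\{\xi'_v\}$ --- using that $R^{\{\xi_v\}}_\infty/(\varpi)\cong R^{\{\xi'_v\}}_\infty/(\varpi)$ and the integrality in the $\xi'$-case coming from Proposition \ref{rlocp}(1) and lemma \ref{lrca} --- upgrades this to full support, whence $(R^{\ps,\ord,\{\xi_v\}}_1)_{\kq^\ps}\to\T^{\ord}_\kq$ has nilpotent kernel (lemma \ref{fulsupthmA} analogue).

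The main obstacle --- and really the only place where this proof is not a mechanical transcription of \ref{Paao-dproc1} --- is verifying that the local ordinary deformation ring $R^{\Delta}_{v,\bar\rho_b}$ at $v\mid p$ has the requisite normality and $\cO$-flatness so that Proposition \ref{rlocp} and lemma \ref{ccal} apply. In \ref{Paao-dproc1} this was lemma \ref{orddefprop1}, whose proof used $[F_v:\Q_p]\ge2$ crucially (to push the non-regular locus to codimension $\ge2$ in the special fibre of the Borel deformation ring). Here we have no such hypothesis, but we also have $\bar\chi|_{G_{F_v}}=\mathbf 1$, which makes the local structure more uniform: lemma \ref{RDelta} identifies $R^{\Delta}_{v,\bar\rho_b}$ with (the completion at the origin of) the affine cone over a Segre embedding $\mathbf P^1\times\mathbf P^{2[F_v:\Q_p]+1}$, which is Cohen--Macaulay, $\cO$-flat, and normal by a direct singular-locus computation. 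So the obstacle is already resolved by lemma \ref{RDelta}; the remaining work is purely bookkeeping of dimensions (the total dimension count in Proposition \ref{rlocp} becomes $\sum_{v\mid p}(3+2[F_v:\Q_p])+3|S\setminus\Sigma_p|+3|T'|+[\text{framing}]$, which still matches the lower bound from Corollary \ref{orddim} after the Taylor--Wiles and framing variables are accounted for, because the defect $[F_v:\Q_p]-1$ per place is exactly absorbed by the change from $\dim_{R^{\ps}_p}$-type estimates to the Hida $\Lambda_F$-dimension $[F:\Q]+1$). I would therefore present the proof as: "The proof is identical to that of Theorem \ref{thmA}, with the modifications indicated in \ref{rdxbq} and using lemma \ref{RDelta} in place of lemma \ref{varlocdim}(2) / lemma \ref{orddefprop1}, and Corollary \ref{orddim} in place of Theorem \ref{dh}. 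We leave the details to the reader," after spelling out the two or three substitutions above in a sentence or two each.
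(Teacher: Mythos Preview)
Your proposal is correct and matches the paper's approach essentially exactly: the paper's proof of Proposition \ref{propC} is likewise ``almost identical to the proof of proposition \ref{propA}'' (hence of Theorem \ref{thmA}) with the Hida family replacing completed homology, the local rings $R^{\Delta}_{v,\bar\rho_b}$ from \ref{gerlocorddef} replacing the $R^{\Delta}_v$ of \ref{rdxbq}(1), and Corollary \ref{orddim} replacing Theorem \ref{dh}. The only microscopic discrepancy is that for the kernel-killed-by-$c$ step in your item (5), the paper here cites the explicit generator description in lemma \ref{RDelta}(2) rather than lemma \ref{ordpseudo}; either works.
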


\begin{para}
The proof is almost identical to the proof of proposition \ref{propA}, which is essentially the same as the proof of Theorem \ref{thmA} while replacing completed cohomology by Hida family and unrestricted deformation rings at places above $p$ by ordinary deformation rings. Compared with \ref{rdxbq}, there are certain technical differences:
\begin{enumerate}
\item In the first part of  \ref{rdxbq}, we replace the ordinary deformation ring $R^{\Delta}_v$ by $R^{\Delta}_{v,\bar\rho_b}$ introduced in \ref{gerlocorddef}.
\item In the second part, we consider the framed global deformation rings:
\[R^{\Delta_P,\{\xi_v\}}_{\bar{\rho}_b,Q}:=R^{\square_P,\{\xi_v\}}_{\bar{\rho}_b,Q}\widehat{\otimes}_{(\bigotimes_{v|p}R^{\square}_v)}(\bigotimes_{v|p}R^{\Delta}_{v,\bar\rho_b})\]
and unframed deformation rings
\[R^{\Delta,\{\xi_v\}}_{\bar{\rho}_b,Q}:=R^{\{\xi_v\}}_{\bar{\rho}_b,Q}\widehat{\otimes}_{(\bigotimes_{v|p}R^{\square}_v)}(\bigotimes_{v|p}R^{\Delta}_{v,\bar\rho_b}).\]
As in \ref{rdxbq}, the kernel of the natural surjective map
\[R^{\{\xi_v\}}_{\bar{\rho}_b}\widehat{\otimes}_{R^{\ps,\{\xi_v\}}} R_{1}^{\ps,\ord,\{\xi_v\}}\to R^{\Delta,\{\xi_v\}}_{\bar{\rho}_b}\]
is killed by the element $c$ in proposition \ref{rpsrc}. This follows from the second part of lemma \ref{RDelta}.

\end{enumerate}
\end{para}

\subsection{Admissible pairs} \label{admpairs}
\begin{para}
Keep the notations as in the previous subsection. Fix a complex conjugation $\sigma^*\in G_F$. 

Note in the case $\bar{\chi}|_{G_{F_v}}\neq \mathbf{1},v|p$, we can impose natural ordinary conditions (i.e. choice of $\Sigma^o$ in \ref{setupord1}) to bound the dimension of reducible locus of the deformation ring $R_b$. See proposition \ref{redloc} and lemma \ref{ubdimh1} for more details. In the case $\bar{\chi}|_{G_{F_v}}= \mathbf{1}$, such an ordinary condition is replaced by fixing a line for $G_{F_v}$ to lift. 

More precisely, fix a place $v_0|p$ of $F$.
\end{para}

\begin{defn}
Consider a pair $(\bar{\rho},\cL)$ where $\bar{\rho}$ is a two-dimensional $\F$-representation of $G_{F,S}$ and $\cL$ is a line in the representation space of $\bar\rho$. We say $(\bar{\rho},\cL)$ is \textit{admissible} if 
\begin{itemize}
\item $\bar\rho$ is a (possibly split) extension of $\bar{\chi}$ by $\mathbf{1}$ such that $\bar\rho|_{G_{F_{v_0}}}$ is \textit{split}.
\item $\cL$ is not $\rho(G_F)$-stable.
\end{itemize}
\end{defn}

\begin{rem}
As pointed out by one referee, the idea of introducing an extra line in the non $p$-distinguished case (i.e. $\bar\chi|_{G_{F_v}}$ is trivial for some $v|p$) at least goes back to the work of Calegari-Emerton \cite{CE05}. Our deformation problem below \ref{defbL} essentially appears as $\mathrm{Def}$ in their introduction. They also refer to some work of Dickinson in the local case.
\end{rem}

\begin{para}
Let $H^1(F)$ be $H^1(G_{F,S},\F(\bar{\chi}^{-1}))\cong \Ext^1_{\F[G_{F,S}]}(\bar\chi,\mathbf{1})$ and $H^1_{v_0}(F)$ be the kernel of
\[H^1(G_{F,S},\F(\bar{\chi}^{-1}))\to H^1(G_{F_{v_0}},\F(\bar{\chi}^{-1})).\] 
Then each class $b\in H^1(F)$ gives rise to an extension of $\bar{\chi}$ by $\mathbf{1}$, which we denote by $\bar{\rho}_b$. Clearly $\bar{\rho}_b|_{G_{F_{v_0}}}$ is split if and only if $b\in H^1_{v_0}(F)$. By abuse of notation, we say $(b,\cL), b\in H^1_{v_0}(F_1),\cL\in\bar\rho_b$ admissible if so is $(\bar{\rho}_b,\cL)$.

Suppose $(b,\cL)$ is admissible. It is clear that if $b=0$, i.e. $\bar\rho_b=\mathbf{1}\oplus \bar\chi$, then $\cL$ is unique up to isomorphisms. 

We are going to define certain deformation rings associated to an admissible pair. This will replace the role of $R_b$ in \ref{Rbpsi}.
\end{para}

\begin{defn}  \label{defbL}
Let $(b,\cL)$ be an admissible pair and $T$ be a finite set of places. Fix an ordered basis $\beta_b$ of $\bar\rho_b$. Let $\mathrm{Def}^{\square_T}_{b,\cL}$  be the functor from $\cOf$ to the category of sets sending $R$ to the set of isomorphism classes of $(M_R,\iota_R,\rho_R,\cL_R,\{\psi_{v,1}\}_{v|p},\{\beta_v\}_{v\in T})$, where
\begin{itemize}
\item $M_R$ is a projective $R$-module of rank $2$ with a continuous $R$-linear action $\rho_R$ of $G_{F,S}$.
\item $\det \rho_R=\chi$.
\item $\tr \rho_R|_{I_{F_v}}=\xi_v+\xi_v^{-1}$ for $v\in S\setminus \Sigma_p$.
\item $\iota_R:M_R/\km_R M_R\stackrel{\sim}{\to}\bar\rho_b$ is an isomorphism of $\F[G_{F,S}]$-modules. Here $\km_R$ denotes the maximal ideal of $R$.
\item $\psi_{v,1}:G_{F_v}\to R^\times$ is a lifting of the trivial character for every $v|p$.
\item $\cL_R\subseteq M_R$ is an $R$-direct summand of $M_R$ of rank one which lifts $\cL$ under $\iota_R$. It is $G_{F_{v_0}}$-stable with $G_{F_{v_0}}$ acting via $\psi_{v_0,1}$.
\item For $v|p,v\neq v_0$, the representation $\rho|_{G_{F_v}}$ is $\psi_{v,1}$-ordinary in the sense that choosing an ordered basis of $M_R$ which lifts $\beta_b$, then the natural map $R^{\square}_{\tilde{\Lambda}_v}\to R$ given by the universal property factors through $R^\Delta_{v,\bar\rho_b}$. See \ref{gerlocorddef} for the notations here. Intrinsically, by the second part of lemma \ref{RDelta}, this is the same as requiring 
\[(\rho_R(\sigma)-\psi_{v,1}(\sigma))(\rho_R(\tau)-\psi_{v,2}(\tau))=0\]
for any $\sigma,\tau\in G_{F_v}$. Here as usual $\psi_{v,2}=\chi|_{G_{F_v}}\psi_{v,1}^{-1}$.
\item For $v\in T$, $\beta_v$ is an ordered basis of $M_R$ which lifts $\beta_b$ under $\iota_R$.
\end{itemize}
If $T$ is empty, we will simply write $\mathrm{Def}_{b,\cL}$.
\end{defn}

\begin{lem}[Definition of $R_{b,\cL}^{\square_T}$] \label{defRbcLT}
$\mathrm{Def}_{b,\cL}^{\square_T}$ is pro-representable by a complete local noetherian $\cO$-algebra which we denote by $R_{b,\cL}^{\square_T}$.  Moreover, the natural map $R_{b,\cL}\to R_{b,\cL}^{\square_T}$ induces a non-canonical isomorphism $R_{b,\cL}^{\square_T}\cong R_{b,\cL}[[y_1,\cdots,y_{4|T|-1}]]$.
\end{lem}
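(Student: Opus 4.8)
The plan is to run the standard (un)framed Galois deformation argument, where the only step carrying genuine content is an automorphism computation that invokes admissibility of $(b,\cL)$.

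First I would verify that $\mathrm{Def}_{b,\cL}$ is pro-representable. Each condition carving it out of the unrestricted deformation functor of $\bar\rho_b$ is a relatively representable closed condition: the determinant is rigidly fixed to $\chi$; the inertial trace conditions $\tr\rho_R|_{I_{F_v}}=\xi_v+\xi_v^{-1}$ for $v\in S\setminus\Sigma_p$ are closed (as in \ref{ldfr}); the ordinary condition at $v\mid p$ with $v\neq v_0$ is, by the second part of Lemma \ref{RDelta}, the closed condition that the entries of $(\rho_R(\sigma)-\psi_{v,1}(\sigma))(\rho_R(\tau)-\psi_{v,2}(\tau))$ vanish for all $\sigma,\tau\in G_{F_v}$; and ``$\cL_R$ is a rank-one $R$-direct summand of $M_R$ reducing to $\cL$, stable under $G_{F_{v_0}}$ with $G_{F_{v_0}}$ acting through $\psi_{v_0,1}$'' is a closed condition on the associated Grassmannian. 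Hence Schlessinger's criteria apply and $\mathrm{Def}_{b,\cL}$ has a hull; pro-representability then follows once one checks that the residual structured object has only scalar automorphisms, i.e. $\{\phi\in\End_{G_{F,S}}(\bar\rho_b):\phi(\cL)\subseteq\cL\}=\F\cdot\mathrm{id}$. This I would prove by elementary linear algebra: a $G_{F,S}$-equivariant $\phi$ with two distinct $\overbar{\F}$-eigenvalues has $G_{F,S}$-stable eigenlines and no $\phi$-stable $\F$-line other than (possibly) those, so if $\cL$ is $\phi$-stable it must be $G_{F,S}$-stable, contradicting admissibility; therefore $\phi=a+N$ with $N$ a $G_{F,S}$-equivariant nilpotent, and if $N\neq 0$ its kernel is the unique $N$-stable line, hence $G_{F,S}$-stable, forcing $\cL=\ker N$ and contradicting admissibility again; thus $N=0$ and $\phi=a\in\F$. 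The framed functor $\mathrm{Def}^{\square_T}_{b,\cL}$, being a rigidification of $\mathrm{Def}_{b,\cL}$, is then also pro-representable, with universal ring $R^{\square_T}_{b,\cL}$.

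Next I would deduce the power-series presentation by the usual framed-versus-unframed comparison (compare \cite{CHT08}, \cite{Ger10}, and the lemmas of \cite{Tho15} already used in \S\ref{AgoroS-W1}): the structural map $R_{b,\cL}\to R^{\square_T}_{b,\cL}$ forgetting the bases $\{\beta_v\}_{v\in T}$ is formally smooth, since over any Artinian coefficient ring a framing lifts without obstruction, the framings at a single place form a torsor under $1+\km_R M_2(R)$ (four formal parameters), and the only overlap between these framing data and the underlying structured deformation is the one-dimensional group of scalar automorphisms, which acts trivially on the deformation but freely on the framings. Comparing $\F[\epsilon]/(\epsilon^2)$-points gives $\dim_\F\mathrm{Def}^{\square_T}_{b,\cL}(\F[\epsilon]/(\epsilon^2))-\dim_\F\mathrm{Def}_{b,\cL}(\F[\epsilon]/(\epsilon^2))=4|T|-1$, using the first paragraph for the ``$-1$'', and the obstruction theories of the two functors coincide; hence there is an isomorphism $R^{\square_T}_{b,\cL}\cong R_{b,\cL}[[y_1,\dots,y_{4|T|-1}]]$, non-canonical because it depends on the choice of a universal framing. (This is for non-empty $T$; for $T=\emptyset$ the two rings simply coincide.)

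The only step with any content is the automorphism computation of the first paragraph; everything else is bookkeeping formally identical to the $\Sigma^o$-ordinary situation of \S\ref{AgoroS-W1}, with the line $\cL$ at $v_0$ playing the role of the subset $\Sigma^o\subseteq\Sigma_p$ there and with admissibility guaranteeing that even the reducible residual objects have only scalar stabilizers. I do not expect a real obstacle, so the written-out proof should be brief.
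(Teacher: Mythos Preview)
Your approach is correct but differs from the paper's. You run the abstract Schlessinger machinery and reduce pro-representability to the automorphism computation, which you handle by a clean case analysis using admissibility of $(b,\cL)$. The paper instead proceeds by an explicit rigidification: when $b\neq 0$ it simply cites the classical result (since $\End_{G_{F,S}}(\bar\rho_b)=\F$ already), and when $b=0$ it uses the fixed complex conjugation $\sigma^*$ to normalize every lift so that $\rho_R(\sigma^*)=\begin{pmatrix}1&0\\0&-1\end{pmatrix}$ and $\cL_R$ is sent to $R\cdot(\mathbf{e}_1+\mathbf{e}_2)$, thereby identifying $\mathrm{Def}^{\square_T}_{0,\cL}(R)$ with a concrete set of tuples $(\rho_R,\{\psi_{v,1}\},\{\alpha_v\})$ modulo scalars in $1+\km_R$. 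Your argument is more portable and makes the role of admissibility transparent; the paper's is more hands-on and, as a bonus, yields an explicit presentation of $R_{0,\cL}$ that is reused verbatim in later computations (e.g.\ Proposition \ref{psdefcL} and the proofs in the subsection on pro-modularity of $R_{0,\cL}$). Both approaches give the $4|T|-1$ count for the same reason, namely that the only global automorphisms compatible with $\cL$ are scalars.
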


\begin{proof}
If $b\neq 0$, then the result is classical. Now we assume $b=0$. Given $R\in \cOf$ and a tuple $(M_R,\iota_R,\rho_R,\cL_R,\{\psi_{v,1}\}_{v|p},\{\beta_v\}_{v\in T})\in \mathrm{Def}^{\square_T}_{b,\cL}(R)$, we claim that up to a scalar in $R^\times$, there is a unique isomorphism $M_R\stackrel{\sim}{\to} R^{\oplus 2}=R\cdot\mathbf{e}_1\oplus R\cdot\mathbf{e}_2$, such that under this isomorphism, the image of $\cL_R$ is generated by $\mathbf{e}_1+\mathbf{e}_2$ and $\rho(\sigma^*)$ has the form $\begin{pmatrix} 1 & 0\\ 0 & -1 \end{pmatrix}$. Recall that $\sigma^*\in G_F$ is a fixed complex conjugation. This is because $G_{F}$ does not fix $\cL$ hence $\cL_R$ cannot be generated by $\mathbf{e}_1$ or $\mathbf{e}_2$. Therefore $\mathrm{Def}^{\square_T}_{b,\cL}(R)$ can be identified with the set of $(\rho_R,\{\psi_{v,1}\}_{v|p},\{\alpha_v\}_{v\in T})$ where 
\begin{itemize}
\item $\rho_R:G_{F,S}\to \GL_2(R)$ is a lifting of $\begin{pmatrix} \mathbf{1} & 0\\ 0 &\bar\chi\end{pmatrix}$ with determinant $\chi$ and satisfies $\tr \rho_R|_{I_{F_v}}=\xi_v+\xi_v^{-1}$ for $v\in S\setminus \Sigma_p$ and $\rho_R(\sigma^*)=\begin{pmatrix} 1 & 0\\ 0 & -1 \end{pmatrix}$;
\item $\psi_{v,1}:G_{F_v}\to R^\times$ is a lifting of the trivial character for $v|p$ such that if $v\neq v_0$, the representation $\rho|_{G_{F_v}}$ is $\psi_{v,1}$-ordinary in the above sense. Moreover $R\cdot\begin{pmatrix}1 \\ 1 \end{pmatrix}$ is $\rho(G_{F_{v_0}})$-stable with $G_{F_{v_0}}$ acting via $\psi_{v_0,1}$;
\item $\alpha_v\in 1+M_2(\km_R),v\in T$;
\end{itemize}
modulo the equivalence relation $(\rho_R,\{\psi_{v,1}\}_{v|p},\{\alpha_v\}_{v\in T}) \sim (\rho'_R,\{\psi'_{v,1}\}_{v|p},\{\alpha'_v\}_{v\in T})$ if $\rho_R=\rho'_R,\psi_{v,1}=\psi'_{v,1}$ and there exists a scalar $a_R\in1+\km_R$ such that $\alpha_v=a_R\alpha'_v$ for any $v\in T$. Using this description, all the statements in the lemma are clear. 
\end{proof}

\begin{para}
We prove several basic properties of $R_{b,\cL}$. First we give a lower bound on the connectedness dimension of $R_{b,\cL}$. See \ref{connectednessdim} for the definition.

Let $(b,\cL)$ be an admissible pair. Fix an ordered basis $\beta_b$ of $\bar\rho_b$. 
Let $R^{\ord,\cL}_{\loc}$ be the following completed tensor product over $\cO$:
\[R^{\Delta,\cL}_{v_0,\bar\rho_b}\widehat\otimes(\widehat{\bigotimes}_{v\in \Sigma_p\setminus \{v_0\}}R^{\Delta}_{v,\bar\rho_b})\widehat{\otimes}(\widehat{\bigotimes}_{v\in S\setminus \Sigma_p}R^{\square,\xi_v}_v),\]
where the ordinary deformation rings $R^{\Delta,\cL}_{v_0,\bar\rho_b}$ and $R^{\Delta}_{v,\bar\rho_b}$ were introduced in \ref{locorddefcl} and  \ref{gerlocorddef}, and for $v\in S\setminus \Sigma_p$, the deformation ring $R^{\square,\xi_v}_v$ associated to $\bar\rho_b|_{G_{F_v}}$ was defined in \ref{ldfr}.

It is well-known (corollary 2.3.5 of \cite{CHT08}) that $R_{b,\cL}^{\square_S}$ can be written as the form
\[R_{b,\cL}^{\square_S}\cong R^{\ord,\cL}_{\loc}[[x_1,\cdots,x_{g_1}]]/(h_1,\cdots,h_{r_1})\]
where $g_1=\dim_\F H^1(G_{F,S},\ad^0\bar{\rho}_b(1))+|S|-1-[F:\Q]-\dim_\F H^0(G_{F,S},\ad^0\bar{\rho}_b(1))$ and $r_1=\dim_\F H^1(G_{F,S},\ad^0\bar{\rho}_b(1))$. From this, we get  
\end{para}

\begin{lem} \label{cRbcL}
The connectedness dimension $c(R_{b,\cL}/(\varpi))$ of $R_{b,\cL}/(\varpi)$ is at least 
\[[F:\Q]-|S|+|\Sigma_p|-1.\]
\end{lem}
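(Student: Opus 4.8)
The plan is to follow the strategy behind Corollary \ref{crord}, now using the framed ordinary deformation ring together with the local descriptions of Lemmas \ref{RDeltaL} and \ref{RDelta}. First I would pass to the framed ring: by Lemma \ref{defRbcLT} there is a (non\-canonical) isomorphism $R_{b,\cL}^{\square_S}\cong R_{b,\cL}[[y_1,\dots,y_{4|S|-1}]]$, and adjoining a formal variable raises the connectedness dimension by exactly $1$, so $c(R_{b,\cL}/(\varpi))=c(R_{b,\cL}^{\square_S}/(\varpi))-(4|S|-1)$ and it suffices to bound $c(R_{b,\cL}^{\square_S}/(\varpi))$ below. Reducing the presentation displayed just before the statement modulo $\varpi$ gives $R_{b,\cL}^{\square_S}/(\varpi)\cong (R^{\ord,\cL}_{\loc}/(\varpi))[[x_1,\dots,x_{g_1}]]/(\bar h_1,\dots,\bar h_{r_1})$; since $(\bar h_1,\dots,\bar h_{r_1})$ has arithmetic rank at most $r_1$ and adjoining the $g_1$ variables raises $c$ by $g_1$, Proposition \ref{cndimformula} yields
\[
c(R_{b,\cL}^{\square_S}/(\varpi))\;\ge\;c\bigl(R^{\ord,\cL}_{\loc}/(\varpi)\bigr)+g_1-r_1-1,
\]
where $g_1-r_1=|S|-1-[F:\Q]-\dim_\F H^0(G_{F,S},\ad^0\bar\rho_b(1))$. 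Thus everything reduces to controlling $c(R^{\ord,\cL}_{\loc}/(\varpi))$ and showing $H^0(G_{F,S},\ad^0\bar\rho_b(1))=0$.

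For the local ring, since every factor of $R^{\ord,\cL}_{\loc}$ is $\cO$-flat, Lemma \ref{scca}(3) identifies $R^{\ord,\cL}_{\loc}/(\varpi)$ with the completed tensor product over $\F$ of $R^{\Delta,\cL}_{v_0,\bar\rho_b}/(\varpi)$, of $R^{\Delta}_{v,\bar\rho_b}/(\varpi)$ for $v\in\Sigma_p\setminus\{v_0\}$, and of $R^{\square,\xi_v}_v/(\varpi)$ for $v\in S\setminus\Sigma_p$. By Lemma \ref{RDeltaL} the first is a power series ring over $\F$, and by the explicit description in the proof of Lemma \ref{RDelta} each $R^{\Delta}_{v,\bar\rho_b}/(\varpi)$ is the completed local ring of the affine cone over a Segre embedding (a generic determinantal ring), hence geometrically irreducible; so the ``$p$-part'' $R^{\cL}_p:=R^{\Delta,\cL}_{v_0,\bar\rho_b}/(\varpi)\,\widehat\otimes_\F\,\widehat{\bigotimes}_{v\in\Sigma_p\setminus\{v_0\}}R^{\Delta}_{v,\bar\rho_b}/(\varpi)$ is geometrically irreducible of dimension $2[F:\Q]+3|\Sigma_p|$, and $c(R^{\cL}_p)=2[F:\Q]+3|\Sigma_p|$. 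Each $R^{\square,\xi_v}_v/(\varpi)$ ($v\in S\setminus\Sigma_p$) is a local complete intersection of dimension $3$ by Lemma \ref{lrca}, hence $S_2$, so $c(R^{\square,\xi_v}_v/(\varpi))\ge 2$ by Grothendieck's connectedness theorem. A direct argument on completed tensor products over $\F$ — of the same flavour as the proof of Lemma \ref{ccal}, using that $-\widehat\otimes_\F-$ is exact enough on finitely generated modules — then shows that completing $R^{\cL}_p$ with the $|S\setminus\Sigma_p|$ factors $R^{\square,\xi_v}_v/(\varpi)$ raises the dimension by $3|S\setminus\Sigma_p|$ and the connectedness defect by at most $\min(1,|S\setminus\Sigma_p|)$ (one splits $\Spec R^{\square,\xi_v}_v/(\varpi)$, for a single $v$, into its two components, tensors through, and estimates the intersection). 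Hence
\[
c\bigl(R^{\ord,\cL}_{\loc}/(\varpi)\bigr)\;\ge\;2[F:\Q]+3|S|-\min(1,|S\setminus\Sigma_p|).
\]

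To see $H^0(G_{F,S},\ad^0\bar\rho_b(1))=0$, I would note that since $p$ is unramified in $F$ the character $\omega|_{G_{F_v}}$ has order $p-1>1$ for each $v\mid p$, so the standing hypothesis $\bar\chi|_{G_{F_v}}=\mathbf 1$ forces $\bar\chi\neq\omega^{\pm1}$; as the Jordan--H\"older constituents of $\ad^0\bar\rho_b(1)$ are $\bar\chi^{-1}\omega,\ \omega,\ \bar\chi\omega$ and $p$ is odd, none of them is trivial, whence $H^0=0$ and $g_1-r_1=|S|-1-[F:\Q]$. Combining the three displayed inequalities,
\[
c(R_{b,\cL}/(\varpi))\;\ge\;\bigl(2[F:\Q]+3|S|-\min(1,|S\setminus\Sigma_p|)\bigr)+\bigl(|S|-1-[F:\Q]\bigr)-1-(4|S|-1)=[F:\Q]-1-\min(1,|S\setminus\Sigma_p|),
\]
which is $\ge[F:\Q]-|S|+|\Sigma_p|-1$ because $\min(1,|S\setminus\Sigma_p|)\le|S\setminus\Sigma_p|=|S|-|\Sigma_p|$.

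The main obstacle is the second paragraph: proving that the mod-$\varpi$ local ordinary deformation rings at the primes above $p$ are geometrically irreducible (not merely normal), which requires extracting the explicit presentations behind Lemmas \ref{RDeltaL} and \ref{RDelta} (a power series ring, and a generic determinantal/Segre-cone ring, both of which stay geometrically irreducible after reduction mod $\varpi$), and making the completed-tensor-product connectedness estimate precise — i.e. the needed exactness of $-\widehat\otimes_\F-$ on finitely generated modules and the behaviour of the connectedness dimension under $\widehat\otimes_\F$ — along the lines already used in Lemma \ref{ccal}. The remaining steps (the reduction via Lemmas \ref{defRbcLT} and \ref{cndimformula}, and the vanishing of $H^0$) are routine.
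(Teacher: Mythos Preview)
Your overall strategy---pass to the framed ring, use the presentation over the local ring, and apply Proposition~\ref{cndimformula}---is exactly the paper's framework. The difference is the choice of base: you work over the full $R^{\ord,\cL}_{\loc}$ and therefore must compute $c(R^{\ord,\cL}_{\loc}/(\varpi))$, whereas the paper works over the $p$-part $R^{\cL}_p$ alone and absorbs each factor $R^{\square,\xi_v}_v$ ($v\in S\setminus\Sigma_p$) into the presentation as extra formal variables and relations, exactly as in the second half of Lemma~\ref{grrcord}. Since each such factor can be written as $\cO[[x_1,\dots,x_{g_v}]]/(f_1,\dots,f_{r_v+1})$ with $g_v-r_v=3$ (see the proof of Lemma~\ref{grrcord}), one obtains a presentation
\[
R_{b,\cL}^{\square_S}\;\cong\; R^{\cL}_p[[x_1,\dots,x_{G}]]/(f_1,\dots,f_{R}),
\]
and Proposition~\ref{cndimformula} gives the bound using only the irreducibility of $R^{\cL}_p/(\varpi)$, which you already established. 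This is the ``same argument as in the proof of~\ref{crord}'' the paper refers to; the only new input is $H^0(G_{F,S},\ad^0\bar\rho_b(1))=0$, which you also prove.

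Your route, by contrast, has a genuine gap at the step you yourself flag. First, the assertion that $R^{\square,\xi_v}_v/(\varpi)$ is a local complete intersection is not a consequence of Lemma~\ref{lrca}: that lemma (via \cite{Ta08}) gives equidimensionality and $\cO$-flatness, not an LCI presentation, and in the case $\xi_v=\mathbf 1$ the ring genuinely need not be LCI. Cohen--Macaulayness is what one can hope for here (cf.\ \cite{Sho16}), and that is already a nontrivial external input. Second, your ``direct argument on completed tensor products'' bounding the connectedness defect by $\min(1,|S\setminus\Sigma_p|)$ is not a standard fact: connectedness dimension does not behave additively under $\widehat\otimes_\F$, and the sketch (``split into two components, tensor through, estimate the intersection'') does not control what happens when several $v$'s contribute multiple components simultaneously. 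The cleanest salvage would be to invoke the Hartshorne--Grothendieck inequality $c(R)\ge\depth R-1$ applied to $R^{\ord,\cL}_{\loc}/(\varpi)$, but this requires knowing that all factors (including the $\ell\ne p$ ones) are Cohen--Macaulay---again, more than the paper needs. The paper's absorption trick avoids all of this: no CM or LCI property of the $\ell\ne p$ factors is ever used, only the count of variables and relations.
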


\begin{proof}
Same argument as in the proof of \ref{crord}. The only difference is that since $p$ is unramified in $F$ and $\bar\chi|_{G_{F_v}}=\mathbf{1}$ for any $v|p$, we have $H^0(G_{F,S},\ad^0\bar{\rho}_b(1))=0$.
\end{proof}

Similarly, we have
\begin{lem} \label{dimRbcL}
Each irreducible component of $R_{b,\cL}$ has dimension at least $[F:\Q]+1$.
\end{lem}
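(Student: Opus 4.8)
The plan is to run the same dimension count used for the first statement of Lemma~\ref{grrcord}, now feeding in the local ordinary deformation rings introduced in this section. First I would assemble the dimensions and ring-theoretic properties of the factors of $R^{\ord,\cL}_{\loc}$: by Lemma~\ref{RDeltaL} the ring $R^{\Delta,\cL}_{v_0,\bar{\rho}_b}$ is formally smooth over $\cO$ of relative dimension $2[F_{v_0}:\Q_p]+3$, hence a domain and $\cO$-flat; by Lemma~\ref{RDelta} each $R^{\Delta}_{v,\bar{\rho}_b}$ with $v\in\Sigma_p\setminus\{v_0\}$ is normal (hence a domain) of dimension $1+2[F_v:\Q_p]+3$ and $\cO$-flat; and for $v\in S\setminus\Sigma_p$, Lemma~\ref{lrca} gives that $R^{\square,\xi_v}_v$ is $\cO$-flat, equidimensional of dimension $1+3$, with each minimal prime of the special fibre dominating a unique minimal prime. (Here $F_v$ never contains a primitive $p$-th root of unity, since $p$ is unramified in $F$, so both Lemmas~\ref{RDeltaL} and~\ref{RDelta} apply.) Combining these through the completed tensor product over $\cO$ exactly as in the proof of Lemma~\ref{grrcord} (using Lemma~\ref{ccal}, or lemma~3.3 of \cite{BLGHT11}), I would conclude that $R^{\ord,\cL}_{\loc}$ is equidimensional and $\cO$-flat of dimension $1+\sum_{v|p}(2[F_v:\Q_p]+3)+3(|S|-|\Sigma_p|)=2[F:\Q]+3|S|+1$, where the last equality uses $\sum_{v|p}[F_v:\Q_p]=[F:\Q]$.

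Next I would invoke the presentation $R_{b,\cL}^{\square_S}\cong R^{\ord,\cL}_{\loc}[[x_1,\cdots,x_{g_1}]]/(h_1,\cdots,h_{r_1})$ recorded just before the lemma, for which $g_1-r_1=|S|-1-[F:\Q]-\dim_\F H^0(G_{F,S},\ad^0\bar{\rho}_b(1))$. The key point, exactly as in the proof of Lemma~\ref{cRbcL}, is that $H^0(G_{F,S},\ad^0\bar{\rho}_b(1))=0$: since $\bar{\rho}_b$ is an extension of $\bar{\chi}$ by $\mathbf 1$, the module $\ad^0\bar{\rho}_b(1)$ carries a $G_{F,S}$-stable filtration with graded pieces $\omega$, $\bar{\chi}\omega$, $\bar{\chi}^{-1}\omega$; because $p$ is unramified in $F$ we have $\omega|_{G_{F_v}}\neq\mathbf 1$ for $v|p$, and because $\bar{\chi}|_{G_{F_v}}=\mathbf 1$ for $v|p$ none of these three characters can be trivial on $G_{F,S}$, so each has vanishing $H^0$, and therefore so does $\ad^0\bar{\rho}_b(1)$ by left-exactness of invariants. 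Hence $g_1-r_1=|S|-1-[F:\Q]$.

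The conclusion is then a dimension count. Since $R^{\ord,\cL}_{\loc}$ is equidimensional and $\cO$-flat, and (being a complete local Noetherian $\cO$-algebra) excellent hence universally catenary, so is $B:=R^{\ord,\cL}_{\loc}[[x_1,\cdots,x_{g_1}]]$, of dimension $2[F:\Q]+3|S|+1+g_1$; passing to $B/\kp_0$ for a minimal prime $\kp_0$ of $B$, Krull's height theorem shows that every minimal prime over $(h_1,\cdots,h_{r_1})$ has height at most $r_1$ there, so every irreducible component of $R_{b,\cL}^{\square_S}$ has dimension at least $2[F:\Q]+3|S|+1+g_1-r_1=[F:\Q]+4|S|$. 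Finally Lemma~\ref{defRbcLT} gives $R_{b,\cL}^{\square_S}\cong R_{b,\cL}[[y_1,\cdots,y_{4|S|-1}]]$, whence every irreducible component of $R_{b,\cL}$ has dimension at least $[F:\Q]+4|S|-(4|S|-1)=[F:\Q]+1$, as claimed.

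The only genuinely delicate point is the equidimensionality of $R^{\ord,\cL}_{\loc}$: one has to push the completed-tensor-product estimates (Lemma~\ref{ccal}, or lemma~3.3 of \cite{BLGHT11}) through the precise structure of the factors $R^{\square,\xi_v}_v$ for $v\in S\setminus\Sigma_p$, distinguishing the trivial and non-trivial cases of $\xi_v$, in exactly the same way as in the proof of Lemma~\ref{grrcord}. Everything else is the routine bookkeeping indicated above, and the lemma could reasonably be proved with the one-line remark that the argument is identical to that of Lemma~\ref{grrcord}, using $H^0(G_{F,S},\ad^0\bar{\rho}_b(1))=0$.
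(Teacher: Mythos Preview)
Your proposal is correct and matches the paper's approach: the paper does not write out a proof of this lemma but records it with the phrase ``Similarly, we have'' immediately after Lemma~\ref{cRbcL}, whose proof already notes the key vanishing $H^0(G_{F,S},\ad^0\bar{\rho}_b(1))=0$ and refers back to the argument of Lemma~\ref{grrcord} (resp.\ Corollary~\ref{crord}). Your write-up carries this out in full, and your final remark that the lemma could be proved in one line is exactly how the paper treats it.
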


\begin{para}
Given an admissible pair $(b,\cL)$, it follows from the universal property that there is a natural map from $R^{\ps,\ord,\{\xi_v\}}_{1}$ (defined in \ref{Rpsordxiv1}) to $R_{b,\cL}$. The following result can be viewed as a weak version of corollary \ref{psccomp} when $b=0$. 
\end{para}

\begin{prop} \label{psdefcL}
Let $\kQ\in\Spec R_{0,\cL}$ such that $\rho(\kQ)$, the pushforward of the universal deformation to $k(\kQ)$, is irreducible. Let $\kQ^{\ps}=\kQ\cap R^{\ps,\ord,\{\xi_v\}}_{1}$. Then 
\[\dim R^{\ps,\ord,\{\xi_v\}}_{1}/\kQ^{\ps}\geq \dim R_{0,\cL}/\kQ-1\]
\end{prop}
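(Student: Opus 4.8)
Set $\mathcal R:=R^{\ps,\ord,\{\xi_v\}}_{1}$. The plan is to adapt the proof of the second part of corollary \ref{psccomp}, the loss of one dimension reflecting that in the split residual situation $(\bar\rho_0,\cL)=(\mathbf 1\oplus\bar\chi,\cL)$ recovering the pair $(\rho,\cL)$ from the pseudo-representation at an irreducible point costs at most one extra parameter. First I would produce the comparison map: over $R_{0,\cL}$ the universal tuple $(\rho^{univ},\cL^{univ},\{\psi^{univ}_{v,1}\})$ gives $(\tr\rho^{univ},\{\psi^{univ}_{v,1}\})$, which is of the type classified by $\mathcal R$ --- the inertial conditions at $v\in S\setminus\Sigma_p$ are part of $\mathrm{Def}_{0,\cL}$, and $\tr\rho^{univ}$ is $\psi^{univ}_{v,1}$-ordinary at every $v\mid p$: at $v\neq v_0$ because $(\rho^{univ}(\sigma)-\psi^{univ}_{v,1}(\sigma))(\rho^{univ}(\tau)-\psi^{univ}_{v,2}(\tau))=0$ there, and at $v_0$ by lemma \ref{ordpseudo}, since $\cL^{univ}$ is $G_{F_{v_0}}$-stable with $G_{F_{v_0}}$ acting through $\psi^{univ}_{v_0,1}$. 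This yields a local homomorphism $\mathcal R\to R_{0,\cL}$ with $\kQ\cap\mathcal R=\kQ^{\ps}$. By the proof of lemma \ref{defRbcLT} (the case $b=0$) we may normalize $\rho^{univ}$ so that $\rho^{univ}(\sigma^*)=\left(\begin{smallmatrix}1&0\\0&-1\end{smallmatrix}\right)$ and $\cL^{univ}=R_{0,\cL}\cdot\left(\begin{smallmatrix}1\\1\end{smallmatrix}\right)$; writing $\rho^{univ}(g)=\left(\begin{smallmatrix}a(g)&b(g)\\ c(g)&d(g)\end{smallmatrix}\right)$, remark \ref{adrep} shows that $a(g),d(g)$ and the products $x(\sigma,\tau)=b(\sigma)c(\tau)$ lie in the image of $\mathcal R$, while the $v_0$-condition gives $b(g)=\psi^{univ}_{v_0,1}(g)-a(g)$ and $c(g)=\psi^{univ}_{v_0,1}(g)-d(g)$ in that image for $g\in G_{F_{v_0}}$. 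In particular $R_{0,\cL}$ is topologically generated over the image of $\mathcal R$ by finitely many of the elements $b(g),c(g)$.

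Next I would reduce to a one-dimensional situation. We may assume $\dim R_{0,\cL}/\kQ\geq 2$, the statement being trivial otherwise. Since $\rho(\kQ)$ is irreducible, not all $x(\sigma,\tau)$ lie in $\kQ$ (cf.~\ref{tar}); fix $\sigma_0,\tau_0$ with $f:=x(\sigma_0,\tau_0)\notin\kQ$ and let $\kp_b$ be a maximal ideal of the localization $(R_{0,\cL}/\kQ)_f$. Then $\rho^{univ}\bmod\kp_b$ is irreducible (as $x(\sigma_0,\tau_0)\notin\kp_b$), $\kp_b$ has coheight one in the complete local domain $R_{0,\cL}/\kQ$ (any prime not containing $f$ and maximal with this property does), whence $\dim(R_{0,\cL}/\kQ)_{\kp_b}=\dim R_{0,\cL}/\kQ-1$. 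Put $\kp:=\kp_b\cap\mathcal R$. One checks $\dim\mathcal R/\kp=1$: the representation $\rho^{univ}\bmod\kp_b$ is irreducible, hence not conjugate to the reducible residual representation $\bar\rho_0=\mathbf 1\oplus\bar\chi$, so $\tr\rho^{univ}$ is non-constant modulo $\kp_b$; therefore $\mathcal R/\kp$ is an infinite complete local domain whose fraction field embeds into that of the one-dimensional complete local domain $R_{0,\cL}/\kp_b$, and so (by Cohen's structure theorem) is one-dimensional.

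The key step is a bound on the special fibre of the local homomorphism $(\mathcal R/\kQ^{\ps})_{\kp}\to(R_{0,\cL}/\kQ)_{\kp_b}$. This fibre is a localization of $R_{0,\cL}/\kQ\otimes_{\mathcal R/\kQ^{\ps}}k(\kp)$, in which $x(\sigma_0,\tau_0)$ is a nonzero element of $k(\kp)$; hence $b(\sigma_0)$ is a unit there, and the identities $b(g)c(\tau_0)=x(g,\tau_0)$, $b(\sigma_0)c(g)=x(\sigma_0,g)$ express each $b(g),c(g)$ as a $k(\kp)$-rational function of $b(\sigma_0)^{\pm1}$, so the fibre ring is generated over $k(\kp)$ by $b(\sigma_0)^{\pm1}$ and has Krull dimension $\leq 1$. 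By the inequality $\dim B\leq\dim A+\dim(B/\km_AB)$ valid for any local homomorphism $A\to B$ of Noetherian local rings, $\dim(R_{0,\cL}/\kQ)_{\kp_b}\leq\dim(\mathcal R/\kQ^{\ps})_{\kp}+1$. Combining this with $\dim(R_{0,\cL}/\kQ)_{\kp_b}=\dim R_{0,\cL}/\kQ-1$ and $\dim\mathcal R/\kp=1$,
\begin{align*}
\dim R_{0,\cL}/\kQ-1 &=\dim(R_{0,\cL}/\kQ)_{\kp_b}\ \leq\ \dim(\mathcal R/\kQ^{\ps})_{\kp}+1\\
&=\hht_{\mathcal R/\kQ^{\ps}}(\kp)+1\ =\ \dim\mathcal R/\kQ^{\ps}-\dim\mathcal R/\kp+1\ =\ \dim\mathcal R/\kQ^{\ps},
\end{align*}
the middle equalities using that $\mathcal R/\kQ^{\ps}$ is an equidimensional catenary complete local domain. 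This is the asserted inequality.

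The main obstacle is the two inputs just sketched: that $\dim\mathcal R/\kp=1$ --- which rests on irreducibility of $\rho^{univ}\bmod\kp_b$ forcing its trace to be non-constant, since an irreducible representation over a field is determined by its trace while $\bar\rho_0$ is reducible --- and the bound $\dim(\text{special fibre})\leq1$, which exploits the relation $b(\sigma)c(\tau)=x(\sigma,\tau)$ together with the invertibility of $x(\sigma_0,\tau_0)$ after localizing at $\kp_b$. The remaining bookkeeping with heights is routine given that complete local Noetherian domains are catenary and formally equidimensional.
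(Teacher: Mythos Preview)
Your strategy coincides with the paper's: reduce to a one-dimensional prime $\kp_b$ of $B:=R_{0,\cL}/\kQ$ where $\rho^{univ}$ stays irreducible, set $\kp=\kp_b\cap A$ with $A:=\mathcal R/\kQ^{\ps}$, and prove $\dim B_{\kp_b}\leq\dim A_\kp+1$. Your verification that $\dim A/\kp=1$ is correct, and the height bookkeeping at the end is fine. The gap is in your fibre bound.

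You claim the fibre $B_{\kp_b}/\kp B_{\kp_b}$ is generated over $k(\kp)$ by $t^{\pm1}$ with $t=b(\sigma_0)$, on the grounds that every $b(g),c(g)$ lands in $k(\kp)\cdot t^{\pm1}$. But $R_{0,\cL}$ is only \emph{topologically} generated over $\mathcal R$ by finitely many $b(g_i),c(g_i)$ in the $\km$-adic topology, so $B$ is a quotient of a formal power series ring $A[[u_1,\dots,u_k]]$, not of a polynomial ring. After localizing at $\kp_b$ the $u_i$ become units and an arbitrary power series $\sum a_I u^I$ need not lie in $k(\kp)[t^{\pm1}]$. Concretely: take $A=\F[[uv,s']]\subset B=\F[[s',u,v]]$ (so $B$ is topologically generated over $A$ by $u,v$, with $uv\in A$ playing the role of $x(\sigma_0,\tau_0)$), and $\kp_b=(s',u-v)$, $\kp=(s')$. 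The fibre is $(\F[[u,v]])_{(u-v)}$, a one-dimensional DVR, whereas $k(\kp)[u^{\pm1}]=\F((uv))[u^{\pm1}]$ is a Laurent polynomial ring not containing, e.g., $1/(1-u)$. So the generation claim is false, even though the desired bound $\dim(\text{fibre})\leq 1$ happens to hold here.

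The paper gets $\dim B_\kq\leq\dim A_\kp+1$ differently. It introduces the \emph{polynomial} ring $R_1=A[x]$ with $x\mapsto\tilde b(\sigma_1)$ and proves: (1) $B/\kq^n$ is finite over $R_1/\kq_x^n$ for all $n$, by showing each $\tilde b(g),\tilde c(g)$ is integral there (reducing by nilpotence to $n=1$, where both quotients are one-dimensional); (2) after localizing, all $\tilde b(g),\tilde c(g)$ lie in the image of $(R_1)_{\kq_x}$ (exactly your Laurent-polynomial identities). Finiteness converts topological generation into honest module-finiteness, and together the two claims force $(R_1/\kq_x^n)_{\kq_x}\to(B/\kq^n)_\kq$ to be surjective for every $n$, hence $\widehat{(R_1)_{\kq_x}}\twoheadrightarrow\widehat{B_\kq}$ and $\dim B_\kq\leq\dim(R_1)_{\kq_x}=\dim A_\kp+1$. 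Your fibre-dimension approach would go through if you inserted this finiteness argument; as written, the generation claim is the missing ingredient.
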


\begin{proof}
Recall that $R_{0,\cL}$ parametrizes tuples $(\rho_R,\{\psi_{v,1}\}_{v|p})$ which satisfies certain properties as in the proof of lemma \ref{defRbcLT}. Let $(\tilde\rho,\{\tilde\psi_{v,1}\}_{v|p})$ be the universal tuple. In particular, $\tilde\rho(\sigma^*)=\begin{pmatrix} 1 & 0\\ 0 & -1 \end{pmatrix}$ and $\tilde\rho(\sigma)\begin{pmatrix}1 \\ 1 \end{pmatrix}=\tilde\psi_{v_0,1}(\sigma)\begin{pmatrix}1 \\ 1 \end{pmatrix}$ for $\sigma\in G_{F_{v_0}}$. Write $\tilde\rho=\begin{pmatrix} \tilde a & \tilde b \\ \tilde c & \tilde d\end{pmatrix}$. Then $\tilde a(\sigma), \tilde d(\sigma)$ lie inside the image of $R^{\ps,\ord,\{\xi_v\}}_{1}$ for any $\sigma \in G_F$ (cf. \ref{adrep}). The values of $\tilde\psi_{v,1}$ are also in the image. Therefore $R_{0,\cL}$ is topologically generated by $\tilde b(\sigma), \tilde c(\sigma) ,\sigma\in G_{F,S}$ over $R^{\ps,\ord,\{\xi_v\}}_{1}$. Note that in fact finitely many elements $\sigma_1,\cdots,\sigma_r\in G_{F,S}$ would be enough.

Since $\rho(\kQ)$ is irreducible, we may find a prime $\kq\in\Spec R_{0,\cL}$ containing $\kQ$ such that $R_{0,\cL}/\kq$ is one-dimensional and $\rho(\kq):= \tilde\rho \otimes k(\kq)$ is irreducible. After possibly reordering $\sigma_1,\cdots,\sigma_r$, we may assume $\tilde{b}(\sigma_1)\notin \kq$. Consider the following homomorphism of $R^{\ps,\ord,\{\xi_v\}}_{1}$-algebras:
\[(R^{\ps,\ord,\{\xi_v\}}_{1}/\kQ^{\ps})[[x]]\to R_{0,\cL}/\kQ \]
sending $x$ to $\tilde b(\sigma_1)$. Here $(R^{\ps,\ord,\{\xi_v\}}_{1}/\kQ^{\ps})[[x]]$ denotes the formal power series ring over $R^{\ps,\ord,\{\xi_v\}}_{1}/\kQ^{\ps}$ with variable $x$. Note that $\tilde{b}(\sigma_1)$ is contained in the maximal ideal of $R_{0,\cL}$ as $\tilde{\rho}$ is a lifting of $\begin{pmatrix} \mathbf{1} & 0\\ 0 &\bar\chi\end{pmatrix}$, cf. proof of lemma \ref{defRbcLT}.

For simplicity, we denote $(R^{\ps,\ord,\{\xi_v\}}_{1}/\kQ^{\ps})[[x]]$ by $R_1$ and $R_{0,\cL}/\kQ$ by $R_2$. By abuse of notation, $\kq$ can be viewed as a prime ideal of $R_2$. Let $\kq_x=\kq\cap R_1$. Then $R_1/\kq_x$ is also one-dimensional. We claim that for any $\sigma\in G_{F,S}$ and any integer $n>0$,
\begin{enumerate}
\item images of $\tilde{b}(\sigma),\tilde c(\sigma)$ in $R_2/\kq^n$ are integral over $R_1/\kq_x^n$;
\item the image of the map $(R_1)_{\kq_x}\to (R_2)_{\kq_x}$ contains the images of $\tilde b (\sigma),\tilde c(\sigma)$ in $(R_2)_{\kq_x}$. Note that here all the localizations are at prime $\kq_x$ as $R_1$-modules.
\end{enumerate}

The first claim implies that $R_2/\kq^n$ is a finite $R_1/\kq_x^n$-module generated by $\tilde b (\sigma_i),\tilde c(\sigma_i),i=1,\cdots,r$. Localizing at $\kq_x$, it follows from the second claim that the natural map $(R_1/\kq_x^n)_{\kq_x}\to (R_2/\kq^n)_{\kq_x}$ is surjective. Thus $(R_2/\kq^n)_{\kq_x}$ is local Artinian and $(R_2/\kq^n)_{\kq_x}=(R_2/\kq^n)_{\kq}$. Hence by passing to the limit, we have a natural surjection $\widehat{(R_1)_{\kq_x}}\to \widehat{(R_2)_{\kq}}$. Thus 
\[\dim R_1= \dim \widehat{(R_1)_{\kq_x}}+1\geq \dim \widehat{(R_2)_{\kq}}+1=\dim R_2.\]
From this, we easily deduce the proposition.

To see the first claim, it suffices to prove this when $n=1$. But this is clear as both $R_1/\kq_x$ and $R_2/\kq$ are one-dimensional complete local Noetherian domains with same residue fields.

To see the second claim, note that for any $\sigma\in G_{F,S}$, element $\tilde b (\sigma_1)\tilde c (\sigma)\in R_{0,\cL}$ is in the image of $R^{\ps,\ord,\{\xi_v\}}_{1}$ (cf. \ref{adrep}). Since $x\in R_1$ maps to $\tilde b (\sigma_1)\notin \kq$, it is clear that all the $\tilde c(\sigma)$ are in the image of the map $(R_1)_{\kq_x}\to (R_2)_{\kq_x}$. Since we assume $\rho(\kq)$ is irreducible, there exists some $\tilde c(\sigma_0)\notin \kq$. Hence by the same argument, all $\tilde b(\sigma)$ are also in the image. This finishes the proof of the proposition.
\end{proof}

\begin{para}[Reducible locus of $R_{b,\cL}$] \label{redlocbcl}
As in \ref{redlocrc}, we give an upper bound on the dimension of the reducible locus of $R_{b,\cL}$. More precisely, let $\tilde\rho: G_{F,S}\to \GL_2(R_{b,\cL})$ be a universal deformation. The subset $\kp\in\Spec R_{b,\cL}$ such that $\tilde\rho\otimes k(\kp)$ is reducible is closed in $\Spec R_{b,\cL}$ and we denote its reduced subscheme by $\Spec R_{b,\cL}^{\mathrm{red}}$.

As before, we denote the $\Z_p$-rank of the maximal pro-$p$ abelian quotient of $G_{F,S}$ by $\delta_F+1$ and we choose a set of elements $\tau_0,\cdots,\tau_{\delta_F}\in G_{F(\bar\chi)}$ whose images in $G_{F,S}^{\mathrm{ab}}(p)\otimes_{\Z_p} \Q_p$ form a basis. Recall that $H^1_{v_0}(F)$ is the kernel of $H^1(G_{F,S},\F(\bar{\chi}^{-1}))\to H^1(G_{F_{v_0}},\F(\bar{\chi}^{-1}))$. Choose $\sigma_1,\cdots,\sigma_{r_s}\in G_{F(\bar{\chi})}$ such that they form a basis of $H^1_{v_0}(F)^\vee$ under the canonical pairing $H^1_{v_0}(F)\times G_{F(\bar{\chi})}\to\F$. Here $r_s=\dim_{\F} H^1_{v_0}(F)$. See \ref{redlocrc} for more details here.

Write $\tilde\rho=\begin{pmatrix} \tilde a & \tilde b \\ \tilde c & \tilde d\end{pmatrix}$. We may always assume  $\tilde\rho(\sigma^*)=\begin{pmatrix} 1 & 0\\ 0 & -1 \end{pmatrix}$. This also implies that $\bar\chi$ is non-trivial as $\bar\chi(\sigma^*)=-1$.
\end{para}

\begin{prop} \label{reddimRbcL}
\hspace{2em}
\begin{enumerate}
\item If $b\neq 0$, then $\dim R^{\mathrm{red}}_{b,\cL}/(\varpi)\leq \delta_F+\dim_\F H^1_{v_0}(F)+2$.
\item If $b=0$, assume $\bar\chi$ is quadratic. Let $I$ be the ideal of $R^{\mathrm{red}}_{0,\cL}$ generated by $\varpi$ and the image of elements 
\begin{itemize}
\item $\tilde a(\tau_i)+\tilde d(\tau_i)-2$, $i=0,\cdots,\delta_F$,
\item $\tilde{b}(\sigma_i)+\tilde{c}(\sigma_i)$, $i=1,\cdots, r_s$.
\end{itemize}
Then $R^{\mathrm{red}}_{0,\cL}/I$ is Artinian. In particular, $\dim R^{\mathrm{red}}_{0,\cL}/(\varpi)\leq \delta_F+\dim_\F H^1_{v_0}(F)+1$.
\end{enumerate}
\end{prop}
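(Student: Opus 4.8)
The plan is to follow the argument of Proposition~\ref{redloc}, with the ordinary condition used there replaced everywhere by the local-at-$v_0$ condition built into $R_{b,\cL}$. I would treat part~(2) first, since it is the sharp statement and contains the main idea, and then deduce the cruder bound in part~(1) by the same kind of count. \emph{For part~(2):} let $\kp$ be a prime of $R^{\mathrm{red}}_{0,\cL}$ containing $I$. Since $\varpi\in\kp$, the residue field $k(\kp)$ has characteristic $p$, so $\det\rho(\kp)=\bar\chi$ and $\rho(\kp)^{\mathrm{ss}}=\chi_1\oplus\chi_2$ for continuous characters $\chi_i\colon G_{F,S}\to k(\kp)^\times$. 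The relations $\tilde a(\tau_i)+\tilde d(\tau_i)-2\in\kp$ give $\chi_1(\tau_i)+\chi_2(\tau_i)=2$, while $\chi_1(\tau_i)\chi_2(\tau_i)=\bar\chi(\tau_i)=1$ because $\bar\chi$ has order prime to $p$ and $\tau_i$ lies in the pro-$p$ abelianization; hence $\chi_1(\tau_i)=\chi_2(\tau_i)=1$ for all $i$. As the $\tau_i$ span $G_{F,S}^{\mathrm{ab}}(p)\otimes\Q_p$ and $k(\kp)^\times$ has no $p$-torsion, $\chi_1$ is trivial on $G_{F,S}^{\mathrm{ab}}(p)$, hence of prime-to-$p$ order, hence trivial, so $\{\chi_1,\chi_2\}=\{\mathbf{1},\bar\chi\}$ and $\rho(\kp)|_{G_{F(\bar\chi)}}$ is unipotent.

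Using the normalization $\tilde\rho(\sigma^*)=\diag(1,-1)$ and $\cL_R=R\cdot(\mathbf{e}_1+\mathbf{e}_2)$ from the proof of Lemma~\ref{defRbcLT}, and that $F(\bar\chi)/F$ is quadratic so $\sigma^*$ normalizes $G_{F(\bar\chi)}$: the triangularizing line of $\rho(\kp)$ is $\sigma^*$-stable, hence is $\langle\mathbf{e}_1\rangle$ or $\langle\mathbf{e}_2\rangle$, and in either case (including the split case, where $\rho(\kp)$ factors through $\Gal(F(\bar\chi)/F)$) one may write $\rho(\kp)=\begin{pmatrix}1&*\\0&\bar\chi\end{pmatrix}$ or $\begin{pmatrix}\bar\chi&0\\ *&1\end{pmatrix}$ in the fixed basis, with $*$ a class in $H^1(G_{F,S},\F(\bar\chi^{-1}))\otimes k(\kp)$ (here using $\bar\chi=\bar\chi^{-1}$). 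The crucial step is to feed in the condition at $v_0$: since $\bar\chi|_{G_{F_{v_0}}}=\mathbf{1}$, the place $v_0$ splits in $F(\bar\chi)$, so $G_{F_{v_0}}\subseteq G_{F(\bar\chi)}$ and $\rho(\kp)|_{G_{F_{v_0}}}$ is unipotent; but $\rho(\kp)|_{G_{F_{v_0}}}$ also fixes the line $\cL_\kp=k(\kp)\cdot(\mathbf{e}_1+\mathbf{e}_2)$, which is distinct from the triangularizing coordinate line. A unipotent two-dimensional representation fixing two distinct lines is trivial, so $\rho(\kp)|_{G_{F_{v_0}}}=\mathbf{1}$, i.e. $*\in H^1_{v_0}(F)\otimes k(\kp)$. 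Finally the relations $\tilde b(\sigma_i)+\tilde c(\sigma_i)\in\kp$ say that $*$ pairs to $0$ with each $\sigma_i$; since $\sigma_1,\dots,\sigma_{r_s}$ span $H^1_{v_0}(F)^\vee$, we get $*=0$, so $\rho(\kp)\cong\bar\rho_0\otimes k(\kp)$, and as $\cL$ is the unique non-stable line up to the centralizer of $\mathbf{1}\oplus\bar\chi$, the classifying map $R_{0,\cL}\to k(\kp)$ factors through $\F$, i.e. $\kp$ is the maximal ideal. Thus $R^{\mathrm{red}}_{0,\cL}/I$ has a single prime and is Artinian; since $I$ is generated by $\varpi$ together with $\delta_F+1+r_s$ further elements, Krull's height theorem gives $\dim R^{\mathrm{red}}_{0,\cL}/(\varpi)\le\delta_F+\dim_\F H^1_{v_0}(F)+1$.

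\emph{For part~(1):} here $\bar\rho_b$ is a non-split extension, so it has a unique $G_F$-stable line and every reducible lift is globally triangular $\begin{pmatrix}\chi_1&*\\0&\chi_2\end{pmatrix}$ with $\chi_1$ a lift of $\mathbf{1}$ and $\chi_2=\chi\chi_1^{-1}$. I would bound the dimension of the reducible locus by parametrizing it: the characters $\chi_1$ are classified by $\cO[[G_{F,S}^{\mathrm{ab}}(p)]]$, of dimension $\delta_F+2$, hence at most $\delta_F+1$ modulo $\varpi$; the triangularizing line cannot reduce to $\cL$ (which is non-stable), so the line $\cL_R$ of $R_{b,\cL}$ supplies a second $G_{F_{v_0}}$-stable line, forcing $\rho|_{G_{F_{v_0}}}$ to be split and hence $*$ to lie in a Selmer group $H^1_{v_0}(G_{F,S},\chi_1\chi_2^{-1})$ whose mod-$\varpi$ reduction is of type $H^1_{v_0}(F)$; an upper-semicontinuity argument over the base $\cO[[G_{F,S}^{\mathrm{ab}}(p)]]$ bounds the fibre dimension of these extension classes by $\dim_\F H^1_{v_0}(F)+1$ uniformly, the extra $1$ absorbing the $\varpi$-torsion in $H^1$. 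Adding, $\dim R^{\mathrm{red}}_{b,\cL}/(\varpi)\le(\delta_F+1)+(\dim_\F H^1_{v_0}(F)+1)$, as claimed; the sharper $+1$ could presumably be recovered by also imposing $\tau_i$- and $\sigma_i$-type relations, but is not needed.

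The step I expect to be the main obstacle is the use of the $v_0$-condition to force $\rho(\kp)|_{G_{F_{v_0}}}$ to be trivial in part~(2), respectively split in part~(1): this needs the splitting of $v_0$ in $F(\bar\chi)$, careful bookkeeping of the normalized basis so that the triangularizing line and the line $\cL_\kp$ are provably distinct (where admissibility, the non-stability of $\cL$, and $\bar\chi$ being quadratic all enter, the last also ensuring the two possible triangular shapes carry cohomology with the same coefficient module $\F(\bar\chi^{-1})$), together with the triviality statement for unipotent representations fixing two lines. In part~(1) the remaining technical point is making the uniform bound on $\dim H^1_{v_0}(G_{F,S},\chi_1\chi_2^{-1})$ precise, i.e. controlling how Selmer dimensions vary over $\cO[[G_{F,S}^{\mathrm{ab}}(p)]]$ and comparing the characteristic-$p$ and characteristic-$0$ fibres.
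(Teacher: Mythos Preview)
Your treatment of part~(2) is essentially the paper's argument. The paper also reduces to the two triangular shapes, uses the $v_0$-condition to force $\rho(\kq)|_{G_{F_{v_0}}}$ to be split (hence $*\in H^1_{v_0}(F)\otimes k(\kq)$), and then uses that one of $\tilde b,\tilde c$ vanishes in each shape to read off $*(\sigma_i)=0$ from $\tilde b(\sigma_i)+\tilde c(\sigma_i)\in\kp$. One small slip: in the fixed basis with $\tilde\rho(\sigma^*)=\mathrm{diag}(1,-1)$, the lower-triangular shape is $\begin{pmatrix}1&0\\ *&\bar\chi\end{pmatrix}$, not $\begin{pmatrix}\bar\chi&0\\ *&1\end{pmatrix}$; this does not affect the argument since $\bar\chi=\bar\chi^{-1}$.

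For part~(1), however, your parametrization approach is where the proposal diverges from the paper and runs into trouble. The paper does \emph{not} try to fibre the reducible locus over $\Spec\cO[[G_{F,S}^{\mathrm{ab}}(p)]]$ and control Selmer-group dimensions in families; it simply repeats the system-of-parameters argument of Proposition~\ref{redloc} verbatim (impose $\varpi$, the $\delta_F+1$ trace relations $\tilde a(\tau_i)+\tilde d(\tau_i)-2$, and $r_s$ relations $\tilde b(\sigma_i)-\tilde b_i$), noting that the resulting quotient is no longer Artinian but at most one-dimensional, the extra dimension coming from the line $\cL_R$. This immediately gives $\dim R^{\mathrm{red}}_{b,\cL}/(\varpi)\le \delta_F+r_s+2$. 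Your route has two concrete problems: first, the ``upper-semicontinuity'' step bounding $\dim H^1_{v_0}(G_{F,S},\chi_1\chi_2^{-1})$ uniformly by $r_s+1$ is not a standard fact and would require real work (semicontinuity of cohomology goes the wrong way here, and the bound can genuinely jump at special $\chi_1$); second, your count does not visibly account for the one-parameter family of liftings of the line $\cL_R$, which is exactly the source of the extra $+1$ in the paper's statement. Since you already note that imposing $\tau_i$- and $\sigma_i$-type relations would work, I would recommend simply doing that: it is both shorter and avoids the family-of-Selmer-groups issue entirely.
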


\begin{proof}
When $b\neq 0$, the argument is the same as the proof of proposition \ref{redloc} except that there is one more dimension coming from the lifting of $\cL$.

Now assume $b=0$. Let $\kq\in\Spec R^{\mathrm{red}}_{0,\cL}/I$. Note that $\tilde\rho(\sigma^*)=\begin{pmatrix} 1 & 0\\ 0 & -1 \end{pmatrix}$, hence by the same argument as in \ref{redloc}, it is easy to see that $\rho(\kq):=\tilde\rho\otimes k(\kq)$ is either of the form $\begin{pmatrix} 1 & * \\ 0 & \bar\chi \end{pmatrix}$ or $\begin{pmatrix} 1 & 0 \\ * & \bar\chi \end{pmatrix}$. We will only treat the first case. The second case is similar as we assume $\bar\chi$ is quadratic. By our assumption, $\rho(\kq)|_{G_{F_{v_0}}}$ is split, hence $*$ can be viewed as an element in $H^1_{v_0}(F)\otimes k(\kq)$. Since $\kq$ contains all $\tilde{c}(\sigma)$, from the construction of $I$, we see that $\tilde{b}(\sigma_i)=0,i=1,\cdots, r_s$, hence $*=0$. This implies that $\kq$ is the maximal ideal of $R^{\mathrm{red}}_{0,\cL}$ and $R^{\mathrm{red}}_{0,\cL}/I$ is Artinian.
\end{proof}

\subsection{\texorpdfstring{$R_{0,\cL}$}{Lg} is pro-modular}
\begin{para}
Keep the notations as in the previous subsection. We first show the existence of Eisenstein maximal ideal of the ordinary Hecke algebra $\T^{\ord}$ under certain assumptions.
\end{para}

\begin{lem} \label{quaeximai}
Assume $\bar\chi$ is quadratic. Then $T_v-(1+\chi(\Frob_v)),v\notin S$ and $\psi_{v,1}(\gamma)-1,\gamma\in F_v^\times,v|p$ and $\varpi$ generate a maximal ideal of $\T^\ord$.
\end{lem}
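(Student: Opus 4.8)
The plan is to produce a CM (dihedral) ordinary Hilbert modular eigenform whose residual Galois representation is $\mathbf 1\oplus\bar\chi$, transfer it to $D^\times$, and read off its Hecke eigenvalues, in the spirit of the last paragraph of the proof of Proposition~\ref{exoemi} but replacing the Eisenstein-series input by automorphic induction. Since $\chi(c)=-1$ the quadratic character $\bar\chi$ is totally odd, so $K=\overline{F}^{\ker\bar\chi}$ is a CM field with $[K:F]=2$, and $\bar\chi|_{G_{F_v}}=\mathbf 1$ forces every $v\mid p$ to split in $K$, say $v=w\bar w$; moreover $\mathbf 1\oplus\bar\chi\cong\Ind_{G_K}^{G_F}\mathbf 1$ and $\chi_{K/F}$ (the order-$2$ character cutting out $K$) reduces to $\bar\chi$.

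The first step is to construct, after enlarging $\cO$, a $p$-adic Hecke character $\theta\colon G_K\to\cO^\times$ with: (i) $\bar\theta=\mathbf 1$; (ii) $\theta\neq\theta^c$, so that $\Ind_{G_K}^{G_F}\theta$ is irreducible; (iii) $\det\Ind_{G_K}^{G_F}\theta=\chi$, equivalently $\theta|_{\A_F^\times}=\chi\chi_{K/F}$ (which forces $\theta\theta^c=\chi|_{G_K}$ since $\chi_{K/F}\circ N_{K/F}=\mathbf 1$); (iv) for each $v=w\bar w\mid p$, $\theta|_{G_{K_w}}$ is crystalline of Hodge--Tate weight $0$ while $\theta|_{G_{K_{\bar w}}}$ is de Rham of strictly positive Hodge--Tate weight (Hida's recipe for an ordinary CM form of some parallel weight $k\geq 2$), so that $\Ind_{G_K}^{G_F}\theta|_{G_{F_v}}\cong\psi_{v,1}\oplus\psi_{v,2}$ with $\psi_{v,1}=\theta|_{G_{K_w}}\equiv\mathbf 1\pmod\varpi$; and (v) $\theta$ is tamely ramified away from $p$ at places compatible with the tame level $U^p$ and the characters $\xi_v$. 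The existence of $\theta$ is classical (Weil's theory of Hecke characters of CM fields, as used by Hida and by Skinner--Wiles), using the torsor structure of such characters over anticyclotomic characters together with twisting $\theta$ by $\mu\circ N_{K/F}$ for finite-order $\mu$ of $F$; this is the main technical point, since one must balance all the constraints at once.

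Granting $\theta$, automorphic induction produces a cuspidal automorphic representation $\pi_\theta$ of $\GL_2(\A_F)$ of parallel weight $k$, ordinary at all $v\mid p$, with $\rho_{\pi_\theta}=\Ind_{G_K}^{G_F}\theta$; its prime-to-$S$ Hecke eigenvalues are $a_v(\pi_\theta)=\theta(\Frob_w)+\theta(\Frob_{\bar w})$ if $v$ splits in $K$ and $0$ if $v$ is inert, so $a_v(\pi_\theta)\equiv 1+\bar\chi(\Frob_v)=\tr(\mathbf 1\oplus\bar\chi)(\Frob_v)\pmod\varpi$ for all $v\notin S$ (choosing $S$ to contain all places ramified in $K$ or in $\theta$), while $\langle\gamma\rangle$ acts on the ordinary part by $\psi_{v,1}(\gamma)\equiv 1$. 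Transferring $\pi_\theta$ to the totally definite quaternion algebra $D$ via Jacquet--Langlands and twisting by a character of $p$-power order to normalise the central character to $\psi=\chi\varepsilon$ and the $S\setminus\Sigma_p$-type to $\xi$, exactly as at the end of the proof of Proposition~\ref{exoemi}, yields an ordinary eigenform in $S^{\ord}_{(\vec k,\vec 0),\psi,\xi}(U^p(c),\cO)$ whose Hecke eigenvalues show that $\varpi$, the $T_v-(1+\chi(\Frob_v))$ for $v\notin S$, and the $\psi_{v,1}(\gamma)-1$ for $\gamma\in F_v^\times$, $v\mid p$, generate a maximal ideal of $\T^{\ord}_{(\vec k,\vec 0),\psi,\xi}(U^p(c))$, hence of $\T^{\ord}=\T^{\ord}_{\psi,\xi}(U^p)$ by weight-independence of the ordinary Hecke algebra (Theorem~2.3 of \cite{Hida89}).

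The hard part is the construction of $\theta$ in the second step: because $\varepsilon\equiv\omega\pmod\varpi$, the positive-weight constituent $\psi_{v,2}=\chi|_{G_{F_v}}\psi_{v,1}^{-1}$ has residual reduction $\omega^{k-1}$ times a finite-order character, which must be cancelled so that $\bar\psi_{v,2}=\bar\chi|_{G_{F_v}}=\mathbf 1$; this has to be arranged simultaneously with $\theta|_{\A_F^\times}=\chi\chi_{K/F}$, with $\theta\neq\theta^c$, and with ramification of $\theta$ fitting the fixed level $U^p$ and the $\xi_v$. Once $\theta$ is in hand the remaining steps are routine and parallel to Section~\ref{EoEmi}. (The reason the Eisenstein-series argument of Proposition~\ref{exoemi} cannot be used here is precisely that, when $\bar\chi|_{G_{F_v}}=\mathbf 1$, the cuspidality argument there—which relied on $\psi_i|_{G_{F_v}}\not\equiv\mathbf 1$ for some $v\mid p$—breaks down, so a genuinely cuspidal, namely CM, source of the Eisenstein eigenvalues is required; here the hypothesis that $\bar\chi$ is quadratic is what makes such a CM form available unconditionally.)
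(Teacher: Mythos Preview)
Your approach via CM forms is essentially the paper's: both construct a Hecke character $\theta$ (the paper's $\psi_0$) of the CM field $K=F(\bar\chi)$ with $\bar\theta=\mathbf 1$, $\theta\theta^c=\chi|_{G_K}$, regular de Rham above $p$, and $\theta|_{I_{K_{w'}}}=\xi_v$ for $w'\mid v\in S\setminus\Sigma_p$, then invoke automorphic induction and Jacquet--Langlands. The paper fills in the existence step you flag as hard by parametrizing such characters via a deformation ring $R^{univ}_1$ and showing $\Lambda_{\Sigma'_p}\to R^{univ}_1$ is finite with $\Spec R^{univ}_1\to\Spec\Lambda_{\Sigma'_p}$ surjective (so regular de Rham characters with the prescribed inertial types exist); with $\theta$ so built, your final twist and enlargement of $S$ are unnecessary.
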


\begin{proof}
After possibly enlarging $\cO$, it follows from the global class field theory that there exists a character $\psi_0:G_{F(\bar\chi)}\to\cO^\times$ such that 
\begin{itemize}
\item $\psi_0$ is unramified outside of places above $S$.
\item For any place $w\in S\setminus \Sigma_p$, there exists a place $w'$ of $F(\bar\chi)$ above $w$ such that 
\[\psi_0|_{I_{F(\bar\chi)_{w'}}}=\xi_w.\]
Here since $w$ splits in $F(\bar\chi)$, we identify $I_{F(\bar\chi)_{w'}}$ with $I_{F_w}$ and $\xi_w$ as a character of $I_{F(\bar\chi)_{w'}}$.
\item $\psi_0\equiv 1\mod \varpi$.
\item $\psi_0$ is de Rham at all places above $p$. Moreover for any embedding $\iota:F(\bar\chi)\hookrightarrow \overbar{\Q_p}$, the $\iota$-Hodge-Tate weight is not equal to the $(\iota\circ c)$-Hodge-Tate weight. Here $c\in G_{F}$ is a complex conjugation.
\item $\psi_0\cdot \psi_0^c=\chi|_{G_{F(\bar\chi)}}$, where $\psi_0^c$ is the complex conjugate of $\psi_0$.
\end{itemize}

We call a character regular de Rham if it satisfies these conditions. Since we are going to need a similar result later, we provide details for its existence here. Let $\tilde{S}$ be the set of places of $F(\bar\chi)$ above $S$. Note that any place $v$ in $S$ splits in $F(\bar\chi)$. Choose a place $v'$ of $F(\bar\chi)$ above $v$. Let $\Sigma'$ be the union of all such $v'$ and $\Sigma'_p$ be the subset of places above $p$. If $v\in S\setminus\Sigma_p$, we may identify $\xi_v$ as a character of $I_{F(\bar\chi)_{v'}}$ as above. 

Consider the universal deformation ring $R^{univ}_1$ which parametrizes all liftings $\psi$ of the trivial character $\mathbf{1}:G_{F(\bar\chi),\tilde{S}}\to\F^\times$ such that $\psi\cdot \psi^c=\chi|_{G_{F(\bar\chi)}}$ and $\psi|_{I_{F(\bar\chi)_{v'}}}=\xi_v,v\in S\setminus\Sigma_p$. Write $\psi^{univ}:G_{F(\bar\chi),\tilde{S}}\to (R^{univ}_1)^\times$ as the universal character. By the local class field theory, $\psi^{univ}|_{I_{F(\bar\chi)_{v'}}}$ induces a map $\cO[[O_{F(\bar\chi)_{v'}}^\times(p)]]\to R^{univ}_1$, where as usual $O_{F(\bar\chi)_{v'}}^\times(p)$ denotes the pro-$p$ completion of $O_{F(\bar\chi)_{v'}}^\times$. Let $\Lambda_{\Sigma'_p}=\hat{\otimes}_{v'\in\Sigma'_p}\cO[[O_{F(\bar\chi)_{v'}}^\times(p)]]$, the completed tensor product over $\cO$. We get a natural map
\[\Lambda_{\Sigma'_p}\to R^{univ}_1.\]

We claim that this map is finite and $\Spec R^{univ}_1 \to \Spec \Lambda_{\Sigma'_p}$ is surjective. Clearly this will imply the existence of $\psi_0$ as we claimed above. To see the claim, by twisting a character of $G_F$, we may assume $\chi|_{G_{F(\bar\chi)}}$ is trivial. Let $H'$ be the maximal pro-$p$ abelian  quotient of $G_{F(\bar\chi),\tilde{S}}$ and $H$ be the quotient of $H'$ by elements of the form $\tau\tau^c$. Here $\tau^c$ denotes the complex conjugation of $\tau$. Then the local class field theory induces natural maps $\iota_{v'}:O_{F(\bar\chi)_{v'}}^\times(p)\to H$. Note that $R^{univ}_1$ is simply the quotient of $\cO[[H]]$ by elements of the form $\iota_{v'}(u)-\xi_v(u)$ with $v\in S\setminus\Sigma_p$ and $u\in O_{F(\bar\chi)_{v'}}^\times(p)$. Here by abuse of notation, we again view $\xi_v$ as characters of $O_{F(\bar\chi)_{v'}}^\times(p)$. Now to deduce our claim, it is enough to prove

\begin{enumerate}
\item The natural map $\prod_{v'\in \Sigma'\setminus \Sigma'_p} O_{F(\bar\chi)_{v'}}^\times(p)\to H$ is injective.
\item The natural map $\prod_{v'\in \Sigma'_p} O_{F(\bar\chi)_{v'}}^\times(p)\to H$ is injective and the cokernel is finite.
\end{enumerate}

By the global class field theory, $H'$ has a finite index subgroup $O^\times_{F(\bar\chi),p}(p)/J$ for some subgroup $J$ of $O_{F(\bar\chi)}^\times$ of finite index. Note that $H$ is $H'$ modulo elements fixed by the complex conjugation. Using this description and the fact that $J\cap O_F^\times$ is of finite index in $J$, it is easy to see that the kernel and cokernel of the map $\prod_{v'\in \Sigma'_p} O_{F(\bar\chi)_{v'}}^\times(p)\to H$ are finite. The kernel has to be trivial since $p$ is unramified in $F(\bar\chi)$. This finishes the proof of our second claim. The first claim can be proved similarly and we omit the details here.

Now let $\psi_0$ be a character as in the beginning of the proof and let $\rho_0=\Ind_{G_{F(\bar\chi)}}^{G_F}\psi_0$. Then $\det \rho_0=\chi$. Since any place of $F$ above $p$ splits in $F(\bar\chi)$, the representation $\rho_0|_{G_{F_v}}$ is completely reducible. Moreover since we assume $\bar\chi$ is quadratic, the semi-simplification of the residual representation of $\rho_0$ is $\mathbf{1}\oplus \bar\chi$. By the theory of CM forms, there exists a regular algebraic ordinary cuspidal automorphic representation of $\GL_2(\A_F)$ corresponding to $\rho_0$. Clearly this automorphic representation gives rise to the maximal ideal of the Hecke algebra in the lemma. 
\end{proof}

\begin{para}
Keep the assumption as in the lemma that $\bar\chi$ is quadratic. We denote the maximal ideal in the lemma \ref{quaeximai} by $\km$. Then we have a non-zero surjective map $R^{\ps,\ord,\{\xi_v\}}_{1}\to \T^{\ord}$. Let $(0,\cL)$ be an admissible pair as in the previous subsection. Consider the natural map $R^{\ps,\ord,\{\xi_v\}}_{1}\to R_{0,\cL}$. We say a prime of $R_{0,\cL}$ is \textit{pro-modular} if its pull-back to $R^{\ps,\ord,\{\xi_v\}}_{1}$ is pro-modular, i.e. in the image of $\Spec \T^{\ord}_\km\to \Spec R^{\ps,\ord,\{\xi_v\}}_{1}$. The main result of this subsection is
\end{para}

\begin{prop} \label{R0cLmod}
Assume $\bar\chi$ is quadratic and
\begin{itemize}
\item $[F:\Q]-4|S|+4|\Sigma_p|-2>\delta_F+\dim_\F H^1_{v_0}(F)$.
\end{itemize}
Then $R_{0,\cL}$ is pro-modular.
\end{prop}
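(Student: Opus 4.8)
The plan is to follow the proof of Proposition \ref{rcmodc}, with the ordinary global deformation rings $R^{\ord}_b$ there replaced by the rings $R_{0,\cL}$ attached to the admissible pair $(0,\cL)$, and with the inputs \ref{crord}, \ref{redloc}, \ref{psccomp}, \ref{propA} replaced by their admissible-pair analogues \ref{cRbcL}, \ref{reddimRbcL}, \ref{psdefcL}, \ref{propC}; the hypothesis that $\bar{\chi}$ is quadratic enters both in producing the Eisenstein maximal ideal by a CM construction (Lemma \ref{quaeximai}) and in bounding the reducible locus of $R_{0,\cL}$ (Proposition \ref{reddimRbcL}(2)). First I would produce an initial pro-modular point of $R_{0,\cL}$. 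Lemma \ref{quaeximai} furnishes the Eisenstein maximal ideal $\km$ of $\T^{\ord}$, and its proof produces a regular de Rham character $\psi_0$ of $G_{F(\bar{\chi})}$ with $\psi_0$ congruent to $\mathbf{1}$ modulo $\varpi$ and $\psi_0|_{I_{F(\bar{\chi})_{v'}}}=\xi_v$ for $v\in S\setminus\Sigma_p$, such that $\rho_0:=\Ind_{G_{F(\bar{\chi})}}^{G_F}\psi_0$ is irreducible, ordinary and modular. Taking the $G_F$-stable lattice in $\rho_0$ induced from the canonical lattice of $\psi_0$, its reduction is the \emph{split} representation $\mathbf{1}\oplus\bar{\chi}$, for $v\neq v_0$ the two eigenlines of $\rho_0|_{G_{F_v}}$ give the required $\psi_{v,1}$-ordinary structure, and the eigenline $\cL$ at $v_0$ reduces to a line which is not $G_F$-stable; hence this data defines a point of $R_{0,\cL}$ with $\cL$ the (unique) admissible line at $b=0$, and since $\bar{\rho}_0$ has semisimplification $\mathbf{1}\oplus\bar{\chi}$ and $\rho_0$ is ordinary with trivial $\psi_{v,1}$-reduction, the associated prime of $R_{0,\cL}$ is pro-modular with irreducible associated representation. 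Varying $\psi_0$ in its Hida family — equivalently, using that the CM deformation ring $R^{\mathrm{CM}}$ appearing in the proof of \ref{quaeximai} is finite over a regular ring of dimension $[F:\Q]+1$ and maps finitely to $\Spec R_{0,\cL}$ — I would upgrade this to a pro-modular prime $\kQ$ with $\dim R_{0,\cL}/\kQ=[F:\Q]+1$; alternatively one can bump up the one-dimensional point using the equidimensionality of the pro-modular locus of $R^{\ps,\ord,\{\xi_v\}}_1$ (Corollary \ref{orddim}) and \ref{psdefcL}.

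Next, to exhibit one pro-modular component, I would cut $R_{0,\cL}/\kQ$ by $\varpi$ and, for each $v\in S\setminus\Sigma_p$, by a system of parameters of $R^{\square,\xi_v}_v/(\varpi)$ (three elements each, by Lemma \ref{lrca}). The resulting ring has dimension at least $[F:\Q]-3(|S|-|\Sigma_p|)$, which by the hypothesis strictly exceeds $\delta_F+\dim_\F H^1_{v_0}(F)+1\geq\dim R^{\mathrm{red}}_{0,\cL}/(\varpi)$ (Proposition \ref{reddimRbcL}(2)), so it contains a one-dimensional prime $\kq\supseteq\kQ$ with $\rho(\kq)$ irreducible; by construction $\rho(\kq)^o|_{G_{F_v}}=\bar{\rho}_b|_{G_{F_v}}$ for $v\in S\setminus\Sigma_p$, $\rho(\kq)$ is non-dihedral in this situation (cf. \ref{Rpsordxiv1}, using that $p$ is unramified and $\bar{\chi}|_{G_{F_v}}=\mathbf{1}$), and $\kq$ is pro-modular since it contains the pro-modular prime $\kQ$. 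Thus $\kq$ is a nice prime, and Proposition \ref{propC} shows the component of $R_{0,\cL}$ through $\kQ$ is pro-modular.

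Now I would run the connectedness argument of \ref{rcmodc}. Let $Z_1,Z_2$ be the unions of the pro-modular, resp.\ non-pro-modular, irreducible components of $R_{0,\cL}/(\varpi)$; if $Z_2\neq\emptyset$ then $Z_1\neq\emptyset$ by the previous step, so Lemma \ref{cRbcL} gives $\dim(Z_1\cap Z_2)\geq c(R_{0,\cL}/(\varpi))\geq [F:\Q]-|S|+|\Sigma_p|-1$, and cutting by the $3(|S|-|\Sigma_p|)$ local parameters leaves a ring of dimension $\geq [F:\Q]-4(|S|-|\Sigma_p|)-1$, which by the hypothesis $[F:\Q]-4|S|+4|\Sigma_p|-2>\delta_F+\dim_\F H^1_{v_0}(F)$ still exceeds $\dim R^{\mathrm{red}}_{0,\cL}/(\varpi)$. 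Hence $Z_1\cap Z_2$ contains a nice prime; that prime contains a minimal prime $\kq_0$ of $R_{0,\cL}$ lying in $Z_2$ (using, as in the proof of Proposition \ref{rlocp}, that each minimal prime of $R_{0,\cL}/(\varpi)$ contains a unique minimal prime of $R_{0,\cL}$), and Proposition \ref{propC} then forces $\kq_0$, hence the corresponding component of $Z_2$, to be pro-modular — a contradiction. So $R_{0,\cL}/(\varpi)$ is entirely pro-modular, and since $R_{0,\cL}$ is $\cO$-flat and equidimensional of dimension $[F:\Q]+1$ (Lemma \ref{dimRbcL} together with the local computations of Lemmas \ref{RDeltaL}, \ref{RDelta}, \ref{lrca} propagated as in the proofs of \ref{ccal} and \ref{rlocp}), every minimal prime of $R_{0,\cL}$ is pro-modular, i.e.\ $R_{0,\cL}$ is pro-modular.

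The step I expect to be the main obstacle is the upgrade in the first paragraph to a full-dimensional pro-modular prime $\kQ$: at $b=0$ the comparison with the pseudo-deformation ring is only the one-sided inequality of Proposition \ref{psdefcL}, not the isomorphism of Corollary \ref{pscrem} available when $\bar{\chi}|_{G_{F_v}}\neq\mathbf{1}$, so one must instead exploit the CM Hida family directly (or the precise structure of the finite map $\Spec R^{\mathrm{CM}}\to\Spec R_{0,\cL}$). A secondary technical point is establishing carefully the $\cO$-flatness, equidimensionality and ``unique minimal prime of the special fibre'' properties of $R_{0,\cL}$ on which the concluding implication rests.
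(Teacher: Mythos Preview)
Your connectedness endgame is correct and matches the paper. The real divergence is in producing the first pro-modular component of $R_{0,\cL}$, and your primary route --- the ``upgrade'' from a one-dimensional CM point to a full-dimensional pro-modular prime $\kQ$ inside $\Spec R_{0,\cL}$ --- does not work, for exactly the reason you flag: at $b=0$ only the one-sided inequality of Proposition~\ref{psdefcL} is available, not the completion isomorphism of Corollary~\ref{psccomp} that drove the analogous step in Proposition~\ref{rcmodc}. Your proposed fix via Corollary~\ref{orddim} plus \ref{psdefcL} goes the wrong way: \ref{psdefcL} bounds $\dim R^{\ps,\ord,\{\xi_v\}}_1/\kQ^{\ps}$ from below given $\dim R_{0,\cL}/\kQ$, not the reverse, so a big pro-modular prime on the pseudo side does not lift to one in $R_{0,\cL}$.

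The paper sidesteps this by never separating the steps. It works on the CM side throughout: trace-of-induction gives $R^{\ps,\ord,\{\xi_v\}}_1\to R^{univ}_1$, and the irreducible component $C$ of $\Spec R^{univ}_1$ from the proof of Lemma~\ref{quaeximai} already has dimension $[F:\Q]+1$ with regular de Rham points dense, so its image is pro-modular. One then chooses \emph{on $C$ itself} a one-dimensional prime $\kq'$ containing $p$ and $\psi^{univ}(\Frob_{v'})-1$ for $v'\in\Sigma'\setminus\Sigma'_p$; there is room for these constraints because $\dim C=[F:\Q]+1$. Its image $\kq$ in $\Spec\T^{\ord}_\km$ is nice directly, and since $\rho(\kq)\cong\Ind^{G_F}_{G_{F(\bar\chi)}}\psi_1$ has split residual reduction with $G_{F_{v_0}}$-eigenlines reducing to lines not $G_F$-stable (your own observation about the induced lattice), $\kq$ lies in the image of $\Spec R_{0,\cL}$. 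So no map from the CM family into $\Spec R_{0,\cL}$ is needed, only that this one nice prime lands there; Proposition~\ref{propC} then gives the first pro-modular component, and the connectedness argument finishes as you wrote. Your secondary concerns about $\cO$-flatness, equidimensionality and unique minimal primes in the special fibre of $R_{0,\cL}$ are not used and need not be established.
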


\begin{proof}
By lemma \ref{quaeximai}, we know that $\T^{\ord}_\km\neq 0$. First we claim that there exists a nice prime (in the sense of \ref{ordnice}) in the image of $\Spec R_{0,\cL}\to \Spec R^{\ps,\ord,\{\xi_v\}}_{1}$. In view of proposition \ref{propC}, this will imply that at least one irreducible component of $R^{\ps,\ord,\{\xi_v\}}_{1}$ is pro-modular.

Such a nice prime can be found on a CM component of $\T^{\ord}_\km$. More precisely, let $R^{univ}_1$ be the deformation ring defined in the proof of lemma \ref{quaeximai}. Then taking the trace of the induction of the universal character $\psi^{univ}$ over $R^{univ}_1$ from $G_{F(\bar\chi)}$ to $G_F$ induces a natural map $R^{\ps,\ord,\{\xi_v\}}_{1}\to R^{univ}_1$. Also we proved in the proof of lemma \ref{quaeximai} that there exists an irreducible component $C$ of $R^{univ}_1$ whose map to $\Spec \Lambda_{\Sigma'}$ is finite and surjective (see notations there). Therefore, regular de Rham points are dense in $C$. It follows from the theory of CM forms that the image of $C$ in $\Spec R^{\ps,\ord,\{\xi_v\}}_{1}$ is pro-modular.

Now let $\kq'$ be any one-dimensional prime of $C$ containing $p$ and $\psi^{univ}(\Frob_{v'})-1,v'\in\Sigma'\setminus\Sigma'_p$. It is easy to check that its image $\kq$ in $\Spec \T^{\ord}_\km$ is a nice prime. Moreover, the semi-simple representation associated to $\kq$ is of the form $\Ind_{G_{F(\bar\chi)}}^{G_F}\psi_1$, for some character $\psi_1:G_{F(\bar\chi)}\to (\T^{\ord}_\km/\kq)^\times$. This implies that $\kq$ is also in the image of $\Spec R_{0,\cL}$. Hence we find a nice prime as claimed in the beginning.

After showing that at least one irreducible component of $R^{\ps,\ord,\{\xi_v\}}_{1}$ is pro-modular, the rest of the proof goes exactly the same as the proof of proposition \ref{rcmodc}: it follows from our assumption, lemma \ref{cRbcL} and proposition \ref{reddimRbcL} that 
\[c(R_{0,\cL}/(\varpi)) > 3(|S|-|\Sigma_p|) + \dim R^{\mathrm{red}}_{0,\cL}/(\varpi).\]
This implies that any irreducible component of $R^{\ps,\ord,\{\xi_v\}}_{1}$ has to be pro-modular since different irreducible components can be ``connected" by nice primes. We omit the details here.
\end{proof}

\subsection{\texorpdfstring{$R_{b,\cL}$}{Lg} is pro-modular}
In this subsection, we extend the result of the previous subsection to any admissible pairs $(b,\cL)$ with $b\neq 0$. As before, we say a prime of $R_{b,\cL}$ pro-modular if its image in $\Spec R^{\ps,\ord,\{\xi_v\}}_{1}$ is pro-modular.

\begin{prop} \label{RbcLpro-modular}
Assume $\bar\chi$ is quadratic and 
\begin{itemize}
\item $[F:\Q]-6|S|+6|\Sigma_p|-3>\delta_F+\dim_\F H^1_{v_0}(F)$.
\end{itemize}
Then $R_{b,\cL}$ is pro-modular for any admissible pair $(b,\cL)$ with $b\neq 0$.
\end{prop}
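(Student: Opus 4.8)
The plan is to run the proof of Proposition \ref{propB} essentially verbatim, substituting the rings $R_{b,\cL}$ attached to admissible pairs for the rings $R^{\ord}_b$, and substituting Lemma \ref{dimRbcL}, Lemma \ref{cRbcL}, Proposition \ref{reddimRbcL}, Proposition \ref{psdefcL} and Proposition \ref{propC} for the ingredients used there (Lemma \ref{grrcord}, Corollary \ref{crord}, Proposition \ref{redloc}, Corollary \ref{psccomp} and Proposition \ref{propA}). Throughout $\bar\chi$ is quadratic, so by Lemma \ref{quaeximai} the Eisenstein maximal ideal $\km$ of $\T^{\ord}$ exists and by Proposition \ref{R0cLmod} the ring $R_{0,\cL_0}$ is pro-modular for the essentially unique admissible pair $(0,\cL_0)$. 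Recall also that, by Corollary \ref{orddim}, the pro-modular locus of $R^{\ps,\ord,\{\xi_v\}}_{1}$ (the image of $\Spec\T^{\ord}_\km$) is equidimensional of dimension $[F:\Q]+1$, while its reducible locus has dimension at most $\delta_F+2$ by the argument of \ref{redlrps}.

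The first step would be to produce a nonzero admissible pair $(b_1,\cL_1)$ with $R_{b_1,\cL_1}$ pro-modular, using $R_{0,\cL_0}$ as the source of pro-modularity. After enlarging $E$, choose a line $\F b_1\subseteq H^1_{v_0}(F)$ and normalize a cocycle $\phi_{b_1}$; since $b_1\in H^1_{v_0}(F)$ and $\bar\chi|_{G_{F_{v_0}}}=\mathbf{1}$, the restriction $\phi_{b_1}|_{G_{F_{v_0}}}$ vanishes identically because $H^1(G_{F_{v_0}},\F(\bar\chi^{-1}))$ has no coboundaries. Then $\rho_T(\sigma)=\bigl(\begin{smallmatrix}1 & \bar\chi(\sigma)\phi_{b_1}(\sigma)T\\ 0 & \bar\chi(\sigma)\end{smallmatrix}\bigr)$ over $\F[[T]]$, together with the line $\cL_0=\F\cdot(1,1)^{t}$ (which is $G_{F_{v_0}}$-fixed with trivial action and not $G_F$-stable), defines a one-dimensional prime of $R_{0,\cL_0}$, hence a one-dimensional pro-modular prime. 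Cutting $R_{0,\cL_0}$ by the entries $\tilde b(\sigma_i)$ ($i\ge 2$) that kill the remaining basis classes of $H^1_{v_0}(F)$ and choosing a minimal prime $\kQ$ inside $\kq_T$, Lemma \ref{dimRbcL} gives $\dim R_{0,\cL_0}/\kQ$ large; comparing with the reducible-locus bound of Proposition \ref{reddimRbcL}(2), using Proposition \ref{psdefcL}, the $\hht(I_C)\ge 1$ computation of Remark \ref{ht1Rem} applied to $R^{\ps,\ord,\{\xi_v\}}_{1}$, and $\dim R^{\ps,\mathrm{red}}\le\delta_F+2$, one extracts a one-dimensional pro-modular prime $\kp$ of $R^{\ps,\ord,\{\xi_v\}}_{1}$ with $\rho(\kp)$ irreducible whose reduction, for a suitable lattice, realizes the class $b_1$ and is split at $v_0$. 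Hence $R_{b_1,\cL_1}$ has a nice prime for a suitable $\cL_1$, so some irreducible component is pro-modular, and the connectedness argument of Proposition \ref{R0cLmod} (via Lemma \ref{cRbcL}, Proposition \ref{reddimRbcL}(1) and the numerical hypothesis) forces $R_{b_1,\cL_1}$ to be pro-modular.

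With a nonzero pro-modular $(b_1,\cL_1)$ in hand, the passage to an arbitrary admissible $(b,\cL)$ with $b\ne 0$ copies the second half of the proof of Proposition \ref{propB}: extend $\{b_1,b\}$ to a basis of $H^1_{v_0}(F)$, choose dual elements $\sigma_1,\dots,\sigma_{r_s}\in G_{F(\bar\chi)}$, form the interpolating family $\rho_{12}(\sigma)=\bigl(\begin{smallmatrix}1&\bar\chi(\sigma)(\phi_{b_1}(\sigma)+\phi_b(\sigma)T)\\ 0&\bar\chi(\sigma)\end{smallmatrix}\bigr)$ over $\F[[T]]$ defining a one-dimensional prime $\kq_{12}$ of $R_{b_1,\cL_1}$, cut by $\tilde b(\sigma_i)$ for $i\ge 3$ and pick a minimal prime $\kQ\subseteq\kq_{12}$. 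By Lemma \ref{dimRbcL} it has dimension at least $[F:\Q]+3-\dim_\F H^1_{v_0}(F)$, which the hypothesis makes strictly larger than the reducible-locus bound of Proposition \ref{reddimRbcL}(1), so $\rho(\kQ)$ is irreducible; then Corollary \ref{psccomp}, the height-one bound (Remark \ref{ht1Rem}/Proposition \ref{ht1def}) applied to $x(\sigma_1,\cdot)$, and $\dim R^{\ps,\mathrm{red}}\le\delta_F+2$ produce a one-dimensional pro-modular prime $\kp$ of $R^{\ps,\ord,\{\xi_v\}}_{1}$ with $\rho(\kp)$ irreducible; inspecting the off-diagonal entries $b'(\sigma_i)$ of a suitable lattice, exactly as in the last paragraph of the proof of Proposition \ref{propB}, shows that after a diagonal conjugation the reduction belongs to the class $b$ and is split at $v_0$, so $R_{b,\cL}$ acquires a nice prime. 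Finally the connectedness argument (Lemma \ref{cRbcL}, Proposition \ref{reddimRbcL}(1), and the inequality $[F:\Q]-6|S|+6|\Sigma_p|-3>\delta_F+\dim_\F H^1_{v_0}(F)$, after cutting by $3(|S|-|\Sigma_p|)$ local parameters at the places of $S\setminus\Sigma_p$) upgrades ``one component pro-modular'' to ``$R_{b,\cL}$ pro-modular''.

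The main obstacle I expect is bookkeeping the $\cL$-datum through these constructions: one must check that every one-dimensional family written down respects the $v_0$-condition (the chosen line stays $G_{F_{v_0}}$-stable with the correct character, which holds here only because $\bar\chi|_{G_{F_{v_0}}}=\mathbf{1}$ forces every line to be $G_{F_{v_0}}$-fixed with trivial action at the residual level), and that the lattice extracted at the end of each step genuinely defines an admissible pair with the prescribed extension class $b$ rather than a different one. This is also precisely where the stronger numerical hypothesis, with six copies of $|S|-|\Sigma_p|$ in place of the four appearing in Proposition \ref{R0cLmod}, is needed: it supplies exactly the slack consumed by the two successive ``cut down and locate an irreducible one-dimensional prime'' steps, together with the extra dimension of the reducible locus in Proposition \ref{reddimRbcL}(1) compared with part (2).
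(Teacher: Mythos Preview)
Your overall framework---produce a nice prime in the image of $\Spec R_{b,\cL}\to\Spec R^{\ps,\ord,\{\xi_v\}}_1$ and then run the connectedness argument---matches the paper. But the route you propose to find that nice prime differs substantially from the paper's, and contains a genuine gap.

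First, your two-step reduction (produce some pro-modular $R_{b_1,\cL_1}$ with $b_1\neq 0$, then pass from $(b_1,\cL_1)$ to $(b,\cL)$) is not what the paper does and is unnecessary: the paper works directly from $R_{0,\cL_0}$ to the target $(b,\cL)$ in a single step. Relatedly, your explanation of the factor $6(|S|-|\Sigma_p|)$ is wrong. It does not come from two successive cut-downs; it comes from imposing, for each $v\in S\setminus\Sigma_p$, the six conditions $\tilde c(\tau_{v,i})=0$, $\tilde a(\tau_{v,i})-1=0$, $\tilde b(\tau_{v,i})-\tilde b(\sigma_1)\widetilde{\bar b(\tau_{v,i})}=0$ for $i=1,2$ on the two topological generators of the pro-$p$ quotient of $G_{F_v}$. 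These guarantee that the eventual lattice $\rho^o$ satisfies $\rho^o|_{G_{F_v}}=\bar\rho_b|_{G_{F_v}}$, which is exactly the niceness condition at $v\in S\setminus\Sigma_p$.

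The real gap is your treatment of the $\cL$-datum. You correctly flag it as the obstacle, but your proposed resolution---that $\bar\chi|_{G_{F_{v_0}}}=\mathbf{1}$ makes every line $G_{F_{v_0}}$-fixed at the \emph{residual} level---is not enough. For a prime $\kq$ to lie in the image of $\Spec R_{b,\cL}$ one needs the \emph{lift} $\rho^o:G_{F,S}\to\GL_2(A)$ to have a $G_{F_{v_0}}$-stable rank-one summand reducing to the \emph{given} $\cL$, not merely to some line. The paper handles this by a three-case analysis according to whether $\tilde a(\sigma)-\tilde\psi_{v_0,1}(\sigma)$ and $\tilde d(\sigma)-\tilde\psi_{v_0,1}(\sigma)$ lie in $\kQ$ for all $\sigma\in G_{F_{v_0}}$. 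In the first case $\rho^o|_{G_{F_{v_0}}}$ is scalar, so every line works. In the other two cases the paper introduces an auxiliary ring $A_2=A_1[\frac{y}{x}]$ with $x=(a(\sigma)-\psi_{v_0,1}(\sigma))x(\sigma_1,\tau)$ and $y=x(\sigma,\tau)$, proves that $(\km_1,\frac{y}{x}-n)$ is a maximal ideal for every $n\in\F$, and then \emph{chooses} $n$ so that $\cL=\F\cdot\begin{pmatrix}-n\\1\end{pmatrix}$; the nice prime is then found in $\Spec\widehat{(A_2)_{\km_2}}$ for this specific $\km_2$, and one checks (Lemma~\ref{case2bcL}) that $\rho^o(G_{F_{v_0}})$ fixes a line reducing to $\cL$. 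This targeted construction is the key new idea and is entirely absent from your proposal; without it you cannot conclude that the nice prime lands in $\Spec R_{b,\cL}$ rather than in $\Spec R_{b,\cL'}$ for some other $\cL'$.
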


\begin{para}
The strategy is the same as the proof of proposition \ref{R0cLmod}: we first find a nice prime in the image of $\Spec R_{b,\cL}\to \Spec R^{\ps,\ord,\{\xi_v\}}_{1}$ and use the numerology $c(R_{b,\cL}/(\varpi)) > 3(|S|-|\Sigma_p|) + \dim R^{\mathrm{red}}_{b,\cL}/(\varpi)$ to deduce that any irreducible component of $R_{b,\cL}$ is pro-modular. The question is how to find such a nice prime.

The basic idea is the same as the proof of Proposition \ref{propB}: we carefully choose a prime ($\kQ^{\ps}$ below) in the image of $\Spec R_{0,\cL}\to\Spec R^{\ps,\ord,\{\xi_v\}}_{1}$ and then specialize it to a nice prime in the image of $\Spec R_{b,\cL}\to\Spec R^{\ps,\ord,\{\xi_v\}}_{1}$. The main difficulty here is to find such a specialization whose associated representation has certain $G_{F_{v_0}}$-stable line lifting $\cL$. For this purpose,  we will  distinguish $3$ possibilities of $\kQ^{\ps}$ in \ref{case1bcLnot}, \ref{case2a}, \ref{case3d}.

\end{para}

\begin{para}[\textbf{Choice of $\kQ^{\ps}$}] \label{ChoicekQ}
Now we introduce some notations. Note that $b$ gives rise to a representation $\bar\rho_b:G_{F,S}\to\GL_2(\F)$ of the form
\[\sigma\mapsto \begin{pmatrix} 1 & \bar{b}(\sigma) \\ 0 & \bar\chi(\sigma) \end{pmatrix}.\]
We may assume $\bar{b}(\sigma^*)=0$ where $\sigma^*\in G_F$ is our fixed complex conjugation.  

Recall that $H^1_{v_0}(F)$ is defined as the kernel of $H^1(G_{F,S},\F(\bar\chi^{-1}))\to H^1(G_{F_{v_0}},\F(\bar\chi^{-1}))$. Then as in \ref{redlocrc},  we have an injection $H^1_{v_0}(F)\to \Hom (G_{F(\bar\chi),\tilde{S}},\F)$. Let $r_s=\dim_{\F} H^1_{v_0}(F)$. Choose elements $\sigma_1,\cdots,\sigma_{r_s}\in G_{F(\bar\chi),\tilde{S}}$ such that they form a dual basis of $H^1_{v_0}(F)^\vee$ and $\bar{b}(\sigma_1)=1$ and $\bar{b}(\sigma_i)=0,i=2,\cdots,r_s$. For any $v\in S\setminus \Sigma_p$, the maximal pro-$p$ quotient $G_{F_v}(p)$ of $G_{F_v}$ is topologically generated by $2$ elements. Let $\tau_{v,1},\tau_{v,2}$ be such topological generators.

Let $(0,\cL_0)$ be an admissible pair and $(\tilde\rho,\{\tilde\psi_{v,1}\}_{v|p})$ be the universal tuple over $R_{0,\cL_0}$ as in the proof of proposition \ref{psdefcL}. Write $\tilde\rho=\begin{pmatrix} \tilde a & \tilde b \\ \tilde c & \tilde d\end{pmatrix}$. Consider the following deformation:
\[\rho_{1}:G_{F,S}\to \GL_2(\F[[T]]),~\sigma\mapsto \begin{pmatrix}1 & T\bar{b}(\sigma)\\ 0 & \bar{\chi}(\sigma) \end{pmatrix}.\]
If we take $T=1$ (at least formally), then this becomes $\bar\rho_b$. Though we couldn't do this in $R_{0\,\cL_0}$, we will try to make this happen in $R^{\ps,\ord,\{\xi_v\}}_{1}$.

In view of the description of $R_{0\,\cL_0}$ in the proof of lemma \ref{defRbcLT}, the tuple $(\rho_1,\{\mathbf{1}\}_{v|p})$ defines a prime ideal $\kq_1$ of $R_{0,\cL_0}$. Let $I$ be the ideal of $R_{0,\cL_0}$ generated by $\varpi$ and $\tilde{b}(\sigma_2),\cdots,\tilde{b}(\sigma_{r_s})$ and for $v\in S\setminus\Sigma_p$ and for $i\in\{1,2\}$,
\begin{itemize}
\item $\tilde{c}(\tau_{v,i}),\tilde{a}(\tau_{v,i})-1$,
\item $\tilde{b}(\tau_{v,i})-\tilde{b}(\sigma_1)\widetilde{\bar{b}(\tau_{v,i})}$, where $\widetilde{\bar{b}(\tau_{v,i})}\in\cO$ is a lifting of $\bar{b}(\tau_{v,i})\in\F$.
\end{itemize}
It is easy to check that $I$ is contained in $\kq_1$ and $\tilde{d}(\tau_{v,i})-1\in I$ for $v\in S\setminus\Sigma_p,i\in\{1,2\}$. 

\begin{rem}
Roughly  speaking, the reason for adding $\tilde{b}(\sigma_2),\cdots,\tilde{b}(\sigma_{r_s})$ is to force the deformation we obtain later having reduction $\bar\rho_b$ once we know it is trivial at $v_0$. Similarly, adding these entries at places $v\in S\setminus \Sigma_p$ is to make sure the deformation is trivial at $v\in S\setminus\Sigma_p$, which is part of our definition of a nice prime in \ref{ordnice}. See the last two paragraphs of the proof of lemma \ref{case1bcL} below.
\end{rem}

\begin{rem}
The deformation $\rho_1$ is not new: it is nothing but $\bar{\rho}_{b_1}$ in the proof of proposition \ref{propB} (taking $b_1=0$), which first appeared in the work of Skinner-Wiles \cite{SW99}. Though it is a non-trivial deformation of $1\oplus\bar\chi$, its trace is a trivial deformation as a pseudo-representation. We will use this fact in the proof of lemma \ref{xynex} below.
\end{rem}

Let $\kQ$ be a minimal prime of $R_{0,\cL_0}/I$ contained in $\kq_1$. By lemma \ref{dimRbcL}, the dimension of $R_{0,\cL_0}/\kQ$ is at least 
\[[F:\Q]+1-\dim_{\F} H^1_{v_0}(F)-6(|S|-|\Sigma_p|).\]
By our assumption, this is at least $3+\delta_F$. We claim that $\kQ\notin \Spec R^{\mathrm{red}}_{0,\cL_0}$. Suppose not, then $\tilde{b}(\sigma)\tilde{c}(\tau)\in\kQ$ for any $\sigma,\tau\in G_{F,S}$. Since $\tilde{b}(\sigma_1)\notin\kq_1$, we must have $\tilde{c}(\sigma)\in \kQ$ hence $\tilde{b}(\sigma_i)+\tilde{c}(\sigma_i)\in\kQ,i=2,\cdots,r_s$. Now using proposition \ref{reddimRbcL} and arguing as in the proof of proposition \ref{propB}, we deduce that $\dim R_{0,\cL_0}/\kQ$ is at most $2+\delta_F$. Contradiction. Hence $\kQ$ is not in the reducible locus and we may apply proposition \ref{psdefcL} and conclude that
\[\dim R^{\ps,\ord,\{\xi_v\}}_{1}/\kQ^{\ps}\geq 2+\delta_F.\]
where $\kQ^{\ps}=\kQ\cap R^{\ps,\ord,\{\xi_v\}}_{1}$. Then $\kQ^{\ps}$ is pro-modular by proposition \ref{R0cLmod}. Our goal is to find a nice prime which contains $\kQ^{\ps}$ and lies in the image of $\Spec R_{b,\cL}\to \Spec R^{\ps,\ord,\{\xi_v\}}_{1}$. There are three cases.
\begin{enumerate}
\item $\tilde{a}(\sigma)-\tilde{\psi}_{v_0,1}(\sigma),\tilde{d}(\sigma)-\tilde{\psi}_{v_0,1}(\sigma)\in \kQ$ for any $\sigma\in G_{F_{v_0}}$.
\item $\tilde{a}(\sigma)-\tilde{\psi}_{v_0,1}(\sigma)\notin \kQ$ for some $\sigma\in G_{F_{v_0}}$.
\item $\tilde{d}(\sigma)-\tilde{\psi}_{v_0,1}(\sigma)\notin \kQ$ for some $\sigma\in G_{F_{v_0}}$.
\end{enumerate}
\end{para}

\begin{para} \label{case1bcLnot}
We first deal with the first case. Let $\kq$ be a prime ideal of $R^{\ps,\ord,\{\xi_v\}}_{1}$ containing $\kQ^{\ps}$ such that the associated $2$-dimensional representation is irreducible. Then $\kq$ is pro-modular. We claim that $\kq$ is a nice prime and lies in the image of $\Spec R_{b,\cL}\to \Spec R^{\ps,\ord,\{\xi_v\}}_{1}$. In fact, we will even show that it is in the image of $\Spec R_{b,\cL}$ for any $\cL$.

Let $A=\tilde{\F}[[T]]$ be the normalization of $R^{\ps,\ord,\{\xi_v\}}_{1}/\kq$, where $\tilde{\F}$ is a finite extension of $\F$. By Ribet's lemma, we may find a lattice of $\rho(\kq)$: $\rho^o:G_{F,S}\to\GL_2(A)$ such that the reduction $\bar\rho_\kq$ of $\rho^o\mod T$ has the form
\[\begin{pmatrix}1 & * \\ 0 & \bar{\chi} \end{pmatrix},~*\neq 0,\]
and $\rho^o(\sigma^*)=\begin{pmatrix}1 & 0\\ 0 & -1 \end{pmatrix}$. Write $\rho^o(g)=\begin{pmatrix} a'(g) & b'(g) \\ c'(g) & d'(g) \end{pmatrix}$. It suffices to prove
\end{para}

\begin{lem} \label{case1bcL}
\hspace{2em}
\begin{enumerate}
\item $\rho^o|_{G_{F_{v_0}}}$ is trivial. In particular, any line in $\rho^o$ is fixed by $G_{F_{v_0}}$.
\item Up to a scalar, $\bar\rho_\kq$ corresponds to the extension class $b$. Hence we may conjugate $\rho^o$ and assume $b'(\sigma_1)=1$.
\item Assume $b'(\sigma_1)=1$. For any $v\in S\setminus\Sigma_p$, the entries of $\rho^o(G_{F_v})$ belong to $\F$. Hence $\rho^o|_{G_{F_v}}$ is equal to $\bar\rho_\kq|_{G_{F_v}}$. Here both are viewed as taking values in $\GL_2(A)$.
\end{enumerate}
\end{lem}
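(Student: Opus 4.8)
\textbf{Proof proposal for Lemma \ref{case1bcL}.}

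The plan is to extract from the Case~(1) hypotheses, together with the defining relations of $\kQ$ (recall $\kQ$ contains $I$), explicit congruences for the entries of the lattice $\rho^o$ over $A$, and then to promote these congruences to honest identities by playing off the irreducibility of $\rho^o$ against the $\psi_{v_0,1}$-ordinarity of $\tr\rho^o$. First I would record the effect of the chosen presentation of $R_{0,\cL_0}$ (see the proof of Lemma \ref{defRbcLT}): the universal tuple $(\tilde\rho,\{\tilde\psi_{v,1}\})$ satisfies $\tilde\rho(\sigma)\binom{1}{1}=\tilde\psi_{v_0,1}(\sigma)\binom{1}{1}$ for $\sigma\in G_{F_{v_0}}$, i.e.
\[\tilde b(\sigma)=\tilde\psi_{v_0,1}(\sigma)-\tilde a(\sigma),\qquad \tilde c(\sigma)=\tilde\psi_{v_0,1}(\sigma)-\tilde d(\sigma),\quad \sigma\in G_{F_{v_0}}.\]
Hence the Case~(1) condition $\tilde a(\sigma)-\tilde\psi_{v_0,1}(\sigma),\ \tilde d(\sigma)-\tilde\psi_{v_0,1}(\sigma)\in\kQ$ is equivalent to $\tilde b(\sigma),\tilde c(\sigma)\in\kQ$ for $\sigma\in G_{F_{v_0}}$, and so $x(\sigma,\tau)=\tilde b(\sigma)\tilde c(\tau)\in\kQ$ there. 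Since $\tilde a(\sigma),\tilde d(\sigma),\tilde\psi_{v_0,1}(\sigma)$ and $x(\sigma,\tau)$ all lie in the image of $R^{\ps,\ord,\{\xi_v\}}_{1}$ (cf. \ref{adrep}), all these relations hold modulo $\kQ^{\ps}$, hence modulo $\kq$; transporting to $A$ via \ref{adrep} for $\rho^o$ (which has $\rho^o(\sigma^*)=\diag(1,-1)$) yields, for $\sigma,\tau\in G_{F_{v_0}}$,
\[a'(\sigma)=d'(\sigma)=\psi_{v_0,1}(\sigma),\qquad b'(\sigma)c'(\tau)=0,\]
where $\psi_{v_0,1}$ now denotes the image of $\tilde\psi_{v_0,1}$ in $A$. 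As $A$ is a domain, after possibly interchanging the roles of $b'$ and $c'$ we may assume $c'|_{G_{F_{v_0}}}=0$.

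For part~(1) it then remains to see $b'|_{G_{F_{v_0}}}=0$. Here I would use that $\tr\rho^o=T^{univ}\bmod\kq$ is $\psi_{v_0,1}$-ordinary in the sense of \ref{psiord}; expanding this as in the proof of \ref{ordpseudo}, for $\sigma,\tau\in G_{F_{v_0}}$ and $\eta\in G_{F,S}$ one gets $\tr\big[(\rho^o(\sigma)-\psi_{v_0,1}(\sigma))(\rho^o(\tau)-\psi_{v_0,2}(\tau))\rho^o(\eta)\big]=0$, which with the relations above collapses to $b'(\sigma)\big(\psi_{v_0,1}(\tau)-\psi_{v_0,2}(\tau)\big)c'(\eta)=0$. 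Since $\rho^o$ is irreducible there is $\eta$ with $c'(\eta)\neq0$, so $b'(\sigma)\big(\psi_{v_0,1}(\tau)-\psi_{v_0,2}(\tau)\big)=0$ for all $\sigma,\tau\in G_{F_{v_0}}$; provided $\psi_{v_0,1}\neq\psi_{v_0,2}$ on $G_{F_{v_0}}$, this forces $b'|_{G_{F_{v_0}}}=0$, i.e. $\rho^o|_{G_{F_{v_0}}}=\psi_{v_0,1}\cdot\id$. I expect the genuine difficulty to be the degenerate possibility $\psi_{v_0,1}=\psi_{v_0,2}$ on $G_{F_{v_0}}$ (equivalently $\chi|_{G_{F_{v_0}}}=\psi_{v_0,1}^2$ in $A$), in which $\tr\rho^o$ sees nothing of the unipotent part; I would dispose of it either by observing that then $\psi_{v_0,1}\bmod\kq$ is a square root of $\chi|_{G_{F_{v_0}}}\bmod\kq$ and invoking that $\kQ$ lies off the reducible locus $\Spec R^{\mathrm{red}}_{0,\cL_0}$ together with the $I$-constraints to rule it out, or by noting that such $\kq$ fall under Case~(2) or Case~(3) instead.

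For part~(2): $\bar\rho_\kq=\rho^o\bmod\km$ is by hypothesis a non-split extension $0\to\mathbf 1\to\bar\rho_\kq\to\bar\chi\to0$, hence defines a non-zero class $[\bar b']\in H^1(G_{F,S},\F(\bar\chi^{-1}))$, and by part~(1) it is split over $G_{F_{v_0}}$, so $[\bar b']\in H^1_{v_0}(F)$. From $\tilde b(\sigma_i)\in I\subseteq\kQ$ ($i=2,\dots,r_s$) one gets, exactly as above via $x(\sigma_i,\tau)=\tilde b(\sigma_i)\tilde c(\tau)\in\kQ^{\ps}$ and irreducibility, that $b'(\sigma_i)=0$ in $A$, so $[\bar b']$ pairs trivially with $\sigma_2,\dots,\sigma_{r_s}$; since $\sigma_1,\dots,\sigma_{r_s}$ is a basis of $H^1_{v_0}(F)^\vee$ and $[\bar b']\neq0$, necessarily $[\bar b']=\lambda b$ for some $\lambda\in\F^\times$ (with $b$ normalised by $\bar b(\sigma_1)=1$, $\bar b(\sigma_i)=0$, $i\geq2$). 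In particular $b'(\sigma_1)\in A^\times$, and conjugating $\rho^o$ by $\diag(b'(\sigma_1)^{-1},1)$ (which fixes $\rho^o(\sigma^*)=\diag(1,-1)$) we may assume $b'(\sigma_1)=1$. Finally, for part~(3), with $b'(\sigma_1)=1$ the relations in $I\subseteq\kQ$ at the topological generators $\tau_{v,i}$ of the pro-$p$ quotient of $G_{F_v}$ ($v\in S\setminus\Sigma_p$), transported to $A$ as before (using $\tilde c(\tau_{v,i})\in\kQ$, $\tilde a(\tau_{v,i})-1,\tilde d(\tau_{v,i})-1\in\kQ$, and $x(\sigma_1,\tau_{v,i})=\tilde b(\sigma_1)\tilde c(\tau_{v,i})$ together with $\tilde b(\tau_{v,i})-\tilde b(\sigma_1)\widetilde{\bar{b}(\tau_{v,i})}\in\kQ$), give $a'(\tau_{v,i})=d'(\tau_{v,i})=1$, $c'(\tau_{v,i})=0$ and $b'(\tau_{v,i})=\widetilde{\bar{b}(\tau_{v,i})}$; as $A$ has characteristic $p$ this last value is just $\bar b(\tau_{v,i})\in\F$, so $\rho^o(\tau_{v,i})=\bar\rho_\kq(\tau_{v,i})\in\GL_2(\F)$. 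Since $\rho^o|_{I_{F_v}}$ is then unipotent with values in $\GL_2(\F)$ while $\det\rho^o|_{G_{F_v}}$ and $\tr\rho^o|_{G_{F_v}}$ lie in $\F$ (reductions of $\chi$, over the characteristic-$p$ ring $A$), a short local analysis at $v$ — pinning $\rho^o$ down on a Frobenius lift from its action on inertia together with trace and determinant, and using $N(v)\equiv1\bmod p$ — gives $\rho^o(G_{F_v})\subseteq\GL_2(\F)$ and $\rho^o|_{G_{F_v}}=\bar\rho_\kq|_{G_{F_v}}$, completing the proof.
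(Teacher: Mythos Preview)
Your argument for part~(1) has a genuine gap. You derive $b'(\sigma)c'(\tau)=0$ only for $\sigma,\tau\in G_{F_{v_0}}$, then try to kill the remaining off-diagonal via $\psi_{v_0,1}$-ordinarity. But notice that in Case~(1) the degenerate situation $\psi_{v_0,1}=\psi_{v_0,2}$ in $A$ is \emph{forced}, not exceptional: since $\tilde a(\sigma)+\tilde d(\sigma)=\tilde\psi_{v_0,1}(\sigma)+\tilde\psi_{v_0,2}(\sigma)$ on $G_{F_{v_0}}$ and Case~(1) gives $\tilde a(\sigma)\equiv\tilde d(\sigma)\equiv\tilde\psi_{v_0,1}(\sigma)\bmod\kQ$, we get $\tilde\psi_{v_0,2}(\sigma)\equiv\tilde\psi_{v_0,1}(\sigma)\bmod\kQ$, hence $\psi_{v_0,1}=\psi_{v_0,2}$ in $A$. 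Your ordinarity identity then reads $b'(\sigma)\cdot 0\cdot c'(\eta)=0$, which is vacuous. Neither proposed workaround helps: $\psi_{v_0,1}=\psi_{v_0,2}$ in $A$ says nothing about $\kQ$ lying in the reducible locus, and it is compatible with (indeed implied by) the Case~(1) hypothesis, so you cannot push $\kq$ into Cases~(2) or~(3).

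The fix is the one you already use in part~(2): let $\tau$ range over \emph{all} of $G_{F,S}$, not just $G_{F_{v_0}}$. For $\sigma\in G_{F_{v_0}}$ you have $\tilde b(\sigma)\in\kQ$, hence $\varphi(x(\sigma,\tau))=\tilde b(\sigma)\tilde c(\tau)\in\kQ$ for every $\tau\in G_{F,S}$, so $x(\sigma,\tau)\in\kQ^{\ps}\subseteq\kq$ and $b'(\sigma)c'(\tau)=0$ in $A$. Irreducibility of $\rho^o$ provides a global $\tau$ with $c'(\tau)\neq 0$, giving $b'(\sigma)=0$ directly; symmetrically $\tilde c(\sigma)\in\kQ$ gives $c'(\sigma)=0$. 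This is exactly the paper's route, and it bypasses ordinarity entirely for part~(1). Your parts~(2) and~(3) are then essentially correct and match the paper; one small simplification in~(3): $\tau_{v,1},\tau_{v,2}$ topologically generate the pro-$p$ quotient of the full $G_{F_v}$ (not just inertia), and $\rho^o|_{G_{F_v}}$ factors through it, so once $\rho^o(\tau_{v,i})\in\GL_2(\F)$ you are done without any separate Frobenius analysis.
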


\begin{proof}
We need some notations as in the proof of proposition \ref{propB}. Let $(T^{univ},\{\psi_{v,1}\}_{v|p})$ be the universal tuple over $R^{\ps,\ord,\{\xi_v\}}_{1}$ (see \ref{Rpsordxiv1}). As in \ref{adx}, we can define functions $a,d,x(\cdot,\cdot)$ associated to $T^{univ}$. Denote the natural map $R^{\ps,\ord,\{\xi_v\}}_{1}\to R_{0,\cL_0}$ by $\varphi$. Then we have $\tilde{a}=\varphi\circ a, a\equiv a'\mod\kq$ and similar results hold for $d,x,\psi_{v,1}$.

Because $\rho^o$ is irreducible, we can choose $\tau\in G_{F,S}$ such that $c'(\tau)\neq 0$. Since $G_{F_{v_0}}$ acts on $\begin{pmatrix} 1 \\ 1 \end{pmatrix}$ via $\tilde\psi_{v_0,1}$, for any $\sigma\in G_{F_{v_0}}$, we have $\tilde{b}(\sigma)=-(\tilde{a}(\sigma)-\tilde{\psi}_{v_0,1}(\sigma))\in\kQ$. Hence $\varphi (x(\sigma,\tau))=\tilde{b}(\sigma)\tilde{c}(\tau)\in\kQ$. This implies $b'(\sigma)c'(\tau)=0$. Thus $b'(\sigma)=0$. Similarly, we can show that $c'(\sigma)=0$. Also by our assumptions, $a'(\sigma)-\psi_{v_0,1}(\sigma)\equiv d'(\sigma)-\psi_{v_0,1}(\sigma)\equiv 0\mod\kq$. Hence $\rho^o|_{G_{F_{v_0}}}$ is scalar and has to be trivial as we fix the determinant and $p\neq 2$. This proves our first claim.

For the second part of the lemma, since $\rho^o|_{G_{F_{v_0}}}$ is trivial, the reduction $\bar\rho_\kq$ is trivial at $v_0$ and hence corresponds to an extension class in $H^1_{v_0}(F)$. Recall that $\tilde{b}(\sigma_2),\cdots,\tilde{b}(\sigma_{r_s})\in \kQ$ by our construction, we can argue as in the previous paragraph and get $b'(\sigma_i)=0$ for $i=2,\cdots,r_s$. As $\bar\rho_\kq$ is a non-split extension, our choice of these $\sigma_i$ in \ref{ChoicekQ} implies that the reduction of $b'(\sigma_1)\mod T$ has to be non-zero and $b'\mod T$ is a non-zero multiple of $\bar{b}$. 

To see the last claim, we note that $\rho^o|_{G_{F_{v}}}$ factors through the pro-$p$ quotient of $G_{F_v}$ for $v\in S\setminus\Sigma_p$. Recall that this quotient is topologically generated by $\tau_{v,1},\tau_{v,2}$. By our construction of $\kQ$,  for $i=1,2$, we have $a'(\tau_{v,i})-1=d'(\tau_{v,i})-1=c'(\tau_{v,i})=0$ and
\[b'(\tau_{v,i})=b'(\sigma_1)\bar{b}(\tau_{v,i})=\bar{b}(\tau_{v,i})\in\F.\]
From these identities, we conclude that $\rho^o(G_{F_v})\subseteq \GL_2(\tilde{\F})\subseteq \GL_2(A)$. 
\end{proof}

\begin{para} \label{case2a}
Now we treat the case that $\tilde{a}(\sigma)-\tilde{\psi}_{v_0,1}(\sigma)\notin \kQ$ for some $\sigma\in G_{F_{v_0}}$. 
 Fix one such $\sigma$.  The idea is to use this element $\tilde{a}(\sigma)-\tilde{\psi}_{v_0,1}(\sigma)$ to find points in the image of $\Spec R_{b,\cL}$.

We will freely use the notation introduced in the first paragraph of the proof of lemma \ref{case1bcL}. Then we have $a(\sigma)-\psi_{v_0,1}(\sigma)\notin \kQ^{\ps}$. Choose $\tau\in G_{F,S}$ such that $\tilde{c}(\tau)\notin\kQ$. This is possible as $\kQ\notin \Spec R^{\mathrm{red}}_{0,\cL_0}$. Note that $\tilde{b}(\sigma)=-\tilde{a}(\sigma)+\tilde{\psi}_{v_0,1}(\sigma)\notin \kQ$. Hence $\varphi(x(\sigma,\tau))=\tilde{b}(\sigma)\tilde{c}(\tau)\notin\kQ$. It follows that $x(\sigma,\tau)\notin\kQ^{\ps}$. Similarly we have $(-a(\sigma)+\psi_{v_0,1}(\sigma))x(\sigma_1,\tau)\notin \kQ^{\ps}$.

Let $x,y$ be the images of $(-a(\sigma)+\psi_{v_0,1}(\sigma))x(\sigma_1,\tau)$ and $x(\sigma,\tau)$ in $R^{\ps,\ord,\{\xi_v\}}_{1}/\kQ^{\ps}$ under the natural quotient map. Let $A_1=R^{\ps,\ord,\{\xi_v\}}_{1}/\kQ^{\ps}$ and $A_2$ be the $A_1$-subalgebra of the fraction field of $A_1$ generated by $\frac{y}{x}$. Denote the maximal ideal of $A_1$ by $\km_1$. The following lemma is inspired by the result of Skinner-Wiles (see Proposition \ref{ht1def}).
\end{para}

\begin{lem} \label{xynex}
$(\km_1,\frac{y}{x}-n)$ is a maximal ideal of $A_2$ for any $n\in \F$.
\end{lem}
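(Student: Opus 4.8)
The plan is to show that the natural $\F$-algebra surjection $\F[t]\twoheadrightarrow A_2/\km_1A_2$ sending $t$ to the image of $y/x$ is in fact an \emph{isomorphism}. Granting this, for every $n\in\F$ we get $A_2/(\km_1,\tfrac yx-n)\cong\F[t]/(t-n)\cong\F$, which is precisely the assertion that $(\km_1,\tfrac yx-n)$ is a maximal ideal. Two things have to be checked: that $\F[t]$ really surjects onto $A_2/\km_1A_2$ (i.e. the relation $xt=y$ becomes trivial mod $\km_1$), and that nothing else is killed.

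For the first point I would record that $x,y\in\km_1$. Since the residual pseudo-character of $R^{\ps,\ord,\{\xi_v\}}_{1}$ is the split $1+\bar\chi$, in which $a\equiv\mathbf 1$ modulo the maximal ideal, and since $\psi_{v_0,1}$ lifts the trivial character, we have $a(\sigma)-\psi_{v_0,1}(\sigma)\in\km_1$ and $x(\sigma_1,\tau)\equiv x(\sigma,\tau)\equiv 0\pmod{\km_1}$ (recall $x(\cdot,\cdot)\equiv \bar b\,\bar c=0$ for a diagonal residual representation, cf.\ \ref{adrep}); hence $x,y\in\km_1$. Because $A_1$ is a domain and $x\neq0$, there is a well-defined $A_1$-algebra surjection $A_1[t]/(xt-y)\twoheadrightarrow A_2$, $t\mapsto y/x$, and reducing it modulo $\km_1$ gives a surjection $\F[t]=\F[t]/(0)\twoheadrightarrow A_2/\km_1A_2$.

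The essential point is that the image of $y/x$ in $A_2/\km_1A_2$ is transcendental over $\F$. For this I would use the geometry of $R_{0,\cL_0}$. With the normalization $\tilde\rho(\sigma^*)=\begin{pmatrix}1&0\\0&-1\end{pmatrix}$ used to construct $R_{0,\cL_0}$, the line $\cL_R$ (generated by $(1,1)$) is $G_{F_{v_0}}$-stable with $G_{F_{v_0}}$ acting via $\tilde\psi_{v_0,1}$, so for $\sigma\in G_{F_{v_0}}$ one has $\tilde a(\sigma)+\tilde b(\sigma)=\tilde\psi_{v_0,1}(\sigma)$, i.e. $\tilde b(\sigma)=-(\tilde a(\sigma)-\tilde\psi_{v_0,1}(\sigma))$. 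Now in $R_{0,\cL_0}/\kQ$ the image of $x(\sigma,\tau)$ is $\tilde b(\sigma)\tilde c(\tau)$ and that of $(a(\sigma)-\psi_{v_0,1}(\sigma))x(\sigma_1,\tau)$ is $(\tilde a(\sigma)-\tilde\psi_{v_0,1}(\sigma))\tilde b(\sigma_1)\tilde c(\tau)$ (cf.\ \ref{adrep}); since $\tilde c(\tau),\ \tilde a(\sigma)-\tilde\psi_{v_0,1}(\sigma)\notin\kQ$ and $R_{0,\cL_0}/\kQ$ is a domain, dividing out these factors and using the compatible embedding $A_1\hookrightarrow R_{0,\cL_0}/\kQ$ gives, in the fraction field of $R_{0,\cL_0}/\kQ$,
\[
\frac{y}{x}=-\,\tilde b(\sigma_1)^{-1}.
\]
But $\tilde b(\sigma_1)$ lies in the maximal ideal of $R_{0,\cL_0}$ (the residual representation attached to $(0,\cL_0)$ is the diagonal $\mathbf 1\oplus\bar\chi$), hence is a non-unit of the local domain $R_{0,\cL_0}/\kQ$; therefore $y/x\notin A_1$, and in fact $y/x$ is not integral over $A_1$, since a monic relation multiplied through by a suitable power of $\tilde b(\sigma_1)$ would force $\tilde b(\sigma_1)$ to divide $1$. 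From this I would conclude that $A_1[t]/(xt-y)$ has no $x$-power-torsion, hence — being an isomorphism after inverting $x$ and having $x$-power-torsion kernel — that the map $A_1[t]/(xt-y)\to A_2$ is an isomorphism; reducing modulo $\km_1$ then yields $A_2/\km_1A_2\cong\F[t]$, finishing the proof.

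The step I expect to be the main obstacle is exactly the implication ``$y/x$ not integral over $A_1$'' $\Rightarrow$ ``$A_1[t]/(xt-y)$ is $x$-torsion-free''. The cleanest argument is to verify that $x,y$ form (part of) a regular sequence in $A_1$, equivalently that $y$ avoids the associated primes of $(x)$; alternatively one can exploit that, by Proposition \ref{psdefcL} together with the displayed identity, $R_{0,\cL_0}/\kQ$ is — after completing at $\kq$ — finite over the ring $A_1[s]/(ys+x)$ and birational to $A_1$, and transport the torsion-freeness statement through this. I would also take care to note that the degenerate case $y/x\in\F$ is genuinely excluded (it would make the conclusion fail for all but one $n$), which is guaranteed by $y/x=-\tilde b(\sigma_1)^{-1}$ having an honest pole.
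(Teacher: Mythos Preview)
Your overall idea---reduce to showing that the image of $t=y/x$ in $A_2/\km_1A_2$ is ``generic'', using the identity $y/x=-\tilde b(\sigma_1)^{-1}$ in $\mathrm{Frac}(R_{0,\cL_0}/\kQ)$---is exactly the right intuition, and your computation of $y/x$ matches what the paper uses. The gap is precisely where you flagged it, and it is a real one: the implication ``$y/x$ not integral over $A_1$'' $\Rightarrow$ ``$A_1[t]/(xt-y)$ is $x$-torsion-free'' is false for general domains. For instance, take $A_1=k[u,v,w]/(uw-v^2)$, $x=u$, $y=v$; then $y/x=v/u$ is not integral over $A_1$, yet the class of $xt^2-w$ in $A_1[t]/(xt-y)$ is nonzero and annihilated by $x$ (one checks $x(xt^2-w)=(xt-y)(xt+y)$). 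So the presentation $A_1[t]/(xt-y)\cong A_2$ can genuinely fail, and neither of your two suggested fixes is carried out: you give no argument that $y$ avoids the associated primes of $(x)$ in our particular $A_1$, and the alternative via finiteness of $R_{0,\cL_0}/\kQ$ over $A_1[s]/(ys+x)$ remains only a sketch.

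The paper avoids this issue entirely by arguing on the other side of the birational picture. Rather than trying to describe $A_2=A_1[t]$, it observes (as you did) that $1/t=\varphi_1(x)/\varphi_1(y)$ equals $\tilde b(\sigma_1)$ up to sign, so that the ring $A_1[1/t]$ sits inside $R_{0,\cL_0}/\kQ$, not merely in the fraction field. Assuming $1\in(\km_1,t-n)$, one first absorbs the $\km_1$-part (using that $f_1(n)\in\km_1$ is a nonunit) to get $(t-n)f_2(t)=1$, then divides by $t^{k+1}$ (where $k=\deg f_2$) to obtain a relation $(1-n\cdot\tfrac1t)\,g_2(\tfrac1t)=(\tfrac1t)^{k+1}$ with $g_2\in A_1[X]$ of degree $\le k$. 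This identity now lives in $R_{0,\cL_0}/\kQ$, and one can reduce modulo the prime $\kq_1$ (the reducible $\F[[T]]$-valued point with $\tilde b(\sigma_1)\mapsto T$); since $\kq_1$ pulls back to $\km_1$, the coefficients of $g_2$ land in $\F$, yielding $(1-nT)g_3(T)=T^{k+1}$ in $\F[[T]]$ with $\deg g_3\le k$, an immediate contradiction. The point is that $1/t$---unlike $t$---specializes to the uniformizer $T$, so no structural statement about $A_2$ is needed.
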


\begin{proof}
Write $t=\frac{y}{x}$. Suppose $(\km_1,\frac{y}{x}-n)$ is not a maximal ideal. Then we can find $f_1(X),f_2(X)\in A_1[X]$ such that all the coefficients of $f_1(X)$ belong to $\km_1$ and
\[f_1(t)+(t-n)f_2(t)=1.\]

Note that $f_1(n)\in\km_1$. Hence after possibly replacing $f_2(X)$ by another polynomial, we may assume $f_1(t)=0$, i.e. $(t-n)f_2(t)=1$. Suppose $f_2(X)$ has degree $k$. Then 
\[(1-\frac{n}{t})g_2(\frac{1}{t})=\frac{1}{t^{k+1}}\]
for some $g_2(X)\in A_1[X]$ of degree $k$.

Consider the natural inclusion $\varphi_1:R^{\ps,\ord,\{\xi_v\}}_{1}/\kQ^{\ps}\hookrightarrow R_{0,\cL_0}/\kQ$. Then $\frac{\varphi_1(x)}{\varphi_1(y)}$ is the reduction of $\tilde{b}(\sigma_1)$ modulo $\kQ$. Therefore
\[(1-n\tilde{b}(\sigma_1))g_2(\tilde{b}(\sigma_1))\equiv \tilde{b}(\sigma_1)^{k+1} \mod \kQ.\]

Recall that there are a prime ideal $\kq_1$ of $R_{0,\cL_0}/\kQ$ and a natural isomorphism $R_{0,\cL_0}/\kq_1\stackrel{\sim}{\longrightarrow} \F[[T]]$ sending $\tilde{b}(\sigma_1)$ to $T$. Moreover, the pull-back of $\kq_1$ to $R^{\ps,\ord,\{\xi_v\}}_{1}$ is the maximal ideal. Now we modulo the above equation by $\kq_1$. We get
\[(1-nT)g_3(T)=T^{k+1}\]
for some polynomial $g_3(t)\in\F[T]$ of degree at most $k$. But this is clearly impossible. Thus $(\km_1,\frac{y}{x}-n)$ is a maximal ideal.
 \end{proof}

Write $\cL$ as $\F\cdot\begin{pmatrix} n \\ 1\end{pmatrix}$ for some $n\in \F$. We define $\km_2\in\Spec A_2$ to be the maximal ideal $(\km_1,\frac{y}{x}-n)$ and denote by $\widehat{(A_2)_{\km_2}}$ the $\km_2$-adic completion of $A_2$. Let $V_1\subseteq \Spec A_1$ be the closed subset defined by $(x)$ and $V_2\subseteq \Spec A_1$ be the closed subset corresponding to reducible deformations (see \ref{redlrps}). Then their union $V$ is a proper closed subset of $\Spec A_1$. We denote its complement by $U$.

Consider $\Spec \widehat{(A_2)_{\km_2}}\to \Spec (A_2)_{\km_2}\to \Spec A_1$. The first map is surjective and the generic point of $A_1$ is in the image of the second map. Hence the preimage $U'$ of $U$ in $\Spec \widehat{(A_2)_{\km_2}}$ is a non-empty open set. Let $\kq'$ be a one-dimensional prime in $U'$ and $\kq=\kq'\cap A_1$. This is a pro-modular prime as it contains $\kQ^{\ps}$. Also it is not in reducible locus by our construction. We claim that $\kq$ is a nice prime and in the image of $\Spec R_{b,\cL}\to\Spec R^{\ps,\ord,\{\xi_v\}}_{1}$.

As in \ref{case1bcLnot}, we can find a representation $\rho^o:G_{F,S}\to\GL_2(A)$ with $A=\tilde{\F}[[T]]$ the normalization of $R^{\ps,\ord,\{\xi_v\}}_{1}/\kq$ such that $\rho^o(\sigma^*)=\begin{pmatrix}1 & 0\\ 0 & -1 \end{pmatrix}$ and the mod $T$ reduction $\bar{\rho}_\kq$ of $\rho^o$ is of the form $\begin{pmatrix}1 & * \\ 0 & \bar{\chi} \end{pmatrix},~*\neq 0$. Moreover if we write $\rho^o(g)=\begin{pmatrix} a'(g) & b'(g) \\ c'(g) & d'(g) \end{pmatrix}$, then $a'\equiv a\mod \kq, d'\equiv d\mod\kq$.

\begin{lem} \label{case2bcL}
\hspace{2em}
\begin{enumerate}
\item $\rho^o(G_{F_{v_0}})$ acts on $A\cdot \begin{pmatrix} n_0\\1\end{pmatrix}$ via $\psi_{v_0,1}$ for some $n_0\in A$ and  $n_0\equiv b'(\sigma_1)n\mod (T)$.
\item $\bar{\rho}_\kq|_{G_{F_{v_0}}}$ is trivial.
\end{enumerate}
\end{lem}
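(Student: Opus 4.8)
The plan is to read off everything from the fact that, $\kq$ being pro-modular, the pseudo-character $T^{univ}\bmod\kq$ is $\psi_{v_0,1}$-ordinary in the sense of Definition \ref{psiord} — this is precisely the condition built into $R^{\ps,\ord,\{\xi_v\}}_{1}$ at the place $v_0$. Write $\psi_{v_0,1},\psi_{v_0,2}=\chi|_{G_{F_{v_0}}}\psi_{v_0,1}^{-1}$ for the reductions mod $\kq$ of the universal characters (elements of $A^\times$), and for $\sigma,\tau\in G_{F_{v_0}}$ put $M(\sigma)=\rho^o(\sigma)-\psi_{v_0,1}(\sigma)$ and $N(\tau)=\rho^o(\tau)-\psi_{v_0,2}(\tau)$ in $M_2(A)$. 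Over a finite extension $K'$ of the fraction field $K$ of $A$ large enough that $\rho^o\otimes K'$ is absolutely irreducible (it is not dihedral, cf. the discussion in \ref{Rpsordxiv1}), the $K'$-span of $\{\rho^o(\eta):\eta\in G_F\}$ is all of $M_2(K')$, so the ordinary identity $\tr\bigl(M(\sigma)N(\tau)\rho^o(\eta)\bigr)=0$ forces $M(\sigma)N(\tau)=0$ for all $\sigma,\tau\in G_{F_{v_0}}$; the entries lie in $A$, so this already holds over $A$.

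First I would pin down the stable line over $K$. By construction $\kq$ lies outside the locus $V(x)$, where $x$ is the image in $A_1$ of $(a(\sigma)-\psi_{v_0,1}(\sigma))x(\sigma_1,\tau)$; using $x(g,h)=b'(g)c'(h)$ (Remark \ref{adrep}) this means $a'(\sigma)-\psi_{v_0,1}(\sigma)$, $b'(\sigma_1)$ and $c'(\tau)$ are all nonzero in $A$. In particular $M(\sigma)\neq0$; since the characteristic polynomial of $\rho^o(\sigma)$ is $(X-\psi_{v_0,1}(\sigma))(X-\psi_{v_0,2}(\sigma))$ (its trace is $T(\sigma)$, its determinant $\chi(\sigma)$), $M(\sigma)$ has determinant zero, hence rank one over $K$ with kernel $K\cdot\binom{-n_0}{1}$, where $n_0:=b'(\sigma)/(a'(\sigma)-\psi_{v_0,1}(\sigma))$. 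If $N$ vanished identically on $G_{F_{v_0}}$ then $\rho^o|_{G_{F_{v_0}}}$ would be the scalar $\psi_{v_0,2}$, forcing $\psi_{v_0,1}=\psi_{v_0,2}$ and $M(\sigma)=0$ — impossible; so $N(\tau_0)\neq0$ for some $\tau_0$, and then $M(\sigma)N(\tau_0)=0$ together with $\det N(\tau_0)=0$ gives $\im N(\tau_0)=\ker M(\sigma)$. For every $\sigma'\in G_{F_{v_0}}$ one has $M(\sigma')N(\tau_0)=0$ and $\det M(\sigma')=0$, so either $M(\sigma')=0$ (i.e. $\rho^o(\sigma')=\psi_{v_0,1}(\sigma')$ is scalar) or $\ker M(\sigma')\supseteq\im N(\tau_0)$ and hence equals $K\binom{-n_0}{1}$; in both cases $G_{F_{v_0}}$ stabilises $K\binom{-n_0}{1}$, acting through $\psi_{v_0,1}$.

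Next I would check $n_0\in A$ with $n_0\equiv b'(\sigma_1)n\bmod(T)$. From $x(\sigma,\tau)=b'(\sigma)c'(\tau)$ and $x(\sigma_1,\tau)=b'(\sigma_1)c'(\tau)$ one gets $n_0=b'(\sigma_1)\cdot(y/x)$, with $y,x$ the images in $A_1/\kq$ of $x(\sigma,\tau)$ and $(a(\sigma)-\psi_{v_0,1}(\sigma))x(\sigma_1,\tau)$. Since $\kq$ is the contraction to $A_1$ of a one-dimensional prime $\kq'$ of $\widehat{(A_2)_{\km_2}}$, chosen with $\km_2=(\km_1,\,y/x-n)$, the image of $y/x$ in the fraction field of $A_1/\kq$ lies in the normalisation $A=\tilde\F[[T]]$ and is $\equiv n$ modulo $\km_A=(T)$; hence $n_0=b'(\sigma_1)(y/x)\in A$ and $n_0-b'(\sigma_1)n=b'(\sigma_1)(y/x-n)\in(T)$. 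Therefore $\ker M(\sigma)\cap A^{\oplus2}=A\cdot\binom{-n_0}{1}$ is a rank-one direct summand on which $G_{F_{v_0}}$ acts via $\psi_{v_0,1}$, which is the first assertion. For the second, reduce mod $T$ (writing $\bar n_0=n_0\bmod T$): since $\psi_{v_0,1}\equiv1$, the representation $\bar{\rho}_\kq|_{G_{F_{v_0}}}$ fixes the line $\F\binom{-\bar n_0}{1}$ with trivial action, while by the chosen shape $\bar{\rho}_\kq=\left(\begin{smallmatrix}1&*\\0&\bar\chi\end{smallmatrix}\right)$ and $\bar\chi|_{G_{F_{v_0}}}=\mathbf1$ it also fixes $\F\binom{1}{0}$ with trivial action; these two lines are distinct (the second coordinate of $\binom{-\bar n_0}{1}$ is a unit), so $\bar{\rho}_\kq|_{G_{F_{v_0}}}$ is semisimple and unipotent, hence trivial.

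The step I expect to give the most trouble is the verification that the image of $y/x$ genuinely lands in the discrete valuation ring $A$ and not merely in its fraction field: because $A_2=A_1[y/x]$ need not be finite over $A_1$, one must first descend $\kq'$ to a prime $\kq''$ of $(A_2)_{\km_2}$ whose completion is $\kq'$, then use $(A_2/\kq'')\hookrightarrow\operatorname{Frac}(A_1/\kq)$ to see $y/x$ lies in that fraction field, and finally that it is integral over $A_1/\kq$ (after clearing the nonzero element $x$ it becomes a product of elements of $A_1/\kq$) so that it lies in the normalisation $A$. Everything else is the by-now-routine manipulation of $M(\sigma)N(\tau)=0$ and of $x(g,h)=b'(g)c'(h)$, essentially as in the proofs of Lemmas \ref{ordpseudo} and \ref{case1bcL}.
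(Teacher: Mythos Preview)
Your proof is correct in outline and matches the paper's approach: both identify the stable line at $v_0$ as $\ker(\rho^o(\sigma)-\psi_{v_0,1}(\sigma))$, compute $n_0=b'(\sigma_1)(y/x)$, and reduce mod $T$. Where the paper tacitly invokes Lemma~\ref{ordpseudo} to pass from the single element $\sigma$ to all of $G_{F_{v_0}}$, you instead spell out the identity $M(\sigma')N(\tau')=0$ directly; these are equivalent. Your argument for part~(2) via two distinct pointwise-fixed lines is likewise equivalent to the paper's remark that the reduced line is not $\F\cdot\binom{1}{0}$. (A minor point: $\rho(\kq)$ is already absolutely irreducible over $k(\kq)$ by the construction in \ref{tar}, so the extension to $K'$ is unnecessary, and the parenthetical about being non-dihedral is not what controls absolute irreducibility here.)

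The one genuine gap is exactly where you flag it, but your proposed fix does not work: the relation $x\cdot(y/x)=y$ is \emph{linear}, not monic, so it does not exhibit $y/x$ as integral over $A_1/\kq$. Here is a clean way to close it. With $\kq''=\kq'\cap(A_2)_{\km_2}$, set $C:=(A_2)_{\km_2}/\kq''$. Since $x\notin\kq$, the image of $A_2=A_1[y/x]$ in $C$ lies inside $\operatorname{Frac}(A_1/\kq)=\operatorname{Frac}(A)$, and hence so does its localization $C$; thus $\operatorname{Frac}(C)=\operatorname{Frac}(A)$. Moreover $C$ is local and not a field (since $\kq'$ is not maximal, neither is $\kq''$ by faithful flatness). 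The integral closure $\tilde C$ of $C$ in $\operatorname{Frac}(A)$ therefore contains $A$, while any overring of the DVR $A$ in its fraction field is either $A$ or $\operatorname{Frac}(A)$; the latter would make $\operatorname{Frac}(A)$ integral over $C$ and force $C$ to be a field. Hence $\tilde C=A$, so $C\subseteq A$ and in particular $y/x\in A$. Lying-over for the integral extension $C\subseteq A$ shows the inclusion is a local homomorphism, and since $y/x-n\in\km_2 C$ this gives $y/x-n\in TA$. (The paper also asserts this step without detail, so you are not omitting anything the paper supplies.)
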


\begin{proof}
It follows from our construction that $x=-(a(\sigma)-\psi_{v_0,1}(\sigma))x(\sigma_1,\tau)\notin \kq$. Hence 
\[(a'(\sigma)-\psi_{v_0,1}(\sigma))b'(\sigma_1)c'(\tau)\neq 0.\]
Since $\kq$ is a pull-back of a prime ideal of $\widehat{(A_2)_{\km_2}}$, it is clear that
\[\frac{y}{x}-n\mod \kq=-\frac{b'(\sigma)}{(a'(\sigma)-\psi_{v_0,1}(\sigma))b'(\sigma_1)}-n\in T\tilde{\F}[[T]].\]
Therefore $\frac{b'(\sigma)}{a'(\sigma)-\psi_{v_0,1}(\sigma)}\equiv -b'(\sigma_1)n\mod (T)$. Note that $a'(\sigma)-\psi_{v_0,1}(\sigma)\neq 0$. The matrix $\rho^o(\sigma)-\psi_{v_0,1}(\sigma)$ has rank one and its kernel is generated by $A\cdot \begin{pmatrix} n_0\\1\end{pmatrix}$  with $n_0\in b'(\sigma_1)n+(T)$. Hence $\rho^o(G_{F_{v_0}})$ acts on $A\cdot \begin{pmatrix} n_0\\1\end{pmatrix}$ via $\psi_{v_0,1}$. Note that reduction of this line modulo $T$ cannot be $\F\cdot \begin{pmatrix} 1 \\ 0 \end{pmatrix}$. Hence $\bar{\rho}_\kq|_{G_{F_{v_0}}}$ is trivial.
\end{proof}

Now we can argue as in the proof of lemma \ref{case1bcL} and conclude that $\bar{\rho}_\kq$ is the extension class $b$ up to a scalar. After renormalizing $\rho^o$, we may assume $b'(\sigma_1)=1$ and $\bar{\rho}_\kq=\bar{\rho}_b$. Then lemma \ref{case2bcL} implies that $\kq$ is in the image of $\Spec R_{b,\cL}\to\Spec R^{\ps,\ord,\{\xi_v\}}_{1}$. Arguing as in the last part of the proof of lemma \ref{case1bcL}, it is easy to see that $\kq$ is nice. Thus we found a nice prime in the image of $\Spec R_{b,\cL}\to\Spec R^{\ps,\ord,\{\xi_v\}}_{1}$ in this case.

\begin{para} \label{case3d}
The last case where $\tilde{d}(\sigma)-\tilde{\psi}_{v_0,1}(\sigma)\notin \kQ$ for some $\sigma\in G_{F_{v_0}}$ can be proved similarly. We omit the details here. Thus we have finished the proof of proposition \ref{RbcLpro-modular}.
\end{para}

\begin{cor} \label{corC}
Assume $\bar\chi$ is quadratic and 
\begin{itemize}
\item $[F:\Q]-6|S|+6|\Sigma_p|-3>\delta_F+\dim_\F H^1_{v_0}(F)$.
\end{itemize}
We have 
\begin{enumerate}
\item A prime $\kp\in\Spec R^{\ps,\ord,\{\xi_v\}}_{1}$ is pro-modular if $\rho(\kp)$ is irreducible.
\item $R^{\ps,\ord,\{\xi_v\}}_{1}$ is a finite $\Lambda_F$-algebra. See the discussion above Theorem \ref{thmC} for the definition of this map.
\item If $\kp$ is a maximal ideal of $R^{\ps,\ord,\{\xi_v\}}_{1}[\frac{1}{p}]$ such that 
\begin{itemize}
\item $\rho(\kp)$ is irreducible.
\item Write $\rho(\kp)|_{G_{F_v}}\cong \begin{pmatrix} \psi_{v,1} & * \\ 0 & \psi_{v,2}\end{pmatrix}$. We assume $\psi_{v,1}$ is Hodge-Tate and has strictly less Hodge-Tate number than $\psi_{v,2}$ for any $v|p$ and any embedding $F_v\hookrightarrow \overbar{\Q_p}$.
\end{itemize}
Then $\rho(\kp)$ comes from a twist of a Hilbert modular form.
\end{enumerate}
\end{cor}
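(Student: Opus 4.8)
The plan is to follow the proof of Corollary \ref{corB}, with the rings $R^{\ord}_b$ of that section replaced by the deformation rings $R_{b,\cL}$ attached to admissible pairs and with Propositions \ref{R0cLmod} and \ref{RbcLpro-modular} in place of Proposition \ref{propB}. Note first that the hypothesis $[F:\Q]-6|S|+6|\Sigma_p|-3>\delta_F+\dim_\F H^1_{v_0}(F)$ implies the weaker inequality required in Proposition \ref{R0cLmod}, so that under the assumptions of the corollary the image of $\Spec R_{b,\cL}\to\Spec R^{\ps,\ord,\{\xi_v\}}_1$ is pro-modular for \emph{every} admissible pair $(b,\cL)$, including the (essentially unique) one with $b=0$.

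For part (1), let $\kp\in\Spec R^{\ps,\ord,\{\xi_v\}}_1$ with $\rho(\kp)$ irreducible. Since the irreducible locus is open, we may choose a one-dimensional prime $\kp'\supseteq\kp$ with $\rho(\kp')$ still irreducible, and it suffices to realize $\kp'$ in the image of $\Spec R_{b,\cL}$ for some admissible pair $(b,\cL)$: the more generic point $\kp$ then also lies in this image, by the going-down/local-isomorphism properties — Proposition \ref{psdefcL} when $b=0$, and the analogue of Corollary \ref{psccomp} when $b\neq0$ (where $\bar\rho_b$ is a genuine non-split extension, and the ordinary conditions, the $\cL$-line at $v_0$, and the $\xi_v$-conditions are transported between the two sides exactly as the ordinary conditions are handled in \ref{rdxbq}) — and hence $\kp$ is pro-modular. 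To realize $\kp'$: after enlarging $E$, pick a lattice $\rho^o:G_{F,S}\to\GL_2(A)$ of $\rho(\kp')$ with $A$ the normalization of $R^{\ps,\ord,\{\xi_v\}}_1/\kp'$; it carries the ordinary data $\{\psi_{v,1}\}_{v\mid p}$, and by Lemma \ref{ordpseudo} at $v_0$ there is a $G_{F_{v_0}}$-stable rank-one direct summand $\cL_{v_0}$ on which $G_{F_{v_0}}$ acts via $\psi_{v_0,1}$. Writing $\bar\rho_b$ for the residual representation (an extension of $\bar\chi$ by $\mathbf 1$, of class $b\in H^1(F)$) and $\cL$ for the reduction of $\cL_{v_0}$, one checks that the lattice can be arranged so that $(b,\cL)$ is admissible, that is $b\in H^1_{v_0}(F)$ (so $\bar\rho_b|_{G_{F_{v_0}}}$ is split) and $\cL$ is not $G_F$-stable; this uses that $\bar\chi$ is quadratic, hence $\neq\mathbf 1$, and proceeds by the same kind of case distinction (according to whether $\tilde a-\tilde\psi_{v_0,1}$ or $\tilde d-\tilde\psi_{v_0,1}$ vanishes identically on $G_{F_{v_0}}$) as in the proof of Proposition \ref{RbcLpro-modular}.

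For part (2), by part (1) every prime of $R^{\ps,\ord,\{\xi_v\}}_1$ outside its reducible locus is pro-modular, so the $\Lambda_F$-equivariant map $R^{\ps,\ord,\{\xi_v\}}_1\to\T^{\ord}_\km\times R^{\ps,\mathrm{red}}$ has nilpotent kernel, where $R^{\ps,\mathrm{red}}$ denotes the reducible locus. It therefore suffices to see that both factors are finite $\Lambda_F$-algebras: for $\T^{\ord}_\km$ this is Hida's finiteness theorem (subsection \ref{hidatheory}), and $R^{\ps,\mathrm{red}}$ is, by the same argument as in \ref{redlrps}, a quotient of $\cO[[G_{F,S}^{\mathrm{ab}}(p)]]$, which is finite over $\Lambda_F$ by global class field theory. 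For part (3), let $\kp$ be a maximal ideal of $R^{\ps,\ord,\{\xi_v\}}_1[\tfrac1p]$ with $\rho(\kp)$ irreducible and, for every $v\mid p$, $\psi_{v,1}$ de Rham with strictly smaller Hodge--Tate numbers than $\psi_{v,2}$ at every embedding; since $\chi|_{G_{F_v}}$ is de Rham and $F_v/\Q_p$ is unramified, $\psi_{v,2}=\chi|_{G_{F_v}}\psi_{v,1}^{-1}$ is de Rham too and $\psi_{v,1}|_{I_{F_v}}$ is a locally algebraic character of the appropriate weight. By part (1), $\kp$ is pro-modular, and then the description of finite-level ordinary forms in \ref{finlevhecke} shows $\kp$ comes from a classical ordinary Hilbert modular form on $D^\times$; Jacquet--Langlands transfers it to a twist of a Hilbert modular form.

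The main obstacle is the middle step of the second paragraph: given an irreducible one-dimensional point together with its ordinary structure at $v_0$, producing a lattice whose reduction, paired with the $\psi_{v_0,1}$-line, is an \emph{admissible} pair — which forces one into the case analysis and uses $\bar\chi\neq\mathbf 1$ — together with the local-isomorphism statement for $R_{b,\cL}$ with $b\neq0$ that is needed to propagate pro-modularity from one-dimensional primes to the generic points of components.
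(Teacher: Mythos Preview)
Your overall plan—mimic the proof of Corollary~\ref{corB}, replacing $R^{\ord}_b$ by $R_{b,\cL}$ and invoking Propositions~\ref{R0cLmod}, \ref{RbcLpro-modular}—is exactly what the paper does, and your treatment of parts~(2) and~(3) is fine. The substantive gap is in your handling of the central step of part~(1): realizing a one-dimensional irreducible prime $\kp'$ in the image of $\Spec R_{b,\cL}$ for some admissible pair.

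Your case distinction (whether $\tilde a-\tilde\psi_{v_0,1}$ or $\tilde d-\tilde\psi_{v_0,1}$ vanishes on $G_{F_{v_0}}$) is borrowed from the proof of Proposition~\ref{RbcLpro-modular}, but that trichotomy was designed for a different task (locating a nice prime inside a fixed $R_{0,\cL_0}$), and it is too coarse here: it only tells you whether the $\psi_{v_0,1}$-line has a nonzero component along each $\sigma^*$-eigenvector, not how to choose the lattice. The paper's argument is more concrete: fix eigenvectors $\mathbf{e}_1,\mathbf{e}_2$ of $\sigma^*$ so that $A\mathbf{e}_1\oplus A\mathbf{e}_2=A[G_{F,S}]\cdot\mathbf{e}_2$ and $A[G_{F,S}]\cdot\mathbf{e}_1=A\mathbf{e}_1\oplus AT^n\mathbf{e}_2$; write the $\psi_{v_0,1}$-line as $A(f_1\mathbf{e}_1+f_2\mathbf{e}_2)$; then split into three cases according to the relative $T$-adic valuations of $f_1,f_2$ (and $n$). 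In two of the cases one gets an admissible pair with $b\neq0$ directly or after \emph{twisting by $\chi^{-1}$}—this is where the hypothesis that $\bar\chi$ is quadratic is actually used, and you do not mention the twist. In the remaining case one rescales to the lattice $Af_1\mathbf{e}_1\oplus Af_2\mathbf{e}_2$, whose reduction is split and whose $\cL$ is visibly not $G_F$-stable, giving an admissible pair with $b=0$.

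A second issue: you invoke Proposition~\ref{psdefcL} to pass from the one-dimensional $\kp'$ back to the more generic $\kp$ when $b=0$. But Proposition~\ref{psdefcL} is a dimension inequality, not a local-surjectivity statement, so it does not furnish the going-down you need. For $b\neq0$ the analogue of Corollary~\ref{psccomp} (transported through the extra ordinary, $\cL$-, and $\xi_v$-conditions) does the job, as you say; for $b=0$ you must argue differently or arrange the choice of $\kp'$ and lattice so as to land in a $b\neq0$ case (possibly after the twist).
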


\begin{proof}
The proof is almost the same as the proof of corollary \ref{corB}. We will show that up to twist, each prime $\kp$  is in the image of $\Spec R_{b,\cL}$ for some admissible pair $(b,\cL)$ if $\rho(\kp)$ is irreducible and $R^{\ps,\ord,\{\xi_v\}}_{1}/\kp$ has dimension one. 

Denote by $A$ the normalization of $R^{\ps,\ord,\{\xi_v\}}_{1}/\kp$ and by $T$ a uniformizer of $A$. We define a basis of the representation space of $\rho(\kp)$ in the following way: let $\mathbf{e}_2$ be a non-zero eigenvector of $\sigma^*$ with eigenvalue $-1$. The lattice $A[G_{F,S}]\cdot \mathbf{e}_2$ can be written as $A\cdot\mathbf{e}_1\oplus A\cdot \mathbf{e}_2$ for some vector $\mathbf{e}_1$ fixed by $\sigma^*$. Then $A[G_{F,S}]\cdot \mathbf{e}_1=A\cdot\mathbf{e}_1\oplus A\cdot T^n\mathbf{e}_2$ for some integer $n>0$. Let $\cL_0$ be a line where $\rho(\kp)(G_{F_{v_0}})$ acts via $\psi_{v,1}$. Suppose $\cL_0$ is generated by $f_1\mathbf{e}_1+f_2\mathbf{e}_2$ with $f_1,f_2\in A$. For an element $f\in A$, denote its $T$-adic valuation by $v_T(f)$. There are three possibilities:
\begin{enumerate}
\item $v_T(f_1)\geq v_T(f_2)$.
\item $v_T(f_2)-n<v_T(f_1)< v_T(f_2)$.
\item $v_T(f_1)\leq v_T(f_2)-n$.
\end{enumerate}

In the first case, we consider the $G_{F,S}$-stable lattice $A\cdot\mathbf{e}_1\oplus A\cdot \mathbf{e}_2$. Its reduction modulo $T$ is a non-split extension of $\bar{\chi}$ by $\mathbf{1}$. Moreover, the reduction of $\cL_0$ is not $\mathbf{e}_1$ modulo $T$. Hence $\kp$ is in the image of $\Spec R_{b,\cL}$ for some admissible pair $(b,\cL),b\neq 0$.

In the second case, we can consider the $G_{F,S}$-stable lattice $A\cdot f_1\mathbf{e}_1\oplus A\cdot f_2\mathbf{e}_2$. The mod $T$ reduction of this lattice is $\mathbf{1}\oplus\bar\chi$ and the reduction of $\cL_0$ is clearly not $G_{F,S}$-stable. Hence $\kp$ is in the image of $\Spec R_{0,\cL}$ for some admissible pair $(0,\cL)$.

In the last case, we can twist everything by $\chi^{-1}$ and this case is reduced to the first case. Thus in any case we can apply the results in proposition \ref{R0cLmod}, \ref{RbcLpro-modular} and conclude that $\kp$ is pro-modular.
\end{proof}

\subsection{Proof of Theorem \ref{thmC}}
\begin{para}
This part is almost the same as \ref{PotthmB}. Note that as in \ref{conF2}, after base change, we may always assume 
\begin{itemize}
\item $\bar\chi$ is quadratic and $\bar\chi|_{G_{F_v}}=\mathbf{1}$ for any $v\in S$, 
\item $\chi$ is unramified everywhere outside of places above $p$.
\end{itemize} 
One key step in  \ref{PotthmB} is to find a totally real field extension $F_1/F$ where we can apply corollary \ref{corC}. This will follow from lemma \ref{bcbsel} below. Once we have lemma \ref{bcbsel}, the rest of the proof of Theorem \ref{thmC} is identical to \ref{PotthmB}. We omit the details here.
\end{para}

\begin{lem} \label{bcbsel}
There exists a totally real field extension $F_1/F$ such that 
\begin{itemize}
\item $F_1$ is abelian over $\Q$.
\item $p$ is unramified in $F_1$.
\item Let $S_1$ (resp. $\Sigma_{p,1}$) be the set of places of $F_1$ above $S$ (resp. $p$) and $v_0$ be a place of $F_1$ above $p$. Then 
\[[F_1:\Q]-6|S_1|+6|\Sigma_{p,1}|-3>\delta_{F_1}+\dim_\F H^1_{v_0}(F_1),\] 
where $H^1_{v_0}(F_1)$ is the kernel of $H^1(G_{F_1,S_1},\F(\bar\chi^{-1}))\to H^1(G_{F_{1,v_0}},\F(\bar\chi^{-1}))$. 
\end{itemize}
\end{lem}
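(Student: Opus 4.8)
The statement of Lemma~\ref{bcbsel} is a purely group-theoretic/arithmetic assertion about the existence of an abelian totally real field $F_1/F$ realizing a favourable numerical inequality. The plan is to build $F_1$ in two stages, exactly mirroring the construction of $F_3$ in \ref{Washington}: first a base change to kill ramification and make degrees even and all the relevant residue degrees large, then a base change along a cyclotomic $\mathbb{Z}_{l_0}$-extension to drive the degree $[F_1:\mathbb{Q}]$ up while controlling the Selmer group via Washington's theorem \cite{Wa78}. Concretely, first I would choose an abelian totally real $F'/F$, unramified at $p$, with $F'/\mathbb{Q}$ abelian of even degree, with $F'(\bar\chi)\cap F' = F'$ (so $\bar\chi|_{G_{F'_v}}$ stays trivial at $v|p$ and $\bar\chi|_{G_{F'}}$ becomes unramified outside $p$), and such that for every place $w$ of $F'$ above $S\setminus\Sigma_p$ we have $2\mid [k(w):k(v)]$, $p\mid N(w)-1$, and $\bar\chi|_{G_{F'_w}}=\mathbf 1$ --- this is possible because $\bar\chi$ extends to $G_{\mathbb{Q}}$, just as in \ref{conF2}.

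Next, pick a rational prime $l_0\equiv 1$ modulo the order of $\bar\chi$ and larger than $N(v)$ for all $v\in S'$ (the places of $F'$ above $S$), and take $F_1$ inside the cyclotomic $\mathbb{Z}_{l_0}$-extension of $F'$ of sufficiently large $l_0$-power degree so that $|S_1|\le \frac{1}{12}[F_1:\mathbb{Q}]$ and, by Washington \cite{Wa78}, $|\mathrm{Cl}(F_1(\bar\chi))[p]|\le \frac{1}{12}[F_1:\mathbb{Q}]$. Since $F_1/\mathbb{Q}$ is abelian, Leopoldt's conjecture holds for $F_1$, so $\delta_{F_1}=0$. The bound on $\dim_\F H^1_{v_0}(F_1)$ then follows by the same argument as in Lemma~\ref{ubdimh1}: the kernel $K$ of the further restriction map into $\bigoplus_{v\in S_1\setminus\Sigma_p} H^1(G_{F_{1,v}},\F(\bar\chi^{-1}))/H^1(G_{k(v)},\F(\bar\chi^{-1}))$ injects into $\Hom(G_{F_1(\bar\chi)},\F)(\bar\chi^{-1})$ with image consisting of everywhere-unramified classes, hence $\dim_\F K \le \dim_{\F_p}\mathrm{Cl}(F_1(\bar\chi))[p]$; meanwhile each local term at $v\in S_1\setminus\Sigma_p$ contributes at most $1$ (as $\bar\chi$ is trivial there), and at $v\in\Sigma_p$ each term has dimension $[F_{1,v}:\Q_p]+e_\chi\le [F_{1,v}:\Q_p]+1$. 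Summing, $\dim_\F H^1_{v_0}(F_1)\le \frac{1}{12}[F_1:\mathbb{Q}] + \frac{1}{12}[F_1:\mathbb{Q}] + |\Sigma_p| + |S_1| - |\Sigma_p| = \frac{1}{6}[F_1:\mathbb{Q}]+|S_1|$, say, which can be absorbed since $|S_1|\le\frac{1}{12}[F_1:\mathbb{Q}]$.

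Finally I would assemble the inequality. With $\delta_{F_1}=0$ and the above bound, the desired inequality $[F_1:\mathbb{Q}]-6|S_1|+6|\Sigma_{p,1}|-3 > \delta_{F_1}+\dim_\F H^1_{v_0}(F_1)$ reduces, after dropping the positive term $6|\Sigma_{p,1}|$, to showing $[F_1:\mathbb{Q}] - 6|S_1| - 3 > \frac{1}{6}[F_1:\mathbb{Q}]+|S_1|$, i.e.\ $\frac{5}{6}[F_1:\mathbb{Q}] > 7|S_1|+3$; since $|S_1|\le\frac{1}{12}[F_1:\mathbb{Q}]$ this is $\frac{5}{6}[F_1:\mathbb{Q}] > \frac{7}{12}[F_1:\mathbb{Q}]+3$, which holds once $[F_1:\mathbb{Q}]$ is large enough --- and we are free to take the $l_0$-power degree as large as we like. (I would double-check the exact constants in Lemma~\ref{ubdimh1}'s analogue here, since the fraction bookkeeping with $S_1\setminus\Sigma_p$ versus $\Sigma^o$ is slightly different from \ref{Washington}, but the slack is comfortable.)

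**Main obstacle.** The only genuine subtlety is the first base-change step: arranging simultaneously that $F'/\mathbb{Q}$ is abelian totally real, $p$ is unramified, degrees/residue degrees at $S\setminus\Sigma_p$ are even with $p\mid N(w)-1$, and $\bar\chi$ trivializes at all those places --- i.e.\ that one can do this within the abelian-over-$\mathbb{Q}$ world. This works precisely because $\bar\chi$ is assumed to extend to $G_{\mathbb{Q}}$, so its fixed field is an abelian number field and we can adjoin suitable cyclotomic pieces (roots of unity of appropriate order to force the splitting and residue-degree conditions) while staying abelian over $\mathbb{Q}$ and unramified at $p$; this is exactly the mechanism used in \ref{conF1}--\ref{conF2}, and I would cite that construction rather than redo it. Everything after that (Washington, Leopoldt for abelian fields, the Selmer bound, the arithmetic) is routine given the results already in the excerpt.
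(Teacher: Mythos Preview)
Your Selmer bound is where the argument breaks. The group $H^1_{v_0}(F_1)$ has a local condition at a \emph{single} place $v_0$, not at a set $\Sigma_p\setminus\Sigma^o$ comprising half the $p$-adic places as in Lemma~\ref{ubdimh1}. If you define $K$ correctly (kernel of restriction to all of $S_1\setminus\{v_0\}$, so that its image really is everywhere unramified and the class-group bound applies), the cokernel piece picks up $\sum_{v\in\Sigma_{p,1}\setminus\{v_0\}}\dim H^1(G_{F_{1,v}},\F(\bar\chi^{-1}))$, which is at least $[F_1:\Q]-[F_{1,v_0}:\Q_p]$, not $\tfrac{1}{12}[F_1:\Q]$. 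With only the inequality $|S_1|\le\tfrac{1}{12}[F_1:\Q]$ you have no lower bound on $[F_{1,v_0}:\Q_p]$ beyond a constant, so the resulting estimate $\dim_\F H^1_{v_0}(F_1)\lesssim [F_1:\Q]$ is useless against the left side, which is also $\sim[F_1:\Q]$. Your displayed sum $\tfrac{1}{12}[F_1:\Q]+\tfrac{1}{12}[F_1:\Q]+|\Sigma_p|+|S_1|-|\Sigma_p|$ does not come from the terms you wrote down.

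The missing idea, which the paper uses, is that in the cyclotomic $\Z_{l_0}$-tower no finite place splits completely, so $|S_L|$ is \emph{uniformly bounded} by some constant $C_1$ independent of the layer $L$. Consequently $[L_{v_0}:\Q_p]\ge [L:\Q]/C_1$, and the correct bound becomes
\[
\dim_\F H^1_{v_0}(L)\le C_2+[L:\Q]-[L_{v_0}:\Q_p]+|S_L|\le C_2+C_1+(1-\tfrac{1}{C_1})[L:\Q],
\]
which is strictly less than $[L:\Q]$ by a linear margin, while the left side of the target inequality is $[L:\Q]+O(1)$ (again because $|S_L|$ is bounded). That gap is what makes the inequality hold for large $[L:\Q]$. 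Incidentally, your preliminary base change to $F'$ is unnecessary here: the ambient reductions already arrange $\bar\chi|_{G_{F_v}}=\mathbf{1}$ for all $v\in S$, and the paper goes directly up the $\Z_{l_0}$-tower of $F$.
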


\begin{proof}
As in \ref{Washington}, choose a prime $l_0$ which is larger than the norm of any place in $S$. Consider the cyclotomic $\Z_{l_0}$-extension of $F$. Let $L/F$ be a finite extension in this tower. We claim that we can take $F_1=L$ as long as $[L:F]$ is sufficiently large. 

To see this, first note that $|S_L|$ has a uniform bound $C_1$ independent of $L$. Here $S_L$ denotes the set of places of $L$ above $S$. Next, we follow  the proof of lemma \ref{ubdimh1} and give an upper bound of $\dim_\F H^1_{v_L}(L)$, where $v_L$ is a place of $L$ above $p$ and $H^1_{v_L}(L)$ is defined similarly as before. Let $K$ be the kernel of 
\[H^1_{v_L}(L)\stackrel{\mathrm{res}}{\longrightarrow}\bigoplus_{v\in S_L\setminus \{v_L\}}H^1(G_{L_{v}},\F(\bar{\chi}^{-1}))/H^1(G_{k(v)},\F(\bar{\chi}^{-1})).\]
Then $K$ can be embedded into $H^1(G_{L(\bar\chi)},\F)=\Hom (G_{L(\bar\chi)},\F)$ and the image lies inside the subspace of characters unramified everywhere. Hence $\dim_\F K\leq \dim_{\F_p} \mathrm{Cl}(L(\bar{\chi}))[p]$. By Washington's result \cite{Wa78}, $\dim_{\F_p} \mathrm{Cl}(L(\bar{\chi}))[p]$ has a uniform upper bound $C_2$. On the other hand, since $\bar\chi|_{G_{L_v}}$ is trivial for any $v\in S_L$, 
\[H^1(G_{L_{v}},\F(\bar{\chi}^{-1}))/H^1(G_{k(v)},\F(\bar{\chi}^{-1}))=\Hom(O_{L_v}^\times,\F),\]
whose dimension is at most $[L_v:\Q_p]+1$ if $v|p$ and is at most one otherwise. Putting all these together, we have
\[\dim_\F H^1_{v_L}(L)\leq  \dim_{\F_p} \mathrm{Cl}(L(\bar{\chi}))[p]+ [L:\Q]-[L_{v_L}:\Q_p]+|S_L|.\]
Note that $[L_{v_L}:\Q_p]\geq \frac{1}{|S_L|}[L:\Q]\geq \frac{1}{C_1}[L:\Q]$. Hence 
\[\dim_\F H^1_{v_L}(L)\leq C_2+[L:\Q]-\frac{1}{C_1}[L:\Q]+C_1.\]
Since $L$ is abelian over $\Q$ and Leopoldt's conjecture is known in this case, hence $\delta_L=0$. From the inequality above, it is clear that when $[L:\Q]$ is large enough, $F_1=L$ satisfies the properties in the lemma.
\end{proof}

\section{The main Theorem} \label{Tmt}
\subsection{The main Theorem}
\begin{thm} \label{mainthm}
Let $p>2$ be an odd prime number. Let $F$ be a totally real abelian extension of $\Q$ in which $p$ completely splits. Suppose 
\[\rho:\Gal(\overbar{F}/F)\to \GL_2(\cO)\] 
is a continuous irreducible representation with the following properties
\begin{itemize}
\item $\rho$ ramifies at only finitely many places.
\item Let $\bar{\rho}$ be the reduction of $\rho$ modulo $\varpi$. We assume its semi-simplification has the form $\bar{\chi}_1\oplus\bar{\chi}_2$ and ${\bar{\chi}_1}/\bar{\chi}_2$ can be extended to a character of $G_\Q$.
\item $\rho|_{G_{F_v}}$ is irreducible and de Rham of distinct Hodge-Tate weights for any $v|p$ and any embedding $F_v\hookrightarrow \overbar{\Q_p}$. Moreover, if $p=3$, we assume
\[({\bar{\chi}_1}/\bar{\chi}_2)|_{G_{F_v}}\neq \omega^{\pm 1}.\]
\item $({\bar{\chi}_1}/{\bar{\chi}_2})(c)=-1$ for any complex conjugation $c\in \Gal(\overbar{F}/F)$.
\end{itemize}
Then $\rho$ arises from a twist of a Hilbert modular form, i.e. a regular algebraic cuspidal automorphic representation of $\GL_2(\A_F)$.
\end{thm}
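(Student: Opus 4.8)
The plan is to follow Emerton's two-step strategy, in the form already packaged by Corollary~\ref{corA}: first to exhibit the prime attached to $\rho$ as lying on an irreducible component of a global pseudo-deformation ring that also carries a \emph{nice} prime, so that patching at a one-dimensional prime (Theorem~\ref{thmA}) makes the whole component pro-modular, and then to conclude by the classicality statement (Corollary~\ref{classicality}). The required nice, pro-modular prime will be produced in the ordinary locus of the component by the generalization of Skinner--Wiles in Theorems~\ref{thmB} and~\ref{thmC}.

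\emph{Reductions.} After enlarging $E$ so that $\bar\chi_1,\bar\chi_2$ are $\F$-valued, I would twist $\rho$ by the Teichmüller lift of $\bar\chi_2^{-1}$, so that $\bar\rho$ becomes an extension of $\bar\chi:=\bar\chi_1\bar\chi_2^{-1}$ by $\mathbf 1$, and set $\chi=\det\rho$; the hypothesis $(\bar\chi_1/\bar\chi_2)(c)=-1$ forces $\chi(c)=-1$, and $\chi|_{G_{F_v}}$ is de Rham. A sequence of solvable base changes, each abelian over $\Q$ and in which $p$ splits completely — of exactly the type used in \ref{conF1}, \ref{conF2} and \ref{Washington} — allows one to assume further that $[F:\Q]$ is even (so that a totally definite quaternion algebra $D/F$ split at all finite places, in particular at $p$, exists), that $\chi$ and $\bar\chi$ are unramified outside $\Sigma_p$, that $p\mid N(v)-1$ for the auxiliary places, that the $p$-adic $L$-value non-vanishing of \ref{exoemi} holds, and that $|S|$ and the $p$-part of the relevant class group are small relative to $[F:\Q]$. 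Since $p$ splits completely, each such base change leaves $\rho|_{G_{F_v}}$ ($v\mid p$) unchanged, so all hypotheses of the theorem persist; choosing the base fields so that the places above $p$ split keeps $\rho|_{G_{F'}}$ irreducible. Using \ref{simcor} one then fixes $p$-power characters $\xi_v$ of $I_{F_v}$ with $\tr\rho|_{I_{F_v}}=\xi_v+\xi_v^{-1}$ for $v\in S\setminus\Sigma_p$. Modularity of the original $\rho$ will follow from modularity after these operations by untwisting and by cyclic descent for cuspidal automorphic representations of $\GL_2$, which preserves the regular algebraic cuspidal conclusion.

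\emph{Finding a nice prime on the component through $\rho$.} Work with $R^{\ps}:=R^{\ps,\{\xi_v\}}$ of Section~\ref{satsotmr}; let $\kp\in\Spec R^{\ps}$ be the prime given by $\tr\rho$, so $\rho(\kp)=\rho$ is irreducible and, at each $v\mid p$, irreducible (hence not ordinary) and de Rham of distinct Hodge--Tate weights. Hypothesis (a) of Corollary~\ref{corA} is then satisfied, and it remains to verify (b): that the irreducible component $C$ of $\Spec R^{\ps}$ through $\kp$ contains a nice prime. A Galois-cohomology computation (lemma \ref{dimrpskp}) lets one choose $C$ with $\dim C$ as large as the Euler characteristic predicts. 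Cutting $C$ by reducibility together with an ordinary structure at the places above $p$ gives the ordinary locus $C^{\ord}\subsetneq C$, and lemma \ref{Corddim} — whose proof is where the hypothesis $\bar\chi|_{G_{F_v}}\neq\omega^{\pm1}$ for $p=3$ enters — shows $\dim C^{\ord}$ is still large, in particular that $C^{\ord}$ dominates $\Lambda_F$ and that its reducible locus (bounded as in \ref{redloc}) is a proper closed subset. By Theorem~\ref{thmB} when $\bar\chi|_{G_{F_v}}\neq\mathbf 1$ and Theorem~\ref{thmC} when $\bar\chi|_{G_{F_v}}=\mathbf 1$ — concretely Corollary~\ref{corB} resp. Corollary~\ref{corC} — every prime of the ambient ordinary pseudo-deformation ring with irreducible associated representation is pro-modular for the ordinary Hecke algebra $\T^{\ord}$; since the corresponding Hida families specialize to classical ordinary forms that occur in the completed cohomology $S_\psi(U^p)_E$, such primes, viewed in $R^{\ps}$, are also pro-modular for $\T=\T_\psi(U^p)$. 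One therefore picks a one-dimensional prime $\kq\in C^{\ord}$ with $\rho(\kq)$ irreducible, lying off the reducible locus, with $\psi_{v,1}\bmod\kq$ of infinite order — so $\rho(\kq)$ is not dihedral by \ref{nirred}, the remaining clause of niceness being automatic by \ref{nicerem} as $p$ splits completely — and, after arranging the behaviour at $S\setminus\Sigma_p$ through the Skinner--Wiles choice of tame level, admitting a lattice $\rho(\kq)^o$ over $\F[[T]]$ with non-split upper-triangular reduction fixed at $S\setminus\Sigma_p$; this $\kq$ is nice in the sense of \ref{nice}. Corollary~\ref{corA} then applies to $\kp$ and $\kq$ (both on $C$), giving that $\rho\cong\rho(\kp)$ arises from a regular algebraic cuspidal automorphic representation of $\GL_2(\A_F)$; undoing the twist and descending through the abelian base changes proves the theorem.

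\emph{Main obstacle.} The delicate part is the dimension bookkeeping of the previous paragraph: one must balance simultaneously the lower bound on $\dim C$, the loss incurred by imposing ordinarity at the places above $p$, the upper bound on the reducible locus, and the requirement that $C^{\ord}$ dominate $\Lambda_F$ and contain a non-dihedral absolutely irreducible one-dimensional prime with the right lattice behaviour. It is precisely the contribution of the local deformation rings at $v\mid p$ to this balance — together with the failure of the $p$-adic local Langlands input when $p=3$ and $\bar\chi|_{G_{F_v}}=\omega^{\pm1}$, which also blocks Theorem~\ref{thmA} — that forces both the exclusion in the hypotheses and the preparatory base changes of \ref{conF1}--\ref{Washington}. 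A secondary point requiring care is the comparison between the ordinary Hecke algebra used by the Skinner--Wiles machinery and the completed-cohomology Hecke algebra used for patching and classicality, so that pro-modularity is transported faithfully between them.
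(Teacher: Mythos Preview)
Your outline captures the generic case well but has a genuine gap: it does not address the case $\bar\chi|_{G_{F_v}}=\omega^{\pm1}$ when $p\geq5$, which the theorem \emph{does} allow (the exclusion is only for $p=3$). In that case lemma~\ref{Corddim} fails --- the kernel of $R^{\ps}_v\to R^{\ps,\ord}_v$ is generated by two elements rather than one (see the remark after \ref{Corddim} and lemma~\ref{nongenRps}), so cutting to the ordinary locus only gives $\dim C^{\ord}\geq 1$, which is far too small to find a nice prime directly. The paper devotes all of \S\ref{case3} to this: it introduces the notion of a \emph{good} component (one connected to a component carrying regular de Rham ordinary primes by a chain of potentially nice primes), proves a connectedness-dimension bound for $(R^{\ps})_\kq$ (lemma~\ref{condimRpskq}), and then runs a rather delicate argument splitting further into three subcases according to whether the one-dimensional ordinary prime has irreducible, generic reducible, or non-generic reducible associated representation. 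The non-generic reducible subcase in particular requires studying deformation rings $R_B$ of characteristic-zero non-split extensions of $\psi_1$ by $\psi_2$, Lichtenbaum's result on $H^2$ with $\Q_p(2)$-coefficients, and a two-step linking argument between extension classes $B_0,B_1$. None of this is visible in your proposal.

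A secondary structural issue: you front-load all base changes (including those of \ref{conF1}--\ref{Washington}) before choosing the nice prime, and fix the $\xi_v$ from $\rho$ at the outset. The paper does it differently: the base changes in \S5--6 are \emph{internal} to the proofs of Theorems~\ref{thmB} and~\ref{thmC}, while the main theorem first works with the \emph{unrestricted} $R^{\ps}$, finds the component $C$ and a potentially nice prime $\kq$ on it, reads off the $\theta_{v,i}$ on $C$ via \ref{simcor}, and only \emph{then} makes a further soluble base change to $F_1$ (see \ref{laststep}) chosen so that $\rho(\kq)|_{G_{F_{1,w}}}$ becomes trivial, $N(w)\equiv1\bmod p$, and $T^{univ}|_{I_{F_{1,w}}}\equiv2\bmod\kP$. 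It is only over $F_1$ that one passes to $R^{\ps,1}$ (trivial $\xi_v$) and applies Corollary~\ref{corA}. Your vaguer ``arranging the behaviour at $S\setminus\Sigma_p$'' does not account for the fact that this second base change depends on both $\kq$ and the generic behaviour on $C$, which are not known until after the component is chosen.
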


\begin{para}
The rest of this section is devoted to the proof this Theorem. There will be three different cases depending on the shape of $({\bar{\chi}_1}/\bar{\chi}_2)|_{G_{F_v}},v|p$ and the proof of each case will be given in the following 3 subsections. We first do some reduction work.

We may always twist $\rho$ with a finite order character and assume $\bar{\chi}_1$ is trivial. Let $S$ be a finite set of primes containing all the ramified finite places of $\rho$ and let $\Sigma_p$ be the set of primes above $p$. By soluble base change and Theorem 5 of chapter 10 of \cite{AT68}, we may always assume 
\[|S\setminus\Sigma_p|+2< [F:\Q].\]
So in particular $[F:\Q]>2$.

As before, we will fix a complex conjugation $\sigma^*\in G_{F,S}$. Let $\chi$ be the determinant of $\rho$ and $\bar{\chi}$ be the reduction of $\chi$ modulo $\varpi$. Consider the functor from $\cOf$ to the category of sets sending $R$ to the set of $2$-dimensional pseudo-representations of $G_{F,S}$ which lift $1+\bar{\chi}$ with determinant $\chi$. This is pro-represented by a complete local noetherian ring $R^{\ps}$ and $\tr\rho$ gives rise to a prime ideal $\kp\in\Spec R^{\ps}$.
\end{para}

\begin{lem} \label{dimrpskp}
$\dim R^{\ps}_\kp\geq 2[F:\Q]$.
\end{lem}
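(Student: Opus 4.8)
The plan is to compute a lower bound on the dimension of the local ring $R^{\ps}_\kp$ via the tangent space of a related framed deformation problem together with the global Euler characteristic formula. The key observation is that although $R^{\ps}$ parametrizes pseudo-representations, the point $\kp$ corresponds to the \emph{irreducible} representation $\rho$ (it is irreducible by hypothesis), so by the discussion in \ref{tar} and \ref{dr0}, the completion $\widehat{(R^{\ps})_\kp}$ agrees with the corresponding deformation ring of $\rho$ over $k(\kp)$. Thus I would first pass from $R^{\ps}$ to the universal deformation ring $R_\rho$ of $\rho$ (as a representation of $G_{F,S}$ with fixed determinant $\chi$), localized and completed at the point cut out by $\rho$ itself; the dimension of $R^{\ps}_\kp$ equals the dimension of this completed local ring, up to the residue field extension which does not affect Krull dimension.

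Next I would estimate the dimension of $R_\rho$ at the generic point of a suitable irreducible component through $\rho$. The standard presentation writes (the framed version of) $R_\rho$ as a quotient of a power series ring over $\cO$ in $d_1 = \dim_{k}H^1(G_{F,S},\ad^0\rho) $ variables (after subtracting framing variables one passes to the unframed ring) by $d_2 = \dim_k H^2(G_{F,S},\ad^0\rho)$ relations. Hence every irreducible component of $\Spec R_\rho$ has dimension at least $1 + d_1 - d_2$ (the extra $1$ for $\cO$), and correspondingly $\dim R^{\ps}_\kp \geq d_1 - d_2$ after localizing away the maximal ideal of $R^{\ps}$ (equivalently, subtracting the contribution of $\cO$, since $\kp$ is not the maximal ideal but has characteristic zero). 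The global Euler characteristic formula (Theorem 2.19 of \cite{DDT95}, as already invoked in lemma \ref{stdcodt}) gives
\[
\dim_k H^1(G_{F,S},\ad^0\rho) - \dim_k H^2(G_{F,S},\ad^0\rho) = \dim_k H^0(G_{F,S},\ad^0\rho) - \sum_{v\mid\infty}\dim_k (\ad^0\rho)^{G_{F_v}} + [F:\Q]\cdot\dim_k\ad^0\rho / \text{(sign adjustments)}.
\]
For $\ad^0\rho$, which is $3$-dimensional, the oddness condition $(\bar\chi_1/\bar\chi_2)(c) = -1$ (equivalently $\det\rho(c) = -\chi(c)$ in the relevant normalization) forces $\dim_k(\ad^0\rho)^{G_{F_v}} = 1$ at each of the $[F:\Q]$ infinite places, and $H^0(G_{F,S},\ad^0\rho) = 0$ since $\rho$ is irreducible (so absolutely irreducible after enough base field extension, or one argues directly). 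Plugging in, the local terms at infinity contribute $[F:\Q]\cdot(3-1) = 2[F:\Q]$, which gives exactly $\dim_k H^1 - \dim_k H^2 \geq 2[F:\Q]$ after the characteristic-zero adjustment. This yields $\dim R^{\ps}_\kp \geq 2[F:\Q]$.

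The main technical point to be careful about is the bookkeeping between the framed and unframed deformation rings, between the pseudo-deformation ring $R^{\ps}$ and the genuine deformation ring, and between the full local ring and its localization at $\kp$ (which costs one dimension corresponding to the $\varpi$-direction, since $\kp$ has residue characteristic zero while $R^{\ps}$ is an $\cO$-algebra). One must also make sure that $\ad^0\rho$ rather than $\ad\rho$ is the right coefficient module given that the determinant is fixed; the difference is a one-dimensional piece $H^1(G_{F,S},k)$ whose dimension is controlled by Leopoldt (known here since $F$ is abelian over $\Q$, hence $\delta_F = 0$ as noted in \ref{redlocrc}), but for a lower bound one simply ignores this extra piece. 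I expect the proof in the paper to be short: invoke the standard presentation, apply the global Euler characteristic formula, and read off $2[F:\Q]$ using the oddness hypothesis at the infinite places and irreducibility of $\rho$ at the place where $H^0$ is computed. The only real subtlety — hence the step I would double-check most carefully — is that the bound is a bound on $\dim R^{\ps}_\kp$ (a localization, not a completion at the maximal ideal), so I would phrase it via: choose a minimal prime of $\widehat{(R^{\ps})_{\km}}$ contained in $\kp$ with the dimension of $R^{\ps}/\text{(that prime)}$ at least $1 + 2[F:\Q]$, and then $\dim R^{\ps}_\kp \geq \dim R^{\ps}/(\text{minimal prime}) - \dim R^{\ps}/\kp$; combined with the fact that $\kp$ has characteristic zero (so $\dim R^{\ps}/\kp \leq$ the appropriate value), this gives the claimed inequality.
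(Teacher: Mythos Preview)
Your approach is essentially the same as the paper's: identify $\widehat{(R^{\ps})_\kp}$ with the deformation ring of $\rho_E=\rho\otimes E$ via corollary \ref{dr0}, present it as $E[[x_1,\ldots,x_{h_1}]]/(f_1,\ldots,f_{h_2})$ with $h_i=\dim_E H^i(G_{F,S},\ad^0\rho_E)$, and apply the global Euler characteristic formula together with irreducibility ($h_0=0$) and oddness ($\dim(\ad^0\rho_E)^{G_{F_v}}=1$ at infinite places) to get $h_1-h_2=2[F:\Q]$. Note however that the paper works directly over $E$ (the residue field $k(\kp)$ has characteristic zero), so there is no ``extra $1$ for $\cO$'' to add and then subtract; your detours through framed rings, Leopoldt's conjecture, and minimal primes of the completion at $\km$ are all unnecessary and stem from conflating the deformation problem over $E$ with one over $\cO$.
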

\begin{proof}
By corollary \ref{dr0}, $\widehat{(R^{\ps})_\kp}$ parametrizes all deformations with determinant $\chi$ of $\rho_E=\rho\otimes E:G_{F,S}\to\GL_2(E)$. See the precise deformation problem in \ref{dr0} except here we put an extra condition on the determinant. It follows from this description that  
\[\widehat{(R^{\ps})_\kp}\cong E[[x_1,\cdots,x_{h^1}]]/(f_1,\cdots,f_{h^2}),\]
where $h^i=\dim_E H^i(G_{F,S},\ad^0 \rho_E),i=1,2$ and $\ad^0 \rho_E$ as usual denotes the subspace of $\End_E(\rho_E)$ with trace $0$. Then by the global Euler characteristic formula (for example see lemma 9.7 of \cite{Kis03}),
\[h^1-h^2=(\dim_E \ad^0 \rho_E)[F:\Q]-\sum_{v|\infty} \dim_E (\ad^0 \rho_E)^{G_{F_v}} +h^0=2[F:\Q] \]
since we assume $\rho_E$ is irreducible and $\dim_E (\ad^0 \rho_E)^{G_{F_v}} =1$ for any $v|\infty$. This proves the dimension inequality in the lemma.
\end{proof}

\begin{para} \label{kPC}
Choose an irreducible component $C$ of $\Spec R^{\ps}$ that contains $\kp$ and has dimension at least $2[F:\Q]+1$. We denote its generic point by $\kP$. Let $T^{univ}:G_{F,S}\to R^{\ps}$ be the universal pseudo-character. By lemma \ref{simcor},
 for any $v\in S\setminus \Sigma_p$, we can write $T^{univ}|_{I_{F_v}}\equiv\theta_{v,1}+\theta_{v,2}\mod\kP$ for some characters $\theta_{v,1},\theta_{v,2}$ of finite orders after possible enlarging $\cO$. 

Now consider the following map given by the universal property:
\[R^{\ps}_p\to R^{\ps}\]
where $R^{\ps}_p$ denotes the completed tensor product $\widehat\bigotimes_{v|p}R^{\ps}_v$ over $\cO$ and $R^{\ps}_v$ denotes the universal deformation ring which parametrizes all $2$-dimensional pseudo-representations of $G_{F_v}$ that lift $(1+\bar{\chi})|_{G_{F_v}}$ with determinant $\chi|_{G_{F_v}}$. Let $R^{\ps,\ord}_v$ be the quotient of $R^{\ps}_v$ that parametrizes all reducible liftings (i.e. liftings that are sum of two characters) and $R^{\ps,\ord}_p$ be the completed tensor product of $R^{\ps,\ord}_v,v|p$. We denote
\[R^{\ps,\ord}=R^{\ps}\otimes_{R^{\ps}_p}R^{\ps,\ord}_p;~C^{\ord}=\Spec R^{\ps,\ord}\cap C.\]

According to the shape of $\bar{\chi}|_{G_{F_v}},v|p$, we can separate into three cases:
\begin{enumerate}
\item (Generic case) $\bar{\chi}|_{G_{F_v}}\neq \mathbf{1},\omega^{\pm1}$ for any $v|p$.
\item $ {\chi}|_{G_{F_v}}= \mathbf{1}$ for any $v|p$.
\item $\bar\chi|_{G_{F_v}}=\omega^{\pm1}$ for any $v|p$.
\end{enumerate}
\end{para}

\subsection{Case 1: Generic case} \label{case1:generic}
In this subsection, we prove Theorem \ref{mainthm} when $\bar{\chi}|_{G_{F_v}}\neq \mathbf{1},\omega^{\pm1}$ for any $v|p$. We will keep this assumption throughout this subsection. We note that in this case, a direct definition of $R^{\ps,\ord}$ was given in \ref{sotmr1} with the same notation.

\begin{lem} \label{Corddim}
$\dim C^{\ord}\geq [F:\Q]+1$.
\end{lem}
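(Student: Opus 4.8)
The plan is to realize $C^{\ord}$ as the intersection of $C$ with a subscheme cut out (set-theoretically) by only $[F:\Q]$ equations, to check that this intersection is nonempty, and then to extract the dimension bound from Krull's height theorem.

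First I would observe that $C^{\ord}\neq\emptyset$. The unique closed point $\km$ of $\Spec R^{\ps}$ corresponds to the residual pseudo-character $1+\bar\chi$, which is literally a sum of two characters and so is reducible at every $v\mid p$; hence $\km$ lies in the closed subscheme $\Spec R^{\ps,\ord}\subseteq\Spec R^{\ps}$. Since $\km$ lies on every irreducible component, in particular $\km\in C$, so $\km\in C^{\ord}$. Next, by the universal property $\Spec R^{\ps,\ord}$ is obtained from $\Spec R^{\ps}$ by imposing, for each of the $[F:\Q]$ places $v\mid p$ (recall $p$ splits completely in $F$), the closed condition that $T^{univ}|_{G_{F_v}}$ be reducible; by \ref{tar} this locus inside $\Spec R^{\ps}_v$ is the common zero locus of the quantities $x(\sigma,\tau)$, $\sigma,\tau\in G_{F_v}$, formed from the universal local pseudo-character with respect to $\sigma^*$. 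The crucial local input -- and the only place the generic hypothesis $\bar\chi|_{G_{F_v}}\neq\mathbf 1,\omega^{\pm1}$ is used -- is that in this case $H^0(G_{F_v},\F(\bar\chi^{\pm1}|_{G_{F_v}}))=H^2(G_{F_v},\F(\bar\chi^{\pm1}|_{G_{F_v}}))=0$, so the relevant extension groups are one-dimensional, $R^{\ps}_v$ is regular, and its reducible locus is an irreducible hypersurface (it is precisely the image of the smooth character-deformation ring $\cO[[G_{F_v}^{\mathrm{ab}}(p)]]$, whose dimension is one less than that of $R^{\ps}_v$). Thus, set-theoretically, the reducible locus of $\Spec R^{\ps}_v$ is $V(f_v)$ for a single element $f_v\in R^{\ps}_v$.

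Pulling these conditions back along $\Spec R^{\ps}\to\prod_{v\mid p}\Spec R^{\ps}_v$ shows that $C^{\ord}$ is, set-theoretically, the intersection of $C$ with the vanishing locus of $[F:\Q]$ elements of $R^{\ps}$. Since $R^{\ps}$ is a quotient of a power series ring over $\cO$, it is excellent, so $R^{\ps}/\kP$ (where $\kP$ is the generic point of $C$) is a complete local domain, hence equidimensional and catenary; Krull's generalized Hauptidealsatz then gives that every irreducible component of $C^{\ord}$ has codimension at most $[F:\Q]$ in $C$. Combined with the nonemptiness just established and with $\dim C\geq 2[F:\Q]+1$, this yields $\dim C^{\ord}\geq (2[F:\Q]+1)-[F:\Q]=[F:\Q]+1$.

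The main obstacle is the local structure statement: verifying that in the generic case the reducible locus of the local pseudo-deformation ring $R^{\ps}_v$ is set-theoretically cut out by a single equation (equivalently, is a hypersurface of codimension one in a regular local ring). This can be proved either from the explicit smoothness of $R^{\ps}_v$ coming from the vanishing of the above local $H^0$ and $H^2$, or more hands-on from the identities 1--4 of \ref{adx} satisfied by the $x(\sigma,\tau)$ together with the one-dimensionality of $H^1(G_{F_v},\F(\bar\chi^{\mp1}|_{G_{F_v}}))$, which forces the ideal generated by the $x(\sigma,\tau)$ to have principal radical. Everything else -- nonemptiness of $C^{\ord}$, the count of $[F:\Q]$ places above $p$, and the Krull-dimension bookkeeping via excellence and equidimensionality -- is routine.
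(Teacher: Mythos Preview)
Your approach is essentially the same as the paper's. The paper's proof is a single line: it cites Corollary~B.20 of \cite{Pas13} to get that in the generic case the kernel of $R^{\ps}_v\to R^{\ps,\ord}_v$ is a \emph{principal} ideal for each $v\mid p$, so the kernel of $R^{\ps}_p\to R^{\ps,\ord}_p$ is generated by $|\Sigma_p|=[F:\Q]$ elements, and then $\dim C^{\ord}\ge\dim C-|\Sigma_p|\ge[F:\Q]+1$ by Krull. Your argument arrives at the same conclusion via the same mechanism; the only difference is that instead of citing Pa\v{s}k\={u}nas you argue that $R^{\ps}_v$ is regular and that the reducible locus, being the image of the smooth character-deformation ring, has codimension one, hence is set-theoretically a hypersurface. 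This is correct, but your justification of the regularity of $R^{\ps}_v$ from the vanishing of $H^0$ and $H^2$ of $\F(\bar\chi^{\pm1})$ is a bit informal---obstruction theory for pseudo-deformation rings is not literally governed by those groups in the same way as for actual representations. The cleanest route is the one the paper implicitly uses: identify $R^{\ps}_v$ with the deformation ring $R_b$ of the unique non-split extension (this identification is part of Pa\v{s}k\={u}nas' Appendix~B), and then the vanishing of $H^2(G_{F_v},\ad^0\bar\rho_b)$ in the generic case gives smoothness of $R_b$. Your explicit check that $C^{\ord}\neq\emptyset$ (via the closed point) is fine but unnecessary, since we are in a complete local ring.
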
 
\begin{proof}
For any $v|p$, since we assume $\bar{\chi}|_{G_{F_v}}\neq \mathbf{1},\omega^{\pm1}$, it follows from corollary B.20 of \cite{Pas13} that the kernel of $R^{\ps}_v\to R^{\ps,\ord}_v$ is a principal ideal. Hence 
\[\dim C^{\ord}\geq \dim C-|\Sigma_p|\geq [F:\Q]+1.\]
\end{proof}

\begin{rem} 
This is the essential reason that we rule out the case $\bar{\chi}|_{G_{F_v}}=\omega^{\pm 1}$ as in this case, the kernel is generated by two elements.
\end{rem}

On the other hand, there is a map $\Lambda_F\to R^{\ps,\ord}$ as in \ref{sotmr1}, which comes from the universal deformation of $\mathbf{1}|_{I_{F_v}}, v|p$. Theorem \ref{thmB} tells us that this is a \textit{finite} map. Combining this with the previous lemma, we see that
\begin{cor} \label{cord1}
There exists an irreducible component $C^{\ord}_1$ of $C^{\ord}$ such that 
\begin{enumerate}
\item $\dim C_1^{\ord}= [F:\Q]+1$. 
\item $C^{\ord}_1\to \Spec \Lambda_F$ is a finite surjective map.
\item Let $C^{\ord,\mathrm{aut}}_1$ be the set of \textit{regular de Rham} primes in $C^{\ord}_1$. Here a prime $\kq$ is called \textit{regular de Rham} if
\begin{itemize}
\item $p\notin \kq$ and $R^{\ps,\ord}/\kq$ is one-dimensional.
\item Denote the associated semi-simple representation $G_{F,S}\to\GL_2(k(\kq))$ by $\rho(\kq)$ (see \ref{tar}). We require $\rho(\kq)$ to be irreducible.
\item For any $v|p$, $\rho(\kq)|_{G_{F_v}}\cong\begin{pmatrix}\psi_{v,1} & *\\ 0 & \psi_{v,2}\end{pmatrix}$ such that $\psi_{v,1}$ is Hodge-Tate and has strictly less Hodge-Tate number than $\psi_{v,2}$ for any embedding $F_v\hookrightarrow \overbar{\Q_p}$.
\end{itemize}
Then $C^{\ord,\mathrm{aut}}_1$ is dense in $C^{\ord}_1$.
\end{enumerate}
\end{cor}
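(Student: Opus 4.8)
\textbf{Dimension and the choice of $C_1^{\ord}$.} Since $p$ splits completely and is odd, $O_{F_v}^\times(p)\cong\Z_p$ for each $v\mid p$, so $\Lambda_F\cong\cO[[x_1,\dots,x_{[F:\Q]}]]$ is a regular local domain of dimension $[F:\Q]+1$. By Theorem \ref{thmB}(1), $R^{\ps,\ord}$ is module-finite over $\Lambda_F$, hence $\dim R^{\ps,\ord}\le[F:\Q]+1$; combined with Lemma \ref{Corddim} this forces $\dim C^{\ord}=[F:\Q]+1$. I would then take $C_1^{\ord}=V(\kP_1)$ for any minimal prime $\kP_1$ of $R^{\ps,\ord}$ lying in $C^{\ord}$ with $\dim R^{\ps,\ord}/\kP_1=[F:\Q]+1$, which gives part (1).

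\textbf{Finite surjectivity onto $\Spec\Lambda_F$.} The quotient $R^{\ps,\ord}/\kP_1$ is again module-finite over $\Lambda_F$, so $\dim\Lambda_F/\ker(\Lambda_F\to R^{\ps,\ord}/\kP_1)=[F:\Q]+1=\dim\Lambda_F$; as $\Lambda_F$ is a domain, this kernel is $0$. Thus $\Lambda_F\hookrightarrow R^{\ps,\ord}/\kP_1$ is a finite injective extension of domains, and the induced morphism $C_1^{\ord}\to\Spec\Lambda_F$ is finite (hence closed) and surjective by the lying-over theorem. This is part (2).

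\textbf{Density of regular de Rham points.} Call a maximal ideal $\lambda$ of $\Lambda_F[\tfrac1p]$ \emph{arithmetic of weight $(n_v)_{v\mid p}$} if the character of $O_{F_v}^\times(p)$ it induces is $u\mapsto u^{n_v}$ up to a finite-order twist; equivalently (cf.\ the identifications in \ref{finlevhecke}) $\lambda$ lies over the ideal $\ka_{k,c}$ of $\Lambda_F$ for a suitable $k=(\vec k,\vec w)$. Restricting to sufficiently negative weights, say $n_v<\tfrac12 w_{\chi,v}$ where $w_{\chi,v}$ is the Hodge--Tate weight of $\chi|_{G_{F_v}}$, still yields a Zariski-dense subset of $\Spec\Lambda_F$ (the standard density of arithmetic weights in an Iwasawa algebra, the inequalities removing only finitely many weights in each of the $[F:\Q]$ directions). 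Since $C_1^{\ord}\to\Spec\Lambda_F$ is finite surjective with $C_1^{\ord}$ irreducible of dimension $\dim\Lambda_F$, the preimage of this dense set is Zariski-dense in $C_1^{\ord}$: any closed subset of $C_1^{\ord}$ mapping onto $\Spec\Lambda_F$ has dimension $\ge\dim\Lambda_F=\dim C_1^{\ord}$, hence equals $C_1^{\ord}$. For a point $\kq$ of $C_1^{\ord}$ over such a $\lambda$ one gets automatically $p\notin\kq$, and since $R^{\ps,\ord}[\tfrac1p]$ is finite over $\Lambda_F[\tfrac1p]$ and $\lambda$ is maximal, $\kq$ is maximal, i.e.\ $R^{\ps,\ord}/\kq$ is one-dimensional; moreover $\psi_{v,1}\bmod\kq$ is locally algebraic, hence de Rham of Hodge--Tate weight $n_v$, and $\psi_{v,2}\bmod\kq=\chi|_{G_{F_v}}(\psi_{v,1}\bmod\kq)^{-1}$ is de Rham of weight $w_{\chi,v}-n_v>n_v$. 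Using the $p$-adic Hodge theory of the ``reducible at $p$'' structure on $R^{\ps,\ord}$ (as in the ordinary theory of Hida and Skinner--Wiles), one checks that $\rho(\kq)|_{G_{F_v}}$ then has the required shape $\begin{pmatrix}\psi_{v,1}&*\\0&\psi_{v,2}\end{pmatrix}$ with the smaller-weight character $\psi_{v,1}$ as the $G_{F_v}$-stable sub. Finally, the reducible locus of $\Spec R^{\ps,\ord}$ is a quotient of $\cO[[G_{F,S}^{\mathrm{ab}}(p)]]$ and so has dimension $\le\delta_F+2$ (cf.\ \ref{redlrps}); using $F\ne\Q$ and the inequality $|S\setminus\Sigma_p|+1<[F:\Q]$ arranged in \ref{case1:generic}, this is $<[F:\Q]+1=\dim C_1^{\ord}$, so $C_1^{\ord}$ is not contained in it. The complement is a nonempty open of the irreducible scheme $C_1^{\ord}$, which meets the dense set above in a set that is again dense; that set consists of regular de Rham primes, so $C_1^{\ord,\mathrm{aut}}$ is dense, proving part (3).

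\textbf{Main obstacle.} The routine inputs are the dimension count and the finite-surjective structure over $\Lambda_F$; the substantive point is the density step, namely verifying that the arithmetic points of $\Spec\Lambda_F$ with the required weight separation are Zariski dense, that this density passes to the preimage in $C_1^{\ord}$ and survives intersection with the (open, proper-complement) irreducible and de Rham locus, and --- most delicately --- that at these arithmetic points the reducibility-at-$p$ condition defining $R^{\ps,\ord}$ produces exactly the upper-triangular shape with $\psi_{v,1}$ of strictly smaller Hodge--Tate weight appearing as the $G_{F_v}$-stable subrepresentation, as demanded by the definition of a regular de Rham prime and by the hypotheses of Theorem \ref{thmB}(2).
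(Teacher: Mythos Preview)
Your treatment of parts (1) and (2) is correct and matches the paper's approach, which is terser but uses the same ingredients: finiteness of $R^{\ps,\ord}$ over $\Lambda_F$ from Theorem \ref{thmB}(1), the dimension bound from Lemma \ref{Corddim}, and the fact that $\Lambda_F$ is a domain.

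The gap is in part (3), exactly where you flag the ``most delicate'' step. You restrict to arithmetic weights with $n_v<\tfrac12 w_{\chi,v}$ so that $\psi^{univ}_{v,1}\bmod\kq$ has strictly smaller Hodge--Tate weight than $\psi^{univ}_{v,2}\bmod\kq$, and then assert that $p$-adic Hodge theory forces the $G_{F_v}$-stable line of $\rho(\kq)|_{G_{F_v}}$ to be the one carrying $\psi^{univ}_{v,1}$. This is not justified: the definition of $R^{\ps,\ord}$ only requires $T^{univ}|_{G_{F_v}}$ to be a sum of two characters with no preferred direction, and no $p$-adic Hodge input is available here since you do not know $\rho(\kq)$ is de Rham before you know the shape (and even for de Rham reducible representations the sub need not carry the smaller weight --- think of the split case). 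The paper's mechanism is Lemma \ref{ordpseudo}: the condition that $T^{univ}$ is $\psi^{univ}_{v,n_v}$-ordinary in the sense of Definition \ref{psiord} is a closed condition on the pseudo-representation; evaluated at the generic point of the irreducible component $C_1^{\ord}$ it singles out a fixed $n_v\in\{1,2\}$ for each $v\mid p$, and Lemma \ref{ordpseudo} then forces that same $n_v$ to govern the shape of $\rho(\kq)|_{G_{F_v}}$ at every $\kq\in C_1^{\ord}$ with $\rho(\kq)$ irreducible. Only after determining the $n_v$'s do you choose the direction of the weight inequality at each $v$ (small weight on $\psi^{univ}_{v,1}$ if $n_v=1$, large if $n_v=2$). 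With this correction the rest of your density argument goes through.
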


\begin{proof}
The existence of an irreducible component that satisfies the first condition is clear. Fix one and denote it by $C^{\ord}_1$. The second claim follows from the first claim and the fact that $\Lambda_F$ is a domain. As for the last one, note that the subset of $\kq\in\Spec R^{\ps}$ such that $\rho(\kq)$ is reducible has dimension at most $2+\delta_F=2<1+[F:\Q]$ (see \ref{redlrps}). Hence we may ignore the second condition in the definition of $C^{\ord,\mathrm{aut}}_1$.

Let $\psi^{univ}_{v,1},\psi^{univ}_{v,2}:G_{F_v}\to (R^{\ps,\ord})^\times$ be the liftings of $\mathbf{1},\bar{\chi}|_{G_{F_v}}$ respectively. It follows from lemma \ref{ordpseudo} that for any $v|p$, there exists $n_v\in\{1,2\}$ such that for any $\kq\in C^{\ord}_1$,
\[\rho(\kq)|_{G_{F_v}}\cong \begin{pmatrix} \psi^{univ}_{v,n_v}\modd\kq & *\\ 0 & \psi^{univ}_{v,3-n_v}\modd \kq\end{pmatrix}.\]
Using this and the surjectivity of the map $C_1^{\ord}\to \Spec \Lambda_F$, we see that the image of $C^{\ord,\mathrm{aut}}_1$ in $\Spec \Lambda_F$ is dense. Thus $C^{\ord,\mathrm{aut}}_1$ is also dense in $C^{\ord}_1$ as the map to $\Spec \Lambda_F$ is finite.
\end{proof}

\begin{para} \label{potnice}
Now we choose a prime $\kq\in C^{\ord}_1$ that is ``potentially nice'' in the following sense:
\begin{itemize}
\item $p\in\kq$ and $R^{\ps}/\kq$ is one-dimensional. In particular, the image of $\kq$ in $\Spec\Lambda_F$ is not the maximal ideal so that we can apply the third part of lemma \ref{nirred}.
\item $\rho(\kq)$ is irreducible.
\item For any $v\in S\setminus\Sigma_p$, $\rho(\kq)|_{G_{F_v}}$ has finite image.
\end{itemize}
To see the existence of such a prime, denote the generic point of $C^{\ord}_1$ by $\kQ\in\Spec R^{\ps}$. Let  $I_S\subseteq R^{\ps}$ be the ideal generated by $T^{univ}(\Frob_v)-1-\chi(\Frob_v),v\in S\setminus\Sigma_p$. Here $\Frob_v$ is not well defined and is just a choice of lift (geometric) Frobenius. Consider $R^{\ps}/(\varpi, \kQ,I_S)$. By our assumptions, its dimension is larger than $1=\delta_F+1$. Hence we can choose a prime $\kq$ of $R^{\ps}/(\varpi, \kQ,I_S)$ such that $\rho(\kq)$ is irreducible. We claim that $\kq$ is potentially nice, i.e. $\rho(\kq)|_{G_{F_v}}$ has finite image for any $v\in S\setminus\Sigma_p$.

By lemma \ref{simlem}, $\rho(\kq)|_{G_{F_v}}$ is either reducible or induced from a character $\theta$ of $G_{F_{v^2}}$. In the second case, it suffices to prove $\theta$ has finite orders. This follows from $\theta(\Frob_v^2)=-\bar{\chi}(\Frob_v)\in\F^\times$. In the first case, note that $\tr\rho(\kq)(\Frob_v)=1+\bar{\chi}(\Frob_v)$. It is easy to see that the semi-simplification of $\rho(\kq)|_{G_{F_v}}$ is a sum of two characters of finite orders. Hence $\rho(\kq)|_H$ is unipotent for some subgroup $H$ of finite index in $G_{F_v}$. Clearly $\rho(\kq)(H)$ is finite. Therefore in either case, our claim is clear.

We fix such a choice of potentially nice prime $\kq$.
\end{para}

\begin{para} \label{laststep}
Finally choose a finite totally real soluble extension $F_1$ of $F$ in which $p$ splits completely such that $[F_1:\Q]$ is even and for any place $w$ of $F_1$ above some place $v\in S\setminus \Sigma_p$,
\begin{enumerate}
\item $\rho(\kq)|_{G_{F_{1,w}}}$ is trivial,
\item $N(w)\equiv 1\mod p$,
\item $T^{univ}|_{I_{F_{1,w}}}\equiv 2\mod\kP$. Recall that $\kP$ is a minimal prime of $R^{\ps}$ introduced in \ref{kPC} and $T^{univ}|_{I_{F_{1,w}}}$ is a sum of two characters of finite orders modulo $\kP$.
\end{enumerate}
This is possible by lemma 2.2 of \cite{Ta03}. Note that we can define a Hecke algebra $\T:=\T_{\psi,\xi}(U^p)$ (of a quaternion algebra over $F_1$) as in \ref{autlev} with $S$ in \ref{defrps1} taken as the set of places of $F_1$ above $S$ here and $\xi=\mathbf{1}$, the trivial character. The assumption in \ref{autlev} holds for $\T$ as we can take a prime $\kq_1\in C^{\ord,\mathrm{aut}}_1$ and consider $\rho(\kq_1)|_{G_{F_1}}$. By Theorem \ref{thmB} and soluble base change, this comes from a regular algebraic cuspidal automorphic representation of $\GL_2(\A_{F_1})$ and gives rise to the existence of our desired maximal ideal of Hecke algebra.

Let $S_1$ be the set of places of $F_1$ above $S$. Consider $R^{\ps,1}$ defined in \ref{defrps1}. More precisely, it pro-represents the functor from $\cOf$ to the category of sets sending $R$ to the set of two-dimensional pseudo-representations $T$ of $G_{F_1,S_1}$ over $R$ such that $T$ is a lifting of $1+\bar\chi|_{G_{F_1,S_1}}$ with determinant $\chi|_{G_{F_1,S_1}}$ and
\[T|_{I_{F_{1,v}}}=2 \]
for any $v\in S_1,v\nmid p$. 

It follows from our construction that there is a map $R^{\ps,1}\to R^{\ps}/\kP$. Let $\kp',\kq'\in\Spec R^{\ps,1}$ be the pull-backs of $\kp,\kq$. We claim that $\kq'$ is a nice prime in the sense of \ref{nice}. All the conditions are clear except that $\rho(\kq)|_{G_{F_1}}$ is irreducible and $\kq'$ is pro-modular. 

The irreducibility can be proved in the same way as in the second paragraph of \ref{1asspropB}. To see that $\kq'$ is pro-modular, we will actually prove that the image of $C^{\ord}_1$ in $\Spec R^{\ps,1}$ is pro-modular. First notice that by Theorem \ref{thmB} for any $\kq_1\in C^{\ord,\mathrm{aut}}_1$, $\rho(\kq_1)$ comes from a regular algebraic cuspidal automorphic representation of $\GL_2(\A_{F_1})$. Hence by soluble base change and the same irreducibility argument, the image of any prime of $C^{\ord,\mathrm{aut}}_1$ in $\Spec R^{\ps,1}$ is modular. By the density result in corollary \ref{cord1}, we see that the image of $C^{\ord}_1$ is in fact pro-modular. In particular, $\kq'$ is pro-modular and hence nice.

Note that we can find an irreducible component of $R^{\ps,1}$ that contains the image of $C$, the irreducible component of $\Spec R^{\ps}$ defined by $\kP$. Therefore we can apply corollary \ref{corA} with $\kp=\kp'$ and $\kq=\kq'$ and  conclude that $\rho(\kp')=\rho|_{G_{F_1}}$ comes from a regular algebraic cuspidal automorphic representation of $\GL_2(\A_{F_1})$.  Theorem \ref{mainthm} now follows from soluble base change.
\end{para}

\subsection{Case 2: \texorpdfstring{$\bar\chi|_{G_{F_v}}=\mathbf{1}$}{Lg}}
In this subsection, we deal with the case where $\bar\chi|_{G_{F_v}}=\mathbf{1}$ for any $v|p$. We will prove lemma \ref{Corddim} in this case and show that there are enough automorphic points on $C^{\ord}$ as in corollary \ref{cord1}. To do this, we need work on some covering of $C^{\ord}$.

Let $R^{\ps,\ord}_1$ be the universal deformation ring introduced in \ref{sotmr2}. It is clear that there is a natural map $R^{\ps,\ord}\to R^{\ps,\ord}_1$. We denote by $C^{\ord,1}$ the pull-back of $C^{\ord}$ to $\Spec R^{\ps,\ord}_1$.

\begin{lem}
$\dim C^{\ord,1}\geq[F:\Q]+1$.
\end{lem}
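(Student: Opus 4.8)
The plan is to follow the proof of Lemma~\ref{Corddim} from the generic case, but to carry it out on the cover $\Spec R^{\ps,\ord}_1$ rather than on $\Spec R^{\ps,\ord}$. One is forced onto the cover precisely because of the degeneracy isolated at the start of this section: when $\bar\chi|_{G_{F_v}}=\mathbf 1$ the reducible-at-$v$ locus $\Spec R^{\ps,\ord}_v$ is no longer cut out by a single equation in $\Spec R^{\ps}_v$, so the clean codimension bound of Lemma~\ref{Corddim} is unavailable; but after adjoining the character $\psi_{v,1}$ to the moduli data the relevant local ring becomes regular again.

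\textbf{Local computation.} Since $p$ splits completely in $F$, every $v\mid p$ has $F_v=\Q_p$, which contains no primitive $p$-th root of unity. I would first show that $R^{\ps,\ord}_{1,v}$, the local deformation ring at $v$ parametrizing pairs $(T|_{G_{F_v}},\psi_{v,1})$ with $\psi_{v,1}$ lifting the trivial character, $T|_{G_{F_v}}=\psi_{v,1}+\chi|_{G_{F_v}}\psi_{v,1}^{-1}$, and $T$ being $\psi_{v,1}$-ordinary, is formally smooth over $\cO$ of relative dimension $2[F_v:\Q_p]=2$. Indeed, $T|_{G_{F_v}}$ is determined by $\psi_{v,1}$, and by Lemma~\ref{ordpseudo} the $\psi_{v,1}$-ordinarity is then automatic (the associated semisimple representation $\psi_{v,1}\oplus\psi_{v,2}$ is block-diagonal, hence upper triangular); so $R^{\ps,\ord}_{1,v}$ is simply the universal deformation ring of the trivial character $G_{\Q_p}\to\F^\times$, i.e.\ $\cO[[x,y]]$. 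This is the pseudo-character analogue of Lemma~\ref{RDeltaL}. Consequently $R^{\ps,\ord}_{1,p}=\widehat\bigotimes_{v\mid p}R^{\ps,\ord}_{1,v}$ is a power series ring over $\cO$, regular of dimension $1+2[F:\Q]$. On the other hand, for each $v\mid p$ the residual pseudo-character $2\cdot\mathbf 1$ of $G_{\Q_p}$ has $H^2(G_{\Q_p},\ad^0)=0$ (again since $\Q_p$ contains no $p$-th root of unity), so the fixed-determinant framed local deformation ring is formally smooth, and taking the $\PGL_2$-quotient gives $\dim R^{\ps}_v=1+3$; hence $\dim R^{\ps}_p=1+3[F:\Q]$.

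\textbf{Dimension estimate.} By construction $R^{\ps,\ord}_1=R^{\ps}\widehat\otimes_{R^{\ps}_p}R^{\ps,\ord}_{1,p}$ and $C^{\ord,1}$ is the preimage of $C$, so $C^{\ord,1}=C\times_{\Spec R^{\ps}_p}\Spec R^{\ps,\ord}_{1,p}$; recall also $\dim C\ge 2[F:\Q]+1$. Feeding the dimensions above into the standard fiber-product inequality
\[\dim C^{\ord,1}\ \ge\ \dim C+\dim R^{\ps,\ord}_{1,p}-\dim R^{\ps}_p\]
yields $\dim C^{\ord,1}\ge(2[F:\Q]+1)+(1+2[F:\Q])-(1+3[F:\Q])=[F:\Q]+1$, as wanted. (As a check, Theorem~\ref{thmC} forces $R^{\ps,\ord}_1$, hence $C^{\ord,1}$, to have dimension at most $[F:\Q]+1$, so this is actually an equality.) Equivalently, one may first observe that $\Spec R^{\ps,\ord}_{1,p}\to\Spec R^{\ps,\ord}_p$ is finite and surjective — surjectivity because, $\bar\chi|_{G_{F_v}}$ being trivial, any reducible-at-$v$ pseudo-character splits as $\psi_1+\psi_2$ with both $\psi_i$ lifting $\mathbf 1$, so a valid $\psi_{v,1}$ always exists — whence $C^{\ord,1}\to C^{\ord}$ is finite surjective and $\dim C^{\ord,1}=\dim C^{\ord}$, reducing the claim to $\dim C^{\ord}\ge[F:\Q]+1$.

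\textbf{Main obstacle.} The delicate point is justifying the fiber-product inequality, i.e.\ controlling the dimension drop when intersecting $C$ with the reducibility locus. In the generic case this rests on the principality of $\ker(R^{\ps}_v\to R^{\ps,\ord}_v)$; here $R^{\ps}_p$ need not be regular (the pseudo-deformation ring of $2\cdot\mathbf 1$ of $G_{\Q_p}$ typically has a singularity at its most reducible point), so one cannot merely invoke Krull's height theorem, and the estimate must instead be extracted from the regularity of the cover $R^{\ps,\ord}_{1,p}$ — concretely, by realising $C^{\ord,1}$ inside $C\times_{\Spec\cO}\Spec R^{\ps,\ord}_{1,p}$ as the pullback along the diagonal $\Spec R^{\ps}_p\hookrightarrow\Spec R^{\ps}_p\times_{\Spec\cO}\Spec R^{\ps}_p$ and checking that this diagonal is a regular embedding of the expected codimension. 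Alternatively, via the reduction $C^{\ord,1}\to C^{\ord}$ of the previous paragraph, the obstacle becomes showing that $\ker(R^{\ps}_v\to R^{\ps,\ord}_v)$ is generated by $[F_v:\Q_p]=1$ element even in this degenerate residual configuration, which should fall out of the explicit presentation in Lemma~\ref{RDelta} (the matrix identity $(\tilde\rho(\sigma)-\tilde\psi_{v,1}(\sigma))(\tilde\rho(\tau)-\tilde\psi_{v,2}(\tau))=0$) pushed down to the pseudo-character quotient.
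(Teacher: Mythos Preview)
Your overall strategy matches the paper's proof, and your arithmetic is correct, but you have talked yourself into a nonexistent obstacle by missing the key structural input.

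The point you flag as delicate --- justifying the fiber-product inequality --- is handled in the paper precisely by the fact that $R^{\ps}_v$ \emph{is} formally smooth over $\cO$: by Corollary~9.13 of \cite{Pas13} one has $R^{\ps}_v\cong\cO[[t_1,t_2,t_3]]$ in the present case $\bar\chi|_{G_{F_v}}=\mathbf{1}$. Your sentence ``$R^{\ps}_p$ need not be regular (the pseudo-deformation ring of $2\cdot\mathbf 1$ of $G_{\Q_p}$ typically has a singularity at its most reducible point)'' is therefore incorrect here. With this smoothness in hand, the paper simply observes that the multiplication map
\[
R^{\ps}_v\,\widehat\otimes_{\cO}\,R^{\ps,\ord}_{v,1}\longrightarrow R^{\ps,\ord}_{v,1},\qquad a\otimes b\mapsto \phi(a)b,
\]
has kernel generated by the three elements $t_i\otimes 1-1\otimes\phi(t_i)$. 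Tensoring over all $v\mid p$, one sees that $C^{\ord,1}$ is cut out of $C\,\widehat\otimes_{\cO}\,R^{\ps,\ord}_{p,1}$ (dimension $\ge (2[F:\Q]+1)+2[F:\Q]$) by $3[F:\Q]$ elements, and Krull's height theorem gives the bound. This is exactly your approach (a), and the ``regular embedding'' you seek is immediate from the power-series presentation of $R^{\ps}_v$.

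Two minor corrections. First, your justification for $\dim R^{\ps}_v=1+3$ via ``framed deformation ring is smooth, take the $\PGL_2$-quotient'' is not a valid deduction when the residual representation has extra automorphisms (as $\mathbf 1\oplus\mathbf 1$ does); the smoothness of the pseudo-deformation ring is a separate fact, supplied by \cite{Pas13}. Second, your approach (b) --- reducing to principality of $\ker(R^{\ps}_v\to R^{\ps,\ord}_v)$ --- is not what the paper does and the appeal to Lemma~\ref{RDelta} is misplaced, since that lemma concerns representation-level deformation rings, not pseudo-characters.
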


\begin{proof}
The proof is similar to the proof of lemma \ref{Corddim}. For any $v|p$, let $R^{\ps,\ord}_{v,1}$ be the universal deformation ring which parametrizes all characters $\psi_{v,1}$ of $G_{F_v}$ lifting the trivial character $\mathbf{1}$. Then it is clear that this ring is (non-canonically) isomorphic to $\cO[[x_1,x_2]]$. Also assigning each such a character $\psi_{v,1}$ to $\psi_{v,1}+\chi\psi_{v,1}^{-1}$ induces a natural finite map $\phi:R^{\ps}_v\to R^{\ps,\ord}_{v,1}$. Consider the map:
\[R^{\ps}_v\hat{\otimes}_{\cO} R^{\ps,\ord}_{v,1}\to R^{\ps,\ord}_{v,1}:~a\otimes b\mapsto \phi(a)b.\]
We claim that the kernel of this surjective map is generated by \textbf{three} elements. This is clear as in this case $R^{\ps}_v$ is isomorphic to $\cO[[t_1,t_2,t_3]]$ by Corollary 9.13. of \cite{Pas13}. Let $R^{\ps,\ord}_{p,1}$ be the completed tensor products of all $R^{\ps,\ord}_{v,1},v|p$ over $\cO$. Then $R^{\ps,\ord}_1=R^{\ps}\otimes_{R^{\ps}_p}R^{\ps,\ord}_{p,1}$. Recall that $C=\Spec R^{\ps}/\kP$ has dimension at least $1+2[F:\Q]$. Hence $C^{\ord,1}$ is the underlying space of the spectrum of $(R^{\ps}/\kP)\otimes_{R^{\ps}_p}R^{\ps,\ord}_{p,1}$, which we may rewrite as 
\[(R^{\ps}/\kP\hat{\otimes}_{\cO} R^{\ps,\ord}_{p,1})\otimes_{(R^{\ps}_p\hat{\otimes}_{\cO} R^{\ps,\ord}_{p,1})}R^{\ps,\ord}_{p,1}\]
From what we just discussed, $C^{\ord,1}$ has dimension at least 
\[1+2[F:\Q]+2[F:\Q]-3[F:\Q]=1+[F:\Q].\] 
This is exactly what we want.
\end{proof}

There is a natural map $C^{\ord,1}\to \Spec R^{\ps,\ord}_1\to \Spec \Lambda_F$ as in \ref{sotmr2}. The following corollary can be proved in exactly the same way as corollary \ref{cord1} using Theorem \ref{thmC}.

\begin{cor} 
There exists an irreducible component $C^{\ord,1}_1$ of $C^{\ord,1}$ such that 
\begin{enumerate}
\item $\dim C_1^{\ord}= [F:\Q]+1$. 
\item $C^{\ord,1}\to \Spec \Lambda_F$ is a finite surjective map.
\item Let $C^{\ord,1,\mathrm{aut}}_1$ be the set of regular de Rham primes in $C^{\ord,1}_1$ defined in corollary \ref{cord1}. Then $C^{\ord,1,\mathrm{aut}}_1$ is dense in $C^{\ord,1}_1$.
\end{enumerate}
\end{cor}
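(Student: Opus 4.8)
The plan is to run the proof of Corollary \ref{cord1} essentially verbatim, with Theorem \ref{thmC} in place of Theorem \ref{thmB} and $R^{\ps,\ord}_1$ in place of $R^{\ps,\ord}$. First, by the lemma just established every irreducible component of $C^{\ord,1}$ has dimension at least $[F:\Q]+1$. On the other hand the map $\Lambda_F\to R^{\ps,\ord}_1$ of \ref{sotmr2} is finite by Theorem \ref{thmC}(1), so $R^{\ps,\ord}_1$, hence $C^{\ord,1}$ and each of its irreducible components, has Krull dimension at most $\dim\Lambda_F=[F:\Q]+1$. Picking any irreducible component $C^{\ord,1}_1$ of $C^{\ord,1}$ therefore forces $\dim C^{\ord,1}_1=[F:\Q]+1$, which is (1). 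For (2), the composite $\Lambda_F\to R^{\ps,\ord}_1\to$ (coordinate ring of $C^{\ord,1}_1$) is finite, so its image in $\Spec\Lambda_F$ is closed and irreducible of dimension $[F:\Q]+1=\dim\Lambda_F$; since $\Lambda_F$ is a domain, $\Spec\Lambda_F$ is irreducible and a closed irreducible subset of full dimension is everything, so the map $C^{\ord,1}_1\to\Spec\Lambda_F$ is finite and surjective.

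For (3) I would argue exactly as in Corollary \ref{cord1}. By \ref{redlrps} the reducible locus of $\Spec R^{\ps}$ is a quotient of $\cO[[G_{F,S}^{\mathrm{ab}}(p)]]$, hence (just as in Corollary \ref{cord1}) has dimension at most $\delta_F+2=2<[F:\Q]+1$; consequently the locus of $\kq\in C^{\ord,1}_1$ with $\rho(\kq)$ reducible is a proper closed subset, so the irreducibility condition in the definition of regular de Rham points may be ignored when checking density. Next, Lemma \ref{ordpseudo} provides, for each $v\mid p$, an integer $n_v\in\{1,2\}$ such that for every $\kq\in C^{\ord,1}_1$ one has $\rho(\kq)|_{G_{F_v}}\cong\begin{pmatrix}\psi^{univ}_{v,n_v}\bmod\kq & *\\ 0 & \psi^{univ}_{v,3-n_v}\bmod\kq\end{pmatrix}$, so the distinguished $G_{F_v}$-stable line carries $\psi^{univ}_{v,n_v}\bmod\kq$. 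The arithmetic locus of $\Spec\Lambda_F$, where each universal inertial character becomes locally algebraic of a prescribed weight (cf. \ref{finlevhecke}), is Zariski dense in $\Spec\Lambda_F$; moreover, since $\det=\chi$ has a fixed Hodge-Tate weight at each $v\mid p$, one may restrict to the sub-locus where the weights are chosen far enough in the appropriate direction that $\psi^{univ}_{v,n_v}$ has strictly smaller Hodge-Tate number than $\psi^{univ}_{v,3-n_v}$, and this sub-locus is still Zariski dense. Pulling these points back along the finite surjective map $C^{\ord,1}_1\to\Spec\Lambda_F$ of (2) yields a dense subset of $C^{\ord,1}_1$ consisting of regular de Rham primes (away from the small reducible locus), so $C^{\ord,1,\mathrm{aut}}_1$ is dense in $C^{\ord,1}_1$.

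Since the corollary is, as the text says, proved in exactly the same way as Corollary \ref{cord1}, there is no serious obstacle; the only point requiring a little care is the bookkeeping in the last step, namely checking that after imposing, at each place above $p$, the inequality between Hodge-Tate numbers forced by the definition of a regular de Rham prime, the remaining arithmetic points still form a Zariski-dense subset of $\Spec\Lambda_F$ and genuinely lift (via Lemma \ref{ordpseudo}) to points of $C^{\ord,1}_1$ with the distinguished line carrying the smaller-weight character. This is handled exactly as in Corollary \ref{cord1}, and once it is settled the remaining assertions are formal consequences of the finiteness and domain properties recorded above.
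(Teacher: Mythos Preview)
Your proposal is correct and follows essentially the same approach as the paper, which simply states that the corollary ``can be proved in exactly the same way as corollary \ref{cord1}.'' The only minor remark is that your invocation of $n_v\in\{1,2\}$ via Lemma \ref{ordpseudo} is slightly redundant here: since $\psi_{v,1}$-ordinarity is built into the very definition of $R^{\ps,\ord}_1$ (see \ref{sotmr2}), one automatically has $n_v=1$ for every $v\mid p$, so this step is in fact simpler than in the generic case.
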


The rest of the proof is the same as in the generic case: we may find a potentially nice prime $\kq'\in C^{\ord,1}_1$. Let $\kq$ be its image in $C^{\ord}$. Then after possibly a soluble base change, $\kq$ becomes a nice prime and we can apply corollary \ref{corA} and conclude that $\rho$ comes from a regular algebraic cuspidal automorphic representation of $\GL_2(\A_F)$. For more details, see the arguments in \ref{potnice}, \ref{laststep}. 

\subsection{Case 3: \texorpdfstring{$\bar\chi|_{G_{F_v}}=\omega^{\pm1}$}{Lg}} \label{case3}
In this subsection, we treat the last case where $\bar\chi|_{G_{F_v}}=\omega^{\pm1}$ for any $v|p$. We will keep the assumption that $p\geq 5$. Then after possibly some twist, we may assume $\bar\chi|_{G_{F_v}}=\omega\neq\omega^{-1}$. In this case, lemma \ref{Corddim} might not hold in general. Hence we need some careful study of $C^{\ord}$. Roughly speaking, our strategy is to `connect' $\kp$ with some pro-modular primes by nice primes. More precisely, we make the following definition.

\begin{defn} 
We say an irreducible component $C'$ of $R^{\ps}$ is \textit{good} if there exist irreducible components $C_1,\cdots,C_t=C'$ for some integer $t>0$ and potentially nice primes (defined in \ref{potnice}) $\kq_1,\cdots,\kq_t$ such that
\begin{itemize}
\item $\kq_i\in C_i\cap C_{i-1},i=2,\cdots,t$, and $\kq_1\in C_1$.
\item $\kq_1$ is in the closure of the set of regular de Rham primes of $\Spec R^{\ps,\ord}$ (introduced in corollary \ref{cord1}). In particular, this set of regular de Rham primes in $\Spec R^{\ps,\ord}$ is non-empty.
\end{itemize}
By abuse of notation, we will also say the minimal prime corresponding to $C'$ is \textit{good}.
\end{defn}

\begin{lem} 
If $C$ is good, then Theorem \ref{mainthm} holds.
\end{lem}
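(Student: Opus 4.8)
The plan is to propagate \emph{pro-modularity} along the chain $C_1,\dots,C_t=C$ furnished by the definition of \emph{good}, invoking Theorem \ref{thmA} at each link, and then to apply Corollary \ref{corA} to the prime $\kp$ attached to $\tr\rho$. As in \ref{laststep}, the patching machinery of Section \ref{Paao-dp} only becomes available after a soluble base change, so the first step is to choose a finite soluble totally real extension $F_1/F$ in which $p$ splits completely, of even degree over $\Q$, such that for every $i\in\{1,\dots,t\}$ and every place $w$ of $F_1$ above a place $v\in S\setminus\Sigma_p$ one has $\rho(\kq_i)|_{G_{F_{1,w}}}$ trivial, $N(w)\equiv 1\bmod p$, and $T^{univ}|_{I_{F_{1,w}}}\equiv 2$ modulo the minimal prime $\kP_i$ cutting out $C_i$. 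Since there are only finitely many $C_i$ and, by lemma \ref{simcor}, $T^{univ}|_{I_{F_v}}$ is a sum of two finite-order characters modulo each $\kP_i$, such an $F_1$ exists by lemma 2.2 of \cite{Ta03}. Over $F_1$ with $\xi$ trivial we obtain the deformation ring $R^{\ps,1}$ of \ref{defrps1}, and restriction of pseudo-characters gives, for each $i$, a map $R^{\ps,1}\to R^{\ps}/\kP_i$, hence a morphism $C_i\to\Spec R^{\ps,1}$ whose image lies in some irreducible component $C_i'$; write $\kq_i'$ and $\kp'$ for the images of $\kq_i$ and of $\kp$.

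To set up the Hecke algebra I would use the automorphic input built into the definition of \emph{good}: it provides a regular de Rham prime of $\Spec R^{\ps,\ord}$ in whose closure $\kq_1$ lies, so Theorem \ref{thmB}(2) produces a twist of a Hilbert modular form over $F$; after the usual twisting trick (as in \ref{exgal}), soluble base change, and the irreducibility argument of the second paragraph of \ref{1asspropB}, this yields a regular algebraic cuspidal automorphic representation over $F_1$ with residual representation $\bar\rho$, which is exactly the assumption in \ref{autlev} needed to define $\T=\T_{\psi,\xi}(U^p)$ and its maximal ideal $\km$. Then I would run the propagation. The regular de Rham primes of $\Spec R^{\ps,\ord}$ are pro-modular over $F_1$ by the same reasoning, and the pro-modular locus is closed in $\Spec R^{\ps,1}$ (it is the image of the closed immersion $\Spec\T_\km\hookrightarrow\Spec R^{\ps,1}$), so $\kq_1'$, being a limit of such points, is pro-modular. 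A potentially nice prime is one-dimensional, contains $p$, has $\rho(\kq_1)$ irreducible and non-dihedral (lemma \ref{nirred}), and has finite image at places of $S\setminus\Sigma_p$; granted pro-modularity and the local conditions arranged by the base change, it therefore gives a nice prime $\tilde\kq_1$ of $\T_\km$ in the sense of Definition \ref{nice}, exactly as in \ref{laststep}. Theorem \ref{thmA} then shows that $(R^{\ps,1})_{\kq_1'}\to\T_{\tilde\kq_1}$ has nilpotent kernel, so every prime of $R^{\ps,1}$ contained in $\kq_1'$ — in particular the minimal prime corresponding to $C_1'$ — is pro-modular; hence $C_1'$ is pro-modular. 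Inducting: if $C_{i-1}'$ is pro-modular then $\kq_i'\in C_{i-1}'\cap C_i'$ is pro-modular (the pro-modular locus is closed and contains $C_{i-1}'$), hence a nice prime, and Theorem \ref{thmA} again forces $C_i'$ to be pro-modular. After $t$ steps $C_t'=C'$ is pro-modular and contains the nice prime $\tilde\kq_t$.

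Finally $\kp'$ lies on $C'$, $\rho(\kp')=\rho|_{G_{F_1}}$ is irreducible (again by the argument of \ref{1asspropB}, using that $\bar\chi_1/\bar\chi_2$ extends to $G_\Q$ and that $\rho$ is irreducible at $p$), and by hypothesis it is de Rham with distinct Hodge–Tate weights at every place above $p$. Corollary \ref{corA}, applied to $\kp'$ and the nice prime $\tilde\kq_t$ on the common irreducible component $C'$, shows that $\rho|_{G_{F_1}}$ arises from a regular algebraic cuspidal automorphic representation of $\GL_2(\A_{F_1})$; soluble base change (solvability of $F_1/F$, as in \cite{BLGHT11}) descends this to $F$, which is the assertion of Theorem \ref{mainthm}.

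The main obstacle is bookkeeping rather than a new idea: one must choose the single extension $F_1$ compatibly for all finitely many $\kq_i$ and for the automorphic input defining $\km$, and one must verify at each link of the chain that ``potentially nice plus pro-modular'' genuinely yields a prime meeting all of the conditions of Definition \ref{nice} (the choice of lattice $\bar\rho_b$, the non-split extension shape, and the local conditions at $S\setminus\Sigma_p$), all handled as in \ref{laststep}. The one genuinely structural point is that the conclusion of Theorem \ref{thmA} — nilpotence of the kernel, hence an equality of underlying spectra — is exactly what is needed to transfer pro-modularity from a single one-dimensional nice prime up to the whole irreducible component passing through it, which is what makes the chain argument close.
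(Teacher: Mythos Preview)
Your proposal is correct and follows essentially the same approach as the paper's proof: a single soluble base change $F_1/F$ chosen compatibly for all the $\kq_i$, followed by an inductive propagation of pro-modularity along the chain $C_1,\dots,C_t$ via Theorem \ref{thmA}, and a final application of Corollary \ref{corA} at $\kp'$. The paper's proof is terser (it simply cites \ref{laststep} and sketches the induction in two lines), but your expanded account --- including the explicit verification that the pro-modular locus is closed, the careful tracking of irreducible components $C_i'$, and the justification that each $\kq_i'$ becomes nice once pro-modular --- matches it step for step.
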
 

\begin{proof}
The argument is very similar to \ref{laststep}. We choose a soluble totally real field extension $F_1/F$ in which $p$ completely splits such that $[F_1:\Q]$ is even and for any place $w$ of $F_1$ above some place in $S\setminus\Sigma_p$,
\begin{itemize}
\item $\rho(\kq_i)|_{G_{F_{1,w}}}=\mathbf{1},i=1\cdots,t$.
\item (2)(3) as in \ref{laststep}.
\end{itemize}
As in \ref{laststep}, we can define a Hecke algebra $\T_{\psi,\xi}(U^p)$ (of a quaternion algebra over $F_1$) which satisfies the assumption in \ref{autlev}. Also over $F_1$, we have the deformation ring $R^{\ps,1}$. Let $\kq'_i$ be the pull-back of $\kq_i$. It suffices to prove that $\kq'_t$ is nice by corollary \ref{corA}. By the same argument as in \ref{laststep}, we see that $\kq'_1$ is nice. Hence the image of $C_1$ in $\Spec R^{\ps,1}$ is pro-modular by Theorem \ref{thmA}. In particular, $\kq'_{2}$ is nice. Repeating this argument, we see that $\kq'_i$ is nice for any $i$.
\end{proof} 

Therefore it rests to prove

\begin{prop}\label{generalprop}
$C$ is good. 
\end{prop}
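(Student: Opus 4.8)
The plan is to prove Proposition \ref{generalprop} in two steps: first produce a single good irreducible component (a ``seed''), and then show that good-ness spreads along chains of potentially nice primes until it reaches $C$.

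For the seed I would use an Eisenstein point of an ordinary Hecke algebra. Since modularity descends along soluble totally real extensions and the lemma above reduces Theorem \ref{mainthm} to the assertion that $C$ is good, one may first replace $F$ by a soluble totally real extension, abelian over $\Q$ and in which $p$ still splits completely, chosen (as in the construction \ref{conF1}, following \cite{SW99}) so that $\ord_\varpi L_p(F,-1,\widetilde{\bar{\chi}\omega})>0$. Then Proposition \ref{exoemi} produces an Eisenstein-congruent ordinary cuspidal Hilbert modular form over $F$ with residual pseudo-character $1+\bar\chi$; its Galois representation is irreducible and de Rham, hence gives a regular de Rham prime of $\Spec R^{\ps,\ord}$ in the sense of Corollary \ref{cord1}. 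Let $C^{\ord}_1$ be the irreducible component of $\Spec R^{\ps,\ord}$ through it. By Theorem \ref{thmB}, $R^{\ps,\ord}$ is finite over $\Lambda_F$, so $\dim C^{\ord}_1\le[F:\Q]+1$; on the other hand the Eisenstein-localized ordinary Hecke algebra is a quotient of $R^{\ps,\ord}$ and, by Corollary \ref{orddim}, is equidimensional of dimension $[F:\Q]+1$, so $C^{\ord}_1$ has dimension exactly $[F:\Q]+1$, dominates $\Spec\Lambda_F$, and (by density of classical specializations in Hida families) has its regular de Rham primes Zariski dense in it. Running the dimension count of \ref{potnice} inside $C^{\ord}_1$ — legitimate since $|S\setminus\Sigma_p|+1<[F:\Q]$ and $\delta_F=0$ — yields a potentially nice prime $\kq_1\in C^{\ord}_1$; it lies in the closure of the dense set of regular de Rham primes, so the component $C_1$ of $\Spec R^{\ps}$ through $\kq_1$ is good.

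For the spreading step, note first that good-ness propagates: if $C'$ is good and $\kq\in C'\cap C''$ is potentially nice, then concatenating chains shows $C''$ is good. Moreover, by Theorem \ref{thmA}, any component of $\Spec R^{\ps}$ carrying a nice prime is pro-modular, and around such a prime one may — exactly as in Sections \ref{AgoroS-W1} and \ref{Paao-dp} — pass to the global deformation ring $R_b$ of the non-split residual lattice $\bar\rho_b$, whose connectedness dimension is controlled by a Taylor--Wiles/CHT-style presentation together with Proposition \ref{cndimformula}, and whose reducible locus is bounded via Proposition \ref{ht1def} and Remark \ref{ht1Rem}; this lets one cross from one component into a neighbouring one through a potentially nice prime. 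To see that good-ness reaches $C$, suppose not and split the components of $\Spec R^{\ps}$ into the good ones $Z_1$ (nonempty by the previous step) and the rest $Z_2\ni C$; since $\Spec R^{\ps}$ is connected, $\dim(Z_1\cap Z_2)\ge c(\Spec R^{\ps})$, and after a further soluble base change shrinking $|S\setminus\Sigma_p|$ and the relevant Selmer group relative to $[F:\Q]$ (as in \ref{Washington} and \ref{ubdimh1}) one arranges
\[
c(\Spec R^{\ps})\;-\;1\;-\;|S\setminus\Sigma_p|\;>\;\dim R^{\ps,\mathrm{red}}\ \bigl(\le\delta_F+2\bigr);
\]
then imposing $\varpi=0$ and $T^{univ}(\Frob_v)=1+\chi(\Frob_v)$ for $v\in S\setminus\Sigma_p$ (which forces finite image at those $v$ by Lemma \ref{simlem}) and avoiding the reducible locus produces a potentially nice prime in $Z_1\cap Z_2$, hence in some $C'\cap C''$ with $C'$ good and $C''$ not — contradicting the propagation property. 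Therefore $C$ is good.

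The main obstacle is this spreading step: one must keep the connectedness dimension of the global pseudo-deformation space large while the reducible locus and the auxiliary conditions at $S\setminus\Sigma_p$ stay small, which forces a careful bookkeeping of soluble base changes that simultaneously preserve ``$F$ abelian over $\Q$'' and ``$p$ completely split in $F$'', make the $p$-adic $L$-value positive for the seed, and shrink $|S|$ and $\dim_\F H^1(F)$ for the connecting argument — and then each hop along the chain of potentially nice primes relies on the full patching output of Theorem \ref{thmA}. The hypothesis $\bar\chi|_{G_{F_v}}=\omega^{\pm1}$ with $p\ge5$ enters only to keep $\bar\chi$ non-quadratic (so the dihedral clause \ref{nice}(3) is automatic by Lemma \ref{nirred}(1)) and to make Theorem \ref{thmB} applicable; the reason Case \ref{case1:generic}'s direct argument fails here is precisely that $C^{\ord}$ may have dimension smaller than $[F:\Q]+1$, so a nice prime cannot in general be found on $C$ itself.
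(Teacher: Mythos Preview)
Your spreading step has a genuine gap. You need a lower bound on $c(\Spec R^{\ps})$, but there is no presentation of the \emph{pseudo}-deformation ring $R^{\ps}$ that yields one: Proposition \ref{cndimformula} applies to quotients of regular rings by a controlled number of relations, and while $R_b$ admits such a presentation (via $H^1$, $H^2$ of $\ad^0\bar\rho_b$), $R^{\ps}$ does not. The bounds you invoke from Sections \ref{AgoroS-W1} and \ref{Paao-dp} are for $R_b$, and passing from $R_b$ to $R^{\ps}$ via Corollary \ref{psccomp} only works at primes $\kq$ with $\rho(\kq)$ irreducible---which gives $c((R^{\ps})_\kq)\ge 2[F:\Q]-1$ \emph{locally} (this is exactly Lemma \ref{condimRpskq}), but says nothing about components that only meet along the reducible locus. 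Nothing in your argument rules out $Z_1\cap Z_2$ being contained in $\Spec R^{\ps,\mathrm{red}}$, in which case no potentially nice prime can be found there. The further soluble base changes you propose do not help: they may shrink the reducible locus, but they give no new handle on $c(\Spec R^{\ps})$.

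The paper avoids this by never leaving $C$. Even in Case 3 one has $\dim C^{\ord}\ge 1$, since by Lemma \ref{nongenRps} the kernel of $R^{\ps}_p\to R^{\ps,\ord}_p$ is generated by $2[F:\Q]$ elements and $\dim C\ge 2[F:\Q]+1$. Fix a one-dimensional $\kq\in C^{\ord}$ and split according to $\rho(\kq)$. If $\rho(\kq)$ is irreducible, bound $c((R^{\ps})_\kq)$ via $R_b$ and show all components through $\kq$---which already include $C$---are good (Lemma \ref{exnicecom} supplies the seed intrinsically, so your external Eisenstein construction is unnecessary). If $\rho(\kq)$ is generic reducible, the kernel of $(R^{\ps}_v)_{\kq_v}\to(R^{\ps,\ord}_v)_{\kq_v}$ turns out to be principal and one recovers $\dim C^{\ord}\ge[F:\Q]+1$, reducing to the argument of subsection \ref{case1:generic}. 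The hard case is $\rho(\kq)\cong\psi_1\oplus\psi_2$ with $\psi_1/\psi_2=\varepsilon\theta$: here $\kq$ has characteristic zero, and one must pass to deformation rings $R_B$ of non-split characteristic-zero extensions of $\psi_1$ by $\psi_2$, using Lichtenbaum's vanishing of $H^2(G_{F',S'},E/\cO(2))$ (Lemma \ref{Ext1char0}) to bound their reducible loci, and then a two-step argument linking the $Z_B$ for different extension classes $B$. None of this characteristic-zero machinery appears in your proposal.
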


To find $C_i,\kq_i$ as in the definition, we need a careful study of $R^{\ps}_v,v|p$ in this case. 

\begin{lem} \label{nongenRps}
Assume $p>3$. Let $v$ be a place of $F$ above $p$.
\begin{enumerate}
\item $R^{\ps}_v\cong \cO[[x_0,x_1,y_0,y_1]]/(x_0y_1-x_1y_0)$.
\item Under this isomorphism, $R^{\ps,\ord}_v=R^{\ps}_v/(x_0,x_1)$.
\item The prime ideal $(x_0,x_1,y_0,y_1)$ corresponds to the pseudo-representation $\psi_1+\psi_2$ where $\psi_i:G_{F_v}\to\cO^\times$ are characters such that $\psi_1/\psi_2=\varepsilon$.
\end{enumerate}
\end{lem}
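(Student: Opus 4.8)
The plan is to reduce everything to an explicit local computation at $G_{\Q_p}$ and then invoke the work of Pa\v{s}k\={u}nas. Since $p$ splits completely in $F$ we have $F_v=\Q_p$, so $R^{\ps}_v$ is the universal deformation ring of the pseudo-character $1+\omega$ of $G_{\Q_p}$ with determinant $\chi|_{G_{F_v}}$. Because $\overline{\chi\varepsilon^{-1}}|_{G_{F_v}}=\mathbf 1$ and $1+\varpi\cO$ is uniquely $2$-divisible (here $p$ is odd), there is a unique character $\theta\colon G_{\Q_p}\to 1+\varpi\cO$ with $\theta^2=\chi|_{G_{F_v}}\varepsilon^{-1}$; twisting a pseudo-character $T$ by $\theta^{-1}$ identifies the deformation problem with determinant $\chi|_{G_{F_v}}$ with the one with determinant $\varepsilon$. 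Hence it suffices to prove (1) and (2) for $\chi|_{G_{F_v}}=\varepsilon$ and to undo this twist when deducing (3).

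For part (1): the semisimple representation $\mathbf 1\oplus\omega$ is exactly the representation $\bar\rho_\kB$ attached, via the recipe of \ref{pas}, to the block $\kB$ of $\GL_2(\Q_p)$ containing $\mathbf 1,\mathrm{Sp},(\Ind^{G}_{B}\omega\otimes\omega^{-1})_{\mathrm{sm}}$ (case (4) with $\delta$ trivial; recall that $p\geq 5$ in this situation). By Theorem \ref{pa1}, $R^{\ps}_v=R^{\ps,\varepsilon}_\kB$ is canonically the centre of $E_\kB$, and Pa\v{s}k\={u}nas computes $E_\kB$ — and hence its centre — explicitly in the analysis of this block (\S 10 of \cite{Pas13}); alternatively one can compute this local pseudo-deformation ring directly in the style of B\"ockle \cite{Boc01}, using that $H^1(G_{\Q_p},\F(\omega^{-1}))$ is one-dimensional and $H^1(G_{\Q_p},\F(\omega))$ is two-dimensional. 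Either way, after a change of coordinates one obtains $R^{\ps}_v\cong\cO[[x_0,x_1,y_0,y_1]]/(x_0y_1-x_1y_0)$, in which $(x_0,x_1)$ are the ``off-diagonal'' (generalized matrix algebra) coordinates obstructing reducibility and $(y_0,y_1)$ are the deformation parameters of the diagonal lift $\psi_{v,1}$ of $\mathbf 1$. I would carry out this identification keeping careful track of the geometric meaning of the four generators, since that is precisely what (2) and (3) use.

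For part (2): the quotient $R^{\ps}_v\twoheadrightarrow R^{\ps,\ord}_v$ is by the reducibility ideal, i.e.\ $R^{\ps,\ord}_v$ is the largest quotient over which the universal pseudo-character splits as $\psi_{v,1}+\psi_{v,2}$ with $\psi_{v,1}$ a lift of $\mathbf 1$. By the description in (1) the universal pseudo-character is already reducible over $R^{\ps}_v/(x_0,x_1)\cong\cO[[y_0,y_1]]$, so $x_0,x_1$ lie in the reducibility ideal and there is a surjection $\cO[[y_0,y_1]]\cong R^{\ps}_v/(x_0,x_1)\twoheadrightarrow R^{\ps,\ord}_v$. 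On the other hand $R^{\ps,\ord}_v$ is the universal deformation ring of the character $\mathbf 1\colon G_{\Q_p}\to\F^\times$, hence by local class field theory is the completed group algebra $\cO[[G_{\Q_p}^{\mathrm{ab}}(p)]]\cong\cO[[s,t]]$, a regular complete local domain of Krull dimension $3$; a surjection of complete local domains of the same Krull dimension is an isomorphism, so $R^{\ps,\ord}_v=R^{\ps}_v/(x_0,x_1)$ (and in particular the reducibility ideal equals $(x_0,x_1)$).

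For part (3): the prime $(x_0,x_1,y_0,y_1)$ contains $(x_0,x_1)$, hence lies in $\Spec R^{\ps,\ord}_v\cong\Spec\cO[[y_0,y_1]]$, where — once the presentation of (1) is arranged so that the images of $y_0,y_1$ are the standard deformation parameters of $\psi_{v,1}$ — it is the augmentation ideal, i.e.\ the unique $\cO$-point at which $\psi_{v,1}$ equals the fixed square root $\theta$ of $\chi|_{G_{F_v}}\varepsilon^{-1}$ (so $\psi_{v,1}=\mathbf 1$ in the normalized case $\chi|_{G_{F_v}}=\varepsilon$). Then $\psi_{v,2}=\chi|_{G_{F_v}}\psi_{v,1}^{-1}=\theta\varepsilon$, so $T^{univ}\bmod(x_0,x_1,y_0,y_1)=\psi_1+\psi_2$ with $\psi_1=\theta$, $\psi_2=\theta\varepsilon$ and $\psi_2/\psi_1=\varepsilon$, which is the assertion up to relabelling. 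The main obstacle throughout is part (1): extracting the precise presentation of the centre of $E_\kB$ from the long case analysis of \cite{Pas13}, and — what really matters here — pinning down the geometric meaning of $x_0,x_1,y_0,y_1$ sharply enough that (2) and (3) become the bookkeeping above rather than fresh computations.
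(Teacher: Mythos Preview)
Your overall strategy is sound and lands at the right answers, but the paper's route is more direct in two places and worth knowing.

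For part (1), the paper does not go through \S 10 of \cite{Pas13} or a B\"ockle-style computation. Instead it takes a non-split extension $\bar\rho_p$ of $\omega$ by $\mathbf 1$ and invokes Corollary~B.16 of \cite{Pas13}, which says that the trace map $R^{\ps}_v\to R^{\chi}_{\bar\rho_p}$ (the universal deformation ring of the genuine representation $\bar\rho_p$ with determinant $\chi|_{G_{F_v}}$) is an \emph{isomorphism}. One then reads off the presentation from Corollary~B.5 of \cite{Pas13}, with explicit coordinates $x_0=c_0,\,x_1=c_1,\,y_0=-d_0,\,y_1=d_1+p$. This gives you the presentation together with the geometric meaning of the variables in one stroke, which is exactly what you flag as ``the main obstacle''; your \S 10 reference would eventually get there but through a much longer analysis. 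For part (2) the paper again just cites the explicit description above Proposition~B.2 of \cite{Pas13}, though your dimension-count argument (surjection of complete local $\cO$-domains of the same Krull dimension) is a clean alternative.

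For part (3) the paper takes a genuinely different and coordinate-free route: it observes that $(x_0,x_1,y_0,y_1)$ is the unique non-smooth point of $\Spec R^{\chi}_{\bar\rho_p}[\tfrac{1}{p}]$ (obvious from the presentation), and then appeals to standard obstruction theory to conclude that the associated pseudo-representation must be the one with nonvanishing $H^2$, namely $\psi_1+\psi_2$ with $\psi_1/\psi_2=\varepsilon$. Your approach---tracking $y_0,y_1$ as deformation parameters of $\psi_{v,1}$ and reading off the augmentation point---also works, but it forces you to arrange the isomorphism in (1) with enough care that $(y_0,y_1)=(0,0)$ really is the ``trivial'' lift; the singularity argument sidesteps this bookkeeping entirely.
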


\begin{proof}
These claims follow easily from the results in Appendix B of \cite{Pas13}. Let $\bar\rho_p$ be a non-split extension of $\omega$ by $\mathbf{1}$ as $\F[G_{F_v}]$-modules. Consider the universal deformation ring $R^{\chi}_{\bar\rho_p}$ of $\bar\rho_p$ with determinant $\chi|_{G_{F_v}}$. Then corollary B.16 of \cite{Pas13} says that the natural map $R^{\ps}_v\to R^{\chi}_{\bar\rho_p}$ given by evaluating the traces is an isomorphism. The first part of the lemma now follows from lemma corollary B.5 of \cite{Pas13} with $x_0=c_0,x_1=c_1,y_0=-d_0,y_1=d_1+p$. The second part can be proved using the explicit description above proposition B.2 ibid. For the last part, it is clear that $(x_0,x_1,y_0,y_1)$ is the only non-smooth point of $\Spec R^{\chi}_{\bar\rho_p}[\frac{1}{p}]$. Then by standard obstruction theory, the associated pseudo-representation must have the form as in the lemma.
\end{proof}

\begin{cor}
$\dim C^{\ord}\ge 1$.
\end{cor}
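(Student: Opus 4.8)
The plan is to deduce this directly from Lemma \ref{nongenRps} and Krull's height theorem. Recall that $p$ splits completely in $F$, so $|\Sigma_p|=[F:\Q]$, and that the chosen irreducible component $C=V(\kP)$ satisfies $\dim C=\dim R^{\ps}/\kP\ge 2[F:\Q]+1$.

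First I would use the second part of Lemma \ref{nongenRps}: for each $v\mid p$ the surjection $R^{\ps}_v\twoheadrightarrow R^{\ps,\ord}_v$ has kernel $(x_0,x_1)$, generated by two elements. Taking the completed tensor product over $\cO$, the kernel $J$ of $R^{\ps}_p\twoheadrightarrow R^{\ps,\ord}_p$ is generated by the $2|\Sigma_p|$ elements $x_0^{(v)},x_1^{(v)}$ ($v\mid p$), since a quotient by an ideal is preserved under $\widehat{\bigotimes}_\cO$. Because $R^{\ps,\ord}_p=R^{\ps}_p/J$, we get $R^{\ps,\ord}=R^{\ps}\otimes_{R^{\ps}_p}R^{\ps,\ord}_p=R^{\ps}/JR^{\ps}$, where $JR^{\ps}$ is generated by the images of those $2|\Sigma_p|=2[F:\Q]$ elements. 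Hence $C^{\ord}=\Spec R^{\ps,\ord}\cap C=\Spec R^{\ps}/(\kP+JR^{\ps})$ is the zero locus inside $C$ of $2[F:\Q]$ functions.

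Next I would check $C^{\ord}\neq\emptyset$: the maximal ideal $\km$ of $R^{\ps}$ contains $\kP$ (as $R^{\ps}/\kP$ has positive dimension) and contains $JR^{\ps}$ (its generators lie in $\km$, the map $R^{\ps}_v\to R^{\ps}$ being local), so $\km\in V(\kP+JR^{\ps})$. Therefore, by the standard inequality $\dim R/(x_1,\dots,x_n)\ge\dim R-n$ for a Noetherian local ring,
\[
\dim C^{\ord}=\dim R^{\ps}/(\kP+JR^{\ps})\ \ge\ \dim R^{\ps}/\kP-2[F:\Q]\ \ge\ (2[F:\Q]+1)-2[F:\Q]=1 .
\]
The argument is short; the only point that needs care is the bookkeeping of the number of defining equations of $R^{\ps,\ord}_v$ inside $R^{\ps}_v$ and the fact that this count is preserved under $\widehat{\bigotimes}_\cO$ and under base change $-\otimes_{R^{\ps}_p}R^{\ps}$, which I addressed above.
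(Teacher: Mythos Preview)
Your proof is correct and follows essentially the same approach as the paper: use Lemma~\ref{nongenRps} to see that the kernel of $R^{\ps}\to R^{\ps,\ord}$ is generated by $2[F:\Q]$ elements, then apply Krull's inequality to $C$ (which has dimension $\ge 2[F:\Q]+1$) to conclude $\dim C^{\ord}\ge 1$. Your write-up simply spells out in detail what the paper states in two sentences.
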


\begin{proof}
It follows from the previous lemma that the kernel of $R^{\ps}\to R^{\ps,\ord}$ is generated by $2[F:\Q]$ elements. Since $\dim C\geq 1+2[F:\Q]$, the dimension of $C^{\ord}$ is at least one.
\end{proof}

\begin{para}
Choose a prime $\kq\in C^{\ord}$ such that $\dim R^{\ps}/\kq=1$. Enlarge $\cO$ if necessary so that we may assume the normalization of $R^{\ps}/\kq$ is either $\cO$ or isomorphic to $\F[[T]]$. There are three possibilities for the associated semi-simple representation $\rho(\kq)$:
\begin{enumerate}
\item $\rho(\kq)$ is irreducible.
\item $\rho(\kq)\cong\psi_1\oplus \psi_2$ is reducible  and $\psi_1/\psi_2$ is not of the form $\varepsilon^{\pm 1}\theta$, where $\theta$ is a finite order character of $G_{F}$. We call this case generic reducible.
\item $\rho(\kq)\cong\psi_1\oplus \psi_2$ and $\psi_1/\psi_2=\varepsilon\theta$, for some finite order character $\theta$ of $G_{F}$. We call this case non-generic reducible.
\end{enumerate}
\end{para}

\begin{para} \label{case(1)genprop}
\noindent \underline{\textbf{Case (1): $\rho(\kq)$ is irreducible}.} 
\end{para}

\begin{lem} \label{condimRpskq}
The connectedness dimension $c((R^{\ps})_\kq)$ of $(R^{\ps})_\kq$ is at least $2[F:\Q]-1$.
\end{lem}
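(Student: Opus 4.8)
The plan is to identify $(R^{\ps})_\kq$ with a localization of a genuine global deformation ring, present the latter over a completed tensor product of local deformation rings, and read off the connectedness dimension via Proposition \ref{cndimformula}. Since $\rho(\kq)$ is irreducible with residual semisimplification $\mathbf 1\oplus\bar\chi$, I would first choose (enlarging $\cO$ if needed) a $G_{F,S}$-stable lattice $\rho(\kq)^o\subseteq\rho(\kq)$ whose reduction $\bar\rho_b$ is a non-split extension $\begin{pmatrix}1&*\\0&\bar\chi\end{pmatrix}$ normalised so that $\bar\rho_b(\sigma^*)=\diag(1,-1)$; such a lattice exists by Ribet's lemma. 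Let $R_b$ be the universal deformation ring of $\bar\rho_b$ with determinant $\chi$, unramified outside $S$. The lattice $\rho(\kq)^o$ determines a one-dimensional prime $\kq_b\in\Spec R_b$ with $\rho^{univ}\bmod\kq_b$ irreducible and $\kq_b\cap R^{\ps}=\kq$, so Corollary \ref{psccomp} gives $\widehat{(R^{\ps})_\kq}\cong\widehat{(R_b)_{\kq_b}}$. Because a minimal prime of a completion contracts to a minimal prime of the base, any disconnecting decomposition of $\Spec(R^{\ps})_\kq$ pulls back to one of $\Spec\widehat{(R^{\ps})_\kq}$ whose component intersection lies over the original, whence $c((R^{\ps})_\kq)\ge c(\widehat{(R^{\ps})_\kq})=c(\widehat{(R_b)_{\kq_b}})$; it therefore suffices to bound the latter. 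Note also $\dim\widehat{(R^{\ps})_\kq}\ge 2[F:\Q]$, since $\kq$ lies on the irreducible component $C$ of dimension $\ge 2[F:\Q]+1$ with $\dim R^{\ps}/\kq=1$, so the target bound is exactly $\dim-1$.

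Next I would pass to the framed ring $R_b^{\square_S}$, a power series ring over $R_b$ (here $H^0(G_{F,S},\ad^0\bar\rho_b)=0$ since $\bar\rho_b$ is indecomposable and $p$ is odd), so that bounding $c$ of the relevant localization–completion of $R_b^{\square_S}$ is equivalent, up to the same additive shift on both sides, to bounding $c(\widehat{(R_b)_{\kq_b}})$. One has $R_b^{\square_S}\cong R_{\loc}[[x_1,\dots,x_g]]/(f_1,\dots,f_r)$ with $R_{\loc}=\widehat\bigotimes_{v\in S}R^{\square}_v$ and $r=\dim_\F H^1(G_{F,S},\ad^0\bar\rho_b(1))$ (corollary 2.3.5 of \cite{CHT08}). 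Localising at the prime $\kq_{\loc}\subseteq R_{\loc}$ cut out by the restrictions $\rho(\kq)^o|_{G_{F_v}}$ and completing, the crucial input is the local structure: by Lemma \ref{varlocdim} together with $p\ge 5$, each $R^{\square}_v$ for $v\mid p$ is a normal domain of dimension $7$ (whether $\bar\rho_b|_{G_{F_v}}$ is split or a non-split extension of $\omega$ by $\mathbf 1$), and for $v\in S\setminus\Sigma_p$ the rings $R^{\square}_v$ are $\cO$-flat, equidimensional, with generically reduced special fibre and each minimal prime of $R^{\square}_v/\varpi$ dominating a unique minimal prime (Lemma \ref{lrca}). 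Running the completed-tensor-product bookkeeping of Lemmas \ref{ccal} and \ref{cctp} shows that $\widehat{(R_{\loc})_{\kq_{\loc}}}$ is $\cO$-flat, equidimensional, and connected in codimension one (each minimal prime of its reduction mod $\varpi$ dominates a unique minimal prime); standard commutative algebra then gives $c(\widehat{(R_{\loc})_{\kq_{\loc}}})\ge\dim\widehat{(R_{\loc})_{\kq_{\loc}}}-1$. Finally, Proposition \ref{cndimformula} applied to the $r$ relations $f_1,\dots,f_r$ drops connectedness by at most $r+1$ while dimension drops by exactly $r$ — the $f_i$ form a regular sequence since $\widehat{(R_b^{\square_S})_{\cdot}}$ already attains its expected dimension, exactly as in the Euler-characteristic count of Lemma \ref{dimrpskp} — so the net loss is one; unwinding the framing yields $c(\widehat{(R_b)_{\kq_b}})\ge\dim\widehat{(R_b)_{\kq_b}}-1\ge 2[F:\Q]-1$.

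The main obstacle is the connectedness of the completed local deformation ring $\widehat{(R_{\loc})_{\kq_{\loc}}}$: one must check that the completed tensor product of the $R^{\square}_v$ neither breaks into disjoint pieces nor fragments worse than codimension one along $V(\varpi)$. This is exactly where $p\ge 5$ is indispensable — for $p=3$ the local ring at a place above $p$ with $\bar\chi|_{G_{F_v}}=\omega^{\pm1}$ is cut out by a rank-two quadric and is reducible, so the estimate fails (which is why the theorem excludes this case). Concretely this amounts to re-running the proof of Proposition \ref{rlocp} while tracking connectedness rather than mere equidimensionality, using Lemmas \ref{varlocdim}, \ref{lrca}, \ref{ccal}, \ref{cctp}. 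The secondary, arithmetic point — that the global obstruction classes $f_1,\dots,f_r$ form a regular sequence, so that the single application of Proposition \ref{cndimformula} costs only one in connectedness — follows from the global Euler-characteristic dimension count already established in Lemma \ref{dimrpskp}.
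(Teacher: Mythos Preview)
Your detour through $R_{\loc}$ is both unnecessary and loses one in the bound. The paper's argument is much more direct: once $\widehat{(R^{\ps})_\kq}\cong\widehat{(R_b)_{\kq_b}}$ via Corollary \ref{psccomp}, it presents $R_b$ over $\cO$ rather than over $R_{\loc}$. Namely, write $R_b\cong\cO[[x_1,\ldots,x_{h^1}]]/(f_1,\ldots,f_{h^2})$ with $h^i=\dim_\F H^i(G_{F,S},\ad^0\bar\rho_b)$; the global Euler characteristic formula together with $h^0=0$ (as $\bar\rho_b$ is non-split and $\bar\chi$ is odd) gives $h^1-h^2=2[F:\Q]$. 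The completion $S_1$ of $\cO[[x_1,\ldots,x_{h^1}]]$ localized at the preimage of $\kq_b$ is regular local of dimension $h^1$, hence a domain with $c(S_1)=h^1$. A single application of Proposition \ref{cndimformula} then yields $c(\widehat{(R_b)_{\kq_b}})\ge h^1-h^2-1=2[F:\Q]-1$. No local structure of $R^{\square}_v$, no regularity of the relations, and no framing bookkeeping is needed.

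Your route has a genuine gap. You only establish $c(\widehat{(R_{\loc})_{\kq_{\loc}}})\ge\dim-1$; after quotienting by $r$ relations, Proposition \ref{cndimformula} costs $r+1$ in $c$ while the dimension drops by at most $r$, so the defect $\dim-c$ \emph{increases} by one and you obtain only $c\ge\dim-2$, one short of the $\dim-1$ you assert. Nor can you upgrade the starting bound to $c=\dim$: the unrestricted $R^{\square}_v$ for $v\in S\setminus\Sigma_p$ is typically reducible (and Lemma \ref{lrca} treats $R^{\square,\xi_v}_v$, not the unrestricted ring relevant here), so $\widehat{(R_{\loc})_{\kq_{\loc}}}$ need not be a domain. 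The appeal to Lemma \ref{dimrpskp} is also misplaced: that is a dimension count at the characteristic-zero point $\kp$, not a regularity statement for relations at $\kq_b$. Presenting over $\cO$ sidesteps all of this, since the base is then regular and its connectedness dimension equals its dimension for free.
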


\begin{proof}
For the definition of connectedness dimension, see \ref{connectednessdim}. Since $c((R^{\ps})_\kq)\geq c(\widehat{(R^{\ps})_\kq})$, it suffices to prove $c(\widehat{(R^{\ps})_\kq})\geq 2[F:\Q]-1$.


Choose a lattice $\rho(\kq)^o$ of $\rho(\kq)$ so that the residual representation $\bar\rho_b$ of $\rho(\kq)^o$ is non-split. Let $R_b$ be the universal deformation ring of $\bar\rho_b:G_{F,S}\to\GL_2(\F)$ with determinant $\chi$. Hence $\rho(\kq)^o$ gives rise to a prime ideal $\kq_b$ of $R_b$. Then it follows from the first part of corollary \ref{psccomp} that $\widehat{(R^{\ps})_\kq}\cong \widehat{(R_b)_{\kq_b}}$. On the other hand, it is standard that $R_b$ can be written as the form $\cO[[x_1,\cdots,x_{h^1}]]/(f_1,\cdots,f_{h^2})$ with $h^i=\dim_\F H^i(G_{F,S},\ad^0\bar\rho_b)$. Since $\bar\chi$ is totally odd, by the global Euler characteristic formula, we have $h^1-h^2=h^0+2[F:\Q]=2[F:\Q]$ as $\bar\rho_b$ is non-split. Hence $\widehat{(R_b)_{\kq_b}}\cong S_1/(f_1,\cdots,f_{h^2})$ where $S_1$ is the completion of $\cO[[x_1,\cdots,x_{h^1}]]$ at the pull-back of $\kq_b$. Using proposition \ref{cndimformula}, we get the desired lower bound on $c(\widehat{(R_b)_{\kq_b}})$.
\end{proof}

\begin{lem} \label{exnicecom}
There exists an irreducible component of $R^{\ps}$ that contains $\kq$ and a potentially nice prime in the closure of the set of regular de Rham primes of $\Spec R^{\ps,\ord}$.
\end{lem}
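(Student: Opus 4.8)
The plan is to produce the desired component by combining Theorem~\ref{thmB}, the explicit local description of $R^{\ps}_v$ from Lemma~\ref{nongenRps}, and the connectedness estimate of Lemma~\ref{condimRpskq}. Write $\mathcal R$ for the set of regular de Rham primes of $\Spec R^{\ps,\ord}$ in the sense of corollary~\ref{cord1}. Since $\bar\chi|_{G_{F_v}}=\omega\neq\mathbf 1$, Theorem~\ref{thmB} applies, so $R^{\ps,\ord}$ is a finite $\Lambda_F$-algebra, and $\Spec R^{\ps,\ord}$ is non-empty because it contains $\kq$. Arguing exactly as in corollary~\ref{cord1} --- density of regular de Rham points over $\Spec\Lambda_F$, together with the bound $\dim R^{\ps,\mathrm{red}}\le\delta_F+2$ --- one sees that $\mathcal R$ is dense in every irreducible component of $\Spec R^{\ps,\ord}$ of dimension $[F:\Q]+1$ that maps surjectively onto $\Spec\Lambda_F$, and that each such component contains a one-dimensional potentially nice prime; for the finite-image condition at the places $v\in S\setminus\Sigma_p$ one cuts down by the ideal generated by the $T^{univ}(\Frob_v)-1-\chi(\Frob_v)$ and invokes Lemma~\ref{simlem}, as in~\ref{potnice}. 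Thus it suffices to place $\kq$ on one such component, since any irreducible subset of $\Spec R^{\ps,\ord}$, regarded as a closed subscheme of $\Spec R^{\ps}$, lies on a single irreducible component of $R^{\ps}$.

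For the connecting step I would use that, for each $v\mid p$, Lemma~\ref{nongenRps} gives $R^{\ps}_v\cong\cO[[x_0,x_1,y_0,y_1]]/(x_0y_1-x_1y_0)$ with $R^{\ps,\ord}_v=R^{\ps}_v/(x_0,x_1)$, and that the quadric $x_0y_1-x_1y_0$ is regular away from the single prime $(x_0,x_1,y_0,y_1)$, which is precisely the locus where $\psi_{v,1}/\psi_{v,2}=\varepsilon$; away from it the ideal $(x_0,x_1)$ is generated, locally, by one element. Replacing $\kq$ by another one-dimensional prime of $C^{\ord}$ lying on a component of maximal dimension and avoiding these bad loci for all $v\mid p$ (a proper closed subset, since $\psi_{v,1}^2=\chi|_{G_{F_v}}\varepsilon$ is a codimension-two condition on $\Lambda_{F_v}$), the reducible-at-$p$ condition cuts $C$ down by only $|\Sigma_p|=[F:\Q]$ equations near $\kq$, whence the dimension of $\Spec R^{\ps,\ord}$ at $\kq$ is at least $\dim C-[F:\Q]=[F:\Q]+1$. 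Since every component of the $\Lambda_F$-finite ring $R^{\ps,\ord}$ has dimension at most $[F:\Q]+1$, the component $D$ of $\Spec R^{\ps,\ord}$ through $\kq$ has dimension exactly $[F:\Q]+1$, maps surjectively onto $\Spec\Lambda_F$, hence contains a dense set of regular de Rham primes and in particular a potentially nice prime $\kq'\in\overline{\mathcal R}$. As $D$ is irreducible and sits inside $\Spec R^{\ps,\ord}\subseteq\Spec R^{\ps}$, it lies on a single irreducible component $C'$ of $R^{\ps}$, and then $\kq,\kq'\in C'$, which is the assertion of the lemma.

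The main obstacle is the dimension bookkeeping at $\kq$: one must justify that the reducible-at-$p$ condition imposes exactly $[F:\Q]$ independent conditions near a sufficiently generic $\kq$ (this is where the irreducibility of $x_0y_1-x_1y_0$ and Lemma~\ref{nongenRps} are essential), and that the resulting component $D$ genuinely surjects onto $\Spec\Lambda_F$ so that the density of regular de Rham points applies. If the genericity of $\kq$ cannot be arranged --- e.g.\ if $C^{\ord}$ has no component of dimension exceeding one --- one falls back on the connectedness estimate $c((R^{\ps})_\kq)\ge 2[F:\Q]-1$ of Lemma~\ref{condimRpskq}: writing the components of $R^{\ps}$ through $\kq$ as $Z_1\cup Z_2$, with $Z_1$ those meeting $\overline{\mathcal R}$ in a potentially nice prime, one uses the largeness of $Z_1\cap Z_2$ to locate a potentially nice prime there and the base-change argument of corollary~\ref{corA} to propagate pro-modularity from $Z_1$ across it, forcing every component through $\kq$ into $Z_1$ and so providing the required $C'$.
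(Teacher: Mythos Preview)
There is a genuine gap. The lemma is about the \emph{fixed} prime $\kq$ (already chosen at the start of Case~(1) in~\ref{case(1)genprop}), and your main argument does not produce a component through that $\kq$. When you write ``replacing $\kq$ by another one-dimensional prime of $C^{\ord}$ \ldots\ avoiding these bad loci'', you are moving to a new prime $\kq'$; the component $D$ you then build passes through $\kq'$, and nothing forces it to pass through the original $\kq$. Your fallback via the connectedness estimate is circular: you write $Z_1$ for the set of components through $\kq$ that already meet $\overline{\mathcal R}$ in a potentially nice prime, and then argue $Z_2=\emptyset$; but for this to start one needs $Z_1\neq\emptyset$, which is exactly the content of the lemma you are trying to prove. (This $Z_1$--$Z_2$ argument is precisely what the paper does \emph{after} the lemma, in~\ref{proofgenpropcase1}, using the lemma as input.)

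The paper's approach is different from yours and avoids both issues. Instead of cutting down $C$ locally by the ideals defining the reducible locus of each $R^{\ps}_v$ --- which indeed may require two generators at the bad point and gives no useful bound at $\kq$ itself --- the paper works on the quotient $R^{\ps,\ord}_{\Sigma^o}$ (with the $\psi_{v,i}$-ordinary condition dictated by how $\rho(\kq)|_{G_{F_v}}$ filters) and compares its completion at $\kq$ to the completion at $\kq_b$ of a \emph{global} ordinary deformation ring $R^{\Delta}_b$ of a lattice with non-split residual representation $\bar\rho_b$. The key point is that $\bar\rho_b$ is a non-split extension of $\mathbf{1}$ by $\bar\chi$ and $\bar\chi|_{G_{F_v}}=\omega\neq\omega^{-1}$, so $H^0(G_{F,S},\ad^0\bar\rho_b(1))=0$; then lemma~\ref{grrcord} gives every irreducible component of $R^{\Delta}_b$ dimension at least $[F:\Q]+1$. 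Transporting this back through $\widehat{(R^{\ps,\ord}_{\Sigma^o})_\kq}\cong\widehat{(R^{\Delta}_b)_{\kq_b}}$ yields a component $C_b^{\ord}$ of $R^{\ps,\ord}_{\Sigma^o}$ through $\kq$ of dimension $\ge[F:\Q]+1$, and the density and potentially-nice-prime arguments of corollary~\ref{cord1} and~\ref{potnice} then apply to $C_b^{\ord}$. The passage to $R^{\Delta}_b$ is the idea your proposal is missing.
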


\begin{proof}
Let $\psi^{univ}_{v,1},\psi^{univ}_{v,2}:G_{F_v}\to (R^{\ps,\ord})^\times$ be the liftings of $\mathbf{1},\bar{\chi}|_{G_{F_v}}$ respectively. Let $\Sigma^o$ be the set of places $v|p$ such that
\[\rho(\kq)|_{G_{F_v}}\cong \begin{pmatrix} \psi^{univ}_{v,1}\modd\kq & *\\ 0 & \psi^{univ}_{v,2}\modd \kq\end{pmatrix}\]
and $\psi^{univ}_{v,1}$ is a lifting of $\mathbf{1}$.
We can consider the quotient $R^{\ps,\ord}_{\Sigma^o}$ of $R^{\ps,\ord}$ which parametrizes pseudo-representations that are $\psi_{v,1}^{univ}$-ordinary (resp. $\psi_{v,2}^{univ}$-ordinary) if $v\in\Sigma^o$ (resp. $v\in\Sigma_p\setminus \Sigma^o$) in the sense of \ref{psiord}.  Then $\kq$ can be viewed as a prime of $R^{\ps,\ord}_{\Sigma^o}$. 

On the other hand, as in the proof of the previous lemma, we may choose a lattice $\rho(\kq)^o$ of $\rho(\kq)$ such that its residual representation $\bar\rho_b$ is a non-split extension of $\mathbf{1}$ by $\bar\chi$. Consider the universal ordinary deformation $R^{\Delta}_b$ which parametrizes all deformations $\rho_b$ of $\bar\rho_b:G_{F,S}\to\GL_2(\F)$ with determinant $\chi$ such that for any $v|p$, $\rho_b|_{G_{F_v}}\cong \begin{pmatrix} \psi_{v,1} & *\\ 0 & *\end{pmatrix}$, where $\psi_{v,1}$ is a lifting of $\mathbf{1}$ (resp. $\bar\chi|_{G_{F_v}}$) if $v\in\Sigma^o$ (resp. $v\in\Sigma_p\setminus \Sigma^o$). Let $\kq_b$ be the prime of $R^{\Delta}_b$ corresponding to $\rho(\kq)^o$. Arguing as in the second part of \ref{rdxbq}, we have an isomorphism:
\[\widehat{(R^{\ps,\ord}_{\Sigma^o})_{\kq}}\cong \widehat{(R^{\Delta}_b)_{\kq_b}}.\]

Since $\bar\rho_b$ is a non-split extension of $\mathbf{1}$ by $\bar\chi$ and $\bar\chi|_{G_{F_v}}\neq\omega^{-1},v|p$, it is easy to deduce that $H^0(G_{F,S},\ad^0(\bar\rho_b(1)))=0$. Hence by lemma \ref{grrcord}, each irreducible component of $R^{\Delta}_b$ has dimension at least $1+[F:\Q]$. In particular,
\[\dim (R^{\ps,\ord}_{\Sigma^o})_{\kq} = \dim (R^{\Delta}_b)_{\kq_b}\geq [F:\Q].\]
Thus there exists an irreducible component $C^{\ord}_b$ of $R^{\ps,\ord}_{\Sigma^o}$ containing $\kq$ with dimension at least $1+[F:\Q]$ \footnote{In fact by the finiteness result over $\Lambda_F$ (Theorem \ref{thmB}), its dimension is exactly $[F:\Q]+1$.}.

Now we can argue as in the proof of  corollary \ref{cord1} and \ref{potnice} to show that the regular de Rham points are dense in $C^{\ord}_b$ and $C^{\ord}_b$ contains a potentially nice prime. For the lemma, we can just take any irreducible component of $R^{\ps}$ containing $C^{\ord}_b$.
\end{proof}

\begin{para}  \label{proofgenpropcase1}
Now we prove proposition \ref{generalprop} in case (1). Let $C_{nice}$ be an irreducible component as in lemma \ref{exnicecom}. Let $Z_1$ be the union of \textit{good} irreducible components  $C'$ of $R^{\ps}$ which contain $\kq$. Note that $C_{nice}\in Z_1$. It suffices to prove that $Z_1$ contains all the minimal primes of $(R^{\ps})_{\kq}$. Suppose not, let $Z_2$ be the union of irreducible components which contain $\kq$ and are not contained in $Z_1$. Then by lemma \ref{condimRpskq}, 
\[\dim Z_1\cap Z_2\geq 2[F:\Q]-1\geq |S|-|\Sigma_p|+3.\]
Arguing as in \ref{potnice}, we can find a potentially nice prime $\kq'$ in $Z_1\cap Z_2$. Then any irreducible component of $Z_2$ containing $\kq'$ would be good. We thus get a contradiction. 
\end{para}

\begin{para} \noindent \underline{\textbf{Case (2): $\rho(\kq)\cong\psi_1\oplus \psi_2$ generic}.}  \label{Case2generic}
Recall that generic here means  $\psi_1/\psi_2$ is not of the form $\varepsilon^{\pm 1}\theta$,  where $\theta$ is a finite order character of $G_{F}$. 

For any $v|p$, let $\kq_v\in \Spec R^{\ps,\ord}_v$ be the pull-back of $\kq$. By abuse of notation, we also view it as a prime ideal of $R^{\ps}_v$. We claim that $\kq_v$ is one-dimensional for any $v$. This is trivial if $p\notin\kq$. Suppose the residue field of $\kq$ has characteristic $p$. Then the image of $G_F$ under $\psi_1$ is infinite. Note that Leopoldt's conjecture is known for $F$ as $F$ is abelian over $\Q$. This implies that the image of the composite map
\[ O_{F_v}^{\times}\to G_{F_v}^{\mathrm{ab}}\to G_{F,S}^{\mathrm{ab}}(p)\]
has finite index inside $G_{F,S}^{\mathrm{ab}}(p)$, where the first map is given by local class field theory. Therefore the image of $\psi_1|_{G_{F_v}}$ is infinite as well. Hence $\kq_v$ is not the maximal ideal.

Moreover, it follows from the same argument that $(\frac{\psi_1}{\psi_2})|_{G_{F_v}}\neq \varepsilon ^{\pm1}$. Otherwise $\psi_1/\psi_2$ would be equal to $\varepsilon^{\pm 1}$ up to a finite order character, which contradicts our generic assumption.
\end{para}

\begin{lem}
For any $v|p$, the kernel of $(R^{\ps}_v)_{\kq_v}\to (R^{\ps,\ord}_v)_{\kq_v}$ is principal.
\end{lem}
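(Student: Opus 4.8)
The plan is to deduce everything from the explicit presentation of $R^{\ps}_v$ in Lemma \ref{nongenRps}. Writing $R^{\ps}_v\cong\cO[[x_0,x_1,y_0,y_1]]/(x_0y_1-x_1y_0)$ with $R^{\ps,\ord}_v=R^{\ps}_v/(x_0,x_1)$, the kernel of $R^{\ps}_v\to R^{\ps,\ord}_v$ is exactly the ideal $(x_0,x_1)$, so the task is to show that $(x_0,x_1)(R^{\ps}_v)_{\kq_v}$ is principal. The relation $x_0y_1=x_1y_0$ is the whole point: if $y_0\notin\kq_v$, then $y_0$ is a unit in the localization and $x_1=x_0y_1y_0^{-1}$ there, so $(x_0,x_1)(R^{\ps}_v)_{\kq_v}=(x_0)(R^{\ps}_v)_{\kq_v}$; symmetrically, if $y_1\notin\kq_v$ then this ideal equals $(x_1)(R^{\ps}_v)_{\kq_v}$. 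Hence it suffices to prove that $\kq_v$ does not contain both $y_0$ and $y_1$.

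Suppose for contradiction that $y_0,y_1\in\kq_v$. Since $\kq_v$ lies in $\Spec R^{\ps,\ord}_v$ it already contains $(x_0,x_1)$, so $(x_0,x_1,y_0,y_1)\subseteq\kq_v$. Now $R^{\ps}_v$ is a domain, as $x_0y_1-x_1y_0$ is irreducible over $\cO$, and $(x_0,x_1,y_0,y_1)$ is a prime with $R^{\ps}_v/(x_0,x_1,y_0,y_1)\cong\cO$, hence of dimension one. On the other hand $\kq_v$ is one-dimensional — this is the input from Leopoldt's conjecture for $F$, which is known since $F/\Q$ is abelian and which was recalled just above. A one-dimensional prime of a domain that contains a one-dimensional prime must coincide with it (its closed subset is contained in the irreducible one-dimensional set $V((x_0,x_1,y_0,y_1))\cong\Spec\cO$, hence equals it), so $\kq_v=(x_0,x_1,y_0,y_1)$.

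It then remains to rule out $\kq_v=(x_0,x_1,y_0,y_1)$, which is where the genericity hypothesis enters. By Lemma \ref{nongenRps}(3) the pseudo-representation attached to the prime $(x_0,x_1,y_0,y_1)$ is $\psi_1'+\psi_2'$ with $\psi_1'/\psi_2'=\varepsilon$ (equivalently, ratio $\varepsilon^{\pm1}$, these describing the same pseudo-representation). The pseudo-representation attached to $\kq_v$ is $T^{univ}|_{G_{F_v}}\bmod\kq=(\psi_1+\psi_2)|_{G_{F_v}}$, and we are in the generic reducible case, so $(\psi_1/\psi_2)|_{G_{F_v}}\neq\varepsilon^{\pm1}$ as recalled above. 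Hence the two primes are distinct, giving the contradiction and finishing the proof. There is no real obstacle beyond this bookkeeping: once Lemma \ref{nongenRps} and the one-dimensionality of $\kq_v$ are in hand, the argument is a direct computation inside the Segre cone $\Spec\cO[[x_0,x_1,y_0,y_1]]/(x_0y_1-x_1y_0)$.
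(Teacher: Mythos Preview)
Your proof is correct and follows essentially the same approach as the paper: use the explicit presentation from Lemma \ref{nongenRps}, observe that the generic hypothesis forces $\kq_v\neq(x_0,x_1,y_0,y_1)$, hence one of $y_0,y_1$ is a unit in the localization and the relation $x_0y_1=x_1y_0$ makes $(x_0,x_1)$ principal there. You simply spell out in more detail why $\kq_v\neq(x_0,x_1,y_0,y_1)$, whereas the paper states this in one line.
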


\begin{proof}
Under the isomorphism in lemma \ref{nongenRps}, $\kq_v$ contains $x_0,x_1$. By our generic assumption, $\kq_v\neq (x_0,x_1,y_0,y_1)$. Hence without loss of generality, we may assume $y_0\notin \kq_v$. Then the kernel of $(R^{\ps}_v)_{\kq_v}\to (R^{\ps,\ord}_v)_{\kq_v}$ is generated by $x_0$ as $x_1=x_0 y_1 y_0^{-1}$ in $(R^{\ps}_v)_{\kq_v}$.
\end{proof}

\begin{cor}
$\dim C^{\ord}\geq [F:\Q]+1$.
\end{cor}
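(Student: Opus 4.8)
The plan is to obtain this sharper bound from the preceding lemma by a local Krull-dimension count at the prime $\kq$; the cruder global count gives only $\dim C^{\ord}\geq 1$. Recall that $R^{\ps,\ord}=R^{\ps}/I_pR^{\ps}$ with $I_p:=\ker(R^{\ps}_p\to R^{\ps,\ord}_p)$, so $C^{\ord}=C\cap\Spec R^{\ps,\ord}$ is cut out inside the domain $R^{\ps}/\kP$ (of dimension $\geq 1+2[F:\Q]$ by our choice of $C$) by the ideal $I$ generated by the image of $I_p$. Globally $I$ requires $2|\Sigma_p|=2[F:\Q]$ generators, since for each $v\mid p$ the kernel of $R^{\ps}_v\to R^{\ps,\ord}_v$ is $(x_0,x_1)$ by lemma \ref{nongenRps}; the gain is that near $\kq$ only $[F:\Q]$ generators are needed.

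First I would pass to the localization at $\kq$. Writing $\kq_p$ for the pull-back of $\kq$ to $R^{\ps}_p$, each map $R^{\ps}_v\to(R^{\ps}_p)_{\kq_p}$ factors through $(R^{\ps}_v)_{\kq_v}$, and by the preceding lemma the kernel of $(R^{\ps}_v)_{\kq_v}\to(R^{\ps,\ord}_v)_{\kq_v}$ is principal, generated by $x_0^{(v)}$ (with $y_0^{(v)}\notin\kq_v$, so that $x_1^{(v)}=x_0^{(v)}y_1^{(v)}(y_0^{(v)})^{-1}$ there, in the notation of lemma \ref{nongenRps}). Hence $I_p(R^{\ps}_p)_{\kq_p}$, and therefore the ideal $I_\kq$ cutting out $(C^{\ord})_\kq$ inside $(R^{\ps}/\kP)_\kq$, is generated by the $|\Sigma_p|=[F:\Q]$ elements $x_0^{(v)}$, $v\mid p$, after push-forward. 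Then I would run the dimension estimate: $R^{\ps}$ is a complete local Noetherian ring, hence universally catenary, and $R^{\ps}/\kP$ is a local domain ($\kP$ being a minimal prime), hence equidimensional, so $\dim(R^{\ps}/\kP)_\kq=\dim C-\dim R^{\ps}/\kq=\dim C-1\geq 2[F:\Q]$; Krull's height theorem applied to $I_\kq$ then gives $\dim(C^{\ord})_\kq\geq 2[F:\Q]-[F:\Q]=[F:\Q]$. Finally, since $\kq$ lies on $C^{\ord}$ and $\dim R^{\ps,\ord}/\kq=\dim R^{\ps}/\kq=1$, the general inequality $\dim C^{\ord}\geq\dim(C^{\ord})_\kq+\dim R^{\ps,\ord}/\kq$ yields $\dim C^{\ord}\geq[F:\Q]+1$.

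The only delicate point I foresee is the bookkeeping across the completed tensor product $R^{\ps}_p=\widehat{\bigotimes}_{v\mid p}R^{\ps}_v$ and the successive localizations — in particular checking that each principal generator $x_0^{(v)}$ and the element $y_0^{(v)}\notin\kq_v$ transport to a generator (resp.\ a unit) of $(R^{\ps}/\kP)_\kq$, so that the localized ideal genuinely needs only $[F:\Q]$ generators. The catenary and equidimensionality input for $R^{\ps}/\kP$ and the height-theorem estimate are routine, so I expect this minor bookkeeping to be the main obstacle; in spirit the argument is identical to the proof of lemma \ref{Corddim} in the generic case, where the kernel is already globally principal.
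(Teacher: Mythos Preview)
Your proposal is correct and follows exactly the paper's approach: localize at $\kq$, use the preceding lemma to see that the kernel of $(R^{\ps})_\kq\to(R^{\ps,\ord})_\kq$ is generated by $[F:\Q]$ elements, and then apply Krull's height theorem together with the catenary dimension formula. The paper compresses all of this into two sentences (``It follows from the previous lemma that the kernel of $(R^{\ps})_{\kq}\to (R^{\ps,\ord})_{\kq}$ is generated by $[F:\Q]$ elements. We can localize $C$ and $C^{\ord}$ at $\kq$ and obtain the desired lower bound.''), but your expanded version supplies precisely the missing details, and the bookkeeping you flag as delicate is indeed routine.
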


\begin{proof}
It follows from the previous lemma that the kernel of $(R^{\ps})_{\kq}\to (R^{\ps,\ord})_{\kq}$ is generated by $[F:\Q]$ elements. We can localize $C$ and $C^{\ord}$ at $\kq$ and obtain the desired lower bound.
\end{proof}

Note that this is lemma \ref{Corddim} in this case. Theorem \ref{mainthm} now can be proved in exactly the same way as \ref{case1:generic}.

\begin{para}\noindent \underline{\textbf{Case (3): $\rho(\kq)\cong\psi_1\oplus \psi_2$ non-generic.}}
In this case, $\psi_1/\psi_2=\varepsilon\theta$ for a finite order character $\theta$ of $G_{F,S}$. Note that $\kq$ cannot contain $p$, otherwise $\psi_1,\psi_2$ would be of finite orders. See \ref{Case2generic} for more details. Therefore $k(\kq)$ is a finite extension of $E$. Enlarging $E$ if necessary, we may assume $k(\kq)=E$.

The rough idea here is to obtain a lower bound on the connectedness dimension of $(R^{\ps})_{\kq}$ first and argue as in the case \ref{case(1)genprop}. As in \cite{SW99} (see also \ref{Rordbimf1b}, \ref{Rordbmodab}), the geometry of $\Spec (R^{\ps})_{\kq}$ is studied by comparing with some other Galois deformation rings. More precisely, we need a careful study of the universal deformation rings of some non-split extensions of $\psi_1$ by $\psi_2$. One key step is to bound the dimension of the reducible locus. In this case, this is essentially a result of Lichtenbaum \cite{Lich72}, which implies the reducible deformation of $\rho(\kq)$ has dimension $[F:\Q]$. I would like to thank Professor Richard Taylor for suggesting looking at such deformation rings. First we compute the extension group $\Ext^1_{E[G_{F,S}]}(\psi_1,\psi_2)\cong H^1(G_{F,S},E(\psi_2/\psi_1))$.
\end{para}

\begin{lem} \label{Ext1char0}
The natural restriction map
\[H^1(G_{F,S},E(\psi_2/\psi_1))\to \bigoplus_{v|p} H^1(G_{F_v},E(\psi_2/\psi_1))\]
is an isomorphism. In particular, $\dim_E H^1(G_{F,S},E(\psi_2/\psi_1))=[F:\Q]$.
\end{lem}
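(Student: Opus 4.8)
The plan is to reduce the statement to a single cohomological vanishing by global duality and Euler characteristics. Write $M=E(\psi_2/\psi_1)$. Since $\psi_1/\psi_2=\varepsilon\theta$ with $\theta$ a smooth, hence finite order, character of $G_{F,S}$, we have $M=E(\varepsilon^{-1}\theta^{-1})$, and its Tate dual is $M^{*}:=M^{\vee}(1)=E(\varepsilon^{2}\theta)$. First I would record the relevant parities and local cohomology. From $\psi_1\psi_2=\chi$ (which is totally odd, as $\chi(c)=-1$) and $\psi_1/\psi_2=\varepsilon\theta$ one gets $\psi_1(c)^2=\theta(c)$ for any complex conjugation $c$, so $\theta(c)=1$; hence $c$ acts by $-1$ on $M$ and trivially on $M^{*}$. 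For $v\in S$ with $v\nmid p$ one checks $H^0(G_{F_v},M)=H^0(G_{F_v},M^{*})=0$ (since $N(v)^{-1}\theta^{-1}(\Frob_v)\neq 1$ and $N(v)^{2}\theta(\Frob_v)\neq 1$), and the local Euler characteristic formula then gives $H^{i}(G_{F_v},M)=0$ for all $i$. For $v\mid p$ we have $F_v=\Q_p$ because $p$ splits completely in $F$, and $H^0(G_{F_v},M)=H^2(G_{F_v},M)=0$, so the local Euler characteristic formula gives $\dim_E H^1(G_{F_v},M)=1$; summing, $\dim_E\bigoplus_{v\mid p}H^1(G_{F_v},M)=[F:\Q]$. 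Finally the global Euler characteristic formula, used exactly as in the proof of Lemma \ref{dimrpskp}, together with $H^0(G_{F,S},M)=0$ and $M^{G_{F_v}}=0$ for all $v\mid\infty$, yields
\[\dim_E H^1(G_{F,S},M)=[F:\Q]+\dim_E H^2(G_{F,S},M).\]

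Next I would observe that it suffices to show the restriction map $r\colon H^1(G_{F,S},M)\to\bigoplus_{v\mid p}H^1(G_{F_v},M)$ is \emph{injective}. Indeed, injectivity forces $\dim_E H^1(G_{F,S},M)\leq[F:\Q]$, which combined with the displayed identity gives $\dim_E H^1(G_{F,S},M)=[F:\Q]$ and $H^2(G_{F,S},M)=0$, and hence that $r$ is an isomorphism, which is the assertion of the lemma. Now $\ker r$ coincides with $\mathrm{Sha}^1(M):=\ker\big(H^1(G_{F,S},M)\to\bigoplus_{v\in S}H^1(G_{F_v},M)\big)$, since the local groups at the finitely many $v\in S\setminus\Sigma_p$ vanish. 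By Poitou--Tate duality, $\mathrm{Sha}^1(M)\cong\mathrm{Sha}^2(M^{*})^{\vee}$; moreover $H^2(G_{F_v},M^{*})\cong H^0(G_{F_v},M)^{\vee}=0$ for every $v\in S$, so $\mathrm{Sha}^2(M^{*})=H^2(G_{F,S},M^{*})$. Thus the entire lemma is reduced to the vanishing
\[H^2\big(G_{F,S},E(\varepsilon^{2}\theta)\big)=0.\]

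It remains to establish this vanishing, which is the one genuinely arithmetic input and the main obstacle. Here $E(\varepsilon^{2}\theta)$ is the Tate twist $\Q_p(2)$ of the abelian field $F$ twisted by the \emph{totally even} finite order character $\theta$, and the vanishing of its $H^2$ over $\cO_{F,S}$ is (essentially) the content of Lichtenbaum's results in \cite{Lich72} on the étale cohomology of rings of $S$-integers; equivalently it follows from the Iwasawa Main Conjecture for abelian fields together with the nonvanishing of the value $L(F,\theta,-1)$, which is a nonzero rational number by Siegel--Klingen since $\theta$ is totally even, or from the weak Leopoldt conjecture, known for $F$ abelian over $\Q$. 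It is precisely at this point that the hypothesis that $F$ is abelian over $\Q$ is used. (One may alternatively route the whole argument through Bloch--Kato Selmer groups: since $M|_{G_{F_v}}$ has only the positive Hodge--Tate weight $1$, the local condition $H^1_f(G_{F_v},M)$ is trivial at each $v\mid p$, so the Bloch--Kato Selmer group of $M$ equals $\mathrm{Sha}^1(M)$ and one is led to the same global input.) Granting the vanishing, the reduction carried out above completes the proof.
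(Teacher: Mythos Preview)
Your proof is correct and follows essentially the same approach as the paper: both arguments compute the local terms, use the global Euler characteristic formula, and reduce via Poitou--Tate duality to the vanishing of $H^2(G_{F,S},-)$ with coefficients in a twist by $\varepsilon^2\theta$, which is then supplied by Lichtenbaum's theorem \cite{Lich72} (using that $\theta$ is totally even). The only cosmetic difference is that you phrase the reduction through $\mathrm{Sha}^1(M)\cong\mathrm{Sha}^2(M^*)^\vee$, whereas the paper invokes the nine-term Poitou--Tate sequence directly; these are equivalent. One small imprecision: writing ``$N(v)^{-1}\theta^{-1}(\Frob_v)\neq 1$'' tacitly assumes $\theta$ is unramified at $v\in S\setminus\Sigma_p$, which need not hold, but your conclusion $H^0(G_{F_v},M)=0$ is still correct since $\varepsilon|_{G_{F_v}}$ has infinite order while $\theta$ has finite order.
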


\begin{proof}
Recall that $\psi_1/\psi_2=\theta\varepsilon$. It follows from the long exact sequence of Poitou-Tate (see for example 8.6.10 of \cite{NSW08}) that there is a short exact sequence:
\[H^2(G_{F,S},E/\cO(\theta\varepsilon^2))^\vee\otimes E\to H^1(G_{F,S},E(\psi_2/\psi_1))\to\bigoplus_{v\in S\mbox{ or }v|\infty}H^1(G_{F_v},E(\psi_2/\psi_1)).\]
Let $F'=F(\theta)$ and $S'$ be the set of places of $F'$ above $S$. Then $G_{F',S'}$ has finite index in $G_{F,S}$. Using the usual restriction-corestriction sequence and the fact that $E$ is of characteristic zero, we see that the restriction map induces a surjective map:
\[H^2(G_{F',S'},E/\cO(2))^\vee\otimes E\twoheadrightarrow H^2(G_{F,S},E/\cO(\theta\varepsilon^2))^\vee\otimes E.\]
Since $\theta\varepsilon$ is totally odd, $F'$ is a totally real field. It follows from proposition 9.6 of \cite{Lich72} that $H^2(G_{F',S'},E/\cO(2))=0$. Strictly speaking, in \cite{Lich72}, this is only proved when $S'$ consists of all the places above $p$. However, it is not too hard to deduce this from Lichtenbaum's result. Hence $H^2(G_{F,S},E/\cO(\theta\varepsilon^2))^\vee\otimes E$ vanishes and we get an inclusion
\[ H^1(G_{F,S},E(\psi_2/\psi_1))\hookrightarrow\bigoplus_{v\in S\mbox{ or }v|\infty}H^1(G_{F_v},E(\psi_2/\psi_1)).\]

Since $\frac{\psi_2}{\psi_1}|_{G_{F_v}}\neq \mathbf{1}, \varepsilon$, we have $\dim_E H^1(G_{F_v},E(\psi_2/\psi_1))=1$ if $v|p$ and $0$ otherwise. Thus $\dim_E H^1(G_{F,S},E(\psi_2/\psi_1))\leq [F:\Q]$.  On the other hand, the global Euler characteristic formula (lemma 9.7 of \cite{Kis03}) gives a lower bound $\dim_E H^1(G_{F,S},E(\psi_2/\psi_1)) \geq [F:\Q]$. From this, we easily deduce all the claims in the lemma.
\end{proof}

\begin{para}
Any non-zero class $B\in \Ext^1_{E[G_{F,S}]}(\psi_1,\psi_2)$ corresponds to a non-split extension $\rho_B: G_{F,S}\to\GL_2(E)$ of $\psi_1$ by $\psi_2$. Using the basis given by the eigenvectors of $\sigma^*$, our fixed complex conjugation, we may assume $\rho_B$ is of the form
\[\begin{pmatrix} \psi_2 & b_B \\ 0 & \psi_1\end{pmatrix}.\] 
We denote the universal deformation ring of $\rho_B$ with fixed determinant $\chi$ by $R_B$. More precisely, it pro-represents the functor from the category of Artinian local $E$-algebras with residue field $E$ (equipped with $p$-adic topology) to the category of sets, assigning $A$ to the set of continuous deformations over $A$ of $\rho_B$ with determinant $\chi$.

First we bound the dimension of the reducible locus of $\Spec R_B$. More precisely, the subset of $\Spec R_B$ corresponding to reducible deformations is closed in $\Spec R_B$ and we denote its reduced closed subscheme by $\Spec R_B^{\mathrm{red}}$.
\end{para}

\begin{lem} \label{RBreddim}
$\dim \Spec R_B^{\mathrm{red}}\leq[F:\Q]+1$.
\end{lem}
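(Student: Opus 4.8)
The plan is to adapt the proof of proposition \ref{redloc}, feeding in the cohomological computation \ref{Ext1char0} in place of the count of extension classes. I will exhibit an ideal $I$ of $R_B^{\mathrm{red}}$ generated by $(\delta_F+1)+[F:\Q]$ elements with $R_B^{\mathrm{red}}/I$ Artinian; since $F$ is abelian over $\Q$, Leopoldt's conjecture holds, so the maximal pro-$p$ abelian quotient $G_{F,S}^{\mathrm{ab}}(p)$ has $\Z_p$-rank $1$, i.e.\ $\delta_F=0$, and the bound on the number of generators forces $\dim R_B^{\mathrm{red}}\le [F:\Q]+1$.

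First a structural remark. Since $\rho_B$ is a non-split extension $0\to\psi_2\to\rho_B\to\psi_1\to 0$ and $\psi_1/\psi_2=\varepsilon\theta\neq\mathbf 1$, the $\psi_2$-line is the \emph{unique} $G_{F,S}$-stable line in $\rho_B$. Hence for any $\kp\in\Spec R_B^{\mathrm{red}}$ the reducible representation $\rho(\kp)=\rho^{univ}\bmod\kp$ is, after conjugation, of the shape $\begin{pmatrix}\chi_1&*\\0&\chi_2\end{pmatrix}$ with $\chi_1\chi_2=\chi$, the sub deforming $\psi_2$ and the quotient $\psi_1$. Fix a universal deformation $\rho^{univ}:G_{F,S}\to\GL_2(R_B^{\mathrm{red}})$ with $\rho^{univ}(\sigma^*)=\begin{pmatrix}1&0\\0&-1\end{pmatrix}$ (possible as $\chi(\sigma^*)=-1$), and write $\rho^{univ}=\begin{pmatrix}\tilde a&\tilde b\\\tilde c&\tilde d\end{pmatrix}$. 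Choose $\tau_0,\dots,\tau_{\delta_F}\in G_{F,S}$ whose images form a basis of $G_{F,S}^{\mathrm{ab}}(p)\otimes_{\Z_p}\Q_p$, and — using that $H^1(G_{F,S},E(\psi_2/\psi_1))$ has dimension $[F:\Q]$ by \ref{Ext1char0} — choose $\sigma_1,\dots,\sigma_{[F:\Q]}\in G_{F,S}$ whose evaluation functionals span the $E$-dual of $H^1(G_{F,S},E(\psi_2/\psi_1))$. Let $I\subseteq R_B^{\mathrm{red}}$ be generated by the $\delta_F+1$ elements $\tilde a(\tau_i)+\tilde d(\tau_i)-\bigl(\psi_2(\tau_i)+\psi_1(\tau_i)\bigr)$ and the $[F:\Q]$ elements $\tilde b(\sigma_j)-b_B(\sigma_j)$, where $b_B(\sigma_j)\in E$ is the value of the chosen cocycle representing $b_B$.

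It then suffices, exactly as in \ref{redloc} and since $R_B^{\mathrm{red}}$ is local, to show that every prime $\kp\supseteq I$ is the maximal ideal. Over the field $k(\kp)$ the relations $\chi_1(\tau_i)+\chi_2(\tau_i)=\psi_2(\tau_i)+\psi_1(\tau_i)$ together with $\chi_1(\tau_i)\chi_2(\tau_i)=\chi(\tau_i)=\psi_2(\tau_i)\psi_1(\tau_i)$ give $\{\chi_1(\tau_i),\chi_2(\tau_i)\}=\{\psi_2(\tau_i),\psi_1(\tau_i)\}$; since $\chi_1$ is a deformation of $\psi_2$ whose pro-$p$ part is killed on the $\tau_i$, which span a finite-index subgroup of $G_{F,S}^{\mathrm{ab}}(p)$, one deduces (as in \ref{redloc}) that $\chi_1$ has finite order and hence $\chi_1=\psi_2$, $\chi_2=\chi/\chi_1=\psi_1$. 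Thus $\rho(\kp)$ is an extension of $\psi_1$ by $\psi_2$ over $k(\kp)$, so its class lies in $H^1(G_{F,S},k(\kp)(\psi_2/\psi_1))\cong H^1(G_{F,S},E(\psi_2/\psi_1))\otimes_E k(\kp)$ (continuous cohomology commuting with the filtered colimit of finite subextensions), and the relations $\tilde b(\sigma_j)=b_B(\sigma_j)$ pin this class to the image of $b_B$; hence $\rho(\kp)\cong\rho_B\otimes k(\kp)$, $\rho^{univ}\bmod\kp$ is the constant deformation, and $\kp$ is maximal.

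The step I expect to be the main obstacle is the careful transcription of \ref{redloc} to the characteristic-zero, non-generic reducible setting: namely, deducing that $\chi_1$ (and then $\chi_2$) is literally $\psi_2$ rather than merely close to it. This relies on the identification of $\delta_F+1$ with the $\Z_p$-rank of $G_{F,S}^{\mathrm{ab}}(p)$ and, via Leopoldt for abelian $F$, on $\delta_F=0$, together with the torsion-freeness of the pro-$p$ part (which one may arrange by a preliminary soluble base change, as is done elsewhere in the paper); it also requires controlling the scaling ambiguity of the extension class, resolved exactly as in \ref{redloc} by matching the residual cocycle values $b_B(\sigma_j)$. Everything else is routine: one simply substitutes $\psi_2,\psi_1$ for $\mathbf 1,\bar\chi$, replaces $\dim_\F H^1_{\Sigma^o}(F)$ by $\dim_E H^1(G_{F,S},E(\psi_2/\psi_1))=[F:\Q]$ from \ref{Ext1char0}, and carries no factor of $\varpi$ since the entire discussion takes place over $E$.
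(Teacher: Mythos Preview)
Your proposal is correct and follows essentially the same approach as the paper: the paper's proof is literally ``Arguing as in proposition \ref{redloc}, we see that $\dim \Spec R_B^{\mathrm{red}}\leq 1+\delta_F+\dim_E H^1(G_{F,S},E(\psi_2/\psi_1))$; since $F$ is abelian over $\Q$, $\delta_F=0$; the claim follows from lemma \ref{Ext1char0}.'' You have simply spelled out the adaptation of \ref{redloc} in detail.

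One small remark: your parenthetical about arranging ``torsion-freeness of the pro-$p$ part'' by soluble base change is off-target. The point you need is that $1+\km_{R_B/\kp}$ is torsion-free, and this is automatic in characteristic zero (via $\log/\exp$, or directly since $(1+x)^n=1$ with $x\in\km$ nilpotent on each Artinian quotient forces $x=0$); no base change is required. Also, to pass from ``same extension class'' to ``$\kp$ is maximal'' you implicitly use that the centralizer of $\rho_B$ consists of scalars (which holds since $\rho_B$ is non-split with $\psi_1\ne\psi_2$), so that an isomorphism of representations can be adjusted to lie in $1+M_2(\km)$; alternatively, the normalization $\tilde b(\sigma^*)=b_B(\sigma^*)=0$ together with $(\psi_2/\psi_1)(\sigma^*)=-1$ kills the coboundary ambiguity directly.
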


\begin{proof}
Arguing as in proposition \ref{redloc}, we see that 
\[\dim \Spec R_B^{\mathrm{red}}\leq 1+\delta_F+ \dim_E H^1(G_{F,S},E(\psi_2/\psi_1)).\]
Since $F$ is abelian over $\Q$, we have $\delta_F=0$. Our claim follows from the previous lemma.
\end{proof}

\begin{para} \label{Rbkpb}
Let $\rho_B^o:G_{F,S}\to\GL_2(\cO)$ be a lattice of $\rho_B$ such that its mod $\varpi$ reduction $\bar\rho_b:G_{F,S}\to\GL_2(\F)$ is non-split. We may consider the universal deformation ring $R_{b}$ of $\bar\rho_b$ with fixed determinant $\chi$. Then $\rho_B^o$ naturally gives rise to a prime $\kp_B$ of $R_b$. There is a natural map $R_B\to \widehat{(R_b)_{\kp_B}}$ given by the universal property. By proposition 9.5 of \cite{Kis03}, we have
\end{para}

\begin{lem} \label{RBRb}
The natural map $R_B\to \widehat{(R_b)_{\kp_B}}$ is an isomorphism.
\end{lem}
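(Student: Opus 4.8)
The plan is to prove this by showing that $\widehat{(R_b)_{\kp_B}}$ pro-represents the same functor as $R_B$, namely the deformation functor $D_B$ of $\rho_B$ with fixed determinant $\chi$ on the category of Artinian local $E$-algebras with residue field $E$. Since $\rho_B$ is a non-split extension of $\psi_1$ by $\psi_2$ with $\psi_1\neq\psi_2$, one has $\End_{E[G_{F,S}]}(\rho_B)=E$, so $D_B$ is pro-represented by $R_B$; thus it suffices to identify $\widehat{(R_b)_{\kp_B}}$ with a hull for $D_B$. This is the representation-theoretic analogue of Proposition~\ref{Dkp}/Corollary~\ref{dr0}, and is also a special case of Proposition~9.5 of \cite{Kis03}, which is what I would ultimately cite; below I sketch the argument.

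First I would record the basic structure of the target. Because $\rho_B^o$ has entries in $\cO$, the point $\kp_B$ does not contain $p$, and (after the harmless enlargement of $\cO$) $k(\kp_B)=E$; hence $(R_b)_{\kp_B}$ is a Noetherian local $E$-algebra with maximal ideal $\kp_B(R_b)_{\kp_B}$ and residue field $E$, and so is its completion $\widehat{(R_b)_{\kp_B}}$, which moreover contains a coefficient field $E$ by Cohen's theorem. Base-changing the universal deformation $\rho^{univ}$ over $R_b$ along $R_b\to\widehat{(R_b)_{\kp_B}}$ produces a deformation of $\rho(\kp_B)=\rho_B$, and hence the natural map $R_B\to\widehat{(R_b)_{\kp_B}}$ of \ref{Rbkpb}; it remains to construct an inverse by verifying the universal property. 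Given $A\in\cOf[1/p]$-type data, i.e.\ an Artinian local $E$-algebra $A$ with residue field $E$ and a deformation $\rho_A$ of $\rho_B$ to $A$, choose the basis so that $\rho_A\equiv\rho_B^o\pmod{\km_A}$. Since $G_{F,S}$ is topologically finitely generated, the matrix entries of $\rho_B^o$ lie in $\cO$, and $\km_A$ is a nilpotent, finite-dimensional $E$-module, the $\cO$-subalgebra $A_0\subseteq A$ generated by the matrix entries of $\rho_A$ on a fixed finite set of topological generators is generated over $\cO$ by finitely many nilpotent elements of $A$, hence is a finite local $\cO$-algebra with residue field $\F$, i.e.\ an object of $\cOf$. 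Then $\rho_A$ factors through $\GL_2(A_0)$ and reduces mod $\km_{A_0}$ to $\bar\rho_b$, so it defines a deformation of $\bar\rho_b$ and a unique $\cO$-algebra map $R_b\to A_0\hookrightarrow A$; the composite $R_b\to A\to A/\km_A=E$ classifies $\rho_B$, so its kernel is $\kp_B$, whence $R_b\setminus\kp_B$ maps into $A^\times$ and the map factors through $(R_b)_{\kp_B}$, and then through $\widehat{(R_b)_{\kp_B}}$ because $\km_A$ is nilpotent. Uniqueness is clear since the map is determined on the matrix entries of $\rho^{univ}$. This exhibits $\widehat{(R_b)_{\kp_B}}$ as pro-representing $D_B$, giving $R_B\cong\widehat{(R_b)_{\kp_B}}$.

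The only mildly delicate point, and the one I would write out carefully, is the claim that the $\cO$-subalgebra $A_0$ generated by the matrix entries is module-finite over $\cO$ with residue field $\F$ — this is precisely what makes the universal property of $R_b$ applicable — and it is routine once one uses that $\km_A$ consists of nilpotents and that the residual representation $\rho_B$ admits the $\cO$-lattice $\rho_B^o$. Everything else is formal, and in the write-up I would simply invoke Proposition~9.5 of \cite{Kis03}.
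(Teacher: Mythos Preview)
Your proposal is correct and matches the paper's approach: the paper simply cites Proposition~9.5 of \cite{Kis03}, which is exactly what you arrive at. Your sketch of the argument (descending a deformation over an Artinian local $E$-algebra to a finite local $\cO$-subalgebra, as in the proof of Proposition~\ref{Dkp}) is sound and more detailed than what the paper provides.
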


\begin{cor} \label{corRB}
\hspace{2em}
\begin{enumerate}
\item $\dim R_B/\kP\geq 2[F:\Q]$ for any minimal prime $\kP$ of $R_B$.
\item The connectedness dimension $c(R_B)$ is at least $2[F:\Q]-1$.
\item By evaluating the trace of the universal deformation, we get a natural map $R^{\ps}\to R_B$. Suppose $\kQ\in\Spec R_B\setminus \Spec R^{\mathrm{red}}_B$. Let $\kQ^{\ps}=\kQ\cap R^{\ps}$. Then
\[\dim R^{\ps}/\kQ^{\ps}\geq 1+\dim R_B/\kQ.\]
\item The natural map $\Spec R_B\to \Spec R^{\ps}$ maps minimal primes to minimal primes.
\end{enumerate}
\end{cor}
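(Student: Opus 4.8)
\textbf{Proof proposal for Corollary \ref{corRB}.} The engine behind all four parts is the isomorphism $R_B\cong\widehat{(R_b)_{\kp_B}}$ of Lemma \ref{RBRb}, together with the standard presentation $R_b\cong\cO[[x_1,\dots,x_{h^1}]]/(f_1,\dots,f_{h^2})$ with $h^i=\dim_\F H^i(G_{F,S},\ad^0\bar\rho_b)$, which (exactly as in the proof of Lemma \ref{condimRpskq}, using that $\bar\rho_b$ is non-split and $\chi$ is totally odd, so $h^0=0$) satisfies $h^1-h^2=2[F:\Q]$ by the global Euler characteristic formula. Note $\kp_B$ is a prime of $R_b$ with $p\notin\kp_B$ and $R_b/\kp_B$ a one-dimensional subring of $\cO$; since $R_b$ and its quotients are quotients of power series rings over $\cO$, they are excellent, hence universally catenary, and completions of their localizations preserve dimension and carry minimal primes to primes contracting to minimal primes. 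I will use these facts freely.

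For (1): a minimal prime $\kP$ of $R_B=\widehat{(R_b)_{\kp_B}}$ contracts to $\kP_b(R_b)_{\kp_B}$ for a minimal prime $\kP_b$ of $R_b$ with $\kP_b\subseteq\kp_B$, and $\widehat{(R_b/\kP_b)_{\kp_B}}$ is an equidimensional domain-completion of dimension $\dim(R_b/\kP_b)_{\kp_B}=\hht_{R_b/\kP_b}(\kp_B)$. In the catenary domain $R_b/\kP_b$ one has $\hht(\kp_B)=\dim R_b/\kP_b-\dim(R_b/\kP_b)/\kp_B\ge(1+2[F:\Q])-1=2[F:\Q]$, since every irreducible component of $R_b$ has dimension $\ge1+h^1-h^2$ by Krull's height theorem. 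Hence $\dim R_B/\kP\ge2[F:\Q]$. For (2): writing $S$ for the completed localization of $\cO[[x_1,\dots,x_{h^1}]]$ at the preimage of $\kp_B$, $S$ is a regular (hence integral) local ring of dimension $h^1+1-1=h^1$, and $R_B\cong S/(f_1,\dots,f_{h^2})$. Since $c(S)=\dim S=h^1$ and the arithmetic rank of $(f_1,\dots,f_{h^2})$ is at most $h^2$, Proposition \ref{cndimformula} gives $c(R_B)\ge h^1-h^2-1=2[F:\Q]-1$.

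For (3) and (4), note that the map $R^{\ps}\to R_B$ factors through $R_b$, so a prime $\kQ\in\Spec R_B$ contracts to a prime $\kQ_b:=\kQ\cap R_b\subseteq\kp_B$ of $R_b$ with $\kQ^{\ps}=\kQ_b\cap R^{\ps}$; moreover $\rho^{univ}_{R_b}\bmod\kQ_b$ pushes forward to $\rho_B^{univ}\otimes k(\kQ)$, so if the latter is irreducible so is the former. For (3): if $\kQ\notin\Spec R_B^{\mathrm{red}}$ then $\rho^{univ}_{R_b}\bmod\kQ_b$ is irreducible, so Corollary \ref{psccomp}(2) gives $\dim R_b/\kQ_b\le\dim R^{\ps}/\kQ^{\ps}$; arguing as in (1) (using that $R_B/\kQ$ is a quotient of $\widehat{(R_b/\kQ_b)_{\kp_B}}$), $\dim R_B/\kQ\le\hht_{R_b/\kQ_b}(\kp_B)=\dim R_b/\kQ_b-1\le\dim R^{\ps}/\kQ^{\ps}-1$, i.e. $\dim R^{\ps}/\kQ^{\ps}\ge1+\dim R_B/\kQ$. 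For (4): by part (1) every irreducible component of $R_B$ has dimension $\ge2[F:\Q]$, while $\dim R_B^{\mathrm{red}}\le[F:\Q]+1$ by Lemma \ref{RBreddim}; since $[F:\Q]\ge2$, no component of $R_B$ lies in the reducible locus, so for every minimal prime $\kP$ of $R_B$ the representation $\rho_B^{univ}\otimes k(\kP)$ is irreducible. Then $\kP_b=\kP\cap R_b$ is a minimal prime of $R_b$ with $\rho^{univ}_{R_b}\bmod\kP_b$ irreducible, and Corollary \ref{psccomp}(2) shows $\kP^{\ps}=\kP_b\cap R^{\ps}$ is a minimal prime of $R^{\ps}$.

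The work here is essentially bookkeeping: every input (Lemma \ref{RBRb}, the Euler-characteristic presentation of $R_b$, Proposition \ref{cndimformula}, Corollary \ref{psccomp}) is already available, so the only care needed is in the excellence, catenary, and completion arguments relating $R_B$ to $R_b$, and in tracking the single drop in dimension caused by the codimension-one prime $\kp_B$ (which is the source of the ``$+1$'' in part (3)). The mild subtlety — that minimal primes and dimensions behave well under the localization-then-completion passage from $R_b$ to $R_B$ — is exactly of the kind already used repeatedly in \S\ref{Srop-r} and \S\ref{AgoroS-W1}, so I expect no genuine obstacle.
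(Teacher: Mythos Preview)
Your proposal is correct and follows essentially the same route as the paper: reduce everything to $\widehat{(R_b)_{\kp_B}}$ via Lemma \ref{RBRb}, use the presentation of $R_b$ with $h^1-h^2=2[F:\Q]$ together with Proposition \ref{cndimformula} for (1) and (2), and invoke Corollary \ref{psccomp}(2) (plus Lemma \ref{RBreddim} to exclude the reducible locus) for (3) and (4). Your explicit mention of the hypothesis $[F:\Q]\ge 2$ in part (4) is a helpful clarification the paper leaves implicit.
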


\begin{proof}
Under the isomorphism in the previous lemma, it is enough to prove all the statements with $R_B$ replaced by $\widehat{(R_b)_{\kp_B}}$. Note that we can write $R_b$ as $\cO[[x_1,\cdots,x_g]]/(f_1,\cdots,f_r)$ with $g-r=2[F:\Q]$ (cf. proof of lemma \ref{condimRpskq}). The first two parts of the corollary follow from this and proposition \ref{cndimformula}.

For the third part, let $\kp=\kQ\cap (R_b)_{\kp_B}$. Then by the going-down property of flat maps, we have $\dim (R_b)_{\kp_B}/\kp\geq \dim R_B/\kQ$. Note that $\kQ^{\ps}=\kp\cap R^{\ps}$. Our claim follows from the second part of corollary \ref{psccomp}.

Finally, let $\kP$ be a minimal prime of $R_B$. By lemma \ref{RBreddim} and the first part of the corollary, $\kP\notin \Spec R^{\mathrm{red}}_B$. Using the going-down property of flat maps, we have $\kP\cap (R_b)_{\kp_B}$ is a minimal prime of $R_b$ and corresponds to an irreducible deformation. Our claim again follows from the second part of corollary \ref{psccomp}.
\end{proof}

\begin{defn}
For any $B\in \Ext^1_{E[G_{F,S}]}(\psi_1,\psi_2)$, we define $Z_B$ as the set of irreducible components of $R^{\ps}$ whose generic points lie in the image of $\Spec R_{B}\to \Spec R^{\ps}$. It follows from the last part of the previous corollary that $\bigcup_{C'\in Z_B} C'$ contains the image of $\Spec R_{B}\to \Spec R^{\ps}$.
\end{defn}

\begin{cor} \label{anycomisgood}
For any $B$, any irreducible component in  $Z_B$ is good as long as one component is good.
\end{cor}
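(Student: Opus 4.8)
The plan is to run the argument used for Proposition~\ref{generalprop} in case~(1) (see \ref{proofgenpropcase1}) and the component–connecting arguments of \S\ref{Rordbmodab}, but carried out inside $\Spec R_B$ rather than $\Spec R^{\ps}$. Fix $B\neq 0$ and assume at least one component of $Z_B$ is good; I want to deduce that every component of $Z_B$ is good. By Corollary~\ref{corRB}(4) every irreducible component $V$ of $\Spec R_B$ has $\eta_V\cap R^{\ps}$ a minimal prime of $R^{\ps}$ (with $\eta_V$ the generic point), so $V$ dominates a unique component of $R^{\ps}$, and $Z_B$ is exactly the finite set of components of $R^{\ps}$ obtained this way. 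Let $W_1$ (resp. $W_2$) be the union of those components of $\Spec R_B$ dominating a good (resp. non-good) component of $R^{\ps}$, so $\Spec R_B=W_1\cup W_2$ with $W_1\neq\emptyset$ by hypothesis; suppose for contradiction $W_2\neq\emptyset$. By Corollary~\ref{corRB}(2) we have $c(R_B)\geq 2[F:\Q]-1$, hence $\dim(W_1\cap W_2)\geq 2[F:\Q]-1$.

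Since $\dim \Spec R_B^{\mathrm{red}}\leq [F:\Q]+1$ by Lemma~\ref{RBreddim}, and $[F:\Q]+1<2[F:\Q]-1$ (using the normalizations of \S\ref{Tmt}, in particular $|S\setminus\Sigma_p|+1<[F:\Q]$, after enlarging $F$ by a soluble totally real base change in which $p$ splits completely if necessary), the intersection $W_1\cap W_2$ has an irreducible component $V$ of dimension $\geq 2[F:\Q]-1$ not contained in $\Spec R_B^{\mathrm{red}}$. Then $\rho(\eta_V)$ is irreducible, and by Corollary~\ref{corRB}(3) the closure $X$ of the image of $V$ in $\Spec R^{\ps}$ satisfies $\dim X\geq 1+\dim R_B/\eta_V\geq 2[F:\Q]$. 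Because $V$ lies in a single component $V_1$ of $W_1$ and a single component $V_2$ of $W_2$, which dominate a good component $C^{\mathrm{good}}$ and a non-good component $C^{\mathrm{bad}}$ of $R^{\ps}$ respectively, we get $X\subseteq C^{\mathrm{good}}\cap C^{\mathrm{bad}}$.

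Next I would produce a potentially nice prime inside $X$ exactly as in \ref{potnice}: cutting $X$ by $\varpi$ and by the $|S\setminus\Sigma_p|$ elements $T^{univ}(\Frob_v)-1-\chi(\Frob_v)$, $v\in S\setminus\Sigma_p$, yields a closed subset of dimension $\geq 2[F:\Q]-1-|S\setminus\Sigma_p|\geq 2$; since the reducible locus of $R^{\ps}$ has dimension $\leq\delta_F+2$ and $\delta_F=0$ (Leopoldt for the abelian field $F$), its image modulo $\varpi$ has dimension $\leq 1$, so I can choose a one-dimensional prime $\kq'$ of this locus with $\rho(\kq')$ irreducible. Then $p\in\kq'$, $\dim R^{\ps}/\kq'=1$, $\rho(\kq')$ is irreducible and $T^{univ}(\Frob_v)\equiv 1+\chi(\Frob_v)\pmod{\kq'}$; the last condition forces $\rho(\kq')|_{G_{F_v}}$ to have finite image for $v\in S\setminus\Sigma_p$ by the argument of \ref{potnice} (via Lemma~\ref{simlem}), so $\kq'$ is potentially nice and $\kq'\in X\subseteq C^{\mathrm{good}}\cap C^{\mathrm{bad}}$. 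Since $C^{\mathrm{good}}$ is good, there is a chain $C_1,\dots,C_t=C^{\mathrm{good}}$ of components of $R^{\ps}$ together with potentially nice primes $\kq_i\in C_i\cap C_{i-1}$, $\kq_1\in C_1$, and $\kq_1$ in the closure of the regular de Rham locus of $\Spec R^{\ps,\ord}$; appending $C_{t+1}:=C^{\mathrm{bad}}$ with $\kq_{t+1}:=\kq'$ exhibits $C^{\mathrm{bad}}$ as good, contradicting its choice. Hence $W_2=\emptyset$, so every component of $\Spec R_B$ dominates a good component, i.e. every component of $Z_B$ is good.

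The main obstacle is purely the dimension bookkeeping: one must be certain that the connectedness dimension $2[F:\Q]-1$ of $R_B$ strictly exceeds both $\dim R_B^{\mathrm{red}}$ and $2+|S\setminus\Sigma_p|$, so that after the two rounds of cutting one still lands, away from the reducible locus, on a one-dimensional prime that is potentially nice. This is the same numerology already used in \ref{proofgenpropcase1} and in \S\ref{Rordbmodab}, and it is guaranteed by the running hypotheses on $F$ (abelian over $\Q$, $p$ split, $|S\setminus\Sigma_p|+1<[F:\Q]$), possibly after one further soluble base change; everything else is a routine transfer, via Corollary~\ref{corRB} and Lemma~\ref{RBRb}, of the $\Spec R^{\ps}$ arguments to $\Spec R_B$.
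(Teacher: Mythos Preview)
Your proof is correct and follows essentially the same route as the paper's. The paper packages the argument a little differently: rather than immediately partitioning $\Spec R_B$ into good/bad parts, it first proves that the connectedness dimension of $C_B=\bigcup_{C'\in Z_B}C'$ (inside $\Spec R^{\ps}$) is at least $|S|-|\Sigma_p|+3$, by pulling back an arbitrary decomposition of $C_B$ to $\Spec R_B$, invoking $c(R_B)\geq 2[F:\Q]-1$, avoiding the reducible locus via Lemma~\ref{RBreddim}, and pushing forward via Corollary~\ref{corRB}(3); it then simply quotes the connectedness argument of \ref{proofgenpropcase1}. Your version expands this reference and runs the good/bad decomposition directly on $\Spec R_B$, then constructs the potentially nice prime explicitly as in \ref{potnice}. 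The ingredients (Corollary~\ref{corRB}(2)--(4), Lemma~\ref{RBreddim}, the numerology $|S\setminus\Sigma_p|+1<[F:\Q]$, and the chain-extension definition of ``good'') are identical, so there is no substantive difference.
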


\begin{proof}
Let $C_B=\bigcup_{C'\in Z_B} C'$. Arguing as in \ref{proofgenpropcase1}, it is enough to prove that the connectedness dimension of $C_B$ is at least $|S|-|\Sigma_p|+3$. Suppose $C_B=Z_1\cup Z_2$, where $Z_i,i=1,2,$ is a non-empty union of irreducible components in $C_B$. We need to show $\dim Z_1\cap Z_2\geq |S|-|\Sigma_p|+3$. Let $C_i$ be the inverse image of $Z_i$ in $\Spec R_B$. Then $C_1,C_2$ are non-empty unions of irreducible components of $\Spec R_B$ and form a covering of $\Spec R_B$. Hence by the second part of corollary \ref{corRB}, $\dim C_1\cap C_2\geq 2[F:\Q]-1$. Choose a point $\kQ\in C_1\cap C_2$ with $\dim R_B/\kQ\geq 2[F:\Q]-1>[F:\Q]+1$. Then $\kQ\notin \Spec R^{\mathrm{red}}_B$ by lemma \ref{RBreddim}. It follows from the third part of corollary \ref{corRB} that $R^{\ps}/(\kQ\cap R^{\ps})$ has dimension at least $2[F:\Q]>|S|-|\Sigma_p|+3$. Moreover $\kQ\cap R^{\ps}\in Z_1\cap Z_2$. Thus we get the desired lower bound on $\dim Z_1\cap Z_2$.
\end{proof}

\begin{proof}[Proof of proposition \ref{generalprop} in case (3)]
We are going to prove proposition \ref{generalprop} in $2$ steps: after possibly enlarging $E$, we can 
\begin{enumerate}
\item Find a good irreducible component in $Z_{B_0}$ for some $B_0$.
\item Find an extension class $B_1$ such that $C\in Z_{B_1}$ and there exist irreducible components $C_{B_0}\in Z_{B_0}, C_{B_1}\in Z_{B_1}$ containing a common potentially nice prime.
\end{enumerate}
By corollary \ref{anycomisgood}, the first claim implies that any irreducible component in $Z_{B_0}$ is good. Using the second claim and corollary \ref{anycomisgood} again, we see that $C$ is good. It rests to prove above two claims.

\begin{proof}[Proof of Step 1]
Fix a place $v_0$ of $F$ above $p$. Let $B_0\in H^1(G_{F,S},E(\psi_2/\psi_1))$ be a non-zero class such that under the restriction map, its image in $H^1(G_{F_v},E(\psi_2/\psi_1)),v|p$ is zero unless $v=v_0$. Such a class exists by lemma \ref{Ext1char0}. We are going to find a good component in $Z_{B_0}$.

As in \ref{Rbkpb}, we may find a lattice $\rho^o_{B_0}:G_{F,S}\to \GL_2(\cO)$ of $\rho_{B_0}$ with non-split mod $\varpi$ reduction $\bar\rho_{b_0}:G_{F,S}\to\GL_2(\F)$. Let $\bar\psi_i,i=1,2$ be the mod $\varpi$ reduction of $\psi_i$. Then $\bar\rho_{b_0}$ is an extension of $\bar\psi_1$ by $\bar\psi_2$. It follows from the choice of $B_0$ that $\bar\rho_{b_0}|_{G_{F_{v}}}$ is completely reducible for $v\in\Sigma_p\setminus \{v_0\}$. Let $R_{b_0}$ be the universal deformation ring of $\bar\rho_{b_0}$ with determinant of $\chi$. Consider the quotient $R_{\{v_0\}}^{\Delta}$ which parametrizes deformations $\rho_b$ such that for any $v|p$, $\rho_b|_{G_{F_v}}\cong \begin{pmatrix} \psi_{v,1} & *\\ 0 & *\end{pmatrix}$, where $\psi_{v,1}$ is a lifting of $\bar\psi_1$ (resp. $\bar\psi_2$) if $v\neq v_0$ (resp. $v=v_0$). By lemma \ref{grrcord}, each irreducible component of $R^{\Delta}_{\{v_0\}}$ has dimension at least $[F:\Q]$.

By the universal property, $\rho^o_{B_0}$ gives rise to a prime $\kp_0$ of $R^{\Delta}_{\{v_0\}}$. Take a minimal prime $\kP$ of $(R^{\Delta}_{\{v_0\}})_{\kp_0}$. We claim that the push-forward $\rho(\kP)$ of the universal deformation to $R^{\Delta}_{\{v_0\}}/\kP$ is irreducible. The argument is almost the same as the proof of \ref{redloc}: Suppose not, we may write
\[\rho(\kP)\cong\begin{pmatrix} \tilde{\psi_2} & *\\ 0 & \tilde{\psi_1}\end{pmatrix}.\]
Since $F$ is abelian over $\Q$ and Leopoldt's conjecture is known in this case, we may choose $\tau_0\in G_{F,S}$ whose image in $G_{F,S}^{\mathrm{ab}}(p)\otimes \Q_p$ forms a basis. Let $\kP'$ be a minimal prime of $R^{\Delta}_{\{v_0\}}/\kP$ containing $\psi_{2}(\tau_0)- \tilde{\psi_2}(\tau_0)$. Then $\psi_i\equiv\tilde{\psi_i}\mod\kP',i=1,2$. Hence $\rho(\kP'){:=}\rho(\kP)\mod\kP'$ is an extension of $\psi_1$ by $\psi_2$. On the other hand, it follows from the definition of $R^{\Delta}_{\{v_0\}}$ that $\rho(\kP')|_{G_{F_v}}$ is completely reducible for $v\in\Sigma_p\setminus\{v_0\}$. By lemma \ref{Ext1char0}, such an extension is unique up to a scalar. Thus $\kP'=\kp_0$. But this implies that $\dim R^{\Delta}_{\{v_0\}}/\kP\leq 2$ which contradicts our previous lower bound.

Choose a one-dimensional prime $\kq''$ of $R^{\Delta}_{\{v_0\}}$ containing $\kP$ such that $\rho(\kq'')$ is irreducible. Let $\kq'=\kq''\cap R^{\ps}$. Now we can use what we have proved in case (1) (\ref{case(1)genprop}) with $\kq$ replaced by $\kq'$. In particular, proposition \ref{generalprop} implies that any irreducible component containing $\kq'$ is good. Hence let $\kQ$ be a minimal prime of $R_{b_0}$ contained in $\kP\subseteq \kp_0$. Its image in $\Spec R^{\ps}$ is good as it is a generalization of $\kq'$. Moreover, by lemma \ref{RBRb}, $\kQ$ is in the image of $\Spec R_{B_0}\to \Spec R_{b_0}$. Thus we have found a good component in $Z_{B_0}$.
\end{proof}

\begin{proof}[Proof of Step 2]
Enlarging $E$ if necessary, we can assume $C$ is geometrically irreducible. 

Choose a prime ideal $\kq_0$ of $R^{\ps}$ such that
\begin{enumerate}
\item $\kq_0\subset \kq$ and $\dim R^{\ps}/\kq_0=2$.
\item $C$ contains $\kq_0$ and is the only irreducible component having this property.
\item $\rho(\kq_0)$ is irreducible.
\end{enumerate}
To see its existence, note that the second and third conditions are open conditions in $\Spec (R^{\ps})_{\kq}$. Also they cut out non-empty open subset as the generic point of $C$ satisfies both conditions. Hence we can find such a prime by Corollary 10.5.8 of \cite{EGA}. 

Consider the $\kq$-adic completion $A_0$ of $(R^{\ps}/\kq_0)_{\kq}$. This is a one-dimensional complete local noetherian ring with residue field $E$. Choose a minimal prime $\kQ_{A_0}$ of $A_0$ and let $A_1=A_0/\kQ_{A_0}$. Finally we denote the normal closure of $A_1$ by $A_2$. Then $A_2\cong\tilde{E}[[T]]$ for some finite extension $\tilde{E}$ of $E$. Using that $C$ is geometrically irreducible, it is easy to see that we may enlarge $E$ and assume $\tilde{E}=E$. There is a natural inclusion $R^{\ps}/\kq_0\hookrightarrow A_2$. By abuse of notation, $\rho(\kq_0)$ can be considered as a representation over the fraction field of $A_2$. We choose a lattice of $\rho(\kq_0)$:
\[\rho(\kq_0)^o:G_{F,S}\to\GL_2(A_2)\]
such that its mod $T$ reduction is a non-split extension of $\psi_1$ by $\psi_2$. Denote this extension class by $B_1$. Then $\kq_0$ lies in the image of $\Spec R_{B_1}\to\Spec R^{\ps}$. Since $\kq_0$ only contains one minimal prime, $C\in Z_{B_1}$. It suffices to find irreducible components $C_{B_0},C_{B_1}$ as in the claim.

The following argument is very similar to the proof of proposition \ref{propB}. Let $\tilde{F}=\bar{F}^{\ker(\psi_2/\psi_1)}$. Since $\psi_1/\psi_2=\varepsilon\theta$, it is clear that $\Gal(\tilde{F}/F)$ has an open subgroup isomorphic to $\Z_p$. Hence 
\[H^i(\Gal(\tilde{F}/F),E(\psi_2/\psi_1))=0,~i\geq 1.\]
Denote the kernel of $\psi_2/\psi_1:G_{F,S}\to E^\times$ by $H$. Using Hochschild-Serre spectral sequence, we have:
\[H^1(G_{F,S},E(\psi_2/\psi_1))=H^1(H,E(\psi_2/\psi_1))^{\Gal(\tilde{F}/F)}=\Hom_{\Gal(\tilde{F}/F)}(H,E(\psi_2/\psi_1))\]
In other words, there is a natural pairing: $H\times H^1(G_{F,S},E(\psi_2/\psi_1))\to E(\psi_2/\psi_1)$. If $E\cdot B_1=E\cdot B_0$, then $Z_{B_1}=Z_{B_0}$ and our claim is clear. Hence we may assume $E\cdot B_1\neq E\cdot B_0$. Extend $B_0,B_1$ to a basis $B_0,\cdots, B_{[F:\Q]-1}$ of $H^1(G_{F,S},E(\psi_2/\psi_1))$. After possibly replacing $B_0,\cdots,B_{[F:\Q]-1}$ by some non-zero multiples, we can choose $\sigma_0,\cdots,\sigma_{[F:\Q]-1}\in H$ that form a dual basis of $B_0,\cdots, B_{[F:\Q]-1}$ under the previous pairing.

Let $\rho^{univ}_{B_0}:G_{F,S}\to\GL_2(R_{B_0})$ be a lifting given by the universal property. We write 
\[\rho^{univ}_{B_0}(\sigma)=\begin{pmatrix} a_0(\sigma) & b_0(\sigma) \\ c_0(\sigma) & d_0(\sigma) \end{pmatrix},\sigma\in G_{F,S}.\]
Moreover, we may assume $b_0(\sigma^*)=c_0(\sigma^*)=0$ for our fixed complex conjugation $\sigma^*\in G_{F,S}$. Consider the following deformation of $\rho_{B_0}$:
\[\rho_{B_0}+T\rho_{B_1}:G_{F,S}\to\GL_2(E[[T]]),~\sigma\mapsto \begin{pmatrix} \psi_2(\sigma) & b_{B_0}(\sigma)+T b_{B_1}(\sigma) \\ 0 & \psi_1(\sigma)\end{pmatrix}.\]
This gives rise to a prime $\kq_{01}$ of $R_{B_0}$. Clearly $b_0(\sigma_i)\in\kq_{01},i=2,\cdots, [F:\Q]-1$. Let $\kQ_{01}$ be a minimal prime of $(R_{B_0})_{\kq_{01}}/(b_0(\sigma_2),\cdots,b_0(\sigma_{[F:\Q]-1}))$. By abuse of notation, we also view it as a prime of $R_{B_0}$. Then by the first part of corollary \ref{corRB}, we have
\[\dim R_{B_0}/\kQ_{01}\geq [F:\Q]+2.\]
Hence $\kQ_{01}\notin \Spec R^{\mathrm{red}}_{B_0}$ by lemma \ref{RBreddim}. Let $\kQ'_{01}=\kQ_{01}\cap R^{\ps}$. Consider the ideal $I_0$ of $R^{\ps}/\kQ_{01}'$ generated by $x(\sigma_0,\tau),\tau\in G_{F,S}$ (see the notations in \ref{adx}). 

\begin{lem}
$\dim R^{\ps}/(\kQ'_{01},I_0)\geq 2+[F:\Q]$.
\end{lem}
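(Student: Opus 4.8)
The proof will combine a dimension lower bound for $R^{\ps}/\kQ'_{01}$ with a height bound $\hht(I_0)\le 1$ obtained from the Skinner--Wiles-type estimate in remark \ref{ht1Rem}.

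For the first ingredient, recall that $\kQ_{01}\notin\Spec R^{\mathrm{red}}_{B_0}$ and $\dim R_{B_0}/\kQ_{01}\ge[F:\Q]+2$ have already been established. Hence the third part of corollary \ref{corRB} applies and gives $\dim R^{\ps}/\kQ'_{01}\ge 1+\dim R_{B_0}/\kQ_{01}\ge [F:\Q]+3$.

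For the second ingredient I would apply remark \ref{ht1Rem} (in its evident fixed-determinant form, which changes nothing in the argument) with $\Gamma=G_{F,S}$, the two characters $\psi_1,\psi_2$, the non-split extension class $B=B_0$, and $\gamma_0=\sigma_0$. Here $b_{B_0}(\sigma_0)=1$ because the restriction of the cocycle $b_{B_0}$ to $H=\ker(\psi_2/\psi_1)$ is, under the Hochschild--Serre identification of lemma \ref{Ext1char0}, exactly the functional on $H$ dual to $\sigma_0$; and the running assumption of \ref{assco} holds with $\sigma^*_0=\sigma^*$ since $T(\kq_0)(\sigma^*)=\psi_1(\sigma^*)+\psi_2(\sigma^*)=0$ as $\chi(\sigma^*)=-1$. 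The hypothesis that $R_{B_0}/(\km_T+\kQ_{01})$ be non-Artinian is verified via the one-dimensional prime $\kq_{01}\supseteq\kQ_{01}$: along $\kq_{01}$ the universal pseudo-character is the trivial deformation $\psi_1+\psi_2$ of $T(\kq_0)$ (the trace is unchanged by an upper-triangular perturbation), so the structure map kills $\km_T$ and $R_{B_0}/(\km_T+\kQ_{01})$ surjects onto $R_{B_0}/\kq_{01}\cong E[[T]]$. Remark \ref{ht1Rem} then bounds by $1$ the height of the ideal generated by the $x(\sigma_0,\sigma)$ inside $R^{\ps}_T/(\kQ_{01}\cap R^{\ps}_T)$, where $R^{\ps}_T$ is the relevant $E$-deformation ring, which by proposition \ref{Dkp}/corollary \ref{dr0} is $\widehat{(R^{\ps})_{\kq_0}}$ once the determinant is fixed to $\chi$. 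Since this ring is obtained from $R^{\ps}/\kQ'_{01}$ by localizing at $\kq_0$ and completing, operations under which the height of a prime cannot decrease, one gets $\hht(I_0)\le 1$ in $R^{\ps}/\kQ'_{01}$.

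Finally I would combine the two ingredients: $R^{\ps}$ is a complete Noetherian local ring and $R^{\ps}/\kQ'_{01}$ a complete local domain, hence equidimensional and catenary, so a minimal prime $\mathfrak p$ over $I_0$ with $\hht\mathfrak p\le 1$ satisfies $\dim R^{\ps}/(\kQ'_{01},I_0)\ge \dim (R^{\ps}/\kQ'_{01})/\mathfrak p\ge \dim R^{\ps}/\kQ'_{01}-1\ge [F:\Q]+2$, which is the claim. The main obstacle I anticipate is the bookkeeping around remark \ref{ht1Rem}: identifying $\widehat{(R^{\ps})_{\kq_0}}$ with the appropriate $E$-deformation ring in the presence of the fixed determinant, matching the elements $x(\sigma_0,\cdot)$ on the pseudo-deformation and framed-deformation sides, and the routine but fussy verification that the height of $I_0$ is controlled after localizing and completing; by contrast the non-Artinian hypothesis, the $T(\sigma^*)=0$ condition, and the final equidimensionality step are straightforward.
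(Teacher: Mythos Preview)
Your approach is essentially the paper's: bound $\dim R^{\ps}/\kQ'_{01}$ from below via corollary \ref{corRB}(3), bound $\hht(I_0)$ from above via remark \ref{ht1Rem} applied after completing at a suitable prime, then combine. The verification of the non-Artinian hypothesis through $\kq_{01}$ is exactly right, and the final equidimensionality/catenarity step is fine.

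There is, however, a real mix-up in your second ingredient: you repeatedly write $\kq_0$ where you must mean $\kq$. Recall that $\kq_0$ was chosen with $\rho(\kq_0)$ \emph{irreducible}; the $E$-pseudo-deformation ring $R^{\ps}_T$ of $T=\psi_1+\psi_2$ is identified (via proposition \ref{Dkp}) with $\widehat{(R^{\ps})_{\kq}}$, since it is at $\kq$ that the universal pseudo-character specializes to $\psi_1+\psi_2$. Your equality ``$T(\kq_0)(\sigma^*)=\psi_1(\sigma^*)+\psi_2(\sigma^*)$'' and your claim that ``this ring is obtained from $R^{\ps}/\kQ'_{01}$ by localizing at $\kq_0$ and completing'' are both false as written and become correct once $\kq_0$ is replaced by $\kq$. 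If one literally localized at $\kq_0$, one would obtain the deformation ring of the irreducible $\rho(\kq_0)$ and remark \ref{ht1Rem} would not apply. With that correction in place, the descent of the height bound is exactly the paper's: $R^{\ps}\to\widehat{(R^{\ps})_{\kq}}$ is flat, $\kQ'_{01}=\kQ''_{01}\cap R^{\ps}$ where $\kQ''_{01}=\kQ_{01}\cap\widehat{(R^{\ps})_{\kq}}$, and going-down transports $\hht(I'_0)\le 1$ to $\hht(I_0)\le 1$.
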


\begin{proof}
It follows from proposition \ref{Dkp} that $\widehat{(R^{\ps})_{\kq}}$ parametrizes the deformations of the pseudo-representation $\psi_1+\psi_2$. We have a natural map $\widehat{(R^{\ps})_{\kq}}\to R_{B_0}$ given by evaluating the trace of the universal deformation. Let $\kQ''_{01}=\kQ_{01}\cap \widehat{(R^{\ps})_{\kq}}$ and $I'_0$ be the ideal of $\widehat{(R^{\ps})_{\kq}}/\kQ''_{01}$ generated by $x(\sigma_0,\tau),\tau\in G_{F,S}$. By remark \ref{ht1Rem}, 
\[\hht(I'_0)\leq 1.\]
Note that $\kQ'_{01}=\kQ''_{01}\cap R^{\ps}$. Since $R^{\ps}\to \widehat{(R^{\ps})_{\kq}}$ is flat, by the going-down property, 
\[\hht(I_0)\leq 1.\]
It follows from the third part of corollary \ref{corRB} that 
\[\dim R^{\ps}/\kQ'_{01}\geq 1+ \dim R_{B_0}/\kQ_{01}\geq [F:\Q]+3.\]
From this, our claim is clear.
\end{proof}

Therefore we may find a prime $\kP_{01}$ of $R^{\ps}$ containing $\kQ'_{01},I_0$ such that 
\[\dim R^{\ps}/\kP_{01}\geq [F:\Q]+2.\] 
It is clear that $\rho(\kP_{01})$ is irreducible. Choose a prime $\kq_1$ with the following properties:
\begin{enumerate}
\item $\kP_{01}\subseteq \kq_1 \subseteq \kq$.
\item $\dim R^{\ps}/\kq_1=2$.
\item $\rho(\kq_1)$ is irreducible.
\item Any irreducible component of $R^{\ps}$ containing $\kq_1$ also contains $\kP_{01}$.
\end{enumerate}
Again, since the last two properties are open non-empty conditions in $(R^{\ps}/\kP_{01})_{\kq}$, the existence of such a prime follows from corollary 10.5.8 of \cite{EGA}. Arguing as in the last three paragraphs of the proof of proposition \ref{propB}, we can show that $\kq_1$ belongs to the image of $\Spec R_{B_1}\to \Spec R^{\ps}$. In particular, there exists an irreducible component $C_{B_1}\in Z_{B_1}$ that contains $\kq_1$. By our construction, $\kP_{01}\in C_{B_1}$.

On the other hand, since $\kQ'_{01}$ is in the image of $\Spec R_{B_0}\to \Spec R^{\ps}$, we can find an irreducible component $C_{B_0}\in Z_{B_0}$ containing $\kQ'_{01}$, hence also containing $\kP_{01}$. Now note that
\[\dim C_{B_0}\cap C_{B_1}\geq \dim R^{\ps}/\kP_{01}\geq [F:\Q]+2.\]
We can argue as in \ref{potnice} to find a potentially nice prime in $C_{B_0}\cap C_{B_1}$. This is exactly what we want in our claim.
\end{proof}

\end{proof}

\section{Fontaine-Mazur conjecture in the residually irreducible case (I)} \label{FMcitric1}
In this subsection, we prove the Fontaine-Mazur conjecture of $\GL_2/\Q$ in the residually irreducible case (under some mild condition). This was previously known by the work of Kisin \cite{Kis09a}, Hu-Tan \cite{HT15} and Emerton \cite{Eme1}. Our treatment is more uniform by establishing a patching argument for completed homology directly. More precisely, we will prove

\begin{thm} \label{ThmirredA}
Let $p>2$ be an odd prime. Let $F$ be a totally real extension of $\Q$ in which $p$ completely splits. Suppose 
\[\rho:\Gal(\overbar{F}/F)\to \GL_2(\cO)\] 
is a continuous irreducible representation with the following properties
\begin{enumerate}
\item $\rho$ ramifies at only finitely many places.
\item $\bar\rho|_{G_{F(\zeta_p)}}$ is absolutely irreducible, where $\bar{\rho}$ denotes the reduction of $\rho$ modulo $\varpi$. 
\item \label{extracond} For any $v|p$, $\rho|_{G_{F_v}}$ is absolutely irreducible and de Rham of distinct Hodge-Tate weights. If $p=3$, then $\bar{\rho}|_{G_{F_v}}$ is not of the form $\begin{pmatrix} \eta & * \\ 0 & \eta\omega\end{pmatrix}$ or $\begin{pmatrix} \eta\omega & * \\ 0 & \eta\end{pmatrix}$.
\item $\det \rho(c)=-1$ for any complex conjugation $c\in \Gal(\overbar{F}/F)$.
\item $\bar\rho$ arises from a regular algebraic cuspidal automorphic representation $\pi_0$ of $\GL_2(\A_F)$. 
\end{enumerate}
Then $\rho$ arises from a regular algebraic cuspidal automorphic representation of $\GL_2(\A_F)$.
\end{thm}

\begin{rem}
In his thesis \cite{Tung18}, Shen-Ning Tung proved this result without our extra assumption in \eqref{extracond} of the theorem when $p=3$. 
\end{rem}

\begin{rem}
If $\rho|_{G_{F_v}}$ is reducible for any $v|p$, i.e. ordinary, the Theorem can be proved by using Hida families \cite{SW01}. If we are in the `mixed' situation, i.e. $\rho|_{G_{F_v}}$ is reducible for some $v|p$ and irreducible for some other place $v$, it should be possible to work with some `mixture of completed cohomology and Hida family'. More precisely, the space of $p$-adic automorphic forms involved would be the ordinary parts of the completed cohomology at places where $\rho|_{G_{F_v}}$ is reducible. It is conceivable that one can define Hecke algebras and develop certain structure theories in this situation. We leave the details to the interested readers.
\end{rem}

\begin{rem} \label{KW}
When $F=\Q$, the last condition automatically holds by the work of Khare-Wintenberger \cite{KW09a}, \cite{KW09b} on Serre's conjecture.
\end{rem}

\begin{para} \label{solublebc}
We do some standard reduction work here. By soluble base change, we may assume  (cf. lemma 2.2 of \cite{Ta03})
\begin{itemize}
\item $\rho|_{I_{F_v}}$ is unipotent for any $v\nmid p$. 
\item If $\rho|_{G_{F_v}}$ is ramified and $v\nmid p$, then $N(v)\equiv1\mod p$ and $\bar\rho|_{G_{F_v}}$ is trivial.
\item $[F:\Q]$ is even.
\item $\pi_0$ is unramified everywhere except places above $p$.
\end{itemize}

After possibly replacing $\cO$ by some unramified extension, we can assume that $\bar\rho|_{G_{F_v}}$ is reducible if it is absolutely reducible. We will keep all these assumptions throughout this section. For notation, we will use $\Sigma_p$ to denote all the places of $F$ above $p$ and $S$ to denote all the places where $\rho$ is ramified. Finally, let $\chi=\det \rho$ and $\bar\chi=\det \bar\rho$.
\end{para}

\subsection{Galois deformation rings}
\begin{para} \label{Gdrs}
Fix characters $\xi_v:k(v)^\times\to\cO^\times$ of $p$-power orders for $v\in S\setminus \Sigma_p$. By local class field theory, we will view them as characters of $I_{F_v}$. At the very end, we will be in the case where all $\xi_v$ are trivial. However later on, it is necessary to consider the case where none of $\xi_v$ is trivial in the proof (in order to apply Taylor's trick \cite{Ta08}). Hence we decide to allow this generality in the setup.

Suppose $Q,T$ are finite sets of primes. We introduced a Galois deformation ring $R^{\square_T,\{\xi_v\}}_{\bar\rho,Q}$ in \ref{globaldefr}. Recall that it pro-represents the functor $\mathrm{Def}^{\square_T,\{\xi_v\}}_{\bar{\rho},Q}$ from $C_{\cO}$ to the category of sets sending $R$ to the set of tuples $(\rho_R;\alpha_v)_{v\in T}$ modulo the equivalence relation $\sim$ where
\begin{itemize}
\item $\rho_R:G_{F,S\cup Q}\to\GL_2(R)$ is a lifting of $\bar{\rho}$ to $R$ with determinant $\chi$ such that $\tr (\rho_R)|_{I_{F_v}}=\xi_v+\xi_v^{-1}$ for any $v\in S\setminus \Sigma_p$.
\item $\alpha_v\in 1+M_2(\km_R)$. Here $\km_R$ is the maximal ideal of $R$.
\item $(\rho_R;\alpha_v)_{v\in T}\sim (\rho'_R;\alpha'_v)_{v\in T}$ if there exists an element $\beta\in 1+M_2(\km_R)$ with $\rho'_R=\beta\alpha_v\beta^{-1},\alpha'_v=\beta\alpha_v$ for any $v\in T$.
\end{itemize}
We will drop $Q$ (resp. $\square_T$) if $Q$ (resp. $T$) is empty. Since $\bar\rho$ is fixed in this section, we will also drop it in the subscript.

On the local side, for a finite place $v$, we denote by $R^{\square}_v$ the unrestricted universal lifting ring of $\bar{\rho}|_{G_{F_v}}:G_{F_v}\to\GL_2(\F)$ with determinant $\chi|_{G_{F_v}}$. For $v\in S\setminus \Sigma_p$, we denote by $R^{\square,\xi_v}_v$ the quotient of $R^{\square}_v$ corresponding to the liftings with traces equal to $\xi_v+\xi_v^{-1}$ when restricted on $I_{F_v}$. More details can be found in \ref{ldfr}.

Let $R^{\{\xi_v\}}_{\loc}$ be the following completed tensor product over $\cO$:
\[(\widehat{\bigotimes}_{v\in \Sigma_p}R^{\square}_v)\widehat{\otimes}(\widehat{\bigotimes}_{v\in S\setminus \Sigma_p}R^{\square,\xi_v}_v).\]
\end{para}

\begin{lem}
For $v|p$, $R^{\square}_v$ is a normal domain of dimension $1+6$ and flat over $\cO$. 
\end{lem}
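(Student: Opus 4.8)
The plan is to prove this by a case analysis on the local representation $\bar{\rho}|_{G_{F_v}}$, running parallel to Lemma \ref{varlocdim}. Since $p$ splits completely in $F$ we have $F_v\cong\Q_p$, so throughout $[F_v:\Q_p]=1$ and $\dim_\F\ad^0\bar{\rho}=3$. First I would record the standard presentation coming from Mazur's deformation theory: $R^{\square}_v$ is a quotient of a power series ring $\cO[[x_1,\dots,x_g]]$ with $g=3-\dim_\F H^0(G_{F_v},\ad^0\bar{\rho})+\dim_\F H^1(G_{F_v},\ad^0\bar{\rho})$ by at most $r=\dim_\F H^2(G_{F_v},\ad^0\bar{\rho})$ relations. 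The local Euler characteristic formula gives $g-r\ge 6$, hence every irreducible component of $\Spec R^{\square}_v$ has dimension $\ge 7$; once we know $R^{\square}_v$ is a domain of dimension exactly $7$, flatness over $\cO$ is automatic because $\varpi$ is then a nonzerodivisor.

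The generic case is $H^2(G_{F_v},\ad^0\bar{\rho})=0$: then $R^{\square}_v\cong\cO[[x_1,\dots,x_6]]$ is formally smooth over $\cO$, in particular a regular (hence normal) domain of dimension $7$, flat over $\cO$. By a determinant/self-twist argument (using that $p$ is odd, together with the exclusion for $p=3$ in Theorem \ref{ThmirredA}) this covers the case where $\bar{\rho}|_{G_{F_v}}$ is absolutely irreducible, as well as every reducible case in which the two characters of $\bar{\rho}|_{G_{F_v}}^{\mathrm{ss}}$ have ratio $\ne\omega^{\pm1}$.

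For the remaining cases --- which, by the $p=3$ hypothesis of Theorem \ref{ThmirredA}, only occur when $p\ge5$ --- I would invoke the explicit computations already used in Lemma \ref{varlocdim}: if $\bar{\rho}|_{G_{F_v}}$ is a twist of a nonsplit extension of $\omega$ by $\mathbf{1}$, then Corollary B.5 of \cite{Pas13} (valid for $p>3$) gives $R^{\square}_v\cong\cO[[c_0,c_1,d_0,d_1,x_1,x_2,x_3]]/(c_0d_1+c_1d_0+pc_0)$; if $\bar{\rho}|_{G_{F_v}}\cong\eta\oplus\eta\omega$, then Corollaries 3.6--3.7 of \cite{HT15} together with Proposition 2.1 of \cite{KW09b} exhibit $R^{\square}_v$ as formally smooth of relative dimension $2$ over $\cO[[c_0,c_1,d_0,d_1,b]]/(c_0d_1-c_1d_0)$. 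In each case one checks that $R^{\square}_v$ is a normal domain of dimension $7$ and flat over $\cO$: after the evident change of variable $d_1\mapsto d_1+p$ (resp. $d_0\mapsto-d_0$) the defining equation becomes the Segre determinant $c_0d_1-c_1d_0$, and the completed affine cone over a Segre embedding is Cohen--Macaulay and normal of the expected dimension with singular locus of codimension $\ge2$ (cf. Proposition 2.7 of \cite{Sho16} and the computation in the proof of Lemma \ref{varlocdim}).

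Finally, I expect there to be no genuine obstacle here: this is a bookkeeping lemma assembling known local deformation-ring computations. The only points requiring care are verifying normality of the two non-smooth deformation rings and confirming that, under the standing hypotheses on $p$, the above list of cases is exhaustive; both are handled exactly as in Lemma \ref{varlocdim} and the references cited there.
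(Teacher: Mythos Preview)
Your case analysis has a genuine gap in the absolutely irreducible case. You claim that the $p=3$ exclusion in Theorem \ref{ThmirredA} forces $H^2(G_{F_v},\ad^0\bar\rho)=0$ whenever $\bar\rho|_{G_{F_v}}$ is absolutely irreducible, but that exclusion only rules out \emph{reducible} local representations (those of the form $\begin{pmatrix}\eta&*\\0&\eta\omega\end{pmatrix}$ or its transpose). By local Tate duality, $H^2\neq 0$ in the irreducible case means $\bar\rho|_{G_{F_v}}\cong\bar\rho|_{G_{F_v}}\otimes\omega$; taking determinants gives $\omega^2=\mathbf{1}$, so $p=3$, and this actually occurs: e.g.\ $\bar\rho|_{G_{F_v}}\cong\Ind_{G_{\Q_{9}}}^{G_{\Q_3}}\omega_2^2$ (up to twist) is irreducible and self-twist by $\omega$. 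Nothing in the hypotheses of Theorem \ref{ThmirredA} excludes this.

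The paper handles precisely this missing case by invoking B\"ockle's explicit computation (Theorem 5.1 of \cite{Boc10}), which gives $R^{\square}_v\cong\cO[[b,c,d,d',x_1,x_2,x_3]]/(bc+d^3)$ after a change of variables; this is visibly a normal domain of dimension $7$, flat over $\cO$. Your treatment of the reducible cases is fine and matches the paper's (which simply cites Lemma \ref{varlocdim}), but you need to add this third case to complete the argument.
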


\begin{proof}
If $\bar\rho|_{G_{F_v}}$ is reducible, this follows from the second part of lemma \ref{varlocdim}. If $\bar\rho|_{G_{F_v}}$ is irreducible and $H^2(G_{F_v},\ad^0 \bar\rho)=0$, then this is clear as $R^{\square}_v$ is smooth. If $\bar\rho|_{G_{F_v}}$ is irreducible and $H^2(G_{F_v},\ad^0 \bar\rho)\neq 0$, then $p=3$ and B\"ockle explicitly computed $R^{\square}_v$ (without framing and fixing determinant) in Theorem 5.1 \cite{Boc10}. Note that after changing of variables, B\"ockle's result implies that $R^{\square}_v\cong\cO[[b,c,d,d',x_1,x_2,x_3]]/(bc+d^3)$, which is clearly normal.
\end{proof}

\begin{lem} \label{iharaavoid}
For $v\in S\setminus \Sigma_p$, $R^{\square,\xi_v}_v$ is equidimensional of dimension $1+3$. The generic point of each irreducible component has characteristic zero. Moreover,
\begin{enumerate}
\item If $\xi_v$ is non-trivial, then $R^{\square,\xi_v}_v$ is integral.
\item In general, each minimal prime of $R^{\square,\xi_v}_v/(\varpi)$ contains a unique minimal prime of $R^{\square,\xi_v}_v$.
\end{enumerate}
\end{lem}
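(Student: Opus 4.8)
The plan is to deduce this from the structure theory of such local lifting rings already available in the literature, together with the normalizations imposed in \ref{solublebc}. First I would observe that for $v\in S\setminus\Sigma_p$ the restriction $\bar\rho|_{G_{F_v}}$ is trivial — this is one of the conditions arranged by the soluble base change of \ref{solublebc}, since $v$ is a place where $\rho$ ramifies — and that the functor pro-represented by $R^{\square,\xi_v}_v$ depends only on $\bar\rho|_{G_{F_v}}$ and on $\xi_v$. Hence $R^{\square,\xi_v}_v$ is exactly the universal lifting ring analysed in lemma \ref{lrca} (with $\bar\rho_b|_{G_{F_v}}$ trivial there), and it suffices to re-read that lemma and extract the asserted statements.

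For $\xi_v$ non-trivial this is immediate: by lemma \ref{lrca}(2) — which in this case rests on Proposition 3.15, 3.16 of \cite{Tho15} and the explicit presentations of Shotton \cite{Sho16} — the ring $R^{\square,\xi_v}_v$ is integral, flat over $\cO$, of dimension $1+3$. In particular it is equidimensional, $\varpi$ is a non-zerodivisor so its unique minimal prime $(0)$ has characteristic zero, assertion (1) is exactly the integrality just quoted, and assertion (2) is vacuous because $R^{\square,\xi_v}_v$ has only one minimal prime.

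For $\xi_v$ trivial I would invoke lemma \ref{lrca}(1), which produces a finite type $\cO$-algebra $A^{\xi_v}$ with $R^{\square,\xi_v}_v\cong\widehat{(A^{\xi_v})_x}$, each of whose minimal primes $\kp$ has $A^{\xi_v}/\kp$ flat over $\cO$ of dimension $4$ with $A^{\xi_v}/(\kp,\varpi)$ generically reduced, and such that each minimal prime of $A^{\xi_v}/(\varpi)$ contains a unique minimal prime of $A^{\xi_v}$; this is ultimately Lemma 3.1 of \cite{Ta08}. Since $A^{\xi_v}$ is excellent, so is $(A^{\xi_v})_x$, and the completion map $(A^{\xi_v})_x\to R^{\square,\xi_v}_v$ is faithfully flat with geometrically regular fibres; as recalled in the proof of lemma \ref{ccal}, such a map preserves reducedness, normality, the dimension of each component, and the correspondence of minimal primes of the special fibres. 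Transferring the properties above along this map — exactly as in the transfer arguments in the proof of lemma \ref{ccal} — gives that $R^{\square,\xi_v}_v$ is equidimensional of dimension $1+3$, that $R^{\square,\xi_v}_v$ modulo each minimal prime is $\cO$-flat (hence each minimal prime has characteristic zero), and that each minimal prime of $R^{\square,\xi_v}_v/(\varpi)$ contains a unique minimal prime of $R^{\square,\xi_v}_v$.

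The substantive input here is entirely contained in the cited work, so the only step requiring any care is the passage, in the case $\xi_v$ trivial, from the finite type algebra $A^{\xi_v}$ to its completion at $x$; this is the routine excellence argument indicated above, using that every irreducible component of $\Spec A^{\xi_v}$ passes through the closed point $x$ (all components of Taylor's moduli space of matrix pairs contain the pair $(\mathrm{id},\mathrm{id})$), so that the minimal primes of $R^{\square,\xi_v}_v$ really do account for all components of $A^{\xi_v}$. I do not expect any genuine obstacle: the lemma is a bookkeeping restatement, in the present notation, of the Ihara-avoidance properties of the tame local lifting rings.
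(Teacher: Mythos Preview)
Your proposal is correct and follows essentially the same route as the paper: both reduce to the structure theory of tame local lifting rings established in \cite{Ta08}, \S3. The paper's proof is simply the one-line citation ``This follows from proposition 3.1 of \cite{Ta08}'', whereas you take the slightly longer route through the paper's own lemma~\ref{lrca} (which itself unpacks Taylor's result) and then, in the trivial-$\xi_v$ case, explain the passage from the finite-type model $A^{\xi_v}$ to its completion via excellence. This is entirely fine and has the virtue of making the dependence on the reduction step \ref{solublebc} (i.e.\ that $\bar\rho|_{G_{F_v}}$ is trivial) explicit; there is no substantive difference in content.
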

\begin{proof}
This follows from proposition 3.1 of \cite{Ta08}. 
\end{proof}

\begin{cor} \label{rlocdim}
$R^{\{\xi_v\}}_{\loc}$ is equidimensional of dimension $1+3[F:\Q]+3|S|$. The generic point of each irreducible component has characteristic zero. Moreover,
\begin{enumerate}
\item If all $\xi_v$ are distinct, then $R^{\{\xi_v\}}_{\loc}$ is integral.
\item In general, each minimal prime of $R^{\{\xi_v\}}_{\loc}/(\varpi)$ contains a unique minimal prime of $R^{\{\xi_v\}}_{\loc}$.
\end{enumerate}
\end{cor}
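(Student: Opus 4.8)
The plan is to run the argument of Proposition \ref{rlocp} almost verbatim, with two simplifications: there is no auxiliary set $T'$ of Taylor--Wiles primes (hence no factors $R^{\square,ur}_v$), and one works with the whole ring $R^{\{\xi_v\}}_{\loc}$ rather than its completed localization at a one-dimensional prime. Write $R_1=\widehat{\bigotimes}_{v\in\Sigma_p}R^\square_v$ (completed tensor product over $\cO$), so that $R^{\{\xi_v\}}_{\loc}=R_1\,\widehat\otimes_\cO\,\widehat{\bigotimes}_{v\in S\setminus\Sigma_p}R^{\square,\xi_v}_v$, and recall that $|\Sigma_p|=[F:\Q]$ since $p$ splits completely in $F$. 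First I would record, using the explicit presentations of $R^\square_v$ for $v|p$ given by the lemma preceding this corollary (a power series ring, or $\cO[[c_0,c_1,d_0,d_1,x_1,x_2,x_3]]/(c_0d_1-c_1d_0)$, or, when $p=3$ and $H^2(G_{F_v},\ad^0\bar\rho)\neq0$, B\"ockle's presentation $\cO[[b,c,d,d',x_1,x_2,x_3]]/(bc+d^3)$, each a normal $\cO$-flat complete intersection domain of dimension $1+6$ whose generic fibre remains geometrically normal and geometrically connected after base change to any $\cO_L$), that $R_1$ is a normal, $\cO$-flat domain of dimension $1+6[F:\Q]$ with $R_1[\frac{1}{p}]$ geometrically normal and geometrically connected; these properties are stable under completed tensor products over $\cO$ and under base change to $\cO_L$ for finite $L/E$ (Serre's criterion together with the excellence of all rings involved).

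Next, let $\Sigma_1\subseteq S\setminus\Sigma_p$ be the set of places with $\xi_v$ non-trivial and $\Sigma_2$ its complement. Applying Lemma \ref{lrca}(2) once for each $v\in\Sigma_1$, starting from $S=R_1$, one gets that $S_1:=R_1\,\widehat\otimes_\cO\,\widehat{\bigotimes}_{v\in\Sigma_1}R^{\square,\xi_v}_v$ has $S_1[\frac{1}{p}]$ geometrically normal and geometrically connected; since $S_1$ is $\cO$-flat it is therefore geometrically integral, of dimension $1+6[F:\Q]+3|\Sigma_1|$. For the remaining places, take for each $v\in\Sigma_2$ the finite-type $\cO$-algebra $X^1_v=A^{\xi_v}$ and its $\F$-point $x_v$ from Lemma \ref{lrca}, so that $\widehat{(X^1_v)_{x_v}}\cong R^{\square,\xi_v}_v$, and set $X=\bigotimes_{v\in\Sigma_2}X^1_v$ over $\cO$ with maximal ideal $\km$ the product of the $x_v$. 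The hypotheses of Lemma \ref{ccal} for $X$ — each minimal prime $\kp$ of $X$ has $X/\kp$ flat over $\cO$ of dimension $1+3|\Sigma_2|$ with $X/(\kp,\varpi)$ generically reduced, and each minimal prime of $X/(\varpi)$ contains a unique minimal prime of $X$ — are verified exactly as in the last paragraph of the proof of Proposition \ref{rlocp}, using the $\cO$-flatness and generic reducedness of the $X^1_v/(\varpi)$ supplied by Lemma \ref{lrca}(1) together with the perfectness of $\F$.

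Now apply Lemma \ref{ccal} with $S=S_1$ and $R=X$ (so $e=1+6[F:\Q]+3|\Sigma_1|$ and $d=3|\Sigma_2|$), identifying $S_1\,\widehat\otimes_\cO X_\km$ with $R^{\{\xi_v\}}_{\loc}$ via Lemma \ref{scca}. Parts (2) and (3) of that lemma yield that every minimal prime $\kq$ of $R^{\{\xi_v\}}_{\loc}$ has $R^{\{\xi_v\}}_{\loc}/\kq$ flat over $\cO$ of dimension $d+e=1+6[F:\Q]+3|\Sigma_1|+3|\Sigma_2|=1+3[F:\Q]+3|S|$ — hence $R^{\{\xi_v\}}_{\loc}$ is equidimensional of that dimension, and since $\varpi$ is a nonzerodivisor on each $R^{\{\xi_v\}}_{\loc}/\kq$ the generic point of every irreducible component lies in characteristic zero — and that each minimal prime of $R^{\{\xi_v\}}_{\loc}/(\varpi)$ contains a unique minimal prime of $R^{\{\xi_v\}}_{\loc}$. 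When all the $\xi_v$ are non-trivial we have $\Sigma_2=\emptyset$, so $R^{\{\xi_v\}}_{\loc}=S_1$ is already (geometrically) integral. The only genuinely non-formal point is establishing the geometric normality and connectedness of $R_1$ from the explicit presentations — in particular handling B\"ockle's $p=3$ ring and checking these properties survive the completed tensor products over $\cO$; the rest is bookkeeping with Lemmas \ref{ccal}, \ref{lrca} and \ref{scca}.
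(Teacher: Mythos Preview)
Your argument is correct, but it takes a substantially different route from the paper. The paper's proof is a single sentence: it invokes the two preceding lemmas (which give the properties of each individual factor $R^\square_v$ for $v\in\Sigma_p$ and $R^{\square,\xi_v}_v$ for $v\in S\setminus\Sigma_p$) and then cites Lemma~3.3 of \cite{BLGHT11}, a general black-box result asserting that the completed tensor product over $\cO$ of complete local Noetherian $\cO$-flat algebras, each equidimensional with generically reduced special fibre and satisfying the ``unique minimal prime over $\varpi$'' condition, again has all of these properties (and is a domain when each factor is geometrically integral).

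Your approach instead re-derives this tensor-product lemma in the present special case by transplanting the proof of Proposition~\ref{rlocp}: you separate the factors into $R_1=\widehat{\bigotimes}_{v|p}R^\square_v$, the non-trivial-$\xi_v$ part (handled via iterated application of Lemma~\ref{lrca}(2)), and the trivial-$\xi_v$ part (handled via the finite-type models $A^{\xi_v}$ and Lemma~\ref{ccal}). This is entirely valid --- in particular your identification of the one genuinely nontrivial step (geometric integrality of $R_1$, via the explicit presentations and Serre's criterion) is accurate, and the verification of the hypotheses of Lemma~\ref{ccal} for $X=\bigotimes_{v\in\Sigma_2}A^{\xi_v}$ goes through verbatim since $\bar\rho|_{G_{F_v}}$ is trivial for $v\in S\setminus\Sigma_p$ by the reductions in~\ref{solublebc}. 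What you gain is a proof internal to the paper with no new external citation; what you lose is brevity, since Lemma~3.3 of \cite{BLGHT11} packages exactly this bookkeeping.
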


\begin{proof}
This follows from the previous lemmas and lemma 3.3 of \cite{BLGHT11}.
\end{proof}

\begin{para}
Given a tuple $(\rho_R;\alpha_v)_{v\in S}$ as in the definition of $R^{\square_S,\{\xi_v\}}_Q$, $\alpha_v^{-1}\rho_R\alpha_v|_{G_{F_v}}$ is a well-defined lifting of $\bar{\rho}|_{G_{F_v}}$. This induces a natural map $R^{\{\xi_v\}}_{\loc}\to R^{\square_S,\{\xi_v\}}_Q$.
\end{para}

\begin{prop} \label{extwpirr}
Set $g=\dim_\F H^1(G_{F,S},\ad^0\bar{\rho}(1))+|S|-1-[F:\Q]$. Then for any positive integer $N$, there exists a finite set of primes  $Q_N$ of $F$, disjoint from $S$, such that
\begin{itemize}
\item $|Q_N|=\dim_\F H^1(G_{F,S},\ad^0\bar{\rho}(1))$.
\item If $v\in Q_N$, then $N(v)\equiv 1\mod p^N$ and $\bar{\rho}(\Frob_v)$ has distinct eigenvalues.
\item As an $R^{\{\xi_v\}}_{\loc}$-algebra, $R^{\square_S,\{\xi_v\}}_{Q_N}$ is topologically generated by $g$ elements.
\end{itemize}
\end{prop}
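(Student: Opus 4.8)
The plan is to produce the Taylor--Wiles primes by the standard Galois-cohomological argument, in the form used for residually irreducible two-dimensional representations, and then to translate the resulting control on the dual Selmer group into the statement about the number of topological generators of $R^{\square_S,\{\xi_v\}}_{Q_N}$ over $R^{\{\xi_v\}}_{\loc}$. The key input is the hypothesis $\bar\rho|_{G_{F(\zeta_p)}}$ absolutely irreducible from \ref{solublebc}: this is exactly what lets us, by a Chebotarev argument, annihilate the dual Selmer group $H^1_{\cL^\perp}(G_{F,S\cup Q},\ad^0\bar\rho(1))$ one cohomology class at a time. The output will be the size of $Q_N$, which is $\dim_\F H^1(G_{F,S},\ad^0\bar\rho(1))$, together with the congruence conditions $N(v)\equiv 1 \bmod p^N$ and $\bar\rho(\Frob_v)$ having distinct eigenvalues.

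Concretely, first I would fix $N$ and argue that for each nonzero class $[\phi]\in H^1(G_{F,S},\ad^0\bar\rho(1))$ there is $\sigma\in G_F$ with $\sigma|_{F(\zeta_{p^N})}=1$, such that $\bar\rho(\sigma)$ has distinct eigenvalues and $\phi(\sigma)\notin(\sigma-1)\ad^0\bar\rho(1)$; this is where absolute irreducibility of $\bar\rho|_{G_{F(\zeta_p)}}$ enters, via the usual lemma that $\ad^0\bar\rho$ has no quotient on which $G_{F(\zeta_p)}$ acts trivially, so the restriction map to $\bigoplus W^*/(\sigma_i-1)W^*$ over suitable $\sigma_i$ is injective (the $p\ge 5$ versus $p=3$ distinction is handled by the extra hypothesis on $\bar\rho|_{G_{F_v}}$ and the standard case analysis; for $p=3$ one invokes the classification of images of $\bar\rho$ and excludes the problematic cases just as in the residually reducible part of the paper, e.g. as in lemma \ref{lem2}, \ref{lem3} there, adapted to the irreducible setting). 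Then I would choose a prime $v_i$ of $F$ with $\Frob_{v_i}$ close enough to $\sigma_i$ in $G_{F,S\cup Q}$ so that $N(v_i)\equiv 1 \bmod p^N$, $\bar\rho(\Frob_{v_i})$ has distinct eigenvalues, and the local-to-global restriction kills $[\phi_i]$. Iterating over a basis of $H^1(G_{F,S},\ad^0\bar\rho(1))$ gives $Q_N$ with $|Q_N|=\dim_\F H^1(G_{F,S},\ad^0\bar\rho(1))$ and with $H^1_{\cL^\perp_{S\cup Q_N}}(G_{F,S\cup Q_N},\ad^0\bar\rho(1))=0$, where $\cL_v=0$ for $v\in S$ and $\cL_v=H^1(G_{F_v},\ad^0\bar\rho)$ for $v\in Q_N$.

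The second step is the dimension count. By the usual presentation of global framed deformation rings over the local deformation ring — this is exactly the computation recalled in \ref{Rbpsi} and \ref{redlocrc}, or equivalently corollary 2.3.5 of \cite{CHT08} — one has
\[
R^{\square_S,\{\xi_v\}}_{Q_N}\cong R^{\{\xi_v\}}_{\loc}[[x_1,\dots,x_{g_1}]]/(f_1,\dots,f_{g_2}),
\]
where $g_1=\dim_\F H^1_{\cL_{S\cup Q_N}}(G_{F,S\cup Q_N},\ad^0\bar\rho)+|S|-1-[F:\Q]$ and $g_2=\dim_\F H^1_{\cL^\perp_{S\cup Q_N}}(G_{F,S\cup Q_N},\ad^0\bar\rho(1))$. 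Since $g_2=0$ by the first step, $R^{\square_S,\{\xi_v\}}_{Q_N}$ is topologically generated over $R^{\{\xi_v\}}_{\loc}$ by $g_1$ elements. Finally one computes $g_1$ using the Poitou--Tate exact sequence and the global and local Euler characteristic formulas: for $v\in Q_N$ the chosen congruence $N(v)\equiv 1\bmod p$ together with $\bar\rho(\Frob_v)$ having distinct eigenvalues makes $\dim \cL_v = \dim H^0(G_{F_v},\ad^0\bar\rho) = 1$, so each Taylor--Wiles prime contributes exactly one to $\dim H^1_{\cL_{S\cup Q_N}}(G_{F,S\cup Q_N},\ad^0\bar\rho)$ relative to the unramified Selmer condition, and the standard bookkeeping (oddness at infinite places contributing, $h^0=0$ since $\bar\rho$ is irreducible) gives $g_1 = \dim_\F H^1(G_{F,S},\ad^0\bar\rho(1)) + |S| - 1 - [F:\Q] = g$. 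This matches the claimed value.

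The main obstacle is the $p=3$ case of the Chebotarev/annihilation step: one must verify that the extra hypothesis on $\bar\rho|_{G_{F_v}}$ (excluding the two listed shapes) rules out precisely the configurations in which $\ad^0\bar\rho$ acquires a $G_{F(\zeta_p)}$-trivial subquotient or in which no element $\sigma$ with distinct eigenvalues and the required non-vanishing of $\phi(\sigma)$ exists; concretely this means running through the classification of the image of $\bar\rho$ and checking the "big image" conditions of the Taylor--Wiles method hold. The rest is routine — the local deformation ring properties are already recorded in corollary \ref{rlocdim}, and the presentation and Euler characteristic computations are entirely standard — so I would allocate essentially all the care to isolating the precise group-theoretic input and citing the appropriate big-image lemma (for $F=\Q$ and $p\ge 5$ this is classical; in general one uses that $\bar\rho|_{G_{F(\zeta_p)}}$ is absolutely irreducible, which by our base change assumptions is available).
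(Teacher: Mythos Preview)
Your proposal is correct and is precisely the standard Taylor--Wiles--Kisin argument that the paper simply cites (proposition 3.2.5 of \cite{Kis09b}, with the remark that the same argument works for general $S\supseteq\Sigma_p$). Your concern about the $p=3$ case is unnecessary here: the existence of Taylor--Wiles primes in this proposition uses only the standing hypothesis that $\bar\rho|_{G_{F(\zeta_p)}}$ is absolutely irreducible, and the extra $p=3$ condition on the shape of $\bar\rho|_{G_{F_v}}$ is needed elsewhere in the paper (for Pa\v{s}k\={u}nas' theory, cf.\ Theorems \ref{pa1}, \ref{pa2}), not for this step.
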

\begin{proof}
This is proposition 3.2.5 of \cite{Kis09b} if $S=\Sigma_p$. In general, the same argument still works.
\end{proof}

\subsection{Completed homology with auxiliary levels}\label{Chwal}
\begin{para} \label{Healg}
By the global class field theory, we may view $\psi=\chi\varepsilon$ as a character of $\AFi/F^\times_{>>0}$. Let $D$ be the quaternion algebra over $F$ which is ramified exactly at all infinite places. Fix an isomorphism between $\DAi$ and $\GL_2(\A_F^\infty)$. We define a tame level $U^p=\prod_{v\nmid p}U_v$ as follows: $U_v=\GL_2(O_{F_v})$ if $v\notin S$ and 
\[U_v=\mathrm{Iw}_v:=\{g\in\GL_2(O_{F_v}),g\equiv \begin{pmatrix}*&*\\0&*\end{pmatrix}\mod \varpi_v\}\]
otherwise. For any $v\notin S\setminus \Sigma_p$, the map $\begin{pmatrix}a&b\\c&d\end{pmatrix}\mapsto \xi_v(\frac{a}{d}\mod \varpi_v)$ defines a character of $U_v$. The product of $\xi_v$ can be viewed as a character $\xi$ of $U^p$ by projecting to $\prod_{v\notin S\setminus \Sigma_p}U_v$. 

Using this, we can define a completed homology $M_{\psi,\xi}(U^p)$ and a Hecke algebra $\T:=\T_{\psi,\xi}(U^p)$ as in section \ref{varch}. The existence of $\pi_0$ in Theorem \ref{ThmirredA} implies that $\bar{\rho}$ is \textit{modular} in the sense that $T_v-\tr\bar{\rho}(\Frob_v),v\notin S$ and $\varpi$ generate a maximal ideal $\km$ of $\T$. Note that   the determinant of $\pi_0$ might be different from $\psi$, but they become the same after modulo $\varpi$. So $\pi_0$ still gives arise to a maximal of $\T$.

Since $\bar{\rho}$ is absolutely irreducible, there is a two-dimensional representation $\rho_\km:G_{F,S}\to\GL_2(\T_\km)$ with determinant $\chi$ such that the trace of $\Frob_v$ is $T_v$ for $v\notin S$. Note that $\tr\rho_\km|_{I_{F_v}}=\xi_v+\xi_v^{-1}$. Hence $\rho_\km$ induces a natural map $R^{\{\xi_v\}}\to \T_\km$, which is surjective as $\T_\km$ is topologically generated by $T_v,v\notin S$. These can all be checked easily on the finite levels.
\end{para}

\begin{para}
For any positive integer $N$, we fix a set of primes $Q_N$ as in proposition \ref{extwpirr}. Denote the unique quotient of $k(v)^\times$ of order $p^N$ by $\Delta_v$ and $\bigoplus_{v\in Q_N}\Delta_v$ by $\Delta_N$. Define tame levels $U^p_{Q_N,0}=\prod_{v\nmid p}U_{Q_N,0,v} ,U^p_{Q_N}=\prod_{v\nmid p}U_{Q_N,v}$ as follows: 
\begin{itemize}
\item $U_{Q_N,0,v}=U_{Q_N,v}=U_v$ if $v\notin Q_N$.
\item $U_{Q_N,0,v}=\mathrm{Iw}_v$ if $v\in Q_N$.
\item $U_{Q_N,v}=\ker(\mathrm{Iw}_v\stackrel{\varphi_v}{\to}\Delta_v)$ if $v\in Q_N$, where $\varphi_v$ is the composite of $\mathrm{Iw}_v\to k(v)^\times: \begin{pmatrix}a&b\\c&d\end{pmatrix}\mapsto \frac{a}{d}\mod \varpi_v$ and the natural quotient map $k(v)^\times\to \Delta_v$.
\end{itemize}

It is clear that $\Delta_v=\mathrm{Iw}_v/U_{Q_N,v}$ acts naturally on the completed homology $M_{\psi,\xi}(U^p_{Q_N})$ and $S_{\psi,\xi}(U^p_{Q_N}U_p,\cO/\varpi^n)$ via the right translation of $\mathrm{Iw}_v$ for any open compact subgroup $U_p\subseteq K_p$ and positive integer $n$. Hence all these spaces are $\cO[\Delta_N]$-modules. Let $\mathfrak{a}_{Q_N}$ be the augmentation ideal of $\cO[\Delta_N]$.
\end{para}

\begin{lem}  \label{finiteflat}
Suppose $U_p\subseteq K_p$ is an open compact subgroup such that $\psi|_{U_p\cap O_{F,p}^\times}$ is trivial modulo $\varpi^n$ for some $n$ and $U^pU_p$ is sufficiently small. Then 
\begin{enumerate}
\item $S_{\psi,\xi}(U^p_{Q_N}U_p,\cO/\varpi^n)$ and $S_{\psi,\xi}(U^p_{Q_N}U_p,\cO/\varpi^n)^\vee$ are finite flat $\cO/\varpi^n[\Delta_N]$-modules.
\item The natural map $S_{\psi,\xi}(U^p_{Q_N}U_p,\cO/\varpi^n)^\vee\to S_{\psi,\xi}(U^p_{Q_N,0}U_p,\cO/\varpi^n)^\vee$ induces a natural isomorphism:
\[S_{\psi,\xi}(U^p_{Q_N}U_p,\cO/\varpi^n)^\vee/\ka_{Q_N}S_{\psi,\xi}(U^p_{Q_N}U_p,\cO/\varpi^n)^\vee\cong S_{\psi,\xi}(U^p_{Q_N,0}U_p,\cO/\varpi^n)^\vee.\]
\end{enumerate}
\end{lem}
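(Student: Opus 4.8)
The plan is to deduce the lemma exactly as its twin Lemma \ref{flmf} was deduced, namely by recognizing it as the Pontryagin dual of Lemma 2.1.4 of \cite{Kis09a} transported to the definite quaternionic setting with fixed central character $\psi$ and auxiliary character $\xi$. First I would observe that since $U^pU_p$ is sufficiently small, the strictly smaller level $U^p_{Q_N}U_p$ (and a fortiori $U^p_{Q_N,0}U_p$) is sufficiently small, and that together with the hypothesis $\psi|_{U_p\cap O_{F,p}^\times}\equiv 1\bmod\varpi^n$ and the standing compatibility $\psi|_{\prod_{v\nmid p}(O_{F_v}^\times\cap U_v)}=\xi|_{\prod_{v\nmid p}(O_{F_v}^\times\cap U_v)}$ we are in the situation of Corollary \ref{sfsmf}. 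Hence $S_{\psi,\xi}(U^p_{Q_N}U_p,\cO/\varpi^n)$ and $S_{\psi,\xi}(U^p_{Q_N,0}U_p,\cO/\varpi^n)$ are finite free $\cO/\varpi^n$-modules.

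For the first assertion it then remains to see that the $\cO/\varpi^n[\Delta_N]$-structure is free. Using the double coset decomposition $\DAi=\bigsqcup_i D^\times t_i U^p_{Q_N}U_p\AFi$ and the evaluation isomorphism $f\mapsto(f(t_i))_i$, the space $S_{\psi,\xi}(U^p_{Q_N}U_p,\cO/\varpi^n)$ is a direct sum indexed by $\Delta_N$-orbits on $D^\times\backslash\DAi/U^p_{Q_N}U_p\AFi$, an orbit with representative $t_i$ contributing $\cO/\varpi^n[\Delta_N/\Gamma_i]$, where $\Gamma_i$ is the image of $t_i^{-1}D^\times t_i\cap U^p_{Q_N,0}U_p\AFi$ in $\Delta_N=U^p_{Q_N,0}U_p/U^p_{Q_N}U_p$. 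I would then show each $\Gamma_i$ is trivial: a nontrivial element would produce $\gamma\in D^\times$ lying up to center in $U^p_{Q_N}U_p\AFi$ away from $Q_N$ but with nontrivial image in some cyclic $p$-group $\Delta_v$, $v\in Q_N$; so $\gamma$ would have order divisible by $p$ modulo $F^\times$, hence $\gamma/\iota(\gamma)$ (with $\iota$ the main involution) would be a root of unity of order divisible by $p$ in a quadratic extension of $F$, contradicting sufficient smallness of $U^p_{Q_N}U_p$ (this is precisely the mechanism in the lemma preceding Corollary \ref{sfsmf}). Thus $S_{\psi,\xi}(U^p_{Q_N}U_p,\cO/\varpi^n)$ is finite free over $\cO/\varpi^n[\Delta_N]$, and since the group algebra $\cO/\varpi^n[\Delta_N]$ is Gorenstein its Pontryagin dual is again finite free over it.

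For the second assertion, quotienting the dual by the augmentation ideal $\ka_{Q_N}$ is the same as taking $\Delta_N$-coinvariants, which dualizes to $\Delta_N$-invariants on the function side; a function invariant under $U^p_{Q_N}U_p$ and under $\Delta_N=U^p_{Q_N,0}U_p/U^p_{Q_N}U_p$ is exactly a function invariant under $U^p_{Q_N,0}U_p$, so the natural map induces the claimed isomorphism $S_{\psi,\xi}(U^p_{Q_N}U_p,\cO/\varpi^n)^\vee/\ka_{Q_N}\xrightarrow{\sim}S_{\psi,\xi}(U^p_{Q_N,0}U_p,\cO/\varpi^n)^\vee$. The localized versions at $\km$ follow by applying the idempotent. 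There is no genuine obstacle here: everything is a formal manipulation of finite modules and finite coset spaces, and the single point needing care — triviality of the stabilizers $\Gamma_i$ — is exactly where the sufficient-smallness hypothesis enters. As this repeats verbatim the argument for Lemma \ref{flmf}, I would simply cite that proof (equivalently, Lemma 2.1.4 of \cite{Kis09a}) and omit the details.
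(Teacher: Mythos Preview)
Your proposal is correct and takes essentially the same approach as the paper, which simply records that the lemma is the Pontryagin dual of Lemma 2.1.4 of \cite{Kis09a}. You have unpacked that citation by spelling out the double-coset/free-orbit argument and the duality step, but the content and the key input (sufficient smallness forcing trivial stabilizers, hence freeness over $\cO/\varpi^n[\Delta_N]$) are the same.
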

\begin{proof}
This is the Pontryagin dual of lemma 2.1.4 of \cite{Kis09a}.
\end{proof}

\begin{para}
We define $\T_{Q_N,0}\subseteq\End (M_{\psi,\xi}(U^p_{Q_N,0}))$ (resp. $\T_{Q_N}\subseteq\End (M_{\psi,\xi}(U^p_{Q_N}))$) to be the $\T_{\psi,\xi}(U^p_{Q_N,0})$-subalgebra generated by $U_{\varpi_v},v\in Q_N$ (resp. the $\T_{\psi,\xi}(U^p_{Q_N})$-subalgebra generated by $U_{\varpi_v},v\in Q_N$). Here $U_{\varpi_v}$ acts via the double coset action $[U_{Q_N,0,v}\begin{pmatrix}\varpi_v&0\\ 0&1\end{pmatrix}U_{Q_N,0,v}]$ (resp. $[U_{Q_N,v}\begin{pmatrix}\varpi_v&0\\ 0&1\end{pmatrix}U_{Q_N,v}]$). There is a natural map $\T_{Q_N}\to\T_{Q_N,0}$.

For any $v\in Q_N$, we denote the eigenvalues of $\bar{\rho}(\Frob_v)$ by $\alpha_v,\beta_v\in\F$ (after possibly enlarging $\F$), which are distinct by our construction. Therefore using Hensel's lemma, we can find a unique root $A_v\in\T_\km$ of $X^2-T_vX+\chi(\Frob_v)=0$ that lifts $\alpha_v\in\T/\km$. The natural map $\T_{\psi,\xi}(U^p_{Q_N,0})\to\T$ now can be extended uniquely to a map $\T_{Q_N,0}\to \T_\km$ sending $U_{\varpi_v}$ to $A_v$. Hence $\km\cap \T_{Q_N,0}$ defines a maximal ideal $\km_{Q_N,0}$ of $\T_{Q_N,0}$, which is nothing but $(\km\cap \T_{\psi,\xi}(U^p_{Q_N,0}),U_v-\tilde\alpha_v,v\in Q_N)$ where $\alpha_v\in\cO$ is any lifting of $\alpha_v$.
\end{para}

\begin{lem}
Under the same assumption as in the previous lemma, the natural map
\[S_{\psi,\xi}(U^pU_p,\cO/\varpi^n)_\km\to S_{\psi,\xi}(U^p_{Q_N,0}U_p,\cO/\varpi^n)_{\km_{Q_N,0}}\]
is an isomorphism of $(\T_{Q_N,0})_{\km_{Q_N,0}}$-modules.
\end{lem}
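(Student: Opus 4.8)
The plan is to reduce to a single Taylor--Wiles prime by induction on $|Q_N|$, and then to run the standard analysis of the Iwahori-level space via the two degeneracy maps at that prime, using that the choice of $Q_N$ makes $\bar\rho(\Frob_v)$ have distinct eigenvalues.

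So fix $w\in Q_N$, write $U=U^pU_p$ and $U_0=U^p_{\{w\},0}U_p$ (Iwahori at $w$), and abbreviate $S(V):=S_{\psi,\xi}(V,\cO/\varpi^n)$. There are two degeneracy maps $\alpha,\beta\colon S(U)\to S(U_0)$: the map $\alpha$ is the inclusion coming from $U_0\subseteq U$ (restriction of the invariance condition, so it is injective), and $\beta(f)(g)=f\big(g\left(\begin{smallmatrix}\varpi_w & 0\\ 0&1\end{smallmatrix}\right)\big)$. Their images span the "$w$-old" submodule $S(U_0)^{w\text{-old}}=\operatorname{im}(\alpha)+\operatorname{im}(\beta)$, which is stable under $U_{\varpi_w}$, and a direct computation with double cosets --- using that $\psi,\xi$ are unramified at $w$ and $N(w)\psi(\varpi_w)=\chi(\Frob_w)$ --- gives the Hecke relations
\[ U_{\varpi_w}\circ\alpha=\beta,\qquad U_{\varpi_w}\circ\beta=T_w\circ\beta-\chi(\Frob_w)\cdot\alpha, \]
so that $U_{\varpi_w}$ acts on $S(U_0)^{w\text{-old}}$ with characteristic polynomial $X^2-T_wX+\chi(\Frob_w)$ over $\T_\km$; I would also record the adjoint (trace) relations $\alpha^{*}\circ\alpha=N(w)+1$, $\alpha^{*}\circ\beta=T_w$ as a cross-check.

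Next I would show that after localising the target at $\km_{\{w\},0}$ the $w$-new quotient $S(U_0)/S(U_0)^{w\text{-old}}$ vanishes. Since we work over $\cO/\varpi^n$ one must genuinely reduce mod $\varpi$ first; there a nonzero contribution would force an automorphic form that is special (Steinberg, or a character, at $w$). The character case is Eisenstein and dies at the non-Eisenstein ideal $\km$, while the Steinberg case forces $\bar\rho|_{G_{F_w}}\cong\left(\begin{smallmatrix}\bar\theta\omega & *\\ 0 & \bar\theta\end{smallmatrix}\right)$ for some unramified $\bar\theta$ (as $w\notin S$, so $\bar\rho$ is unramified at $w$), so $\bar\rho(\Frob_w)$ would have eigenvalue ratio $N(w)^{\pm1}\equiv1\pmod p$, contradicting the choice of $Q_N$ in Proposition~\ref{extwpirr}. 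Hence $S(U_0)_{\km_{\{w\},0}}=(S(U_0)^{w\text{-old}})_{\km_{\{w\},0}}$. On this module $(U_{\varpi_w}-A_w)(U_{\varpi_w}-B_w)=0$, where $A_w$ is the Hensel lift of $\alpha_w$ and $B_w=T_w-A_w$; since $A_w-B_w\equiv\alpha_w-\beta_w\neq0$ in $\F$, the element $A_w-B_w$ is a unit, and $U_{\varpi_w}-A_w$ lies in the maximal ideal of $(\T_{\{w\},0})_{\km_{\{w\},0}}$, so $U_{\varpi_w}-B_w=(U_{\varpi_w}-A_w)+(A_w-B_w)$ is a unit there. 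Therefore $U_{\varpi_w}=A_w$ on $S(U_0)_{\km_{\{w\},0}}$, and feeding this into $U_{\varpi_w}\circ\alpha=\beta$ gives $\beta=A_w\cdot\alpha$ after localisation, whence $S(U_0)_{\km_{\{w\},0}}=(\operatorname{im}\alpha)_{\km_{\{w\},0}}$. Because $\T_{\{w\},0}$ acts on $\operatorname{im}\alpha\cong S(U)$ through $\T_{\{w\},0}\to\T_\km$ (with $U_{\varpi_w}\mapsto A_w$), under which $\km_{\{w\},0}$ lies over $\km$, this localised copy of $\operatorname{im}\alpha$ is exactly $S(U)_\km$; combined with injectivity of $\alpha$ and exactness of localisation, the localised $\alpha$ --- which is the natural map of the lemma --- is an isomorphism $S(U)_\km\xrightarrow{\sim}S(U_0)_{\km_{\{w\},0}}$, and it is $(\T_{\{w\},0})_{\km_{\{w\},0}}$-linear since $U_{\varpi_w}$ acts as $A_w$ on both sides. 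Iterating over the primes of $Q_N$ (passing each time to the new base level) proves the lemma.

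The main obstacle is the bookkeeping in the first two steps in the quaternionic Iwahori setting with the auxiliary character $\xi$: pinning down the precise degeneracy-map relations --- in particular the correct power of $N(w)$, which is exactly where $N(w)\psi(\varpi_w)=\chi(\Frob_w)$ and hence the polynomial $X^2-T_wX+\chi(\Frob_w)$ enters --- and checking cleanly that the map named in the lemma really is $\alpha$; the vanishing of the $\km_{\{w\},0}$-localised new part, though conceptually standard, also needs the mod-$\varpi$ reduction to be argued carefully because we work integrally over $\cO/\varpi^n$.
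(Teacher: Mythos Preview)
Your degeneracy-map strategy with the quadratic relation $X^2-T_wX+\chi(\Frob_w)$ and the Hensel split is precisely the content of Kisin's Lemma~2.1.7, which the paper simply cites after one short reduction. That reduction is the step you are missing: twist by a character $\theta$ so that $\psi$ becomes the Teichm\"uller lift of its mod-$\varpi$ reduction (as in \ref{exgal}). Then $\psi|_{U_p\cap O_{F,p}^\times}$ is trivial, so $S_{\psi,\xi}(U,\cO)$ with genuine $\cO$-coefficients is defined and surjects onto $S_{\psi,\xi}(U,\cO/\varpi^n)$; the paper proves the map over $\cO$ is an isomorphism (this is Kisin's lemma) and then tensors down.

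The point where your direct approach over $\cO/\varpi^n$ breaks is exactly the vanishing of the localised new quotient. You say ``reduce mod $\varpi$; there a nonzero contribution would force an automorphic form that is special (Steinberg, or a character) at $w$.'' But over $\F$ there is no theory of local automorphic representations to invoke: the dichotomy ``new at $w$ $\Rightarrow$ Steinberg or character at $w$'' is a statement about the $\GL_2(F_w)$-representation generated by a characteristic-zero automorphic form, and has no direct meaning for a mod-$p$ function. You correctly flag this as the main obstacle, but reducing further to $\F$ does not resolve it --- it is the lift to $\cO$ (available only after the twist) that gives access to the automorphic-theoretic input. Once you insert the twist, your argument becomes Kisin's Lemma~2.1.7 together with the paper's one-line reduction; as written, the ``new part vanishes'' step is a genuine gap.
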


\begin{proof}
Let $\tilde\psi$ be the Teichm\"uller lifting of $\psi\mod \varpi$ and write $\tilde\psi=\psi\theta^2$ for some continuous character $\theta:(\A^\infty_F)^\times/F^\times_{>>0}\to \cO^\times$. By twisting with $\theta$ and arguing as in \ref{exgal},  we may assume $\psi=\tilde\psi$. Now it suffices to show that $S_{\psi,\xi}(U^pU_p,\cO)_\km\to S_{\psi,\xi}(U^p_{Q_N,0}U_p,\cO)_{\km_{Q_N,0}}$ is an isomorphism. This follows from directly from lemma 2.1.7 of \cite{Kis09a}.
\end{proof}

\begin{para} \label{Sinfty}
Denote $\km_{Q_N,0}\cap\T_{Q_N}$ by $\km_{Q_N}$. Then under the same assumption as in lemma \ref{finiteflat}, it follows that $S_{\psi,\xi}(U^p_{Q_N}U_p,\cO/\varpi^n)^\vee_{\km_{Q_N}}$ is a finite flat $\cO/\varpi^n[\Delta_N]$-module and we have natural $(\T_{Q_N})_{\km_{Q_N}}$-module isomorphisms
\begin{eqnarray} \label{fflat2}
~S_{\psi,\xi}(U^p_{Q_N}U_p,\cO/\varpi^n)^\vee_{\km_{Q_N}}/(\ka_{Q_N})\cong S_{\psi,\xi}(U^p_{Q_N,0}U_p,\cO/\varpi^n)^\vee_{\km_{Q_N,0}}\cong S_{\psi,\xi}(U^p,\cO/\varpi^n)^\vee_{\km}.
\end{eqnarray}

We recall that there is a Galois-theoretic meaning of $\Delta_N$ in the following sense. There is a natural map $R^{\{\xi_v\}}_{Q_N}\to (\T_{Q_N})_{\km_{Q_N}}$ sending the trace of $\Frob_v,v\notin S\cup Q_N$ on the tautological $R^{\{\xi_v\}}_{Q_N}$-representation $\rho_{Q_N}$ of $G_{F,S\cup Q_N}$ to $T_v$. Hence for any $U_p,n$, we may view $S_{\psi,\xi}(U^p_{Q_N}U_p,\cO/\varpi^n)^\vee_{\km_{Q_N}}$ as a $R^{\{\xi_v\}}_{Q_N}$-module. Moreover there is a natural $\cO[\Delta_{Q_N}]$-algebra structure on $R^{\{\xi_v\}}_{Q_N}$ (which comes from $\rho_{Q_N}|_{G_{F_v}},v\in Q_N$) such that its induced action of $\cO[\Delta_{Q_N}]$ on $S_{\psi,\xi}(U^p_{Q_N}U_p,\cO/\varpi^n)^\vee_{\km_{Q_N}}$ agrees with the one considered before. Under the natural map $R^{\{\xi_v\}}_{Q_N}\to R^{\{\xi_v\}}$, the image of $\ka_{Q_N}$ in $R^{\{\xi_v\}}$ is zero. See lemma 2.1 of \cite{Ta06} for more details.
\end{para}

\subsection{Patching of completed homology}\label{Pofch}
\begin{para}
We will patch the completed homology and conclude with an `$R=\T$' Theorem under certain conditions. The patched completed homology first appeared in the work \cite{CEG+}. Following \cite{Sch15}, we adopt the language of ultrafilters (see also \cite{GN16}). 

Let $\cI$ be the set of positive integers  and $\fR=\prod_\cI \cO$. From now on we fix a non-principal ultrafilter $\kF$ on $\cI$. Then $\kF$ gives rise to a multiplicative set $S_\kF\subseteq \fR$ which contains all idempotents $e_{I}$ with $I\in\kF$ and $e_I(i)=1$ if $i\in I$, $e_I(i)=0$ otherwise. 

We define $\fR_\kF=S_\kF^{-1}\fR$. This is a quotient of $\fR$ and $\cdot\otimes_\fR \fR_\kF$ is an exact functor. Since $\kF$ is non-principal, for any finite set $T\subseteq \cI$, we have $\fR_T\otimes_\fR \fR_\kF\cong \fR_\kF$, where $\fR_T$ is the quotient of $\fR$ by elements of the form $(a_i)_{i\in \cI}$ with $a_i=0$ for $i\notin T$.
\end{para} 

\begin{para}
First we set
\begin{itemize}
\item $\Delta_\infty:=\Z_p^{\oplus r}$, where $r=\dim_\F H^1(G_{F,S},\ad^0\bar{\rho}(1))=|Q_N|$. Fix surjective maps $\Z_p\to \Delta_v$ for all $v\in Q_N$ and thus surjective maps $\Delta_\infty\to \Delta_{Q_N}$. 
\item $\cO_\infty=\cO[[y_1,\cdots,y_{4|S|-1}]]$ with maximal ideal $\kb$. 
\item $S_\infty=\cO_\infty[[\Delta_\infty]]\cong \cO_{\infty}[[s_1,\cdots,s_r]]$. This is a local $\cO_\infty$-algebra with maximal ideal $\ka$. 
\item $\ka_0=\ker(S_\infty\to\cO_\infty)=(s_1,\cdots,s_r)$ the augmentation ideal, and $\ka_1=\ker(S_\infty\to\cO)=(y_1,\cdots,y_{4|S|-1},s_1,\cdots,s_r)$. 
\end{itemize}

For any positive integer $n$, recall that $S_{\psi,\xi}(U^p_{Q_N}U_p,\cO/\varpi^n)_{\km_{Q_N}}$ is an $\cO[\Delta_{Q_N}]$-module. Hence $S_{\psi,\xi}(U^p_{Q_N}U_p,\cO/\varpi^n)_{\km_{Q_N}}^\vee\otimes_\cO \cO_\infty$ is a natural $S_\infty$-module. For simplicity, we denote it by $M(U_p,N,n)$. Then $\prod_{N\in\cI}M(U_p,N,n)$ has a natural $\fR$-module structure. We define

\begin{itemize}
\item $M^{\patch,\{\xi_v\}}_n:=\varprojlim_{U_p}(\prod_{N\in\cI} M(U_p,N,n)/\ka^n M(U_p,N,n)\otimes_\fR \fR_\kF)$.
\item $M^{\{\xi_v\}}_{n}:=\varprojlim_{U_p}(\prod_{\cI} (S_{\psi,\xi}(U^pU_p,\cO/\varpi^n)_\km^\vee\otimes_\cO \cO_\infty/\kb^n) \otimes_\fR \fR_\kF)$.
\end{itemize}
Both are equipped with the topology given by the projective limits. Here $U_p$ runs through all open compact subgroups of $D_p^\times=\prod_{v|p}\GL_2(F_v)$ in all projective limits above. We note that $M^{\{\xi_v\}}_{n}$ is nothing but 
\[\varprojlim_{U_p}(S_{\psi,\xi}(U^pU_p,\cO/\varpi^n)_\km^\vee\otimes_\cO \cO_\infty/\kb^n)=M_{\psi,\xi}(U^p)_\km \otimes_\cO \cO_\infty/\kb^n\]
since $S_{\psi,\xi}(U^pU_p,\cO/\varpi^n)_\km^\vee\otimes_\cO \cO_\infty/\kb^n$ has finite cardinality.
\end{para}

\begin{para}
In the definition of $M^{\patch,\{\xi_v\}}_n$, we can replace the limits taken over all open compact subgroups of $D_p^\times$ by over all open subgroups of $K_p=\prod_{v|p}\GL_2(O_{F_v})$. From this description, it is clear that both patched completed homologies are natural $\cO[[K_p]]$-modules. The action of $K_p$ can be extended to $D_p^\times=\prod_{v|p}\GL_2(F_v)$ in the usual way: the action of $g\in D_p^\times$ induces isomorphisms $M(U_p,N,n)\stackrel{\sim}{\longrightarrow}M(g^{-1}U_pg,N,n)$.

We prove some simple properties of $M^{\patch,\{\xi_v\}}_n$. Note that the diagonal action of  $S_\infty$ on $\prod_{N\in\cI} M(U_p,N,n)/\ka^n M(U_p,N,n)$ defines a natural $S_\infty$-module structure on $M^{\patch,\{\xi_v\}}_n$.
\end{para}

\begin{lem} \label{mnflat2}
$M^{\patch,\{\xi_v\}}_n$ is a flat $S_\infty/\ka^n$-module. Moreover the natural maps $M^{\patch,\{\xi_v\}}_n$ to $M^{\patch,\{\xi_v\}}_{n-1}$ and \eqref{fflat2} induce isomorphisms
\begin{eqnarray*}
M^{\patch,\{\xi_v\}}_n/\ka^{n-1}M^{\patch,\{\xi_v\}}_n&\cong& M^{\patch,\{\xi_v\}}_{n-1}.\\
M^{\patch,\{\xi_v\}}_n/\ka_0 M^{\patch,\{\xi_v\}}_n&\cong& M^{\{\xi_v\}}_{n}\cong M_{\psi,\xi}(U^p)_\km \otimes_\cO \cO_\infty/\kb^n.
\end{eqnarray*}
In particular, $(M^{\patch,\{\xi_v\}}_n)^\vee$ are admissible $D_p^\times$-representations as $(M_{\psi,\xi}(U^p)_\km)^\vee$ is admissible.
\end{lem}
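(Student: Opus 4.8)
The plan is to follow the proof of Proposition~\ref{mnflat} almost verbatim, with the finite-level flatness input lemma~\ref{flmf} replaced by lemma~\ref{finiteflat} and the reduction along the augmentation ideal of $\cO[\Delta_N]$ replaced by the isomorphisms \eqref{fflat2}. First I would fix $n$ and an open compact $U_p\subseteq K_p$ small enough that $U^pU_p$ is sufficiently small and $\psi|_{U_p\cap O_{F,p}^\times}$ is trivial modulo $\varpi^n$. By lemma~\ref{finiteflat}, for every $N$ the module $S_{\psi,\xi}(U^p_{Q_N}U_p,\cO/\varpi^n)^\vee_{\km_{Q_N}}$ is finite flat over $\cO/\varpi^n[\Delta_N]$; tensoring over $\cO$ with $\cO_\infty$ shows $M(U_p,N,n)$ is flat over $\cO_\infty/\varpi^n[\Delta_N]$, and since $S_\infty$ acts on $M(U_p,N,n)$ through the composite $S_\infty\to\cO_\infty[\Delta_N]\to\cO_\infty/\varpi^n[\Delta_N]$ coming from the diamond operators of \S\ref{Chwal}, the usual Taylor--Wiles bookkeeping gives, for $N>n$, flatness of $M(U_p,N,n)/\ka^n M(U_p,N,n)$ over $S_\infty/\ka^n$. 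Since $S_\infty/\ka^n$ is Noetherian an arbitrary product of flat modules over it is flat, so $\prod_{N>n}M(U_p,N,n)/\ka^n M(U_p,N,n)$ is flat over $S_\infty/\ka^n$; as $\cdot\otimes_\fR\fR_\kF$ is exact and the ultraproduct sees only cofinitely many indices, $\prod_{N\in\cI}M(U_p,N,n)/\ka^n M(U_p,N,n)\otimes_\fR\fR_\kF$ is still flat over $S_\infty/\ka^n$.

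Passing to the inverse limit over $U_p$ and invoking the first part of lemma~\ref{fllem}, I would conclude that $M^{\patch,\{\xi_v\}}_n$ is flat over $S_\infty/\ka^n$ and that forming quotients by ideals of $S_\infty$ commutes with this limit. Combining this with the exactness of $\cdot\otimes_\fR\fR_\kF$ and the elementary identity $(\prod_i X_i)/J(\prod_i X_i)\cong\prod_i X_i/JX_i$ over a Noetherian ring (the ``easy lemma'' invoked in the proof of Proposition~\ref{mnflat}), the two displayed isomorphisms reduce to finite-level statements: namely $M(U_p,N,n)/\ka^{n-1}M(U_p,N,n)\cong M(U_p,N,n-1)/\ka^{n-1}M(U_p,N,n-1)$, which follows from $S_{\psi,\xi}(U,\cO/\varpi^n)/\varpi^{n-1}\cong S_{\psi,\xi}(U,\cO/\varpi^{n-1})$ (the variant of Corollary~\ref{sfsmf} valid for sufficiently small $U$) after dualizing and tensoring with $\cO_\infty$, and $M(U_p,N,n)/(\ka^n+\ka_0)M(U_p,N,n)\cong S_{\psi,\xi}(U^p,\cO/\varpi^n)^\vee_\km\otimes_\cO\cO_\infty/\kb^n$, which is \eqref{fflat2} tensored with $\cO_\infty$ and reduced modulo $\kb^n$. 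The identification $M^{\{\xi_v\}}_n\cong M_{\psi,\xi}(U^p)_\km\otimes_\cO\cO_\infty/\kb^n$ was already recorded when $M^{\{\xi_v\}}_n$ was defined.

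Finally, $S_\infty/\ka^n$ is an Artinian local ring and $M^{\patch,\{\xi_v\}}_n$ is a profinite $\cO[[K_p]]$-module; since $\ka=\kb S_\infty+\ka_0$, the quotient $M^{\patch,\{\xi_v\}}_n/\ka M^{\patch,\{\xi_v\}}_n$ is a quotient of $M^{\patch,\{\xi_v\}}_n/\ka_0 M^{\patch,\{\xi_v\}}_n\cong M_{\psi,\xi}(U^p)_\km\otimes_\cO\cO_\infty/\kb^n$, hence of $M_{\psi,\xi}(U^p)_\km/\varpi M_{\psi,\xi}(U^p)_\km$, which is finitely generated over $\F[[K_p]]$ because $(M_{\psi,\xi}(U^p)_\km)^\vee=S_{\psi,\xi}(U^p,E/\cO)_\km$ is an admissible $K_p$-representation; topological Nakayama then shows $M^{\patch,\{\xi_v\}}_n$ is finitely generated over $\cO[[K_p]]$, i.e. $(M^{\patch,\{\xi_v\}}_n)^\vee$ is admissible over $D_p^\times$. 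The only step requiring genuine care is the flatness of $M(U_p,N,n)/\ka^n M(U_p,N,n)$ over $S_\infty/\ka^n$ for $N>n$ --- the standard diamond-operator argument --- but this is handled in exactly the same way as for Proposition~\ref{mnflat}, so no new difficulty arises.
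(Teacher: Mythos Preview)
Your proposal is correct and follows essentially the same route as the paper, which simply says the lemma follows from lemma~\ref{finiteflat}, its variant \eqref{fflat2}, and the inverse-limit flatness lemma~\ref{fllem}; you have spelled out in detail exactly the argument of Proposition~\ref{mnflat} with those inputs substituted. One minor phrasing point: when you write that $M^{\patch,\{\xi_v\}}_n/\ka M^{\patch,\{\xi_v\}}_n$ is ``hence [a quotient] of $M_{\psi,\xi}(U^p)_\km/\varpi M_{\psi,\xi}(U^p)_\km$'', this is correct only because the further quotient by $\ka$ kills $\kb$, so the surjection from $M_{\psi,\xi}(U^p)_\km\otimes_\cO\cO_\infty/\kb^n$ factors through $M_{\psi,\xi}(U^p)_\km\otimes_\cO\cO_\infty/\kb=M_{\psi,\xi}(U^p)_\km/\varpi$; it may be worth making that step explicit.
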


\begin{proof}
This follows easily from lemma \ref{finiteflat} and its variant in \eqref{fflat2} and lemma \ref{flmf}.
\end{proof}

\begin{para}
Finally we define the patched completed homology to be
\[M^{\{\xi_v\}}_\infty=\varprojlim_n M^{\patch,\{\xi_v\}}_n.\]
It follows from the previous lemma that $M^{\{\xi_v\}}_\infty$ is a flat $S_\infty$-module and $\ka$-adically complete and separated. There is a natural isomorphism 
\begin{eqnarray} \label{Minftyma1}
M^{\{\xi_v\}}_\infty/\ka_1M^{\{\xi_v\}}_\infty\cong M_{\psi,\xi}(U^p)_\km.
\end{eqnarray}
Moreover there is a natural action of $\cO[[K_p]]$ and $\cO[D_p^\times]$ such that both actions agree on $\cO[K_p]$. It follows from the construction that $M^{\{\xi_v\}}_\infty$ is an object of $\kC_{D_p^\times,\psi}(\cO)$.
\end{para}

\begin{para}
On the other hand,
\begin{itemize}
\item Set $R^{\{\xi_v\}}_\infty=R^{\{\xi_v\}}_{\loc}[[x_1,\cdots,x_g]]$, where $g=r+|S|-1-[F:\Q]$ as in proposition \ref{extwpirr}.
\item Fix isomorphisms $R^{\square_S,\{\xi_v\}}_{Q_N}\cong R^{\{\xi_v\}}_{Q_N}\hat\otimes_\cO \cO_\infty$ and surjective $R^{\{\xi_v\}}_{\loc}$-algebra homomorphisms $R^{\{\xi_v\}}_\infty\twoheadrightarrow R^{\square_S,\{\xi_v\}}_{Q_N}$ which exist by proposition \ref{extwpirr}. 
\end{itemize}
The natural action of $R^{\{\xi_v\}}_{Q_N}$ on $S_{\psi,\xi}(U^p_{Q_N}U_p,\cO/\varpi^n)_{\km_{Q_N}}^\vee$ extends $\cO_\infty$-linearly to an action of $R^{\{\xi_v\}}_{Q_N}\otimes_\cO \cO_\infty$ on $M(U_p,N,n)$. Hence we get an action of $R^{\{\xi_v\}}_{\infty}$ on $M(U_p,N,n)$ via the maps fixed above. The diagonal action of $R^{\{\xi_v\}}_{\infty}$ on $\prod_{N\in\cI} M(U_p,N,n)/\ka^n M(U_p,N,n)$ thus makes $M^{\patch,\{\xi_v\}}_n$ and $M^{\{\xi_v\}}_\infty$ into $R^{\{\xi_v\}}_{\infty}$-modules. 

Using the fact that $M(U_p,N,n)/\ka^n M(U_p,N,n)$ has bounded cardinalities for any $N$ and $R^{\{\xi_v\}}_{Q_N}$ is topologically finitely generated, we conclude from the discussion in the second paragraph of \ref{Sinfty} that the image of $R^{\{\xi_v\}}_{\infty}$ in $\End (M^{\{\xi_v\}}_\infty)$ contains the image of $S_\infty$. Hence there exists a lifting $S_\infty\to R^{\{\xi_v\}}_{\infty}$ which makes the action of $S_\infty$ on $M^{\{\xi_v\}}_\infty$ factors through $R^{\{\xi_v\}}_{\infty}$. Moreover, it follows from the discussion in \ref{Sinfty} that we can choose the lifting such that the image of $\ka_1$ is in the kernel of $R^{\{\xi_v\}}_{\infty}\twoheadrightarrow R^{\{\xi_v\}}$.
\end{para}

\subsection{An application of Pa\v sk\=unas' theory and the local-global compatibility result}

\begin{para}
In subsection \ref{sblgc}, we attach a block $\kB_\km$ of $D_p^\times$ to the maximal ideal $\km$ of Hecke algebra. Under our assumptions, it follows from Theorem \ref{lgc} that $M_{\psi,\xi}(U^p)_\km\in \kC_{D_p^\times,\psi}(\cO)^{\kB_{\km}}$. Hence $M^{\{\xi_v\}}_\infty$ also belongs to $\kC_{D_p^\times,\psi}(\cO)^{\kB_{\km}}$ as  $M^{\{\xi_v\}}_\infty/\ka_1M^{\{\xi_v\}}_\infty\cong M_{\psi,\xi}(U^p)_\km$. Let $P_{\kB_\km}$ be the projective generator of $\kB_\km$. See \ref{sblgc} for the notations here.

We define
\begin{itemize}
\item $\mm^{\{\xi_v\}}_0:=\Hom_{\kC_{D_p^\times,\psi}(\cO)}(P_{\kB_\km},M_\psi(U^p)_\km)$.
\item $\mm^{\{\xi_v\}}_\infty:=\Hom_{\kC_{D_p^\times,\psi}(\cO)}(P_{\kB_\km},M^{\{\xi_v\}}_\infty)$.
\end{itemize}
Recall that $\Hom_{\kC_{D_p^\times,\psi}(\cO)}(P_{\kB_\km},\cdot)$ is an exact functor. Hence $\mm^{\{\xi_v\}}_\infty$ is a flat $S_\infty$-module. It follows from corollary \ref{mclgc} that $\mm^{\{\xi_v\}}_0$ is a \textit{finitely generated, faithful} $\T_\km$-module. Also by \eqref{Minftyma1}, we have
\[\mm^{\{\xi_v\}}_0\cong\mm^{\{\xi_v\}}_\infty/\ka_1\mm^{\{\xi_v\}}_\infty\]
and $\mm^{\{\xi_v\}}_\infty$ is $\ka$-adically complete and separated. Note that $\mm^{\{\xi_v\}}_\infty$ is a natural $R^{\{\xi_v\}}_{\infty}$-module. The action of $R^{\{\xi_v\}}_{\infty}$ on $\mm^{\{\xi_v\}}_\infty/\ka\mm^{\{\xi_v\}}_\infty\cong\mm^{\{\xi_v\}}_0/\varpi\mm^{\{\xi_v\}}_0$ factors through its quotient $\T_\km$. Hence 
\end{para}

\begin{lem}
$\mm^{\{\xi_v\}}_\infty$ is a finitely generated $R^{\{\xi_v\}}_{\infty}$-module.
\end{lem}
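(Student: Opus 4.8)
The statement asserts that $\mm^{\{\xi_v\}}_\infty$ is a finitely generated $R^{\{\xi_v\}}_\infty$-module. The plan is to reduce this to a statement modulo the maximal ideal $\km_\infty$ of $R^{\{\xi_v\}}_\infty$ together with a topological completeness/separatedness argument, exactly in the spirit of the classical Taylor--Wiles patching combined with Nakayama-type reasoning in the complete local setting. First I would recall that $\mm^{\{\xi_v\}}_\infty$ is $\ka$-adically complete and separated (as noted just before the lemma, since $M^{\{\xi_v\}}_\infty$ is $\ka$-adically complete and separated and $\Hom_{\kC_{D_p^\times,\psi}(\cO)}(P_{\kB_\km},\cdot)$ is exact and commutes with the relevant inverse limits). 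Next, the action of $S_\infty$ on $\mm^{\{\xi_v\}}_\infty$ factors through $R^{\{\xi_v\}}_\infty$ via the chosen lift $S_\infty \to R^{\{\xi_v\}}_\infty$, and $\ka \subseteq \km_\infty := \ker(R^{\{\xi_v\}}_\infty \to \F)$ maps to the maximal ideal of $R^{\{\xi_v\}}_\infty$; hence $\mm^{\{\xi_v\}}_\infty$ is also $\km_\infty$-adically complete and separated (the $\km_\infty$-adic and $\ka$-adic topologies are compatible in the sense that powers of $\km_\infty$ are cofinal with, or at least comparable to, powers of the ideal generated by the image of $\ka$, after noting that $R^{\{\xi_v\}}_\infty$ is Noetherian and $\km_\infty$-adically complete).

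The key step is to show $\mm^{\{\xi_v\}}_\infty / \km_\infty \mm^{\{\xi_v\}}_\infty$ is a finite-dimensional $\F$-vector space. For this I would compute: $\km_\infty$ contains (the image of) $\ka_1$, and $R^{\{\xi_v\}}_\infty / \ka_1 R^{\{\xi_v\}}_\infty$ surjects onto $R^{\{\xi_v\}}$, which in turn surjects onto $\T_\km$ (as recalled in \ref{Healg}). Since $\km_\infty$ also contains $\varpi$ and the maximal ideal of $\T_\km$ under these surjections, we get a surjection $\mm^{\{\xi_v\}}_\infty / \km_\infty \mm^{\{\xi_v\}}_\infty \twoheadleftarrow \mm^{\{\xi_v\}}_0 / \km \mm^{\{\xi_v\}}_0$ — more precisely, using $\mm^{\{\xi_v\}}_0 \cong \mm^{\{\xi_v\}}_\infty / \ka_1 \mm^{\{\xi_v\}}_\infty$ we have $\mm^{\{\xi_v\}}_\infty / \km_\infty \mm^{\{\xi_v\}}_\infty$ is a quotient of $\mm^{\{\xi_v\}}_0 / \km \mm^{\{\xi_v\}}_0$, where $\km$ here denotes (the image of) the maximal ideal of $\T_\km$, since the $R^{\{\xi_v\}}_\infty$-action on $\mm^{\{\xi_v\}}_0/\varpi$ factors through $\T_\km$ as observed in the lemma's setup. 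By corollary \ref{mclgc}, $\mm^{\{\xi_v\}}_0$ is a finitely generated $\T_\km$-module, so $\mm^{\{\xi_v\}}_0 / \km \mm^{\{\xi_v\}}_0$ is a finite-dimensional $\F$-vector space; hence so is $\mm^{\{\xi_v\}}_\infty / \km_\infty \mm^{\{\xi_v\}}_\infty$.

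Finally I would conclude by a topological Nakayama lemma: pick lifts $x_1,\dots,x_s \in \mm^{\{\xi_v\}}_\infty$ of an $\F$-basis of $\mm^{\{\xi_v\}}_\infty / \km_\infty \mm^{\{\xi_v\}}_\infty$, and let $N \subseteq \mm^{\{\xi_v\}}_\infty$ be the (closed) $R^{\{\xi_v\}}_\infty$-submodule they generate; the standard successive-approximation argument using $\km_\infty$-adic completeness and separatedness of $\mm^{\{\xi_v\}}_\infty$ shows $N = \mm^{\{\xi_v\}}_\infty$, so $\mm^{\{\xi_v\}}_\infty$ is finitely generated over $R^{\{\xi_v\}}_\infty$. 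The main obstacle I anticipate is purely bookkeeping: verifying carefully that the $\km_\infty$-adic topology on $\mm^{\{\xi_v\}}_\infty$ coincides with (or is finer than and cofinal with) the $\ka$-adic topology so that the completeness needed for topological Nakayama is genuinely available — this requires knowing $R^{\{\xi_v\}}_\infty$ is Noetherian and $\km_\infty$-adically complete (true, being a power series ring over the Noetherian complete local rings $R^\square_v$, $R^{\square,\xi_v}_v$) and comparing $\km_\infty^n$ with the ideal generated by the image of $\ka^m$, using that $R^{\{\xi_v\}}_\infty$ is module-finite over the image of $S_\infty$ is \emph{not} automatic, so one instead argues directly that $\mm^{\{\xi_v\}}_\infty$, being $\ka$-adically complete and a module over the Noetherian ring $R^{\{\xi_v\}}_\infty$ on which $\ka$ acts through $\km_\infty$, is automatically $\km_\infty$-adically complete once we know $\mm^{\{\xi_v\}}_\infty/\km_\infty^n\mm^{\{\xi_v\}}_\infty$ are all finite (which follows by induction from the base case above together with the flatness of $\mm^{\{\xi_v\}}_\infty$ over $S_\infty$).
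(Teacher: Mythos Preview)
Your overall strategy is the paper's — reduce modulo an ideal, invoke that $\mm^{\{\xi_v\}}_0$ is finitely generated over $\T_\km$ (corollary~\ref{mclgc}), and finish by a topological Nakayama argument — but you take a slightly harder route by passing to the maximal ideal $\km_\infty$ of $R^{\{\xi_v\}}_\infty$, and this is precisely what creates the obstacle you flag at the end. The paper's (implicit) argument stays with $\ka$ throughout: since the $R^{\{\xi_v\}}_\infty$-action on $\mm^{\{\xi_v\}}_\infty/\ka\,\mm^{\{\xi_v\}}_\infty \cong \mm^{\{\xi_v\}}_0/\varpi\,\mm^{\{\xi_v\}}_0$ factors through $\T_\km$, this quotient is already finitely generated over $R^{\{\xi_v\}}_\infty$ (one does not need it finite over $\F$). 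Lifting a finite set of $R^{\{\xi_v\}}_\infty$-generators and running successive approximation in the $\ka$-adic topology then works without further input: the resulting coefficient series converge in $R^{\{\xi_v\}}_\infty$ because a complete Noetherian local ring is complete with respect to any ideal contained in its maximal ideal (here the ideal generated by the image of $\ka$), and equality with the target follows from $\ka$-adic separatedness of $\mm^{\{\xi_v\}}_\infty$.

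Your proposed workaround — deducing $\km_\infty$-adic completeness of $\mm^{\{\xi_v\}}_\infty$ from finiteness of all the quotients $\mm^{\{\xi_v\}}_\infty/\km_\infty^n\,\mm^{\{\xi_v\}}_\infty$ — does not go through as written: those quotients being finite does not by itself force $\bigcap_n \km_\infty^n\,\mm^{\{\xi_v\}}_\infty=0$ or the comparison $\mm^{\{\xi_v\}}_\infty \cong \varprojlim_n \mm^{\{\xi_v\}}_\infty/\km_\infty^n\,\mm^{\{\xi_v\}}_\infty$, absent exactly the finite generation you are trying to establish. Replacing $\km_\infty$ by $\ka$ in your Nakayama step removes the difficulty entirely and matches the paper's one-line ``Hence''.
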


\begin{para}
Now we have the following commutative diagram
\[\begin{tikzcd}
S_\infty \arrow[r,dashed]  & R^{\{\xi_v\}}_{\infty} \arrow[ld, two heads,"\pi_R"]  \arrow [d, two heads] \arrow[r] & \End_{S_\infty} (\mm^{\{\xi_v\}}_\infty) \arrow[d] \\
R^{\{\xi_v\}} \arrow[r,two heads]  &{\T_{\km}} \arrow[r,hook] & \End_{\cO} (\mm_0^{\{\xi_v\}}),
\end{tikzcd}\]
such that the image of $\ka_1\subseteq S_\infty$ in $R^{\{\xi_v\}}_{\infty}$ is in fact in the kernel of $\pi_R$.
 
Recall that $\ka_1=(y_1,\cdots,y_{4|S|-1},s_1,\cdots,s_r)$. It follows from the flatness of $\mm^{\{\xi_v\}}_\infty$ over $S_\infty$ that $y_1,\cdots,y_{4|S|-1},s_1,\cdots,s_r$ form a regular sequence for $\mm^{\{\xi_v\}}_\infty$. Hence
\[\dim_{R^{\{\xi_v\}}_{\infty}} (\mm^{\{\xi_v\}}_\infty) = \dim_{R^{\{\xi_v\}}_{\infty}} (\mm^{\{\xi_v\}}_\infty/\ka_1\mm^{\{\xi_v\}}_\infty)+4|S|-1+r=\dim_{\T_\km} (\mm_0^{\{\xi_v\}})+4|S|-1+r.\]
Note that $\mm_0^{\{\xi_v\}}$ is a faithful $\T_\km$-module, hence $\dim_{\T_\km} (\mm_0^{\{\xi_v\}})=\dim {\T_\km}$. Thus $\dim_{\T_\km} (\mm_0^{\{\xi_v\}})\geq1+2[F:\Q]$ by Theorem \ref{dh}. We get
\[\dim_{R^{\{\xi_v\}}_{\infty}} (\mm^{\{\xi_v\}}_\infty)\geq 4|S|+2[F:\Q]+r.\]

On the other hand, since $R^{\{\xi_v\}}_{\infty}=R^{\{\xi_v\}}_{\loc}[[x_1,\cdots,x_g]]$, it follow from corollary \ref{rlocdim} that 
\[\dim R^{\{\xi_v\}}_{\infty}=1+3[F:\Q]+3|S|+g=4|S|+2[F:\Q]+r.\]
Combined with the previous inequality, it is easy to see 
\end{para}

\begin{lem} \label{suppmin0}
The support of $\mm_\infty^{\{\xi_v\}}$ over $R^{\{\xi_v\}}_{\infty}$ contains an irreducible component of $R^{\{\xi_v\}}_{\infty}$. Moreover any minimal prime of $\Supp_{R^{\{\xi_v\}}_{\infty}} (\mm_\infty^{\{\xi_v\}})$ has characteristic zero.
\end{lem}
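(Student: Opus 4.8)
\textbf{Proof plan for Lemma \ref{suppmin0}.}

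The plan is to extract the statement from the dimension inequalities already assembled just before the lemma, using only elementary commutative algebra. First I would record the two facts established in the paragraph preceding the lemma: that $\mm_\infty^{\{\xi_v\}}$ is a finitely generated $R^{\{\xi_v\}}_\infty$-module with
\[
\dim_{R^{\{\xi_v\}}_\infty}(\mm_\infty^{\{\xi_v\}})\ \geq\ 4|S|+2[F:\Q]+r,
\]
and that $R^{\{\xi_v\}}_\infty$ is equidimensional of dimension exactly $4|S|+2[F:\Q]+r$ (this uses corollary \ref{rlocdim}, which gives that $R^{\{\xi_v\}}_{\loc}$ is equidimensional of dimension $1+3[F:\Q]+3|S|$, together with $g=r+|S|-1-[F:\Q]$). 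Since the support of a finitely generated module is a closed subset of $\Spec R^{\{\xi_v\}}_\infty$ and its dimension equals the dimension of the module, the support has dimension at least $\dim R^{\{\xi_v\}}_\infty$, hence exactly $\dim R^{\{\xi_v\}}_\infty$. As $R^{\{\xi_v\}}_\infty$ is equidimensional, a closed subset of full dimension must contain at least one irreducible component; this gives the first assertion.

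For the second assertion I would argue by the usual characteristic-zero trick. Suppose $\kp$ is a minimal prime of $\Supp_{R^{\{\xi_v\}}_\infty}(\mm_\infty^{\{\xi_v\}})$. Recall that $\mm_\infty^{\{\xi_v\}}$ is a flat, in particular $\varpi$-torsion-free, $\cO$-module, because it is a flat $S_\infty$-module and $S_\infty$ is flat over $\cO$ (so $\varpi$ is a nonzerodivisor on $\mm_\infty^{\{\xi_v\}}$). If $\varpi\in\kp$, then localizing at $\kp$ would force $\varpi$ to act nilpotently, hence as zero (since the localization is $\varpi$-torsion-free), on the nonzero module $(\mm_\infty^{\{\xi_v\}})_\kp$, a contradiction. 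More carefully: $\kp$ being in the support means $(\mm_\infty^{\{\xi_v\}})_\kp\neq 0$, and since $\mm_\infty^{\{\xi_v\}}$ is $\varpi$-torsion-free so is its localization; if $\varpi\in\kp$ then $\varpi$ lies in the maximal ideal of $(R^{\{\xi_v\}}_\infty)_\kp$, but multiplication by $\varpi$ on $(\mm_\infty^{\{\xi_v\}})_\kp$ is injective, which is compatible with $\kp$ being merely minimal in the support only if one checks that $\varpi$ is not a zerodivisor on the associated graded — the cleanest route is simply: $\mm_\infty^{\{\xi_v\}}/\varpi\mm_\infty^{\{\xi_v\}}$ has support equal to the vanishing locus of $(\varpi)$ inside $\Supp(\mm_\infty^{\{\xi_v\}})$, and by flatness over $\cO$ this is a proper closed subset of every irreducible component of the support, so no minimal prime of the support can contain $\varpi$. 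Thus $\kp$ has characteristic zero.

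The only genuinely non-routine point is making sure the dimension bookkeeping is consistent, i.e. that the lower bound $4|S|+2[F:\Q]+r$ for $\dim_{R^{\{\xi_v\}}_\infty}(\mm_\infty^{\{\xi_v\}})$ matches $\dim R^{\{\xi_v\}}_\infty$ exactly; but this was already carried out in the paragraph above the lemma using Theorem \ref{dh} (the lower bound $1+2[F:\Q]$ on $\dim\T_\km$), the fact that $\mm_0^{\{\xi_v\}}$ is faithful over $\T_\km$, and the regular-sequence property of $y_1,\dots,y_{4|S|-1},s_1,\dots,s_r$ coming from flatness of $\mm_\infty^{\{\xi_v\}}$ over $S_\infty$. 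So in effect the lemma is a formal consequence of material already in place, and the write-up mainly needs to cite corollary \ref{rlocdim} for equidimensionality, invoke the closed-support/dimension identity, and invoke $\cO$-flatness for the characteristic-zero claim. I do not anticipate a real obstacle; the main thing to be careful about is phrasing the characteristic-zero argument so it applies to minimal primes of the support rather than of the ring.
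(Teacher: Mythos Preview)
Your proposal is correct and matches the paper's approach exactly: the first assertion follows from the dimension equality established just above the lemma together with the equidimensionality of $R^{\{\xi_v\}}_\infty$ (corollary \ref{rlocdim}), and the second assertion is precisely the $\cO$-flatness argument you give---the paper's proof consists of the single sentence ``The second claim follows from the fact that $\mm_\infty^{\{\xi_v\}}$ is $\cO$-flat,'' with the first claim left implicit from the preceding dimension count.
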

\begin{proof}
The second claim follows from the fact that $\mm_\infty^{\{\xi_v\}}$ is $\cO$-flat.
\end{proof}

\begin{cor} \label{nondihia}
If all $\xi_v$ are non-trivial, then $\mm_0^{\{\xi_v\}}$ has full support on $\Spec R^{\{\xi_v\}}_{\infty}$.
\end{cor}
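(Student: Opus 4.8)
\textbf{Proof strategy for Corollary \ref{nondihia}.} The plan is to exploit that when all $\xi_v$ are non-trivial the local rings $R^{\square,\xi_v}_v$ are \emph{integral} (first part of Lemma \ref{iharaavoid}), hence by the first part of Corollary \ref{rlocdim} the ring $R^{\{\xi_v\}}_{\loc}$ is an integral domain, and therefore $R^{\{\xi_v\}}_\infty = R^{\{\xi_v\}}_{\loc}[[x_1,\dots,x_g]]$ is an integral domain as well. In particular $R^{\{\xi_v\}}_\infty$ has a unique minimal prime, namely $(0)$. By Lemma \ref{suppmin0}, $\Supp_{R^{\{\xi_v\}}_\infty}(\mm^{\{\xi_v\}}_\infty)$ contains an irreducible component of $R^{\{\xi_v\}}_\infty$; since there is only one, $\mm^{\{\xi_v\}}_\infty$ has full support on $\Spec R^{\{\xi_v\}}_\infty$.

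The remaining task is to descend this full-support statement from $\mm^{\{\xi_v\}}_\infty$ (a module over $R^{\{\xi_v\}}_\infty$) to $\mm^{\{\xi_v\}}_0$ (a module over $\T_\km$, the quotient of $R^{\{\xi_v\}}_\infty$ modulo the images of $y_1,\dots,y_{4|S|-1},s_1,\dots,s_r$ together with the local deformation relations). First I would recall the commutative diagram
\[
\begin{tikzcd}
R^{\{\xi_v\}}_\infty \arrow[r,two heads] \arrow[d,two heads,"\pi_R"] & \End_{S_\infty}(\mm^{\{\xi_v\}}_\infty) \arrow[d] \\
\T_\km \arrow[r,hook] & \End_\cO(\mm^{\{\xi_v\}}_0)
\end{tikzcd}
\]
where the image of $\ka_1$ in $R^{\{\xi_v\}}_\infty$ lies in $\ker \pi_R$, and $\mm^{\{\xi_v\}}_0 \cong \mm^{\{\xi_v\}}_\infty/\ka_1 \mm^{\{\xi_v\}}_\infty$. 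The argument is then the same as in Lemma \ref{fulsupthmA}: given any prime $\kp_0 \in \Spec \T_\km$, pull it back to a prime $\kp_\infty \in \Spec R^{\{\xi_v\}}_\infty$ containing $\ka_1$; since $\mm^{\{\xi_v\}}_\infty$ has full support, $\kp_\infty \in \Supp_{R^{\{\xi_v\}}_\infty}(\mm^{\{\xi_v\}}_\infty)$, hence $\kp_\infty \in \Supp(\mm^{\{\xi_v\}}_\infty/\ka_1\mm^{\{\xi_v\}}_\infty) = \Supp(\mm^{\{\xi_v\}}_0)$ because $\ka_1 \subseteq \kp_\infty$; as the $\T_\km$-action on $\mm^{\{\xi_v\}}_0$ factors through the quotient, this gives $\kp_0 \in \Supp_{\T_\km}(\mm^{\{\xi_v\}}_0)$. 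Since $\kp_0$ was arbitrary, $\mm^{\{\xi_v\}}_0$ has full support on $\Spec \T_\km$, which (in view of the surjection $R^{\{\xi_v\}} \twoheadrightarrow \T_\km$ and that $\mm_0^{\{\xi_v\}}$ is already faithful over $\T_\km$) is the assertion; if one wants the statement phrased over $R^{\{\xi_v\}}_\infty$ itself, one notes the support of $\mm_0^{\{\xi_v\}}$ viewed in $\Spec R^{\{\xi_v\}}_\infty$ is exactly $\Spec \T_\km$, which is all of $V(\ka_1 + \ker)$.

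I do not expect any serious obstacle here: both ingredients — the integrality of $R^{\{\xi_v\}}_\infty$ under the non-triviality hypothesis, and the support-descent along a regular sequence — are already in place (Lemma \ref{iharaavoid}, Corollary \ref{rlocdim}, Lemma \ref{suppmin0}, and the technique of Lemma \ref{fulsupthmA}). The only point requiring a little care is making sure the prime $\kp_\infty$ can be chosen to contain $\ka_1$, which is automatic since $\T_\km$ is literally a quotient of $R^{\{\xi_v\}}_\infty$ by an ideal containing the image of $\ka_1$, so every prime of $\T_\km$ pulls back to such a $\kp_\infty$. This corollary will then feed into the Ihara-avoidance step (via Taylor's trick, reducing the general $\xi_v$ case to the all-non-trivial case by comparing mod $\varpi$ reductions) to establish the full-support statement, hence the desired $R = \T$ result, for trivial $\xi_v$.
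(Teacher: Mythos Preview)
Your first paragraph is exactly the paper's proof: by Corollary \ref{rlocdim}(1) the ring $R^{\{\xi_v\}}_\infty$ is an integral domain, so Lemma \ref{suppmin0} forces $\mm^{\{\xi_v\}}_\infty$ to have full support. That is all the paper writes.

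The entire second half of your proposal is unnecessary, and stems from a typo in the statement: the corollary should read $\mm^{\{\xi_v\}}_\infty$, not $\mm^{\{\xi_v\}}_0$. You can confirm this from the paper's own use of the corollary a few lines later (``$\mm^{\{\xi'_v\}}_\infty$ has full support on $\Spec R^{\{\xi'_v\}}_{\infty}$ by corollary \ref{nondihia}''), and from the fact that the one-line proof in the paper only addresses irreducibility of $R^{\{\xi_v\}}_\infty$. Indeed, $\mm^{\{\xi_v\}}_0$ cannot literally have full support on $\Spec R^{\{\xi_v\}}_\infty$ since it is annihilated by the nonzero ideal generated by the image of $\ka_1$. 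The descent you carry out (from full support of $\mm^{\{\xi_v\}}_\infty$ to full support of $\mm^{\{\xi_v\}}_0$ on $\Spec R^{\{\xi_v\}}$) is correct in spirit but belongs to Corollary \ref{R=T}, where the paper does exactly this after first establishing full support of $\mm^{\{\xi_v\}}_\infty$ in the general case via Taylor's trick.
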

\begin{proof}
It follows from lemma \ref{rlocdim} that $R^{\{\xi_v\}}_{\infty}$ is irreducible in this case.
\end{proof}

\begin{para}
To treat the general case, we use Taylor's trick in \cite{Ta08}.Let $\xi'_v:k(v)\to \cO^\times$ be non-trivial characters of $p$-power order for $v\notin S\setminus \Sigma_p$. Then the product of $\xi'_v$ can be viewed as a character $\xi'$ of $U^p$ and we can define completed cohomology $S_{\psi,\xi'}(U^p)$, $S_{\psi,\xi'}(U^p,E/\cO)$ and Hecke algebra $\T':=\T_{\psi,\xi'}(U^p)$ as in subsection \ref{varch}. The following result is lemma \ref{SWbc}.
\end{para}

\begin{lem}
$T_v-\tr\bar{\rho}(\Frob_v),v\notin S$ and $\varpi$ generate a maximal ideal $\km'$ of $\T'$.
\end{lem}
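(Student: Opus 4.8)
The statement is the residually irreducible analogue of Lemma \ref{SWbc}, and the plan is to prove it by the same argument. The claim is equivalent to the non-vanishing of $S_{\psi,\xi'}(U^p,\F)[\km']$: indeed $\T'$ acts faithfully on $M_{\psi,\xi'}(U^p)$, hence on its Pontryagin dual $S_{\psi,\xi'}(U^p,E/\cO)$, so a maximal ideal $\mathfrak n$ of $\T'$ lies in $\Spec\T'$ if and only if $S_{\psi,\xi'}(U^p,E/\cO)[\mathfrak n]\neq 0$, which (taking $\varpi$-torsion, using that this module is $\varpi$-divisible and co-finitely generated over $\cO$) happens if and only if $S_{\psi,\xi'}(U^p,\F)[\mathfrak n]\neq 0$. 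So it suffices to exhibit a nonzero Hecke eigenclass in $S_{\psi,\xi'}(U^p,\F)$ with the prescribed system of eigenvalues $T_v\mapsto \tr\bar\rho(\Frob_v)$, $v\notin S$.

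The key observation is that $\xi=\prod_v\xi_v$ and $\xi'=\prod_v\xi'_v$ are both of $p$-power order, so their reductions modulo $\varpi$ are trivial (the group $\F^\times$ has order prime to $p$). Consequently the defining transformation rule for an element of $S_{\psi,\xi'}(U^pU_p,\F)$ becomes literally the same as that for $S_{\psi,\xi}(U^pU_p,\F)$; passing to the direct limit over $U_p$ one obtains a canonical identification $S_{\psi,\xi'}(U^p,\F)\cong S_{\psi,\xi}(U^p,\F)$. This identification is equivariant for every Hecke operator $T_v$ with $v\notin S$, because those operators are defined by double cosets supported at places away from $S$ and therefore do not involve the tame character $\xi$ or $\xi'$ at all. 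Under this identification $\km'$ corresponds to $\km$.

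Finally one invokes the modularity of $\bar\rho$: the existence of $\pi_0$ in Theorem \ref{ThmirredA}, together with the normalizations recorded in \ref{solublebc} and the discussion of \ref{Healg}, shows that $T_v-\tr\bar\rho(\Frob_v)$, $v\notin S$, and $\varpi$ generate a maximal ideal $\km$ of $\T=\T_{\psi,\xi}(U^p)$, equivalently $S_{\psi,\xi}(U^p,\F)[\km]\neq 0$. Transporting a nonzero class through the isomorphism of the previous paragraph yields $S_{\psi,\xi'}(U^p,\F)[\km']\neq 0$, which is exactly what is needed. There is no substantive obstacle here; the only point requiring a line of verification is the Hecke-equivariance of the mod-$\varpi$ identification at the unramified places, which is immediate from the definition of the $T_v$.
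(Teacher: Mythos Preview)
Your proof is correct and follows exactly the same approach as the paper, which simply refers back to Lemma~\ref{SWbc}: reduce to showing $S_{\psi,\xi'}(U^p,\F)[\km']\neq 0$, identify this space with $S_{\psi,\xi}(U^p,\F)[\km]$ via the observation that $\xi\equiv\xi'\equiv\mathbf{1}\pmod\varpi$, and invoke the modularity assumption on $\bar\rho$. Your write-up is in fact more detailed than the paper's (which is a one-line cross-reference), and the added justification of Hecke-equivariance and the equivalence with non-vanishing of the eigenspace is accurate.
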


\begin{para}
Therefore we get a non-zero surjective map $R^{\{\xi'_v\}}\to\T'_{\km'}$. Note that $R^{\ps,\{\xi'_v\}}/(\varpi)\cong R^{\ps,\{\xi_v\}}/(\varpi)$ as both rings represent the same universal problem. Similarly, we have a natural isomorphism $R^{\{\xi_v\}}_{\loc}/(\varpi)\cong R^{\{\xi'_v\}}_{\loc}/(\varpi)$.

It is easy to see that we can use the same set of Taylor-Wiles primes $Q_N$ in proposition \ref{extwpirr} with $\xi_v$ replaced by $\xi'_v$ and choose the map $R^{\{\xi'_v\}}_{\loc}[[x_1,\cdots,x_g]]\to R^{\square_S,\{\xi'_v\}}_{Q_N}$ to be the same as $R^{\{\xi_v\}}_{\loc}[[x_1,\cdots,x_g]]\to R^{\square_S,\{\xi_v\}}_{Q_N}$ after reducing mod $\varpi$ under the above isomorphisms. Thus we can use the same primes to patch our completed homology and get a diagram:
\[\begin{tikzcd}
S_\infty \arrow[r,dashed]  & R^{\{\xi'_v\}}_{\infty} \arrow[ld, two heads,"\pi_R"]  \arrow [d, two heads] \arrow[r] & \End_{S_\infty} (\mm^{\{\xi'_v\}}_\infty) \arrow[d] \\
R^{\{\xi'_v\}} \arrow[r,two heads]  &{\T'_{\km'}} \arrow[r,hook] & \End_{\cO} (\mm_0^{\{\xi'_v\}}).
\end{tikzcd}\]
Moreover, we have the following commutative diagram
\[\begin{tikzcd}
R^{\{\xi'_v\}}_{\infty}/(\varpi)   \arrow [d,"\cong"] \arrow[r] & \End_{S_\infty} (\mm^{\{\xi'_v\}}_\infty/\varpi \mm^{\{\xi'_v\}}_\infty) \arrow[d,"\cong"] \\
R^{\{\xi_v\}}_{\infty}/(\varpi)  \arrow[r] & \End_{S_\infty} (\mm^{\{\xi_v\}}_\infty/\varpi \mm^{\{\xi_v\}}_\infty) \\
\end{tikzcd}\]

Since $\xi'_v$ are all non-trivial, $\mm^{\{\xi'_v\}}_\infty$ has full support on $\Spec R^{\{\xi'_v\}}_{\infty}$ by corollary \ref{nondihia}. Hence $\mm^{\{\xi_v\}}_\infty/\varpi \mm^{\{\xi_v\}}_\infty$ also has full support on $\Spec R^{\{\xi'_v\}}_{\infty}/(\varpi)$ by the above diagram. It follows from lemma \ref{rlocdim} that any minimal prime of $R^{\{\xi_v\}}_{\infty}/(\varpi)$ contains a unique characteristic zero prime in the support of $\mm^{\{\xi_v\}}_\infty$. By lemma \ref{suppmin0}, this means all minimal primes of $R^{\{\xi_v\}}_\infty$ are in the support. Therefore
\end{para}

\begin{prop}
$\mm^{\{\xi_v\}}_\infty$ has full support on $\Spec R^{\{\xi_v\}}_{\infty}$.
\end{prop}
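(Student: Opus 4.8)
The plan is to deduce full support from the irreducible case already in hand, via Taylor's Ihara-avoidance trick \cite{Ta08}. If every auxiliary character $\xi'_v$, $v\in S\setminus\Sigma_p$, is non-trivial, then $R^{\{\xi'_v\}}_\infty=R^{\{\xi'_v\}}_{\loc}[[x_1,\dots,x_g]]$ is an integral domain by corollary \ref{rlocdim}, so it has a unique minimal prime; since lemma \ref{suppmin0} already guarantees that $\Supp_{R^{\{\xi'_v\}}_\infty}(\mm^{\{\xi'_v\}}_\infty)$ contains at least one irreducible component of $R^{\{\xi'_v\}}_\infty$, it must be all of $\Spec R^{\{\xi'_v\}}_\infty$. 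This is recorded as corollary \ref{nondihia}, and I would take it as the base case.

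Next I would run the patching machinery of subsections \ref{Chwal}--\ref{Pofch} a second time with such a collection $\{\xi'_v\}$: lemma \ref{SWbc} shows that $\bar\rho$ is still modular of this level, so $M^{\{\xi'_v\}}_\infty$ and the arrow $R^{\{\xi'_v\}}_\infty\to\End_{S_\infty}(\mm^{\{\xi'_v\}}_\infty)$ are defined. The key, and the only genuinely delicate, point is to carry out the two patchings compatibly modulo $\varpi$. One uses the \emph{same} Taylor-Wiles sets $Q_N$, which is legitimate since the conditions in proposition \ref{extwpirr} depend only on $\bar\rho$; one then picks the framing surjections $R^{\{\xi'_v\}}_\infty\twoheadrightarrow R^{\square_S,\{\xi'_v\}}_{Q_N}$ so that, under the canonical isomorphisms $R^{\square,\xi_v}_v/(\varpi)\cong R^{\square,1}_v/(\varpi)\cong R^{\square,\xi'_v}_v/(\varpi)$ (lemma \ref{lrca}) and hence $R^{\{\xi_v\}}_\infty/(\varpi)\cong R^{\{\xi'_v\}}_\infty/(\varpi)$, they reduce to the surjections chosen for $\{\xi_v\}$. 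Because the local deformation conditions at the places $v\in S\setminus\Sigma_p$ are invisible after reduction mod $\varpi$, the same identification propagates to the completed homology and the module $\mm$: $\mm^{\{\xi_v\}}_\infty/\varpi\mm^{\{\xi_v\}}_\infty\cong\mm^{\{\xi'_v\}}_\infty/\varpi\mm^{\{\xi'_v\}}_\infty$ as modules over $R^{\{\xi_v\}}_\infty/(\varpi)\cong R^{\{\xi'_v\}}_\infty/(\varpi)$.

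Finally, with this identification in hand, the conclusion is a short commutative-algebra step. By the base case $\mm^{\{\xi'_v\}}_\infty$, hence $\mm^{\{\xi'_v\}}_\infty/\varpi$, has full support over $R^{\{\xi'_v\}}_\infty/(\varpi)$; transporting along the isomorphism above, $\mm^{\{\xi_v\}}_\infty/\varpi\mm^{\{\xi_v\}}_\infty$ has full support over $R^{\{\xi_v\}}_\infty/(\varpi)$. Let $\kp$ be any minimal prime of $R^{\{\xi_v\}}_\infty$. By corollary \ref{rlocdim} the ring $R^{\{\xi_v\}}_\infty/(\varpi)$ is equidimensional and each of its minimal primes contains a unique minimal prime of $R^{\{\xi_v\}}_\infty$; picking a minimal prime of $R^{\{\xi_v\}}_\infty/(\varpi)$ lying over $\kp$ and using that it lies in the support of $\mm^{\{\xi_v\}}_\infty/\varpi$, one gets $\kp\in\Supp_{R^{\{\xi_v\}}_\infty}(\mm^{\{\xi_v\}}_\infty)$. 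Here lemma \ref{suppmin0}, which forces the minimal primes of the support to be of characteristic zero, is what makes the correspondence between minimal primes of $R^{\{\xi_v\}}_\infty$ and those of $R^{\{\xi_v\}}_\infty/(\varpi)$ unambiguous. Since the support is closed and contains every minimal prime, it equals $\Spec R^{\{\xi_v\}}_\infty$. I expect the compatibility bookkeeping of the second step to be the main obstacle; everything else is formal.
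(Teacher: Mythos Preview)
Your proposal is correct and follows essentially the same route as the paper: invoke Taylor's Ihara-avoidance with non-trivial $\{\xi'_v\}$, patch a second time with the same $Q_N$ and mod-$\varpi$-compatible framing surjections to obtain the identification $\mm^{\{\xi_v\}}_\infty/\varpi\cong\mm^{\{\xi'_v\}}_\infty/\varpi$ over $R^{\{\xi_v\}}_\infty/(\varpi)\cong R^{\{\xi'_v\}}_\infty/(\varpi)$, and then combine corollary \ref{rlocdim} with lemma \ref{suppmin0} to lift full support on the special fibre to full support on $R^{\{\xi_v\}}_\infty$. Your assessment that the compatibility bookkeeping is the only non-formal step matches the paper's emphasis.
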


\begin{cor}\label{R=T}
$\mm^{\{\xi_v\}}_0$ has full support on $\Spec R^{\{\xi_v\}}$. In particular, the kernel of 
\[R^{\{\xi_v\}}\twoheadrightarrow\T_\km\]
is nilpotent.
\end{cor}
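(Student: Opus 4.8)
The plan is to descend the full-support statement from $R^{\{\xi_v\}}_\infty$ to $R^{\{\xi_v\}}$ by a formal support-chasing argument, in the spirit of lemma \ref{fulsupthmA}. The relevant inputs are already in place: $\mm^{\{\xi_v\}}_\infty$ is a finitely generated $R^{\{\xi_v\}}_\infty$-module, there is a natural isomorphism $\mm^{\{\xi_v\}}_0 \cong \mm^{\{\xi_v\}}_\infty/\ka_1\mm^{\{\xi_v\}}_\infty$, and the $R^{\{\xi_v\}}_\infty$-action on $\mm^{\{\xi_v\}}_0$ factors through the surjection $\pi_R\colon R^{\{\xi_v\}}_\infty \twoheadrightarrow R^{\{\xi_v\}}$ (which kills $\ka_1$) followed by $R^{\{\xi_v\}} \twoheadrightarrow \T_\km$; moreover the proposition just proved gives $\Supp_{R^{\{\xi_v\}}_\infty}(\mm^{\{\xi_v\}}_\infty) = \Spec R^{\{\xi_v\}}_\infty$.

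First I would fix an arbitrary prime $\kp \in \Spec R^{\{\xi_v\}}$ and set $\tilde\kp = \pi_R^{-1}(\kp)$, so that $\ker\pi_R \supseteq \ka_1 R^{\{\xi_v\}}_\infty$ and hence $\ka_1 R^{\{\xi_v\}}_\infty \subseteq \tilde\kp$. By full support of $\mm^{\{\xi_v\}}_\infty$, the localization $(\mm^{\{\xi_v\}}_\infty)_{\tilde\kp}$ is a nonzero finitely generated module over the local ring $(R^{\{\xi_v\}}_\infty)_{\tilde\kp}$, and $\ka_1$ lies in its maximal ideal, so Nakayama's lemma gives $(\mm^{\{\xi_v\}}_0)_{\tilde\kp} = (\mm^{\{\xi_v\}}_\infty/\ka_1\mm^{\{\xi_v\}}_\infty)_{\tilde\kp} \neq 0$. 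Since $\mm^{\{\xi_v\}}_0$ is an $R^{\{\xi_v\}}$-module (killed by $\ker\pi_R$) and $\tilde\kp$ corresponds to $\kp$, localizing at $\tilde\kp$ over $R^{\{\xi_v\}}_\infty$ agrees with localizing at $\kp$ over $R^{\{\xi_v\}}$, so $(\mm^{\{\xi_v\}}_0)_\kp \neq 0$, i.e. $\kp \in \Supp_{R^{\{\xi_v\}}}(\mm^{\{\xi_v\}}_0)$. As $\kp$ was arbitrary, $\mm^{\{\xi_v\}}_0$ has full support on $\Spec R^{\{\xi_v\}}$, which is the first assertion.

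For the second assertion I would observe that $\mm^{\{\xi_v\}}_0$ is a finitely generated module over the Noetherian ring $R^{\{\xi_v\}}$ with full support, so $\operatorname{Ann}_{R^{\{\xi_v\}}}(\mm^{\{\xi_v\}}_0)$ is contained in every prime and hence nilpotent; by corollary \ref{mclgc} the module $\mm^{\{\xi_v\}}_0$ is a faithful $\T_\km$-module and the $R^{\{\xi_v\}}$-action factors through $\T_\km$, so this annihilator equals $\ker(R^{\{\xi_v\}}\twoheadrightarrow\T_\km)$, which is therefore nilpotent. There is essentially no obstacle here: the genuine work (the dimension bound of Theorem \ref{dh} feeding into a lower bound for $\dim_{R^{\{\xi_v\}}_\infty}\mm^{\{\xi_v\}}_\infty$, combined with Taylor's Ihara-avoidance trick to kill the components of $R^{\{\xi_v\}}_\infty$) is already done, and the only point requiring care in this last step is bookkeeping of which base ring one localizes along.
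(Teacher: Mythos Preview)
Your proof is correct and is exactly the argument the paper intends: the paper's one-line proof just says the claim follows from the full-support proposition for $\mm^{\{\xi_v\}}_\infty$, its finite generation over $R^{\{\xi_v\}}_\infty$, and the containment $\ka_1\subseteq\ker(R^{\{\xi_v\}}_\infty\to R^{\{\xi_v\}})$, which you have correctly unpacked via Nakayama and localization (in the spirit of lemma \ref{fulsupthmA}).
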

\begin{proof}
Since $\mm^{\{\xi_v\}}_\infty$ is a finitely generated $R^{\{\xi_v\}}_{\infty}$-module, this follows from the previous proposition and the fact that the kernel of $R^{\{\xi_v\}}_{\infty}\to R^{\{\xi_v\}}$ contains $\ka_1$.
\end{proof}

\begin{para} \label{Proof of thmirredA}
Now we can prove our main Theorem.
\begin{proof}[Proof of Theorem \ref{ThmirredA}]
Take $\xi_v$ to be the trivial characters for all $v\in S\setminus \Sigma_p$. Then $\rho$ gives rise to a maximal prime ideal $\kp$ of $R^{\{\mathbf{1}\}}[\frac{1}{p}]$, which can be viewed as a maximal ideal of $\T_\km[\frac{1}{p}]$ by our previous result. Theorem \ref{ThmirredA} follows from corollary \ref{classicality}.
\end{proof}
\end{para}

\section{Fontaine-Mazur conjecture in the residually irreducible case (II)} \label{FMcitric2}

In this section, we prove Theorem \ref{ThmirredA} without assuming $\bar\rho|_{G_{F(\zeta_p)}}$ is irreducible.
\begin{thm} \label{ThmirredB}
Let $F$ be a totally real extension of $\Q$ in which $p$ completely splits. Suppose 
\[\rho:\Gal(\overbar{F}/F)\to \GL_2(\cO)\] 
is a continuous irreducible representation with the following properties
\begin{enumerate}
\item $\rho$ ramifies at only finitely many places.
\item $\bar\rho$ is absolutely irreducible, where $\bar{\rho}$ denotes the reduction of $\rho$ modulo $\varpi$. 
\item For any $v|p$, $\rho|_{G_{F_v}}$ is absolutely irreducible and de Rham of distinct Hodge-Tate weights. If $p=3$, then $\bar{\rho}|_{G_{F_v}}$ is not of the form $\begin{pmatrix} \eta & * \\ 0 & \eta\omega\end{pmatrix}$ or $\begin{pmatrix} \eta\omega & * \\ 0 & \eta\end{pmatrix}$
\item $\det \rho(c)=-1$ for any complex conjugation $c\in \Gal(\overbar{F}/F)$.
\item $\bar\rho$ arises from a regular algebraic cuspidal automorphic representation $\pi_0$ of $\GL_2(\A_F)$. 
\end{enumerate}
Then $\rho$ arises from a regular algebraic cuspidal automorphic representation of $\GL_2(\A_F)$.
\end{thm}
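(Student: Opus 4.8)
The plan is to reduce Theorem \ref{ThmirredB} to the residually reducible results of the paper by a solvable base change, following Skinner--Wiles \cite{SW01}. First I would analyse $\bar\rho$: being absolutely irreducible with $\bar\rho|_{G_{F(\zeta_p)}}$ absolutely reducible, $\bar\rho$ has dihedral projective image, so $\bar\rho\cong\Ind_{G_L}^{G_F}\bar\theta$ for some quadratic $L/F$ with $\bar\theta\neq\bar\theta^\sigma$ ($\sigma$ generating $\Gal(L/F)$), and the reducibility already over $F(\zeta_p)$ forces $L\subseteq F(\zeta_p)$; since $p$ splits completely in $F$, $L$ is the unique quadratic subextension $F(\sqrt{p^*})$, $p^*=(-1)^{(p-1)/2}p$. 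In particular $\bar\rho|_{G_L}=\bar\theta\oplus\bar\theta^\sigma$ is a sum of two characters, and $\bar\rho|_{G_L}$ is modular (base change $\pi_0$ to $L$; it becomes Eisenstein and carries the characters $\bar\theta,\bar\theta^\sigma$, and lifting $\bar\theta$ to an algebraic Hecke character of $L$ by global class field theory, as in the constructions of \S\ref{AgoroS-W2}, even gives a CM automorphic representation congruent to $\bar\rho$). Note that $L$ is totally real when $p\equiv1\pmod 4$ and CM when $p\equiv3\pmod 4$, and in all relevant cases $p$ is at most tamely ramified in $L$.

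I would then choose a solvable extension $M/F$ containing $L$, with $M$ totally real if possible and $M/F$ as small as the argument permits, such that $\rho|_{G_M}$ is covered by a residually reducible modularity theorem. The requirements are: $\rho|_{G_M}$ is still irreducible --- automatic unless $\rho$ is induced from a character of $G_M$, in which case $\rho$ is itself dihedral and hence automorphic by automorphic induction of the corresponding Hecke character, so that case can be set aside (here one uses that $\rho|_{G_{F_v}}$ is irreducible for $v\mid p$); $\bar\rho|_{G_M}=\bar\chi_1\oplus\bar\chi_2$ with $\bar\chi_1/\bar\chi_2$ odd, and after enlarging $M$ extending to $G_\Q$; $\bar\rho|_{G_M}$ modular, inherited from the previous step; the local conditions on $\rho|_{G_{M_w}}$ at $w\mid p$ are of the allowed type (de Rham with distinct Hodge--Tate weights, and the $p=3$ exclusions persist); and the Selmer-group inequalities used in \S\ref{AgoroS-W1}--\S\ref{AgoroS-W2} hold (arranged by a further solvable twist, exactly as in \S\ref{PotthmB}). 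Over such an $M$ I would invoke the residually reducible modularity results proved earlier --- Theorem \ref{mainthm} together with the patching argument of \S\ref{Paao-dp} and the Skinner--Wiles generalisations of \S\ref{AgoroS-W1}, \S\ref{AgoroS-W2}, supplemented by Theorems \ref{thmB}, \ref{thmC} in the nearly ordinary sub-cases --- to conclude $\rho|_{G_M}$ arises from a regular algebraic cuspidal automorphic representation of $\GL_2(\A_M)$. Finally, since $\rho|_{G_M}$ is $\Gal(M/F)$-invariant and $M/F$ is solvable, cyclic base change descent (Langlands--Arthur--Clozel) applied along a chain of cyclic subextensions produces a cuspidal automorphic representation of $\GL_2(\A_F)$ attached to $\rho$, with regular algebraicity preserved and cuspidality guaranteed by irreducibility of $\rho$.

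The main obstacle, and where the real work lies, is the middle step. Because $L=F(\sqrt{p^*})\subseteq F(\zeta_p)$ and $p$ ramifies in $\Q(\sqrt{p^*})$, every field $M$ over which $\bar\rho$ becomes reducible has $p$ ramified, so the completed-cohomology results of this paper --- which require $p$ to split completely, since the local-global compatibility of \S\ref{L-gc} and Pa\v{s}k\={u}nas' theory are tied to $\GL_2(\Q_p)$ --- do not apply over $M$ verbatim, even though $\rho$ is non-ordinary at $p$. One must therefore either extend the patching and local-global compatibility arguments to allow $p$ tamely ramified in $M$ (reconciling this with the $\GL_2(\Q_p)$-specific inputs, possibly via a further base change of the local situation), or split the analysis according to whether $\rho|_{G_{M_w}}$ is reducible or irreducible at $w\mid p$ and treat the irreducible case by a separate reduction --- which when $p\equiv3\pmod 4$ may force passage to a CM field and use of the ordinary/CM-form results there, a delicate point given the subtleties of modularity over CM fields. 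Arranging a single $M$ for which irreducibility of $\rho|_{G_M}$ at $p$, extendability of $\bar\chi_1/\bar\chi_2$ to $G_\Q$, the Selmer estimates, and the admissible local conditions at $p$ all hold simultaneously is the crux of the argument.
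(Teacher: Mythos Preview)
Your proposal takes a fundamentally different route from the paper, and the obstacle you isolate in your final paragraph is exactly why the paper does \emph{not} proceed this way. You want to pass to a solvable extension $M\supseteq L=F(\sqrt{p^*})$ where $\bar\rho$ becomes reducible and then invoke the residually reducible theorems; but as you observe, $p$ ramifies in any such $M$, so the completed-cohomology machinery of \S\ref{L-gc} (which is tied to $\GL_2(\Q_p)$ via Pa\v{s}k\={u}nas' theory) is unavailable over $M$. You do not propose a way around this, and indeed there is no straightforward one; the residually reducible results of this paper genuinely require $p$ to split completely.

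The paper instead stays over $F$ and works directly with the irreducible $\bar\rho$, following the Skinner--Wiles philosophy in a different way. Rather than changing the residual representation, it patches the completed homology at one-dimensional primes $\kq$ of the deformation ring $R^{\{\mathbf{1}\}}$ where the associated characteristic-$p$ representation $\rho(\kq)$ is irreducible and \emph{not dihedral}. At such a prime the Taylor--Wiles method applies to $\rho(\kq)$ (the non-dihedral condition replaces the usual hypothesis on $\bar\rho|_{G_{F(\zeta_p)}}$; compare Lemma~\ref{ktwl}), and one obtains $(R^{\{\mathbf{1}\}})_\kq\to\T_\kq$ with nilpotent kernel exactly as in \S\ref{Paao-dp}. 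The remaining work is to show every irreducible component of $R^{\{\mathbf{1}\}}$ contains a nice prime: a connectedness-dimension bound $c(R^{\{\mathbf{1}\}}/(\varpi))\geq 2[F:\Q]-|S|+|\Sigma_p|-1$ (Proposition~\ref{cndimR1}), together with the estimate that the dihedral locus in $\Spec R^{\{\mathbf{1}\}}/(\varpi)$ has dimension at most $[F:\Q]$, lets one connect components through nice primes after arranging $[F:\Q]-4|S|+4|\Sigma_p|\geq 2$ by soluble base change. The point is that the problematic dihedral structure of $\bar\rho$ is a codimension phenomenon in the deformation space, so one can navigate around it at the level of one-dimensional primes without ever leaving the field $F$ where $p$ splits completely.
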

 
\begin{rem}
If we are in the ordinary case, i.e. $\rho|_{G_{F_v}}$ is reducible for any $v|p$, this result is known mainly by the work of Skinner-Wiles \cite{SW01}. Note that in \cite{SW01}, they need to assume that $\bar\rho|_{G_{F_v}}$ is not of the form $\begin{pmatrix} \eta & * \\ 0 & \eta\end{pmatrix}$ for any $v|p$. However, if this happens, then $\bar\rho|_{G_{F(\zeta_p)}}$ cannot be reducible as we assume $p$ splits completely in $F$. In this case, the result was known by the work of \cite{Kis09a}, \cite{HT15}.
\end{rem}

\begin{rem}
Again the last condition holds when $F=\Q$. See remark \ref{KW}.
\end{rem}

\begin{para}
For the proof, we follow the strategy of \cite{SW01}. We will use notations introduced in \ref{solublebc}. By soluble base change, we may assume everything in \ref{solublebc}. and
\[[F:\Q]-4|S|+4|\Sigma_p|\geq2.\]

As in \ref{Gdrs}, we have a Galois deformation ring $R^{\{\mathbf{1}\}}$ parametrizing all deformations of $\bar\rho:G_{F,S}\to\GL_2(\F)$ which are unipotent when restricted to $I_{F_v},v\in S\setminus \Sigma_p$ and have determinant $\det \rho$.  On the automorphic side, we have a Hecke algebra $\T$ and a maximal ideal $\km\in\Spec \T$ corresponding to $\bar\rho$ as in \ref{Healg} with all $\xi_v$ chosen to be the trivial characters. There is a natural surjective map $R^{\{\mathbf{1}\}}\to\T_\km$. Arguing as in \ref{Proof of thmirredA}, it suffices to prove: 
\end{para}

\begin{prop} \label{kerRtoTnil}
The kernel of $R^{\{\mathbf{1}\}}\to\T_\km$ is nilpotent.
\end{prop}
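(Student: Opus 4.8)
\textbf{Proof proposal for Proposition~\ref{kerRtoTnil}.}
The plan is to follow the Skinner--Wiles strategy of \cite{SW01}, but built on top of the patching and local-global machinery already developed in Sections~\ref{Paao-dp}, \ref{L-gc} and the previous section rather than on Hida families. The first reduction is to show that it suffices to identify a single well-chosen irreducible component of $\Spec R^{\{\mathbf{1}\}}$ as pro-modular and then to propagate pro-modularity along the whole deformation space by a connectedness argument, exactly as in the residually reducible case (Section~\ref{Tmt}). Concretely, let $\kp\in\Spec R^{\{\mathbf{1}\}}$ be the prime cut out by $\rho$; a Galois cohomology computation as in Lemma~\ref{dimrpskp} (here for the framed or unframed deformation ring of the irreducible $\bar\rho$, using the global Euler characteristic formula) shows $\dim R^{\{\mathbf{1}\}}_\kp$ is large enough that we may choose an irreducible component $C\ni\kp$ of dimension $\ge 1+2[F:\Q]$. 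We then want to find a ``nice'' one-dimensional prime $\kq\in C$ in the sense of Definition~\ref{nice}, i.e.\ a pro-modular prime with residually reducible (indeed non-split upper-triangular) reduction and prescribed behavior at $S\setminus\Sigma_p$, and then apply Theorem~\ref{thmA} (via Corollary~\ref{corA}) to conclude that $C$, hence $\kp$, is pro-modular; classicality then follows from Corollary~\ref{classicality}. Note that $\rho$ itself is residually irreducible, so the point of this argument is precisely to reach $\kp$ through a residually reducible one-dimensional point of the same component --- this is the crucial geometric input and the reason we are in the ``residually irreducible case (II)''.

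The substantive work is to produce such a nice prime. First I would run the residually irreducible patching of the previous section (Section~\ref{FMcitric1}) with $\bar\rho$ and the trivial $\xi_v$: this already gives $\mm_0^{\{\mathbf{1}\}}$ full support on $\Spec R^{\{\mathbf{1}\}}$ by Corollary~\ref{R=T}, but that Corollary was proved under the extra hypothesis that $\bar\rho|_{G_{F(\zeta_p)}}$ is irreducible (it enters only through the existence of Taylor--Wiles primes, Proposition~\ref{extwpirr}, and the Ihara-avoidance argument). Dropping that hypothesis, the Taylor--Wiles system degenerates: one cannot in general find enough primes $v$ with $N(v)\equiv1\bmod p^N$ and $\bar\rho(\Frob_v)$ regular to kill the dual Selmer group over $F$. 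The Skinner--Wiles remedy is to make a soluble totally real base change $F'/F$, disjoint from $F(\zeta_p)$ and chosen (using Washington's theorem \cite{Wa78} on the boundedness of the $p$-part of class groups in cyclotomic $\Z_{l_0}$-towers, and auxiliary level-lowering base changes to make $\bar\rho|_{G_{F'_v}}=\mathbf{1}$ and $p\mid N(v)-1$ at the bad places) so that over $F'$ the relevant Selmer group is small; then over $F'$ one is in a situation where the patching of Section~\ref{FMcitric1} applies, or more precisely where one can run the patching-at-a-one-dimensional-prime argument of Section~\ref{Paao-dp}. In fact I would instead, following the structure of Section~\ref{Tmt}, first produce over $F'$ (or a further extension) a pro-modular residually reducible one-dimensional prime by exhibiting an explicit reducible deformation --- e.g.\ one coming from a CM character or from a congruence to an Eisenstein series, as in Section~\ref{EoEmi} and Section~\ref{AgoroS-W1} --- lying on a component that also meets (the base change of) $C$, and then invoke Theorem~\ref{thmA}. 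Since $\rho|_{G_{F'}}$ remains irreducible (the same eigenvector-of-complex-conjugation argument as in the second paragraph of \ref{1asspropB}), soluble descent (the main results of \cite{BLGHT11} on cyclic base change) reduces the statement over $F'$ back to $F$, giving Proposition~\ref{kerRtoTnil}.

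The main obstacle, and the only genuinely new point relative to what is already in the paper, is the construction of the base field $F'$ together with the bad-place data $\{\xi_v\}$ so that simultaneously: (i) the Selmer-group bound holds, so that the dimension count in the patching argument closes up (one needs an inequality of the shape $[F':\Q]-c|S'|+c|\Sigma'_p|>\delta_{F'}+\dim_\F H^1_{\ast}(F')$, controlled via Washington and Leopoldt, the latter known since $F'$ is abelian over $\Q$); (ii) $\bar\rho$ stays modular over $F'$ (automatic by base change of $\pi_0$) and the relevant Hecke maximal ideal exists; (iii) the chosen one-dimensional residually reducible prime is actually \emph{nice} in the sense of Definition~\ref{nice}, including the non-dihedral/splitting-field condition (3) there and the condition (4) on $\rho(\kq)^o|_{G_{F_v}}$ at $v\in S\setminus\Sigma_p$, which is why one arranges $\bar\rho|_{G_{F'_v}}$ to be trivial at bad places and shrinks the level; and (iv) the prime $\kq$ lies on a component of $\Spec R^{\{\mathbf{1}\}}_{F'}$ also containing the base change of $\kp$, so that Corollary~\ref{corA} applies. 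Each of these is, individually, a variant of an argument already carried out in the paper (Sections~\ref{Paao-dp}, \ref{AgoroS-W1}, \ref{AgoroS-W2}, \ref{Tmt}); the work is in checking they can be met at once. I expect no essentially new idea is needed beyond assembling these, together with the standard soluble descent of automorphy, and that the argument can be written in close parallel to Section~\ref{Tmt} and to \cite{SW01}.
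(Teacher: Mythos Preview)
Your proposal rests on a misreading of the setup. The ring $R^{\{\mathbf{1}\}}$ in this section is the universal deformation ring of the fixed \emph{irreducible} $\bar\rho:G_{F,S}\to\GL_2(\F)$; every point of $\Spec R^{\{\mathbf{1}\}}$ has residual representation equal to $\bar\rho$. There are no ``residually reducible one-dimensional points'' to find, so the plan of reaching $\kp$ through such a point and invoking Definition~\ref{nice} and Theorem~\ref{thmA} (which concern the pseudo-deformation ring of $1+\bar\chi$, a different object) cannot start. For the same reason the Eisenstein-congruence and Washington-type base-change machinery of Sections~\ref{AgoroS-W1}--\ref{AgoroS-W2} is irrelevant here: that machinery bounds reducible loci in deformation spaces of a reducible residual pseudo-representation, and there is no reducible locus in $\Spec R^{\{\mathbf{1}\}}$.

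The paper instead stays over $F$ (after the reductions in~\ref{solublebc}) and introduces a \emph{different} notion of nice prime for this section: a one-dimensional characteristic-$p$ prime $\kq$ such that $\rho(\kq)$, the representation over the infinite field $k(\kq)$, is absolutely irreducible, not dihedral, and trivial at $v\in S\setminus\Sigma_p$. One then patches the completed homology at $\kq$ as in Section~\ref{Paao-dp}; the crucial point is that the Taylor--Wiles Selmer estimate (Lemma~\ref{ktwl}) depends on $\rho(\kq)$, not on $\bar\rho$, and $\rho(\kq)$ being irreducible non-dihedral is exactly what makes it go through --- this is how the failure of the classical hypothesis on $\bar\rho|_{G_{F(\zeta_p)}}$ is circumvented. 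Since $\bar\rho$ is irreducible one may even take the auxiliary primes with $\bar\rho(\Frob_v)$ regular, simplifying the argument. A connectedness-dimension bound $c(R^{\{\mathbf{1}\}}/(\varpi))\ge 2[F:\Q]-|S|+|\Sigma_p|-1$ (Proposition~\ref{cndimR1}), the observation that the dihedral locus has dimension $\le[F:\Q]$, and the numerical reduction $[F:\Q]-4|S|+4|\Sigma_p|\ge 2$ then guarantee (Lemma~\ref{existnice}) that any sufficiently large pro-modular closed subset of $\Spec R^{\{\mathbf{1}\}}/(\varpi)$ contains a nice prime; since $\dim\T_\km\ge 2[F:\Q]+1$ by Theorem~\ref{dh}, one nice prime exists, and the argument of Proposition~\ref{rcmodc} propagates pro-modularity to every component.
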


As before, we say a prime of $R^{\{\mathbf{1}\}}$ is \textit{pro-modular} if it comes from a prime of $\T_\km$.

\begin{defn}
A pro-modular prime $\kq$ of $R^{\{\mathbf{1}\}}$ is \textit{nice} if 
\begin{enumerate}
\item $R^{\{\mathbf{1}\}}/\kq$ is one-dimensional with characteristic $p$.
\item $\rho(\kq)$ is absolutely irreducible and not induced from a quadratic extension of $F$. Here $\rho(\kq)$ denotes the push-forward of the universal deformation on $R^{\{\mathbf{1}\}}$ to $k(\kq)$.
\item $\rho(\kq)|_{G_{F_v}}$ is trivial for $v\in S\setminus\Sigma_p$.
\end{enumerate}
\end{defn}

\begin{prop}
Let $\kq\in\Spec R^{\{\mathbf{1}\}}$ be a nice prime. By abuse of notation, we also view it as a prime of $\T_\km$. Then the kernel of $(R^{\{\mathbf{1}\}})_\kq\to \T_\kq$ is nilpotent.
\end{prop}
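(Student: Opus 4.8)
The plan is to run the Taylor--Wiles--Kisin patching machinery for completed homology at the one-dimensional prime $\kq$, exactly in the spirit of Section~\ref{Paao-dp} (Theorem~\ref{thmA}) but now in the residually irreducible setting of Sections~\ref{FMcitric1}--\ref{FMcitric2}. The key point is that $\kq$ is nice, so I may mimic the structure of the argument surrounding Definition~\ref{nice} and Theorem~\ref{thmA}: the conditions that $\rho(\kq)$ is absolutely irreducible, not dihedral, and trivial on $G_{F_v}$ for $v\in S\setminus\Sigma_p$ are precisely what is needed to build Taylor--Wiles primes and to control the relevant Galois cohomology at $\kq$. First I would pass, as usual, to a lattice $\rho(\kq)^o\colon G_{F,S}\to\GL_2(A)$ with $A$ (the normal closure of $R^{\{\mathbf 1\}}/\kq$) isomorphic to $\F[[T]]$ after enlarging $\cO$, and whose reduction $\bar\rho_b$ is a non-split extension of the appropriate shape; the usual faithful-flatness argument (as in \ref{kqrho}) reduces the nilpotence claim for $(R^{\{\mathbf 1\}})_\kq\to\T_\kq$ to the analogous claim after this base change.

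Next I would set up the local framed deformation rings and the patched objects at $\kq$: the local ring $R^{\{\xi_v\}}_\loc$ of \ref{rxivloc} (here with all $\xi_v$ trivial, or temporarily non-trivial for Taylor's trick), its $\kq_\loc$-adic completion, for which Proposition~\ref{rlocp} gives equidimensionality of dimension $3[F:\Q]+3|P|$ and the structure of minimal primes; the Taylor--Wiles primes $Q_N$ from Proposition~\ref{exttwp} (available because $\kq$ is nice, in particular not dihedral, so Lemma~\ref{lem2} applies); the patched completed homology $M^{\patch,\{\xi_v\}}_n$, $M^{\{\xi_v\}}_\infty$ of Section~\ref{p1pch}; and the patched pseudo-deformation rings $R^{\ps,\patch}$, $R^\patch_b$ of Section~\ref{Paao-dp}, together with the comparison isomorphism $\widehat{(R^{\ps,\patch})_{[\kq^{\ps,\patch}]}}\cong\widehat{(R^\patch_b)_{[\kq^\patch_b]}}$ from Corollary~\ref{pscpatch} and the surjection from $\widehat{(R^{\{\xi_v\}}_\infty)_{\kq^{\{\xi_v\}}_\infty}}$ of Corollary~\ref{patdefr}. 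Then I would invoke the local--global compatibility input: since $\bar\rho$ is modular and absolutely irreducible, $M_{\psi,\xi}(U^p)_\km$ lies in the block $\kC_{D_p^\times,\psi}(\cO)^{\kB_\km}$ by Theorem~\ref{lgc}, so applying $\Hom_{\kC_{D_p^\times,\psi}(\cO)}(P_{\kB_\km},-)$ as in Section~\ref{p3apop} turns the (localized, completed) patched homology into a finitely generated module $\mm^{\{\xi_v\}}_\infty$ over $\widehat{(R^{\{\xi_v\}}_\infty)_{\kq^{\{\xi_v\}}_\infty}}$ which is flat over $S_\infty$ with $\mm^{\{\xi_v\}}_\infty/\ka_1\mm^{\{\xi_v\}}_\infty\cong(\mm^{\{\xi_v\}}_0)^{\oplus 2^r}$, where $\mm^{\{\xi_v\}}_0$ is the $\kq$-adic completion of the faithful finitely generated $\T_\kq$-module coming from $\mm^{\{\xi_v\}}$.

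The numerical heart of the argument is then the dimension count: by Theorem~\ref{dh} each irreducible component of $\T_\psi(U^p)_\km$ has dimension at least $1+2[F:\Q]$, hence (the completion at $\kq$ of) $\mm^{\{\xi_v\}}_0$ has support of dimension $\geq 2[F:\Q]$ over $\widehat{\T_\kq}$; feeding the regular sequence $y_1,\dots,y_{4|P|-1},s_1,\dots,s_r$ through the flatness over $S_\infty$ gives $\dim_{(R^{\{\xi_v\}})'}\mm^{\{\xi_v\}}_\infty\geq 4|P|-1+r+2[F:\Q]$, which by Proposition~\ref{rlocp} and Lemma~\ref{cctp} equals the full dimension of $\widehat{(R^{\{\xi_v\}}_\infty)_{\kq^{\{\xi_v\}}_\infty}}$. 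So $\mm^{\{\xi_v\}}_\infty$ meets at least one irreducible component, and every minimal prime of its support is of characteristic zero since $\mm^{\{\xi_v\}}_\infty$ is $\cO$-flat. Exactly as in Section~\ref{Paao-dp} (Lemmas~\ref{fulsupthmA}, the Taylor's-trick argument with auxiliary non-trivial $\xi'_v$, Ihara avoidance via Proposition~\ref{rlocp}(1) and the isomorphism $R^{\square,\xi_v}_v/(\varpi)\cong R^{\square,1}_v/(\varpi)$), one upgrades this to full support, which forces $\widehat{(R^{\{\mathbf 1\}})_\kq}\to\widehat{\T_\kq}$ to have nilpotent kernel, hence $(R^{\{\mathbf 1\}})_\kq\to\T_\kq$ has nilpotent kernel.

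I expect the main obstacle to be twofold. First, one must check that the hypotheses of this section genuinely let all the inputs from Section~\ref{Paao-dp} go through: in particular that the niceness of $\kq$ (absolutely irreducible, non-dihedral, trivial at $v\in S\setminus\Sigma_p$) suffices for the existence of Taylor--Wiles primes (Proposition~\ref{exttwp}, whose proof uses $\rho(\kq)$ non-dihedral in Lemma~\ref{lem2}) and that the local deformation rings at $p$ behave as in Proposition~\ref{rlocp}, which requires the $p=3$ exclusion on the shape of $\bar\rho|_{G_{F_v}}$ and the distinct-Hodge--Tate-weight hypothesis entering through the block theory of \ref{pas}. Second, and more seriously, one must verify the dimension lower bound $\dim\T_\psi(U^p)_\km\geq 1+2[F:\Q]$ really applies here — i.e. that Theorem~\ref{dh} is available with the present choice of $\psi,\xi$ and tame level, since this is what makes the patched module big enough to fill a component; this is exactly the role played by the Gelfand--Kirillov dimension formula (Proposition~\ref{df}) and the local--global compatibility of Theorem~\ref{lgc}, so the task is to confirm their hypotheses hold verbatim. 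Once these are in place the rest is a faithful transcription of the proof of Theorem~\ref{thmA}.
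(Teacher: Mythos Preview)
Your overall strategy is correct and matches the paper's: patch completed homology at the one-dimensional prime $\kq$, use the local--global compatibility and the dimension bound of Theorem~\ref{dh} to force full support, and conclude via Taylor's Ihara avoidance. The paper's own proof is in fact just a two-sentence pointer to Theorem~\ref{thmA}, noting that the argument is \emph{simpler} in the present residually irreducible setting.

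That said, you have imported several complications from Section~\ref{Paao-dp} that are specific to the residually \emph{reducible} case and are unnecessary (or even nonsensical) here. First, your choice of a lattice $\rho(\kq)^o$ whose reduction $\bar\rho_b$ is ``a non-split extension of the appropriate shape'' does not apply: $\bar\rho$ is already absolutely irreducible, so every lattice reduces to $\bar\rho$ itself. Second, the comparison $\widehat{(R^{\ps,\patch})_{[\kq^{\ps,\patch}]}}\cong\widehat{(R^{\patch}_b)_{[\kq^{\patch}_b]}}$ of Corollary~\ref{pscpatch} and the entire pseudo-deformation apparatus are vacuous in this setting, since $R^{\ps}\cong R_{\bar\rho}$ when $\bar\rho$ is absolutely irreducible. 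Third, the auxiliary set $T'$ (hence $P=S\cup T'$) of \ref{defnP}, the maps $\eta_N$ of \ref{tdm}, and the elements $\tilde f_N$ of Proposition~\ref{twkcc} were introduced precisely to handle the failure of the level-change isomorphism at an Eisenstein maximal ideal; with $\bar\rho$ irreducible one instead uses the clean patching of \S\ref{Chwal}--\S\ref{Pofch}, choosing Taylor--Wiles primes $Q_N$ so that already $\bar\rho(\Frob_v)$ has distinct eigenvalues (rather than merely $\rho(\kq)(\Frob_v)$). This is exactly the simplification the paper flags. Your dimension count then becomes the one in \S\ref{FMcitric1} (with $|S|$ in place of $|P|$), and the rest proceeds as you outlined.
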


\begin{proof}
The proof is essentially the same as that of Theorem \ref{thmA}. In fact, since $\bar\rho$ is irreducible, the argument is much simpler in this case. The auxiliary primes in proposition \ref{exttwp} can be chosen so that $\bar\rho(\Frob_v)$ has distinct eigenvalues, $v\in Q_N$. Hence it is enough to use the construction of patched completed homology in \ref{Chwal}, \ref{Pofch}.
\end{proof}

\begin{para}
This proposition basically says that an irreducible component of $\Spec R^{\{\mathbf{1}\}}$ is pro-modular if it contains a nice prime. To show that there are a lot of nice primes in $\Spec R^{\{\mathbf{1}\}}$, we need a lower bound on the connectedness dimension of $R^{\{\mathbf{1}\}}$ (see \ref{connectednessdim} for its definition).
\end{para}

\begin{prop} \label{cndimR1}
$c(R^{\{\mathbf{1}\}}/(\varpi))\geq 2[F:\Q]-|S|+|\Sigma_p|-1$.
\end{prop}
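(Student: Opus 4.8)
The plan is to express $R^{\{\mathbf{1}\}}$ (with framing at $S$) as a quotient of a power series ring over the local lifting ring $R^{\{\mathbf{1}\}}_{\loc}$ by a controlled number of relations, and then apply the connectedness-dimension formula of Brodmann--Rung (Proposition \ref{cndimformula}) together with the irreducibility/connectedness information about $R^{\{\mathbf{1}\}}_{\loc}$ coming from Corollary \ref{rlocdim}. Concretely, by the standard presentation (as in \ref{Gdrs} and proposition \ref{extwpirr}, with $Q=\emptyset$) there is a surjection
\[
R^{\{\mathbf{1}\}}_{\loc}[[x_1,\dots,x_{g_1}]]\twoheadrightarrow R^{\square_S,\{\mathbf{1}\}},
\]
where $g_1=\dim_\F H^1(G_{F,S},\ad^0\bar\rho(1))+|S|-1-[F:\Q]$ and the kernel is generated by $r_1=\dim_\F H^1(G_{F,S},\ad^0\bar\rho(1))$ elements; moreover $R^{\square_S,\{\mathbf{1}\}}\cong R^{\{\mathbf{1}\}}[[y_1,\dots,y_{4|S|-1}]]$ non-canonically. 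Since passing to a power series ring and then to the quotient by $\varpi$ does not change connectedness dimension beyond the obvious shifts, it suffices to bound $c$ of the corresponding quotient of $R^{\{\mathbf{1}\}}_{\loc}/(\varpi)$.

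First I would record that $c(R^{\{\mathbf{1}\}}_{\loc}/(\varpi))$ is large: by Corollary \ref{rlocdim}, $R^{\{\mathbf{1}\}}_{\loc}$ is equidimensional of dimension $1+3[F:\Q]+3|S|$ and each minimal prime of $R^{\{\mathbf{1}\}}_{\loc}/(\varpi)$ contains a unique minimal prime of $R^{\{\mathbf{1}\}}_{\loc}$; hence $R^{\{\mathbf{1}\}}_{\loc}/(\varpi)$ is equidimensional of dimension $3[F:\Q]+3|S|$ and its irreducible components are in bijection (via specialization) with those of $R^{\{\mathbf{1}\}}_{\loc}$. Because $R^{\{\mathbf{1}\}}_{\loc}$ is a completed tensor product of the local rings $R^\square_v$ ($v\in\Sigma_p$, normal domains) and $R^{\square,\mathbf 1}_v$ ($v\in S\setminus\Sigma_p$, whose components all meet along the reduced special fibre by Taylor's Ihara-avoidance, cf. lemma \ref{iharaavoid}), any two irreducible components of $R^{\{\mathbf{1}\}}_{\loc}/(\varpi)$ can be connected through a chain of components each pair of which meets in codimension $1$; equivalently $c(R^{\{\mathbf{1}\}}_{\loc}/(\varpi))\geq \dim R^{\{\mathbf{1}\}}_{\loc}/(\varpi)-1=3[F:\Q]+3|S|-1$. (If all the $\xi_v$ were nontrivial the ring would be integral, but here with $\xi_v$ trivial one uses the second part of Corollary \ref{rlocdim} to get this codimension-one connectedness.)

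Next, applying Proposition \ref{cndimformula} to the ideal $I=(h_1,\dots,h_{r_1})$ cutting out $R^{\square_S,\{\mathbf 1\}}/(\varpi)$ inside $(R^{\{\mathbf{1}\}}_{\loc}/(\varpi))[[x_1,\dots,x_{g_1}]]$ — noting $r(I)\le r_1$ and that adjoining the $g_1$ power-series variables raises both dimension and connectedness dimension by $g_1$ — gives
\[
c(R^{\square_S,\{\mathbf 1\}}/(\varpi))\;\ge\;\bigl(3[F:\Q]+3|S|-1\bigr)+g_1-r_1-1.
\]
Substituting $g_1-r_1=|S|-1-[F:\Q]$ yields $c(R^{\square_S,\{\mathbf 1\}}/(\varpi))\ge 2[F:\Q]+4|S|-3$. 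Finally, $R^{\square_S,\{\mathbf 1\}}\cong R^{\{\mathbf 1\}}[[y_1,\dots,y_{4|S|-1}]]$, and removing $4|S|-1$ formal variables lowers connectedness dimension by exactly $4|S|-1$, so
\[
c(R^{\{\mathbf 1\}}/(\varpi))\;\ge\;\bigl(2[F:\Q]+4|S|-3\bigr)-(4|S|-1)\;=\;2[F:\Q]-2.
\]
A slightly more careful bookkeeping of $r(I)$ versus $r_1$ and of the number of places in $S\setminus\Sigma_p$ contributing reducible special fibres (Ihara avoidance for those $v$ gives one extra unit of connectedness per such place over the naive estimate) upgrades this to the stated $2[F:\Q]-|S|+|\Sigma_p|-1$; this final accounting is the place where I expect to have to be careful, since it requires matching the exact Brodmann--Rung shift against the component structure of the local deformation rings at the ramified auxiliary primes. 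The main obstacle is thus not any single deep input but the precise tracking of connectedness dimension through the completed tensor product $R^{\{\mathbf 1\}}_{\loc}$ and through the presentation of the global ring, ensuring no component is lost when reducing mod $\varpi$; here the uniqueness statement in Corollary \ref{rlocdim}(2) is what makes the argument go through.
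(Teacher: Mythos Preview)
Your route through the framed local ring $R^{\{\mathbf 1\}}_{\loc}$ is much more elaborate than what the paper does, and it has a real gap. The paper never touches $R^{\{\mathbf 1\}}_{\loc}$ here: it simply observes that the \emph{unrestricted} global deformation ring $R_{\bar\rho}$ (no framing, no local conditions) admits a presentation $\cO[[x_1,\dots,x_{h^1}]]/(f_1,\dots,f_{h^2})$ with $h^1-h^2=2[F:\Q]$ by the global Euler characteristic formula, and that the surjection $R_{\bar\rho}\twoheadrightarrow R^{\{\mathbf 1\}}$ has kernel generated by the $|S|-|\Sigma_p|$ elements $\tr\rho^{univ}(t_v)-2$, $v\in S\setminus\Sigma_p$. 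Thus $R^{\{\mathbf 1\}}\cong\cO[[x_1,\dots,x_g]]/(f_1,\dots,f_r)$ with $g-r=2[F:\Q]-|S|+|\Sigma_p|$, and a single application of Proposition~\ref{cndimformula} to the regular (hence irreducible) ring $\cO[[x_1,\dots,x_g]]$ gives the bound immediately. The point is that by starting from a power series ring, the base has only one component and $c=\dim$, so there is nothing to analyze.

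Your argument, by contrast, forces you to compute $c(R^{\{\mathbf 1\}}_{\loc}/(\varpi))$, and this is where the gap is. You assert that any two components meet in codimension~$1$, but Corollary~\ref{rlocdim} only gives equidimensionality and the ``unique lift of minimal primes'' property; it says nothing about the dimension of intersections of distinct components of the special fibre. Establishing $c(R^{\{\mathbf 1\}}_{\loc}/(\varpi))\ge\dim-1$ would require a separate analysis of the component geometry of each $R^{\square,\mathbf 1}_v$ and of their completed tensor product, which you have not provided. Even granting that bound, your computation lands at $2[F:\Q]-2$, which is \emph{weaker} than the stated bound when $S=\Sigma_p$ (and in general differs from it by $|S|-|\Sigma_p|-1$, not by something one can ``bookkeep'' away); the closing paragraph about ``one extra unit of connectedness per place'' from Ihara avoidance does not correspond to anything proved in the paper and is not a valid argument as written. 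The moral is that routing through $R^{\{\mathbf 1\}}_{\loc}$ needlessly introduces components whose connectedness you must then control; the paper sidesteps this entirely.
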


\begin{proof}
Denote by $R_{\bar\rho}$ the unrestricted universal deformation ring of $\bar\rho:G_{F,S}\to\GL_2(\F)$ with fixed determinant $\det \rho$. Then it is standard that $R_{\bar\rho}$ can be written as the form $\cO[[x_1,\cdots,x_{h^1}]]/(f_1,\cdots,f_{h^2})$ with $h^1-h^2=2[F:\Q]$ (see the proof of lemma \ref{condimRpskq}). There is a natural quotient map $R_{\bar\rho}\twoheadrightarrow R^{\{\mathbf{1}\}}$ corresponding to the unipotent conditions at $v\in S\setminus \Sigma_p$. Its kernel is generated by $\tr \rho^{univ}(t_v)-2,v\in S\setminus\Sigma_p$, where $\rho^{univ}$ denotes the universal deformation over $R_{\bar\rho}$ and $t_v$ is a topological generator of the pro-$p$ quotient of $I_{F_v}$. Thus we can write
\[R^{\{\mathbf{1}\}}\cong \cO[[x_1,\cdots,x_g]]/(f_1,\cdots,f_r)\]
with $g-r=2[F:\Q]-|S|+|\Sigma_p|$. Our claim follows from proposition \ref{cndimformula}.
\end{proof}

\begin{lem} \label{existnice}
Let $\kQ\in\Spec R^{\{\mathbf{1}\}}$ be a pro-modular prime containing $p$ such that 
\[\dim R^{\{\mathbf{1}\}}/\kQ\geq 2[F:\Q]-|S|+|\Sigma_p|-1.\] 
Then there exists a nice prime $\kq$ containing $\kQ$.
\end{lem}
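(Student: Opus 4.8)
The plan is to cut $\kQ$ down to a one-dimensional prime while preserving pro-modularity and arranging the remaining conditions in the definition of a nice prime. First, since $R^{\{\mathbf{1}\}}\twoheadrightarrow\T_\km$, the pro-modular locus is the \emph{closed} subset $\Spec\T_\km\subseteq\Spec R^{\{\mathbf{1}\}}$, so every prime containing $\kQ$ is again pro-modular; and since $p\in\kQ$, the ring $R^{\{\mathbf{1}\}}/\kQ$ is an $\F$-algebra, so every prime above $\kQ$ has residue characteristic $p$. Next, irreducibility of $\rho(\kq)$ is automatic once $\bar\rho$ is absolutely irreducible: if $\rho^{\mathrm{univ}}\bmod\kq$ were reducible over $k(\kq)$, then passing to the normalization $\tilde B$ of the (excellent complete local) domain $R^{\{\mathbf{1}\}}/\kq$, which is finite over it and again local, one could saturate a stable $\mathrm{Frac}(\tilde B)$-line to a rank-one $G_{F,S}$-stable $\tilde B$-direct summand of $\tilde B^{\oplus2}$, whose reduction would be a stable line in $\bar\rho\otimes_\F\F'$ --- impossible. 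The same argument shows that an irreducible $\rho(\kq)$ that is not absolutely irreducible is necessarily induced from a quadratic extension of $F$. Hence it suffices to produce a one-dimensional prime $\kq\supseteq\kQ$ such that $\rho(\kq)|_{G_{F_v}}$ is trivial for all $v\in S\setminus\Sigma_p$ and $\rho(\kq)$ is not induced from a quadratic extension of $F$.

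Then I would impose triviality at the auxiliary places. For $v\in S\setminus\Sigma_p$ the residual representation $\bar\rho|_{G_{F_v}}$ is trivial (arranged in \ref{solublebc}), so the trivial representation is the closed point of $\Spec R^{\square,1}_v$, and over the characteristic-$p$ quotient $R^{\{\mathbf{1}\}}/\kQ$ the scalar $\chi(\Frob_v)$ becomes $1$, so ``trivial at $v$'' is consistent. By lemma \ref{iharaavoid}, $R^{\square,1}_v/(\varpi)$ has dimension $3$, so we may choose $f_{v,1},f_{v,2},f_{v,3}$ forming a system of parameters of it (equivalently generating an ideal with radical the maximal ideal). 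Pulling these back through $R^{\square,1}_v\to R^{\square_S,\{\mathbf{1}\}}$, where $R^{\square_S,\{\mathbf{1}\}}$ is a formal power series ring over $R^{\{\mathbf{1}\}}$ (as $\bar\rho$ is absolutely irreducible), quotienting the pull-back of $\kQ$ by them and by $\varpi$, and descending to the unframed side exactly as in the proof of proposition \ref{rcmodc}, one obtains a quotient $R'$ of $R^{\{\mathbf{1}\}}/\kQ$ all of whose primes $\kq$ satisfy $\rho(\kq)|_{G_{F_v}}$ trivial for $v\in S\setminus\Sigma_p$, with
\[
\dim R'\ \geq\ \dim R^{\{\mathbf{1}\}}/\kQ-3(|S|-|\Sigma_p|)\ \geq\ 2[F:\Q]-4|S|+4|\Sigma_p|-1,
\]
which by the running hypothesis $[F:\Q]-4|S|+4|\Sigma_p|\geq2$ is at least $[F:\Q]+1$.

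It remains to find a one-dimensional prime of $R'$ whose associated representation is not induced from a quadratic extension of $F$; any such prime is nice. If $\bar\rho$ is not so induced then no deformation is, and any one-dimensional prime of $R'$ works (one exists as $\dim R'\geq1$). If $\bar\rho\cong\Ind_{G_L}^{G_F}\bar\theta$, there are only finitely many quadratic $L/F$ unramified outside $S$; for each, the locus of $\Ind_{G_L}^{G_F}\theta$-type deformations inside $\Spec R'$ is closed, and a standard class-field-theory/Galois-cohomology count (parallel to the reducible-locus estimates such as \ref{reddimRbcL} and to the arguments of \cite{SW01}) bounds its dimension strictly below $\dim R'$, using that the auxiliary triviality conditions force each such $v$ to split in $L$ with $\theta$ trivial above $v$, thereby further cutting the dihedral locus. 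Choosing a one-dimensional prime $\kq$ of $R'$ avoiding these finitely many loci yields the desired nice prime. The main obstacle I anticipate is exactly this last step: tracking the framing variables when descending $R'$ from the framed deformation ring, and checking that the numerology $[F:\Q]-4|S|+4|\Sigma_p|\geq2$ genuinely suffices to beat the dihedral locus once the auxiliary conditions are imposed.
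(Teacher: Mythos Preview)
Your approach is essentially the same as the paper's: impose triviality at $v\in S\setminus\Sigma_p$ by cutting by $3(|S|-|\Sigma_p|)$ elements coming from systems of parameters of $R^{\square,1}_v/(\varpi)$, arrive at a quotient $R'$ of $R^{\{\mathbf{1}\}}/\kQ$ with $\dim R'\geq [F:\Q]+1$, and then avoid the dihedral locus. The one place you hedge is precisely where the paper's argument is simpler than you anticipate: by global class field theory the \emph{entire} dihedral locus in $\Spec R^{\{\mathbf{1}\}}/(\varpi)$ already has dimension at most $[F:\Q]$ (for each of the finitely many quadratic $L/F$ unramified outside $S$, the characters $\theta$ of $G_{L,S_L}$ with $\theta\theta^c$ fixed are parametrized by a space of rank $\leq [F:\Q]$). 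Since any minimal prime $\kQ'$ of $R'$ satisfies $\dim R^{\{\mathbf{1}\}}/\kQ'\geq [F:\Q]+1$, the generic $\rho(\kQ')$ is already non-dihedral, and one simply takes a one-dimensional $\kq\supseteq\kQ'$ outside this lower-dimensional locus; there is no need to leverage the auxiliary triviality conditions to cut the dihedral locus further. Your other worry, about descending from the framed deformation ring, is also harmless here: since $\bar\rho$ is absolutely irreducible, $R^{\square_S,\{\mathbf{1}\}}$ is a formal power series ring over $R^{\{\mathbf{1}\}}$, so the elements $a_1,\dots,a_k$ can be taken directly in the maximal ideal of $R^{\{\mathbf{1}\}}$.
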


\begin{proof}
For $v\in S\setminus \Sigma_p$, consider the local deformation ring $R^{\square,\mathbf{1}}_v$ introduced in \ref{Gdrs} (with $\xi_v=\mathbf{1}$). Then by lemma \ref{iharaavoid}, $\dim R^{\square,\mathbf{1}}_v/(\varpi)=3$. Hence we can choose $k=3(|S|-|\Sigma_p|)$ elements $a_1,\cdots,a_{k}$ in the maximal ideal of $R^{\{\mathbf{1}\}}$ such that $\rho(\kp)|_{G_{F_v}}$ is trivial for any $v\in S\setminus\Sigma_p$ and $\kp\in\Spec R^{\{\mathbf{1}\}}/(a_1,\cdots,a_k)$. See proof of proposition \ref{rcmodc} for more details. Let $\kQ'$ be a minimal prime of $R^{\{\mathbf{1}\}}/(\kQ,a_1,\cdots,a_k)$. Then it follows from our assumption that
\[\dim R^{\{\mathbf{1}\}}/\kQ'\geq 2[F:\Q]-4|S|+4|\Sigma_p|-1\geq [F:\Q]+1.\]
By the global class field theory, the dihedral locus in $\Spec R^{\{\mathbf{1}\}}/(\varpi)$ has dimension at most $[F:\Q]$. Hence $\rho(\kQ')$ is not dihedral and we can choose a one-dimensional prime $\kq$ containing $\kQ'$ such that $\rho(\kq)$ is not dihedral. Clearly $\kq$ is nice by our construction.
\end{proof}

\begin{proof}[Proof of proposition \ref{kerRtoTnil}]
By Theorem \ref{dh}, $\T_\km$ has dimension at least $2[F:\Q]+1$. By lemma \ref{existnice}, there exists a nice prime in $\Spec R^{\{\mathbf{1}\}}$. Hence at least one irreducible component of $\Spec R^{\{\mathbf{1}\}}$ is pro-modular. Using the results we just proved in this section, we can argue as in the last paragraph of the proof of proposition \ref{rcmodc} to show that any irreducible component of $\Spec R^{\{\mathbf{1}\}}$ has a nice prime. Proposition \ref{kerRtoTnil} follows immediately.
\end{proof}

\bibliographystyle{amsalpha}

\bibliography{bib}

\newcommand{\etalchar}[1]{$^{#1}$}
\providecommand{\bysame}{\leavevmode\hbox to3em{\hrulefill}\thinspace}
\providecommand{\MR}{\relax\ifhmode\unskip\space\fi MR }
\providecommand{\MRhref}[2]{%
  \href{http://www.ams.org/mathscinet-getitem?mr=#1}{#2}
}
\providecommand{\href}[2]{#2}
\begin{thebibliography}{BLGHT11}

\bibitem[AT68]{AT68}
E.~Artin and J.~Tate, \emph{Class field theory}, W. A. Benjamin, Inc., New
  York-Amsterdam, 1968. \MR{0223335}

\bibitem[B\"01]{Boc01}
Gebhard B\"{o}ckle, \emph{On the density of modular points in universal
  deformation spaces}, Amer. J. Math. \textbf{123} (2001), no.~5, 985--1007.
  \MR{1854117}

\bibitem[B\"10]{Boc10}
\bysame, \emph{Deformation rings for some mod 3 {G}alois representations of the
  absolute {G}alois group of {$\bold Q_3$}}, Ast\'{e}risque (2010), no.~330,
  529--542. \MR{2642411}

\bibitem[BB10]{BB10}
Laurent Berger and Christophe Breuil, \emph{Sur quelques repr\'esentations
  potentiellement cristallines de {${\rm GL}_2(\bold Q_p)$}}, Ast\'erisque
  (2010), no.~330, 155--211. \MR{2642406}

\bibitem[BE10]{BE10}
Christophe Breuil and Matthew Emerton, \emph{Repr\'esentations {$p$}-adiques
  ordinaires de {${\rm GL}_2(\bold Q_p)$} et compatibilit\'e local-global},
  Ast\'erisque (2010), no.~331, 255--315. \MR{2667890}

\bibitem[BH06]{BH06}
Colin~J. Bushnell and Guy Henniart, \emph{The local {L}anglands conjecture for
  {$\rm GL(2)$}}, Grundlehren der Mathematischen Wissenschaften [Fundamental
  Principles of Mathematical Sciences], vol. 335, Springer-Verlag, Berlin,
  2006. \MR{2234120}

\bibitem[BH15]{BH15}
Christophe Breuil and Florian Herzig, \emph{Ordinary representations of
  {$G(\Bbb{Q}_p)$} and fundamental algebraic representations}, Duke Math. J.
  \textbf{164} (2015), no.~7, 1271--1352. \MR{3347316}

\bibitem[BLGHT11]{BLGHT11}
Tom Barnet-Lamb, David Geraghty, Michael Harris, and Richard Taylor, \emph{A
  family of {C}alabi-{Y}au varieties and potential automorphy {II}}, Publ. Res.
  Inst. Math. Sci. \textbf{47} (2011), no.~1, 29--98. \MR{2827723}

\bibitem[BR86]{BR86}
Markus Brodmann and Josef Rung, \emph{Local cohomology and the connectedness
  dimension in algebraic varieties}, Comment. Math. Helv. \textbf{61} (1986),
  no.~3, 481--490. \MR{860135}

\bibitem[CDP14]{CDP14}
Pierre Colmez, Gabriel Dospinescu, and Vytautas Pa{\v s}k\=unas, \emph{The
  {$p$}-adic local {L}anglands correspondence for {${\rm GL}_2(\Bbb Q_p)$}},
  Camb. J. Math. \textbf{2} (2014), no.~1, 1--47. \MR{3272011}

\bibitem[CE05]{CE05}
Frank Calegari and Matthew Emerton, \emph{On the ramification of {H}ecke
  algebras at {E}isenstein primes}, Invent. Math. \textbf{160} (2005), no.~1,
  97--144. \MR{2129709}

\bibitem[CE12]{CE12}
\bysame, \emph{Completed cohomology---a survey}, Non-abelian fundamental groups
  and {I}wasawa theory, London Math. Soc. Lecture Note Ser., vol. 393,
  Cambridge Univ. Press, Cambridge, 2012, pp.~239--257. \MR{2905536}

\bibitem[CEG{\etalchar{+}}16]{CEG+}
Ana Caraiani, Matthew Emerton, Toby Gee, David Geraghty, Vytautas
  Pa\v{s}k\={u}nas, and Sug~Woo Shin, \emph{Patching and the {$p$}-adic local
  {L}anglands correspondence}, Camb. J. Math. \textbf{4} (2016), no.~2,
  197--287. \MR{3529394}

\bibitem[CHT08]{CHT08}
Laurent Clozel, Michael Harris, and Richard Taylor, \emph{Automorphy for some
  {$l$}-adic lifts of automorphic mod {$l$} {G}alois representations}, Publ.
  Math. Inst. Hautes \'Etudes Sci. (2008), no.~108, 1--181, With Appendix A,
  summarizing unpublished work of Russ Mann, and Appendix B by Marie-France
  Vign\'eras. \MR{2470687}

\bibitem[CS19]{CS19}
Frank Calegari and Joel Specter, \emph{Pseudorepresentations of weight one are
  unramified}, Algebra Number Theory \textbf{13} (2019), no.~7, 1583--1596.
  \MR{4009671}

\bibitem[DdSMS99]{DdSMS99}
J.~D. Dixon, M.~P.~F. du~Sautoy, A.~Mann, and D.~Segal, \emph{Analytic
  pro-{$p$} groups}, second ed., Cambridge Studies in Advanced Mathematics,
  vol.~61, Cambridge University Press, Cambridge, 1999. \MR{1720368}

\bibitem[DDT94]{DDT95}
Henri Darmon, Fred Diamond, and Richard Taylor, \emph{Fermat's last theorem},
  Current developments in mathematics, 1995 ({C}ambridge, {MA}), Int. Press,
  Cambridge, MA, 1994, pp.~1--154. \MR{1474977}

\bibitem[Del73]{De72}
P.~Deligne, \emph{Formes modulaires et repr\'esentations de {${\rm GL}(2)$}},
  Modular functions of one variable, {II} ({P}roc. {I}nternat. {S}ummer
  {S}chool, {U}niv. {A}ntwerp, {A}ntwerp, 1972), Springer, Berlin, 1973,
  pp.~55--105. Lecture Notes in Math., Vol. 349. \MR{0347738}

\bibitem[DT94]{DT94}
Fred Diamond and Richard Taylor, \emph{Nonoptimal levels of mod {$l$} modular
  representations}, Invent. Math. \textbf{115} (1994), no.~3, 435--462.
  \MR{1262939}

\bibitem[Eme06]{Eme06}
Matthew Emerton, \emph{A local-global compatibility conjecture in the
  {$p$}-adic {L}anglands programme for {${\rm GL}_{2/{\Bbb Q}}$}}, Pure Appl.
  Math. Q. \textbf{2} (2006), no.~2, Special Issue: In honor of John H. Coates.
  Part 2, 279--393. \MR{2251474}

\bibitem[Eme10]{Eme10a}
\bysame, \emph{Ordinary parts of admissible representations of {$p$}-adic
  reductive groups {II}. {D}erived functors}, Ast\'erisque (2010), no.~331,
  403--459. \MR{2667883}

\bibitem[Eme11]{Eme1}
\bysame, \emph{Local-global compatibility in the p-adic langlands programme for
  {$GL_2/Q$}}, 2011.

\bibitem[FM95]{FM93}
Jean-Marc Fontaine and Barry Mazur, \emph{Geometric {G}alois representations},
  Elliptic curves, modular forms, \& {F}ermat's last theorem ({H}ong {K}ong,
  1993), Ser. Number Theory, I, Int. Press, Cambridge, MA, 1995, pp.~41--78.
  \MR{1363495}

\bibitem[Ger10]{Ger10}
David~James Geraghty, \emph{Modularity lifting theorems for ordinary {G}alois
  representations}, ProQuest LLC, Ann Arbor, MI, 2010, Thesis (Ph.D.)--Harvard
  University. \MR{2941425}

\bibitem[GN20]{GN16}
Toby Gee and James Newton, \emph{Patching and the completed homology of locally
  symmetric spaces}, Journal of the Institute of Mathematics of Jussieu (2020),
  1--64.

\bibitem[Gro66]{EGA}
A.~Grothendieck, \emph{\'el\'ements de g\'eom\'etrie alg\'ebrique. {IV}.
  \'etude locale des sch\'emas et des morphismes de sch\'emas. {III}}, Inst.
  Hautes \'Etudes Sci. Publ. Math. (1966), no.~28, 255. \MR{0217086}

\bibitem[Hid89a]{Hida88}
Haruzo Hida, \emph{Nearly ordinary {H}ecke algebras and {G}alois
  representations of several variables}, Algebraic analysis, geometry, and
  number theory ({B}altimore, {MD}, 1988), Johns Hopkins Univ. Press,
  Baltimore, MD, 1989, pp.~115--134. \MR{1463699}

\bibitem[Hid89b]{Hida89}
\bysame, \emph{On nearly ordinary {H}ecke algebras for {${\rm GL}(2)$} over
  totally real fields}, Algebraic number theory, Adv. Stud. Pure Math.,
  vol.~17, Academic Press, Boston, MA, 1989, pp.~139--169. \MR{1097614}

\bibitem[Hid06]{Hida06}
\bysame, \emph{Hilbert modular forms and {I}wasawa theory}, Oxford Mathematical
  Monographs, The Clarendon Press, Oxford University Press, Oxford, 2006.
  \MR{2243770}

\bibitem[HT15]{HT15}
Yongquan Hu and Fucheng Tan, \emph{The {B}reuil-{M}\'ezard conjecture for
  non-scalar split residual representations}, Ann. Sci. \'Ec. Norm. Sup\'er.
  (4) \textbf{48} (2015), no.~6, 1383--1421. \MR{3429471}

\bibitem[Kis03]{Kis03}
Mark Kisin, \emph{Overconvergent modular forms and the {F}ontaine-{M}azur
  conjecture}, Invent. Math. \textbf{153} (2003), no.~2, 373--454. \MR{1992017}

\bibitem[Kis09a]{Kis09a}
\bysame, \emph{The {F}ontaine-{M}azur conjecture for {${\rm GL}_2$}}, J. Amer.
  Math. Soc. \textbf{22} (2009), no.~3, 641--690. \MR{2505297}

\bibitem[Kis09b]{Kis09b}
\bysame, \emph{Moduli of finite flat group schemes, and modularity}, Ann. of
  Math. (2) \textbf{170} (2009), no.~3, 1085--1180. \MR{2600871}

\bibitem[KW09a]{KW09a}
Chandrashekhar Khare and Jean-Pierre Wintenberger, \emph{Serre's modularity
  conjecture. {I}}, Invent. Math. \textbf{178} (2009), no.~3, 485--504.
  \MR{2551763}

\bibitem[KW09b]{KW09b}
\bysame, \emph{Serre's modularity conjecture. {II}}, Invent. Math. \textbf{178}
  (2009), no.~3, 505--586. \MR{2551764}

\bibitem[Lic72]{Lich72}
Stephen Lichtenbaum, \emph{On the values of zeta and {$L$}-functions. {I}},
  Ann. of Math. (2) \textbf{96} (1972), 338--360. \MR{0360527}

\bibitem[LvO96]{LvO96}
Huishi Li and Freddy van Oystaeyen, \emph{Zariskian filtrations},
  $K$-Monographs in Mathematics, vol.~2, Kluwer Academic Publishers, Dordrecht,
  1996. \MR{1420862}

\bibitem[Mat80]{Mat2}
Hideyuki Matsumura, \emph{Commutative algebra}, second ed., Mathematics Lecture
  Note Series, vol.~56, Benjamin/Cummings Publishing Co., Inc., Reading, Mass.,
  1980. \MR{575344}

\bibitem[Mat89]{Mat1}
\bysame, \emph{Commutative ring theory}, second ed., Cambridge Studies in
  Advanced Mathematics, vol.~8, Cambridge University Press, Cambridge, 1989,
  Translated from the Japanese by M. Reid. \MR{1011461}

\bibitem[Maz89]{Maz87}
B.~Mazur, \emph{Deforming {G}alois representations}, Galois groups over {${\bf
  Q}$} ({B}erkeley, {CA}, 1987), Math. Sci. Res. Inst. Publ., vol.~16,
  Springer, New York, 1989, pp.~385--437. \MR{1012172}

\bibitem[NSW08]{NSW08}
J\"urgen Neukirch, Alexander Schmidt, and Kay Wingberg, \emph{Cohomology of
  number fields}, second ed., Grundlehren der Mathematischen Wissenschaften
  [Fundamental Principles of Mathematical Sciences], vol. 323, Springer-Verlag,
  Berlin, 2008. \MR{2392026}

\bibitem[NT20]{NT20}
James Newton and Jack~A. Thorne, \emph{Adjoint selmer groups of automorphic
  galois representations of unitary type}, 2020.

\bibitem[Pa{\v s}09]{Pas09}
Vytautas Pa{\v s}k\={u}nas, \emph{On some crystalline representations of {${\rm
  GL}_2(\Bbb Q_p)$}}, Algebra Number Theory \textbf{3} (2009), no.~4, 411--421.
  \MR{2525557}

\bibitem[Pa{\v s}10]{Pas10}
Vytautas Pa{\v s}k\=unas, \emph{Extensions for supersingular representations of
  {${\rm GL}_2(\Bbb Q_p)$}}, Ast\'erisque (2010), no.~331, 317--353.
  \MR{2667891}

\bibitem[Pa{\v s}13]{Pas13}
\bysame, \emph{The image of {C}olmez's {M}ontreal functor}, Publ. Math. Inst.
  Hautes \'Etudes Sci. \textbf{118} (2013), 1--191. \MR{3150248}

\bibitem[Pa{\v s}14]{Pas14}
\bysame, \emph{Blocks for {${\rm mod}\,p$} representations of {${\rm GL}_2(\Bbb
  Q_p)$}}, Automorphic forms and {G}alois representations. {V}ol. 2, London
  Math. Soc. Lecture Note Ser., vol. 415, Cambridge Univ. Press, Cambridge,
  2014, pp.~231--247. \MR{3444235}

\bibitem[Pa{\v s}16]{Pas16}
\bysame, \emph{On 2-dimensional 2-adic {G}alois representations of local and
  global fields}, Algebra Number Theory \textbf{10} (2016), no.~6, 1301--1358.
  \MR{3544298}

\bibitem[Pa{\v s}18]{Pas18}
\bysame, \emph{{On some consequences of a theorem of J. Ludwig}}, ArXiv
  e-prints (2018).

\bibitem[PT21]{PT21}
Vytautas Pa{\v s}k\=unas and Shen-Ning Tung, \emph{Finiteness properties of the
  category of mod $p$ representations of $\mathrm{GL}_2(\mathbb{Q}_p)$}, 2021.

\bibitem[Rib76]{Rib76}
Kenneth~A. Ribet, \emph{A modular construction of unramified {$p$}-extensions
  of {$Q(\mu _{p})$}}, Invent. Math. \textbf{34} (1976), no.~3, 151--162.
  \MR{419403}

\bibitem[Sch]{Sch15}
Peter Scholze, \emph{On the p-adic cohomology of the lubin-tate tower}, Annales
  de l'ENS, to appear.

\bibitem[Shi78]{Shi78}
Goro Shimura, \emph{The special values of the zeta functions associated with
  {H}ilbert modular forms}, Duke Math. J. \textbf{45} (1978), no.~3, 637--679.
  \MR{507462}

\bibitem[Sho16]{Sho16}
Jack Shotton, \emph{Local deformation rings for {${\rm GL}_2$} and a
  {B}reuil-{M}\'ezard conjecture when {$\ell\ne p$}}, Algebra Number Theory
  \textbf{10} (2016), no.~7, 1437--1475. \MR{3554238}

\bibitem[Ski09]{Ski09}
Christopher Skinner, \emph{A note on the {$p$}-adic {G}alois representations
  attached to {H}ilbert modular forms}, Doc. Math. \textbf{14} (2009),
  241--258. \MR{2538615}

\bibitem[SS16]{SS16}
Tobias Schmidt and Matthias Strauch, \emph{Dimensions of some locally analytic
  representations}, Represent. Theory \textbf{20} (2016), 14--38. \MR{3455080}

\bibitem[{Sta}18]{stacks-project}
The {Stacks Project Authors}, \emph{\textit{Stacks Project}},
  \url{https://stacks.math.columbia.edu}, 2018.

\bibitem[SW99]{SW99}
C.~M. Skinner and A.~J. Wiles, \emph{Residually reducible representations and
  modular forms}, Inst. Hautes \'Etudes Sci. Publ. Math. (1999), no.~89, 5--126
  (2000). \MR{1793414}

\bibitem[SW01]{SW01}
C.~M. Skinner and Andrew~J. Wiles, \emph{Nearly ordinary deformations of
  irreducible residual representations}, Ann. Fac. Sci. Toulouse Math. (6)
  \textbf{10} (2001), no.~1, 185--215. \MR{1928993}

\bibitem[Tay91]{Ta91}
Richard Taylor, \emph{Galois representations associated to {S}iegel modular
  forms of low weight}, Duke Math. J. \textbf{63} (1991), no.~2, 281--332.
  \MR{1115109}

\bibitem[Tay03]{Ta03}
\bysame, \emph{On icosahedral {A}rtin representations. {II}}, Amer. J. Math.
  \textbf{125} (2003), no.~3, 549--566. \MR{1981033}

\bibitem[Tay06]{Ta06}
\bysame, \emph{On the meromorphic continuation of degree two {$L$}-functions},
  Doc. Math. (2006), no.~Extra Vol., 729--779. \MR{2290604}

\bibitem[Tay08]{Ta08}
\bysame, \emph{Automorphy for some {$l$}-adic lifts of automorphic mod {$l$}
  {G}alois representations. {II}}, Publ. Math. Inst. Hautes \'Etudes Sci.
  (2008), no.~108, 183--239. \MR{2470688}

\bibitem[Tho15]{Tho15}
Jack~A. Thorne, \emph{Automorphy lifting for residually reducible {$l$}-adic
  {G}alois representations}, J. Amer. Math. Soc. \textbf{28} (2015), no.~3,
  785--870. \MR{3327536}

\bibitem[Tun18]{Tung18}
Shen-Ning Tung, \emph{On the automorphy of 2-dimensional potentially
  semi-stable deformation rings of $g_{\mathbb{q}_p}$}, 2018.

\bibitem[Tun19]{Tung19}
\bysame, \emph{On the modularity of 2-adic potentially semi-stable deformation
  rings}, 2019.

\bibitem[Was78]{Wa78}
Lawrence~C. Washington, \emph{The non-{$p$}-part of the class number in a
  cyclotomic {${\bf Z}_{p}$}-extension}, Invent. Math. \textbf{49} (1978),
  no.~1, 87--97. \MR{511097}

\bibitem[Wil86]{Wi86}
A.~Wiles, \emph{On {$p$}-adic representations for totally real fields}, Ann. of
  Math. (2) \textbf{123} (1986), no.~3, 407--456. \MR{840720}

\bibitem[Wil88]{Wi88}
\bysame, \emph{On ordinary {$\lambda$}-adic representations associated to
  modular forms}, Invent. Math. \textbf{94} (1988), no.~3, 529--573.
  \MR{969243}

\end{thebibliography}

\end{document}